\newcommand{\idf}{\leavevmode \cleaders \hb@xt@ .67em{\hss \textcolor{gray} .\hss }\hfill \kern \z@}
\newcommand{\mygobble}[1]{\@gobble{#1}\ignorespaces} %%gobbles the extra space at the beginning of index subitems starting with ``|see'' or ``|see also''; use the format \index{entry!subentry@\mygobble|see {subentry}}
\numberwithin{equation}{section}
\newcommand{\todol}[1]{\todo[color=vturkiz!80]{#1}} %%regular todonotes (Dori)
\newcommand{\todog}[1]{\todo[color=vzold!70]{#1}} %%helpful remarks (nothing needs to be done)
\newcommand{\todoig}[1]{\todo[inline, color=vzold!70]{#1}} 
\newcommand{\todoir}[1]{\todo[inline, color=red!70!magenta]{#1}}
\newcommand{\todoo}[1]{\todo[color=orange!80!red]{#1}} %%also important - due to the new def of independent trees or of the new definable version of ODD 
\newcommand{\todon}[1]{\todo[color=yellow!80!orange]{#1}} %%todonotes regarding notation
\newcommand{\todoq}[1]{\todo[color=magenta!50]{#1}} %%todonotes regarding some open questions
\definecolor{BblueOrig}{RGB}{0,119,204}
\colorlet{Bblue}{BblueOrig!87!black} %%%color for hyperrefs, citations and arrows with hyperlinks
\definecolor{turkiz}{RGB}{6,121,147} %teal
\definecolor{vturkiz}{RGB}{46,184,184} %lightteal
\definecolor{skek}{RGB}{6,90,147} %teal2 (more blue)
\definecolor{vzold}{cmyk}{.48, .05, .91, 0} %lightgreen
\definecolor{zold}{RGB}{28,172,0} %darkgreen
\definecolor{narancs}{cmyk}{0,0.61,0.87,0} %orange
\definecolor{skek2}{RGB}{6,6,147} %darkblue
\colorlet{VeryDarkBlueNode}{skek2!40!black}
\colorlet{GreyBlueNode}{VeryDarkBlueNode!40}
\colorlet{DarkBlueNode}{skek2!90!black}
\colorlet{DottedLine}{VeryDarkBlueNode}
\colorlet{BlueArrow}{Bblue} %%color for arrows with hyperrefs
\definecolor{LightBlueArrow}{RGB}{122, 163, 214} 
\colorlet{LightGrey}{black!40}
\colorlet{VeryLightBlueArrow}{LightBlueArrow!70!LightGrey}
\definecolor{TealArrow}{RGB}{0,146,179}
\colorlet{LightGreenArrow}{vzold!80!black}
\colorlet{DarkGreenArrow}{zold!85!black}
\colorlet{RedArrow}{red!85!black}
\colorlet{OrangeArrow}{orange!90!black}
\colorlet{RedOrangeArrow}{RedArrow!40!OrangeArrow}
\colorlet{YellowArrow}{yellow!70!black}
\renewcommand\subsubsection{
  \@startsection{subsubsection}{3} % Level 3 sectioning (subsubsection)
  \z@{.5\linespacing\@plus.7\linespacing}{-.5em} % Vertical spacing before and after the title
  {\normalfont\bfseries\boldmath} % Font style (bold + math symbols in bold)
}
\DeclareRobustCommand{\SkipTocEntry}[5]{}
\renewcommand{\tocsection}[3]{%
  \indentlabel{\@ifnotempty{#2}{\bfseries\ignorespaces#1 #2\quad}}\bfseries#3}
\renewcommand{\tocsubsection}[3]{%
  \indentlabel{\@ifnotempty{#2}{\ignorespaces#1 #2\quad}}#3}
\renewcommand{\tocsubsubsection}[3]{%
  \indentlabel{\hspace{30pt}\@ifnotempty{#2}{\ignorespaces#1 #2\quad}}#3}
\newcommand\@dotsepp{4.5}
\def\@tocline#1#2#3#4#5#6#7{\relax
  \ifnum #1>\c@tocdepth % then omit
  \else
    \par \addpenalty\@secpenalty\addvspace{#2}%
    \begingroup \hyphenpenalty\@M
    \@ifempty{#4}{%
      \@tempdima\csname r@tocindent\number#1\endcsname\relax
    }{%
      \@tempdima#4\relax
    }%
    \parindent\z@ \leftskip#3\relax \advance\leftskip\@tempdima\relax
    \rightskip\@pnumwidth plus1em \parfillskip-\@pnumwidth
    #5\leavevmode\hskip-\@tempdima{#6}\nobreak
    \leaders\hbox{$\m@th\mkern \@dotsepp mu\hbox{.}\mkern \@dotsepp mu$}\hfill
    \nobreak
    \hbox to\@pnumwidth{\@tocpagenum{\ifnum#1=1\bfseries\fi#7}}\par% <-- \bfseries for \section page
    \nobreak
    \endgroup
  \fi}
\renewcommand\csname r@tocindent0\endcsname{0pt}
\def\l@subsection{\@tocline{2}{0pt}{2.5pc}{5pc}{}}
\newcommand{\makesmalleroperator}[1]{%
  \expandafter\let\csname saved\string#1\endcsname#1% save the old command
  \def#1{\mathop{\small@mathchoice#1}}% redefine it
}
\def\small@mathchoice#1{%
  \mathchoice{\textstyle\@nameuse{saved\string#1}}%
             {\@nameuse{saved\string#1}}%
             {\@nameuse{saved\string#1}}%
             {\@nameuse{saved\string#1}}%
}
\newcommand{\RR}{\mathbb{R}}
\newcommand{\PP}{\mathbb{P}}
\newcommand{\QQ}{\mathbb{Q}}
\newcommand{\KK}{\mathbb{K}}
\newcommand{\CC}{\mathbb{C}}
\newcommand{\BB}{\mathbb{B}}
\newcommand{\HH}{\mathbb{H}}
\newcommand{\JJ}{\mathbb{J}}
\renewcommand{\SS}{\mathbb{S}}
\let\oldsetminus\setminus
\renewcommand{\setminus}{{\oldsetminus}} 
\newcommand{\id}{\operatorname{id}}
\newcommand{\Col}{\operatorname{Col}}
\newcommand{\Add}{\operatorname{Add}}
\newcommand{\Addd}{\mathrm{Add^{*}}}
\newcommand{\lh}{\operatorname{lh}}
\newcommand{\Lim}{{\mathrm{Lim}}}
\newcommand{\Sym}{\operatorname{\mathrm{Sym}}}
\newcommand{\GCH}{{\mathrm{GCH}}}
\newcommand{\CH}{{\mathsf{CH}}}
\newcommand{\restr}{{\upharpoonright}}
\newcommand{\AD}{{\sf AD}}
\newcommand{\ZFC}{{\sf ZFC}}
\newcommand{\DC}{{\sf DC}}
\newcommand{\megj}[1]{}
\newcommand{\downset}{\ensuremath{\subseteq_{\textnormal{\sf down}}}}
\newcommand{\lle}{\ensuremath{{<}}}
\newcommand{\lleq}{\ensuremath{{\leq}}}
\newcommand{\lexleq}{\ensuremath{\leq_{\mathsf{lex}}}}
\newcommand\comp{\circ}
\newcommand{\conc}{ {}^\frown}
\newcommand{\cwrestr}[1]{_{\upharpoonright #1}} %%%coordinatewise restriction
\newcommand{\partialto}{\rightharpoonup}
\newcommand{\dom}{\operatorname{dom}}
\newcommand{\ran}{\operatorname{ran}}
\newcommand{\pwrset}{\ensuremath{\mathcal{P}}}
\newcommand{\powerset}{\ensuremath{\mathcal{P}}}
\renewcommand{\bar}{\overline}
\newcommand{\proj}{\operatorname{\mathbf{p}}}
\newcommand{\graph}{\operatorname{G}} %%graph of a function
\newcommand{\llbracket}{\ensuremath{[\hspace{-1.7pt}[}}
\newcommand{\rrbracket}{\ensuremath{]\hspace{-1.7pt}]}}
\newcommand{\boolval}[1]{\ensuremath{\llbracket{#1}\rrbracket}}
\newcommand{\boolvalpo}[2]{\ensuremath{\llbracket{#1}\rrbracket}_{#2}}
\newcommand{\Ord}{\ensuremath{\mathrm{Ord}}} 
\newcommand{\Succ}{\ensuremath{\mathrm{Succ}}} 
\newcommand{\cof}{\ensuremath{\mathrm{cof}}}
\newcommand{\stat}{\ensuremath{\mathrm{stat}}}
\newcommand{\Diamondi}[1]{\ensuremath{\Diamond^{i}_{#1}}}
\newcommand{\Dell}[1]{\ensuremath{(\mathrm{D}\ell)_{#1}}}
\newcommand{\compat}{\ensuremath{\parallel}}
\newcommand{\forces}{\ensuremath{\Vdash}}
\newcommand{\bigveepo}[1]{\ensuremath{\bigvee\nolimits^{#1}}}
\newcommand{\bigwedgepo}[1]{\ensuremath{\bigwedge\nolimits^{#1}}}
\DeclareMathAlphabet{\mymathbbold}{U}{bbold}{m}{n}
\newcommand{\one}{\ensuremath{\mymathbbold{1}}}
\newcommand{\zero}{\ensuremath{\mymathbbold{0}}}
\newcommand{\height}{\operatorname{ht}}
\newcommand{\kkppred}[1]{{{#1}{\downarrow}}} %% set of all predecessors of the node #1 in kappa^{<kappa}
\renewcommand{\succ}[1]{{{#1}{\uparrow}}} %% set of all successors of  #1 in kappa^kappa
\newcommand{\closedsets}{\ensuremath{\mathbf{\Pi}^0_1}} %family of open sets of the #1-Baire space
\newcommand{\closure}[1]{\ensuremath{\overline{#1}}}
\newcommand{\Fsigma}{\ensuremath{\mathbf{\Sigma}^0_2}}
\newcommand{\Gdelta}{\ensuremath{\mathbf{\Pi}^0_2}}
\newcommand{\DeltaZeroTwo}{\ensuremath{\mathbf{\Delta}^0_2}}
\newcommand{\analytic}{\ensuremath{\mathbf{\Sigma}^1_1}}
\newcommand{\coanalytic}{\ensuremath{\mathbf{\Pi}^1_1}}
\newcommand{\PSP}{\ensuremath{\mathsf{PSP}}}
\newcommand{\OCA}{\ensuremath{\mathsf{OCA}}}
\newcommand{\OGA}{\ensuremath{\mathsf{OGA}}}
\newcommand{\OGD}{\ensuremath{\mathsf{OGD}}}
\newcommand{\ODD}[2]{\ensuremath{\mathsf{ODD}^{#2}_{#1}}} %the open dihypergraph dichotomy
\newcommand{\OHD}[2]{\ensuremath{\mathsf{OHD}^{#2}_{#1}}} %the open HYPERGRAPH dichotomy
\newcommand{\ODDI}[2]{\ensuremath{\mathsf{ODDI}^{#2}_{#1}}} %%version of ODD with injective continuous homomorphisms
\newcommand{\ODDInp}{\ensuremath{\mathsf{ODDI}}} %%version of ODD with injective continuous homomorphisms
\newcommand{\ODDH}[2]{\ensuremath{\mathsf{ODDH}^{#2}_{#1}}} %%version of ODD with homomorphisms which are homeomorphisms
\newcommand{\ODDC}[2]{\ensuremath{\mathsf{ODDC}^{#2}_{#1}}} %%version of ODD with homomorphisms which are homeomorphisms whose range is a closed subset of the kappa-Baire space
\newcommand{\ddim}{d} %%when the dimension is an ordinal
\newcommand{\ddimq}{\delta} %%for the dimension in the quotient lemma - has to be different from \ddim!!
\newcommand{\domh}[1]{\ensuremath{\dom_{#1}}} %domain of the dihypergraph #1
\newcommand{\leqf}{\ensuremath{\leq_{\mathsf f}}} %fullness
\newcommand{\leqif}{\ensuremath{\leq_{\mathsf {if}}}} %injejctive fullness
\newcommand{\equivf}{\ensuremath{\equiv_{\mathsf f}}}
\newcommand{\equivif}{\ensuremath{\equiv_{\mathsf {if}}}}
\newcommand{\indtrees}[1]{\ensuremath{\mathcal T_{#1}^{\mathrm{ind}}}}
\newcommand{\dhH}[1]{\ensuremath{\mathbb{H}_{{}^\kappa\!{#1}}}} %the #1-dimensional dihypergraph in the def of OGD
\newcommand{\sym}{{\mathsf{sym}}}
\newcommand{\hhH}[1]{\ensuremath{{\mathbb{H}}^{\sym}_{{}^\kappa\!{#1}}}} %the smallest hypergraph containing dhH
\newcommand{\hypg}[1]{{#1}^{\sym}} %the smallest hypergraph containing the dihypergraph #1
\newcommand{\dhHomega}[1]{\ensuremath{\mathbb{H}_{{}^\omega\!{#1}}}} %the #1-dimensional dihypergraph in the def of OGD in the countable case
\newcommand{\dhC}[2]{\ensuremath{{\Delta}^{#1}_{#2}}} %set of constant sequences of elements of #2 of length #1
\newcommand{\dhK}[2]{\ensuremath{\mathbb{K}^{#1}_{#2}}} %the complete #1-dimensional hypergraph on #2
\newcommand{\gK}[1]{\ensuremath{\mathbb{K}_{#1}}} %the complete graph on #1
\newcommand{\dhII}[2]{\ensuremath{\mathbb{I}^{#1}_{#2}}} %the dihypergraph which consists of all injective sequences in #2 of length #1
\newcommand{\dhI}[1]{\ensuremath{\mathbb{I}^{#1}}} %the dihypergraph which consists of all injective sequences in kappa^kappa of length #1
\newcommand{\dhIkappa}{\ensuremath{\mathbb{I}}} %the dihypergraph which consists of all injective sequences in kappa^kappa length kappa
\newcommand{\dhN}[1]{\ensuremath{\mathbb{N}^{#1}}} %the dihypergraph which consists of all kappa-sequences in #1 which have no convergent subsequences 
\newcommand{\dhNkk}{\ensuremath{\mathbb{N}}} %the dihypergraph which consists of all kappa-sequences in kappa^kappa which have no convergent subsequences 
\newcommand{\Xform}[2]{\ensuremath{X_{#1,#2}}}  %subset of kappa^kappa defined by the formula #1 with parameter #2; here, kappa is understood from the context
\newcommand{\Xphia}{\Xform{\varphi}{a}}
\newcommand{\defsets}[1]{\ensuremath{\mathsf D_{#1}}} %class of sets in V[G] which are definable from a {#1}-sequence of ordinals 
\newcommand{\defsetsk}{\ensuremath{\mathsf D_\kappa}} %class of sets in V[G] which are definable from a kappa-sequence of ordinals
\newcommand{\mpo}{\bar{\QQ}}
\newcommand{\mleq}{\leq}
\newcommand{\mbigcap}{\bigwedge}
\newcommand{\na}{\sigma} %the P_gamma name used in the kappa=omega proof
\newcommand{\nna}{\tau}  %the Add(kappa,1) name used in the kappa>omega proof
\newcommand{\mmap}{\pi^{\ast}} %the projection used when determining the quotient forcing
\newcommand{\THD}{\ensuremath{\mathsf{THD}}} %%top Hurewicz dichotomy
\newcommand{\HD}{\ensuremath{\mathsf{HD}}}  %% Hurewicz dichotomy (i.e. the superperfect set version)
\newcommand{\ABP}{\ensuremath{\mathsf{ABP}}} 
\newcommand{\dhD}[1]{\ensuremath{\mathbb{D}_{#1}}} %dihypergraph of ``dense'' sequences
\newcommand{\KLW}{\ensuremath{\mathsf{KLW}}} %%seemingly weaker (new) form of KLW
\newcommand{\KLWdis}{\ensuremath{\mathsf{KLW}^{\mathsf{dis}}}} %%seemingly weaker (new) form of KLW
\newcommand{\sKLW}{\ensuremath{\mathsf{KLW}^{\ast}}} %%seemingly stronger (original) form of KLW 
\newcommand{\JR}{\ensuremath{\mathsf{JR}}} 
\newcommand{\pone}{\ensuremath{\mathbf{I}}}
\newcommand{\ptwo}{\ensuremath{\mathbf{II}}}
\newcommand{\GV}{\ensuremath{\mathcal{V}}}
\newcommand{\Ker}{\ensuremath{\mathrm{Ker}}}
\newcommand{\Sc}{\ensuremath{\mathrm{Sc}}}
\newcommand{\Det}{\ensuremath{\mathsf{Det}}}
\newcommand{\cbo}{\ensuremath{\mathsf{CB_\kappa^-}}}
\newcommand{\cbi}{\ensuremath{\mathsf{CB_\kappa^1}}}
\newcommand{\cbii}{\ensuremath{\mathsf{CB_\kappa^2}}}
\newcommand{\cboomegaone}{\ensuremath{\mathsf{CB_{\omega_1}^-}}}
\newcommand{\cbiomegaone}{\ensuremath{\mathsf{CB_{\omega_1}^1}}}
\newcommand{\cbiiomegaone}{\ensuremath{\mathsf{CB_{\omega_1}^2}}}
\newcommand{\cbig}[1]{\ensuremath{{\mathsf{CB}_\kappa^{1,#1}}}}
\newcommand{\DK}{\ensuremath{\mathsf{DK}}} 
\newcommand{\ths}{\textrm{\upshape({\sf TH1})}\xspace} %top Hurewicz dichotomy small option
\newcommand{\thl}{\textrm{\upshape({\sf TH2})}\xspace} %top Hurewicz dichotomy large option
\newcommand{\hds}{\textrm{\upshape({\sf HD1})}\xspace} %Hurewicz dichotomy small option (i.e. the superperfect set version)
\newcommand{\hdl}{\textrm{\upshape({\sf HD2})}\xspace} %Hurewicz dichotomy large option (i.e. the superperfect set version)
\newcommand{\klws}{\textrm{\upshape({\sf KLW1})}\xspace} %KLW small option 
\newcommand{\klwl}{\textrm{\upshape({\sf KLW2})}\xspace} %KLW large option 
\newcommand{\dhth}[1]{\ensuremath{\mathbb{H}_{\mathsf{top}}^{#1}}}
\newcommand{\dhthkk}{\ensuremath{\mathbb{H}_{\mathsf{top}}}}
\newcommand{\dhhd}{\ensuremath{\mathbb{H}_{\mathsf{super}}}}
\newcommand{\dhhdom}{\ensuremath{\mathbb{H}_{\mathsf{super}}}}
\newcommand{\ka}{\kappa}
\newcommand{\kk}{{}^{\ka}\ka}
\newcommand{\widtharrow}{{.15mm}}
\newcommand{\widthline}{{.15mm}}
\newcommand{\widthlinethin}{{.08mm}}
\newtheorem{theorem}{Theorem}[section]
\newtheorem*{theorem*}{Theorem}
\newtheorem{lemma}[theorem]{Lemma}
\newtheorem*{lemma*}{Lemma}
\newtheorem{corollary}[theorem]{Corollary}
\newtheorem{proposition}[theorem]{Proposition}
\newtheorem{problem}[theorem]{Problem}
\newtheorem*{claim*}{Claim}
\newtheorem*{subclaim*}{Subclaim}
\newtheorem*{case*}{Case}
\newtheorem{case}{Case}[]
\theoremstyle{definition}
\newtheorem{definition}[theorem]{Definition}
\newtheorem{fact}[theorem]{Fact}
\newtheorem{remark}[theorem]{Remark}
\newtheorem{example}[theorem]{Example}
\newtheorem{assumptions}[theorem]{Assumptions}
\theoremstyle{plain} % just in case the style had changed
\newcommand{\thistheoremname}{}
\newtheorem{genericthm}[theorem]{\thistheoremname}
\newtheorem*{genericthm*}{\thistheoremname}
\newenvironment{namedtheorem*}[1]
  {\renewcommand{\thistheoremname}{#1}%
   \begin{genericthm*}}
  {\end{genericthm*}}
\newenvironment{enumerate-(a)}{\begin{enumerate}[label={\upshape (\alph*)}]}{\end{enumerate}}
\newenvironment{enumerate-(A)}{\begin{enumerate}[label={\upshape (\Alph*)}]}{\end{enumerate}}
\newenvironment{enumerate-(i)}{\begin{enumerate}[label={\upshape (\roman*)}]}{\end{enumerate}}
\newenvironment{enumerate-(I)}{\begin{enumerate}[label={\upshape (\Roman*)}]}{\end{enumerate}}
\newenvironment{enumerate-(1)}{\begin{enumerate}[label={\upshape (\arabic*)}]}{\end{enumerate}}
   \def\MR#1{}
\begin{document}

\title[The open dihypergraph dichotomy for generalized Baire spaces]%
{The open dihypergraph dichotomy for generalized \\ Baire spaces and its applications} 

\author{Philipp Schlicht}
\address{Dipartimento di Ingegneria dell'Informazione e Scienze Matematiche, Università di Siena, via Roma 56, 53100 Siena, Italy
} 
\email{philipp.schlicht@unisi.it}
  
\author{Dori Szir\'aki}
\address{\H{O}rhegy~u.~112, H--2085 Pilisv\"or\"osv\'ar, Hungary}
\email{dsziraki@renyi.hu}

\subjclass[2020]{(Primary) 03E15, (Secondary) 03E35, 03E55} 

\keywords{Generalized Baire space, 
directed hypergraph, 
dichotomy, 
graph coloring, 
open graph dichotomy, 
large cardinal} 

\thanks{The authors are grateful to Stevo Todor\v{c}evi\'c for comments, to Rapha\"el Carroy and Ben Miller for answering a query regarding their work and to the referee for the thorough reading of the manuscript and  useful suggestions.} 
%We are grateful to Stevo Todor\v{c}evi\'c for comments and discussions. 
%We further thank Rapha\"el Carroy and Ben Miller for answering a query regarding their work.} 

\thanks{The first-listed author acknowledges the kind support of INdAM\--GNSAGA (Istituto Nazionale di Alta Matematica, Gruppo Nazionale per le Strutture Algebriche, Geometriche e le loro Applicazioni). This research was funded in part by EPSRC grant number EP/V009001/1 of the first-listed author. 
The second-listed author was partially supported by NKFIH grant number K129211.
For the purpose of open access, the authors have applied a ‘Creative Commons Attribution' (CC BY) public copyright licence to any Author Accepted Manuscript (AAM) version arising from this submission. 
}

\maketitle 

\begin{abstract}

The open graph dichotomy 
for a
subset $X$ of the Baire space ${}^\omega\omega$ 
states that any open graph on $X$ either admits a coloring in countably many colors or contains a perfect complete subgraph. 
This strong version of the open graph axiom 
for $X$
was introduced by Feng and Todor\v{c}evi\'c to study definable sets of reals.  
We first show that its recent generalization to infinite dimensional directed hypergraphs by Carroy, Miller and Soukup 
holds for all subsets of the Baire space in Solovay's model, 
extending a theorem of Feng in dimension $2$. 

The main theorem 
lifts this result to generalized Baire spaces~${}^\kappa\kappa$ in two ways. 
\begin{enumerate-(1)}
\item 
For any regular infinite cardinal $\kappa$, the following 
holds after a L\'evy collapse of an inaccessible cardinal $\lambda>\kappa$ to $\kappa^+$. 
\begin{quote} 
Suppose that $H$ is a $\kappa$-dimensional box-open directed hypergraph on a subset of ${}^\kappa\kappa$ such that $H$ is definable from a $\kappa$-sequence of ordinals. 
Then either $H$ admits a 
coloring in $\kappa$ many colors
or there exists a continuous homomorphism from a 
canonical 
large directed hypergraph to $H$. 
\end{quote} 
\item 
If $\lambda$ is a Mahlo cardinal, then
the previous result extends to all  box-open directed hypergraphs on any subset of ${}^\kappa\kappa$ that is definable from a $\kappa$-sequence of ordinals.
\end{enumerate-(1)}

This approach is applied to solve several problems about definable subsets of generalized Baire spaces. 
For instance, we obtain 
variants of the Hurewicz dichotomy that characterizes subsets of $K_\sigma$ sets, strong variants of the Kechris--Louveau--Woodin dichotomy that characterizes when two disjoint sets can be separated by an $F_\sigma$ set, the determinacy of V\"a\"an\"anen's perfect set game for all subsets of ${}^\kappa\kappa$, an asymmetric version of the Baire property and an analogue of the Jayne--Rogers theorem that characterizes $G_\delta$-measurable functions. 
\end{abstract}

\newpage

\tableofcontents

\newpage 

\section{Introduction}

It is natural to wonder whether Ramsey's theorem for $n$-tuples of natural numbers \cite{Ramsey1930} can be extended to the set of real numbers. 
Sierpinski's counterexample is a partition of pairs of reals into two pieces such that no uncountable homogeneous set exists \cite{sierpinski1933probleme}. 
Since his example is not constructive, one can ask if a version of Ramsey's theorem holds for simple partitions. 
Galvin 
%partially answered this 
gave a partial answer to this
by proving that for any partition 
%of the square of the Cantor space 
of pairs of reals
into two open pieces there exists an uncountable (in fact, perfect) homogeneous set
\cite{galvin1968partition}.
Blass generalized this to partitions of $n$-tuples into finitely many Borel pieces
%Borel $n$-hypergraphs 
by weakening the conclusion 
\cite{blass1981partition}: 
%there is always a perfect set in which 
instead of homogeneity, one requires that
all pairs in the set belong to at most $(n-1)!$ pieces.
%to a finite number of pieces that depends only on the dimension $n$.

The next step was to generalize these results to spaces other than the reals. %Cantor space. 
Abraham, Rubin and Shelah formulated a strengthening of Galvin's theorem for countably based metric spaces of size $\aleph_1$ \cite{AbrahamRubinShelah}.
Abstracting from many applications, Todor\v{c}evi\'c introduced the \emph{open graph axiom}
$\OGA$ \cite{TodorcevicPartitionProblems}.%
\footnote{It is also often called the \emph{open coloring axiom} $\OCA$. 
Note that a graph $G$ on a set $X$ can be identified with a partition or coloring of pairs in $X$, for example by coloring edges in $G$ red and edges in the complement of $G$ blue. 
Galvin's theorem is then equivalent to the statement that for any clopen graph $G$ on the reals, there exists either a perfect independent set or a perfect complete subgraph.} 
For a topological space $X$,
$\OGA(X)$ states that any open graph $G$ on $X$ either admits a coloring%
\footnote{Recall that a coloring of a graph $G$ assigns different colors to adjacent vertices.} 
in countably many colors or else contains an uncountable complete subgraph. 
$\OGA$ states that $\OGA(X)$ holds 
for all countably based metric spaces $X$. 
$\OGA$ follows from the proper forcing axiom $\mathsf{PFA}$ \cite{TodorcevicPartitionProblems}
 and has many applications \cites{bekkali1991topics, TodorcevicFarah, VelickovicOCA}. 
 %differs from the results mentioned above in that its alternatives are mutually exclusive and it fails for closed graphs. 
Feng and Todor\v{c}evi\'c studied a stronger version of $\OGA(X)$, the \emph{open graph dichotomy} $\OGD(X)$, that is useful for applications to definable\footnote{Subsection~\ref{preliminaries: notation} explains for
what we mean precisely by definable subsets of (generalized) Baire spaces.}
sets of reals \cites{FengOCA, TodorcevicFarah}. 
%This is the open graph dichotomy $\OGD(X)$. 
Here the uncountable 
complete subgraph
has to be perfect. 
$\OGD(X)$ is provable for all analytic sets $X$ of reals and it is in fact consistent that $\OGD(X)$ holds for all definable sets $X$ \cite{FengOCA}.\footnote{See \cite{FengOCA}*{Theorem~1.1 and Section~4}, as well as \cite{FengOCA}*{pages~676-677} for an alternative approach for analytic sets due to Todor\v{c}evi\'c.} 
The previous statements only mention open graphs because they cannot hold for all closed graphs \cite{TodorcevicFarah}.%
\footnote{See Example~\ref{OGD fails for closed graphs} in this paper.}

The verbatim analogues of the open graph axiom and dichotomy fail in dimension $3$ and higher:
an open dihypergraph%
\footnote{A dihypergraph, or directed hypergraph, of dimension $\ddim$ is a set of nonconstant functions with domain~$\ddim$.} 
on the Baire or Cantor space might neither admit a countable coloring nor have an uncountable complete subhypergraph \cites{TodorcevicFarah,HeHigherDimOCA}.${}$\footnote{See Examples~\ref{3 dim OGA fails} and~\ref{3 dim OGA fails He} in this paper.}
\todon{Keep this todo-note. We added the symbols \$\{\}\$ just before the next footnotes 6 and 7 to make the superscript number appear in the right size. It looks like a latex bug.}
However, one can replace the latter
by a continuous homomorphic image of a canonical large dihypergraph.
Carroy, Miller and Soukup 
proved a countably infinite dimensional version of $\OGD(X)$ for 
analytic sets $X$
and extended this to  
all sets of reals assuming the axiom of determinacy $\AD$
\cite{CarroyMillerSoukup}.%
\footnote{It follows that this dichotomy is consistent with $\ZFC$ for all box-open $\aleph_0$-dimensional dihypergraphs on the Baire space which are definable from a countable sequence of ordinals. To see this,  take a model of $\AD+\DC+{V=L(\RR)}$ and add a well-order of the reals by a $\sigma$-closed homogeneous forcing.}
\todog{Dori's note to self for the footnote: Since the forcing (in the 2nd sentence) is $\sigma$-closed, no new $\omega$-sequences of ordinals are added. Hence by homogeneity, every box-open definable $\omega$-dihypergraph $H$ on the Baire space in the generic extension is in already in $V$ and is definable in $V$. (The proof is similar to Lemma \ref{restricting sets to intermediate models} \ref{restricting sets to intermediate models 1}. ``Definable'' means definable from an $\omega$-sequence of ordinals, as usual.) Homogeneity is needed since otherwise the forcing can make any dihypergraph definable even without parameters.}
A similar idea 
is implicit in work of
Avil\'es and Todor\v{c}evi\'c \cite{avitod2010}.${}$\footnote{We would like to thank Stevo Todor\v{c}evi\'c for pointing out that the idea for a higher dimensional version of $\OGD(X)$ is implicit in the proof of \cite{avitod2010}*{Theorem~7}.}
The main motivation behind this dichotomy 
is to provide new proofs and strong versions of several seemingly unrelated results in descriptive set theory \cite{CarroyMillerSoukup}. 
The first such result is the Hurewicz dichotomy that 
characterizes the circumstances in which an analytic subset of a Polish space is contained in a $K_\sigma$ subset
\cites{Hurewicz1928, SaintRaymond1975, Kechris1977}. 
The Hurewicz dichotomy has also been studied by theoretical computer scientists \cite{de2018generalization}. 
A dichotomy due to Kechris, Louveau and Woodin describes when 
an analytic subset of a Polish space can be separated from a disjoint subset by an $F_\sigma$ set
\cite{KechrisLouveauWoodin1987}. 
A celebrated theorem of Jayne and Rogers characterizes $G_\delta$-measurable functions on analytic sets \cite{JayneRogers1982}. 

We study the above problems in the more general context where $\omega$ is replaced by a regular infinite cardinal $\kappa$ with $\kappa^{<\kappa}=\kappa$. 
The topological properties of the generalized Baire space ${}^\kappa\kappa$ of functions on $\kappa$ resemble that of the Baire space ${}^\omega\omega$. In fact, larger classes of spaces including ${}^\kappa\kappa$ have been identified as analogues to 
%countably based complete metric spaces 
Polish spaces
at uncountable cardinals \cites{coskey2013generalized, agostini2021generalized}. 
The study of descriptive set theory for these spaces in the uncountable setting is motivated, 
%in part 
inter alia, 
by connections 
with model theory 
\todoo{added Vaught's paper} 
\cites{vaught1974invariant, vaan,VaananenGamesTrees} 
and Shelah's classification theory in particular \cites{MR3235820, moreno2022isomorphism, moreno2022unsuperstable, moreno2023shelah}.
There is an ample amount of background literature   
on generalized Baire spaces and their descriptive set theory, 
beginning with the study of closed, $\kappa$-analytic and $\kappa$-coanalytic subsets \cites{VaananenCantorBendixson, vaan, lucke2012definability, MR3430247}. 
It is known that some properties of definable subsets of generalized Baire spaces are closely linked to statements in combinatorial set theory, for instance to the existence of Kurepa trees \cites{lucke2020descriptive,LambieStejPSP}. 
The extension of several classical dichotomies to the uncountable setting paves the way for a structure theory of definable sets. 
For instance, the Hurewicz dichotomy for $\kappa$-analytic sets is consistent 
\cite{LuckeMottoRosSchlichtHurewicz}.
Assuming the existence of an inaccessible cardinal,
it is consistent for all definable sets \cite{Hurewiczdef},
and so is the $\kappa$-perfect set property for definable sets \cite{SchlichtPSPgames}.
Moreover, the analogue of the open graph dichotomy for $\kappa$-analytic sets is consistent relative to an inaccessible cardinal \cite{SzThesis}. 
There is keen interest in analogues of the open graph axiom $\OGA$ 
and similar principles at cardinals beyond $\omega_1$ \cite{mohammadpour2021guessing}. 

The aim here is to extend the aforementioned results on the open graph dichotomy and
its infinite dimensional version %variant 
to uncountable cardinals and utilize such results to derive applications to the descriptive set theory of generalized Baire spaces. 
We next introduce the relevant variants of these dichotomies.
The precise definitions of some of the following concepts will be given in Section \ref{section preliminaries}. 
We assume that $\kappa$ is a regular infinite cardinal with 
$\kappa^{<\kappa}=\kappa$
throughout this paper, unless otherwise stated.
The $\kappa$-Baire space
\index{Baire space@$\kappa$-Baire space\idf${}^\kappa\kappa$}
is the set ${}^\kappa\kappa$ of functions from $\kappa$ to $\kappa$ equipped 
with the \emph{bounded topology} 
%\index{topology!bounded\idf}
(or \emph{$\lle\kappa$-box topology}),
%\index{topology!box@$\lle\kappa$-box\idf}
i.e., the topology generated by the base $\{N_t:t\in{}^{<\kappa}\kappa\}$, where
$N_t=\{x\in{}^\kappa\kappa : t\subseteq x\}$ 
for each $t\in{}^{<\kappa}\kappa$.%
\footnote{The $\omega$-Baire space is the Baire space ${}^\omega\omega$, since the bounded topology equals the product topology for $\kappa=\omega$.} 
The $\kappa$-Cantor space 
\index{Cantor space@$\kappa$-Cantor space\idf${}^\kappa 2$}%
is ${}^\kappa 2$ with the subspace topology.
A \emph{graph} is a symmetric irreflexive binary relation. 
A graph $G$ on a topological space $X$ is \emph{open} 
\index{graph!open\idf}
if it is an open subset of the product space $X\times X$.
A subset of $X$ is \emph{$G$-independent} if it contains no edges, and
a \emph{$\kappa$-coloring} of $G$ is a partition of $X$ into at most $\kappa$ many disjoint $G$-independent sets. 

Todor\v{c}evi\'c's 
{open graph axiom} 
\cite{TodorcevicPartitionProblems}
can be generalized to the uncountable setting.
For any class $\mathcal C$, let
\index{open graph axiom for a!A@class \idf $\OGA_\kappa({\mathcal C})$} 
$\OGA_\kappa({\mathcal C})$ state that the following holds for all subsets 
$X\in \mathcal{C}$ of ${}^\kappa\kappa$: \begin{quotation}
$\OGA_\kappa(X)$:
\index{open graph axiom for a!B@set \idf $\OGA_\kappa(X)$} 
If $G$ is an open graph on $X$, then
either $G$ admits a $\kappa$-coloring or 
$G$ has a complete subgraph of size $\kappa^+$. 
\end{quotation}
The following generalization of %Feng's 
the {open graph dichotomy} \cite{FengOCA} is a stronger version of this axiom. 

\begin{definition} 
\label{OGD def}
For any class $\mathcal C$, 
$\OGD_\kappa({\mathcal C})$ 
\index{open graph dichotomy for a!A@class\idf $\OGD_\kappa({\mathcal C})$} 
states that the following holds for all subsets $X\in \mathcal{C}$ of ${}^\kappa\kappa$: 
\begin{quotation}
$\OGD_\kappa(X)$:
\index{open graph dichotomy for a!B@set\idf $\OGD_\kappa(X)$} 
If $G$ is an open graph on $X$, then
either $G$ admits a $\kappa$-coloring or 
$G$ has a $\kappa$-perfect 
complete subgraph.% 
%\footnote{A subset of ${}^\kappa\kappa$ is \emph{$\kappa$-perfect} if it can be obtained as the set $[T]$ of $\kappa$-branches of a $\lle\kappa$-closed cofinally splitting subtree $T$ of ${}^{<\kappa}\kappa$.}
\footnote{I.e., $G$ has a complete subgraph whose domain is a $\kappa$-perfect subset of ${}^\kappa\kappa$. 
See Definitions~\ref{def: domain of H} and~\ref{def: perfect set}.}
\end{quotation}
\end{definition}

Note that $G$ has a $\kappa$-perfect complete subgraph if and only if there exists 
a continuous homomorphism from the complete graph
$\gK{{}^\kappa 2}$ to $G$.\footnote{See Corollary \ref{homomorphisms and perfect homogeneous subgraphs}.} 
Thus, $\OGD_\kappa(X)$ can be formulated for arbitrary topological spaces as well. 
\todoo{added the sentence about naive generalizations} 
However, naive generalizations of $\OGD_\kappa(X)$ in higher dimensions fail.\footnote{See Subsection~\ref{subsection: examples}.} 
In the countable case,\footnote{I.e., for $\kappa=\omega$.} 
Carroy, Miller and Soukup introduced the \emph{box-open $\ddim$-dimensional dihypergraph dichotomy} as an analogue of $\OGD_\omega(X)$ in higher dimensions $\ddim$ \cite{CarroyMillerSoukup}. 
We now consider its analogue for regular infinite cardinals $\kappa$ with $\kappa^{<\kappa}=\kappa$. 

Let $X$ be any set and $\ddim$ a set of size at least $2$.
A \emph{$\ddim$-dihypergraph}\footnote{I.e., a \emph{$\ddim$-dimensional directed hypergraph}.}
on $X$ is a subset $H$ of ${}^\ddim X$ consisting of non-constant sequences.
A subset of $X$ is \emph{$H$-independent} 
if it contains no hyperedges,%
\footnote{See Subsection~\ref{subsection: dihypergraphs} for the precise definitions of the concepts discussed in this paragraph.}
and a \emph{$\kappa$-coloring} of $H$ is a partition of $X$ into at most $\kappa$ many disjoint $H$-independent sets. 
A \emph{homomorphism} from one $\ddim$-dihypergraph to another is a map that takes hyperedges to hyperedges.
We write elements of ${}^\ddim X$ as $\bar{x}=\langle x_i : i\in\ddim\rangle$. 
Let $\dhH\ddim$
\index{dihypergraph!of uniformly splitting sequences\idf$\dhH\ddim$}
denote the following $\ddim$-dihypergraph on~${}^\kappa\ddim$ (see Figure \ref{figure: dhgrs on kappa^kappa}). 
We call this the \emph{dihypergraph of uniformly splitting sequences.} 
\todon{In the next figure, ``abovecaptionskip'' (distance between figure and caption) and ``intextsep'' (distance between text and figure) were set LOCALLY. Remove before submitting to journal?}
\vspace{5pt} 
\begin{equation*}
\dhH\ddim
:=
\bigcup_{t\in{}^{<\kappa}\ddim}\prod_{i\in \ddim} N_{t\conc\langle i\rangle}
=
\{\bar x\in{}^\ddim({}^{\kappa}\ddim)
\::\:
{\exists t\in{}^{<\kappa}\ddim}\ \ {\forall i\in\ddim}\ \ 
t\conc\langle i\rangle\subseteq x_i
\}
.
\end{equation*}
{
\setlength{\abovecaptionskip}{5pt}
\setlength{\intextsep}{-5 pt}
\newcommand{\x}{0.5} 
\newcommand{\y}{1} 
%\begin{figure}[h] 
\begin{figure}[ht] 
\centering
%\begin{tikzpicture} 
\tikz[scale=1.0,font=\small]{ 
\draw 
(0*\x-110pt,0); 

\draw 
(0*\x,0) edge[-] (-4*\x,4*\y) 
(0*\x,0) edge[-] (4*\x,4*\y); 

\draw
(-0.75*\x,0.75*\y) edge[-] (-0.4*\x,1.5*\y) 
(-0.4*\x,1.5*\y) edge[-] (-0.6*\x,2.25*\y); 

\draw
(-0.6*\x,2.25*\y) edge[thick, -, YellowArrow] (-1.4*\x,3*\y) 
(-0.6*\x,2.25*\y) edge[thick, -, DarkGreenArrow] (-0.6*\x,3*\y) 
(-0.6*\x,2.25*\y) edge[thick, -, TealArrow] (0.2*\x,3*\y); 

\draw
(-1.4*\x,3*\y) edge[thick, -, dashed, YellowArrow] (-1.8*\x,4*\y) 
(-0.6*\x,3*\y) edge[thick, -, dashed, DarkGreenArrow] (-0.6*\x,4*\y) 
(0.2*\x,3*\y) edge[thick, -, dashed, TealArrow] (0.8*\x,4*\y); 

\draw
    (-0.2*\x,3*\y) node {\tiny \color{DarkGreenArrow} ..} 
    (1.2*\x,3*\y) node {\tiny \color{TealArrow} ..}; 
       
\draw
    (-1.2*\x,3*\y) node[left] {\tiny \color{YellowArrow} $t_0$} 
    (-0.9*\x,3*\y) node {\tiny \color{DarkGreenArrow} $t_1$} 
    (0.1*\x,3*\y) node[right] {\tiny \color{TealArrow} $t_\alpha$}; 

\draw
    (1.8*\x,4.2*\y) node {\tiny \color{TealArrow} ..}; 
    
\draw
    (-1.8*\x,4*\y) node[above] {\tiny \color{YellowArrow} $x_0$} 
    (-0.6*\x,4*\y) node[above] {\tiny \color{DarkGreenArrow} $x_1$} 
    (1*\x,4*\y) node[above] {\tiny \color{TealArrow} $x_\alpha$}; 
    
%\draw
%    (0*\x,-0.5*\y) node {\tiny ${}^{<\kappa} d$}; 
    
\draw
    (6*\x+80pt,3*\y) node {\tiny $t_\alpha=t^\smallfrown \langle\alpha\rangle$}; 

\draw %empty nodes for correct placement 
    (0*\x,0*\y) node[above] {} 
    (4*\x,4*\y) node[above] {}; 
} 
    %\end{tikzpicture} 
    %\vspace{-6pt} 
\caption{$\dhH\ddim$} 
\label{figure: dhgrs on kappa^kappa}
\end{figure}
}

We will understand $d$ as a discrete topological space and equip ${}^\kappa d$ with the ${<}\kappa$-box topology. 
Ordinals are equipped with the discrete topology. 
If $X$ is a subset of the $\kappa$-Baire space, 
then ${}^\ddim X$ is equipped with the box-topology unless otherwise stated.

\pagebreak
\begin{definition} 
\label{ODD def}
For any $\ddim$-dihypergraph $H$ 
%on ${}^\kappa\kappa$,
on a topological space $X$,
let {$\ODD\kappa H$} 
\index{open dihypergraph dichotomy!A@for a dihypergraph\idf $\ODD\kappa H$} 
denote the following statement: 
\todog{We define $\ODD\kappa H$ for all dihypergraphs $H$ on $X$ because we sometimes talk about the case when $H$ is not box-open}
\begin{quotation}
{$\ODD\kappa H$:}
Either $H$ admits a $\kappa$-coloring, or 
there exists a continuous homomorphism 
%$f:{}^\kappa\ddim\to {}^\kappa\kappa$
$f:{}^\kappa\ddim\to X$
from~$\dhH{\ddim}$ to~$H$.
\end{quotation}
\end{definition}
%We define $\ODD\kappa H$ for dihypergraphs $H$ on {arbitrary} topological spaces similarly.

\todoo{added this sentence as a motivation. check line breaks from here}  
This is a valid generalization of the open graph dichotomy to higher dimensions, since $\OGD_\kappa(X)$ is equivalent to $\ODD\kappa H$ for all open digraphs $H$ on $X$.\footnote{See Corollary \ref{homomorphisms and perfect homogeneous subgraphs}.} 

\begin{definition}
\label{def: ODD for classes}
For classes $\mathcal C$ and $\mathcal D$, let $\ODD\kappa\ddim(\mathcal C,\mathcal D)$
\index{open dihypergraph dichotomy!$d$-dimensional for!two classes\idf $\ODD\kappa\ddim(\mathcal C,\mathcal D)$}  
denote statement that 
$\ODD\kappa{H\restr X}$ holds%
\footnote{$H\restr X:=H\cap{}^\ddim X$ denotes the {restriction} of $H$ to $X$.}
 whenever 
\begin{enumerate-(a)} 
\item 
$X$ is a subset of ${}^\kappa\kappa$ in $\mathcal C$ and 
\item 
$H$ is a box-open% 
\footnote{I.e, $H$ is an open subset of ${}^\ddim({}^\kappa\kappa)$ in the box topology.} 
$\ddim$-dihypergraph on ${}^\kappa\kappa$ in $\mathcal D$.
\end{enumerate-(a)} 
If $\mathcal D$ is the class of all sets or all $\ddim$-dihypergraphs on ${}^\kappa\kappa$, then we omit it from the notation
and write $\ODD\kappa\ddim({\mathcal C})$ 
\index{open dihypergraph dichotomy!$d$-dimensional for!a class\idf $\ODD\kappa\ddim(\mathcal C)$}  
for short.
\index{open dihypergraph dichotomy!$d$-dimensional for!a set and class\idf $\ODD\kappa\ddim(X,\mathcal C)$}  
If $\mathcal C$ has one single element $X$, then we write $X$ instead of $\mathcal C$, and similarly for $\mathcal D$. 
\end{definition} 

For example, $\ODD\kappa\ddim(X)$
\index{open dihypergraph dichotomy!$d$-dimensional for!a set aa@a set\idf $\ODD\kappa\ddim(X)$}
states that $\ODD\kappa {H{\restr}X}$ holds for all box-open $\ddim$-dihypergraphs $H$ on ${}^\kappa\kappa$. 
It is equivalent that $\ODD\kappa H$ holds for all $\ddim$-dihypergraphs $H$ on $X$ that are box-open on $X$.%
\footnote{In general, we have to work with the restrictions of dihypergraphs $H\in\mathcal D$ on ${}^\kappa\kappa$ to $X$ instead of dihypergraphs on $X$ which are in $\mathcal D$. 
For instance, $\ODD\kappa{H{\restr}X}$ might hold for all definable box-open dihypergraphs $H$ 
even though $H{\restr}X$ is not necessarily definable. 
This is relevant for several applications.}
\todog{this is exactly what it's called in \cite{CarroyMillerSoukup}}
Following \cite{CarroyMillerSoukup}, we call $\ODD\kappa\ddim(X)$ the \emph{box-open $\ddim$-dimensional dihypergraph dichotomy} for $X$,
or \emph{open dihypergraph dichotomy} for short. 
The open graph dichotomy $\OGD_\kappa(X)$ 
is equivalent to $\ODD\kappa 2(X)$, 
and therefore it
follows from
$\ODD\kappa \ddim(X)$ for any $d$.%
\footnote{See Subsection~\ref{subsection: OGD and ODD} and Lemma~\ref{comparing ODD for different dimensions}.}

Section \ref{section preliminaries} collects basic facts that we will use throughout the paper. 
Section \ref{section: ODD observations} provides tools for working with dihypergraphs and homomorphisms. 

Let $\defsetsk$
denote the class of all sets that are definable from a $\kappa$-sequence of ordinals.
The following main result of this paper 
is proved in 
Section~\ref{section: proof of the main result}. 

\pagebreak

\begin{theorem}
\label{main theorem}
Suppose $\kappa<\lambda$ are regular infinite cardinals and $2\leq\ddim\leq\kappa$.
For any $\Col(\kappa,\lle\lambda)$-generic filter $G$ over $V$, the following statements hold in $V[G]$:%
\begin{enumerate-(1)}
\item 
\label{mainthm ddim=kappa}
$\ODD\kappa\ddim(\defsets\kappa,\defsets\kappa)$ if $\lambda$ is inaccessible in $V$.% 
\footnote{This is equivalent to (1) in the abstract by
Lemma~\ref{lemma: two versions of definable ODD}.}
\item 
\label{mainthm Mahlo} 
\index{Mahlo cardinal\idf} 
$\ODD\kappa\ddim(\defsets\kappa)$ if $\lambda$ is Mahlo in $V$.%
\footnote{Recall that a cardinal $\lambda$ is \emph{Mahlo} if the set of inaccessible cardinals $\nu<\lambda$ is stationary in $\lambda$.}  
\end{enumerate-(1)}
\end{theorem}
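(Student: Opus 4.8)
The plan is to argue inside $V[G]$: fix a box-open $\ddim$-dihypergraph $H$ on $\kk$ and a set $X\in\defsetsk$, and prove $\ODD\kappa{H\restr X}$ by the Solovay-style method of first pushing the parameters into an intermediate model and then splitting $X$ into a $\kappa$-colorable part and a homogeneous kernel. First I would record the features of $\mathbb P=\Col(\kappa,\lle\lambda)$: it is ${<}\kappa$-closed, hence preserves $\kappa$ and $\kappa^{<\kappa}=\kappa$; when $\lambda$ is inaccessible it is $\lambda$-c.c.; and it factors as $\Col(\kappa,\lle\alpha)\times\Col(\kappa,[\alpha,\lambda))$, the tail being again a L\'evy collapse of $\lambda$ to $\kappa^+$ over $V[G_\alpha]$, in which $\lambda$ remains inaccessible (respectively Mahlo). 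Since $X$ is definable from some $p\in({}^{\kappa}\Ord)^{V[G]}$ and the $\lambda$-c.c.\ bounds the support of a nice name for a $\kappa$-sequence strictly below $\lambda$, we get $p\in V[G_\alpha]$ for some $\alpha<\lambda$; by Lemma~\ref{restricting sets to intermediate models} I would then replace $V$ by $V[G_\alpha]$, so that in case~\ref{mainthm ddim=kappa} both $X$ and $H$ are definable from a ground-model parameter. An arbitrary dimension $2\le\ddim\le\kappa$ can be reduced to a convenient canonical case via Lemma~\ref{comparing ODD for different dimensions}.

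The core is the following dichotomy, to be proved for definable $X$ and (suitably captured) $H$ over the new ground model. Set
\[
W=\bigcup\{\,N_s : s\in\klk,\ H\restr(N_s\cap X)\text{ admits a }\kappa\text{-coloring}\,\}.
\]
Because $H$-independence is intrinsic to a set and $|\klk|=\kappa$, the at most $\kappa$ many local colorings witnessing $N_s\subseteq W$ glue, after disjointifying, to a single coloring of $W\cap X$ in $\kappa\cdot\kappa=\kappa$ colors; if $X\subseteq W$ the coloring alternative of $\ODD\kappa{H\restr X}$ holds. Otherwise the kernel $K:=X\setminus W$ is nonempty and every basic neighborhood of every point of $K$ has an uncolorable trace. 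On $K$ I would build a continuous homomorphism $f\colon{}^{\kappa}\ddim\to X$ from $\dhH\ddim$ to $H$ by a $\kappa$-fusion: recursively assign to each $s\in{}^{<\kappa}\ddim$ a basic box $N_{u_s}$, with the $u_s$ taken $\subseteq$-increasing and length-cofinal so that the branch map $f(x)=\bigcup_{\gamma<\kappa}u_{x\restr\gamma}$ is a continuous map into $K$, and arrange that for every $t\in{}^{<\kappa}\ddim$ the tuple $\langle N_{u_{t\conc\langle i\rangle}}\rangle_{i\in\ddim}$ is contained in $H$. Since $H$ is box-open, one hyperedge promotes to a whole box of hyperedges, so at each node it suffices to find inside the uncolorable region $N_{u_t}\cap K$ a single $H$-hyperedge whose entries separate into $\ddim$ distinct sub-boxes each again meeting $K$ in an uncolorable trace; ${<}\kappa$-closure handles the limits of length ${<}\kappa$, and box-openness makes the limit map a homomorphism. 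The engine that supplies these hyperedges is a genericity lemma: for a forcing adding a generic element $\dot x\in X$, uncolorability of the local trace should force that $\ddim$ mutually sufficiently-generic copies realize an $H$-hyperedge, so that a $\kappa$-branching tree of mutually generic points yields $f$. This is precisely where the homogeneity of the collapse and the regularity of definable sets in the extension are used.

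The two cases differ only in how $H$ is captured. In case~\ref{mainthm ddim=kappa} we have $H\in\defsetsk$, so after the reduction $H$ is definable from a ground-model parameter and the genericity lemma applies directly over $V[G_\alpha]$; a single inaccessible $\lambda$ suffices. In case~\ref{mainthm Mahlo}, $H$ is an \emph{arbitrary} box-open dihypergraph. Note that for $\ddim<\kappa$ one has $|{}^{\ddim}(\klk)|=\kappa$, so every box-open $H$ is coded by a subset of $\kappa$ and hence already lies in $\defsetsk$; the strengthening therefore has genuine content only when $\ddim=\kappa$, where the code of $H$ is a subset of a set of size $2^\kappa\ge\lambda$ and need not lie in any $V[G_\alpha]$ with $\alpha<\lambda$, so the parameter reduction fails for $H$. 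Here I would invoke that $\lambda$ is Mahlo: the inaccessibles below $\lambda$ are stationary, and by a reflection (pressing-down) argument I would find an inaccessible $\mu$ with $\alpha<\mu<\lambda$ over which $V[G]$ is again a L\'evy-collapse Solovay extension and over which $H$, restricted to the points generic over $V[G_\mu]$, is tractable enough that the genericity lemma goes through as in the definable case. Running the core dichotomy over $V[G_\mu]$ then yields $\ODD\kappa{H\restr X}$.

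The main obstacle is the genericity lemma at the heart of the fusion: turning "no $\kappa$-coloring" into a dense, uniformly available supply of $H$-hyperedges along a $\kappa$-branching tree of mutually generic points, coherently enough to survive the $\kappa$-length recursion, with ${<}\kappa$-closure securing the limit stages and box-openness securing that finite (length ${<}\kappa$) approximations of hyperedges promote to genuine ones. For part~\ref{mainthm Mahlo} the additional difficulty is the capturing step for an arbitrary $\ddim=\kappa$ dihypergraph: ensuring that a single inaccessible $\mu<\lambda$ can be found past all the information in $H$ that is needed to decide local colorings and hyperedges on $V[G_\mu]$-generic points. This is exactly what forces the use of stationarily many inaccessibles rather than one, since that information can otherwise be cofinal in $\lambda$.
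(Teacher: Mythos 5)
Your high-level architecture does coincide with the paper's: reduce the parameters of $X$ (and, in case~\ref{mainthm ddim=kappa}, of $H$) into an intermediate model $V[G_\alpha]$; use Mahlo-ness to reflect the relevant information about an arbitrary $H$ when $\ddim=\kappa$ (the paper reflects the family of $H$-independent subtrees of ${}^{<\kappa}\kappa$, Lemma~\ref{intermediate model which see H-independence for their trees} --- your ``tractable enough'' reflection would have to take exactly this form); convert non-colorability into a locally dense supply of $H$-hyperedges (your kernel $K$ plays the role that the name $\nna$ and its witness functions play in the paper, Lemmas~\ref{lemma 3c m} and~\ref{lemma 3b reformulation}); and realize the homomorphism as $f(z)=\nna^{[h](z)}$ along a $\kappa$-branching tree of generic points. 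The genuine gap sits in the one step you leave as a black box, your ``genericity lemma'', and it is not a routine step: to conclude $f(z)\in X$ one needs, for every branch point $x=[h](z)$, not merely that $x$ is $\Add(\kappa,1)$-generic over $V[G_\gamma]$, nor that the branch points are mutually generic, but that $V[G]$ is a $\PP_\lambda$-generic extension of $V[G_\gamma][x]$. Only with this quotient condition does homogeneity transfer the statement ``$\one_{\PP_\lambda}\forces \check{x}\in\Xphia$'', which holds in $V[G_\gamma][x]$, into $f(z)\in X$ in $V[G]$ (this is exactly Lemma~\ref{ran(f) subset of X}, using Lemma~\ref{lemma: quotient lemma}~\ref{nsf 2}); ``sufficient genericity'' plus ``regularity of definable sets'' cannot substitute for it, since the homogeneity argument needs the quotient of $V[G]$ over $V[G_\gamma][x]$ to be the homogeneous forcing $\PP_\lambda$ itself.

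For $\kappa=\omega$ the quotient condition is free: Solovay's Lemma~\ref{Solovay lemma 1} says every element of ${}^\omega\mathrm{Ord}$ in $V[G]$ has $\PP_\lambda$ as its quotient, which is why your sketch closes up in the countable case. For uncountable $\kappa$ it is false: Remark~\ref{failure of Solovay's lemma} recalls the standard counterexample, where $\Add(\kappa,1)$ adds a stationary set $S\subseteq\kappa$ and a further forcing shoots a club through its complement, so that the final model is not a generic extension of the intermediate one by any forcing preserving stationary subsets of $\kappa$ --- in particular not by the ${<}\kappa$-closed forcing $\PP_\lambda$ --- even though the intermediate point is perfectly generic. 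Hence a tree of mutually generic points, which is what your fusion produces, does not suffice: nothing in mutual genericity rules out branches with bad quotients. Repairing this is the actual content of the theorem: the paper constructs the tree of generics by forcing with the poset $\QQ$ of partial $s$-functions (Definition~\ref{def of the forcing}), proves $\QQ\simeq\Add(\kappa,1)$, and shows that every branch value of the generic $s$-function is generic \emph{and} has a well-behaved quotient (the Nice Quotient Lemmas~\ref{Q main lemma} and~\ref{lemma: quotient lemma - uncountable case}); essentially all of Subsection~\ref{subsection: the forcing Q} is devoted to this. Relatedly, your closing list of ``main obstacles'' (supplying hyperedges along the recursion, limit stages of length ${<}\kappa$) names the parts that are comparatively routine --- they are handled by witness functions and ${<}\kappa$-closure --- and omits the one that is not.
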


A crucial idea in the proof of the uncountable case is to construct a forcing which adds a perfect tree of generic filters over an intermediate model that induces a homomorphism of dihypergraphs. 
This technical part of the proof is done in Subsection \ref{section: proof in the uncountable case}. 

Note that $\ODD\kappa\ddim(\defsetsk,\defsetsk) \Longrightarrow \ODD\kappa\ddim(\defsetsk)$ holds for any $\ddim<\kappa$, since 
any box-open $\ddim$-dihypergraph on ${}^\kappa\kappa$ is in $\defsetsk$ by virtue of the topology having a base of size $\kappa$.%
\footnote{See Lemma~\ref{<kappa dim hypergraphs are definable}.}
It thus suffices to assume in \ref{mainthm Mahlo} that $\lambda$ is inaccessible in $V$ if $\ddim<\kappa$. 
It is open whether the Mahlo cardinal is neccessary for \ref{mainthm Mahlo} if $\ddim=\kappa$.
However, the inaccessible cardinal is necessary for \ref{mainthm ddim=kappa}, since the statement implies the $\kappa$-perfect set property $\PSP_\kappa(X)$%
\footnote{See Definition~\ref{def: perfect set}. Note that $\PSP_\kappa(X)$ from $\OGD_\kappa(X)$. To see this, consider the complete graph $\gK{X}$ on $X$.}
for all closed sets $X\subseteq{}^\kappa\kappa$ and 
thus the existence of an inaccessible cardinal above $\kappa$ in $L$.%
\footnote{See \cite{JechTrees}*{Sections~3\&4}
and \cite{lucke2012definability}*{Section~7}.
The inaccessible cardinal is also neccessary in the countable case
by \cite{MR1940513}*{Theorem 25.38}.}

We also obtain a global version of Theorem \ref{main theorem} \ref{mainthm ddim=kappa} for all regular infinite cardinals
in Subsection \ref{subsection global}. 
It remains open whether \ref{mainthm Mahlo} admits a global version.

In Section~\ref{section: stronger forms of ODD}, 
\todol{This paragraph changed slightly.}
we investigate alternatives to the open dihypergraph dichotomy 
and
show that its definition is optimal in various aspects.
It is nontrivial for any non-$\kappa$-colorable box-open dihypergraph $H$ on a subset of ${}^\kappa\kappa$.\footnote{By nontrivial, we mean that $\ODD\kappa {H\restr X}$ fails for some set $X$. 
This and the next result are shown in Subsection~\ref{subsection: examples}.} 
Moreover, one cannot replace open by closed dihypergraphs in $\ODD\kappa\ddim(X)$ in any dimension $\ddim\geq 2$. 
$\ODD\kappa\ddim(X)$ is equivalent to a dichotomy 
for $\ddim$-hypergraphs 
with the open graph dichotomy $\OGD_\kappa(X)$ as a special case for $\ddim=2$.%
\footnote{See Subsection~\ref{subsection: OGD and ODD}. 
A hypergraph is a dihypergraph that is closed under permutation of hyperedges as in Definition~\ref{def: dihypergraph}.}
While $\dhH\ddim$ witnesses that for any $\ddim\geq 3$, the existence of a continuous homomorphism cannot be replaced by the existence of a large complete subhypergraph,\footnote{See Subsection~\ref{subsection: examples} for stronger counterexamples.}  
it is natural to ask for continuous homomorphisms with additional properties such as injectivity; 
note that the homomorphism constructed in the proof of Theorem~\ref{main theorem} is injective when $\kappa$ is uncountable.\footnote{See Remark~\ref{the f constructed is injective remark 1}.}
We study this in Subsection \ref{subsection: ODD ODDI ODDH}.
While it cannot be done for $\ddim=\kappa=\omega$,\footnote{The dihypergraph in Definition~\ref{def: dhD} is a counterexample by Propositions~\ref{lemma: dhD} and~\ref{ODDI fails for D omega}.}
\todog{If $\ddim<\kappa$, then we can always choose it to be a homeomorphism onto a closed set (even when $\kappa=\omega$).}
it is possible in the uncountable setting 
assuming a weak version 
of $\Diamond_\kappa$
by the next result. 
This follows from Theorems~\ref{theorem: ODD ODDC when ddim<kappa} and~\ref{theorem: ODD ODDI}. 

\pagebreak
\begin{theorem}
\label{intro theorem: ODD ODDI}
Assume $\Diamondi\kappa$.%
\footnote{$\Diamondi\kappa$ and its relationship with some other versions of $\Diamond_\kappa$ is discussed in Subsection~\ref{subsection: diamondi}. 
Note that $\Diamondi\kappa$ implies that $\kappa$ is uncountable by Remark~\ref{diamondi omega}.}
If $H$ is a box-open $\ddim$-dihypergraph on a subset of ${}^\kappa\kappa$ where $2\leq\ddim\leq\kappa$, 
then $\ODD\kappa H$ is equivalent to the following statement:
\begin{quotation}
{$\ODDI\kappa H$:}
Either $H$ admits a $\kappa$-coloring or 
there exists an \emph{injective} continuous homomorphism 
from~$\dhH{\ddim}$ to~$H$.
\end{quotation}
\end{theorem}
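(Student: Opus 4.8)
The plan is to prove the two implications separately. Since an injective continuous homomorphism is in particular a continuous homomorphism, $\ODDI\kappa H$ trivially implies $\ODD\kappa H$, so the content is to upgrade a continuous homomorphism to an injective one. Assume $\ODD\kappa H$. If $H$ admits a $\kappa$-coloring then the first alternative of $\ODDI\kappa H$ already holds, so we may assume $H$ has no $\kappa$-coloring and fix a continuous homomorphism $\varphi\colon{}^\kappa\ddim\to X$ from $\dhH\ddim$ to $H$. I would produce the injective homomorphism by recursively building a Cantor scheme $(u_s)_{s\in{}^{<\kappa}\ddim}$ of nodes $u_s\in{}^{<\kappa}\kappa$ that is $\subseteq$-monotone, continuous at limits, and satisfies $\sup_{\alpha<\kappa}|u_{x\restr\alpha}|=\kappa$ along every branch, so that $f(x):=\bigcup_{\alpha<\kappa}u_{x\restr\alpha}$ defines a continuous map into ${}^\kappa\kappa$. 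The homomorphism property would be ensured by the \emph{box condition} that $\prod_{i\in\ddim}N_{u_{t\conc\langle i\rangle}}\cap{}^\ddim X\subseteq H$ for every $t\in{}^{<\kappa}\ddim$; this uses box-openness of $H$ together with the fact that $\varphi$ is a continuous homomorphism, which supplies, below every node, a box contained in $H$. Injectivity then reduces to forcing distinct branches to receive incompatible images.

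The construction splits according to dimension. When $\ddim<\kappa$, I would establish the stronger closed-embedding dichotomy $\ODDC\kappa H$ (this is the content of Theorem~\ref{theorem: ODD ODDC when ddim<kappa}), which needs no guessing principle, not even uncountability of $\kappa$. The point is that each node of ${}^{<\kappa}\ddim$ has fewer than $\kappa$ immediate successors, so within a recursion of length $\kappa$ one can schedule the separation of all ${<}\kappa$ pairs of children below every node, refining the boxes step by step so that the scheme becomes an antichain at each splitting level. This yields not merely an injective homomorphism but a homeomorphism onto a closed subset, which in particular witnesses $\ODDI\kappa H$.

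The case $\ddim=\kappa$ is the heart of the matter and is exactly where $\Diamondi\kappa$ enters (Theorem~\ref{theorem: ODD ODDI}). The obstruction is that a single box $\prod_{i<\kappa}N_{b_i}\cap{}^\kappa X\subseteq H$ only guarantees that its coordinates have no common extension, i.e. $\bigcap_{i<\kappa}N_{b_i}=\emptyset$ (since $H$ contains no constant sequences), which is far weaker than the $N_{b_i}$ being pairwise disjoint. Thus at a splitting node with $\kappa$ many children one cannot separate all pairs at once, and the separation of the $\kappa$-fold branching must instead be spread across the $\kappa$ many levels above the node. I would use $\Diamondi\kappa$ as the driving guessing device: at the guessed levels the diamond sequence anticipates a pair of partial branches still in need of separation, and the recursion responds by routing those two branches into incompatible coordinates of a deeper box while preserving the box condition. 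Because $\kappa$ is uncountable and regular, every branch has room for $\kappa$ many such corrections, and the guessing ensures that every pair of distinct branches is eventually separated, so $f$ is injective.

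Uncountability is genuinely indispensable here: for $\ddim=\kappa=\omega$ the dihypergraph of Definition~\ref{def: dhD} shows that no injective continuous homomorphism need exist (Propositions~\ref{lemma: dhD} and~\ref{ODDI fails for D omega}), so some principle forcing $\kappa$ uncountable, such as $\Diamondi\kappa$, is required. I expect the main difficulty to be the bookkeeping at limit stages of the recursion, where one must simultaneously maintain the box condition (for the homomorphism property), keep the branches inside $X$ and of length cofinal in $\kappa$ (for well-definedness and continuity), and realize the diamond-prescribed separations; orchestrating these three demands coherently, and verifying that the guessing really catches every pair of branches, is the technical core of the argument.
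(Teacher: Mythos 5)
Your proposal is correct and follows essentially the same route as the paper's own proof: the trivial direction, the reduction to upgrading a continuous homomorphism via a box-condition scheme (the paper's order homomorphisms, Lemma~\ref{homomorphisms and order preserving maps}), the case split into $\ddim<\kappa$ (handled without any guessing principle by the closed-embedding upgrade of Theorem~\ref{theorem: ODD ODDC when ddim<kappa}) and $\ddim=\kappa$ (Theorem~\ref{theorem: ODD ODDI}), and the correct identification of the obstruction that a box in $H$ need not have pairwise incompatible coordinates. Your use of $\Diamondi\kappa$ to guess pairs of partial branches and separate their images level by level is exactly the paper's argument, which applies the two-dimensional reformulation of $\Diamondi\kappa$ (Lemma~\ref{diamondi equiv}) to sets $B_\alpha$ of fewer than $\kappa$ pairs of nodes, separates the $\theta$-images of each guessed pair by extending to points with distinct images and cutting at a sufficiently deep level, and then reads off injectivity because every pair of distinct branches is guessed at some level.
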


For $\ddim<\kappa$, or alternatively with additional assumptions on $H$, the continuous homomorphism in $\ODD\kappa H$ can 
 be chosen to be a homeomorphism onto a closed subset even 
without assuming $\Diamondi\kappa$.%
\footnote{See Theorems~\ref{theorem: ODD ODDC when ddim<kappa},~\ref{theorem: strong variants for dhI dhth} and Definition~\ref{def: dhth}. This cannot be done in general by  Propositions~\ref{lemma: dhD} and~\ref{ODDH fails for D kappa}.}
It is open whether the assumption $\Diamondi\kappa$ in the previous theorem can be removed for $\ddim=\kappa$.
$\Diamondi\kappa$ holds for all inaccessible cardinals $\kappa$ and all successor cardinals $\kappa\geq\omega_2$ with $\kappa^{<\kappa}=\kappa$ by a result of Shelah \cite{ShelahDiamonds}.%
\footnote{Shelah \cite{ShelahDiamonds} proved that 
$\Diamond_\kappa$ is equivalent to $\kappa^{<\kappa}=\kappa$ 
for successor cardinals $\kappa\geq\omega_2$.
It is easy to show that $\Diamondi\kappa$ follows from $\Diamond_\kappa$ and that it holds at inaccessible cardinals $\kappa$; see Lemma~\ref{diamondi claim}.}
Moreover, if $\kappa$ is a regular uncountable cardinal, then $\Diamondi\kappa$ holds in $\Col(\kappa,\lle\lambda)$-generic extensions $V[G]$ where $\lambda>\kappa$ is inaccessible.
This is another way to see that 
\todol{This sentence changed due to changes in the previous paragraph.}
Theorem~\ref{main theorem} provides a model of $\ODDI\kappa\ddim$
 when $\kappa$ is uncountable.%
\footnote{See Corollary~\ref{main theorem strong version}.}
%Therefore $\ODD\kappa\ddim$ can be replaced by $\ODDI\kappa\ddim$ in Theorem~\ref{main theorem} when $\kappa$ is uncountable.\footnote{See Corollary~\ref{main theorem strong version}.}

Section \ref{section: applications} studies a number of applications of the open dihypergraph dichotomy. 
We now describe the main results of the section.
The missing definitions will be introduced in the respective subsections and Section~\ref{section preliminaries}. 
The \emph{Hurewicz dichotomy} for a subset $X$ of a Polish space 
states that $X$ is contained in a $K_\sigma$ set
if and only if it does not contain a closed homeomorphic copy
of the Baire space. 
Hurewicz showed this for Polish subspaces $X$ 
\cite{Hurewicz1928}.%
\footnote{It follows immediately from \cite{Hurewicz1928}*{Section 6} by passing to a compactification that an analytic subset of a Polish space is $K_\sigma$ if and only if does not contain a relatively closed homeomorphic copy of the Baire space, but this statement does not seem to be mentioned directly in the paper. We call this the \emph{exact topological Hurewicz dichotomy} (see Section~\ref{subsection: Hurewicz}).} 
Saint Raymond \cite{SaintRaymond1975} and Kechris \cite{Kechris1977} proved it for analytic sets. 
Kechris extended 
it to more complex sets under appropriate determinacy assumptions 
 \cite{Kechris1977}. 
In the uncountable setting, the Hurewicz dichotomy has a topological version 
$\THD_\kappa(X)$
similar to the classical one \cite{LuckeMottoRosSchlichtHurewicz} as well as a combinatorial version 
$\HD_\kappa(X)$
using superperfect trees \cite{Hurewiczdef}. 
\todoo{added motivating sentence about different generalizations of $K_\sigma$} 
These correspond to different generalizations of $K_\sigma$ sets. 
The next result is proved in Theorems~\ref{theorem: THD and ODD dhth} and~\ref{theorem: dhhd from ODD} using variants of dihypergraphs from \cite{CarroyMillerSoukup}. 

\begin{theorem}
\label{theorem: Hurewicz intro}
For all subsets $X$ of ${}^\kappa\kappa$, 
$\ODD\kappa\kappa(X,\defsetsk)$ implies the following analogues of the Hurewicz dichotomy: 
\begin{enumerate-(1)}
\item
{$\THD_\kappa(X)$:}
Either 
$X$ can be covered by $\kappa$ many $\kappa$-compact%
%\footnote{I.e., $\kappa$-Lindel\"of.}
\footnote{A topological space is \emph{$\kappa$-compact} (or \emph{$\kappa$-Lindel\"of}) if every open cover has a subcover of size strictly less than $\kappa$.} 
subsets of ${}^\kappa\kappa$, or
$X$ contains a closed homeomorphic copy of ${}^\kappa\kappa$.
\item
{$\HD_\kappa(X)$:}
Either
$X$ can be covered by the sets of branches of at most $\kappa$ many $\lle\kappa$-splitting subtrees of ${}^{<\kappa}\kappa$, or
$X$ contains a $\kappa$-superperfect subset.%
\footnote{See Definition~\ref{def: superperfect}.} 
\end{enumerate-(1)}
\end{theorem}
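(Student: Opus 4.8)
The plan is to treat both parts uniformly by the method of Carroy, Miller and Soukup: for each dichotomy I would exhibit a single box-open $\kappa$-dihypergraph $H$ on ${}^\kappa\kappa$ that lies in $\defsetsk$ and is tailored so that the two alternatives of $\ODD\kappa{H\restr X}$ translate into the two sides of the Hurewicz dichotomy. Concretely, I want $H$ to satisfy two correspondences. First, $H\restr X$ admits a $\kappa$-coloring if and only if $X$ is covered by $\kappa$ many $\kappa$-compact sets, equivalently by the branches of $\kappa$ many $\lle\kappa$-splitting subtrees of ${}^{<\kappa}\kappa$ (these two covering notions coincide, since a closed set is $\kappa$-compact exactly when its tree is $\lle\kappa$-splitting). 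Second, every continuous homomorphism $f\colon{}^\kappa\kappa\to X$ from $\dhH\kappa$ to $H\restr X$ yields the wild witness: a closed homeomorphic copy of ${}^\kappa\kappa$ for $\THD_\kappa$, respectively a $\kappa$-superperfect subset for $\HD_\kappa$. Granting these, $\ODD\kappa\kappa(X,\defsetsk)$ applied to $H$ gives exactly the stated dichotomy. Box-openness and membership in $\defsetsk$ will be immediate, since the hyperedge relations I use are defined coordinatewise by basic open conditions and are definable from the single ordinal parameter $\kappa$.

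For $\THD_\kappa$ I would take $\dhthkk$ (as in Definition~\ref{def: dhth}) to be the $\kappa$-dihypergraph whose hyperedges are the sequences $\langle x_i:i<\kappa\rangle$ for which there is a node $s\in{}^{<\kappa}\kappa$ with $s\subseteq x_i$ for all $i$ and with $\{x_i(\lh s):i<\kappa\}$ unbounded in $\kappa$; thus a hyperedge records a single node at which the coordinates split into unboundedly many directions. The key observation is that a set $Y$ is $\dhthkk$-independent precisely when the tree $T_Y=\{y\restr\alpha:y\in Y,\ \alpha<\kappa\}$ is $\lle\kappa$-splitting, using that for regular $\kappa$ a successor set has size $\lle\kappa$ if and only if it is bounded. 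Since the closure of $Y$ in ${}^\kappa\kappa$ equals $[T_Y]$ and inherits the same splitting, the $\dhthkk$-independent sets are exactly the subsets of $\kappa$-compact closed sets. Hence a $\kappa$-coloring of $\dhthkk\restr X$, after passing to closures, is a cover of $X$ by $\kappa$ many $\kappa$-compact sets, while conversely any such cover is already a partition into independent pieces; this settles the first correspondence. For the second, a continuous homomorphism from $\dhH\kappa$ transports the unbounded splitting of $\dhH\kappa$ at each node into unbounded splitting of the image, so the image is cofinally $\kappa$-splitting; invoking the upgrade results of Theorems~\ref{theorem: ODD ODDC when ddim<kappa} and~\ref{theorem: strong variants for dhI dhth}, whose structural hypotheses $\dhthkk$ is designed to meet, I may take $f$ to be a homeomorphism onto a closed subset $C\subseteq X$, so that $C\cong{}^\kappa\kappa$ is the required closed copy.

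For $\HD_\kappa$ I would run the same scheme with the variant $\dhhd$, whose hyperedges witness the splitting along a cofinal set of nodes, so that the extracted closed image is read off directly as the set of branches of a $\kappa$-superperfect tree in the sense of Definition~\ref{def: superperfect}; alternatively, since the tame side here is verbatim the same as in part~(1), and a closed homeomorphic copy of ${}^\kappa\kappa$ is itself $\kappa$-superperfect while every $\kappa$-superperfect set contains such a copy, $\HD_\kappa$ can also be deduced from $\THD_\kappa$ by a standard pruning argument at limit levels. I expect the main obstacle to lie in the wild side, namely in turning the abstract continuous homomorphism supplied by $\ODD$ into a genuine closed homeomorphic copy of ${}^\kappa\kappa$ (respectively a superperfect subset). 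The difficulty is specific to the dimension $\ddim=\kappa$, where injectivity and, above all, closedness of the image at limit levels of ${}^{<\kappa}\kappa$ are not automatic; this is exactly the point handled by the homeomorphism-onto-a-closed-set upgrade, so the substantive work reduces to verifying that $\dhthkk$ and $\dhhd$ satisfy the structural hypotheses of those theorems.
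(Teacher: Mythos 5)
There is a genuine gap, and it sits at the very center of your plan: the bridging claim that ``a closed set is $\kappa$-compact exactly when its tree is $\lle\kappa$-splitting'', equivalently that covering by $\kappa$ many $\kappa$-compact sets is the same as covering by branches of $\kappa$ many $\lle\kappa$-splitting trees. This is true only when $\kappa=\omega$ or $\kappa$ is weakly compact (this is Proposition~\ref{proposition: HD and THD options}); at every other uncountable $\kappa$ it fails, with ${}^\kappa 2$ as the counterexample: ${}^{<\kappa}2$ is $2$-splitting, yet ${}^\kappa 2$ is homeomorphic to ${}^\kappa\kappa$ by Hung--Negrepontis \cite{hung1973spaces}, hence not contained in any $K_\kappa$ subset of ${}^\kappa\kappa$. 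The same example destroys your proposed passage between the two dichotomies: a closed homeomorphic copy of ${}^\kappa\kappa$ need \emph{not} be $\kappa$-superperfect (no subset of ${}^\kappa 2$ even contains a $\kappa$-superperfect set, since a $\kappa$-superperfect tree whose branches lie in ${}^\kappa 2$ would have all nodes binary and so no $\kappa$-splitting node), and the ``tame'' sides of $\THD_\kappa$ and $\HD_\kappa$ are not ``verbatim the same''. Since the corresponding options of the two dichotomies are provably inequivalent at non--weakly compact $\kappa$, no uniform treatment by a single dihypergraph, and no mutual reduction between $\THD_\kappa(X)$ and $\HD_\kappa(X)$, can work.

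Concretely, the dihypergraph you wrote down (a common node $s$ with $\{x_i(\lh(s)):i<\kappa\}$ unbounded), despite being labeled $\dhthkk$, is not the paper's $\dhthkk$: its independent sets are exactly those $Y$ whose tree is $\lle\kappa$-splitting, so it is a variant of the paper's $\dhhd$ and at best yields $\HD_\kappa(X)$ (as in Theorem~\ref{theorem: dhhd from ODD}); your proposal therefore proves (a version of) $\HD_\kappa$ twice and never $\THD_\kappa$. The paper's dihypergraph for $\THD_\kappa$ is genuinely different: $\dhthkk$ consists of the \emph{injective} $\kappa$-sequences with \emph{no convergent subsequences}, and the point of Lemma~\ref{dhth and compactness} is that its independent sets are exactly the $\kappa$-precompact sets --- compactness is detected by convergence, not by tree-splitting, and this is precisely what survives at non--weakly compact $\kappa$. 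A secondary problem: your $H$ admits hyperedges with repeated entries and with convergent subsequences, so $H\not\subseteq\dhIkappa$ and $H\not\subseteq\dhN{{}^\kappa\kappa}$, and hence the closed-image upgrade of Theorem~\ref{theorem: strong variants for dhI dhth} (which requires $H\subseteq\dhth Y$) does not apply to it directly; one would need the fullness machinery of Section~\ref{subsection: full dihypergraphs}, or simply the paper's dihypergraphs $\dhthkk$ and $\dhhd$, for which the wild side is carried out in Theorems~\ref{theorem: THD and ODD dhth} and~\ref{theorem: dhhd from ODD}.
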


In fact, we obtain a generalization $\THD_\kappa(X,Y)$ of $\THD_\kappa(X)$ for all supersets $Y$ of $X$ from $\ODD\kappa\kappa(X)$.%
%\footnote{See Theorem~\ref{theorem: THD and ODD dhth}. If $Y$ is in $\defsetsk$, then $\ODD\kappa\kappa(X,\defsetsk)$ suffices.}
\footnote{See Definition~\ref{def: top Hwd} and Theorem~\ref{theorem: THD and ODD dhth}. If $Y$ is in $\defsetsk$, then $\ODD\kappa\kappa(X,\defsetsk)$ suffices.}
Here, the $K_\kappa$ set in the first option needs to be a subset of $Y$,
while it suffices that the homeomorphic copy of ${}^\kappa\kappa$ is a relatively closed subset of $Y$ in the second option.
%\footnote{See Definition~\ref{def: top Hwd}.}
We will use this principle to answer two questions of Zdomskyy in Section \ref{subsection: Hurewicz}. 

Kechris, Louveau and Woodin 
proved a dichotomy that characterizes when 
an analytic subset of a Polish space
can be separated from a disjoint subset by an $F_\sigma$ set \cite{KechrisLouveauWoodin1987}*{Theorem 4}. 
The special case of complements describes when an analytic set is $F_\sigma$. 
This was already shown by Hurewicz \cite{Hurewicz1928}*{Section~6}. 
The theorem of Kechris, Louveau and Woodin strengthens the Hurewicz dichotomy for analytic sets in the sense that one can derive the latter by a short argument using compactifications.%
\footnote{See \cite{KechrisBook}*{Theorem 21.22 \& Corollary 21.23}. 
Note that this argument has a version for weakly compact cardinals by
Proposition~\ref{KLW and THD for precompact X} and Corollary~\ref{cor: KLW implies THD weakly compact}, but it does not generalize to an uncountable regular cardinal that is not weakly compact.} 
Let $\RR_\kappa$ denote the set of elements of ${}^\kappa 2$ that take %value $\gge 0$ 
nonzero values
unboundedly often and $\QQ_\kappa$ the set of those that do not. 
The proof of the next theorem is based on ideas from \cite{CarroyMillerSoukup}.

\begin{theorem}
\label{intro thm KLW} 
%For all disjoint subsets $X$ and $Y$ of ${}^\kappa\kappa$,
Suppose that $X$ is a subset of ${}^\kappa\kappa$.
$\ODD\kappa\kappa(X)$ implies the following analogue of the Kechris--Louveau--Woodin dichotomy
for all subsets $Y$ of ${}^\kappa\kappa$ disjoint from $X$:%
\footnote{If $Y$ is in $\defsetsk$, then $\ODD\kappa\kappa(X,\defsetsk)$ suffices.}
\begin{quotation}
$\KLW_\kappa(X,Y)$: 
Either there is a $\Fsigma(\kappa)$ set $A$ separating $X$ from $Y$,%
\footnote{I.e., $X\subseteq A$ and $A\cap Y=\emptyset$.} 
%A set $A$ is $\Fsigma(\kappa)$ if it is the union of $\kappa$ many closed sets.}
or there is a homeomorphism 
$f$ from ${}^\kappa 2$ onto a closed subset of ${}^\kappa\kappa$
that reduces
$(\RR_\kappa,\QQ_\kappa)$ to $(X,Y)$.%
\footnote{I.e., $f(\RR_\kappa)\subseteq X$ and $f(\QQ_\kappa)\subseteq Y$.}
\end{quotation} 
\end{theorem}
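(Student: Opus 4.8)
The plan is to derive both alternatives of $\KLW_\kappa(X,Y)$ from a single application of $\ODD\kappa\kappa(X)$ to a box-open $\kappa$-dihypergraph encoding convergence of sequences from $X$ to points of $Y$. First I reformulate the separation alternative: there is an $\Fsigma(\kappa)$ set $A$ with $X\subseteq A$ and $A\cap Y=\emptyset$ if and only if $X=\bigcup_{\alpha<\kappa}X_\alpha$ for some sets with $\overline{X_\alpha}\cap Y=\emptyset$. The forward direction intersects $A=\bigcup_\alpha C_\alpha$ with $X$; the backward direction takes $A=\bigcup_\alpha\overline{X_\alpha}$.

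Next I define the dihypergraph. Call a sequence $\langle s_i:i<\kappa\rangle$ of nodes in ${}^{<\kappa}\kappa$ \emph{admissible} if it is $\subseteq$-increasing with strictly increasing lengths cofinal in $\kappa$; then $\bigcup_{i<\kappa}s_i$ is a point of ${}^\kappa\kappa$. Let $H$ consist of all $\bar x\in{}^\kappa({}^\kappa\kappa)$ for which there is an admissible sequence with $\bigcup_i s_i\in Y$ and $x_i\in N_{s_i}$ for every $i<\kappa$. As a union of the open boxes $\prod_{i<\kappa}N_{s_i}$ over admissible sequences whose limit lies in $Y$, the set $H$ is box-open, and no definability of $Y$ is needed, so $\ODD\kappa\kappa(X)$ applies. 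A hyperedge of $H$ is exactly a $\kappa$-sequence converging, along a fixed branch, to a point of $Y$. The key combinatorial point is that $S\subseteq X$ is $H\restr X$-independent if and only if $\overline S\cap Y=\emptyset$: if $y\in\overline S\cap Y$ then $y\notin S$ since $X\cap Y=\emptyset$, so by the regularity of $\kappa$ one recursively selects $\kappa$ many distinct points $x_i\in N_{y\restriction\beta_i}\cap S$ along a strictly increasing cofinal sequence $\langle\beta_i\rangle$, and $\langle x_i\rangle$ is a hyperedge inside $S$; conversely a hyperedge in $S$ forces a point of $Y$ into $\overline S$.

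I then apply $\ODD\kappa\kappa(X)$ to $H\restr X$. In the coloring case a partition $X=\bigcup_{\alpha<\kappa}X_\alpha$ into $H\restr X$-independent pieces yields $\overline{X_\alpha}\cap Y=\emptyset$ for all $\alpha$, so $A=\bigcup_{\alpha<\kappa}\overline{X_\alpha}$ is the required $\Fsigma(\kappa)$ separator. In the homomorphism case I have a continuous $f:{}^\kappa\kappa\to X$ taking $\dhH\kappa$-edges to $H$-edges. Feeding in, for each $t\in{}^{<\kappa}\kappa$, the canonical edge $\langle(t\conc\langle i\rangle)\conc 0^{(\kappa)}:i<\kappa\rangle\in\dhH\kappa$, I obtain a point $y_t\in Y$ with $f((t\conc\langle i\rangle)\conc 0^{(\kappa)})\to y_t$ as $i\to\kappa$; crucially the limit lies in $Y$ itself, not merely in $\overline Y$, precisely because membership in $Y$ was built into the edges of $H$.

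The remaining and hardest step is to convert $f$ together with the limit points $y_t$ into a homeomorphism $g:{}^\kappa 2\to{}^\kappa\kappa$ onto a closed set reducing $(\RR_\kappa,\QQ_\kappa)$ to $(X,Y)$. I build, by recursion on ${}^{<\kappa}2$ and a fusion over $\kappa$ stages, a reading of a binary sequence in which a maximal initial block of $0$s of length $\gamma$ selects coordinate $\gamma$ of the current hyperedge, a following $1$ commits to the subtree below that coordinate where a fresh canonical hyperedge takes over, and the process repeats. Then $b\in\RR_\kappa$ produces an unbounded branch of the domain, so $g(b)=f$ of that branch lies in $X$, whereas $b\in\QQ_\kappa$ ends in a cofinal block of $0$s inside one hyperedge, so $g(b)$ is the corresponding limit point $y_t\in Y$, with continuity at such $b$ coming from $f((t\conc\langle i\rangle)\conc 0^{(\kappa)})\to y_t$. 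The main obstacle is arranging the recursion so that $g$ is simultaneously continuous, injective, and has closed image: this requires choosing successive hyperedges in pairwise disjoint basic open sets, ensuring the selected limit points $y_t$ are distinct and are attained as genuine branch limits of a pruned subtree, and controlling neighborhoods so that the map contracts. The hypothesis $\kappa^{<\kappa}=\kappa$ is used throughout to run the $\kappa$-stage fusion and keep all trees of size $\kappa$. Finally, the two alternatives are mutually exclusive, since $\RR_\kappa$ is comeager while $\QQ_\kappa$ is a dense $\Fsigma(\kappa)$ set, so no $\Fsigma(\kappa)$ set can contain $\RR_\kappa$ and avoid $\QQ_\kappa$; pulling back through $g$ shows that the reduction precludes separation, confirming that $\KLW_\kappa(X,Y)$ is a genuine dichotomy.
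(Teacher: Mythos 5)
Your first half is correct and is essentially the paper's own route: on $X$ your dihypergraph $H$ coincides (up to discarding constant sequences, which is harmless here since $X\cap Y=\emptyset$) with the paper's $\CC^Y$, the dihypergraph of convergent sequences with limit in $Y$; your independence characterization is the paper's Lemma~\ref{lemma: KLW independence}, and the coloring case producing the $\Fsigma(\kappa)$ separator $\bigcup_{\alpha<\kappa}\closure{X_\alpha}$ is exactly the paper's argument. So up to the point where you hold a continuous homomorphism $f$ from $\dhH\kappa$ to $H\restr X$, you are reproducing the paper's proof of Theorem~\ref{theorem: KLW from ODD}.

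The gap is in the second half, and it is genuine: converting $f$ into a homeomorphism of ${}^\kappa 2$ onto a closed set reducing $(\RR_\kappa,\QQ_\kappa)$ to $(X,Y)$ is the actual content of the theorem (the paper's Lemma~\ref{lemma: nice reduction}), and you do not prove it --- you only enumerate its difficulties. Two concrete problems with what you do write. First, the continuity justification fails as stated: convergence of the single canonical edge $f\big((t\conc\langle i\rangle)\conc\langle 0\rangle^{\kappa}\big)\to y_t$ does not give continuity of $g$ at points of $\QQ_\kappa$; you need that the whole sets $f(N_{t\conc\langle i\rangle})$ converge to $y_t$ uniformly, which is true but needs an argument (e.g.\ interleaving two edges and using that every image of an edge converges, or first replacing $f$ by an order homomorphism as in Lemma~\ref{homomorphisms and order preserving maps}, which is what the paper does). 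Second, and more seriously, the block-reading map you describe satisfies $g\restr\RR_\kappa=f\comp[\pi]^{-1}$, and $\ODD\kappa\kappa(X)$ gives no injectivity of $f$ whatsoever; so your $g$ is in general neither injective nor a homeomorphism, and nothing you write repairs this. The repair is the recursive thinning the paper carries out: one builds a $\perp$- and strict order preserving $\Phi:{}^{<\kappa}2\to{}^{<\kappa}\kappa$ in which, at each node $s\conc\langle 0\rangle^{\alpha}$, the $1$-successor is sent to a node inside some $f(N_{t\conc\langle i\rangle})$ that is \emph{incompatible} with $y_t$ (available here because $X\cap Y=\emptyset$ forces $f(z)\neq y_t$; in the paper this incompatibility comes from Lemma~\ref{lemma: CC convergence} via the range-avoidance clause in $\CC^Y$), while the $0$-successor is sent further along $y_t$. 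Injectivity, the homeomorphism property, and closedness of the range then follow automatically from $\perp$-preservation and the fact that the domain tree is ${}^{<\kappa}2$ with $2<\kappa$ (Lemma~\ref{perfect subsets and sop maps}); none of these are "contraction" side conditions one can wave at. As it stands, your proposal establishes only the coloring half of the dichotomy and reduces the other half to precisely the construction that constitutes the proof.
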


%It is equivalent to ask only that $f$ is continuous by Theorem~\ref{theorem: KLW from ODD}. \todog{Since $X$ and $Y$ are disjoint. In general, we also need $f(\RR_\kappa)\cap f(\QQ_\kappa)=\emptyset$}
We extend the previous theorem to sets $X$ and $Y$ that are not neccessarily disjoint
in Theorem~\ref{theorem: KLW from ODD}.
Here, the first option is replaced by the statement
that $X$ is the a union of $\kappa$-many sets $X_\alpha$ with no limit points in $Y$.\footnote{See Definition~\ref{def: KLW dichotomy}. 
Note that  by Lemma~\ref{lemma: CC coloring}, this statement is equivalent to the conjunction of the statements that $|X\cap Y|\leq\kappa$ and $X\setminus Y$ can be separated from $Y$ by a $\Fsigma(\kappa)$ set.} 
\todoo{added motivation to this sentence} 
The next, even more general, dichotomy $\KLW_\kappa^H(X,Y)$ arose when we studied the strength of $\ODD\kappa\kappa({}^\kappa\kappa)$. 
It is defined 
relative to an arbitrary box-open $\ddim$-dihypergraph $H$ where $2\leq\ddim\leq\kappa$
in Theorem~\ref{theorem: ODD and KLW^H}. 
In the first option of the dichotomy, the role of limit points is replaced by a notion of
limit points relative to $H$.\footnote{See Definition \ref{def: H-limit point}.} 
In the second option, $\RR_\kappa$ and $\QQ_\kappa$ are replaced by analogous subsets $\RR^\ddim_\kappa$ and $\QQ^\ddim_\kappa$ of ${}^\kappa\ddim$ and moreover, 
the map needs to be a continuous homomorphism from $\dhH \ddim$ to $H$ instead of a homeomorphism from ${}^\kappa 2$ onto a closed set.%
\footnote{See Definition~\ref{def: KLW for hypergraphs} and the preceding paragraph. Note that in the second option of $\KLW_\kappa(X,Y)$, it is equivalent to ask that $f$ is continuous and injective instead of a homeomorphism by Theorem~\ref{theorem: KLW from ODD}. This shows that $\KLW_\kappa(X,Y)$ is equivalent to the special case of $\KLW_\kappa^{\gK{{}^\kappa \kappa}}(X,Y)$ for the complete graph $\gK{{}^\kappa \kappa}$ on ${}^\kappa\kappa$ by Remark~\ref{remark: KLW^G for graphs G}.} 
$\KLW^\ddim_\kappa(X)$ then states that $\KLW_\kappa^H(X,Y)$ holds for all subsets $Y$ of ${}^\kappa\kappa$
and all box-open $\ddim$-dihypergraphs $H$ on ${}^\kappa\kappa$.
The next result follows from Theorem~\ref{theorem: ODD and KLW^H} and 
Corollaries~\ref{cor: from KLW to ODD} and \ref{cor: from KLW to ODD kappa}, 
as does its restriction to definable dihypergraphs $H$ 
and definable subsets $Y$.

\pagebreak

\begin{theorem}\ 
\label{thm: KLW ODD intro}
The following hold for all $2\leq\ddim\leq\kappa$ and all subsets $X$ of ${}^\kappa\kappa$.
\begin{enumerate-(1)}
\item
$\ODD\kappa\kappa(X) \Longrightarrow 
\KLW_\kappa^\ddim(X)$. 
\item
$\KLW_\kappa^\ddim(X) \Longrightarrow \ODD\kappa\ddim(X)$.
\end{enumerate-(1)}
In particular, 
$\ODD\kappa\kappa(X)$
is equivalent to 
$\KLW^\kappa_\kappa(X)$.
\end{theorem}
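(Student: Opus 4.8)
The statement packages two implications—parts (1) and (2)—with the final equivalence obtained by specializing both to $\ddim=\kappa$. The plan is to prove each implication by reducing it to the appropriate named result, and then to read off the equivalence.

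\textbf{Part (1).} Fix $2\le\ddim\le\kappa$. Since $\KLW^\ddim_\kappa(X)$ asserts $\KLW^H_\kappa(X,Y)$ for every box-open $\ddim$-dihypergraph $H$ on ${}^\kappa\kappa$ and every $Y\subseteq{}^\kappa\kappa$, part (1) is exactly Theorem~\ref{theorem: ODD and KLW^H}. The route I would take there follows the Carroy--Miller--Soukup scheme: build an auxiliary \emph{$\kappa$-dimensional} box-open dihypergraph $\tilde H$ on ${}^\kappa\kappa$ whose hyperedges simultaneously encode the hyperedges of $H$ and the accumulation data recorded by the relation of $H$-limit point (Definition~\ref{def: H-limit point}). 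The encoding is arranged so that $\ODD\kappa\kappa(X)$ applied to $\tilde H\restr X$ returns precisely the two options of $\KLW^H_\kappa(X,Y)$: a $\kappa$-coloring of $\tilde H\restr X$ decodes into a partition $X=\bigcup_{\alpha<\kappa}X_\alpha$ in which no $X_\alpha$ has an $H$-limit point in $Y$, while a continuous homomorphism $\dhH\kappa\to\tilde H$ decodes into a continuous homomorphism $\dhH\ddim\to H$ reducing $(\RR^\ddim_\kappa,\QQ^\ddim_\kappa)$ to $(X,Y)$. Crucially, $\tilde H$ has dimension $\kappa$ no matter what $\ddim$ is, which is why the single hypothesis $\ODD\kappa\kappa(X)$ suffices uniformly over all $\ddim\le\kappa$.

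\textbf{Part (2).} For $\KLW^\ddim_\kappa(X)\Rightarrow\ODD\kappa\ddim(X)$, take any box-open $\ddim$-dihypergraph $H$ on ${}^\kappa\kappa$ and apply the instance $\KLW^H_\kappa(X,Y)$ with $Y:=X$ (the non-disjoint case is permitted); this is the content of Corollaries~\ref{cor: from KLW to ODD} (for $\ddim<\kappa$) and~\ref{cor: from KLW to ODD kappa} (for $\ddim=\kappa$). If the second option holds, we obtain a continuous homomorphism $f\colon{}^\kappa\ddim\to{}^\kappa\kappa$ from $\dhH\ddim$ to $H$ with $f(\RR^\ddim_\kappa)\subseteq X$ and $f(\QQ^\ddim_\kappa)\subseteq Y=X$; hence $f$ maps all of ${}^\kappa\ddim$ into $X$ and is a continuous homomorphism $\dhH\ddim\to H\restr X$, i.e.\ the second option of $\ODD\kappa{H\restr X}$. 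If the first option holds, we get $X=\bigcup_{\alpha<\kappa}X_\alpha$ with no $X_\alpha$ having an $H$-limit point in $X$; using box-openness of $H$ together with $\kappa^{<\kappa}=\kappa$, one refines each $X_\alpha$ into $H$-independent pieces (the coloring lemma behind the corollaries), yielding a $\kappa$-coloring of $H\restr X$, i.e.\ the first option of $\ODD\kappa{H\restr X}$. The small difference in bookkeeping between $\ddim<\kappa$ and $\ddim=\kappa$ accounts for the two separate corollaries.

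\textbf{Equivalence and main obstacle.} Specializing both parts to $\ddim=\kappa$ gives $\ODD\kappa\kappa(X)\Rightarrow\KLW^\kappa_\kappa(X)$ and $\KLW^\kappa_\kappa(X)\Rightarrow\ODD\kappa\kappa(X)$, which is the final assertion; the restriction to definable $H$ and definable $Y$ follows by carrying definability through the same constructions. All the real work sits in Part (1): the delicate points are (i) packing both the $H$-hyperedges and the $H$-limit-point structure into one box-open $\kappa$-dihypergraph $\tilde H$, (ii) decoding a homomorphism out of the $\kappa$-dimensional $\dhH\kappa$ into a homomorphism out of the possibly lower-dimensional $\dhH\ddim$ while exactly realizing the reduction of $(\RR^\ddim_\kappa,\QQ^\ddim_\kappa)$ to $(X,Y)$, and (iii) ensuring the decoded map carries the continuity (and, where required, injectivity or closedness) demanded by $\KLW^H$. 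By contrast, the coloring lemma underlying Part (2)—that absence of $H$-limit points refines into genuine $H$-independence—is comparatively routine once Definition~\ref{def: H-limit point} is unwound.
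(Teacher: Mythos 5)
Your proposal is correct and is essentially the paper's own proof: Part (1) is exactly Theorem~\ref{theorem: ODD and KLW^H}, whose auxiliary dihypergraph $\CC^{Y,H}$ (of dimension $|\kappa\times(\ddim\setminus\{0\})|=\kappa$ regardless of $\ddim$) is precisely the $\tilde H$ you describe, and Part (2) is an instance of Corollaries~\ref{cor: from KLW to ODD} and~\ref{cor: from KLW to ODD kappa}. The only inessential difference is your choice $Y:=X$ in Part (2), where the paper's corollaries take $Y:={}^\kappa\kappa$; both are special cases of Corollary~\ref{theorem: KLW H and ODD H} (since $X\setminus Y=\emptyset$ either way), and your variant even has the minor advantage that the homomorphism produced by the second option lands in $X$ outright, so it restricts to a homomorphism into $H\restr X$ directly without the fullness transfer of Corollary~\ref{cor: ODD subsequences}.
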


This general version of the Kechris--Louveau--Woodin dichotomy
is used 
to prove the next result. 
It follows from Lemma~\ref{ODD for continuous images} and 
Corollary~\ref{cor: ODD(X) implies ODD(closed subsets of X)}. 

\begin{theorem}\ 
\label{thm: ODD for kappa^kappa and analytic sets}
\begin{enumerate-(1)}
\item $\ODD\kappa\kappa({}^\kappa\kappa)\Longrightarrow
\ODD\kappa\kappa(\analytic(\kappa))$.
\item $\ODD\kappa\kappa({}^\kappa\kappa,\defsetsk)\Longrightarrow
\ODD\kappa\kappa(\analytic(\kappa),\defsetsk)$.
\end{enumerate-(1)}
\end{theorem}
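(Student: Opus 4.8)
The plan is to exploit the fact that every $\kappa$-analytic set is a continuous image of a closed set, and then chain together the two cited transfer results. Recall that $A\in\analytic(\kappa)$ means, by definition, that $A=f[C]$ for some closed $C\subseteq{}^\kappa\kappa$ and some continuous $f\colon C\to{}^\kappa\kappa$; concretely one may take $C$ to be a closed subset of ${}^\kappa\kappa\times{}^\kappa\kappa\cong{}^\kappa\kappa$ projecting onto $A$ and $f=\proj$ the continuous projection. Thus it suffices to show that $\ODD\kappa\kappa$ propagates first from ${}^\kappa\kappa$ to its closed subsets, and then from a set to its continuous images. Both steps are exactly what the cited results provide.

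For part (1), assume $\ODD\kappa\kappa({}^\kappa\kappa)$ and fix $A\in\analytic(\kappa)$ with witnesses $C$ and $f$ as above. By Corollary~\ref{cor: ODD(X) implies ODD(closed subsets of X)} applied to the closed set $C\subseteq{}^\kappa\kappa$ we obtain $\ODD\kappa\kappa(C)$, and then Lemma~\ref{ODD for continuous images} applied to the continuous surjection $f\colon C\to A$ yields $\ODD\kappa\kappa(f[C])=\ODD\kappa\kappa(A)$. As $A$ was an arbitrary $\kappa$-analytic set, this gives $\ODD\kappa\kappa(\analytic(\kappa))$. The mechanism behind Lemma~\ref{ODD for continuous images} is worth recalling, since it is where the only genuinely dihypergraph-specific point lies: given a box-open $H$ on ${}^\kappa\kappa$, one pulls it back along $f$ to the box-open dihypergraph $H'$ on $C$ defined by $\bar x\in H'\iff f\circ\bar x\in H$ (box-open because $\bar x\mapsto f\circ\bar x$ is continuous). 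A $\kappa$-coloring $\langle C_\alpha:\alpha<\kappa\rangle$ of $H'\restr C$ pushes forward to the $H$-independent sets $f[C_\alpha]$ covering $A$: if $\bar y\in H$ had all coordinates in some $f[C_\alpha]$, lifting each $y_i$ to a preimage $x_i\in C_\alpha$ produces $\bar x\in H'$ inside $C_\alpha$ — here it is essential that $H$-hyperedges are \emph{non-constant}, so that $\bar y$ non-constant forces $\bar x$ non-constant, contradicting $H'$-independence. Dually, a continuous homomorphism $g\colon{}^\kappa\kappa\to C$ from $\dhH\kappa$ to $H'$ composes to the continuous homomorphism $f\circ g$ from $\dhH\kappa$ to $H\restr A$. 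This is precisely $\ODD\kappa{H\restr A}$.

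For part (2), I would rerun the same argument inside the definable category, tracking parameters. The point is that the witnessing closed set $C$ and continuous map $f$ can be chosen definable from a $\kappa$-sequence of ordinals (a tree code for $A$), so that for any $H\in\defsetsk$ the pullback $H'$ is definable from the codes of $H$ and $f$; since two $\kappa$-sequences of ordinals combine into one (using $\kappa^{<\kappa}=\kappa$), we get $H'\in\defsetsk$. Hence the definable forms of Corollary~\ref{cor: ODD(X) implies ODD(closed subsets of X)} and Lemma~\ref{ODD for continuous images} apply verbatim, and $\ODD\kappa\kappa({}^\kappa\kappa,\defsetsk)$ delivers $\ODD\kappa{H'\restr C}$ and then $\ODD\kappa{H\restr A}$, giving $\ODD\kappa\kappa(\analytic(\kappa),\defsetsk)$.

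I do not expect a serious obstacle, since the topological transfer is packaged in the two cited results; the one place demanding care is the definability bookkeeping in part (2). There one must verify that the closed set and continuous map witnessing $\kappa$-analyticity, and therefore the pulled-back dihypergraph $H'$, all remain definable from a single $\kappa$-sequence of ordinals, so that the \emph{definable} hypothesis $\ODD\kappa\kappa({}^\kappa\kappa,\defsetsk)$ can legitimately be invoked on $H'$ rather than merely the full hypothesis. Everything else is the routine pullback/pushforward of colorings and the composition of homomorphisms already recorded in Lemma~\ref{ODD for continuous images}.
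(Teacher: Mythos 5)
Your proposal is correct and takes essentially the same route as the paper, which proves the theorem by exactly this chain: Corollary~\ref{cor: ODD(X) implies ODD(closed subsets of X)} applied with $X={}^\kappa\kappa$ and $Y$ a closed set (closed sets are $\Gdelta(\kappa)$ and lie in $\defsetsk$), followed by Lemma~\ref{ODD for continuous images} to pass from closed sets to their continuous images. Note that the paper's definition of $\analytic(\kappa)$ already provides a \emph{total} continuous $g:{}^\kappa\kappa\to{}^\kappa\kappa$ with $A=g(Y)$, so the extendibility hypothesis in Lemma~\ref{ODD for continuous images}~(2) is automatic; your concrete choice of $f$ as a restricted projection guarantees the same thing, so your definability bookkeeping for part~(2) goes through as you describe.
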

Note that it is easy to see that $\ODD\kappa\kappa({}^\kappa\kappa) \Longrightarrow\ODD\kappa\kappa(X)$ for continuous images $X$ of~${}^\kappa\kappa$,\footnote{See Lemma \ref{ODD for continuous images}.}  
but not all closed subsets of ${}^\kappa\kappa$ are of this form \cite{MR3430247}*{Theorem~1.5}. 
It follows from the previous theorem
that $\ODD\kappa\kappa({}^\kappa\kappa,\defsetsk)$ has the same consistency strength as an inaccessible cardinal.

$\ODD\kappa\kappa({}^\kappa\kappa)$ has further applications to \emph{V\"a\"an\"anen's perfect set game} that is described next. 
Any set of reals can be decomposed as the disjoint union of a crowded set and a countable scattered set by Cantor--Bendixson analysis.%
\footnote{Recall that a set of reals
$X$ is \emph{crowded} if it has no isolated points, \emph{perfect} if it is closed and crowded, and \emph{scattered} if each nonempty subspace contains an isolated point.}
V\"a\"an\"anen showed that an analogue of this statement for all closed subsets of ${}^\kappa\kappa$ is consistent%
\footnote{V\"a\"an\"anen used a measurable cardinal
and Galgon reduced this to an inaccessible cardinal \cite{GalgonThesis}.
In fact, the analogue for closed sets is equivalent to the $\kappa$-perfect set property $\PSP_\kappa(X)$ for closed sets \cite{SzThesis}.}
but not provable \cite{VaananenCantorBendixson}. 
To this end, he generalized the notions of scattered and crowded sets  
to subsets $X$ of ${}^\kappa\kappa$
via a game $\GV_\kappa(X,x)$ of length $\kappa$.%
\footnote{See Definition~\ref{def: Vaananen's game}. 
Here, $x\in X$ denotes the first move of player $\ptwo$; this is fixed. We also consider a global version $\GV_\kappa(X)$ of the game, where player $\ptwo$ may play any element of $X$ as their first move.} 
We extend V\"a\"an\"anen's result to all subsets of ${}^\kappa\kappa$ in the next theorem, which follows from 
Theorems~\ref{theorem: ODD implies CB1} and~\ref{prop: cbi iff cbii}. 

\begin{theorem}
\label{intro theorem: CB}
$\ODD\kappa\kappa({}^\kappa\kappa)$ implies the following version of the Cantor--Bendixson decomposition 
for all subsets $X$ of ${}^\kappa\kappa$:%
\footnote{$\ODD\kappa\kappa({}^\kappa\kappa,\defsetsk)$ suffices if $X$ is in $\defsetsk$.}
\begin{quotation}
$\cbii(X)$:
$X$ equals the disjoint union of a $\kappa$-scattered set of size $\kappa$ and a $\kappa$-crowded set.%
\footnote{See Definition~\ref{def: kernel}.}
\end{quotation}
In particular, 
V\"a\"ananen's perfect set game $\GV_\kappa(X,x)$
is determined for all subsets $X$ of ${}^\kappa\kappa$ and all $x\in X$.%
\footnote{It is easy to see from the definitions that this statement follows from $\cbii(X)$. Note that this statement implies the determinacy of the global version $\GV_\kappa(X)$ of V\"a\"ananen's game as well.}
%
%In particular, 
%$\GV_\kappa(X)$ of V\"a\"ananen's perfect set game%
%\footnote{See Definition~\ref{def: Vaananen's game}. V\"a\"an\"anen worked with local versions $\GV_\xi(X,x)$ for $x\in {}^\kappa\kappa$ of this game.
%It is easy to see from Definitions~\ref{def: kernel} and~\ref{def: cbii} that $\cbii(X)$ implies the determinacy of the local versions as well.}
%is determined for all subsets $X$ of ${}^\kappa\kappa$.
\end{theorem}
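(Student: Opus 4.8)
The plan is to peel off the purely set-theoretic content of $\cbii(X)$ first and then extract the one non-trivial ingredient from the dichotomy, following the two-step route through an intermediate principle $\cbi(X)$. First I would form the $\kappa$-perfect kernel $C$ of $X$, namely the union of all $\kappa$-crowded subsets of $X$, and set $S:=X\setminus C$. In $\ZFC$ one checks that a union of $\kappa$-crowded sets is again $\kappa$-crowded, so $C$ is $\kappa$-crowded, and that $S$ has empty kernel, hence is $\kappa$-scattered; this already gives the weak decomposition $\cbo(X)$. Consequently the whole content of $\cbii(X)$ beyond $\cbo(X)$ is the single inequality $|S|\leq\kappa$ (the scattered part can then be padded to have size exactly $\kappa$), and this is precisely what $\ODD\kappa\kappa({}^\kappa\kappa)$ must supply.

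For the size bound I would attach to V\"a\"an\"anen's game a box-open $\kappa$-dihypergraph $H$ on ${}^\kappa\kappa$, modelled on $\dhH\kappa$, whose hyperedges $\langle x_i:i<\kappa\rangle$ record a legal splitting continuation for player $\ptwo$: the $x_i$ share a node $t\in{}^{<\kappa}\kappa$ and branch into the $\kappa$ directions $t\conc\langle i\rangle$. The design is governed by two requirements. First, every $H$-independent subset of $X$ should be relatively discrete, so that, since the bounded topology on ${}^\kappa\kappa$ has a base of size $\kappa^{<\kappa}=\kappa$, each independent set has size at most $\kappa$. Second, any continuous homomorphism $f\colon{}^\kappa\kappa\to X$ from $\dhH\kappa$ to $H$ should have $\kappa$-perfect, hence $\kappa$-crowded, range, exactly as in the proof that $\ODD\kappa\kappa$ yields $\PSP_\kappa$ through the complete graph $\gK{{}^\kappa\kappa}$.

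I would then feed a suitable closed (hence $\analytic(\kappa)$) set built from $S$, such as its closure $\closure S$, to the dichotomy for $H$, using that $\ODD\kappa\kappa({}^\kappa\kappa)$ applies to analytic sets by Theorem~\ref{thm: ODD for kappa^kappa and analytic sets}. Because $S$ has empty kernel it contains no $\kappa$-crowded subset, so a homomorphism — which would produce a $\kappa$-crowded set — must be excluded; hence the coloring alternative survives and yields $S=\bigcup_{\alpha<\kappa}S_\alpha$ with each $S_\alpha$ being $H$-independent, whence $|S_\alpha|\leq\kappa$ and $|S|\leq\kappa\cdot\kappa=\kappa$. This establishes the size bound, i.e. $\cbi(X)$; to obtain $\cbii(X)$ in its stated form I would invoke the translation lemma $\cbi(X)\Leftrightarrow\cbii(X)$, which identifies $C$ with the set of points from which $\ptwo$ wins $\GV_\kappa(X,x)$ and $S$ with the $\pone$-winning points. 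The main obstacle is exactly this application of the \emph{full-space} hypothesis to the \emph{non-definable} set $S$: since $\ODD\kappa\kappa({}^\kappa\kappa)$ transfers only to analytic sets and not to arbitrary subsets, one must set up $H$ and the closed set fed to the dichotomy so that the homomorphism alternative is genuinely excluded and the coloring alternative refers back to $S$ itself, while simultaneously handling the limit stages below $\kappa$ — ensuring that $H$ is box-open, that $H$-independence coincides with relative discreteness, and that a limit of splitting plays stays in the space so that $f$ is continuous.

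Finally, the determinacy of $\GV_\kappa(X,x)$ is immediate from $\cbii(X)$: for $x\in C$ player $\ptwo$ wins by always staying inside the $\kappa$-crowded set $C$, where a splitting move is available at every stage and at limit stages the play converges back into $C$; for $x\in S$ player $\pone$ wins by using the well-founded scattered structure of $S$ to drive the play into an isolating neighborhood of $x$. As $X=S\sqcup C$, every starting point $x\in X$ falls under one of the two cases, so the game is determined, and the same argument applied to the global game $\GV_\kappa(X)$ gives its determinacy as well.
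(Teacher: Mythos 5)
Your proposal breaks down at its first step, and this gap is not repairable within your framework. You claim that, in $\ZFC$, setting $C$ equal to the union of all $\kappa$-crowded subsets of $X$ and $S:=X\setminus C$, the set $S$ is automatically $\kappa$-scattered, so that the only content of $\cbii(X)$ beyond $\ZFC$ is the size bound $|S|\leq\kappa$. This is exactly backwards: since winning strategies for $\ptwo$ pass to supersets, one does get $C=\Ker_\kappa(X)$ (using Lemma~\ref{theorem: GV II}), but the fact that no $x\in S$ is a winning position \emph{for} $\ptwo$ does not make any such $x$ a winning position for $\pone$. For $\kappa>\omega$ these are games of length $\kappa$, to which Gale--Stewart determinacy does not apply (the paper recalls explicitly that determinacy fails for lengths beyond $\omega$), and the classical Cantor--Bendixson derivative argument establishes the equivalence ``no crowded subset $\Rightarrow$ scattered'' only in the topological, i.e.\ $\xi=\omega$, case. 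The pointwise determinacy $X=\Ker_\kappa(X)\cup\Sc_\kappa(X)$ is precisely the nontrivial content of $\cbii(X)$ that the dichotomy has to supply; it cannot be declared set-theoretically free. A second, independent defect in the same step: even if $S$ were $\kappa$-scattered \emph{as a space}, that gives $\pone$ a winning strategy in $\GV_\kappa(S,x)$, not in $\GV_\kappa(X,x)$, since in the latter game $\ptwo$ is free to play points of $C$; your closing argument for determinacy (``drive the play into an isolating neighborhood'') ignores this.

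Your use of the dichotomy is also not the paper's, and it does not work. Applying $\ODD\kappa\kappa$, transferred to analytic sets, to the closed set $\closure{S}$ cannot exclude the homomorphism alternative: a continuous homomorphism produces a $\kappa$-perfect (hence crowded) subset of $\closure{S}$, which is perfectly compatible with $S$ itself containing no crowded subset, since that perfect set need not meet $S$ at all; and the coloring alternative, while it would bound $|\closure{S}|$, yields no strategy for $\pone$ in the game on $X$. The paper's proof (Theorems~\ref{theorem: ODD implies CB1} and~\ref{prop: cbi iff cbii}) avoids both problems with the one idea you are missing: the dichotomy is never applied to $X$, to $\closure{S}$, or to any analytic set, but to the box-open $\kappa$-dihypergraph $\CC^X$ on the \emph{whole space} ${}^\kappa\kappa$, whose hyperedges are the convergent $\kappa$-sequences with limit in $X$ outside the sequence. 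The arbitrary, possibly non-definable, set $X$ enters only as a parameter of this dihypergraph, which is why $\ODD\kappa\kappa({}^\kappa\kappa)$ suffices for all $X$ and no transfer result is needed. Both horns of $\ODD\kappa{\CC^X}$ then translate directly into strategies: a $\kappa$-coloring covers ${}^\kappa\kappa$ by sets with no limit points in $X$, from which $\pone$ wins $\GV_\kappa(X)$ by isolating each of $\ptwo$'s moves inside the appropriate piece (and $|X|\leq\kappa$ follows); a continuous homomorphism from $\dhH\kappa$ to $\CC^X$ yields, via order homomorphisms and nice reductions (Lemma~\ref{theorem: GV II}), a winning strategy for $\ptwo$. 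This proves $\cbi(X)$, and only the final localization step — passing to the sets $X\cap N_t$ to get $\cbii(X)$, which is Theorem~\ref{prop: cbi iff cbii} — is the part of your outline that matches the paper and is sound.
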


While there is a verbatim analogue of the Baire property at uncountable cardinals, it is not as useful as in the countable setting since even simple sets such as the club filter do not satisfy it \cite{MR1880900}.%
${}$\footnote{See the beginning of Subsection~\ref{subsection: almost Baire property}. Note that the club filter can be coded as a $\kappa$-analytic subset of the $\kappa$-Baire space.}
\todon{Footnote size problem fixed in the usual way}
The \emph{asymmetric $\kappa$-Baire property} $\ABP_\kappa(X)$ introduced in \cite{SchlichtPSPgames} is a more general principle
that is equivalent to the determinacy of the Banach-Mazur game of length $\kappa$ for $X$. 
In the countable setting, $\ABP_\omega(\defsets\omega)$%
\footnote{I.e., $\ABP_\omega(X)$ for all subsets $X\in\defsets\omega$ of ${}^\omega\omega$.}  
is thus equivalent to the Baire property for sets in $\defsets\omega$. 
\todog{We wrote $\defsets\omega$ instead of $X$ here since. The equivalence only holds for sufficiently closed classes of subsets of ${}^\omega\omega$.}
The next result is proved in Theorem \ref{theorem: ABP from ODD}. 

\begin{theorem}
\label{intro them Baire} 
For all subsets $X$ of ${}^\kappa\kappa$,
$\ODD\kappa\kappa(X,\defsetsk)$ implies the asymmetric $\kappa$-Baire property $\ABP_\kappa(X)$. 
\end{theorem}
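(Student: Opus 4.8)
The plan is to deduce $\ABP_\kappa(X)$ from its characterization as the determinacy of the length-$\kappa$ Banach--Mazur game for $X$, by producing a single box-open dihypergraph whose two $\ODD\kappa\kappa$-alternatives supply winning strategies for the two players. Recall from \cite{SchlichtPSPgames} that $\ABP_\kappa(X)$ holds if and only if, for every $s\in\klk$, the Banach--Mazur game played below $s$ is determined, where player~$\pone$ has a winning strategy when $X$ is $\kappa$-meager below $s$ and player~$\ptwo$ when $X$ is suitably large below some $N_{s'}\subseteq N_s$. I would match the coloring alternative to a strategy for~$\pone$ and the homomorphism alternative to a strategy for~$\ptwo$.

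Fix a surjection $e\colon\kappa\to\klk$ and define the $\kappa$-dihypergraph
\[
H_e \;=\; \bigcup_{t\in\klk}\ \prod_{i<\kappa} N_{t\conc e(i)}
\;=\;\bigl\{\bar x\in{}^{\kappa}(\kk) : \exists\, t\in\klk\ \ \forall\, i<\kappa\ \ t\conc e(i)\subseteq x_i\bigr\}.
\]
Since $e$ is onto, the cones $N_{t\conc e(i)}$ run through all $N_{t'}$ with $t'\supseteq t$, so a set contains an $H_e$-hyperedge exactly when it is dense in some $N_t$; equivalently, a set is $H_e$-independent exactly when it is nowhere dense. The dihypergraph $H_e$ is box-open and, being defined from $e$ (a $\kappa$-sequence of ordinals once $\klk$ is coded by $\kappa$), lies in $\defsetsk$; the same holds for $H_e\cap{}^{\kappa}(N_s)$ for each $s$. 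Thus, for every $s\in\klk$, the hypothesis $\ODD\kappa\kappa(X,\defsetsk)$ yields $\ODD\kappa{H_e{\restr}(X\cap N_s)}$, and I would run a case analysis for each fixed $s$. If $H_e{\restr}(X\cap N_s)$ admits a $\kappa$-coloring, then $X\cap N_s$ is a union of $\kappa$ many $H_e$-independent, hence nowhere dense, sets, so $X$ is $\kappa$-meager below $s$; this gives player~$\pone$ a winning strategy below $s$ in the usual way (at stage $\alpha$, extend the current position into a cone disjoint from the $\alpha$-th nowhere dense piece, so that the limit avoids $X$).

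In the remaining case there is a continuous homomorphism $f\colon\kk\to X\cap N_s$ from $\dhH\kappa$ to $H_e{\restr}(X\cap N_s)$. Using the description of continuous homomorphisms from $\dhH\kappa$ by monotone tree maps developed in Section~\ref{section: ODD observations}, the homomorphism condition against $H_e$ produces, for every node $u\in\klk$, a stem $t_u$ with $f(y)\supseteq t_u\conc e(i)$ whenever $y\supseteq u\conc\langle i\rangle$. Because $e$ is onto, this means $\range f$ is dense in $N_{t_u}$ for \emph{every} $u$, and in particular dense in $N_{t_\emptyset}$ with $t_\emptyset\supseteq s$ (as $\range f\subseteq N_s$ and $N_s$ is clopen).

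The crux is to convert this node-by-node density into an explicit winning strategy for player~$\ptwo$ below $t_\emptyset$. The plan is for~$\ptwo$ to maintain, alongside the play, an increasing sequence of source nodes $u_\xi\in\klk$ with current position $t_{u_\xi}$: after~$\pone$ extends $t_{u_\xi}$ to some $q$, density of $\range f$ in $N_{t_{u_\xi}}$ yields a branch $y^\ast\supseteq u_\xi$ with $q\subseteq f(y^\ast)$, and since the stems $t_{y^\ast\restr\eta}$ exhaust $f(y^\ast)$ one picks $\eta$ with $q\subseteq t_{y^\ast\restr\eta}$, plays this as the next position, and sets $u_{\xi+1}=y^\ast\restr\eta$; limits are handled by continuity of $u\mapsto t_u$. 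The resulting play converges to $\bigcup_\xi t_{u_\xi}=f\bigl(\bigcup_\xi u_\xi\bigr)\in\range f\subseteq X$, so~$\ptwo$ wins. As $s$ was arbitrary, every instance of the asymmetric property is met and $\ABP_\kappa(X)$ follows. I expect the main difficulty to be precisely this last step: recognizing that the homomorphism should be exploited as a $\ptwo$-strategy built on local density, rather than trying to extract a $\kappa$-comeager subset of $X$ directly (which the necessarily ``thin'' homomorphic image will not provide); the surjectivity of $e$, which forces density of the image at every node, is exactly what makes the strategy go through.
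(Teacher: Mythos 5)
Your architecture is sound and in fact runs parallel to the paper's own proof: your $H_e$ has exactly the same independent sets as the dihypergraph $\dhD\kappa$ of somewhere dense sequences used in Theorem~\ref{theorem: ABP from ODD} (independent $=$ nowhere dense), the coloring case is handled the same way (localizing at $s$ is unnecessary; $s=\emptyset$ already yields the first alternative of Definition~\ref{def: ABP}), and you correctly see that the homomorphism must be turned into a player-$\ptwo$ object rather than a $\kappa$-comeager set. The gap is the assertion that ``the homomorphism condition against $H_e$ produces, for every node $u\in\klk$, a stem $t_u$ with $f(y)\supseteq t_u\conc e(i)$ whenever $y\supseteq u\conc\langle i\rangle$.'' What being a homomorphism (equivalently, the order homomorphism supplied by Lemma~\ref{homomorphisms and order preserving maps}) actually gives is weaker: for each \emph{individual} hyperedge $\bar y$ of $\dhH\kappa$ through $u$, the image sequence lies in $H_e$ and hence has a witness $t$ \emph{depending on $\bar y$}. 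Since these image sequences have all their entries in the possibly very thin set $X\cap N_s$, promoting the edge-by-edge witnesses to a single node $t_u$ with $f(N_{u\conc\langle i\rangle})\subseteq N_{t_u\conc e(i)}$ for all $i$ is precisely the nontrivial transfer from ``$X$-relative'' structure to structure of the ambient tree. This is the step the paper isolates as Lemma~\ref{lemma: dhD and density} (every order homomorphism for $(X,\dhD\kappa)$ is an order homomorphism for $\dhD\kappa$), whose proof, building a nowhere dense selector through prescribed cones of $T(X)$, is the technical heart of the whole argument; nothing in Section~\ref{section: ODD observations} yields it, so as written the main difficulty has been assumed away.

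The claim is in fact true for your $H_e$, and its rigidity admits a proof that the paper's permutation-insensitive $\dhD\kappa$ does not: if $t,t'$ both satisfy $t\conc e(i)\subseteq x_i$ and $t'\conc e(i)\subseteq x_i$ for all $i$ in an index set whose $e$-values contain two nodes with distinct first entries, then $t=t'$; splitting $\kappa$ into two such index sets and mixing two choice functions from $\prod_{i<\kappa}f(N_{u\conc\langle i\rangle})$ then shows that all choice functions share one witness $t_u$, which therefore works for every point of every $f(N_{u\conc\langle i\rangle})$. So the gap is fillable, but some such argument must be supplied. Two further repairs are needed in the endgame: the map $u\mapsto t_u$ need not be continuous, only monotone, which suffices because $\ptwo$ moves at limit stages of the length-$\kappa$ Banach--Mazur game (this is exactly the asymmetry in $\ABP_\kappa$); and the stems $t_{y^\ast\restr\eta}$ need not exhaust $f(y^\ast)$, since they can stagnate at every $\eta$ with $e(y^\ast(\eta))=\emptyset$. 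Both issues disappear if you take $e$ onto the nonempty nodes and have $\ptwo$ answer $\pone$'s move $q\supsetneq t_{u_\xi}$ by choosing $i$ with $t_{u_\xi}\conc e(i)=q$ and playing $t_{u_\xi\conc\langle i\rangle}\supseteq q$. With these repairs the game detour becomes unnecessary: the map $\theta(u):=t_u$ is a dense strict order preserving map with $[\theta]=f$, so $\ran([\theta])\subseteq X$, which is literally the second alternative of $\ABP_\kappa(X)$ in Definition~\ref{def: ABP}; this is the same endgame the paper reaches via Lemma~\ref{dense sop and order homomorphisms}.
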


A new consequence in the countable setting is that 
$\ODD\omega\omega(\defsets\omega,\defsets\omega)$ implies the Baire property for subsets of the Baire space in $\defsets\omega$.

A celebrated result of Jayne and Rogers characterizes ${\bf \Delta}^0_2$-measurable functions on analytic sets 
as those that are $\sigma$-continuous with closed pieces \cite{JayneRogers1982}*{Theorem 5}.%
\footnote{A function between countably based metric spaces is ${\bf \Delta}^0_2$-measurable if and only if it is ${\bf \Pi}^0_2$-measurable.
It is \emph{$\sigma$-continuous with closed pieces} if it can be obtained as a countable union of continuous functions on relatively closed subsets of its domain.}
A simpler proof of this theorem was subsequently discovered by Motto Ros and Semmes \cites{ros2010new,kacenamottorossemmes}.
Possible generalizations thereof are a subject of intense study \cites{ros2013structure, gregoriades2021turing}. 
Recently, Carroy, Miller and Soukup found a new proof that derives the theorem from the open dihypergraph dichotomy \cite{CarroyMillerSoukup}. 
Their proof allows us to generalize the Jayne--Rogers theorem to the uncountable setting.
The next result is proved in Corollary~\ref{cor: Jayne Rogers}.

\begin{theorem}
\label{intro theorem: JR}
$\ODD\kappa\kappa(\defsetsk,\defsetsk)$ implies the following analogue of the Jayne--Rogers theorem for all subsets $X\in\defsetsk$ of ${}^\kappa\kappa$: 
\begin{quotation}
$\JR_\kappa(X)$: %Let $X\in\defsetsk$ be a subset of ${}^\kappa\kappa$ and let $f:X\to{}^\kappa\kappa$. Then $f$ 
Any function $f:X\to{}^\kappa\kappa$
is $\DeltaZeroTwo(\kappa)$-measurable if and only if it is a union of $\kappa$ many continuous functions on relatively closed subsets of $X$. 
\end{quotation}
\end{theorem}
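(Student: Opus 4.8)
The plan is to establish the two implications in $\JR_\kappa(X)$ separately; only the forward direction, from $\DeltaZeroTwo(\kappa)$-measurability to $\sigma$-continuity with closed pieces, will use the dichotomy. For the backward direction, suppose $f=\bigcup_{\alpha<\kappa}f_\alpha$ where each $f_\alpha=f\restr C_\alpha$ is continuous and each $C_\alpha$ is relatively closed in $X$. For a basic open set $N_s$ we have $f^{-1}(N_s)=\bigcup_{\alpha<\kappa}(C_\alpha\cap W_\alpha)$ with $W_\alpha$ relatively open, which is $\Fsigma(\kappa)$ in $X$; applying the same computation to the closed set ${}^\kappa\kappa\setminus N_s$ shows that the complement of $f^{-1}(N_s)$ is $\Fsigma(\kappa)$ as well, so $f^{-1}(N_s)\in\DeltaZeroTwo(\kappa)$. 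As the sets $N_s$ form a base, $f$ is $\DeltaZeroTwo(\kappa)$-measurable. This direction needs neither the dichotomy nor any hypothesis on $X$.

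For the forward direction I would follow the argument of Carroy, Miller and Soukup. Identifying ${}^\kappa\kappa\times{}^\kappa\kappa$ with ${}^\kappa\kappa$, regard the graph $\Gamma=\graph(f)$ as a subset of ${}^\kappa\kappa$; provided $\Gamma$ is definable from a $\kappa$-sequence of ordinals (automatic when $f$ is, and in particular throughout the models produced by Theorem~\ref{main theorem}, where every subset of ${}^\kappa\kappa$ lies in $\defsetsk$), the hypothesis $\ODD\kappa\kappa(\defsetsk,\defsetsk)$ provides $\ODD\kappa{H\restr\Gamma}$ for every definable box-open $\kappa$-dihypergraph $H$ on the product space. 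The key step is to choose $H$ so that its hyperedges are exactly the $\kappa$-sequences $\langle(x_i,f(x_i)):i<\kappa\rangle$ realizing a discontinuity pattern of $f$ at a common $\kappa$-limit (the $x_i$ converging to a point while the values $f(x_i)$ are driven out of a fixed basic neighbourhood of the prospective limit value), arranged so that $H\restr\Gamma$-independent sets correspond precisely to subsets of $X$ on which $f$ extends continuously to their relative closure. Granting such an $H$, I would apply $\ODD\kappa{H\restr\Gamma}$ and analyse the two alternatives.

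In the coloring alternative, a $\kappa$-coloring of $H\restr\Gamma$ partitions $\Gamma$ into $\kappa$ many $H$-independent pieces; projecting to $X$ and passing to relative closures yields $\kappa$ many relatively closed subsets covering $X$ on each of which $f$ is continuous, so $f$ is a union of $\kappa$ continuous functions on relatively closed sets, as desired. In the homomorphism alternative, a continuous homomorphism from $\dhH\kappa$ to $H\restr\Gamma$ produces a continuous injection of ${}^\kappa\kappa$ into $\Gamma$ densely realizing the discontinuity pattern; I would convert this into a failure of $\DeltaZeroTwo(\kappa)$-measurability, most cleanly by feeding it into the Kechris--Louveau--Woodin dichotomy $\KLW_\kappa^H$ of Theorem~\ref{theorem: ODD and KLW^H}, so that the homomorphism yields a reduction of $(\RR_\kappa,\QQ_\kappa)$ to $(f^{-1}(B),f^{-1}({}^\kappa\kappa\setminus B))$ for a suitable basic clopen $B\subseteq{}^\kappa\kappa$, witnessing that $f^{-1}(B)$ is not $\DeltaZeroTwo(\kappa)$. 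Under the standing assumption that $f$ is $\DeltaZeroTwo(\kappa)$-measurable this alternative is impossible, so the coloring alternative holds and the forward direction follows.

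The main obstacle is the design and verification of $H$ together with the homomorphism-case analysis. Three points require care in the generalized setting: first, encoding \emph{discontinuity at a $\kappa$-limit} as a box-open dihypergraph whose independent sets are exactly the continuity pieces, which is sensitive to the fact that convergence in ${}^\kappa\kappa$ is along $\kappa$-sequences and that points of different cofinalities behave differently than in the countable case; second, arranging the definition so that the independent pieces can be taken relatively closed, so that the coloring alternative delivers \emph{closed} pieces rather than merely pieces of continuity; and third, turning the continuous homomorphism into a genuine non-$\DeltaZeroTwo(\kappa)$ preimage, which I expect to be the crux and which I would handle uniformly through the already-established $\KLW_\kappa^H$ rather than by a bare-hands complexity computation.
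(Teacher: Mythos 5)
Your backward direction and your coloring-alternative analysis are fine and match the paper's (the dihypergraph you describe is exactly the paper's $\JJ^f$, and your required property of independent sets is Lemma~\ref{lemma: JR independence}). However, two steps of the proposal have genuine gaps.

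First, your reduction to definable $f$ fails. The statement $\JR_\kappa(X)$ quantifies over \emph{all} functions $f:X\to{}^\kappa\kappa$, and your justification for assuming $\graph(f)\in\defsetsk$ --- that in the models produced by Theorem~\ref{main theorem} every subset of ${}^\kappa\kappa$ lies in $\defsetsk$ --- is false. Those models satisfy $\ZFC$, hence contain Bernstein sets of size $2^\kappa$; if every subset of ${}^\kappa\kappa$ were in $\defsetsk$ there, then $\ODD\kappa\kappa(\defsetsk,\defsetsk)$ together with Corollary~\ref{cor: PSP from ODD} (applied to the definable box-open dihypergraph $\dhK\kappa{{}^\kappa\kappa}$) would yield $\PSP_\kappa$ for every set, a contradiction. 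Moreover, the theorem is an outright implication from $\ODD\kappa\kappa(\defsetsk,\defsetsk)$ and must be proved in an arbitrary model of that hypothesis, not just in collapse extensions. The gap is closable, but by a different observation, which is how the paper argues in Corollary~\ref{cor: Jayne Rogers}: if $f\notin\defsetsk$ then \emph{both} sides of the equivalence $\JR_\kappa^f$ fail, because any $\DeltaZeroTwo(\kappa)$-measurable function on a definable $X$ is definable from the $\kappa$-sequence of codes for the preimages $f^{-1}(N_s)$, and any $\kappa$-continuous function with closed pieces is definable from codes for its pieces; so $\JR_\kappa^f$ holds vacuously for such $f$.

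Second, and more seriously, your treatment of the homomorphism alternative --- ``handle it uniformly through the already-established $\KLW_\kappa^H$'' --- does not work. Theorem~\ref{theorem: ODD and KLW^H} concerns dihypergraphs of the form $\CC^{Y,H}$, whose hyperedges converge blockwise to a point of the ambient space; hyperedges of $\JJ^f$ need not converge in ${}^\kappa\kappa\times{}^\kappa\kappa$ at all, since the second coordinates merely avoid a neighbourhood of $f(\lim_\alpha x_\alpha)$ and may be unbounded with no convergent subsequence. One can fit $\JJ^f$ into the $\KLW$ framework only under precompactness hypotheses on $\dom(f)$ and $\ran(f)$ --- this is exactly Proposition~\ref{JR from KLW or THD} and Corollary~\ref{cor: THD implies JR weakly compact}, which need $K_\kappa$ bounds, respectively weak compactness of $\kappa$ --- and no such hypothesis is available here. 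In general, the passage from a continuous homomorphism from $\dhH\kappa$ to $\JJ^f\restr\graph(f)$ to a continuous $g:{}^\kappa2\to X$ such that $f\comp g$ reduces $\RR_\kappa$ to a closed set (which is what makes the non-$\DeltaZeroTwo(\kappa)$ contradiction go, since $\RR_\kappa$ is not $\Fsigma(\kappa)$) is precisely the hard direction of Theorem~\ref{theorem: JR main theorem}: one must first replace the given order homomorphism by one, $\psi=(\psi_0,\psi_1)$, for which the $f$-images of the limit points arising from $\psi_0$ avoid $\closure{\ran([\psi_1])}$ --- a three-case construction relying on Lemma~\ref{lemma: small Lim} --- and only then apply Lemma~\ref{lemma: nice reduction}. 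This construction is the technical core of the proof, and deferring it to $\KLW_\kappa^H$ leaves the main step unproved.
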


At the end of Section \ref{section: applications}, 
\todol{This paragraph changed due to changes in Section 6.6}
we prove some implications and separations
between applications of the open dihypergraph dichotomy such as
the above versions of the Hurewicz dichotomy,
the Kechris--Louveau--Woodin dichotomy and its generalization to arbitrary sets, 
the $\kappa$-perfect set property, 
and the analogue of the Jayne--Rogers theorem. 
Several of these results rely on the comparison relation between dihypergraphs introduced in Subsection~\ref{subsection: full dihypergraphs}. 
For example, the next results follow as special cases from 
Propositions~\ref{connection btw KLW, KLWdis and PSP} 
\ref{prop: PSP KLW new},
and
\ref{prop: PSP THD new} 
%and \ref{remark: PSP THD non weakly compact},
and
Corollaries~\ref{cor: THD implies JR weakly compact} and~\ref{cor: KLW to THD new}.

\begin{theorem}\ 
\label{intro theorem on implications}
\begin{enumerate-(1)}
\item
\label{intro theorem on implications 1}
$\KLW_\kappa(\defsetsk,\defsetsk)
\Longleftrightarrow\KLWdis_\kappa(\defsetsk,\defsetsk)\land\PSP_\kappa(\defsetsk)
\Longrightarrow\THD_\kappa(\defsetsk,\defsetsk)$.%
\footnote{$\KLW_\kappa(\mathcal C,\mathcal D)$ states that $\KLW_\kappa(X,Y)$ holds for all subsets $X\in\mathcal C,\,Y\in\mathcal D$ of ${}^\kappa\kappa$, while
$\KLWdis_\kappa(\mathcal C,\mathcal D)$ states this for all such \emph{disjoint subsets}. 
Thus, $\KLWdis_\kappa(\mathcal C):=\KLWdis(\mathcal C,\pwrset({}^\kappa\kappa))$ 
is a direct analogue of the Kechris--Louveau--Woodin dichotomy
for classes $\mathcal C$ of subsets of the $\kappa$-Baire space.
%$\KLW_\kappa(X,Y)$ holds for all \emph{disjoint} subsets $X\in\mathcal C,\,Y\in\mathcal D$ of ${}^\kappa\kappa$.
$\THD_\kappa(\mathcal C,\mathcal D)$, $\PSP_\kappa(\mathcal C)$ and $\JR_\kappa(\mathcal C)$ are defined similarly in Definitions~\ref{def: perfect set}, \ref{def: top Hwd} and~\ref{def: JR}.}%
${}^{\text{,}}$%
\footnote{Note that if $\kappa$ is not weakly compact, then
$\PSP_\kappa(\defsetsk)\Longrightarrow\THD_\kappa(\defsetsk,\pwrset({}^\kappa\kappa))$ 
by \cite{LuckeMottoRosSchlichtHurewicz}*{Proposition~2.7};
see also Proposition~\ref{remark: PSP THD non weakly compact}.
This follows from the fact that ${}^\kappa 2$ is homeomorphic to ${}^\kappa\kappa$ by \cite{hung1973spaces}*{Theorem 1}.}
\item\label{intro theorem on implications new KLWdis}
$\KLW_\kappa(\defsetsk,\defsetsk)\Longleftrightarrow\KLWdis_\kappa(\defsetsk,\defsetsk)$
if the $\kappa$-perfect set property holds for closed sets.%
\item\label{intro theorem on implications new THD}
$\THD_\kappa(\defsetsk,\defsetsk)\Longrightarrow\PSP_\kappa(\defsetsk)$
if the $\kappa$-perfect set property holds for $\kappa$-compact sets.%
\item
\label{intro theorem on implications 2}
$\THD_\kappa(\defsetsk,\defsetsk)\Longrightarrow\JR_\kappa(\defsetsk)$ if $\kappa$ is weakly compact.
\end{enumerate-(1)}
\end{theorem}

The above versions of the Hurewicz dichotomy, 
the asymmetric $\kappa$-Baire property and the Jayne--Rogers theorem for definable subsets of ${}^\kappa\kappa$
are consistent relative to an inaccessible cardinal by 
Theorem~\ref{main theorem}. 
The first two statements provide new proofs of the main results of \cites{SchlichtPSPgames, Hurewiczdef} and answer two questions of Zdomskyy \cite{zdomskyy}.\footnote{See \cite{zdomskyy}*{pages 6-7}.} 
The variant $\KLW_\kappa(\defsets\kappa):=\KLW_\kappa(\defsetsk,\powerset({}^\kappa\kappa))$ of the Kechris--Louveau--Woodin dichotomy,
%\footnote{I.e., $\KLW_\kappa(X,Y)$ for all definable $X$ and arbitrary subsets $Y$ of ${}^\kappa\kappa$.}  
its extension $\KLW^\ddim_\kappa(\defsetsk)$
relative to box-open $\ddim$-dihypergraphs for $2\leq\ddim\leq\kappa$,
the determinacy of V\"a\"an\"anen's perfect set game and 
in fact $\cbii(X)$ 
for arbitrary subsets $X$ of ${}^\kappa\kappa$
are consistent relative to a Mahlo cardinal. 
The former answers a question of L\"ucke, Motto Ros and the first-listed author and solves a recent problem of Bergfalk, Chodounsk\'y, Guzm\'an and Hru\v s\'ak.\footnote{See Remark~\ref{remark: Sigma02 complete}.}
The latter extends a result of V\"a\"an\"anen \cite{VaananenCantorBendixson} from closed sets to arbitrary subsets of~${}^\kappa\kappa$. 
Moreover, it lowers the upper bound for the consistency strength of a dichotomy studied in \cite{SzVaananen} 
from a measurable to a Mahlo cardinal.

\newpage 

%%%%%%%%%%%%%%%%%%
\section{Preliminaries} 
\label{section preliminaries} 

\numberwithin{theorem}{subsection} 
\subsection{Notation}
\label{preliminaries: notation}

$\alpha$, $\beta$, $\gamma$, $\delta$ denote ordinals and $\kappa$, $\lambda$, $\mu$, $\nu$ cardinals. 
$\kappa$ always denotes an infinite cardinal with $\kappa^{<\kappa}=\kappa$, unless otherwise stated.
$\Lim$ and $\Succ$ denote the classes of limit and successor ordinals, respectively. 
If $\alpha=\beta+1$, then $\alpha-1$ denotes $\beta$. 
For any set $\ddim$,
a \emph{$\ddim$-sequence} is a function $f: \ddim\to V$. 
We write $\ddim$-sequences as $\bar{x}=\langle x_i : i\in\ddim\rangle$. 
$\langle c\rangle^{\ddim}$ denotes the constant 
$\ddim$-sequence with value $c$. 
We write $\bar x^\pi:=\langle x_{\pi(i)}:i\in\ddim\rangle$ for any $D$-sequence $\bar x=\langle x_i:i\in D\rangle$ and $\pi: d\rightarrow D$, where $D$ is any set. 
A subsequence of $\bar x=\langle x_i:i\in \alpha\rangle$ is $\bar x^\pi=\langle x_{\pi(i)}:i\in \beta\rangle$ for some $\beta\leq\alpha$ and some strictly increasing map $\pi:\beta\to\alpha$. 
\index{sequences!coordinatewise restriction \idf $\bar x\cwrestr\alpha$}
For any set $\ddim$,
$\alpha<\kappa$ and any sequence 
$\bar x=\langle x_i:i\in\ddim \rangle\in{}^\ddim{({}^\kappa\kappa)}$, 
let $\bar x\cwrestr\alpha:=\langle x_i\restr\alpha:i\in\ddim\rangle$. 
\index{sequences!concatenation\idf $s\conc t$, $\bigoplus_{\alpha<\beta} s_\alpha$}
$s\conc t$ denotes the concatenation of two sequences $s$ and $t$,
and $\bigoplus_{\alpha<\beta} s_\alpha$ denotes the concatenation of the sequences
$s_\alpha$ for $\alpha<\beta$.
Given a function $f:X\to Y$, write $f(U)$ for the pointwise image of a subset $U$ of $X$ under $f$, and
write $f^{-1}(W)$ for the preimage of a subset $W$ of $Y$.
For any set $\ddim$, let $f^\ddim:{}^\ddim X\to{}^\ddim Y$ denote the function defined by letting
$f^\ddim(\langle x_i:i\in \ddim\rangle) := 
\langle f(x_i):i\in\ddim\rangle$
for all $\langle x_i:i\in\ddim\rangle\in{}^\ddim X$. 
$\id_X$ denotes the identity function on $X$.

A class $X$ is \emph{definable from a set $y$} if $X=\{x : \varphi(x,y)\}$ for some first order formula $\varphi(v_0,v_1)$ with two free variables. 
\index{definable sets\idf $\defsetsk$, $X^\kappa_{\varphi,y}$} 
Let $\defsets\kappa$ denote the class of those sets which are definable from a $\kappa$-sequence of ordinals. 
By a \emph{definable} set, we always mean an element of $\defsetsk$ when $\kappa$ is clear from the context.
We use the following notation for definable subsets of the $\kappa$-Baire space:
If $\varphi(v_0,v_1)$ is a first order formula with two free variables and $y$ is a set, write  
$X^\kappa_{\varphi,y}:=\{x\in{}^{\kappa}\kappa: \varphi(x,y)\}$.  
We further write $X_{\varphi,y}$ for $X^\kappa_{\varphi,y}$ if $\kappa$ is clear from the context. 

%Basic open subsets of ${}^\kappa\ddim$ are denoted $N_t:=\{x\in {}^\kappa\ddim : t\subseteq x\}$ for any discrete topological space $\ddim$
For any discrete topological space $\ddim$, ${}^\kappa\ddim$ 
denotes the space of functions from $\kappa$ to $\ddim$ equipped with 
\index{space of functions\idf${}^\kappa\ddim$}%
\index{topology!bounded\idf}
the \emph{bounded topology} 
\index{topology!box@$\lle\kappa$-box\idf} 
(or \emph{$\lle\kappa$-box topology}),
i.e., the topology generated by the base $\{N_t:t\in{}^{<\kappa}\ddim\}$, where
$N_t=\{x\in{}^\kappa\ddim : t\subseteq x\}$ 
for each $t\in{}^{<\kappa}\ddim$.%
\footnote{In particular, ${}^\kappa\kappa$ denotes the $\kappa$-Baire space and ${}^\kappa 2$ denotes the $\kappa$-Cantor space. Moreover, if $\kappa=\omega$, then the bounded topology equals the product topology.}
\todog{By definition, the \emph{$\lle\kappa$-box topology} is generated by basic open sets of the form $U:=\prod_{i\in \kappa} U_i$, where each $U_i$ is an open subset of $\ddim$ and $U_i\neq\ddim$ only $\lle\kappa$ many times. If $\ddim$ is a discrete space, then the two topologies are equal: since $N_t=\prod_{i<\lh(t)}\{t(i)\}$, the bounded topology is coarser that the $\lle\kappa$-box topology. Conversely, the set $U:=\prod_{i<\kappa} U_i$ is the union of all those $N_t$ with $t\in\prod_{i<\alpha} U_i$, where $\alpha=\sup\{i:U_i\neq\ddim\}$, and hence it is open in the bounded topology as well.}
Ordinals are always equipped with the discrete topology. 
Suppose that $X$ is a topological space. 
The closure of a subset $Y$ of $X$ is denoted by $\closure Y$.
$Y'$ denotes the set of \emph{limit points} of $Y$,
\index{limit pointsa@limit points\idf $X'$}
i.e., the set of those $x\in X$ 
with $x\in\closure{Y\setminus\{x\}}$.
The \emph{$\kappa$-Borel} subsets of $X$ are defined by closing the set of basic open sets under unions of length $\kappa$ and complements. 
The first level of the 
\index{Borel@$\kappa$-Borel levels!additive\idf${\bf\Sigma}^0_\gamma(\kappa)$} 
\index{Borel@$\kappa$-Borel levels!multiplicative\idf${\bf\Pi}^0_\gamma(\kappa)$} 
\index{Borel@$\kappa$-Borel levels!selfdual\idf${\bf\Delta}^0_\gamma(\kappa)$} 
$\kappa$-Borel hierarchy is the class ${\bf\Sigma}^0_1(\kappa)$ of open sets. 
For any $\gamma$ with $1<\gamma<\kappa^+$, ${\bf\Sigma}^0_\gamma(\kappa)$ sets are of the form $\bigcup_{\alpha<\kappa} A_\alpha$, where each $A_\alpha$ is in ${\bf\Pi}^0_\beta(\kappa)$ for some $\beta<\gamma$. 
For any $\gamma$ with $0<\gamma<\kappa^+$, ${\bf\Pi}^0_\gamma(\kappa)$ sets are complements of ${\bf\Sigma}^0_\gamma(\kappa)$ sets, and 
a set is ${\bf\Delta}^0_\gamma(\kappa)$ if it is both ${\bf\Sigma}^0_\gamma(\kappa)$ and ${\bf\Pi}^0_\gamma(\kappa)$. 
%\todog{This version of the def of ``$\kappa$-analytic'' is optimal for applying Lemma~\ref{ODD for continuous images}.}
The \emph{$\kappa$-analytic} 
\index{analytic@$\kappa$-analytic\idf $\analytic(\kappa)$}
or $\analytic(\kappa)$ subsets of ${}^\kappa\kappa$ 
are subsets of the form $f(Y)$, where $f: {}^\kappa\kappa\to {}^\kappa\kappa$ is continuous and $Y$ is a closed subset of ${}^\kappa\kappa$. 
A $\coanalytic(\kappa)$ set is a complement of a $\analytic(\kappa)$ set. 
For collections $\mathcal C$ and $\mathcal D$ of subsets of $X$, let
\index{meet of!classes\idf$\mathcal C\wedge\mathcal D$}
$\mathcal C\land\mathcal D$ denote the class of all sets of the form $Y\cap Z$ where $Y\in\mathcal C$ and $Z\in\mathcal D$, 
and
\index{dual class of $\mathcal C$\idf$\check{\mathcal C}$}
let $\check{\mathcal C}$ denote the class of complements of elements of $\mathcal C$.
A function $f$ from  $X$ to a topological space $Y$ is called \emph{$\mathcal{C}$-measurable} 
\index{function!measurable@$\mathcal{C}$-measurable\idf}
%with respect to a collection $\mathcal C$ of subsets of $X$ 
if $f^{-1}(U) \in\mathcal{C}$ for every open subset $U$ of~$Y$.

A topological space is 
\index{compact@$\kappa$-compact\idf} 
\emph{$\kappa$-compact} (or 
\index{Lindelöf@$\kappa$-Lindelöf space\idf} 
\emph{$\kappa$-Lindel\"of}) if every open cover has a subcover of size strictly less than $\kappa$. 
It is \emph{$K_\kappa$} 
\index{Kkappa@$K_\kappa$ space\idf}
if it is a union of $\kappa$ many $\kappa$-compact subspaces. 
A subset $X$ of ${}^\kappa\kappa$ is 
\index{precompact@$\kappa$-precompact\idf} 
\emph{$\kappa$-precompact} if its closure is $\kappa$-compact. 
\index{bounded subset of ${}^\kappa\kappa$\idf} 
$X$ is \emph{bounded} by
$b\in{}^\kappa\kappa$ 
if $x(\alpha)<b(\alpha)$ for all $x\in X$ and $\alpha<\kappa$. 
Write $x\leq^* y$ if there is some $\alpha<\kappa$ such that $x(\beta)<y(\beta)$ for all $\alpha<\beta<\kappa$. 
A subset $X$ of ${}^\kappa\kappa$ is 
\index{eventually bounded subset of ${}^\kappa\kappa$\idf} 
\emph{eventually bounded} by $b\in{}^\kappa\kappa$ if $x\leq^* b$ for all $x\in X$. 
An uncountable cardinal $\kappa$ is 
\index{weakly compact cardinal\idf} 
weakly compact if and only if 
${}^\kappa 2$ is $\kappa$-compact, and if and only if
every closed and bounded subset of ${}^\kappa\kappa$ is $\kappa$-compact.% 
\footnote{See \cite{hung1973spaces}*{Theorem 1} and 
\cite{LuckeMottoRosSchlichtHurewicz}*{Lemma~2.6}. Conversely, it is easy to see that $\kappa$-compact subsets of ${}^\kappa\kappa$ are always closed and bounded for all infinite cardinals $\kappa$ with $\kappa^{<\kappa}=\kappa$.}

%%%%%%%%%%%%%%%%%%
\subsection{Dihypergraphs} 
\label{subsection: dihypergraphs}
In this subsection, $X$ denotes a set and $\ddim$ a set of size at least $2$.
$\dhC\ddim{X}$ \index{diagonal\idf$\dhC\ddim{X}$} denotes the \emph{diagonal}, i.e., the set of constant sequences
$\langle c\rangle^{\ddim}$ with $c\in X$. 
Let $\Sym(\ddim)$ \index{symmetric group\idf $\Sym(\ddim)$} denote the \emph{symmetric group} of all permutations of $\ddim$. 

\begin{definition}\ 
\label{def: dihypergraph}
\begin{enumerate-(a)} 
\item\label{def: dihypergraph ddim}
A \emph{$\ddim$-dihypergraph}\footnote{This is short for \emph{$\ddim$-dimensional directed hypergraph}.} 
\index{dihypergraph!$\ddim$-dihypergraph\idf} 
on $X$ is a subset $H$ of ${}^\ddim X\,\setminus\,\dhC\ddim{X}$. 
Elements of $H$ are called \emph{hyperedges}\index{hyperedge\idf} of $H$.
\item\label{def: dihypergraph hypergraph}
A \emph{$\ddim$-hypergraph}\index{hypergraph!$\ddim$-hypergraph\idf} 
on $X$ is a $\ddim$-dihypergraph $H$ on $X$ that is closed under permutation of hyperedges, i.e., $\bar x^\pi\in H$ for all $\bar x\in H$ and all $\pi\in \Sym(\ddim)$.
\end{enumerate-(a)} 
\end{definition}

A \emph{digraph}\index{digraph\idf} on $X$ is then a $2$-dihypergraph on $X$, while a \emph{graph}
\index{graph!A@on a set\idf}%
on $X$ is a $2$-hypergraph on $X$, i.e., a symmetric irreflexive binary relation on $X$.

\begin{definition}
\ 
%Let $X$ be a set, and let $\ddim$ be a nonempty set.
\begin{enumerate-(a)}
\item
$\dhK\ddim X:= 
{}^\ddim X-\dhC\ddim{X}$ 
is the \emph{complete $\ddim$-hypergraph on $X$}.
\index{hypergraph!complete $\ddim$-hypergraph\idf $\dhK\ddim X$}
\item 
$\gK X:=\dhK 2 X$ is the \emph{complete graph} on $X$. 
\index{graph!complete graph on!$X$\idf$\gK X$}
\item
$\dhII\ddim X$ 
\index{hypergraph!of sequences!IIX@injective in $X$\idf$\dhII\ddim X$} 
denotes the $\ddim$-hypergraph consisting of all 
injective elements of ${}^\ddim X$.
\end{enumerate-(a)}
\end{definition}

Note that $\dhII 2 X=\gK X$, but $\dhII\ddim X\subsetneq \dhK\ddim X$ if $\ddim\geq 3$ and $|X|\geq 2$.

\begin{definition}
\label{def: independent}
\label{def: coloring}
\label{def: homomorphism}
\label{def: clique}
Let $H$ and $I$ be $\ddim$-dihypergraphs on $X$ and $Y$, respectively. Let $Z$ be a subset of $X$. 
\begin{enumerate-(a)}
\item The \emph{restriction} \index{dihypergraph!restriction to a set\idf $H\restr X$}
of $H$ to $Z$ is $H\restr Z:=H\cap{}^\ddim Z$.
\item
$Z$ is \emph{$H$-independent} \index{independent@$H$-independent set\idf}
if $H\restr Z=\emptyset$.
%$H\cap{}^\ddim Z=\emptyset$.
\item 
$Z$ is an \emph{$H$-clique} \index{clique@$H$-clique\idf}
if $\dhK \ddim Z \subseteq H$.
\item
Let $\lambda$ be a cardinal. A function $c:X\to\lambda$
is a \emph{$\lambda$-coloring} \index{coloring@$\lambda$-coloring\idf} 
of $H$ if $c^{-1}(\{\alpha\})$ is $H$-independent for all $\alpha<\lambda$.%\footnote{Equivalently, if $c$ is a homomorphism from $H$ to $\dhK\ddim \lambda$.}
\item
A \emph{homomorphism} \index{homomorphism!between dihypergraphs \idf}
from $H$ to $I$ is a function $f:X\to Y$
such that $f^\ddim(H)\subseteq I$.
\end{enumerate-(a)}
\end{definition}

Given a family of topological spaces $\langle X_i : i\in \ddim\rangle$,
the \emph{box-topology}\index{topology!box\idf}
on $\prod_{i\in \ddim} X_i$ is the topology generated by sets of the form
$\prod_{i\in \ddim} U_i$, where $U_i$ is an arbitrary open subset of $X_i$ for each $i\in \ddim$.
If $X$ is a subset of ${}^\kappa\kappa$, then
we always work with the {box topology} on ${}^\ddim X$ unless otherwise stated. 
\todog{We sometimes also consider the \emph{product topology}, but this is always explicitly stated, and ``product-open'' etc are defined in footnotes, so we don't need to mention it here.}%

In the following, let $H$ be a $\ddim$-dihypergraph on a topological space $X$.
We call $H$
\emph{box-open} 
\index{dihypergraph!box-open\idf}
on $X$ if it is open as a subset of 
${}^\ddim X$ with the box-topology.
{For finite $\ddim$, we will simply say that $H$ is \emph{open} on $X$.}
\todon{pagebreak inserted}

\pagebreak

\begin{definition}
\label{def: domain of H}
\label{def: relatively box-open}
\ 
%Suppose $H$ is a $\ddim$-dihypergraph on $X$.
\begin{enumerate-(a)}
\item
The \emph{domain} $\domh H$ \index{dihypergraph!domain\idf $\domh H$}
of $H$ is the set of all $x\in X$ such that $x$ is an element of some hyperedge of $H$.%
\footnote{More precisely, $\domh H=\bigcup_{i\in\ddim} \proj_i(H)$, where $\proj_i$ denotes projection onto the $i^\textrm{th}$ coordinate.}
\item 
$H$ is \emph{relatively box-open}
\index{relatively box-open\idf} 
\index{dihypergraph!relatively box-open\idf} 
if it is a box-open dihypergraph on its domain $\dom(H)$.
\end{enumerate-(a)}
\end{definition}

Note that $H$ is box-open on $X$ if and only if $H$ is relatively box-open and $\domh H$ is a relatively open subset of $X$.

%%%%%%%%%%%%%%%%%%
\subsection{Trees and order preserving maps}
\label{preliminaries: trees}

Let $\ddim$ and $D$ denote discrete topological spaces of size at least $2$ throughout this subsection. 
Suppose $s,t,u\in{}^{\leq\kappa}\ddim$ and $A\subseteq{}^{\leq\kappa}\ddim$. 
\index{length\idf $\lh(t)$, $\lh(A)$}
Let $\lh(t):=\dom(t)$ and $\lh(A):=\sup\{\lh(u):u\in A\}$. 
\index{predecessors of $t$\idf$\kkppred t$}
Let $\kkppred t:=\{s\in{}^{<\kappa}\ddim: s\subsetneq t\}$ 
\index{successors of $t$\idf$\succ t$}
and $\succ{t}:=\{u\in {}^{<\kappa}\ddim: t\subsetneq u\}$. 
\todoo{new sentence: defined direct predecessor and direct successor} 
$s$ is called a \emph{direct predecessor} of $u$ and $u$ a \emph{direct successor} of $s$ if $s\subsetneq u$ and there is no $s\subsetneq t\subsetneq u$. 
$s$ and $t$ are called \emph{compatible},
denoted $s \compat t$, if $s\subseteq t$ or $t\subseteq s$.%
\footnote{For general posets $\PP$, this is called \emph{comparable}. Conditions $p,q\in \PP$ are called \emph{compatible} if there exists a common extension $r\leq p,q$.} 
$s$ and $t$ are called \emph{incompatible}, denoted $s \perp t$, if they are not compatible. 
\index{meet of!sequences\idf$s\wedge t$}
%%\index{meet!of sequences!\idf$s\wedge t$}
Let $s\wedge t$ denote the maximal $r\in {}^{\leq\kappa}\ddim$ with $r\subseteq s$ and $r\subseteq t$. 
Note that $s\wedge t$ is the node where $s$ and $t$ split if $s\perp t$, and $s\wedge t=s$ if $s\subseteq t$. 
For incompatible $s$ and $t$, let 
\index{distance\idf$\Delta(s,t)$}
$\Delta(s,t):=\min\{ \alpha<\kappa : s(\alpha)\neq t(\alpha)\}$. 
Then $\Delta(s,t)=\lh(s\wedge t)$ and $s\wedge t= s\restr\Delta(s,t)=t\restr\Delta(s,t)$.

A \emph{subtree} \index{tree!subtree\idf}
of ${}^{<\kappa}\ddim$ is a subset $T$ that is closed under forming initial segments (i.e., it is downward closed). 
%Its \emph{body}, i.e., its set of \emph{branches}, is denoted by
%\index{tree!set of branches\idf$[T]$}%
\index{tree!body\idf$[T]$}%
\index{tree!branch\idf}%
$[T]:=\{ x\in {}^\kappa\ddim :\: {\forall \alpha<\kappa}\ \ {x{\restr}\alpha \in T}\}$
denotes its \emph{body}; we call elements of $[T]$ \emph{branches} of $T$.
%denotes its \emph{body}, i.e., its set of \emph{branches}.
%Note that it is a closed subset of ${}^\kappa\ddim$.
Note that $[T]$ is a closed subset of ${}^\kappa\ddim$.
The \emph{height} \index{tree!height\idf$\height(T)$}
of $T$ is defined as $\height(T):=\sup\{\lh(t)+1:t\in T\}$. 
Thus, $\height(T)=\lh(T)$ if $\height(T)\in\Lim\cup \{0\}$ and $\height(T)=\lh(T)+1$ if $\height(T)\in\Succ$. 
The \emph{stem} \index{tree!stem\idf}
of $T$ is 
the unique maximal node $s\in T$ with $s\compat t$ for all $t\in T$. 

If $A$ is a subset of ${}^{\leq\kappa}\ddim$, let 
\index{tree!of initial segments\idf$T(A)$}
$T(A):=\{t\in{}^{<\kappa}\ddim :\: {\exists a\in A}\ \ {t\subseteq a} \}$
\todog{$T(A)$ is used for $A\subseteq{}^{<\kappa}\ddim$ in Remark~\ref{ran[e] not closed} and Def~\ref{T(e) def}}%
denote the tree of initial segments of its elements.
Note that if $X\subseteq{}^\kappa\ddim$, 
then
$[T(X)]$ is the closure of~$X$.

\begin{definition}
\label{def: perfect set} 
Suppose $T$ is a subtree of ${}^{<\kappa}\ddim$. %and $t\in T$. 
\begin{enumerate-(a)} 
\item 
\index{tree!pruned\idf}
$T$ is \emph{pruned} if every node $t\in T$ extends to a branch $x\in [T]$. 
\item 
%\index{tree!splitting node\idf}
\index{node!splitting A@splitting\idf}
A node $t$ in $T$ is \emph{splitting} if it has at least two direct successors in $T$.
\item 
\index{tree!cofinally splitting\idf}
$T$ is \emph{cofinally splitting} if for each $t\in T$, there exists a splitting node $u$ in $T$ with $t\subseteq u$. 
\item 
\index{tree!closed@${<}\kappa$-closed\idf}%
\index{closed tree@${<}\kappa$-closed tree\idf}
$T$ is \emph{${<}\kappa$-closed} if for any strictly increasing sequence $\bar{t}=\langle t_i : i<\alpha \rangle$ in $T$ with $\alpha<\kappa$, there exists an upper bound $t\in T$ for $\bar{t}$. 
\item 
\index{tree!perfect@$\kappa$-perfect\idf}%
\index{perfect@$\kappa$-perfect!tree\idf}
$T$ is \emph{$\kappa$-perfect} if it is cofinally splitting and ${<}\kappa$-closed. 
\item 
\index{perfect@$\kappa$-perfect!set\idf}
A closed subset $X$ of ${}^\kappa\ddim$ is \emph{$\kappa$-perfect} if $T(X)$ is $\kappa$-perfect.
\end{enumerate-(a)} 
The 
\index{perfect set property@$\kappa$-perfect set property for a!set\idf $\PSP_\kappa(X)$}
\emph{$\kappa$-perfect set property} 
$\PSP_\kappa(X)$
holds for a subset $X$ of ${}^\kappa\ddim$ if either $|X|\leq\kappa$ or $X$ contains a $\kappa$-perfect subset. 
\index{perfect set property@$\kappa$-perfect set property for a!class\idf $\PSP_\kappa(\mathcal C)$}
$\PSP_\kappa(\mathcal C)$ states that 
$\PSP_\kappa(X)$
\todog{We need to keep ${}^\kappa\kappa$ instead of ${}^\kappa\ddim$ in the definition of $\PSP_\kappa(\mathcal C)$} 
holds for all subsets $X\in \mathcal C$ of~${}^\kappa\kappa$.
\end{definition}

Suppose that $\PP$ and $\QQ$ are posets. 
A partial function $\iota: \PP\partialto \QQ$ 
is called \emph{strict order preserving} 
\index{function!strict order preserving\idf}
if  $p < q$ implies $\iota(p)< \iota(q)$ for all 
$p,q\in\dom(\iota)$, 
and 
$\iota$ is called \emph{strict order reversing} 
\index{function!strict order reversing\idf}
if  $p < q$ implies $\iota(p) > \iota(q)$ for all 
$p,q\in\dom(\iota)$. 

\begin{definition}
\label{def: order preserving}
Let $\iota$ be a partial function from ${}^{<\kappa}\ddim$ to ${}^{<\kappa}D$.
\begin{enumerate-(a)}
\item \label{pp def}
$\iota$ is \emph{$\perp$-preserving} 
\index{function!$\perp$-preserving\idf}
if $s\perp t$ implies $\iota(s)\perp\iota(t)$ for all $s,t\in\dom(\iota)$.
\item \label{cont def}
Suppose that $\iota$ is strict order preserving. 
It is called \emph{continuous} 
\index{function!continuous strict order preserving\idf}
if $\iota(t)=\bigcup_{s\subsetneq t}\iota(s)$ 
for all $t\in\dom(\iota)$ with  $\lh(t)\in\Lim$ 
and $\kkppred t\subseteq\dom(\iota)$.
\end{enumerate-(a)}
\end{definition}

\begin{definition}
\label{[e] def}
\label{T(e) def}
Let $\iota$ be a strict order preserving partial function from ${}^{<\kappa}\ddim$ to~${}^{<\kappa}D$.
\begin{enumerate-(a)}
\item\label{item: [e] def}
\index{function!induced by a strict order preserving function \idf$[\iota]$} 
$[\iota]$ denotes the partial function from ${}^\kappa\ddim$ to ${}^\kappa D$ 
such that 
$\dom([\iota])$ consists of those
\todog{This more complicated def of $\dom([\iota])$ is needed since we won't always assume that $\dom(\iota)$ is a subtree}%
$x\in{}^{\kappa}\ddim$ with $x\restr\alpha\in\dom(\iota)$  for unboundedly many  $\alpha<\kappa$ and \text{for all $x\in\dom([\iota])$} 
\[
[\iota](x):= 
\bigcup_{\substack{\alpha<\kappa,\\ x\restr\alpha\in\dom(\iota)}} \iota(x\restr\alpha).
%\bigcup\{\iota(x\restr\alpha):\:\alpha<\kappa,\ x\restr\alpha\in\dom(\iota)\}.
\]
\item\label{item: T(e) def}
\index{tree!induced by a strict order!preserving function \idf$T(\iota)$}
$T(\iota):=T(\ran(\iota))$ 
denotes the tree of initial segments of elements of $\ran(\iota)$. 
\end{enumerate-(a)}
\end{definition}

We will usually assume that $\dom(\iota)$ is a pruned subtree of ${}^{<\kappa}\ddim$, 
\todog{For $\dom([\iota])=[\dom(\iota)]$, it suffices that $\dom(\iota)$ is a subtree. For $T(\iota)=T(\ran([\iota]))$,
it suffices that every $s\in\dom(\iota)$ extends to some $x\in\dom([\iota])$ (but $\dom(\iota)$ need not be a subtree).}
in which case $\dom([\iota])=[\dom(\iota)]$ and
$\iota(x)=\bigcup_{\alpha<\kappa} \iota(x\restr\alpha)$ for all $x\in\dom([\iota])$.
Moreover,
$T(\iota)=T(\ran([\iota]))$ in this case,
and therefore
$[T(\iota)]$ is the closure of $\ran([\iota])$ in ${}^\kappa D$.

\begin{lemma}
\label{[e] continuous}
Let $\iota$ be a strict order preserving partial function from ${}^{<\kappa}\ddim$ to ${}^{<\kappa} D$.
\begin{enumerate-(1)}
\item\label{[e] cont}
$[\iota]$ is a continuous function from 
$\dom([\iota])$
to ${}^\kappa D$.
\item\label{[e] homeomorphism}
If $\iota$ is $\perp$-preserving, 
then
$[\iota]$ is a homeomorphism from $\dom([\iota])$ to $\ran([\iota])$.
\end{enumerate-(1)}
\end{lemma}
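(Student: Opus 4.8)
The plan is to first verify that $[\iota]$ is well-defined as a map into ${}^\kappa D$, then establish continuity for part~\ref{[e] cont}, and finally add injectivity and continuity of the inverse for part~\ref{[e] homeomorphism}. \emph{Well-definedness.} For $x\in\dom([\iota])$, strict order preservation makes $\langle\iota(x\restr\alpha):\alpha<\kappa,\ x\restr\alpha\in\dom(\iota)\rangle$ a $\subseteq$-increasing chain, so $[\iota](x)$ is a function defined on some ordinal $\leq\kappa$. Since $x\restr\alpha\in\dom(\iota)$ for unboundedly many $\alpha<\kappa$ and the lengths $\lh(\iota(x\restr\alpha))$ strictly increase along this cofinal set, regularity of $\kappa$ forces $\sup_\alpha\lh(\iota(x\restr\alpha))=\kappa$; hence $[\iota](x)\in{}^\kappa D$.

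\emph{Continuity (part \ref{[e] cont}).} I would verify continuity pointwise. Given $x\in\dom([\iota])$ and $\alpha<\kappa$, pick $\gamma<\kappa$ with $x\restr\gamma\in\dom(\iota)$ and $\lh(\iota(x\restr\gamma))\geq\alpha$, which exists by well-definedness. Then $\iota(x\restr\gamma)\subseteq[\iota](x)$, so $[\iota](x)\restr\alpha=\iota(x\restr\gamma)\restr\alpha$. For any $y\in\dom([\iota])\cap N_{x\restr\gamma}$ one has $y\restr\gamma=x\restr\gamma\in\dom(\iota)$, whence $\iota(y\restr\gamma)=\iota(x\restr\gamma)\subseteq[\iota](y)$ and $[\iota](y)\restr\alpha=[\iota](x)\restr\alpha$. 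Thus $[\iota]$ maps the relatively open neighbourhood $N_{x\restr\gamma}\cap\dom([\iota])$ into $N_{[\iota](x)\restr\alpha}$, giving continuity.

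\emph{Homeomorphism (part \ref{[e] homeomorphism}).} Assuming $\iota$ is $\perp$-preserving, I would first show injectivity: if $x\neq y$ in $\dom([\iota])$, choose $\delta,\delta'$ with $x\restr\delta,y\restr\delta'\in\dom(\iota)$ both long enough to witness $x\restr\delta\perp y\restr\delta'$; then $\perp$-preservation gives $\iota(x\restr\delta)\perp\iota(y\restr\delta')$, and since these are initial segments of $[\iota](x)$ and $[\iota](y)$ respectively, $[\iota](x)\neq[\iota](y)$. Since $[\iota]$ is a continuous bijection onto $\ran([\iota])$, it remains to show it is a relatively open map; equivalently, given $x$ and $\alpha<\kappa$ I would exhibit $\beta<\kappa$ with $N_{[\iota](x)\restr\beta}\cap\ran([\iota])\subseteq[\iota](N_{x\restr\alpha})$. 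Choosing $\delta\geq\alpha$ with $x\restr\delta\in\dom(\iota)$ and setting $\beta=\lh(\iota(x\restr\delta))$, if $z=[\iota](y)$ with $[\iota](y)\restr\beta=[\iota](x)\restr\beta=\iota(x\restr\delta)$, then $\iota(x\restr\delta)\subseteq[\iota](y)$; were $y\restr\delta\neq x\restr\delta$, taking $\delta'\geq\delta$ with $y\restr\delta'\in\dom(\iota)$ would give $y\restr\delta'\perp x\restr\delta$, hence $\iota(y\restr\delta')\perp\iota(x\restr\delta)$, contradicting that both are initial segments of $[\iota](y)$. So $y\restr\delta=x\restr\delta$ and in particular $y\restr\alpha=x\restr\alpha$, i.e.\ $z\in[\iota](N_{x\restr\alpha})$; as the $N_{x\restr\alpha}$ form a neighbourhood base this yields openness and hence continuity of the inverse.

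The bookkeeping is uniform across the cases, so the only genuinely delicate point is the well-definedness step, where it is the regularity of $\kappa$ (and not merely the unboundedness of $\{\alpha : x\restr\alpha\in\dom(\iota)\}$) that guarantees the increasing union of the images reaches full length $\kappa$. Every subsequent step then reduces to the single observation that compatible nodes of $\dom(\iota)$ have compatible $\iota$-images sitting as initial segments of the relevant branch, while incompatible nodes have incompatible images under the $\perp$-preservation hypothesis.
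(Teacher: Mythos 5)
Your proof is correct and follows essentially the same route as the paper's: the key inclusion $[\iota]\big(N_t\cap\dom([\iota])\big)\subseteq N_{\iota(t)}$ for $t\in\dom(\iota)$ gives continuity, and $\perp$-preservation upgrades this to the equality $[\iota]\big(N_t\cap\dom([\iota])\big)=N_{\iota(t)}\cap\ran([\iota])$, yielding injectivity and openness onto the range. The paper's proof is just a terser statement of exactly these two observations, so your extra detail (the well-definedness check via regularity of $\kappa$ and the pointwise neighbourhood bookkeeping) is a faithful expansion rather than a different argument.
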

\begin{proof}
Since $[\iota]$ is strict order preserving, we have
$[\iota]\big(N_t\cap \dom([\iota])\big)\subseteq N_{\iota(t)}$
for all $t\in\dom(\iota)$.
\ref{[e] cont} follows easily from this observation. 
To show \ref{[e] homeomorphism},
suppose that $\iota$ is $\perp$-preserving.
Then $N_{\iota(s)}\cap N_{\iota(t)}=\emptyset$
for all $s,t\in\dom(\iota)$ such that $s\perp t$.
Therefore $[\iota]$ is injective and
$[\iota]\big(N_t\cap \dom([\iota])\big)= N_{\iota(t)}\cap\ran([\iota])$
for all $t\in\dom(\iota)$.
This implies that $[\iota]$ is a homeomorphism from $\dom([\iota])$ to $\ran([\iota])$.
\end{proof}

Note that the converse of~\ref{[e] homeomorphism} fails for the map
$\iota:{}^{<\kappa}\kappa\to{}^{<\kappa}\kappa$ defined by fixing a nonempty element $s$ of ${}^{<\kappa}\ddim$ and 
defining
$\iota(\emptyset):=\emptyset$,
$\iota(t):= s$ for all $t\in{}^{<\kappa}\ddim$ with $\lh(t)=1$ 
and
$\iota(t)=s\conc t$ for all $t\in{}^{<\kappa}\ddim$ with $\lh(t)>1$. 
%The converse holds for the statement that  $[\iota]$ is injective and $\ran([\iota])\cap N_{\iota(t)}=[\iota](N_t)$ for all $t\in\dom(\iota)$.
The converse of~\ref{[e] homeomorphism} does hold for maps $\iota$ such that $\ran([\iota])\cap N_{\iota(t)}=[\iota](N_t)$ for all $t\in\dom(\iota)$. 
In fact, if $\iota$ has the latter property and 
$[\iota]$ is injective, then $\iota$ is $\perp$-preserving.

A \emph{$\wedge$-semilattice} is a 
\index{semilattice@$\wedge$-semilattice\idf}
poset $\PP$ such that for any two $p,q\in \PP$, there exists a greatest lower bound $p \wedge q$. 
We regard
${}^{<\kappa}\ddim$ and ${}^{<\kappa} D$ as a $\wedge$-semilattices with respect to $\subseteq$.%
\footnote{The greatest lower bound $s\wedge t$ of $s,\,t\in{}^{<\kappa}\ddim$ is defined at the beginning of this subsection.}

\begin{definition}
\label{def: wedge-homomorphism} 
Suppose $\PP$ and $\QQ$ are $\wedge$-semilattices and $\iota:\PP\to\QQ$.
\begin{enumerate-(i)}
\item
\index{homomorphism!meet homomorphism@$\wedge$-homomorphism\idf} 
$\iota$ is a \emph{$\wedge$-homomorphism} if $\iota(p\wedge q)=\iota(p)\wedge \iota(q)$ for all $p,q\in \PP$.
\item
$\iota$ is a \emph{strict $\wedge$-homomorphism} 
\index{homomorphism!strict meet homomorphism@strict $\wedge$-homomorphism\idf} 
if it is a strict order preserving $\wedge$-homomorphism.
\end{enumerate-(i)}
\end{definition}
Note that any $\wedge$-homomorphism is order preserving, 
but not necessarily strict order preserving.
Moreover, a strict order preserving map
\todog{``order preserving'' does not suffice for the right-to-left-direction. To see this, let $\iota(\emptyset)=\emptyset$,
and for all $s\in{}^{<\kappa}\kappa$, let
$\iota(\langle 0\rangle\conc s)=s$ and $\iota(\langle\alpha\rangle\conc s)=\langle\alpha\rangle\conc s$ if $\alpha\geq 1$.}
$\iota: {}^{<\kappa}\ddim\to {}^{<\kappa} D$ is a $\wedge$-homomorphism if and only if 
$\iota(t\conc\langle i\rangle) \wedge
\iota(t\conc\langle j\rangle)
= \iota(t)$
for all $t\in{}^{<\kappa}\ddim$ and all $i,j\in\ddim$ with $i\neq j$.\footnote{Note that if $\iota: {}^{<\kappa}\ddim\to {}^{<\kappa}D$ is strict order preserving, then $\iota$ is a strict $\wedge$-homomorphism with $\lh(\iota(t))=\lh(t)$ for all $t\in{}^\kappa\kappa$ if and only if $[\iota]$ is an isometry in the standard $\kappa$-ultrametric.}
This is a strong form of $\perp$-preservation.\footnote{$\perp$ refers to reverse inclusion on ${}^{\leq\kappa}\ddim$ throughout the paper.}%

%%%%%%%%%%%%%%%%%%
\subsection{Forcing}
\label{subsection: forcing preliminaries}
A forcing $\PP=(P,\leq_\PP,\one_\PP)$ is a triple such that $\leq_\PP$ is a pre-order (i.e., a reflexive transitive relation) on $P$ and $\one_\PP$ is a largest element of $\PP$ with respect to $\leq_\PP$. We 
\todoo{changed} 
conflate $\PP$ with its domain $P$, and we usually write $\leq$, $\perp$, $\one$ 
and $\forces$ instead of $\leq_{\PP}$, $\perp_\PP$, $\one_\PP$ and $\forces_\PP$, respectively.
For all $p,q\in \PP$, we let $p\compat q$ (or sometimes $p\compat_\PP q$) denote the statement that $p$ and $q$ are compatible, i.e., that $p\not\perp q$.
\index{Boolean!completion\idf$\BB(\PP)$}  
%\index{Boolean completion\idf$\BB(\PP)$}  
Let $\BB(\PP)$ denote the Boolean completion of $\PP$.\footnote{Note that $\BB(\PP)$ is unique up to isomorphism.} 
If $\PP$ is separative, we assume that $\PP$ is a dense subset of~$\BB(\PP)$.

\begin{definition} \ 
\begin{enumerate-(a)} 
\item 
\index{forcing!non-atomic\idf}
An \emph{atom} in a forcing $\PP$ is a condition $p\in\PP$ with no incompatible extensions. A forcing $\PP$ is \emph{non-atomic} if it has no atoms. 
\item 
\index{forcing!homogeneous\idf}
A forcing $\PP$ is \emph{homogeneous} if for all $p,q\in \PP$, there is an automorphism $\pi: \PP\to \PP$ such that $\pi(p)$ and $q$ are compatible. 
\end{enumerate-(a)} 
\end{definition}

\begin{definition} 
\todog{For separative posets, $i:\PP\to\QQ$ is a dense embedding iff it's a \emph{sub-isomorphism} (as defined in Philipp's paper).}
\label{def: equivalent forcings} 
\label{def: dense embedding} 
\label{def: pulling back names}
Suppose $\PP,\QQ$ are forcings.
\begin{enumerate-(a)} 
\item A \emph{dense embedding} 
\index{embedding!dense\idf}
$\iota:\PP\to\QQ$ is a homomorphism with respect to $\leq,\,\perp$ and $\one$ 
such that $\iota(\PP)$ is a dense subset of $\QQ$. 
\item\label{eqf}
\index{forcing!equivalence\idf}
Two forcings $\PP$ and $\QQ$ are 
\emph{equivalent} 
($\PP\simeq \QQ$) if there exist 
dense embeddings 
$\iota: \PP\to \RR$ and $\nu: \QQ\to\RR$ into some forcing $\RR$. 
\item\label{pbn} 
Let $\iota: \PP\to \QQ$ be a dense embedding. Define a $\PP$-name 
\index{name!induced by a dense embedding \idf$\sigma^\iota$}
$\sigma^\iota$ for each $\QQ$-name $\sigma$ by recursion on the rank as 
\[
\sigma^{\iota}:=\left\{(\tau^{\iota},p) \::\: 
p\in \PP,\ \,
\exists q\in\QQ\ \big(\iota(p)\leq q\wedge (\sigma,q)\in \tau\big)
\right\}.
\]
\end{enumerate-(a)} 
\end{definition}

Recall the following standard facts.\footnote{See e.g. \cite{KunenBook2013}.}
Two forcings 
are equivalent if and only if they have isomorphic Boolean completions.
Suppose $\iota:\PP\to\QQ$ is a dense embedding between forcings $\PP$ and $\QQ$.
Given a $\PP$-generic filter $G$ over $V$,
 the upwards closure $H$ of $\iota(G)$ in $\QQ$ is a $\QQ$-generic filter over $V$, and $G=\iota^{-1}(H)$.
Moreover, 
$(\sigma^\iota)^G=\sigma^H$ for each $\QQ$-name $\sigma$.
Conversely, suppose $H$ is a $\QQ$-generic filter over $V$. Then $G=\iota^{-1}(H)$ is a $\PP$-generic filter over $V$, and $H$ is equal to the upwards closure of $G$ in $\QQ$. In both of the above cases, we have $V[G]=V[H]$.

The following definition of the forcing for adding Cohen subsets is non-standard but essential in several arguments below.\todog{It is essential throughout Section~\ref{section: proof of the main result} that we use this non-standard definition}

\begin{definition} 
\label{definition of Add(kappa,1)}
Suppose that $\kappa$ is a regular uncountable cardinal.  
\begin{enumerate-(a)} 
\item 
\index{forcing!Cohen subset!standard\idf$\Add(\kappa,1)$}
$\Add(\kappa,1)$ is defined as the forcing
%$$\Add(\kappa,1):=\{p:\alpha\to\kappa\,\mid\, \alpha<\kappa\},$$ 
$$\Add(\kappa,1):={}^{<\kappa}\kappa
=\{p:\alpha\to\kappa\,\mid\, \alpha<\kappa\},$$ 
ordered by reverse inclusion. 
\item 
\index{forcing!Cohen subset!multiple\idf$\Add(\kappa,\xi)$}
$\Add(\kappa,\xi)$ is defined as the ${<}\kappa$-support product $\prod_{i<\xi}\Add(\kappa,1)$ for any ordinal $\xi$. 
\end{enumerate-(a)} 
\end{definition}

We use the following convention. %throughout the section.
For a given $y\in{}^{\kappa}\kappa$, 
we sometimes conflate $y$ with the set 
$\kkppred y=\{t\in{}^{<\kappa}\kappa: t\subsetneq y\}$.
For example, we say that $y$ is $\Add(\kappa,1)$-generic if and only if $\kkppred y$ is an $\Add(\kappa,1)$-generic filter. 
Moreover, if $y$ is $\Add(\kappa,1)$-generic and $\sigma$ is an $\Add(\kappa,1)$-name, let $\sigma^y:=\sigma^{\kkppred y}$.
Also let $\prod_{i<\xi}y_i$ denote $\prod_{i<\xi}\kkppred{y_i}$ whenever $\langle y_i:i<\xi\rangle$ is a sequence of elements of ${}^{\kappa}\kappa$.
The next two lemmas contain
standard facts about adding Cohen subsets and collapse forcings.

\begin{lemma} 
\label{forcing equivalent to Add(kappa,1)} 
\todog{will only need this for uncountable $\kappa$}
Suppose that $\kappa$ is an uncountable cardinal
such that $\kappa^{<\kappa}=\kappa$. 
If $\PP$ is a non-atomic ${<}\kappa$-closed forcing of size $\kappa$, then $\PP$ has a dense subset which is isomorphic to the dense subforcing 
\index{forcing!Cohen subset!modified\idf$\Addd(\kappa,1)$}
$$\Addd(\kappa,1):=\{p\in \Add(\kappa,1):\: \dom(p)\in \Succ\}$$ 
of $\Add(\kappa,1)$.
In particular, $\PP$ is equivalent to $\Add(\kappa,1)$. 
\end{lemma}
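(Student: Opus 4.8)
The plan is to build the isomorphism explicitly, exploiting that $\Addd(\kappa,1)$ is a particularly tractable tree-like forcing, and then to verify the ``in particular'' clause from the isomorphism together with the standard facts recalled above. First I would observe that $\Addd(\kappa,1)$ is itself a non-atomic ${<}\kappa$-closed forcing of size $\kappa$: it has size $\kappa$ since $\kappa^{<\kappa}=\kappa$ and its conditions are functions with successor-length domains below $\kappa$; it is non-atomic because any $p$ has two incompatible one-step extensions $p\conc\langle 0\rangle$ and $p\conc\langle 1\rangle$; and it is ${<}\kappa$-closed because the union of a $\lle\kappa$-chain of conditions has domain of size $\lle\kappa$, hence can be extended to a successor-length condition still below $\kappa$ (using regularity of $\kappa$). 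Crucially, $\Addd(\kappa,1)$ is a \emph{splitting tree}: every condition has at least two pairwise-incompatible extensions of each larger successor length, and below every condition the tree continues to branch cofinally.

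The heart of the argument is to find a dense subset $D$ of $\PP$ isomorphic to this tree by a transfinite recursion of length $\kappa$. I would enumerate a sequence of maximal antichains or a dense set and recursively assign to each node $t\in\Addd(\kappa,1)$ a condition $p_t\in\PP$ so that $t\subseteq t'$ implies $p_{t'}\leq p_t$, incompatible nodes map to incompatible conditions, and the image is dense. Concretely, at successor stages I would use non-atomicity of $\PP$ to split each $p_t$ into $\kappa$-many (or at least two, padding out to the required width) pairwise-incompatible extensions indexed by the direct successors of $t$; at limit stages of length $\gamma<\kappa$ I would use ${<}\kappa$-closure to pick a lower bound $p_{t\restr\gamma}$ for the chain $\langle p_{t\restr\alpha}:\alpha<\gamma\rangle$. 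To guarantee \emph{density} of the image rather than merely an order-embedding, I would interleave a bookkeeping argument: fixing an enumeration $\langle q_\xi:\xi<\kappa\rangle$ of $\PP$, at each stage I would arrange that every $q_\xi$ lies above some $p_t$, by refining the splitting so as to meet $q_\xi$ whenever compatible. The map $t\mapsto p_t$ is then the desired isomorphism from $\Addd(\kappa,1)$ onto the dense set $D=\{p_t:t\in\Addd(\kappa,1)\}\subseteq\PP$, being a bijection onto its image that preserves $\leq$, $\perp$ and $\one$.

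Once the isomorphism is established, the remaining clauses are routine. Since $\Addd(\kappa,1)$ is a dense subforcing of $\Add(\kappa,1)$ by definition, the inclusion is a dense embedding, and composing with the isomorphism above gives dense embeddings of both $\PP$ and $\Add(\kappa,1)$ into a common forcing (for instance into $\Add(\kappa,1)$ itself, after identifying $D$ with $\Addd(\kappa,1)$); hence $\PP\simeq\Add(\kappa,1)$ by Definition~\ref{def: equivalent forcings}\ref{eqf}, using the recalled fact that forcings are equivalent iff they have isomorphic Boolean completions.

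I expect the main obstacle to be the simultaneous maintenance of the three requirements during the recursion: preserving incompatibility (which the splitting automatically gives), keeping the chains bounded below at limits (which ${<}\kappa$-closure handles, but one must check the limit condition can be chosen of the correct \emph{successor} length to land inside $\Addd(\kappa,1)$), and securing genuine density via the bookkeeping. The delicate point is the interaction between the rigid branching pattern demanded by an \emph{isomorphism} onto $\Addd(\kappa,1)$ and the freedom needed to absorb an arbitrary enumeration of $\PP$ for density; one resolves this by noting that non-atomicity lets each node split into a full antichain, so at each stage there is enough room to both branch and catch the next $q_\xi$ below some node. The hypothesis $\kappa^{<\kappa}=\kappa$ is used to keep the size of $\Addd(\kappa,1)$ at $\kappa$ and to ensure the bookkeeping enumeration has length $\kappa$.
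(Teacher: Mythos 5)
The paper itself omits this proof as ``straightforward and well known'', so your attempt can only be judged against the standard folklore construction. Your skeleton (a tree-indexed recursion, splitting by non-atomicity, lower bounds by ${<}\kappa$-closure, bookkeeping for density, Boolean completions for the ``in particular'' clause) is the right one, but there is a genuine gap at the limit stages, which is exactly where the real work of this lemma lies. Your stated invariants --- order- and $\perp$-preservation, splitting each $p_t$ into $\kappa$ many pairwise incompatible extensions, ``pick a lower bound $p_{t\restr\gamma}$ for the chain'' at limits, and bookkeeping that meets $q_\xi$ \emph{whenever compatible} --- do not imply density of the image. Concretely, take $\PP=\Add(\kappa,1)$ itself. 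Suppose along some branch of limit length $\gamma$ the images $p_{t\restr(\alpha+1)}$ have union $w\in{}^{<\kappa}\kappa$, and at the limit stage you choose the lower bound $b:=w\conc\langle 0\rangle$ and hang all further splitting below $b$. Put $b':=w\conc\langle 1\rangle$. Then $b'$ is a lower bound of the branch incompatible with $b$; every node built at levels $\geq\gamma$ is either below $b$ or lies on a branch that already split away from ours, so it is incompatible with $b'$; and the only nodes compatible with $b'$ are the branch nodes themselves, none of which is $\leq b'$. If $b'$ is enumerated as $q_\xi$ only for some $\xi>\gamma$, your bookkeeping is vacuous from stage $\gamma$ onwards (there is nothing compatible left to refine), so no $p_t\leq b'$ is ever created and the image is not dense. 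The same leak occurs at successor splittings and at the roots if the chosen antichains are not \emph{maximal}: e.g.\ children $\{p_t\conc\langle\beta,0\rangle:\beta<\kappa\}$ in $\Add(\kappa,1)$ permanently lock out $p_t\conc\langle 0,1\rangle$.

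The missing idea is that the recursion must maintain \emph{predensity of every level}, and this must be built in as maximality requirements: the images of the length-one sequences form a maximal antichain in $\PP$; the children of each node form a maximal antichain below that node; and --- the crucial clause your proposal omits --- the children attached to a limit-length branch form a maximal antichain in the set of \emph{all} lower bounds of that branch (a nonempty, downward closed set by ${<}\kappa$-closure), not merely an antichain below one chosen lower bound. With these invariants one shows by induction, using ${<}\kappa$-closure at limits, that every level is predense in $\PP$; only then does a bookkeeping device (say: every node at level $\xi+1$ is either $\leq q_\xi$ or $\perp q_\xi$) convert predensity into density of the image. Two smaller points: producing $\kappa$ many pairwise incompatible conditions, let alone a maximal antichain of size exactly $\kappa$, below a given condition needs non-atomicity \emph{together with} ${<}\kappa$-closure (a comb argument) and $|\PP|=\kappa$, not non-atomicity alone as you assert; and $\Addd(\kappa,1)$ has no largest element (the empty sequence is excluded), so there is no $\one$ to preserve --- instead the roots are the length-one sequences, whose images must form the maximal antichain just described.
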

We omit the proof of Lemma~\ref{forcing equivalent to Add(kappa,1)}, since it is straightforward and well known.

\begin{lemma}[\cite{MR2387944}*{Lemma 2.2}] 
\label{forcing equivalent to Col(kappa,mu)} 
Let $\kappa$ be a regular cardinal. 
Suppose that $\nu>\kappa$ is a cardinal with $\nu^{<\kappa}=\nu$. Let $\PP$ be a separative ${<}\kappa$-closed forcing of size $\nu$ which forces that $\nu$ has size $\kappa$. 
Then $\PP$ has a dense subset which is isomorphic to the dense subforcing\index{forcing!collapse!modified\idf$\Col^*(\kappa,\nu)$}
$$\mathrm{Col}^*(\kappa,\nu):=\{p\in \Col(\kappa,\nu):\: \dom(p)\in\Succ\}$$ 
\index{forcing!collapse!standard\idf$\Col(\kappa,\nu)$}
of $\Col(\kappa,\nu)$. 
In particular, $\PP$ is equivalent to $\Col(\kappa,\nu)$. 
\end{lemma}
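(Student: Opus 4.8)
The final statement is Lemma~\ref{forcing equivalent to Col(kappa,mu)}, which the paper cites to \cite{MR2387944}*{Lemma 2.2}. Since it is an external citation, I will propose how one proves it directly, as its companion Lemma~\ref{forcing equivalent to Add(kappa,1)} is proved by an identical strategy and the paper explicitly says that proof is ``straightforward and well known''.

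\textbf{Overall strategy.} The plan is to build the isomorphism between a dense subset of $\PP$ and $\mathrm{Col}^*(\kappa,\nu)$ directly, by a recursion of length $\kappa$ that constructs a tree of conditions in $\PP$ mirroring the tree structure $\klk\nu$ (restricted to successor-length nodes) underlying the collapse forcing. The key abstract fact I would invoke is the standard characterization: any two separative, ${<}\kappa$-closed, atomless forcings of size $\nu$ that both collapse $\nu$ to $\kappa$ (more precisely, force $|\nu|=\kappa$ while being ${<}\kappa$-closed of the right cardinality) have isomorphic dense subsets, because the successor-level tree $\mathrm{Col}^*(\kappa,\nu)$ is the canonical such object. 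So the task reduces to verifying that $\PP$ (or a dense subset of it) carries exactly this tree structure.

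\textbf{Key steps.} First I would enumerate $\PP=\{p_\alpha:\alpha<\nu\}$ and fix a bijection to organize a back-and-forth/tree construction. Then I would recursively construct a map $\iota:\mathrm{Col}^*(\kappa,\nu)\to\PP$ assigning to each $t\in\mathrm{Col}^*(\kappa,\nu)$ (a successor-length partial function $t:\beta\to\nu$) a condition $\iota(t)\in\PP$, maintaining the invariants that $\iota$ is strict order preserving and $\perp$-preserving, and that the image is dense. The two forcing hypotheses do the real work here: ${<}\kappa$-closure of $\PP$ guarantees that at limit stages $\beta<\kappa$ of the recursion a lower bound $\iota(t{\restr}\beta)$ exists in $\PP$, so the construction does not break down before reaching length $\kappa$; and the hypothesis that $\PP$ forces $\nu$ to have size $\kappa$, combined with $\nu^{<\kappa}=\nu$ and atomlessness, is what lets me split each node into exactly the right number ($\nu$-many, arranged along $\kappa$) of pairwise incompatible successors while keeping the image dense below every condition. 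At each successor step I would, given any condition $q\in\PP$ lying below a node already in the image, find some $\iota(t)\le q$, which ensures density of $\iota[\mathrm{Col}^*(\kappa,\nu)]$; this is the standard ``meeting every condition'' bookkeeping driven by the size-$\nu$ enumeration.

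\textbf{Main obstacle.} The crux — and the step I expect to be most delicate — is the cardinality/collapsing bookkeeping at successor levels: one must verify that each node genuinely admits a maximal antichain of the correct size below it so that the tree $\mathrm{Col}^*(\kappa,\nu)$ embeds \emph{densely} rather than merely order-preservingly. This is precisely where ``$\PP$ forces $|\nu|=\kappa$'' is needed: it forces the existence, below each condition, of a name for a surjection $\kappa\to\nu$, which translates into antichains witnessing that the generic object has the collapsing behavior of $\Col(\kappa,\nu)$. The passage from $\mathrm{Col}^*(\kappa,\nu)$ being a dense subforcing of $\Col(\kappa,\nu)$ back to the ``in particular'' conclusion ($\PP\simeq\Col(\kappa,\nu)$) is then immediate from Definition~\ref{def: equivalent forcings}\ref{eqf}, since a dense embedding of $\PP$ onto a dense subset of $\Col(\kappa,\nu)$ witnesses equivalence. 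Once the tree is built with the stated invariants, Lemma~\ref{[e] continuous} together with the dense-embedding facts recalled after Definition~\ref{def: pulling back names} packages the result cleanly, so the only genuine content is the recursive construction and its cardinal arithmetic.
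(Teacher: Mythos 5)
Your overall strategy coincides with the proof the paper is pointing to: the paper does not prove this lemma itself, but refers to an adaptation of Jech's Lemma 26.7 (the $\kappa=\omega$ case, where conditions are finite) carried out in the cited Lemma 2.2 of Fuchs, and that argument is exactly the tree construction you describe — recursively build $\langle \iota(t) : t\in\mathrm{Col}^*(\kappa,\nu)\rangle$ order- and $\perp$-preservingly, use a name for a surjection $\kappa\to\nu$ to extract $\nu$-sized antichains below every condition (pairwise incompatible conditions forcing different values $\dot f(\xi)=\check\alpha$), and run density bookkeeping against an enumeration of $\PP$ in order type $\nu$; the ``in particular'' clause is then immediate from the definition of equivalence.

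There is, however, a genuine gap at limit stages, and it is precisely the point where the adaptation from $\omega$ to uncountable $\kappa$ has content. You treat limits by saying that ${<}\kappa$-closure supplies a lower bound $\iota(t\restr\delta)$ so that ``the construction does not break down''. But if at a limit branch $b$ of length $\delta$ you fix \emph{one} lower bound $p_b$ of the chain along $b$ and then let the nodes of length $\delta+1$ above $b$ be a maximal antichain below $p_b$, density of the image can fail outright: any lower bound $r$ of the chain along $b$ with $r\perp p_b$ is, by $\perp$-preservation, incompatible with every node of the tree except the nodes on $b$ itself, and it lies \emph{below} none of them — so no $\iota(t)\le r$ exists, no matter how clever the bookkeeping at successor steps is. The invariant one actually needs is stronger: writing $L_b$ for the set of \emph{all} lower bounds of the chain along $b$ (note $L_b$ is closed under common extensions, so compatibility in $L_b$ agrees with compatibility in $\PP$), the nodes of length $\delta+1$ extending $b$ must form a predense subset of $L_b$, refined by the dense set $\{r : r\le q_\delta \text{ or } r\perp q_\delta\}$ handling the $\delta$-th condition of the enumeration. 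With that invariant the density argument threads through limits: given $q=q_\alpha$, build a decreasing sequence $r_\beta\le q$ with $r_\beta$ below a node at each level, using maximality of the successor antichains at successor steps and ${<}\kappa$-closure plus predensity-in-$L_b$ at limit steps, until the refinement at the level handling $q_\alpha$ forces a node below $q_\alpha$. Relatedly, the step you single out as the crux — producing $\nu$-sized antichains from the collapsing hypothesis — is the part that is essentially identical to the classical countable case; the limit-level predensity is the new ingredient your sketch is missing.
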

Lemma~\ref{forcing equivalent to Col(kappa,mu)} can be proven 
using an adaptation of the proof of 
\cite{MR1940513}*{Lemma 26.7} 
which can be found in the proof of \cite{MR2387944}*{Lemma 2.2}. 

\label{notation for the Levy collapse and subforcings} 
We will use the following notation
for subforcings of the 
\index{forcing!L\'evy collapse\idf$\Col(\kappa,\lle\lambda),\,\PP_\lambda$}
L\'evy collapse $\Col(\kappa,\lle\lambda)$. 
Suppose that $\kappa<\lambda$ are cardinals, $\alpha\leq\lambda$ and $I\subseteq\lambda$ is not an ordinal (to avoid a conflict with the notation for the standard collapse). 
\index{forcing!subforcing of the L\'evy!collapse\idf$\PP_\alpha,\,\PP_I,\,\PP^\alpha$}
Then let $\PP_\alpha:=\Col(\kappa,\lle\alpha)$ 
%for all $\alpha\leq\lambda$ 
and
\[
\PP_I:=\Col(\kappa,I):=\{p\in \Col(\kappa,\lle\lambda):\: \dom{p}\subseteq I\times \kappa\}.
\] 
The notation $\PP_I=\Col(\kappa,I)$ will be used for intervals $I$, for which we use the standard notation 
\begin{align*}
(\alpha,\gamma)&:=\{\beta\in \Ord: \alpha<\beta<\gamma\},
\\
[\alpha,\gamma)&:=\{\beta\in \Ord: \alpha\leq\beta<\gamma\}. 
\end{align*}
\megj{(}
Let $\PP^\alpha:=\PP_{[\alpha,\lambda)}$
%$\PP^\alpha=\Col(\kappa,[\alpha,\lambda))$ 
for all $\alpha<\lambda$.
\index{forcing!generic for a subforcing of the L\'evy collapse\idf$G_\alpha,\,G_I,\,G^\alpha$}
If $G$ is a $\PP_\lambda$-generic filter over $V$, write
$G_\alpha:=G\cap\PP_\alpha$, $G_I:=G\cap\PP_I$  
and $G^\alpha:=G\cap \PP^\alpha$.
%

%We will also often use the following consequence of Lemma~\ref{forcing equivalent to Col(kappa,mu)}. 

The next corollary follows easily from Lemma~\ref{forcing equivalent to Col(kappa,mu)} 
using a straightforward analogue of the proof of 
\cite{MR2387944}*{Corollary~2.4}.\footnote{See also \cite{MR2387944}*{Corollary~2.3}. 
\cite{MR2387944}*{Corollary~2.4} 
asserts that the conclusion of Corollary~\ref{forcing equivalent to the Levy collapse} holds when $\gamma=0$, under the weaker assumption that  
$\lambda>\kappa$ is a cardinal such that $\mu^{<\kappa}<\lambda$ for all cardinals $\mu<\lambda$.} 

\begin{corollary}
\label{forcing equivalent to the Levy collapse}
Let $\lambda>\kappa$ be an inaccessible cardinal and $\gamma<\lambda$.
If $\PP$ is a $\lle\kappa$-closed forcing of size $\lle\lambda$, 
then the forcings $\PP\times\PP_\lambda$ and 
$\PP^\gamma=\PP_{[\gamma,\lambda)}$ 
are equivalent.
\end{corollary}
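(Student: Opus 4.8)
The plan is to factor both forcings through the interval structure of the L\'evy collapse and to absorb the ``small'' part $\PP\times\PP_\gamma$ into a single collapse by a double application of Lemma~\ref{forcing equivalent to Col(kappa,mu)}. Since the assertion depends only on the $\simeq$-class of $\PP$, I would first reduce to the case that $\PP$ is separative. Using that $\Col(\kappa,\lle\lambda)$ is the $\lle\kappa$-support product over $\lambda$ and hence factors over disjoint index sets, I would write $\PP_\lambda=\PP_\gamma\times\PP^\gamma$, where $\PP_\gamma=\Col(\kappa,\lle\gamma)$ and $\PP^\gamma=\Col(\kappa,[\gamma,\lambda))$, so that
\[
\PP\times\PP_\lambda\;\cong\;(\PP\times\PP_\gamma)\times\PP^\gamma .
\]
Set $\mu:=|\PP\times\PP_\gamma|$; since $|\PP|<\lambda$, $|\PP_\gamma|\leq|\gamma|^{<\kappa}<\lambda$ and $\lambda$ is inaccessible (hence a regular strong limit), we have $\mu<\lambda$.

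Next I would pick a cardinal $\nu$ with $\max(\mu,\gamma,\kappa)<\nu<\lambda$ and $\nu^{<\kappa}=\nu$; such $\nu$ exist cofinally below $\lambda$ because $\lambda$ is inaccessible, so $\delta^{<\kappa}<\lambda$ for every $\delta<\lambda$. Splitting the index set as $[\gamma,\lambda)=[\gamma,\nu+1)\sqcup(\nu,\lambda)$ and factoring the collapse accordingly gives
\[
\PP^\gamma=\Col(\kappa,[\gamma,\nu+1))\times\Col(\kappa,(\nu,\lambda)),
\]
and therefore $\PP\times\PP_\lambda\cong\big((\PP\times\PP_\gamma)\times\Col(\kappa,[\gamma,\nu+1))\big)\times\Col(\kappa,(\nu,\lambda))$.

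The core step is to apply Lemma~\ref{forcing equivalent to Col(kappa,mu)} with this $\nu$ to the two forcings $\Col(\kappa,[\gamma,\nu+1))$ and $(\PP\times\PP_\gamma)\times\Col(\kappa,[\gamma,\nu+1))$. Both are separative and $\lle\kappa$-closed; both have size $\nu$ (here I use $\mu<\nu=\nu^{<\kappa}$, so the $\PP\times\PP_\gamma$ factor does not increase the size); and both collapse $\nu$ to $\kappa$, because the coordinate $\nu$ of $\Col(\kappa,[\gamma,\nu+1))$ adds a surjection from $\kappa$ onto $\nu$. Hence Lemma~\ref{forcing equivalent to Col(kappa,mu)} shows that each is equivalent to $\Col(\kappa,\nu)$, and so they are equivalent to each other. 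Finally I would use that $\simeq$ is preserved under products with a fixed forcing: if $A\simeq A'$, witnessed by dense embeddings into some $\RR$, then $A\times B\simeq A'\times B$ via the induced dense embeddings into $\RR\times B$. Taking $B=\Col(\kappa,(\nu,\lambda))$ and comparing the two displayed factorizations yields $\PP\times\PP_\lambda\simeq\PP^\gamma$, as required. (Equivalently, one may phrase the last step through isomorphism of Boolean completions, using the standard facts on equivalence of forcings recalled above.)

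The hard part is the bookkeeping needed to invoke Lemma~\ref{forcing equivalent to Col(kappa,mu)}: one must choose the interval so that it contains the coordinate $\nu$ (so the amalgamated forcing genuinely collapses $\nu$, rather than merely turning it into $\kappa^+$) and choose $\nu$ above $\mu=|\PP\times\PP_\gamma|$ with $\nu^{<\kappa}=\nu$, which is exactly where the inaccessibility of $\lambda$ and the size bound $|\PP|<\lambda$ are used. The reduction to separative $\PP$ at the outset is the one point requiring a little care, since Lemma~\ref{forcing equivalent to Col(kappa,mu)} is stated for separative forcings.
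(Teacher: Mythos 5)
Your argument is, in substance, the paper's intended one: the paper gives no separate proof of Corollary~\ref{forcing equivalent to the Levy collapse}, but points to Lemma~\ref{forcing equivalent to Col(kappa,mu)} together with the factor-and-absorb argument of \cite[Corollary~2.4]{MR2387944}, which is exactly what you carry out. The factorization $\PP_\lambda\cong\PP_\gamma\times\PP^\gamma$, the choice of $\nu$ with $\max(|\PP\times\PP_\gamma|,\gamma,\kappa)<\nu=\nu^{<\kappa}<\lambda$, the two applications of Lemma~\ref{forcing equivalent to Col(kappa,mu)} (both amalgams are ${<}\kappa$-closed, have size $\nu$, and collapse $\nu$ via the coordinate $\nu$), and the preservation of $\simeq$ under products via $\iota\times\id$ are all correct.

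The step that does not hold up as justified is the opening reduction to separative $\PP$. The conclusion is indeed $\simeq$-invariant in $\PP$, but the hypotheses are not: ${<}\kappa$-closure is not preserved under equivalence of forcings, and in particular the separative quotient of a ${<}\kappa$-closed forcing can fail to be ${<}\kappa$-closed when $\kappa$ is uncountable. For instance, let $\PP$ consist of a top element $\one$, pairwise incomparable elements $d_n$ for $n<\omega$, and pairwise disjoint copies $C_m$ of $\Add(\kappa,1)$ for $m<\omega$, where the elements of $C_m$ lie below $d_n$ exactly when $m\geq n$ (and below $\one$), while each $d_n$ lies below $\one$ only. Any strictly decreasing sequence enters a single copy $C_m$ after at most two steps, so $\PP$ is ${<}\kappa$-closed; but in the separative quotient the classes $[d_n]$ are decreasing (every extension of $d_{n+1}$ is compatible with $d_n$) and have no lower bound, since every condition has an extension inside some $C_m$, and that extension is incompatible with $d_{m+1}$. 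So passing to the separative quotient destroys precisely the hypothesis that Lemma~\ref{forcing equivalent to Col(kappa,mu)} needs, and an appeal to $\simeq$-invariance does not by itself produce a separative ${<}\kappa$-closed forcing of size ${<}\lambda$ equivalent to $\PP$.

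This gap is vacuous for $\kappa=\omega$ and harmless for every use of the corollary in the paper, where $\PP$ is always a product of collapse forcings and copies of $\Add(\kappa,1)$, hence separative; note that the paper itself elides the same point, since its lemma requires separativity while the corollary does not assume it. If you want the statement in the stated generality, a correct repair along your lines is as follows: the map $(p,c)\mapsto([p],c)$ is a dense embedding of $\PP\times\QQ$ into $\PP'\times\QQ$ for any forcing $\QQ$, where $\PP'$ denotes the separative quotient of $\PP$; and although $\PP'$ need not be ${<}\kappa$-closed, it is ${<}\kappa$-strategically closed, since in the completeness game one can maintain a genuinely $\leq$-decreasing sequence $\langle q_\alpha\rangle$ in $\PP$ with each $[q_\alpha]$ below the conditions played so far, using ${<}\kappa$-closure of $\PP$ at limits and the fact that $[p]\leq[q_\alpha]$ implies $p\compat q_\alpha$ at successors. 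The proof of Lemma~\ref{forcing equivalent to Col(kappa,mu)} goes through for separative ${<}\kappa$-strategically closed forcings by interleaving the strategy's moves into the construction, and with that strengthened lemma your argument applies verbatim to $\PP'$ in place of $\PP$. Without some such argument, or an added separativity hypothesis, the reduction at the start of your proof is unjustified.
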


\begin{definition} 
\label{def: complete embedding}
\label{def: quotient forcing} 
Suppose that $\PP$, $\QQ$ are forcings. 
\begin{enumerate-(a)} 
\item\label{qf1} 
\index{embedding!complete\idf}
A \emph{complete embedding} $i: \PP\to \QQ$ is a homomorphism with respect to $\leq,\,\perp$ and $\one$ with the property that for every $q\in\QQ$, there is a condition $p\in \PP$ 
\index{reduction of!a forcing condition\idf} 
(called a \emph{reduction} of $q$ to $\PP$) such that for every $r\leq p$ in $\PP$, $i(r)$ is compatible with $q$. 
\item \label{qf2}
\index{forcing!complete subforcing\idf}
$\PP$ is a \emph{complete subforcing} of $\QQ$  %($\PP \lessdot \QQ$) 
if $\PP$ is a subforcing of $\QQ$ and the inclusion map $\id_{\PP}:\PP\to\QQ$ is a complete embedding.
\item \label{qf3}
Suppose that $i: \PP\to \QQ$ is a complete embedding and $G$ is $\PP$-generic over $V$.
\index{forcing!quotient\idf$\QQ/G$} 
The \emph{quotient forcing $\QQ/G$ for $G$ in $\QQ$} is defined as the subforcing 
\[\QQ/G:=\{q\in \QQ:\: \forall p\in G\ \,i(p)\compat q\}\] 
of $\QQ$. 
Moreover, we fix a $\PP$-name $\QQ/\PP$ for  for the quotient forcing for $\dot G$ in $\PP$, where $\dot{G}$ is the canonical $\PP$-name for the $\PP$-generic filter. We also refer to $\QQ/\PP$ as (a name for) the quotient forcing for $\PP$ in $\QQ$. 
\end{enumerate-(a)} 
\end{definition} 

We recall the following standard facts.\footnote{See e.g. \cite{KunenBook2013} and  
Exercises (C7), (C8), (D4) and (D5) in 
\cite{KunenBook1980}*{Chapter~VII}.}

\begin{fact} 
\label{fact}
Suppose $\PP$ and $\QQ$ are forcings and $i:\PP\to\QQ$.
\begin{enumerate-(1)}
\item\label{fact: complete embeddings equiv}
If $i$ is a homomorphism with respect to $\leq,\,\perp$ and $\one$, then 
$i$ is a complete embedding if and only if
for all maximal antichains $A$ of $\PP$,
$i(A)$ is a maximal antichain of $\QQ$. 
If $\PP$ and $\QQ$ are complete Boolean algebras, then $i$ is a complete embedding (in the sense of Definition~\ref{def: complete embedding}) if and only if $i$ is 
an injective complete homomorphism of Boolean algebras%
% (in the sense that it preserves infima and suprema of arbitrary subsets).
\footnote{A homomorphism between Boolean algebras is called \emph{complete} if it preserves arbitrary infima and suprema.}
\item\label{fact: complete embeddings and Boolean completions}
Any complete embedding $i:\PP\to\QQ$ defines a complete embedding 
$j:\BB(\PP)\to\BB(\QQ)$.
If $\PP$ and $\QQ$ are separative, we may assume that $i\subseteq j$.
In particular, if $\PP$ is a complete subforcing of $\QQ$ and $\QQ$ is separative,%
\footnote{Note that complete subforcings of separative forcings are also separative.} 
we may assume that $\BB(\PP)\subseteq\BB(\QQ)$.
Then $\BB(\PP)$ is a complete Boolean subalgebra of $\BB(\QQ)$ by~\ref{fact: complete embeddings equiv}.
\todog{If $\QQ$ is separative, then any complete subforcing $\PP$ of $\QQ$ is also separative. Proof: Let $p,q\in \PP$ with $p\not\leq q$. Since $\QQ$ is separative, take $s\in \QQ$ with $s\leq p$ and $s\perp q$. Let $r$ be a reduction of $s$ to $\PP$. Then $r\compat_{\PP} p$ since $\id$ is $\perp$-preserving and $r\perp q$ since $r$ is a reduction of $s$ (and $q\in\PP$).}
\item\label{fact: generic extensions wrt quotient forcings}
Suppose $i$ is a complete embedding. 
Let $G$ be a $\PP$-generic filter over $V$.
If $D$ is a dense subset of $\QQ$, then $D\cap\QQ/G$ is a dense subset of $\QQ/G$.
Therefore
all $\QQ/G$-generic filters $H$ over $V[G]$ are also $\QQ$-generic filters over $V$, and $G=i^{-1}(H)$ for all such $H$. 
Conversely, 
if $H$ is $\QQ$-generic over $V$ and $G=i^{-1}(H)$, then $G$ is a $\PP$-generic filter over $V$ and $H$ is $\QQ/G$-generic over $V[G]$.
Furthermore, under either of the above assumptions, we have  
$V[H]_{\QQ}=V[G][H]_{\QQ/G}$.
\end{enumerate-(1)}
\end{fact}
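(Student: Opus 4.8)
The plan is to establish the three parts in order, relying throughout on the \emph{reduction} condition of Definition~\ref{def: complete embedding}. For the antichain characterization in~\ref{fact: complete embeddings equiv}, I would prove both implications directly. If $i$ is a complete embedding and $A$ is a maximal antichain of $\PP$, then $i(A)$ is an antichain because $i$ preserves $\perp$; to see it is maximal, given $q\in\QQ$ take a reduction $p$ of $q$, use maximality of $A$ to find $a\in A$ and $r\leq a,p$, and note that $i(r)\compat q$ (as $r\leq p$), so that $i(a)\geq i(r)$ is compatible with $q$. Conversely, if $i$ carries maximal antichains to maximal antichains but some $q\in\QQ$ has no reduction, then $\{r\in\PP: i(r)\perp q\}$ is dense; a maximal antichain $A$ contained in it is sent to a maximal antichain $i(A)$ all of whose members are incompatible with $q$, contradicting maximality of $i(A)$.

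For the Boolean reformulation in the same item, the easy direction turns an injective complete homomorphism into a complete embedding: such a map preserves and reflects $\leq$, $\perp$ and $\one$, and for any $q$ the element $p:=\bigwedge\{r\in\PP: q\leq i(r)\}$ is a reduction because $i$ preserves infima. For the converse I would show that a complete embedding between complete Boolean algebras preserves arbitrary suprema by a reduction argument: if $s:=\bigvee i(p_\alpha)$ were strictly below $i(\bigvee p_\alpha)$, then a reduction $r$ of $q:=i(\bigvee p_\alpha)\wedge\neg s$ would satisfy $r\perp p_\alpha$ for all $\alpha$, hence $r\perp\bigvee p_\alpha$ and $i(r)\perp q$, contradicting that $i(r)$ is compatible with $q$ by the reduction property. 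Preservation of complements and of $0,\one$ then follows, and injectivity from the reflection of $\perp$, so $i$ is an injective complete homomorphism.

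For~\ref{fact: complete embeddings and Boolean completions}, I would compose $i$ with the canonical dense embeddings $e_\PP\colon\PP\to\BB(\PP)$ and $e_\QQ\colon\QQ\to\BB(\QQ)$ and set $j(b):=\bigvee\{e_\QQ(i(p)): p\in\PP,\ e_\PP(p)\leq b\}$ for $b\in\BB(\PP)$. Density of the canonical embeddings makes $j$ an injective complete homomorphism extending $i$, hence a complete embedding by the Boolean reformulation just proved. If $\PP$ and $\QQ$ are separative the canonical embeddings are inclusions, so $i\subseteq j$; in the special case $i=\id_\PP$ with $\QQ$ separative, $\PP$ is separative as well and we may take $j$ to be the inclusion, exhibiting $\BB(\PP)$ as a complete subalgebra of $\BB(\QQ)$ via~\ref{fact: complete embeddings equiv}.

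Finally, for~\ref{fact: generic extensions wrt quotient forcings} I would reduce, using~\ref{fact: complete embeddings and Boolean completions} and the invariance of generic extensions under Boolean completion, to the case where $\PP$ is a complete subalgebra of the complete Boolean algebra $\QQ$. There the projection $\pi(q):=\bigwedge\{p\in\PP: q\leq p\}$ lies in $\PP$, and one checks that $q\in\QQ/G$ if and only if $\pi(q)\in G$. The density statement follows: for $q\in\QQ/G$ and $D$ dense in $\QQ$, the set of $p\in\PP$ that are either incompatible with $\pi(q)$ or lie below $\pi(d)$ for some $d\in D$ with $d\leq q$ is dense in $\PP$, and since $\pi(q)\in G$ the filter $G$ meets it via the second alternative, yielding $d\in D\cap\QQ/G$ with $d\leq q$. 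Consequently every $\QQ/G$-generic $H$ over $V[G]$ meets each dense $D\in V$ and so is $\QQ$-generic over $V$, with $G=i^{-1}(H)$; conversely, if $H$ is $\QQ$-generic over $V$ then~\ref{fact: complete embeddings equiv} shows $G=i^{-1}(H)$ is a $\PP$-generic filter, and a name version of the density statement together with the genericity of $G$ makes $H$ a $\QQ/G$-generic filter over $V[G]$. The equality $V[H]=V[G][H]$ is then immediate from $G=i^{-1}(H)\in V[H]$. I expect the density lemma for $\QQ/G$ to be the main obstacle, since it relies on the projection $\pi$ and on pushing $V[G]$-density back to $V$ via names; the supremum-preservation step in~\ref{fact: complete embeddings equiv} is the other point that needs care.
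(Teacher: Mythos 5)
The paper itself gives no proof of this Fact: it is quoted as standard, with a pointer to \cite{KunenBook2013} and to Exercises (C7), (C8), (D4) and (D5) of \cite[Chapter~VII]{KunenBook1980}, so there is no in-paper argument to measure yours against. On its own terms your proposal is correct. Your treatment of~\ref{fact: complete embeddings equiv} is the standard poset-level argument, and your route through~\ref{fact: generic extensions wrt quotient forcings} via Boolean completions and the projection $\pi(q)=\bigwedge\{p\in\PP : q\leq p\}$ is exactly the retraction machinery the paper develops immediately after this Fact (Definition~\ref{def: retraction}, Lemma~\ref{retraction facts}); the textbook alternative stays at the poset level and proves the density statement directly from the reduction property, which avoids any translation between $\QQ$ and $\BB(\QQ)$ but is otherwise no shorter.

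Two steps are compressed enough that you should flag them, though neither breaks the proof. In~\ref{fact: complete embeddings and Boolean completions}, ``density of the canonical embeddings makes $j$ an injective complete homomorphism'' is not an adequate justification: density gives order- and incompatibility-preservation and injectivity of $j$, but preservation of arbitrary suprema genuinely needs the reduction property of $i$ (one argues as in your supremum step of~\ref{fact: complete embeddings equiv}: if some $e_{\QQ}(i(p))$ with $e_{\PP}(p)\leq\bigvee_\alpha b_\alpha$ were not below $\bigvee_\alpha j(b_\alpha)$, then a reduction of a witnessing $q\in\QQ$ produces a contradiction). In~\ref{fact: generic extensions wrt quotient forcings}, the reduction to complete Boolean algebras must address non-separative $\QQ$: there $e_{\QQ}(d)\leq e_{\QQ}(q)$ does not yield $d\leq q$, so the Boolean density lemma has to be applied to $D_q:=\{d\in D : d\leq q\}$, which is dense below $e_{\QQ}(q)$, together with the verification that $q\in\QQ/G$ if and only if $e_{\QQ}(q)$ lies in the quotient of the completions. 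Finally, the equality $G=i^{-1}(H)$ in the forward direction is asserted without proof; it needs its own short argument: for $p\in G$, intersect the dense set $\{q\in\QQ : q\leq i(p)\}\cup\{q\in\QQ : q\perp i(p)\}$ with $\QQ/G$ using the density statement and note that the second alternative is impossible for elements of $\QQ/G$; conversely, $i(p)\in H$ with $p\notin G$ contradicts genericity of $G$, since some $p'\in G$ is incompatible with $p$ and then $i(p')\perp i(p)$ would be two elements of the filter $H$. None of these repairs requires a new idea; the architecture of your proof is sound.
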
 

\begin{definition}
\label{def: retraction}
\todog{(Philipp:) I think both `natural projection' and 'retraction' are nice. } 
Given complete Boolean algebras $\PP$ and $\QQ$ and a complete embedding $i:\PP\to\QQ$, 
the 
\index{retraction\idf$\pi_i$} 
\emph{retraction} associated to $i$ is the map
$\pi_i:\QQ\to\PP$ 
defined by letting
\[
\pi_i(q):=\bigwedgepo\PP\{p\in\PP: i(p)\geq q\}
\]
{for all } $q\in \QQ$.
\end{definition}

The following lemma is standard, but we include a proof for the reader.

\begin{lemma}
\label{retraction facts}
Suppose $i:\PP\to\QQ$ is a complete embedding between the complete Boolean algebras $\PP$ and $\QQ$.
\begin{enumerate-(1)}
\item\label{rf1} For all $q\in\QQ$, $\pi_i(q)$ is the largest reduction of $q$ to $\PP$.
\item\label{rf2} 
%$\pi_i(q)=\boolvalpo{q\in\QQ/\PP}{\PP}$.
If $G$ is a $\PP$-generic filter over $V$, then $q\in\QQ/G$ if and only if $\pi_i(q)\in G$.
\end{enumerate-(1)}
\end{lemma}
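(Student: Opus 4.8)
The plan is to unwind the definitions in the Boolean-algebraic setting, where everything reduces to short computations with meets, complements and infima. By Fact~\ref{fact}\,\ref{fact: complete embeddings equiv}, since $\PP$ and $\QQ$ are complete Boolean algebras, the complete embedding $i$ is an injective \emph{complete} homomorphism; in particular $i$ preserves complements and arbitrary infima. Write $A:=\{p\in\PP:i(p)\geq q\}$, so that $p_0:=\pi_i(q)=\bigwedgepo\PP A$. The first thing I would record is that $p_0$ actually lies in $A$: since $i$ preserves infima, $i(p_0)=\bigwedgepo\QQ\{i(p):p\in A\}\geq q$, because every member of $A$ is mapped above $q$. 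Thus $p_0$ is the least element of $A$, and this is precisely the point where completeness of $i$ is used.

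For part~\ref{rf1}, I would first verify that $p_0$ is a reduction of $q$. Given a nonzero $r\leq p_0$, suppose toward a contradiction that $i(r)\perp q$, i.e.\ $i(r)\wedge q=\zero$; then $q\leq\neg i(r)=i(\neg r)$, so $\neg r\in A$ and hence $p_0\leq\neg r$. Together with $r\leq p_0$ this forces $r\leq\neg r$, i.e.\ $r=\zero$, a contradiction. Hence $i(r)\compat q$ for every nonzero $r\leq p_0$, so $p_0$ is a reduction. For maximality, let $p$ be any reduction of $q$ and suppose $p\not\leq p_0$; then $r:=p\wedge\neg p_0$ is a nonzero condition below $p$, and since $i(p_0)\geq q$ we have $i(r)\leq i(\neg p_0)=\neg i(p_0)\leq\neg q$, whence $i(r)\wedge q=\zero$, contradicting that $p$ is a reduction. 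Therefore $p\leq p_0$, and $p_0$ is the largest reduction.

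For part~\ref{rf2}, I would invoke the standard fact that a $\PP$-generic filter $G$ over $V$ is an ultrafilter on the complete Boolean algebra $\PP$, so for each $b\in\PP$ exactly one of $b\in G$ or $\neg b\in G$ holds. If $p_0\in G$, then for any $p\in G$ the meet $r:=p\wedge p_0$ lies in $G$, so $r\neq\zero$ and $r\leq p_0$; by the reduction property $i(r)\wedge q\neq\zero$, and since $i(r)\leq i(p)$ this yields $i(p)\compat q$. As $p\in G$ was arbitrary, $q\in\QQ/G$. Conversely, if $p_0\notin G$ then $\neg p_0\in G$, and since $i(\neg p_0)=\neg i(p_0)\leq\neg q$ we get $i(\neg p_0)\wedge q=\zero$, so the condition $\neg p_0\in G$ witnesses $q\notin\QQ/G$.

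The computations are elementary Boolean algebra, so I do not expect a serious obstacle; the only points requiring care are the bookkeeping about the zero element (reading ``for every $r\leq p$'' as ranging over nonzero $r$, and noting that $p_0\notin G$ forces $p_0\neq\one$, so that $\neg p_0$ is a genuine condition) and the explicit appeal to the completeness of $i$ to guarantee $i(p_0)\geq q$, which is exactly what makes $p_0$ itself a reduction rather than merely an infimum of reductions.
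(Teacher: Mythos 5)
Your proof is correct and follows essentially the same route as the paper: an elementary unwinding of the definitions whose only substantive point is that completeness of $i$ yields $i(\pi_i(q))\geq q$. The paper packages this same computation as the duality $i(p)\perp q\Longleftrightarrow p\perp\pi_i(q)$ for all $p\in\PP$ and reads off both parts from it, while you apply the membership fact $i(\pi_i(q))\geq q$ directly in each step; the two formulations differ only in bookkeeping.
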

\begin{proof}
Let $q\in\QQ$. 
First, observe that $i(p)\perp q$ if and only if $p\perp \pi_i(q)$ holds for all $p\in\PP$.
Indeed, given $p\in\PP$, we have
$i(p)\perp q$
$\Longleftrightarrow$ $i(p)\wedge q=\zero$
$\Longleftrightarrow$ $\neg i(p)\geq q$. 
By the definition of $\pi_i$ and since $i$ is a Boolean homomorphism, the last statement is equivalent to
$\neg p\geq \pi_i(q)$,
and is therefore also equivalent to
$p\wedge \pi_i(q)=\zero$ and to
$p\perp\pi_i(q)$.

The previous observation easily implies that $\pi_i(q)$ is a reduction of $q$ to $\PP$.
To see that $\pi_i(q)$ is the largest reduction of $q$ to $\PP$, suppose that
$p\in\PP$ and $p\not\leq\pi_i(q)$.
Let $r:=p\wedge\neg\pi_i(q)$.
Then $p\geq r\geq\zero_\PP$ and $r\perp \pi_i(q)$, and therefore $i(r)\perp q$.
Thus, $r$ witnesses that 
$p$ is not a reduction of $q$ to~$\PP$.
Moreover, if $G$ is a $\PP$-generic filter over $V$ and $q\in\QQ$, 
then
$q\in\QQ/G$ 
$\Longleftrightarrow$
$p\compat \pi_i(q)$  for all $p\in\PP$
$\Longleftrightarrow$
$\pi_i(q)\in G$.
\end{proof}

As usual, by a name $\sigma$ for an element of ${}^\kappa\kappa$, we mean a name $\sigma$ such that $\one\forces\sigma\in {}^\kappa\kappa$.
\todoo{added footnote with def. of $\check{V}$} 
By a name $\sigma$ for a \emph{new} object, 
we mean a name $\sigma$ such that $\one\forces\sigma\notin \check V$.\footnote{$\check{V}:=\{(\check{x},1) : x\in V \}$ is the canonical name for $V$.} 
In general, if $\varphi(v)$ is any formula with one variable $v$, then by a name $\sigma$ for an object with property $\varphi$, we mean a name $\sigma$ such that $\one\forces \varphi(\sigma)$. 

We will use the following notation when working with quotient forcings induced by names. 
\begin{definition}
\label{def: generated Boolean subalgebra} 
Suppose $\QQ$ is a complete Boolean algebra and $\sigma$ is a $\QQ$-name for a subset of a set $x$.
%\index{Boolean subalgebra for a name\idf$\BB(\sigma)$} 
\index{Boolean!subalgebra for a name\idf$\BB(\sigma)$} 
Let $\BB(\sigma):=\BB^{\QQ}(\sigma)$ denote the complete Boolean subalgebra of $\QQ$ that is completely generated by 
the set of Boolean values $\{\boolvalpo{y\in \sigma}{\QQ}: y\in x\}$. 
\end{definition}

The following lemma is standard, but we include a proof for the reader. 

\begin{lemma}
\label{fact: generated Boolean subalgebra} 
Suppose that $\QQ$ is a complete Boolean algebra and $\sigma$ is a $\QQ$-name for a subset of a set $x$. 
If $H$ is a $\QQ$-generic filter over $V$, then
$V\big[\sigma^H\big]=V\big[\BB^{\QQ}(\sigma)\cap H\big]$. 
\end{lemma}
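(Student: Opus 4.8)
The plan is to put $G:=\BB^\QQ(\sigma)\cap H$ and to prove the two inclusions $V[\sigma^H]\subseteq V[G]$ and $V[G]\subseteq V[\sigma^H]$ separately, the second being the substantial one. First I would record that the inclusion $\id\colon\BB^\QQ(\sigma)\to\QQ$ is a complete embedding: since $\BB^\QQ(\sigma)$ is by definition a complete Boolean subalgebra of $\QQ$, the inclusion is an injective complete homomorphism of Boolean algebras, hence a complete embedding by Fact~\ref{fact}~\ref{fact: complete embeddings equiv}. Applying Fact~\ref{fact}~\ref{fact: generic extensions wrt quotient forcings} with $i=\id$ and $G=\id^{-1}(H)=\BB^\QQ(\sigma)\cap H$ then shows that $G$ is a $\BB^\QQ(\sigma)$-generic filter over $V$ and that $V[G]\subseteq V[H]$. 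Throughout I abbreviate $b_y:=\boolvalpo{y\in\sigma}{\QQ}$ for $y\in x$; these are the complete generators of $\BB^\QQ(\sigma)$, and $b_y\in H\iff b_y\in G$ because $b_y\in\BB^\QQ(\sigma)$.

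For the inclusion $V[\sigma^H]\subseteq V[G]$ I would exhibit $\sigma^H$ as the interpretation of a $\BB^\QQ(\sigma)$-name. Set $\sigma_0:=\{(\check y,b_y):y\in x\}$, which is a legitimate $\BB^\QQ(\sigma)$-name since each $b_y\in\BB^\QQ(\sigma)$. Then $\sigma_0^G=\{y\in x: b_y\in G\}=\{y\in x: b_y\in H\}=\sigma^H$, so $\sigma^H\in V[G]$.

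The main step is $V[G]\subseteq V[\sigma^H]$, for which it suffices to recover the filter $G$ inside $V[\sigma^H]$ from $\sigma^H$ together with parameters lying in $V$. The key lever is the standard fact that a $V$-generic filter commutes with set-indexed suprema and infima: for any family $\langle c_i:i\in I\rangle\in V$ of elements of $\QQ$ one has $\bigvee_{i\in I}c_i\in H\iff\exists i\in I\,(c_i\in H)$ and dually $\bigwedge_{i\in I}c_i\in H\iff\forall i\in I\,(c_i\in H)$. The forward direction of the first equivalence is proved by checking that the set of $d\in\QQ$ with $d\leq\neg\bigvee_{i}c_i$ or $d\leq c_i$ for some $i$ is dense, and intersecting it with $H$ below $\bigvee_i c_i$; the statement for infima follows by complementation. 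Because $\BB^\QQ(\sigma)$ is a complete subalgebra, these suprema and infima agree with the ones computed in $\QQ$, so the lever applies verbatim to membership in $G$.

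Finally I would make the recursion explicit to dispose of any well-definedness concern. Working in $V$, present $\BB^\QQ(\sigma)$ as the image under a surjective evaluation map $\mathrm{ev}\colon T\to\BB^\QQ(\sigma)$ of a well-founded term algebra $T$, whose atomic terms are the $b_y$ together with $\zero$ and $\one$, and whose formation rules are formal complement, formal set-indexed supremum and formal set-indexed infimum. In $V[\sigma^H]$ define $f\colon T\to 2$ by recursion on term rank: an atomic term for $b_y$ receives value $1$ iff $y\in\sigma^H$, the atomic terms for $\zero,\one$ receive $0,1$; a complement term $\neg t$ receives $1-f(t)$; a supremum term $\bigvee_{t\in A}t$ receives $\max_{t\in A}f(t)$; and an infimum term $\bigwedge_{t\in A}t$ receives $\min_{t\in A}f(t)$. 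An induction on term rank, using the lever at the supremum and infimum steps, then yields $f(t)=1\iff\mathrm{ev}(t)\in H$ for every $t\in T$. Consequently $G=\BB^\QQ(\sigma)\cap H=\{\mathrm{ev}(t):t\in T,\ f(t)=1\}$ lies in $V[\sigma^H]$, since $T$ and $\mathrm{ev}$ belong to $V$ while $f$ is computed in $V[\sigma^H]$. The principal obstacle is precisely this last step: transferring knowledge of which \emph{generators} lie in $H$ into knowledge of the whole generated filter, which is exactly what the supremum/infimum commutation lever, together with the term-algebra bookkeeping ensuring well-definedness, is designed to achieve.
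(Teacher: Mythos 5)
Your proof is correct and follows the same outline as the paper's: both arguments reduce the lemma to (i) the mutual computability over $V$ of $\sigma^H$ and the trace of $H$ on the set of generators $\{\boolvalpo{y\in\sigma}{\QQ} : y\in x\}$, and (ii) the fact that this trace already determines all of $\BB^{\QQ}(\sigma)\cap H$. The only difference is that for step (ii) the paper simply cites \cite[Lemma~15.40]{MR1940513}, whereas you prove that lemma's content from scratch: your term-algebra recursion, combined with the commutation of generic ultrafilters with $V$-indexed suprema and infima, is precisely the standard proof of the cited result, so your argument is a correct, self-contained unfolding of the paper's two-line proof.
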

\begin{proof}
Let $A:=\{\boolval{y\in\sigma}_{\QQ}:y\in x\}$. 
We then have $V\big[\BB^{\QQ}(\sigma)\cap H\big]=V\big[A\cap H\big]$ 
by \cite{MR1940513}*{Lemma~15.40}.
Moreover, 
$V\big[A\cap H\big]=V\big[\sigma^H\big]$ 
since
$y\in\sigma^H$ 
$\Longleftrightarrow$ $\boolvalpo{y\in\sigma}\QQ\in H$
$\Longleftrightarrow$ $\boolvalpo{y\in\sigma}\QQ\in A\cap H$
for all $y\in x$.
\end{proof}

In the next definition and lemmas, suppose that $\QQ\in V$ is a forcing, $q\in\QQ$ and $\sigma\in V$ is a $\QQ$-name for an element of~${}^\kappa\kappa$.

\begin{definition}\ 
\label{T^sigma def}
\label{sigma_q def}
\label{sigma[q] def}
%Suppose $\QQ$ is a forcing, $q\in \QQ$ and $\sigma$ is a $\QQ$-name for an element of~${}^\kappa\kappa$. 
\begin{enumerate-(a)}
\item
\index{name!decided part of $\sigma$!initial segment\idf$\sigma_{[q]}$} 
$\sigma_{[q]}:=\bigcup\{t\in{}^{<\kappa}\kappa: q\forces t\subseteq\sigma\}$. 
\item
\index{name!decided part of $\sigma$!full\idf$\sigma_{(q)}$} 
$\sigma_{(q)}:=\{(\alpha,\beta):q\forces(\alpha,\beta)\in\sigma\}$.
\item
\index{tree!of possible values for $\sigma$\idf$T^{\sigma, q}$}%
The \emph{tree $T^{\sigma, q}$ of possible values for $\sigma$ below $q$} is defined as follows:
\todog{Equivalently, $t\in T^{\sigma, q}$ $\Leftrightarrow$ $r\forces t\subseteq \sigma$ for some $r\leq q$}
\[
T^{\sigma, q}:=
\{t\in{}^{<\kappa}\kappa\::\: \exists r\leq q\ \ t\subseteq \sigma_{[r]}\}.\] 
\end{enumerate-(a)}
For clarity, we at times write $\sigma_{[q,\QQ]},\ \sigma_{(q,\QQ)}$ and $T^{\sigma,q}_{\QQ}$ instead of $\sigma_{[q]},\ \sigma_{(q)}$ and $T^{\sigma,q}$, respectively.
\end{definition}

The next two lemmas list basic properties of $\sigma_{[q]}$ and $T^{\sigma,q}$. 
The first one follows immediately from the definitions.

\begin{lemma}\ 
\label{lemma: sigma_q and sigma[q]}
\begin{enumerate-(1)}
\item $\sigma_{[q]}\subseteq \sigma_{(q)}$, and
$\sigma_{[q]}=\sigma_{(q)}$ if and only if $\dom(\sigma_{(q)})$ is an ordinal. 
\item $\sigma_{[q]}\restr\alpha=\sigma_{(q)}\restr\alpha$ for all ordinals $\alpha$ with $\alpha\subseteq\dom(\sigma_{(q)})$. 
\end{enumerate-(1)}
\end{lemma}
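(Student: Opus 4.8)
The plan is to begin by recording two structural facts that reduce both parts to bookkeeping, then read off each claim. First I would observe that the set $\{t\in{}^{<\kappa}\kappa : q\forces t\subseteq\sigma\}$ is linearly ordered by $\subseteq$: since $q\forces\sigma\in{}^\kappa\kappa$, in any generic filter $G\ni q$ every such $t$ satisfies $t\subseteq\sigma^G$, and $\sigma^G$ is a function, so any two such $t,t'$ are comparable; hence back in $V$ the set is a $\subseteq$-chain. Consequently $\sigma_{[q]}$, being the union of a $\subseteq$-chain of elements of ${}^{<\kappa}\kappa$, is itself a function whose domain is an ordinal (the supremum of the domains of the $t$'s, an ordinal $\leq\kappa$). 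Second, $\sigma_{(q)}$ is a function, though with a possibly non-ordinal domain: as $q\forces\sigma\in{}^\kappa\kappa$, $q$ cannot force both $(\alpha,\beta)\in\sigma$ and $(\alpha,\beta')\in\sigma$ for $\beta\neq\beta'$.

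For part (1), the containment $\sigma_{[q]}\subseteq\sigma_{(q)}$ is immediate: if $(\alpha,\beta)\in\sigma_{[q]}$, choose $t$ with $(\alpha,\beta)\in t$ and $q\forces t\subseteq\sigma$, whence $q\forces(\alpha,\beta)\in\sigma$, i.e.\ $(\alpha,\beta)\in\sigma_{(q)}$. For the equivalence, one direction is free from the first structural fact: $\dom(\sigma_{[q]})$ is always an ordinal, so $\sigma_{[q]}=\sigma_{(q)}$ forces $\dom(\sigma_{(q)})$ to be an ordinal. Conversely, assuming $\dom(\sigma_{(q)})=\delta\in\On$, I would show $\sigma_{(q)}\restr(\gamma+1)\subseteq\sigma_{[q]}$ for every $\gamma<\delta$: since $q$ decides $\sigma(\beta)$ for all $\beta\leq\gamma$, every generic $G\ni q$ has $\sigma^G\restr(\gamma+1)=\sigma_{(q)}\restr(\gamma+1)$, so $q\forces \sigma\restr(\gamma+1)=\sigma_{(q)}\restr(\gamma+1)$; as $\gamma+1<\kappa$ this places $\sigma_{(q)}\restr(\gamma+1)$ among the $t$'s defining $\sigma_{[q]}$. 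Taking the union over $\gamma<\delta$ gives $\sigma_{(q)}\subseteq\sigma_{[q]}$, hence equality.

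For part (2), the hypothesis $\alpha\subseteq\dom(\sigma_{(q)})$ says precisely that $q$ decides $\sigma(\gamma)$ for every $\gamma<\alpha$, so running the argument from the converse direction above only up to $\alpha$ yields $\sigma_{(q)}\restr\alpha\subseteq\sigma_{[q]}$. Since $\dom(\sigma_{(q)}\restr\alpha)=\alpha$, this forces $\alpha\subseteq\dom(\sigma_{[q]})$ and shows $\sigma_{[q]}$ agrees with $\sigma_{(q)}$ throughout $\alpha$; together with the containment from part (1) this gives $\sigma_{[q]}\restr\alpha=\sigma_{(q)}\restr\alpha$.

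The only step beyond pure unwinding of definitions — and thus the point I would be most careful about — is the passage from ``$q$ decides each individual value $\sigma(\gamma)$ for $\gamma\leq\gamma_0$'' to ``$q$ forces the whole sequence $\sigma_{(q)}\restr(\gamma_0+1)$ to be an initial segment of $\sigma$''. I would handle this semantically, arguing uniformly over all generic filters containing $q$ rather than manipulating the forcing relation syntactically, and noting that it relies on $\gamma_0+1<\kappa$ so the relevant initial segment lies in ${}^{<\kappa}\kappa$; this holds because $\kappa$ is a cardinal and hence a limit ordinal. Everything else is the two structural observations about $\sigma_{[q]}$ and $\sigma_{(q)}$ being functions.
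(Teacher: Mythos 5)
Your proof is correct. The paper offers no argument for this lemma — it is explicitly dismissed as following immediately from the definitions — and your write-up is exactly the routine verification being alluded to: the two structural observations (the nodes $t$ with $q\forces t\subseteq\sigma$ form a $\subseteq$-chain, so $\sigma_{[q]}$ is a function with ordinal domain, and $\sigma_{(q)}$ is a function since $q$ cannot force two values at one coordinate), together with the passage from deciding individual values $\sigma(\beta)$ for $\beta\leq\gamma$ to forcing the initial segment $\sigma_{(q)}\restr(\gamma+1)\subseteq\sigma$, where your care about $\gamma+1<\kappa$ is indeed the only point requiring any attention.
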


\begin{lemma}\ 
\label{T^sigma facts}
\begin{enumerate-(1)}
\item \label{T^sigma nodes comp}
If $q\not\forces_{\QQ} \sigma \in\check{V}$, 
then $\sigma_{[q]}\in{}^{<\kappa}\kappa$ equals the stem of  $T^{\sigma,q}$.  
\item
\label{sigma is a branch of T^sigma}
$q\forces^V_{\QQ} \sigma\in[T^{\sigma,q}].$
\item 
\label{T^sigma subtree of S}
If $S$ is a subtree of ${}^{<\kappa}\kappa$ with $q\forces^V_{\QQ} \sigma\in[S]$, then $T^{\sigma,q}\subseteq S$.
\item
\label{T^sigma abs}
If $M\supseteq V$ is a transitive model of $\ZFC$ with 
$({}^{<\kappa}\kappa)^V=({}^{<\kappa}\kappa)^M$, then 
${(\sigma_{[q]})}^V={(\sigma_{[q]})}^M$
and
${(T^{\sigma, q})}^V={(T^{\sigma, q})}^M.$
\todog{If $\QQ$ is a $\lle\kappa$-closed forcing 
and $q\forces\sigma\notin\check{V}$,
then $T^{\sigma,q}$ is a $\kappa$-perfect tree.
\\
PROOF: $T:=T^{\sigma,q}$ is cofinally splitting because $\sigma$ is a name for a new element of ${}^\kappa\kappa$.
To obtain a $\kappa$-perfect (in the stronger sense) subtree containing a given node $s\in T$: use that $T$ is cofinally splitting at successor stages of defining the tree, and use the fact that 
 $\QQ$ is $\lle\kappa$-closed at limit stages.
It's probably enough to assume that $\QQ$ is $\lle\kappa$-strategically closed.}
\end{enumerate-(1)}
\end{lemma}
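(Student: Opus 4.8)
The plan is to read off all four parts from the definitions in Definition~\ref{T^sigma def}, with the forcing theorem as the only real tool. I would begin by recording the reformulation that, for $t\in{}^{<\kappa}\kappa$, one has $t\in T^{\sigma,q}$ if and only if $r\forces \check t\subseteq\sigma$ for some $r\leq q$. One direction is immediate from the definition of $\sigma_{[r]}$. For the converse, if $t\subseteq\sigma_{[r]}$ then $r\forces \check t\subseteq\sigma$ because $r\forces\sigma_{[r]}\subseteq\sigma$: at successor or zero length $\sigma_{[r]}$ is itself a member of the union defining it, and at limit length $\lh(\sigma_{[r]})$ this follows since $r\forces\sigma_{[r]}\restr\beta\subseteq\sigma$ for every $\beta<\lh(\sigma_{[r]})$ and $\sigma$ is forced to be a function on $\kappa$. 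The same computation gives $q\forces\sigma_{[q]}\subseteq\sigma$, which I will use repeatedly.

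For \ref{T^sigma nodes comp}, the sequences $t$ with $q\forces\check t\subseteq\sigma$ are pairwise comparable, so $\sigma_{[q]}$ is a function. To see $\lh(\sigma_{[q]})<\kappa$ I argue by contradiction: if $\lh(\sigma_{[q]})=\kappa$ then $q\forces\sigma\restr\beta=\sigma_{[q]}\restr\beta$ for all $\beta<\kappa$, hence $q\forces\sigma=\check{(\sigma_{[q]})}\in\check V$, contradicting $q\not\forces\sigma\in\check V$. Thus $\sigma_{[q]}\in{}^{<\kappa}\kappa$, and taking $r=q$ shows $\sigma_{[q]}\in T^{\sigma,q}$. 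Since $r\leq q$ implies $\sigma_{[q]}\subseteq\sigma_{[r]}$, every $t\in T^{\sigma,q}$ is comparable with $\sigma_{[q]}$ (both being initial segments of the witnessing $\sigma_{[r]}$), so $\sigma_{[q]}$ is comparable with all nodes. Finally I would show $\sigma_{[q]}$ splits: writing $\ell=\lh(\sigma_{[q]})$, no value $\gamma$ satisfies $q\forces\sigma(\ell)=\gamma$ (else $\sigma_{[q]}\conc\langle\gamma\rangle$ would belong to the union defining $\sigma_{[q]}$, forcing $\ell+1\leq\ell$). Deciding $\sigma(\ell)$ below $q$ gives $r_0\leq q$ with $r_0\forces\sigma(\ell)=\gamma_0$; since $q\not\forces\sigma(\ell)=\gamma_0$, some $q'\leq q$ forces $\sigma(\ell)\neq\gamma_0$, and deciding $\sigma(\ell)$ below $q'$ yields $r_1\leq q$ with $r_1\forces\sigma(\ell)=\gamma_1\neq\gamma_0$. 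Hence $\sigma_{[q]}\conc\langle\gamma_0\rangle$ and $\sigma_{[q]}\conc\langle\gamma_1\rangle$ are incomparable nodes of $T^{\sigma,q}$. A node comparable with all nodes and having two incomparable extensions admits no proper extension comparable with everything, so it is the maximal such node, i.e.\ the stem.

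I would prove \ref{sigma is a branch of T^sigma} by density. Fix $\alpha<\kappa$; below any $r\leq q$ I decide an initial segment, obtaining $r'\leq r$ and $t\in{}^{<\kappa}\kappa$ with $r'\forces\sigma\restr\alpha=\check t$, so $t\in T^{\sigma,q}$ and $r'\forces\sigma\restr\alpha\in\check{T^{\sigma,q}}$. Thus the relevant set is dense below $q$ and $q\forces\sigma\restr\alpha\in\check{T^{\sigma,q}}$ for every $\alpha<\kappa$, that is, $q\forces\sigma\in[T^{\sigma,q}]$. For \ref{T^sigma subtree of S}, take $t\in T^{\sigma,q}$ with witness $r\leq q$, $r\forces\check t\subseteq\sigma$; in any generic $G\ni r$ we get $t=\sigma^G\restr\lh(t)$ and $\sigma^G\in[S]$ (as $q\in G$), so $\sigma^G\restr\lh(t)\in S$, and since $t,S\in V$, membership in the ground-model tree $S$ is absolute, giving $t\in S$.

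The main obstacle is absoluteness in \ref{T^sigma abs}, and, relatedly, the hidden hypothesis behind the density step of \ref{sigma is a branch of T^sigma}. For \ref{T^sigma abs} the objects $\sigma_{[q]}$ and $T^{\sigma,q}$ are built from $\QQ,q,\sigma\in V\subseteq M$, from ${}^{<\kappa}\kappa$, which is the same in $V$ and $M$ by hypothesis, and from the forcing relation for the statements $\check t\subseteq\sigma$ and $(\alpha,\beta)\in\sigma$. The danger is that the forcing relation for an unbounded quantifier is sensitive to the ambient universe of names; I avoid this by noting that each occurring statement reduces to a ground-model-indexed conjunction of atomic assertions $r\forces(\check\gamma,\check\delta)\in\sigma$, whose truth value is computed by the same recursion on $\sigma$ in $V$ and in $M$. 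Since the ranges of $t$ and $r$ are identical in the two models, $\sigma_{[q]}$ and $T^{\sigma,q}$ are computed identically. This same point explains the subtlety in \ref{sigma is a branch of T^sigma}: deciding $\sigma\restr\alpha$ below a condition at limit $\alpha$ requires that $\QQ$ add no new ${<}\kappa$-sequences (as guaranteed by ${<}\kappa$-closure, which holds for all forcings considered here, equivalently $({}^{<\kappa}\kappa)^V=({}^{<\kappa}\kappa)^{V[G]}$); otherwise $\sigma^G\restr\alpha$ could fall outside $({}^{<\kappa}\kappa)^V\supseteq T^{\sigma,q}$ and the branch claim would fail.
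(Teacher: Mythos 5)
Your proof is correct and takes essentially the same route as the paper, which simply declares \ref{T^sigma nodes comp}--\ref{T^sigma subtree of S} immediate and proves \ref{T^sigma abs} exactly as you do, via absoluteness of the recursively defined forcing relation for ``$q\forces \check t\subseteq\sigma$'' (citing \cite[Theorem II.4.15]{KunenBook2013}). Your closing observation that \ref{sigma is a branch of T^sigma} tacitly needs $\QQ$ to add no new ${<}\kappa$-sequences is well taken: the lemma is stated for arbitrary $\QQ$ but is only ever applied to ${<}\kappa$-closed forcings such as $\Col(\kappa,\lle\lambda)$ and $\Add(\kappa,1)$, so your caveat sharpens, rather than conflicts with, the paper's (unwritten) argument.
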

\begin{proof} 
\ref{T^sigma nodes comp}--\ref{T^sigma subtree of S} are immediate. 
\ref{T^sigma abs} holds since the formula ``$q\forces t\subseteq \sigma$'' is absolute between $M$ and $V$, as it  can be defined by a recursion which uses only absolute concepts \cite{KunenBook2013}*{Theorem II.4.15}. 
\end{proof}

\newpage 

%%%%%%%%%%%%%%%%%%
\section{Dihypergraphs and homomorphisms}
%\section{Facts about $\ODD\kappa\ddim(X,H)$}
\label{section: ODD observations}

We discuss preliminary observations about colorings, continuous homomorphisms and 
\todoo{modified intro: added first sentence. Moved the second sentence here (from later in the paragraph)}
the open dihypergraph dichotomy in this section. 
%Subsection~\ref{subsection: basic observations} 
These are necessary for the rest of the paper. 
Subsection~\ref{subsection: basic observations} discusses some basic facts. 
We characterize $H$-independence for box-open dihypergraphs $H$ at the level of subtrees of ${}^{<\kappa}\kappa$ in Section~\ref{subsection: independent trees}. 
We provide a characterization of
the existence of continuous homomorphisms from $\dhH\ddim$ to $H$
via certain strict order preserving maps from ${}^{<\kappa}\ddim$ to ${}^{<\kappa}\kappa$ in Subsection~\ref{subsection: order homomorphisms}. 
%These facts will be used throughout the paper. 
The relation 
of \emph{fullness} between dihypergraphs that is introduced in Subsection~\ref{subsection: full dihypergraphs} 
compares the induced
$\kappa$-ideals of $\kappa$-colorable sets and, as we show, allows us 
to compare the existence of continuous homomorphisms
for relatively box-open dihypergraphs as well.%
\footnote{See Lemma~\ref{lemma: ODD subsequences}.}
In particular, 
equivalent relatively box-open dihypergraphs give rise to 
the same instances of the open dihypergraph dichotomy.

%We introduce the notion of a full dihypergraph in Subsection \ref{subsection: full dihypergraphs}. 
%We show that for two relatively box-open dihypergraphs which are equivalent with respect to this notion, the two instances of the open dihypergraph dichotomy are equivalent. 
%This is used to compare the applications studied in Section \ref{section: applications}. 

\subsection{Colorings and homomorphisms}
\label{subsection: basic observations}

Throughout this subsection, 
we assume that $\ddim\geq 2$ is an ordinal. 
% with $2\leq \ddim$. 
%we assume throughout this subsection that $d$ is a discrete topological space with $2\leq|\ddim|\leq\kappa$.
We first observe 
that the two options in the definition of $\ODD\kappa H$ are mutually exclusive. 

\begin{lemma}
\label{two options in ODD are mutually exclusive}
Let $H$ be a $\ddim$-dihypergraph on a topological space $X$.
Suppose that there is a homomorphism $f:{}^\kappa\ddim\to X$ from $\dhH\ddim$ to $H$.
\begin{enumerate-(1)}
\item\label{me 1} $H$ does not have a $\kappa$-coloring.
\item\label{me 2} 
$H\restr f(N_t)$ does not have a $\kappa$-coloring for any $t\in{}^{<\kappa}\ddim$.
\todog{Stating~\ref{me 2} explicitly will be useful in some of the arguments below.}
\end{enumerate-(1)}
\end{lemma}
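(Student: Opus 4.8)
The plan is to reduce both parts to a single combinatorial fact about the canonical dihypergraph---that $\dhH\ddim\restr N_t$ admits no $\kappa$-coloring for any $t\in{}^{<\kappa}\ddim$---and then transport this across the homomorphism $f$. Part~\ref{me 1} is the instance $t=\emptyset$ of part~\ref{me 2}: since $N_\emptyset={}^\kappa\ddim$ and $f(N_\emptyset)=\ran(f)$, any $\kappa$-coloring $c$ of $H$ would restrict to a $\kappa$-coloring $c\restr\ran(f)$ of $H\restr\ran(f)=H\restr f(N_\emptyset)$ (each class $c^{-1}(\{\alpha\})\cap\ran(f)$ stays $H\restr\ran(f)$-independent because $H$-independence is inherited by subsets). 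So it suffices to prove part~\ref{me 2}.

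First I would record the combinatorial fact. Fix $t\in{}^{<\kappa}\ddim$ and suppose toward a contradiction that $\langle A_\alpha:\alpha<\kappa\rangle$ is a partition of $N_t$ into $\dhH\ddim$-independent sets. Unwinding the definition of $\dhH\ddim$, a set $A$ is $\dhH\ddim$-independent exactly when for every $s\in{}^{<\kappa}\ddim$ there is some $i\in\ddim$ with $N_{s\conc\langle i\rangle}\cap A=\emptyset$; otherwise one could choose a point of $A$ in each direction $N_{s\conc\langle i\rangle}$ and assemble a hyperedge of $\dhH\ddim$ (with stem $s$) lying in $A$. This lets me build a strictly increasing continuous sequence $\langle s_\xi:\xi<\kappa\rangle$ in ${}^{<\kappa}\ddim$: set $s_0:=t$; take unions at limits; and given $s_\xi$, use independence of $A_\xi$ to pick $i_\xi\in\ddim$ with $N_{s_\xi\conc\langle i_\xi\rangle}\cap A_\xi=\emptyset$, putting $s_{\xi+1}:=s_\xi\conc\langle i_\xi\rangle$. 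Each $s_\xi$ has length $\lh(t)+\xi<\kappa$, so $x:=\bigcup_{\xi<\kappa}s_\xi$ lies in $N_t$. For every $\xi<\kappa$ we have $x\in N_{s_{\xi+1}}=N_{s_\xi\conc\langle i_\xi\rangle}$, which is disjoint from $A_\xi$; hence $x\notin A_\xi$ for all $\xi$, contradicting $x\in N_t=\bigcup_{\alpha<\kappa}A_\alpha$.

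To finish part~\ref{me 2}, suppose $H\restr f(N_t)$ had a $\kappa$-coloring $c$. Then $f\restr N_t$ is a homomorphism from $\dhH\ddim\restr N_t$ to $H\restr f(N_t)$: if $\bar x\in\dhH\ddim$ has all components in $N_t$, then $f^\ddim(\bar x)\in H$ by the hypothesis on $f$, and its components lie in $f(N_t)$. Consequently $c\circ(f\restr N_t)\colon N_t\to\kappa$ is a $\kappa$-coloring of $\dhH\ddim\restr N_t$, since each class $(c\circ f)^{-1}(\{\alpha\})$ is $\dhH\ddim$-independent: a hyperedge of $\dhH\ddim$ contained in it would map under $f^\ddim$ to a hyperedge of $H\restr f(N_t)$ all of whose components have $c$-color $\alpha$, contradicting that $c^{-1}(\{\alpha\})$ is $H\restr f(N_t)$-independent. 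This contradicts the combinatorial fact established above.

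The only genuine content is that combinatorial fact, and the main obstacle is precisely its branch-building diagonalization in the second paragraph; the transport step is a formal pullback using that $f$ preserves hyperedges and that $H$-independence passes to subsets and preimages. I would also remark that continuity of $f$ is never used: the bare assumption that $f$ is a homomorphism from $\dhH\ddim$ to $H$ already forces both failures of colorability.
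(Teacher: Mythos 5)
Your proof is correct and is essentially the paper's argument: the heart of both is the same branch-building diagonalization against $\kappa$ many $\dhH\ddim$-independent color classes (a continuous increasing sequence of nodes, choosing at each successor stage a direction whose basic open set misses the current class), combined with transporting a coloring backwards across the homomorphism, and like the paper you observe that continuity of $f$ plays no role. The only difference is organizational: the paper proves (1) first, by pulling a coloring of $H$ back to a coloring of $\dhH\ddim$ along $f$ and diagonalizing from the root, and then deduces (2) by applying (1) to the shifted homomorphism $f_t(x)=f(t\conc x)$, whereas you isolate the no-coloring fact for $\dhH\ddim\restr N_t$, diagonalize starting from $t$, and recover (1) as the case $t=\emptyset$ --- a cosmetic reshuffling of the same ideas.
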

\begin{proof}
The proof of~\ref{me 1} is a straightforward analogue of the proof for the $\kappa=\omega$ case 
in \cite{CarroyMillerSoukup}*{Theorem~1}.
We give the details for completeness. 
Suppose $c':X\to\kappa$ is a $\kappa$-coloring of $H$, and let $c:=c'\comp f$. Since $f$ is a homomorphism from $\dhH\ddim$ to $H$, $c$ is a $\kappa$-coloring of $\dhH\ddim$. 
Construct by recursion a continuous increasing sequence
$\langle t_\alpha:\alpha<\kappa\rangle$ such that
$t_\alpha\in{}^{\alpha}\ddim$ and
$c^{-1}(\{\alpha\})\cap N_{t_{\alpha+1}}=\emptyset$
for each $\alpha<\kappa$.
Use the
$\dhH\ddim$-independence of $c^{-1}(\{\alpha\})$ at
stage $\alpha+1$ of the construction.
Then
$\bigcup_{\alpha<\kappa} t_\alpha$ is an element of ${}^\kappa\ddim$ 
which is not in $c^{-1}(\{\alpha\})$ for any $\alpha<\ddim$,
a contradiction.

\ref{me 2} follows from \ref{me 1}, since
the map
$f_t:{}^\kappa\ddim\to f(N_t);\; x\mapsto f(t\conc x)$ 
is a homomorphism from~$\dhH\ddim$ to~$H\restr f(N_t)$
for any given $t\in{}^\kappa\ddim$. 
\end{proof}

Note that
\ref{me 1} implies $|X|>\kappa$ and~\ref{me 2} implies 
$\left|f (N_t)\right|>\kappa$ 
for all $t\in{}^{<\kappa}\ddim$.
We did not need to assume that $f$ is continuous or that $H$ is box-open. 
Thus, if $\ODD\kappa H$ holds and there exists an arbitrary homomorphism from $\dhH \ddim$ to $H$, then there also exists a continuous one. 
This is analogous to the fact that for any subset $X$ of ${}^\kappa\kappa$ that satisfies the $\kappa$-perfect set property, the existence of an injective function from ${}^\kappa 2$ to $X$ already implies the existence of a continuous injective function.

\begin{lemma}
\label{<kappa dim hypergraphs are definable}
Suppose $\ddim<\kappa$ and $X\subseteq{}^\kappa\kappa$.
\begin{enumerate-(1)}
\item\label{dhad 1}
If $U$ is a box-open subset of ${}^\ddim ({}^\kappa\kappa)$
then $U\in\defsets\kappa$.
\item\label{dhad 2}
$\ODD\kappa\ddim(X)$ is equivalent to $\ODD\kappa\ddim(X,\defsets\kappa)$.
\end{enumerate-(1)}
\end{lemma}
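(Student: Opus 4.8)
The plan is to establish part~\ref{dhad 1} by a cardinality-and-coding argument and then read off part~\ref{dhad 2} as an immediate formal consequence.

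For part~\ref{dhad 1}, the guiding observation is that, since $\ddim<\kappa$ and $\kappa^{<\kappa}=\kappa$, the box topology on ${}^\ddim({}^\kappa\kappa)$ has a base of size $\kappa$: the basic box-open sets are exactly the products $\prod_{i\in\ddim}N_{s_i}$ with $\langle s_i:i\in\ddim\rangle\in{}^\ddim({}^{<\kappa}\kappa)$, and there are at most $\kappa^{|\ddim|}\leq\kappa^{<\kappa}=\kappa$ of these. First I would set
$$C:=\set{\langle s_i:i\in\ddim\rangle\in{}^\ddim({}^{<\kappa}\kappa)}{\prod_{i\in\ddim}N_{s_i}\subseteq U},$$
the set of codes of all basic box-open subsets of $U$. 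Since $U$ is box-open, each of its points lies in a basic box-open subset of $U$, so $U=\bigcup_{\langle s_i\rangle\in C}\prod_{i\in\ddim}N_{s_i}$, while $|C|\leq\kappa$.

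Next I would code $C$ by a $\kappa$-sequence of ordinals. The transitive closure of $C$ has size at most $\kappa$, as $\ddim<\kappa$, each $s_i$ is a bounded sequence of ordinals below $\kappa$, and $|C|\leq\kappa$. Hence there is a $\kappa$-sequence of ordinals $p$ coding $C$ in the standard way, e.g.\ coding a well-founded extensional relation on $\kappa$ together with a distinguished node that its transitive collapse sends to $C$. From any such $p$ one recovers $C$ by a first-order formula, and then
$$\bar x\in U\ \Longleftrightarrow\ \exists\,\langle s_i:i\in\ddim\rangle\in C\ \ \forall i\in\ddim\ \ s_i\subseteq x_i,$$
which displays $U$ as definable from $p$. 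Thus $U\in\defsets\kappa$.

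For part~\ref{dhad 2}, the implication $\ODD\kappa\ddim(X)\Longrightarrow\ODD\kappa\ddim(X,\defsets\kappa)$ is immediate, since the right-hand side only demands the dichotomy for the smaller class of definable box-open $\ddim$-dihypergraphs. For the converse I would note that any box-open $\ddim$-dihypergraph $H$ on ${}^\kappa\kappa$ is in particular a box-open subset of ${}^\ddim({}^\kappa\kappa)$, hence lies in $\defsets\kappa$ by part~\ref{dhad 1}. Consequently the two statements quantify over exactly the same collection of dihypergraphs $H$, and so they are equivalent.

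The only genuine work sits in part~\ref{dhad 1}, and even there the single real idea is that the base has size $\kappa$; the remainder is the routine principle that an object of hereditary size $\leq\kappa$ is coded by a $\kappa$-sequence of ordinals over which first-order definability is preserved. I expect the main (and still minor) obstacle to be phrasing the coding cleanly enough that the recovery of $C$, and hence the defining condition for $U$, is visibly first order in $p$; no deeper difficulty should arise.
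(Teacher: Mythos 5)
Your proposal is correct and follows essentially the same route as the paper: both proofs rest on the observation that the box topology on ${}^\ddim({}^\kappa\kappa)$ has a base of size $|{}^\ddim({}^{<\kappa}\kappa)|=\kappa$, code the ($\leq\kappa$-many) basic sets contained in $U$ by a $\kappa$-sequence of ordinals (the paper codes the enumeration directly by an element of ${}^\kappa\kappa$, you use the standard transitive-collapse coding, an inessential difference), and then deduce part~(2) immediately from part~(1) since every box-open $\ddim$-dihypergraph on ${}^\kappa\kappa$ is a box-open subset of ${}^\ddim({}^\kappa\kappa)$.
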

\begin{proof}
For \ref{dhad 1}, note that $U$ is given by a sequence $x: \kappa\to {}^\ddim({}^{<\kappa}\kappa)$ since the base of the topology has size $|{}^\ddim({}^{<\kappa}\kappa)|=\kappa$. Since $x$ can be coded by an element of ${}^\kappa\kappa$, we have $U\in \defsets\kappa$. 
\ref{dhad 2} follows from \ref{dhad 1}.
\end{proof}

The next two lemmas give reformulations of $\ODD\kappa\ddim(\defsetsk)$ and $\ODD\kappa\ddim(\defsetsk,\defsetsk)$. 
Recall that $H$ is a \emph{relatively box-open dihypergraph} if it is box-open on its domain $\domh H$.

\begin{lemma}
\label{lemma: two versions of ODD for definable sets}
$\ODD\kappa\ddim(\defsetsk)$ is equivalent to the statement that $\ODD\kappa H$ holds for all
relatively box-open $\ddim$-dihypergraphs 
$H$  on ${}^\kappa\kappa$
with $\domh H\in\defsetsk$.
\end{lemma}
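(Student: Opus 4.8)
Write (II) for the statement that $\ODD\kappa H$ holds for all relatively box-open $\ddim$-dihypergraphs $H$ on ${}^\kappa\kappa$ with $\domh H\in\defsetsk$. The plan is to prove both implications by passing between a dihypergraph regarded on ${}^\kappa\kappa$ and the same dihypergraph regarded on the definable subset that is relevant to it, using one auxiliary observation repeatedly: \emph{the truth value of $\ODD\kappa H$ is insensitive to the ambient space}. Precisely, for any $\ddim$-dihypergraph $H$ with domain $D=\domh H$ and any topological space $Y$ with $D\subseteq Y\subseteq{}^\kappa\kappa$ (carrying the subspace topology), $\ODD\kappa H$ for $H$ regarded on $Y$ is equivalent to $\ODD\kappa H$ for $H$ regarded on $D$. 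For colourings this holds because every point of $Y\setminus D$ is $H$-independent, so a $\kappa$-coloring of $H$ on $D$ extends to one on $Y$ by dumping $Y\setminus D$ into a single colour class, and conversely by restriction. For homomorphisms it holds because every $x\in{}^\kappa\ddim$ lies in some hyperedge of $\dhH\ddim$ (take $t=x\restr\alpha$ and the index $i=x(\alpha)$, so that $t\conc\langle i\rangle=x\restr(\alpha+1)$), whence the range of any homomorphism $f\colon{}^\kappa\ddim\to Y$ from $\dhH\ddim$ to $H$ is automatically contained in $D$; continuity is preserved in both directions since $D$ carries the subspace topology.

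For the forward implication, I would start from a relatively box-open $H$ on ${}^\kappa\kappa$ with $D:=\domh H\in\defsetsk$ and extend it to a box-open dihypergraph on all of ${}^\kappa\kappa$. Since $H$ is box-open on $D$ and ${}^\ddim D$ carries the subspace box-topology, there is a box-open $U\subseteq{}^\ddim({}^\kappa\kappa)$ with $U\cap{}^\ddim D=H$; as the diagonal $\dhC\ddim{{}^\kappa\kappa}$ is box-closed (its complement is a union of basic box-open sets), the set $H':=U\setminus\dhC\ddim{{}^\kappa\kappa}$ is a box-open $\ddim$-dihypergraph on ${}^\kappa\kappa$ with $H'\restr D=H$. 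Applying $\ODD\kappa\ddim(\defsetsk)$ to the definable set $D$ and to $H'$ yields $\ODD\kappa{H'\restr D}=\ODD\kappa H$ for $H$ regarded on $D$, and the auxiliary observation upgrades this to $\ODD\kappa H$ for $H$ regarded on ${}^\kappa\kappa$, which is what (II) asserts.

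For the converse, suppose (II) holds, let $X\in\defsetsk$ and let $H$ be an \emph{arbitrary} box-open $\ddim$-dihypergraph on ${}^\kappa\kappa$; I must verify $\ODD\kappa{H\restr X}$. Put $G:=H\restr X$. Then $G$ is box-open on $X$, hence (recalling the characterization that box-openness on $X$ is equivalent to relative box-openness together with relative openness of the domain) $G$ is relatively box-open and $\domh G$ is relatively open in $X$. The crucial point is that $\domh G$ is then definable: writing $\domh G=X\cap W$ for an open $W\subseteq{}^\kappa\kappa$, and using that every open subset of ${}^\kappa\kappa$ lies in $\defsetsk$ because the bounded topology has a base of size $\kappa$ (as in Lemma~\ref{<kappa dim hypergraphs are definable}), we get $W\in\defsetsk$ and hence $\domh G=X\cap W\in\defsetsk$. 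Thus $G$, regarded on ${}^\kappa\kappa$, is a relatively box-open dihypergraph with definable domain, so (II) gives $\ODD\kappa G$; the auxiliary observation transfers this to $\ODD\kappa{H\restr X}$ on $X$.

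The main obstacle is exactly this definability check in the converse. Because $\ODD\kappa\ddim(\defsetsk)$ quantifies over all box-open dihypergraphs $H$ — the class $\mathcal D$ being omitted in the sense of Definition~\ref{def: ODD for classes} — neither $H\restr X$ nor its domain is definable a priori, so (II) cannot be applied naively. The resolution is that box-openness of $H\restr X$ on the \emph{definable} set $X$ forces its domain to be relatively open in $X$, and a relatively open subset of a definable set is itself definable (being the intersection of a definable set with an open, hence definable, set). This is the only nontrivial ingredient; everything else is the bookkeeping about colourings, homomorphisms, and the diagonal recorded in the auxiliary observation.
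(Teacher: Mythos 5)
Your proof is correct and takes essentially the same route as the paper's: the paper's argument rests on the observation that the relatively box-open $\ddim$-dihypergraphs with domain in $\defsetsk$ are exactly the restrictions $H'\restr X$ of box-open dihypergraphs $H'$ on ${}^\kappa\kappa$ to definable sets $X$, and your extension construction (choosing $U$ and removing the diagonal) is the left-to-right half of that observation, while your relative-openness/definability argument for $\domh{H\restr X}$ is precisely its right-to-left half, which is the only part the paper spells out. Your auxiliary observation on insensitivity to the ambient space merely makes explicit a bookkeeping point that the paper leaves implicit.
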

\begin{proof}
This follows from the observation that a dihypergraph
$H$ is relatively box-open with $\domh H\in\defsetsk$ 
if and only if
$H=H'\restr X$ for some 
subset $X\in\defsetsk$ of ${}^\kappa\kappa$ and some
box-open $\ddim$-dihypergraph $H'$ on ${}^\kappa\kappa$.
To see the direction from right to left, note that $\domh{H'\restr X}$ is a relatively open subset of $X$,
and hence it is in $\defsetsk$ if $X$ is in $\defsetsk$.
\end{proof}

\begin{lemma}\ 
\label{lemma: two versions of definable ODD}
Suppose that $d\leq\kappa$. Then
$\ODD\kappa\ddim(\defsetsk,\defsetsk)$ is equivalent to the statement that $\ODD\kappa H$ holds for all
relatively box-open $\ddim$-dihypergraphs 
$H\in \defsetsk$ on ${}^\kappa\kappa$.
\end{lemma}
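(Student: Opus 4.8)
The plan is to reduce the assertion to a single characterization of the dihypergraphs that occur in the definition of $\ODD\kappa\ddim(\defsetsk,\defsetsk)$, exactly paralleling the proof of Lemma~\ref{lemma: two versions of ODD for definable sets}, but now keeping track of the definability of the dihypergraph \emph{itself} rather than only of its domain. By Definition~\ref{def: ODD for classes}, the principle $\ODD\kappa\ddim(\defsetsk,\defsetsk)$ asserts that $\ODD\kappa{H'\restr X}$ holds for every $X\in\defsetsk$ and every box-open $\ddim$-dihypergraph $H'\in\defsetsk$ on ${}^\kappa\kappa$. Since $\ODD\kappa G$ is a property of the dihypergraph $G$ alone, it therefore suffices to show that a $\ddim$-dihypergraph $H$ on a subset of ${}^\kappa\kappa$ is relatively box-open with $H\in\defsetsk$ if and only if $H=H'\restr X$ for some subset $X\in\defsetsk$ of ${}^\kappa\kappa$ and some box-open $\ddim$-dihypergraph $H'\in\defsetsk$ on ${}^\kappa\kappa$. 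The lemma then follows at once.

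For the direction from right to left I would argue as follows. Suppose $H=H'\restr X=H'\cap{}^\ddim X$ with $X\in\defsetsk$ and $H'$ a box-open $\ddim$-dihypergraph on ${}^\kappa\kappa$ with $H'\in\defsetsk$. Since $X\in\defsetsk$ and $\ddim\leq\kappa$, the set ${}^\ddim X$ is again definable from a $\kappa$-sequence of ordinals, so ${}^\ddim X\in\defsetsk$ and hence $H=H'\cap{}^\ddim X\in\defsetsk$. For relative box-openness, note that $\domh H\subseteq X$ and $H\subseteq{}^\ddim(\domh H)$, whence
\[
H=H'\cap{}^\ddim X\cap{}^\ddim(\domh H)=H'\cap{}^\ddim(\domh H).
\]
As $H'$ is open in the box topology and the box topology on ${}^\ddim(\domh H)$ coincides with the subspace topology inherited from ${}^\ddim({}^\kappa\kappa)$, this exhibits $H$ as a relatively open subset of ${}^\ddim(\domh H)$; that is, $H$ is relatively box-open.

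For the reverse direction, given a relatively box-open $H\in\defsetsk$ I would set $X:=\domh H=\bigcup_{i\in\ddim}\proj_i(H)$, which is definable from $H$ and hence lies in $\defsetsk$, and take the canonical box-open envelope
\[
H':=\Big(\bigcup\big\{\textstyle\prod_{i\in\ddim}N_{t_i}\ :\ \langle t_i:i\in\ddim\rangle\in{}^\ddim({}^{<\kappa}\kappa)\text{ and }\textstyle\prod_{i\in\ddim}N_{t_i}\cap{}^\ddim X\subseteq H\big\}\Big)\setminus\dhC{\ddim}{{}^\kappa\kappa}.
\]
Since $H$ is relatively open in ${}^\ddim X$, the displayed union meets ${}^\ddim X$ exactly in $H$; removing the (box-closed) diagonal $\dhC{\ddim}{{}^\kappa\kappa}$ changes nothing because $H$ consists of non-constant sequences, so $H'\cap{}^\ddim X=H$ and $H'$ is a box-open $\ddim$-dihypergraph on ${}^\kappa\kappa$. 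Finally, $H'$ is given by a formula with parameters $H$ and $X$, both in $\defsetsk$, so $H'\in\defsetsk$ as well.

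The routine points—that the diagonal is box-closed, that the box and subspace topologies agree on ${}^\ddim X$, and that the various intersections and definable-from-$H$ constructions remain in $\defsetsk$—are all standard. The one genuinely new point compared with Lemma~\ref{lemma: two versions of ODD for definable sets}, and the step I would treat most carefully, is the bookkeeping that $H'$ itself lands in $\defsetsk$: there only $\domh H\in\defsetsk$ was required, whereas here I must build $H'$ solely from parameters already known to be definable from $\kappa$-sequences of ordinals and check that the quantifier over boxes $\langle t_i:i\in\ddim\rangle$ introduces no inadmissible parameters—precisely the place where the hypothesis $\ddim\leq\kappa$ is convenient.
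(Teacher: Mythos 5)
Your proof is correct and takes essentially the same approach as the paper's: the same reduction to the equivalence between ``$H\in\defsetsk$ is relatively box-open'' and ``$H=H'\restr X$ for some $X\in\defsetsk$ and box-open $H'\in\defsetsk$ on ${}^\kappa\kappa$'', with the same witnesses $X:=\domh H$ and the same envelope $H'$ (your removal of the diagonal $\dhC{\ddim}{{}^\kappa\kappa}$ is exactly the paper's intersection with $\dhK\ddim{{}^\kappa\kappa}$). The only difference is that you spell out the direction the paper calls ``clear'' and the definability bookkeeping in more detail.
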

\begin{proof}
This follows from the equivalence of the following statements for any $\ddim\leq\kappa$ and any $\ddim$-dihypergraph $H$ on ${}^\kappa\kappa$: 
\begin{enumerate-(a)} 
\item\label{def ODD 1a} 
$H\in \defsetsk$ is relatively box-open.
\item\label{def ODD 1b} 
$H=H'\restr X$ for some 
subset $X\in\defsetsk$ of ${}^\kappa\kappa$ and some
box-open $\ddim$-dihypergraph $H'\in\defsetsk$ on ${}^\kappa\kappa$.
\end{enumerate-(a)} 
For \ref{def ODD 1a} $\Rightarrow$ \ref{def ODD 1b}, 
note that 
$X:=\domh H\in \defsetsk$ and
$$H':=\bigcup\{\prod_{\alpha<\ddim}N_{t_\alpha}:\, \prod_{\alpha<\ddim}(N_{t_\alpha}\cap X)\subseteq H\}\cap\dhK\ddim{{}^\kappa\kappa} \in\defsetsk$$ 
is a box-open $\ddim$-dihypergraph on ${}^\kappa\kappa$ 
with $H=H'\restr X$. 
The converse is clear. 
\end{proof}

\begin{lemma}
\label{ODD for continuous images}
Let $f:X\to Y$ be a continuous surjection between subsets $X$ and $Y$ of~${}^\kappa\kappa$. 
\begin{enumerate-(1)}
\item \label{ci 1}
$\ODD\kappa\ddim(X)$ implies $\ODD\kappa\ddim(Y)$.%
%\footnote{This works for arbitrary topological spaces $X,Y$.}
\item \label{ci 2} 
If $f$ can be extended to a continuous function \todog{``continuous'' is needed  in the proof, and it implies $g\in\defsetsk$}
$g: {}^\kappa\kappa\to {}^\kappa\kappa$, 
%a function $g: {}^\kappa\kappa\to {}^\kappa\kappa$ in $\defsets\kappa$,  
then $\ODD\kappa\ddim(X,\defsets\kappa)$ implies $\ODD\kappa\ddim(Y,\defsets\kappa)$.
\end{enumerate-(1)}
\end{lemma}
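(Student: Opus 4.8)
The plan is to pull the dihypergraph back along $f$. For part~\ref{ci 1}, fix a box-open $\ddim$-dihypergraph $I$ on ${}^\kappa\kappa$; by the definition of $\ODD\kappa\ddim(Y)$ it suffices to establish $\ODD\kappa{I\restr Y}$. I would set $H:=(f^\ddim)^{-1}(I\restr Y)=\{\bar x\in{}^\ddim X: f^\ddim(\bar x)\in I\restr Y\}$. Any $\bar x\in H$ has $f^\ddim(\bar x)\in I\restr Y\subseteq I$, which is non-constant, so $\bar x$ is non-constant; hence $H$ is a genuine $\ddim$-dihypergraph on $X$. Since $f:X\to Y$ is continuous, so is $f^\ddim$ in the box topology, and therefore $H$ is box-open on $X$. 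I would then invoke $\ODD\kappa\ddim(X)$ in its equivalent formulation for dihypergraphs that are box-open on $X$ (noted after Definition~\ref{def: ODD for classes}), which yields $\ODD\kappa H$.

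The next step is to transfer each of the two alternatives across $f$. If $c:X\to\kappa$ is a $\kappa$-coloring of $H$, define $c':Y\to\kappa$ by $c'(y):=\min\{c(x): x\in f^{-1}(\{y\})\}$, which is well-defined because $f$ is onto and $\kappa$ is an ordinal. Each class $c'^{-1}(\{\alpha\})$ is $I\restr Y$-independent: a hyperedge $\bar y$ inside it lifts coordinatewise to some $\bar x$ with $f^\ddim(\bar x)=\bar y\in I\restr Y$ and $c(x_i)=\alpha$ for all $i$, which would be a hyperedge of $H$ inside $c^{-1}(\{\alpha\})$, contradicting its $H$-independence. Thus $c'$ is a $\kappa$-coloring of $I\restr Y$. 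Otherwise there is a continuous homomorphism $h:{}^\kappa\ddim\to X$ from $\dhH\ddim$ to $H$; then $f\comp h$ is continuous, and since $(f\comp h)^\ddim(\bar z)=f^\ddim(h^\ddim(\bar z))\in I\restr Y$ for every $\bar z\in\dhH\ddim$ by the definition of $H$, it is a continuous homomorphism from $\dhH\ddim$ to $I\restr Y$. Either way $\ODD\kappa{I\restr Y}$ holds, proving~\ref{ci 1}.

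For part~\ref{ci 2}, the main obstacle is that the pullback $H$ above is only box-open \emph{on $X$} and need not be definable, whereas $\ODD\kappa\ddim(X,\defsetsk)$ may be applied only to dihypergraphs that are box-open on all of ${}^\kappa\kappa$ and lie in $\defsetsk$. This is precisely what the continuous extension $g:{}^\kappa\kappa\to{}^\kappa\kappa$ repairs. I would instead set $H':=(g^\ddim)^{-1}(I)$, which is box-open on ${}^\kappa\kappa$ since $g^\ddim$ is continuous, and which lies in $\defsetsk$ because a continuous function ${}^\kappa\kappa\to{}^\kappa\kappa$ is coded by, and hence definable from, a $\kappa$-sequence of ordinals, while $I\in\defsetsk$ by hypothesis. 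As $g$ extends $f$, we have $g^\ddim(\bar x)=f^\ddim(\bar x)$ for all $\bar x\in{}^\ddim X$, so $H'\restr X=H$. Applying $\ODD\kappa\ddim(X,\defsetsk)$ to $H'$ gives $\ODD\kappa{H'\restr X}=\ODD\kappa H$, and the case analysis from part~\ref{ci 1} then transfers the dichotomy from $H$ to $I\restr Y$ verbatim, yielding $\ODD\kappa\ddim(Y,\defsetsk)$.
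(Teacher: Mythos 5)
Your proposal is correct and follows essentially the same route as the paper's proof: pull the dihypergraph back along $f$ (using continuity of $f^\ddim$ in the box topology for box-openness), transfer a $\kappa$-coloring by taking minima over fibers and a continuous homomorphism by composing with $f$, and in part~\ref{ci 2} pull back along the extension $g$ instead, so that the pullback is box-open on all of ${}^\kappa\kappa$, lies in $\defsetsk$ (as $g$ and $I$ are definable from $\kappa$-sequences of ordinals), and restricts on ${}^\ddim X$ to the pullback along $f$. The paper's argument is organized identically, with your $H'\restr X=H$ identity appearing there as $H'_g\restr X=H_f$.
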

\begin{proof}
%We begin with two observations.
$\ODD\kappa\ddim(X)$ is equivalent to the statement that $\ODD\kappa I$ holds for all box-open $\ddim$-dihypergraphs $I$ on $X$. 
For a given box-open $\ddim$-dihypergraph $H$ on $Y$, let
\[H_f={{(f^\ddim)}^{-1}(H)}
=\{\langle x_\alpha:\alpha<\ddim\rangle\in{}^\ddim X: 
\langle f(x_\alpha):\alpha<\ddim\rangle\in H\}.
\]
Since ${f^\ddim}$ is a continuous map between the spaces ${}^\ddim X$ and ${}^\ddim Y$,
where both spaces are 
equipped with the box-topology, 
$H_f$ is a box-open $\ddim$-dihypergraph on $X$. 
%Let $H'_f$ be any box-open dihypergraph on ${}^\kappa\kappa$ such that $H'_f{\restr}X = H_f$. 

%
\begin{claim*}
$\ODD\kappa {H_f}$ implies $\ODD\kappa H$.
\end{claim*}
\begin{proof}
Note that $f$ is a continuous homomorphism from $H_f$ to $H$.
Thus, if $h:{}^\kappa\ddim\to X$ is a continuous homomorphism from $\dhH\ddim$ to $H_f$, then $h\comp f$ is a continuous homomorphism from $\dhH\ddim$ to $H$.
On the other hand, if $c:X\to\kappa$ is a $\kappa$-coloring of $H_f$, then the map 
$c':Y\to\kappa$ defined by letting
$c'(y):=
\min\big\{\alpha:\:f^{-1}(\{y\})\cap {(c)}^{-1}(\{\alpha\})\neq\emptyset\big\}$
for each $y\in Y$
is a $\kappa$-coloring of $H$.
\end{proof}

\ref{ci 1} follows immediately from the previous claim.
For \ref{ci 2}, suppose that $H'\in \defsets\kappa$ is a box-open $\ddim$-dihypergraph on ${}^\kappa\kappa$. 
Let $H:=H'{\restr}Y$. 
It suffices to show 
$\ODD\kappa H$.
Let $H_f:={(f^\ddim)}^{-1}(H)$ and $H'_g={(g^\ddim)}^{-1}(H')$. 
Since $H_f=H'_g\restr X$ and $g$ is continuous, it follows that 
$H'_g\in\defsetsk$ 
is a box-open \todog{needs $g$ to be continuous}
$\ddim$-dihypergraph on ${}^\kappa\kappa$.
Since we assume 
$\ODD\kappa\ddim(X,\defsetsk)$, we have $\ODD\kappa\ddim(X,H'_g)\Longleftrightarrow\ODD\kappa{H'_g{\restr}X}\Longleftrightarrow\ODD\kappa{H_f}$. 
Thus $\ODD\kappa H$ by the previous claim.
\todog{Short explicit proof of $H'_g\restr X=H_f$ (I think it can be omitted though): $\bar x\in H'_g\restr X$ if and only if ($\bar x\in{}^\ddim X$ and $H'\ni g(\bar x)=f(\bar x)$). In the last equality, we use that $f=g\restr X$. Since $H=H'\restr Y$ and $\ran(f)\subseteq H$, the latter condition holds if and only if ($\bar x\in{}^\ddim X$ and $f(\bar x)\in H$), i.e., if and only if $\bar x\in H_f$}
\end{proof}

We next show that $\ODD\kappa\ddim(X)$ implies $\ODD\kappa c(X)$ 
for all $c\subseteq d$ 
with $ |c|\geq 2$.
We will need the following lemma.
Let $\proj_{\ddim,c}$ denote 
%\index{sequences!projection!onto coordinates in $c$ \idf$\proj_{\ddim,c}$}%
\index{projection of $\ddim$-sequences onto!coordinates in $c$ \idf$\proj_{\ddim,c}$}%
the projection of $\ddim$-sequences onto 
coordinates in~$c$,
i.e.,
for any sequence 
$\bar x=\langle x_i:i\in\ddim\rangle$,
\[\proj_{\ddim,c}(\bar x)=\bar x\restr c=\langle x_i: i\in c\rangle.\]
Recall that $\dhK\ddim X$ is the complete $\ddim$-hypergraph on $X$ and
$\dhII\ddim X$ consists of 
the injective elements of $\dhK\ddim X$.

\begin{lemma}
\label{sublemma: ODD for different dimensions}
%Suppose $2\leq c<\ddim$.
Suppose that $c\subseteq d$ and $|c|\geq 2$.
Let $H$ be a
$c$-dihypergraph 
%on a subset $X$ of ${}^\kappa\kappa$,
on a topological space $X$,
and let 
$I:=\proj_{\ddim, c}^{-1}(H){\restr}X$. 
\todog{Equivalently, $I:=\proj_{\ddim, c}^{-1}(H)\cap\dhK\ddim X$, since 
$\proj_{\ddim, c}^{-1}(\dhK c X)\cap{}^\ddim X\subseteq\dhK\ddim X$.}
\begin{enumerate-(i)}
\item\label{dif dims dh} 
$I$ is a $\ddim$-dihypergraph on $X$.
\item\label{dif dims open}
$H$ is box-open on $X$ if and only if $I$ is box-open on $X$. 
In fact, if $c$ is finite and $H$ is open on $X$, 
then $I$ is product-open on $X$.% 
\footnote{I.e., $H$ is an open subset of ${}^\ddim X$ in the product topology.} 
\index{dihypergraph!product-open\idf}
\todog{This is not true for closed dihypergraphs, since maybe $I\subsetneq \proj_{\ddim, c}^{-1}(H\cup\dhC c X)\cap\dhK\ddim X$.}
\item
\label{dif dims definability}
If $c,X\in\defsetsk$, then $H\in\defsetsk$ if and only if $I\in\defsetsk$. 
\item\label{dif dims complete subgraphs}
Suppose that $Y\subseteq X$.
\begin{enumerate-(a)}
\item
$\dhK\ddim Y\not\subseteq I$ if $c\neq d$.
\todog{For example, $\langle x\rangle ^\ddim\conc\langle y\rangle^{c\setminus d}$ is in $I\setminus\dhK\ddim Y$}
\item If $|Y|\geq|\ddim|$, then
$\dhII c Y\subseteq H$ if and only if $\dhII \ddim Y\subseteq I$.
\end{enumerate-(a)}
\item\label{dif dims independent}
Suppose that $Y\subseteq X$.
\begin{enumerate-(a)}
\item
$Y$ is $H$-independent if and only if $Y$ is $I$-independent. 
\item If $|Y|\geq|\ddim|$, then
$\dhII c Y\cap H=\emptyset$ if and only if $\dhII\ddim Y\cap I=\emptyset$. 
\end{enumerate-(a)}
\item\label{dif dims colorings}
A function $f:X\to \kappa$ is a $\kappa$-coloring of $H$ if and only if it is a $\kappa$-coloring of $I$.
\item\label{dif dims homomorphisms}
There exists a continuous homomorphism from $\dhH c$ to $H$ if and only if there exists a continuous homomorphism from $\dhH\ddim$ to $I$. 
\item\label{dif dims ODD}
$\ODD\kappa{H}$ $\Longleftrightarrow$ $\ODD\kappa{I}$.
\end{enumerate-(i)}
\end{lemma}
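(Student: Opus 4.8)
The plan is to establish (i)--(viii) essentially in the stated order, treating (i)--(vi) as direct consequences of the defining equation $I=\proj_{\ddim,c}^{-1}(H)\cap{}^\ddim X$ together with its dual description $H=\proj_{\ddim,c}(I)$ (valid since every $\bar y\in H$ extends to some $\bar x\in{}^\ddim X$), and reserving the real work for the transfer of continuous homomorphisms in (vii), from which (viii) is immediate. For (i), since $H$ consists of non-constant $c$-sequences, every $\bar x\in I$ has non-constant restriction $\bar x\restr c$ and is therefore itself non-constant, so $I\subseteq\dhK\ddim X$. For (ii), I would use that the box projection $\proj_{\ddim,c}\colon{}^\ddim X\to{}^c X$ is both continuous and an open map (a basic box-open rectangle projects to a box-open rectangle): continuity gives that $I=\proj_{\ddim,c}^{-1}(H)$ is box-open when $H$ is, and openness together with $H=\proj_{\ddim,c}(I)$ gives the converse; when $c$ is finite the box and product topologies on ${}^c X$ coincide, so the same map witnesses product-openness. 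For (iii), $I$ is definable from $H,c,X$ and, conversely, $H=\proj_{\ddim,c}(I)$ is definable from $I,c,X$, while $\ddim$ is a definable ordinal, so membership in $\defsetsk$ transfers both ways.

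The combinatorial items (iv)--(vi) rest on one extension principle: every sequence in ${}^c Y$ extends to a sequence in ${}^\ddim Y$ (for $Y\neq\emptyset$), and every \emph{injective} sequence in ${}^c Y$ extends to an injective one in ${}^\ddim Y$ provided $|Y|\ge|\ddim|$, by choosing fresh values in $Y$ for the coordinates in $\ddim\setminus c$. Feeding these extensions through the equivalence ``$\bar x\in I\Longleftrightarrow\bar x\restr c\in H$'' yields (iv)(b) and (v)(b) for cliques and injective-independence, and (v)(a) for independence (i.e.\ $H\restr Y=\emptyset\Longleftrightarrow I\restr Y=\emptyset$); then (vi) follows, since a $\kappa$-coloring is exactly a partition of $X$ into $\kappa$ many independent fibres and (v)(a) matches the $H$- and $I$-independent sets fibrewise. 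For (iv)(a) I would exhibit a $\ddim$-sequence with values in $Y$ that is non-constant but constant on the coordinates in $c$ (available once $\ddim\setminus c\neq\emptyset$ and $|Y|\ge2$); its $c$-restriction is constant, so it lies in $\dhK\ddim Y\setminus I$.

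The crux is (vii). For the direction producing a homomorphism into $H$ from one $h\colon{}^\kappa\ddim\to X$ into $I$, I would simply restrict, using $c\subseteq\ddim$ and hence ${}^\kappa c\subseteq{}^\kappa\ddim$: set $g:=h\restr{}^\kappa c$. Given a $c$-hyperedge of $\dhH c$ with splitting node $t\in{}^{<\kappa}c\subseteq{}^{<\kappa}\ddim$, extend it to a $\ddim$-hyperedge of $\dhH\ddim$ by filling each coordinate $i\in\ddim\setminus c$ with some branch through $t\conc\langle i\rangle$; applying $h$ and reading off the $c$-coordinates lands in $H$ by the definition of $I$, and on those coordinates $h$ agrees with $g$, so $g$ is a homomorphism into $H$. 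For the converse I would build a strict order preserving, continuous map $j\colon{}^{<\kappa}\ddim\to{}^{<\kappa}c$ with $j(\emptyset)=\emptyset$, with $j(t\conc\langle i\rangle)=j(t)\conc\langle i\rangle$ for $i\in c$ and $j(t\conc\langle i\rangle)=j(t)\conc\langle i_0\rangle$ for $i\in\ddim\setminus c$ (a fixed $i_0\in c$), and with unions taken at limit levels. Then $\lh(j(t))=\lh(t)$, so $[j]\colon{}^\kappa\ddim\to{}^\kappa c$ is total and continuous by Lemma~\ref{[e] continuous}\ref{[e] cont}; setting $h:=g\comp[j]$ for a continuous homomorphism $g$ from $\dhH c$ to $H$, a $\ddim$-hyperedge with splitting node $t$ is sent by $[j]$ into the sets $N_{j(t)\conc\langle i\rangle}$ for $i\in c$, hence to a $c$-hyperedge of $\dhH c$ with splitting node $j(t)$; its image under $g$, read on the $c$-coordinates, lies in $H$, so $h$ is a continuous homomorphism from $\dhH\ddim$ to $I$. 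The main obstacle is exactly this forward construction of $j$: it must fold the extra directions $\ddim\setminus c$ onto a single direction while preserving continuity and the splitting structure that guarantees hyperedges map to hyperedges.

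Finally, (viii) drops out by combining (vi) and (vii). The first disjunct of $\ODD\kappa H$ (existence of a $\kappa$-coloring of $H$) is equivalent to the first disjunct of $\ODD\kappa I$ by (vi), and the second disjuncts (a continuous homomorphism from $\dhH c$ to $H$, respectively from $\dhH\ddim$ to $I$) are equivalent by (vii); hence $\ODD\kappa H\Longleftrightarrow\ODD\kappa I$.
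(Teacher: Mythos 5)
Your proposal is correct and takes essentially the same route as the paper: items (i)--(vi) are treated there as immediate consequences of the definition of $I$, and for (vii) the paper likewise restricts to ${}^\kappa c$ in one direction and, in the other, fixes a map $k\colon \ddim\to c$ with $k\restr c=\id_c$ and sets $h(x):=g(k\circ x)$, which is exactly your $g\comp[j]$, since $[j](x)=k\circ x$ for the $k$ sending $\ddim\setminus c$ to $i_0$. The only cosmetic difference is that you obtain continuity of this map via the tree-level homomorphism $j$ and Lemma~\ref{[e] continuous} rather than verifying it directly.
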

\begin{proof}
\ref{dif dims dh}--\ref{dif dims independent} 
are easy to show. 
\ref{dif dims colorings} follows from \ref{dif dims independent}. 
To show \ref{dif dims homomorphisms}, 
suppose first that $g:{}^\kappa\ddim\to X$ is a continuous homomorphism from $\dhH\ddim$ to $I$. 
Let $f:=g\restr{}^\kappa c$.
It is easy to see that $f$ is continuous.
To see that $f$ is a homomorphism from $\dhH c$ to $H$,
suppose that~$\bar y\in \dhH c$.
Then there exists $\bar x\in{}\dhH\ddim$ such that $\proj_{\ddim, c}(\bar x)=\bar y$.
Since $g$ is a homomorphism from $\dhH\ddim$ to $I$, 
we have $g^\ddim(\bar x)\in I$.
Therefore
$f^c(\bar y)= 
g^c(\bar y)= 
\proj_{\ddim,c}\big(g^\ddim(\bar x)\big)
\in H,$
as required.
Conversely, suppose that $f:{}^\kappa c\to X$ is a continuous homomorphism from $\dhH c$ to $H$. 
Fix a map $k: d\to c$ with $k{\upharpoonright}c = \id_c$ and define $g:{}^\kappa\ddim\to X$ by letting $g(x):=f(k\circ x)$. 
%We claim that $f$ is a continuous homomorphism from $\dhH\ddim$ to $I$. 
It is easy to see that $g$ is continuous. 
To see that $g$ is a homomorphism from $\dhH \ddim$ to $I$, 
suppose that~$\bar y= \langle y_i : i\in \ddim\rangle \in \dhH \ddim$. 
Then $g^\ddim(\bar{y})=\langle f(k\circ y_i) : i\in \ddim\rangle$ and its projection equals $\langle f(k\circ y_i) : i\in c\rangle$. 
Suppose that $y_i$ extends $t^\smallfrown \langle i\rangle$ for all $i\in d$. 
Then $k\circ y_i$ extends $(k\circ t)^\smallfrown \langle i\rangle$ for all $i\in c$ and hence $\langle f(k\circ y_i) : i\in c\rangle\in H$. 
Thus $g^\ddim(\bar{y})\in I$. 
\ref{dif dims ODD} follows from \ref{dif dims colorings} and~\ref{dif dims homomorphisms}.
\end{proof}

\begin{lemma}
\label{comparing ODD for different dimensions}
\hypertarget{ODD^kappa implies definable ODD^ddim}{}%
Suppose that $c\subseteq d$, $|c|\geq 2$
and $X$ is a subset of ${}^\kappa\kappa$.
\begin{enumerate-(1)}
\item
\label{codd 1}
$\ODD\kappa\ddim(X)$ implies $\ODD\kappa c(X)$.
\item
\label{codd 2}
$\ODD\kappa\ddim(X,\defsetsk)$ implies $\ODD\kappa c(X,\defsetsk)$
if $c\in\defsetsk$. 
\end{enumerate-(1)}
\end{lemma}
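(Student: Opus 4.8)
The plan is to reduce both implications to Lemma~\ref{sublemma: ODD for different dimensions}, which already packages the passage between a $c$-dihypergraph and the $\ddim$-dihypergraph obtained by pulling it back along the projection $\proj_{\ddim,c}$. Unfolding the definitions, to prove \ref{codd 1} it suffices to verify $\ODD\kappa{H\restr X}$ for an arbitrary box-open $c$-dihypergraph $H$ on ${}^\kappa\kappa$, and for \ref{codd 2} the same with the extra hypothesis $H\in\defsetsk$.

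Given such an $H$, I would set $I:=\proj_{\ddim,c}^{-1}(H)\restr{}^\kappa\kappa$. By parts~\ref{dif dims dh} and~\ref{dif dims open} of Lemma~\ref{sublemma: ODD for different dimensions}, applied with the space ${}^\kappa\kappa$, $I$ is a box-open $\ddim$-dihypergraph on ${}^\kappa\kappa$; and by part~\ref{dif dims definability}, if $c,H\in\defsetsk$ then $I\in\defsetsk$, using ${}^\kappa\kappa\in\defsetsk$. Thus $I$ is a legitimate input for the hypothesis, so $\ODD\kappa\ddim(X)$ yields $\ODD\kappa{I\restr X}$ in case \ref{codd 1}, and $\ODD\kappa\ddim(X,\defsetsk)$ yields $\ODD\kappa{I\restr X}$ in case \ref{codd 2}.

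It remains to transfer this back to $H$. The one book-keeping point is the identity
\[
I\restr X=\proj_{\ddim,c}^{-1}(H\restr X)\restr X,
\]
which holds because a nonconstant $\bar x\in{}^\ddim X$ lies in $I\restr X$ exactly when $\bar x\restr c\in H$, and since $\bar x\restr c\in{}^c X$ this is equivalent to $\bar x\restr c\in H\cap{}^c X=H\restr X$. Granting this, I would apply part~\ref{dif dims ODD} of Lemma~\ref{sublemma: ODD for different dimensions} \emph{with the space taken to be $X$ and the $c$-dihypergraph taken to be $H\restr X$}, obtaining $\ODD\kappa{H\restr X}\Longleftrightarrow\ODD\kappa{I\restr X}$. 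Combining with the previous paragraph gives $\ODD\kappa{H\restr X}$, and since $H$ was arbitrary this proves \ref{codd 1} and \ref{codd 2}.

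I do not expect a genuine obstacle, as all the content sits in Lemma~\ref{sublemma: ODD for different dimensions}; the only point requiring care is to work throughout with the restriction $H\restr X$ rather than with $H$ itself. This is essential in \ref{codd 2}: there $H$ ranges over box-open \emph{definable} dihypergraphs on all of ${}^\kappa\kappa$, whose restriction $H\restr X$ need not be definable, which is precisely why the pullback $I$ must be formed on ${}^\kappa\kappa$ and only afterwards restricted to $X$.
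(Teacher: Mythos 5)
Your proposal is correct and is essentially the paper's own proof: both form $I:=\proj_{\ddim,c}^{-1}(H)\restr{}^\kappa\kappa$, invoke parts \ref{dif dims dh}, \ref{dif dims open} (and \ref{dif dims definability} for the definable case) of Lemma~\ref{sublemma: ODD for different dimensions}, and transfer $\ODD\kappa{I\restr X}$ back to $\ODD\kappa{H\restr X}$ via part~\ref{dif dims ODD}. The only difference is that you spell out the book-keeping identity $I\restr X=\proj_{\ddim,c}^{-1}(H\restr X)\restr X$ needed to apply \ref{dif dims ODD} on the space $X$, which the paper leaves implicit.
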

\begin{proof} 
For \ref{codd 1}, suppose $H$ is a box-open $c$-dihypergraph on ${}^\kappa\kappa$. 
It suffices to show that $\ODD\kappa {H{\restr}X}$ holds. 
$I=\proj_{\ddim, c}^{-1}(H){\restr}{}^\kappa\kappa$ is a box-open $d$-dihypergraph on ${}^\kappa\kappa$ by \ref{dif dims dh} and \ref{dif dims open} in the previous lemma. 
By $\ODD\kappa\ddim(X)$ we have  $\ODD\kappa {I{\restr}X}$ and then obtain $\ODD\kappa {H{\restr}X}$ from \ref{dif dims ODD}. 
The proof of \ref{codd 2} is similar, but also uses \ref{dif dims definability}. 
\end{proof} 

In particular, $\ODD\kappa\kappa(X,\defsetsk)$ implies $\ODD\kappa c(X)$ for all $2\leq c<\kappa$ and $X\subseteq{}^\kappa\kappa$ by 
Lemmas~\ref{<kappa dim hypergraphs are definable} and \ref{comparing ODD for different dimensions}. 

%%%%%%%%%%%%%%

\subsection{Independent trees}
\label{subsection: independent trees}
We assume throughout this subsection that $d$ is a set
of size at least $2$.
We characterize 
independence for box-open dihypergraphs
at the level of subtrees of ${}^{<\kappa}\kappa$.

\begin{definition}
\label{def: independent tree}
Suppose $X$ is a subset of ${}^\kappa\kappa$ and $H$ is a $\ddim$-dihypergraph on ${}^\kappa\kappa$.
\index{tree!independent\idf}%
\index{independent tree!$(X,H)$-independent tree\idf}
A subtree $T$ of ${}^{<\kappa}\kappa$ is an \emph{$(X,H)$-independent tree} if 
$\prod_{i\in\ddim}(N_{t_i}\cap X)\not\subseteq H$ 
for all sequences $\langle t_i:i\in\ddim\rangle \in{}^\ddim T$.
\index{independent tree!$H$-independent tree\idf}
We omit $X$ if~$X={}^\kappa\kappa$, i.e., $T$ is called \emph{$H$-independent} if it is $({}^\kappa\kappa,H)$-independent.%
\end{definition}

We will most often work with $H$-independent trees.
Note that if $T$ is an $(X,H)$-independent tree, then $T\subseteq T(X)$ because otherwise, there exists some $t\in T$ with $N_t\cap X=\emptyset$.
%Note that a tree $T$ is $(X,H)$-independent if and only if it is $H^-_X$-independent.% 
%\footnote{I.e., $H^-_X\restr T=\emptyset$. See Remark~\ref{remark: H^-_X} for the definition of $H^-_X$.}

\begin{lemma}
\label{lemma: independent trees}
Suppose $H$ is a $\ddim$-dihypergraph on a subset $X$ of ${}^\kappa\kappa$.
\begin{enumerate-(1)}
\item\label{ind tree ind set}
If $H$ is box-open on $X$ and $T$ is an $(X,H)$-independent tree, then $[T]$ is an $H$-independent set. 
\item\label{ind set ind tree}
If $Y\subseteq X$ is an $H$-independent set, then $T(Y)$ is an $(X,H)$-independent tree. 
\end{enumerate-(1)}
\end{lemma}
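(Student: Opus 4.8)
The plan is to prove both parts directly from the definitions, with part \ref{ind tree ind set} the only one using topology. For \ref{ind tree ind set} I would argue by contraposition: a hyperedge all of whose coordinates are branches of $T$ would, by box-openness, sit inside a basic box-neighbourhood of the forbidden product form $\prod_{i\in\ddim}(N_{s_i}\cap X)$ whose legs $s_i$ are nodes of $T$, contradicting $(X,H)$-independence. For \ref{ind set ind tree} I would instead exhibit, for every tuple of nodes of $T(Y)$, an explicit tuple of branches lying in $Y$ that witnesses the relevant product is not contained in $H$.

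For \ref{ind tree ind set}, suppose toward a contradiction that $[T]$ is not $H$-independent, so there is a hyperedge $\bar x=\langle x_i:i\in\ddim\rangle\in H$ with $x_i\in[T]$ for every $i\in\ddim$. Since $H\subseteq{}^\ddim X$, each $x_i$ lies in $X$. Because $H$ is box-open on $X$ and the sets $N_s\cap X$ with $s\subseteq x_i$ form a neighbourhood base of $x_i$ in the subspace $X$, I may choose a basic box-open neighbourhood $\prod_{i\in\ddim}(N_{s_i}\cap X)\subseteq H$ of $\bar x$ with $s_i\subseteq x_i$ for each $i$. As $x_i\in[T]$, every initial segment of $x_i$ belongs to $T$; in particular $s_i=x_i\restr\lh(s_i)\in T$. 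Then $\langle s_i:i\in\ddim\rangle\in{}^\ddim T$ yields $\prod_{i\in\ddim}(N_{s_i}\cap X)\subseteq H$, contradicting the $(X,H)$-independence of $T$.

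For \ref{ind set ind tree}, first note that $T(Y)$ is a subtree of ${}^{<\kappa}\kappa$ by construction. To verify $(X,H)$-independence, fix an arbitrary $\langle t_i:i\in\ddim\rangle\in{}^\ddim T(Y)$. By definition of $T(Y)$, for each $i$ there is some $y_i\in Y$ with $t_i\subseteq y_i$, whence $y_i\in N_{t_i}\cap X$ using $Y\subseteq X$. Thus $\langle y_i:i\in\ddim\rangle\in\prod_{i\in\ddim}(N_{t_i}\cap X)$. On the other hand $\langle y_i:i\in\ddim\rangle\in{}^\ddim Y$, and since $Y$ is $H$-independent we have $H\cap{}^\ddim Y=\emptyset$, so this tuple is not a hyperedge of $H$ (this also covers the case where the tuple happens to be constant, as $H$ contains no sequences from the diagonal $\dhC\ddim{X}$). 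Hence $\prod_{i\in\ddim}(N_{t_i}\cap X)\not\subseteq H$, as required.

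The only genuine care is in \ref{ind tree ind set}: one must invoke box-openness in the correct sense, namely openness of $H$ in ${}^\ddim X$ with the box topology, and extract a neighbourhood of the specific product shape $\prod_{i\in\ddim}(N_{s_i}\cap X)$ rather than an arbitrary open set, so that its legs are honest nodes $s_i\in{}^{<\kappa}\kappa$ to which the defining property of branches of $T$ applies. Part \ref{ind set ind tree} is purely combinatorial and uses no topology at all. I therefore expect no serious obstacle beyond this bookkeeping, since the statement merely translates the tree-level notion of independence into the space-level one and back.
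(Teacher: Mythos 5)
Your proof is correct and follows essentially the same route as the paper's: part \ref{ind tree ind set} is the identical box-openness argument extracting nodes $s_i\subseteq x_i$ with $\prod_{i\in\ddim}(N_{s_i}\cap X)\subseteq H$, and part \ref{ind set ind tree} is the paper's argument stated directly rather than by contraposition. The extra remark about constant tuples is harmless but unnecessary, since exhibiting any element of the product outside $H$ (constant or not) already suffices.
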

\begin{proof}
For~\ref{ind tree ind set}, suppose that $[T]$ is not $H$-independent, and
let $\langle y_i:i\in\ddim\rangle \in{}^\ddim[T]\cap H$.
Since $H$ is box-open on $X$, 
there exist nodes $t_i\subsetneq y_i$
such that
$\prod_{i\in\ddim}(N_{t_i}\cap X)\subseteq H$.
Then $t_i\in T$ for all $i\in\ddim$, so $T$ is not $(X,H)$-independent.

For~\ref{ind set ind tree}, suppose that $Y\subseteq X$ and $T(Y)$ is not $(X,H)$-independent. 
Take $\langle t_i:i\in\ddim\rangle$ in ${}^\ddim T(Y)$ such that
$\prod_{i\in\ddim}(N_{t_i}\cap X)\subseteq H$.
For all $i\in\ddim$, 
fix $y_i\in Y$ with $t_i\subseteq y_i$.
Then $\langle y_i:i\in\ddim\rangle$ is a hyperedge of $H$.
\end{proof}

\begin{corollary}
\label{independence and closure}
%If $H$ is box-open on $X$ and $Y\subseteq X$ is $H$-independent, then $\closure Y$%
Suppose $H$ is a box-open $\ddim$-dihypergraph on a subset $X$ of ${}^\kappa\kappa$.
If $Y\subseteq X$ is $H$-independent, then $\closure Y$
is $H$-independent.%
\footnote{Recall that $\closure Y$ denotes the closure of $Y$ in ${}^\kappa\kappa$.}
\end{corollary}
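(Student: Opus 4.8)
The plan is to chain together the two parts of Lemma~\ref{lemma: independent trees} with the observation from Subsection~\ref{preliminaries: trees} that for a subset $Y$ of ${}^\kappa\kappa$, the set $[T(Y)]$ of branches of the tree $T(Y)$ of initial segments of $Y$ equals the closure $\closure Y$ of $Y$. First I would note that $Y\subseteq X$ is $H$-independent by hypothesis, so Lemma~\ref{lemma: independent trees}\ref{ind set ind tree} applies directly and shows that $T(Y)$ is an $(X,H)$-independent tree in the sense of Definition~\ref{def: independent tree}. Since $H$ is box-open on $X$, Lemma~\ref{lemma: independent trees}\ref{ind tree ind set} then yields that $[T(Y)]$ is an $H$-independent set.

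To finish, I would recall the remark at the end of the discussion of trees that $[T(Y)]$ is the closure of $Y$ (this was stated for subsets of ${}^\kappa\ddim$, here applied with the alphabet $\kappa$). Hence $\closure Y=[T(Y)]$ is $H$-independent, which is exactly the assertion of the corollary. The argument is therefore just a two-step application of the preceding lemma followed by the identification of $[T(Y)]$ with $\closure Y$.

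The only point requiring a moment's care is that $\closure Y$ need not be contained in $X$, so one might worry that $H$-independence of $\closure Y$ is not the natural conclusion to draw. This is harmless: since $H\subseteq{}^\ddim X$, we have $H\restr \closure Y=H\cap{}^\ddim(\closure Y\cap X)$, and the proof of Lemma~\ref{lemma: independent trees}\ref{ind tree ind set} never uses that $[T]\subseteq X$—it only extracts, from a hypothetical hyperedge $\bar y\in{}^\ddim[T]\cap H$, basic open boxes $\prod_{i\in\ddim}(N_{t_i}\cap X)\subseteq H$ with $t_i\subsetneq y_i$ in $T$, contradicting $(X,H)$-independence. Thus no genuine obstacle arises, and the corollary follows immediately from the two parts of the lemma.
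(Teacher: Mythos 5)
Your proof is correct and is exactly the paper's argument: the paper derives the corollary in one line from Lemma~\ref{lemma: independent trees} together with the identity $\closure Y=[T(Y)]$. Your closing remark about $\closure Y\not\subseteq X$ is accurate but already built into the lemma, since its conclusion ``$[T]$ is $H$-independent'' is stated without assuming $[T]\subseteq X$ (any hyperedge of $H$ has all coordinates in $X$ automatically, as $H\subseteq{}^\ddim X$).
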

\begin{proof}
The statement follows from the previous lemma, since $\closure Y=[T(Y)]$.
\end{proof}

%%%%%%%%%%%%%

\subsection{Order homomorphisms}
\label{subsection: order homomorphisms}

We assume throughout this subsection that $d$ is a discrete topological space of size at least $2$.
We first characterize the existence of a continuous homomorphism from $\dhH\ddim$ to 
box-open dihypergraphs
using the following types of strict order preserving maps.
We use terminology from Subsection \ref{preliminaries: trees}.

\begin{definition}
\label{def: order homomorphism}
%Suppose that $X$ is a subset of ${}^\kappa\kappa$ and $H$ is a $\ddim$-dihypergraph on ${}^\kappa\kappa$. 
Suppose $X\subseteq{}^\kappa\kappa$ and $H$ is a $\ddim$-dihypergraph on ${}^\kappa\kappa$.
A strict order preserving map $\iota:{}^{<\kappa}\ddim\to{}^{<\kappa}\kappa$ 
\index{order homomorphism for!dihypergraphs on ${}^\kappa\kappa$\idf}%
\index{homomorphism!Z@\mygobble|see {order homomorphism}} 
is an \emph{order homomorphism for $(X,H)$} if
\begin{enumerate-(a)}
\item \label{oh1} 
$\ran([\iota])\subseteq X$, and 
\item \label{oh2}
$\prod_{i\in \ddim}(N_{\iota(t\conc\langle i\rangle)}\cap X)\subseteq H$
 for all $t\in{}^{<\kappa}\ddim$.
\end{enumerate-(a)}
We omit $X$ if it equals $\domh H$, i.e., 
$\iota$ is an \emph{order homomorphism for $H$} if it is 
%\index{order homomorphism!for $H$\idf} 
an {order homomorphism for $(\domh H,H)$}. 
\end{definition}

We will most often work with order homomorphisms for $H$. 
If $\domh H\subseteq X$, then any order homomorphism $\iota$ for $(X,H)$ is also an order homomorphism for $H$,
since \ref{oh1} and \ref{oh2} imply $\ran([\iota])\subseteq \domh H$. 
%since $\prod_{i\in \ddim}(N_{\iota(\langle i\rangle)}\cap \ran([\iota]))\subseteq H$ and hence $\ran([\iota])\subseteq \domh H$.
If $\domh{H}\subseteq X$ is relatively open,%
\footnote{For instance, if $H$ is box-open on $X$.} 
then the converse holds: 
if there exists an order homomorphism $\iota$ for $H$, then there exists one for $(X,H)$. 
\todog{The special case when $H$ is box-open on $X$ also follows from Lemma~\ref{homomorphisms and order preserving maps} below.} 
To see this, one constructs by recursion a strict order preserving map $e:{}^{<\kappa}\ddim\to{}^{<\kappa}\ddim$ such that $\theta:=\iota\circ e$ satisfies $N_{\theta(t\conc\langle i\rangle))}\cap X\subseteq 
N_{\iota(e(t)\conc\langle i\rangle)}\cap \domh{H}$ for all $t\in{}^{<\kappa}\kappa$; 
then $\theta$ is as required. 

\begin{remark} 
\label{remark: H^-_X}
\label{remark: HH_iota}
Suppose $X$ and $H$ are as in the previous definition
and $\iota:{}^{<\kappa}\ddim\to T(X)$.
We define the following $\ddim$-dihypergraphs on ${}^{<\kappa}\ddim$ and ${}^{<\kappa}\kappa$, respectively: 
\begin{enumerate-(1)} 
\item 
%The \emph{$\ddim$-dihypergraph of uniformly splitting short sequences}
$\HH^-_{{}^{<\kappa}\ddim}$ 
\index{dihypergraph!on kk@on ${}^{<\kappa}\kappa$!of uniformly splitting\newline\hspace{32 pt}sequences\idf$\HH^-_{{}^{<\kappa}\ddim}$}
consists of all sequences in ${}^{<\kappa}\ddim$ of the form
$\langle t\conc\langle \alpha\rangle: \alpha<\ddim\rangle$. 
We call this the \emph{dihypergraph on ${}^{<\kappa}\kappa$ of uniformly splitting sequences}.
\item 
$H^-_X$ 
\index{dihypergraph!on kk@on ${}^{<\kappa}\kappa$!inducing $H\restr X$\idf $H^-_X$}
consists of all sequences $\langle u_\alpha:\alpha<\ddim\rangle$ in $T(X)$
%${}^{<\kappa}\kappa$ 
with $\prod_{\alpha<\kappa} (N_{u_\alpha}\cap X)\subseteq H$.%
\footnote{Note that a subtree $T$ of $T(X)$ is $(X,H)$-independent in the sense of Definition~\ref{def: independent tree} if and only if it is $H^-_X$-independent in the sense that $H^-_X\restr T=\emptyset$.}
\end{enumerate-(1)} 
Using these definitions, \ref{oh2} in Definition \ref{def: order homomorphism} is equivalent to the statement that 
$\iota$
is a homomorphism from $\HH^-_{{}^{<\kappa}\ddim}$ to $H^-_X$. 
\todog{This motivates the name ``order homomorphism''.}

Let 
\index{dihypergraph!induced by a strict order preserving function\idf$\HH_{\iota,X},\,\HH_\iota$}
$\HH_{\iota,X} := 
\bigcup_{t\in{}^{<\kappa}\ddim}\prod_{i\in \ddim} (N_{\iota(t\conc\langle i\rangle)}\cap X) 
$. 
\ref{oh2} is further equivalent to the inclusion 
$\HH_{\iota,X}\subseteq H$. 
Note that $\HH_{\iota,X}$ is a box-open subset of ${}^\ddim X$. 
It is a dihypergraph if and only if 
$X\cap\bigcap_{i\in\ddim} N_{\iota(t\conc\langle i\rangle)}=\emptyset$ for all $t\in{}^{<\kappa}\ddim$ since otherwise it contains constant sequences. 
For $\HH_{\iota,X}$ to be a dihypergraph, it suffices that for every $t\in {}^{<\kappa}\ddim$, there exist $i,j\in \ddim$ with $\iota(t\conc\langle i \rangle)\perp \iota(t\conc\langle j \rangle)$. 
This condition is necessary if $\ran(\iota)\subseteq T(X)$,  
(a) 
$T(X)$ is ${\leq}|\ddim|$-closed if $|d|<\kappa$ 
and (b) 
$X$ is closed and $T(X)$ is ${<}\kappa$-closed if $|d|=\kappa$. 
\end{remark}

The next lemma shows that for relatively box-open dihypergraphs $H$, 
the existence of an order homomorphism for $H$ 
is equivalent to the existence of a continuous homomorphism from $\dhH\ddim$ to $H$.
We state a more general version for order homomorphisms for $(X,H)$ which implies 
the previous statement in the special case $X=\domh H$.

\begin{lemma} 
\label{homomorphisms and order preserving maps}
Suppose $X\subseteq{}^\kappa\kappa$ and $H$ is a $\ddim$-dihypergraph on ${}^\kappa\kappa$.
\begin{enumerate-(1)}
\item\label{hop 2} 
If $\iota$ is an order homomorphism for $(X,H)$, then 
$[\iota]$ is a continuous homomorphism from $\dhH\ddim$ to $H\restr X$. 
\item\label{hop 1} 
If $H\restr X$ is box-open on $X$ and $f$ is
\todog{$X$ is important here, e.g when $\domh H\supseteq X$, an order homomorphism for $(X,H)$ is a stronger notion than an order homomorphism for $H$ whose range is a subset of $X$.}
a continuous homomorphism 
from $\dhH\ddim$ to~$H\restr X$, then
there exists a continuous%
\footnote{%
See Definition~\ref{def: order preserving}~\ref{cont def}.
}
order homomorphism $\iota$ for $(X,H)$
and a continuous strict $\wedge$-homomorphism%
\footnote{See Definition~\ref{def: wedge-homomorphism}. Note that $e:{}^{<\kappa}\ddim\to{}^{<\kappa}\kappa$  is a strict $\wedge$-homomorphism if and only if it is strict order preserving and $e(t\conc\langle i\rangle) \wedge e(t\conc\langle j\rangle) =e(t)$ for all $t\in{}^{<\kappa}\ddim$ and $i,\,j\in\ddim$ with $i\neq j$. It is easy to see that such a map $e$ is $\perp$-preserving, and hence $[e]$ is a homeomorphism onto its image by Lemma~\ref{[e] continuous}~\ref{[e] homeomorphism}.} 
$e:{}^{<\kappa}\ddim\to{}^{<\kappa}\ddim$
with $[\iota]=f\comp [e]$.%
%with $\ran([\iota])\subseteq\ran(f)$.%
\footnote{In general, it is not possible to find a strict order preserving map $\iota$ that induces the original function $f$ (i.e., $[\iota]=f$). For instance, let $f$ map $N_{\langle 0\rangle}$ onto $N_{\langle 0\rangle} \cup N_{\langle 1\rangle}$. Note that it is always possible to find an order preserving map $\iota$ that induces $f$ \cite{andretta2022souslin}*{Lemma~3.7}. But order preserving maps $\iota$ as in Definition~\ref{def: order homomorphism}~\ref{oh2} for graphs $H$ are always $\perp$- and hence strict order preserving.}
\end{enumerate-(1)}
In particular, \ref{hop 2} and \ref{hop 1} apply to order homomorphisms for $H$ and continuous homomorphisms from $\dhH\ddim$ to $H$ by letting $X=\domh H$.
\end{lemma}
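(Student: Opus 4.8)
The plan is to treat the two parts separately, with \ref{hop 2} being a direct verification and \ref{hop 1} requiring a simultaneous recursive construction of $\iota$ and $e$. For \ref{hop 2}, suppose $\iota$ is an order homomorphism for $(X,H)$. Strict order preservation gives that $[\iota]$ is continuous by Lemma~\ref{[e] continuous}~\ref{[e] cont}, while condition~\ref{oh1} of Definition~\ref{def: order homomorphism} gives $\ran([\iota])\subseteq X$, so $[\iota]$ maps ${}^\kappa\ddim$ into $X$. To see it is a homomorphism from $\dhH\ddim$ to $H\restr X$, take any hyperedge $\bar x=\langle x_i:i\in\ddim\rangle\in\dhH\ddim$, witnessed by some $t\in{}^{<\kappa}\ddim$ with $t\conc\langle i\rangle\subseteq x_i$ for all $i$. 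Since $\iota$ is order preserving, $\iota(t\conc\langle i\rangle)\subseteq[\iota](x_i)$, so $[\iota](x_i)\in N_{\iota(t\conc\langle i\rangle)}\cap X$; condition~\ref{oh2} then yields $\langle[\iota](x_i):i\in\ddim\rangle\in\prod_{i\in\ddim}(N_{\iota(t\conc\langle i\rangle)}\cap X)\subseteq H$, and this sequence lies in ${}^\ddim X$, hence in $H\restr X$.

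For \ref{hop 1}, I would build $e:{}^{<\kappa}\ddim\to{}^{<\kappa}\ddim$ and $\iota:{}^{<\kappa}\ddim\to{}^{<\kappa}\kappa$ by recursion on the levels of ${}^{<\kappa}\ddim$, starting from $e(\emptyset)=\iota(\emptyset)=\emptyset$ and maintaining the invariant that $\iota(s)\subseteq f(y)$ for every $y\in N_{e(s)}$, together with strict order preservation of both maps and the branching pattern $e(s\conc\langle i\rangle)\supseteq e(s)\conc\langle i\rangle$ that will make $e$ a strict $\wedge$-homomorphism. The crux is the successor step. Given $e(s)$, I first pick for each $i\in\ddim$ some $z_i\in N_{e(s)\conc\langle i\rangle}$; then $\langle z_i:i\in\ddim\rangle$ is a hyperedge of $\dhH\ddim$ witnessed by $e(s)$, so $\langle f(z_i):i\in\ddim\rangle\in H\restr X$ because $f$ is a homomorphism. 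Since $H\restr X$ is box-open on $X$, there are nodes $u_i\subseteq f(z_i)$ with $\prod_{i\in\ddim}(N_{u_i}\cap X)\subseteq H$. Because $z_i\in N_{e(s)}$, the invariant gives $\iota(s)\subseteq f(z_i)$, so $\iota(s)$ and $u_i$ are comparable; after extending each $u_i$ along $f(z_i)$ to have $u_i\supsetneq\iota(s)$ (which only shrinks $N_{u_i}$, preserving the product inclusion) I set $\iota(s\conc\langle i\rangle):=u_i$, securing condition~\ref{oh2}. Finally, using continuity of $f$ at $z_i$, I choose $e(s\conc\langle i\rangle):=z_i\restr\gamma$ with $\gamma\geq\lh(e(s))+1$ large enough that $f(N_{e(s\conc\langle i\rangle)})\subseteq N_{u_i}$, which restores the invariant and makes $\lh(e(s\conc\langle i\rangle))>\lh(e(s))$. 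At limit levels $\alpha\in\Lim$ I take unions $e(s):=\bigcup_{\beta<\alpha}e(s\restr\beta)$ and $\iota(s):=\bigcup_{\beta<\alpha}\iota(s\restr\beta)$; these stay in ${}^{<\kappa}\ddim$ and ${}^{<\kappa}\kappa$ by regularity of $\kappa$, they preserve the invariant since $N_{e(s)}=\bigcap_{\beta<\alpha}N_{e(s\restr\beta)}$, and they make both maps continuous in the sense of Definition~\ref{def: order preserving}~\ref{cont def}.

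To finish, I would read off the conclusion. Strict order preservation forces $\lh(\iota(x\restr\alpha)),\lh(e(x\restr\alpha))\geq\alpha$, so $[\iota](x)$ and $[e](x)$ have length $\kappa$ for every branch $x\in{}^\kappa\ddim$; the invariant gives $\iota(x\restr\alpha)\subseteq f([e](x))$ for all $\alpha$, hence $[\iota](x)\subseteq f([e](x))$, and equality follows since both are sequences of length $\kappa$. Thus $[\iota]=f\comp[e]$, which also gives $\ran([\iota])\subseteq\ran(f)\subseteq X$, i.e.\ condition~\ref{oh1}; together with~\ref{oh2} from the successor step and continuity from the limit step, $\iota$ is a continuous order homomorphism for $(X,H)$. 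Since $e$ is a strict $\wedge$-homomorphism it is $\perp$-preserving, so $[e]$ is a homeomorphism onto its image by Lemma~\ref{[e] continuous}~\ref{[e] homeomorphism}. The final "in particular" clause is just the specialization $X=\domh H$, where $H\restr X=H$ and order homomorphisms for $(X,H)$ coincide with order homomorphisms for $H$. I expect the main obstacle to be the successor step: correctly interleaving the appeal to box-openness of $H$ (which fixes the $u_i$ and hence $\iota(s\conc\langle i\rangle)$) with the subsequent appeal to continuity of $f$ (which refines $e(s\conc\langle i\rangle)$ to re-establish the approximation invariant), while simultaneously forcing $\iota$ to grow strictly.
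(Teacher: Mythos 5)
Your proof is correct and follows essentially the same route as the paper's: part \ref{hop 2} is the same direct verification, and in part \ref{hop 1} your invariant ``$\iota(s)\subseteq f(y)$ for all $y\in N_{e(s)}$'' is exactly the paper's condition $f(N_{e(s)})\subseteq N_{\iota(s)}$, with the same recursion (box-openness to fix $\iota$ at successors, then continuity of $f$ to refine $e$, unions at limits) and the same readoff $[\iota]=f\comp[e]$. The only difference is that you spell out the step of extending the box-neighborhood nodes $u_i$ past $\iota(s)$, which the paper simply asserts.
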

\begin{proof}
We first show \ref{hop 2}. 
Since $\iota$ is strict order preserving, 
$[\iota]$ is a continuous function from ${}^{\kappa}\ddim$ to $X$
by Lemma~\ref{[e] continuous}.
It thus suffices to show that $[\iota]$ is a homomorphism from $\dhH\ddim$ to $H$.
Suppose $\bar x=\langle x_i:i\in\ddim\rangle\in\dhH\ddim$.
Let $u\in{}^{<\kappa}\ddim$ be such that
$u\conc\langle i\rangle\subseteq x_i$ 
for all $i\in\ddim$.
Then
$\iota(u\conc\langle i\rangle)\subseteq [\iota](x_i)$ 
for all $i\in\ddim$, and therefore
\[
{[\iota]^\ddim}(\bar x)=
\langle[\iota](x_i):i\in\ddim\rangle\in
\prod_{i\in\ddim}(N_{\iota(u\conc\langle i\rangle)}\cap X)\subseteq H.
\]

To prove \ref{hop 1}, let $f$ be a continuous homomorphism from $\dhH\ddim$ to $H\restr X$. 
%$f:{}^{\kappa}\ddim\to X$ be a continuous homomorphism from $\dhH\ddim$ to $H$.
We construct 
continuous strict order preserving 
maps
$\iota:{}^{<\kappa}\ddim\to{}^{<\kappa}\kappa$ 
and 
$e:{}^{<\kappa}\ddim\to{}^{<\kappa}\ddim$
such that for all $t\in{}^{<\kappa}\ddim$ and $i\in\ddim$,
\begin{enumerate-(i)}
\item\label{hop proof e}
$e(t)\conc\langle i\rangle\subseteq e(t\conc\langle i\rangle)$, 
\item\label{hop proof 1}
$f(N_{e(t)})\subseteq N_{\iota(t)}$, and 
\item\label{hop proof 2} 
$\prod_{i\in\ddim}(N_{\iota(t\conc\langle i\rangle)}\cap X)\subseteq H$.
\end{enumerate-(i)}
We construct $\iota(t)$ and $e(t)$ by recursion on $\lh(t)$.
Let $\iota(\emptyset):=e(\emptyset)=\emptyset$.
Let $t\in{}^{<\kappa}\ddim\,\setminus\{\emptyset\}$, and suppose that $\iota(u)$ and $e(u)$ have been defined for all $u\subsetneq t$.
If $\lh(t)\in\Lim$,
let $\iota(t):=\bigcup_{u\subsetneq t}\iota(u)$ and
 $e(t):=\bigcup_{u\subsetneq t}e(u)$. 
Since 
$N_{e(t)}=\bigcap_{u\subsetneq t}N_{e(u)}$
and \ref{hop proof 1} holds for all $u\subsetneq t$, 
we have
\[
f(N_{e(t)})\subseteq
\bigcap_{u\subsetneq t}f(N_{e(u)})\subseteq
\bigcap_{u\subsetneq t}N_{\iota(u)}
=N_{\iota(t)}.
\] 
Suppose $\lh(t)\in\Succ$, and let $u$ be the direct predecessor of $t$.
We define 
$\iota(u\conc\langle i\rangle)$ 
and
$e(u\conc\langle i\rangle)$ 
for each $i\in\ddim$ simultaneously as follows. 
Let 
$x_i\in{}^{\kappa}\ddim$ extend $e(u)\conc\langle i\rangle$ for each $i\in\ddim$.
Since $f$ is a homomorphism from $\dhH\ddim$ to $H\restr X$, the sequence 
$\langle f(x_i):i\in\ddim\rangle$ is a hyperedge of $H\restr X$. 
Since $H\restr X$ is box-open on $X$, there exists a sequence
$\langle\iota(u\conc\langle i\rangle): i\in\ddim\rangle$ 
such that 
$\iota(u)\subsetneq\iota(u\conc\langle i\rangle)\subsetneq f(x_i)$ 
and \ref{hop proof 2} holds for $u$.
By continuity of $f$, there exists for each $i\in\ddim$
an initial segment $e(u\conc\langle i\rangle)$ of $x_i$ such that $e(u)\conc\langle i\rangle\subseteq e(u\conc\langle i\rangle)$ and 
\ref{hop proof 1} holds for $u\conc\langle i\rangle$.

Suppose the continuous strict order preserving 
maps $\iota$ and $e$ have been constructed.
$e$~is a strict $\wedge$-homomorphism by~\ref{hop proof e}.
\ref{hop proof 1} implies that
for all $x\in{}^\kappa\ddim$, %we have 
\[
f\big([e](x)\big)\in
\bigcap_{t\subsetneq x}f(N_{e(t)})\subseteq
\bigcap_{t\subsetneq x}N_{\iota(t)}
=\big\{[\iota](x)\big\}.
\]
Thus $[\iota]=f\comp[e]$, and therefore
$\ran([\iota])\subseteq\ran(f)\subseteq X$.
This fact and~\ref{hop proof 2} imply that $\iota$ is an order homomorphism for $(X,H)$.
\end{proof}

%%%%%%%%%%%%%%%%%%%%%%%%%%%%%%%%%%%%%%

\begin{remark}
\label{remark: order homomorphisms even length}
Fix any club $C$ in $\kappa$. 
The construction in the proof of Lemma \ref{homomorphisms and order preserving maps} \ref{hop 1} can be easily modified to obtain $\lh(\iota(t))\in C$ for all $t\in {}^{<\kappa}\ddim$ in addition to the above properties. 
%\todog{At successor stages, choose each $\iota(t\conc\langle i\rangle)$ so that its length is in $C$. This can be done since $C$ is unbounded. At limit stages, taking $\iota(t)=\bigcup{u\subsetneq t}\iota(u)$ still works since $C$ is closed. (So $\iota$ will still be continuous.)}
\end{remark}

\subsection{Full dihypergraphs}
\label{subsection: full dihypergraphs}
In this subsection, 
$\ddim$ and $c$ are discrete topological spaces of cardinality at least~$2$.
We show that if $H$ and $I$ are relatively box-open 
dihypergraphs on ${}^\kappa\kappa$,
then 
$\ODD\kappa H\Longleftrightarrow\ODD\kappa I$ 
whenever $H\equivf I$ in the sense of the next definition.

\begin{definition} 
\label{def: H-full}
Suppose that $H$ and $I$ are $\ddim$- and $c$- dihypergraphs, respectively, on 
%a set $X$.
the same set $X$. 
%on ${}^\kappa\kappa$.
\begin{enumerate-(a)}
\item\label{def: H-full 1}
\index{dihypergraph!fullness, equivalence\idf}%
\index{full!full@$H$-full\idf$H\leqf I$} 
$I$ is \emph{$H$-full} ($H\leqf I$)
if $H\restr A$ admits a $\kappa$-coloring for every $I$-independent subset $A$ of~$X$.%
\footnote{Equivalently, $H\restr A$ admits a $\kappa$-coloring for all $A\subseteq X$ such that $I\restr A$ admits a $\kappa$-coloring.}
\item\label{def: H-full 2}
\index{full!Z@equivalence\idf$H\equivf I$, $H\equivif I$} 
$H\equivf I$ 
%if $H\leqf I$ and $I\leqf H$.
if for all $A\subseteq X$,
$H\restr A$ admits a $\kappa$-coloring if and only if $I\restr A$ admits a $\kappa$-coloring.
%\footnote{For example, $I$ is $H$-full if $H\subseteq I$ or if $c\leq \ddim$ are ordinals and any hyperedge of $H$ has a subsequence in $I$, since every $I$-independent subset of ${}^\kappa\kappa$ is $H$-independent.}
\end{enumerate-(a)}
\end{definition}

Note that $H\equivf I$ if and only if $H\leqf I$ and $I\leqf H$.
%Note that $H\equivf I$ if and only if for all $A\subseteq{}^\kappa\kappa$, $H\restr A$ admits a $\kappa$-coloring if and only if $I\restr A$ admits a $\kappa$-coloring.
For example, 
$H\leqf I$ if $H\subseteq I$ or if $c\leq\ddim$ are ordinals and
any hyperedge of $H$ has a subsequence in $I$, 
since  in these cases, every $I$-independent subset of ${}^\kappa\kappa$ is $H$-independent.
\todog{Previously, we defined $I$ to be $H$-full if every $I$-independent subset of ${}^\kappa\kappa$ was also $H$-independent}

\begin{lemma}
\label{lemma: ODD subsequences} 
Suppose $H$ and $I$ are relatively box-open $\ddim$- and $c$- dihypergraphs, respectively, on ${}^\kappa\kappa$.
If $H\leqf I$ and
there exists a 
continuous homomorphism %$f$ 
from $\dhH \ddim$ to $H$, 
then there exists a continuous homomorphism %$g$ 
from $\dhH  c$ to~$I$.
\end{lemma}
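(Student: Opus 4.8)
The plan is to convert the given homomorphism into $\kappa$-uncolorability of $I$ on a rich family of regions, and then to build a continuous homomorphism into $I$ through the order-homomorphism calculus of Lemma~\ref{homomorphisms and order preserving maps}. Concretely, I would fix a continuous homomorphism $f\colon{}^\kappa\ddim\to{}^\kappa\kappa$ from $\dhH\ddim$ to $H$. By Lemma~\ref{two options in ODD are mutually exclusive}~\ref{me 2}, $H\restr f(N_s)$ admits no $\kappa$-coloring for every $s\in{}^{<\kappa}\ddim$. Since $H\leqf I$, the equivalent form of $H$-fullness---if $H\restr A$ has no $\kappa$-coloring then neither does $I\restr A$---gives that $I\restr f(N_s)$ admits no $\kappa$-coloring for every $s\in{}^{<\kappa}\ddim$. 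In particular every region $f(N_s)$ meets $\domh I$ and fails to be $I$-independent, so an $I$-hyperedge can always be found inside it; this \emph{self-similar} uncolorability over the tree ${}^{<\kappa}\ddim$ is what will drive the recursion.

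I would then aim to produce an order homomorphism $\iota\colon{}^{<\kappa}c\to{}^{<\kappa}\kappa$ for $I$, since by Lemma~\ref{homomorphisms and order preserving maps}~\ref{hop 2} this immediately yields the desired continuous homomorphism $[\iota]$ from $\dhH c$ to $I$. The successor step is the easy part: at a node $t$, uncolorability of $I$ on the current region supplies an $I$-hyperedge $\langle y_i:i\in c\rangle$ with each $y_i\in\domh I$, and because $I$ is box-open on $\domh I$ I may spread the coordinates apart into basic neighborhoods, choosing $\iota(t\conc\langle i\rangle)\supsetneq\iota(t)$ with $\prod_{i\in c}(N_{\iota(t\conc\langle i\rangle)}\cap\domh I)\subseteq I$, which is exactly condition~\ref{oh2} of Definition~\ref{def: order homomorphism}. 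Tracking auxiliary source nodes $\varrho(t)\in{}^{<\kappa}\ddim$ with $f(N_{\varrho(t)})\subseteq N_{\iota(t)}$ shows moreover that the recursion never stalls at limits: since ${}^{<\kappa}\ddim$ is ${<}\kappa$-closed and the uncolorability holds at \emph{every} source node, one simply takes unions $\varrho(t)=\bigcup_{s\subsetneq t}\varrho(s)$, $\iota(t)=\bigcup_{s\subsetneq t}\iota(s)$ and keeps going.

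The main obstacle is condition~\ref{oh1}, that is $\ran([\iota])\subseteq\domh I$. Any homomorphism into $I$ must in fact send ${}^\kappa c$ into $\domh I$, because every point of ${}^\kappa c$ is a coordinate of some $\dhH c$-hyperedge; but $\domh I$ need not be closed, whereas trapping through $f$ only places the length-$\kappa$ branches $[\iota](z)=f([\varrho](z))$ inside $\closure{\ran f}\subseteq\closure{\domh H}$, which can overshoot $\domh I$. So the domain $\domh I$ has to be entered \emph{intrinsically}, and I expect this limit-stage phenomenon to be the real heart of the lemma. I would address it by discarding $\varrho$ and working instead inside the set $A^{\ast}$ of $I$-condensation points of the closed set $\closure{\ran f}$, i.e. those $x$ for which $I\restr(\closure{\ran f}\cap N_u)$ has no $\kappa$-coloring for every $N_u\ni x$. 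A standard condensation argument---amalgamating $\kappa$ many local colorings into one using $|{}^{<\kappa}\kappa|=\kappa$, together with the box-openness of $I$ and Corollary~\ref{independence and closure}---should show that $A^{\ast}$ is closed, that $I\restr A^{\ast}$ still has no $\kappa$-coloring, that $A^{\ast}$ is everywhere $I$-uncolorable, and that $A^{\ast}\cap\domh I$ is dense in $A^{\ast}$.

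Finally, running the successor step above inside $A^{\ast}$ and controlling the limits by a generalized Cantor--Bendixson fusion---passing to a ${<}\kappa$-closed $\kappa$-perfect kernel of $A^{\ast}$ so that the nodes of the branch scheme always extend within $A^{\ast}$---should produce a $\kappa$-perfect scheme of $I$-hyperedges whose branches lie in $A^{\ast}$. The genuinely delicate point, which I expect to be the crux, is to arrange the splitting so that these branches fall into $A^{\ast}\cap\domh I\subseteq\domh I$ rather than merely into $A^{\ast}\subseteq\closure{\domh I}$; once this is secured, $\iota$ is an order homomorphism for $I$ and Lemma~\ref{homomorphisms and order preserving maps}~\ref{hop 2} completes the proof.
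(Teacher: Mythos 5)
You have located the crux correctly: the problem is condition~\ref{oh1}, i.e.\ forcing $\ran([\iota])\subseteq\domh I$, and it is a limit-stage problem because $\domh I$ need not be closed. But your proposal never resolves it; the phrase ``once this is secured'' is exactly where the proof is missing. The condensation-point set $A^{\ast}$ cannot supply the missing step: its only relevant feature is that $A^{\ast}\cap\domh I$ is \emph{dense} in $A^{\ast}$, and density is useless at limits, since a branch of a scheme is a limit of nodes and limits have no reason to land in a dense subset. Likewise, passing to a $\kappa$-perfect kernel of the closed set $A^{\ast}$ keeps branches inside $A^{\ast}$ but says nothing about membership in $\domh I$, so your construction ends exactly where your own diagnosis started, with branches only known to lie in $\closure{\domh I}$. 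Two further warning signs: by discarding the factorization through $f$ you also lose $\ran([\iota])\subseteq\domh H$, and your outline never uses the hypothesis that $H$ (not just $I$) is relatively box-open, which the correct argument needs at a crucial point.

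The missing idea is a \emph{second} application of fullness, aimed not at the regions $f(N_s)$ but at the exceptional set $A:=\domh H\setminus\domh I$ itself. Since $A$ is disjoint from $\domh I$, it is $I$-independent, so $H\leqf I$ writes $A=\bigcup_{\alpha<\kappa}A_\alpha$ with each $A_\alpha$ $H$-independent; relative box-openness of $H$ and Corollary~\ref{independence and closure} then make each $\closure{A_\alpha}$ $H$-independent as well. Setting $U_\alpha:={}^\kappa\kappa\setminus\closure{A_\alpha}$ and $U:=\bigcap_{\alpha<\kappa}U_\alpha$, one gets $U\cap\domh H\subseteq\domh I$, and --- this is the key point --- $I\restr(U\cap f(N_s))\neq\emptyset$ for every $s\in{}^{<\kappa}\ddim$: otherwise $H\restr(U\cap f(N_s))$ would be $\kappa$-colorable by fullness, while $H\restr(f(N_s)\setminus U)$ is $\kappa$-colorable because it is covered by the $\kappa$ many closed $H$-independent sets $\closure{A_\alpha}$, so $H\restr f(N_s)$ would be $\kappa$-colorable, contradicting Lemma~\ref{two options in ODD are mutually exclusive}~\ref{me 2}. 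Now the recursion of Lemma~\ref{homomorphisms and order preserving maps} can pick its $I$-hyperedges inside $U\cap f(N_{e(t)})$ and additionally demand $N_{\iota(t\conc\langle i\rangle)}\subseteq U_{\lh(t)}$ at each successor step, so every branch of $[\iota]$ lands in $U$; since $[\iota]=f\comp[e]$ keeps branches inside $\ran(f)\subseteq\domh H$, they land in $U\cap\domh H\subseteq\domh I$, which is condition~\ref{oh1}, and \ref{oh2} holds by construction, so Lemma~\ref{homomorphisms and order preserving maps}~\ref{hop 2} finishes. The device of trading the single non-closed condition ``branch~$\in\domh I$'' for $\kappa$ many \emph{open} conditions, one per level, is what your fusion sketch lacks; it could even be grafted onto your $A^{\ast}$ setup (applying fullness to $\closure{\ran f}\setminus\domh I$ instead of $\domh H\setminus\domh I$), but without it, or something equivalent, the construction cannot be completed.
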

\begin{proof}
Suppose that $f$ is a continuous homomorphism from $\dhH \ddim$ to $H$.
\todoo{The next two sentences changed to make the proof (and the next remark) a bit clearer}
The next claim is needed 
to ensure that 
$\ran([\iota])\subseteq \domh{I}$ for the map $\iota$ constructed below.
Note that it is immediate in the case that
$\domh{H}\subseteq \domh{I}$ 
by letting $U_\alpha:={}^\kappa\kappa$.
\begin{claim*}
There is a sequence $\langle U_\alpha : \alpha<\kappa\rangle$ of open subsets of ${}^\kappa\kappa$ such that their intersection $U:=\bigcap_{\alpha<\kappa} U_\alpha$ satisfies
$U\cap\domh H\subseteq\domh I$ and
${I\restr(U\cap f(N_{t}))}\neq\emptyset$ 
\todoo{We corrected the typo: we wrote $f(N_{t})$ instead of $f(N_{e(t)})$} 
for all $t\in{}^{<\kappa}\ddim$.
\end{claim*}
\begin{proof}
Since $A:=\domh H\setminus\domh I$ is $I$-independent and $H\leqf I$, $A$ is the union of $H$-independent sets $A_\alpha$ for $\alpha<\kappa$.
We show that the sets $U_\alpha:={}^\kappa\kappa\oldsetminus \closure{A}_\alpha$ are as required. 
The first requirement is immediate. 
Seeking a contradiction, suppose that $I\restr (U\cap f(N_t))=\emptyset$ for some $t\in{}^\kappa\ddim$.
Then $H\restr (U\cap f(N_t))$ admits a $\kappa$-coloring
since $H\leqf I$.
Moreover, $H\restr (f(N_t)\setminus U)$ also admits a $\kappa$-coloring, since $\closure A_\alpha$ is $H$-independent for each $\alpha<\kappa$ by Corollary~\ref{independence and closure}.
%since $H$ is relatively box-open and $A_\alpha\subseteq\domh H$. 
\todog{This is the only place we need to use that $H$ is relatively box-open}
Hence $H\restr f(N_t)$ admits a $\kappa$-coloring, contradicting Lemma~\ref{two options in ODD are mutually exclusive}~\ref{me 2}.
\end{proof}

%Let $G$ be as in the previous claim and take open sets $U_\alpha$ with $G=\bigcap_{\alpha<\kappa} U_\alpha$.
We construct 
continuous strict order preserving 
maps
$\iota:{}^{<\kappa} c\to{}^{<\kappa}\kappa$ 
and 
$e:{}^{<\kappa} c\to{}^{<\kappa} \ddim$
such that for all $t\in{}^{<\kappa} c$ and $i\in c$,
\begin{enumerate-(i)}
\item\label{subs proof 1}
$f(N_{e(t)})\subseteq N_{\iota(t)}$,
\item\label{subs proof 2} 
$\prod_{j\in  c}(N_{\iota(t\conc\langle j\rangle)}\cap\domh I)\subseteq I$, and 
\item\label{subs proof new} 
$N_{\iota(t\conc\langle i\rangle)}\subseteq U_{\lh(t)}$. 
\end{enumerate-(i)}
$\iota(t)$ and $e(t)$ are constructed by recursion on $\lh(t)$
by modifying the construction for Lemma \ref{homomorphisms and order preserving maps}~\ref{hop 1} at successor lengths. In detail, let 
%Let
$\iota(\emptyset):=e(\emptyset)=\emptyset$.
Let $t\in{}^{<\kappa} c\,\setminus\{\emptyset\}$, and suppose that $\iota(u)$ and $e(u)$ have been defined for all $u\subsetneq t$.
If $\lh(t)\in\Lim$,
let $\iota(t):=\bigcup_{u\subsetneq t}\iota(u)$ and
 $e(t):=\bigcup_{u\subsetneq t}e(u)$. 
Then \ref{subs proof 1} holds for 
$t$ similarly to the proof of Lemma~\ref{homomorphisms and order preserving maps}~\ref{hop 1}.
Now, suppose $\lh(t)\in\Succ$ and
let $u$ be the direct predecessor of $t$.
Define 
$\iota(u\conc\langle i\rangle)$ 
and
$e(u\conc\langle i\rangle)$ 
for each $i\in  c$ simultaneously as follows. 
Let $\bar x:=\langle x_i:i\in c\rangle$ be a sequence in $N_{e(u)}$
with $\langle f(x_i):i\in c\rangle\in I\restr U$.
Since $I$ is relatively box-open and $f(x_i)\in U\cap\domh I \subseteq U_{\lh(u)}\cap \domh I$ for each $i\in c$,
take a sequence
$\langle\iota(u\conc\langle i\rangle): i\in  c\rangle$ 
with
$\iota(u)\subsetneq\iota(u\conc\langle i\rangle)\subsetneq f(x_i)$ 
such that \ref{subs proof 2} and \ref{subs proof new} hold for $u$.
By the continuity of $f$, choose 
$e(u\conc\langle i\rangle)$ with $e(u)\subsetneq e(u\conc\langle i\rangle)\subsetneq x_{i}$ such that 
\ref{subs proof 1} holds for $u\conc\langle i\rangle$ for each $i\in c$.

Suppose that $\iota$ and $e$ have been constructed.
Then $[\iota]=f\comp[e]$ 
%Then $g:=[\iota]$ is equal to $f\comp[e]$ 
as in the proof of Lemma~\ref{homomorphisms and order preserving maps}~\ref{hop 1}, so 
$\ran([\iota])\subseteq\ran(f)\subseteq\domh H$. Moreover, $\ran([\iota])\subseteq U$ by~\ref{subs proof new}, and hence $\ran([\iota])\subseteq \domh I$.
%$\ran(g)\subseteq\ran(f)\subseteq\domh H$. Moreover, $\ran(g)\subseteq G$ by~\ref{subs proof new},and hence $\ran(g)\subseteq \domh I$.
Thus $\iota$ is an order homomorphism for $I$ by~\ref{subs proof 2},
so $[\iota]$ is a continuous homomorphism from $\dhH  c$ to $I$ by
%so $g$ is a continuous homomorphism from $\dhH  c$ to $I$ by
Lemma~\ref{homomorphisms and order preserving maps}~\ref{hop 2}.
\end{proof}

\begin{corollary}
\label{cor: ODD subsequences}
Suppose $H$ and $I$ are relatively box-open $\ddim$- and $c$- dihypergraphs, respectively, on ${}^\kappa\kappa$.
If $H\equivf I$, then
$\ODD\kappa{H}\Longleftrightarrow\ODD\kappa{I}$.%
\footnote{Equivalently, there exists a continuous homomorphism from $\dhH\ddim$ to $H$ if and only if there exists a continuous homomorphism from $\dhH c$ to $I$.}
\end{corollary}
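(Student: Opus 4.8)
The plan is to reduce the biconditional to two separate equivalences which together yield $\ODD\kappa H\Leftrightarrow\ODD\kappa I$ directly from Definition~\ref{ODD def}, namely: (a) $H$ admits a $\kappa$-coloring if and only if $I$ does; and (b) there is a continuous homomorphism from $\dhH\ddim$ to $H$ if and only if there is one from $\dhH c$ to $I$. Since $\ODD\kappa H$ is precisely the disjunction of its first option (existence of a $\kappa$-coloring of $H$) and its second option (existence of such a homomorphism), and likewise for $I$, establishing (a) and (b) immediately gives the corollary.

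For (a), I would simply instantiate the hypothesis $H\equivf I$ (Definition~\ref{def: H-full}~\ref{def: H-full 2}) at the set $A={}^\kappa\kappa$. Since $H\restr{}^\kappa\kappa=H$ and $I\restr{}^\kappa\kappa=I$, the definition yields precisely that $H$ admits a $\kappa$-coloring exactly when $I$ does.

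For (b), I would apply Lemma~\ref{lemma: ODD subsequences} twice, in the two directions afforded by $H\equivf I$, recalling that $H\equivf I$ is equivalent to the conjunction $H\leqf I$ and $I\leqf H$. For the forward implication, assume there is a continuous homomorphism from $\dhH\ddim$ to $H$; since $H\leqf I$ and both $H$ and $I$ are relatively box-open, Lemma~\ref{lemma: ODD subsequences} produces a continuous homomorphism from $\dhH c$ to $I$. For the backward implication, assume there is a continuous homomorphism from $\dhH c$ to $I$; applying Lemma~\ref{lemma: ODD subsequences} with the roles of $H$ and $I$ interchanged — so that the hypothesis needed is $I\leqf H$, again available from $\equivf$, together with the same relative box-openness — yields a continuous homomorphism from $\dhH\ddim$ to $H$.

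Combining (a) and (b) shows that the two options defining $\ODD\kappa H$ hold if and only if the corresponding options defining $\ODD\kappa I$ hold, whence $\ODD\kappa H\Leftrightarrow\ODD\kappa I$. There is no substantial obstacle at this stage: all of the analytic work — the recursive construction of the order homomorphism and the use of $H\leqf I$ to handle a possible difference in the domains via the open sets $U_\alpha$ — has already been carried out in Lemma~\ref{lemma: ODD subsequences}. The only points requiring care are matching each direction of the $\leqf$ relation with the correct direction of the homomorphism transfer, and observing that the coloring equivalence (a) is exactly the $A={}^\kappa\kappa$ instance of $\equivf$.
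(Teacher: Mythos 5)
Your proposal is correct and is essentially the paper's own argument: the corollary is stated there as an immediate consequence of Lemma~\ref{lemma: ODD subsequences} applied in both directions (using $H\leqf I$ and $I\leqf H$), with the coloring equivalence being just the instance $A={}^\kappa\kappa$ of Definition~\ref{def: H-full}~\ref{def: H-full 2}, exactly as you decompose it.
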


\begin{remark}
\label{remark: ODD subsequences}
Suppose that $H$ and $I$ are $\ddim$- and $c$- dihypergraphs, respectively, on ${}^\kappa\kappa$ and $I$ is box-open on some superset $X$ of $\domh H$. 
\begin{enumerate-(1)}
\item\label{remark: ODD subsequences 1} 
The statement of Lemma \ref{lemma: ODD subsequences} holds even if we remove the assumption that $H$ is relatively box-open.
\todoo{The first few lines of~\ref{remark: ODD subsequences 1} were modified to make the presentation clearer, per the referees request}
To see this, we modify its proof as follows. 
Note that the claim will not be needed. %since $\domh{H}\subseteq X$.
We construct $\iota$ and $e$ satisfying \ref{subs proof 1} and \ref{subs proof 2} with $\domh{I}$ replaced by $X$. 
The construction works as above, except at successor lengths when $\iota(u)$ and $e(u)$ have been constructed,
it suffices to find a hyperedge
$\langle f(x_i): i\in c\rangle$ of $I\restr f(N_{e(u)})$.
This can be done since 
%$I$ is $H$-full and
$H\leqf I$ and
$H\restr f(N_{e(u)})$ does not admit a $\kappa$-coloring 
by Lemma~\ref{two options in ODD are mutually exclusive}~\ref{me 2}.
Since $I$ is box-open on $X$, choose $\iota(u\conc\langle i\rangle)\subsetneq f(x_i)$ extending $\iota(u)$ for each $i\in c$ so that 
$\prod_{i\in  c}(N_{\iota(u\conc\langle i\rangle)}\cap X)\subseteq I$, 
%and choose $e(u\conc\langle i\rangle)\subsetneq x_i$ extending $e(u)$ 
and choose each $e(u\conc\langle i\rangle)$
as above.
Then $[\iota]=f\comp[e]$
as in the proof of Lemma~\ref{homomorphisms and order preserving maps}~\ref{hop 1}, 
so $\ran([\iota])\subseteq \domh H\subseteq X$.
Therefore $\iota$ is an order homomorphism for $(X,I)$ and hence $[\iota]$ is a continuous homomorphism from $\dhH c$ to $I$.
\item\label{remark: ODD subsequences 2} 
If $c=\ddim$ and $I\subseteq H$,
then the assumption that $H$ is relatively box-open is not needed in 
Corollary~\ref{cor: ODD subsequences}.
This follows from~\ref{remark: ODD subsequences 1} and the fact that any homomorphism from $\dhH\ddim$ to $I$ is a homomorphism to $H$. 
\end{enumerate-(1)}
By~\ref{remark: ODD subsequences 2},
$\ODD\kappa\ddim(X)$ implies $\ODD\kappa{H}$ 
if $H$ contains a subdihypergraph $I$ which is box-open on $X$ and $H$-full, even if $H$ is not relatively box-open itself. 
\end{remark}

\begin{remark} 
\label{remark: ODD injective subsequences} 
Suppose that $H$ and $I$ are $\ddim$- and $c$- dihypergraphs, respectively, on ${}^\kappa\kappa$. 
We call $I$ \emph{injectively $H$-full} ($H\leqif I$) 
\index{full!injectively $H$-full\idf$H\leqif I$} 
if for every hyperedge $\bar x=\langle x_j:j\in \ddim\rangle$ of $H$,
there exists an injective map $\pi:c\to\ddim$
with $\langle x_{\pi(i)}:i\in c\rangle\in I$.%
\footnote{If $I$ is injectively $H$-full, then $I$ is $H$-full 
since every $I$-independent subset of ${}^\kappa\kappa$ is also $H$-independent.}
We write $H\equivif I$ 
\index{full!Z@equivalence\idf$H\equivf I$, $H\equivif I$} 
if $H\leqif I$ and $I \leqif H$. 
\todog{If $I$ is injectively $H$-full, then $|c|\leq\|\ddim|$.}
For instance, if $c\leq \ddim$ are ordinals and any hyperedge of $H$ has a subsequence in $I$, 
then $I\leqif H$. 
If $I\leqif H$ and $I$ is box-open on a set $X$ with $\domh H\subseteq X$, then we can assume that the continuous homomorphism from $\dhH c$ to $I$ in  Lemma~\ref{lemma: ODD subsequences}
is of the form $f\comp [e]$
for some continuous strict $\wedge$-homomorphism $e:{}^{<\kappa} c\to{}^{<\kappa}\ddim$ and some continuous homomorphism $f$ from $\dhH\ddim$ to $H$. 
Moreover, we do not need to assume that $H$ is relatively box-open.

%To see this, we modify the previous proof. 
%The claim is replaced by the assumption $\domh{H}\subseteq X$.
%We construct $\iota$ and $e$ satisfying \ref{subs proof 1} and \ref{subs proof 2} with $\domh{I}$ replaced by $X$. 
%We modify the construction at successor lengths as follows. 
To see this, we modify the construction in the previous remark at successor lengths as follows.
Assuming $\iota(u)$ and $e(u)$ have been constructed, 
%define $\iota(u\conc\langle i\rangle)$ and $e(u\conc\langle i\rangle)$ for each $i\in  c$ simultaneously as follows. Let 
let
$x_j\in{}^{\kappa} \ddim$ extend $e(u)\conc\langle j\rangle$ for each $j\in \ddim$.
Since $\langle f(x_j):j\in  \ddim\rangle\in H$ and $I\leqif H$, 
$\langle f(x_{\pi_u(i)}):i\in  c\rangle$ is in $I$ for some injective map $\pi_u:c\to\ddim$. 
Since $I$ is box-open on $X$, take a sequence
$\langle\iota(u\conc\langle i\rangle): i\in  c\rangle$ 
such that 
$\iota(u)\subsetneq\iota(u\conc\langle i\rangle)\subsetneq f(x_{\pi_u(i)})$ 
and $\prod_{j\in  c}(N_{\iota(t\conc\langle j\rangle)}\cap X)\subseteq I$.
%and \ref{subs proof 2} holds for $u$.
By the continuity of $f$, choose 
$e(u\conc\langle i\rangle)$ 
with
$e(u)\conc\langle\pi_u(i)\rangle\subseteq e(u\conc\langle i\rangle)\subsetneq x_{\pi_u(i)}$ 
which also satisfies \ref{subs proof 1} from the previous construction.
%such that \ref{subs proof 1} holds for $u\conc\langle i\rangle$ for each $i\in c$. 
As above, $\iota$ is an order homomorphism for $(X,I)$ with $[\iota]=f\circ [e]$. 
Since $\pi_u$ is injective for all $u\in {}^{<\kappa}\kappa$, 
$e(u\conc\langle i\rangle)\wedge e(u\conc\langle j\rangle)=e(u)$ for all $i\neq j\in c$, so $e$ is a strict $\wedge$-homomorphism.

It follows that in this situation, $[e]$ is a homeomorphism onto its image. 
Thus, if $f$ is injective or a homeomorphism onto its image, then the same holds for $f\comp [e]$. This will be relevant in  Subsection~\ref{subsection: ODD ODDI ODDH}. 
\end{remark}

\newpage 

%%%%%%%%%%%%%
\section{The open dihypergraph dichotomy for definable sets}
\label{section: proof of the main result}
In this section we prove the main result, Theorem~\ref{main theorem}.
The argument consists of three steps. 
In the situation of the theorem, 
$\kappa$ is a regular infinite cardinal, 
$2\leq\ddim\leq \kappa$, and
$X$ is a subset of ${}^\kappa\kappa$ and 
$H$ is a box-open $\ddim$-dihypergraph on ${}^\kappa\kappa$ 
in some $\Col(\kappa,\lle\lambda)$-generic extension $V[G]$. 
We may assume that $H\restr X$ does not admit a $\kappa$-coloring. 
In the first step, we find an ordinal $\nu<\lambda$ that reflects the set $\mathcal T$ of trees that code $H$-independent closed sets
in the sense that
${\mathcal T}\cap V[G_\nu] \in V[G_\nu]$, 
where
$G_\nu$ denotes the restriction of $G$ to $\Col(\kappa,\lle\nu)$. 
In the second step, 
we work in $V[G_\nu]$ 
to find a $\Col(\kappa,\lle\lambda)$-name 
for an element of $X$ 
that avoids all closed sets induced by trees in $\mathcal T \cap V[G_\nu]$. 
Then we 
transform this to an $\Add(\kappa,1)$-name $\nna$ to facilitate the third step, 
where $\nna$ is used 
to construct a continuous homomorphism %$f$ 
from $\dhH\ddim$ to $H\restr X$. 
The values of this homomorphism come from the evaluations of $\nna$ 
by a $\kappa$-perfect set of 
generic
filters with well-behaved quotient forcings.
The main technical step in the proof, the \emph{Nice Quotient Lemma for $\Col(\kappa,\lle\lambda)$},
%~\ref{lemma: quotient lemma}, 
guarantees the existence 
in $V[G]$ 
of such a $\kappa$-perfect set.
More precisely, it constructs
a continuous function $g$ 
from ${}^\kappa\ddim$ 
to the set of $\Add(\kappa,1)$-generic filters with well-behaved quotient forcings
such that $g$ induces a
%continuous 
homomorphism from $\dhH\ddim$ to $H$ via~$\nna$. 
Controlling the quotient forcings is necessary to ensure the values of the homomorphism are in~$X$.
% correct. 
The proof of the Nice Quotient Lemma is postponed to Subsections~\ref{section: proof in the countable case} and~\ref{section: proof in the uncountable case}.

In the countable case, 
our proof \todog{of the main theorem} 
resembles 
Solovay's proof of the consistency of  the perfect set property $\PSP_\omega(\defsets\omega)$~\cite{Solovay1970} and 
Feng's proof of the consistency of the open graph dichotomy $\OGD_\omega(\defsets\omega)$ \cite{FengOCA}.%
\footnote{See Definitions~\ref{OGD def} and~\ref{def: perfect set}.}
In this case, the Nice Quotient Lemma is an easy consequence of Solovay's lemma showing that all elements of the Baire space in $V[G]$ have $\Col(\omega,\lle\lambda)$ as a quotient. 
However, it is well known that Solovay's lemma fails in the uncountable case.
The Nice Quotient Lemma overcomes this difficulty by ensuring that 
all elements of the $\kappa$-Baire space
that occur in the construction do have $\Col(\kappa,\lle\lambda)$ as a quotient in the uncountable case.
This was done in \cite{SchlichtPSPgames} 
for the $\kappa$-perfect set property using a similar idea but different technical means.

%%%%%%%%%%%%%
\subsection{Three important steps}

%%%%%%%%%%%%%
\subsubsection{Reflection to intermediate models}
%\subsubsection{Reflecting to intermediate models}
\label{subsection: restricting sets to intermediate models}

We will work with the L\'evy collapse of an inaccessible or a 
Mahlo cardinal.%
\footnote{A cardinal $\lambda$ is \emph{Mahlo} if the set of inaccessible cardinals $\nu<\lambda$ is stationary in $\lambda$.}  
The next lemma describes an essential property of such extensions. 

Recall the notation $\PP_\alpha, \PP_I, \PP^\alpha, G_\alpha, G_I, G^\alpha$ from page~\pageref{notation for the Levy collapse and subforcings}. 
Note that if $\lambda$ is inaccessible, then $\PP^\alpha$ is equivalent to $\PP_\lambda$ for all $\alpha<\lambda$, by Corollary~\ref{forcing equivalent to the Levy collapse}.

\begin{lemma} 
\label{restricting sets to intermediate models} 
Suppose that $\kappa<\lambda$ are regular infinite cardinals, $G$ is $\PP_\lambda$-generic over $V$ and $X$ is a subset of ${}^\kappa\kappa$ in $V[G]$. 
Suppose that either 
\begin{enumerate-(1)} 
\item 
\label{restricting sets to intermediate models 1} 
$\lambda$ is inaccessible and $X\in \defsets\kappa^{V[G]}$, 
or 
\item 
\label{restricting sets to intermediate models 2} 
$\lambda$ is Mahlo. 
\end{enumerate-(1)} 
Then there is a stationary (in $V$) subset $S\in V$ of $\lambda$ such that for all $\nu\in S$,  we have $X \cap V[G_\nu] \in V[G_\nu]$. 
In fact, assuming \ref{restricting sets to intermediate models 1}, $S$ can be chosen as a final segment of $\lambda$. 
Assuming~\ref{restricting sets to intermediate models 2}, $S$ can be chosen so that every $\nu\in S$ is inaccessible. 
\end{lemma}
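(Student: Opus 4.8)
The plan is to handle the two cases by different mechanisms, both exploiting that $\PP_\lambda=\Col(\kappa,\lle\lambda)$ is $\lambda$-cc when $\lambda$ is inaccessible (which holds under either hypothesis, since Mahlo implies inaccessible), and the standard factorisation $V[G]=V[G_\nu][G^\nu]$ with $\PP_\nu$ a complete subforcing of $\PP_\lambda$ and $G_\nu=G\cap\BB(\PP_\nu)$ recorded in the Forcing preliminaries.

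For case~\ref{restricting sets to intermediate models 1} I would first \emph{capture the defining parameter at a bounded stage}. Writing $X=\{x\in\kk:\varphi(x,z)\}^{V[G]}$ for a formula $\varphi$ and a $\kappa$-sequence of ordinals $z\in V[G]$, fix a nice $\PP_\lambda$-name $\dot z$: for each $\alpha<\kappa$ the value $z(\alpha)$ is decided by an antichain of size $\lle\lambda$ (by $\lambda$-cc), each of whose conditions has support of size $\lle\kappa$. Since $\lambda$ is regular, the union of these $\kappa$-many supports is bounded by some $\nu_0<\lambda$, so $\dot z$ may be taken as a $\PP_{\nu_0}$-name and $z\in V[G_{\nu_0}]$. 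Then for any $\mu\in[\nu_0,\lambda)$ I would invoke the weak homogeneity of the tail $\PP^\mu=\Col(\kappa,[\mu,\lambda))$: as $x,z\in V[G_\mu]$, the statement $\varphi(x,z)$ is decided by $\one_{\PP^\mu}$ over $V[G_\mu]$, so $x\in X\iff \one_{\PP^\mu}\Vdash_{V[G_\mu]}\varphi(\check x,\check z)$, a condition evaluable in $V[G_\mu]$. Hence $X\cap V[G_\mu]\in V[G_\mu]$ and $S=[\nu_0,\lambda)$ works, giving the promised final segment.

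For case~\ref{restricting sets to intermediate models 2}, where $X$ need not be definable, homogeneity no longer suffices and I would instead \emph{reflect the name} $\dot X$. For each $\PP_\nu$-name $\dot x$ for an element of $\kk$ put $b_{\dot x}:=\boolvalpo{\dot x\in\dot X}{\BB(\PP_\lambda)}$, and note that $\dot x^{G_\nu}\in X\iff b_{\dot x}\in G$. The crucial observation is that, by $\lambda$-cc, every element of $\BB(\PP_\lambda)$ is the supremum of a maximal antichain of size $\lle\lambda$ whose conditions have supports bounded below $\lambda$; thus $\BB(\PP_\lambda)=\bigcup_{\nu<\lambda}\BB(\PP_\nu)$, and each $b_{\dot x}$ already lies in some $\BB(\PP_{g(\dot x)})$ with $g(\dot x)<\lambda$. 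Since $\lambda$ is a strong limit, for each $\eta<\lambda$ there are $\lle\lambda$-many nice $\PP_\eta$-names for elements of $\kk$, so $h(\eta):=\sup\{g(\dot x):\dot x\text{ a nice }\PP_\eta\text{-name for an element of }\kk\}<\lambda$ by regularity. The set $C$ of limit $\nu<\lambda$ closed under $h$ is club. For \emph{inaccessible} $\nu\in C$, every nice $\PP_\nu$-name $\dot x$ for an element of $\kk$ is actually a $\PP_\eta$-name for some $\eta<\nu$ (its support, a $\kappa$-union of sets each bounded below the regular $\nu$ by $\nu$-cc, is bounded), whence $b_{\dot x}\in\BB(\PP_{h(\eta)})\subseteq\BB(\PP_\nu)$. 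Then $b_{\dot x}\in G\iff b_{\dot x}\in G_\nu$, so $\dot Y:=\{(\dot x,b_{\dot x}):\dot x\text{ a nice }\PP_\nu\text{-name for an element of }\kk\}$ is (equivalent to) a $\PP_\nu$-name with $\dot Y^{G_\nu}=X\cap V[G_\nu]$. Taking $S$ to be $C$ intersected with the stationary set of inaccessibles below $\lambda$ (stationary because $\lambda$ is Mahlo) finishes this case, and by construction every element of $S$ is inaccessible.

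The main obstacle is the second case: it is exactly where one must argue that an \emph{arbitrary} Boolean value $b_{\dot x}$ is localised to a bounded level of the collapse and that these localisation levels reflect simultaneously for all $V[G_\nu]$-reals. The enabling facts are the $\lambda$-cc decomposition $\BB(\PP_\lambda)=\bigcup_{\nu<\lambda}\BB(\PP_\nu)$ and the strong-limit bound on the number of names, both of which rely on inaccessibility of $\lambda$; the upgrade from ``club-many $\nu$'' to ``stationarily-many \emph{inaccessible} $\nu$'' is precisely what the Mahlo hypothesis buys. In writing this up I would take care to verify that suprema computed in the complete subalgebra $\BB(\PP_\nu)$ agree with those in $\BB(\PP_\lambda)$, and to cite the factoring, complete-subforcing and weak-homogeneity facts from the Forcing preliminaries.
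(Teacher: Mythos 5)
Your proof is correct and follows essentially the same route as the paper's: case (1) is the identical argument (capture the parameter at a bounded stage via the $\lambda$-c.c., then use homogeneity of the tail collapse over $V[G_\nu]$), and case (2) is the paper's reflection argument, with your Boolean values $\boolvalpo{\dot x\in\dot X}{\BB(\PP_\lambda)}$ localized in $\BB(\PP_\nu)$ serving as an interchangeable repackaging of the paper's maximal antichains of conditions deciding $\dot x\in\dot X$ that lie in $\PP_{f(\gamma)}$. The closure function, the club of its closure points, the use of Mahloness to get stationarily many inaccessible closure points, and the $\nu$-c.c.\ argument that every $\PP_\nu$-name is a bounded-stage name all match the paper's proof step for step.
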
 
\begin{proof} 
First assume \ref{restricting sets to intermediate models 1}. 
In $V[G]$, let 
$$X:=X_{\varphi,a}:=\{ x\in {}^\kappa\kappa: \varphi(x,a) \},$$
where $\varphi(x,a)$ is a formula with a parameter $a\in {}^\kappa\Ord$. 
Since $\PP_\lambda$ has the $\lambda$-c.c.,~$a\in V[G_\alpha]$ for some $\alpha<\lambda$.
We shall show that $X\cap V[G_\nu]\in V[G_\nu]$ for any $\nu\in S:=[\alpha,\lambda)$.
Let $x\in({}^\kappa\kappa)^{V[G_\nu]}$.
Since $a$ is also in $V[G_\nu]$ and
$V[G]$ is a $\PP^\nu$-generic extension of $V[G_\nu]$ and
$\PP^\nu$ is homogeneous,
\[
x\in X=X_{\varphi,a} 
\ \ \Longleftrightarrow \ \ 
\one_{\PP^\nu}\forces^{V[G_\nu]} \varphi(x,a).
\]
Thus,
$X\cap V[G_\nu]$ is equal to the set $\{ x\in({}^\kappa\kappa)^{V[G_\nu]}: 
\one_{\PP^\nu}\forces^{V[G_\nu]} \varphi(x,a)\}$, 
which is in $V[G_\nu]$.

Now assume \ref{restricting sets to intermediate models 2}. 
Let $\dot{X}\in V$ be a $\PP_\lambda$-name with $\dot{X}^G=X$. 
Define $f: \lambda\to \lambda$ as follows. 
Given $\alpha<\lambda$, let $\mathrm{Name}_\alpha$ denote the set of nice $\PP_\alpha$-names for subsets of $\kappa\times \kappa$. 
Note that $|\mathrm{Name}_\alpha |\leq2^{\max\{\kappa,|\alpha|\}}$. 
For each $\dot{x}\in \mathrm{Name}_\alpha$, choose a maximal antichain $A_{\dot{x}}$ in $\PP_\lambda$ of conditions deciding whether $\dot{x}\in \dot{X}$. 
By the $\lambda$-c.c.~of $\PP_\lambda$, 
let $\mu_{\dot{x}}<\lambda$ be least with $A_{\dot{x}}\subseteq \PP_{\mu_{\dot{x}}}$.
Finally, let $f(\alpha):=\sup \{ \mu_{\dot{x}}: \dot{x}\in \mathrm{Name}_\alpha\}<\lambda$. 

In $V$, there is a club of closure points of $f$, i.e. ordinals $\nu<\lambda$ with  $f(\alpha)<\nu$ for all $\alpha<\nu$. 
%$f[\nu]\subseteq\nu$. 
Since $\lambda$ is Mahlo, the set $S$ of inaccessible closure points of $f$ is stationary in $\lambda$. 

Let $\nu\in S$.
We shall show that $X\cap V[G_\nu] \in V[G_\nu]$.

\begin{claim*} 
For any nice $\PP_\nu$-name $\dot{x}$ for a subset of $\kappa\times\kappa$, exactly one of the following holds: 
\begin{enumerate-(i)} 
\item 
$\exists p\in G_\nu\ \,p\forces_{\PP_\lambda}^V \dot{x} \in \dot{X}$.
\item 
$\exists p\in G_\nu\ \,p\forces_{\PP_\lambda}^V \dot{x} \notin \dot{X}$. 
\end{enumerate-(i)} 
\end{claim*} 
\begin{proof} 
Since $\nu$ is inaccessible, $\PP_\nu$ has the $\nu$-c.c. 
Since $\dot{x}$ is a nice $\PP_\nu$-name, it is a $\PP_\gamma$-name for some $\gamma<\nu$. 
By the definition of $f$, there is
$A\subseteq \PP_{f(\gamma)}$ such that $A$ is
a maximal antichain in $\PP_\lambda$ of conditions deciding whether $\dot{x}\in \dot{X}$. 
Since $\nu$ is a closure point of $f$, we have $f(\gamma)<\nu$. 
Thus $A$ is a maximal antichain in $\PP_\nu$. 
Let $p\in G_\nu \cap A$. 
\end{proof}

\begin{claim*} 
If $\dot{x}\in \mathrm{Name}_\alpha$ and $\dot{x}^{G_\nu}=x$, then: 
\begin{enumerate-(i)} 
\item 
$x\in X \ \Longleftrightarrow\  \exists p\in G_\nu\ \,p\forces_{\PP_\lambda}^V \dot{x} \in \dot{X}$.
\item 
$x\notin X \ \Longleftrightarrow\ \exists p\in G_\nu\ \,p\forces_{\PP_\lambda}^V \dot{x} \notin \dot{X}$.
\end{enumerate-(i)} 
\end{claim*} 
\begin{proof} 
By the previous claim, it suffices to show the implications from right to left. 
Suppose $p\in G_\nu$ and  $p\forces_{\PP_\lambda}^V \dot{x} \in \dot{X}$. 
Since $\dot{x}$ is a $\PP_\nu$-name and $G_\nu\subseteq G$, we have $\dot{x}^{G_\nu} = \dot{x}^G\in \dot{X}^G$. 
The second implication is similar. 
\end{proof} 
In $V[G_\nu]$, define 
$X_\nu:=\{ {\dot x}^{G_\nu}:\: \exists p\in G_\nu\ \,p\forces_{\PP_\lambda}^V \dot{x} \in \dot{X} \}$. 
Then we have $X\cap V[G_\nu]=X_\nu \in V[G_\nu]$. 
\end{proof}

\begin{remark} 
\label{restricting functions to intermediate models} 
The previous lemma also has a version for functions:
Let $\kappa<\lambda$ be regular infinite cardinals, let $G$ be $\PP_\lambda$-generic over $V$ and let $F: {}^\kappa\kappa\to {}^\kappa\kappa$ be a function in $V[G]$. 
If either
\begin{enumerate-(1)} 
\item 
\label{restricting functions to intermediate models 1} 
$\lambda$ is inaccessible and $F\in \defsets\kappa^{V[G]}$, or
\item 
\label{restricting functions to intermediate models 2} 
$\lambda$ is Mahlo,
\end{enumerate-(1)} 
then there is a stationary (in $V$) subset $S\in V$ of $\lambda$ such that for all $\nu\in S$,  we have $F\restr V[G_\nu] \in V[G_\nu]$. 
Assuming \ref{restricting sets to intermediate models 1}, $S$ can be chosen as a final segment of $\lambda$, and for \ref{restricting sets to intermediate models 2}, $S$ can be chosen so that every $\nu\in S$ is inaccessible. 

To see this, apply the previous lemma
%Lemma \ref{restricting sets to intermediate models} 
to the set 
%$X= \{ (\langle \alpha,\beta \rangle\conc x)\in {}^\kappa\kappa : (\alpha,\beta)\in F(x) \}$. 
$X= \{\langle \alpha,\beta \rangle\conc x : x\in {}^\kappa\kappa,\, (\alpha,\beta)\in F(x) \}$. 
If $X \cap V[G_\nu] \in V[G_\nu]$, then $F\restr V[G_\nu] \in V[G_\nu]$, 
\todog{If $x\in{}^\kappa\kappa\cap V[G_\nu]$, then $F(x)=\{(\alpha,\beta): \langle \alpha,\beta\rangle\conc x\in X\cap V[G_\nu]$}
since $F\restr V[G_\nu]$ can be calculated from $X \cap V[G_\nu]$. 
\end{remark} 

\begin{remark} 
The Mahlo cardinal is necessary in Lemma~\ref{restricting sets to intermediate models}~\ref{restricting sets to intermediate models 2} and Remark~\ref{restricting functions to intermediate models}~\ref{restricting functions to intermediate models 2}. 
To see this, assume the $\GCH$ holds and $\lambda>\kappa$ is any cardinal that is not Mahlo. 
We claim that 
there exists a function $F:{}^\kappa\kappa\to{}^\kappa\kappa$ in $V[G]$ such that 
$F\restr  V[G_\alpha]\notin V[G_\alpha]$ for all $\alpha$ with $\kappa\leq\alpha<\lambda$. 

First suppose $\lambda$ is singular in $V$. 
Then $\lambda$ remains singular in $V[G]$. 
Hence $\lambda\neq (\kappa^+)^{V[G]}$ and $\lambda$ is collapsed in $V[G]$, so there exists some $x\in ({}^\kappa\kappa)^{V[G]}$ with $x\notin V[G_\alpha]$ for all $\alpha<\lambda$.%
\todog{Then any $F$ mapping $\langle 0\rangle^\kappa$ to $x$ works}

Now suppose $\lambda$ is regular in $V$. 
Let $C$ be a club in $\lambda$ whose elements are not inaccessible. 
Additionally, if $\lambda$ is a successor cardinal then assume $C$ does not contain cardinals, while if $\lambda$ is a limit cardinal then assume $C$  contains only cardinals. 
For any $\alpha<\lambda$, let $\alpha'$ denote the least ordinal 
above $\alpha$ in $C$.
Take any function $F: {}^\kappa\kappa\to {}^\kappa\kappa$ in $V[G]$ with the following property: if $\alpha<\lambda$ is least with $x\in V[G_\alpha]$, then $F(x)\notin V[G_{\alpha'}]$. 

We show that 
$F\restr  V[G_\alpha]\notin V[G_\alpha]$ for all $\alpha$ with $\kappa\leq\alpha<\lambda$. 
To see this, fix any such $\alpha$. 
We distinguish two cases: 

\begin{case}
\label{case Mahlo not limit} 
$\alpha$ is not a limit point of $C$. 
\end{case} 
 
Then there is some $\gamma\in C$ with $\gamma< \alpha\leq \gamma'$. 
Take any $x\in V[G_\alpha]\oldsetminus V[G_\gamma]$. 
Let $\beta>\gamma$ be least with $x\in V[G_\beta]$. 
Then $F(x)\notin V[G_{\beta'}]$ by the definition of $F$. 
Since 
\todog{Since $\gamma<\beta\leq\alpha\leq\gamma'$, we have $\gamma'=\beta'=\alpha'$. But we only need $\alpha\leq\beta'$.}
$\alpha\leq\gamma'\leq\beta'$, we have $F(x)\notin V[G_\alpha]$ as required. 

\begin{case}
$\alpha$ is a limit point of $C$. 
\end{case} 

We claim that there is some $x\in {}^\kappa\kappa$ such that $\alpha$ is least with $x\in V[G_\alpha]$. 
First suppose that $\lambda$ is successor cardinal in $V$.\footnote{For successor cardinals $\lambda$, the proof does not use that $\alpha$ is a limit point of $C$, so Case \ref{case Mahlo not limit} is not needed.} 
Let $\mu:=|\alpha|^V$, so $\mu < \alpha <(\mu^+)^V$ 
since $C$ does not contain cardinals. 
Since $|\alpha|=\kappa$ in $V[G_{\mu+1}]$, $G_{(\mu+1,\alpha)}$ can be coded as a subset $x$ of $\kappa$ via a bijection $f: \kappa\to \alpha$ with $f\in V[G_{\mu+1}]$. 
Then $\alpha$ is least with $x\in V[G_\alpha]$. 
Now suppose $\lambda$ is a limit cardinal in $V$. 
Since $C$ contains only cardinals and the $\GCH$ holds, $\alpha$ is singular in $V$.%and the claim follows as in the case of singular $\lambda$. 
Thus $\alpha\neq (\kappa^+)^{V[G_\alpha]}$ and $\alpha$ is collapsed in $V[G_\alpha]$, so the claim follows. 

Then $F(x)\notin V[G_{\alpha'}]$ by the definition of $F$. Thus, $F(x)\notin V[G_\alpha]$ as required. 
 \end{remark}

\begin{remark} 
The proof of Lemma~\ref{restricting sets to intermediate models}~\ref{restricting sets to intermediate models 2} 
can be adapted to show that if $\lambda>\kappa$ is inaccessible, then
the following strengthening of $\Diamond_{\kappa^+}$
holds in
$\PP_\lambda$-generic extensions $V[G]$:
\begin{quotation}
\noindent
$\Diamond^+_{\kappa^+}$: 
\index{diamond!plus\idf$\Diamond^+_{\kappa^+}$}
there exists a sequence 
$\langle \mathcal A_\alpha:\alpha<\kappa^+\rangle$ of sets 
$\mathcal A_\alpha\subseteq\pwrset(\alpha)$ such that
\begin{enumerate-(a)}
\item\label{diamond+ a}
$|\mathcal A_\alpha|\leq \kappa$ for all $\alpha<\kappa^+$, and 
\item\label{diamond+ b}
for all $X\subseteq\kappa^+$, there exists a club $C$ in $\kappa^+$ such that for all $\alpha\in C$, 
$X\cap\alpha \in \mathcal A_\alpha$ and $C\cap \alpha \in \mathcal A_\alpha$. 
\end{enumerate-(a)}
\end{quotation}

We show that in $V[G]$,
$\langle\mathcal A_\alpha=\pwrset(\alpha)^{V[G_\alpha]}:\alpha<\kappa\rangle$ 
is a $\Diamond^+_{\kappa^+}$-sequence.
It is clear that~\ref{diamond+ a} holds in $V[G]$.
To show~\ref{diamond+ b},
we use an argument similar to the proof of Lemma~\ref{restricting sets to intermediate models}~\ref{restricting sets to intermediate models 2}.%
\footnote{Note that this version of the argument works assuming only that $\lambda$ is inaccessible.}
Let $X$ be a subset of $\kappa^+=\lambda$ in $V[G]$, and let $\dot{X}\in V$ be a $\PP_\lambda$-name with $\dot{X}^G=X$. 
In $V$, first define a function $f: \lambda\to \lambda$ as follows. 
For each $\alpha<\lambda$,
let $A$ be a maximal antichain in $\PP_\lambda$ consisting of conditions
deciding whether $\alpha\in\dot X$.
Since $\PP_\lambda$ has the $\lambda$-c.c., choose $f(\alpha)<\lambda$ with $A\subseteq \PP_{f(\alpha)}$.

Let $C$ be a club of limit ordinals $\nu<\lambda$ that are closure points of $f$. 
We shall show that $C$ satisfies~\ref{diamond+ b}.
\todog{$\nu$ will not need to be inaccessible. So $\lambda$ does not need to be Mahlo} 
Since $C\in V$, it suffices to show that $X 
\cap \nu \in \mathcal{A}_\nu=\mathcal{P}(\nu)^{V[G_\nu]}$ for any given $\nu\in C$. 

\begin{claim*} 
For any $\alpha<\nu$, exactly one of the following holds: 
\begin{enumerate-(i)} 
\item 
$\exists p\in G_\nu\ \,p\forces_{\PP_\lambda}^V \alpha \in \dot{X}$.
\item 
$\exists p\in G_\nu\ \,p\forces_{\PP_\lambda}^V \alpha \notin \dot{X}$.
\end{enumerate-(i)} 
\end{claim*}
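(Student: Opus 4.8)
The plan is to mirror the first claim in the proof of Lemma~\ref{restricting sets to intermediate models}~\ref{restricting sets to intermediate models 2}, noting that the present argument is in fact simpler: here we decide membership of a single ordinal $\alpha$ rather than of a nice name, so no appeal to a chain condition is needed and $\nu$ need not be inaccessible (which is exactly why this version works under the weaker hypothesis that $\lambda$ is merely inaccessible). First I would fix $\alpha<\nu$ and recall the maximal antichain $A$ of $\PP_\lambda$ that was associated to $\alpha$ in the definition of $f$: every condition in $A$ decides whether $\alpha\in\dot{X}$, and $A\subseteq\PP_{f(\alpha)}$. Since $\nu$ is a closure point of $f$ and $\alpha<\nu$, we have $f(\alpha)<\nu$, whence $A\subseteq\PP_{f(\alpha)}\subseteq\PP_\nu$.

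The key step is to upgrade the statement ``$A$ is a maximal antichain of $\PP_\lambda$ contained in $\PP_\nu$'' to ``$A$ is a maximal antichain of $\PP_\nu$''. This uses that $\PP_\nu$ is a complete subforcing of $\PP_\lambda$, which holds because the L\'evy collapse factors as $\PP_\lambda\simeq\PP_\nu\times\PP^\nu$; in particular, two conditions of $\PP_\nu$ are compatible in $\PP_\nu$ if and only if they are compatible in $\PP_\lambda$. Consequently, any $r\in\PP_\nu$ that were incompatible with every element of $A$ in $\PP_\nu$ would also be incompatible with every element of $A$ in $\PP_\lambda$, contradicting the maximality of $A$ there. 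Thus $A$ is a maximal antichain of $\PP_\nu$. As $A\in V$ and $G_\nu$ is $\PP_\nu$-generic over $V$, there is some $p\in G_\nu\cap A$.

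Finally I would read off the dichotomy. The chosen $p$ decides $\alpha\in\dot{X}$ over $\PP_\lambda$, so since $p\in G_\nu$, at least one of (i) and (ii) holds. Both cannot hold simultaneously: if $p,q\in G_\nu$ witnessed (i) and (ii) respectively, then a common extension $r\leq p,q$ inside the filter $G_\nu$ would force both $\alpha\in\dot{X}$ and $\alpha\notin\dot{X}$, which is impossible. Hence exactly one of (i), (ii) holds. The only point requiring genuine care — and the one I would flag as the main (albeit minor) obstacle — is the complete-subforcing step that makes $A$ remain a maximal antichain after passing from $\PP_\lambda$ down to $\PP_\nu$; everything else follows immediately from genericity and the filter property.
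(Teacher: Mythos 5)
Your proof is correct and follows essentially the same route as the paper's: fix the maximal antichain $A\subseteq\PP_{f(\alpha)}$ from the definition of $f$, use the closure-point property to get $A\subseteq\PP_\nu$, observe $A$ remains maximal in $\PP_\nu$, and pick $p\in G_\nu\cap A$. You merely spell out two points the paper leaves implicit — the complete-subforcing justification for maximality of $A$ in $\PP_\nu$, and the mutual exclusivity via the filter property of $G_\nu$ — both of which are fine.
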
 
\begin{proof} 
By the definition of $f$, there is
$A\subseteq \PP_{f(\gamma)}$ such that $A$ is
a maximal antichain in $\PP_\lambda$ of conditions deciding whether $\alpha\in \dot{X}$. 
Since $\nu$ is a closure point of $f$, we have $f(\gamma)<\nu$. 
Thus $A$ is a maximal antichain in $\PP_\nu$. 
Let $p\in G_\nu \cap A$. 
\end{proof} 

As in the proof of Lemma~\ref{restricting sets to intermediate models}~\ref{restricting sets to intermediate models 2}, 
it now follows that 
for all $\alpha<\nu$,
$$\alpha\in X\cap\nu \  \Longleftrightarrow \  
\exists p\in G_\nu\ \,p\forces_{\PP_\lambda}^V \alpha \in \dot{X}.$$
Thus,
$X\cap\nu$ is an element of 
$\mathcal{A}_\nu=\mathcal{P}(\nu)^{V[G_\nu]}$.
\end{remark}

\subsubsection{Names and witness functions}
\label{subsection: finding names}

\begin{assumptions}
\label{proof notation}
The following assumptions will be used throughout the rest of this section, unless explicitly stated otherwise.
In $V$:
\begin{itemize}
\item
$\kappa$ is a regular infinite cardinal, 
$2\leq\ddim\leq\kappa$ 
and $\lambda>\kappa$ is an inaccessible cardinal.
$G$ is a $\PP_\lambda=\Col(\kappa,\lle\lambda)$-generic filter over $V$.
\item
$\kappa^{<\kappa}=\kappa$ holds in $V$. 
This may be assumed without loss of generality, 
because $\PP_2$ forces $\kappa{}^{<\kappa}=\kappa$ and $\PP^2$ is equivalent to $\PP_\lambda$. 
\end{itemize}
In $V[G]$: 
\label{proof notation 2}
\begin{itemize}
\item
$X$ is a subset of ${}^\kappa\kappa$ in $\defsetsk$. 
% with $X\in\defsetsk$.
More specifically  
%using the notation given in Definition~\ref{def: Xphi}~\ref{Xphi},
\[X=\Xphia=
\{x\in{}^{\kappa}\kappa:\varphi(x,a)\},\]
where 
$\varphi(x,a)$ 
is a first order formula 
\todog{$\kappa$ and $\ddim$ are clear from the context, so they do not need to be specified here}
with a parameter $a\in{}^\kappa\Ord$.
\item
$H$ is a  box-open $\ddim$-dihypergraph on ${}^\kappa\kappa$ 
such that $H\restr X$ does not admit a $\kappa$-coloring. 
\item 
$\lambda$ is a Mahlo cardinal in $V$, or $H\in\defsetsk$ in $V[G]$.%
\footnote{This assumption can be weakened as in Remark \ref{remark weaker assumption for main proof} below.}%
$^{,}$%
\footnote{If $\ddim<\kappa$, then $H\in\defsetsk$ follows from the assumption that $H$ is box-open by Lemma~\ref{<kappa dim hypergraphs are definable}.}
\label{proof notation end}
\end{itemize}

To prove Theorem~\ref{main theorem}, it suffices to prove
that $\ODD\kappa{H\restr X}$ holds under these assumptions.
Since we assume that $H\restr X$ does not admit a $\kappa$-coloring, 
it further suffices to show that there is 
a continuous homomorphism from $\dhH\ddim$ to $H\restr X$.
\end{assumptions} 

We work in $V[G]$ unless stated otherwise. Let $T$ be a subtree of ${}^{<\kappa}\kappa$.
Recall from Definition~\ref{def: independent tree} 
that $T$ is an $H$-independent tree if
for all sequences $\langle t_\alpha\in T:\alpha<\ddim\rangle$, 
we have
$\prod_{\alpha<\ddim}N_{t_\alpha}\not\subseteq H$.
Since $H$ is box-open on ${}^\kappa\kappa$, 
$[T]$ is an $H$-independent set
whenever $T$ is an $H$-independent tree 
and the converse holds when $T$ is pruned,
by Lemma~\ref{lemma: independent trees}. 

\begin{definition}
\label{def: trees in intermediate models}
\label{def: sets of H-independent trees}
For all $\alpha\leq\lambda$, let 
%\index{tree!trees in $V[G_\alpha]$\idf $\mathcal T_\alpha$}%
%\index{tree!independent trees in $V[G_\alpha]$\idf $\indtrees\alpha$}% 
\index{tree!subtrees of ${}^{<\kappa}\kappa$ in $V[G_\alpha]$!all\idf $\mathcal T_\alpha$}%
\index{tree!subtrees of ${}^{<\kappa}\kappa$ in $V[G_\alpha]$!independent\idf $\indtrees\alpha$}%
%\index{tree!trees in $V[G_\alpha]$!independent subtrees\idf $\indtrees\alpha$}% 
\index{independent tree!independent trees in $V[G_\alpha]$\idf $\indtrees\alpha$}% 
\todog{use the command ``indtrees'' for the notation $\indtrees\alpha$} 
\begin{align*}
\mathcal T_\alpha &:=\{T\in V[G_\alpha] : T
\text{ is a subtree of ${}^{<\kappa}\kappa$}\},
\\
\indtrees\alpha &:=\{T\in V[G_\alpha] : T
\text{ is an $H$-independent subtree of ${}^{<\kappa}\kappa$}\}.
\end{align*}
\end{definition}

Let $\alpha<\lambda$. 
Then $\mathcal T_\alpha$ has size $\kappa$ in $V[G]$ 
since $\lambda$ is inaccessible, 
and $\mathcal T_\alpha\in V[G_\alpha]$.
However, $\indtrees\alpha$ may not be in $V[G_\alpha]$. 

\begin{lemma} 
\label{intermediate model which see H-independence for their trees}
There is a stationary (in $V$) subset $S\in V$ of $\lambda$ such that 
$\indtrees\nu\in V[G_\nu]$ for all $\nu\in S$.
\end{lemma}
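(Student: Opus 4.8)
The plan is to reduce the statement to Lemma~\ref{restricting sets to intermediate models} by viewing the family of all $H$-independent subtrees of ${}^{<\kappa}\kappa$ as a single subset of ${}^\kappa\kappa$, and then to observe that for each $\nu$ the set $\indtrees\nu$ is nothing but the restriction of that subset to the intermediate model $V[G_\nu]$.

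First I would fix, in $V$, a bijection $b:\kappa\to{}^{<\kappa}\kappa$, which exists since $\kappa^{<\kappa}=\kappa$ holds in $V$. Using $b$, each subtree $T$ of ${}^{<\kappa}\kappa$ is coded by its characteristic function $x_T\in{}^\kappa 2\subseteq{}^\kappa\kappa$, and this coding is absolute between transitive models of $\ZFC$ containing $b$; in particular $T\in V[G_\nu]$ if and only if $x_T\in V[G_\nu]$. Let $\mathcal T^{\mathrm{ind}}\subseteq{}^\kappa\kappa$ denote the set, computed in $V[G]$, of all codes $x_T$ of $H$-independent subtrees $T$ of ${}^{<\kappa}\kappa$. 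Then for every $\nu<\lambda$ the code-set of $\indtrees\nu$ is exactly $\mathcal T^{\mathrm{ind}}\cap V[G_\nu]$, so $\indtrees\nu\in V[G_\nu]$ if and only if $\mathcal T^{\mathrm{ind}}\cap V[G_\nu]\in V[G_\nu]$. It therefore suffices to produce a stationary $S\in V$ with $\mathcal T^{\mathrm{ind}}\cap V[G_\nu]\in V[G_\nu]$ for all $\nu\in S$, which is precisely the conclusion of Lemma~\ref{restricting sets to intermediate models} applied to the set $\mathcal T^{\mathrm{ind}}$.

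To invoke that lemma I would check its hypotheses, exploiting the dichotomy already built into Assumptions~\ref{proof notation}. If $\lambda$ is Mahlo, I apply part~\ref{restricting sets to intermediate models 2} directly, obtaining a stationary $S$ consisting of inaccessibles. If instead $H\in\defsetsk$, I apply part~\ref{restricting sets to intermediate models 1}, and the main obstacle will be to verify that $\mathcal T^{\mathrm{ind}}\in\defsets\kappa^{V[G]}$, i.e.\ that ``$x$ codes an $H$-independent subtree of ${}^{<\kappa}\kappa$'' is first-order definable from a $\kappa$-sequence of ordinals. This holds because the property unfolds to a quantification over sequences $\langle t_\alpha:\alpha<\ddim\rangle\in{}^\ddim T$ (each coded by an element of ${}^\kappa\kappa$, as $\ddim\leq\kappa$) asserting $\prod_{\alpha<\ddim}N_{t_\alpha}\not\subseteq H$, and this uses only the defining parameter of $H$ together with $b$, both of which are $\kappa$-sequences of ordinals. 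Since $\lambda$ is inaccessible under Assumptions~\ref{proof notation}, part~\ref{restricting sets to intermediate models 1} then yields a final segment of $\lambda$, which is the required stationary $S$. In either case $\indtrees\nu\in V[G_\nu]$ for all $\nu\in S$, completing the argument.
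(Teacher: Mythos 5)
Your proposal is correct and follows essentially the same route as the paper: code the family of all $H$-independent subtrees (computed in $V[G]$) as a subset of ${}^\kappa\kappa$ via a bijection $\kappa\to{}^{<\kappa}\kappa$ in $V$, note that this set is in $\defsets\kappa$ when $H$ is (and otherwise use that $\lambda$ is Mahlo), and apply Lemma~\ref{restricting sets to intermediate models} together with the absoluteness of the coding to conclude $\indtrees\nu=\indtrees\lambda\cap V[G_\nu]\in V[G_\nu]$ on a stationary set. The only difference is that you spell out the definability verification that the paper leaves implicit, which is a harmless elaboration rather than a different argument.
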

\begin{proof} 
In $V[G]$, 
code $\indtrees\lambda\subseteq\pwrset({}^{<\kappa}\kappa)$ as a subset $Y_\lambda$ of ${}^\kappa\kappa$, using a bijection $f: \kappa\to {}^{<\kappa}\kappa$ in $V$. 
If $H\in\defsetsk$, then $\indtrees\lambda\in\defsetsk$ and hence $Y_\lambda\in\defsetsk$. 
Otherwise, $\lambda$ is Mahlo.
In both cases, Lemma~\ref{restricting sets to intermediate models} yields stationarily many cardinals $\nu<\lambda$  such that $Y_\lambda\cap V[G_\nu]\in V[G_\nu]$, and hence $\indtrees\nu=\indtrees\lambda\cap V[G_\nu]\in V[G_\nu]$. 
\end{proof}

For the rest of this section, we fix a cardinal $\nu$ %as in the previous lemma 
such that $\indtrees\nu\in V[G_\nu]$ and $a\in V[G_\nu]$, 
where $a\in({}^\kappa\Ord)^{V[G]}$ is the parameter with $X=\Xphia^{V[G]}$.
This may be done by the previous lemma and the $\lambda$-c.c.~for $\PP_\lambda$.

\begin{remark} 
\label{remark weaker assumption for main proof} 
Note that the assumption that either $H\in \defsetsk^{V[G]}$ or $\lambda$ is Mahlo in $V$ in Assumption~\ref{proof notation} 
was needed only to obtain the previous lemma.
From now on,
we will use only the fact that
$\indtrees\nu\in V[G_\nu]$, $a\in V[G_\nu]$ and the rest of Assumption~\ref{proof notation}. 
\end{remark} 

The next lemma shows that there is a $\PP_\lambda$-name $\na$ in $V[G_\nu]$ for an element 
of $X$ that 
avoids all sets $[T]$ with $T\in\indtrees\nu$.
%avoids all $H$-independent sets $[T]$ with $T\in \mathcal T_\nu$. %%%possibly not correct if $T$ is not pruned

\begin{lemma}
\label{lemma 3a} 
In $V[G_\nu]$, there exists a $\PP_\lambda$-name $\na$ %for a new element of ${}^{\kappa}\kappa$ 
with the following properties:
\begin{enumerate-(1)}
\item \label{na eq 1a}
$\one_{\PP_\lambda} \forces^{V[G_\nu]}{\na}\in\Xphia\oldsetminus V[G_\nu].$%
\footnote{Note that
$\one_{\PP_\lambda} \forces^{V[G_\nu]}{\na}\notin V[G_\nu]$ follows from~\ref{na eq 2} as in the proof of the lemma.}
\item \label{na eq 2}
$\one_{\PP_\lambda}\forces^{V[G_\nu]} {\na\notin[T]}$ for all ${T\in\indtrees\nu}$. 
\end{enumerate-(1)}
\end{lemma}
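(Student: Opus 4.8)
The plan is to run a Solovay--Feng style argument: I would produce a witness in the full extension $V[G]$ and then reflect it down to a $\PP_\lambda$-name over $V[G_\nu]$ using the homogeneity of the collapse. The whole argument revolves around the family of $H$-independent closed sets coded in $V[G_\nu]$, namely $\{[T] : T\in\indtrees\nu\}$, and two elementary facts about it. \textbf{(a)} For every $y\in({}^\kappa\kappa)^{V[G_\nu]}$ the branch tree $T(\{y\})=\{y\restr\alpha:\alpha<\kappa\}$ lies in $V[G_\nu]$ and is $H$-independent: any $\langle t_i:i\in\ddim\rangle\in{}^\ddim T(\{y\})$ consists of pairwise comparable initial segments of $y$, so if $t$ is the longest of them, then for $z\in N_t$ the constant sequence $\langle z\rangle^\ddim$ lies in $\prod_{i\in\ddim}N_{t_i}$ but not in $H$ (as $H$ contains no constant sequences), witnessing $\prod_{i\in\ddim}N_{t_i}\not\subseteq H$. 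Hence $T(\{y\})\in\indtrees\nu$ and $y\in[T(\{y\})]$, so $\bigcup_{T\in\indtrees\nu}[T]\supseteq({}^\kappa\kappa)^{V[G_\nu]}$. \textbf{(b)} Since $H$ is box-open, each $[T]$ with $T\in\indtrees\nu$ is an $H$-independent set by Lemma~\ref{lemma: independent trees}, and $\indtrees\nu\subseteq\mathcal T_\nu$ has size at most $\kappa$ in $V[G]$, because $\lambda$ is inaccessible.

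Working in $V[G]$, I would then argue that $X\not\subseteq\bigcup_{T\in\indtrees\nu}[T]$. Otherwise, enumerating $\indtrees\nu$ in order type at most $\kappa$ as $\langle T_\alpha:\alpha<\kappa\rangle$ and assigning to each $x\in X$ the least index $\alpha$ with $x\in[T_\alpha]$ would produce a $\kappa$-coloring of $H\restr X$, each colour class being contained in the $H$-independent set $[T_\alpha]$; this contradicts Assumption~\ref{proof notation}. Thus there is $x\in X$ with $x\notin[T]$ for every $T\in\indtrees\nu$, and by fact \textbf{(a)} such an $x$ automatically lies outside $V[G_\nu]$. Letting $\psi_0(v)$ denote the formula $\varphi(v,a)\wedge\forall T\in\indtrees\nu\ v\notin[T]$, I have shown $V[G]\models\exists v\,\psi_0(v)$, a statement whose only parameters, $a$ and $\indtrees\nu$, both belong to $V[G_\nu]$.

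Finally I would reflect this existence down to $V[G_\nu]$. By Corollary~\ref{forcing equivalent to the Levy collapse} the tail forcing $\PP^\nu$ is equivalent to $\PP_\lambda$, so $V[G]=V[G_\nu][G^\nu]$ is, up to this equivalence, a $\PP_\lambda$-generic extension of $V[G_\nu]$ by a (weakly) homogeneous forcing. Since $\exists v\,\psi_0(v)$ holds in this extension and all its parameters lie in $V[G_\nu]$, some condition forces it, and homogeneity upgrades this to $\one_{\PP_\lambda}\forces^{V[G_\nu]}\exists v\,\psi_0(v)$. The maximal principle then yields a single $\PP_\lambda$-name $\na\in V[G_\nu]$ with $\one_{\PP_\lambda}\forces^{V[G_\nu]}\psi_0(\na)$. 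Unpacking $\psi_0$ gives exactly property~\ref{na eq 2} together with the membership $\na\in\Xphia$ of~\ref{na eq 1a}; the remaining part $\na\notin V[G_\nu]$ of~\ref{na eq 1a} follows from~\ref{na eq 2} via fact \textbf{(a)}, exactly as flagged in the footnote to~\ref{na eq 1a}.

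The main obstacle is this reflection step. The combinatorial core (no $\kappa$-coloring forbids a covering by $\kappa$ many $H$-independent closed sets) is immediate once the size bound $|\indtrees\nu|\le\kappa$ in $V[G]$ is available; the delicate point is recognising $V[G]$ as a homogeneous $\PP_\lambda$-generic extension of $V[G_\nu]$ in which both the defining parameter $a$ and the \emph{entire} family $\indtrees\nu$ are present, so that a witness merely \emph{existing} in $V[G]$ can be converted into a name whose required behaviour is forced by $\one$ rather than by some nontrivial condition.
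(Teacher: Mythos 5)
Your proposal is correct and follows essentially the same route as the paper's own proof: establish in $V[G]$ that $\Xphia\setminus\bigcup\{[T]:T\in\indtrees\nu\}\neq\emptyset$ (using that $\indtrees\nu$ has size at most $\kappa$, that each $[T]$ is $H$-independent, and that $H\restr X$ has no $\kappa$-coloring), transfer this to $\one_{\PP_\lambda}\forces^{V[G_\nu]}$ via homogeneity of $\PP^\nu$ and Corollary~\ref{forcing equivalent to the Levy collapse}, apply the maximal principle, and deduce $\na\notin V[G_\nu]$ from~\ref{na eq 2} since $T(\{x\})\in\indtrees\nu$ for every $x\in({}^\kappa\kappa)^{V[G_\nu]}$. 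The only cosmetic slip is your appeal to "the longest" of the pairwise comparable nodes $t_i$, which need not exist when $\ddim$ is infinite; simply take $z=y$ itself to witness $\prod_{i\in\ddim}N_{t_i}\not\subseteq H$.
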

\begin{proof} 
Work in $V[G]$. 
$[T]$ is an $H$-independent set for all $T\in\indtrees\nu$
by Lemma~\ref{lemma: independent trees}~\ref{ind tree ind set}.
Since 
$H\restr X=H\restr\Xphia$ does not admit a $\kappa$-coloring and
$\indtrees\nu$ has size at most $\kappa$,
%and $X=\Xphia$, 
we have
\begin{equation}
\label{3a eq}
\Xphia-\bigcup\big\{[T]: T\in\indtrees\nu\big\}
\neq\emptyset.
\end{equation}
Note that~\eqref{3a eq}
is a first-order statement with parameters in $V[G_\nu]$, 
because $\nu$ was chosen so that
$\indtrees\nu$ and $a$ are in $V[G_\nu]$.
\todog{%
This will be the only place 
in the proof of Theorem~\ref{main theorem}
where we use
Lemma~\ref{intermediate model which see H-independence for their trees}.}
Since $\PP^\nu$ is homogeneous, 
$\one_{\PP^\nu}$ forces in 
$V[G_\nu]$ that \eqref{3a eq} holds.
Since the forcings $\PP^\nu$ and $\PP_\lambda$ are equivalent
by Corollary~\ref{forcing equivalent to the Levy collapse},
$\one_{\PP_\lambda}$ forces in 
$V[G_\nu]$ that \eqref{3a eq} holds.
By the maximal principle, there exists a $\PP_\lambda$-name $\na\in V[G_\nu]$ 
for an element of $\Xphia$ which satisfies~\ref{na eq 2}. 
To see that
$\one_{\PP_\lambda}\forces^{V[G_\nu]} {\na\notin V[G_\nu]}$, 
let $x\in({}^\kappa\kappa)^{V[G_\nu]}$.
Since $\{x\}$ is $H$-independent, 
$T(\{x\})$ 
is in $\indtrees\nu$ by Lemma~\ref{lemma: independent trees}~\ref{ind set ind tree}. 
Thus $\one_{\PP_\lambda}\forces^{V[G_\nu]} \sigma\neq x$ by \ref{na eq 2}. 
\end{proof}

Recall from Definition~\ref{T^sigma def} that 
if $\QQ$ is a forcing, $q\in\QQ$ and $\nna$ is a $\QQ$-name for an element of ${}^\kappa\kappa$,
then 
$$T^{\nna, q}=T^{\nna, q}_\QQ= \{t\in{}^{<\kappa}\kappa\::\: \exists r\leq q\ \ r\forces t\subseteq \nna\}$$
is the \emph{tree of possible values for $\nna$ below $q$}.
Recall also that the definition of $T^{\nna,q}$ is absolute between transitive models of $\ZFC$ with the same ${}^{<\kappa}\kappa$ 
by Lemma~\ref{T^sigma facts}~\ref{T^sigma abs}.
In particular, 
if $\nna\in V[G_\nu]$, then
$(T^{\nna,q})^{V[G_\nu]}
=(T^{\nna,q})^{V[G]}$.

%We now fix a name $\na$ as in the previous lemma. 
The next lemma shows that the trees
$T^{\na,p}_{\PP_\lambda}$ are not $H$-independent for names $\na$ as in the previous lemma.

\begin{lemma}
\label{lemma 3b}
$T^{\na,p}_{\PP_\lambda}\in\mathcal T_\nu\oldsetminus\indtrees\nu$
for all names $\na$ as in Lemma~\ref{lemma 3a} and
%the previous lemma and 
all $p\in\PP_\lambda$.
\end{lemma}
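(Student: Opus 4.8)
The plan is to establish the two membership claims separately, both being short once the model-bookkeeping is set up correctly. For $T^{\na,p}_{\PP_\lambda}\in\mathcal T_\nu$, I would first observe that $T^{\na,p}_{\PP_\lambda}$ is a subtree of ${}^{<\kappa}\kappa$ directly from its definition: using the equivalent form $t\in T^{\na,p}_{\PP_\lambda}\Leftrightarrow\exists r\leq p\ \,r\forces t\subseteq\na$, if $t\in T^{\na,p}_{\PP_\lambda}$ and $s\subseteq t$, then the witnessing $r\leq p$ also forces $s\subseteq\na$, so the set is downwards closed. The essential point is that this tree lies in $V[G_\nu]$: the name $\na$ is in $V[G_\nu]$ by Lemma~\ref{lemma 3a}, the forcing $\PP_\lambda$ and the condition $p$ are in $V\subseteq V[G_\nu]$, and the definition of $T^{\na,p}_{\PP_\lambda}$ only quantifies over $r\leq p$ and invokes the forcing relation $\forces_{\PP_\lambda}$, which is definable in $V[G_\nu]$. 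Hence $T^{\na,p}_{\PP_\lambda}$ can be computed inside $V[G_\nu]$ and is an element of $V[G_\nu]$, so $T^{\na,p}_{\PP_\lambda}\in\mathcal T_\nu$.

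For $T^{\na,p}_{\PP_\lambda}\notin\indtrees\nu$, I would argue by contradiction. Since we have just shown $T^{\na,p}_{\PP_\lambda}\in\mathcal T_\nu$, that is, it is a subtree of ${}^{<\kappa}\kappa$ lying in $V[G_\nu]$, it fails to be in $\indtrees\nu$ exactly when it is not $H$-independent. So suppose towards a contradiction that $T^{\na,p}_{\PP_\lambda}$ \emph{is} $H$-independent; then by definition $T^{\na,p}_{\PP_\lambda}\in\indtrees\nu$. On the one hand, Lemma~\ref{lemma 3a}~\ref{na eq 2} applied to this tree gives $\one_{\PP_\lambda}\forces^{V[G_\nu]}\na\notin[T^{\na,p}_{\PP_\lambda}]$, and in particular $p\forces^{V[G_\nu]}_{\PP_\lambda}\na\notin[T^{\na,p}_{\PP_\lambda}]$. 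On the other hand, Lemma~\ref{T^sigma facts}~\ref{sigma is a branch of T^sigma}, applied in $V[G_\nu]$ with the name $\na$ and the forcing $\PP_\lambda$, gives $p\forces^{V[G_\nu]}_{\PP_\lambda}\na\in[T^{\na,p}_{\PP_\lambda}]$. Since $p$ is a genuine (nonzero) condition, these cannot both be forced by $p$: any $\PP_\lambda$-generic filter $H'$ over $V[G_\nu]$ containing $p$ would then place $\na^{H'}$ both in and out of $[T^{\na,p}_{\PP_\lambda}]$. This contradiction shows $T^{\na,p}_{\PP_\lambda}$ is not $H$-independent, hence $T^{\na,p}_{\PP_\lambda}\notin\indtrees\nu$.

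I do not expect a serious obstacle here; the content is essentially careful accounting of which model each object and each forcing statement lives in. The one point requiring attention is the tacit use of absoluteness: the tree $T^{\na,p}_{\PP_\lambda}$ must be the same object whether computed in $V[G_\nu]$ or in $V[G]$, so that the property ``$H$-independent'', which refers to $H\in V[G]$ through the set $\indtrees\nu$ (shown to lie in $V[G_\nu]$ in Lemma~\ref{intermediate model which see H-independence for their trees}), is being tested on the correct tree. This is guaranteed by Lemma~\ref{T^sigma facts}~\ref{T^sigma abs} together with $({}^{<\kappa}\kappa)^{V[G_\nu]}=({}^{<\kappa}\kappa)^{V[G]}$, which holds because the tail forcing $\PP^\nu$ is ${<}\kappa$-closed and so adds no ${<}\kappa$-sequences.
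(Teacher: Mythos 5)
Your proof is correct and takes essentially the same route as the paper's: both parts rest on the same two ingredients, namely the absoluteness of $T^{\na,p}_{\PP_\lambda}$ (Lemma~\ref{T^sigma facts}~\ref{T^sigma abs}) to get membership in $\mathcal T_\nu$, and the combination of Lemma~\ref{T^sigma facts}~\ref{sigma is a branch of T^sigma} with Lemma~\ref{lemma 3a}~\ref{na eq 2} to rule out membership in $\indtrees\nu$. The only differences are cosmetic: you phrase the second step as a contradiction (via a generic filter containing $p$) where the paper concludes directly, and you spell out bookkeeping details (downward closure, where each object lives) that the paper leaves implicit.
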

\begin{proof} 
Let $p\in \PP_\lambda$, and let 
$T:=T^{\na,p}_{\PP_\lambda}$.
Since $\na\in V[G_\nu]$, we have
$T\in \mathcal T_\nu$ by Lemma~\ref{T^sigma facts}~\ref{T^sigma abs}. 
Moreover, $p\forces^{V[G_\nu]}_{\PP_\lambda}\na\in[T]$ by Lemma~\ref{T^sigma facts}~\ref{sigma is a branch of T^sigma}. 
Therefore
$T\notin \indtrees\nu$
since $\na$ was chosen so that Lemma~\ref{lemma 3a}~\ref{na eq 2} holds.
\end{proof}

We now prove an analogue of the two previous lemmas for $\Add(\kappa,1)$.
Recall that 
\[\Add(\kappa,1):=\{p:\alpha\to\kappa\,\mid\, \alpha<\kappa\}.%
\footnote{See Definition~\ref{definition of Add(kappa,1)}.}
\]

\begin{lemma}
\label{lemma 3c m}
For unboundedly many $\gamma<\lambda$, there is an $\Add(\kappa,1)$-name $\nna$ in $V[G_\gamma]$ 
for a new element of ${}^\kappa\kappa$
with the following properties: 
\begin{enumerate-(1)}
\item
\label{nna eq 1}
In $V[G_\gamma]$,
$\one_{\Add(\kappa,1)}$ forces that 
$\one_{\PP_\lambda}\forces\check{\nna}\in\Xphia.$
\item
\label{nna eq 3}
$T^{\,\nna,\,q}_{\Add(\kappa,1)}\in\mathcal T_\gamma\oldsetminus\indtrees\gamma$
for all $q\in\Add(\kappa,1)$.
\end{enumerate-(1)}
\end{lemma}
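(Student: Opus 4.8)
The plan is to transfer the $\PP_\lambda$-name $\na$ furnished by Lemmas~\ref{lemma 3a} and~\ref{lemma 3b} onto the forcing $\Add(\kappa,1)$. First I would note that the only properties of $\nu$ used in the proofs of those lemmas are $\indtrees\nu\in V[G_\nu]$ and $a\in V[G_\nu]$, and that these hold not just for the fixed $\nu$ but for every $\gamma\geq\nu$ in the stationary set $S$ produced by Lemma~\ref{intermediate model which see H-independence for their trees}. For such a $\gamma$, rerunning Lemmas~\ref{lemma 3a} and~\ref{lemma 3b} over $V[G_\gamma]$ gives a $\PP_\lambda$-name $\na\in V[G_\gamma]$ for a new element of $\Xphia$ that avoids every $[T]$ with $T\in\indtrees\gamma$ and satisfies $T^{\na,p}_{\PP_\lambda}\in\mathcal T_\gamma\setminus\indtrees\gamma$ for all $p$. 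Since $S$ is unbounded, producing one $\nna\in V[G_\gamma]$ for each such $\gamma$ yields the required unboundedly many $\gamma$.

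Working in $V[G_\gamma]$, I would pass to $\BB^{\BB(\PP_\lambda)}(\na)$, the complete subalgebra generated by $\na$ (Definition~\ref{def: generated Boolean subalgebra}); by Lemma~\ref{fact: generated Boolean subalgebra} forcing with it adds exactly $\na$, so there is a $\BB^{\BB(\PP_\lambda)}(\na)$-name $\rho$ with $\rho^{K}=\na^{G'}$ whenever $K=\BB^{\BB(\PP_\lambda)}(\na)\cap G'$ for a $\PP_\lambda$-generic $G'$. This algebra is non-atomic, since an atom would decide $\na$ completely and force $\na\in V[G_\gamma]$, contradicting newness; and it is ${<}\kappa$-closed, because the retraction $\pi$ of the inclusion $\BB^{\BB(\PP_\lambda)}(\na)\hookrightarrow\BB(\PP_\lambda)$ (Definition~\ref{def: retraction}, Lemma~\ref{retraction facts}) sends a lower bound in $\BB(\PP_\lambda)$ of a ${<}\kappa$-sequence from $\BB^{\BB(\PP_\lambda)}(\na)$ back to a lower bound inside it. Once $\BB^{\BB(\PP_\lambda)}(\na)$ is identified with $\Add(\kappa,1)$ via Lemma~\ref{forcing equivalent to Add(kappa,1)}, I would fix the witnessing dense embedding $\iota$ and set $\nna:=\rho^{\iota}$ (Definition~\ref{def: pulling back names}), so that $\nna^{h}$ equals the value $\na^{G'}$ attached to the induced generic.

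Property~\ref{nna eq 3} then follows cleanly. As $\nna\in V[G_\gamma]$, Lemma~\ref{T^sigma facts}~\ref{T^sigma abs} gives $T^{\nna,q}_{\Add(\kappa,1)}\in\mathcal T_\gamma$, and $q\forces\nna\in[T^{\nna,q}]$ by Lemma~\ref{T^sigma facts}~\ref{sigma is a branch of T^sigma}. If $T^{\nna,q}$ were in $\indtrees\gamma$, then since $\nna^{h}$ is a value $\na^{G'}$ of $\na$ and $\na$ avoids all $[T]$ with $T\in\indtrees\gamma$ by Lemma~\ref{lemma 3a}~\ref{na eq 2}, we would obtain $\nna^{h}\notin[T^{\nna,q}]$, contradicting $\nna^{h}\in[T^{\nna,q}]$. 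Applying the same observation to $T(\{x\})$ for $x\in V[G_\gamma]$ (which lies in $\indtrees\gamma$ by Lemma~\ref{lemma: independent trees}~\ref{ind set ind tree}) shows in addition that $\nna$ names a new element.

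The hard part will be Property~\ref{nna eq 1} together with the identification $\BB^{\BB(\PP_\lambda)}(\na)\simeq\Add(\kappa,1)$. The latter requires that $\BB^{\BB(\PP_\lambda)}(\na)$ have a dense subset of size $\kappa$, i.e.\ that the chosen witness $\na$ not collapse cardinals; this is where the hypothesis that $H\restr X$ admits no $\kappa$-coloring must be exploited to secure a sufficiently generic witness, since for an arbitrary $X$-witness the algebra is only equivalent to some $\Col(\kappa,\mu)$. Granting this, I would derive Property~\ref{nna eq 1} by the homogeneity argument of Lemma~\ref{lemma 3a}: the iterated statement $\one_{\Add(\kappa,1)}\forces(\one_{\PP_\lambda}\forces\check\nna\in\Xphia)$ is equivalent, by the product lemma, to $\one_{\Add(\kappa,1)\times\PP_\lambda}\forces\nna\in\Xphia$, and $\Add(\kappa,1)\times\PP_\lambda\simeq\PP_\lambda$ by Corollary~\ref{forcing equivalent to the Levy collapse}, reducing it to the assertion that a name for a value of $\na$ is forced into $\Xphia$; this holds because membership of a fixed element in $\Xphia$ is decided by $\one$ through the homogeneity of $\PP_\lambda$ and the equivalence $\PP^\gamma\simeq\PP_\lambda$, exactly as in Lemma~\ref{lemma 3a}~\ref{na eq 1a}.
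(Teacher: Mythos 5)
Your high-level plan (transfer the name of Lemma~\ref{lemma 3a} to an $\Add(\kappa,1)$-name and read off the two properties) matches the paper, and your handling of unboundedness and of property~\ref{nna eq 3} would be fine \emph{given} the transfer. The gap is in the transfer itself, and it is not a technicality: it is exactly the obstruction that makes the uncountable case hard. You pass to $\BB^{\BB(\PP_\lambda)}(\na)$ and claim it is ${<}\kappa$-closed via the retraction. This argument is vacuous: since $\BB^{\BB(\PP_\lambda)}(\na)$ is a \emph{complete} subalgebra, the infimum (computed in $\BB(\PP_\lambda)$) of any decreasing sequence from it already lies in it; the question is whether that infimum is nonzero, and the retraction cannot help because your argument presupposes a nonzero lower bound in $\BB(\PP_\lambda)$ — which need not exist, as Boolean completions of ${<}\kappa$-closed forcings are not ${<}\kappa$-closed. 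For instance, in $\BB(\PP_\lambda)$ let $g$ name one coordinate of the generic collapse and put $b_n:=\boolval{g(0)>g(1)>\dots>g(n-1)}$; then $\langle b_n : n<\omega\rangle$ is decreasing with all $b_n\neq\zero$ but $\bigwedge_{n<\omega}b_n=\zero$, since there is no infinite descending sequence of ordinals. What you actually need is that $\BB^{\BB(\PP_\lambda)}(\na)$ has a ${<}\kappa$-closed \emph{dense subset} of size $\kappa$, i.e.\ that $V[G_\gamma][\na^{G'}]$ is an $\Add(\kappa,1)$-generic extension of $V[G_\gamma]$; this is precisely Solovay's lemma, which fails for uncountable $\kappa$ (Remark~\ref{failure of Solovay's lemma}), and Lemma~\ref{lemma 3a} — being a pure maximal-principle argument — gives no control whatsoever over the subalgebra its witness generates, so the "no $\kappa$-coloring" hypothesis cannot be "exploited" to repair this. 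The same phenomenon breaks your argument for property~\ref{nna eq 1} even if the identification were granted: knowing $\na^{K_0}\in\Xphia$ in \emph{one} outer model $V[G_\gamma][G']$ does not yield $\one_{\PP_\lambda}\forces^{V[G_\gamma][\na^{K_0}]}\check{x}\in\Xphia$, because $V[G_\gamma][G']$ need not be a $\PP_\lambda$-generic extension of $V[G_\gamma][\na^{K_0}]$ — the quotient $\BB(\PP_\lambda)/\BB^{\BB(\PP_\lambda)}(\na)$ is uncontrolled, and quotients of ${<}\kappa$-closed forcings by even well-behaved complete subalgebras can fail to be ${<}\kappa$-closed (again Remark~\ref{failure of Solovay's lemma}).

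The paper's proof sidesteps the generated subalgebra entirely, and this is the idea you are missing. By the $\lambda$-c.c., $\na$ is a $\PP_\gamma$-name for all sufficiently large $\gamma<\lambda$; choose such a $\gamma=\xi+1$ with $\xi^{<\kappa}=\xi$. Then the \emph{full} segment $\PP_\gamma$, viewed as a forcing over $V[G_\gamma]$, is manifestly non-atomic, ${<}\kappa$-closed and of size $\kappa$, so Lemma~\ref{forcing equivalent to Add(kappa,1)} yields a dense embedding $\iota:\Addd(\kappa,1)\to\PP_\gamma$, and one sets $\nna:=\na^{\iota}$. Now an $\Add(\kappa,1)$-generic $J$ over $V[G_\gamma]$ recovers a full $\PP_\gamma$-generic $K$ with $V[G_\gamma][J]=V[G_\gamma][K]$ and $\nna^J=\na^K$, and this model is completely controlled: $K$ and $G_{[\nu,\gamma)}$ are mutually generic over $V[G_\nu]$, and the absorption identities $\PP_\lambda\simeq\PP_\gamma\times\PP^\gamma$ and $\PP^\gamma\simeq\PP_{[\nu,\gamma)}\times\PP_\lambda$ (Corollary~\ref{forcing equivalent to the Levy collapse}) convert Lemma~\ref{lemma 3a}~\ref{na eq 1a} into property~\ref{nna eq 1} and into the newness of $\nna$. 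Property~\ref{nna eq 3} then follows by a direct computation with the dense embedding: for $q^*\in\Addd(\kappa,1)$ with $q^*\leq q$ one shows $T^{\na,\iota(q^*)}_{\PP_\lambda}\subseteq T^{\nna,q}_{\Add(\kappa,1)}$, so non-independence transfers from Lemma~\ref{lemma 3b}. In short: transfer the name to a forcing whose generic extensions you understand (a full collapse segment), not to the minimal subalgebra, whose generic extensions are exactly the objects that misbehave at uncountable $\kappa$.
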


\begin{proof} 
Fix a $\PP_\lambda$-name $\na$ as in Lemma~\ref{lemma 3a}.
We first sketch the idea. 
First choose some $\gamma$ so that 
$\na\in V[G_\gamma]$ is a $\PP_\gamma$-name
and the forcings $\PP_\gamma$ and
$\Add(\kappa,1)$ are equivalent in $V[G_\gamma]$.
Then transform  
$\na$
to an $\Add(\kappa,1)$-name $\nna\in V[G_\gamma]$,
and finally show that 
Lemma~\ref{lemma 3a}~\ref{na eq 1a}
and Lemma~\ref{lemma 3b} yield the required properties of $\nna$.

Here is the detailed proof. 
First, note that for all sufficiently large $\gamma\in[\nu,\lambda)$, we have that
$\na\in V[G_\gamma]$ is a $\PP_\gamma$-name.
This holds
because $\PP_\lambda$ has the $\lambda$-c.c.~and $\na\in V[G_\nu]$ is a $\PP_\lambda$-name for an element of ${}^\kappa\kappa$.
Let $\gamma$ be sufficiently large with $\gamma=\xi+1$ for some ordinal 
$\xi<\lambda$ with $\xi^{<\kappa}=\xi$. 
Then $\PP_\gamma$ 
has size $\kappa$ in $V[G_\gamma]$.
Since $\PP_\gamma$ is also non-atomic and $\lle\kappa$-closed, 
it is equivalent in $V[G_\gamma]$ to $\Add(\kappa,1)$, 
and in fact there exists a dense embedding $\iota:\Addd(\kappa,1)\to\PP_\gamma$ in $V[G_\gamma]$ by Lemma~\ref{forcing equivalent to Add(kappa,1)}. 
Let 
$\nna:=\na^{\iota}$.% 
\footnote{See Definition \ref{def: pulling back names}.} 
This is an $\Addd(\kappa,1)$-name and thus an $\Add(\kappa,1)$-name. 

We first show that in $V[G_\gamma]$, 
$\nna$ is a name for a new element of ${}^\kappa\kappa$ and
\ref{nna eq 1} holds.
Let $J$ be $\Add(\kappa,1)$-generic over $V[G_\gamma]$, 
and let $x:=\nna^J$. It suffices to prove the following claim.
\begin{claim*} 
$x\notin V[G_\gamma]$ and
$\one_{\PP_\lambda}\forces^{V[G_\gamma][J]}\check x\in\Xphia.$
\end{claim*} 
\begin{proof} 
Let $K$ be the upwards closure of $\iota[J\cap \Addd(\kappa,1)]$ in $\PP_\gamma$.
Then $K$ is a $\PP_\gamma$-generic filter over $V[G_\gamma]$ such that 
$V[G_\gamma][J]=V[G_\gamma][K]$
and
$\na^K=x$.  
We then have $x\notin V[G_\nu]$ 
by 
Lemma~\ref{lemma 3a}~\ref{na eq 1a}.
Since $K$ and $G_{[\nu,\gamma)}$ are mutually generic over $V[G_\nu]$,
we have $V[K]\cap V[G_{\gamma}]=V[G_\nu]$.
Therefore $x\notin V[G_\gamma]$.
We also have 
$\one_{\PP_\lambda} \forces^{V[G_\nu]}{\na}\in\Xphia$
by Lemma~\ref{lemma 3a}~\ref{na eq 1a}.
Since $\PP_\lambda\simeq \PP_\gamma\times\PP^\gamma$, 
$$\one_{\PP^\gamma} \forces^{V[G_\nu][K]} \check{x}\in\Xphia.$$ 
Since
$\PP^\gamma\simeq\PP_{[\nu,\gamma)}\times \PP_\lambda$% 
\footnote{See Corollary~\ref{forcing equivalent to the Levy collapse}.} 
and $V[G_\gamma][K]$ is a $\PP_{[\nu,\gamma)}$-generic extension of $V[G_\nu][ K]$,
$$\one_{\PP_\lambda} \forces^{V[G_\gamma][K]} \check{x}\in\Xphia.$$ 
The claim holds since 
$V[G_\gamma][J]=V[G_\gamma][K]$. 
\end{proof} 

To see~\ref{nna eq 3}, let $q\in\Add(\kappa,1)$ and 
$T:=T^{\,\nna,\,q}_{\Add(\kappa,1)}$.
Since $\nna\in V[G_\gamma]$, we have
$T\in\mathcal T_\gamma$  
 by Lemma~\ref{T^sigma facts}~\ref{T^sigma abs}.

\begin{claim*} 
$T$ is not $H$-independent.\footnote{I.e., $T\notin\indtrees\gamma$.} 
\end{claim*} 
\begin{proof} 
Choose some $q^*\in \Addd(\kappa,1)$ with $q^*\leq q$,
and let
$T^*=T^{\,\na,\iota(q^*)}_{\PP_\lambda}$.
By Lemma~\ref{lemma 3b},
$T^*\in V[G_\nu]\subseteq V[G_\gamma]$ and 
$T^*$ is not $H$-independent.
It thus suffices to show that $T^*\subseteq T$.
To prove this, work in $V[G_\gamma]$ and 
let $v\in T^*$.
Choose $p\in\PP_\lambda$ with $p\leq \iota(q^*)$ and
$p\forces_{\PP_\lambda} v\subseteq\na$.
\todog{Here and in (ii) below, it's better to keep track of the forcing in $\forces$}
Let $p\restr\gamma:=p\restr((\gamma\times\kappa)\cap\dom(p))$. 

\todog{$p\restr\gamma$ is the retraction of $p$ to $\PP_\gamma$,  
i.e., $p\restr\gamma=\pi_{\id}(p)$ for the identity map $\id:\PP_\gamma\to\PP_\lambda$ and
the map 
$\pi_{\id}$ from Definition~\ref{def: retraction}.}
Then $p\restr\gamma\in\PP_\gamma$ and
$p\restr\gamma\leq\iota(q^*)$
since $\iota(q^*)\in\PP_\gamma$.
As the formula ``$x\subseteq y$'' is absolute,
$p\restr\gamma\forces_{\PP_\gamma} v\subseteq\na$.
By the density of $\ran(\iota)$ in $\PP_\gamma$,
there exists $r\in\Addd(\kappa,1)$ 
such that 
$\iota(r)\leq p\restr\gamma$. Then
\begin{enumerate-(i)}
\item\label{3c proof 3}
$\iota(r)\leq\iota(q^*)$ and
\item\label{3c proof 4}
$\iota(r)\forces_{\PP_\gamma} v\subseteq\na$.
\end{enumerate-(i)}
Since $\iota$ is a dense embedding, 
\ref{3c proof 3} implies $r\leq q^*$ as $\Addd(\kappa,1)$ is separative. 
Because $\nna=\na^{\iota}$,
\ref{3c proof 4} implies $r\forces v\subseteq \nna$.
Thus, $r$ witnesses that
$v\in T$
%$v\in T^{\,\nna,\,q}_{\Add(\kappa,1)}=T$ 
as required. 
\end{proof} 
This completes the proof of Lemma~\ref{lemma 3c m}.
\end{proof}

We now define the notion of an $H$-witness function for $\nna$ and $\QQ$.
The existence of such a function
 is an explicit 
and equivalent way of stating that
the trees $T_\QQ^{\nna,q}$ are not $H$-independent for all $q\in\QQ$.

\begin{definition} 
\label{definition H-witness function} 
Suppose $\QQ$ is a forcing and $\nna$ is a $\QQ$-name for an
element of ${}^\kappa\kappa$.
\index{function!witness function@$H$-witness function\idf} 
An \emph{$H$-witness function} for $\nna$ and $\QQ$
is a function
$w:\QQ\times\ddim\to{}^{<\kappa}\kappa$
such that for all $p\in\QQ$ and $\alpha<\ddim$
\begin{enumerate-(1)}
\item
$w(p,\alpha)\in T_\QQ^{\nna,p}$ and
\item
$\prod_{\beta<\ddim}N_{w(p,\beta)}\subseteq H$.
\end{enumerate-(1)}
\end{definition}

Such functions exist for the names
$\nna$ from
Lemma~\ref{lemma 3c m} and $\QQ=\Add(\kappa,1)$.%
\footnote{Note that $H$-witness functions need not exist.
For instance, if $\nna=\check x$ for some $x$ in the ground model, then 
$T_\QQ^{\nna,q}=T(\{x\})$ 
for any $q\in\QQ$. 
Therefore $T_\QQ^{\nna,q}$ is
$H$-independent and
there is no $H$-witness function for~$\check x$.}

\begin{definition}
\label{def: step function}
\index{function!step function\idf} 
A \emph{step function} for a given forcing $\QQ$ is a function $s:\QQ\times\ddim\to \QQ$
such that
$s(q,\alpha)\leq q$
for all $q\in\QQ$ and $\alpha<\ddim$.
\end{definition}

We now show that if there is an $H$-witness function, then there is an $H$-witness function with some nice properties and a corresponding step function.

\begin{lemma}
\label{lemma 3b reformulation}
Suppose $\QQ$ is a forcing and $\nna$ is a $\QQ$-name for an element of ${}^\kappa\kappa$.
\begin{enumerate-(1)}
\item\label{nice witness}
If there exists an $H$-witness function for $\nna$ and $\QQ$, then there exists an $H$-witness 
function $w$ such that
\[\nna_{[p]}\subsetneq w(p,\alpha)\,
\footnote{Recall the notation $\nna_{[p]}$ from Definition~\ref{sigma[q] def}. Note that this requirement is equivalent to: if $p\forces v\subseteq\nna$, then $v\subsetneq w(p,\alpha)$.} 
\]
for all $p\in\QQ$ and $\alpha<\ddim$.
\item\label{witness step}
If $w$ is an $H$-witness function for $\nna$ and $\QQ$, then 
there exists a step function $s$ for $\QQ$ such that
$$s(p,\alpha)\forces w(p,\alpha)\subseteq\nna$$
for all $p\in\QQ$ and $\alpha<\ddim$.
\end{enumerate-(1)}
\end{lemma}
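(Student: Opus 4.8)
The plan is to read everything off the characterisation that, for $t\in{}^{<\kappa}\kappa$, one has $t\in T^{\nna,p}$ if and only if $r\forces t\subseteq\nna$ for some $r\leq p$; this is immediate from Definitions~\ref{sigma[q] def} and~\ref{T^sigma def}. Together with the fact that $H$, being a $\ddim$-dihypergraph on ${}^\kappa\kappa$, is disjoint from the diagonal $\dhC\ddim{{}^\kappa\kappa}$ of constant sequences, this suffices.

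Part~\ref{witness step} requires almost no work. Given an $H$-witness function $w$, fix $p\in\QQ$ and $\alpha<\ddim$. Since $w(p,\alpha)\in T^{\nna,p}$, the characterisation above provides some $r\leq p$ with $r\forces w(p,\alpha)\subseteq\nna$. Choosing such an $r$ for each pair $(p,\alpha)$ by the axiom of choice and setting $s(p,\alpha):=r$ produces a step function with $s(p,\alpha)\forces w(p,\alpha)\subseteq\nna$, as required.

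For part~\ref{nice witness} I would work coordinatewise at a fixed $p\in\QQ$, abbreviating $t_p:=\nna_{[p]}$, and start from the given $H$-witness function $w_0$. The single observation driving the argument is that the witness nodes $\langle w_0(p,\beta):\beta<\ddim\rangle$ are \emph{not} pairwise comparable: if they were, the nested family $\{N_{w_0(p,\beta)}:\beta<\ddim\}$ would have nonempty intersection, and for any $x$ in it the constant sequence $\langle x\rangle^\ddim$ would lie in $\prod_{\beta<\ddim}N_{w_0(p,\beta)}\subseteq H$, contradicting $H\cap\dhC\ddim{{}^\kappa\kappa}=\emptyset$. On the other hand, since $p\forces t_p\subseteq\nna$ by the definition of $\nna_{[p]}$, every $w_0(p,\beta)$ is comparable with $t_p$: any $r\leq p$ witnessing $w_0(p,\beta)\in T^{\nna,p}$ forces both $w_0(p,\beta)$ and $t_p$ to be initial segments of $\nna$, and comparability of these fixed nodes is then a forced---hence true---statement. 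Combining the two facts, the nodes $w_0(p,\beta)$ cannot all be initial segments of $t_p$, so $t_p\in{}^{<\kappa}\kappa$ (equivalently $p\not\forces\nna\in\check V$, cf.\ Lemma~\ref{T^sigma facts}~\ref{T^sigma nodes comp}) and there is some $\beta_p<\ddim$ with $w_0(p,\beta_p)\supsetneq t_p$.

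I would then simply lengthen the ``short'' coordinates, defining
\[
w(p,\beta):=
\begin{cases}
w_0(p,\beta) & \text{if } w_0(p,\beta)\supsetneq t_p,\\
w_0(p,\beta_p) & \text{otherwise.}
\end{cases}
\]
In the second case $w_0(p,\beta)\subseteq t_p\subsetneq w_0(p,\beta_p)$ by comparability, so in both cases $w(p,\beta)\supseteq w_0(p,\beta)$, whence $w(p,\beta)\in T^{\nna,p}$ and $N_{w(p,\beta)}\subseteq N_{w_0(p,\beta)}$. The latter inclusions give $\prod_{\beta<\ddim}N_{w(p,\beta)}\subseteq\prod_{\beta<\ddim}N_{w_0(p,\beta)}\subseteq H$, so $w$ is again an $H$-witness function, and $\nna_{[p]}=t_p\subsetneq w(p,\beta)$ holds by construction. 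I expect the only genuinely delicate point to be the non-comparability observation, which is precisely where the dihypergraph hypothesis (no constant hyperedges) enters; note in particular that box-openness of $H$ is not needed here, since the product condition is preserved merely by passing to longer nodes.
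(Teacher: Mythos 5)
Your proof is correct. Part~\ref{witness step} coincides with the paper's argument: both simply choose, for each pair $(p,\alpha)$, a condition below $p$ forcing $w(p,\alpha)\subseteq\nna$, which exists by the definition of $T^{\nna,p}_{\QQ}$.

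For part~\ref{nice witness} you take a genuinely different route. The paper \emph{extends} each given witness node: since $\nna_{[p]}$ is the stem of $T^{\nna,p}$ (Lemma~\ref{T^sigma facts}~\ref{T^sigma nodes comp}), the node $\nna_{[p]}\cup u(p,\alpha)$ lies in $T^{\nna,p}$, and since this tree has no terminal nodes one may pick $w(p,\alpha)\in T^{\nna,p}$ properly extending $\nna_{[p]}\cup u(p,\alpha)$; the product condition survives because passing to longer nodes shrinks the boxes, exactly as in your last paragraph. You instead never extend any node: you show the witness nodes cannot be pairwise comparable (else $\prod_{\beta<\ddim}N_{w_0(p,\beta)}\subseteq H$ would contain a constant sequence, contradicting that $H$ is a dihypergraph), while each of them is comparable with $\nna_{[p]}$, so some $w_0(p,\beta_p)$ already properly extends $\nna_{[p]}$, and you substitute it for the short coordinates. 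Both arguments rest on the same two pillars — comparability of witness nodes with $\nna_{[p]}$, and monotonicity of the product condition under lengthening nodes — but your substitution trick buys two small things: it avoids the ``$T^{\nna,p}$ has no terminal nodes'' step entirely, and it makes explicit why $\nna_{[p]}\in{}^{<\kappa}\kappa$, i.e.\ why the hypothesis $p\not\forces\nna\in\check V$ of Lemma~\ref{T^sigma facts}~\ref{T^sigma nodes comp} is available. The paper leaves that point implicit; it is automatic in the intended application, where $\nna$ names a \emph{new} element of ${}^\kappa\kappa$, but under the bare hypotheses of the lemma it needs precisely your observation that an $H$-witness function prevents any $p$ from deciding all of $\nna$. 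What the paper's route buys in exchange is economy: once the stem lemma and the non-termination of $T^{\nna,p}$ are on record, its construction is immediate and needs no case distinction.
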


\begin{proof}
To see~\ref{nice witness}, suppose $u:\QQ\times\ddim\to{}^{<\kappa}\kappa$
is an $H$-witness function for $\nna$ and $\QQ$.
For any $p\in \QQ$, we have $\nna_{[p]}\cup u(p,\alpha)\in T_\QQ^{\nna,p}$ for all $\alpha<\ddim$, since $\nna_{[p]}$ equals the stem of $T_\QQ^{\nna,p}$ 
by Lemma~\ref{T^sigma facts}~\ref{T^sigma nodes comp}.
It is easy to see that $T_\QQ^{\nna,p}$ has no terminal nodes. 
Hence there is a sequence
$\langle w_p^\alpha:\alpha<\ddim\rangle$ in $ T_\QQ^{\nna,p}$ 
such that $\nna_{[p]}\cup u(p,\alpha)\subsetneq w_p^\alpha$ for all $\alpha<\ddim$.
Define
$w:\QQ\times\ddim\to{}^{<\kappa}\kappa$
by letting
$w(p,\alpha):=w_p^\alpha$ for all $p\in\QQ$ and $\alpha<\ddim$. It is clear that $w$ is as required.

To see~\ref{witness step}, let $w:\QQ\times\ddim\to{}^{<\kappa}\kappa$ be an $H$-witness function for $\nna$ and $\QQ$.
By the definition of $T_\QQ^{\nna,q}$, 
there exists $s_q^\alpha$
such that $s_q^\alpha\leq p$ and  
$s_q^\alpha\forces w(p,\alpha)\subseteq\nna$
for all $q\in\QQ$ and $\alpha<\ddim$.
Define $s:\QQ\times\ddim\to\QQ$ by letting $s(q,\alpha):=s^\alpha_q$ 
for all $q\in\QQ$ and $\alpha<\ddim$.
Then $s$ is as required.
\end{proof}

%%%%%%%%%%%%%%%%

%\subsubsection{Constructing continuous homomorphisms}
\subsubsection{Construction of continuous homomorphisms}
\label{subsection: the main step}
In our proof of Theorem~\ref{main theorem}, 
we will define a continuous homomorphism $f$ from $\dhH\ddim$ to $H\restr X$. 
The values of the homomorphism come from the evaluations of the $\Add(\kappa,1)$-name $\nna$ from the previous subsection 
by a $\kappa$-perfect set of generic filters. 
The difficulty is to ensure that $\ran(f)\subseteq X$. 
The next lemma, the \emph{Nice Quotient Lemma for $\Col(\kappa,\lle\lambda)$}, does this by controlling quotient forcings:  
It will guarantee the existence 
in $V[G]$ 
of a continuous function 
whose range consists of elements with well-behaved quotient forcings.
This function will induce the required homomorphism $f$ via~$\nna$.

We will construct 
this function 
along an appropriately chosen step function.
We will need the next definition. 

\begin{definition}\ 
\label{def: s-function}
Suppose $s$ is a step function for a given forcing $\QQ$.
\index{function!s-function@$s$-function\idf} 
A strict order reversing map $h:{}^{<\kappa}\ddim\to\QQ$ is called an \emph{$s$-function} 
if for all $t\in{}^{<\kappa}\ddim$ and all $\alpha<\ddim$: 
\begin{equation}
\label{eq: s-function}
h\big(t\conc\langle\alpha\rangle\big)<
s\big(h(t), \alpha\big). 
\end{equation}
\end{definition}

Note that it is necessary to require that $h$ is strict order reversing in the previous definition
since this does not follow from~\eqref{eq: s-function}.
\todog{\eqref{eq: s-function} does not guarantee that $h$ is order reversing at limits}
However, \eqref{eq: s-function} does imply that 
$h(u)<h(t)$ when $u$ is a direct successor of $t$. 
%by Definition~\ref{def: step function} of step functions.
Moreover, 
if $\QQ=\Add(\kappa,1)$
and $h$ is an $s$-function, then 
$[h]:{}^\kappa\ddim\to{}^\kappa\kappa$ is continuous and
$$[h](x)=\bigcup_{\alpha<\kappa}s\big( h(x\restr\alpha), x(\alpha)\big)$$ 
holds for all $x\in{}^\kappa\ddim$.

\begin{lemma}{\bf (Nice Quotient Lemma for $\Col(\kappa,\lle\lambda)$)}
\label{lemma: quotient lemma}
\index{Nice Quotient Lemma for!Col@$\Col(\kappa,\lle\lambda)$\idf}
The following holds in $V[G]$. 
Suppose that $\gamma<\lambda$ and
$s$ is a step-function for $\Add(\kappa,1)$.
Then there exists an $s$-function $h$
such that for all $x\in \ran([h])$: 
\begin{enumerate-(1)}
\item\label{nsf 1}
$x$ is $\Add(\kappa,1)$-generic over $V[G_\gamma]$.%
\footnote{Here, we identify $x$ with
$\kkppred{x}=\{t\in\Add(\kappa,1): t\subsetneq x\}$.
}
\item\label{nsf 2}
$V[G]$ is a 
$\PP_\lambda$-generic
extension of $V[G_\gamma][x]$.
\end{enumerate-(1)}
\end{lemma}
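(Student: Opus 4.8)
The plan is to construct the $s$-function $h$ inside $V[G]$ and to secure the two conclusions by different means according to whether $\kappa=\omega$ or $\kappa$ is uncountable, since Solovay's factoring lemma is available only in the former. Throughout I would work over the intermediate model $V[G_\gamma]$, using that the tail $G^\gamma$ is $\PP^\gamma$-generic over $V[G_\gamma]$ and that $\PP^\gamma\simeq\PP_\lambda$ because $\lambda$ remains inaccessible in $V[G_\gamma]$ (the remark preceding Lemma~\ref{restricting sets to intermediate models}, via Corollary~\ref{forcing equivalent to the Levy collapse}). The conceptual engine for conclusion~\ref{nsf 2} is an absorption principle: whenever $x\in V[G]$ is $\Add(\kappa,1)$-generic over $V[G_\gamma]$ and the remainder forcing from $V[G_\gamma][x]$ up to $V[G]$ splits as a ${<}\kappa$-closed forcing of size ${<}\lambda$ followed by a copy of $\PP_\lambda$, then Corollary~\ref{forcing equivalent to the Levy collapse} collapses the two steps into a single copy of $\PP_\lambda$, giving~\ref{nsf 2}.

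For $\kappa=\omega$, conclusion~\ref{nsf 2} is automatic for \emph{every} real $x\in V[G]$: such an $x$ lies in some $V[G_\alpha]$ with $\gamma\le\alpha<\lambda$, and $V[G_\gamma][x]$ is an intermediate model of the generic extension $V[G_\alpha]$, so $V[G_\alpha]=V[G_\gamma][x][H]$ for $H$ generic over $V[G_\gamma][x]$ for a forcing of size ${<}\lambda$; absorbing this into the tail $\PP^\alpha\simeq\PP_\lambda$ via Corollary~\ref{forcing equivalent to the Levy collapse} shows $V[G]$ is a $\PP_\lambda$-extension of $V[G_\gamma][x]$ (Solovay's lemma, cf.~\cite{Solovay1970}). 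Thus it suffices to produce an $s$-function whose branch values are Cohen-generic over $V[G_\gamma]$. Since $\PP^\gamma$ adds a perfect tree $P$ of Cohen-generics over $V[G_\gamma]$, I would fix such a $P\in V[G]$ and build $h$ by recursion on $\lh(t)$ so that each $[h](y)=\bigcup_{\alpha<\omega}s(h(y\restr\alpha),y(\alpha))$ is a branch of $P$: at $t\conc\langle\alpha\rangle$ I extend $s(h(t),\alpha)$ to a longer node $h(t\conc\langle\alpha\rangle)$ lying on $P$ and splitting away from the extensions chosen for the other children. Every branch value is then a branch of $P$, hence generic, yielding~\ref{nsf 1}.

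For uncountable $\kappa$, where Solovay's lemma is unavailable, I would instead build the generics to order. Following the strategy signalled in the introduction, I would design a ${<}\kappa$-closed forcing $\mathbb{T}$ of size ${<}\lambda$, definable over $V[G_\gamma]$, whose generic object is precisely a perfect tree of $\Add(\kappa,1)$-generics over $V[G_\gamma]$ laid out along the given step function $s$: a system $\langle h(t):t\in{}^{<\kappa}\ddim\rangle$ meeting the $s$-function constraints whose branch values are mutually Cohen-generic over $V[G_\gamma]$, each carrying a complete embedding of $\Add(\kappa,1)$ into $\mathbb{T}$ with a ${<}\kappa$-closed quotient of size ${<}\lambda$ (arranged, e.g., so that the quotient is a product of copies of $\Add(\kappa,1)$). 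Since $\mathbb{T}$ is ${<}\kappa$-closed of size ${<}\lambda$, Corollary~\ref{forcing equivalent to the Levy collapse} gives $\mathbb{T}\times\PP_\lambda\simeq\PP^\gamma$, so a $\mathbb{T}$-generic lives inside $V[G]$ and leaves $V[G]$ a $\PP_\lambda$-extension of the corresponding $\mathbb{T}$-extension of $V[G_\gamma]$. Reading $h$ off this generic gives~\ref{nsf 1} directly, while~\ref{nsf 2} follows by composing the ${<}\kappa$-closed quotient attached to each branch value with the tail $\PP_\lambda$ and absorbing, exactly as in the first paragraph.

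The hard part will be the limit levels $\delta<\kappa$ of the recursion defining the conditions of $\mathbb{T}$. Because Solovay's lemma fails, I cannot read genericity or the quotient property off an arbitrary limit; I must guarantee that the partial object $\bigcup_{\beta<\delta}s(h(y\restr\beta),y(\beta))$ is a genuine initial segment of a generic column, that it lies below the prescribed value $s(h(y\restr\delta),y(\delta))$ so the recursion can continue, and that the quotient of $\PP_\lambda$ by the partial generic built so far remains equivalent to $\PP_\lambda$ — this is exactly the ``nice quotient'' invariant the lemma is named for. Maintaining it across limits requires careful bookkeeping of which intermediate models $V[G_\nu]$ the conditions and the relevant dense sets belong to, together with the ${<}\kappa$-closure of $\Add(\kappa,1)$ and the inaccessibility of $\lambda$ to keep the quotients equivalent to $\PP_\lambda$. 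I expect this limit coherence, rather than the successor steps or the final verification, to carry the real weight; it is handled in Subsections~\ref{section: proof in the countable case} and~\ref{section: proof in the uncountable case}.
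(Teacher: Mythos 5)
Your countable case has a genuine gap. You fix a perfect tree $P\in V[G]$ of Cohen generics over $V[G_\gamma]$ \emph{in advance} and then try to run the recursion inside $P$, choosing $h(t\conc\langle\alpha\rangle)\supsetneq s(h(t),\alpha)$ on $P$. But $P$ is a proper downward-closed subtree of ${}^{<\omega}\omega$, while $s$ is an arbitrary step function given with the lemma: nothing prevents $s(h(t),\alpha)$ from leaving $P$, and once $s(h(t),\alpha)\notin P$ no extension of it lies in $P$ either, so the recursion cannot continue. A tree that accommodates every value of $s$ and consists of generics is exactly what the lemma asks you to build, so it cannot be quoted as given. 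The repair is to drop the detour through $P$ altogether: since $({2^\omega})^{V[G_\gamma]}$ is countable in $V[G]$, enumerate all dense subsets $D_n\in V[G_\gamma]$ of $\Add(\omega,1)$ and, at each successor step, choose $h(t\conc\langle\alpha\rangle)<s(h(t),\alpha)$ inside $D_{\lh(t)+1}$; this works for \emph{any} step function precisely because dense sets reach below every condition. This is the paper's proof (Lemma~\ref{lemma: quotient lemma - countable case}); your use of Solovay's lemma for conclusion~\ref{nsf 2} is correct and identical to the paper's.

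Your uncountable case is, in outline, the paper's own strategy (a forcing whose generic is a tree of Cohen generics laid out along $s$, with controlled quotients), but it is not a proof, for two reasons. First, $\mathbb{T}$ cannot in general be ``definable over $V[G_\gamma]$'': the step function $s$ lives in $V[G]$ and need not belong to $V[G_\gamma]$, so the forcing of approximations to an $s$-function can only be defined in some $V[G_\alpha]$ with $s\in V[G_\alpha]$, $\gamma\leq\alpha<\lambda$. This is exactly why the paper passes to the model $M=V[G_\alpha\times J]$ and at the end re-factors $M[x]=V[G_\gamma][x][G_{[\gamma,\alpha)}\times J]$; your absorption principle tolerates the extra ${<}\kappa$-closed factor $\PP_{[\gamma,\alpha)}$, so this slip is fixable. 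Second, and decisively, the existence of $\mathbb{T}$ with the properties you list is the entire content of the lemma: that the forcing of strict partial $s$-functions is ${<}\kappa$-closed, non-atomic of size $\kappa$ (hence equivalent to $\Add(\kappa,1)$), that each branch-value name $\sigma$ generates a complete subalgebra $\BB(\sigma)$, and above all that the quotient $\BB/\BB(\sigma)$ is again equivalent to $\Add(\kappa,1)$ --- what you dismiss with ``arranged, e.g., so that the quotient is a product of copies of $\Add(\kappa,1)$'' --- is precisely Lemma~\ref{Q main lemma}, whose proof occupies Subsection~\ref{subsection: the forcing Q} (the dense subforcings $\QQ^*(\vartheta,\sigma)$, separativity of $\QQ$, the identification $\BB(\RR^*)=\BB(\sigma)$, and Lemma~\ref{quotient forcing of sigma 2}). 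Asserting this and deferring to those subsections leaves your argument empty at its core. Note also that the difficulty is not, as you suggest, limit-level bookkeeping of intermediate models $V[G_\nu]$ --- limits inside the forcing are handled by ${<}\kappa$-closure --- but the quotient analysis itself, for which no soft absorption argument is available: this is exactly the point where Solovay's lemma fails for uncountable $\kappa$ (Remark~\ref{failure of Solovay's lemma}).
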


We shall prove two cases of the Nice Quotient Lemma separately: the countable case in 
Subsection~\ref{section: proof in the countable case} and the uncountable case in 
Subsections~\ref{subsection: main theorem for uncountable kappa} and~\ref{subsection: the forcing Q}.

In the rest of this subsection, we aim to construct a continuous homomorphism 
$f:{}^{\kappa}\ddim\to{}^\kappa\kappa$
from $\dhH\ddim$ to $H\restr X$.
To this end,
fix an ordinal $\gamma$ and an $\Add(\kappa,1)$-name $\nna$ in $V[G_\gamma]$
as in Lemma~\ref{lemma 3c m}.
Then there exists an $H$-witness function for $\Add(\kappa,1)$ and $\nna$.
Thus, we may also fix
an $H$-witness function $w$ for $\nna$ and step function $s$ for $\nna$ and $w$ as in Lemma~\ref{lemma 3b reformulation}.
Lastly, let
$h$ be an $s$-function as in the previous lemma.
%the Nice Quotient Lemma~\ref{lemma: quotient lemma}.

We argue in $V[G]$.
We define a function $f:{}^\kappa\ddim\to{}^\kappa\kappa$ by letting
\[
f(z):= \nna^{[h](z)}
\]
for all $z\in{}^{\kappa}\ddim$. 

\begin{lemma}
\label{ran(f) subset of X}
$\ran(f)\subseteq X$.
\end{lemma}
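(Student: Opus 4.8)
The plan is to prove the inclusion pointwise. Fix an arbitrary $z\in{}^\kappa\ddim$ and set $x:=[h](z)$, so that $x\in\ran([h])$ and both conclusions of the Nice Quotient Lemma (Lemma~\ref{lemma: quotient lemma}) apply to $x$. First I would invoke part~\ref{nsf 1}: since $x$ is $\Add(\kappa,1)$-generic over $V[G_\gamma]$ and $\nna\in V[G_\gamma]$ is a name for an element of ${}^\kappa\kappa$ (Lemma~\ref{lemma 3c m}), the evaluation $\nna^x$ is a well-defined element of ${}^\kappa\kappa$ lying in $V[G_\gamma][x]$. Thus $f(z)=\nna^{[h](z)}=\nna^x$ is meaningful, and it remains only to check that it belongs to $X=\Xphia^{V[G]}$.

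The heart of the argument combines the defining property of $\nna$ with the quotient property of $x$. By Lemma~\ref{lemma 3c m}~\ref{nna eq 1}, in $V[G_\gamma]$ the condition $\one_{\Add(\kappa,1)}$ forces that $\one_{\PP_\lambda}\forces\check\nna\in\Xphia$. Evaluating the outer $\Add(\kappa,1)$-forcing at the generic $x$, I obtain that in $V[G_\gamma][x]$,
\[
\one_{\PP_\lambda}\forces^{V[G_\gamma][x]}\check{(\nna^x)}\in\Xphia,
\]
where $a$ is the parameter defining $X$; here I use that the chosen $\gamma$ lies in $[\nu,\lambda)$, so $a\in V[G_\nu]\subseteq V[G_\gamma]$ and $\Xphia$ is computed with the same parameter throughout. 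Next I apply part~\ref{nsf 2} of the Nice Quotient Lemma, which presents $V[G]$ as a $\PP_\lambda$-generic extension of $V[G_\gamma][x]$: fixing a $\PP_\lambda$-generic filter $H'$ over $V[G_\gamma][x]$ with $V[G]=V[G_\gamma][x][H']$ and applying the forcing theorem to the displayed statement with $H'$ yields $\nna^x\in\Xphia$ as computed in $V[G_\gamma][x][H']=V[G]$. Since $\Xphia^{V[G]}=X$, this gives $f(z)=\nna^x\in X$, completing the pointwise verification.

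I do not expect a genuine obstacle in proving this particular statement, since the substantive work has been delegated to Lemma~\ref{lemma: quotient lemma} (whose proof is postponed) and to the construction of $\nna$ in Lemma~\ref{lemma 3c m}; what remains is careful bookkeeping across the nested generic extensions. The one point requiring attention is that the $\PP_\lambda$-generic $H'$ realizing $V[G]$ over $V[G_\gamma][x]$ must be exactly the one supplied by part~\ref{nsf 2}, so that the model in which $\varphi(\nna^x,a)$ is evaluated is genuinely $V[G]$ and hence $\Xphia$ there is precisely $X$. This matching is guaranteed by the Nice Quotient Lemma, so the conclusion follows.
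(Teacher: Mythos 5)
Your proof is correct and follows essentially the same route as the paper's: evaluate $\nna$ at the generic $x=[h](z)$ using part~(1) of the Nice Quotient Lemma together with Lemma~\ref{lemma 3c m}, then use part~(2) to transfer the statement $\one_{\PP_\lambda}\forces f(z)\in\Xphia$ from $V[G_\gamma][x]$ up to $V[G]$. Your added bookkeeping about the parameter $a\in V[G_\nu]\subseteq V[G_\gamma]$ and about which generic realizes $V[G]$ is consistent with what the paper leaves implicit.
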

\begin{proof}
Let $z\in({}^{\kappa}\ddim)^{V[G]}$, and let $x:=[h](z)$. 
Then $f(z)=\nna^x$.
Since $\nna$ was chosen so that 
Lemma~\ref{lemma 3c m}~\ref{nna eq 1} 
holds in $V[G_\gamma]$
and $x$ is $\Add(\kappa,1)$-generic %over $M$ and therefore also 
over $V[G_\gamma]$ 
by Lemma~\ref{lemma: quotient lemma}~\ref{nsf 1},
\[
\qquad\qquad
\one_{\PP_\lambda}\forces f(z)\in\Xphia.\]
holds in $V[G_\gamma][x]$. 
Therefore
$f(z)\in{(\Xphia)}^{V[G]}=X$, 
by Lemma~\ref{lemma: quotient lemma}~\ref{nsf 2}.
\end{proof}

It now suffices to prove the following.

\begin{lemma}
\label{lemma: f cont hom}
$f$ is a continuous homomorphism from $\dhH\ddim$ to $H$.
\end{lemma}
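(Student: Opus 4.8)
The plan is to verify the two defining properties of a continuous homomorphism separately, extracting in each case the relevant information about $f(z)=\nna^{[h](z)}$ from the interplay of the $s$-function $h$, the step function $s$, the $H$-witness function $w$, and the genericity supplied by Lemma~\ref{lemma: quotient lemma}. Throughout I identify each $x=[h](z)\in\ran([h])$ with the $\Add(\kappa,1)$-generic filter $\kkppred x$ over $V[G_\gamma]$ that it determines (Lemma~\ref{lemma: quotient lemma}~\ref{nsf 1}), so that $f(z)=\nna^x$ is well defined.

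First I would isolate a local fact. Fix $z\in{}^\kappa\ddim$ and put $x=[h](z)$. Since $h$ is strict order reversing, the conditions $h(z\restr\alpha)$ form an increasing chain in $\Add(\kappa,1)$ with $\bigcup_{\alpha<\kappa}h(z\restr\alpha)=x$, so each $h(z\restr\alpha)$ lies in $\kkppred x$ and the chain is cofinal in it. Consequently $\nna_{[h(z\restr\alpha)]}\subseteq\nna^x=f(z)$ for every $\alpha$, because any $t$ with $h(z\restr\alpha)\forces t\subseteq\nna$ satisfies $t\subseteq\nna^x$; here $\nna_{[h(z\restr\alpha)]}$ is the stem of $T^{\nna,h(z\restr\alpha)}$ by Lemma~\ref{T^sigma facts}~\ref{T^sigma nodes comp}. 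Moreover, by genericity these stems exhaust $f(z)$: for any $\delta<\kappa$ the value $f(z)\restr\delta$ is forced by some condition of $\kkppred x$, hence by some $h(z\restr\alpha)$, so $f(z)\restr\delta\subseteq\nna_{[h(z\restr\alpha)]}$. Continuity of $f$ follows at once. Given $z$ and $\delta<\kappa$, choose such an $\alpha$ and set $p:=h(z\restr\alpha)$. If $z'\restr\alpha=z\restr\alpha$, then $h(z'\restr\alpha)=p$, so $p\in\kkppred{[h](z')}$ and the local fact applied to $z'$ gives $\nna_{[p]}\subseteq f(z')$; since $\lh(\nna_{[p]})\geq\delta$ we obtain $f(z')\restr\delta=\nna_{[p]}\restr\delta=f(z)\restr\delta$.

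For the homomorphism property, let $\bar z=\langle z_i:i\in\ddim\rangle\in\dhH\ddim$ and fix $t\in{}^{<\kappa}\ddim$ with $t\conc\langle i\rangle\subseteq z_i$ for all $i\in\ddim$; write $\delta=\lh(t)$, so $z_i\restr\delta=t$ and $z_i(\delta)=i$, whence $z_i\restr(\delta+1)=t\conc\langle i\rangle$. Applying the defining inequality \eqref{eq: s-function} at $t$ yields $h(t\conc\langle i\rangle)<s(h(t),i)$ for each $i$, and since $s$ is a step function for $\nna$ and $w$ we have $s(h(t),i)\forces w(h(t),i)\subseteq\nna$ by Lemma~\ref{lemma 3b reformulation}~\ref{witness step}. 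Therefore $h(z_i\restr(\delta+1))=h(t\conc\langle i\rangle)\leq s(h(t),i)$ also forces $w(h(t),i)\subseteq\nna$, and as this condition lies in $\kkppred{[h](z_i)}$ we conclude $w(h(t),i)\subseteq\nna^{[h](z_i)}=f(z_i)$, i.e.\ $f(z_i)\in N_{w(h(t),i)}$ for every $i\in\ddim$. The defining property of the $H$-witness function $w$ then gives
\[
f^\ddim(\bar z)=\langle f(z_i):i\in\ddim\rangle\in\prod_{i\in\ddim}N_{w(h(t),i)}\subseteq H,
\]
so $f$ is a homomorphism from $\dhH\ddim$ to $H$. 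Combined with Lemma~\ref{ran(f) subset of X}, $f$ is in fact a continuous homomorphism into $H\restr X$.

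The argument is routine once the machinery is in place, and I would not expect a genuine obstacle; the one point requiring care is the alignment across the three forcing-theoretic layers—ensuring that $h(t\conc\langle i\rangle)$ is genuinely stronger than $s(h(t),i)$ (so that it inherits the forced containment $w(h(t),i)\subseteq\nna$) and that the generic filter $\kkppred{[h](z_i)}$ really contains it. This is exactly what the inequality \eqref{eq: s-function} and the order-reversing property of $h$ deliver, while the genericity underlying the well-definedness of the evaluations $\nna^x$ is precisely the content of the Nice Quotient Lemma.
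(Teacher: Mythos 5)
Your proof is correct, but it takes a genuinely different route from the paper's. The paper constructs an explicit tree-level map $\iota:{}^{<\kappa}\ddim\to{}^{<\kappa}\kappa$ by recursion, setting $\iota(u\conc\langle\alpha\rangle):=w(h(u),\alpha)$ and taking unions at limits, proves by induction on $\lh(v)$ that $h(v)\forces_{\Add(\kappa,1)}^{V[G_\gamma]}\iota(v)\subseteq\nna$, and then checks that $\iota$ is a strict order preserving order homomorphism for $({}^\kappa\kappa,H)$ with $f=[\iota]$; the lemma then follows from Lemma~\ref{homomorphisms and order preserving maps}~\ref{hop 2}. You instead verify the two properties of $f$ directly: continuity via the truth lemma and the cofinality of the chain $\langle h(z\restr\alpha):\alpha<\kappa\rangle$ in $\kkppred{[h](z)}$, and the homomorphism property from \eqref{eq: s-function} together with Lemma~\ref{lemma 3b reformulation}~\ref{witness step} and the defining property of the $H$-witness function. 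A notable structural difference is that you never use Lemma~\ref{lemma 3b reformulation}~\ref{nice witness} (the property $\nna_{[p]}\subsetneq w(p,\alpha)$), which the paper needs precisely to make $\iota$ strict order preserving; in your argument, genericity does that work instead. What the paper's route buys is the explicit order homomorphism $\iota$ with $f=[\iota]$, which matches the machinery of Section~\ref{section: ODD observations} and isolates all forcing content in a single induction claim; your route is shorter and more self-contained, at the cost of leaning harder on the genericity provided by the Nice Quotient Lemma.

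One point you should make explicit: Lemma~\ref{lemma 3b reformulation} is applied in $V[G]$ (where $H$, and hence $w$ and $s$, live), while the evaluations $\nna^{[h](z_i)}$ take place over $V[G_\gamma]$. The step from $s(h(t),i)\forces w(h(t),i)\subseteq\nna$ to $w(h(t),i)\subseteq\nna^{[h](z_i)}$ therefore requires the absoluteness of statements of the form ``$p\forces \check{t}\subseteq\nna$'' between $V[G_\gamma]$ and $V[G]$, as in Lemma~\ref{T^sigma facts}~\ref{T^sigma abs} and the paper's own proof; the same remark applies to your uses of $\nna_{[q]}$ in the continuity argument. This is standard and does not create a gap, but it is exactly the ``alignment across forcing-theoretic layers'' your last paragraph alludes to, so it deserves a sentence rather than silence.
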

\begin{proof}
We will define 
a map
$\iota:{}^{<\kappa}\ddim\to{}^{<\kappa}\kappa$ 
and show that it is an order homomorphism for 
$({}^\kappa\kappa,H)$ 
with $f=[\iota]$.
This is sufficient by Lemma~\ref{homomorphisms and order preserving maps}~\ref{hop 2}. 

Let $\iota(\emptyset):=\emptyset.$
If $\lh(v)\in\Succ$ and $v=u\conc\langle\alpha\rangle$, then let
$\iota(v):=w\big(h(u), \alpha\big)$.
If $\lh(v)\in\Lim$, 
let $\iota(v):=\bigcup_{u\subsetneq v}\iota(u)$.

\begin{claim*}
For all $v\in {}^{<\kappa}\ddim$, $h(v)\forces_{\Add(\kappa,1)}^{V[G_\gamma]} \iota(v)\subseteq \tau$. 
\end{claim*}
\begin{proof} 
This is proved by induction on $\lh(v)$. 
It is clear for $v=\emptyset$. 
For the successor case, note that  
$$s(h(u),\alpha) \forces_{\Add(\kappa,1)} w(h(u),\alpha)\subseteq \tau$$ 
holds in $V[G]$
by Lemma \ref{lemma 3b reformulation} \ref{witness step}. 
Therefore this also holds in $V[G_\gamma]$ since the formula 
``$p\forces \theta\subseteq \eta$'' is absolute between transitive models of $\ZFC$.%
\footnote{%It is absolute since it 
It
%``$q\forces t\subseteq \sigma$'' 
can be defined by a recursion which 
uses only absolute concepts; see \cite{KunenBook2013}*{Theorem II.4.15}. 
%See Lemma~\ref{T^sigma facts}~\ref{q forces t in sigma abs}.
}
Since $h$ is an $s$-function, we have $s(h(u),\alpha)\subseteq h(u\conc\langle \alpha\rangle)$. 
Moreover, $\iota(u\conc\langle\alpha\rangle)=w(h(u),\alpha)$ by the definition of $\iota$. 
Hence 
$$h(u\conc\langle\alpha\rangle) \forces_{\Add(\kappa,1)}^{V[G_\gamma]} \iota(u\conc\langle\alpha\rangle)\subseteq \tau.$$ 
If $\lh(v)\in\Lim$, then $h(v)\supseteq \bigcup_{u\subsetneq v} h(u)$ and $\iota(v)= \bigcup_{u\subsetneq v} \iota(u)$. 
Thus $h(v)\forces_{\Add(\kappa,1)}^{V[G_\gamma]} \iota(v)\subseteq\tau$ by the inductive hypothesis. 
\end{proof} 

Now $\iota$ is strict order preserving, since 
$$ \iota(u)\subseteq \tau_{[h(u)]}  \subsetneq  w(h(u),\alpha)= \iota(u\conc \langle\alpha\rangle)$$ 
by the previous claim, Lemma \ref{lemma 3b reformulation} \ref{nice witness} and the definition of $\iota$. 
To see that $\iota$ is an order homomorphism for $({}^\kappa\kappa,H)$, note that
\[
\prod_{\alpha<\ddim} N_{\iota(u\conc\langle\alpha\rangle)}=
\prod_{\alpha<\ddim} N_{w(h(u),\alpha)}
\subseteq H
\]
by the definition of $\iota$ and 
since $w$ is an $H$-witness function.
To see that 
$f=[\iota]$, recall that $f(z)=\tau^{[h](z)}$ by the definition of $f$.
Since $h(z\restr \alpha) \forces_{\Add(\kappa,1)}^{V[G_\gamma]} \iota(z\restr \alpha)\subseteq \tau$, we have $\iota(z\restr \alpha)\subseteq f(z)$ for all $\alpha<\kappa$. 
\end{proof}

This completes the proof of Theorem~\ref{main theorem} except for the proof of the 
Nice Quotient Lemma for $\Col(\kappa,\lle\lambda)$.
% for $\kappa>\omega$.

%%%%%%%%%
\subsection{The countable case}
%\section{The proof of Theorem \ref{main theorem} in the $\kappa=\omega$ case}
%\section{The proof of the main result in the $\kappa=\omega$ case}
\label{section: proof in the countable case}

In this section, we complete the proof of the open dihypergraph dichotomy for definable sets in the countable case (Theorem~\ref{main theorem} for $\kappa=\omega$). 
The missing step is the Nice Quotient Lemma for $\Col(\omega,\lle\lambda)$. 
We make use of the next lemma: 

\begin{lemma}[Solovay \cite{Solovay1970}, see \cite{MR1940513}*{Corollary 26.11}] 
\label{Solovay lemma 1}
\index{Solovay's lemma\idf}
For any $x\in{({}^\omega\Ord)}^{V[G]}$, there exists a $\PP_\lambda$-generic
filter\footnote{For $\kappa=\omega$, $\PP_\lambda$ equals $\Col(\omega,\lle\lambda)$, where $\lambda$ is inaccessible. }
$K$ over $V[x]$ such that $V[G]=V[x][K]$.
\end{lemma}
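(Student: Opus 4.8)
The plan is to reduce $x$ to a bounded initial piece of the collapse and then absorb the remaining forcing back into a fresh copy of $\PP_\lambda$ over $V[x]$, using the absorption Corollary~\ref{forcing equivalent to the Levy collapse}. Throughout, $\kappa=\omega$ and $\PP_\lambda=\Col(\omega,\lle\lambda)$ with $\lambda$ inaccessible.

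First I would localize $x$. Fix a $\PP_\lambda$-name $\dot x^{*}$ for $x$ in $V$. Since $\lambda$ is inaccessible, $\PP_\lambda$ has the $\lambda$-c.c., so for each $n<\omega$ I can take in $V$ a maximal antichain $A_n\subseteq\PP_\lambda$ of conditions deciding $\dot x^{*}(n)$, with $|A_n|<\lambda$. As each condition is a finite partial function, the union of the domains of the elements of $A_n$ is bounded below $\lambda$, and since $\lambda$ is regular and there are only countably many $n$ the ordinal $\alpha:=\sup_n\sup\{\sup(\dom p):p\in A_n\}$ is still $<\lambda$. Each $A_n$ then lies in $\PP_\alpha$ and remains a maximal antichain there, and $G_\alpha=G\cap\PP_\alpha$ meets it in the same condition as $G$ does; reading off the forced value along $G_\alpha$ shows $x\in V[G_\alpha]$. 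This also produces a $\PP_\alpha$-name $\dot x\in V$ with $\dot x^{G_\alpha}=x$.

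Next I would pass to the quotient below $\alpha$. Working in $V[x]$, let $\mathbb D:=\BB^{\BB(\PP_\alpha)}(\dot x)$ be the complete subalgebra of $\BB(\PP_\alpha)$ generated by $\dot x$, so that $V[x]=V[\mathbb D\cap G_\alpha]$ by Lemma~\ref{fact: generated Boolean subalgebra}, and let $\QQ:=\BB(\PP_\alpha)/(\mathbb D\cap G_\alpha)$ be the associated quotient; by Fact~\ref{fact}~\ref{fact: generic extensions wrt quotient forcings}, $V[G_\alpha]=V[x][H]$ for a $\QQ$-generic filter $H$ over $V[x]$. Because $\lambda$ is a strong limit, $|\QQ|\le|\BB(\PP_\alpha)|\le 2^{|\PP_\alpha|}<\lambda$, so $\QQ$ is a forcing of size $<\lambda$ in $V[x]$, and for $\kappa=\omega$ it is vacuously $\lle\omega$-closed. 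Since inaccessibility is downward absolute, $\lambda$ remains inaccessible in $V[x]$, and $\PP^\alpha=\PP_{[\alpha,\lambda)}$ and $\Col(\omega,\lle\lambda)$ are the same posets in $V$ and in $V[x]$, being posets of finite functions. Now I reassemble: $V[G]=V[G_\alpha][G^\alpha]=V[x][H][G^\alpha]$, where $G^\alpha$ is $\PP^\alpha$-generic over $V[x][H]$; as $\PP^\alpha$ is a fixed ground-model poset, the two-step iteration $\QQ*\check\PP^\alpha$ collapses to the product $\QQ\times\PP^\alpha$, and $(H,G^\alpha)$ yields a $(\QQ\times\PP^\alpha)$-generic filter $\tilde K$ over $V[x]$ with $V[x][\tilde K]=V[G]$. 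Applying Corollary~\ref{forcing equivalent to the Levy collapse} inside $V[x]$ with the trivial forcing gives $\PP^\alpha\simeq\PP_\lambda$, and with $\PP=\QQ$ it gives $\QQ\times\PP_\lambda\simeq\PP^\gamma\simeq\PP_\lambda$ for a suitable $\gamma<\lambda$; combining, $\QQ\times\PP^\alpha\simeq\PP_\lambda$ over $V[x]$. Since equivalent forcings admit dense embeddings into a common forcing, $\tilde K$ transfers to a $\PP_\lambda$-generic filter $K$ over $V[x]$ with $V[x][K]=V[x][\tilde K]=V[G]$, as required.

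The main obstacle is the reassembly step: one must check that the quotient $\QQ=\PP_\alpha/x$ is genuinely a forcing of size $<\lambda$ over $V[x]$, and that the absorption Corollary~\ref{forcing equivalent to the Levy collapse} is being applied \emph{in} $V[x]$ rather than in $V$ (using that $\lambda$ stays inaccessible there and that $\PP^\alpha$ and $\Col(\omega,\lle\lambda)$ are computed identically in both models). The careful bookkeeping of the tower $V[x]\subseteq V[G_\alpha]=V[x][H]\subseteq V[G]$, together with the identification of $\QQ*\check\PP^\alpha$ with the product $\QQ\times\PP^\alpha$, is where the argument needs the most attention.
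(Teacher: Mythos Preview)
Your argument is correct and is essentially a detailed version of the standard proof that the paper only sketches (citing Jech): localize $x$ to some $V[G_\alpha]$ by the $\lambda$-c.c., take the quotient of $\PP_\alpha$ over $V[x]$, and then absorb this small quotient together with the tail $\PP^\alpha$ back into $\Col(\omega,{<}\lambda)$ via Corollary~\ref{forcing equivalent to the Levy collapse}. One small remark: your justification that $\lambda$ stays inaccessible in $V[x]$ via ``downward absoluteness'' is in fact valid (inaccessibility does pass to inner models with the same ordinals), though it is perhaps more transparent to note that $V[x]$ is a generic extension of $V$ by the complete subalgebra $\mathbb D\subseteq\BB(\PP_\alpha)$, which has size ${<}\lambda$ since $\lambda$ is inaccessible in $V$.
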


The proof idea for Solovay's lemma is that for any regular uncountable cardinal $\mu$, $\Col(\omega,\mu)$ is the unique forcing of size $\mu$ that makes $\mu$ countable \cite{MR1940513}*{Lemma 26.7}. 
\todog{The phrase ``quotient forcing of $V[x]$'' makes sense by Proposition 10.10 in Kanamori's book. See Proposition 10.21 therein for a proof of Solovay's lemma.}
Using this, one shows that the 
quotient forcing of $V[x]$ in $V[G]$ is equivalent to $\Col(\omega,\lle\lambda)$. 

We use the notation from the previous section for $\kappa=\omega$. 
It remains to prove the following.

\begin{lemma}{\bf (Nice Quotient Lemma for $\Col(\omega,\lle\lambda)$)}
\label{lemma: quotient lemma - countable case}
The following holds in $V[G]$. 
Suppose that $\gamma<\lambda$ and
$s$ is a step-function for $\Add(\omega,1)$.
Then there exists an $s$-function $h$
such that for all $x\in \ran([h])$: 
\begin{enumerate-(1)}
\item
\label{nsf 1 omega}
$x$ is $\Add(\omega,1)$-generic over $V[G_\gamma]$.
\item
\label{nsf 2 omega}
$V[G]$ is a $\PP_\lambda$-generic extension of $V[G_\gamma][x]$.
\end{enumerate-(1)}
\end{lemma}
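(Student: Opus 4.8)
The plan is to observe that condition~\ref{nsf 2 omega} comes essentially for free from Solovay's lemma, so that the whole lemma reduces to constructing a single $s$-function whose branches are all Cohen-generic over $V[G_\gamma]$. First I would record that $\lambda$ remains inaccessible in $V[G_\gamma]$, since $\PP_\gamma=\Col(\omega,\lle\gamma)$ has size $<\lambda$, and that by Corollary~\ref{forcing equivalent to the Levy collapse} the forcing $\PP^\gamma$ is equivalent to $\PP_\lambda$; thus $V[G]=V[G_\gamma][G']$ for some $\PP_\lambda$-generic filter $G'$ over $V[G_\gamma]$. Applying Lemma~\ref{Solovay lemma 1} with $V[G_\gamma]$ in place of $V$ and $G'$ in place of $G$, for every $x\in({}^\omega\Ord)^{V[G]}$ there is a $\PP_\lambda$-generic filter $K$ over $V[G_\gamma][x]$ with $V[G]=V[G_\gamma][x][K]$. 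This is exactly~\ref{nsf 2 omega}, and it holds for an arbitrary $x\in\ran([h])\subseteq({}^\omega\omega)^{V[G]}$. Hence it suffices to build an $s$-function $h$ for which every $x\in\ran([h])$ satisfies~\ref{nsf 1 omega}, i.e.\ is $\Add(\omega,1)$-generic over $V[G_\gamma]$.

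Next I would count the relevant dense sets. Since $\Add(\omega,1)={}^{<\omega}\omega$ is countable and absolute, the collection $\mathcal D$ of its dense subsets lying in $V[G_\gamma]$ is contained in $\powerset({}^{<\omega}\omega)^{V[G_\gamma]}$, which has size $<\lambda$: indeed $|\PP_\gamma|<\lambda$ and $\lambda$ is inaccessible, so $(2^\omega)^{V[G_\gamma]}<\lambda$ by a nice-name count. As $\Col(\omega,\lle\lambda)$ collapses every ordinal below $\lambda=\omega_1^{V[G]}$ to $\omega$, the set $\mathcal D$ is countable in $V[G]$, so I may fix an enumeration $\mathcal D=\langle D_n:n<\omega\rangle$ there.

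I would then construct $h:{}^{<\omega}\ddim\to\Add(\omega,1)$ by recursion on length, with no limit stages to handle because $\kappa=\omega$. Put $h(\emptyset):=\emptyset$. Given $h(t)$ with $\lh(t)=n$, for each $\alpha<\ddim$ use that $s(h(t),\alpha)\leq h(t)$, that $\Add(\omega,1)$ has no atoms, and that $D_n$ is dense to choose $h(t\conc\langle\alpha\rangle)\in D_n$ with $h(t\conc\langle\alpha\rangle)<s(h(t),\alpha)$ (first pass to a strict extension of $s(h(t),\alpha)$, then into $D_n$). Then $h$ is strict order reversing and satisfies~\eqref{eq: s-function}, so it is an $s$-function in the sense of Definition~\ref{def: s-function}. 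For any $z\in{}^\omega\ddim$ the conditions $h(z\restr n)$ are $\subsetneq$-increasing of strictly increasing finite length, so $x:=[h](z)=\bigcup_{n<\omega}h(z\restr n)\in{}^\omega\omega$; and for each $n$ the initial segment $h(z\restr(n+1))\subseteq x$ lies in $D_n$, whence the filter $\kkppred x$ meets every $D_n$. Since $\langle D_n:n<\omega\rangle$ lists all dense subsets of $\Add(\omega,1)$ in $V[G_\gamma]$, this shows $x$ is $\Add(\omega,1)$-generic over $V[G_\gamma]$, giving~\ref{nsf 1 omega}.

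I do not expect a serious obstacle in the countable case: the argument rests entirely on Solovay's lemma, which already supplies~\ref{nsf 2 omega} for \emph{every} real of $V[G]$, together with the collapse making the ground-model dense sets countable, so that a single level-by-level tree construction diagonalizes against all of them at once. The only points needing care are the relativization of Solovay's lemma to the intermediate model $V[G_\gamma]$ (legitimate because $\PP^\gamma\simeq\PP_\lambda$ and $\lambda$ stays inaccessible) and the verification that meeting $D_n$ at level $n$ forces every branch to meet every $D_n$. The genuine difficulty of the Nice Quotient Lemma surfaces only in the uncountable case, where Solovay's lemma fails and one must instead engineer well-behaved quotient forcings by hand.
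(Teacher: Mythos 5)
Your proposal is correct and follows essentially the same route as the paper's proof: reduce condition (2) to Solovay's lemma relativized to $V[G_\gamma]$ (legitimate since $\PP^\gamma\simeq\PP_\lambda$), use the collapse to enumerate in $V[G]$ all dense subsets of $\Add(\omega,1)$ lying in $V[G_\gamma]$, and build the $s$-function by a level-by-level recursion meeting $D_n$ at stage $n$. Your write-up merely makes explicit two points the paper leaves implicit — the justification for applying Solovay's lemma over the intermediate model and the cardinality count showing the dense sets are countable in $V[G]$ — so there is nothing to correct.
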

\begin{proof} 
By Solovay's Lemma~\ref{Solovay lemma 1} for $V[G_\gamma]$, it suffices to construct an $s$-function $h$ such that~\ref{nsf 1 omega} holds.
%We argue in $V[G]$.
Let $\langle D_n:n<\omega\rangle\in V[G]$ enumerate all dense subsets of $\Add(\omega,1)$ that are elements of~$V[G_\gamma]$. 
This is possible since $({2^\omega})^{V[G_\gamma]}$ is countable in $V[G]$.
We construct $h:{}^{<\omega}\ddim\to\Add(\omega,1)$ with the following properties for all $u,v\in{}^{<\omega}\ddim$ and $\alpha<\kappa$: 
\begin{enumerate-(i)}
\item\label{om 1} 
If $u\subsetneq v$, then $h(v)< h(u)$.
\vspace{2 pt}
\item\label{om 2} 
$h(u\conc\langle\alpha\rangle)<
s(h(u), \alpha)$.
\vspace{2 pt}
\item\label{om 0} $h(u)\in D_{\lh(u)}$.
\end{enumerate-(i)}
We construct $h(u)$ by recursion on $\lh(u)$.
Let $h(\emptyset)\in D_0$.
Now fix $u\in{}^{<\omega}\ddim$, and suppose that $h(u)$ has been constructed. 
For each $\alpha<\ddim$, 
choose $h(u\conc\langle\alpha\rangle)< s(h(u),\alpha)$ in $D_{\lh(u)+1}$. 
Then $h$ satisfies 
\ref{om 1}-\ref{om 0}. 
By~\ref{om 1}, $h$ is order preserving, and so by~\ref{om 2}, it is an $s$-function.
By~\ref{om 0}, each $x\in\ran([h])$ is $\Add(\omega,1)$-generic over $V[G_\gamma]$.
\end{proof} 

\begin{remark} 
While $\OGD^\omega_\omega(\analytic)$ is provable in $\ZFC$ alone \cite{CarroyMillerSoukup}*{Theorem~1.1}, $\OGD^\omega_\omega(\defsets\omega)$ has the consistency strength of an inaccessible cardinal, since $\OGD^\omega_\omega(\bf{\Pi}^1_1)$ implies that $\PSP_\omega(\bf{\Pi}^1_1)$ holds and thus there is an inaccessible cardinal in $L$ 
(see \cite{MR1940513}*{Theorem 25.38}.)
\end{remark}

%%%%%%%%%%%
\subsection{The uncountable case}
\label{section: proof in the uncountable case}
In this section, we complete the proof of the open dihypergraph dichotomy for definable sets in the uncountable case (Theorem~\ref{main theorem} for $\kappa>\omega$). 
We that assume $\kappa$ is uncountable
%$\kappa>\omega$ 
throughout the section.

%%%%%%%%%%%
\subsubsection{\texorpdfstring{$\kappa$}{kappa}-analytic sets}
\label{subsection: main theorem for analytic X} 

We begin with the special case of Theorem~\ref{main theorem} for 
$\kappa$-analytic 
%$\analytic(\kappa)$
sets. 
The proof is much more direct than in the general case, 
since it does not need the Nice Quotient Lemma for $\Col(\kappa,\lle\lambda)$.
It adapts an argument from \cite{SzThesis}*{Section~3.1}
for the open graph dichotomy $\OGD_\kappa(\analytic(\kappa))$. 
We first give a proof 
for {closed} sets.

\begin{lemma}
\label{main theorem for closed X} 
In $V[G]$, suppose $X$ is a closed subset of ${}^\kappa\kappa$. Then $\ODD\kappa {H\restr X}$ holds.
\end{lemma}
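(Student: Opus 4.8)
The plan is to assume $H\restr X$ has no $\kappa$-coloring and produce a continuous homomorphism from $\dhH\ddim$ to $H\restr X$; by Lemma~\ref{homomorphisms and order preserving maps}~\ref{hop 2} it suffices to obtain one of the form $[\iota]$ for an order homomorphism $\iota$ for $(X,H)$ in the sense of Definition~\ref{def: order homomorphism}. First I would record the elementary fact (Lemma~\ref{lemma: CC coloring}) that the $\kappa$-colorable sets form a $\kappa^+$-complete ideal: if $X=\bigcup_{i<\kappa}X_i$ with each $H\restr X_i$ carrying a $\kappa$-coloring $c_i$, then $x\mapsto\langle i(x),c_{i(x)}(x)\rangle$, where $i(x)$ is least with $x\in X_{i(x)}$, is a $\kappa$-coloring of $H\restr X$, since a hyperedge inside one color class would lie inside a single independent class $c_i^{-1}(\{\xi\})$.

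Next I extract a \emph{kernel}. As $\kappa^{<\kappa}=\kappa$, the base $\{N_s:s\in{}^{<\kappa}\kappa\}$ has size $\kappa$, so $W:=\bigcup\{N_s:H\restr(N_s\cap X)\text{ admits a }\kappa\text{-coloring}\}$ is a union of $\kappa$ many colorable pieces, whence $H\restr(W\cap X)$ is $\kappa$-colorable by the ideal property. Put $K:=X\setminus W$, a closed subset of $X$. If $K=\emptyset$, then $H\restr X=H\restr(W\cap X)$ is $\kappa$-colorable and we are in the first alternative. Otherwise $K\neq\emptyset$ has the kernel property: for every $s\in T(K)$ (i.e. $N_s\cap K\neq\emptyset$), $H\restr(N_s\cap K)$ admits no $\kappa$-coloring. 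Indeed such an $N_s$ is not contained in $W$, so $H\restr(N_s\cap X)$ is non-colorable; writing $N_s\cap X=(N_s\cap K)\sqcup(N_s\cap W\cap X)$ with the second piece colorable, colorability of $H\restr(N_s\cap K)$ would make the union colorable, a contradiction. In particular $H\restr(N_s\cap K)\neq\emptyset$ for all $s\in T(K)$, so since $H$ is box-open every such neighborhood contains a hyperedge of $H$ with all members in $K$.

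Now I would build $\iota\colon{}^{<\kappa}\ddim\to T(K)$ with $H\restr(N_{\iota(t)}\cap K)$ non-colorable for each $t$ and $\prod_{i\in\ddim}N_{\iota(t\conc\langle i\rangle)}\subseteq H$ for each $t$ (this gives condition~\ref{oh2} of Definition~\ref{def: order homomorphism}). At a successor node, given $\iota(t)$ with $N_{\iota(t)}\cap K$ non-colorable, pick a hyperedge $\langle y_i:i\in\ddim\rangle\in H$ with all $y_i\in N_{\iota(t)}\cap K$ and, by box-openness, choose $\iota(t\conc\langle i\rangle)\subsetneq y_i$ extending $\iota(t)$ with $\prod_iN_{\iota(t\conc\langle i\rangle)}\subseteq H$; since $y_i\in K$, each such node lies in $T(K)$ and, by the kernel property, has a non-colorable neighborhood. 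Because every $\iota(t)\in T(K)$ and lengths tend to $\kappa$ along branches, $K$ being closed forces $\ran([\iota])\subseteq K\subseteq X$ automatically, so $\iota$ is an order homomorphism for $(X,H)$ and $[\iota]$ maps $\dhH\ddim$ into $H\restr K\subseteq H\restr X$.

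The hard part is the limit levels. At a limit $t$ of length $\delta$, strict order preservation forces $\iota(t)\supseteq s^-:=\bigcup_{u\subsetneq t}\iota(u)$ within $T(K)$, so I need $s^-\in T(K)$, i.e. $N_{s^-}\cap K\neq\emptyset$. This can fail, because $T(K)$ need not be ${<}\kappa$-closed: a decreasing ${<}\kappa$-chain of nonempty cones $N_{\iota(u)}\cap K$ may have empty intersection even in the presence of the kernel property (e.g. the closed set of sequences having a finite initial block of $0$'s followed by a nonzero value is everywhere-non-colorable for the complete graph $\gK{{}^\kappa\kappa}$, yet its tree is not ${<}\kappa$-closed). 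To overcome this I would run the recursion as a fusion, attaching to each $t$ not only $\iota(t)$ but a ${<}\kappa$-closed pruned subtree $S_t\subseteq T(K)$ with stem $\iota(t)$ and with $H\restr[S_t]$ non-colorable, arranged so that the $S_t$ decrease under $\subseteq$ and $S_t=\bigcap_{u\subsetneq t}S_u$ at limits. Since a decreasing ${<}\kappa$-intersection of ${<}\kappa$-closed subtrees is again ${<}\kappa$-closed and contains the union of the chain of stems, this yields $s^-\in S_t\subseteq T(K)$ at every limit, resolving the obstruction. The genuinely technical point — and the crux that lets the closed case (and, via a continuous-image argument, the $\kappa$-analytic case) proceed without the Nice Quotient Lemma — is to preserve non-colorability of $H\restr[S_t]$ through these limit intersections; I would secure this using everywhere-non-colorability together with the freedom, granted by the kernel property, to extend any node of $T(K)$ to arbitrarily long nodes with non-colorable neighborhoods before splitting, following the construction of \cite{SzThesis}*{Section~3.1} for $\OGD_\kappa(\analytic(\kappa))$. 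Finally, Lemma~\ref{homomorphisms and order preserving maps}~\ref{hop 2} turns the resulting order homomorphism $\iota$ into the required continuous homomorphism, completing the second alternative of $\ODD\kappa{H\restr X}$.
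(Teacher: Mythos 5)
Your construction never uses the hypotheses of the lemma beyond non-colorability: nowhere do you invoke the ambient generic extension $V[G]$, the inaccessibility of $\lambda$, or the name machinery of Assumptions~\ref{proof notation}. You are therefore attempting a pure $\ZFC$ proof of the statement "for every closed $X\subseteq{}^\kappa\kappa$ and every box-open $H$, if $H\restr X$ has no $\kappa$-coloring then there is a continuous homomorphism from $\dhH\ddim$ to $H\restr X$." For uncountable $\kappa$ this statement is not a theorem of $\ZFC$: applied to the complete graph $\gK{{}^\kappa\kappa}$ (which is open), it is exactly $\PSP_\kappa(X)$ for closed $X$ by Corollary~\ref{cor: PSP from ODD}, and this fails in $L$ (via Kurepa-tree counterexamples, see \cite{JechTrees} and \cite[Section~7]{lucke2012definability}); indeed it has the consistency strength of an inaccessible cardinal. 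So some step of your argument must be unprovable, and it is precisely the one you flag as "the genuinely technical point": producing, for each node, a ${<}\kappa$-closed pruned subtree $S_t\subseteq T(K)$ whose branch set remains non-colorable, and preserving non-colorability through limit intersections. In $L$, take $K$ to be a closed set of size $\kappa^+$ with no $\kappa$-perfect subset and $H=\gK{{}^\kappa\kappa}\restr K$: the kernel property holds (every $N_s\cap K$ has size ${>}\kappa$, hence is non-colorable), your successor step goes through, yet the full fusion cannot be completed, since completing it would yield a $\kappa$-perfect subset of $K$. The appeal to \cite{SzThesis}*{Section~3.1} does not rescue this: that construction (like the paper's) is a forcing argument carried out in a L\'evy-collapse extension, not a combinatorial fusion, so it cannot be "followed" inside your framework.

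The paper's proof of this lemma is genuinely forcing-theoretic and this is unavoidable. It fixes a tree $S$ with $[S]=X$ in some $V[G_\alpha]$, takes $\gamma>\alpha$ and an $\Add(\kappa,1)$-name $\nna$ as in Lemma~\ref{lemma 3c m}, so that $\one_{\Add(\kappa,1)}\forces^{V[G_\gamma]}\nna\in[S]$ by absoluteness of "$x\in[S]$", and then recursively builds maps $h:{}^{<\kappa}\ddim\to\Add(\kappa,1)$ and $t:{}^{<\kappa}\ddim\to{}^{<\kappa}\kappa$ using an $H$-witness function and step function (Lemma~\ref{lemma 3b reformulation}). Your limit-stage obstruction is resolved there not by any closure of a tree of non-colorable nodes, but by the $\lle\kappa$-closure of the \emph{forcing} $\Add(\kappa,1)$: at limits one simply chooses $h(v)$ below all previous conditions, and condition \ref{an 2} ($h(u)\forces t(u)\subseteq\nna$) together with $\one\forces\nna\in[S]$ guarantees $\ran(t)\subseteq S$, hence $\ran([t])\subseteq X$. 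The non-colorability of $H\restr X$ enters only once, via Lemma~\ref{lemma 3a}, to produce the name $\nna$ in the first place — and that step uses the reflection of $H$-independent trees into the intermediate model $V[G_\nu]$ and the homogeneity of the collapse, which is exactly the content your proposal omits.
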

\begin{proof}
Recall our Assumptions~\ref{proof notation}.
In particular, we assume that $H\restr X$ does not admit a $\kappa$-coloring. It suffices to show that
there is a continuous homomorphism $f:{}^{\kappa}\ddim\to X$ from $\dhH\ddim$ to $H\restr X$.

In $V[G]$, let $S$ be a subtree of ${}^{<\kappa}\kappa$ such that $[S]=X=\Xphia$. 
There is some $\alpha<\lambda$ such that $S\in V[G_\alpha]$, since $\PP_\lambda$ has the $\lambda$-c.c. 
Now let $\gamma>\alpha$ and let $\nna$ be an $\Add(\kappa,1)$-name as in Lemma~\ref{lemma 3c m}.
%\footnote{We assume that the formula $\varphi(x,a)$ specified in Assumptions~\ref{proof notation} and referenced in Lemma~\ref{lemma 3c m} is the formula $\psi(x,S)$.}
\todog{$\one\forces\nna\notin\check V$ is not needed in this proof. (It is needed for the proof of the general case.)}
In $V[G_\gamma]$, 
$\one_{\Add(\kappa,1)}$ forces that 
$\one_{\PP_\lambda}\forces\check{\nna}\in [S]$ by 
Lemma~\ref{lemma 3c m}~\ref{nna eq 1}.
Since the formula ``$x\in [S]$'' is absolute between transitive models of $\ZFC$, we have
\begin{equation}
\label{an eq 1}
\one_{\Add(\kappa,1)} \forces^{V[G_\gamma]} \nna\in[S].
\end{equation}
In $V[G]$, 
we construct functions  
$h:{}^{<\kappa}\ddim\to\Add(\kappa,1)$ 
and 
$t:{}^{<\kappa}\ddim\to{}^{<\kappa}\kappa$
with the following properties 
for all $u,v\in{}^{<\kappa}\ddim$:
\begin{enumerate-(i)}
\item \label{an 1} 
If $u\subsetneq v$, then $t(u)\subsetneq t(v)$ and $h(v)< h(u)$.
\vspace{2 pt}
\item \label{an 2} $h(u)\forces^{V[G_\gamma]} t(u)\subseteq \nna$.
\vspace{2 pt}
\item \label{an 3}
$
\prod_{\alpha<\ddim}N_{t({u\conc\langle\alpha\rangle})}\subseteq H
.
$
\end{enumerate-(i)}
%To construct $h$ and $t$: 
Since the tree $T^{\tau,q}_{\Add(\kappa,1)}$ is not $H$-independent for all $q\in\Add(\kappa,1)$ 
by Lemma~\ref{lemma 3c m}~\ref{nna eq 3},
there exists an $H$-witness function for $\nna$. 
Thus, we may fix an $H$-witness function $w$ and a step function $s$ for $\nna$ as in Lemma~\ref{lemma 3b reformulation}. 
%\todog{I would prefer to fix $w$ and $s$ before starting the construction - although I think the other way works too (i.e. fixing possibly different witness functions for each $u$ individually)}
%
Construct $h(v)$ and $t(v)$ by recursion on~$\lh(v)$.
Let $h(\emptyset):=\one_{\Add(\kappa,1)}$ 
and $t(\emptyset):=\emptyset$.
Now suppose that $v\in{}^{<\kappa}\ddim$ and 
$h(u),\,t(u)$ have been constructed for all $u\subsetneq v$.
If $\lh(v)\in\Lim$,
let $t(v):=\bigcup_{u\subsetneq v}t(u)$. 
Since 
$\Add(\kappa,1)$ is $\lle\kappa$-closed, there is some $h(v)< h(u)$ for all $u\subsetneq v$. 
If $\lh(v)\in\Succ$ and $v=u\conc\langle\alpha\rangle$,
let $t(v):=w(h(u),\alpha)$
 and $h(v)<s(h(u),\alpha)$.

Now suppose that 
$h:{}^{<\kappa}\ddim\to\Add(\kappa,1)$ 
and 
$t:{}^{<\kappa}\ddim\to S$
have been constructed. 
Note that $t$ is strict order preserving by \ref{an 1}. 
Hence $f=[t]:{}^\kappa\ddim\to X$ is defined.
%\footnote{See Definition~\ref{[e] def}.} 
By \ref{an 3}, $t$ is an order homomorphism for $({}^\kappa\kappa,H)$, so 
$f$ is a continuous homomorphism from $\dhH\ddim$ to~$H$. 
%By Lemma~\ref{homomorphisms and order preserving maps}~\ref{hop 2}, 
It thus suffices that
 $\ran(f)\subseteq [S]=X$. 
%\footnote{This is where we use that $X$ is closed.} 
This holds since
\eqref{an eq 1} and \ref{an 2} imply that
$\ran(t)\subseteq S$.
%that $t(u)\in S$ for all $u\in{}^{<\kappa}\ddim$. 
\end{proof}

\begin{remark}
\label{alternative proof for closed X}
The following alternative version of the previous proof can be understood as a special case of our proof of Theorem~\ref{main theorem}. 
Let $S$, $\gamma$ and $\nna$ be as in the previous proof. Then
$\one_{\Add(\kappa,1)} \forces^{V[G_\gamma]} \nna\in[S]$. 
Choose an $H$-witness function $w$ and a step function $s$ as in Subsection~\ref{subsection: the main step}.
A similar construction as in 
%the countable case of the Nice Quotient Lemma 
the Nice Quotient Lemma for $\Col(\omega,\lle\lambda)$%
\footnote{See the proof of Lemma \ref{lemma: quotient lemma - countable case}.} 
shows that there exists an $s$-function 
$h:{}^{<\kappa}\ddim\to{}^{<\kappa}\kappa$ 
such that $[h](x)$ is $\Add(\kappa,1)$-generic over $V[G_\gamma]$ for all $x\in{}^\kappa\ddim$.\footnote{This is a weaker version of the Nice Quotient Lemma~\ref{lemma: quotient lemma} %for $\Col(\kappa,\lle\lambda)$ 
obtained by omitting~\ref{nsf 2}.} 
For $f: {}^\kappa\ddim\to {}^\kappa\kappa$ with $f(x)=\nna^{[h](x)}$, 
it follows that $\ran(f)\subseteq [S]$ by the absoluteness of ``$x\in[S]$''. 
One can show that $f$ is a homomorphism from $\dhH\ddim$ to $H$ as in 
Lemma~\ref{lemma: f cont hom}. 
Thus, $f$ is a continuous homomorphism from $\dhH\ddim$ to $H\restr[S]=H\restr X$.
\end{remark}

By the previous lemma, Theorem~\ref{main theorem} holds for all closed subsets of the $\kappa$-Baire space.
Since every 
$\kappa$-analytic 
%$\analytic(\kappa)$ 
subset of ${}^\kappa\kappa$ is the continuous image of a closed subset of ${}^\kappa\kappa$, 
we can extend this 
to 
$\kappa$-analytic 
%$\analytic(\kappa)$
sets by Lemma~\ref{ODD for continuous images}.

\begin{corollary}
\label{main theorem for analytic X}
Theorem~\ref{main theorem} holds for all 
$\kappa$-analytic
%$\analytic(\kappa)$ 
subsets of ${}^\kappa\kappa$.
\end{corollary}

%%%%%%%%%%%
\subsubsection{Definable sets}
\label{subsection: main theorem for uncountable kappa}
We prove the Nice Quotient Lemma for $\Col(\kappa,\lle\lambda)$ and thus Theorem~\ref{main theorem} for uncountable $\kappa$.
The main difficulty is that quotient forcings are not well behaved in the uncountable setting: 

\begin{remark} 
\label{failure of Solovay's lemma} 
Solovay's Lemma~\ref{Solovay lemma 1} does not extend to uncountable regular cardinals $\kappa$ by the following well-known counterexample. 
Let $V[J*K]$ be an $\Add(\kappa,1)*\dot{\QQ}$-generic extension of $V$, where $\Add(\kappa,1)$ adds a stationary set $S$ and $\dot{\QQ}$ shoots a club through its complement. 
Thus $\dot{\QQ}^J$ is not stationary set preserving in $V[J]$. 
Hence  $V[J*K]$ is not a generic extension of $V[J]$ by a ${<}\kappa$-closed forcing. 
This is discussed in more detail in \cite{SchlichtPSPgames}*{Subsection 1.3}. 
\end{remark} 

Instead of Solovay's lemma, we will use the following Nice Quotient Lemma for $\Add(\kappa,1)$. 
Its proof is postponed to
the next subsection.
%Subsection~\ref{subsection: the forcing Q}. 

\begin{lemma}{\bf (Nice Quotient Lemma for $\Add(\mu,1)$)}
\label{Q main lemma}
\index{Nice Quotient Lemma for!Add@$\Add(\mu,1)$\idf}
Let $M$ be a transitive class model of $\ZFC$.\footnote{I.e., $M$ can be a proper class or a set.} 
In $M$, suppose that $\mu$ is an uncountable cardinal with $\mu^{<\mu}=\mu$, 
$2\leq\ddimq\leq\mu$ and
$s:\Add(\mu,1)\times\ddimq\to\Add(\mu,1)$ is a step function for $\Add(\mu,1)$.
For any $\Add(\mu,1)$-generic filter $K$ over $M$, 
there exists in $M[K]$ an $s$-function 
%$h$ 
$h:{}^{<\mu}\ddimq\to\Add(\mu,1)$
such that for all 
$x\in \ran([h])$: 
%$x\in \ran([h])^{M[K]}$: 
\begin{enumerate-(1)}
\item\label{Q main 1}
$x$ is $\Add(\mu,1)$-generic over $M$.
%In fact, for all distinct 
%%$y,z\in({}^\mu\ddimq)^{M[K]}$,
%$y,z\in{}^\mu\ddimq$,
%$[h](y)$ and $[h](z)$ are mutually $\Add(\mu,1)$-generic over $M$.
\item\label{Q main 2}
$M[K]$ is an $\Add(\mu,1)$-generic extension of $M[x]$.
\end{enumerate-(1)}
In fact, in~\ref{Q main 1},
%$y,z\in({}^\mu\ddimq)^{M[K]}$,
for all distinct $y,z\in{}^\mu\ddimq$,
$[h](y)$ and $[h](z)$ are mutually $\Add(\mu,1)$-generic over $M$.
\end{lemma}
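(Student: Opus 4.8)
The plan is to prove this as a self-contained forcing lemma about $\Add(\mu,1)$ over $M$, by realizing the single generic $K$ as a product of mutually generic Cohen generics indexed along the tree ${}^{<\mu}\ddimq$ and then threading these through the given step function $s$. Since $\mu^{<\mu}=\mu$ in $M$, the ${<}\mu$-support product $\Add(\mu,\mu)$ of $\mu$ copies of $\Add(\mu,1)$ is a non-atomic ${<}\mu$-closed forcing of size $\mu$, hence equivalent to $\Add(\mu,1)$ by Lemma~\ref{forcing equivalent to Add(kappa,1)}; indexing its coordinates by the nodes of ${}^{<\mu}\ddimq$ (recall $|{}^{<\mu}\ddimq|=\mu$), I obtain in $M[K]$ a family $\langle K_t : t\in{}^{<\mu}\ddimq\rangle$ of mutually $\Add(\mu,1)$-generic filters over $M$ with $M[K]=M[\langle K_t\rangle]$. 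I also fix in $M$ a continuous cofinal sequence $\langle\eta_\gamma:\gamma<\mu\rangle$ of ordinals below $\mu$ to serve as predetermined block lengths.

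I would then define $h:{}^{<\mu}\ddimq\to\Add(\mu,1)$ by recursion on $\lh(t)$, keeping it a strict order reversing $s$-function in the sense of Definition~\ref{def: s-function}: put $h(\emptyset)=\one$; at limits take a proper extension of $\bigcup_{u\subsetneq t}h(u)$, which exists by the ${<}\mu$-closure of $\Add(\mu,1)$ (this also forces each value $h(t)\in M$, since $\Add(\mu,1)$ adds no new ${<}\mu$-sequences); and at a successor $u^\frown\langle\alpha\rangle$ set $q:=s(h(u),\alpha)$ and let $h(u^\frown\langle\alpha\rangle)$ be $q$ with the block $K_{u^\frown\langle\alpha\rangle}\restr\eta_{\lh(u)}$ appended, so that \eqref{eq: s-function} holds. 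The decisive bookkeeping point is that the breakpoints $\lh(h(y\restr\gamma))$ are recoverable in $M$ from $[h](y)$ alone: knowing $h(y\restr\gamma)$ as an initial segment of $[h](y)$, one recomputes $q=s(h(y\restr\gamma),y(\gamma))$ inside $M$ and reads off the next block of predetermined length. Consequently $M[[h](y)]=M[\langle K_{y\restr\gamma}\restr\eta_\gamma:\gamma<\mu\rangle]$ for every branch $y$; that is, $[h](y)$ is interdefinable over $M$ with the initial blocks of the node-generics lying along $y$.

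The three conclusions should then follow from the product structure. For genericity~\ref{Q main 1}, the family $\langle K_{y\restr\gamma}\restr\eta_\gamma:\gamma<\mu\rangle$ is generic for a ${<}\mu$-support product of copies of $\Add(\mu,1)$, hence $[h](y)$ is $\Add(\mu,1)$-generic over $M$. For the final ``in fact'' clause, if $y\neq z$ and $w=y\wedge z$, then $[h](y)$ and $[h](z)$ agree on the bounded initial segment $h(w)\in M$, while for $\gamma>\lh(w)$ their blocks come from the pairwise distinct, and hence mutually generic, coordinates $K_{y\restr\gamma}\neq K_{z\restr\gamma}$; therefore the pair is $\Add(\mu,1)\times\Add(\mu,1)$-generic over $M$. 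For the quotient~\ref{Q main 2}, working in $M[[h](y)]$ the generic $\langle K_t\rangle$ has consumed only the blocks $K_{y\restr\gamma}\restr\eta_\gamma$, so $M[K]$ over $M[[h](y)]$ is (equivalent to) the product of the untouched coordinates together with the tails $K_{y\restr\gamma}\restr[\eta_\gamma,\mu)$ of the branch coordinates. This quotient is again ${<}\mu$-closed, non-atomic and of size $\mu$, hence equivalent to $\Add(\mu,1)$ by Lemma~\ref{forcing equivalent to Add(kappa,1)}, and Fact~\ref{fact} yields that $M[K]$ is an $\Add(\mu,1)$-generic extension of $M[[h](y)]$.

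The hard part will be precisely this quotient analysis, which is where Solovay's lemma breaks down in the uncountable setting (Remark~\ref{failure of Solovay's lemma}). The delicate issue is that the index set $\{y\restr\gamma:\gamma<\mu\}$ of branch coordinates depends on $y\notin M$, so the sub-product computing $[h](y)$ is not defined over $M$; one must therefore verify the factorization and the ${<}\mu$-closedness of the quotient while working in $M[[h](y)]$ (where $y$ is available) and only then feed this into the complete-embedding and quotient machinery of Fact~\ref{fact}. A secondary obstacle is reconciling the block-reading construction with an \emph{arbitrary} step function: because $s(q,\alpha)$ may be far longer than $q$, the block lengths $\eta_\gamma$ must be chosen so as not to disturb the $s$-refinement while keeping the breakpoints $M$-recoverable, and it is exactly this compatibility that secures the interdefinability $M[[h](y)]=M[\langle K_{y\restr\gamma}\restr\eta_\gamma\rangle]$ on which the whole argument rests.
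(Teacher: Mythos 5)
The central gap is the interdefinability claim $M\big[[h](y)\big]=M\big[\langle K_{y\restr(\gamma+1)}\restr\eta_\gamma:\gamma<\mu\rangle\big]$, on which both conclusions of your argument rest, and it fails for an arbitrary step function $s$. Your recovery procedure is circular: to locate the $\gamma$-th block inside $[h](y)$ you must first compute $\lh\big(s(h(y\restr\gamma),y(\gamma))\big)$, and this requires knowing $y(\gamma)$ --- which is exactly the information that $[h](y)$ does not record. For the trivial step function $s(q,\alpha)=q$ the breakpoints happen to be predetermined, but already for $s(q,\alpha)=q\conc\langle 0\rangle^{\alpha}$ one cannot tell from $[h](y)$ where the $s$-part ends and the Cohen block begins, since the block may itself start with zeros. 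Nor can you code $\alpha$ into $h(u\conc\langle\alpha\rangle)$ ahead of the $s$-part: Definition~\ref{def: s-function} forces $h(u\conc\langle\alpha\rangle)\supseteq s(h(u),\alpha)$, whose entries beyond $h(u)$ are dictated by $s$, so the code can only come after the $s$-part --- where it again cannot be located. No choice of predetermined lengths $\eta_\gamma$ repairs this, because $s$ is arbitrary and fixed before the $\eta_\gamma$ are chosen.

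The second, independent gap is the genericity transfer itself. Mutual genericity of $\langle K_t:t\in{}^{<\mu}\ddimq\rangle$ over $M$ says nothing about subfamilies indexed by a set that is not in $M$: the branch coordinates $\{y\restr\gamma:\gamma<\mu\}$ lie in $M[K]$, not in $M$ (and, by the first gap, not even in $M[[h](y)]$ in general). The lemma quantifies over \emph{all} $x\in\ran([h])$, hence over branches defined adaptively from $K$ itself, e.g.\ $y(\gamma+1):=\min\{K_{y\restr(\gamma+1)}(0),1\}$; for such $y$ the assertion that the selected blocks form a generic for the corresponding subproduct is not an instance of any product fact and must be proved by a density argument against arbitrary \emph{names} for branches. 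The same problem undermines your quotient analysis for conclusion (2): the factorization into "untouched coordinates times tails of branch coordinates" is not a forcing definable over $M[[h](y)]$, since $y$ need not belong to that model. This is precisely why the paper's proof takes a different route: it forces with the poset $\QQ$ of strict partial $s$-functions (Definition~\ref{def of the forcing}), shows $\QQ\simeq\Add(\mu,1)$, proves conclusion (1) by a density argument valid for arbitrary names for branches (Lemma~\ref{branches are Add(kappa,1) generic}, using the dense subforcings of Definition~\ref{Q* def}), and proves conclusion (2) by analyzing the quotient $\BB/\BB(\sigma)$ via the retraction (Lemma~\ref{quotient forcing of sigma 2}). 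The two steps you defer as "the hard part" are not details to be checked later; they are the entire content of the lemma, and your product set-up gives no purchase on them.
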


\begin{lemma}{\bf (Nice Quotient Lemma  for $\Col(\kappa,\lle\lambda)$)}
\label{lemma: quotient lemma - uncountable case}
The following holds in $V[G]$. 
Suppose that $\gamma<\lambda$ and
$s$ is a step-function for $\Add(\kappa,1)$.
Then there exists an $s$-function 
%$h$
$h:{}^{<\kappa}\ddim\to\Add(\kappa,1)$
such that for all $x\in \ran([h])$: 
\begin{enumerate-(1)}
\item\label{nsf 1 uncountable}
$x$ is $\Add(\kappa,1)$-generic over $V[G_\gamma]$.
%In fact, for all distinct $y,z\in{}^\kappa\ddim$,
%$[h](y)$ and $[h](z)$ are mutually $\Add(\kappa,1)$-generic over $V[G_\gamma]$.
\item\label{nsf 2 uncountable}
$V[G]$ is a 
%$\Col(\kappa,\lle\lambda)$-generic 
$\PP_\lambda$-generic
extension of $V[G_\gamma][x]$.
\end{enumerate-(1)}
In fact in~\ref{nsf 1 uncountable}, for all distinct $y,z\in{}^\kappa\ddim$,
$[h](y)$ and $[h](z)$ are mutually $\Add(\kappa,1)$-generic over $V[G_\gamma]$.
\end{lemma}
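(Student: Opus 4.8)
The plan is to derive the Nice Quotient Lemma for $\Col(\kappa,\lle\lambda)$ (Lemma~\ref{lemma: quotient lemma - uncountable case}) from the already-available Nice Quotient Lemma for $\Add(\mu,1)$ (Lemma~\ref{Q main lemma}), in close analogy with how the countable case was reduced to Solovay's lemma. The key structural observation is that the collapse $\PP_\lambda$ factors nicely around the ordinal $\gamma$. First I would recall from Corollary~\ref{forcing equivalent to the Levy collapse} and the standard factoring $\PP_\lambda\simeq\PP_\gamma\times\PP^\gamma$ (together with $\PP^\gamma$ being equivalent to $\PP_\lambda$ for inaccessible $\lambda$) that, working inside $M:=V[G_\gamma]$, the generic $G$ decomposes so that $V[G]$ is obtained from $M$ by the quotient forcing $\PP^\gamma$, which is itself equivalent to $\Col(\kappa,\lle\lambda)$. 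The idea is to first collapse via $\Add(\kappa,1)$ to get the $s$-function and genericity, and then use that $\PP_\lambda$ further collapses everything back to the full generic extension.

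The main step is to apply Lemma~\ref{Q main lemma} with $M=V[G_\gamma]$, $\mu=\kappa$, $\ddimq=\ddim$, and the given step function $s$. Note that $\kappa^{<\kappa}=\kappa$ holds in $V[G_\gamma]$ by Assumptions~\ref{proof notation}, so the hypotheses are met. To invoke the lemma I need an $\Add(\kappa,1)$-generic filter $K$ over $M=V[G_\gamma]$ lying inside $V[G]$: I would obtain this by observing that the quotient forcing for $\PP_\gamma$ in $\PP_\lambda$ — equivalently $\PP^\gamma$ — adds such a filter, since $\PP^\gamma$ collapses $\lambda$ to $\kappa^+$ and in particular adds $\Add(\kappa,1)$-generics over $V[G_\gamma]$; concretely, one picks an $\Add(\kappa,1)$-generic $K\in V[G]$ over $V[G_\gamma]$. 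Lemma~\ref{Q main lemma} then yields, in $M[K]\subseteq V[G]$, an $s$-function $h:{}^{<\kappa}\ddim\to\Add(\kappa,1)$ with the property that every $x\in\ran([h])$ is $\Add(\kappa,1)$-generic over $V[G_\gamma]$, giving clause~\ref{nsf 1 uncountable}, and moreover that distinct branches give mutually generic reals, giving the final ``in fact'' clause.

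The remaining task is clause~\ref{nsf 2 uncountable}: that $V[G]$ is a $\PP_\lambda$-generic extension of $V[G_\gamma][x]$ for each such $x$. From Lemma~\ref{Q main lemma}~\ref{Q main 2}, $M[K]=V[G_\gamma][K]$ is an $\Add(\kappa,1)$-generic extension of $V[G_\gamma][x]$. I would then chain the quotient extensions: $V[G]$ is a $\PP^\gamma$-generic extension of $V[G_\gamma]$, hence of $V[G_\gamma][x]$ once we absorb the intermediate $\Add(\kappa,1)$ step. The clean way is to note that the full $\PP^\gamma$-quotient over $V[G_\gamma][x]$ is a $\lle\kappa$-closed forcing of size $\lle\lambda$ that collapses $\lambda$ to $\kappa^+$; by Corollary~\ref{forcing equivalent to the Levy collapse} applied over the model $V[G_\gamma][x]$, this quotient is equivalent to $\PP_\lambda$, so $V[G]$ is indeed a $\PP_\lambda$-generic extension of $V[G_\gamma][x]$. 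This is the analogue of how the countable proof used Solovay's Lemma~\ref{Solovay lemma 1} to supply clause~\ref{nsf 2 omega}.

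The hard part will be verifying that the quotient forcing to pass from $V[G_\gamma][x]$ up to $V[G]$ really satisfies the hypotheses of Corollary~\ref{forcing equivalent to the Levy collapse} — in particular that it is $\lle\kappa$-closed (this is where the failure of Solovay's lemma, flagged in Remark~\ref{failure of Solovay's lemma}, bites, and where the genericity control from Lemma~\ref{Q main lemma}~\ref{Q main 2} is essential, since it guarantees the passage from $V[G_\gamma][x]$ to $M[K]$ is itself via the well-behaved forcing $\Add(\kappa,1)$ rather than an arbitrary non-$\lle\kappa$-closed quotient). Once the factorization $V[G]$ over $V[G_\gamma][x]$ is seen to be of the form ``$\Add(\kappa,1)$ followed by a $\lle\kappa$-closed collapse of size $\lle\lambda$'', absorbing the $\Add(\kappa,1)$ into the $\lle\kappa$-closed collapse and applying Corollary~\ref{forcing equivalent to the Levy collapse} finishes the argument. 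All the genuinely new forcing-theoretic content is thus isolated in Lemma~\ref{Q main lemma}, whose proof is deferred to the next subsection, so the present lemma is essentially a bookkeeping reduction via the factoring properties of the Lévy collapse.
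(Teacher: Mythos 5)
Your reduction has the right skeleton --- the lemma should indeed follow from Lemma~\ref{Q main lemma} together with factoring properties of the L\'evy collapse --- but the way you instantiate Lemma~\ref{Q main lemma} leaves the essential point unproved. You apply it over $M=V[G_\gamma]$ with an \emph{arbitrary} $\Add(\kappa,1)$-generic filter $K\in V[G]$ over $V[G_\gamma]$. The lemma then controls only the bottom segment of the tower $V[G_\gamma][x]\subseteq V[G_\gamma][K]\subseteq V[G]$: it gives that $V[G_\gamma][K]$ is an $\Add(\kappa,1)$-generic extension of $V[G_\gamma][x]$, but it says nothing about the quotient of $V[G]$ over $V[G_\gamma][K]$, which for an arbitrary $K$ is completely uncontrolled. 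Indeed it can be genuinely bad: the two-step iteration of Remark~\ref{failure of Solovay's lemma} (add a stationary $S\subseteq\kappa$, then shoot a club through its complement) has a dense ${\lle}\kappa$-closed subset of size $\kappa$ and is therefore equivalent to $\Add(\kappa,1)$ by Lemma~\ref{forcing equivalent to Add(kappa,1)}, so one can factor $\PP^\gamma\simeq(\Add(\kappa,1)\ast\dot\QQ)\times\PP_\lambda$ and take $K$ to be the generic for the first $\Add(\kappa,1)$ step; then $S\in V[G_\gamma][K]$ is stationary in $V[G_\gamma][K]$ but non-stationary in $V[G]$, whence $V[G]$ is \emph{not} a $\PP_\lambda$-generic extension of $V[G_\gamma][K]$ (as $\PP_\lambda$ is ${\lle}\kappa$-closed, hence preserves stationary subsets of $\kappa$). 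So your key assertion --- that the quotient of $V[G]$ over $V[G_\gamma][x]$ is a ${\lle}\kappa$-closed forcing of size ${\lle}\lambda$ collapsing $\lambda$, so that Corollary~\ref{forcing equivalent to the Levy collapse} applies --- is not a ``hard part to be verified'' but an unsupported claim: with your choice of $K$ the factorization you rely on passes through a model over which $V[G]$ provably need not be a nice extension, and nothing in Lemma~\ref{Q main lemma} repairs this. (The conclusion might still hold for those particular $x$, but your argument does not establish it.)

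There is a second, smaller gap at the very start: Lemma~\ref{Q main lemma} requires the step function to be an element of the model over which it is applied, and it produces an $s$-function only for \emph{that} step function. The $s$ in the present lemma is an arbitrary step function in $V[G]$ and in general $s\notin V[G_\gamma]$, so the lemma cannot be applied over $V[G_\gamma]$ with the given $s$ at all (checking $\kappa^{<\kappa}=\kappa$ in $V[G_\gamma]$ does not address this). The paper's proof handles both problems at once: by the $\lambda$-c.c.\ it fixes $\alpha\in[\gamma,\lambda)$ with $s\in V[G_\alpha]$, factors $\PP^\alpha\simeq\Add(\kappa,1)\times\PP_\lambda$ to write $V[G]=V[G_\alpha\times K\times J]$, and applies Lemma~\ref{Q main lemma} over the \emph{large} model $M:=V[G_\alpha\times J]$, so that $M[K]$ is all of $V[G]$ and there is no uncontrolled top segment. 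The lemma then yields directly that $V[G]=M[K]$ is an $\Add(\kappa,1)$-generic extension of $M[x]$, while by mutual genericity $M[x]=V[G_\gamma][x][G_{[\gamma,\alpha)}\times J]$ is a $\PP_{[\gamma,\alpha)}\times\PP_\lambda$-generic extension of $V[G_\gamma][x]$, and $\PP_{[\gamma,\alpha)}\times\PP_\lambda\times\Add(\kappa,1)\simeq\PP_\lambda$ by Corollary~\ref{forcing equivalent to the Levy collapse}. Your approach could be repaired in the same spirit by working over $V[G_\alpha]$ and taking $K$ to be the canonical $\Add(\kappa,1)$-factor of $G^\alpha$ rather than an arbitrary generic, so that the top quotient is $\PP_\lambda$ by construction; the choice of the base model and of the \emph{particular} generic $K$ is precisely where the content of the lemma lies.
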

\begin{proof}
\label{proof: quotient lemma uncountable case} 
We rearrange $V[G]$ as an $\Add(\kappa,1)$-generic extension of a model $M$. 
Fix some $\alpha\in[\gamma,\lambda)$ with $s\in V[G_\alpha]$ by the $\lambda$-c.c. 
%Since $\PP_\lambda$ has the $\lambda$-c.c., there exists
%$\alpha\in[\gamma,\lambda)$ such that $s\in V[G_\alpha]$.
%
Since $\PP^\alpha$ is equivalent to $\Add(\kappa,1)\times\PP_\lambda$,\footnote{See Corollary~\ref{forcing equivalent to the Levy collapse}.} 
we write 
$$V[G]=V[G_\alpha\times K\times J],$$ 
where is $K\times J$ is an $\Add(\kappa,1)\times\PP_\lambda$-generic filter over $V[G_\alpha]$. 
Then $V[G]$ is an $\Add(\kappa,1)$-generic extension of $M:=V[G_\alpha\times J]$. 

Note that $s$ is a step-function in $M$, 
since this concept is absolute between transitive $\ZFC$-models with the same ${}^{<\kappa}\kappa$.
Fix an $s$-function $h$ in $V[G]$
as in the previous lemma.
%such that Lemma~\ref{Q main lemma}~\ref{Q main 1}~\ref{Q main 2} hold for all $x\in \ran([h])$. 
%
Then each $x\in\ran([h])$ is $\Add(\kappa,1)$-generic over $V[G_\gamma]$, since $V[G_\gamma]\subseteq M$. 
It remains to see that $V[G]=M[K]$ is a 
$\PP_\lambda$-generic extension of $V[G_\gamma][x]$.
Recall that $M[K]$ is an $\Add(\kappa,1)$-generic extension of $M[x]$. 
%by Lemma~\ref{Q main lemma}~\ref{Q main 2}. 
Moreover, 
$$M[x]=V[G_\alpha\times J][x]=V[G_\gamma][x][ G_{[\gamma,\alpha)}\times J]$$ 
is a $\PP_{[\alpha,\gamma)}\times\PP_\lambda$-generic extension of  $V[G_\gamma][x]$. 
Since $\PP_{[\alpha,\gamma)}\times\PP_\lambda\times\Add(\kappa,1)$ is equivalent to $\PP_\lambda$,\footnote{See Corollary~\ref{forcing equivalent to the Levy collapse}.} 
this suffices. 
\end{proof} 

\begin{remark}
\label{the f constructed is injective remark 1}
In the uncountable case only, the proof yields the following stronger version of Theorem~\ref{main theorem}. 
For a given $\ddim$-dihypergraph $I$ on ${}^\kappa\kappa$,
let 
$\ODDI\kappa I$ denote the statement:  
\begin{quotation}
$\ODDI\kappa I$: Either $I$ has a $\kappa$-coloring, or 
there exists 
an \emph{injective} continuous homomorphism
from~$\dhH{\ddim}$ to~$I$.%
\footnote{See Definition~\ref{def: ODD variants} below.}
\end{quotation}
We claim that
if $\kappa<\lambda$ are regular uncountable cardinals and $2\leq\ddim\leq\kappa$,
then the following hold in all $\PP_\lambda$-generic extensions $V[G]$:
\begin{enumerate-(1)}
\item\label{the f constructed is injective remark 1 1}
$\ODDI\kappa\ddim(\defsetsk,\defsetsk)$ if $\lambda$ is inacessible.%
\footnote{The definitions of 
$\ODDI\kappa\ddim(\defsetsk,\defsetsk)$ and
$\ODDI\kappa\ddim(\defsetsk)$  
are analogous to the ones in Definition~\ref{def: ODD for classes}.
Note that \ref{the f constructed is injective remark 1 1} implies $\ODDI\kappa\ddim(\defsetsk)$ for all $\ddim$ with $2\leq\ddim<\kappa$ by Lemma~\ref{<kappa dim hypergraphs are definable}.}
\item\label{the f constructed is injective remark 1 2}
$\ODDI\kappa\ddim(\defsetsk)$ if $\lambda$ is Mahlo.
\end{enumerate-(1)}

To prove this, recall that after the Nice Quotient Lemma \ref{lemma: quotient lemma}, 
%Lemma \ref{lemma: quotient lemma}, 
we defined a continuous homomorphism $f$ from $\dhH\ddim$ to $H\restr X$ by letting $f(x):=\nna^{[h](x)}$, where $\gamma$ and $\nna$ are as in Lemma~\ref{lemma 3c m}. 
In particular, note that $\nna$ is a name in $V[G_\gamma]$ for a new element of  ${}^\kappa\kappa$.
It remains to show that $f$ is injective. Let $x,y$ be distinct elements of ${}^\kappa\ddim$, 
let $x':=[h](x)$ and $y':=[h](y)$.
Then $f(x)=\nna^{x'}\in V[G_\gamma][x'] - V[G_\gamma]$
and
$f(y)=\nna^{y'}\in V[G_\gamma][y'] - V[G_\gamma]$.
Since
$x'$ and $y'$ are mutually generic over $V[G_\gamma]$,
we have
$V[G_\gamma][x']\cap V[G_\gamma][y']=V[G_\gamma]$.
Therefore
%$f(x)=\nna^{x'}\neq\nna^{y'}=f(y)$.
$f(x)\neq f(y)$.

Theorem~\ref{theorem: ODD ODDI}
and  Corollary~\ref{main theorem strong version} 
below provide an alternative proof using a weak variant of the $\Diamond$ principle. 
\end{remark}

\begin{remark} 
Note that the Nice Quotient Lemmas~\ref{Q main lemma} and \ref{lemma: quotient lemma - uncountable case} both fail in the countable case, 
since $\ODDI\omega\omega(\defsets\omega,\defsets\omega)$ fails by Proposition \ref{ODDI fails for D omega} below. 
While Lemma~\ref{lemma: quotient lemma - uncountable case} does hold without the last condition on mutual genericity by Lemma \ref{lemma: quotient lemma - countable case}, we do not know if this is the case for Lemma~\ref{Q main lemma}. 
\end{remark}

%%%%%%%%%%%%%%
\subsubsection{The Nice Quotient Lemma for \texorpdfstring{$\Add(\mu,1)$}{Add(mu,1)}} 
\label{subsection: the forcing Q}

This subsection is dedicated to the proof of Lemma~\ref{Q main lemma}. 
We make the same assumptions as in this lemma
throughout the subsection. 
In particular,
$M$ is a transitive class model of $\ZFC$. 
In $M$, $\mu$ is an uncountable cardinal with $\mu^{<\mu}=\mu$, 
\todog{we need to assume that $\mu^{<\mu}=\mu$ holds in $M$ e.g. to have $|\QQ|=\mu$}
$2\leq\ddimq\leq \mu$ and 
$s:\Add(\mu,1)\times\ddimq\to\Add(\mu,1)$
is a step function for $\Add(\mu,1)$.%
\footnote{See Definition~\ref{def: step function}}

Our proof uses ideas from~\cite{SchlichtPSPgames}.
We first give a short description of the proof.
Working in $M$, we define a forcing $\QQ$ whose domain contains
\todog{I wrote ``whose domain contains'' because: the partial order on $\QQ$ is unusual; also, $\QQ$ consists of \emph{strict} partial $s$-functions only}
partial functions of size $\lle\mu$ approximating 
an $s$-function.%
\footnote{See Definition~\ref{def: s-function}.}
We first prove that
$\QQ$ is equivalent to $\Add(\mu,1)$. 
We then show that
if $h$ is the $s$-function added in the natural way by a given $\QQ$-generic filter, 
then $h$ satisfies the requirements in Lemma~\ref{Q main lemma}.
A significant portion of the arguments in this subsection are aimed at proving the second requirement %\ref{Q main 2} 
that elements of $\ran([h])$ have well-behaved quotients.

We now present the detailed proof.
We argue either in $M$ or in a $\QQ$-generic extension of $M$ throughout this subsection. 

Recall that an $s$-function for $\Add(\mu,1)$ is a strict order preserving
map $h:{}^{<\mu}\ddimq\to{}^{<\mu}\mu$%
\footnote{Note that ${}^{<\mu}\mu$ is $\Add(\mu,1)$ with the ordering reversed.}
which is ``built along'' $s$ in 
the sense of Definition~\ref{def: s-function}.
In \ref{psf1} of the next definition, we consider partial functions 
%from ${}^{<\mu}\ddimq$ to ${}^{<\mu}\mu$
with the same property.
Our forcing $\QQ$ will consist of 
such partial $s$-functions of size $\lle\mu$ 
with the additional technical requirement of strictness defined in \ref{psf2} 
that is used to ensure $\QQ$ is separative. 
\todog{$\supsetneq$ (instead of $\supseteq$) in 
%Def~\ref{def: partial s-function}~\ref{psf1} 
the definition of ``partial $s$-function''
is also needed only to ensure separativity}

\begin{definition}\ 
\label{def: partial s-function}
\begin{enumerate-(a)}
\item\label{psf1}
\index{function!partial s-function@partial $s$-function\idf}%
A \emph{partial $s$-function} is a
strict order preserving \todog{the requirement in the equation below does not guarantee that $p$ is order preserving at limit lengths}
partial function 
$p:{}^{<\mu}\ddimq\partialto{}^{<\mu}\mu$ 
%$p$ from ${}^{<\mu}\ddimq$ to ${}^{<\mu}\mu$ 
such that 
$\dom(p)$ is a subtree of ${}^{<\mu}\ddimq$, and
if $u,u\conc\langle \alpha\rangle\in\dom(p)$, then
\[
p\big(u\conc\langle\alpha\rangle\big)
\supsetneq 
s\big(p(u), \alpha\big).
\]
\item\label{psf2}
A partial $s$-function $p$ is \emph{strict} 
\index{function!strict partial $s$-function\idf} 
if for all $t\in\dom(p)$ with $\lh(t)\in\Lim$,\todog{``strict order preserving'' already implies this requirement for $\lh(t)\in\Succ$}  
we have
\[p(t)\supsetneq \bigcup_{u\subsetneq t} p(u).\]
\end{enumerate-(a)}
\end{definition}

We now define the forcing $\QQ$ that will be used in our proof of the Nice Quotient 
Lemma~\ref{Q main lemma} for $\Add(\mu,1)$.
It is embedded into a poset $\mpo$ that will be useful as well.

\begin{definition}\ 
\label{def of the forcing}
\begin{enumerate-(a)}
\item\label{mpo def}
\index{poset of strict partial $s$-functions!all\idf$\mpo$}%
$\mpo$ consists of all strict partial $s$-functions.
\item\label{QQ def}
\index{poset of strict partial $s$-functions!with domain of size $<\mu$\idf$\QQ$}%
\index{forcing!of strict partial $s$-functions\idf$\QQ$}%
$\QQ$ consists of all $q\in\mpo$ with $\left|\dom(q)\right|<\mu$.
\item\label{ordering def}
We equip both $\QQ$ and $\mpo$ with the partial order $\mleq$ defined by letting 
$p\mleq q$ 
if the following conditions hold:
\begin{enumerate-(i)}
\item
$\dom (q)\subseteq\dom (p)$. 
\item
$q(t)=p(t)$ for every non-terminal node $t$ of $\dom(q)$.
\item
$q(t)\subseteq p(t)$ for every terminal node $t$ of $\dom(q)$.
\end{enumerate-(i)}
\end{enumerate-(a)}
\end{definition}

Thus, a condition $q$ is extended by moving its topmost values upwards and end-extending the tree $\dom(q)$. 
Note that the definition of $\QQ$ is absolute to transitive models 
$N\supseteq M$ of $\ZFC$ with $({}^{<\mu}\mu)^M=({}^{<\mu}\mu)^N$. 
In detail, since $\QQ$ is $\lle\mu$-closed by Lemma~\ref{Q equivalent to Add(kappa,1)} below, the definition of $\QQ$ is absolute between $M$ and $\QQ$-generic extensions of $M$.
However, 
%the definition of $\mpo$ may not be absolute to such models $N$.
%For example, 
$\mpo^N$ may contain some 
total $s$-functions 
$h:{}^{<\mu}\ddimq\to{}^{<\mu}\mu$
that are not in $M$.

We now discuss some properties of $\mpo$ which will be useful in this subsection.
The next lemma 
gives an equivalent reformulation 
of the definition of strict partial $s$-functions. 
%(i.e., of the definition of~$\mpo$).
%This reformulation will be convenient in several arguments in this subsection.
\todog{Both the next claim and lemma will be needed right away} 

\begin{lemma}
\label{def of the ordering equiv}
Let $q:{}^{<\mu}\ddimq\partialto{}^{<\mu}\mu$ be a 
\todog{the assumption that $q$ is strict order preserving is not needed}
\todog{$\supsetneq$ (instead of $\supseteq$) in \eqref{Q equiv eq} is needed for $\QQ$ to be separative; see the last paragraph of the proof of Lemma~\ref{Q separative}.\\
This is why $\supsetneq$ was needed in the def of ``(partial) $s$-function'' and also why the partial $s$-functions in $\QQ$ need to be strict}
partial function whose domain %$\dom(q)$ 
is a subtree of~${}^{<\mu}\ddimq$. 
Then 
%$q\in\mpo$
$q$ is a strict partial $s$-function (i.e., $q\in\mpo$)
if and only if 
for all $t\in\dom(q)$,
\begin{equation}
\label{Q equiv eq}
q(t)\supsetneq \bigcup_{\alpha< \lh(t)} s\big(q(t\restr\alpha), t(\alpha) \big). 
\end{equation}
\end{lemma}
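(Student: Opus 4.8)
The plan is to prove both directions of the equivalence by unwinding the definitions, with the main content being a clean reformulation of the defining inequalities of a strict partial $s$-function as the single displayed inequality \eqref{Q equiv eq}. The statement is genuinely a restatement lemma, so the proof should be a careful bookkeeping argument rather than anything deep; the subtle point to respect throughout is that all inclusions are \emph{strict} ($\supsetneq$ rather than $\supseteq$), which is exactly why strictness was built into the definition.

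First I would prove the forward direction. Suppose $q\in\mpo$, i.e.\ $q$ is a strict partial $s$-function, and fix $t\in\dom(q)$. I would argue by induction on $\lh(t)$ that \eqref{Q equiv eq} holds, distinguishing the successor and limit cases. If $\lh(t)\in\Succ$, write $t=u\conc\langle\alpha\rangle$; then $\alpha=t(\lh(u))$ and the successor clause of Definition~\ref{def: partial s-function}~\ref{psf1} gives $q(t)\supsetneq s(q(u),\alpha)=s\big(q(t\restr\lh(u)),t(\lh(u))\big)$. Since $q$ is strict order preserving, $q(u)=q(t\restr\lh(u))$ is an initial segment of $q(t\restr\beta)$ is handled by noting that the sup over $\beta<\lh(t)$ of the nodes $s(q(t\restr\beta),t(\beta))$ is achieved (as a union of a chain of initial segments) at the topmost index $\lh(u)$, so the union on the right of \eqref{Q equiv eq} equals $s(q(u),\alpha)$ and the strict inclusion is immediate. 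If $\lh(t)\in\Lim$, the strictness clause \ref{psf2} gives $q(t)\supsetneq\bigcup_{u\subsetneq t}q(u)$, and I would combine this with the successor estimate applied to each proper initial segment $t\restr(\beta+1)$ of $t$ to show $\bigcup_{\beta<\lh(t)}s(q(t\restr\beta),t(\beta))\subseteq\bigcup_{u\subsetneq t}q(u)\subsetneq q(t)$, giving \eqref{Q equiv eq}.

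For the converse I would assume \eqref{Q equiv eq} holds for all $t\in\dom(q)$ (with $\dom(q)$ a subtree) and recover the three defining properties: strict order preservation, the successor inequality of \ref{psf1}, and the limit strictness of \ref{psf2}. The successor inequality is read off directly by taking $t=u\conc\langle\alpha\rangle$ and isolating the top term $s(q(u),\alpha)$ of the union in \eqref{Q equiv eq}, exactly as above. The limit strictness \ref{psf2} follows because for $\lh(t)\in\Lim$ each proper initial segment $u=t\restr\beta$ satisfies $q(u)\subseteq s(q(u),\ldots)\subseteq$ the union, so $\bigcup_{u\subsetneq t}q(u)$ is contained in the right-hand side of \eqref{Q equiv eq} and hence strictly below $q(t)$. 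The only genuinely nontrivial point is \emph{strict order preservation} at limits, which \eqref{Q equiv eq} does not hand to us for free along arbitrary pairs; I would establish $u\subsetneq t\Rightarrow q(u)\subsetneq q(t)$ by induction on $\lh(t)$, using that $s$ is a step function (so $s(q(u),\alpha)\subseteq q(u)$ fails—rather $s(p,\alpha)\leq p$ means $q(u)\subseteq s(q(u),\alpha)$ under reverse-inclusion ordering) together with \eqref{Q equiv eq} to chain the strict inclusions through successor and limit stages.

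I expect the main obstacle to be precisely this last comparison-of-orderings bookkeeping: the forcing order reverses inclusion (conditions are ${}^{<\mu}\mu$ viewed as $\Add(\mu,1)$ with the order reversed), so a step function satisfies $s(p,\alpha)\leq p$, which translates into $p\subseteq s(p,\alpha)$ as sequences, and one must be scrupulous about the direction of every $\subseteq$/$\supseteq$ so that the strict inclusions in \eqref{Q equiv eq} propagate correctly and actually yield strict order preservation rather than merely order preservation. Once the direction conventions are pinned down and the successor/limit induction is set up, the remaining steps are routine unfoldings of Definition~\ref{def: partial s-function} and Definition~\ref{def: step function}, so the write-up should be short.
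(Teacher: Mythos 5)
Your proposal is correct and follows essentially the same route as the paper's proof: the forward direction is the same successor/limit case analysis (in the successor case $t=u\conc\langle\beta\rangle$ the terms $s\big(q(t\restr\alpha),t(\alpha)\big)$ form a chain with top term $s(q(u),\beta)\subsetneq q(t)$, and in the limit case the union is absorbed into $\bigcup_{u\subsetneq t}q(u)\subsetneq q(t)$ via strictness), and the converse reads the defining properties off \eqref{Q equiv eq} together with the step-function inequality $q(u)\subseteq s(q(u),\alpha)$. The only difference is cosmetic: your induction for strict order preservation is unnecessary, since for any $u\subsetneq t$ one has directly $q(u)\subseteq s\big(q(u),t(\lh(u))\big)\subseteq\bigcup_{\alpha<\lh(t)}s\big(q(t\restr\alpha),t(\alpha)\big)\subsetneq q(t)$, which is how the paper disposes of it in one line.
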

\begin{proof}
First suppose that 
$q$ is a strict partial $s$-function 
%$q\in\mpo$, 
and $t\in\dom(q)$.
If $\lh(t)\in\Lim$, then 
\[
q(t)\supsetneq \bigcup_{\alpha< \lh(t)} q(t\restr\alpha)=
\bigcup_{\alpha< \lh(t)} q(t\restr(\alpha+1))\supseteq
\bigcup_{\alpha< \lh(t)} s\big(q(t\restr\alpha), t(\alpha)\big).
\]
The inclusion on the left 
holds since $q$ is strict, while the 
inclusion on the right holds 
since $q$ is a partial $s$-function. 
Now assume
$\lh(t)\in\Succ$ and $t=u\conc\langle\beta\rangle$.
Since $s$ is a step function
and
$q$ is a partial $s$-function,
we have 
\smallskip 
\begin{equation*}
\begin{gathered}
%\begin{align*}
 q(u)\subseteq s(q(u),\beta)\subsetneq q(t),  
\\[4pt] 
 s\big(q(t\restr\alpha), t(\alpha)\big)\subsetneq
q(t\restr(\alpha+1))
\subseteq q(u)
%\end{align*}
\smallskip 
\end{gathered}
\end{equation*}
for all $\alpha<\lh(u)$.
Therefore
$\bigcup_{\alpha< \lh(t)} s\big(q(t\restr\alpha), t(\alpha)\big)=
s(q(u),\beta)\subsetneq q(t)$. 

Conversely,~\eqref{Q equiv eq} implies immediately that $q$ is a strict partial $s$-function. 
In particular, $q$ is strict order preserving since $s$ is a step-function.
\end{proof}

Recall that $K\subseteq\mpo$ is a \emph{downward directed} 
\index{downward directed subset\idf}
subset of the poset $\mpo$ if for every $p,q\in K$, there exists $r\in K$ with $r\mleq p,q$.
In particular, any $\QQ$-generic filter $K$ over $M$ is a downward directed subset
of $\mpo^{M[K]}$.
We next define $\mbigcap K\in \mpo$ for any downward directed 
subset $K$ of $\mpo$. This will turn out to equal the infimum in relevant cases.

\begin{definition}
\label{mbigcap def}
Suppose $K$ is a downward directed subset of $\mpo$.
\index{meet of!strict partial $s$-functions\idf$\mbigcap K$}
Let $\mbigcap K$ denote the partial function from ${}^{<\mu}\ddimq$ to ${}^{\leq\mu}\mu$ defined as follows:
\smallskip 
\begin{equation*}
\begin{gathered}
%\begin{align*}
%&
 \dom(\mbigcap K\big):=\bigcup\{\dom(q):q\in K\},
\\[4pt]
%& \ 
 (\mbigcap K) (t):=\bigcup\{q(t):{q\in K,\,t\in\dom(q)}\}.
%\end{align*}
\end{gathered}
\end{equation*}
%{for all $t\in\dom(\mbigcap K\big)$.}
\end{definition}

Note that the definition of $\mbigcap K$ 
is absolute between transitive models $N\subseteq N'$ of $\ZFC$ with $K\in N$. \todog{This might not be needed, but it's interesting to note that the infimum of a subset $K\in M$ of $\QQ$ will be the same in $\mpo^M$ and $\mpo^{M[K]}$}
The next lemma shows that $\mbigcap K$ is indeed the infimum of $K$ in~$\mpo$ in all relevant cases. 

\begin{lemma}
\label{mbigcap facts}
Let $K$ be a downward directed subset of $\mpo$.
\begin{enumerate-(1)}
\item \label{mbigcap fact 1}
If $q\in K$ and $t$ is a non-terminal node of $\dom(q)$, then 
$(\mbigcap K)(t)=q(t)$.
\item \label{mbigcap fact 2}
Suppose that for any $t\in \dom(\mbigcap K)$, there is some $p\in K$ such that $t$ is a non-terminal node in $\dom(p)$. 
Then $\ran(\mbigcap K) \subseteq {}^{<\mu}\mu$. 
\todog{The assumption that $|K|<\mu$ also implies that $\ran(\mbigcap K) \subseteq {}^{<\mu}\mu$. However, this will not be needed. \ref{mbigcap fact 4} will be enough.}
\item \label{mbigcap fact 3}
If $\ran(\mbigcap K) \subseteq {}^{<\mu}\mu$, then $\mbigcap K\in\mpo$, and 
$\mbigcap K$
is the infimum of $K$ in $\mpo$.
\item \label{mbigcap fact 4} If $K$ is a downward directed subset of $\QQ$ of size $\lle\mu$, then $\mbigcap K\in\QQ$ and $\mbigcap K$ is the infimum of $K$ in $\QQ$. 
\end{enumerate-(1)}
\end{lemma}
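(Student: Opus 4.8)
The plan is to prove the four items of Lemma~\ref{mbigcap facts} in the order \ref{mbigcap fact 1}, \ref{mbigcap fact 2}, \ref{mbigcap fact 3}, \ref{mbigcap fact 4}, since each later item leans on the earlier ones. Throughout I fix a downward directed subset $K$ of $\mpo$ and write $q_K:=\mbigcap K$ for brevity. The overall strategy is to unwind Definition~\ref{mbigcap def} using downward directedness of $K$ and the coherence conditions in Definition~\ref{def of the forcing}~\ref{ordering def}, then invoke the reformulation of strictness in Lemma~\ref{def of the ordering equiv} to verify $q_K\in\mpo$, and finally check the universal property of the infimum.

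For~\ref{mbigcap fact 1}, let $q\in K$ and let $t$ be a non-terminal node of $\dom(q)$. Given any $p\in K$ with $t\in\dom(p)$, downward directedness yields $r\in K$ with $r\mleq p,q$. Since $t$ is non-terminal in $\dom(q)$ and $\dom(q)\subseteq\dom(r)$, some successor $t\conc\langle\alpha\rangle$ of $t$ lies in $\dom(r)$, hence $t$ is non-terminal in $\dom(r)$ as well; I also need $t$ non-terminal in $\dom(p)$, which follows because $r\mleq p$ forces $r(t)=p(t)$ only at non-terminal nodes of $\dom(p)$, so I argue instead directly from the compatibility conditions: $r\mleq q$ gives $r(t)=q(t)$ (as $t$ is non-terminal in $\dom(q)$), and $r\mleq p$ gives $r(t)\subseteq p(t)$. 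Combining, $q(t)=r(t)\subseteq p(t)$, and symmetrically whichever of $p,q$ is non-terminal at $t$ determines the common value; taking the union over all such $p$ in Definition~\ref{mbigcap def} collapses to $q(t)$. Thus $q_K(t)=q(t)\in{}^{<\mu}\mu$.

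For~\ref{mbigcap fact 2}, let $t\in\dom(q_K)$ and take $p\in K$ with $t$ non-terminal in $\dom(p)$; by~\ref{mbigcap fact 1}, $q_K(t)=p(t)\in{}^{<\mu}\mu$, so $\ran(q_K)\subseteq{}^{<\mu}\mu$. For~\ref{mbigcap fact 3}, assume $\ran(q_K)\subseteq{}^{<\mu}\mu$. I first verify $\dom(q_K)$ is a subtree (a union of subtrees) and that $q_K$ is a strict partial $s$-function by checking~\eqref{Q equiv eq} of Lemma~\ref{def of the ordering equiv} at each $t\in\dom(q_K)$: fix $p\in K$ with $t\in\dom(p)$; then $q_K(t\restr\alpha)=p(t\restr\alpha)$ for $\alpha\leq\lh(t)$ at the relevant non-terminal nodes via~\ref{mbigcap fact 1}, so~\eqref{Q equiv eq} for $q_K$ reduces to~\eqref{Q equiv eq} for $p\in\mpo$. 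Hence $q_K\in\mpo$. To see $q_K$ is the infimum, I check $q_K\mleq q$ for all $q\in K$ by verifying the three clauses of Definition~\ref{def of the forcing}~\ref{ordering def} (using~\ref{mbigcap fact 1} for the non-terminal clause and the union definition for the terminal clause), and that any lower bound $r\mleq q$ for all $q\in K$ satisfies $r\mleq q_K$ by comparing values node-by-node. Finally~\ref{mbigcap fact 4} follows from~\ref{mbigcap fact 3}: if $K\subseteq\QQ$ is downward directed of size $\leq\mu$, then $\dom(q_K)=\bigcup_{q\in K}\dom(q)$ has size $<\mu$ since it is a union of $\leq\mu$ sets each of size $<\mu$ and $\mu^{<\mu}=\mu$ is regular; the hypothesis of~\ref{mbigcap fact 2} holds because directedness lets me extend any $t\in\dom(q_K)$ to a strictly longer node inside some single $p\in K$, making $t$ non-terminal there, so $\ran(q_K)\subseteq{}^{<\mu}\mu$ and~\ref{mbigcap fact 3} applies; the infimum computed in $\mpo$ lies in $\QQ$ and is therefore also the infimum in $\QQ$.

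The main obstacle I anticipate is the bookkeeping in~\ref{mbigcap fact 1} and~\ref{mbigcap fact 3} around \emph{terminal versus non-terminal} nodes: the ordering $\mleq$ only pins down values exactly at non-terminal nodes and merely records an end-extension $\subseteq$ at terminal nodes, so I must repeatedly use directedness to promote a terminal node of one condition to a non-terminal node of a common lower bound before I can equate values. Getting the quantifiers right here — and confirming that the union in Definition~\ref{mbigcap def} genuinely stabilizes to a single value rather than producing a properly longer (possibly length-$\mu$) sequence — is the delicate point, which is precisely why the separating hypothesis in~\ref{mbigcap fact 2} (or the size bound in~\ref{mbigcap fact 4}) is needed to guarantee $\ran(q_K)\subseteq{}^{<\mu}\mu$.
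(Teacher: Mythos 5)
Your architecture follows the paper's proof — \ref{mbigcap fact 1} via directedness, \ref{mbigcap fact 2} from \ref{mbigcap fact 1}, \ref{mbigcap fact 3} by checking \eqref{Q equiv eq} through Lemma~\ref{def of the ordering equiv} and then verifying the infimum property, \ref{mbigcap fact 4} by reduction to \ref{mbigcap fact 3} — but two steps are wrong as written, and the second is a genuine gap. First, in \ref{mbigcap fact 1} you have the ordering backwards. By Definition~\ref{def of the forcing}~\ref{ordering def}, extension \emph{end-extends} values: $r\mleq p$ gives $p(t)\subseteq r(t)$ (with equality when $t$ is non-terminal in $\dom(p)$), not $r(t)\subseteq p(t)$ as you claim. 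The correct chain is $p(t)\subseteq r(t)=q(t)$, which is exactly what you need: every $p\in K$ with $t\in\dom(p)$ satisfies $p(t)\subseteq q(t)$, so the union defining $(\mbigcap K)(t)$ is contained in $q(t)$, and it contains $q(t)$ trivially. Your chain $q(t)=r(t)\subseteq p(t)$ points the wrong way; from $q(t)\subseteq p(t)$ for all $p$ one cannot conclude that the union ``collapses to $q(t)$'' — a priori it could be strictly larger.

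Second, and more seriously, your proof of \ref{mbigcap fact 4} fails. You claim that directedness of $K$ guarantees the hypothesis of \ref{mbigcap fact 2}, i.e.\ that every $t\in\dom(\mbigcap K)$ is non-terminal in $\dom(p)$ for some $p\in K$. This is false: take $K=\{q\}$ a singleton (downward directed, of size ${<}\mu$) and $t$ a terminal node of $\dom(q)$; no member of $K$ has $t$ as a non-terminal node. Directedness only provides common lower bounds \emph{inside} $K$; it does not let you extend nodes, since $K$ need not be dense or generic in any sense. (The analogous claim \emph{is} automatic for non-terminal nodes of $\dom(\mbigcap K)$, which is all that the infimum verification in \ref{mbigcap fact 3} needs, but \ref{mbigcap fact 2} requires it for terminal nodes too.) The paper therefore proves \ref{mbigcap fact 4} without \ref{mbigcap fact 2}: since $|K|<\mu$ and $\mu$ is regular, each value $(\mbigcap K)(t)$ is a union of fewer than $\mu$ pairwise compatible sequences in ${}^{<\mu}\mu$ (compatible because any two members of $K$ have a common extension in $K$), hence itself lies in ${}^{<\mu}\mu$; together with $|\dom(\mbigcap K)|<\mu$ this verifies the hypotheses of \ref{mbigcap fact 3} directly and shows $\mbigcap K\in\QQ$. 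Note also that the hypothesis of \ref{mbigcap fact 4} is $|K|<\mu$, not $|K|\leq\mu$ as you write: with a chain of length $\mu$ in $\QQ$ whose domains (or whose values at a fixed terminal node) strictly increase, both $|\dom(\mbigcap K)|<\mu$ and $\ran(\mbigcap K)\subseteq{}^{<\mu}\mu$ can fail, so your cardinality inference ``a union of $\leq\mu$ sets each of size $<\mu$ has size $<\mu$'' is not valid.
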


\begin{proof}
For \ref{mbigcap fact 1}, 
%suppose that $q\in K$ and $t$ is a non-terminal node of $\dom(q)$. 
note that $q(t)\subseteq (\mbigcap K)(t)$.
Conversely, if $p\in K$ and $t\in\dom(p)$, then there exists $r\in K$ such that $r\mleq p,q$ and therefore
$p(t)\subseteq r(t)=q(t)$. 
Hence $(\mbigcap K)(t)\subseteq q(t).$ 

For \ref{mbigcap fact 2}, let $t\in \dom(\mbigcap K)$ and 
take $p\in K$ such that $t$ is a non-terminal node in $\dom(p)$. 
Then $(\mbigcap K) (t)=p(t)\in {}^{<\mu}\mu$ by \ref{mbigcap fact 1}. 

For \ref{mbigcap fact 3},
we first claim that $\mbigcap K\in\mpo$.
Since $\dom(\mbigcap K)$ is a subtree of ${}^{<\mu}\ddimq$,
it suffices to show~\eqref{Q equiv eq} for $\mbigcap K$
by the previous lemma.
%by Lemma~\ref{def of the ordering equiv}.
Let $t\in\dom(\mbigcap K)$ and take $p\in K$ with $t\in\dom(p)$.
Then $t\restr\alpha$ is a non-terminal node of $\dom(p)$ for all $\alpha<\lh(t)$,  
so $(\mbigcap K)(t\restr\alpha)=p(t\restr\alpha)$ by \ref{mbigcap fact 1}. 
Since~\eqref{Q equiv eq} holds for $p$,
\[
\bigcup_{\alpha< \lh(t)} s\big((\mbigcap K)(t\restr\alpha), t(\alpha) \big)=
\bigcup_{\alpha< \lh(t)} s\big(p(t\restr\alpha), t(\alpha) \big)\subsetneq
p(t)\subseteq
(\mbigcap K)(t).
\]
We now claim that $\mbigcap K$ is the infimum of $K$ in $\mpo$. 
We first show that $\mbigcap K\mleq p$ for any $p\in K$.  
By definition, $\dom(p)\subseteq \dom(\mbigcap K)$ 
and $p(t)\subseteq (\mbigcap K)(t)$ for any $t\in\dom(p)$.
Moreover, if $t$ is a non-terminal in $\dom(p)$, then $(\mbigcap K)(t)=p(t)$ by  \ref{mbigcap fact 1}. 
Hence $\mbigcap K\mleq p$. 
Now, suppose $h\in\mpo$ and $h\mleq p$ for all $p\in K$. It suffices to show 
$h\mleq \mbigcap K$.
By definition, $\dom(\mbigcap K)\subseteq\dom(h)$ and $(\mbigcap K)(t)\subseteq h(t)$ for all $t\in\dom(\mbigcap K)$. 
Moreover, if $t$ is a non-terminal node of $\dom(\mbigcap K)$, then there exists $p\in K$ such that $t$ is a non-terminal node of $\dom(p)$.
Hence 
$(\mbigcap K)(t)=p(t)=h(t)$ by
\ref{mbigcap fact 1} and $h\mleq p$.
Therefore $h\leq\mbigcap K$.

For \ref{mbigcap fact 4}, note that $|\dom(\mbigcap K)|<\mu$ and $\ran(\mbigcap K)\subseteq {}^{<\mu}\mu$. 
Thus the claim follows from~\ref{mbigcap fact 3} since $\QQ\subseteq\mpo$.
\end{proof}

\begin{remark}
\label{remark: infima in BB and mpo}
There is no general relationship between $\mbigcap K$ and the infimum of $K$ in the Boolean completion $\BB(\QQ)$ of $\QQ$.
However, they are equal if $K$ is a downward directed subset of $\QQ$ of size ${<}\mu$ 
using Lemma~\ref{mbigcap facts}~\ref{mbigcap fact 4} 
and the fact that $\QQ$ is a dense subset of $\BB(\QQ)$.%
\footnote{$\QQ$ can be assumed to be a dense subset of $\BB(\QQ)$,
since we will prove in Lemma~\ref{Q separative} below that $\QQ$ is separative. 
%as in Subsection~\ref{subsection: forcing preliminaries}.
}
\end{remark}

Our first goal is to show that the forcing $\QQ$ is equivalent to $\Add(\kappa,1)$.
This is done in Lemma~\ref{Q equivalent to Add(kappa,1)}.
Before proving this, 
we characterize
when two given elements of $\QQ$ are compatible, in the next lemma. 
This will be used to prove that $\QQ$ is 
non-atomic%
\footnote{See the proof Lemma~\ref{Q equivalent to Add(kappa,1)}.} 
and separative.% 
\footnote{See Lemma~\ref{Q separative}.} 

\begin{lemma}
\label{perp_Q claim}
Two elements $p$ and $q$ of $\QQ$ are compatible if and only if
the following hold for all $t\in\dom(p)\cap\dom(q)$:
\begin{enumerate-(1)}
\item\label{comp 1}  
$p(t)\compat q(t)$.% 
\footnote{I.e., $p(t)\subseteq q(t)$ or $q(t)\subseteq p(t)$.}
\item\label{comp 2} 
%If $t\in\dom(p)^-$, then 
If $t$ is a non-terminal node of $\dom(p)$, then
$q(t)\subseteq p(t)$. 
\item\label{comp 3} 
%If $t\in\dom(q)^-$, then
If $t$ is a non-terminal node of $\dom(q)$, then
$p(t)\subseteq q(t)$. 
\end{enumerate-(1)}
In particular, if $p$ and $q$ are compatible, then
$p(t)=q(t)$ 
whenever $t$ is non-terminal in both $\dom(p)$ and $\dom(q)$.
\end{lemma}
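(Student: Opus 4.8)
The plan is to prove the compatibility criterion in both directions, using the explicit infimum $\mbigcap\{p,q\}$ provided by Lemma~\ref{mbigcap facts} as the common extension when $p$ and $q$ satisfy the three conditions, and arguing contrapositively in the other direction. First I would establish the forward direction: assume $p$ and $q$ are compatible and fix a common extension $r\mleq p,q$ in $\QQ$. For any $t\in\dom(p)\cap\dom(q)$ both $p(t)\subseteq r(t)$ and $q(t)\subseteq r(t)$ hold by the definition of the ordering $\mleq$ in Definition~\ref{def of the forcing}~\ref{ordering def}, since $r$ extends each topmost value upwards and agrees with non-terminal values. Two initial segments $p(t),q(t)$ of a common $r(t)\in{}^{<\mu}\mu$ are automatically comparable, giving~\ref{comp 1}. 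For~\ref{comp 2}, if $t$ is non-terminal in $\dom(p)$ then $r(t)=p(t)$ by the ordering; combined with $q(t)\subseteq r(t)$ this yields $q(t)\subseteq p(t)$. Condition~\ref{comp 3} is symmetric. The final ``in particular'' clause then follows by applying both~\ref{comp 2} and~\ref{comp 3}.

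For the converse I would assume~\ref{comp 1}--\ref{comp 3} and construct the witness $r:=\mbigcap\{p,q\}$ directly, showing it lies in $\QQ$ and extends both conditions. The set $K:=\{p,q\}$ is trivially downward directed \emph{provided} $p$ and $q$ have a common lower bound — but since that is precisely what we are trying to produce, I would instead work with the explicit formula from Definition~\ref{mbigcap def} applied to $K=\{p,q\}$ and verify by hand that the resulting partial function is a strict partial $s$-function of size $<\mu$. Concretely, set $\dom(r):=\dom(p)\cup\dom(q)$, which is a subtree of ${}^{<\mu}\ddimq$ of size $<\mu$, and define $r(t):=p(t)\cup q(t)$ where both are defined, $r(t):=p(t)$ on $\dom(p)\setminus\dom(q)$, and $r(t):=q(t)$ on $\dom(q)\setminus\dom(p)$. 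Conditions~\ref{comp 1}--\ref{comp 3} guarantee that on the overlap $\dom(p)\cap\dom(q)$ this union is well-behaved: by~\ref{comp 1} the value $r(t)$ is the longer of $p(t),q(t)$, and~\ref{comp 2}--\ref{comp 3} force agreement $p(t)=q(t)$ at nodes non-terminal in both trees, so $r$ restricts correctly to each of $p$ and $q$ as required by $\mleq$.

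The main work, and the step I expect to be the principal obstacle, is verifying that $r$ is genuinely a \emph{strict partial $s$-function}, i.e.\ that it satisfies~\eqref{Q equiv eq} of Lemma~\ref{def of the ordering equiv} at every $t\in\dom(r)$. For $t$ lying entirely within $\dom(p)$ (or $\dom(q)$) this is inherited from $p$ (resp.\ $q$), but for a node $t$ whose initial segments straddle both trees one must check that
\[
r(t)\supsetneq \bigcup_{\alpha<\lh(t)} s\big(r(t\restr\alpha), t(\alpha)\big),
\]
where the values $r(t\restr\alpha)$ may come from $p$ at some levels and from $q$ at others. Here the agreement forced by~\ref{comp 2} and~\ref{comp 3} at the non-terminal overlap nodes is exactly what makes the mixed union $s\big(r(t\restr\alpha),t(\alpha)\big)$ match the corresponding term computed inside whichever of $p,q$ contains $t$, so that the strict inclusion carried by that single condition transfers to $r$. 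I would phrase this via Lemma~\ref{def of the ordering equiv} rather than checking strictness at limits directly, since that lemma reduces the whole verification to the single displayed inequality. Once $r\in\QQ$ and $r\mleq p,q$ are established, compatibility follows and the proof is complete.
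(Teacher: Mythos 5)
Your proposal is correct and takes essentially the same route as the paper's own proof: the forward direction is identical, and for the converse the paper likewise defines the common extension directly by $\dom(r)=\dom(p)\cup\dom(q)$ and $r(t)=p(t)\cup q(t)$, using \ref{comp 1}--\ref{comp 3} to see that $r\in\QQ$ and $r\mleq p,q$. The verification you spell out via Lemma~\ref{def of the ordering equiv} (that conditions \ref{comp 2}--\ref{comp 3} force $r(t\restr\alpha)=p(t\restr\alpha)$ along any $t\in\dom(p)$, so the strict inclusion transfers) is precisely the step the paper compresses into ``it is easy to check''.
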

\begin{proof}
First, assume $p$ and $q$ are compatible
and $t\in\dom(p)\cap\dom(q)$.
Let $r\in \QQ$ be a common extension of $p$ and $q$.
Then
$r(t)\supseteq p(t)$ and $r(t)\supseteq q(t)$, and hence $p(t)\compat q(t)$. 
If 
$t$ is a non-terminal node of $\dom(p)$,
then $q(t)\subseteq r(t)=p(t)$.
Similarly, 
if $t$ is a non-terminal node of $\dom(q)$,
then $p(t)\subseteq r(t)=q(t)$.

Now assume \ref{comp 1}--\ref{comp 3} hold for all $t\in\dom(p)\cap\dom(q)$.
Let $r$ be the unique function with $\dom(r)=\dom(p)\cup\dom(q)$ and $r(t)=p(t)\cup q(t)$ for all $t\in\dom(r)$. 
Then  $r(t)=\max\{p(t),q(t)\}$
for all $t\in\dom(p)\cap\dom(q)$
by~\ref{comp 1}.
Moreover,
$r(t)=p(t)$ for all 
non-terminal nodes $t$ of $\dom(p)$
by~\ref{comp 2},
and
$r(t)=q(t)$  for all 
non-terminal nodes $t$ of $\dom(q)$
by~\ref{comp 3}.
Using these observations, it is easy to check that $r\in\QQ$ and 
that $r\leq p, q$.
\end{proof}

\begin{lemma}
\label{Q equivalent to Add(kappa,1)}
$\QQ$ is equivalent to $\Add(\mu,1)$.
\end{lemma}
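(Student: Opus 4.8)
The plan is to apply the criterion from Lemma~\ref{forcing equivalent to Add(kappa,1)}: a non-atomic, $\lle\mu$-closed forcing of size $\mu$ is equivalent to $\Add(\mu,1)$. Thus I must verify these three properties of $\QQ$ in $M$, where $\mu^{<\mu}=\mu$. The size computation is routine: a condition $q\in\QQ$ is a partial function with $|\dom(q)|<\mu$ from ${}^{<\mu}\ddimq$ to ${}^{<\mu}\mu$, and since $\mu^{<\mu}=\mu$, both index and value sets have size $\mu$ and there are $\mu^{<\mu}=\mu$ such partial functions; so $|\QQ|=\mu$. Non-atomicity follows from the characterization of compatibility in Lemma~\ref{perp_Q claim}: given any $q\in\QQ$, I would produce two incompatible extensions by end-extending $\dom(q)$ along a terminal node $t$ in two different directions $t\conc\langle\alpha\rangle$, $t\conc\langle\beta\rangle$ with $\alpha\neq\beta$ (possible since $\ddimq\geq2$), or more simply by extending the topmost value $q(t)$ of a terminal node $t$ upwards in two incompatible ways in ${}^{<\mu}\mu$ (possible since $\mu\geq2$ and a terminal value can always be properly extended). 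Lemma~\ref{perp_Q claim}~\ref{comp 1} then makes these incompatible, witnessing that $q$ has incompatible extensions.

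The substantive property to verify is $\lle\mu$-closure. I would take a descending sequence $\langle q_i : i<\alpha\rangle$ in $\QQ$ with $\alpha<\mu$ and produce a lower bound. The natural candidate is $\mbigcap K$ for $K=\{q_i:i<\alpha\}$, and Lemma~\ref{mbigcap facts}~\ref{mbigcap fact 4} is tailored to exactly this: for a downward directed subset $K$ of $\QQ$ of size $\lle\mu$, $\mbigcap K\in\QQ$ and is the infimum of $K$ in $\QQ$. A descending sequence is downward directed, and its image has size $\lle\alpha<\mu$, so the lemma applies directly and hands me a lower bound in $\QQ$. I should double-check that the domain of $\mbigcap K$ still has size $<\mu$: it is the union of $<\mu$ subtrees each of size $<\mu$, hence of size $<\mu$ by the regularity of $\mu$ (implied by $\mu^{<\mu}=\mu$), so $\mbigcap K\in\QQ$ indeed. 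Thus $\QQ$ is $\lle\mu$-closed.

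Having established that $\QQ$ is non-atomic, $\lle\mu$-closed, and of size $\mu$, Lemma~\ref{forcing equivalent to Add(kappa,1)} immediately yields that $\QQ$ has a dense subset isomorphic to $\Addd(\mu,1)$ and in particular is equivalent to $\Add(\mu,1)$, completing the proof. The main obstacle is really just ensuring the hypotheses of the two invoked lemmas are met cleanly: I must confirm that the definition of $\QQ$ (the strictness requirement in Definition~\ref{def: partial s-function}~\ref{psf2} and the $\supsetneq$ in the partial $s$-function condition) does not obstruct the closure argument, but since $\mbigcap K$ is shown in Lemma~\ref{mbigcap facts} to lie in $\QQ$ precisely by verifying the reformulated condition~\eqref{Q equiv eq}, this is handled by the preparatory lemmas already in place. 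No genuinely new combinatorial work is needed here; the weight of the argument has been front-loaded into Lemmas~\ref{def of the ordering equiv}, \ref{perp_Q claim} and \ref{mbigcap facts}.
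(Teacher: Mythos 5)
Your reduction to Lemma~\ref{forcing equivalent to Add(kappa,1)}, the size computation from $\mu^{<\mu}=\mu$, and the $\lle\mu$-closure argument via Lemma~\ref{mbigcap facts}~\ref{mbigcap fact 4} coincide with the paper's proof and are correct. The gap is in the non-atomicity step, where neither of your two variants works as stated.

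The first variant is false. If $q_1$ and $q_2$ extend $q$ by adjoining the distinct new nodes $t\conc\langle\alpha\rangle$ and $t\conc\langle\beta\rangle$ to the domain (with whatever admissible values), then $\dom(q_1)\cap\dom(q_2)=\dom(q)$, and on this common domain both conditions agree with $q$; since Lemma~\ref{perp_Q claim} decides compatibility entirely on the common domain, $q_1$ and $q_2$ are compatible --- indeed $q_1\cup q_2$ is a common extension. Branching the domain tree in different directions creates no incompatibility in $\QQ$; that intuition is imported from $\Add(\mu,1)$ and does not transfer to this forcing, where incompatibility can only come from incompatible or wrongly nested \emph{values}. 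The second variant (stretching the value of a terminal node of $\dom(q)$ in two incompatible ways) is sound where it applies, but it presupposes that $\dom(q)$ has a terminal node, and this can fail: $\one_\QQ$ is the empty function, and, since $\mu$ is uncountable, there are also conditions whose domain is an infinite tree with no maximal node (for instance $\dom(q)={}^{<\omega}2$ with values chosen recursively), to which your argument gives no incompatible extensions. The paper's proof handles every $p\in\QQ$ uniformly: take a minimal node $t\in{}^{<\mu}\ddimq\setminus\dom(p)$, let $u:=\bigcup_{\alpha<\lh(t)}s\big(p(t\restr\alpha),t(\alpha)\big)$, choose $u_0,u_1\supsetneq u$ with $u_0\perp u_1$, and set $q_i:=p\cup\{(t,u_i)\}$; these lie in $\QQ$ by Lemma~\ref{def of the ordering equiv}, extend $p$, and are incompatible by Lemma~\ref{perp_Q claim}~\ref{comp 1}, because now the \emph{same} new node $t$ lies in both domains and carries incompatible values. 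Your second variant can be repaired by first performing exactly such a domain extension to create a terminal node, but as written the non-atomicity argument has a real hole.
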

\begin{proof}
By Lemma~\ref{forcing equivalent to Add(kappa,1)}, it suffices to show that $\QQ$ is  a non-atomic $\lle\mu$-closed forcing of size $\mu$. 
The assumption $\mu^{<\mu}=\mu$ implies that $|\QQ|=\mu$.
To show that $\QQ$ is $\lle\mu$-closed, suppose that $\langle p_\alpha:\alpha<\xi\rangle$ is a descending sequence of elements of $\QQ$ of length $\xi<\mu$. Then 
$p=\mbigcap\{p_\alpha:\alpha<\xi\}$ is an element of $\QQ$ such that $p\leq p_\alpha$ for all $\alpha<\xi$ by Lemma~\ref{mbigcap facts}~\ref{mbigcap fact 4}.
To show that $\QQ$ is non-atomic, let $p\in \QQ$. Take a minimal element $t$ of ${{}^{<\mu}\ddimq}-\dom(p)$, let
$u=\bigcup_{\alpha< \lh(t)} s\big(q(t\restr\alpha), t(\alpha)\big),$
and choose $u_0,u_1\in{}^{<\mu}\mu$ with $u_0\perp u_1$ and $u_0,u_1\supsetneq u$.
Finally, let $q_i:=p\cup\{(t,u_i)\}$ for $i=0,1$. 
Then $q_0,q_1\in\QQ$ by Lemma~\ref{def of the ordering equiv}.
It is clear that 
$q_0,q_1\leq p$, and
$q_0\perp q_1$ holds by 
the previous lemma.
%Lemma~\ref{perp_Q claim}.
\end{proof}

Throughout the rest of this subsection, we use the following notation.
\begin{itemize}
\item
\label{h_K def}
Suppose $K$ is a $\QQ$-generic filter over $M$.
In $M[K]$, let
\index{function!s-functionz@$s$-function added by $\QQ$\idf$h_K$, $\dot h$}%
$$
h_K=\mbigcap K.
$$
\item
\label{dot h def}
In $M$, let $\dot K$ denote the canonical name for a $\QQ$-generic filter,
and let 
\index{function!s-functionz@$s$-function added by $\QQ$\idf$h_K$, $\dot h$}%
$\dot h$ be a $\QQ$-name for $\mbigcap \dot K$ in the sense that
$$
\one_\QQ\forces\:\dot h=\mbigcap \dot K.
$$
\end{itemize}
$h_K$ is well defined
since $K$ is a filter on $\QQ$ and hence an upward directed subset of 
$\mpo^{M[K]}$.
In the rest of this subsection, we will show that 
$h_K$ is an $s$-function which
satisfies the conclusion of the Nice Quotient Lemma~\ref{Q main lemma} for $\Add(\mu,1)$.
We first prove 
that $h_K$ is indeed an $s$-function.
We will need the following lemma.

\begin{lemma}
\label{h_K domain sublemma}
If $t\in{}^{<\mu}\ddimq$, then 
$D_t:=\{r\in\QQ:t\in\dom(r)\}$
is a dense subset of $\QQ$. 
\end{lemma}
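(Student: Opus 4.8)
The claim is that for each fixed $t\in{}^{<\mu}\ddimq$, the set $D_t=\{r\in\QQ:t\in\dom(r)\}$ is dense in $\QQ$. The plan is to start from an arbitrary $q\in\QQ$ and construct a single extension $r\leq q$ whose domain contains $t$. Since $\dom(q)$ is a subtree of ${}^{<\mu}\ddimq$, the set $\dom(q)\cup\kkppred t\cup\{t\}$ is again a subtree (it is downward closed, as $\kkppred t\cup\{t\}$ is the set of initial segments of $t$), and it has size $<\mu$ because $|\dom(q)|<\mu$, $\lh(t)<\mu$ and $\mu$ is regular. So the target domain of $r$ is unproblematic; the work is entirely in defining the values of $r$ on the new nodes so that the result is a genuine strict partial $s$-function extending $q$.

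First I would locate the branch of $t$ along which new nodes must be added. Let $u$ be the longest initial segment of $t$ that already lies in $\dom(q)$ (this exists and is $\subseteq t$ since $\emptyset\in\dom(q)$, or one handles $\dom(q)=\emptyset$ directly). Then I would define $r$ by recursion along the initial segments $t\restr\alpha$ for $\lh(u)\le\alpha\le\lh(t)$, keeping $r(v):=q(v)$ for all $v\in\dom(q)$. At each new node $v=t\restr(\alpha+1)$ with immediate predecessor $w=t\restr\alpha$, I must choose $r(v)\in{}^{<\mu}\mu$ satisfying the defining inequality of a partial $s$-function, namely $r(v)\supsetneq s(r(w),t(\alpha))$; this is always possible since $s(r(w),t(\alpha))\in{}^{<\mu}\mu$ and I may simply extend it by one value. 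At new limit nodes $v=t\restr\alpha$ with $\alpha\in\Lim$, strictness forces $r(v)\supsetneq\bigcup_{\beta<\alpha}r(t\restr\beta)$, and again I can take any proper end-extension in ${}^{<\mu}\mu$; the union has length $<\mu$ by regularity of $\mu$, so this extension exists inside ${}^{<\mu}\mu$.

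The key verification is that the resulting $r$ is a strict partial $s$-function, i.e. $r\in\mpo$, and that $r\mleq q$ with $|\dom(r)|<\mu$. For membership in $\mpo$ I would invoke the reformulation in Lemma~\ref{def of the ordering equiv}: since $\dom(r)$ is a subtree, it suffices to check inequality~\eqref{Q equiv eq}, $r(v)\supsetneq\bigcup_{\beta<\lh(v)}s(r(v\restr\beta),v(\beta))$, for every $v\in\dom(r)$. For $v\in\dom(q)$ this holds because $q\in\mpo$ and $r$ agrees with $q$ on $\dom(q)$ together with all its initial segments. For the newly added nodes $v$ along $t$, the successor-step and limit-step choices above were made precisely so that~\eqref{Q equiv eq} holds, using that $s$ is a step function. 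Thus $r\in\mpo$, and $|\dom(r)|<\mu$, so $r\in\QQ$. Finally $r\mleq q$ is immediate from the construction: $\dom(q)\subseteq\dom(r)$, and $r(v)=q(v)$ for all $v\in\dom(q)$, so in particular $r$ agrees with $q$ on non-terminal nodes of $\dom(q)$ and extends $q$ on terminal nodes (here by equality). Since $t\in\dom(r)$, we have $r\in D_t$, proving density.

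I do not expect any genuine obstacle here; the only points requiring a little care are the limit stages, where one must confirm that the union of previously chosen values has length $<\mu$ (so that a proper end-extension lies in ${}^{<\mu}\mu$) and that the strictness requirement of Definition~\ref{def: partial s-function}~\ref{psf2} is met by taking a \emph{proper} extension. Both follow from the regularity of $\mu$ and the freedom to extend any element of ${}^{<\mu}\mu$ by one coordinate, so the argument is routine once organized as above.
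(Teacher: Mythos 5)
Your construction is correct in substance, but it is organized differently from the paper's proof. The paper argues by induction on $\lh(t)$: given $p\in\QQ$, the induction hypothesis supplies conditions in $D_{t\restr\alpha}$ below $p$, the ${<}\mu$-closure of $\QQ$ (via $\mbigcap$ and Lemma~\ref{mbigcap facts}~\ref{mbigcap fact 4}) produces a single $q\leq p$ whose domain already contains every proper initial segment of $t$, and then exactly one new node, $t$ itself, is adjoined with value $u\supsetneq\bigcup_{\alpha<\lh(t)}s\big(q(t\restr\alpha),t(\alpha)\big)$, which is a condition by Lemma~\ref{def of the ordering equiv}. You avoid both the induction and the closure lemma and instead adjoin the entire missing segment of the branch of $t$ in one direct transfinite recursion, re-verifying the strict partial $s$-function conditions node by node. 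The underlying computation is the same in both proofs (the union $\bigcup_{\beta<\lh(v)}s\big(r(v\restr\beta),v(\beta)\big)$ is controlled by the value at the immediate predecessor at successor nodes, and by the union of earlier values at limit nodes); the paper's route buys brevity by delegating the successor/limit bookkeeping to machinery already established, while yours is more self-contained and makes explicit exactly which freedom in choosing values is used.

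One slip needs fixing: you take $u$ to be ``the longest initial segment of $t$ that already lies in $\dom(q)$'' and justify its existence by $\emptyset\in\dom(q)$. Such a longest initial segment need not exist: $\dom(q)$ is only required to be a subtree of size ${<}\mu$, so the set of $\alpha$ with $t\restr\alpha\in\dom(q)$ is an ordinal $\delta$, and when $\delta$ is a limit one has $t\restr\beta\in\dom(q)$ for all $\beta<\delta$ while $t\restr\delta\notin\dom(q)$ (for instance $\dom(q)=\{t\restr\beta:\beta<\omega\}$). This is harmless for your argument, since your limit step is designed for exactly this situation: start the recursion at the least $\delta$ with $t\restr\delta\notin\dom(q)$ and, if $\delta$ is a limit, define $r(t\restr\delta)$ as a proper end-extension of $\bigcup_{\beta<\delta}q(t\restr\beta)$. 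But as written, the first step of your construction is not well-defined, so the rephrasing is needed.
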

\begin{proof}
We prove the lemma by induction on $\lh(t)$.
Assume 
$D_{t\restr\alpha}$ is dense in $\QQ$
for all $\alpha<\lh(t)$.
To show that $D_t$ is dense in $\QQ$, let $p\in\QQ$.
Using the fact that $\QQ$ is $\lle\mu$-closed, construct a decreasing sequence 
$\langle q_\alpha:\alpha<\lh(t)\rangle$ of conditions below $p$
such that 
$q_\alpha\in D_{t\restr\alpha}$, and let
$q:=\mbigcap\{q_\alpha:\alpha<\lh(t)\}.$
Then $q\in\QQ$
and $q\leq  p$
 by Lemma~\ref{mbigcap facts}~\ref{mbigcap fact 4}, and
 $t\restr\alpha\in\dom(q)$ for all $\alpha<\lh(t)$.
We define a condition 
$r\in D_t$ below $q$
as follows.
If $t\in\dom(q)$, then let $r:=q$.
If $t\notin\dom(q)$, fix $u\in{}^{<\mu}\mu$ with 
$
u\supsetneq \bigcup_{\alpha< \lh(t)} s\big(q(t\restr\alpha), t(\alpha) \big)
$
and let $r:=q\cup\{(t,u)\}$. 
Then $r\in \QQ$ by Lemma~\ref{def of the ordering equiv},
and 
it is clear that $t\in\dom(r)$ and $r\leq  q$. 
Thus, 
$r$ is an element of $D_t$ below~$p$.
\end{proof}

\begin{corollary}
\label{h_K is an s-function}
Suppose that $K$ is a $\QQ$-generic filter over $M$. Then in $M[K]$:
\begin{enumerate-(1)}
\item\label{dom h_K}
$\dom(h_K)={}^{<\mu}\ddimq$.
\item\label{h_K in mpo}
$h_K\in\mpo$ and $h_K$ is the infimum of $K$ in $\bar{\QQ}$. 
\item\label{h_K s-function}
$h_K$ is an $s$-function.
\end{enumerate-(1)}
\end{corollary}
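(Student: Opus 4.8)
The plan is to read off all three items from the density lemma (Lemma~\ref{h_K domain sublemma}) together with the structural facts about $\mbigcap K$ collected in Lemma~\ref{mbigcap facts}, using only that $K$ is a $\QQ$-generic, hence downward directed, subset of $\mpo^{M[K]}$. I would begin with~\ref{dom h_K}. By the definition of $\mbigcap K$ we have $\dom(h_K)=\bigcup\{\dom(q):q\in K\}\subseteq{}^{<\mu}\ddimq$, so only the reverse inclusion needs an argument. Fixing $t\in{}^{<\mu}\ddimq$, Lemma~\ref{h_K domain sublemma} says that $D_t=\{r\in\QQ:t\in\dom(r)\}$ is dense in $\QQ$; genericity then provides $q\in K\cap D_t$, whence $t\in\dom(q)\subseteq\dom(h_K)$. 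This gives $\dom(h_K)={}^{<\mu}\ddimq$.

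For~\ref{h_K in mpo} the strategy is to reduce everything to the single range condition $\ran(h_K)\subseteq{}^{<\mu}\mu$. Once this is established, Lemma~\ref{mbigcap facts}~\ref{mbigcap fact 3} immediately yields both that $h_K\in\mpo$ and that $h_K$ is the infimum of $K$ in $\mpo$. To secure the range condition I would verify the hypothesis of Lemma~\ref{mbigcap facts}~\ref{mbigcap fact 2}, namely that every $t\in\dom(h_K)={}^{<\mu}\ddimq$ is a non-terminal node of $\dom(p)$ for some $p\in K$. Since $0\in\ddimq$ (as $\ddimq\geq 2$), the node $t\conc\langle 0\rangle$ again lies in ${}^{<\mu}\ddimq$, so $D_{t\conc\langle 0\rangle}$ is dense by Lemma~\ref{h_K domain sublemma}; taking $p\in K\cap D_{t\conc\langle 0\rangle}$ and using that $\dom(p)$ is downward closed, we get $t\conc\langle 0\rangle\in\dom(p)$, so $t$ is non-terminal in $\dom(p)$, as required.

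Item~\ref{h_K s-function} should then be immediate. By~\ref{h_K in mpo} the map $h_K$ is a strict partial $s$-function in the sense of Definition~\ref{def: partial s-function}, and by~\ref{dom h_K} it is total with $\dom(h_K)={}^{<\mu}\ddimq$. Comparing Definitions~\ref{def: partial s-function} and~\ref{def: s-function}, a total strict partial $s$-function with range in ${}^{<\mu}\mu$ is exactly an $s$-function: its growth condition $h_K(t\conc\langle\alpha\rangle)\supsetneq s(h_K(t),\alpha)$ is the defining inequality~\eqref{eq: s-function}, and being strict order preserving for $\subseteq$ coincides with being strict order reversing for the forcing order of $\Add(\mu,1)$, since the latter is reverse inclusion. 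Hence $h_K$ is an $s$-function.

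The one step that carries genuine content is the range computation in~\ref{h_K in mpo}. A priori a generic $K$ could keep end-extending the value assigned to a node $t$ that remains terminal in every condition, so that $(\mbigcap K)(t)$ accumulates to a sequence of length $\mu$ lying in ${}^\mu\mu\setminus{}^{<\mu}\mu$ and falls outside $\mpo$ altogether. Meeting $D_{t\conc\langle 0\rangle}$ is precisely what rules this out: once some $p\in K$ makes $t$ non-terminal, Lemma~\ref{mbigcap facts}~\ref{mbigcap fact 1} forces the value to stabilize at $(\mbigcap K)(t)=p(t)\in{}^{<\mu}\mu$. Everything else is a direct appeal to the previously established lemmas.
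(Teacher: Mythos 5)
Your proof is correct and follows essentially the same route as the paper's: item (1) from Lemma~\ref{h_K domain sublemma} plus genericity, item (2) by reducing to $\ran(h_K)\subseteq{}^{<\mu}\mu$ via Lemma~\ref{mbigcap facts}~\ref{mbigcap fact 3} and verifying the non-terminality hypothesis of~\ref{mbigcap fact 2}, and item (3) as an immediate consequence. The only cosmetic difference is that you re-invoke density of $D_{t\conc\langle 0\rangle}$ to witness non-terminality, whereas the paper reads the same fact off the already-established equality $\dom(h_K)={}^{<\mu}\ddimq=\bigcup_{p\in K}\dom(p)$; these are the same argument.
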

\begin{proof}
\ref{dom h_K} follows from the previous lemma.
For~\ref{h_K in mpo}, it suffices to show $\ran(h_K)\subseteq {}^{<\mu}\mu$ by Lemma~\ref{mbigcap facts}~\ref{mbigcap fact 3}.
To see this, note that $\dom(h_K)={}^{<\mu}\ddimq=\bigcup_{p\in K}\dom(p)$ by~\ref{dom h_K} and the definition of~$\mbigcap$. 
Thus,  $h_K=\mbigcap K$ by Lemma~\ref{mbigcap facts}~\ref{mbigcap fact 2}, as required.
\ref{h_K s-function} follows immediately from~\ref{dom h_K} and~\ref{h_K in mpo}.
\end{proof}

The following lemma shows that any $\QQ$-generic filter
$K$ is definable from $h_K$.
Hence
$M[K]=M[h_K]$. 

\begin{lemma}
\label{K and h_K}
For all $\QQ$-generic filters $K$ over $M$ 
\todog{This lemma is used twice later.}
and all $q\in\QQ$,
\[
q\in K 
\ \Longleftrightarrow \ 
%\quad\text{if and only if}\quad 
h_K\mleq q.
\]
\end{lemma}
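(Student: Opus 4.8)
The plan is to prove the two implications separately: the direction from left to right falls out immediately from the properties of $h_K$ recorded in Corollary~\ref{h_K is an s-function}, while the direction from right to left requires a genericity argument whose crux is the compatibility criterion of Lemma~\ref{perp_Q claim}.

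First I would dispatch the forward direction. Suppose $q\in K$. By Corollary~\ref{h_K is an s-function}~\ref{h_K in mpo}, $h_K=\mbigcap K$ is the infimum of $K$ in $\mpo$, hence in particular a lower bound of $K$, so $h_K\mleq q$. (Alternatively one can read this off the definitions directly: $\dom(q)\subseteq\dom(h_K)={}^{<\mu}\ddimq$, while $h_K(t)=q(t)$ for non-terminal $t\in\dom(q)$ by Lemma~\ref{mbigcap facts}~\ref{mbigcap fact 1}, and $q(t)\subseteq h_K(t)$ for terminal $t\in\dom(q)$.)

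For the converse I would argue by contradiction, assuming $h_K\mleq q$ but $q\notin K$. Since $K$ is an upward closed filter, $q\notin K$ means that no condition of $K$ lies below $q$. The set $D_q:=\{r\in\QQ:\ r\mleq q \text{ or } r\perp q\}$ is dense, since any $r\in\QQ$ is either incompatible with $q$ or has a common extension with $q$ that then lies below both. By genericity pick $p\in K\cap D_q$; as no element of $K$ is below $q$, we must have $p\perp q$.

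The key step, which I expect to be the main obstacle, is to derive a contradiction from $p\perp q$ together with $h_K\mleq p$ (valid because $h_K$ is a lower bound of $K$ and $p\in K$) and $h_K\mleq q$. The difficulty is that $h_K$ witnesses a common lower bound of $p$ and $q$ only in $\mpo$, not in $\QQ$: indeed $h_K$ is total with $\dom(h_K)={}^{<\mu}\ddimq$ of size $\mu$, so $h_K\notin\QQ$, and one cannot conclude $\QQ$-compatibility merely from the existence of an $\mpo$-lower bound. I would bridge this gap with Lemma~\ref{perp_Q claim}. Fix any $t\in\dom(p)\cap\dom(q)$. From $h_K\mleq p$ and $h_K\mleq q$ we get $p(t)\subseteq h_K(t)$ and $q(t)\subseteq h_K(t)$, so $p(t)$ and $q(t)$ are comparable. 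If moreover $t$ is non-terminal in $\dom(p)$, then $h_K(t)=p(t)$, whence $q(t)\subseteq h_K(t)=p(t)$; symmetrically, if $t$ is non-terminal in $\dom(q)$, then $p(t)\subseteq q(t)$. These are precisely the three conditions of Lemma~\ref{perp_Q claim}, so $p$ and $q$ are compatible, contradicting $p\perp q$. Hence $q\in K$, which completes the argument.
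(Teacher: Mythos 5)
Your proof is correct and follows essentially the same route as the paper's: the forward direction is read off from $h_K$ being the infimum of $K$, and the backward direction reduces via genericity (the dense set of conditions below $q$ or incompatible with $q$) to showing that every $p\in K$ is compatible with $q$ in $\QQ$, using $h_K\mleq p,q$. The only difference is the witness for that compatibility: the paper takes the explicit common extension $r:=h_K\restr(\dom(p)\cup\dom(q))$, which lies in $\QQ$ because its domain has size ${<}\mu$, whereas you verify the three clauses of the compatibility criterion of Lemma~\ref{perp_Q claim}; both are perfectly adequate.
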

\begin{proof}
If $q\in K$, then $h_K\mleq q$  
by the previous corollary.
%as $h_K$ is the infimum of $K$ in $\mpo$. 
%by Corollary \ref{h_K is an s-function} \ref{h_K in mpo}. 
Conversely, suppose $h_K\mleq q$. 
Since $K$ is $\QQ$-generic, it suffices to show that all elements $p$ of $K$ are compatible with $q$ in $\QQ$. 
Given $p\in K$, 
let $r:=h_K\restr(\dom(p)\cup \dom(q))$. 
Then $r\in\QQ$ since 
$h_K\in\mpo^{M[K]}$ and $|\dom(p)\cup\dom(q)|<\mu$. 
Moreover, $r\leq  p,\,q$ 
since $h_K\mleq p,\,q$. 
\end{proof}

%\begin{remark}
%We have 
%\todog{This fact will not be used later, so it's only a remark}
%$M[K]=M[h_K]$  
%for any $\QQ$-generic filter $K$ over $M$ by the previous lemma. 
%\end{remark}

In the remainder of the subsection, we will prove that $h_K$ satisfies the conclusion of the Nice Quotient Lemma~\ref{Q main lemma} for $\Add(\mu,1)$,
i.e., that
for all $x\in \ran([h_K])$, 
\begin{enumerate-(1)}
\item\label{Q main 1 again}
$x$ is $\Add(\mu,1)$-generic over $M$,
and
\item\label{Q main 2 again}
$M[K]$ is an $\Add(\mu,1)$-generic extension of $M[x]$.
\end{enumerate-(1)}

In order to show~\ref{Q main 2 again}, we will later calculate the quotient forcing for a given element of $\ran([h_K])$.
When doing so, it will be more convenient to work with certain dense subforcings $\QQ^{*}$ of $\QQ$.
These will be 
the intersections of ${<}\mu$ many subforcings
of the form $\QQ^*(\vartheta,\sigma)$ given in the next definition.
Regarding notation,
$\height(A)$, $\lh(A)$ were defined in Subsection~\ref{preliminaries: trees},
and $\vartheta_{(p)},\,\vartheta_{[p]}$ in Definition~\ref{T^sigma def}. 

\begin{definition}
\label{Q* def}
Let $\vartheta$ and $\sigma$ be $\QQ$-names such that 
$\one_\QQ\forces\vartheta\in{}^{\mu}\ddimq\wedge[\dot h](\vartheta)=\sigma$. 
\index{forcing!subforcing for names\idf$\QQ^*(\vartheta,\sigma)$} 
Then $\QQ^*(\vartheta,\sigma)$ denotes the subforcing of $\QQ$ which consists of all conditions $p\in \QQ$ such that the following hold:%
\footnote{%
Note that 
we can equivalently 
replace $\vartheta_{(p)}$ by $\vartheta_{[p]}$
and $\sigma_{(p)}$ by $\sigma_{[p]}$
everywhere in this definition, 
%in definition of $\QQ^*(\vartheta,\sigma)$ 
by Lemma~\ref{lemma: sigma_q and sigma[q]}.
}
\begin{enumerate-(a)}
\item\label{Q* def 1}
$\height(\dom p)\in \Lim$ and 
$\height(\dom p)\subseteq \dom(\vartheta_{(p)})$.
\item\label{Q* def 3}
$\lh(\ran p)\in \Lim$ and $\lh(\ran p)\subseteq \dom(\sigma_{(p)})$. 
\item\label{Q* def 2} 
$\vartheta_{(p)}\restr\gamma\in\dom(p)$ 
for all $\gamma<\height(\dom(p))$. 
\end{enumerate-(a)}
\end{definition}

Note that if $p\in \QQ^*(\vartheta,\sigma)$, then 
we get the equality $\lh(\ran p)= \dom(\sigma_{(p)})$
in \ref{Q* def 3}, 
by Lemma~\ref{dom sigma_p} below. 

We next show that $\QQ^*(\vartheta,\sigma)$ is a dense subforcing of $\QQ$ and
more generally, that
the intersection $\QQ^*$ of ${<}\mu$ many subforcings of this form is also dense in $\QQ$.
We will need the following lemma.

\begin{lemma}
\label{decreasing chains in Q}
Suppose $\langle p_\alpha:\alpha<\xi\rangle$ is a strictly decreasing sequence of elements of $\QQ$ of length $\xi<\mu$.
Let
\[
p:=\mbigcap\{p_\alpha:\alpha<\xi\}.
\]
Then %$p\in\QQ$, 
$\height(\dom p)=\bigcup_{\alpha<\xi}\height(\dom p_\alpha)$
and
$\lh(\ran p)=\bigcup_{\alpha<\xi}\lh(\ran p_\alpha)$.
\end{lemma}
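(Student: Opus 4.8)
The plan is to reduce everything to the explicit description of $p=\mbigcap\{p_\alpha:\alpha<\xi\}$ together with the fact that $p$ is an honest lower bound of the chain. Since $K:=\{p_\alpha:\alpha<\xi\}$ is a downward directed subset of $\QQ$ of size $\lle\mu$, Lemma~\ref{mbigcap facts}~\ref{mbigcap fact 4} gives $p\in\QQ$ together with $p\leq p_\alpha$ for all $\alpha<\xi$. From Definition~\ref{mbigcap def} I record the two facts I will use repeatedly: $\dom p=\bigcup_{\alpha<\xi}\dom p_\alpha$, and $p(t)=\bigcup\{p_\alpha(t):\alpha<\xi,\ t\in\dom p_\alpha\}$ for each $t\in\dom p$. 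Because the chain is decreasing, for $\alpha\leq\beta$ with $t\in\dom p_\alpha$ one has $t\in\dom p_\beta$ and $p_\alpha(t)\subseteq p_\beta(t)$ (with equality if $t$ is non-terminal in $\dom p_\alpha$, by Definition~\ref{def of the forcing}~\ref{ordering def}); hence for fixed $t$ the values $p_\alpha(t)$ form a $\subseteq$-chain whose union is $p(t)$, so $\lh(p(t))=\sup\{\lh(p_\alpha(t)):t\in\dom p_\alpha\}$. Throughout I use that the union of a set of ordinals equals its supremum, so the two displayed right-hand sides are just $\sup_{\alpha<\xi}\height(\dom p_\alpha)$ and $\sup_{\alpha<\xi}\lh(\ran p_\alpha)$.

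For the height equality I argue by two inequalities. Since $\dom p_\alpha\subseteq\dom p$ for every $\alpha$, monotonicity of $\height$ gives $\height(\dom p_\alpha)\leq\height(\dom p)$, whence $\sup_\alpha\height(\dom p_\alpha)\leq\height(\dom p)$. Conversely, every $t\in\dom p$ lies in some $\dom p_\alpha$, so $\lh(t)+1\leq\height(\dom p_\alpha)\leq\sup_\beta\height(\dom p_\beta)$; taking the supremum over $t\in\dom p$ yields $\height(\dom p)\leq\sup_\alpha\height(\dom p_\alpha)$. Combining the two gives $\height(\dom p)=\bigcup_{\alpha<\xi}\height(\dom p_\alpha)$.

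For the length equality I again use two inequalities. For each $\alpha$ and each $t\in\dom p_\alpha$, the relation $p\leq p_\alpha$ forces $p_\alpha(t)\subseteq p(t)$, so $\lh(p_\alpha(t))\leq\lh(\ran p)$; taking suprema gives $\lh(\ran p_\alpha)\leq\lh(\ran p)$ and then $\sup_\alpha\lh(\ran p_\alpha)\leq\lh(\ran p)$. Conversely, for $t\in\dom p$ the chain description above yields $\lh(p(t))=\sup\{\lh(p_\alpha(t)):t\in\dom p_\alpha\}\leq\sup_\alpha\lh(\ran p_\alpha)$, and taking the supremum over $t\in\dom p$ gives $\lh(\ran p)\leq\sup_\alpha\lh(\ran p_\alpha)$. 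The only genuinely non-formal point, and hence the step I would treat most carefully, is the computation $\lh(p(t))=\sup_\alpha\lh(p_\alpha(t))$: it rests on observing that $p(t)$ is the union of the $\subseteq$-chain $\langle p_\alpha(t)\rangle$, which is exactly where the decreasing-chain hypothesis (rather than mere downward directedness) enters. Everything else is bookkeeping with suprema of ordinals.
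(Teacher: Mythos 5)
Your proof is correct and takes essentially the same route as the paper's: both equalities are read off directly from the definition of $\mbigcap$, the height equality from $\dom(p)=\bigcup_{\alpha<\xi}\dom(p_\alpha)$ and the length equality from $\lh(p(t))=\bigcup\{\lh(p_\alpha(t)):\alpha<\xi,\ t\in\dom(p_\alpha)\}$, with your two-inequality presentation just spelling out what the paper compresses into one line. One small quibble: your closing remark overstates the role of the chain hypothesis, since mere downward directedness already makes the values $p_\alpha(t)$ pairwise compatible (any common lower bound $r$ satisfies $p_\alpha(t)\subseteq r(t)$), so the union is a sequence and the length computation goes through; this does not affect the correctness of your argument.
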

\begin{proof}
The first part of the conclusion holds since
$\dom(p)=\bigcup_{\alpha<\xi}\dom(p_\alpha)$ by definition.
Moreover,
$\lh(p(t))=\bigcup\{\lh(p_\alpha(t)):\:\alpha<\xi,\, t\in\dom(p_\alpha)\}$ for all $t\in\dom(p)$ by definition.
Therefore
$\lh(\ran p)=\bigcup\{\lh(p(t)):t\in\dom(p)\}=
\bigcup\{\lh(p_\alpha(t)):\:\alpha<\xi,\,t\in\dom(p_\alpha)\}
=\bigcup_{\alpha<\xi}\lh(\ran p_\alpha)$. 
\end{proof}

\begin{lemma}
\label{Q* dense}
Suppose that $\xi<\mu$. 
For all $i<\xi$,
let $\vartheta_i$ and $\sigma_i$
denote $\QQ$-names such that
$\one_\QQ\forces \vartheta_i\in{}^{\mu}\ddimq\wedge\sigma_i=[\dot h](\vartheta_i)$, 
and let
\begin{equation*}
%\label{Q* eq}
\QQ^*:=\bigcap_{i<\xi}\QQ^*(\vartheta_i,\sigma_i).
\end{equation*}
Then
$\QQ^*$ 
is a dense subforcing of $\QQ$.
\end{lemma}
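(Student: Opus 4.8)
The plan is to show that $\QQ^*$ is dense in $\QQ$ by building, from an arbitrary condition $p\in\QQ$, a decreasing $\lle\mu$-sequence of conditions whose infimum lies in $\QQ^*$. The idea is that each of the three requirements in Definition~\ref{Q* def}, for each of the finitely-bounded-below-$\mu$-many indices $i<\xi$, is a ``tail'' condition that can be met by closing off under countably-cofinally-many density arguments and then taking a limit via $\lle\mu$-closure. Concretely, I would first record the basic density facts needed as building blocks: by Lemma~\ref{h_K domain sublemma}, for each $t\in{}^{<\mu}\ddimq$ the set $D_t$ of conditions with $t$ in their domain is dense; and since each $\sigma_i$ is forced to be $[\dot h](\vartheta_i)$ with $\vartheta_i\in{}^{\mu}\ddimq$, the partial values $(\vartheta_i)_{[p]}$ and $(\sigma_i)_{[p]}$ grow densely, i.e. for any $\beta<\mu$ the set of $q$ forcing $\lh((\vartheta_i)_{[q]})\geq\beta$ and $\lh((\sigma_i)_{[q]})\geq\beta$ is dense below $p$.

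With these in hand I would carry out a single recursive construction of length some limit ordinal $\delta<\mu$ (for instance $\delta$ of cofinality $\geq\xi$, or simply take successive passes cycling through all $i<\xi$ cofinally often). At stage $\alpha+1$, given $p_\alpha$, I would strengthen it to $p_{\alpha+1}\leq p_\alpha$ so as to (a) extend $\dom(p_\alpha)$ to include $(\vartheta_i)_{(p_\alpha)}\restr\gamma$ for the relevant $\gamma$ and each $i$, thereby working toward \ref{Q* def 2}; (b) force $(\vartheta_i)_{[p_{\alpha+1}]}$ and $(\sigma_i)_{[p_{\alpha+1}]}$ to have strictly greater length than at stage $\alpha$, working toward the ``$\subseteq\dom$'' clauses of \ref{Q* def 1} and \ref{Q* def 3}; and (c) ensure $\height(\dom p_{\alpha+1})$ and $\lh(\ran p_{\alpha+1})$ also strictly increase. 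At limit stages I would take $p_\beta:=\mbigcap\{p_\alpha:\alpha<\beta\}$, which lies in $\QQ$ and is a lower bound by Lemma~\ref{mbigcap facts}~\ref{mbigcap fact 4}. Finally set $p^*:=\mbigcap\{p_\alpha:\alpha<\delta\}$.

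It remains to verify $p^*\in\QQ^*(\vartheta_i,\sigma_i)$ for each $i$. The key technical tool is Lemma~\ref{decreasing chains in Q}, which gives $\height(\dom p^*)=\bigcup_{\alpha<\delta}\height(\dom p_\alpha)$ and $\lh(\ran p^*)=\bigcup_{\alpha<\delta}\lh(\ran p_\alpha)$; since these were arranged to increase cofinally, both are limits, giving the ``$\in\Lim$'' clauses of \ref{Q* def 1} and \ref{Q* def 3}. For the containment clauses I would use that $(\vartheta_i)_{[p^*]}=\bigcup_\alpha(\vartheta_i)_{[p_\alpha]}$ has length at least $\height(\dom p^*)$ (and likewise for $\sigma_i$ versus $\lh(\ran p^*)$), because at each successor stage the partial value was pushed past the corresponding height; together with Lemma~\ref{lemma: sigma_q and sigma[q]} relating $\vartheta_{[p]}$ to $\vartheta_{(p)}$ this yields $\height(\dom p^*)\subseteq\dom((\vartheta_i)_{(p^*)})$ and $\lh(\ran p^*)\subseteq\dom((\sigma_i)_{(p^*)})$. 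Clause \ref{Q* def 2} follows from the domain-extension steps (a) at stage $\alpha+1$ and the fact that $(\vartheta_i)_{(p^*)}\restr\gamma$ stabilized for each $\gamma<\height(\dom p^*)$.

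The main obstacle I expect is the bookkeeping needed to interleave the $\xi$-many indices $i$ and the three distinct growth requirements so that \emph{all} of them are satisfied cofinally within a single run of length $<\mu$; this is where one must be careful that strengthening $p_\alpha$ to increase $(\vartheta_i)_{[p_\alpha]}$ does not conflict with the simultaneous demand to have domain closed under initial segments of $(\vartheta_{i'})_{(p_\alpha)}$ for $i'\neq i$. A secondary subtlety is clause \ref{Q* def 2}: one must ensure the partial value $(\vartheta_i)_{(p^*)}\restr\gamma$ is decided (i.e. lands in $\dom(p^*)$) for \emph{every} $\gamma<\height(\dom p^*)$, not merely cofinally many; this is handled by the absoluteness and monotonicity of the $\vartheta_{(p)}$ operation, so that once $(\vartheta_i)_{(p_\alpha)}\restr\gamma$ is forced it is never undone, combined with explicitly throwing these nodes into the domain at the appropriate stage.
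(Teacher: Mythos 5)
Your proposal is correct and is essentially the paper's own proof: the same decreasing-chain construction below an arbitrary condition (decide $\vartheta_i$ and $\sigma_i$ past the current $\height(\dom p)$ and $\lh(\ran p)$, absorb the decided nodes into the domain so that downward closure of domains yields clause~\ref{Q* def 2} of Definition~\ref{Q* def}, and strictly grow height and length), the same passage to the infimum via $\mbigcap$ using Lemma~\ref{mbigcap facts}, and the same verification via Lemma~\ref{decreasing chains in Q} and the monotonicity of decided values. The only difference is that the paper runs the recursion for exactly $\omega$ steps, handling all $\xi$-many indices simultaneously at each step via the ${<}\mu$-closure of $\QQ$ (recall $\mu>\omega$ and $\xi<\mu$), so the interleaving bookkeeping you flag as the main obstacle never arises.
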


\begin{proof}
\todog{This proof works only when $\mu>\omega$}
Let $q\in \QQ$. Construct by recursion a strictly decreasing sequence 
$\langle p_n:n<\omega\rangle$ of conditions in $\QQ$ as follows.   
Let $p_0:=q$. 
If $n<\omega$ and $p_n$ has been defined, use Lemma~\ref{h_K domain sublemma}
and the fact that 
$\QQ$ is $\lle\mu$-closed to choose $p_{n+1}$ such that
\begin{enumerate-(i)}
\item\label{dense 1}
$p_{n+1}$ decides 
$\vartheta_i\restr\height(\dom p_n)$ and $\sigma_i\restr\lh(\ran p_n)$
for all $i<\xi$,
\item\label{dense 0}
$(\vartheta_i)_{[p_n]}\in\dom(p_{n+1})$ for all $i<\xi$,
\item \label{dense 2}
$\height(\dom p_{n+1})>\height(\dom p_n)$, and
\item \label{dense 3}
$\lh(\ran p_{n+1})>\lh(\ran p_n)$.
\end{enumerate-(i)}
Let $p:=\mbigcap\{p_n:n<\omega\}$. 
Then $p\in\QQ$ and $p\leq  q$ by Lemma~\ref{mbigcap facts}~\ref{mbigcap fact 4}.
To prove that $p\in\QQ^*$, 
let $i<\xi$.
We show that $p\in \QQ^*(\vartheta_i,\sigma_i)$.
The fact that 
$\height(\dom p)\in \Lim$ and $\lh(\ran p)\in \Lim$
follows from \ref{dense 2}, \ref{dense 3} and Lemma \ref{decreasing chains in Q}.
By \ref{dense 1},
$\height(\dom p_n)\subseteq \dom{(\vartheta_i)_{(p_{n+1})}}$  
and
$\lh(\ran p_n)\subseteq\dom{(\sigma_i)_{(p_{n+1})}}$
for all $n<\omega$. 
From this and Lemma~\ref{decreasing chains in Q}, we obtain:
\todog{we have to use $\lh(\ran p)$ instead of $\height(\ran p)$ in the def of $\QQ^{*}$ for this step to work}
\begin{equation*}
\begin{gathered}
\height(\dom p)=\bigcup_{n<\omega}\height(\dom p_n)
\subseteq
\bigcup_{n<\omega}\dom{(\vartheta_i)_{(p_{n+1})}}
\subseteq \dom{(\vartheta_i)_{(p)}}, 
\\
\lh(\ran p)=\bigcup_{n<\omega}\lh(\ran p_n)
\subseteq
\bigcup_{n<\omega}\dom{(\sigma_i)_{(p_{n+1})}}
\subseteq \dom{(\sigma_i)_{(p)}}.
\end{gathered}
\end{equation*}
Thus, 
Definition~\ref{Q* def}~\ref{Q* def 1} and~\ref{Q* def 3}
hold for
$\vartheta_i,\,\sigma_i$ and $p$.
To show that 
\ref{Q* def 2} also holds, let 
$\gamma<\height(\dom p)$ and take $n<\omega$ with
$\gamma\leq\height(\dom p_{n})$.
Then by \ref{dense 1},
$\gamma\subseteq\dom{(\vartheta_i)_{(p_{n+1})}}$
or equivalently, $\gamma\leq\lh{\left((\vartheta_i)_{[p_{n+1}]}\right)}$.
Let
\[
t
=(\vartheta_i)_{[p_{n+1}]}\restr\gamma
=(\vartheta_i)_{(p_{n+1})}\restr\gamma
.\]
Then 
$t\in\dom(p_{n+2})\subseteq\dom(p)$
by \ref{dense 0}.
We further have 
$t\subseteq
(\vartheta_i)_{(p_{n+1})}\subseteq 
(\vartheta_i)_{(p)}.$
Hence $(\vartheta_i)_{(p)}\restr\gamma=t\in\dom(p)$, as required.
\end{proof}

%Lemma~\ref{extending elements of Q*} below 
The next lemma
is a stronger form of the following observation: 
Let $p\in\QQ^*(\vartheta,\sigma)$, where $\vartheta$ and $\sigma$ are $\QQ$-names 
such that $\one_\QQ\forces\vartheta\in{}^{\mu}\ddimq\wedge[\dot h](\vartheta)=\sigma$. 
Let $t$ be the least initial segment of $\vartheta_{[p]}$ with $t\notin\dom(p)$.%
\footnote{$t$ exists since 
$\vartheta_{[p]}\notin\dom(p)$
if $p\in\QQ^*(\vartheta,\sigma)$
by Definition~\ref{Q* def}~\ref{Q* def 1} 
and Lemma~\ref{lemma: sigma_q and sigma[q]}.
For instance, if $\vartheta=\check x$ for some $x$ in~$({}^\mu\mu)^M$, then $\vartheta_{[p]}=x$.} 
Then 
$t=\vartheta_{(p)}\restr\height(\dom p)$.
Furthermore, let
$
v:=\bigcup\big\{p(t\restr\alpha):{\alpha<\height(\dom p)}\big\}.
$
For any choice of $u$ with $v\subsetneq u$,
$r:=p\cup\{(t,u)\}$ 
remains a strict partial $s$-function (i.e., $r\in\QQ$)
and moreover, 
$\sigma_{(r)}\supseteq u$.
Note that 
$\sigma_{(p)}=v$ by Lemma~\ref{dom sigma_p} below.
We will need the next lemma
for ${<}\mu$ many pairs of names $(\vartheta_i,\sigma_i)$.

\begin{lemma}
\label{extending elements of Q*}
Using the same assumptions as in Lemma~\ref{Q* dense}, 
suppose that $p\in\QQ^*$. 
Let $t_i :=(\vartheta_i)_{(p)}\restr\height(\dom p)$ for all $i<\xi$.
\begin{enumerate-(1)}
\item\label{eeq1}
%For all $i<\xi$, $t_i :=(\vartheta_i)_{(p)}\restr\height(\dom p)$ 
$t_i$ is the least initial segment of $(\vartheta_i)_{[p]}$ with $t_i\notin\dom(p)$ for all $i<\xi$.
\item\label{eeq2}
If $t_i\neq t_j$, %for all $i<j<\xi$
$u_i\in{}^{<\mu}\mu$ and
$\bigcup\big\{p(t_i\restr\alpha):{\alpha<\height(\dom p)}\big\}\subsetneq u_i$
for all $i<j<\xi$ 
and
%then for
$$r:= p\cup\{(t_i, u_i):i<\xi\},$$
then $r\in\QQ$,
$r\leq  p$ and
$r\forces (t_i\subseteq \vartheta_i\wedge u_i\subseteq \sigma_i)$ 
for all $i<\xi$.%
\footnote{In fact, $p\forces t_i\subseteq \vartheta_i$ since $t_i\subseteq(\vartheta_i)_{(p)}$.} 
\end{enumerate-(1)}
\end{lemma}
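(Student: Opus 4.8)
The plan is to handle the two parts in order, letting part~(1) supply the structural facts about the nodes $t_i$ that drive part~(2).

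For part~(1), fix $i<\xi$. Since $p\in\QQ^*(\vartheta_i,\sigma_i)$, Definition~\ref{Q* def}~\ref{Q* def 1} gives $\height(\dom p)\in\Lim$ and $\height(\dom p)\subseteq\dom\big((\vartheta_i)_{(p)}\big)$, so by Lemma~\ref{lemma: sigma_q and sigma[q]} the node $t_i=(\vartheta_i)_{(p)}\restr\height(\dom p)$ equals $(\vartheta_i)_{[p]}\restr\height(\dom p)$ and is thus an initial segment of $(\vartheta_i)_{[p]}$ of length $\height(\dom p)$. As $\height(\dom p)$ is a limit, every node of $\dom p$ has length strictly below $\height(\dom p)=\lh(t_i)$, whence $t_i\notin\dom p$; and every proper initial segment $t_i\restr\gamma=(\vartheta_i)_{(p)}\restr\gamma$ with $\gamma<\height(\dom p)$ lies in $\dom p$ by Definition~\ref{Q* def}~\ref{Q* def 2}. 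This is precisely the assertion that $t_i$ is the least initial segment of $(\vartheta_i)_{[p]}$ not in $\dom p$.

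For part~(2), I would first observe that $r=p\cup\{(t_i,u_i):i<\xi\}$ is a well-defined function extending $p$: the $t_i$ are pairwise distinct by hypothesis and none lies in $\dom p$ by~(1), so no clashes occur. Its domain $\dom p\cup\{t_i:i<\xi\}$ is a subtree, being the union of the subtree $\dom p$ with nodes all of whose proper initial segments already lie in $\dom p$, and $|\dom r|<\mu$ since $|\dom p|,\xi<\mu$ and $\mu$ is regular. To show $r\in\QQ$, I would invoke the reformulation in Lemma~\ref{def of the ordering equiv}: for $t\in\dom p$ the inequality required for $r$ coincides with the one for $p$, since $r$ and $p$ agree on $\dom p$ and all of its initial segments; while for $t=t_i$ I must check $u_i\supsetneq\bigcup_{\alpha<\lh(t_i)}s\big(p(t_i\restr\alpha),t_i(\alpha)\big)$. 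Here the crucial point is that $p$ is itself a partial $s$-function, so $s\big(p(t_i\restr\alpha),t_i(\alpha)\big)\subsetneq p\big(t_i\restr(\alpha+1)\big)$ for each $\alpha<\height(\dom p)$ (using $t_i\restr(\alpha+1)\in\dom p$ from~(1), as $\height(\dom p)$ is a limit); taking unions over $\alpha$ yields $\bigcup_\alpha s\big(p(t_i\restr\alpha),t_i(\alpha)\big)\subseteq v_i$, where $v_i:=\bigcup\{p(t_i\restr\alpha):\alpha<\height(\dom p)\}$, and the hypothesis $v_i\subsetneq u_i$ then gives the desired strict inclusion. Hence $r\in\mpo$ by Lemma~\ref{def of the ordering equiv}, so $r\in\QQ$.

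Next I would read off $r\le p$ directly from Definition~\ref{def of the forcing}~\ref{ordering def}: $\dom p\subseteq\dom r$, and $r$ agrees with $p$ on all of $\dom p$, so both the non-terminal-equality and terminal-inclusion clauses hold at once. Finally, for the forcing statement I would argue in a $\QQ$-generic extension $M[K]$ with $r\in K$. Since $t_i\subseteq(\vartheta_i)_{(p)}$ we have $p\forces t_i\subseteq\vartheta_i$, and so $r\forces t_i\subseteq\vartheta_i$; moreover $h_K=\mbigcap K$ satisfies $h_K(t_i)\supseteq r(t_i)=u_i$, while $t_i=\vartheta_i^K\restr\height(\dom p)$, so the term $h_K(t_i)$ appears in the union defining $\sigma_i^K=[h_K](\vartheta_i^K)$, giving $u_i\subseteq\sigma_i^K$. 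As the generic $K\ni r$ was arbitrary, $r\forces(t_i\subseteq\vartheta_i\wedge u_i\subseteq\sigma_i)$ for every $i<\xi$. The step I expect to be the main obstacle is the verification that $r\in\mpo$, i.e.\ that adjoining the values $u_i$ preserves being a strict partial $s$-function: it rests entirely on the chain $s\big(p(t_i\restr\alpha),t_i(\alpha)\big)\subsetneq p\big(t_i\restr(\alpha+1)\big)\subseteq v_i\subsetneq u_i$, combining the partial-$s$-function property of $p$ with the hypothesis on $u_i$; the rest (well-definedness, subtree closure, the order relation, and the forcing computations through $[\dot h]$) is routine once part~(1) locates each $t_i$ and its initial segments.
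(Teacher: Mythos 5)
Your proposal is correct and follows essentially the same route as the paper's proof: part (1) via Definition~\ref{Q* def} and Lemma~\ref{lemma: sigma_q and sigma[q]}, and part (2) by checking that $r$ is a strict partial $s$-function through Lemma~\ref{def of the ordering equiv} (your chain $s\big(p(t_i\restr\alpha),t_i(\alpha)\big)\subsetneq p\big(t_i\restr(\alpha+1)\big)\subseteq v_i\subsetneq u_i$ is exactly the computation the paper leaves implicit), then deriving the forcing statement from $\dot h(t_i)\subseteq[\dot h](\vartheta_i)=\sigma_i$ together with $h_K\mleq r$. The only cosmetic difference is that you argue semantically in a generic extension $M[K]$ where the paper argues syntactically with $\forces$; these are interchangeable.
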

\begin{proof}
For \ref{eeq1},
note that 
$t_i\in{}^{<\mu}\ddimq$ 
since 
$p\in\QQ^*(\vartheta_i,\sigma_i)$,
and in particular, since we have 
$\height(\dom p)\subseteq \dom\big({(\vartheta_i)}_{(p)}\big)$
by definition.
%by Definition~\ref{Q* def}~\ref{Q* def 1}. 
The latter also implies $t_i\subseteq{(\vartheta_i)}_{[p]}$ by Lemma~\ref{lemma: sigma_q and sigma[q]}.
It thus suffices that $t_i\restr\gamma\in\dom(p)$ for all $\gamma<\height(\dom p)$, but this follows from $p\in\QQ^*(\vartheta_i,\sigma_i)$ by definition.

For \ref{eeq2}, we first show $r\in\QQ$.
Since $\xi<\mu$ and
$t_i\neq t_j$ for all $i<j<\xi$, 
$r$ is a partial function from ${}^{<\mu}\ddimq$ to ${}^{<\mu}\mu$ 
with $|\dom(r)|<\mu$.
$\dom(r)$ is a subtree of ${}^{<\mu}\ddimq$
by \ref{eeq1}.
Moreover, $r$ is a strict partial $s$-function by Lemma~\ref{def of the ordering equiv} and
our assumption about the values $u_i=r(t_i)$.
Therefore
$r\in\QQ$, as required.

It is clear that $r\leq  p$. 
If $i<\xi$, then
$r$ forces $t_i\subseteq\vartheta_i$ 
since $t_i\subseteq(\vartheta_i)_{(p)}\subseteq(\vartheta_i)_{(r)}$.
Therefore
$r\forces\dot h(t_i)\subseteq [\dot h](\vartheta_i)=\sigma_i$.
Since $r\forces \dot h\mleq r$ by Corollary~\ref{h_K is an s-function}~\ref{h_K in mpo},
$r$ also forces that $u_i\subseteq\dot h(t_i)$. Therefore 
$r\forces u_i\subseteq\sigma_i$.
\end{proof}

The next lemma states that for $p\in \QQ^*$, $\sigma_{(p)}$ can be calculated from $p$ and $\vartheta$. 

\begin{lemma}
\label{dom sigma_p}
If $p\in\QQ^*:=\QQ^*(\vartheta,\sigma)$
where
 $\vartheta$ and $\sigma$ are $\QQ$-names such that
$\one_\QQ\forces \vartheta\in{}^{\mu}\ddimq\wedge\sigma=[\dot h](\vartheta)$,
%For all $p\in\QQ^*$,
then
\todog{the second statement is used in Lemma \ref{P* is <kappa closed}}
\[
\sigma_{(p)}=\bigcup\big\{p(\vartheta_{(p)}\restr\alpha): \alpha<\height(\dom p)\big\}. 
\]
In particular, $\dom(\sigma_{(p)})=\lh(\ran p)$. 
\end{lemma}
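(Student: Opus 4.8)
The plan is to prove the equality $\sigma_{(p)}=R$, where $R:=\bigcup\{p(\vartheta_{(p)}\restr\alpha):\alpha<\height(\dom p)\}$, and then extract the statement about $\dom(\sigma_{(p)})$. Before the two inclusions I would record the two facts that drive everything. Since $h_K$ is the infimum of $K$ in $\mpo$ by Corollary~\ref{h_K is an s-function}~\ref{h_K in mpo}, any $p\in K$ satisfies $h_K\mleq p$, so $p\forces\dot h\mleq p$; by the definition of the ordering in Definition~\ref{def of the forcing}~\ref{ordering def} this means $p\forces\dot h(t)=p(t)$ for every non-terminal node $t$ of $\dom(p)$. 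Secondly, since $\one_\QQ\forces\sigma=[\dot h](\vartheta)=\bigcup_{\alpha<\mu}\dot h(\vartheta\restr\alpha)$ and $\dot h$ is forced to be order preserving, we have $\one_\QQ\forces\dot h(\vartheta\restr\alpha)\subseteq\sigma$ for every $\alpha$.

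For $R\subseteq\sigma_{(p)}$ I would fix $\alpha<\height(\dom p)$. Condition \ref{Q* def 1} of Definition~\ref{Q* def} gives $\height(\dom p)\in\Lim$ and $\height(\dom p)\subseteq\dom(\vartheta_{(p)})$, so $\vartheta_{(p)}\restr\alpha$ is a genuine sequence of length $\alpha$ and, using Lemma~\ref{lemma: sigma_q and sigma[q]}, $p\forces\vartheta\restr\alpha=\vartheta_{(p)}\restr\alpha$. By condition \ref{Q* def 2} the node $\vartheta_{(p)}\restr\alpha$ lies in $\dom(p)$ and, as $\alpha+1<\height(\dom p)$, it has the direct successor $\vartheta_{(p)}\restr(\alpha+1)\in\dom(p)$, hence is non-terminal. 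Combining the two facts above yields $p\forces p(\vartheta_{(p)}\restr\alpha)=\dot h(\vartheta\restr\alpha)\subseteq\sigma$, so $p(\vartheta_{(p)}\restr\alpha)\subseteq\sigma_{(p)}$; taking the union over $\alpha$ gives $R\subseteq\sigma_{(p)}$.

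The main work is the reverse inclusion, and the key tool is Lemma~\ref{extending elements of Q*} in the single-name instance $\xi=1$, $\vartheta_0=\vartheta$, $\sigma_0=\sigma$, $t_0=\vartheta_{(p)}\restr\height(\dom p)$; here the quantity $\bigcup\{p(t_0\restr\alpha):\alpha<\height(\dom p)\}$ occurring in that lemma is exactly $R$. Since $p\forces\sigma\in{}^\mu\mu$, the relation $\sigma_{(p)}$ is a partial function, so it suffices to show $\dom(\sigma_{(p)})\subseteq\lh(R)$: for $\delta<\lh(R)$ the pair $(\delta,R(\delta))\in R\subseteq\sigma_{(p)}$ forces $\sigma_{(p)}(\delta)=R(\delta)$, which gives $\sigma_{(p)}\subseteq R$. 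To prove $\dom(\sigma_{(p)})\subseteq\lh(R)$ I would fix $\delta\geq\lh(R)$ and choose two end-extensions $u_0,u_1\supsetneq R$ in ${}^{<\mu}\mu$ of length $\delta+1$ (possible as $\lh(R)\leq\delta<\mu$) with $u_0(\delta)\neq u_1(\delta)$. Lemma~\ref{extending elements of Q*} then produces conditions $r_0,r_1\mleq p$ with $r_j\forces u_j\subseteq\sigma$, hence $r_j\forces\sigma(\delta)=u_j(\delta)$; as the two values differ, $p$ does not decide $\sigma(\delta)$, i.e.\ $\delta\notin\dom(\sigma_{(p)})$. This establishes $\sigma_{(p)}=R$.

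Finally, for the ``in particular'' clause, $\sigma_{(p)}=R$ gives $\dom(\sigma_{(p)})=\lh(R)$. Each $p(\vartheta_{(p)}\restr\alpha)$ belongs to $\ran(p)$, so $\lh(R)\leq\lh(\ran p)$, while condition \ref{Q* def 3} of Definition~\ref{Q* def} gives $\lh(\ran p)\subseteq\dom(\sigma_{(p)})=\lh(R)$; hence $\dom(\sigma_{(p)})=\lh(R)=\lh(\ran p)$. I expect no substantial new idea to be required: the only delicate point is the bookkeeping in the reverse inclusion, namely verifying that the single-name instance of Lemma~\ref{extending elements of Q*} applies and that its auxiliary sequence $v$ coincides with $R$.
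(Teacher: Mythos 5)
Your proof is correct and follows essentially the same route as the paper: both inclusions rest on $p\forces\dot h\mleq p$ (via Lemma~\ref{K and h_K}) together with the defining properties of $\QQ^*$, the reverse inclusion is driven by the single-name instance of Lemma~\ref{extending elements of Q*}, and the ``in particular'' clause is extracted exactly as in the paper from Definition~\ref{Q* def}~\ref{Q* def 3}. The only cosmetic difference is in the reverse inclusion, where you use two incompatible extensions of $R$ to show $p$ does not decide $\sigma(\delta)$ for $\delta\geq\lh(R)$, whereas the paper picks a single extension $u\supsetneq v$ incompatible with $\sigma_{(p)}$ and derives a contradiction.
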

\begin{proof}
Let $p\in\QQ^*$, 
%let $t=(\vartheta_{(p)})\restr\height(\dom p)$,
and let 
$v:= \bigcup\big\{p(\vartheta_{(p)}\restr\alpha): \alpha<\height(\dom p)\big\}.$
We first show $v=\sigma_{(p)}$.
Since
$p\forces\dot h\mleq p$
by Lemma \ref{K and h_K} and
$p\forces\vartheta_{(p)}\subseteq \vartheta$,
\[p\forces p(\vartheta_{(p)}\restr\alpha)\subseteq\dot h(\vartheta_{(p)}\restr\alpha)\subseteq [\dot h](\vartheta)=\sigma.\]
for all $\alpha<\height(\dom p)$.
Thus
$p\forces v\subseteq\sigma$, so $v\subseteq\sigma_{(p)}$.
Suppose $v\subsetneq \sigma_{(p)}$. 
Take $u\in{}^{<\mu}\mu$ with $v\subsetneq u$
and  $u\perp \sigma_{(p)}$.
Let
$t:=(\vartheta_{(p)})\restr\height(\dom p)$ 
and $r:=p\cup\{(t,u)\}.$
Then $r\in\QQ$ and 
$r\forces u\subseteq \sigma$ 
by Lemma \ref{extending elements of Q*}. 
However, $r\leq  p$ and therefore $r\forces\sigma_{(p)}\subseteq\sigma$, a contradiction.

To show the second part of the conclusion, 
observe that 
$\sigma_{(p)}=v$  implies 
\[
\dom(\sigma_{(p)})=\lh(v)
=\bigcup\{\lh(p(\vartheta_{(p)}\restr\alpha):\:\alpha<\height(\dom p)\}
\subseteq\lh(\ran p).
\]
Conversely,
$\lh(\ran p)\subseteq\dom(\sigma_{(p)})$
follows from our assumption that $p\in\QQ^*$. 
\end{proof}

The next lemma will be used to prove the ${<}\mu$-closure of the quotient forcings for elements of $\ran([h_K])$. 

\begin{lemma}
\label{P* is <kappa closed}
Suppose that
$\QQ^*:=\QQ^*(\vartheta,\sigma),$
where
 $\vartheta$ and $\sigma$ are $\QQ$-names such that
$\one_\QQ\forces \vartheta\in{}^{\mu}\ddimq\wedge\sigma=[\dot h](\vartheta)$.
If
$\langle p_\alpha:\alpha<\xi\rangle$ is a strictly decreasing chain of elements of $\QQ^*$ of length $\xi<\mu$ and
\[
p:=\mbigcap\{p_\alpha:\alpha<\xi\},
\]
then $p\in\QQ^*$ and $\sigma_{(p)}=\bigcup_{\alpha<\xi}\sigma_{(p_\alpha)}$.
\end{lemma}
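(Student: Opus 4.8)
The plan is to verify the three defining clauses of $\QQ^*$ (Definition~\ref{Q* def}) directly for $p$, and then to compute $\sigma_{(p)}$ by means of the explicit formula in Lemma~\ref{dom sigma_p}. First I would record the basic facts. Since $\langle p_\alpha:\alpha<\xi\rangle$ is a downward directed subset of $\QQ$ of size ${<}\mu$, Lemma~\ref{mbigcap facts}~\ref{mbigcap fact 4} gives $p\in\QQ$ and that $p$ is the infimum of $\{p_\alpha:\alpha<\xi\}$; in particular $p\mleq p_\alpha$ for all $\alpha$, so by monotonicity of forcing $\vartheta_{(p_\alpha)}\subseteq\vartheta_{(p)}$ and $\sigma_{(p_\alpha)}\subseteq\sigma_{(p)}$ for every $\alpha$. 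Lemma~\ref{decreasing chains in Q} supplies the two key identities $\height(\dom p)=\bigcup_{\alpha<\xi}\height(\dom p_\alpha)$ and $\lh(\ran p)=\bigcup_{\alpha<\xi}\lh(\ran p_\alpha)$.

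For clause~\ref{Q* def 1} I would argue that $\height(\dom p)$ is a limit ordinal because it is a supremum of the limit ordinals $\height(\dom p_\alpha)$, and that $\height(\dom p)\subseteq\dom(\vartheta_{(p)})$ by taking the union over $\alpha$ of the inclusions $\height(\dom p_\alpha)\subseteq\dom(\vartheta_{(p_\alpha)})\subseteq\dom(\vartheta_{(p)})$. Clause~\ref{Q* def 3} is entirely analogous, using $\lh(\ran p_\alpha)\subseteq\dom(\sigma_{(p_\alpha)})\subseteq\dom(\sigma_{(p)})$. Clause~\ref{Q* def 2} needs a little more care: given $\gamma<\height(\dom p)$, I pick $\alpha$ with $\gamma<\height(\dom p_\alpha)$; since then $\gamma\subseteq\dom(\vartheta_{(p_\alpha)})$ and $\vartheta_{(p_\alpha)}\subseteq\vartheta_{(p)}$, the two functions agree on $\gamma$, whence $\vartheta_{(p)}\restr\gamma=\vartheta_{(p_\alpha)}\restr\gamma$, and the latter lies in $\dom(p_\alpha)\subseteq\dom(p)$ by clause~\ref{Q* def 2} for $p_\alpha\in\QQ^*$. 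This establishes $p\in\QQ^*$.

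For the equality $\sigma_{(p)}=\bigcup_{\alpha<\xi}\sigma_{(p_\alpha)}$ the inclusion $\supseteq$ is immediate from $\sigma_{(p_\alpha)}\subseteq\sigma_{(p)}$. For $\subseteq$ I would invoke Lemma~\ref{dom sigma_p}, now applicable because $p\in\QQ^*$, to write $\sigma_{(p)}=\bigcup\{p(\vartheta_{(p)}\restr\alpha):\alpha<\height(\dom p)\}$. Fixing $\alpha<\height(\dom p)$ and choosing $\beta$ with $\alpha<\height(\dom p_\beta)$, I set $t:=\vartheta_{(p)}\restr\alpha=\vartheta_{(p_\beta)}\restr\alpha$ (the equality as in the previous paragraph). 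The node $t$ is non-terminal in $\dom(p_\beta)$: since $\height(\dom p_\beta)\in\Lim$ we have $\alpha+1<\height(\dom p_\beta)$, so clause~\ref{Q* def 2} for $p_\beta$ yields $\vartheta_{(p_\beta)}\restr(\alpha+1)\in\dom(p_\beta)$, a proper extension of $t$. Hence Lemma~\ref{mbigcap facts}~\ref{mbigcap fact 1} gives $p(t)=p_\beta(t)$, and by Lemma~\ref{dom sigma_p} applied to $p_\beta$ this value is contained in $\sigma_{(p_\beta)}$. Taking the union over $\alpha$ gives $\sigma_{(p)}\subseteq\bigcup_\beta\sigma_{(p_\beta)}$, completing the argument.

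The main obstacle, and the only point demanding real attention, is the repeated identification $\vartheta_{(p)}\restr\gamma=\vartheta_{(p_\alpha)}\restr\gamma$ together with the non-terminality of the relevant initial segment $t$ in $\dom(p_\beta)$: it is precisely because $t$ is non-terminal that Lemma~\ref{mbigcap facts}~\ref{mbigcap fact 1} transfers the value of $p$ at $t$ to the value of $p_\beta$ at $t$, and this is what ties $\sigma_{(p)}$ back to the pieces $\sigma_{(p_\beta)}$. Everything else reduces to routine bookkeeping with suprema of limit ordinals and the monotonicity of $\vartheta_{(\cdot)}$ and $\sigma_{(\cdot)}$ under strengthening of conditions.
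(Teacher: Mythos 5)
Your proof is correct, and its first half is essentially the paper's own argument: the same appeal to Lemmas~\ref{mbigcap facts} and~\ref{decreasing chains in Q}, the same inclusions for clauses~\ref{Q* def 1} and~\ref{Q* def 3} of Definition~\ref{Q* def}, and word-for-word the same treatment of clause~\ref{Q* def 2}; your uniform remark that a supremum of limit ordinals is again a limit ordinal even replaces the paper's small case split on whether $\xi$ is a successor or a limit. Where you genuinely diverge is the inclusion $\sigma_{(p)}\subseteq\bigcup_{\alpha<\xi}\sigma_{(p_\alpha)}$. The paper never inspects individual values of $p$: setting $v:=\bigcup_{\alpha<\xi}\sigma_{(p_\alpha)}$, it notes $v\subseteq\sigma_{(p)}$ and then checks that both sides have the same domain, via
\[
\dom(\sigma_{(p)})=\lh(\ran p)=\bigcup_{\alpha<\xi}\lh(\ran p_\alpha)=\bigcup_{\alpha<\xi}\dom(\sigma_{(p_\alpha)})=\dom(v),
\]
using only the clause $\dom(\sigma_{(q)})=\lh(\ran q)$ of Lemma~\ref{dom sigma_p} together with Lemma~\ref{decreasing chains in Q}; equality of comparable sequences with equal domains finishes the proof. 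You instead unpack the full formula of Lemma~\ref{dom sigma_p} for $p$ and push each piece $p(\vartheta_{(p)}\restr\alpha)$ into some $\sigma_{(p_\beta)}$, which requires your non-terminality argument for $\vartheta_{(p_\beta)}\restr\alpha$ in $\dom(p_\beta)$ and the value-transfer Lemma~\ref{mbigcap facts}~\ref{mbigcap fact 1}. Both routes are sound: the paper's domain comparison is shorter and avoids value transfer entirely, while yours makes explicit how $\sigma_{(p)}$ is assembled from values the conditions $p_\beta$ already commit to, at the cost of that extra step.
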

\begin{proof}
We first show that $p\in\QQ^*$.
Since $|\dom(p)|<\mu$,
we have $p\in\QQ$ by Lemma~\ref{mbigcap facts}.
The fact that $\height(\dom p)$ and $\lh(\ran p)$ are limit ordinals follows
from Lemma~\ref{decreasing chains in Q} in the case that $\xi\in\Lim$, 
and
from the fact that $p_\alpha\in\QQ^*$ in the case that $\xi=\alpha+1$.
\todog{we have to use $\lh(\ran p)$ instead of $\height(\ran p)$ in the def of $\QQ^{*}$ for this step to work}
Moreover,
\begin{equation*}
\begin{gathered}
\height(\dom p)=\bigcup_{\alpha<\xi}\height(\dom p_\alpha)
\subseteq
\bigcup_{\alpha<\xi}\dom(\vartheta_{(p_\alpha)})
\subseteq \dom(\vartheta_{(p)}), 
\\
\lh(\ran p)=\bigcup_{\alpha<\xi}\lh(\ran p_\alpha)
\subseteq
\bigcup_{\alpha<\xi}\dom(\sigma_{(p_\alpha)})
\subseteq \dom(\sigma_{(p)}) 
\end{gathered}
\end{equation*}
by
Lemma~\ref{decreasing chains in Q} 
and since
$p_\alpha\in\QQ^*$ and $p\leq  p_\alpha$ for all $\alpha<\xi$.
Thus, Definition~\ref{Q* def}~\ref{Q* def 1} and~\ref{Q* def 3} hold.
To show 
\ref{Q* def 2}, let $\gamma<\height(\dom p)$.
Then $\gamma<\height(\dom p_\alpha)$ for some $\alpha<\xi$. 
Since 
$\vartheta_{(p_\alpha)}\subseteq\vartheta_{(p)}$,
we have
$\vartheta_{(p)}\restr\gamma=\vartheta_{(p_\alpha)}\restr\gamma\in \dom(p_\alpha)\subseteq\dom(p).$

To show the second part of the conclusion, let $v:=\bigcup_{\alpha<\xi}\sigma_{(p_\alpha)}$.
Then $v\subseteq\sigma_{(p)}$. 
Moreover,
\[
\dom(\sigma_{(p)})=\lh(\ran p)=\bigcup_{\alpha<\xi}\lh(\ran p_\alpha)=
\bigcup_{\alpha<\xi}\dom(\sigma_{(p_\alpha)})=\dom v
\]
by the previous lemma
%Lemmas~\ref{dom sigma_p}
and Lemma~\ref{decreasing chains in Q}.
Hence $\sigma_{(p)}=v$.
\end{proof}

The next lemma proves the first part of the Nice Quotient Lemma~\ref{Q main lemma} for the forcing $\Add(\mu,1)$. 

\begin{lemma}
\label{branches are Add(kappa,1) generic}
Suppose $K$ is $\QQ$-generic over $M$ and
$\langle x_i:i<\xi\rangle$ is a sequence of distinct
elements of $({}^{\mu}\ddimq)^{M[K]}$ of length $\xi<\mu$.
%such that $[h_K](x_i)\neq [h_K](x_j)$ for all $i<j<\xi$. 
Then 
$\prod_{i<\xi}[h_K](x_i)$
is $\Add(\mu,\xi)$-generic over $M$.\footnote{$[h_K](x_i)$ is identified with the filter of its initial segments in $\Add(\mu,1)$.}
\end{lemma}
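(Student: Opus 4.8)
The plan is to prove mutual genericity of the branches by a density argument built on the dense subforcings $\QQ^*$ and the extension lemmas above. Fix $\QQ$-names $\vartheta_i$ $(i<\xi)$ with $\one_\QQ\forces\vartheta_i\in{}^{\mu}\ddimq$ and $\vartheta_i^K=x_i$, and set $\sigma_i:=[\dot h](\vartheta_i)$, so that $\sigma_i^K=[h_K](x_i)$ by Corollary~\ref{h_K is an s-function}. Since the $x_i$ are pairwise distinct, $\QQ$ is $\lle\mu$-closed and $\xi<\mu$ with $\mu$ regular, I can amalgamate the $<\mu$ many conditions witnessing the separate inequalities and fix a single $q_0\in K$ forcing $\vartheta_i\neq\vartheta_j$ for all $i\neq j<\xi$. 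Writing $F:=\prod_{i<\xi}\sigma_i^K$, it suffices to show that $F$ meets every dense set $D\in M$ of $\Add(\mu,\xi)$. Because $\xi<\mu$, the $\lle\mu$-support condition is vacuous, so a condition of $\Add(\mu,\xi)$ is simply a sequence $\bar u=\langle u_i:i<\xi\rangle$ with each $u_i\in{}^{<\mu}\mu$, ordered coordinatewise by reverse inclusion. Fixing such a $D$, I would reduce the claim to showing that the set of $r\in\QQ$ forcing ``$\prod_{i<\xi}\sigma_i$ meets $\check D$'' is dense below $q_0$; genericity of $K$ then yields that $M[K]\models$ ``$F$ meets $D$''.

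So let $p\mleq q_0$ be arbitrary; I must produce $r\mleq p$ forcing the product filter to meet $\check D$. The crucial preparatory step is to arrange that the nodes $t_i:=(\vartheta_i)_{(p)}\restr\height(\dom p)$ are pairwise distinct. Since $p\mleq q_0$ forces $\vartheta_i\neq\vartheta_j$, I would first extend $p$ to a condition deciding $\vartheta_i\restr\beta$ for all $i<\xi$, where $\beta<\mu$ is chosen large enough that these decided initial segments are pairwise distinct, and arrange $\height(\dom p)>\beta$; passing afterwards into the dense subforcing $\QQ^*=\bigcap_{i<\xi}\QQ^*(\vartheta_i,\sigma_i)$ via Lemma~\ref{Q* dense} only enlarges the domain and height and preserves these decisions. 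For the resulting $p\in\QQ^*$, membership in $\QQ^*(\vartheta_i,\sigma_i)$ guarantees that $(\vartheta_i)_{(p)}$ is defined up to $\height(\dom p)>\beta$, so $t_i\restr\beta=(\vartheta_i)_{(p)}\restr\beta$ equals the decided value; hence $t_i\neq t_j$ whenever $i\neq j$.

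With the $t_i$ pairwise distinct, I would invoke Lemma~\ref{dom sigma_p} to identify the current branch-approximations: $(\sigma_i)_{(p)}=\bigcup\{p(t_i\restr\alpha):\alpha<\height(\dom p)\}\in{}^{<\mu}\mu$, of length $\lh(\ran p)<\mu$. Form $\bar w=\langle(\sigma_i)_{(p)}:i<\xi\rangle\in\Add(\mu,\xi)$, extend each coordinate by one value to obtain $\bar w'\leq\bar w$ with $w_i'\supsetneq(\sigma_i)_{(p)}$, and apply density of $D$ to pick $\bar u=\langle u_i:i<\xi\rangle\in D$ with $\bar u\leq\bar w'$, so that $u_i\supsetneq(\sigma_i)_{(p)}$ for every $i$. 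Now Lemma~\ref{extending elements of Q*}\ref{eeq2} applies verbatim, its hypotheses being exactly that $t_i\neq t_j$ and $u_i\supsetneq\bigcup\{p(t_i\restr\alpha):\alpha<\height(\dom p)\}=(\sigma_i)_{(p)}$; it yields $r:=p\cup\{(t_i,u_i):i<\xi\}\in\QQ$ with $r\mleq p$ and $r\forces u_i\subseteq\sigma_i$ for all $i<\xi$. Thus $r\forces\bar u\in\prod_{i<\xi}\sigma_i$, and since $\bar u\in D$ this $r$ forces the product filter to meet $\check D$, which completes the density argument and hence the proof.

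The name bookkeeping and the coordinatewise description of $\Add(\mu,\xi)$ (legitimate precisely because $\xi<\mu$) are routine. The main obstacle is the middle step: securing pairwise distinctness of the nodes $t_i$, since this is exactly what permits the single condition $r$ to assign independent end-extensions $u_i$ to the $\xi$ branches simultaneously. This is where the hypothesis that the $x_i$ are distinct elements of ${}^{\mu}\ddimq$ enters, combined with the structural control over $\QQ^*$ afforded by Lemmas~\ref{Q* dense} and~\ref{dom sigma_p}; once distinctness is in hand, the genericity follows from one application of Lemma~\ref{extending elements of Q*}.
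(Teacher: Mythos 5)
Your proof is correct and takes essentially the same route as the paper's: fix names $\vartheta_i,\sigma_i$, pass to the dense subforcing $\bigcap_{i<\xi}\QQ^*(\vartheta_i,\sigma_i)$ via Lemma~\ref{Q* dense}, compute $(\sigma_i)_{(p)}$ with Lemma~\ref{dom sigma_p}, and apply Lemma~\ref{extending elements of Q*} in a density argument below a condition of $K$ separating the names, concluding by genericity. The only cosmetic difference is that the paper fixes in advance a single level $\alpha<\mu$ and a condition $q\in K$ forcing $\vartheta_i\restr\alpha\neq\vartheta_j\restr\alpha$ with $\height(\dom q)\geq\alpha$, so that the nodes $t_i$ are automatically pairwise distinct for every $p\leq q$ in $\QQ^*$, whereas you re-establish this distinctness locally inside the density argument by extending the given condition to decide separating initial segments.
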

\begin{proof}
Let $i<\xi$. 
By basic facts about forcing, 
there exist $\QQ$-names $\vartheta_i$ and $\sigma_i$  such that
$(\vartheta_i)^K=x_i$ 
and 
$\one_\QQ\forces\vartheta_i\in{}^\mu\ddimq\wedge [\dot{h}](\vartheta_i)=\sigma_i$.
In more detail, let $\vartheta'_i$ be a $\QQ$-name such that $(\vartheta'_i)^K=x_i$ and let 
\begin{align*}
\vartheta_i&:=\big\{\big((\alpha,\beta),q\big):\: 
(\alpha,\beta)\in\mu\times\ddimq
,\ \, 
q\forces\vartheta'_i\in{}^\mu\ddimq\wedge(\alpha,\beta)\in\vartheta'_i
\big\},
\\
\sigma_i&:=\big\{\big((\alpha,\beta),q\big):\: 
(\alpha,\beta)\in\mu\times\mu
,\ \, 
q\forces(\alpha,\beta)\in[\dot h](\vartheta_i)
\big\}. 
\end{align*}

Since $x_i\neq x_j$ for all $i<j<\xi$, there exists an ordinal $\alpha<\mu$ and a condition $q\in K$ such that 
$q\forces\vartheta_i\restr\alpha\neq \vartheta_j\restr\alpha$
 for all $i<j<\xi$.
We may also assume $\height(\dom q)\geq\alpha$, since
$\{p\in\QQ:\height(\dom p)\geq\alpha\}$ is dense in $\QQ$ by Lemma \ref{h_K domain sublemma}.

\begin{claim*}
\label{branches are Add(kappa,1) generic subclaim}
If $D$ is a dense subset of $\Add(\mu,\xi)$, then
%the subset
$$
D':=\big\{r\in\QQ\::\: {\exists u\in D}\ \ {\forall i<\xi} \ \ {r\forces u(i)\subseteq\sigma_i}\big\}
%D':=\big\{r\in\QQ\::\: {\exists \langle u_i:i<\xi\rangle\in D}\ \ {\forall i<\xi} \ \ {r\forces u_i\subseteq\sigma_i}\big\}
$$
%of $\QQ$ 
is dense below $q$.
\end{claim*}
\begin{proof}
Suppose that $q'\in\QQ$ and $q'\leq  q$. 
Take some $p\leq  q'$ with $p\in\bigcap_{i<\xi}\QQ^*(\vartheta_i,\sigma_i)$ by Lemma~\ref{Q* dense}. 
For all $i<\xi$, choose $t_i\in{}^{<\mu}\ddimq$ and $u_i\in{}^{<\mu}\mu$ as in Lemma~\ref{extending elements of Q*}. That is, let
$t_i:=(\vartheta_i)_{(p)}\restr\height(\dom p)$ and
choose $u_i$ so that
$u_i\supsetneq \bigcup\big\{p(t_i\restr\alpha)\::\:{\alpha<\height(\dom p)}\big\}.$
In addition, we may assume $\langle u_i : i<\xi\rangle\in D$. 
Since $p\leq  q$, we have $t_i\neq t_j$ for all $i<j<\xi$.
Thus $r:=p\cup\{(t_i,u_i):i<\xi\}$ is a condition in $\QQ$ below $p$ which forces $u_i\subseteq \sigma_i$ for all $i<\xi$ by Lemma \ref{extending elements of Q*}. 
Hence $r\leq  q'$ and $r\in D'$ as required. 
\end{proof}
To see that $G':=\prod_{i<\xi}[h_K](x_i)$ is $\Add(\mu,\xi)$-generic,  
let $D$ be a dense subset of $\Add(\mu,\xi)$.
Then $D'\cap K\neq\emptyset$
by the previous claim.
Hence there exists $u\in D$ 
with $u(i)\subseteq (\sigma_i)^K=[h_K](x_i)$ 
for all $i<\xi$, i.e., with
$u\in G'$.
This completes the proof of Lemma~\ref{branches are Add(kappa,1) generic}.
\end{proof}

\begin{remark}
\label{the f constructed is injective remark 2}
The previous lemma
shows that the special case of the Nice Quotient Lemma for $\Add(\mu,1)$ holds with the following stronger $\xi$-dimensional variant  
of~\ref{Q main 1}: 
\begin{quotation}\noindent
If $\langle x_i:i<\xi\rangle$ is a sequence of distinct
elements of $({}^{\mu}\ddimq)^{M[K]}$ of length $\xi<\mu$,
then 
$\prod_{i<\xi}[h_K](x_i)$
is $\Add(\mu,\xi)$-generic over $M$.
\end{quotation}
\end{remark}

In the rest of this subsection, we will prove the second part of the Nice Quotient Lemma for $\Add(\mu,1)$.
We need to show that 
the quotient forcing 
in a $\QQ$-generic extension $M[K]$ of $M$
for an element $\sigma^K$ 
of $\ran([h_K])$ 
is equivalent to $\Add(\mu,1)$. 
The precise statement is given in Lemma~\ref{quotient forcing of sigma 2} below. 

For this purpose, it will be more convenient to work 
with the Boolean completion of $\QQ$.
We therefore use the next lemma. 

\begin{lemma}
\label{Q separative}
$\QQ$ is separative.
\end{lemma}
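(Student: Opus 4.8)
The plan is to verify separativity directly: given $p,q\in\QQ$ with $p\not\mleq q$, I would produce some $r\mleq p$ with $r\perp q$. If $p\perp q$ already, then $r:=p$ works, so I may assume $p\compat q$ and use the compatibility criterion of Lemma~\ref{perp_Q claim} to pin down the precise failure of $p\mleq q$. First I would note that under $p\compat q$, if $\dom(q)\subseteq\dom(p)$ then every node $t$ that is non-terminal in $\dom(q)$ is also non-terminal in $\dom(p)$ (its successor lies in $\dom(q)\subseteq\dom(p)$), whence $p(t)=q(t)$ by Lemma~\ref{perp_Q claim}. Hence the only ways $p\mleq q$ can fail are that either (a) $\dom(q)\not\subseteq\dom(p)$, or (b) there is a terminal node $t$ of $\dom(q)$ with $q(t)\not\subseteq p(t)$; and in case (b) compatibility (clause~\ref{comp 1}) forces $p(t)\subsetneq q(t)$, while clause~\ref{comp 2} forces $t$ to be terminal in $\dom(p)$ as well.

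The construction rests on two elementary moves. Move (i): at a terminal node $t$ of $\dom(p)$ with $p(t)\subsetneq q(t)$, replace the value $p(t)$ by some $u\supsetneq p(t)$ with $u\perp q(t)$; such $u$ exists precisely because $p(t)$ is a proper initial segment of $q(t)$. Move (ii): at a $\subseteq$-minimal node $t\in\dom(q)\setminus\dom(p)$, adjoin $t$ to the domain with a value $r(t)\supsetneq w$ that diverges from $q(t)$, where $w:=\bigcup_{\gamma<\lh(t)}s\big(p(t\restr\gamma),t(\gamma)\big)$ is the lower bound imposed on $r(t)$ by Lemma~\ref{def of the ordering equiv}. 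Move (i) settles case (b) immediately. For case (a) I would take the minimal missing node $t$: if $\lh(t)\in\Lim$, or $\lh(t)\in\Succ$ with immediate predecessor $v$ non-terminal in $\dom(p)$, then the initial segments of $t$ in $\dom(p)\cap\dom(q)$ carry equal $p$- and $q$-values, so $w=\bigcup_{\gamma<\lh(t)}s\big(q(t\restr\gamma),t(\gamma)\big)$ and move (ii) applies; the remaining possibility is that $v$ is terminal in $\dom(p)$ with $p(v)\subsetneq q(v)$ (from clause~\ref{comp 3}), where move (i) applies at $v$ instead. In each case I would check via Lemma~\ref{def of the ordering equiv} that $r$ is a strict partial $s$-function with $|\dom(r)|<\mu$, that $r\mleq p$ (only a terminal value is raised or a new terminal node added), and that $r\perp q$ by Lemma~\ref{perp_Q claim}, since the chosen node witnesses a failure of clause~\ref{comp 1}.

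The main obstacle is exactly the room-to-diverge step in move (ii): this is where the strict inclusions $\supsetneq$ built into Definition~\ref{def: partial s-function} are indispensable. Were the defining inequality of a partial $s$-function merely $\supseteq$, one could have $q(t)=w$, and then every value $r(t)\supseteq w$ assigned to the new node would be comparable with $q(t)$, so no extension of $p$ through $t$ could be made incompatible with $q$, and separativity would break down. Thus the crux reduces to confirming that at the divergence node the forced bound $w$ is a \emph{proper} initial segment of $q(t)$, which the strictness of $q$ in the form \eqref{Q equiv eq} guarantees; the rest is the routine bookkeeping indicated above.
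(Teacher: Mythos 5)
Your two moves are exactly the paper's, and most of your analysis is sound, but the last sub-case of your treatment of case (a) contains a genuine gap. You claim that if the minimal missing node $t\in\dom(q)\setminus\dom(p)$ has an immediate predecessor $v$ that is terminal in $\dom(p)$, then $p(v)\subsetneq q(v)$ ``from clause~\ref{comp 3}''. That clause only yields $p(v)\subseteq q(v)$, and equality is entirely possible: take $\dom(p)=\{\emptyset\}$ and $\dom(q)=\{\emptyset,\langle 0\rangle\}$ with $p(\emptyset)=q(\emptyset)\neq\emptyset$ and $q(\langle 0\rangle)\supsetneq s(q(\emptyset),0)$. Then $p,q\in\QQ$, $p\compat q$, $p\not\mleq q$, your case (b) does not occur, and the minimal missing node $t=\langle 0\rangle$ has predecessor $v=\emptyset$ terminal in $\dom(p)$ with $p(v)=q(v)$. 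Here move (i) at $v$ is impossible: no $u\supsetneq p(v)$ can be incompatible with $q(v)=p(v)$, since any such $u$ is comparable with it. So your recipe, as written, produces no witness to separativity in this configuration.

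The repair is short and uses only what you already have: if $p(v)=q(v)$, then the $p$- and $q$-values agree on \emph{every} initial segment of $t$ (below $v$ these nodes are non-terminal in both domains, so the last clause of Lemma~\ref{perp_Q claim} gives equality, and at $v$ it holds by assumption), hence $w=\bigcup_{\gamma<\lh(t)}s\big(q(t\restr\gamma),t(\gamma)\big)\subsetneq q(t)$ by Lemma~\ref{def of the ordering equiv}, and move (ii) at $t$ goes through after all. So your final sub-case must be split according to whether $p(v)\subsetneq q(v)$ (move (i) at $v$) or $p(v)=q(v)$ (move (ii) at $t$). It is worth noting how the paper's own decomposition sidesteps this: it cases first on whether $q(u)\not\subseteq p(u)$ for \emph{some} common node $u$ (handled by move (i)), and in the complementary case clause~\ref{comp 3} combined with $q(u)\subseteq p(u)$ forces $p(u)=q(u)$ at every common node that is non-terminal in $\dom(q)$ — in particular at every initial segment of the minimal missing node, regardless of whether it is terminal in $\dom(p)$ — so move (ii) always applies there. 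Your division by domains versus values is what created the overlooked configuration.
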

\begin{proof}
Suppose $p,q\in\QQ$ and $p\not\leq  q$. 
We aim to find $r\in\QQ$ with $r\leq p$ and $r\perp  q$.
We may assume $p\compat q$. 
First, suppose $q(t)\not\subseteq p(t)$ 
for some $t\in\dom(p)\cap\dom(q)$. 
Then $p(t)\subsetneq q(t)$ and $t$ is a terminal node of $\dom(p)$ by Lemma~\ref{perp_Q claim}~\ref{comp 1} and \ref{comp 2}. 
Take $v\in{}^{<\mu}\mu$ with $p(t)\subsetneq v$ and $v\perp  q(t)$.
Let $r$ be the element of $\QQ$
obtained from $p$ by extending the value of $t$ to $v$.
That is,
let $\dom(r):=\dom(p)$, and for all $u\in\dom(r)$, let
\[
r(u):=
\begin{cases}
v &\text{ if $u=t$,}
\\
p(u) &\text{ if $u\neq t$}.
\end{cases}
\]
Then 
$r< p$, and $r\perp  q$ by Lemma~\ref{perp_Q claim}~\ref{comp 1}.

Now, suppose $q(t)\subseteq p(t)$ for all $t\in\dom(p)\cap\dom(q)$.
Then $q(t)=p(t)$  
for all $t\in \dom(p)\cap \dom(q)$ such that $t$ is non-terminal in $\dom(q)$ 
by Lemma~\ref{perp_Q claim}~\ref{comp 3}. 
Thus, since $p\not\leq  q$, we must have $\dom(q)\not\subseteq\dom(p)$.
Take $t\in\dom(q)$ of minimal length with $t\notin\dom(p)$, and let
\[
u:= \bigcup_{\alpha< \lh(t)} s\big(p(t\restr\alpha), t(\alpha) \big)=
 \bigcup_{\alpha< \lh(t)} s\big(q(t\restr\alpha), t(\alpha) \big). 
\]
The second equality holds since 
for all $\alpha<\lh(t)$, 
$t\restr\alpha$ is a non-terminal node of $\dom(q)$ with $t\restr\alpha\in\dom(p)$,
and hence $p(t\restr\alpha)=q(t\restr\alpha)$.
Then $u\subsetneq q(t)$ holds by Lemma~\ref{def of the ordering equiv} since $q\in \QQ$. 
%is a strict partial $s$-function.
Take $v\in{}^{<\mu}\mu$ with $u\subsetneq v$ and $v\perp q(t)$,
and let $r:=p\cup\{(t,v)\}$. 
Then $r\in\QQ$ by Lemma~\ref{def of the ordering equiv}, and clearly $r< p$. 
We further have  $r\perp  q$ by Lemma~\ref{perp_Q claim}~\ref{comp 1}. 
\end{proof}

Recall 
that $\BB(\QQ)$ denotes the Boolean completion of $\QQ$. We may assume that 
$\QQ$ is a dense subset of $\BB(\QQ)$, since $\QQ$ is separative.
Write $\BB:=\BB(\QQ)$. 
Let $\bigwedgepo\BB$ and $\bigveepo\BB$ denote the infimum and supremum in $\BB$.% 
\footnote{Recall that $\bigwedgepo\BB K$ agrees with $\mbigcap K$ for downward directed subsets $K$ of $\QQ$ of size $\lle\mu$ by Remark~\ref{remark: infima in BB and mpo}.}
Write
$\boolval \varphi:=\boolvalpo \varphi \BB$ for all formulas $\varphi$ in the forcing language.
Assume $\vartheta$ and $\sigma$ denote $\BB$-names such that 
$\boolval{\vartheta\in{}^\mu\ddimq \wedge [\dot{h}](\vartheta)=\sigma} =\one_\BB$, and let $\QQ^*:=\QQ^*(\vartheta,\sigma)$.
Note that $\QQ^*$ is a dense subforcing of $\BB$ by Lemma \ref{Q* dense}.

\begin{lemma} 
\label{u subset sigma}
For all $u,v\in{}^{<\mu}\mu$: 
\begin{enumerate-(1)} 
\item 
\label{u subset sigma 1}
If $u\subseteq v$, then $\boolval{u\subseteq\sigma}\geq\boolval{v\subseteq\sigma}$. 
\item 
\label{u subset sigma 2}
If $\boolval{u\subseteq\sigma}>\boolval{v\subseteq\sigma}$, then $u\subsetneq v$. 
\end{enumerate-(1)} 
\end{lemma}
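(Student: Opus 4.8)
The plan is to prove the two parts in turn, deriving \ref{u subset sigma 2} from \ref{u subset sigma 1} together with a single structural observation about $\sigma$. Throughout I would use that $\one_\BB\forces\sigma=[\dot h](\vartheta)\in{}^\mu\mu$, so $\sigma$ is forced to name a function on $\mu$, and in particular two nodes of ${}^{<\mu}\mu$ that are both initial segments of $\sigma$ must be comparable.

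For \ref{u subset sigma 1} the content is purely logical. If $u\subseteq v$, then in every generic extension $v\subseteq\sigma$ entails $u\subseteq v\subseteq\sigma$, so $\one_\BB\forces(v\subseteq\sigma\to u\subseteq\sigma)$. This is exactly the inequality $\boolval{v\subseteq\sigma}\le\boolval{u\subseteq\sigma}$ of Boolean values, which is the assertion.

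For \ref{u subset sigma 2} I would assume $\boolval{u\subseteq\sigma}>\boolval{v\subseteq\sigma}$ and argue by cases on the relative position of $u$ and $v$ in the tree ${}^{<\mu}\mu$, recalling that any two nodes are either comparable or incompatible. If $v\subseteq u$, then \ref{u subset sigma 1} (applied with the roles of $u$ and $v$ exchanged) gives $\boolval{v\subseteq\sigma}\ge\boolval{u\subseteq\sigma}$; combined with the hypothesis this forces equality, contradicting strictness, and so excludes both $v=u$ and $v\subsetneq u$. If $u\perp v$, then no function extends both $u$ and $v$, whence $\one_\BB\forces\neg(u\subseteq\sigma\wedge v\subseteq\sigma)$, i.e.\ $\boolval{u\subseteq\sigma}\wedge\boolval{v\subseteq\sigma}=\zero$; since the hypothesis also gives $\boolval{v\subseteq\sigma}\le\boolval{u\subseteq\sigma}$, this yields $\boolval{v\subseteq\sigma}=\zero$. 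The only remaining case is $u\subsetneq v$, which is the desired conclusion.

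The one point requiring an actual (if mild) argument — and the main obstacle — is ruling out the possibility $\boolval{v\subseteq\sigma}=\zero$ produced in the incompatible case; that is, I must check $\boolval{v\subseteq\sigma}\neq\zero$ for every $v\in{}^{<\mu}\mu$. For $v=\emptyset$ this is immediate. For $v\neq\emptyset$ I would exhibit a single condition forcing $v\subseteq\sigma$: take $q\in\QQ$ with $\dom(q)=\{\emptyset\}$ and $q(\emptyset)=v$. Since $\lh(v)\ge 1$, the strictness condition \eqref{Q equiv eq} reduces to $q(\emptyset)\supsetneq\emptyset$ and so holds, giving $q\in\QQ$. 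If $q$ belongs to a $\QQ$-generic filter $K$, then $h_K\mleq q$ by Lemma~\ref{K and h_K}, and because $\emptyset$ is a terminal node of $\dom(q)$ this means $v=q(\emptyset)\subseteq h_K(\emptyset)$; as $h_K$ is total by Corollary~\ref{h_K is an s-function}, we get $h_K(\emptyset)\subseteq[h_K](\vartheta)=\sigma$, hence $v\subseteq\sigma$. Thus $q\forces v\subseteq\sigma$ and $\boolval{v\subseteq\sigma}\ge q>\zero$, which closes the incompatible case and completes the plan.
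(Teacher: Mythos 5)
Your proof is correct, and its skeleton is the same trichotomy the paper uses: part~(1) is logical monotonicity, and for part~(2) one excludes $v\subseteq u$ via part~(1) and excludes $u\perp v$ by noting that incompatible nodes give $\boolval{u\subseteq\sigma}\wedge\boolval{v\subseteq\sigma}=\zero$ (since $\sigma$ is forced to be a function). The genuine difference is that you noticed the incompatibility case is not by itself a contradiction: the paper's proof says only that $u\perp v$ would make $\boolval{u\subseteq\sigma}\perp\boolval{v\subseteq\sigma}$ and stops, which contradicts $\boolval{u\subseteq\sigma}>\boolval{v\subseteq\sigma}$ only if one also knows $\boolval{v\subseteq\sigma}\neq\zero$; otherwise the strict inequality and the zero meet coexist. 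You supply exactly this missing fact, and your argument for it is sound: for $v\neq\emptyset$ the one-node function $q=\{(\emptyset,v)\}$ is a condition of $\QQ$ by Lemma~\ref{def of the ordering equiv}, and by Lemma~\ref{K and h_K} together with Corollary~\ref{h_K is an s-function}, any generic $K\ni q$ satisfies $v\subseteq h_K(\emptyset)\subseteq[h_K](\vartheta^K)=\sigma^K$, so $\boolval{v\subseteq\sigma}\geq q>\zero$; your separate treatment of $v=\emptyset$ is also necessary, since $\{(\emptyset,\emptyset)\}$ is not a strict partial $s$-function. So your write-up is more complete than the paper's two-line proof: what the extra step costs is length, and what it buys is that the lemma is actually verified rather than resting on the unstated (but needed, and true) non-vanishing of the values $\boolval{v\subseteq\sigma}$. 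A small bonus: in the $v\subseteq u$ case you apply (1) with the correct direction $\boolval{v\subseteq\sigma}\geq\boolval{u\subseteq\sigma}$, whereas the paper's text states the inequality the wrong way round, which would make that case vacuous rather than contradictory.
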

\begin{proof} 
\ref{u subset sigma 1} is immediate. 
For 
\ref{u subset sigma 2}, suppose $\boolval{u\subseteq\sigma}>\boolval{v\subseteq\sigma}$. 
Then $u\compat v$ since otherwise $\boolval{u\subseteq\sigma}\perp\boolval{v\subseteq\sigma}$. 
If $v\subseteq u$, then $\boolval{v\subseteq\sigma}\leq\boolval{u\subseteq\sigma}$ by \ref{u subset sigma 1}. 
Therefore $u\subsetneq v$. 
\end{proof}

Recall from Definition \ref{def: generated Boolean subalgebra} 
that
$\BB(\sigma)=\BB^\QQ(\sigma)$ denotes the complete Boolean subalgebra of $\BB$ that is completely generated by $\{\boolval{(\alpha,\beta)\in\sigma}:\:\alpha,\beta<\mu\}$. 
\label{mmap def}
Note that $\sigma_{(p)}\in{}^{<\mu}\mu$ for all $p\in\QQ^*$ 
by Lemma~\ref{dom sigma_p}. 
Define the map $\mmap:\QQ^*\to\BB(\sigma)$ by letting
$$
\mmap(p):=\boolval{\sigma_{(p)}\subseteq\sigma}%
$$
for all $p\in \QQ^*$.%
\footnote{$\mmap(p)\in\BB(\sigma)$ since $\mmap(p)=\bigwedgepo\BB\big\{\boolval{(\alpha,\beta)\in\sigma}:(\alpha,\beta)\in\sigma_{(p)}\big\}$
and $\sigma_{(p)}\in{}^{<\mu}\mu$.}
Let $\RR^*:=\ran(\mmap)=\{\mmap(q):q\in\QQ^*\}$.

\begin{lemma}
\label{mmap obs}
For all $p,q\in\QQ^*$:
\begin{enumerate-(1)}
\item\label{mmap obs 1} $q\leq \mmap(q)$.
\item\label{mmap obs 2} If $p\leq q$, then $\mmap(p)\leq\mmap(q)$.
\end{enumerate-(1)}
\end{lemma}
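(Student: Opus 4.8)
The plan is to unwind the definitions of $\mmap$ and of $\sigma_{(p)}$ and then invoke Lemma~\ref{u subset sigma}~\ref{u subset sigma 1}. Throughout, I would use that $\QQ$ is a dense subset of $\BB$ (Lemma~\ref{Q separative}), so that for $q\in\QQ^*\subseteq\QQ$ and any formula $\varphi$ in the forcing language, $q\leq\boolval{\varphi}$ holds if and only if $q\forces\varphi$. I would also recall that $\sigma_{(p)}\in{}^{<\mu}\mu$ for every $p\in\QQ^*$ by Lemma~\ref{dom sigma_p}, so that $\boolval{\sigma_{(p)}\subseteq\sigma}$ is a legitimate Boolean value and the hypotheses of Lemma~\ref{u subset sigma} are met.

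For \ref{mmap obs 1}, I would argue directly from the definition of $\sigma_{(q)}$ in Definition~\ref{sigma_q def}: for every pair $(\alpha,\beta)\in\sigma_{(q)}$ we have $q\forces(\alpha,\beta)\in\sigma$ by definition, and hence $q\forces\sigma_{(q)}\subseteq\sigma$. Translating back to Boolean values via the density of $\QQ$ in $\BB$, this yields $q\leq\boolval{\sigma_{(q)}\subseteq\sigma}=\mmap(q)$, as required.

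For \ref{mmap obs 2}, I would first observe that $p\leq q$ implies $\sigma_{(q)}\subseteq\sigma_{(p)}$: indeed, if $q\forces(\alpha,\beta)\in\sigma$ then $p\forces(\alpha,\beta)\in\sigma$ since $p\leq q$, so every pair in $\sigma_{(q)}$ lies in $\sigma_{(p)}$. Now I would apply Lemma~\ref{u subset sigma}~\ref{u subset sigma 1} with $u:=\sigma_{(q)}$ and $v:=\sigma_{(p)}$; since $u\subseteq v$, it gives $\boolval{\sigma_{(p)}\subseteq\sigma}\leq\boolval{\sigma_{(q)}\subseteq\sigma}$, that is, $\mmap(p)\leq\mmap(q)$.

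Neither step presents a genuine obstacle; this is essentially a bookkeeping lemma. The only points requiring care are the ones already flagged above: confirming $\sigma_{(p)}\in{}^{<\mu}\mu$ so that Lemma~\ref{u subset sigma} is applicable, and using density of $\QQ$ in $\BB$ to move between the forcing relation $\forces$ and the order $\leq$ of the Boolean algebra. Both are immediate from results established earlier in this subsection, so I expect the full proof to be short.
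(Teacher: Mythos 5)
Your proof is correct and follows the paper's argument exactly: part \ref{mmap obs 1} from $q\forces\sigma_{(q)}\subseteq\sigma$, and part \ref{mmap obs 2} by noting $\sigma_{(q)}\subseteq\sigma_{(p)}$ when $p\leq q$ and applying Lemma~\ref{u subset sigma}~\ref{u subset sigma 1}. The extra care you take (citing Lemma~\ref{dom sigma_p} for $\sigma_{(p)}\in{}^{<\mu}\mu$ and the density of $\QQ$ in $\BB$) is implicit in the paper's shorter write-up.
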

\begin{proof}
\ref{mmap obs 1} holds
since $q\forces \sigma_{(q)}\subseteq \sigma$.
For~\ref{mmap obs 2}, suppose $p\leq q$. Then $\sigma_{(p)}\supseteq \sigma_{(q)}$ and hence $\mmap(p)\leq\mmap(q)$ by 
the previous lemma.
%Lemma \ref{u subset sigma}~\ref{u subset sigma 1}. 
\end{proof}

\begin{lemma} \
\label{R* complete subforcing} 
\begin{enumerate-(1)} 
\item\label{mmap reduction}
If $q\in\QQ^*$, then $\mmap(q)$ is a reduction\footnote{See Definition \ref{def: complete embedding}.} of $q$ to $\RR^*$ with respect to the inclusion $\id_{\RR^*}: \RR^*\to \BB$. 
\item\label{R* csf}
$\RR^*$ is a complete subforcing of $\BB$.
\end{enumerate-(1)}
\end{lemma}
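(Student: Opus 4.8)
The plan is to establish \ref{mmap reduction} first and then deduce \ref{R* csf} from it using the density of $\QQ^*$ in $\BB$. I work inside $\BB$, relying on the facts that $\QQ^*$ is a dense subforcing of $\BB$ (Lemma~\ref{Q* dense}), that $\one_\BB\forces\sigma\in{}^\mu\mu$, and that $\sigma_{(p)},\,\sigma_{(q)}\in{}^{<\mu}\mu$ for all $p,q\in\QQ^*$ with $\dom(\sigma_{(p)})=\lh(\ran p)$ (Lemma~\ref{dom sigma_p}). The single recurring observation is a comparability fact: if $\mmap(p)\wedge\mmap(q)\neq\zero$ for $p,q\in\QQ^*$, then $\boolval{\sigma_{(p)}\subseteq\sigma}\wedge\boolval{\sigma_{(q)}\subseteq\sigma}=\boolval{\sigma_{(p)}\cup\sigma_{(q)}\subseteq\sigma}\neq\zero$, so some condition forces both $\sigma_{(p)}$ and $\sigma_{(q)}$ to be initial segments of the single function $\sigma$; as their domains are ordinals, this forces $\sigma_{(p)}\compat\sigma_{(q)}$.

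For \ref{mmap reduction}, fix $q\in\QQ^*$. Then $\mmap(q)\in\RR^*$, so it suffices to show that every $r\in\RR^*$ with $r\leq\mmap(q)$ is compatible with $q$ in $\BB$. Write $r=\mmap(p)=\boolval{\sigma_{(p)}\subseteq\sigma}$ with $p\in\QQ^*$; note $r\neq\zero$ since $p\leq\mmap(p)$ by Lemma~\ref{mmap obs}~\ref{mmap obs 1}, so the comparability observation applies and $\sigma_{(p)}\compat\sigma_{(q)}$. If $\sigma_{(p)}\subseteq\sigma_{(q)}$, then $q\leq\mmap(q)$ already forces $\sigma_{(p)}\subseteq\sigma$, hence $q\leq\mmap(p)=r$ and $r\wedge q=q\neq\zero$. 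If instead $\sigma_{(q)}\subsetneq\sigma_{(p)}$, I extend $q$ to reach the value $\sigma_{(p)}$: letting $t:=\vartheta_{(q)}\restr\height(\dom q)$, which by Lemma~\ref{extending elements of Q*}~\ref{eeq1} is the least initial segment of $\vartheta_{[q]}$ outside $\dom(q)$ and satisfies $\bigcup\{q(t\restr\alpha):\alpha<\height(\dom q)\}=\sigma_{(q)}$ by Lemma~\ref{dom sigma_p}, the condition $r':=q\cup\{(t,\sigma_{(p)})\}$ lies in $\QQ$, extends $q$, and forces $\sigma_{(p)}\subseteq\sigma$ by Lemma~\ref{extending elements of Q*}~\ref{eeq2} applied with a single pair and $u=\sigma_{(p)}\supsetneq\sigma_{(q)}$. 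Then $r'\leq q$ and $r'\leq\mmap(p)=r$, so again $r\wedge q\neq\zero$.

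For \ref{R* csf}, I check that the inclusion $\id_{\RR^*}\colon\RR^*\to\BB$ is a complete embedding in the sense of Definition~\ref{def: complete embedding}, taking $\one_\BB$ as the top element (adjoined to $\RR^*$ if necessary, which affects none of the computations below). Order preservation is trivial. Incompatibility is preserved: if $r_1=\mmap(p_1)$ and $r_2=\mmap(p_2)$ satisfy $r_1\wedge r_2\neq\zero$ in $\BB$, then by the comparability observation $\sigma_{(p_1)}\compat\sigma_{(p_2)}$, say $\sigma_{(p_1)}\subseteq\sigma_{(p_2)}$, and Lemma~\ref{u subset sigma} gives $r_1\wedge r_2=\boolval{\sigma_{(p_2)}\subseteq\sigma}=r_2\in\RR^*$, a common lower bound of $r_1$ and $r_2$ inside $\RR^*$; contrapositively, $r_1\perp_{\RR^*}r_2$ implies $r_1\perp_{\BB}r_2$. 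Finally, every $q\in\BB$ admits a reduction to $\RR^*$: by density pick $q'\in\QQ^*$ with $q'\leq q$; the reduction $\mmap(q')$ of $q'$ provided by \ref{mmap reduction} is also a reduction of $q$, because any $r\in\RR^*$ below $\mmap(q')$ is compatible with $q'$ and hence with $q\geq q'$. This shows $\id_{\RR^*}$ is a complete embedding, i.e., $\RR^*$ is a complete subforcing of $\BB$.

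I expect the main obstacle to be the comparability observation together with the extension step in \ref{mmap reduction}: one must see that compatible Boolean values of the form $\boolval{t\subseteq\sigma}$ force their underlying sequences to be comparable, and that below any $q\in\QQ^*$ one can freely realize a strictly longer value $\sigma_{(p)}$ of $\sigma$ via Lemma~\ref{extending elements of Q*}. The verification that $\RR^*$ is closed under meets of compatible elements (which yields $\perp$-preservation) and the top-element bookkeeping are then routine.
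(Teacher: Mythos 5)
Your proof is correct and follows essentially the same route as the paper's: part \ref{mmap reduction} is the same comparability-plus-extension argument (comparability of $\sigma_{(p)}$ and $\sigma_{(q)}$ as in Lemma~\ref{u subset sigma}, then extending $q$ to $q\cup\{(t,\sigma_{(p)})\}$ via Lemmas~\ref{dom sigma_p} and~\ref{extending elements of Q*}), and your reduction-existence step in \ref{R* csf} is identical to the paper's (density of $\QQ^*$, plus the fact that a reduction of some $q'\leq b$ is a reduction of $b$). The one genuine difference is that you explicitly verify that $\id_{\RR^*}$ is a homomorphism with respect to $\perp$ (via the observation that compatible elements of $\RR^*$ are in fact comparable, so a common extension can be found inside $\RR^*$), whereas the paper's proof of \ref{R* csf} silently reduces the whole claim to the existence of reductions. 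Since reduction-existence alone does not in general make a subforcing complete --- incompatibility in a subforcing can fail to persist in the ambient forcing even when every condition has a reduction --- your extra check, together with the remark about adjoining $\one_\BB$, fills in steps that the paper's definition of complete embedding (Definition~\ref{def: complete embedding}) formally requires but whose verification the paper omits; both are easy but worth recording.
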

\begin{proof}
For \ref{mmap reduction}, suppose $r\in\RR^*$ and $r\leq \mmap(q)$. 
We aim to show $r\compat q$. 
We may assume $r< \mmap(q)$ by the previous lemma.
Take $p\in\QQ^*$ with $r=\mmap(p)$. 
Then
\[
\boolval{\sigma_{(p)}\subseteq\sigma}=\mmap(p)<\mmap(q)=\boolval{\sigma_{(q)}\subseteq\sigma}.
\]
Therefore $\sigma_{(p)}\supsetneq \sigma_{(q)}$ by Lemma \ref{u subset sigma}.
Let $t:=\vartheta_{(q)}\restr\height(\dom q)$ and take $u\in{}^{<\mu}\mu$ with  $u\supseteq\sigma_{(p)}$.
Then 
\[u\supsetneq \sigma_{(q)}=\bigcup\big\{q(\vartheta_{(q)}\restr\alpha): \alpha<\height(\dom q)\big\} 
\] 
by Lemma~\ref{dom sigma_p}.
Thus $q$, $t$ and $u$ satisfy the requirements of Lemma~\ref{extending elements of Q*}, and  hence
$\bar q:=q\cup\{(t,u)\}\in\QQ$, $\bar q\leq q$ and 
$\bar q\forces u\subseteq \sigma$.
Since $u\supseteq \sigma_{(p)}$, we have 
\[
\bar q\leq\boolval{u\subseteq\sigma}\leq 
\boolval{\sigma_{(p)}\subseteq\sigma}=\mmap(p)=r.
\]
This shows that $q$ and $r$ are compatible in $\QQ$ and hence in $\BB$.

For \ref{R* csf}, it suffices to show that every $b\in\BB$ has a reduction to $\RR^*$.
Given $b\in \BB$, take $q\in\QQ^*$ with $q\leq b$. 
Since $\mmap(q)$ is a reduction of $q$ to $\RR^{*}$, 
$\mmap(q)$ is also a reduction of $b$ to~$\RR^{*}$.
\end{proof}

Let $\BB(\RR^*)$ be the Boolean completion of $\RR^{*}$.
Since $\RR^*$ is a complete subforcing of $\BB$
and therefore separative, 
we may assume 
that $\RR^{*}$ is dense in $\BB(\RR^{*})$ 
and that $\BB(\RR^*)$ is a complete Boolean subalgebra of $\BB$.%
\footnote{See Fact~\ref{fact}~\ref{fact: complete embeddings and Boolean completions}.}
It is easy to see 
that $\BB(\RR^*)$ is then equal to the complete Boolean subalgebra of $\BB$ that is completely generated by $\RR^*$. 

\begin{lemma}
\label{B(R*)=B(sigma)}
$\BB(\RR^*)=\BB(\sigma)$.
\end{lemma}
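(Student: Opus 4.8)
The plan is to prove the two inclusions $\BB(\RR^*)\subseteq\BB(\sigma)$ and $\BB(\sigma)\subseteq\BB(\RR^*)$ separately, using that both are complete Boolean subalgebras of $\BB$, so it suffices to show that each is contained in the other's set of completely generated elements. Recall that $\BB(\RR^*)$ is the complete subalgebra of $\BB$ completely generated by $\RR^*$, while $\BB(\sigma)=\BB^\QQ(\sigma)$ is completely generated by the Boolean values $\{\boolval{(\alpha,\beta)\in\sigma}:\alpha,\beta<\mu\}$.

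For the inclusion $\BB(\RR^*)\subseteq\BB(\sigma)$, I would simply observe that every generator of $\BB(\RR^*)$ already lies in $\BB(\sigma)$. Indeed, $\RR^*=\ran(\mmap)$, and for each $p\in\QQ^*$ the element $\mmap(p)=\boolval{\sigma_{(p)}\subseteq\sigma}$ is an element of $\BB(\sigma)$ by the footnote accompanying the definition of $\mmap$: since $\sigma_{(p)}\in{}^{<\mu}\mu$ by Lemma~\ref{dom sigma_p}, we have $\mmap(p)=\bigwedgepo\BB\{\boolval{(\alpha,\beta)\in\sigma}:(\alpha,\beta)\in\sigma_{(p)}\}$, a finite (in fact ${<}\mu$-sized) infimum of generators of $\BB(\sigma)$. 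Hence $\RR^*\subseteq\BB(\sigma)$, and since $\BB(\sigma)$ is a complete subalgebra, the subalgebra completely generated by $\RR^*$ is contained in it, i.e.\ $\BB(\RR^*)\subseteq\BB(\sigma)$.

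The reverse inclusion $\BB(\sigma)\subseteq\BB(\RR^*)$ is the more substantive direction, and I expect it to be the main obstacle. Here the goal is to show that each generator $\boolval{(\alpha,\beta)\in\sigma}$ of $\BB(\sigma)$ can be recovered from elements of $\RR^*$ using complete Boolean operations. The natural strategy is to express $\boolval{(\alpha,\beta)\in\sigma}$ as a supremum (in $\BB$) of elements of the form $\boolval{u\subseteq\sigma}$ ranging over those $u\in{}^{<\mu}\mu$ with $(\alpha,\beta)\in u$, and then to relate each such $\boolval{u\subseteq\sigma}$ to $\RR^*$. The key point is that for $p\in\QQ^*$, $\mmap(p)=\boolval{\sigma_{(p)}\subseteq\sigma}$, and by the density of $\QQ^*$ in $\BB$ (Lemma~\ref{Q* dense}) together with Lemma~\ref{dom sigma_p}, the values $\sigma_{(p)}$ range cofinally over initial segments of potential values of $\sigma$; one shows $\boolval{u\subseteq\sigma}=\bigveepo\BB\{\mmap(p):p\in\QQ^*,\,\sigma_{(p)}\supseteq u\}$, so each $\boolval{u\subseteq\sigma}$ lies in $\BB(\RR^*)$. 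Since $\boolval{(\alpha,\beta)\in\sigma}=\bigveepo\BB\{\boolval{u\subseteq\sigma}:u\in{}^{<\mu}\mu,\,(\alpha,\beta)\in u\}$, it follows that every generator of $\BB(\sigma)$ belongs to $\BB(\RR^*)$.

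Putting these together gives $\BB(\sigma)\subseteq\BB(\RR^*)$ and hence equality. The delicate bookkeeping will be in justifying the suprema identities in $\BB$, using Lemma~\ref{u subset sigma} to control when $\boolval{u\subseteq\sigma}\leq\boolval{v\subseteq\sigma}$ and using that $\sigma$ is forced to be an element of ${}^\mu\mu$ (so that $\boolval{(\alpha,\beta)\in\sigma}$ is genuinely captured by the initial-segment values $\boolval{u\subseteq\sigma}$). I would verify carefully that the reduction map $\mmap$ and Lemma~\ref{R* complete subforcing} guarantee $\RR^*$ is rich enough that these suprema are attained within $\BB(\RR^*)$; this is where the work of the preceding lemmas is essential.
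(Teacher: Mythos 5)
Your proposal is correct and follows essentially the same route as the paper: the easy inclusion is exactly the observation $\RR^*\subseteq\BB(\sigma)$, and for the hard inclusion the paper directly shows $\boolval{(\alpha,\beta)\in\sigma}=\bigveepo\BB\{\mmap(q):q\in\QQ^{*},\,(\alpha,\beta)\in\sigma_{(q)}\}$ using density of $\QQ^*$ in $\BB$, the inequality $q\leq\mmap(q)$, and monotonicity of $u\mapsto\boolval{u\subseteq\sigma}$ -- precisely your ingredients. Your intermediate layer of elements $\boolval{u\subseteq\sigma}$ with $(\alpha,\beta)\in u$ collapses to the paper's single supremum (the same family $\{\mmap(p):p\in\QQ^*,\ (\alpha,\beta)\in\sigma_{(p)}\}$ appears after combining your two identities), so it is a reorganization rather than a different argument.
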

\begin{proof}
$\BB(\RR^*)\subseteq\BB(\sigma)$
since $\RR^*\subseteq\BB(\sigma)$ by definition.
For $\BB(\sigma)\subseteq\BB(\RR^*)$, 
it suffices to show
$\boolval{(\alpha,\beta)\in\sigma}\in\BB(\RR^*)$ for all $\alpha,\beta\in\mu$.
Suppose $\alpha,\beta\in\mu$ and let
$$a:=\bigveepo\BB\{\mmap(q):q\in\QQ^{*},\,(\alpha,\beta)\in\sigma_{(q)}\}.$$
Then $a\in\BB(\RR^{*})$.
We claim that $\boolval{(\alpha,\beta)\in\sigma}=a$.
Since $\QQ^*$ is dense in $\BB$ and $q\leq \pi^*(q)$ for all~$q\in\QQ^*$, 
%by Lemma~\ref{mmap obs}~\ref{mmap obs 1},
\begin{align*}
\boolval{(\alpha,\beta)\in\sigma} 
&=\bigveepo\BB\{q\in\QQ^*:\: q\leq\boolval{(\alpha,\beta)\in\sigma}\} \\
&=\bigveepo\BB\{q\in\QQ^*:\: (\alpha,\beta)\in\sigma_{(q)}\}  \\
& \leq  \bigveepo\BB\{\mmap(q):\: q\in\QQ^*,\, (\alpha,\beta)\in\sigma_{(q)}\}=a.
\end{align*}
On the other hand, if $q\in\QQ^*$ and $(\alpha,\beta)\in\sigma_{(q)}$, then
$\mmap(q)=\boolval{\sigma_{(q)}\subseteq\sigma}\leq\boolval{(\alpha,\beta)\in\sigma}.$
Hence 
$a\leq \boolval{(\alpha,\beta)\in\sigma}$.
\end{proof}

Let $\pi:\BB\to \BB(\sigma)$ be the retraction associated to the inclusion map $\id_{\BB(\sigma)}:\BB(\sigma)\to\BB$. That is, let 
$
\pi(b):=\bigwedgepo{\BB}\{a\in\BB(\sigma) : a\geq b\}
$
for all $b\in\BB$.%
\footnote{See Definition~\ref{def: retraction}.}

\begin{lemma}
\label{mmap natural projection}
$\pi(p)=\mmap(p)$
for all $p\in\QQ^*$.
\todog{Equivalently: for all $p\in\QQ^*$, $\boolval{p\in\BB/\BB(\sigma)} = \boolval{\sigma_{(b)}\subseteq\sigma.}$}
\end{lemma}
\begin{proof}
Let $p\in\QQ^*$. 
Then $\pi(p)\leq\mmap(p)$
since $\mmap(p)\in\BB(\RR^*)=\BB(\sigma)$ 
and $p\leq\mmap(p)$. 
Moreover, $\mmap(p)\leq\pi(p)$, since
$\mmap(p)$ is a reduction of $p$ to $\RR^{*}$ 
\todog{I think we use that $\RR^*$ is dense in $\BB(\RR^*)$ here}
and hence also to $\BB(\RR^*)=\BB(\sigma)$
and
$\pi(p)$ is the largest reduction of $p$ to $\BB(\sigma)$ by Lemma~\ref{retraction facts}.
\end{proof}

The next lemma proves the second part of the Nice Quotient Lemma for $\Add(\mu,1)$. 

\begin{lemma}
\label{quotient forcing of sigma 2}
${\BB(\sigma)}$ forces that the quotient forcing 
$\BB/\BB(\sigma)$ is equivalent to the forcing $\Add(\mu,1)$.
\end{lemma}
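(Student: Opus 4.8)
The plan is to show that $\BB(\sigma)$ forces the quotient $\BB/\BB(\sigma)$ to be a non-atomic, $\lle\mu$-closed forcing of size $\mu$ inside a model of $\mu^{<\mu}=\mu$, and then to apply Lemma~\ref{forcing equivalent to Add(kappa,1)}. Fix a $\BB(\sigma)$-generic filter $G$ over $M$ and extend it to a $\BB$-generic $H$ over $M$ with $G=\BB(\sigma)\cap H$; write $x:=\sigma^G=\sigma^H$. By Lemma~\ref{fact: generated Boolean subalgebra} we have $M[G]=M[x]$, with $x\in({}^\mu\mu)^{M[G]}$. The first step is to describe the quotient concretely on the dense subforcing $\QQ^*=\QQ^*(\vartheta,\sigma)$. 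Since $\QQ^*$ is dense in $\BB$ (Lemma~\ref{Q* dense}), the set $\QQ^*_G:=\QQ^*\cap(\BB/G)$ is dense in $\BB/G$ by Fact~\ref{fact}~\ref{fact: generic extensions wrt quotient forcings}. For $p\in\QQ^*$, Lemma~\ref{mmap natural projection} gives $\pi(p)=\mmap(p)=\boolval{\sigma_{(p)}\subseteq\sigma}$, which lies in $\BB(\sigma)$; hence by Lemma~\ref{retraction facts}~\ref{rf2} together with the forcing theorem, $p\in\BB/G$ iff $\sigma_{(p)}\subseteq x$ (recall $\sigma_{(p)}\in{}^{<\mu}\mu$ by Lemma~\ref{dom sigma_p}). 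Thus $\QQ^*_G=\{p\in\QQ^*:\sigma_{(p)}\subseteq x\}$.

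Next I would establish $\lle\mu$-closure. Given a descending sequence $\langle p_\alpha:\alpha<\xi\rangle$ in $\QQ^*_G$ with $\xi<\mu$, lying in $M[G]$, set $p:=\mbigcap\{p_\alpha:\alpha<\xi\}$. Applying Lemma~\ref{P* is <kappa closed} in $M[G]$ (legitimate since $\QQ$, $\QQ^*$ and the operation $q\mapsto\sigma_{(q)}$ are defined absolutely from objects of $M$), we get $p\in\QQ^*$, $p\mleq p_\alpha$ for all $\alpha$, and $\sigma_{(p)}=\bigcup_{\alpha<\xi}\sigma_{(p_\alpha)}$. Each $\sigma_{(p_\alpha)}$ is an initial segment of $x$, and the union of a $\xi$-chain of initial segments of $x$ is again an initial segment of $x$ of length $<\mu$ by the regularity of $\mu$; hence $\sigma_{(p)}\subseteq x$ and $p\in\QQ^*_G$. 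So $\QQ^*_G$, and therefore $\BB/G$, is $\lle\mu$-closed. Consequently $\BB/G$ adds no new $<\mu$-sequences over $M[G]$. Since $\BB\simeq\Add(\mu,1)$ is $\lle\mu$-closed of size $\mu$ and $M\models\mu^{<\mu}=\mu$, we have $M[H]\models\mu^{<\mu}=\mu$; as $M[G]$ and $M[H]$ have the same $<\mu$-sequences, it follows that $M[G]\models\mu^{<\mu}=\mu$ as well.

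Finally, size and non-atomicity. The dense subset $\QQ^*_G$ has size at most $|\QQ^*|\le\mu$. For non-atomicity, fix $p\in\QQ^*_G$; the goal is to produce $q_0,q_1\mleq p$ in $\QQ^*_G$ with $q_0\perp q_1$ in $\BB$. I would first extend $p$, following $x$ along the branch $\vartheta_{(p)}$ via the density arguments of Lemmas~\ref{h_K domain sublemma} and~\ref{extending elements of Q*}, to some $p'\in\QQ^*_G$ whose domain above $\height(\dom p)$ consists only of the branch, so that a sibling node $t=\vartheta_{(p')}\restr\gamma\conc\langle\beta\rangle$ with $\beta\neq\vartheta_{(p')}(\gamma)$ is off-branch and addable. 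Choosing incompatible values $u_0\perp u_1$ for $t$, both extending $s(p'(t\restr\gamma),\beta)$ but short enough to keep $\lh(\ran(\cdot))$ a limit bounded by $\lh(\ran p')$, and setting $q_i:=p'\cup\{(t,u_i)\}$, gives $q_i\in\QQ$ by Lemma~\ref{def of the ordering equiv}; since the branch values are untouched, $\sigma_{(q_i)}=\sigma_{(p')}\subseteq x$ by Lemma~\ref{dom sigma_p}, so $q_i\in\QQ^*_G$, while $q_0\perp q_1$ by Lemma~\ref{perp_Q claim}~\ref{comp 1}. A non-atomic $\lle\mu$-closed forcing with a dense subset of size $\le\mu$ has size exactly $\mu$, so with non-atomicity, $\lle\mu$-closure and $\mu^{<\mu}=\mu$ in hand, Lemma~\ref{forcing equivalent to Add(kappa,1)} yields that $\BB/G$ is equivalent to $\Add(\mu,1)$, as required.

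I expect the non-atomicity step to be the main obstacle: one must split the quotient \emph{off} the branch while keeping $\sigma_{(\cdot)}$ an initial segment of $x$, and the length bookkeeping needed to remain inside $\QQ^*$ is the only genuinely delicate point. As a cross-check, non-atomicity can also be read off from the mutual genericity of Lemma~\ref{branches are Add(kappa,1) generic}, which shows that below every condition the quotient keeps adjoining $\Add(\mu,1)$-generic reals $[h_K](y)$ for branches $y\neq\vartheta^H$ that do not lie in $M[x]=M[G]$.
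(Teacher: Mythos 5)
Your proposal follows the paper's proof almost step for step in its main architecture: both pass to the dense subforcing $\QQ^*\cap(\BB/G)$ of the quotient, identify it as $\{p\in\QQ^*:\sigma_{(p)}\subseteq x\}$ via Lemmas \ref{mmap natural projection} and \ref{retraction facts}~\ref{rf2}, deduce $\lle\mu$-closure from Lemma \ref{P* is <kappa closed}, bound its size by $\mu$, and then invoke Lemma \ref{forcing equivalent to Add(kappa,1)}. (One phrasing slip: $\BB/G$ itself is not $\lle\mu$-closed, being a subforcing of a complete Boolean algebra; what matters, and what both you and the paper actually use, is that the \emph{dense subforcing} is $\lle\mu$-closed and of size ${\leq}\mu$.)

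The one place where you genuinely depart from the paper is non-atomicity, and that is exactly where your argument has a gap. You want to adjoin an off-branch node $t=\vartheta_{(p')}\restr\gamma\conc\langle\beta\rangle$ with incompatible values $u_0\perp u_1$ that are ``short enough to keep $\lh(\ran(\cdot))$ a limit bounded by $\lh(\ran p')$''. But Lemma \ref{def of the ordering equiv} forces $u_i\supsetneq s\bigl(p'(\vartheta_{(p')}\restr\gamma),\beta\bigr)$, and the step function $s$ is \emph{arbitrary}: the length of $s\bigl(p'(\vartheta_{(p')}\restr\gamma),\beta\bigr)$ may well exceed $\lh(\ran p')$, so no admissible short $u_i$ need exist. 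Nor can you simply take $u_i$ long: then $\lh(\ran q_i)>\lh(\ran p')=\dom(\sigma_{(p')})$, and membership in $\QQ^*$ requires $\lh(\ran q_i)\subseteq\dom(\sigma_{(q_i)})$ (Definition \ref{Q* def}~\ref{Q* def 3}, Lemma \ref{dom sigma_p}), i.e.\ $q_i$ would have to force correspondingly longer initial segments of $\sigma$ --- which an off-branch value gives you no means to arrange, since $p'$ already forces $t\not\subseteq\vartheta$. This matters because your criterion for membership in the quotient ($\sigma_{(q_i)}\subseteq x$) is only justified for conditions in $\QQ^*$. The argument is repairable: the branching node $v=\vartheta_{(p')}\restr\gamma$ is non-terminal in $\dom(p')$ (the branch continues above it), so $p'(v)$, and hence the ordinal $\lh\bigl(s(p'(v),\beta)\bigr)$, is frozen under extension; one can then use density of $\QQ^*\cap(\BB/G)$ in $\BB/G$ to extend $p'$ \emph{inside the quotient} until $\lh(\ran p')$ exceeds that ordinal, treating separately the case that $t$ has meanwhile entered the domain. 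But none of this bookkeeping is in your sketch, and it is the entire content of the step.

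The clean way out is the observation you relegate to a closing ``cross-check'', which is in fact the paper's actual proof of non-atomicity: for any $\BB/G$-generic $K$ over $M[G]$ (equivalently, $\BB$-generic over $M$ with $G=K\cap\BB(\sigma)$), writing $\sigma^K=[h_{K\cap\QQ}](y)$ and picking any $z\neq y$, Lemma \ref{branches are Add(kappa,1) generic} makes $[h_{K\cap\QQ}](z)$ an $\Add(\mu,1)$-generic real over $M[\sigma^K]=M[G]$ (the equality by Lemma \ref{fact: generated Boolean subalgebra}), hence an element of $M[K]\setminus M[G]$. So every generic extension by the quotient is proper, which rules out atoms. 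If you promote this from a cross-check to your main argument, your proof coincides with the paper's and is complete.
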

\begin{proof}
Suppose $J$ is a $\BB(\sigma)$-generic filter over $M$. 
It suffices to show that in $M[J]$, 
the forcing $\BB/J$ is non-atomic and
has a dense subforcing of size ${\leq}\mu$ that is $\lle\mu$-closed,
by Lemma~\ref{forcing equivalent to Add(kappa,1)}. 
We use the standard facts about quotient forcings in Fact~\ref{fact}~\ref{fact: generic extensions wrt quotient forcings}. 

To see that $\BB/J$ is non-atomic in $M[J]$, it suffices to show that  $M[J][K]\neq M[J]$ for all $\BB/J$-generic filters $K$ over $M[J]$.
Let $K$ be $\BB/J$-generic over $M[J]$.
Then $K$ is also $\BB$-generic over $M$, $M[J][K]=M[K]$ and  $J=K\cap\BB(\sigma)$. 
Then $M\big[\sigma^K\big]=M[J]$ by
Lemma~\ref{fact: generated Boolean subalgebra}. 
Take $y,z\in ({}^\mu\ddimq)^{M[K]}$ with 
$\sigma^K=
%=[\dot h^K](y)
[h_{K\cap \QQ}](y)$ and $y\neq z$.% 
\footnote{Recall the notation $h_{K\cap \QQ}$ from page~\pageref{h_K def} after Lemma~\ref{Q equivalent to Add(kappa,1)}.}
Then $[h_{K\cap \QQ}](z)$ 
is $\Add(\mu,1)$-generic over $M\big[\sigma^K\big]$ 
by Lemma~\ref{branches are Add(kappa,1) generic}. 
Since $[h_{K\cap \QQ}](z)\in M[K]$, 
we have $M[K]\neq M\big[\sigma^K\big]$ as required.

In $M[J]$,
let
$\PP^*:=\QQ^{*}\cap (\BB/J).$
Note that $\PP^*$ is a dense subforcing of $\BB/J$
since $\QQ^*$ is a dense subforcing of $\BB$, 
and $|\PP^*|\leq|\QQ^*|\leq|\QQ|\leq\mu$.
To prove that
$\PP^*$ is $\lle\mu$-closed, 
let $K$ be $\BB/J$-generic over $M[J]$.
Then $K$ is $\BB$-generic over $M$
and $J=K\cap\BB(\sigma)$.
It suffices  
to show that
$$\PP^*=\{q\in\QQ^*:\sigma_{(q)}\subseteq\sigma^K\},$$
since the right hand side is $\lle\mu$-closed 
by Lemma~\ref{P* is <kappa closed}.
Let $q\in\QQ^*$.
We then have 
$q\in\PP^*
\Longleftrightarrow 
\pi(q)\in J
\Longleftrightarrow 
\pi(q)\in K$
by Lemma~\ref{retraction facts}~\ref{rf2}.
Since $\pi(q)=\mmap(q)=\boolval{\sigma_{(q)}\subseteq\sigma}$
by Lemma~\ref{mmap natural projection}, 
$\pi(q)\in K
\Longleftrightarrow 
\sigma_{(q)}\subseteq\sigma^K$, as required.
\end{proof}

If $K$ is a $\BB$-generic filter over $M$,
$x\in\ran([h_{K\cap\QQ}])$,
and $\sigma$ is a $\BB$-name for $x$ with $\boolval{\sigma\in\ran([\dot h])}=\one_\BB$%
\footnote{Recall the notation $\dot h$ from page~\pageref{dot h def}.
Note that all $x\in\ran([h_{K\cap\QQ}])$ are of this form, as in the proof of Lemma~\ref{branches are Add(kappa,1) generic}.
}
then $M[K]$ is an $\Add(\kappa,1)$-generic extension of 
$M[K\cap\BB(\sigma)]=M[x]$
by the previous lemma. 
This completes the proof of the Nice Quotient Lemma~\ref{Q main lemma} for $\Add(\mu,1)$. 

%%%%%%%%%%%%%
\subsection{A global version} 
\label{subsection global}

Recall that a class forcing extension is called \emph{tame} 
\index{tame class forcing extension\idf}
if it is a model of $\ZFC$. 

\begin{theorem} 
\label{theorem: global version}
Assume there exists a proper class of inaccessible cardinals. 
Then there is a tame class forcing extension of $V$ 
in which 
$\ODD\kappa\ddim(\defsetsk,\defsetsk)$
holds for all regular infinite cardinals $\kappa$ and all $\ddim$ with $2\leq\ddim\leq\kappa$.%
\footnote{Note that this implies $\ODD\kappa\ddim(\defsetsk)$ for all $\ddim$ with $2\leq\ddim<\kappa$ by Lemma~\ref{<kappa dim hypergraphs are definable}.}
\end{theorem}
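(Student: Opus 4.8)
Assume there exists a proper class of inaccessible cardinals. Then there is a tame class forcing extension of $V$ in which $\ODD\kappa\ddim(\defsetsk,\defsetsk)$ holds for all regular infinite cardinals $\kappa$ and all $\ddim$ with $2\leq\ddim\leq\kappa$.

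Let me think about how to prove this.

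We have a local result (Theorem \ref{main theorem}\ref{mainthm ddim=kappa}): for a single regular $\kappa$ and a single inaccessible $\lambda>\kappa$, the Lévy collapse $\Col(\kappa,{<}\lambda)$ forces $\ODD\kappa\ddim(\defsetsk,\defsetsk)$. We want a single class forcing that achieves this simultaneously for *all* regular infinite $\kappa$.

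The natural candidate is an Easton-style product (or iteration) of Lévy collapses. We have a proper class of inaccessibles. The idea: enumerate the inaccessibles, and at each regular cardinal $\kappa$ we want some inaccessible $\lambda>\kappa$ collapsed to $\kappa^+$. The key structural fact that makes $\ODD\kappa\ddim(\defsetsk,\defsetsk)$ hold in $V[G]$ is exactly Lemma \ref{restricting sets to intermediate models} (reflection to intermediate models) plus the Nice Quotient Lemma — and both of these only used properties of the Lévy collapse $\PP_\lambda = \Col(\kappa,{<}\lambda)$ above the relevant cardinal. So the strategy is to set up a global forcing $\PP$ such that, for each regular $\kappa$, $\PP$ factors as $\PP = \PP_{\leq\kappa} \times \PP_{>\kappa}$ where $\PP_{>\kappa}$ is (forcing-equivalent to) a Lévy collapse $\Col(\kappa^+,{<}\lambda)$-type tail that does not disturb the structure at $\kappa$, while the part relevant to $\kappa$ looks like $\Col(\kappa,{<}\lambda)$ for an inaccessible $\lambda$.

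Let me sketch the construction concretely.

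\textbf{The construction.} Fix the class of inaccessibles and recursively choose a strictly increasing continuous sequence $\langle \lambda_\kappa : \kappa \text{ regular} \rangle$ of inaccessible cardinals, one just above each regular cardinal $\kappa$, chosen so that the intervals $(\kappa, \lambda_\kappa)$ are disjoint as $\kappa$ ranges over regular cardinals — equivalently, $\lambda_\kappa \leq$ the next regular cardinal we care about. Actually the cleanest approach is the standard one used to obtain a model where the perfect set property (and related dichotomies) hold at all cardinals simultaneously: define the Easton-support product
\[
\PP = \prod_{\kappa \text{ inaccessible}}^{\mathrm{Easton}} \Col(\kappa, {<}\lambda_\kappa),
\]
where $\lambda_\kappa$ is the least inaccessible above $\kappa$, so that in the extension every former inaccessible $\kappa$ becomes a successor cardinal and the cardinal structure is governed by the inaccessibles of $V$. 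The point is that this is the canonical forcing in which, above each surviving regular cardinal $\mu$ of $V[G]$, the forcing factors appropriately. I would verify that $\PP$ is tame (preserves $\ZFC$): Easton-support products of Lévy collapses at inaccessibles are well known to preserve $\ZFC$, cardinals are controlled, and the $\GCH$-type bookkeeping ensures $\kappa^{<\kappa}=\kappa$ holds for all relevant $\kappa$ in the extension.

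\textbf{The factoring argument.} Now fix, in $V[G]$, a regular infinite cardinal $\kappa$ with $\kappa^{<\kappa}=\kappa$, a $\ddim$ with $2\le\ddim\le\kappa$, a definable $X\in\defsetsk$, and a box-open definable $\ddim$-dihypergraph $H\in\defsetsk$ on ${}^\kappa\kappa$. I would factor $\PP \simeq \PP^{\mathrm{low}} \times \Col(\kappa,{<}\lambda) \times \PP^{\mathrm{high}}$, where $\lambda$ is the least inaccessible strictly above $\kappa$, $\PP^{\mathrm{low}}$ is the product of collapses at inaccessibles below $\kappa$ (which has size ${<}\kappa$ or is at worst ${<}\kappa$-closed after the bottom piece, so adds no new $\kappa$-sequences / is absorbed), and $\PP^{\mathrm{high}}$ is the product of collapses at inaccessibles $\geq\lambda$, which is ${\leq}\lambda$-closed and hence ${<}\kappa^+$-closed — adding no new subsets relevant to the statement at $\kappa$. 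The crucial observation, using Corollary \ref{forcing equivalent to the Levy collapse}, is that the low part together with the tail can be absorbed so that, \emph{over the ground model for the $\kappa$-relevant factor}, the extension is exactly a $\Col(\kappa,{<}\lambda)$-generic extension of an inner model $W$ with the same ${}^{<\kappa}\mathrm{Ord}$-structure up to the point needed, with $\lambda$ inaccessible in $W$. Since $\Col(\kappa,{<}\lambda)$-generic extensions satisfy $\ODD\kappa\ddim(\defsetsk,\defsetsk)$ by Theorem \ref{main theorem}\ref{mainthm ddim=kappa}, and since $X$ and $H$ are definable from a $\kappa$-sequence of ordinals (which, by a $\lambda$-c.c.\ plus closure argument as in Lemma \ref{restricting sets to intermediate models}, already lies in the appropriate factor), the dichotomy for $X$ and $H$ transfers down to $V[G]$. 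Here I would invoke precisely the machinery of Subsection \ref{subsection: restricting sets to intermediate models}: a definable set's defining parameter reflects into a bounded-below-$\lambda$ initial segment of the generic, and homogeneity of the tail gives the required absoluteness of ``$x\in X$'' and ``$\bar x\in H$''.

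\textbf{Main obstacle.} The hard part will be the bookkeeping in the factoring: showing that for \emph{each} $\kappa$ the global product genuinely presents itself, over a suitable inner model $W\in V[G]$ with $\lambda$ still inaccessible in $W$, as the single Lévy collapse $\Col(\kappa,{<}\lambda)$ to which Theorem \ref{main theorem} applies verbatim — in particular that the pieces below $\kappa$ and above $\lambda$ do not introduce new definable subsets of ${}^\kappa\kappa$ or new box-open dihypergraphs, and do not spoil the homogeneity and equivalence facts (Corollary \ref{forcing equivalent to the Levy collapse}) used in the proof of Lemma \ref{restricting sets to intermediate models} and the Nice Quotient Lemma. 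Concretely, one must check that $\PP^{\mathrm{high}}$ is sufficiently closed that $(\defsetsk)^{V[G]}$ restricted to the $\kappa$-relevant objects coincides with that computed in $W[\text{low}\times\Col]$, and that $\lambda$ remains inaccessible in $W$. Once this reduction is in place, no new argument is needed: the conclusion is immediate from the local theorem applied inside the factor. I expect the verification of tameness of the class product and the uniform factoring to be entirely standard (Easton/Solovay-style), so the essential content of the theorem is organizational rather than requiring a genuinely new forcing idea.
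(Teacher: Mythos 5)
Your second half — reflecting the definable $X$ and $H$ into an intermediate extension via homogeneity and closure of the tail, applying Theorem~\ref{main theorem}~\ref{mainthm ddim=kappa} there, and pushing the coloring or homomorphism back up because no new elements of ${}^\kappa\kappa$ are added — is essentially the paper's argument and is fine (modulo the slip that witnesses transfer \emph{up} to $V[G]$, not ``down''). The genuine gap is in the forcing you construct. Your first formulation is literally impossible: you cannot choose inaccessibles $\lambda_\kappa>\kappa$ for \emph{every} regular $\kappa$ with the intervals $(\kappa,\lambda_\kappa)$ pairwise disjoint, since $\kappa^+,\kappa^{++},\dots$ are regular cardinals lying inside $(\kappa,\lambda_\kappa)$. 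Your fallback, the Easton product $\prod^{\mathrm{Easton}}_{\kappa\text{ inaccessible}}\Col(\kappa,\lle\lambda_\kappa)$, indexes the factors by inaccessibles only, and this breaks the theorem: regular cardinals of $V[G]$ that are not inaccessible in $V$ get no collapse above them. Concretely, nothing is collapsed to $\omega_1$, so if $V=L$ plus a proper class of inaccessibles, then in $V[G]$ we have $\omega_1^{V[G]}=\omega_1^L$, hence $\PSP_\omega$ fails for some closed set and therefore $\ODD\omega\omega(\defsets\omega,\defsets\omega)$ fails. The same problem occurs at every successor cardinal below the first inaccessible and at successors of singular limits of inaccessibles (for such a singular limit $\kappa$, the interval $(\kappa,\mu)$ up to the next inaccessible $\mu$ is never collapsed, since $\kappa$ has no largest inaccessible below it to own that interval). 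Since $\ODD\kappa\ddim(\defsetsk,\defsetsk)$ forces $\kappa^+$ to be inaccessible in $L$, your $\PP$ cannot yield the global statement.

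The repair is exactly why the paper uses an Easton-support \emph{iteration} rather than a product: enumerate the closure of the class of inaccessibles together with $\omega$ as $\langle\kappa_\alpha:\alpha\in\Ord\rangle$, and at stage $\alpha$ force with $\Col(\dot\nu_\alpha,\lle\kappa_{\alpha+1})$, where $\dot\nu_\alpha$ is a $\PP_\alpha$-\emph{name} for the least regular cardinal ${\geq}\kappa_\alpha$ not collapsed by $\PP_\alpha$. The base of each collapse must be a cardinal that survives the earlier stages, and which cardinal that is (e.g.\ the new successor of a singular limit of inaccessibles) depends on the earlier generic — a product, whose factors are fixed in $V$, cannot make this dynamic choice. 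With the iteration, every regular cardinal of $V[G]$ is some $\nu_\alpha$ and sits directly below the Lévy collapse of the inaccessible $\kappa_{\alpha+1}$; $|\PP_\alpha|<\kappa_{\alpha+1}$ keeps $\kappa_{\alpha+1}$ inaccessible in $V[G_\alpha]$, and the tail is homogeneous and ${<}\kappa_{\alpha+1}$-closed, so your factoring and reflection argument then goes through verbatim.
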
 
\begin{proof} 
Let $\bar{\kappa}:=\langle \kappa_{\alpha}: \alpha\in \Ord\rangle$ denote the order preserving enumeration of the closure $C$ of the class of inaccessible cardinals and $\omega$. 
We define the following Easton support iteration $\bar{\PP}=\langle \PP_{\alpha},\dot{\PP}_{\alpha}: \alpha\in \Ord\rangle$ with bounded support at regular limits and full support at singular limits. 
Let $\dot{\nu}_\alpha$ be a $\PP_\alpha$-name for the least regular cardinal $\nu\geq \kappa_\alpha$ 
that is not collapsed by $\PP_\alpha$, and let $\dot{\PP}_{\alpha}$ be a $\PP_{\alpha}$-name for $\Col(\dot{\nu}_\alpha,{{<}\kappa_{\alpha+1}})$. 
We may assume that the names $\dot{\PP}_\alpha$ are chosen in a canonical way so that the iteration is definable, for instance by working with nice names for elements of the forcing. 
Let $\PP$ denote the class forcing defined by $\bar{\PP}$. 
Moreover, let $\dot{\PP}^{(\alpha)}$ denote a $\PP_\alpha$-name for the tail forcing of the iteration at stage $\alpha$. 
It follows from the definition of the iteration that $|\PP_\alpha|<\kappa_{\alpha+1}$ and $\one_{\PP_\alpha} \forces \dot{\PP}^{(\alpha)}$ is ${<}\kappa_\alpha$-closed, for all $\alpha\in\Ord$. 

Now suppose that $G$ is $\PP$-generic over $V$. 
We write $G_\alpha:=G\cap \PP_\alpha$ and $\PP^{(\alpha)}:=(\dot{\PP}^{(\alpha)})^{G_\alpha}$ for $\alpha\in\Ord$. Moreover, let $\nu_\alpha:=\dot{\nu}^{G_\alpha}$ for $\alpha\geq 1$ and write $\bar{\nu}:=\langle \nu_\alpha: \alpha\geq 1\rangle$. 
%The following claim is standard (see e.g. \cite{SchlichtPSPgames}), but we include a proof for the reader. 
\todon{pagebreak inserted}

\pagebreak

\begin{claim*} \ 
\begin{enumerate-(a)} 
\item 
\label{global case claim 1} 
If $\kappa_\alpha$ is inaccessible in $V$, then $\kappa_\alpha$ remains a regular cardinal in $V[G]$, $\nu_\alpha=\kappa_\alpha$ and ${(\kappa_\alpha^+)}^{V[G]}=\kappa_{\alpha+1}$. 
\item 
If $\kappa_\alpha$ is a singular limit in $V$, then 
%$\nu_\alpha>\kappa_\alpha$ and 
${(\kappa_\alpha^+)}^{V[G]}=\nu_\alpha$. 
\end{enumerate-(a)} 
\end{claim*} 
\begin{proof} 
If $\kappa_\alpha$ is inaccessible in $V$, it follows from the $\Delta$-system lemma that $\PP_\alpha$ has the $\kappa_\alpha$-c.c.
\todog{Dori's note to self: See the proof of Theorem 10.17 in Kanamori's book. This also follows from \cite{MR1940513}*{Theorems 16.4 and 16.30}.}
The claims in \ref{global case claim 1} easily follow from this and the fact that $\PP^{(\alpha+1)}$ is ${<}\kappa_{\alpha+1}$-closed. 
If $\kappa_\alpha$ is a singular limit in $V$, then 
$\nu_\alpha$ is the least regular cardinal strictly above $\kappa_\alpha$ in $V[G_\alpha]$ by the definition of $\dot{\nu}_\alpha$. 
Moreover, $\nu_\alpha$ is not collapsed in $V[G]$, since $\PP^{(\alpha+1)}$ is ${<}\kappa_{\alpha+1}$-closed. 
\end{proof} 

By the previous claim, $\bar{\nu}$ enumerates the class of regular infinite cardinals in $V[G]$. 
Thus, suppose that $\alpha\geq 1$, $\kappa=\nu_\alpha$ and $2\leq \ddim\leq\kappa$.
In $V[G]$, 
let $X\in\defsetsk$ be a subset of $\kk$, and 
let $H\in\defsetsk$ be a box-open $\ddim$-dihypergraph on $\kk$.

\begin{claim*} 
$\ODD\kappa{H\restr X}$ holds in $V[G]$. 
\end{claim*} 
\begin{proof} 
Since $\one_{\PP_{\beta}}$ 
forces that 
$\dot{\PP}_{\beta}$ is homogeneous for all $\beta\in\Ord$, the tail forcing $\PP^{(\beta)}$ is homogeneous for all $\beta\in\Ord$. 

We first claim that $X,H\in V[G_{\alpha+1}]$. 
Note that $\PP^{(\alpha+1)}$ does not add new elements of ${}^{\kappa}\Ord$ since $\PP^{(\alpha+1)}$ is ${<}\kappa_{\alpha+1}$-closed. 
Suppose that $X=X_{\varphi,a}^{V[G]}$ where $\varphi$ is a formula and $a\in {}^\kappa\Ord$. 
Since  $\PP^{(\alpha+1)}$ is homogeneous, $X= X_{\psi,a}^{V[G_{\alpha+1}]}$ where $\psi(x,a)$ denotes the formula $\one_{\PP^{(\alpha+1)}}\forces^{V[G_{\alpha+1}]}\varphi(x,a)$. 
Hence $X\in V[G_{\alpha+1}]$, and similarly $H\in V[G_{\alpha+1}]$. 

Since $\kappa=\nu_\alpha$, $\dot{\PP}_\alpha$ is a name for $\Col(\kappa,\lle\kappa_{\alpha+1})$ and hence $\ODD\kappa{H\restr X}$ holds in $V[G_{\alpha+1}]$ 
by Theorem~\ref{main theorem}~\ref{mainthm ddim=kappa}. 
Since $\PP^{(\alpha+1)}$ is $\lle\kappa_{\alpha+1}$-closed, a $\kappa$-coloring of $H\restr X$ in $V[G_{\alpha+1}]$ remains a $\kappa$-coloring in $V[G]$, and a continuous homomorphism from $\dhH\ddim$ to $H\restr X$ in $V[G_{\alpha+1}]$ remains one in $V[G]$. 
Hence $\ODD\kappa{H\restr X}$ holds in $V[G]$. 
\end{proof} 

$\ODD\kappa\ddim(\defsetsk,\defsetsk)$ 
holds in $V[G]$ by the previous claim. 
\end{proof} 

It is open whether a global version of Theorem~\ref{main theorem} \ref{mainthm Mahlo} holds in analogy with the previous theorem. 
The problem is that in an iterated forcing extension as above, new box-open dihypergraphs are added by the tail forcings. 

\begin{remark} 
In the model of the previous theorem, we actually have $\ODDI\kappa\ddim(\defsets\kappa,\defsets\kappa)$ 
for all regular uncountable $\kappa$ and $d\leq\kappa$  
by Remark \ref{the f constructed is injective remark 1}.% 
\footnote{See Remark~\ref{the f constructed is injective remark 1} fot the definition of $\ODDInp$. For $\ddim<\kappa$, we have $\ODDI\kappa\ddim(\defsets\kappa)$.
In fact, the continuous homomorphism can even be chosen to be a homeomorphism onto a closed set in this case by
Definition~\ref{def: ODD variants} and Corollary~\ref{main theorem strong version} below.}
\end{remark}

\newpage 

%%%%%%%%%%%%
\section{Variants} 
\label{section: stronger forms of ODD}

We begin with some preliminary results 
regarding conditions under which 
the map $[\iota]$ is closed and the tree $T(\iota)$ is sufficiently closed for a given strict order preserving map~$\iota$ in Subsection~\ref{subsection: closure properties}. 
We show in Subsection~\ref{subsection: OGD and ODD} 
that the open graph dichotomy 
$\OGD_\kappa(X)$
is equivalent to $\ODD\kappa 2(X)$. 
In fact, we prove a more general version of this equivalence for an extension of $\OGD_\kappa(X)$ to $\ddim$-hypergraphs of higher dimensions $2\leq\ddim\leq\kappa$. 

In the remainder of the section,
\todol{introduction to section changed}
we show that the definition of $\ODD\kappa H$ is optimal in several ways.
%It is further shown that the open dihypergraph dichotomy 
It is shown to be nontrivial in Subsection~\ref{subsection: examples} in the sense that for any non-$\kappa$-colorable box-open dihypergraph $H$ on a subset of ${}^\kappa\kappa$, $\ODD\kappa {H\restr X}$ fails for some set $X$. 
It is then shown that 
$\ODD\kappa H$ fails for some product-closed\footnote{I.e., $H$ is a relatively closed subset of $\dhK\ddim {{}^\kappa\kappa}$, where ${}^\ddim {({}^\kappa\kappa)}$ has the product topology.}
hypergraph $H$ on ${}^\kappa\kappa$ 
in any dimension $\ddim\geq 2$.
$\dhH\ddim$ witnesses that for any $\ddim\geq 3$, the existence of a continuous homomorphism cannot be replaced by the existence of a large complete subhypergraph.
Stronger counterexamples are discussed 
in Subsection~\ref{subsection: examples}. 
%
%Subsection~\ref{subsection: diamondi} studies a weak version $\Diamondi\kappa$ of $\Diamond_\kappa$ in preparation for the next subsection. 
One may further wonder whether for relatively box-open\footnote{I.e., $H$ is box-open on its domain $\domh H$.}
$\ddim$-dihypergraphs $H$ on ${}^\kappa\kappa$, 
the continuous homomorphism provided by $\ODD\kappa H$ can be chosen 
to have additional properties such as injectivity. 
Recall that
%, for regular uncountable cardinals $\kappa$,
in the uncountable case,
 the version of $\ODD\kappa\ddim(\defsets\kappa)$ with an injective homomorphism
holds after a L\'evy collapse of a Mahlo cardinal to $\kappa^+$ and the version for definable dihypergraphs holds if the cardinal is inaccessible.\footnote{See Remark~\ref{the f constructed is injective remark 1}.}
We show in Subsection~\ref{subsection: ODD ODDI ODDH} 
that more generally, %in the uncountable case,
%the continuous homomorphism in $\ODD\kappa H$ can equivalently be replaced with an injective one
$\ODD\kappa H$ is equivalent to its variant with an injective homomorphisms
assuming
a weak version $\Diamondi\kappa$ of $\Diamond_\kappa$ in the uncountable case.
However, the version of $\ODD\omega H$  with an injective homomorphism fails in general in the countable case.
%this cannot be done in general
%in the countable case.
%for $\ODD\omega H$. 
For $\ddim<\kappa$,
the continuous homomorphism in $\ODD\kappa H$ can 
be chosen to be a homeomorphism onto a closed subset
even without assuming $\Diamondi\kappa$. 
%
%This is also possible
%when $\ddim=\kappa$ for dihypergraphs $H$ 
%containing only injective sequences without convergent subsequences.
%However, one cannot in general find a homomorphism that is 
%a homeomorphism onto its image when $\ddim=\kappa$.
%
For $\ddim=\kappa$, one cannot in general find a homomorphism that is a homeomorphism onto its image.
However, this is always possible
assuming all hyperedges of $H$ are injective,
and the image can be chosen to be 
%closed 
{a relatively closed subset %with respect to 
of a set $Y\subseteq{}^\kappa\kappa$}
under additional assumptions on $H$ and $Y$.
%with a closed image if further assumptions are made on $H$.
%
The latter result will be relevant for versions of the Hurewicz dichotomy studied in Subsection~\ref{subsection: Hurewicz} below.
Subsection~\ref{subsection: diamondi} studies the principle $\Diamondi\kappa$
%a weak version $\Diamondi\kappa$ of $\Diamond_\kappa$ 
in preparation for the next subsection. 

\subsection{Closure properties of maps}
\label{subsection: closure properties}

Let $Y$ and $Z$ be topological spaces.
Recall that a map $f:Y\to Z$ is called \emph{closed} \index{closed map\idf}
if $f(X)$ is a closed subset of $Z$ for any closed subset $X$ of $Y$. 
It is easy to see that 
\todol{slight correction}
$f$ is closed if and only if $\closure{f(X)}\subseteq f(\closure{X})$ for all subsets $X$ of $Y$.% 
\footnote{Hence a continuous map $f:Y\to Z$ is closed if and only if $\closure{f(X)}=f(\closure{X})$ for all subsets $X$ of $Y$.} 
\label{Lim^iota_t def}
For any map 
$f: {}^{\kappa}\kappa\to {}^{\kappa}\kappa$,
$X\subseteq{}^\kappa\kappa$ and $t\in{}^{<\kappa}\kappa$, 
let $\Lim_t^{f}(X)$ 
\index{limit set\idf$\Lim_t^{f}(X)$, $\Lim_t^{f}$}
denote the union of all 
limit points of sets of the form 
$\{f(x_\alpha) : \alpha<\kappa\}$ 
where 
$x_\alpha\in N_{t\conc\langle\alpha\rangle}\cap X$
for all $\alpha<\kappa$,
and let
$\Lim_t^{f}:=\Lim_t^{f}({}^\kappa\kappa)$.
For strict order preserving maps $\iota: {}^{<\kappa}\kappa\to {}^{<\kappa}\kappa$,
we also write
$\Lim_t^{\iota}(X):=\Lim_t^{[\iota]}(X)$ and
$\Lim_t^{\iota}:=\Lim_t^{[\iota]}$. 
We call these \emph{limit sets} of $f$ and $\iota$. 
\todog{Is $\Lim_t^{\iota}$ the same as the set of limit points of $\langle \iota(t\conc\langle\alpha\rangle):\alpha<\kappa\rangle$? Is something similar true for $\Lim_t^{\iota}(X)$ or even $\Lim_t^{f}(X)$? (answer: no)}

\begin{lemma} 
\label{lemma: [e] closed map}
Suppose that %$2\leq\ddim\leq\kappa$ and
$\iota:{}^{<\kappa}\ddim\to{}^{<\kappa}\kappa$ is $\perp$- and strict order preserving. 
\begin{enumerate-(1)}
\item\label{[e] closed when ddim<kappa}
If $2\leq\ddim<\kappa$, then 
%$\closure{[\iota](X)}=\iota(\closure{X})$ for all $X\subseteq{}^\kappa\ddim$. In particular, 
$[\iota]$ is a closed map. 
\item\label{closure of ran[e] when ddim=kappa}
If $\ddim=\kappa$, 
then 
$$\closure{[\iota](X)}=[\iota](\closure{X})\cup\bigcup_{t\in T(X)} \Lim^\iota_t(X).$$
for all subsets $X$ of ${}^\kappa\kappa$. 
In particular, 
 $[\iota]$ is a closed map if 
$\Lim^\iota_t=\emptyset$ for all $t\in{}^{<\kappa}\kappa$. 
\end{enumerate-(1)}
\end{lemma}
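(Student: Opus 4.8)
The plan is to analyze the structure of closed sets in $\ran([\iota])$ via the characterization that $f$ is closed if and only if $\closure{f(X)}=f(\closure X)$ for all $X$. Throughout, write $f:=[\iota]$, which is a continuous homeomorphism onto its image by Lemma~\ref{[e] continuous}~\ref{[e] homeomorphism}, since $\iota$ is $\perp$- and strict order preserving.

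\textbf{Part \ref{[e] closed when ddim<kappa}.} Suppose $2\leq\ddim<\kappa$. I would fix a subset $X$ of ${}^\kappa\ddim$ and show $\closure{f(X)}\subseteq f(\closure X)$; the reverse inclusion $f(\closure X)\subseteq\closure{f(X)}$ follows from continuity. So let $y\in\closure{f(X)}$. The key observation is that $y\in\closure{\ran f}=[T(\iota)]$, using the remarks after Definition~\ref{T(e) def} (as $\dom(\iota)$ is a subtree). Since $\ddim<\kappa$ and $\iota$ is $\wedge$-preserving enough to make the tree of ranges well-behaved, I want to recover a branch $x\in{}^\kappa\ddim$ with $f(x)=y$ and $x\in\closure X$. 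For each $\alpha<\kappa$, the initial segment $y\restr\beta$ for large $\beta$ lies on a node $\iota(t_\alpha)$ for a unique increasing sequence of nodes $t_\alpha\in\dom(\iota)$; the crux is that, because $\ddim<\kappa$, at each limit stage $\lambda<\kappa$ the node $t_\lambda:=\bigcup_{\alpha<\lambda}t_\alpha$ has length $\lambda<\kappa$ and survives in $\dom(\iota)$ — there is no way for $y$ to ``escape'' $\ran f$ at a limit, which is exactly what can fail when $\ddim=\kappa$. This yields $x:=\bigcup_\alpha t_\alpha$ with $f(x)=y$. To see $x\in\closure X$, I approximate: for each $\alpha$, $y$ is a limit of points $f(x^n)$ with $x^n\in X$, and since $f$ is injective with $\perp$-preserving $\iota$, the preimages $x^n$ converge to $x$, forcing $x\in\closure X$. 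Hence $y=f(x)\in f(\closure X)$.

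\textbf{Part \ref{closure of ran[e] when ddim=kappa}.} Suppose $\ddim=\kappa$. I would prove the displayed equality by mutual inclusion. For ``$\supseteq$'': $f(\closure X)\subseteq\closure{f(X)}$ by continuity, and each $\Lim^\iota_t(X)\subseteq\closure{f(X)}$ directly from the definition of $\Lim^\iota_t(X)$ as limit points of sequences $f(x_\alpha)$ with $x_\alpha\in N_{t\conc\langle\alpha\rangle}\cap X\subseteq X$. For ``$\subseteq$'': take $y\in\closure{f(X)}$, so $y\in[T(\iota)]$ as above. Run the same branch-reconstruction as in Part~\ref{[e] closed when ddim<kappa}, building increasing nodes $t_\alpha\in\dom(\iota)$ tracking $y$. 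Either the process continues through all limits, producing $x\in{}^\kappa\kappa$ with $f(x)=y$ and (by the convergence argument) $x\in\closure X$, giving $y\in f(\closure X)$; or it breaks at some limit stage, meaning $y$ lies above $\iota(t)$ for $t=\bigcup_{\alpha<\lambda}t_\alpha$ but $y\notin N_{\iota(t\conc\langle i\rangle)}$ for any single $i<\kappa$. In this breaking case, for each $\alpha<\kappa$ one finds witnesses $x_\alpha\in N_{t\conc\langle\alpha\rangle}\cap X$ with $f(x_\alpha)\to y$, so $y\in\Lim^\iota_t(X)$ with $t\in T(X)$. The final ``In particular'' clause is immediate: if $\Lim^\iota_t=\emptyset$ for all $t$, then a fortiori $\Lim^\iota_t(X)=\emptyset$ for all $X$ and all $t$, so the formula reduces to $\closure{f(X)}=f(\closure X)$, i.e.\ $f$ is closed.

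\textbf{Main obstacle.} The hard part will be the branch-reconstruction at limit stages in the $\ddim=\kappa$ case and cleanly isolating exactly when the reconstruction breaks — this is precisely the mechanism that distinguishes \ref{[e] closed when ddim<kappa} from \ref{closure of ran[e] when ddim=kappa}. I expect the technical care to lie in verifying that a ``broken'' $y$ genuinely arises as an element of $\Lim^\iota_t(X)$ rather than of $f(\closure X)$, and in confirming the convergence $x^n\to x$ of preimages using $\perp$-preservation (so that distinct extensions have disjoint basic neighborhoods $N_{\iota(s)}$, letting one read off longer and longer common initial segments of the $x^n$ from longer common initial segments of the $f(x^n)$).
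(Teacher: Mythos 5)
Your plan — reconstruct the branch of $y$ inside $T(\iota)$ and split according to whether the reconstruction completes or ``escapes'' — is organized differently from the paper's proof (the paper fixes approximations $[\iota](x_\alpha)\to y$ with $x_\alpha\in X$ and runs the dichotomy on whether the preimage sequence $\langle x_\alpha:\alpha<\kappa\rangle$ converges, so in the convergent case $y\in[\iota](\closure{X})$ comes for free, with no need to invert $[\iota]$), and it can be made to work; but as written it has two genuine gaps, both sitting exactly where the real work is. First, your stated ``crux'' for Part~(1) is vacuous: the observation that at limit stages $t_\lambda=\bigcup_{\alpha<\lambda}t_\alpha$ has length $\lambda<\kappa$ and ``survives in $\dom(\iota)$'' holds equally when $\ddim=\kappa$, since $\dom(\iota)={}^{<\kappa}\ddim$ contains every node of length ${<}\kappa$, so it cannot be what separates the two cases. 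Limit stages are in fact never the obstruction: if $\iota(t_\alpha)\subseteq y$ for all $\alpha<\lambda$, then $\perp$-preservation rules out $\iota(t_\lambda)\perp y$. The only possible escape is of successor type — a node $t$ with $\iota(t)\subseteq y$ but $\iota(t\conc\langle i\rangle)\not\subseteq y$ for every $i<\ddim$ — and excluding this when $\ddim<\kappa$ needs a pigeonhole argument you never give: for every sufficiently large $\beta<\kappa$ one has $y\restr\beta\subseteq\iota(s_\beta)$ with $s_\beta\supseteq t\conc\langle i_\beta\rangle$ for some $i_\beta<\ddim$, and since $\iota(t\conc\langle i_\beta\rangle)\not\subseteq y$ this forces $y\restr\beta\subseteq\iota(t\conc\langle i_\beta\rangle)$, i.e.\ $\lh(\iota(t\conc\langle i_\beta\rangle))\geq\beta$; by regularity of $\kappa$ and $\ddim<\kappa$ some fixed $i$ recurs unboundedly, making $\lh(\iota(t\conc\langle i\rangle))$ unbounded in $\kappa$, which is absurd. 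This counting step is the entire content of~(1); it is the counterpart of the step in the paper's proof showing that non-convergence of $\langle x_\alpha\rangle$ forces the splitting values $x_\alpha(\beta)$ to be unbounded in $\kappa$, whence $\ddim=\kappa$.

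The second gap is the breaking case of Part~(2): you assert that ``for each $\alpha<\kappa$ one finds witnesses $x_\alpha\in N_{t\conc\langle\alpha\rangle}\cap X$ with $f(x_\alpha)\to y$'', but producing a sequence indexed by \emph{all} immediate successors of $t$ whose image has $y$ as a limit point is precisely the technical heart of the lemma, not an automatic consequence of escaping. One must start from approximations $f(w_\beta)\to y$ with $w_\beta\in X$, check that eventually $t\subsetneq w_\beta$ (which is also what gives $t\in T(X)$), argue that each value $w_\beta(\lh(t))$ can occur only boundedly often — otherwise $\iota(t\conc\langle i\rangle)\subseteq y$ for the recurring $i$, contradicting escape — pass to a subsequence on which $\beta\mapsto w_\beta(\lh(t))$ is injective, and then reindex these points along the immediate successors of $t$, filling the remaining coordinates with arbitrary points of $N_{t\conc\langle\alpha\rangle}\cap X$; injectivity of $[\iota]$ (from $\perp$-preservation) is then needed so that $y$ is a limit point of the resulting \emph{set}. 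This is exactly what the paper's proof spends its effort on (choosing $z_\alpha=x_\delta$ when $\alpha=x_\delta(\beta)$ after passing to an injective subsequence), whereas your sketch defers it to ``technical care'' without indicating the argument.
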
 
\begin{proof}  
We prove~\ref{[e] closed when ddim<kappa} and~\ref{closure of ran[e] when ddim=kappa} simultaneously. 
Fix $2\leq\ddim\leq\kappa$ and a subset $X$ of ${}^\kappa\ddim$. 
Note that it suffices to show  $[\iota](\closure{X})=\closure{[\iota](X)}$ for \ref{[e] closed when ddim<kappa}. 
It is clear that $[\iota](\closure{X})\subseteq\closure{[\iota](X)}$ and that 
$\Lim^\iota_t(X)\subseteq\closure{[\iota](X)}$ for all $t\in{}^{<\kappa}\kappa$ in the case $\ddim=\kappa$. 
Now suppose that $y\in\closure{[\iota](X)}$. 
It suffices to show $y \in [\iota](\closure{X})\cup\bigcup_{t\in T(X)} \Lim^\iota_t(X)$. 
$y$ is the limit of some convergent sequence 
$\langle [\iota](x_\alpha):\alpha<\kappa\rangle$ with $x_\alpha\in X$ for each $\alpha<\kappa$.
If $\bar x:=\langle x_\alpha:\alpha<\kappa\rangle$ converges to some $x$, then $x\in \closure X$ and $y=f(x)\in[\iota](\closure X)$. 
Therefore we now assume that $\bar{x}$ does not converge. 
We will see that $\ddim=\kappa$ and $y\in\Lim^\iota_t(X)$ for some $t\in{}^{<\kappa}\kappa$. 

Let $\gamma\leq \kappa$ be least such that
$\langle x_\alpha\restr\gamma : \alpha<\kappa\rangle$ is not eventually constant. 
Then clearly $\gamma>0$. 
We first claim that $\gamma$ is a successor ordinal. 
Fix some $\delta_\beta<\kappa$ such that $\langle x_\alpha\restr\beta : \delta_\beta\leq\alpha<\kappa\rangle$ is constant for each $\beta<\gamma$ and let $\delta:=\sup_{\beta<\gamma} \delta_\beta<\kappa$. 
If $\gamma$ were a limit ordinal, then $\langle x_\alpha\restr\gamma : \delta\leq\alpha<\kappa\rangle$ would be constant. 

Let $\gamma=\beta+1$. 
We may assume that 
$\langle x_\alpha\restr\beta:\alpha<\kappa\rangle$ has a constant value $t\in{}^{<\kappa}\kappa$ by replacing $\bar x$ by a tail. 
Then $x_\alpha\restr\gamma=t\conc\langle x_\alpha(\beta)\rangle$ for all $\alpha<\kappa$.
Next, we claim that 
$\langle x_\alpha(\beta):\alpha<\kappa\rangle$ 
is unbounded in $\kappa$; in particular $\ddim=\kappa$. 
Towards a contradiction, suppose that there is a strict upper bound $\delta<\kappa$ with 
$x_\alpha(\beta)<\delta$ for all $\alpha<\kappa$.
Since
$\langle x_\alpha(\beta):\alpha<\kappa\rangle$ 
is not eventually constant, 
there exist $i<j<\kappa$ and disjoint unbounded subsets $I,\,J$ of $\kappa$ with 
$x_\alpha(\beta)=i$ for all $\alpha\in I$
and
$x_\alpha(\beta)=j$ for all $\alpha\in J$.
Then
$[\iota](x_\alpha)\supseteq \iota(t\conc\langle i\rangle)$
for all $\alpha\in I$ and
$[\iota](x_\alpha)\supseteq \iota(t\conc\langle j\rangle)$
for all $\alpha\in J$.
Since $\iota$ is $\perp$-preserving,
$\iota(t\conc\langle i\rangle)\perp\iota(t\conc\langle j\rangle)$.
But then $\langle[\iota](x_\alpha):\alpha<\kappa\rangle$ cannot be convergent. 

To show that  $y\in\Lim^\iota_t(X)$, 
we may assume that $\langle x_\alpha(\beta):\alpha<\kappa\rangle$ is injective by replacing $\bar x$ with a subsequence. 
Let $\bar z= \langle z_\alpha : \alpha<\kappa \rangle$ be a sequence with $z_\alpha\in N_{t\conc\langle \alpha\rangle}\cap X$ and
$z_\alpha=x_\delta$ if $\alpha= x_\delta(\beta)$. 
Since $\iota$ is $\perp$-preserving, $[\iota]$ is injective. 
Thus $y$ is a limit point of the set $\{[\iota](z_\alpha):\alpha<\kappa\}$ and hence $y\in\Lim^\iota_t(X)$. 
\end{proof} 

The next corollary follows from the previous lemma and Lemma~\ref{[e] continuous}.

\begin{corollary}
\label{[e] homeomorphism onto a closed set}
Suppose 
$\iota:{}^{<\kappa}\ddim\to{}^{<\kappa}\kappa$ is $\perp$- and strict order preserving. 
If $\ddim<\kappa$ or if $\ddim=\kappa$ and $\Lim^\iota_t=\emptyset$ for all $t\in{}^{<\kappa}\kappa$, then
$[\iota]$ is a homeomorphism from ${}^\kappa\ddim$ onto a closed subset of ${}^\kappa\kappa$. 
\end{corollary}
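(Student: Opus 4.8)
The plan is to combine the two most recent results directly. Corollary~\ref{[e] homeomorphism onto a closed set} asks for two things about the map $[\iota]$: that it is a homeomorphism onto its image, and that its image is closed. Each of these is supplied by an earlier result under the stated hypotheses, so the proof is essentially an assembly.

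First I would invoke Lemma~\ref{[e] continuous}~\ref{[e] homeomorphism}. Since $\iota:{}^{<\kappa}\ddim\to{}^{<\kappa}\kappa$ is assumed $\perp$-preserving and strict order preserving, that lemma gives immediately that $[\iota]$ is a homeomorphism from $\dom([\iota])$ onto $\ran([\iota])=[\iota]({}^\kappa\ddim)$. Here I would note that because $\dom(\iota)={}^{<\kappa}\ddim$ is a (pruned) subtree of ${}^{<\kappa}\ddim$, we have $\dom([\iota])=[\dom(\iota)]={}^\kappa\ddim$, as recorded in the discussion following Definition~\ref{T(e) def}; so $[\iota]$ is genuinely defined on all of ${}^\kappa\ddim$ and is a homeomorphism onto its range.

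Next I would apply Lemma~\ref{lemma: [e] closed map} to conclude that the range is closed. In the case $\ddim<\kappa$, part~\ref{[e] closed when ddim<kappa} says outright that $[\iota]$ is a closed map; applying this to the closed set ${}^\kappa\ddim$ itself shows $\ran([\iota])=[\iota]({}^\kappa\ddim)$ is closed in ${}^\kappa\kappa$. In the case $\ddim=\kappa$ with $\Lim^\iota_t=\emptyset$ for all $t\in{}^{<\kappa}\kappa$, part~\ref{closure of ran[e] when ddim=kappa} gives that $[\iota]$ is a closed map under exactly this vanishing hypothesis, and again applying it to ${}^\kappa\ddim$ yields that $\ran([\iota])$ is closed. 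Combining this with the homeomorphism statement of the first step completes the argument.

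I do not expect any genuine obstacle here, as the corollary is a straightforward conjunction of Lemmas~\ref{[e] continuous} and~\ref{lemma: [e] closed map}. The only point requiring a word of care is the identification $\dom([\iota])={}^\kappa\ddim$, which relies on $\dom(\iota)$ being the full tree ${}^{<\kappa}\ddim$ (so that the more general definition of $\dom([\iota])$ in Definition~\ref{[e] def} collapses to the set of all branches); this is implicit in the hypotheses since $\iota$ is given as a total strict order preserving map on ${}^{<\kappa}\ddim$. With that observed, the proof is a direct citation of the two preceding results.

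\begin{proof}
Since $\iota$ is a strict order preserving total map on the subtree ${}^{<\kappa}\ddim$, we have $\dom([\iota])={}^\kappa\ddim$, and by Lemma~\ref{[e] continuous}~\ref{[e] homeomorphism} the $\perp$-preservation of $\iota$ implies that $[\iota]$ is a homeomorphism from ${}^\kappa\ddim$ onto $\ran([\iota])$. By Lemma~\ref{lemma: [e] closed map}, $[\iota]$ is a closed map: this is immediate from~\ref{[e] closed when ddim<kappa} when $\ddim<\kappa$, and from~\ref{closure of ran[e] when ddim=kappa} when $\ddim=\kappa$ and $\Lim^\iota_t=\emptyset$ for all $t\in{}^{<\kappa}\kappa$. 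Applying closedness to ${}^\kappa\ddim$ shows that $\ran([\iota])$ is a closed subset of ${}^\kappa\kappa$. Hence $[\iota]$ is a homeomorphism from ${}^\kappa\ddim$ onto a closed subset of ${}^\kappa\kappa$.
\end{proof}
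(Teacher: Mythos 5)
Your proof is correct and follows exactly the paper's route: the paper derives this corollary by combining Lemma~\ref{lemma: [e] closed map} (closedness of the map, in both the $\ddim<\kappa$ and the $\ddim=\kappa$ with vanishing $\Lim^\iota_t$ cases) with Lemma~\ref{[e] continuous}~\ref{[e] homeomorphism} (homeomorphism onto the range from $\perp$-preservation). Your added remark that $\dom([\iota])={}^\kappa\ddim$ because $\dom(\iota)$ is the full pruned tree is a correct, if routine, point of care.
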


The next lemma shows that the assumption that $\iota$ is $\perp$-preserving can be omitted from the previous lemma if $\kappa$ is weakly compact. 

\begin{lemma} 
\label{lemma: f closed map kappa weakly compact}
Suppose that $\kappa=\omega$ or $\kappa$ is weakly compact.
Let $f:{}^{\kappa}\ddim\to{}^{\kappa}\kappa$ be a continuous map.
\begin{enumerate-(1)}
\item\label{f closed when ddim<kappa kappa weakly compact}
If $2\leq\ddim<\kappa$, then 
$f$ is a closed map. 
\item\label{closure of ran(f) when kappa weakly compact}
If $\ddim=\kappa$, 
then $$\closure{f(X)}=f(\closure{X})\cup\bigcup_{t\in T(X)} \Lim^f_t(X).$$ 
for all subsets $X$ of ${}^\kappa\kappa$. 
In particular, 
$f$ is a closed map if 
$\Lim^f_t=\emptyset$ for all $t\in{}^{<\kappa}\kappa$. 
\end{enumerate-(1)}
\end{lemma}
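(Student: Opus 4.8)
The plan is to mirror the proof of Lemma~\ref{lemma: [e] closed map} almost verbatim, replacing the strict order preserving $\perp$-map $\iota$ by the continuous map $f$ and replacing the $\perp$-preservation argument by an appeal to weak compactness. As in that lemma, I would prove~\ref{f closed when ddim<kappa kappa weakly compact} and~\ref{closure of ran(f) when kappa weakly compact} simultaneously, since the first is the special case of the second when $\ddim<\kappa$ forces $\Lim^f_t=\emptyset$ (an unbounded sequence in the $\beta$-th coordinate requires $\ddim=\kappa$). Fix $2\leq\ddim\leq\kappa$ and a subset $X$ of ${}^\kappa\ddim$. The inclusions $f(\closure X)\subseteq\closure{f(X)}$ and $\Lim^f_t(X)\subseteq\closure{f(X)}$ are immediate from continuity and the definition of $\Lim^f_t(X)$, so the content is the reverse inclusion $\closure{f(X)}\subseteq f(\closure X)\cup\bigcup_{t\in T(X)}\Lim^f_t(X)$.

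For this direction, let $y\in\closure{f(X)}$. First I would handle the case $\kappa=\omega$ directly (then $f(X)=\{f(x_n):n<\omega\}$ for a sequence in $X$, and the classical metric argument yields a convergent subsequence or an unbounded coordinate exactly as in the countable case). For $\kappa$ weakly compact, I would write $y$ as the limit of a convergent $\kappa$-sequence $\langle f(x_\alpha):\alpha<\kappa\rangle$ with $x_\alpha\in X$. If $\bar x=\langle x_\alpha:\alpha<\kappa\rangle$ converges to some $x$, then $x\in\closure X$ and $y=f(x)\in f(\closure X)$ by continuity. Otherwise, exactly as in Lemma~\ref{lemma: [e] closed map}, let $\gamma\leq\kappa$ be least such that $\langle x_\alpha\restr\gamma:\alpha<\kappa\rangle$ is not eventually constant; the same two sub-arguments show $\gamma=\beta+1$ is a successor and, after passing to a tail, $\langle x_\alpha\restr\beta:\alpha<\kappa\rangle$ is constant with value $t\in{}^{<\kappa}\kappa$.

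The one step where the hypothesis on $\iota$ was used — and hence the main obstacle — is showing that $\langle x_\alpha(\beta):\alpha<\kappa\rangle$ is unbounded in $\kappa$, forcing $\ddim=\kappa$. In Lemma~\ref{lemma: [e] closed map} this came from $\perp$-preservation: two distinct values $i\neq j$ appearing cofinally gave $\iota(t\conc\langle i\rangle)\perp\iota(t\conc\langle j\rangle)$, obstructing convergence. Here I would instead argue: suppose $\langle x_\alpha(\beta):\alpha<\kappa\rangle$ is bounded by $\delta<\kappa$. Since it is not eventually constant, by regularity of $\kappa$ some value $i<\delta$ is attained on an unbounded set $I$ and some other value $j\neq i$ on an unbounded set $J$. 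Consider the clopen sets $N_{t\conc\langle i\rangle}$ and $N_{t\conc\langle j\rangle}$, which are disjoint. By continuity of $f$, $f^{-1}(N_{y\restr\eta})$ is open for each $\eta<\kappa$, and since $f(x_\alpha)\to y$, for cofinally many $\alpha$ we have $f(x_\alpha)\restr\eta=y\restr\eta$; but the points $x_\alpha$ with $\alpha\in I$ lie in $N_{t\conc\langle i\rangle}$ and those with $\alpha\in J$ in $N_{t\conc\langle j\rangle}$, two fixed disjoint basic open sets. This is where weak compactness enters: the tree of possible values $x_\alpha\restr\eta$ over the bounded index range is a ${<}\kappa$-branching subtree of ${}^{<\kappa}\delta$, and the tree property (equivalently, that ${}^\kappa\delta$ with $\delta<\kappa$ is $\kappa$-compact, cf.\ the characterization of weak compactness recalled in Subsection~\ref{preliminaries: notation}) lets me extract a cofinal $A\subseteq\kappa$ on which $\langle x_\alpha:\alpha\in A\rangle$ converges to some $x^*\in\closure X$ with $x^*\restr\gamma=t\conc\langle i\rangle$ say; then $f(x^*)=y$ by continuity, contradicting that $\bar x$ was chosen non-convergent after passing to tails, or directly placing $y\in f(\closure X)$. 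Once unboundedness of $\langle x_\alpha(\beta):\alpha<\kappa\rangle$ is established, $\ddim=\kappa$, and the final paragraph of Lemma~\ref{lemma: [e] closed map} carries over unchanged (replacing $[\iota]$ by $f$ and using continuity in place of injectivity where the witnessing sequence $\bar z$ is built): after thinning so that $\langle x_\alpha(\beta):\alpha<\kappa\rangle$ is injective, define $z_\alpha\in N_{t\conc\langle\alpha\rangle}\cap X$ with $z_\alpha=x_\delta$ whenever $\alpha=x_\delta(\beta)$, whence $y$ is a limit point of $\{f(z_\alpha):\alpha<\kappa\}$ and thus $y\in\Lim^f_t(X)$ with $t\in T(X)$. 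The ``In particular'' clause is then immediate, and statement~\ref{f closed when ddim<kappa kappa weakly compact} follows since $\ddim<\kappa$ rules out the unbounded case, leaving $\closure{f(X)}=f(\closure X)$.
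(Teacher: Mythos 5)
There is a genuine gap, and it sits exactly at the step you flag as the main obstacle. Your argument inherits from Lemma~\ref{lemma: [e] closed map} the case distinction ``convergent vs.\ non-convergent'' together with the claim that, for a non-convergent $\bar x$, the values at the first unstable coordinate must be unbounded. For a general continuous $f$ this claim is simply false. Take $\ddim=\kappa$ and $x_\alpha(0)=\alpha \bmod 2$, $x_\alpha(\eta)=\alpha$ for all $\eta\geq 1$: the least $\gamma$ with $\langle x_\alpha\restr\gamma:\alpha<\kappa\rangle$ not eventually constant is $\gamma=1$, the values $x_\alpha(0)$ are bounded by $2$, yet $\bar x$ has no convergent subsequence at all, because coordinate $1$ tends to $\kappa$ along every cofinal set. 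This also kills your weak-compactness rescue: boundedness of the single coordinate $\beta$ by $\delta$ does \emph{not} make the tree of restrictions a ${<}\kappa$-branching subtree of ${}^{<\kappa}\delta$, since the coordinates above $\beta$ are completely unconstrained; so the tree property gives you no convergent subsequence $x^*$, and in this configuration your proof can neither reach a contradiction nor decide where $y$ lands.

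The fix, which is what the paper does, is to change the dichotomy from ``convergent vs.\ non-convergent'' to ``bounded vs.\ unbounded''. If $\bar x$ is bounded, it lies in a closed bounded subset $C$ of $\closure{X}$, which is $\kappa$-compact by the characterization of weak compactness recalled in the preliminaries; then $f(C)$ is $\kappa$-compact, hence closed, so $y\in f(C)\subseteq f(\closure{X})$ — no convergence of $\bar x$ itself is needed. If $\bar x$ is unbounded, let $\beta$ be least such that $\langle x_\alpha(\beta):\alpha<\kappa\rangle$ is unbounded (not the least unstable coordinate!), pass to a subsequence on which these values are injective; by minimality of $\beta$ every earlier coordinate is bounded, so by inaccessibility of $\kappa$ there are fewer than $\kappa$ possibilities for $x_\alpha\restr\beta$, and a further subsequence makes $x_\alpha\restr\beta$ constant with value $t$. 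Then your final paragraph goes through: after assuming without loss of generality that $f(x_\alpha)\neq y$ for all $\alpha$ (a point you should make explicit, since it replaces the injectivity of $[\iota]$ in concluding that $y$ is a genuine limit point), the sequence $\bar z$ witnesses $y\in\Lim^f_t(X)$. In my example above this route works: $\beta=1$, and $t$ is $\langle 0\rangle$ or $\langle 1\rangle$. Finally, your reduction of \ref{f closed when ddim<kappa kappa weakly compact} to the proof of \ref{closure of ran(f) when kappa weakly compact} is legitimate (for $\ddim<\kappa$ the unbounded case cannot occur), though the paper proves \ref{f closed when ddim<kappa kappa weakly compact} even more directly: closed subsets of ${}^\kappa\ddim$ are $\kappa$-compact, continuous images of $\kappa$-compact sets are $\kappa$-compact, and $\kappa$-compact subsets of ${}^\kappa\kappa$ are closed.
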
 
\begin{proof}  
\ref{f closed when ddim<kappa kappa weakly compact} holds since closed subsets of ${}^
\kappa\ddim$ are $\kappa$-compact by the weak compactness of~$\kappa$, 
$\kappa$-compactness is preserved under continuous images and 
$\kappa$-compact subsets of ${}^\kappa\kappa$ are closed.

For~\ref{closure of ran(f) when kappa weakly compact},
note that clearly $f(\closure{X})\subseteq\closure{f(X)}$ and that 
$\Lim^f_t(X)\subseteq\closure{f(X)}$ for all $t\in{}^{<\kappa}\kappa$.
It suffices to show $y \in f(\closure{X})\cup\bigcup_{t\in T(X)} \Lim^f_t(X)$ for any $y\in\closure{f(X)}$. 
Fix a sequence 
$\langle f(x_\alpha):\alpha<\kappa\rangle$ converging to $y$ with $x_\alpha\in X$ for all $\alpha<\kappa$. 
We may assume $f(x_\alpha)\neq y$ for all $\alpha<\kappa$.
If $\bar x:=\langle x_\alpha:\alpha<\kappa\rangle$ is bounded, then it is contained in a $\kappa$-compact subset $C$ of $\closure X$ since $\kappa$ is weakly compact. Then $f(C)$ is also $\kappa$-compact and thus closed, so $y\in f(C)\subseteq f(\closure X)$.
Now suppose that $\bar x$ is unbounded. 
Let $\beta<\kappa$ be least such that 
$\langle x_\alpha(\beta):\alpha<\kappa\rangle$ is unbounded in $\kappa$.
We may assume that $\langle x_\alpha(\beta):\alpha<\kappa\rangle$ is injective by replacing $\bar x$ with a subsequence. 
\todol{slight correction}%
There are $\lle\kappa$ many possibilities for $x_\alpha\restr\beta$, as $\alpha$ ranges over $\kappa$,
since $\kappa$ is inaccessible 
and $\langle x_\alpha(\gamma):\alpha<\kappa\rangle$ is bounded for all $\gamma<\beta$.
%There are only $|\kappa{}^\beta|$ many possibilities for $x_\alpha\restr\beta$, and $|\kappa{}^\beta|<\kappa$ since $\kappa$ is inaccessible. 
We may thus assume that $\langle x_\alpha\restr\beta:\alpha<\kappa\rangle$
has a constant value $t\in{}^{<\kappa}\kappa$ by replacing $\bar x$
with a further subsequence.
Then $x_\alpha\restr{(\beta+1)}=t\conc\langle x_\alpha(\beta)\rangle$ for all $\alpha<\kappa$.
Let $\bar z= \langle z_\alpha : \alpha<\kappa \rangle$ be a sequence with $z_\alpha\in N_{t\conc\langle \alpha\rangle}\cap X$ and
$z_\alpha=x_\gamma$ if $\alpha= x_\gamma(\beta)$. 
Since $y\neq f(x_\gamma)$ for all $\gamma<\kappa$,
$y$ is a limit point of the set $\{f(z_\alpha):\alpha<\kappa\}$, and thus $y\in\Lim^f_t(X)$. 
\end{proof} 

The next remark shows that the following assumptions cannot be omitted: $\iota$ is $\perp$-preserving in Lemma \ref{lemma: [e] closed map} and $\kappa$ is weakly compact in Lemma \ref{lemma: f closed map kappa weakly compact}. 

\begin{remark} 
\label{ran[e] not closed}
Suppose that $\kappa$ is not weakly compact. 
\begin{enumerate-(1)} 
\item\label{ran[e] not closed 1}
There exists a strict order preserving map $\iota: {}^{<\kappa}2\to {}^{<\kappa} 2$ such that $\ran([\iota])$ is not closed, and thus
$[\iota]$ is not a closed map.
To see this, we first claim that there exists a \emph{barrier}
\index{barrier\idf} 
$A$ in ${}^{<\kappa}2$ of size $\kappa^{<\kappa}=\kappa$,\footnote{A similar argument also works without the assumption that $\kappa^{<\kappa}=\kappa$.} 
i.e. a maximal antichain in ${}^{<\kappa}2$ of size $\kappa^{<\kappa}=\kappa$ with the following equivalent properties: (a) every $x\in {}^{\kappa}2$ 
has an initial segment in $A$ and (b) $T(A)$ has no $\kappa$-branches. 
If $\kappa$ is inaccessible, let $A$ be the boundary of a
\index{tree!Aronszajn@$\kappa$-Aronszajn\idf}
$\kappa$-Aronszajn subtree of ${}^{<\kappa}2$.%
\footnote{Recall that a subtree $T$ of ${}^{<\kappa}\kappa$ of height $\kappa$ is a \emph{$\kappa$-Aronszajn tree} if its levels $T\cap{}^\alpha\kappa$ have size $\lle\kappa$ for all $\alpha<\kappa$ and its set $[T]$ of branches is empty.}
If $\kappa=\mu^+$, let $A:={}^\mu 2$. 

Fix an injective enumeration $\langle t_\alpha : \alpha<\kappa\rangle$ of $A$. 
Further fix a strictly increasing sequence $\langle \gamma_\alpha : \alpha<\kappa\rangle$ with $\gamma_\alpha \geq \lh(t_\alpha)$ and let  $\iota(t_\alpha):= \langle 0\rangle^{\gamma_\alpha}\conc\langle1\rangle$ for all $\alpha<\kappa$. 
We now extend $\iota$ 
to a strict order preserving map
from ${}^{<\kappa}2$ to ${}^{<\kappa}2$.
If $s\subsetneq t$ for some $t\in A$, let $\iota(s):=\langle 0\rangle^{\lh(s)}$. 
We further 
extend $\iota$ above $A$ arbitrarily 
to a strict order preserving map.
Then $\langle 0\rangle^\kappa$ witnesses that $\ran([\iota])$ is not closed. 
Note that we can also guarantee that $[\iota]$ is injective
by extending $\iota$ in a $\perp$-preserving way above $A$.%
\footnote{I.e., so that for all $u\in A$ and all
incompatible $v,w\supsetneq u$, we have $\iota(v)\perp\iota(w)$).
E.g.~let $\iota(u \conc v):=\iota(u)\conc v$ for all 
$u\in A$ and $v\in{}^{<\kappa}\kappa$.}

\item\label{ran[e] not closed 2}
There exists a strict order preserving map $\theta:{}^{<\kappa}\kappa\to{}^{<\kappa}\kappa$ such that $\Lim^{\theta}_t=\emptyset$ for all $t\in{}^{<\kappa}\kappa$ and $\ran([\theta])$ is not closed. 
To see this, define $\theta(s\conc t):=\iota(s)\conc t$ for all $s\in{}^{<\kappa}2$ and $t\in{}^{<\kappa}\kappa$ with $t(0)\geq 2$, 
where $\iota$ is as in \ref{ran[e] not closed 1}.
Note that if $[\iota]$ is injective, then so is $[\theta]$.
\end{enumerate-(1)} 
\end{remark} 

If $\mu$ is a regular cardinal, let $\cof_\mu$ 
\index{cofmu@$\cof_\mu$, $\cof^\gamma_\mu$, $\cof^\gamma_{<\mu}$,$\cof^\gamma_{>\mu}$\idf}
denote the class of limit ordinals 
$\alpha$
with $\cof(\alpha)=\mu$, 
and
for any ordinal $\gamma$, 
let $\cof^\gamma_\mu:=\cof_\mu\cap \gamma$.
We define $\cof_{{\leq}\mu}$, $\cof_{<\mu}$,
$\cof_{{>}\mu}$, $\cof^\gamma_{>\mu}$ etc.~similarly
for arbitrary ordinals $\mu$.
Given a class $C$ of limit ordinals and a subtree $T$ of ${}^{<\kappa}\kappa$, we say that
$T$ is \emph{$C$-closed} 
\index{tree!closed@$C$-closed\idf}%
\index{closed tree@$C$-closed tree\idf}%
if every strictly increasing sequence in $T$
whose supremum has length in $C$ 
has an upper bound in $T$. 
\todoo{typo corrected: we changed $A$ to $C$ in ``\ldots with length in $C$}
\todol{we also replaced ``\ldots with length in $C$'' by ``\ldots whose supremum has length in $C$'' since this definition is more general. 
This does not affect the next lemmas or remarks and the definition does not appear anywhere else.}

The next lemma shows that the relevant closure properties of $T(\iota)$ are determined by the following sets. 
For $\iota : {}^{<\kappa}\ddim \to {}^{<\kappa}\kappa$ and $u\in {}^{<\kappa}\ddim$, we write $T^{\iota,u}$ for the downward closure of 
$\{\iota(u\conc \langle \alpha \rangle): \alpha<\kappa \}$ in ${}^{<\kappa}\ddim$. 

\begin{lemma}
\label{lemma: T(e) closure properties} 
If $2 \leq \ddim\leq \kappa$, $\iota: {}^{<\kappa}\ddim \to {}^{<\kappa}\kappa$ is $\perp$- and strict order preserving and $\mu<\kappa$ is a regular infinite cardinal, then the following statements are equivalent: 
\begin{enumerate-(1)} 
\item 
\label{lemma: T(e) closure properties 1} 
$T(\iota)$ is $\cof^\kappa_\mu$-closed. 
\item 
\label{lemma: T(e) closure properties 2} 
For all $u\in {}^{<\kappa}\ddim$, $T^{\iota,u}$ is $\cof^\kappa_\mu$-closed. 
\end{enumerate-(1)} 
\end{lemma}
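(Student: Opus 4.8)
The plan is to reduce both statements to the elementary observation that a subtree $T$ of ${}^{<\kappa}\kappa$ is $\cof^\kappa_\mu$-closed precisely when the union $v=\bigcup_{i<\alpha}v_i$ of any strictly increasing sequence $\langle v_i:i<\alpha\rangle$ in $T$ with $\alpha\in\cof^\kappa_\mu$ again lies in $T$; this holds because such a union is an initial segment of every upper bound and $T$ is downward closed. Throughout I would use that, since $\iota$ is $\perp$- and strict order preserving, it is injective and both preserves and \emph{reflects} $\subseteq$ and $\perp$ (if $\iota(t)\subseteq\iota(t')$ then $t\subseteq t'$, and if $\iota(t)\perp\iota(t')$ then $t\perp t'$). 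Two consequences are worth recording at the outset: $T^{\iota,u}\subseteq T(\iota)$ for every $u\in{}^{<\kappa}\ddim$, and for $v\in T(\iota)$ the set $\{t:\iota(t)\subseteq v\}$ is a $\subseteq$-chain, its members being pairwise comparable initial segments of $v$.

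For \ref{lemma: T(e) closure properties 1}$\Rightarrow$\ref{lemma: T(e) closure properties 2} I would fix $u$ and a strictly increasing sequence $\langle s_i\rangle$ in $T^{\iota,u}$ of length $\alpha\in\cof^\kappa_\mu$ with union $s$, writing $s_i\subseteq\iota(u\conc\langle\beta_i\rangle)$. Since each $s_i$ is comparable with $\iota(u)$, either all $s_i\subseteq\iota(u)$, in which case $s\subseteq\iota(u)\subsetneq\iota(u\conc\langle 0\rangle)$ gives $s\in T^{\iota,u}$ at once, or I may pass to a tail on which $s_i\supsetneq\iota(u)$. In the latter case I would apply \ref{lemma: T(e) closure properties 1} to obtain $t^*$ with $s\subseteq\iota(t^*)$; then $\iota(u)\subsetneq s\subseteq\iota(t^*)$ and order-reflection yield $u\subsetneq t^*$, and with $\beta:=t^*(\lh(u))$ one checks that each $s_i\subseteq\iota(u\conc\langle\beta\rangle)$ — the only alternative, $s_i\supsetneq\iota(u\conc\langle\beta\rangle)$, would force $u\conc\langle\beta\rangle\subsetneq u\conc\langle\beta_i\rangle$, which is impossible. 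Hence $s\subseteq\iota(u\conc\langle\beta\rangle)$ and $s\in T^{\iota,u}$.

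The harder direction is \ref{lemma: T(e) closure properties 2}$\Rightarrow$\ref{lemma: T(e) closure properties 1}, which I would prove by contraposition, locating a single node $w$ along which $T^{\iota,w}$ fails to be $\cof^\kappa_\mu$-closed. Fix a strictly increasing $\langle v_i:i<\alpha\rangle$ in $T(\iota)$ with $\alpha\in\cof^\kappa_\mu$ and $v=\bigcup_{i<\alpha}v_i\notin T(\iota)$, and set $w:=\bigcup\{t:\iota(t)\subseteq v\}$, the union of the chain above. The crux is to rule out the degenerate configurations: $\lh(w)=\kappa$ is impossible since it would force $\lh(v)=\kappa$, and the case $\iota(w)\not\subseteq v$ is impossible because every sufficiently long $v_i$ would then satisfy $v_i\subsetneq\iota(w)$, giving $v\subseteq\iota(w)\in\ran(\iota)$ and contradicting $v\notin T(\iota)$. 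This leaves $\iota(w)\subsetneq v$. On the tail where $v_i\supsetneq\iota(w)$, choosing $t_i$ with $v_i\subseteq\iota(t_i)$ gives $w\subsetneq t_i$; putting $\gamma_i:=t_i(\lh(w))$, I would show $v_i\subseteq\iota(w\conc\langle\gamma_i\rangle)$, since otherwise $\iota(w\conc\langle\gamma_i\rangle)\subsetneq v_i\subseteq v$ would place $w\conc\langle\gamma_i\rangle$ into the chain defining $w$, contradicting $w=\bigcup\{t:\iota(t)\subseteq v\}$. Thus this tail lies in $T^{\iota,w}$, its union is $v$, yet $v\notin T^{\iota,w}$ (otherwise $v\subseteq\iota(w\conc\langle\beta\rangle)\in T(\iota)$); as a tail of a sequence of length in $\cof^\kappa_\mu$ again has length in $\cof^\kappa_\mu$, this witnesses the failure of $\cof^\kappa_\mu$-closure of $T^{\iota,w}$.

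I expect the main obstacle to be exactly this localization step in the backward direction: correctly analyzing the chain $\{t:\iota(t)\subseteq v\}$ together with its union $w$, and verifying that the degenerate cases cannot occur, so that the global failure is faithfully reflected in the local tree $T^{\iota,w}$. The repeated use of the order- and $\perp$-reflection properties of $\iota$, together with the bookkeeping ensuring the witnessing tails retain cofinality $\mu$, is where the care is needed; the forward direction, by contrast, is a routine application of the hypothesis once the right extension $t^*$ is extracted.
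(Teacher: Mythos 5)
Your forward direction is correct, and your backward direction follows the same localization as the paper's own proof (the chain $A=\{t:\iota(t)\subseteq v\}$, its union $w$, and the key observation that $\iota(w\conc\langle\gamma_i\rangle)\subseteq v$ would force $w\conc\langle\gamma_i\rangle$ into $A$), just run contrapositively. The gap is in your dismissal of the degenerate case $\iota(w)\not\subseteq v$. Your justification — ``every sufficiently long $v_i$ would then satisfy $v_i\subsetneq\iota(w)$'' — tacitly assumes that each $v_i$ is comparable with $\iota(w)$, i.e.\ it treats the case split as $\iota(w)\subseteq v$ versus $v\subsetneq\iota(w)$. But there is a third configuration, $\iota(w)\perp v$, and nothing rules it out a priori: $\iota$ is \emph{not} assumed continuous, so $\iota(w)$ is merely some node extending $\bigcup_{t\in A}\iota(t)$ and may then branch away from $v$. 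In that configuration every $v_i$ long enough to pass the splitting point of $v$ and $\iota(w)$ satisfies $v_i\perp\iota(w)$ — the opposite of what you assert — so your one-line argument cannot be the derivation; this configuration must be refuted by a separate argument, and none is given. This is not a vacuous worry: it is exactly the point where non-continuity of $\iota$ bites, and the paper's proof of (2)$\Rightarrow$(1) is structured precisely so as never to evaluate $\iota$ at the limit node of the chain, working instead with witnesses $s_\gamma$ satisfying $w\restr\gamma\subseteq\iota(s_\gamma)$.

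The repair is short and brings you in line with the paper's argument. Since $\bigcup_{t\in A}\iota(t)=v$ would give $v\subseteq\iota(w)\in T(\iota)$, you may pass to the tail of indices $i$ with $\lh(v_i)>\lh\bigl(\bigcup_{t\in A}\iota(t)\bigr)$. For such $i$ pick $r_i$ with $v_i\subseteq\iota(r_i)$; then for every $t\in A$ we have $\iota(t)\subsetneq v_i\subseteq\iota(r_i)$ (two initial segments of $v$, the first shorter), hence $t\subsetneq r_i$ by order reflection, and therefore $w\subseteq r_i$. Now either $v_i\subseteq\iota(w)$ for cofinally many $i$ — then $v\subseteq\iota(w)$, contradicting $v\notin T(\iota)$ — or on a tail $v_i\not\subseteq\iota(w)$, which forces $w\subsetneq r_i$ (equality would give $v_i\subseteq\iota(r_i)=\iota(w)$), and then your main-case argument, with $r_i$ in place of $t_i$ and $\gamma_i:=r_i(\lh(w))$, goes through verbatim and produces the failure of $\cof^\kappa_\mu$-closure of $T^{\iota,w}$. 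With this change your proof is correct, and apart from the contrapositive packaging it is essentially the paper's proof.
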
 
\begin{proof} 
\ref{lemma: T(e) closure properties 1} $\Rightarrow$ \ref{lemma: T(e) closure properties 2}: 
Suppose that $w$ is a limit of nodes in $T^{\iota,u}$ with $w \in T(\iota) \,\setminus\, T^{\iota,u}$. 
There exists some $\eta<\kappa$ with $\iota(u\conc\langle \eta\rangle)\subsetneq w$, since otherwise $w$ properly extends $\iota(u)$ and is incompatible with $\iota(t)$ for all $t\supsetneq u$, but this would entail $w\notin T(\iota)$. 
Then $\iota(u\conc\langle \xi\rangle)\perp w$ for all $\xi\neq \eta$, so $w$ cannot be a limit of nodes in $T^{\iota,u}$. 

\ref{lemma: T(e) closure properties 2} $\Rightarrow$ \ref{lemma: T(e) closure properties 1}: 
Suppose that $w$ is a limit of nodes in $T(\iota)$ with $\lh(w) \in \cof^\kappa_\mu$. 
We shall show that $w\in T(\iota)$. 
Let 
$A := \{ t\in {}^{<\kappa}\ddim: \iota(t)\subseteq w\}$. Note that the elements of $A$ are pairwise comparable, since $\iota$ is $\perp$-preserving. 
Hence $u:= \bigcup A$ and $v:= \bigcup \{\iota(t) : t\in A\}$ are elements of ${}^{<\kappa}\ddim$. 
We may assume that $v\subsetneq w$, since otherwise $w=v\subseteq\iota(u)$.

For all $\gamma<\lh(w)$, pick some $s_\gamma$ such that $w{\restr}\gamma \subseteq \iota(s_\gamma)$ and $\lh(s_\gamma)>\lh(u)$. 
We claim that $u\subsetneq s_\gamma$ for all $\gamma$ with $\lh(v) <\gamma< \lh(w)$. 
To see this, note that for any $t\in A$, %with $\iota(t)\subseteq w$, 
we have $\iota(t)\subseteq v\subseteq w{\restr} \gamma \subseteq \iota(s_\gamma)$, and hence $t\subseteq s_\gamma$ since $\iota$ is $\perp$-preserving. 
As this holds for all $t\in A$, we have $u\subseteq s_\gamma$. 
As $\lh(s_\gamma)>\lh(u)$, we have $u\subsetneq s_\gamma$. 

For each $\gamma$ as above, we can thus pick some $\eta_\gamma$ with $u\conc\langle\eta_\gamma\rangle \subseteq s_\gamma$. 
It suffices to show $w{\restr}\gamma \subseteq \iota(u\conc\langle\eta_\gamma\rangle)$, since this implies $w\in T^{\iota,u}\subseteq T(\iota)$. 
To see this, note that $w{\restr}\gamma \compat \iota(u\conc\langle \eta_\gamma\rangle)$, since both are initial segments of $\iota(s_\gamma)$. 
But $\iota(u\conc\langle \eta_\gamma\rangle)\subseteq w{\restr}\gamma\subseteq w$ would entail $u\conc\langle \eta_\gamma\rangle \subseteq u$ by the definition of $u$. 
\end{proof} 

The next corollary is immediate: 

\begin{corollary} 
\label{cor: T(e) closed}
Suppose that $\iota: {}^{<\kappa}\ddim \to {}^{<\kappa}\kappa$ is $\perp$- and strict order preserving. 
\begin{enumerate-(1)} 
\item 
If $2\leq \ddim<\kappa$, then $T(\iota)$ is $\cof^\kappa_{>\ddim}$-closed. 
\item 
If the nodes $\iota(t\conc\langle\alpha\rangle)$ 
split at the same node
for each $t\in {}^{<\kappa}\ddim$,%
\footnote{I.e., the $\iota(t\conc\langle\alpha\rangle)$'s extend pairwise different direct successors of some $s_t\in{}^{<\kappa}\kappa$, for each $t\in{}^{<\kappa}\ddim$.
Equivalently, $\iota$ is an order homomorphism for the dihypergraph $\dhhd$ defined right after Definition~\ref{def: Hwd}.} 
then $T(\iota)$ is ${<}\kappa$-closed. 
\end{enumerate-(1)} 
\end{corollary}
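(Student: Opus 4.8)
The plan is to deduce both parts directly from Lemma~\ref{lemma: T(e) closure properties} together with a short combinatorial analysis of the trees $T^{\iota,u}$. First I would make the word ``immediate'' precise by recording a one-directional reduction: for a class $C$ of limit ordinals, $T(\iota)$ is $C$-closed provided it is $\cof^\kappa_\mu$-closed for every regular $\mu<\kappa$ such that some $\delta\in C$ has $\cof(\delta)=\mu$. Indeed, whether a strictly increasing sequence of length $\delta$ has an upper bound in the subtree $T(\iota)$ depends only on its union, hence only on a cofinal subsequence of length $\cof(\delta)$, so closure under sequences of length $\mu$ already yields closure under sequences of length $\delta$. Thus part~(1) amounts to showing $T(\iota)$ is $\cof^\kappa_\mu$-closed for every regular $\mu$ with $\ddim<\mu<\kappa$, and part~(2) to showing it is $\cof^\kappa_\mu$-closed for every regular $\mu<\kappa$; in each case Lemma~\ref{lemma: T(e) closure properties} reduces this further to the same closure of every $T^{\iota,u}$.

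Next I would analyse $T^{\iota,u}$, the downward closure of the ``top'' nodes $\iota(u\conc\langle\alpha\rangle)$. Since $u\conc\langle\alpha\rangle\perp u\conc\langle\beta\rangle$ for $\alpha\neq\beta$ and $\iota$ is $\perp$-preserving, these tops are pairwise incompatible, so every $w\in T^{\iota,u}$ lies below at least one top; setting $S_w:=\{\alpha:w\subseteq\iota(u\conc\langle\alpha\rangle)\}$, each $S_w$ is nonempty and $S_{w'}\subseteq S_w$ whenever $w\subseteq w'$. For part~(1), let $\langle w_i:i<\delta\rangle$ be strictly increasing in $T^{\iota,u}$ with $\cof(\delta)=\mu>\ddim$. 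Then $\langle S_{w_i}:i<\delta\rangle$ is a decreasing chain of nonempty subsets of a set of size $\ddim$, so it strictly decreases at most $\ddim$ times; as $\cof(\delta)>\ddim$, it is eventually constant with nonempty value $S$. Fixing $\alpha\in S$, every $w_i$ on a final segment is an initial segment of $\iota(u\conc\langle\alpha\rangle)$, whence $\bigcup_i w_i\subseteq\iota(u\conc\langle\alpha\rangle)$ lies in $T^{\iota,u}$. This gives $\cof^\kappa_\mu$-closure of $T^{\iota,u}$ for all $\ddim<\mu<\kappa$, and part~(1) follows by the reduction.

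For part~(2), fix the common splitting node $s=s_u$, so $\iota(u\conc\langle\alpha\rangle)\supseteq s\conc\langle\beta_\alpha\rangle$ with the $\beta_\alpha$ pairwise distinct. Given a strictly increasing $\langle w_i:i<\delta\rangle$ in $T^{\iota,u}$ with $\delta<\kappa$ a limit, either all $w_i\subseteq s$, in which case $\bigcup_i w_i$ is an initial segment of $s$ and hence of any top; or some $w_{i_0}\supsetneq s$, say $w_{i_0}\supseteq s\conc\langle\beta\rangle$, and then for $i\geq i_0$ the condition $w_i\supseteq s\conc\langle\beta\rangle$ forces $w_i\subseteq\iota(u\conc\langle\alpha\rangle)$ for the unique $\alpha$ with $\beta_\alpha=\beta$, so $\bigcup_i w_i$ again lies below a single top. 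In both cases the union is in $T^{\iota,u}$, so $T^{\iota,u}$ is ${<}\kappa$-closed for every $u$; Lemma~\ref{lemma: T(e) closure properties} applied to each regular $\mu<\kappa$ then yields ${<}\kappa$-closure of $T(\iota)$ in the sense of Definition~\ref{def: perfect set}.

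The only genuinely substantive point, and the one I would state with care, is the stabilization argument in part~(1): the hypothesis $\ddim<\mu$ is precisely what forces the decreasing family $\langle S_{w_i}\rangle$ to freeze below the cofinality of $\delta$, pinning the tail of the sequence beneath one top. Everything else is bookkeeping with the tree order, the observation that incompatibility of the tops comes from $\perp$-preservation, and the reduction to regular cofinalities.
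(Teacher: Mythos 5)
Your proof is correct and follows the paper's route exactly: the paper derives this corollary as immediate from Lemma~\ref{lemma: T(e) closure properties}, i.e.\ by checking the relevant closure of the local trees $T^{\iota,u}$, which is precisely what you do. Your stabilization (pigeonhole) argument for part~(1) and the common-splitting-node analysis for part~(2) are exactly the verifications the paper leaves to the reader; your preliminary reduction to sequences of regular length is harmless but not actually needed, since your local arguments already handle arbitrary limit lengths of the given cofinality.
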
 

The next remark shows
that 
the restriction to $\cof^\kappa_{>\ddim}$ 
is necessary in the previous corollary. 

\begin{remark} 
\label{example: T(e) not kappa-closed}
Suppose $\ddim\leq\kappa$ and 
$\eta\in\cof^\kappa_{\leq\ddim}$. 
Then there exists a $\perp$- and strict order preserving function $\iota: {}^{<\kappa}\ddim \to {}^{<\kappa}\kappa$ such that  $T(\iota)$ is not $\{\eta\}$-closed. 
To see this, choose a strictly increasing sequence $\langle \gamma_\alpha:\alpha<\cof(\eta)\rangle$ cofinal in~$\eta$,
and let
$\iota=\iota_{\ddim,\eta}$ be any $\perp$- and strict order preserving map
from ${}^{<\kappa}\ddim$ to ${}^{<\kappa}\kappa$
such that
\[
\iota\left(\langle \alpha\rangle\right)=
\begin{cases}
\langle 0\rangle^{\gamma_\alpha}\conc\langle 1\rangle 
&\text{if $\alpha<\cof(\eta)$,}
\\
\langle\alpha\rangle
&\text{if $\alpha\in[\cof(\eta),\ddim)$.}
\end{cases}
\]
Then the node
${\langle 0\rangle}^{\eta}$
witnesses that
$T(\iota)$ is not $\{\eta\}$-closed. 
For $\ddim=\kappa$, it is also easy to make sure that
$\Lim^\iota_t=\emptyset$ for all $t\in{}^{<\kappa}\kappa$.%
\footnote{Let, for example, $\iota(\langle \alpha \rangle \conc t)=\iota(\langle \alpha\rangle)\conc t$ for all $\alpha<\kappa$ and $t\in{}^{<\kappa}\kappa$.
Note that $\Lim^\iota_\emptyset=\emptyset$ is guaranteed by the choice of the $\iota(\langle \alpha\rangle)$'s.}

Suppose that $\ddim=\kappa$. 
Then there exists a $\perp$- and strict order preserving function $\psi: {}^{<\kappa}\ddim \to {}^{<\kappa}\kappa$ such that $T(\psi)$ is not $\{\eta\}$-closed for all limit ordinals $\eta<\kappa$, and at the same time 
$\Lim^\psi_t=\emptyset$ for all $t\in{}^{<\kappa}\kappa$. 
To see this, let $\langle \eta_\alpha:\alpha<\kappa\rangle$ be an enumeration of
all the limit ordinals below $\kappa$.
For all $\alpha<\kappa$, let $\iota_\alpha$ be the map 
$\iota_{\kappa,\eta_\alpha}$ 
defined in the previous paragraph.
Define
$\psi:{}^{<\kappa}\kappa\to{}^{<\kappa}\kappa$
where 
$\psi(\emptyset):=\emptyset$
and
$\psi(\langle\alpha\rangle\conc t):= 
\langle \alpha\rangle\conc\iota_\alpha(t)$
for all $\alpha<\kappa$ and
$t\in{}^{<\kappa}\kappa$.
Then $T(\psi)$ is not $\{\eta\}$-closed for any limit ordinal $\eta<\kappa$,
and we can also ensure that $\Lim^\psi_t=\emptyset$ for all $t\in{}^{<\kappa}\kappa$.
\end{remark}

\begin{lemma}
\label{perfect subsets and sop maps}
Suppose $2\leq\ddim<\omega$.
\begin{enumerate-(1)}
\item\label{sop maps -> perfect subsets}
If $\iota:{}^{<\kappa}\ddim\to{}^{<\kappa}\kappa$ 
is $\perp$- and strict order preserving, then
$\ran([\iota])$ is a $\kappa$-perfect set.
\item\label{perfect subsets -> sop maps}
If $X\subseteq{}^\kappa\kappa$ has a $\kappa$-perfect subset, then
there exists a $\perp$- and strict order preserving map 
$\iota:{}^{<\kappa}\ddim\to{}^{<\kappa}\kappa$ with $\ran([\iota])\subseteq X$.
\end{enumerate-(1)}
\end{lemma}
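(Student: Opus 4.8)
The plan is to prove the two statements by relating $\perp$- and strict order preserving maps $\iota:{}^{<\kappa}\ddim\to{}^{<\kappa}\kappa$ with $\kappa$-perfect trees, using that $\ddim$ is finite so that the closure and splitting conditions behave well.

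For \ref{sop maps -> perfect subsets}, I would first invoke Lemma~\ref{[e] continuous}~\ref{[e] homeomorphism} to see that $[\iota]$ is a homeomorphism onto $\ran([\iota])$, and then note that since $\ddim<\omega$, Corollary~\ref{cor: T(e) closed} gives that $T(\iota)$ is ${<}\kappa$-closed (as $\cof^\kappa_{>\ddim}=\cof^\kappa_{\geq\omega}$ covers all limit ordinals $<\kappa$ when $\ddim$ is finite). Since $\dom(\iota)={}^{<\kappa}\ddim$ is a pruned subtree, the remarks after Definition~\ref{[e] def} give that $[T(\iota)]$ is the closure of $\ran([\iota])$. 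Because $[\iota]$ is a closed map by Lemma~\ref{lemma: [e] closed map}~\ref{[e] closed when ddim<kappa}, the set $\ran([\iota])=[\iota]({}^\kappa\ddim)$ is closed, so $\ran([\iota])=[T(\iota)]$ and $T(X)=T(\iota)$ for $X:=\ran([\iota])$. It then remains to check that $T(\iota)$ is cofinally splitting: given any node, it lies below some $\iota(t)$, and since $\iota$ is $\perp$-preserving the images $\iota(t\conc\langle 0\rangle)$ and $\iota(t\conc\langle 1\rangle)$ are incompatible extensions of $\iota(t)$, which forces a splitting node of $T(\iota)$ above the given node. Combined with ${<}\kappa$-closedness, this shows $T(\iota)$ is $\kappa$-perfect, so $\ran([\iota])$ is a $\kappa$-perfect set by Definition~\ref{def: perfect set}.

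For \ref{perfect subsets -> sop maps}, I would start from a $\kappa$-perfect subset $P\subseteq X$ with $T:=T(P)$ a $\kappa$-perfect tree, and construct $\iota$ by recursion on $\lh(t)$ so as to embed ${}^{<\kappa}\ddim$ into the splitting structure of $T$. Concretely, I set $\iota(\emptyset)$ to be the stem of $T$; given $\iota(t)\in T$, I use cofinal splitting to find a splitting node $s_t\supseteq\iota(t)$ in $T$ and choose $\ddim$ many pairwise incompatible immediate successors, assigning $\iota(t\conc\langle i\rangle)$ for $i<\ddim$ to be nodes of $T$ extending these distinct successors; at limit stages $\lh(t)\in\Lim$ I let $\iota(t):=\bigcup_{u\subsetneq t}\iota(u)$, which lies in $T$ by ${<}\kappa$-closedness of $T$. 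This makes $\iota$ strict order preserving and $\perp$-preserving (distinct immediate successors at each splitting node ensure incompatible images on incompatible arguments). Since $\ran(\iota)\subseteq T=T(P)$ and $P$ is closed, every branch $[\iota](x)$ is a branch of $T$, hence lies in $P\subseteq X$, giving $\ran([\iota])\subseteq X$ as required.

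I expect the main obstacle to be the bookkeeping in the recursive construction for \ref{perfect subsets -> sop maps}, specifically ensuring simultaneously that $\iota$ is strict order preserving at limit stages (which needs the ${<}\kappa$-closure of $T$ to supply the limit node inside $T$) and that $\perp$-preservation is maintained globally, not just locally at each splitting node. The latter follows automatically once distinct immediate successors are chosen at each branching, since incompatible $s,t\in{}^{<\kappa}\ddim$ split at some node $r$ and their images then extend incompatible successors of $\iota(r)$; but verifying this cleanly requires tracking the split node $s\wedge t$ and the images of the two diverging immediate successors. The finiteness of $\ddim$ is used crucially both to have enough room at each splitting node (any splitting node of a $\kappa$-perfect tree has at least two successors, which suffices when $\ddim$ can be realized by iterating binary splits) and to guarantee closedness of $[\iota]$ via Lemma~\ref{lemma: [e] closed map}~\ref{[e] closed when ddim<kappa}.
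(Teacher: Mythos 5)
Your proposal is correct and follows essentially the same route as the paper's proof: for \ref{sop maps -> perfect subsets}, closedness of $\ran([\iota])$ via Lemma~\ref{lemma: [e] closed map}, ${<}\kappa$-closure of $T(\iota)$ via Corollary~\ref{cor: T(e) closed}, and cofinal splitting from $\perp$-preservation; for \ref{perfect subsets -> sop maps}, a recursive embedding of ${}^{<\kappa}\ddim$ into a $\kappa$-perfect tree $T$ with $[T]\subseteq X$. The only caveat, which you yourself flag and repair, is that a splitting node need only have two direct successors, so for $\ddim>2$ one obtains the $\ddim$ pairwise incompatible nodes by iterating binary splits, exactly as in the paper's (terser) recursion into $T$.
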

\begin{proof}
For~\ref{sop maps -> perfect subsets}, let $\iota:{}^{<\kappa}\ddim\to{}^{<\kappa}\kappa$ be
$\perp$- and strict order preserving.
By Lemma~\ref{lemma: [e] closed map},
$\ran([\iota])$ is a closed set, so
$\ran([\iota])=[T(\iota)]$.
It thus suffices to show that $T(\iota)$ is a $\kappa$-perfect tree.
$T(\iota)$ is $\lle\kappa$-closed
by Corollary~\ref{cor: T(e) closed}.
$T(\iota)$ is also cofinally splitting since $\iota$ preserves $\perp$.
For~\ref{perfect subsets -> sop maps}, let
$T$ be a $\kappa$-perfect tree $T$ with $[T]\subseteq X$. 
It is easy to construct a $\perp$- and strict order preserving map 
$\iota:{}^{<\kappa}\ddim\to T$ by recursion.
Then $\ran([\iota])\subseteq[T]\subseteq X$.
\end{proof}

%%%%%%%%%%%%%%%%%

\subsection{The open graph dichotomy} 
\label{subsection: OGD and ODD}
Throughout this subsection, we assume that $X$ is a subset of ${}^\kappa\kappa$.
We say that a graph $G$ on $X$ has a \emph{$\kappa$-perfect complete subgraph} \index{perfect@$\kappa$-perfect!complete subgraph\idf}
if there exists a $\kappa$-perfect subset $Y\subseteq X$ such that the complete graph $\gK Y$ %on $Y$ 
is a subset of $G$. 
Recall that the open graph dichotomy $\OGD_\kappa(X)$ is the following statement:
\begin{quotation}
$\OGD_\kappa(X)$:
If $G$ is an open graph on $X$, then
either $G$ admits a $\kappa$-coloring or 
$G$ has a $\kappa$-perfect 
complete subgraph. 
\end{quotation}
In this subsection, we show that 
the open graph dichotomy 
is equivalent to $\ODD\kappa 2(X)$.%
\footnote{%
Thus, it also follows from 
$\ODD\kappa\ddim(X,\defsets\kappa)$
for any $\ddim>2$ 
by Lemmas~\ref{<kappa dim hypergraphs are definable} and~\ref{comparing ODD for different dimensions}.} 
In fact, we prove a more general version of 
this equivalence for an extension of $\OGD_\kappa(X)$ to $\ddim$-hypergraphs for higher dimensions $2\leq\ddim\leq\kappa$. 

Recall that a \emph{$\ddim$-hypergraph} is a $\ddim$-dihypergraph that is closed under permutation of hyperedges.%
\footnote{See Definition~\ref{def: dihypergraph}~\ref{def: dihypergraph hypergraph}. In this subsection only, $H$ always denotes a hypergraph and $I$ denotes a dihypergraph.}
%\footnote{See Definition~\ref{def: dihypergraph}~\ref{def: dihypergraph hypergraph}.}
For any $\ddim$-dihypergraph $I$,
%\footnote{In this subsection only, $H$ always denotes a hypergraph and $I$ denotes a dihypergraph.} 
let $\hypg I$ denote the 
\emph{symmetrization of $I$}, i.e., the smallest hypergraph containing~$I$: 
\index{symmetrization of!a dihypergraph\idf $\hypg H$}
$$\hypg I:=\bigcup
\{ 
\bar{x}^\pi:\:
%\langle x_{\pi(i)}:i\in\ddim\rangle:\:
{\bar{x}\in I,\ \pi\in\Sym(\ddim)}\}. 
$$
Then 
\index{symmetrization of!HH@$\dhH\ddim$\idf $\hhH\ddim$}
$\hhH\ddim=\bigcup
\left\{\prod_{\alpha\in \ddim}N_{t\conc\langle \pi(\alpha)\rangle}:\:
{t\in{}^{<\kappa}\ddim,\,\pi\in\Sym(\ddim)}\right\}
$,
and $\hhH2$ equals the complete graph $\gK{{}^\kappa 2}$.

\begin{definition}
\label{def: OHD}
$\OHD\kappa\ddim(X)$ 
\index{open hypergraph dichotomy!$d$-dimensional for a set\idf$\OHD\kappa\ddim(X)$}
states that the following holds for all box-open \emph{$\ddim$-hypergraphs} $H$ on $X$: 
\begin{quotation}
$\OHD\kappa H$:
%%%%
\index{open hypergraph dichotomy!for a hypergraph\idf$\OHD\kappa H$}
Either $H$ admits a $\kappa$-coloring, or 
there exists a continuous homomorphism 
from~$\hhH{\ddim}$ to~$H$.
\end{quotation}
\end{definition}

We first show that $\OHD\kappa 2(X)$ is equivalent to the open graph dichotomy $\OGD_\kappa(X)$.
Since $\hhH 2=\gK{{}^\kappa 2}$,
%$\hhH 2$ is the complete graph $\gK{{}^\kappa 2}$ in the two-dimensional case, 
it suffices to show that 
a graph $G$ on $X$ has a $\kappa$-perfect complete subgraph
if and only if there is a continuous homomorphism from $\gK{{}^\kappa 2}$ to $G$.
The next lemma is
a more general version of this for dihypergraphs.
We say a $\ddim$-dihypergraph 
$I$
has a \emph{$\kappa$-perfect complete subhypergraph} \index{perfect@$\kappa$-perfect!complete subhypergraph\idf}
if there exists a $\kappa$-perfect subset $Y\subseteq X$ such that $\dhK \ddim Y\subseteq I$. 

\begin{lemma}
\label{lemma: kappa-perfect subdh}
Suppose that $2\leq\ddim\leq\kappa$ and 
$I$ is a $\ddim$-dihypergraph on $X$. 
\begin{enumerate-(1)}
\item
\label{lemma: kappa-perfect subdh 1}
If $f$ is a continuous homomorphism from $\dhH\ddim$ to $I$,
then $\ran(f)$ has a $\kappa$-perfect subset.
\item
\label{lemma: kappa-perfect subdh 2}
$I$ has a $\kappa$-perfect complete subhypergraph if and only if 
there exists a continuous homomorphism from $\dhK\ddim{{}^\kappa\! \ddim}$ to~$I$.
\end{enumerate-(1)}
\end{lemma}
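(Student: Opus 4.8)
The plan is to prove the two parts using the characterization of continuous homomorphisms via order homomorphisms from Subsection~\ref{subsection: order homomorphisms} together with the closure results from Subsection~\ref{subsection: closure properties}. For part~\ref{lemma: kappa-perfect subdh 1}, suppose $f:{}^\kappa\ddim\to X$ is a continuous homomorphism from $\dhH\ddim$ to $I$. Since $\dhH\ddim$ is itself a box-open $\ddim$-dihypergraph on ${}^\kappa\ddim$ whose domain is all of ${}^\kappa\ddim$, and $f$ is a continuous homomorphism into $I$, I would first apply Lemma~\ref{homomorphisms and order preserving maps}~\ref{hop 1} (with $H$ replaced by the trivial situation, or more directly to the identity homomorphism $\id:\dhH\ddim\to\dhH\ddim$) to extract a continuous strict $\wedge$-homomorphism $e:{}^{<\kappa}\ddim\to{}^{<\kappa}\ddim$. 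Actually the cleaner route is: it suffices to produce a $\perp$- and strict order preserving map whose range lies in $\ran(f)$. Take any $\perp$- and strict order preserving $e:{}^{<\kappa}2\to{}^{<\kappa}\ddim$ (which exists trivially by sending $\langle 0\rangle, \langle 1\rangle$ to extensions of two distinct immediate successors of the stem and proceeding recursively). Then $[e]:{}^\kappa 2\to{}^\kappa\ddim$ is a homeomorphism onto a closed subset by Lemma~\ref{[e] continuous}~\ref{[e] homeomorphism}, and by Lemma~\ref{perfect subsets and sop maps}~\ref{sop maps -> perfect subsets} (or directly Corollary~\ref{cor: T(e) closed}) its range is $\kappa$-perfect. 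Composing, $f\comp[e]$ is continuous and injective; I would then argue $\ran(f\comp[e])\subseteq\ran(f)$ is $\kappa$-perfect, using that $f\comp[e]=[\iota]$ for the order homomorphism $\iota$ obtained from Lemma~\ref{homomorphisms and order preserving maps}, so that $\ran([\iota])=\ran(f\comp[e])=[T(\iota)]$ is closed, ${<}\kappa$-closed and cofinally splitting.

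For part~\ref{lemma: kappa-perfect subdh 2}, the right-to-left direction follows immediately from~\ref{lemma: kappa-perfect subdh 1}: a continuous homomorphism $f$ from $\dhK\ddim{{}^\kappa\ddim}$ to $I$ restricts to a continuous homomorphism from $\dhH\ddim$ to $I$ (since $\dhH\ddim\subseteq\dhK\ddim{{}^\kappa\ddim}$), so $\ran(f)$ contains a $\kappa$-perfect subset $Y$; one checks $\dhK\ddim Y\subseteq I$ because $f$ maps \emph{all} nonconstant sequences to hyperedges and $f\restr Y$ is injective, so every nonconstant sequence in $Y$ is the $f$-image of a nonconstant sequence. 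For the left-to-right direction, suppose $Y\subseteq X$ is $\kappa$-perfect with $\dhK\ddim Y\subseteq I$. By Lemma~\ref{perfect subsets and sop maps}~\ref{perfect subsets -> sop maps} (extended to $\ddim\leq\kappa$, or constructed directly by recursion using that $T(Y)$ is cofinally splitting and ${<}\kappa$-closed) there is a $\perp$- and strict order preserving map $\iota:{}^{<\kappa}\ddim\to T(Y)$ with $\ran([\iota])\subseteq Y$. Then $[\iota]:{}^\kappa\ddim\to Y$ is continuous and injective, and I claim it is a homomorphism from $\dhK\ddim{{}^\kappa\ddim}$ to $I$: given any nonconstant $\bar x\in\dhK\ddim{{}^\kappa\ddim}$, injectivity of $[\iota]$ guarantees $[\iota]^\ddim(\bar x)$ is a nonconstant sequence in $Y$, hence a hyperedge of $\dhK\ddim Y\subseteq I$.

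The main obstacle is twofold. First, in part~\ref{lemma: kappa-perfect subdh 1} the range of $f$ need not itself be closed when $\ddim=\kappa$, so I cannot simply say $\ran(f)$ is $\kappa$-perfect; I must pass through a composition $f\comp[e]$ with $e$ a strict $\wedge$-homomorphism (hence $\perp$-preserving) to guarantee that the \emph{composite} has $\kappa$-perfect, in particular closed, range. Getting the closedness of $\ran([\iota])$ for $\ddim=\kappa$ requires invoking Corollary~\ref{cor: T(e) closed}~(2), which applies precisely because $\iota$ is built as an order homomorphism whose successor-nodes split at a common node; I would state explicitly that the construction of $e$ (and hence $\iota$) can be arranged this way. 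Second, the injectivity of $[\iota]$ (equivalently $f\comp[e]$) is exactly what is needed to conclude $\dhK\ddim Y\subseteq I$ rather than merely $\dhH\ddim$-edges being covered; this hinges on $e$ being $\perp$-preserving, which Lemma~\ref{homomorphisms and order preserving maps}~\ref{hop 1} and the strict $\wedge$-homomorphism property provide. Once these closure-and-injectivity points are pinned down, both parts reduce to routine verifications that nonconstant sequences map to hyperedges.
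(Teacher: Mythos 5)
Your part~\ref{lemma: kappa-perfect subdh 2} is essentially the paper's argument and is fine: the forward direction reduces to part~\ref{lemma: kappa-perfect subdh 1} together with the observation that $\dhK\ddim{\ran(f)}\subseteq I$ (your aside that $f\restr Y$ is injective is neither true in general nor needed --- a nonconstant sequence in $\ran(f)$ pulls back to a nonconstant sequence simply because distinct values have distinct preimages), and the converse uses an injective continuous map from ${}^\kappa\ddim$ into the $\kappa$-perfect set $Y$, exactly as in the paper.

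Part~\ref{lemma: kappa-perfect subdh 1}, however, has a genuine gap, located exactly where you put your ``main obstacle''. First, the claim that $f\comp[e]$ is injective is unwarranted: $f$ is only a continuous homomorphism from $\dhH\ddim$ to $I$ and need not be injective anywhere. Second, and more seriously, your fallback --- writing $f\comp[e]=[\iota]$ for an order homomorphism $\iota$ via Lemma~\ref{homomorphisms and order preserving maps}~\ref{hop 1} --- is not available: that lemma requires the target dihypergraph to be relatively box-open, and in this statement $I$ is completely arbitrary, with no openness hypothesis at all. Moreover the $e$ produced by that lemma is not your pre-chosen binary embedding, and composing $f$ with an arbitrary $\perp$-preserving $e:{}^{<\kappa}2\to{}^{<\kappa}\ddim$ discards the hypergraph structure: for $\ddim\geq 3$ the set $\ran([e])$ need not contain a single hyperedge of $\dhH\ddim$, so $f\restr\ran([e])$ is an unconstrained continuous map whose image need not contain any $\kappa$-perfect set. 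Third, even granted an order homomorphism $\iota$ for $(X,I)$, the conclusion that $\ran([\iota])=[T(\iota)]$ is closed and $\kappa$-perfect does not follow: order homomorphisms need not preserve $\perp$ (for $I=\dhD\kappa$ no order homomorphism is $\perp$-preserving, by Proposition~\ref{ODDH fails for D kappa}), and your proposed remedy --- arranging that the nodes $\iota(t\conc\langle\alpha\rangle)$ split at a common node --- is a constraint that an arbitrary $I$ simply may not admit; it is a property of the dihypergraph, not of the construction.

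The paper closes this gap with one idea your proposal is missing: replace $I$ by the complete hypergraph $\dhK\ddim{\ran(f)}$. Any homomorphism from $\dhH\ddim$ to $I$ is automatically one into $\dhK\ddim{\ran(f)}$, and the complete hypergraph on any set \emph{is} relatively box-open, so Lemma~\ref{homomorphisms and order preserving maps} now applies legitimately and yields a continuous order homomorphism $\iota$ for $\dhK\ddim{\ran(f)}$ with $\ran([\iota])\subseteq\ran(f)$. Completeness of the target is then what buys incompatibility: above every $t$ there exist $u,v$ with $\iota(u)\perp\iota(v)$, since otherwise a constant sequence would lie in $\prod_{\alpha<\ddim}(N_{\iota(t\conc\langle\alpha\rangle)}\cap\ran([\iota]))$, contradicting the order homomorphism property. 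Using this and the continuity of $\iota$, one recursively builds a continuous $\perp$- and strict order preserving $\theta:{}^{<\kappa}2\to\ran(\iota)$; since the domain tree is \emph{binary}, $\ran([\theta])$ is $\kappa$-perfect by Lemma~\ref{perfect subsets and sop maps}~\ref{sop maps -> perfect subsets}, which sidesteps the $\ddim=\kappa$ closedness problem entirely rather than trying to solve it. Your instinct that one must pass to a composition with better closure properties was right, but without the detour through the complete hypergraph there is no way to manufacture the $\perp$-preservation that this step needs.
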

\begin{proof}
For~\ref{lemma: kappa-perfect subdh 1}, note first that
$f$ is also a homomorphism from $\dhH\ddim$ to 
$\dhK\ddim {\ran(f)}$. 
Since
$\dhK\ddim {\ran(f)}$
is relatively box-open,
there exists 
a continuous order homomorphism 
$\iota:{}^{<\kappa}\ddim\to{}^{<\kappa}\kappa$
for 
$\dhK\ddim {\ran(f)}$ 
by Lemma~\ref{homomorphisms and order preserving maps}.
For all $t\in{}^{<\kappa}\ddim$,
there exists $u,v\supsetneq t$ with $\iota(u)\perp \iota(v)$
\todog{We can not guarantee that there exists $\alpha<\beta<\ddim$ with $\iota(t\conc\langle\alpha\rangle)\perp\iota(t\conc\langle\beta\rangle)$. 
The negation of this statement (all the $\iota(t\conc\langle\alpha\rangle)$ being comparable)
 does NOT imply that \emph{some} sequence in $\prod_{\alpha<\ddim}(N_{\iota(t\conc\langle\alpha\rangle)}\cap \ran(f))$ is constant, since the union $w$ of all the  $\iota(t\conc\langle\alpha\rangle)$'s might not be in $\ran(f)\cup T(\ran(f))$.}
since otherwise,
every sequence in $\prod_{\alpha<\ddim}(N_{\iota(t\conc\langle\alpha\rangle)}\cap \ran([\iota]))$
would be constant, 
so $\iota$ would not be an order homomorphism
for $\dhK\ddim{\ran(f)}$.
Using this observation and the continuity of $\iota$, 
\todol{We needed to change this sentence in the proof, to ensure that $[\theta]\subseteq[\iota]$. (The previous version, where $\theta:{}^{<\kappa}2\to \ran(\iota)$ is an arbitrary $\perp$- and strict order preserving, may not suffice.)}
%%%CORRECTED VERSION
it is easy to construct a continuous 
%$\perp$- and strict order preserving map 
strict order preserving map 
$e:{}^{<\kappa}2\to {}^{<\kappa}\ddim$ by recursion such that $\theta:=\iota\comp e$ is $\perp$-preserving.
Since $\theta$ is also strict order preserving,
$\ran([\theta])$ is a $\kappa$-perfect set by Lemma~\ref{perfect subsets and sop maps}.
Moreover, 
$[\theta]=[\iota]\comp[e]$ and hence $\ran([\theta])\subseteq\ran([\iota])\subseteq\ran(f)$.
%Then $\ran([\theta])$ is a $\kappa$-perfect set  by Lemma~\ref{perfect subsets and sop maps}, and it is a subset of $\ran([\iota])$ as in the proof of Lemma~\ref{homomorphisms and order preserving maps}~\ref{hop 1}.
%
\begin{comment}
%%SUBMITTED VERSION
%%%%it is easy to construct a continuous $\perp$- and strict order preserving map $\theta:{}^{<\kappa}2\to \ran(\iota)$ by recursion. Then $\ran([\theta])$ is a $\kappa$-perfect subset of $\ran([\iota])$ by Lemma~\ref{perfect subsets and sop maps}.
\end{comment}

For \ref{lemma: kappa-perfect subdh 2}, 
suppose first that $f$ is a homomorphism from 
$\dhK\ddim{{}^\kappa\ddim}$
to $I$.
Then 
$\dhK\ddim{\ran(f)}\subseteq I$.
Since $\ran(f)$ has a $\kappa$-perfect subset 
by \ref{lemma: kappa-perfect subdh 1}, 
$I$ has a $\kappa$-perfect complete subhypergraph.
Conversely,
let $Y$ be a $\kappa$-perfect subset of $X$ such that $\dhK\ddim{Y}\subseteq I$.
Take any map $f: {}^\kappa\ddim\to Y$ that is a homeomorphism onto its image. 
Since $f$ is injective, it is a homomorphism from
$\dhK\ddim{{}^\kappa\! \ddim}$ to~$\dhK\ddim Y\subseteq I$.
\end{proof}

The special case of \ref{lemma: kappa-perfect subdh 2} for the complete dihypergraph $I:=\dhK\ddim X$ shows that $X$ has a $\kappa$-perfect subset if and only if there exists a continuous injection from ${}^\kappa \ddim$ onto a closed subset of $X$.% 
\footnote{This can also be proved directly as in \cite{LuckeMottoRosSchlichtHurewicz}*{Lemma~2.9} using Lemma~\ref{lemma: [e] closed map}.}
It also implies the next corollary since $\dhK\ddim X$ has a $\kappa$-coloring if and only if $|X|\leq\kappa$.

\begin{corollary}
\label{cor: PSP from ODD}
$\PSP_\kappa(X)\Longleftrightarrow \ODD\kappa{\dhK\ddim X}$ for all $\ddim$ with $2\leq\ddim\leq\kappa$.
\end{corollary}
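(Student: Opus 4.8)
The plan is to deduce Corollary~\ref{cor: PSP from ODD} directly from the special case of Lemma~\ref{lemma: kappa-perfect subdh}~\ref{lemma: kappa-perfect subdh 2} applied to the complete dihypergraph $I:=\dhK\ddim X$. The key point, already stated in the paragraph following the proof of that lemma, is the elementary observation that $\dhK\ddim X$ admits a $\kappa$-coloring if and only if $|X|\leq\kappa$. Indeed, if $|X|\leq\kappa$, then coloring each point of $X$ with its own color yields a $\kappa$-coloring since every singleton is $\dhK\ddim X$-independent (a constant sequence is not a hyperedge). Conversely, if $c:X\to\kappa$ is a $\kappa$-coloring, then each color class $c^{-1}(\{\alpha\})$ is $\dhK\ddim X$-independent, and an $H$-independent set for the \emph{complete} hypergraph can contain at most one point (any two distinct points, padded out to a $\ddim$-sequence, would form a hyperedge of $\dhK\ddim X$ once $|X|\geq\ddim$; more carefully, any non-constant $\ddim$-sequence of elements is a hyperedge). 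Hence each color class is a singleton or empty, so $|X|\leq\kappa$.

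First I would make the independence computation precise: a subset $Z\subseteq X$ is $\dhK\ddim X$-independent exactly when $\dhK\ddim X\restr Z=\dhK\ddim Z=\emptyset$, i.e.\ when $Z$ contains no non-constant $\ddim$-sequence, which for $2\leq\ddim\leq\kappa$ means $|Z|\leq 1$. Therefore a $\kappa$-coloring of $\dhK\ddim X$ is precisely a partition of $X$ into at most $\kappa$ singletons, which exists iff $|X|\leq\kappa$. This takes care of the first disjunct on both sides of the desired equivalence.

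Next I would handle the second disjunct. By Lemma~\ref{lemma: kappa-perfect subdh}~\ref{lemma: kappa-perfect subdh 2} with $I=\dhK\ddim X$, the dihypergraph $\dhK\ddim X$ has a $\kappa$-perfect complete subhypergraph if and only if there is a continuous homomorphism from $\dhK\ddim{{}^\kappa\ddim}$ to $\dhK\ddim X$. Since $\dhH\ddim\subseteq\dhK\ddim{{}^\kappa\ddim}$, any continuous homomorphism from $\dhK\ddim{{}^\kappa\ddim}$ to $\dhK\ddim X$ restricts to one from $\dhH\ddim$; conversely, by Lemma~\ref{lemma: kappa-perfect subdh}~\ref{lemma: kappa-perfect subdh 1} a continuous homomorphism $f$ from $\dhH\ddim$ to $\dhK\ddim X$ has $\ran(f)\subseteq X$ with a $\kappa$-perfect subset, giving a $\kappa$-perfect complete subhypergraph of $\dhK\ddim X$. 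Thus the existence of a continuous homomorphism from $\dhH\ddim$ to $\dhK\ddim X$ is equivalent to $X$ having a $\kappa$-perfect subset. Unwinding Definitions~\ref{ODD def} and~\ref{def: perfect set}, $\ODD\kappa{\dhK\ddim X}$ says exactly that either $\dhK\ddim X$ has a $\kappa$-coloring (i.e.\ $|X|\leq\kappa$) or there is such a continuous homomorphism (i.e.\ $X$ has a $\kappa$-perfect subset), which is precisely the statement $\PSP_\kappa(X)$.

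I do not anticipate a genuine obstacle here, since all the substantive work is contained in Lemma~\ref{lemma: kappa-perfect subdh}. The only point requiring a small amount of care is to note that $\dhK\ddim X$ is relatively box-open (being the complete hypergraph, its domain is all of $X$ and it is box-open there as the complement of the diagonal), so that the hypotheses of Lemma~\ref{lemma: kappa-perfect subdh}~\ref{lemma: kappa-perfect subdh 1} and of the homomorphism-versus-order-homomorphism correspondence in Lemma~\ref{homomorphisms and order preserving maps} are met; and to verify the singleton characterization of $\dhK\ddim X$-independence uniformly for all $2\leq\ddim\leq\kappa$, including the case $\ddim=\kappa$.
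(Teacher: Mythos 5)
Your proof is correct and follows essentially the same route as the paper: the paper derives the corollary from Lemma~\ref{lemma: kappa-perfect subdh} (both parts, with the restriction observation $\dhH\ddim\subseteq\dhK\ddim{{}^\kappa\ddim}$) together with the observation that $\dhK\ddim X$ admits a $\kappa$-coloring if and only if $|X|\leq\kappa$. You have merely spelled out the independence computation and the bridging steps that the paper leaves implicit.
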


Let $H$ be a {$\ddim$-hypergraph} on $X$. 
Then
any homomorphism from $\dhH{\ddim}$ to $H$ is also a homomorphism from $\hhH\ddim$ to $H$,
and 
hence $\OHD\kappa H$ is equivalent to $\ODD\kappa H$.
Thus
$\OHD\kappa\ddim(X)$ is equivalent to $\ODD\kappa\ddim(X,\mathcal H_\ddim)$,
where $\mathcal H_\ddim$ denotes the class of all $\ddim$-hypergraphs.
The next theorem 
shows that $\OHD\kappa\ddim(X)$ is in fact equivalent to
$\ODD\kappa\ddim(X)$.

\begin{theorem}
\label{theorem: ODD hypergraphs}
Suppose $2\leq\ddim\leq\kappa$ and $I$ is a $\ddim$-dihypergraph on $X$.
\begin{enumerate-(1)}
\item\label{hyp0} If $I$ is box-open on $X$, then
$\hypg I$ is box-open on $X$.
\item\label{hyp1} $I\equivf\hypg I$.\footnote{See Definition~\ref{def: H-full}.}
%A function $c:X\to\kappa$ is a $\kappa$-coloring of $I$ if and only if it is a $\kappa$-coloring of $\hypg I$.
\item\label{hyp2} If $I$ is box-open on $X$, then 
$\ODD\kappa I\Longleftrightarrow\OHD\kappa{\hypg I}$.%
\footnote{Equivalently, there exists a continuous homomorphism from $\dhH\ddim$ to $I$ if and only if there exists a continuous homomorphism from $\hhH\ddim$ to $\hypg I$.}
%Then there is a continuous homomorphism from $\dhH\ddim$ to $I$ if and only if there is a continuous homomorphism from $\hhH\ddim$ to $\hypg I$.
\todoq{can the assumption that $I$ is box-open be omitted? It's used in the proof below}
\end{enumerate-(1)}
In particular, $\OHD\kappa\ddim(X)$ is equivalent to $\ODD\kappa\ddim(X)$.
\end{theorem}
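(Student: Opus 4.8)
The plan is to prove the three numbered items $(1)$--$(3)$ and then read off the final equivalence $\OHD\kappa\ddim(X)\Longleftrightarrow\ODD\kappa\ddim(X)$ as a formal consequence. Throughout, $I$ denotes a $\ddim$-dihypergraph on $X$ and $\hypg I$ its hypergraph closure. The guiding principle is that $\hypg I$ is obtained from $I$ by closing under coordinate permutations, and this operation neither destroys box-openness nor changes which subsets are independent, so it preserves everything relevant to the dichotomy.

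First I would prove $\ref{hyp0}$. Recall $\hypg I=\bigcup\{\bar x^\pi:\bar x\in I,\ \pi\in\Sym(\ddim)\}$. For each fixed $\pi\in\Sym(\ddim)$, the reindexing map $\bar x\mapsto\bar x^\pi$ is a homeomorphism of ${}^\ddim X$ onto itself in the box topology (it merely permutes the coordinate factors), so it sends the box-open set $I$ to a box-open set. Since $\hypg I$ is a union of such images, it is box-open on $X$. Next, for $\ref{hyp1}$, I would observe that a subset $A\subseteq X$ is $I$-independent if and only if it is $\hypg I$-independent: clearly $\hypg I\restr A=\emptyset$ implies $I\restr A=\emptyset$ since $I\subseteq\hypg I$; conversely, if $\bar x\in\hypg I\restr A$ then $\bar x=\bar y^\pi$ for some $\bar y\in I$ and $\pi\in\Sym(\ddim)$, and $\bar y=\bar x^{\pi^{-1}}\in{}^\ddim A$ as $\ran(\bar y)=\ran(\bar x)\subseteq A$, so $\bar y\in I\restr A$, giving a contradiction. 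Hence the $I$-independent and $\hypg I$-independent subsets of $X$ coincide, which yields in particular that $I\restr A$ admits a $\kappa$-coloring if and only if $\hypg I\restr A$ does (a $\kappa$-coloring is precisely a partition into $\kappa$ many independent pieces), i.e. $I\equivf\hypg I$.

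For $\ref{hyp2}$, I would combine the two previous parts with the fullness machinery of Subsection~\ref{subsection: full dihypergraphs}. Since $I$ is box-open on $X$ it is relatively box-open, and by $\ref{hyp0}$ so is $\hypg I$; by $\ref{hyp1}$ we have $I\equivf\hypg I$. Corollary~\ref{cor: ODD subsequences} then gives $\ODD\kappa I\Longleftrightarrow\ODD\kappa{\hypg I}$. It remains to identify $\ODD\kappa{\hypg I}$ with $\OHD\kappa{\hypg I}$. Because $\hypg I$ is a hypergraph (closed under permutations), every homomorphism $f:{}^\kappa\ddim\to X$ from $\dhH\ddim$ to $\hypg I$ is automatically a homomorphism from $\hhH\ddim$ to $\hypg I$: a hyperedge of $\hhH\ddim$ has the form $\langle t\conc\langle\pi(\alpha)\rangle:\alpha\in\ddim\rangle$ for some $t$ and $\pi\in\Sym(\ddim)$, and applying $f$ yields a permutation of the image under $f$ of the corresponding $\dhH\ddim$-hyperedge, which lies in $\hypg I$ by closure under $\Sym(\ddim)$. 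The converse inclusion $\hhH\ddim\supseteq\dhH\ddim$ is immediate, so the two notions of continuous homomorphism agree, whence $\ODD\kappa{\hypg I}\Longleftrightarrow\OHD\kappa{\hypg I}$. Chaining the equivalences gives $\ref{hyp2}$.

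Finally, the in-particular clause follows formally. For one direction, assume $\ODD\kappa\ddim(X)$ and let $H$ be any box-open $\ddim$-hypergraph on $X$; since $H$ is in particular a box-open dihypergraph with $\hypg H=H$, part $\ref{hyp2}$ gives $\OHD\kappa H\Longleftrightarrow\ODD\kappa H$, and the latter holds by assumption. For the other direction, assume $\OHD\kappa\ddim(X)$ and let $I$ be any box-open $\ddim$-dihypergraph on $X$; then $\hypg I$ is a box-open $\ddim$-hypergraph on $X$ by $\ref{hyp0}$, so $\OHD\kappa{\hypg I}$ holds, and $\ref{hyp2}$ converts this to $\ODD\kappa I$. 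I expect the main subtlety to be the careful bookkeeping in the independence argument for $\ref{hyp1}$ and in verifying that hyperedges of $\hhH\ddim$ are correctly matched to permuted $\dhH\ddim$-hyperedges in $\ref{hyp2}$; the topological claim $\ref{hyp0}$ and the final formal chaining are routine once the fullness corollary is invoked.
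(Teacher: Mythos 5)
Your proposal is correct and follows essentially the same route as the paper's proof: box-openness of $\hypg I$, the coincidence of $I$- and $\hypg I$-independent sets giving $I\equivf\hypg I$, Corollary~\ref{cor: ODD subsequences} to transfer $\ODD\kappa I$ to $\ODD\kappa{\hypg I}$, and the observation that for hypergraphs homomorphisms from $\dhH\ddim$ and from $\hhH\ddim$ coincide (the paper's paragraph preceding the theorem). You have merely filled in details the paper leaves implicit, such as the permutation bookkeeping in $\ref{hyp1}$ and $\ref{hyp2}$.
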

\begin{proof}
\ref{hyp0} is clear.
\ref{hyp1} holds since a subset of ${}^\kappa\kappa$ is $\hypg I$-independent if and only if it is $I$-independent.
For \ref{hyp2},
note that
$\ODD\kappa I\Longleftrightarrow\ODD\kappa{\hypg I}$ by
\ref{hyp0}, \ref{hyp1} and Corollary~\ref{cor: ODD subsequences}. 
The latter is equivalent to 
$\OHD\kappa{\hypg I}$ by the paragraph preceding the theorem.
The last claim follows from~\ref{hyp0} and~\ref{hyp2}.
\end{proof}

\begin{corollary}
\label{homomorphisms and perfect homogeneous subgraphs}
Suppose $I$ is an open digraph 
 on $X$, 
\todoq{can the assumption that $I$ is open be omitted? It's used in the proof below}
and let $G:=\hypg I$ be the smallest graph containing $I$.
Then the following statements are equivalent: 
\begin{enumerate-(1)}
\item\label{hps 0} 
There is a continuous homomorphism from $\dhH 2$ to~$I$.
\item\label{hps 1} 
There is a continuous homomorphism from $\gK{{}^\kappa\! 2}$ to~$G$.
\item\label{hps 2} 
$G$ has a $\kappa$-perfect complete subgraph.
\end{enumerate-(1)}
In particular,
the open graph dichotomy $\OGD_\kappa(X)$ is equivalent to $\ODD\kappa 2(X)$.
\end{corollary}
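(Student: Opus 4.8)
The plan is to prove Corollary~\ref{homomorphisms and perfect homogeneous subgraphs} by invoking the general machinery already established, specializing it to dimension $\ddim = 2$. The key observation is that all three statements have been essentially addressed by preceding results, so the proof reduces to assembling them in the right order and confirming the base case of the equivalence chain.

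First I would establish the equivalence \ref{hps 0} $\Longleftrightarrow$ \ref{hps 1}. Since $G = \hypg I$ is the smallest graph (i.e.\ symmetric $2$-hypergraph) containing the open digraph $I$, Theorem~\ref{theorem: ODD hypergraphs}~\ref{hyp2} applies with $\ddim = 2$: because $I$ is open on $X$, we have $\ODD\kappa I \Longleftrightarrow \OHD\kappa{\hypg I}$, which is precisely the statement that there is a continuous homomorphism from $\dhH 2$ to $I$ if and only if there is one from $\hhH 2$ to $\hypg I = G$. Here I would use the identity $\hhH 2 = \gK{{}^\kappa 2}$, noted in the paragraph before Definition~\ref{def: OHD}, to rewrite $\hhH 2$ as the complete graph $\gK{{}^\kappa 2}$. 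This is the cleanest route and avoids redoing the construction.

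Next I would show \ref{hps 1} $\Longleftrightarrow$ \ref{hps 2}. This is the special case $\ddim = 2$ of Lemma~\ref{lemma: kappa-perfect subdh}~\ref{lemma: kappa-perfect subdh 2}, which states that a $\ddim$-dihypergraph has a $\kappa$-perfect complete subhypergraph if and only if there exists a continuous homomorphism from $\dhK\ddim{{}^\kappa\ddim}$ to it. For $\ddim = 2$, $\dhK 2{{}^\kappa 2} = \gK{{}^\kappa 2}$, and a $\kappa$-perfect complete subhypergraph of the graph $G$ is exactly a $\kappa$-perfect complete subgraph in the sense defined at the start of this subsection. Applying the lemma to $I := G$ gives the equivalence directly.

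Finally, the ``in particular'' clause follows by unwinding definitions: $\OGD_\kappa(X)$ asserts that every open graph $G$ on $X$ either admits a $\kappa$-coloring or has a $\kappa$-perfect complete subgraph, while $\ODD\kappa 2(X)$ asserts that every box-open $2$-dihypergraph $I$ on $X$ satisfies $\ODD\kappa{I\restr X}$. Given an open graph $G$, it is in particular an open digraph, so the equivalence \ref{hps 0} $\Leftrightarrow$ \ref{hps 2} together with $\hypg G = G$ yields $\OGD_\kappa(X)$ from $\ODD\kappa 2(X)$; conversely, for any open digraph $I$ one passes to $G = \hypg I$, which is open by Theorem~\ref{theorem: ODD hypergraphs}~\ref{hyp0}, and applies $\OGD_\kappa(X)$ to $G$, then transfers back via the equivalence. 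I do not anticipate a serious obstacle here, since every ingredient is already proved; the only care needed is to check that the coloring options match up, but this is immediate because $I$ and $\hypg I$ have the same independent sets (Theorem~\ref{theorem: ODD hypergraphs}~\ref{hyp1}, i.e.\ $I \equivf \hypg I$), so $I$ admits a $\kappa$-coloring exactly when $G$ does.
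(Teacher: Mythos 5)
Your proposal is correct and follows essentially the same route as the paper: the equivalence \ref{hps 0}~$\Leftrightarrow$~\ref{hps 1} via Theorem~\ref{theorem: ODD hypergraphs}~\ref{hyp2} together with $\hhH 2=\gK{{}^\kappa 2}$, and \ref{hps 1}~$\Leftrightarrow$~\ref{hps 2} via Lemma~\ref{lemma: kappa-perfect subdh}~\ref{lemma: kappa-perfect subdh 2} with $\dhK 2{{}^\kappa 2}=\gK{{}^\kappa 2}$. Your explicit unwinding of the ``in particular'' clause (using \ref{hyp0} and \ref{hyp1} to match the coloring options) is exactly the reasoning the paper leaves implicit, so there is nothing to correct.
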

\begin{proof}
\ref{hps 0} $\Leftrightarrow$ \ref{hps 1} follows from Theorem~\ref{theorem: ODD hypergraphs}~\ref{hyp2} 
since $\hhH 2=\gK{{}^\kappa 2}$.
\todog{$H$ needs to be open to apply Theorem~\ref{theorem: ODD hypergraphs}~\ref{hyp2}}
\ref{hps 1} $\Leftrightarrow$ \ref{hps 2} follows from Lemma~\ref{lemma: kappa-perfect subdh}~\ref{lemma: kappa-perfect subdh 2} 
since $\dhK 2 {{}^\kappa 2}=\gK {{}^\kappa 2}$.
\end{proof}

%%%%%%%%%%%%%%%%%
\subsection{Counterexamples}
\label{subsection: examples}

We show that the open dihypergraph dichotomy is nontrivial for any
non-$\kappa$-colorable
box-open dihypergraph $H$ on any subset $X$ of 
${}^\kappa\kappa$ in the sense that $\ODD\kappa {H\restr Y}$ fails for some $Y\subseteq X$. 
Furthermore, one can neither replace open by closed dihypergraphs in $\ODD\kappa\ddim(X)$ 
nor 
continuous homomorphisms by large complete subhypergraphs. 
The arguments for the latter two claims generalize known examples from the countable setting. 
%to uncountable cardinals.

The next lemma is used to show nontriviality of the open dihypergraph dichotomy.
\todol{the next two lemmas and corollary (about nontriviality) are new}

\begin{lemma}
\label{ODD fails for some Y new}
Suppose that 
$2\leq\ddim\leq\kappa$ and
$H$ is a box-open $\ddim$-dihypergraph on some subset $X$ of ${}^\kappa\kappa$.
If 
%$|{}^\kappa\kappa\setminus A|=2^\kappa$
$|X\setminus A|=2^\kappa$
for all $A\subseteq X$ such that $H\restr A$ has a $\kappa$-coloring,
then there exists some $Y\subseteq X$ such that 
$\ODD\kappa{H\restr Y}$ fails.
\end{lemma}
\begin{proof}
We can assume that there exists at least one order homomorphism for $(X,H)$, since otherwise, $\ODD\kappa H$ fails by Lemma~\ref{homomorphisms and order preserving maps}~\ref{hop 1}, and then the conclusion holds for $Y:=X$. 
Let 
$\langle A_\alpha:\alpha<2^\kappa\rangle$ enumerate 
all subsets of ${}^\kappa\kappa$ which can be written as the
union of $\kappa$ many closed 
$H$-independent sets,
and let
$\langle \iota_\alpha:\alpha<2^\kappa\rangle$ enumerate all order homomorphisms for $(X,H)$, possibly with repetitions. 
Since for each $\alpha<2^\kappa$,
we have $|X\setminus A_\alpha|=2^\kappa$ by assumption
and
$|\ran([\iota_\alpha])|=2^\kappa$ 
by Lemma~\ref{lemma: kappa-perfect subdh}~\ref{lemma: kappa-perfect subdh 1}, it is easy to 
construct sequences
$\bar y:=\langle y_\alpha:\alpha<2^\kappa\rangle$
and
$\bar z:=\langle z_\alpha:\alpha<2^\kappa\rangle$
with
$y_\alpha\in X\setminus A_\alpha$,
$z_\alpha\in\ran([\iota_\alpha])$
and
$y_\alpha\neq z_\beta$ 
for all $\alpha,\beta<2^\kappa$.
Then $Y:=\ran(\bar y)$ and $Z:=\ran(\bar z)$ are disjoint subsets of $X$ with 
$Y\not\subseteq A_\alpha$
and 
$Z\cap \ran([\iota_\alpha])\neq\emptyset$
for all $\alpha<2^\kappa$.
To see that $\ODD\kappa{H\restr Y}$ fails, assume first that $H\restr Y$ has a $\kappa$-coloring and take $H$-independent sets $Y_i$ for $i<\kappa$ with $Y=\bigcup_{i<\kappa} Y_i$. 
By Corollary~\ref{independence and closure},
$\closure Y_i$ is $H$-independent as well for each $i<\kappa$ and hence
$\bigcup_{i<\kappa}\closure Y_i=A_\alpha$ for some $\alpha<2^\kappa$. But then $Y\subseteq A_\alpha$, a contradiction.
Now, assume that there exists
a continuous homomorphism from $\dhH\ddim$ to $H\restr Y$ 
and take
an order homomorphism $\iota$ for $(X,H)$
with $\ran([\iota])\subseteq Y$
by Lemma~\ref{homomorphisms and order preserving maps}~\ref{hop 1}.
Then $\iota=\iota_\alpha$ for some $\alpha<2^\kappa$ and hence
$Y\cap \ran([\iota])\neq\emptyset$. This contradicts the fact that $Y$ and $Z$ are disjoint.
\end{proof}

We next show that the assumption in the previous lemma holds if there is a continuous homomorphism from $\dhH\ddim$ to $H$.

\begin{lemma}
\label{lemma: kappa-perfect subdh new}
Suppose that 
$2\leq\ddim\leq\kappa$ and
$H$ is a box-open $\ddim$-dihypergraph on a subset $X$ of ${}^\kappa\kappa$.
If $f$ is a continuous homomorphism from $\dhH\ddim$ to $H$,
then $\ran(f)\setminus A$ has a $\kappa$-perfect subset for all $A\subseteq X$ such that $H\restr A$ has a $\kappa$-coloring.\footnote{This lemma strengthens both Lemmas~\ref{two options in ODD are mutually exclusive}~\ref{me 1} and~\ref{lemma: kappa-perfect subdh}~\ref{lemma: kappa-perfect subdh 1}.}
\end{lemma}
\begin{proof}
%We construct a perfect tree $T$ such that $f{\upharpoonright}[T]$ maps to a perfect set. 
By Lemma~\ref{homomorphisms and order preserving maps}~\ref{hop 1}, we may assume that $f=[\iota]$ for some 
order homomorphism $\iota$ for $(X,H)$. 
Take some $H$-independent sets $A_i$ for $i<\kappa$ with $A=\bigcup_{i<\kappa} A_i$.
We construct a continuous strict order preserving map 
$e:{}^{<\kappa} 2\to{}^{<\kappa}\ddim$ 
such that 
\begin{enumerate-(i)}
\item \label{lemma: kappa-perfect subdh new i}
$\theta:=\iota\comp e$ is $\perp$-preserving, and
\item \label{lemma: kappa-perfect subdh new ii}
$N_{\theta(t\conc\langle i\rangle)}\cap A_{\lh(t)}=\emptyset$ for all $t\in{}^{<\kappa}2$ and $i=0,1$.
\end{enumerate-(i)}
We define $e(t)$ by recursion on $\lh(t)$. 
Let $e(\emptyset)=\emptyset$. 
If $\lh(t)\in\Lim$, let $e(t)=\bigcup_{s\subsetneq t}e(s)$.
Suppose that $e(t)$ has been defined.
To construct 
$e(t\conc\langle 0\rangle)$ and $e(t\conc\langle 1\rangle)$,
first take an immediate successor $u$ of $e(t)$ with 
$N_{\iota(u)}\cap A_{\lh(t)}=\emptyset$.
Such $u$ exists since 
%$A_{\lh(t)}$ is $H$-independent and $\iota$ is an order homomorphism for $H$.
otherwise, one could obtain a hyperedge of $H\restr A_{\lh(t)}$ by choosing an arbitrary element of $N_{\iota(e(t)\conc\langle i\rangle)}\cap A_{\lh(t)}$ for each $i<\ddim$, 
as $\iota$ is an order homomorphism for $(X,H)$.
This contradicts the $H$-independence of $A_{\lh(t)}$.
Let $e(t\conc\langle 0\rangle)$ and $e(t\conc\langle 1\rangle)$
be extensions of $u$ in ${}^{<\kappa} \ddim$
such that 
$\theta(t\conc\langle 0\rangle):=\iota(e(t\conc\langle 0\rangle))$
and
$\theta(t\conc\langle 1\rangle):=\iota(e(t\conc\langle 1\rangle))$
are incomparable.
Such extensions exist
since otherwise, every sequence in 
$\prod_{i<\ddim}N_{\iota(u\conc\langle i\rangle)}\cap \ran([\iota])$
would be constant, so $\iota$ would not be an order homomorphism for $(X,H)$.
Once $\theta$ has been constructed, 
note that $\ran([\theta])$ is a $\kappa$-perfect subset of $\ran(f)$
by Lemma~\ref{perfect subsets and sop maps} and 
\ref{lemma: kappa-perfect subdh new i}.
Moreover, $\ran([\theta])\cap A=\emptyset$ by
\ref{lemma: kappa-perfect subdh new ii}.
\end{proof}

\begin{corollary}
\label{cor: ODD fails for some Y new}
Suppose  
$2\leq\ddim\leq\kappa$
and 
$H$ is a non-$\kappa$-colorable box-open $\ddim$-dihypergraph on a subset $X$ of ${}^\kappa\kappa$.
Then there exists some $Y\subseteq X$ such that 
$\ODD\kappa{H\restr Y}$ fails.
\end{corollary}
\begin{proof}
If $\ODD\kappa H$ fails, then the conclusion holds for $Y:=X$. 
Otherwise, there exists a continuous homomorphism from $\dhH\ddim$ to $H$ by our assumption.
Hence the conclusion follows from the previous two lemmas.
\end{proof}

\begin{remark}
\label{remark: ODD fails simultaneously for some Y new}
It is also possible to get a simultaneous failure
for all dihypergraphs
which are as in Lemma~\ref{ODD fails for some Y new} with arbitrary dimensions ${\leq}\kappa$. 
Suppose $X$ is a subset of ${}^\kappa\kappa$. Let $\mathcal H_X$ denote the class of all box-open 
${\leq}\kappa$ dimensional dihypergraphs $H$ on $X$
such that  
$|X\setminus A|=2^\kappa$ holds
for any $A\subseteq X$ for which $H\restr A$ admits a $\kappa$-coloring.%
\footnote{In particular, $\mathcal H_X$ contains all box-open ${\leq}\kappa$ dimensional dihypergraphs $H$ on $X$ such that 
there exists a continuous homomorphism from $\dhH{(\ddim_{H})}$ to $H$ 
by Lemma~\ref{lemma: kappa-perfect subdh new}. Here, $\ddim_H$ denotes the dimension of $H$.}
Then there exists some $Y\subseteq X$ such that 
$\ODD\kappa{H\restr Y}$ fails
for each $H\in\mathcal H_X$.
This can be seen by modifying the proof of Lemma~\ref{ODD fails for some Y new} as follows. 
We can assume that $X$ has a $\kappa$-perfect subset,
since otherwise, $\ODD\kappa{H}$ fails for all $H\in\mathcal H_X$ by Lemma~\ref{lemma: kappa-perfect subdh}~\ref{lemma: kappa-perfect subdh 1}, and then the conclusion holds for $Y:=X$. 
Let $\langle P_\alpha:\alpha<2^\kappa\rangle$ enumerate all $\kappa$-perfect subsets of $X$, possibly with repititions,
and let $\langle A_\alpha:\alpha<2^\kappa\rangle$ enumerate all subsets of ${}^\kappa\kappa$  
of the form $\bigcup_{i<\kappa}C_i$ where each $C_i$ is closed and there exists $H\in\mathcal H_X$ such that each $C_i$ is $H$-independent.
As in the proof of Lemma~\ref{ODD fails for some Y new},
construct disjoint sets $Y$ and $Z$
such that $Y\not\subseteq A_\alpha$ and $Z\cap P_\alpha\neq\emptyset$ for all $\alpha<\kappa$. 
Then $\ODD\kappa{H}$ fails for each $H\in\mathcal H_X$, since
the first property guarantees that $H\restr Y$ is not $\kappa$-colorable,\todog{We use that $H$ is box-open on $X$ here.} 
and the second property guarantees that 
$Y$ does not have a $\kappa$-perfect subset. 
Hence there is no continuous homomorphism from $\dhH{(\ddim_H)}$ to $H\restr Y$ by Lemma~\ref{lemma: kappa-perfect subdh}~\ref{lemma: kappa-perfect subdh 1}, where $\ddim_H$ denotes the dimension of $H$.
\end{remark}

The next example shows that 
$\ODD\kappa H$ may fail for some closed hypergraph $H$
in any dimension $\ddim\geq 2$.

\begin{example} 
\label{OGD fails for closed graphs} 
For any ordinal $\ddim\geq 2$, 
there exists a product-closed%
\footnote{I.e., $H$ is a relatively closed subset 
of $\dhK\ddim {{}^\kappa\kappa}$, where ${}^\ddim {({}^\kappa\kappa)}$ has the product topology.
Equivalently, $H\cup \dhC\ddim {{}^\kappa\kappa}$ is a closed subset of ${}^\ddim {({}^\kappa\kappa)}$ with the product topology.} 
\index{dihypergraph!product-closed\idf}
$\ddim$-hypergraph $H\in\defsetsk$ 
on ${}^\kappa\kappa$ with the properties: 
\begin{enumerate-(a)} 
\item\label{cg no coloring} 
$H$ does not admit a $\kappa$-coloring.  
\item\label{cd all}
\begin{enumerate-(i)} 
\item\label{cd no complete subgraph}
$H$ does not have a complete subhypergraph of size $\kappa^+$.  
\item\label{cd no continuous homomorphism}
There is no continuous homomorphism from $\dhH\ddim$ to $H$. 
\end{enumerate-(i)} 
\end{enumerate-(a)} 

We first provide an example for $\ddim=2$. 
An example for $\kappa=\omega$ can be found in \cite{TodorcevicFarah}*{Proposition~10.1}, so suppose $\kappa$ is uncountable. 
We follow \cite{MR1940513}*{Exercise 29.9} and \cite{SzThesis}*{Example 3.3}. 
The example is based on the notion of a \emph{strongly $\kappa$-dense linear order}. 
\index{strongly dense linear order@strongly $\kappa$-dense linear order\idf}%
This is a linear order with no $({<}\kappa,{<}\kappa)$-gaps. 
\index{gap@$({<}\kappa,{<}\kappa)$-gap\idf}%
A $({<}\kappa,{<}\kappa)$-gap in a linear order $(\mathbb{L},\leq)$ is a pair $(A,B)$ of subsets of $\mathbb{L}$ with the properties: 
\begin{enumerate-(1)}
  \item 
    $a<b$ for all $a\in A$ and $b\in B$. 
  \item 
    There is no $x\in \mathbb{L}$ with $a<x<b$ for all $a\in A$ and $b\in B$. 
  \item 
  $|A|, |B|<\kappa$. 
\todog{Strongly $\kappa$ dense linear orders have no endpoints: consider the $A=\emptyset$ case, and the $B=\emptyset$ case} 
\end{enumerate-(1)} 
It is easy to see that a strongly $\kappa$-dense linear order $\mathbb{L}$ of size $\kappa$ exists for any $\kappa$ with $\kappa^{<\kappa}=\kappa$.% 
\footnote{%
It is also easy to show using a back-and-forth argument that any two strongly $\kappa$-dense linear orders of size $\kappa$ are isomorphic.}
\todog{Both existence and uniqueness up to isomorphism also follow from the fact that 
a linear order is strongly $\kappa$-dense if and only if it is a $\kappa$-saturated model of the theory $T$ of dense linear orders with no endpoints.}
For instance, let $\QQ_\kappa$ denote the set of all $x\in {}^\kappa\{-1,0,1 \}$ that take values in $\{-1,1\}$ up to some $\alpha<\kappa$ and are constant with value $0$ from $\alpha$. 
$\QQ_\kappa$ with the lexicographical order is strongly $\kappa$-dense.

Let $(\mathbb{L},<)$ be a strongly dense linear order of size $\kappa$, and let 
\[
  X:=\{y\in \pwrset(\mathbb{L})\,:\, {\leq}{\restriction}y
  \text{ is a well-order}\}.
\] 
Since $|\mathbb{L}|=\kappa$, $\pwrset (\mathbb{L}) $ can be identified with ${}^\kappa 2$. 
Since $\kappa$ is uncountable, $X$ is a closed subset.\footnote{For $\kappa=\omega$, $X$ would be a $\coanalytic$ subset.} 
Therefore it suffices to define a closed graph $H$ on $X$ with the required properties.
Write $x\sqsubset y$ if $x$ is a strict initial segment of $y$ with respect to $\mathbb{L}$. 
For $x,y\in X$, let 
\[ (x,y) \in H  \ :\Longleftrightarrow\   x\sqsubset y  \text{ or }  y\sqsubset x. \] 
It is easy to see that $H$ is a closed graph on $X$ with no complete subgraph of size $\kappa^+$.
Thus there is no continuous homomorphism from $\dhH 2$ to $H$ by Corollary~\ref{homomorphisms and perfect homogeneous subgraphs}.

We now show that $H$ does not admit a $\kappa$-coloring.
\todoo{This paragraph was streamlined and made more explicit}
Towards a contradiction, suppose that $X=\bigcup_{\alpha<\kappa} X_\alpha$ where $X_\alpha$ is $H$-independent for each $\alpha<\kappa$. 
Construct a strictly increasing chain
$\langle x_\alpha:\alpha<\kappa\rangle$ in $X$ with respect to end extension 
\todoo{We need the $x_\alpha$'s to be increasing with respect to end extension (instead of $\subseteq$)! Otherwise the construction might fail in step $\omega$.}
and a strictly 
$\leq$-decreasing chain 
$\langle r_\alpha :\alpha<\kappa\rangle$ in $\mathbb{L}$ 
such that for all $\alpha<\kappa$, 
$x_\alpha$ is bounded below $r_\alpha$.\footnote{There exists some $q<r_\alpha$ such that $p<q$ for all $p\in x_\alpha$.} 
In each step, we first choose $r_\alpha$ such that $r_\alpha<r_\beta$ for all $\beta<\alpha$ and $\bigcup_{\beta<\alpha}x_\beta$ is bounded below $r_\alpha$. 
This is possible since $\QQ_\kappa$ is strongly $\kappa$-dense. 
We then choose an end extension $x_\alpha$ of $\bigcup_{\beta<\alpha}x_\beta$ that is bounded below $r_\alpha$. 
Moreover, we ensure that $x_\alpha\in X_\alpha$ holds if this is possible in the current step. 
Finally, let  $x:=\bigcup_{\alpha<\kappa}x_\alpha$ and suppose that $x\in X_\beta$ for some $\beta<\kappa$. 
Then $x_\beta\in X_\beta$ by the construction and $(x_\beta,x)\in H$, so $X_\beta$ is not $H$-independent. 

Given the example for $\ddim=2$,
the case $\ddim>2$ follows from the next claim.

\begin{claim*}
Suppose $\ddim>2$ and
\todoq{Does the claim work for digraphs instead of graphs? (We used that $G$ is a graph in the proof.)}
$G$ is a graph on ${}^\kappa\kappa$. 
Let $H_G$ be the $\ddim$-hypergraph consisting of those sequences $\bar x$ such that $\ran(\bar x)=\{x,y\}$ for some $(x,y)\in G$.
\begin{enumerate-(1)}
\item\label{H_G is a closed hypergraph}
\begin{enumerate-(i)}
\item
If $G$ is closed on ${}^\kappa\kappa$, then $H_G$ is product-closed on ${}^\kappa\kappa$.
\item
If $G\in\defsetsk$, then $H_G\in\defsetsk$.
\end{enumerate-(i)}
\item\label{H_G coloring}
A function $c:{}^\kappa\kappa\to\kappa$ is a $\kappa$-coloring of $H_G$
if and only if
it is a $\kappa$-coloring of $G$.
\item\label{H_G large}
\begin{enumerate-(i)}
\item\label{H_G large compl}
$H_G$ does not have a complete subhypergraph of size $3$.
\item\label{H_G large hom}
If there is a continuous homomorphism from $\dhH\ddim$ to $H_G$,
then there is a continuous homomorphism from $\dhH 2$ to $G$.
\todog{This does NOT follow from Lemma~\ref{lemma: ODD subsequences} since $G$ may not be box-open}
\end{enumerate-(i)}
\end{enumerate-(1)}
\end{claim*}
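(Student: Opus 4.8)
The plan is to verify each item of the claim about the lifted hypergraph $H_G$ by unwinding the definitions, since the construction of $H_G$ from $G$ is designed precisely to preserve the relevant combinatorial features while enlarging the dimension. First I would fix notation: a sequence $\bar x\in{}^\ddim({}^\kappa\kappa)$ lies in $H_G$ exactly when $\ran(\bar x)=\{x,y\}$ for some edge $(x,y)\in G$; in particular every hyperedge of $H_G$ takes only two distinct values, and since $G$ is a graph (symmetric irreflexive) the membership $\bar x\in H_G$ does not depend on which of $x,y$ is assigned to which coordinates, so $H_G$ is automatically closed under permutations, i.e.\ a genuine $\ddim$-hypergraph.

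For part~\ref{H_G is a closed hypergraph}, the topological statement is the main point. I would show $H_G\cup\dhC\ddim{{}^\kappa\kappa}$ is closed in the product topology by describing its complement. A sequence $\bar x$ fails to lie in $H_G\cup\dhC\ddim{{}^\kappa\kappa}$ precisely when either $\ran(\bar x)$ has at least three distinct values, or $\ran(\bar x)=\{x,y\}$ with $x\neq y$ but $(x,y)\notin G$. The first condition is product-open: if $x_i,x_j,x_k$ are pairwise distinct, separate them by basic open boxes on the three coordinates $i,j,k$. The second uses that $G$ is closed: pick $i,j$ with $x_i=x\neq y=x_j$, take a product-open neighborhood of $(x,y)$ in the complement of $G$, and intersect with the open condition forcing all other coordinates into $\{x,y\}$ (using that $x_i\neq x_j$ is itself open). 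Combining these into a single product-open set covering the complement gives closedness. The definability clause~\ref{H_G is a closed hypergraph}(ii) is immediate, since $H_G$ is defined by a first-order formula with $G$ as the only parameter, so $H_G\in\defsetsk$ whenever $G\in\defsetsk$.

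For part~\ref{H_G coloring}, I would observe that a set $Z\subseteq{}^\kappa\kappa$ is $H_G$-independent if and only if it is $G$-independent: any $G$-edge $(x,y)$ with $x,y\in Z$ yields an $H_G$-hyperedge with range $\{x,y\}$ (using $\ddim\geq 2$ to populate the extra coordinates), and conversely every $H_G$-hyperedge in $Z$ exhibits a $G$-edge among its two values. Hence $c^{-1}(\{\alpha\})$ is $H_G$-independent iff it is $G$-independent, so $c$ is a $\kappa$-coloring of one iff it is of the other. Part~\ref{H_G large}(i) follows because a complete subhypergraph $\dhK\ddim Y\subseteq H_G$ with $|Y|=3$ would force every injective triple from $Y$ into $H_G$, but each $H_G$-hyperedge uses only two distinct values, so no injective $\ddim$-sequence with three distinct entries can lie in $H_G$; thus $\dhII\ddim Y\not\subseteq H_G$, let alone $\dhK\ddim Y$.

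The step I expect to require the most care is~\ref{H_G large}(ii), converting a continuous homomorphism $f:{}^\kappa\ddim\to{}^\kappa\kappa$ from $\dhH\ddim$ to $H_G$ into one from $\dhH 2$ to $G$, precisely because $G$ need not be box-open and so Lemma~\ref{lemma: ODD subsequences} does not apply directly. My plan is to exploit that each hyperedge of $\dhH\ddim$ has the shape $\langle x_i:i\in\ddim\rangle$ with $t\conc\langle i\rangle\subseteq x_i$ for a common stem $t$, and that $f$ sends it to an $H_G$-hyperedge taking only two values. I would fix distinct $i_0\neq i_1$ in $\ddim$ and consider the composite $f\circ\jmath$, where $\jmath:{}^\kappa 2\to{}^\kappa\ddim$ is a suitable continuous embedding that feeds a $\dhH 2$-edge into coordinates $i_0,i_1$ while filling the remaining coordinates so as to keep the image inside $\dhH\ddim$. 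The subtlety is ensuring that on each such edge the two values actually assigned by $f$ are distinct (so that the $H_G$-hyperedge genuinely witnesses a $G$-edge rather than collapsing), which should follow from an argument analogous to the injectivity-of-splitting reasoning in Lemma~\ref{lemma: kappa-perfect subdh}\ref{lemma: kappa-perfect subdh 1}: were the two values to coincide on a dense set of stems, one could extract a constant, contradicting that $f$ lands in $H_G$ which excludes the diagonal. Working this out via an order-homomorphism refinement of $f$, as in the proof of Lemma~\ref{homomorphisms and order preserving maps}, should produce the desired continuous homomorphism $\dhH 2\to G$ and thereby a $\kappa$-perfect complete subgraph of $\hypg G$ by Corollary~\ref{homomorphisms and perfect homogeneous subgraphs}.
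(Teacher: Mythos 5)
Your proposal is correct, and for the substantive item \ref{H_G large hom} it takes essentially the same route as the paper: a recursion building a continuous strict order preserving map $\iota:{}^{<\kappa}2\to{}^{<\kappa}\ddim$ so that $f\comp[\iota]$ is the desired homomorphism, driven by the facts that $H_G$-hyperedges are non-constant and two-valued and that $G$ is symmetric. Items \ref{H_G is a closed hypergraph}, \ref{H_G coloring} and \ref{H_G large compl} are the routine verifications the paper dismisses as clear.

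Two points of comparison on \ref{H_G large hom}. First, you fix the coordinates $i_0\neq i_1$ in advance, so you genuinely owe the ``extract a constant'' argument: if every hyperedge with stem $s$ had $f(z_{i_0})=f(z_{i_1})$, then (varying one of these coordinates while freezing the other) $f$ would be constant on $N_{s\conc\langle i_0\rangle}\cup N_{s\conc\langle i_1\rangle}$, and then any hyperedge with stem extending $s\conc\langle i_0\rangle$ would have constant $f$-image, contradicting $f^\ddim(\bar x)\in H_G\subseteq\dhK\ddim{{}^\kappa\kappa}$; note that a \emph{single} bad stem already yields the contradiction, so your ``dense set of stems'' hypothesis is not needed. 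The paper avoids this lemma altogether by letting the pair of coordinates vary with the stem: the $f$-image of any one hyperedge above the current node is non-constant, so some $j<k$ (depending on the node) already satisfy $f(x_j)\neq f(x_k)$. Second, your sketch leaves implicit exactly the step that replaces the box-openness used in the proof of Lemma~\ref{homomorphisms and order preserving maps} (unavailable here, as you note): distinctness of the two values at the chosen points is not enough, since the homomorphism property must hold for \emph{all} edges of $\dhH 2$ passing through a given split. One must use continuity of $f$ to shrink to nodes $x_j\restr\alpha$, $x_k\restr\alpha$ with $f(N_{x_j\restr\alpha})\cap f(N_{x_k\restr\alpha})=\emptyset$, and then observe that $f(N_{x_j\restr\alpha})\times f(N_{x_k\restr\alpha})\subseteq G$: any pair from these sets completes to a $\dhH\ddim$-hyperedge whose $f$-image is a two-valued member of $H_G$ containing both (now necessarily distinct) values, so its range is exactly that pair, which is a $G$-edge, correctly oriented because $G$ is symmetric. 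With this neighborhood-level inclusion made explicit, your construction coincides with the paper's proof. A last small slip: in part \ref{H_G is a closed hypergraph}, ``forcing all other coordinates into $\{x,y\}$'' is not an open condition; instead choose disjoint open sets $U\ni x$, $V\ni y$ with $(U\times V)\cap G=\emptyset$ and force each coordinate into $U$ or $V$ according to its value in $\bar x$.
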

\begin{proof}
\ref{H_G is a closed hypergraph}
is clear. 
\ref{H_G coloring} holds since a subset of ${}^\kappa\kappa$ is $H_G$-independent if and only if it is $G$-independent.

In~\ref{H_G large},
\ref{H_G large compl} is immediate.
For
\ref{H_G large hom}, suppose that $f$ is a continuous homomorphism from $\dhH\ddim$ to $H_G$.
We construct a continuous $\perp$- and strict order preserving map
$\iota:{}^{<\kappa} 2\to{}^{<\kappa}\ddim$ such that 
$$
f(N_{\iota(t\conc\langle 0\rangle)})
\times
f(N_{\iota(t\conc\langle 1\rangle)})
\subseteq G
$$
holds for all $t\in{}^{<\kappa} \ddim$.
We construct $\iota(t)$ by recursion on $\lh(t)$.
Let $\iota(\emptyset):=\emptyset$.
If $\lh(t)\in\Lim$,
and $\iota(s)$ has been defined for all $s\subsetneq t$, then
let $\iota(t):=\bigcup_{s\subsetneq t}\iota(s)$.
We now assume $\iota(t)$ has been defined
and construct 
$\iota(t\conc\langle 0\rangle)$
and $\iota(t\conc\langle 1\rangle)$.
For each $i<\ddim$, take $x_i$ extending $\iota(t)\conc\langle i\rangle$.
Since 
$\bar x:=\langle x_i:i<\ddim\rangle$ is a hyperedge of $\dhH\ddim$
and
$f$ is a homomorphism from $\dhH\ddim$ to $\dhK\ddim{{}^\kappa\kappa}$,
there exist $j<k<\ddim$
with $f(x_{j})\neq f(x_{k})$.
Using the continuity of $f$,
take $\alpha<\kappa$
such that 
$\alpha>\lh(\iota(t))$ and
$f(N_{x_j\restr\alpha})\cap f(N_{x_k\restr\alpha})=\emptyset$.
Since $f$ is a homomorphism from $\dhH\ddim$ to $H_G$
and $x_i\restr\alpha$ extends $\iota(t)\conc\langle i\rangle$ for each $i<\ddim$,
we have
$\prod_{i<\ddim}f(N_{x_i\restr\alpha})\subseteq H_G$.
Since
$f(N_{x_j\restr\alpha})$ and $f(N_{x_k\restr\alpha})$ are disjoint,
we must have
\todog{We use that $G$ is symmetric in this step.}
$$f(N_{x_j\restr\alpha})\times f(N_{x_k\restr\alpha})\subseteq G.$$
Let
$\iota(t\conc\langle 0\rangle)=x_j\restr\alpha$
and
$\iota(t\conc\langle 0\rangle)=x_k\restr\alpha$.

Now, suppose that $\iota$ has been constructed and let
$g:=f\comp[\iota]$.
Then $g$ is continuous.
To see that $g$ is a homomorphism from $\dhH 2$ to $G$,
suppose $(x,y)\in\dhH 2$ and let $t\in{}^\kappa 2$ 
be the node where $x$ and $y$ split.
Then
$(g(x),g(y))\in g(N_{t\conc\langle 0\rangle})\times
g(N_{t\conc\langle 1\rangle})
=
f(N_{\iota(t\conc\langle 0\rangle)})\times
f(N_{\iota(t\conc\langle 1\rangle)})\subseteq G$, 
as required.
\end{proof}
\end{example}

\begin{remark} 
There is a simpler way to get a dihypergraph with the same properties as in the previous claim instead of a hypergraph. 
Given a graph $G$ on ${}^\kappa\kappa$ and $\ddim>2$, let 
$I_G$ denote the $\ddim$-dihypergraph that consists of all $\langle x\rangle\conc \langle y : 1\leq i<\ddim\rangle$ with $(x,y)\in G$. 
It is again clear that
\ref{H_G is a closed hypergraph},
\ref{H_G coloring}
and~\ref{H_G large}\ref{H_G large compl} 
hold for $I_G$.
\ref{H_G large}\ref{H_G large hom} 
holds as well, since
for any homomorphism $f$ from $\dhH\ddim$ to $I_G$, $f\restr {}^\kappa2$ is a homomorphism from $\dhH2$ to $G$. 
\end{remark} 

\begin{remark} 
Note that the closed set $X$ in the above example is not homeomorphic to ${}^\kappa\kappa$. 
In fact, one can show 
as in \cite{MR3430247}*{Section 2} 
that $X$ is not a continuous image of ${}^\kappa\kappa$. 
To see this, one only has to replace the relation $R_x$ coded by a subset $x$ of $\kappa$ by the relation ${<_\mathbb{L}}\restr x$ throughout the proof of \cite{MR3430247}*{Lemma 2.4}. 
\end{remark}

%%%%%%%%%%%%%%%%%

A \emph{complete subhypergraph} 
of a $\ddim$-dihypergraph $H$ on $X$ is a subhypergraph of the form $\dhK \ddim Y\subseteq H$ for some subset $Y\subseteq X$. 
In the definition of $\ODD\kappa H$, one cannot replace 
the existence of a continuous homomorphism with the existence of a large complete subhypergraph if $|d|\geq 3$, 
since $\dhH\ddim$ does not admit a $\kappa$-coloring by Lemma \ref{two options in ODD are mutually exclusive} 
and it does not have a complete subhypergraph of size $2$. 
Instead of complete subhypergraphs,
it is thus more interesting to consider subhypergraphs of the form 
$\dhII\ddim Y$ 
consisting of all \emph{injective} $\ddim$-sequences in some subset $Y$ of $X$. 
However, $\dhII\ddim Y\not\subseteq\dhH\ddim$ for any $Y$ of size $|\ddim|$. 
Similarly, the hypergraph $\hhH\ddim$ does not admit a $\kappa$-coloring 
and if $d$ is infinite, then 
$\dhII\ddim Y\not\subseteq\hhH\ddim$ for any $Y$ of size $|\ddim|$.
%there does not exist a set $Y$ of size $|\ddim|$ with $\dhII\ddim Y\subseteq\hhH\ddim$. 
If $d$ is finite, then
$\dhII\ddim Y\not\subseteq\hhH\ddim$ for any $Y$ of size $|\ddim|+1$.
We now describe some more interesting counterexamples with stronger properties.

%%%%%%%%%%%

\begin{example} 
\label{3 dim OGA fails}
For any ordinal $\ddim\geq 3$, there exists 
a product-open% 
\footnote{I.e., $H$ is an open subset of ${}^\ddim X$ in the product topology.} 
$\ddim$-dihypergraph 
$H\in\defsetsk$ on ${}^\kappa 2$ 
with the properties:
\todog{Let $[X]^3=\{(x,y,z)\in {}^3 X:x,y,z$ are pairwise distinct$\}$. For $\ddim=3$, $H\cap [X]^3$ will be a clopen subset of $[X]^3$, but $H$ will not be a closed subset of $\dhK 3 X$. Even if it were, it would not guarantee that $H'$ (as in Lemma 3.3) is a closed subset of $\dhK\ddim X$ for $\ddim>3$.}
\begin{enumerate-(a)}
\item\label{3d OGA no coloring}
$H$ does not admit a $\kappa$-coloring. 
\item\label{3d OGA no complete subgraph}
%$H$ does not have a complete subhypergraph of size $5$.
$\dhII\ddim Y\not\subseteq H$ for any $Y$ of size at least $\max\{5,|\ddim|\}$.
\end{enumerate-(a)}
For $\ddim=3$, $H$ can be chosen to be a hypergraph.

It suffices to provide an example for $\ddim=3$, since one can then apply a projection 
(see Lemma~\ref{sublemma: ODD for different dimensions}). 
We follow  \cite{TodorcevicFarah}*{Section 10} and \cite{SzThesis}*{Example~3.4}. 
Let $X$ denote the set of those $x\in {}^\kappa 2$ with $x(\alpha)=0$ for $\alpha=0$ and all limits $\alpha<\kappa$.
Since $X$ is homeomorphic to ${}^\kappa 2$, 
it suffices to define an open dihypergraph on $X$ with the required properties. 
Recall that for $x\neq y$ in ${}^\kappa2$, $\Delta(x,y)$ denotes the least $\alpha<\kappa$ with $x(\alpha)\neq y(\alpha)$.%
\footnote{Note that $\Delta(x,y)$ equals the length of the node $x\wedge y$ where $x$ and $y$ split.} 

For any 
$\bar x\in\dhII 3 X$, 
let
$\bar x_0,\, \bar x_1,\, \bar x_2$
list $\ran(\bar x)$ such that 
$\Delta(\bar x_0,\bar x_1)<\Delta(\bar x_1,\bar x_2)$.
As can be seen in Figure \ref{figure: a triple},  
$\bar{x}_0$ is unique while the role of $\bar{x}_1$ and $\bar{x}_2$ can be switched. 
Let 
$\Delta_0^{\bar x}:=\Delta(\bar{x}_0,\bar{x}_1)=\Delta(\bar{x}_0,\bar{x}_2)$ 
and 
$\Delta_1^{\bar x}:=\Delta(\bar{x}_1,\bar{x}_2)$. 
Then $\Delta_0^{\bar x},\,\Delta_1^{\bar x}$ are successor ordinals with $\Delta_0^{\bar x}<\Delta_1^{\bar x}$. 
For any 
$\bar x\in\dhII 3 X$, let
$$
\bar x\in H\ :\Longleftrightarrow\ 
\bar{x}_2{\left(\Delta_0^{\bar x}-1\right)}
\neq
\bar{x}_2{\left(\Delta_1^{\bar x}-1\right)}.
$$ 
It is easy to see that $H\in\defsets\kappa$ is an open $3$-hypergraph. 

{
\setlength{\abovecaptionskip}{-10pt} 
\setlength{\intextsep}{10pt} 
\newcommand{\x}{0.5} 
\newcommand{\y}{1} 
\begin{figure}[H]
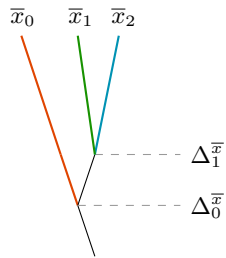
 
\tikz[scale=0.9,font=\small]{ 

\draw
(0.5*\x,0.75*\y) edge[-] (0*\x,1.5*\y) 
(0*\x,1.5*\y) edge[-, thick, RedOrangeArrow] (-0.5*\x,2.25*\y) 
(0*\x,1.5*\y) edge[-] (0.5*\x,2.25*\y); 

\draw
(0*\x,1.5*\y) edge[-, dashed, LightGrey] (3*\x,1.5*\y) 
(0.5*\x,2.25*\y) edge[-, dashed, LightGrey] (3*\x,2.25*\y); 

\draw 
(0.5*\x,2.25*\y) edge[thick, -, TealArrow] (1.2*\x,4*\y); 

\draw
(-0.5*\x,2.25*\y) edge[thick, -, RedOrangeArrow] (-1.65*\x,4*\y) 
(0.5*\x,2.25*\y) edge[thick, -, DarkGreenArrow] (0*\x,4*\y); 

\draw
    (-1.6*\x,4*\y) node[above] {\footnotesize %\color{RedOrangeArrow} 
    $\bar{x}_0$} 
    (0.1*\x,4*\y) node[above] {\footnotesize %\color{DarkGreenArrow} 
    $\bar{x}_1$} 
    (1.35*\x,4*\y) node[above] {\footnotesize %\color{TealArrow} 
    $\bar{x}_2$}; 
    
\draw
    (3*\x,1.5*\y) node[right] 
{\footnotesize $\Delta_0^{\bar x}$}
    (3*\x,2.25*\y) node[right] 
{\footnotesize $\Delta_1^{\bar x}$};
    
\draw %empty nodes for correct placement 
    (0*\x,0*\y) node[above] {} 
    (4*\x,4*\y) node[above] {}; 
}
\caption{A triple in $H$} 
\label{figure: a triple}
\end{figure}}

For~\ref{3d OGA no coloring}, it suffices to show that any $H$-independent subset $Y$ of $X$ is nowhere dense in $X$, since $X$ is homeomorphic to~${}^\kappa 2$ and thus not the union of $\kappa$ many nowhere dense subsets. 
To this end, take any $s\in T(X)$.
There exist  $s_0,s_1,s_2\in T(X)$ 
with $s\subseteq s_i$ for all $i\leq 2$ and
\todon{In the next 2 figures: The gap between the figure and the caption was decreased by LOCALLY setting the value of ``abovecaptionskip'', and so was the gap between the text and the figure (``intextsep''). Remove before submitting?}
$N_{s_0}\times N_{s_1}\times N_{s_2}\subseteq H$ 
by the definition of $H$. 
Since $Y$ is $H$-independent, there is some $i\leq 2$ with $N_{s_i}\cap Y=\emptyset$. 

\vspace{5pt} 
{
\setlength{\abovecaptionskip}{8pt} 
\setlength{\intextsep}{10pt} 
\newcommand{\x}{0.5} 
\newcommand{\y}{1} 

\begin{figure}[H]
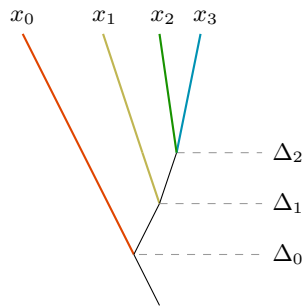
 
\centering
\tikz[scale=0.9,font=\small]{ 

\draw 
(0*\x,0) edge[-] (-0.75*\x,0.75*\y); 

\draw
(-0.75*\x,0.75*\y) edge[-, thick, RedOrangeArrow] (-4*\x,4*\y) 
(-0.75*\x,0.75*\y) edge[-] (0*\x,1.5*\y) 
(0*\x,1.5*\y) edge[-, thick, YellowArrow] (-0.5*\x,2.25*\y) 
(0*\x,1.5*\y) edge[-] (0.5*\x,2.25*\y); 

\draw
(-0.6*\x,0.75*\y) edge[-, dashed, LightGrey] (3*\x,0.75*\y) 
(0*\x,1.5*\y) edge[-, dashed, LightGrey] (3*\x,1.5*\y) 
(0.5*\x,2.25*\y) edge[-, dashed, LightGrey] (3*\x,2.25*\y); 

\draw 
(0.5*\x,2.25*\y) edge[thick, -, TealArrow] (1.2*\x,4*\y); 

\draw
(-0.5*\x,2.25*\y) edge[thick, -, YellowArrow] (-1.65*\x,4*\y) 
(0.5*\x,2.25*\y) edge[thick, -, DarkGreenArrow] (0*\x,4*\y); 

\draw
    (-4*\x,4*\y) node[above] {\footnotesize %\color{RedOrangeArrow} 
    $x_0$} 
    (-1.6*\x,4*\y) node[above] {\footnotesize %\color{YellowArrow} 
    $x_1$} 
    (0.1*\x,4*\y) node[above] {\footnotesize %\color{DarkGreenArrow} 
    $x_2$} 
    (1.35*\x,4*\y) node[above] {\footnotesize %\color{TealArrow} 
    $x_3$}; 
    
\draw
    %(3*\x,0.75*\y) node[right] {\footnotesize $x_3(\Delta_0)$}
    (3*\x,0.75*\y) node[right] {\footnotesize $\Delta_0$}
    %(3*\x,1.5*\y) node[right] {\footnotesize $x_3(\Delta_1)$}
    (3*\x,1.5*\y) node[right] {\footnotesize $\Delta_1$}
    %(3*\x,2.25*\y) node[right] {\footnotesize $x_3(\Delta_2)$};
    (3*\x,2.25*\y) node[right] {\footnotesize $\Delta_2$};
    
\draw %empty nodes for correct placement 
    (0*\x,0*\y) node[above] {} 
    (4*\x,4*\y) node[above] {}; 
} 
\caption{A splitting pattern witnessing $\dhII 3 Y\not\subseteq H$} 
\label{figure: splitting pattern}
\end{figure}
}
To show~\ref{3d OGA no complete subgraph}, 
suppose that $Y\subseteq X$ and $|Y|=5$. 
By considering the splitting nodes of pairs in $Y$ from bottom to top, it is easy to check that there exist $x_0,\, x_1,\, x_2,\, x_3\in Y$ with 
$\Delta_0<\Delta_1<\Delta_2$, where 
$\Delta_i:=\Delta(x_i, x_{i+1})$ 
for $i\leq 2$ (see Figure~\ref{figure: splitting pattern}).
Let $i,j$ be such that $i<j\leq 2$ and
$x_3{\left(\Delta_i-1\right)}=x_3{\left(\Delta_j-1\right)}.$
Since $(x_i,x_j,x_3)\not\in H$, we have $\dhII 3 Y\not\subseteq H$.
\end{example}

We have not studied whether there exists a $\ddim$-hypergraph with the previous properties for any $\ddim\geq 4$. 
In the previous example, 
the set of all $x\in {}^\kappa2$ with $x(\alpha)=0$ for all even $\alpha<\kappa$ is a $\kappa$-perfect $H$-independent set for all $\ddim\geq 3$.
The next example shows that
one can strengthen the non-existence of a $\kappa$-coloring to the non-existence of a large independent set. 

\begin{example}
\label{3 dim OGA fails He}
For any ordinal $\ddim\geq 3$, there exists 
a product-open 
$\ddim$-dihypergraph $H\in\defsetsk$ on ${}^\kappa \kappa$ 
with the properties:
\todog{$H_3$ will be a clopen subset of $\dhK\ddim{}$, but $H_\ddim$ may not even be a closed subset of $\dhK\ddim{}$ (in the product or in the box topology)}
\begin{enumerate-(a)}
\item\label{He 3d OGA no independent set}
%There is no $H$-independent set of size $\kappa^+$. 
$\dhII\ddim Y\cap H\neq\emptyset$ for any $Y$ of size $\kappa^+$.
\item\label{He 3d OGA no complete subgraph}
$\dhII\ddim Y\not\subseteq H$ for any $Y$ of size $\kappa^+$.
\end{enumerate-(a)}
For $\ddim=3$, $H$ can be chosen to be a hypergraph.
Note that~\ref{He 3d OGA no independent set} implies that 
there is no $H$-independent set of size $\kappa^+$ and hence
$H$ does not admit a $\kappa$-coloring. 

It suffices to provide an example for $\ddim=3$, since one can then apply a projection 
(see Lemma~\ref{sublemma: ODD for different dimensions}). 
We follow \cite{HeHigherDimOCA}*{Example~1.3}.\footnote{This is also a special case of an argument of Blass \cite{blass1981partition}*{page~273} with a stronger conclusion.} 
For any 
$\bar x\in\dhII 3{{}^\kappa\kappa}$, 
let $\bar x^{0}\lexleq \bar x^{1}\lexleq \bar x^{2}$ list $\ran(\bar{x})$ in lexicographic order 
and define
$$ \bar x\in H\ :\Longleftrightarrow\   
\Delta{\left(\bar x^{0},\bar x^{1}\right)}
\leq
\Delta{\left(\bar x^{1},\bar x^{2}\right)}.$$ 
Note that $H\in \defsetsk$ is an open hypergraph. 

We first show \ref{He 3d OGA no complete subgraph}. 
Suppose that $\dhII 3 Y\subseteq H$. 
We claim that the lexicographic order on $Y$ is a well-order with order type at most $\kappa+1$. 
All splitting nodes of $T(Y)$ are compatible and thus lie on a branch $z$. 
Since $\dhII 3 Y\subseteq H$, we have $y\lexleq z$ for all $y\in Y$. 
Therefore for any splitting node $s\subsetneq z$ of $T(Y)$,  
the order type of the set of all $y\in Y\setminus \{z\}$ with $y\wedge z=s$ 
is an ordinal ${<}\kappa$. 
Since $z$ may be in $Y$, the order type of $Y$ is at most $\kappa+1$. 
Hence $|Y|\leq\kappa$.

For \ref{He 3d OGA no independent set},
suppose that $\dhII\ddim Y\cap H=\emptyset$.
Then $\Delta{\left(\bar x^{0},\bar x^{1}\right)}>
\Delta{\left(\bar x^{1},\bar x^{2}\right)}$
for all
$\bar x\in\dhII 3Y$. 
We claim that the reverse lexicographic order on $Y$ is a well-order with order type at most $\kappa+1$. 
As before, all splitting nodes of $T(Y)$ lie on a branch $z$. 
If $s\subsetneq z$ is a splitting node, then exactly one element of $Y$ splits off at $s$. 
Therefore the order type is at most $\kappa+1$, and hence $|Y|\leq\kappa$. 
\end{example}
 
We have not studied whether there exists a $\ddim$-hypergraph with the properties of the previous example for any $\ddim\geq 4$. 
He \cite{HeHigherDimOCA}*{Theorem 4.3}, Di Prisco and Todor\v cevi\'c  \cite{di1998perfect}*{Sections 3 \& 4} 
showed for 
dimensions $3\leq n<\omega$ 
that open $n$-hypergraphs with additional structural properties 
either admit a countable coloring or contain a perfect complete subhypergraph. 

%%%%%%%%%%%%

%%%%%%%%%%%%%%%%%%
%\subsection{\texorpdfstring{$\Diamondi\kappa$}{A Diamond-like principle}}
\subsection{\texorpdfstring{$\Diamondi\kappa$}{Diamond\^{}i\_kappa}}
\label{subsection: diamondi}
In this subsection, we define the combinatorial principle $\Diamondi\kappa$ 
that is used for some of our results, for instance Theorem~\ref{theorem: ODD ODDI}, 
and discuss its relationship with some other versions of $\Diamond_\kappa$.
It is a weak version of $\Diamond_\kappa$ that holds at all inaccessible cardinals. 

\begin{definition}%[Definition of $\Diamondi\kappa$]
\label{diamondi def}
Let $\kappa$ be an infinite cardinal and let $\gamma\geq 2$.
A \emph{$\Diamondi{\kappa,\gamma}(\cof)$-sequence} (respectively \emph{$\Diamondi{\kappa,\gamma}(\stat)$-sequence})  is a sequence
$\langle A_\alpha:\alpha<\kappa\rangle$ 
of sets $A_\alpha\subseteq{}^\alpha\gamma$ such that 
\begin{enumerate-(a)}
\item\label{diamondi def 1}
$|A_\alpha|<\kappa$ for all $\alpha<\kappa$, and 
\item\label{diamondi def 2}
for all $y\in{}^{\kappa}\gamma$, the set
$\{\alpha<\kappa:\:y\restr\alpha\in A_\alpha\}$
is cofinal (stationary) in $\kappa$. 
\end{enumerate-(a)}
Let 
\index{diamond!weak cofinal version\idf$\Diamondi{\kappa,\gamma}(\cof)$}%
\index{diamond!weak stationary version\idf$\Diamondi{\kappa,\gamma}(\stat)$}
$\Diamondi{\kappa,\gamma}(\cof)$ (respectively, $\Diamondi{\kappa,\gamma}(\stat)$) 
denote the statement that a $\Diamondi{\kappa,\gamma}(\cof)$-sequence (respectively, $\Diamondi{\kappa,\gamma}(\stat)$-sequence) exists. 
Let $\Diamondi\kappa$ 
\index{diamond!weak a@weak version\idf$\Diamondi\kappa$}
denote the statement $\Diamondi{\kappa,\kappa}(\cof)$.
Similarly, 
a $\Diamondi\kappa$-sequence 
\index{sequences!Diamondi sequence@$\Diamondi\kappa$-sequence\idf}
is a $\Diamondi{\kappa,\kappa}(\cof)$-sequence.
\end{definition} 

We shall see in the next lemma
%Lemma \ref{versions of diamondi are equivalent} 
that all these principles are equivalent as long as 
$\kappa$ is an uncountable regular cardinal 
and 
$2\leq \gamma\leq\kappa$.
Note that $\Diamondi{\kappa,2}(\stat)$ is exactly the 
principle $\Dell\kappa$  introduced in \cite{ShelahModelsIV}*{Definition~2.10}. 

\begin{lemma} 
\label{versions of diamondi are equivalent} 
Suppose that $\kappa$ is an uncountable regular cardinal. 
Then $\Diamondi{\kappa,\beta}(\varphi)$ and $\Diamondi{\kappa,\gamma}(\psi)$ are equivalent 
for all $\beta,\gamma$ with 
$2\leq \beta,\gamma\leq \kappa$ 
and all $\varphi, \psi\in \{\cof, \stat\}$. 
\end{lemma}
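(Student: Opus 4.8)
The plan is to prove the lemma by establishing a web of implications among the various $\Diamondi{\kappa,\gamma}(\varphi)$ principles, reducing everything to a single representative (say $\Diamondi{\kappa,2}(\cof)$) and showing the strongest one follows from it. First I would record the trivial monotonicity facts. Since a stationary subset of $\kappa$ is cofinal, every $\Diamondi{\kappa,\gamma}(\stat)$-sequence is automatically a $\Diamondi{\kappa,\gamma}(\cof)$-sequence, so $\Diamondi{\kappa,\gamma}(\stat)\Rightarrow\Diamondi{\kappa,\gamma}(\cof)$ for each fixed $\gamma$. The reverse direction is the substantive step for fixed $\gamma$ and I postpone it. For the dependence on $\gamma$, I would first handle the two extremes $\gamma=2$ and $\gamma=\kappa$, and then argue that any intermediate $\gamma$ with $2\leq\gamma\leq\kappa$ is squeezed between them.

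The key reductions all go through coding. To pass from a larger alphabet to a smaller one, fix for each $\alpha<\kappa$ a bijection $c_\alpha:{}^\alpha\gamma\to{}^\alpha\beta$ when $2\leq\beta\leq\gamma\leq\kappa$; more usefully, since $\kappa^{<\kappa}=\kappa$ and $\kappa$ is regular, one can code a sequence $x\in{}^\kappa\gamma$ by an element of ${}^\kappa 2$ (or ${}^\kappa\kappa$) in a way that is \emph{initial-segment respecting}, i.e.\ there is a strictly increasing continuous map $\pi:\kappa\to\kappa$ and a translation $x\restr\alpha\mapsto \bar x\restr\pi(\alpha)$ commuting with restriction. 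Given a $\Diamondi{\kappa,2}(\cof)$-sequence $\langle B_\xi:\xi<\kappa\rangle$, I would define $A_\alpha$ to consist of all decodings of elements of $B_{\pi(\alpha)}$ that happen to lie in ${}^\alpha\gamma$; property \ref{diamondi def 1} is preserved because $|A_\alpha|\leq|B_{\pi(\alpha)}|<\kappa$, and property \ref{diamondi def 2} is preserved because the guessing set for $x$ pulls back along $\pi$ to a cofinal (respectively stationary, using that $\pi$ is continuous increasing so preimages of clubs are clubs) set. This yields $\Diamondi{\kappa,2}(\cof)\Rightarrow\Diamondi{\kappa,\gamma}(\cof)$ and the analogous implication for $\stat$, and the easy direction gives the converses, so all alphabets are interchangeable once $\varphi$ is fixed.

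The main obstacle is upgrading $\cof$ to $\stat$, i.e.\ showing $\Diamondi{\kappa,2}(\cof)\Rightarrow\Diamondi{\kappa,2}(\stat)$; this is where uncountable regularity of $\kappa$ is genuinely used and where the connection to Shelah's $\Dell\kappa$ enters. The strategy I would use is a \emph{thinning-and-relativizing} argument: from a cofinal-guessing sequence $\langle A_\alpha:\alpha<\kappa\rangle$, build a stationary-guessing sequence by distributing the guesses across the cofinality-$\omega$ points (or all limit points) so that failure of stationary guessing on some $x$ would produce a club $C$ disjoint from the guessing set, and then use $C$ together with the cofinal guesses to derive a contradiction, typically by a diagonal or reflection argument. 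I would expect to follow the template of \cite{ShelahModelsIV}*{Definition~2.10} and the standard proof that $\Dell\kappa$ (a cofinal-type principle) is equivalent to $\Diamond_\kappa$-like stationary guessing for uncountable regular $\kappa$; the regularity is essential to ensure that intersections of fewer than $\kappa$ clubs are club and that the relevant closure points form a club.

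Having assembled these pieces, the conclusion follows formally: for arbitrary $\beta,\gamma$ with $2\leq\beta,\gamma\leq\kappa$ and $\varphi,\psi\in\{\cof,\stat\}$, one has
\[
\Diamondi{\kappa,\beta}(\varphi)\ \Longrightarrow\ \Diamondi{\kappa,2}(\cof)\ \Longrightarrow\ \Diamondi{\kappa,2}(\stat)\ \Longrightarrow\ \Diamondi{\kappa,\gamma}(\psi),
\]
where the first implication is the easy direction plus alphabet reduction, the middle is the hard upgrade, and the last is alphabet expansion together with $\stat\Rightarrow\cof$ as needed. Since $\beta,\gamma,\varphi,\psi$ were arbitrary in the allowed ranges, all the principles are pairwise equivalent, which is exactly the statement of the lemma. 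I would present the alphabet-coding reductions in full but compressed detail and treat the $\cof$-to-$\stat$ upgrade as the one step deserving a careful club-based argument.
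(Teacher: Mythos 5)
Your overall architecture (reduce everything to $\Diamondi{\kappa,2}(\cof)$, upgrade to $\Diamondi{\kappa,2}(\stat)$, then expand the alphabet) is the same as the paper's, but the step you yourself single out as the crux --- $\Diamondi{\kappa,2}(\cof)\Rightarrow\Diamondi{\kappa,2}(\stat)$ --- is not an argument as described, and this is a genuine gap. Your contrapositive setup (``failure of stationary guessing on some $x$ produces a club $C$ disjoint from the guessing set, then derive a contradiction'') conflates two different statements: that a \emph{given} cofinal-guessing sequence fails to guess stationarily (which is no contradiction at all --- cofinal-guessing sequences genuinely need not guess stationarily), versus that \emph{no} stationary-guessing sequence exists. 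To prove the implication one must \emph{construct} a new sequence, and no club/diagonal argument of the shape you gesture at does this uniformly. The paper's proof instead splits into cases: if $\kappa$ is inaccessible, $\Diamondi{\kappa,2}(\stat)$ holds outright because $\langle {}^\alpha 2:\alpha<\kappa\rangle$ is already a witness (here $2^{|\alpha|}<\kappa$); if $\kappa$ is not inaccessible, one fixes $\lambda<\kappa$ with $2^\lambda\geq\kappa$ and invokes a variant of Matet's proposition, whose proof essentially uses $2^\lambda\geq\kappa$. Your sketch contains neither the case division nor this hypothesis, and your appeal to ``the standard proof that $\Dell\kappa$ is equivalent to $\Diamond_\kappa$-like stationary guessing'' is circular, since $\Dell\kappa$ \emph{is} $\Diamondi{\kappa,2}(\stat)$ (and it is the stationary principle, not a ``cofinal-type'' one).

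Your alphabet-expansion step breaks down in exactly the case your hard step would have trivialized. A coding of ${}^\kappa\gamma$ into ${}^\kappa 2$ along a \emph{fixed} continuous strictly increasing $\pi$ requires injecting each value below $\gamma$ into a block ${}^{\mu}2$ of fixed length $\mu<\kappa$; for $\gamma=\kappa$ this needs some $\mu<\kappa$ with $2^\mu\geq\kappa$, which fails precisely when $\kappa$ is inaccessible --- and at inaccessibles the trivial sequence $\langle {}^\alpha\kappa : \alpha<\kappa\rangle$ is no substitute, since $|{}^\alpha\kappa|=\kappa$ there. The paper's expansion $\Diamondi{\kappa,2}(\stat)\Rightarrow\Diamondi{\kappa,\kappa}(\stat)$ instead codes $x\in{}^\kappa\kappa$ as a subset of $\kappa\times\kappa$ via a pairing bijection $f$ and decodes on the intersection of the club of closure points of $f$ with the club of $\alpha$ where $\ran(x\restr\alpha)\subseteq\alpha$; this works at every regular uncountable $\kappa$, but it only transfers \emph{stationary} guessing (a cofinal guessing set need not meet these clubs), which is why the paper performs the expansion after upgrading to $\stat$, not before, as you propose. (Also, a minor inefficiency: passing from a larger alphabet to a smaller one needs no coding at all --- just intersect each $A_\alpha$ with ${}^\alpha\beta$.)
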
 
\begin{proof} 
It is clear that 
$\Diamondi{\kappa,\gamma}(\stat)\Longrightarrow\Diamondi{\kappa,\gamma}(\cof)$
holds for all ordinals $\gamma\geq 2$.
\begin{claim*}
If $2\leq\beta<\gamma\leq\kappa$, then 
$\Diamondi{\kappa,\gamma}(\stat)\Longrightarrow\Diamondi{\kappa,\beta}(\stat)$.
\end{claim*}
\begin{proof}
Suppose that 
$\langle A_\alpha:\alpha<\kappa\rangle$ 
is a $\Diamondi{\kappa,\gamma}(\stat)$-sequence.
Then $\langle A_\alpha\cap{}^\alpha\beta:\alpha<\kappa\rangle$ 
is a $\Diamondi{\kappa,\beta}(\stat)$-sequence.
\end{proof}

\begin{claim*}
$\Diamondi{\kappa,2}(\stat)\Longrightarrow\Diamondi{\kappa,\kappa}(\stat)$.
\end{claim*}
\begin{proof}
This is similar to 
the argument 
sketched in 
\cite{KunenBook2013}*{Exercise III.7.9}. 
Suppose $\langle A_\alpha:\alpha<\kappa\rangle$ is a 
$\Diamondi{\kappa,2}(\stat)$-sequence.
Let $f$ be any bijection between $\kappa\times\kappa$ and $\kappa$. 
For all
$\alpha<\kappa$, let 
$\mathcal A_\alpha:=\big\{s^{-1}(\{1\}):
s\in A_\alpha
\big\}$,%
\footnote{Thus, $\mathcal A_\alpha$ is a subset of $\pwrset(\alpha)$, and
 $A_\alpha$ consists of the characteristic functions of elements of $\mathcal A_\alpha$.}
and let
\[\mathcal B_\alpha=
\left\{f^{-1}{\left(S \right)}: S\in \mathcal A_\alpha\right\}
.\]
Using the fact that $C=\{\alpha<\kappa:f(\alpha\times\alpha)=\alpha\}$ is a club subset of $\kappa$,
it is easy to see that 
for all $Y\subseteq\kappa\times\kappa$, 
the set of ordinals $\alpha<\kappa$ with 
$Y\cap(\alpha\times\alpha)\in \mathcal B_\alpha$ is stationary in $\kappa$.
For all $\alpha<\kappa$, 
let $B_\alpha:=\mathcal B_\alpha\cap {}^\alpha\alpha$.
Then 
$\langle B_\alpha:\alpha<\kappa\rangle$ is a $\Diamondi\kappa(\stat)$-sequence.
Indeed, if $x\in{}^\kappa\kappa$, then 
for a club set of ordinals $\alpha<\kappa$ 
we have 
$\ran(x\restr\alpha)\subseteq\alpha$,
or equivalently,
$x\restr\alpha=x\cap(\alpha\times\alpha)\in{}^\alpha\alpha$.\footnote{This step uses that $\kappa$ is regular.} 
Thus, the set
$\{\alpha<\kappa: x\restr\alpha\in B_\alpha\}$ is a stationary subset of~$\kappa$.
\end{proof}

It now suffices show that 
$\Diamondi{\kappa,2}(\cof)\Longrightarrow\Diamondi{\kappa,2}(\stat)$.
First, note that 
$\Diamondi{\kappa,2}(\stat)$ 
always holds 
if $\kappa$ is inaccessible, since
$\langle {}^\alpha 2:\alpha<\kappa\rangle$ is a
$\Diamondi{\kappa,2}(\stat)$-sequence.
In the case that $\kappa$ is not inaccessible, the implication 
$\Diamondi{\kappa,2}(\cof)\Longrightarrow\Diamondi{\kappa,2}(\stat)$
follows from the next claim. 
It is a 
variant 
of the Proposition in \cite{MatetCorrigendumDiamond} whose proof works verbatim in this setting. 

\begin{claim*}
Suppose that $\lambda<\kappa$ is an infinite cardinal such that $2^\lambda\geq\kappa$.
Suppose that there exists a sequence
$\langle A_\alpha:\alpha<\kappa\rangle$ 
of sets $A_\alpha\subseteq{}^\alpha 2$ such that 
\begin{enumerate-(a)}
\item
\label{diamondi weakest a}
$|A_\alpha|<\kappa$ for all $\alpha<\kappa$, and 
\item
\label{diamondi weakest b}
for all $y\in{}^{\kappa}2$, there exists $\alpha\geq\lambda$ 
such that $y\restr\alpha\in A_\alpha$.
\end{enumerate-(a)}
Then $\Diamondi{\kappa,2}(\stat)$ holds.
\end{claim*}
Lemma~\ref{versions of diamondi are equivalent} follows immediately from the previous claims.
\end{proof} 

\begin{lemma} \ 
\label{diamondi claim}
\begin{enumerate-(1)}
\item\label{diamondi diamond inacc}
If $\Diamond_\kappa$ holds or $\kappa$ is inaccessible, then $\Diamondi\kappa$ holds.
\item\label{diamondi kappa^<kappa}
$\Diamondi\kappa$ implies that $\kappa^{<\kappa}=\kappa$.
\end{enumerate-(1)}
\end{lemma}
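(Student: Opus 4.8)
The plan is to prove Lemma~\ref{diamondi claim} in two separate parts, both of which reduce to exhibiting an explicit $\Diamondi\kappa$-sequence. By Lemma~\ref{versions of diamondi are equivalent}, it suffices to produce a $\Diamondi{\kappa,2}(\cof)$-sequence (or any of the equivalent forms) in each case, since all the principles $\Diamondi{\kappa,\beta}(\varphi)$ for $2\leq\beta\leq\kappa$ and $\varphi\in\{\cof,\stat\}$ coincide for uncountable regular $\kappa$. Note that $\Diamondi\kappa$ implies $\kappa$ is uncountable by Remark~\ref{diamondi omega}, so in the context of \ref{diamondi kappa^<kappa} we may assume $\kappa$ is regular and uncountable; this makes Lemma~\ref{versions of diamondi are equivalent} available throughout.

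For \ref{diamondi diamond inacc}, first suppose $\kappa$ is inaccessible. Then, as already observed inside the proof of Lemma~\ref{versions of diamondi are equivalent}, the sequence $\langle {}^\alpha 2:\alpha<\kappa\rangle$ is a $\Diamondi{\kappa,2}(\stat)$-sequence: condition~\ref{diamondi def 1} holds because $|{}^\alpha 2|=2^{|\alpha|}<\kappa$ for all $\alpha<\kappa$ by inaccessibility, and condition~\ref{diamondi def 2} holds trivially since $y\restr\alpha\in{}^\alpha 2$ for every $\alpha<\kappa$, so the relevant set is all of $\kappa$. Hence $\Diamondi{\kappa,2}(\stat)$, and thus $\Diamondi\kappa$, holds. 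Now suppose $\Diamond_\kappa$ holds. I would take a $\Diamond_\kappa$-sequence $\langle S_\alpha:\alpha<\kappa\rangle$ with $S_\alpha\subseteq\alpha$ and guessing property that $\{\alpha<\kappa: Y\cap\alpha=S_\alpha\}$ is stationary for every $Y\subseteq\kappa$. Letting $A_\alpha:=\{\chi_{S_\alpha}\}$, the characteristic function of $S_\alpha$ viewed in ${}^\alpha 2$, gives $|A_\alpha|=1<\kappa$, so \ref{diamondi def 1} holds, and the stationary guessing of $\Diamond_\kappa$ yields \ref{diamondi def 2} for $\Diamondi{\kappa,2}(\stat)$. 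Thus $\Diamondi{\kappa,2}(\stat)$ holds, whence $\Diamondi\kappa$ holds by Lemma~\ref{versions of diamondi are equivalent}.

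For \ref{diamondi kappa^<kappa}, the approach is the standard counting argument showing that a $\Diamond$-type principle forces $\kappa^{<\kappa}=\kappa$. Fix a $\Diamondi\kappa$-sequence $\langle A_\alpha:\alpha<\kappa\rangle$, which by definition is a $\Diamondi{\kappa,\kappa}(\cof)$-sequence, so each $A_\alpha\subseteq{}^\alpha\kappa$ with $|A_\alpha|<\kappa$ and for every $y\in{}^\kappa\kappa$ the set $\{\alpha<\kappa: y\restr\alpha\in A_\alpha\}$ is cofinal in $\kappa$. I claim every $t\in{}^{<\kappa}\kappa$ appears as an element of some $A_\alpha$. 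Indeed, given $t\in{}^\beta\kappa$ with $\beta<\kappa$, extend $t$ to some $y\in{}^\kappa\kappa$; by cofinality of the guessing set there is $\alpha\geq\beta$ with $y\restr\alpha\in A_\alpha$, and since $A_\alpha\subseteq{}^\alpha\kappa$ consists of sequences of length $\alpha$, we record $y\restr\alpha$ together with its initial segment $t=y\restr\beta=(y\restr\alpha)\restr\beta$. The cleanest way to count is to observe that the map sending $t\in{}^{<\kappa}\kappa$ to a pair $(\alpha, u)$ with $u\in A_\alpha$ and $t\subseteq u$ is well-defined after a choice, and that $\bigcup_{\alpha<\kappa} A_\alpha$ has size at most $\kappa$ (a union of $\kappa$ many sets each of size $<\kappa$). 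Since each $u\in{}^\alpha\kappa$ has only $|\alpha|\leq\kappa$ initial segments, the set of all initial segments of elements of $\bigcup_{\alpha<\kappa}A_\alpha$ has size at most $\kappa\cdot\kappa=\kappa$; and I have just argued this set equals ${}^{<\kappa}\kappa$. Therefore $|{}^{<\kappa}\kappa|\leq\kappa$, i.e.\ $\kappa^{<\kappa}=\kappa$.

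The arguments here are all routine; there is no serious obstacle, the only mild subtlety being to confirm that the equivalences of Lemma~\ref{versions of diamondi are equivalent} are genuinely applicable (which they are, once one notes $\kappa$ must be uncountable regular). The bulk of the work is bookkeeping: verifying the size bound $|A_\alpha|<\kappa$ in each construction and carrying out the cardinal arithmetic $\bigl|\bigcup_{\alpha<\kappa}A_\alpha\bigr|\leq\kappa$ together with the count of initial segments. I would present \ref{diamondi diamond inacc} first, splitting into the inaccessible and the $\Diamond_\kappa$ cases, and then \ref{diamondi kappa^<kappa} as the counting argument just sketched.
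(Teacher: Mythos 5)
Your proof is correct and follows essentially the same route as the paper's: for \ref{diamondi diamond inacc} you use the same two witnessing sequences ($\langle\{a_\alpha\}:\alpha<\kappa\rangle$ from a $\Diamond_\kappa$-sequence, and $\langle{}^\alpha 2:\alpha<\kappa\rangle$ for inaccessible $\kappa$) together with Lemma~\ref{versions of diamondi are equivalent}, and for \ref{diamondi kappa^<kappa} you give the same counting argument showing ${}^{<\kappa}\kappa$ is contained in the (size-$\kappa$) set of initial segments of elements of $\bigcup_{\alpha<\kappa}A_\alpha$. One harmless caveat: your opening claim that $\Diamondi\kappa$ lets you ``assume $\kappa$ is regular'' is not justified at that point (regularity only follows \emph{after} \ref{diamondi kappa^<kappa} is proved), but since your argument for \ref{diamondi kappa^<kappa} works directly from the definition and never invokes Lemma~\ref{versions of diamondi are equivalent}, no circularity actually arises.
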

\begin{proof}
If 
$\langle a_\alpha\in{}^\alpha 2:\alpha<\kappa\rangle$ 
is a $\Diamond_\kappa$-sequence, then
$\big\langle \{a_\alpha\}:\alpha<\kappa\big\rangle$ 
is a $\Diamondi{\kappa,2}(\stat)$-sequence.
If $\kappa$ is inaccessible, then 
$\langle {}^\alpha 2:\alpha<\kappa\rangle$ 
is a $\Diamondi{\kappa,2}(\stat)$-sequence.
Under both of these assumptions, $\kappa$ is an uncountable regular cardinal
and therefore
$\Diamondi{\kappa,2}(\stat)$ implies
$\Diamondi\kappa=\Diamondi{\kappa,\kappa}(\cof)$
by Lemma~\ref{versions of diamondi are equivalent}.

To prove~\ref{diamondi kappa^<kappa}, suppose that 
$\langle A_\alpha:\alpha<\kappa\rangle$ is a $\Diamondi\kappa$-sequence. 
It is sufficient to show that
${}^{<\kappa}\kappa\subseteq
%\bigcup_{\alpha<\kappa}(A_\alpha\cup T(A_\alpha))
\bigcup_{\alpha<\kappa}T(A_\alpha)
$ 
since the latter set 
has size $\kappa$. 
Suppose $s\in{}^{<\kappa}\kappa$.
Let $x$ be an arbitrary element of ${}^\kappa\kappa$ extending $s$,
and let 
$\alpha\geq\lh(s)$ be such that $x\restr\alpha\in A_\alpha$.
Then $s\subseteq x\restr\alpha$ and hence 
$s\in{}T(A_\alpha)$. 
%$s\in{}A_\alpha\cup T(A_\alpha)$. 
\end{proof}

\begin{remark}
\label{diamondi omega}
Suppose that either $\kappa=\omega$ or that $\kappa$ is a singular strong limit cardinal. 
Then
$\Diamondi{\kappa,2}(\stat)$ holds 
since $\langle {}^\alpha 2:\alpha<\kappa\rangle$ 
is a $\Diamondi{\kappa,2}(\stat)$-sequence,
but $\Diamondi\kappa=\Diamondi{\kappa,\kappa}(\cof)$ is false.
To show the latter statement, first note that
that $\Diamondi\kappa$ implies $\kappa$ is regular by
Claim~\ref{diamondi claim}~\ref{diamondi kappa^<kappa}.
Seeking a contradiction, suppose that $\Diamondi\omega$ holds and let
$\langle A_n:n<\omega\rangle$ be a $\Diamondi\omega$-sequence.
Define $x\in{}^\omega\omega$
by letting
$x(n):=\sup\{\ran(a):a\in A_{n+1}\}+1$
for each $n<\omega$.
%Then $x(n)<\omega$ for all $n<\omega$, 
%since $A_{n+1}$ is finite and $\left|\ran(a)\right|=n+1$ for all $a\in A_{n+1}$.
%
%Since $\langle A_n:n<\omega\rangle$ is a $\Diamondi\omega$-sequence, there exists $n<\omega$ such that 
Take $m<\omega$ with
$x\restr m\in A_m$.
Then 
\[x(m-1)>\sup\{\ran(a):a\in A_{m}\}\geq\sup\left(\ran(x\restr m)\right)\geq x(m-1),\]
a contradiction.
\end{remark}

\begin{remark} 
\label{diamondi remark} \ 
\begin{enumerate-(1)}
\item\label{diamondi remark 1} For $\kappa=\lambda^+\geq\aleph_1$,
Definition~\ref{diamondi def}~\ref{diamondi def 1} 
is equivalent to 
$|A_\alpha|\leq\lambda$ for all $\alpha<\kappa$.
Therefore $\Diamondi\kappa$ is equivalent to $\Diamond_\kappa$
by a result of Kunen's 
\cite{JensenKunenDiamond}.%
\footnote{See also \cite{KunenBook2013}*{Theorem~III.7.8 and Exercise~III.7.16}.}%
%%\todog{\cite{KunenBook2013}*{Theorem III.7.8} contains a proof for the $\kappa=\omega_1$ case, which generalizes easily to the $\kappa>\omega_1$ case (this is Exercise III.7.16)}
\item\label{diamondi remark 2} For $\kappa=\lambda^+\geq\aleph_2$,
$\kappa^{<\kappa}=\kappa$ is equivalent to $\Diamond_\kappa$ 
\cite{ShelahDiamonds} 
and therefore also to $\Diamondi\kappa$. 
\item\label{diamondi remark 3} 
For inaccessible cardinals $\kappa$, 
$\Diamond_\kappa$ and $\Diamondi\kappa$ are not in general equivalent, 
since the failure of $\Diamond_\kappa$ at the least inaccessible cardinal $\kappa$ is consistent \cite{GolshaniDiamondInaccessible}. 
\end{enumerate-(1)}
\end{remark}

To the best knowledge of the authors, it is not known whether 
any of the assumptions
$\kappa^{<\kappa}=\kappa$,
$\Diamondi\kappa$
and $\Diamond_\kappa$ are equivalent 
when $\kappa$ is weakly inaccessible but not inaccessible.

The next lemma describes a
$\delta$-dimensional equivalent version 
\index{diamond!weak z@weak $\ddim$-dimensional version\idf}
of $\Diamondi\kappa$ for ordinals $2\leq\delta<\kappa$. 
%We will make use of this in some of our later arguments for $\delta=2$. 
Recall that 
$\dhC\delta X$ denotes the set of constant sequences $\langle c\rangle ^\ddim \in{}^\ddim X$
and 
%that the complete $\delta$-hypergraph on a given set $X$ is
$\dhK\delta X={}^\delta X-\dhC\delta X$.

\begin{lemma}
\label{diamondi equiv}
Suppose $\kappa$ is a regular uncountable cardinal and $2\leq\delta<\kappa$.
Then the following statements are equivalent:
\begin{enumerate-(1)}
\item\label{diamondi orig}
$\Diamondi\kappa$.
\item\label{diamondi delta dim}
There exists a sequence 
$\langle B_\alpha:\alpha<\kappa\rangle$ of sets 
$B_\alpha\subseteq\dhK\delta{{}^\alpha\kappa}$ such that
\begin{enumerate-(a)}
\item\label{di1}
$|B_\alpha|<\kappa$ for all $\alpha<\kappa$, and
\item\label{di2}
for all $\bar x\in\dhK\delta{{}^\kappa\kappa}$,
there exists $\alpha<\kappa$ 
with 
$\bar x\cwrestr\alpha\in B_\alpha$.%
\footnote{Recall that
$\bar x\cwrestr\alpha:=\langle x_i\restr\alpha:i<\delta\rangle$ 
for any sequence 
$\bar x=\langle x_i:i\in\delta \rangle\in{}^\delta{({}^\kappa\kappa)}$ and $\alpha<\kappa$.}
\end{enumerate-(a)}
\end{enumerate-(1)}
\end{lemma}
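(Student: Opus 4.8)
The plan is to prove the equivalence \ref{diamondi orig} $\Leftrightarrow$ \ref{diamondi delta dim} by moving between a $\Diamondi\kappa$-sequence (a one-dimensional guessing sequence on ${}^{<\kappa}\kappa$) and a $\delta$-dimensional guessing sequence on $\dhK\delta{{}^\alpha\kappa}$, using the fact that $\delta<\kappa$ is small. The key arithmetic observation that makes both directions work is that since $\kappa$ is regular and $\delta<\kappa$, any $\delta$-sequence of elements of ${}^\alpha\kappa$ can be coded by a single element of ${}^{\beta}\kappa$ for some $\beta<\kappa$ (via a fixed pairing function), and conversely. So the two kinds of objects are really the same up to a uniform recoding, and the point is to check that guessing is preserved.

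First I would prove \ref{diamondi delta dim} $\Rightarrow$ \ref{diamondi orig}, which is the easier direction. Given a sequence $\langle B_\alpha:\alpha<\kappa\rangle$ as in \ref{diamondi delta dim}, I would fix $\delta$ distinct ``channels'' and use them to read off a single branch. Concretely, fix any two distinct constants and observe that from each nonconstant $\bar x=\langle x_i:i<\delta\rangle$ one recovers the individual $x_i$; so from $B_\alpha$ one can define $A_\alpha:=\{x_0\restr\alpha : \bar x\restr\alpha\in B_\alpha\}\subseteq{}^\alpha\kappa$, which still has size ${<}\kappa$. To guess a given $y\in{}^\kappa\kappa$, I would pad $y$ into a nonconstant $\delta$-sequence $\bar y$ whose first coordinate is $y$ (e.g.\ take $y_0=y$ and let the remaining coordinates be fixed distinct constants, or small perturbations guaranteeing nonconstancy), apply \ref{di2} to obtain $\alpha$ with $\bar y\cwrestr\alpha\in B_\alpha$, and conclude $y\restr\alpha\in A_\alpha$. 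This yields a $\Diamondi{\kappa,\kappa}(\cof)$-sequence, hence $\Diamondi\kappa$ by Definition~\ref{diamondi def}.

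For \ref{diamondi orig} $\Rightarrow$ \ref{diamondi delta dim}, I would start from a $\Diamondi\kappa$-sequence $\langle A_\alpha:\alpha<\kappa\rangle$ and a fixed bijection coding $\delta$-sequences from ${}^{\alpha}\kappa$ as single elements of ${}^{\alpha}\kappa$ (using that $\kappa$ is regular and $\delta<\kappa$, so that $\alpha\cdot\delta<\kappa$ and an initial segment of the pairing bijection on $\kappa\times\delta$ stabilizes on a club). For each $\alpha$, I would let $B_\alpha$ be the decoding of $A_\alpha$ intersected with $\dhK\delta{{}^\alpha\kappa}$, so \ref{di1} is preserved since the decoding map is injective. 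For \ref{di2}, given $\bar x\in\dhK\delta{{}^\kappa\kappa}$, I would code $\bar x$ as a single $z\in{}^\kappa\kappa$, apply the guessing property of $\langle A_\alpha\rangle$ on a club of coding-closure points to find $\alpha$ with $z\restr\alpha\in A_\alpha$ decoding exactly to $\bar x\cwrestr\alpha$, and use that $\bar x$ nonconstant forces $\bar x\cwrestr\alpha$ nonconstant for large $\alpha$ (so it lands in $\dhK\delta{{}^\alpha\kappa}$).

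The main obstacle I anticipate is the bookkeeping in the coding: the pairing bijection between ${}^{\alpha}\kappa$ (or ${}^{\alpha}({}^\delta\kappa)$) and ${}^{\alpha}\kappa$ must agree with the global bijection on initial segments for sufficiently large $\alpha$, so that ``$z\restr\alpha$ codes $\bar x\cwrestr\alpha$'' holds simultaneously for the guessed $\alpha$. This is handled by intersecting with the club of closure points of the pairing function, exactly as in the regularity step flagged in the proof of the $\Diamondi{\kappa,2}(\stat)\Rightarrow\Diamondi{\kappa,\kappa}(\stat)$ claim inside Lemma~\ref{versions of diamondi are equivalent}; since $\Diamondi\kappa$ only requires cofinal (not stationary) guessing, it suffices to intersect a cofinal guessing set with a club, which stays cofinal. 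I would also need to ensure nonconstancy of $\bar x\cwrestr\alpha$ for cofinally many $\alpha$, which is immediate because two distinct coordinates $x_i\neq x_j$ differ below some $\beta<\kappa$ and hence differ on $\alpha$ for all $\alpha>\beta$.
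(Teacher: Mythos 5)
Your overall skeleton (project onto the first coordinate for one direction, code $\delta$-sequences by single branches for the other) is the same as the paper's, but both directions have a genuine gap in the step that actually matters. For \ref{diamondi delta dim}~$\Rightarrow$~\ref{diamondi orig}: the reduction $A_\alpha:=\{s_0:\langle s_i:i<\delta\rangle\in B_\alpha\}$ is correct, but $\Diamondi\kappa=\Diamondi{\kappa,\kappa}(\cof)$ requires the set $\{\alpha<\kappa: y\restr\alpha\in A_\alpha\}$ to be \emph{cofinal} in $\kappa$ (Definition~\ref{diamondi def}), whereas padding $y$ with \emph{fixed} distinct constants and applying \ref{di2} yields only a single guessing level, with nothing to push it above a prescribed bound $\gamma$: with constant padding, $\bar y\cwrestr\alpha$ is already nonconstant for every $\alpha\geq 1$, so the guess could land at $\alpha=1$ every time. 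The repair is to vary the padding with $\gamma$: given $y$ and $\gamma<\kappa$, choose $\bar y\in\dhK\delta{{}^\kappa\kappa}$ with $y_0=y$ and $\bar y\cwrestr\gamma\in\dhC\delta{{}^\gamma\kappa}$ (all coordinates agree with $y$ below $\gamma$ and split later). Since $B_\alpha\subseteq\dhK\delta{{}^\alpha\kappa}$ contains only nonconstant sequences, any $\alpha$ with $\bar y\cwrestr\alpha\in B_\alpha$ must satisfy $\alpha>\gamma$, and then $y\restr\alpha=y_0\restr\alpha\in A_\alpha$. This is exactly how the paper's proof secures cofinality.

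For \ref{diamondi orig}~$\Rightarrow$~\ref{diamondi delta dim}: your justification that ``it suffices to intersect a cofinal guessing set with a club, which stays cofinal'' is false --- the set of successor ordinals is cofinal, the set of limit ordinals is a club, and the two are disjoint. Since $\Diamondi\kappa$ hands you only a cofinal (not stationary) guessing set, the club of closure points of your pairing function may miss it entirely. There are two repairs. Either invoke Lemma~\ref{versions of diamondi are equivalent}, which for regular uncountable $\kappa$ upgrades $\Diamondi\kappa$ to the stationary version $\Diamondi{\kappa,\kappa}(\stat)$, whose guessing set meets your club; or do what the paper does and avoid clubs altogether by using the interleaving bijections $g_\gamma:{}^{\delta\cdot\gamma}\kappa\to{}^\delta({}^\gamma\kappa)$, $g_\gamma(s)=\langle t_\beta:\beta<\delta\rangle$ with $t_\beta(\alpha)=s(\delta\cdot\alpha+\beta)$, which commute with restriction \emph{exactly at every level}: $g_\kappa(z)\cwrestr\gamma=g_\gamma\big(z\restr(\delta\cdot\gamma)\big)$. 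One then sets $B_\gamma:=\bigcup_{\beta<\delta}\big\{g_\gamma\big(s\restr(\delta\cdot\gamma)\big): s\in A_{\delta\cdot\gamma+\beta}\big\}$, so that whichever level $\alpha=\delta\cdot\gamma+\beta$ the diamond guesses at, the corresponding $\gamma$ works; regularity of $\kappa$ together with $\delta<\kappa$ keeps $|B_\gamma|<\kappa$. In either repair you must also intersect $B_\gamma$ with $\dhK\delta{{}^\gamma\kappa}$ and use cofinality (or stationarity) of the guessing set to ensure the guessed level is large enough that $\bar x\cwrestr\gamma$ is nonconstant, as you correctly noted at the end.
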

\begin{proof}
\ref{diamondi delta dim} $\Rightarrow$ \ref{diamondi orig}:
Suppose that $\langle B_\alpha:\alpha<\kappa\rangle$ is a sequence as in~\ref{diamondi delta dim}. 
For all $\alpha<\kappa$, let 
$
A_\alpha:=\{s_0: \langle s_\beta:\beta<\delta\rangle\in B_\alpha\}.
$
We claim that $\langle A_\alpha:\alpha<\kappa\rangle$ is a $\Diamondi\kappa$-sequence. 
To see this, suppose $x\in{}^\kappa\kappa$ and $\gamma<\kappa$.  
It suffices to find $\alpha<\kappa$ such that $\alpha>\gamma$
and $x\restr\alpha\in A_\alpha$.
Take $\bar x=\langle x_\beta:\beta<\delta\rangle$ in 
$\dhK\delta{{}^\kappa\kappa}$ 
with $x_0=x$ and
$\bar x\cwrestr\gamma\in\dhC\delta{{}^\gamma\kappa}$.
By~\ref{diamondi delta dim}\ref{di2}, there exists $\alpha<\kappa$ such that 
$\bar x\cwrestr\alpha\in B_\alpha$.
Then $x\restr\alpha=x_0\restr\alpha\in A_\alpha$, 
and $\alpha>\gamma$ since 
$\bar x\cwrestr\gamma\in\dhC\delta{{}^\gamma\kappa}$ 
but
$\bar x\cwrestr\alpha\in B_\alpha\subseteq\dhK\delta{{}^\alpha\kappa}$. 

\ref{diamondi orig} $\Rightarrow$ \ref{diamondi delta dim}:
Let $\langle A_\alpha:\alpha<\kappa\rangle$ be a $\Diamondi\kappa$-sequence.
For every
$\gamma\leq\kappa$, let
$g_\gamma:
{}^{\delta\cdot\gamma}\kappa
\to
{}^\delta({}^\gamma\kappa)$
be the bijection 
which maps each
$s\in{}^{\delta\cdot\gamma}\kappa$
to the unique element
$g_\gamma(s)=
\langle t_\beta:\beta<\delta\rangle$ of 
${}^\delta({}^\gamma\kappa)$
with
$t_\beta(\alpha)=s(\delta\cdot\alpha+\beta)$
for all $\alpha<\gamma$ and $\beta<\delta$.
For each $\gamma<\kappa$, let
\[
B_\gamma=
\bigcup_{\beta<\delta}
\left\{
g_\gamma\big(s\restr(\delta\cdot\gamma)\big):
s\in A_{\delta\cdot\gamma+\beta}
\right\}.
\]
We claim that $\langle B_\gamma:\gamma<\kappa\rangle$ satisfies~\ref{diamondi delta dim}.
Since $\kappa$ is regular,
$|B_\gamma|<\kappa$ for all $\gamma<\kappa$.
To see~\ref{diamondi delta dim}\ref{di2},
take $\bar x\in\dhK\delta{{}^\kappa\kappa}$ and let $z:=g_\kappa^{-1}(\bar x)$.
Since $z\in{}^\kappa\kappa$ and $\langle A_\alpha:\alpha<\kappa\rangle$ is a $\Diamondi\kappa$-sequence,
there exist ordinals
$\gamma<\kappa$  
and $\beta<\delta$ such that
$z\restr(\delta\cdot\gamma+\beta)\in A_{\delta\cdot\gamma+\beta}$.
Then 
$
\bar x\cwrestr\gamma=
g_\kappa(z)\cwrestr\gamma=
g_\gamma\big(z\restr(\delta\cdot\gamma)\big)\in
B_\gamma.
$
\end{proof}

%%%%%%%%%%%%%%
\subsection{Strong variants} 
\label{subsection: ODD ODDI ODDH}
%We consider the following extensions of $\ODD\kappa H$. These are ordered by increasing strength. 

In this subsection, we study the relationships between $\ODD\kappa H$ 
and its strengthenings
%in Definition~\ref{def: ODD variants} 
given in the next definition,
for relatively box-open dihypergraphs $H$ on ${}^\kappa\kappa$.
\todol{These introductory sentences are slightly more detailed.}
These variants,
ordered by increasing strength, 
are obtained by replacing the continuous homomorphism with a map with stronger properties.
Recall that a dihypergraph $H$ is defined to be \emph{relatively box-open} if it is box-open on its domain $\domh H$.\footnote{See Definition~\ref{def: relatively box-open}.}

\begin{definition} 
Suppose that $2\leq\ddim\leq\kappa$ and 
$H$ is a $\ddim$-dihypergraph on  ${}^\kappa\kappa$.
\index{open dihypergraph dichotomy!variant where homomorphisms are!A@injective\idf $\ODDI\kappa H$}%
\index{open dihypergraph dichotomy!variant where homomorphisms are!B@homeomorphisms\idf $\ODDH\kappa H$}%
\index{open dihypergraph dichotomy!variant where homomorphisms are!C@homeomorphisms onto closed\newline\hspace{32 pt}sets\idf $\ODDC\kappa H$}%
\label{def: ODD variants}
\begin{equation*}
\parbox{\dimexpr\linewidth-60pt}{%
\hypertarget{ODDI def}{}%
\strut
{$\ODDI\kappa H$:}
Either $H$ admits a $\kappa$-coloring, or 
there exists 
an \emph{injective} continuous homomorphism
from~$\dhH{\ddim}$ to~$H$.
\strut
}
\end{equation*}
\begin{equation*}
\parbox{\dimexpr\linewidth-60pt}{%
\hypertarget{ODDH def}{}%
\strut
{$\ODDH\kappa H$:}
Either $H$ admits a $\kappa$-coloring, or 
there exists 
a homomorphism 
from~$\dhH{\ddim}$ to~$H$
which is
\emph{a homeomorphism} from ${}^\kappa\ddim$ onto its image.
\strut
}
\end{equation*}
\begin{equation*}
\parbox{\dimexpr\linewidth-60pt}{%
\hypertarget{ODDC def}{}%
\strut
{$\ODDC\kappa H$:}
Either $H$ admits a $\kappa$-coloring, or 
there exists 
a homomorphism 
from~$\dhH{\ddim}$ to~$H$
which is
\emph{a homeomorphism} from ${}^\kappa\ddim$ onto 
a \emph{closed} subset of~${}^\kappa\kappa$.
\strut
}
\end{equation*}
\end{definition}

\begin{comment} %%%def with variants in ``align*'' environment - works well if the whole def can be on the same page 
\begin{definition}
Suppose that $2\leq\ddim\leq\kappa$ and 
$H$ is a $\ddim$-dihypergraph on  ${}^\kappa\kappa$.
\label{def: ODD variants}
\begin{align*}
&\parbox{\dimexpr\linewidth-60pt}{%
\hypertarget{ODDI def}{}%
\strut
{$\ODDI\kappa H$:}
Either $H$ admits a $\kappa$-coloring, or 
there exists 
an \emph{injective} continuous homomorphism
from~$\dhH{\ddim}$ to~$H$.
\strut
}
\\
&\parbox{\dimexpr\linewidth-60pt}{%
\hypertarget{ODDH def}{}%
\strut
{$\ODDH\kappa H$:}
Either $H$ admits a $\kappa$-coloring, or 
there exists 
a homomorphism 
from~$\dhH{\ddim}$ to~$H$
which is
\emph{a homeomorphism} from ${}^\kappa\ddim$ onto its image.
\strut
}
\\
&\parbox{\dimexpr\linewidth-60pt}{%
\hypertarget{ODDC def}{}%
\strut
{$\ODDC\kappa H$:}
Either $H$ admits a $\kappa$-coloring, or 
there exists 
a homomorphism 
from~$\dhH{\ddim}$ to~$H$
which is
\emph{a homeomorphism} from ${}^\kappa\ddim$ onto 
a \emph{closed} subset of~${}^\kappa\kappa$.
\strut
}
\end{align*}
\end{definition}
\end{comment}

Note that if $H$ and $I$ are box-open $\ddim$- and $c$- dihypergraphs with $H\equivif I$ on a subset $X$ of ${}^\kappa\kappa$, respectively,
then $\ODDI\kappa H\Longleftrightarrow\ODDI\kappa I$ and 
$\ODDH\kappa H\Longleftrightarrow\ODDH\kappa I$
by Remark~\ref{remark: ODD injective subsequences}.%
\footnote{If $c=d$ and $I\subseteq H$, then the assumption that $H$ is box-open may be omitted, since in this case any homomorphism from $\dhH\ddim$ to $I$ is a homomorphism to $H$ as well.}

We first show that all of the dichotomies in the previous definition 
are in fact equivalent to $\ODD\kappa H$ for $\ddim<\kappa$ dimensional relatively box-open dihypergraphs $H$,
in the next theorem and corollary.
\todol{New introductory paragraph describing the results in this subsection. These sentences were moved or copied over from later paragraphs in the middle of the subsection (and then edited).}
We study the $\ddim=\kappa$ case
in the rest of the subsection. 
These results are summarized in Figure~\ref{figure: ODD variants}.
In particular, we show that $\ODDI\kappa H$ 
%already follows from 
is equivalent to
$\ODD\kappa H$ assuming $\Diamondi\kappa$.%
\footnote{Recall that $\Diamondi\kappa$ implies $\kappa>\omega$ by Remark~\ref{diamondi omega}.}
However, 
$\ODDI\omega H$ and $\ODDH\kappa{I}$ fail for some box-open definable dihypergraphs $H$ on ${}^\omega\omega$ and $I$ on ${}^\kappa\kappa$.
At the end of the subsection,
we show that some of the dichotomies
are equivalent for relatively box-open $\kappa$-dihypergraphs with additional properties. 
%even without assuming $\Diamondi\kappa$.
These results are summarized in 
Figure~\ref{figure: ODD variants special cases}.

The next lemma characterizes the existence of a homomorphism as in $\ODDH\kappa H$ via order homomorphisms.

\begin{lemma}
\label{lemma: perp-preserving order homomorphisms and homeomorphisms}
Suppose
that $2\leq\ddim\leq\kappa$,
$X\subseteq{}^\kappa\kappa$
and $H$ is a $\ddim$-dihypergraph on  ${}^\kappa\kappa$.
\begin{enumerate-(1)}
\item\label{ppohh homeo}
If $\iota$
is a $\perp$-preserving order homomorphism for $(X,H)$,
then $[\iota]$ is a homomorphism from $\dhH\ddim$ to $H\restr X$ which is a homeomorphism from ${}^\kappa\ddim$ onto its image.%
\item\label{homeo ppohh}
If $H\restr X$ is box-open on $X$ and there exists a homomorphism $f$ from $\dhH\ddim$ to $H\restr X$ which is a homeomorphism from ${}^\kappa\ddim$ onto its image,
then there exists a continuous $\perp$-preserving
order homomorphism $\iota$ for $(X,H)$ 
and a continuous strict $\wedge$-homomorphism%
\footnote{See Definition~\ref{def: wedge-homomorphism}. Note that $e:{}^{<\kappa}\ddim\to{}^{<\kappa}\kappa$  is a strict $\wedge$-homomorphism if and only if it is strict order preserving and $e(t\conc\langle \alpha\rangle) \wedge e(t\conc\langle \beta\rangle) =e(t)$ for all $t\in{}^{<\kappa}\ddim$ and $\alpha<\beta<\ddim$. It is easy to see that such a map $e$ is $\perp$-preserving, and hence $[e]$ is a homeomorphism onto its image by Lemma~\ref{[e] continuous}~\ref{[e] homeomorphism}.} 
$e:{}^{<\kappa}\ddim\to{}^{<\kappa}\ddim$
with $[\iota]=f\comp [e]$.
%with $\ran([\iota])\subseteq \ran(f)$.
\todog{This lemma is only used in this subsection. \ref{homeo ppohh} used only in the proof of Prop~\ref{ODDH fails for D kappa}.}
\end{enumerate-(1)}
\end{lemma}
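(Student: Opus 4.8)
The statement to prove is Lemma~\ref{lemma: perp-preserving order homomorphisms and homeomorphisms}, which characterizes the existence of a homomorphism that is a homeomorphism onto its image (as in $\ODDH\kappa H$) via $\perp$-preserving order homomorphisms. This is the direct analogue of Lemma~\ref{homomorphisms and order preserving maps}, refined to track the $\perp$-preservation property, so the plan is to mirror that proof while carefully arranging the extra splitting that forces $\perp$-preservation.

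For part~\ref{ppohh homeo}, I would argue as follows. If $\iota$ is a $\perp$-preserving order homomorphism for $(X,H)$, then by Lemma~\ref{homomorphisms and order preserving maps}~\ref{hop 2}, $[\iota]$ is already a continuous homomorphism from $\dhH\ddim$ to $H\restr X$. Since $\iota$ is moreover $\perp$- and strict order preserving, Lemma~\ref{[e] continuous}~\ref{[e] homeomorphism} gives that $[\iota]$ is a homeomorphism from $\dom([\iota])={}^\kappa\ddim$ onto $\ran([\iota])$, which is its image. This part is essentially immediate from the cited lemmas.

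For part~\ref{homeo ppohh}, the plan is to run the recursive construction from the proof of Lemma~\ref{homomorphisms and order preserving maps}~\ref{hop 1}, but at successor stages I would insist on choosing the initial segments $e(u\conc\langle i\rangle)$ of the $x_i$ far enough out that their images $f(N_{e(u\conc\langle i\rangle)})$ are \emph{pairwise disjoint}, using that $f$ is injective (indeed a homeomorphism onto its image, hence maps disjoint basic open sets to sets with disjoint closures along the tree). Concretely: at stage $u$, after picking $x_i\supseteq e(u)\conc\langle i\rangle$ for each $i\in\ddim$, the points $f(x_i)$ are pairwise distinct since $f$ is injective; by continuity I can shrink to initial segments $\iota(u\conc\langle i\rangle)\subsetneq f(x_i)$ that are pairwise incompatible (i.e.\ $\iota(u\conc\langle i\rangle)\perp\iota(u\conc\langle j\rangle)$ for $i\neq j$) while still satisfying the box-openness requirement $\prod_{i\in\ddim}(N_{\iota(u\conc\langle i\rangle)}\cap X)\subseteq H$ that comes from $H\restr X$ being box-open on $X$; then choose $e(u\conc\langle i\rangle)\supseteq e(u)\conc\langle i\rangle$ refining $x_i$ so that $f(N_{e(u\conc\langle i\rangle)})\subseteq N_{\iota(u\conc\langle i\rangle)}$. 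Ensuring the $\iota(u\conc\langle i\rangle)$ are pairwise incompatible is exactly what makes $\iota$ a $\perp$-preserving order homomorphism, and arranging $e(u\conc\langle i\rangle)\wedge e(u\conc\langle j\rangle)=e(u)$ for $i\neq j$ makes $e$ a strict $\wedge$-homomorphism, as recorded in the footnote to the statement. The verification that $[\iota]=f\comp[e]$, that $\iota$ is continuous and that $\ran([\iota])\subseteq X$ then proceeds verbatim as in Lemma~\ref{homomorphisms and order preserving maps}~\ref{hop 1}, using property~\ref{hop proof 1} there.

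The main obstacle is the simultaneous choice at successor stages: I must realize $\perp$-preservation of the values $\iota(u\conc\langle i\rangle)$ together with the box-open inclusion into $H$, all for up to $\kappa$ many indices $i\in\ddim$ at once. The key point making this possible is that $f$ is a homeomorphism onto its image, so the sets $f(N_{e(u)\conc\langle i\rangle})$ can be separated into the basic open neighborhoods $N_{\iota(u\conc\langle i\rangle)}$ with pairwise incompatible stems; since $f$ is injective the images of distinct branches stay apart, and the box-openness of $H\restr X$ on $X$ guarantees the inclusion $\prod_{i\in\ddim}(N_{\iota(u\conc\langle i\rangle)}\cap X)\subseteq H$ survives the shrinking. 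I would also note explicitly, as in Remark~\ref{remark: H^-_X}, that incompatibility of the $\iota(u\conc\langle i\rangle)$ across distinct immediate successors already yields $\perp$-preservation of the full strict order preserving map $\iota$, so no further work is needed to conclude $\iota$ is $\perp$-preserving.
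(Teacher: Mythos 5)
Part~(1) of your proposal is fine and matches the paper's proof verbatim (cite Lemma~\ref{homomorphisms and order preserving maps}~\ref{hop 2} and Lemma~\ref{[e] continuous}~\ref{[e] homeomorphism}). The gap is in part~(2), at the successor stage of the recursion. There you argue: the points $f(x_i)$, $i\in\ddim$, are pairwise distinct since $f$ is injective, hence ``by continuity'' one can shrink to pairwise incompatible initial segments $\iota(u\conc\langle i\rangle)\subsetneq f(x_i)$. This works when $\ddim<\kappa$ (finitely or ${<}\kappa$ many distinct points split at boundedly many levels), but it fails precisely in the crucial case $\ddim=\kappa$: a family of $\kappa$ many pairwise distinct points need not admit pairwise incompatible initial segments, because the family may accumulate at one of its own members. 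If $\langle f(x_i):i<\kappa\rangle$ accumulates at $f(x_0)$, then every initial segment of $f(x_0)$ is an initial segment of cofinally many $f(x_i)$, so no choice of lengths produces pairwise incompatible segments. This is not a hypothetical worry: for $H=\dhD\kappa$ (Definition~\ref{def: dhD}) the image of any hyperedge of $\dhH\kappa$ is somewhere dense, hence accumulates at one of its members, and Propositions~\ref{lemma: dhD} and~\ref{ODDH fails for D kappa} (with Theorem~\ref{theorem: ODD ODDI}) show that consistently there is an \emph{injective} continuous homomorphism from $\dhH\kappa$ to $\dhD\kappa$ while no $\perp$-preserving order homomorphism for $\dhD\kappa$ exists. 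So no argument that, at this step, uses only injectivity of $f$ (distinctness of the $f(x_i)$) can possibly succeed; the homeomorphism hypothesis must enter in a stronger way than you use it.

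The paper's fix is exactly this stronger use: since $f$ is a homeomorphism onto its image, each $f(N_{e(u)\conc\langle\alpha\rangle})$ is \emph{relatively open} in $\ran(f)$, so one may choose $\iota(u\conc\langle\alpha\rangle)\subsetneq f(x_\alpha)$ extending $\iota(u)$ with the additional requirement $\ran(f)\cap N_{\iota(u\conc\langle\alpha\rangle)}\subseteq f(N_{e(u)\conc\langle\alpha\rangle})$, alongside the box-open inclusion into $H$. Pairwise incompatibility of the $\iota(u\conc\langle\alpha\rangle)$ is then automatic: if two of them were compatible, say $\iota(u\conc\langle\alpha\rangle)\subseteq\iota(u\conc\langle\beta\rangle)$, then $f(x_\beta)\in\ran(f)\cap N_{\iota(u\conc\langle\alpha\rangle)}\subseteq f(N_{e(u)\conc\langle\alpha\rangle})$, contradicting the disjointness of $f(N_{e(u)\conc\langle\alpha\rangle})$ and $f(N_{e(u)\conc\langle\beta\rangle})$ given by injectivity. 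In other words, incompatibility is extracted from disjointness of relatively open images, not from separating $\kappa$ many points. Relatedly, your parenthetical claim that a homeomorphism onto its image ``maps disjoint basic open sets to sets with disjoint closures'' is incorrect as stated (the image need not be closed, and closures in ${}^\kappa\kappa$ may meet); what holds, and what the proof needs, is disjointness together with relative openness in $\ran(f)$.
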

\begin{proof}
For \ref{ppohh homeo}, 
%note that
$[\iota]$ is a homeomorphism from ${}^{\kappa}\ddim$ onto its image
by Lemma~\ref{[e] continuous}~\ref{[e] homeomorphism}.
It is a homomorphism from $\dhH\ddim$ to $H\restr X$
by Lemma~\ref{homomorphisms and order preserving maps}~\ref{hop 2}. 

The argument for~\ref{homeo ppohh} is similar to the proof of Lemma~\ref{homomorphisms and order preserving maps}~\ref{hop 1}. 
We construct continuous strict order preserving maps
$\iota:{}^{<\kappa}\ddim\to{}^{<\kappa}\kappa$ 
and 
$e:{}^{<\kappa}\ddim\to{}^{<\kappa}\ddim$
with the following properties for all $t\in{}^{<\kappa}\ddim$ and 
$\alpha<\beta<\ddim$:
\begin{enumerate-(i)}
\item\label{homeo ppoh proof 0}
$e(t)\conc\langle \alpha\rangle\subseteq e(t\conc\langle \alpha\rangle)$. 
\item\label{homeo ppoh proof 1}
$f(N_{e(t)})\subseteq N_{\iota(t)}$.
\item\label{homeo ppoh proof perp}
$\iota(t\conc\langle \alpha\rangle)\perp\iota(t\conc\langle \beta\rangle)$.
\item\label{homeo ppoh proof 2} 
$\prod_{\gamma<\ddim}(N_{\iota(t\conc\langle \gamma \rangle)}\cap X)\subseteq H$.
\end{enumerate-(i)}
Construct $\iota(t)$ and $e(t)$ by recursion on $\lh(t)$.
Let $\iota(\emptyset):=e(\emptyset)=\emptyset$.
Let $t\in{}^{<\kappa}\ddim\,\setminus\{\emptyset\}$, and suppose that $\iota(u)$ and $e(u)$ have been defined for all $u\subsetneq t$.
If $\lh(t)\in\Lim$,
let $\iota(t):=\bigcup_{u\subsetneq t}\iota(u)$ and
let $e(t):=\bigcup_{u\subsetneq t}e(u)$. 
Since 
$N_{e(t)}=\bigcap_{u\subsetneq t}N_{e(u)}$
and \ref{hop proof 1} holds for all $u\subsetneq t$, 
we have
\[
f(N_{e(t)})\subseteq
\bigcap_{u\subsetneq t}f(N_{e(u)})\subseteq
\bigcap_{u\subsetneq t}N_{\iota(u)}
=N_{\iota(t)}.
\] 
Suppose $\lh(t)\in\Succ$, and let $u$ be the direct predecessor of $t$.
Construct 
$\iota(u\conc\langle\alpha\rangle)$ 
and
$e(u\conc\langle\alpha\rangle)$ 
for each $\alpha<\ddim$ simultaneously as follows. 
Let 
$x_\alpha\in{}^{\kappa}\ddim$ extend $e(u)\conc\langle \alpha\rangle$ 
for each $\alpha<\ddim$.
Choose
$\iota(u\conc\langle\alpha\rangle)\subsetneq f(x_\alpha)$ extending $\iota(u)$
with
$\ran(f)\cap N_{\iota(u\conc\langle\alpha\rangle)}\subseteq f(N_{e(u)\conc\langle\alpha\rangle})$
for each $\alpha<\ddim$
such that
$\prod_{\alpha<\ddim}(N_{\iota(u\conc\langle\alpha\rangle)}\cap X\subseteq H$.
The former can be achieved since 
$f$ is a homeomorphism onto its image and hence 
$f(N_{e(u)\conc\langle\alpha\rangle})$ is an open subset of $\ran(f)$. 
The latter can be achieved since 
$f$ is a homomorphism from $\dhH\ddim$ to $H\restr X$ and $H\restr X$ is box-open on $X$.
The nodes $\iota(u\conc\langle\alpha\rangle)$ are then pairwise incompatible, since the sets 
$\ran(f)\cap N_{\iota(u\conc\langle\alpha\rangle)}\subseteq f(N_{e(u)\conc\langle\alpha\rangle})$
are pairwise disjoint. 
By continuity of $f$, choose nodes 
$e(u\conc\langle\alpha\rangle)\subsetneq x_\alpha$ 
extending $e(u)\conc\langle\alpha\rangle$
with
$f(N_{e(u\conc\langle\alpha\rangle})\subseteq
N_{\iota(u\conc\langle\alpha\rangle)}$.

Suppose the
maps $\iota$ and $e$ have been constructed.
$e$~is a strict $\wedge$-homomorphism by~\ref{homeo ppoh proof 0}
and $\iota$ is 
$\perp$-preserving continuous map by~\ref{homeo ppoh proof perp}. 
$[\iota]=f\comp[e]$ by~\ref{homeo ppoh proof 1} 
as in the proof of 
Lemma~\ref{homomorphisms and order preserving maps}~\ref{hop 1}.
Hence
$\ran([\iota])\subseteq X$, 
so $\iota$ is an order homomorphism for $H$
by~\ref{homeo ppoh proof 2}.
\end{proof}

%For $\ddim<\kappa$ dimensional relatively box-open dihypergraphs $H$, all of 
%the dichotomies in Definition~\ref{def: ODD variants}
%are in fact equivalent to $\ODD\kappa H$.
%This follows immediately from the next theorem.

\begin{theorem}
\label{theorem: ODD ODDC when ddim<kappa}
Suppose that $2\leq\ddim<\kappa$ and $H$ is a relatively box-open $\ddim$-dihypergraph on ${}^\kappa\kappa$.
If there exists a continuous homomorphism $f$ from $\dhH\ddim$ to $H$, 
then there exists a homomorphism $h$ from $\dhH\ddim$ to $H$ which is a homeomorphism from ${}^\kappa\ddim$ onto a closed subset 
of ${}^\kappa\kappa$ 
and a continuous strict $\wedge$-homomorphism 
$e:{}^{<\kappa}\ddim\to{}^{<\kappa}\ddim$
with $h=f\comp [e]$.
\end{theorem}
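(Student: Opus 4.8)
The plan is to start from the given continuous homomorphism $f$ from $\dhH\ddim$ to $H$ and apply Lemma~\ref{homomorphisms and order preserving maps}~\ref{hop 1} to obtain a continuous order homomorphism $\iota_0$ for $H$ together with a continuous strict $\wedge$-homomorphism $e_0:{}^{<\kappa}\ddim\to{}^{<\kappa}\ddim$ with $[\iota_0]=f\comp[e_0]$. The issue is that $\iota_0$ need not be $\perp$-preserving, so $[\iota_0]$ need not be injective, let alone a homeomorphism onto a closed set. The point of assuming $\ddim<\kappa$ is that we have room to separate the $\ddim$ many immediate successors of each node: since $\ddim<\kappa$ and $H$ is box-open on its domain, at each splitting stage we can first force the images $\iota(t\conc\langle\alpha\rangle)$ for $\alpha<\ddim$ to be pairwise incompatible while still lying inside the box-open sets witnessing condition~\ref{oh2} of Definition~\ref{def: order homomorphism}.

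Concretely, I would construct, by recursion on $\lh(t)$, a continuous $\perp$- and strict order preserving map $\iota:{}^{<\kappa}\ddim\to{}^{<\kappa}\kappa$ and a continuous strict $\wedge$-homomorphism $e:{}^{<\kappa}\ddim\to{}^{<\kappa}\ddim$ refining $\iota_0,e_0$, maintaining the analogues of \ref{homeo ppoh proof 0}--\ref{homeo ppoh proof 2} from Lemma~\ref{lemma: perp-preserving order homomorphisms and homeomorphisms}~\ref{homeo ppoh}. At successor stages, given $e(u)$, pick $x_\alpha\supseteq e(u)\conc\langle\alpha\rangle$ for each $\alpha<\ddim$; using that $f$ is a homomorphism, the sequence $\langle f(x_\alpha):\alpha<\ddim\rangle$ is a hyperedge of $H$, so by box-openness on $\domh H$ we can choose $\iota(u\conc\langle\alpha\rangle)\subsetneq f(x_\alpha)$ with $\prod_{\alpha<\ddim}(N_{\iota(u\conc\langle\alpha\rangle)}\cap\domh H)\subseteq H$; since $\ddim<\kappa$ we may additionally extend these finitely-or-boundedly-many nodes to be pairwise incompatible. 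Then continuity of $f$ gives the matching $e(u\conc\langle\alpha\rangle)\supseteq e(u)\conc\langle\alpha\rangle$ with $f(N_{e(u\conc\langle\alpha\rangle)})\subseteq N_{\iota(u\conc\langle\alpha\rangle)}$. Setting $h:=[\iota]$, the identity $[\iota]=f\comp[e]$ follows exactly as in Lemma~\ref{homomorphisms and order preserving maps}~\ref{hop 1}, so $\ran(h)\subseteq\ran(f)\subseteq\domh H$ and $h$ is a homomorphism from $\dhH\ddim$ to $H$ by Lemma~\ref{homomorphisms and order preserving maps}~\ref{hop 2}.

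It then remains to verify the two topological properties. Since $\iota$ is $\perp$- and strict order preserving, $[\iota]$ is a homeomorphism from ${}^\kappa\ddim$ onto its image by Lemma~\ref{[e] continuous}~\ref{[e] homeomorphism}; and because $\ddim<\kappa$, Corollary~\ref{[e] homeomorphism onto a closed set} (via Lemma~\ref{lemma: [e] closed map}~\ref{[e] closed when ddim<kappa}) guarantees that $[\iota]$ is in fact a homeomorphism onto a \emph{closed} subset of ${}^\kappa\kappa$. Finally, $e$ is a strict $\wedge$-homomorphism by the analogue of \ref{homeo ppoh proof 0}, hence continuous and $\perp$-preserving as recorded in Definition~\ref{def: wedge-homomorphism}.

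The main obstacle I anticipate is purely bookkeeping at the successor stage: arranging the $\ddim$ many new nodes $\iota(u\conc\langle\alpha\rangle)$ to be simultaneously pairwise incompatible \emph{and} to still satisfy the box-openness inclusion $\prod_{\alpha<\ddim}(N_{\iota(u\conc\langle\alpha\rangle)}\cap\domh H)\subseteq H$. This is where $\ddim<\kappa$ is genuinely used — one can thin out each $f(x_\alpha)$ to a splitting pattern because there are fewer than $\kappa$ coordinates to separate and $N_s\cap\domh H$ refines into incompatible relatively open pieces — whereas for $\ddim=\kappa$ this step fails, which is exactly why $\ODDC\kappa H$ and $\ODDH\kappa H$ require $\Diamondi\kappa$ (Theorem~\ref{theorem: ODD ODDI}) and can genuinely fail otherwise (Propositions~\ref{lemma: dhD} and~\ref{ODDH fails for D kappa}). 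Everything else is a routine adaptation of the limit-stage continuity argument already carried out in the proof of Lemma~\ref{lemma: perp-preserving order homomorphisms and homeomorphisms}~\ref{homeo ppoh}.
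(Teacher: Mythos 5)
Your overall skeleton matches the paper's proof — start from Lemma~\ref{homomorphisms and order preserving maps}~\ref{hop 1}, run a recursion to upgrade to a $\perp$-preserving order homomorphism, and finish with Lemma~\ref{[e] continuous}~\ref{[e] homeomorphism} and Lemma~\ref{lemma: [e] closed map}~\ref{[e] closed when ddim<kappa} — but the successor stage, which you yourself single out as the main obstacle, fails as written. You pick arbitrary $x_\alpha\supseteq e(u)\conc\langle\alpha\rangle$ and then claim that, since $\ddim<\kappa$, the nodes $\iota(u\conc\langle\alpha\rangle)\subsetneq f(x_\alpha)$ can be ``extended to be pairwise incompatible''. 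This is impossible whenever $f(x_\alpha)=f(x_\beta)$ for some $\alpha\neq\beta$: initial segments of one and the same point are always compatible no matter how far you extend them, and no refinement of $N_s\cap\domh H$ into incompatible relatively open pieces helps, because the obstruction is equality of points, not topology. Nothing in your construction rules this out: $f$ is only a (possibly badly non-injective) continuous homomorphism, and hyperedges of $H$ are merely non-constant, not injective, so $\langle f(x_\alpha):\alpha<\ddim\rangle\in H$ does not force these values to be pairwise distinct.

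The missing idea — and it is exactly the one nontrivial point of the paper's proof — is to choose the $x_\alpha$ recursively so that the values $f(x_\alpha)$ are pairwise \emph{distinct} before extracting incompatible initial segments. This is possible because $|f(N_t)|>\kappa$ for every $t\in{}^{<\kappa}\ddim$, which follows from Lemma~\ref{two options in ODD are mutually exclusive}~\ref{me 2}: no $H\restr f(N_t)$ admits a $\kappa$-coloring, whereas any set of size $\leq\kappa$ does (singletons are $H$-independent). Once the $f(x_\alpha)$ are distinct, the rest is as you say: since $\ddim<\kappa$ and $\kappa$ is regular, there is a single level $\gamma<\kappa$ past all pairwise splitting points, and one takes $\iota(u\conc\langle\alpha\rangle)$ extending $f(x_\alpha)\restr\gamma$. (The paper also makes this step frictionless by working with the order homomorphism $\iota_0$ produced by Lemma~\ref{homomorphisms and order preserving maps}~\ref{hop 1} rather than with $f$ directly: it sets $\theta(u\conc\langle\alpha\rangle):=\iota_0(x_\alpha\restr\gamma)$ for $x_\alpha\supseteq e_1(u)\conc\langle\alpha\rangle$ with $[\iota_0](x_\alpha)$ pairwise distinct, so the inclusions $\prod_{\alpha<\ddim}(N_{\theta(u\conc\langle\alpha\rangle)}\cap\domh H)\subseteq H$ are inherited from $\iota_0$ instead of being re-derived from box-openness at every stage.) With this correction your argument goes through and coincides with the paper's; note also that $\ddim<\kappa$ is used for the existence of the uniform splitting level $\gamma$ and for closedness of the range at the end, not for any refinement of neighborhoods.
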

\begin{proof}
The theorem will follow from the next claim.
\begin{claim*}
There exists a continuous
$\perp$-preserving
order homomorphism $\theta$ for $H$ 
and a continuous strict $\wedge$-homomorphism 
$e:{}^{<\kappa}\ddim\to{}^{<\kappa}\ddim$
with $[\theta]=f\comp [e]$.
\end{claim*}
\begin{proof}
There exists a continuous order homomorphism $\iota$ for $H$ 
and a continuous strict $\wedge$-homomorphism 
$e_0:{}^{<\kappa}\ddim\to{}^{<\kappa}\ddim$
with $[\iota]=f\comp[e_0]$
by Lemma~\ref{homomorphisms and order preserving maps}~\ref{hop 1}. 
\todog{continuity of $\iota$ is only needed for $\theta$ to be continuous. But the rest of the proof works even if $\theta$ is not continuous}
We define continuous strict order preserving maps
$\theta:{}^{<\kappa}\ddim\to{}^{<\kappa}\kappa$
and $e_1:{}^{<\kappa}\ddim\to{}^{<\kappa}\ddim$
such that 
the following hold
for all $s,t\in{}^{<\kappa}\ddim$ 
and $\alpha<\ddim$:
\begin{enumerate-(i)}
\item\label{hop2 proof 3}
If $s\perp t$, then $\theta(s)\perp \theta(t)$. 
\item\label{hop2 proof 2}
$e_1(t)\conc\langle\alpha\rangle\subseteq e_1(t\conc\langle\alpha\rangle)$.
\item\label{hop2 proof 1}
$\theta(t)= \iota(e_1(t))$.
\end{enumerate-(i)}
Construct $\theta(t)$ and $e_1(t)$ by recursion on $\lh(t)$.
Let $\theta(\emptyset):=e_1(\emptyset)=\emptyset$.
Fix $t\in{}^{<\kappa}\ddim\,\setminus\{\emptyset\}$ and assume that $\theta(u)$ and $e_1(u)$ have been constructed for all $u\subsetneq t$. 
If $\lh(t)\in\Lim$,
let $e_1(t):=\bigcup_{u\subsetneq t}e_1(u)$ 
and 
$\theta(t):= \iota(e_1(t))$.
Suppose that $\lh(t)\in\Succ$ and let $u$ be the direct predecessor of $t$.
Define 
$e_1(u\conc\langle\alpha\rangle)$
and
$\theta(u\conc\langle\alpha\rangle)$
simultaneously for all $\alpha<\ddim$ as follows.
Since
$|[\iota](N_{e(u)\conc\langle\alpha\rangle})|>\kappa$ for all $\alpha<\ddim$ 
by Lemma~\ref{two options in ODD are mutually exclusive}~\ref{me 2},
construct by recursion on $\alpha$ 
a sequence $\langle x_\alpha\in{}^\kappa\ddim:\alpha<\ddim\rangle$ 
such that 
$e_1(u)\conc\langle\alpha\rangle\subseteq x_\alpha$
and
$[\iota](x_\alpha)\neq [\iota](x_\beta)$
for all $\alpha,\,\beta<\ddim$ with $\alpha\neq\beta$.
Since $\ddim<\kappa$,
there exists $\gamma<\kappa$ such that
$\iota(x_\alpha\restr\gamma)\perp \iota(x_\beta\restr\gamma)$
and
$e_1(u)\conc\langle\alpha\rangle\subseteq x_\alpha\restr\gamma$
for all $\alpha,\,\beta<\ddim$ with $\alpha\neq\beta$.
Let 
$e_1(u\conc\langle\alpha\rangle):=x_\alpha\restr\gamma$
and 
$\theta(u\conc\langle\alpha\rangle):=\iota(x_\alpha\restr\gamma)$
for all $\alpha<\ddim$.

Suppose the maps $\theta$ and $e_1$ have been constructed.
$\theta$ is $\perp$-preserving by~\ref{hop2 proof 3}.
$e_1$ is a strict $\wedge$-homomorphism by~\ref{hop2 proof 2}
and hence so is 
$e:=e_0\comp e_1$.
Moreover,
$[\theta]=[\iota]\comp[e_1]=
%f\comp[e_0]\comp [e_1]=
f\comp [e]$
since 
$\theta=\iota\comp e_1$ by~\ref{hop2 proof 1}
and
$[\iota]=f\comp[e_0]$.
Lastly,
$\theta$ is an order homomorphism for $H$
since
$\ran([\theta])\subseteq\ran(f)\subseteq \domh H$
and
$\prod_{\alpha<\ddim}(N_{\theta(t\conc\langle\alpha\rangle)}\cap \domh H)
\subseteq
\prod_{\alpha<\ddim}(N_{\iota(e_1(t)\conc\langle\alpha\rangle)}\cap \domh H)
\subseteq H$
for all $t\in{}^{<\kappa}\ddim$.
The last two inclusions hold since
$\iota(e_1(t)\conc\langle\alpha\rangle)\subseteq\theta(t\conc\langle\alpha\rangle)$
for all $\alpha<\ddim$
by~\ref{hop2 proof 2} and~\ref{hop2 proof 1}
and since $\iota$ is an order homomorphism for $H$.
\end{proof}
Let $\theta$ be as in the claim.
Then $h:=[\theta]$ is a homomorphism from $\dhH\kappa$ to $H$ which is a homeomorphism from ${}^\kappa\ddim$ onto $\ran(h)$
by Lemma~\ref{lemma: perp-preserving order homomorphisms and homeomorphisms}~\ref{ppohh homeo}.
Since $\ddim<\kappa$,
$\ran(h)$ 
is a closed subset of ${}^\kappa\kappa$
by Lemma~\ref{lemma: [e] closed map}~\ref{[e] closed when ddim<kappa}.
\end{proof}

\begin{corollary}
\label{cor: ODD ODDC when ddim<kappa}
Suppose that $2\leq\ddim<\kappa$ and $H$ is a relatively box-open $\ddim$-dihypergraph on~${}^\kappa\kappa$. Then
$\ODDC\kappa H$
$\Longleftrightarrow$
$\ODDH\kappa H$
$\Longleftrightarrow$
$\ODDI\kappa H$
$\Longleftrightarrow$
$\ODD\kappa H$.
\end{corollary}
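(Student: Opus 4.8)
The statement to prove is Corollary~\ref{cor: ODD ODDC when ddim<kappa}: for a relatively box-open $\ddim$-dihypergraph $H$ on ${}^\kappa\kappa$ with $2\leq\ddim<\kappa$, the four dichotomies $\ODDC\kappa H$, $\ODDH\kappa H$, $\ODDI\kappa H$, and $\ODD\kappa H$ are all equivalent.

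The plan is to establish a chain of implications and then close the loop. The implications $\ODDC\kappa H \Rightarrow \ODDH\kappa H \Rightarrow \ODDI\kappa H \Rightarrow \ODD\kappa H$ are immediate from the definitions in Definition~\ref{def: ODD variants}, since each successive dichotomy asks for a homomorphism with strictly weaker additional properties: a homeomorphism onto a closed set is in particular a homeomorphism onto its image, which is in particular an injective continuous homomorphism, which is in particular a continuous homomorphism. (In each case the first alternative, the existence of a $\kappa$-coloring, is identical across all four statements, so only the second alternatives need to be compared.) Thus the only substantive content is the reverse implication $\ODD\kappa H \Rightarrow \ODDC\kappa H$, which will complete the cycle.

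First I would observe that $\ODD\kappa H$ and $\ODDC\kappa H$ share the same first alternative, so it suffices to show: if there exists a continuous homomorphism from $\dhH\ddim$ to $H$, then there exists one which is a homeomorphism from ${}^\kappa\ddim$ onto a closed subset of ${}^\kappa\kappa$. But this is exactly the conclusion of Theorem~\ref{theorem: ODD ODDC when ddim<kappa}, whose hypotheses ($2\leq\ddim<\kappa$, $H$ relatively box-open on ${}^\kappa\kappa$, and the existence of a continuous homomorphism $f$) match our situation precisely. Applying that theorem yields a homomorphism $h$ from $\dhH\ddim$ to $H$ that is a homeomorphism from ${}^\kappa\ddim$ onto a closed subset of ${}^\kappa\kappa$, which witnesses $\ODDC\kappa H$.

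Combining these, I would write the proof as the single display
\[
\ODDC\kappa H
\Longrightarrow
\ODDH\kappa H
\Longrightarrow
\ODDI\kappa H
\Longrightarrow
\ODD\kappa H
\Longrightarrow
\ODDC\kappa H,
\]
noting that the first three arrows hold by the definitions and the last arrow holds by Theorem~\ref{theorem: ODD ODDC when ddim<kappa}. Since the implications form a cycle, all four statements are equivalent. There is essentially no obstacle here: all the difficulty has already been absorbed into Theorem~\ref{theorem: ODD ODDC when ddim<kappa}, where the genuine work is the recursive construction (in its internal claim) of a $\perp$-preserving order homomorphism $\theta$ factoring as $f\comp[e]$ for a strict $\wedge$-homomorphism $e$, together with the appeals to Lemma~\ref{lemma: perp-preserving order homomorphisms and homeomorphisms}~\ref{ppohh homeo} (to turn $\theta$ into a homeomorphism onto its image) and to Lemma~\ref{lemma: [e] closed map}~\ref{[e] closed when ddim<kappa} (to conclude the image is closed, crucially using $\ddim<\kappa$). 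The corollary itself is just the bookkeeping that packages this into the four-way equivalence, so the only thing to be careful about is verifying that the first alternative genuinely coincides in all four dichotomies and that the definitional implications go in the direction claimed.
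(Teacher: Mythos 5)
Your proposal is correct and matches the paper's intended argument exactly: the paper states the corollary immediately after Theorem~\ref{theorem: ODD ODDC when ddim<kappa} with no separate proof, precisely because the definitional implications $\ODDC\kappa H \Rightarrow \ODDH\kappa H \Rightarrow \ODDI\kappa H \Rightarrow \ODD\kappa H$ (the dichotomies share the same first alternative and have successively weaker second alternatives) together with the theorem closing the cycle are all that is needed. Nothing is missing.
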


In the rest of this subsection, we study the relationships between $\ODD\kappa H$ 
and its strengthenings
in Definition~\ref{def: ODD variants} 
for $\ddim=\kappa$
and relatively box-open $\kappa$-dihypergraphs $H$.
Our results are summarized in Figure~\ref{figure: ODD variants}.%
\footnote{Blue arrows are clickable and lead to the results where the implications are proved.} 
Note that all implications from left to right in 
Figure~\ref{figure: ODD variants} 
hold for all dihypergraphs by 
definition. 
Recall that $\Diamondi\kappa$ implies $\kappa^{<\kappa}=\kappa>\omega$ 
and that it always holds when $\kappa\geq \aleph_2$ is a successor cardinal or inaccessible.\footnote{See Lemma~\ref{diamondi claim} and Remarks~\ref{diamondi omega} and \ref{diamondi remark}~\ref{diamondi remark 2}.}
\todon{In the next figure, ``abovecaptionskip'' (distance between figure and caption) and ``intextsep'' (distance between text and figure) were set LOCALLY. Remove before submitting to journal?}%

{
\hypersetup{linkcolor=VeryDarkBlueNode}
\setlength{\abovecaptionskip}{0pt}
%\setlength{\intextsep}{0 pt}
%\begin{figure}[H] %%%puts figure in this exact place in text 
\begin{figure}[t] %%%floats the figure 
\begin{tikzpicture}[scale=1]%was: scale=1
%node styles
%\tikzstye{vertex}={draw,black,scale=.6,fill=white,line width=.5pt}
\tikzstyle{theory1}=[draw,rounded corners,scale=.90, minimum width=8.0mm, minimum height=8.0mm,line width=\widthline,VeryDarkBlueNode]
\tikzstyle{info}=[rounded corners, inner sep= 2pt, align=left,scale=0.82]
\tikzstyle{label}=[rounded corners, inner sep= 2pt, align=left,scale=0.95]
%arrow styles
%%%%option 1: the original arrows

\tikzstyle{medarrow}=[>=Stealth, line width=\widtharrow, scale=1.8] 
\tikzstyle{consarrow}=[>=Stealth, line width=\widtharrow, scale=1.8, dash pattern=on 6pt off  2pt]
\tikzstyle{questionarrow}=[>=Stealth, line width=\widtharrow, scale=1.8, dash pattern=on 1pt off  2pt]

\tikzstyle{infoarrow}=[>=Stealth, scale=1] 

%%%%%option 2: small round arrowheads
%\tikzstyle{medarrow}=[>={Stealth[round, length=1.5mm,width=1.1mm]}, line width=\widtharrow, scale=1.8, VeryDarkBlueNode] 
%\tikzstyle{smallarrow}=[>={Stealth[round, length=1.5mm,width=1.1mm]}, line width=\widthlinethin, scale=1.8, VeryDarkBlueNode] 
%\tikzstyle{consarrow}=[>={Stealth[round, length=1.5mm,width=1.1mm]}, line width=\widtharrow, scale=1.8, dash pattern=on 6pt off  2pt]
%\tikzstyle{questionarrow}=[>={Stealth[round, length=1.5mm,width=1.1mm]}, line width=\widtharrow, scale=1.8, dash pattern=on 1pt off  2pt]

%\tikzstyle{infoarrow}=[>=Stealth[round], scale=1] 

%%%%option 3: arrows which look like implications
%
%\tikzstyle{bigarrow}=[>=implies, double distance=1.7 pt, very thick, scale=1.8] 
%\tikzstyle{medarrow}=[>=implies, double distance=1.6 pt, thick, scale=1.8] 
%\tikzstyle{smallarrow}=[>=implies, double equal sign distance,  thick, scale=1.8] 

% draw the nodes

%%kappa=omega nodes
 \node (om1) at (0,10.2) [theory1] {\hyperlink{ODDC def}{\parbox[c][.7cm]{1.5cm}{$\,\ODDC\omega H$}}};
 \node (om2) at (4,10.2) [theory1] {\hyperlink{ODDH def}{\parbox[c][.7cm]{1.5cm}{$\,\ODDH\omega H$}}};
 \node (om3) at (8,10.2) [theory1] {\hyperlink{ODDI def}{\parbox[c][.7cm]{1.35cm}{$\,\ODDI\omega H$}}};
 \node (om4) at (12,10.2) [theory1] {\hyperref[ODD def]{\parbox[c][.7cm]{1.25cm}{$\,\ODD\omega H$}}}; 
%label node
\node (l1) at (6,9.0) [label, text width=1.0 cm] {$\kappa=\omega$};

%%nodes: kappa=omega_1 or weakly inaccessible but not inaccessible
 \node (unc1) at (0,6.7) [theory1] {\hyperlink{ODDC def}{\parbox[c][.7cm]{1.5cm}{$\,\ODDC\kappa H$}}};
 \node (unc2) at (4,6.7) [theory1] {\hyperlink{ODDH def}{\parbox[c][.7cm]{1.5cm}{$\,\ODDH\kappa H$}}};
 \node (unc3) at (8,6.7) [theory1] {\hyperlink{ODDI def}{\parbox[c][.7cm]{1.35cm}{$\,\ODDI\kappa H$}}};
 \node (unc4) at (12,6.7) [theory1] {\hyperref[ODD def]{\parbox[c][.7cm]{1.25cm}{$\,\ODD\kappa H$}}}; 
%label node
\node (l2) at (6,5.5) [label, text width=9.2 cm] {$\kappa=\omega_1$ 
or $\kappa$ is weakly inaccessible but not inaccessible
};

%%nodes: kappa>omega_1 successor or inaccessible (i.e. when diamondi provably holds)
 \node (dia1) at (0,3.7) [theory1] {\hyperlink{ODDC def}{\parbox[c][.7cm]{1.5cm}{$\,\ODDC\kappa H$}}};
 \node (dia2) at (4,3.7) [theory1] {\hyperlink{ODDH def}{\parbox[c][.7cm]{1.5cm}{$\,\ODDH\kappa H$}}};
 \node (dia3) at (8,3.7) [theory1] {\hyperlink{ODDI def}{\parbox[c][.7cm]{1.35cm}{$\,\ODDI\kappa H$}}};
 \node (dia4) at (12,3.7) [theory1] {\hyperref[ODD def]{\parbox[c][.7cm]{1.25cm}{$\,\ODD\kappa H$}}}; 
%label node
\node (l3) at (6,2.5) [label, text width=8.0 cm] {$\kappa\geq\omega_2$ is a successor cardinal or is inaccessible
};
%%nodes containing explanations of arrows
\node (i1) at (2.75,1.4) 
[info, text width=10.0 cm] {
Implications for relatively box-open $H$ and $\ddim=\kappa$:
};
\node(a1) at (-1.2, 0.9)[info,BlueArrow]{A};
\node(b1) at (3.30, 0.9)[info, text width=9.0 cm]{\textcolor{BlueArrow}{B:}
$A$ and $B$ are equivalent for all such $H$};
\draw[infoarrow,BlueArrow] (a1) edge[<->] (b1);
\node(a2) at (-1.20, 0.45)[info,BlueArrow]{A};
\node(b2a) at (3.30, 0.45)[info, text width=9.0 cm]{\textcolor{BlueArrow}{B:}
$A$ implies $B$ for all such $H$
but the reverse% 
};
\node(b2b) at (2.60,0.00)
[info, text width=9.6 cm] {
implication fails for some
such $H$%
};
\draw[infoarrow,BlueArrow] (a2) edge[->>] (b2a);
\node (i3) at (10.0,0.70)
[info, text width=9.0 cm] {
{\bf \color{BlueArrow} solid arrow:} 
provable for all $\kappa$ with $\kappa^{<\kappa}=\kappa$%
\newline
%%%which color do we want for dashed arrows? 
%{\bf \color{RedOrangeArrow} dashed arrow:} 
%{\bf \color{TealArrow} dashed arrow:} 
{\bf \color{BlueArrow} dashed arrow:} 
consistent and follows from the \\assumption in the superscript%
\newline
%{\bf \color{GreyBlueNode} dotted arrow:} 
{\bf \color{GreyBlueNode} dotted arrow:} 
its consistency is an open question%
}; 

%draw the arrows
%%% arrows: kappa=omega_1 or weakly inacc
%provable
 \draw[medarrow, BlueArrow]
([yshift=0mm]unc1.east) edge [->>] 
node {\hyperref[lemma:ODDH and not ODDC]{\phantom{xxx}}} 
([yshift=0mm]unc2.west)
([yshift=0mm]unc2.north) edge [->>,bend left=22] 
node {\hyperlink{ODD and not ODDH}{\phantom{xxx}}} 
([yshift=0mm]unc4.north);
%consistent for kappa>omega (holds assuming Diamondi)
%%%which color do we want for dashed arrows? 
%\draw[consarrow, RedOrangeArrow] 
%\draw[consarrow, TealArrow]
\draw[consarrow, BlueArrow]
([yshift=1mm]unc3.east) edge[<->] 
node[midway,above] {{\footnotesize $\Diamondi\kappa$}} 
node {\hyperlink{ODD and ODDI Diamondi}{\phantom{xxx}}} 
([yshift=1mm]unc4.west)
([yshift=1mm]unc2.east) edge[->>] 
node[midway,above] {{\footnotesize $\Diamondi\kappa$}} 
node {\hyperlink{ODDI and not ODDH Diamondi}{\phantom{xxx}}} 
([yshift=1mm]unc3.west);
%open questions
 \draw[questionarrow, GreyBlueNode]
([yshift=-1mm]unc3.west) edge[<->] 
%node[midway,below] {{\footnotesize?}} 
([yshift=-1mm]unc2.east)
([yshift=-1mm]unc3.east) edge[->>] 
%node[midway,below] {{\footnotesize?}} 
([yshift=-1mm]unc4.west);
%
%%% arrows: kappa\geq omega_2 successor weakly inacc
%provable
 \draw[medarrow, BlueArrow]
([yshift=0mm]dia1.east) edge [->>] 
node {\hyperref[lemma:ODDH and not ODDC]{\phantom{xxx}}} 
([yshift=0mm]dia2.west)
([yshift=0mm]dia3.east) edge [<->] 
node {\hyperlink{ODD and ODDI Diamondi provable}{\phantom{xxx}}} 
([yshift=0mm]dia4.west)
([yshift=0mm]dia2.east) edge[->>] 
node {\hyperlink{ODD and ODDI Diamondi provable}{\phantom{xxx}}} 
([yshift=0mm]dia3.west);
%
%%%%kappa=omega arrows
%provable
\draw[medarrow, BlueArrow]
([yshift=0mm]om1.east) edge [->>] 
node {\hyperref[lemma:ODDH and not ODDC]{\phantom{xxx}}} 
([yshift=0mm]om2.west)
([yshift=0mm]om3.east) edge[->>] 
node {\hyperlink{ODD and not ODDI}{\phantom{xxx}}} 
([yshift=0mm]om4.west);
%open questions
\draw[questionarrow, GreyBlueNode]
([yshift=-1mm]om2.east) edge[<->] 
%node[midway,below] {{\footnotesize?}} 
([yshift=-1mm]om3.west)
([yshift=1mm]om2.east) edge[->>] 
%node[midway,above] {{\footnotesize?}} 
([yshift=1mm]om3.west);
\end{tikzpicture}
\caption{Variants of the open dihypergraph dichotomy}
\label{figure: ODD variants}
\end{figure}}

The next lemma shows that
in general, $\ODDH\kappa H$ 
does not imply
$\ODDC\kappa H$.

\begin{lemma} 
\label{lemma:ODDH and not ODDC}
There exists a box-open $\kappa$-dihypergraph $H$ on ${}^\kappa\kappa$ such that 
\begin{enumerate-(1)}
\item\label{lemma:ODDH and not ODDC 1}
there is a homomorphism from $\dhH\kappa$ to $H$ which is a homeomorphism from ${}^\kappa\kappa$ onto its image, but
\item\label{lemma:ODDH and not ODDC 2}
there is no continuous homomorphism from $\dhH{\kappa}$ to $H$ with closed image.
\end{enumerate-(1)}
\end{lemma}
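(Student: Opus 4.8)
The plan is to construct a single box-open $\kappa$-dihypergraph $H$ on ${}^\kappa\kappa$ that admits an order homomorphism whose induced map is a homeomorphism onto its (non-closed) image, while simultaneously forbidding any continuous homomorphism with closed range. The natural source of a homeomorphism onto a non-closed image is Remark~\ref{ran[e] not closed}: since this lemma assumes $\kappa$ is not weakly compact, I would first fix a $\perp$- and strict order preserving map $\psi:{}^{<\kappa}\kappa\to{}^{<\kappa}\kappa$ as in part~\ref{ran[e] not closed 2} of that remark, so that $\Lim^\psi_t=\emptyset$ for all $t\in{}^{<\kappa}\kappa$, the map $[\psi]$ is injective, and $\ran([\psi])$ is \emph{not} closed. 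By Lemma~\ref{[e] continuous}~\ref{[e] homeomorphism}, $[\psi]$ is then a homeomorphism from ${}^\kappa\kappa$ onto $\ran([\psi])$.

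First I would define $H:=\HH_{\psi,{}^\kappa\kappa}=\bigcup_{t\in{}^{<\kappa}\kappa}\prod_{i\in\kappa}N_{\psi(t\conc\langle i\rangle)}$ in the notation of Remark~\ref{remark: HH_iota}. This is a box-open subset of ${}^\kappa({}^\kappa\kappa)$ by that remark, and it is a genuine $\kappa$-dihypergraph because $\psi$ is $\perp$-preserving, so for each $t$ the nodes $\psi(t\conc\langle i\rangle)$ are pairwise incompatible and no constant sequence appears. By construction $\psi$ is a $\perp$-preserving order homomorphism for $({}^\kappa\kappa,H)$, and hence $[\psi]$ is a homomorphism from $\dhH\kappa$ to $H$ which is a homeomorphism from ${}^\kappa\kappa$ onto its image by Lemma~\ref{lemma: perp-preserving order homomorphisms and homeomorphisms}~\ref{ppohh homeo}. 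This gives~\ref{lemma:ODDH and not ODDC 1} immediately, and it also shows $H$ admits no $\kappa$-coloring by Lemma~\ref{two options in ODD are mutually exclusive}~\ref{me 1}, so the dichotomy $\ODDH\kappa H$ indeed holds via this homomorphism.

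The hard part will be~\ref{lemma:ODDH and not ODDC 2}: showing that \emph{no} continuous homomorphism $f:{}^\kappa\kappa\to{}^\kappa\kappa$ from $\dhH\kappa$ to $H$ can have closed image. The key point is that $H$ is \emph{rigid} enough that any order homomorphism for it must essentially factor through $\psi$. Concretely, since $H$ is box-open on ${}^\kappa\kappa$, Lemma~\ref{homomorphisms and order preserving maps}~\ref{hop 1} lets me replace any continuous homomorphism $f$ by an order homomorphism $\iota$ for $({}^\kappa\kappa,H)$ with $[\iota]=f\comp[e]$ for a strict $\wedge$-homomorphism $e$; since $[e]$ is a homeomorphism onto its closed image (by Lemma~\ref{lemma: [e] closed map} it need not be, but $e$ being a $\wedge$-homomorphism makes $\ran([e])$ closed), the closedness of $\ran(f)$ transfers to $\ran([\iota])$. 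So I would reduce to showing that $\ran([\iota])$ is never closed for any order homomorphism $\iota$ for $H$. Then I would analyze the combinatorial constraint imposed by condition~\ref{oh2} of Definition~\ref{def: order homomorphism}: because each hyperedge of $H$ lives inside some $\prod_i N_{\psi(t\conc\langle i\rangle)}$, the values $\iota(u\conc\langle i\rangle)$ for $i<\kappa$ must extend a $\kappa$-indexed family of incompatible nodes $\psi(t_u\conc\langle i\rangle)$ drawn from $\ran(\psi)$. Tracking the stem $\psi(t_u)$ along a cofinal branch, the non-closure of $\ran([\psi])$ witnessed by $\langle 0\rangle^\kappa$ (via the barrier construction of Remark~\ref{ran[e] not closed}~\ref{ran[e] not closed 1}) should force a limit point of $\ran([\iota])$ outside $\ran([\iota])$ itself.

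The main obstacle, and the step I expect to require the most care, is making precise that every order homomorphism for $H$ inherits the defect causing $\ran([\psi])$ to fail closure, rather than merely noting that $\psi$ itself does. This amounts to proving a structural lemma: for any $\perp$-preserving order homomorphism $\iota$ for $H$, the tree $T(\iota)$ fails to be $\{\eta\}$-closed for some limit $\eta<\kappa$, so that $\ran([\iota])=[T(\iota)]$ cannot hold and a ``missing branch'' produces a limit point not in the range. I would establish this by choosing $\psi$ so that its splitting pattern records the barrier $A$ from Remark~\ref{ran[e] not closed}~\ref{ran[e] not closed 1} in a way that is detectable from $H$ alone; then the analysis of Lemma~\ref{lemma: T(e) closure properties} and Remark~\ref{example: T(e) not kappa-closed} applies to show that the required non-closure is forced. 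Once this rigidity is in place, the conclusion that $[\iota]$, and hence $f$, has non-closed image follows from Lemma~\ref{lemma: [e] closed map}~\ref{closure of ran[e] when ddim=kappa} by exhibiting the offending element of $\closure{\ran([\iota])}\setminus\ran([\iota])$.
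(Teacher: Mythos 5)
Your construction breaks at the very first step: the map $\psi$ you want does not exist. You ask for $\psi:{}^{<\kappa}\kappa\to{}^{<\kappa}\kappa$ that is simultaneously $\perp$- and strict order preserving, has $\Lim^\psi_t=\emptyset$ for all $t\in{}^{<\kappa}\kappa$, is injective on branches, and has non-closed range. But by Lemma~\ref{lemma: [e] closed map}~\ref{closure of ran[e] when ddim=kappa} --- the very lemma you invoke at the end --- any $\perp$- and strict order preserving map with all limit sets $\Lim^\psi_t$ empty induces a \emph{closed} map $[\psi]$, so $\ran([\psi])$ is closed. There is no conflict with Remark~\ref{ran[e] not closed}: the maps constructed there are strict order preserving (and can be made injective) but are \emph{not} $\perp$-preserving --- in part~\ref{ran[e] not closed 1} all nodes $s$ strictly below the barrier $A$ are sent to the pairwise comparable nodes $\langle 0\rangle^{\lh(s)}$ --- and $\perp$-preservation is exactly what you need both for $\HH_{\psi,{}^\kappa\kappa}$ to be a dihypergraph and for $[\psi]$ to be a homeomorphism onto its image via Lemma~\ref{[e] continuous}~\ref{[e] homeomorphism}. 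So your requirements pull in opposite directions and cannot be reconciled. A second, independent defect: your construction is confined to non-weakly compact $\kappa$, while the lemma is needed --- and is cited in every row of Figure~\ref{figure: ODD variants} --- for all $\kappa$ with $\kappa^{<\kappa}=\kappa$, including $\kappa=\omega$ and weakly compact $\kappa$.

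The frame $H:=\HH_\pi$ is salvageable, but you must choose a $\perp$-preserving $\pi$ whose limit sets are \emph{nonempty}, so that the non-closure of $\ran([\pi])$ is forced by the hypergraph itself rather than recorded externally by a barrier. The paper takes $\pi:{}^{<\kappa}\kappa\to{}^{<\kappa}2$ with $\pi(t\conc\langle\alpha\rangle)=\pi(t)\conc\langle 0\rangle^{\alpha}\conc\langle 1\rangle$; then $\ran([\pi])=\RR_\kappa$ is not closed (each $\pi(t)\conc\langle 0\rangle^\kappa$ is a limit point outside it), and part~\ref{lemma:ODDH and not ODDC 1} follows exactly as in your second paragraph. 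For part~\ref{lemma:ODDH and not ODDC 2} no rigidity or factoring analysis is needed: one shows directly that every element of the range of \emph{any} continuous homomorphism $f$ from $\dhH\kappa$ to $\HH_\pi$ takes the value $1$ unboundedly often (fix $x$ and $\xi$, use continuity to get $f(N_{x\restr\gamma})\subseteq N_{f(x)\restr\xi}$, run a hyperedge through $x$ above $x\restr\gamma$, and observe that the common stem $v$ of its image must extend $f(x)\restr\xi$), whereas for any hyperedge $\langle x_\alpha:\alpha<\kappa\rangle$ there is $v$ with $v\conc\langle 0\rangle^{\alpha}\conc\langle 1\rangle\subseteq f(x_\alpha)$ for all $\alpha$, so a closed range would have to contain the eventually-zero point $v\conc\langle 0\rangle^\kappa$ --- a contradiction. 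The ``rigidity'' lemma you defer (that every order homomorphism for $H$ inherits the defect of $\psi$) is precisely the part your proposal leaves open, and with this choice of $\pi$ it is replaced by the short direct argument above.
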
 
\begin{proof} 
Fix a strict order preserving map $\iota: {}^{<\kappa}\kappa\to {}^{<\kappa}2$ 
\todoo{We replaced $\pi$ by $\iota$ everywhere (since it's not exactly the same map as in Subsection~\ref{subsection: original KLW}).}
with $\iota(t\conc\langle\alpha\rangle)=\iota(t)\conc \langle0\rangle^{\alpha}{}\conc \langle1\rangle$ for all $t\in {}^{<\kappa}\kappa$ and $\alpha<\kappa$,
%\footnote{If we choose $\iota$ to be continuous with $\iota(\emptyset)=\emptyset$, we obtain the map defined at the beginning of Subsection~\ref{subsection: original KLW}.} 
\todog{The value of $\iota$ is unspecified at $\emptyset$ and at limit lengths. If we choose $\iota$ to be continuous with $\iota(\emptyset)=\emptyset$, we obtain the map defined at the beginning of Subsection~\ref{subsection: original KLW}.} 
and consider 
\index{dihypergraph!induced by a strict order preserving function\idf$\HH_{\iota,X},\,\HH_\iota$}
$\HH_{\iota}:=
\bigcup_{t\in{}^{<\kappa}\ddim}\prod_{\alpha<\ddim} N_{\iota(t\conc\langle\alpha\rangle)}
.$
Since $\iota$ is $\perp$-preserving, $\HH_\iota$ is a box-open dihypergraph on ${}^\kappa\kappa$ and $\iota$ is an order homomorphism for  $({}^\kappa\kappa,\HH_\iota)$.\footnote{In fact, $\HH_\iota$ is the smallest dihypergraph $H$ on ${}^\kappa\kappa$ such that $\iota$ is an order homomorphism for $({}^\kappa\kappa,H)$. 
$\HH_{\iota}$ is the same as the dihypergraph $\HH_{\iota, {}^\kappa\kappa}$ defined in  Remark~\ref{remark: HH_iota}.}
Thus, 
$[\iota]: {}^\kappa\kappa\to {}^\kappa\kappa$ is a 
homomorphism from  $\dhH{\kappa}$ to $\HH_\iota$ which is a homeomorphism onto its image by Lemma~\ref{lemma: perp-preserving order homomorphisms and homeomorphisms}~\ref{ppohh homeo}.
%continuous homomorphism from  $\dhH{\kappa}$ to $\HH_\iota$ 
%by Lemma~\ref{homomorphisms and order preserving maps}~\ref{hop 2}. 
%Moreover, it is easy to see that $[\iota]$ is a homeomorphism onto its image.\footnote{This also follows from Lemma \ref{[e] continuous}.}
\ref{lemma:ODDH and not ODDC 2} will follow from the next claim.

\begin{claim*} 
If $f: {}^\kappa\kappa\to {}^\kappa\kappa$ is a continuous homomorphism from $\dhH{\kappa}$ to $\HH_\iota$, then 
every element of $\ran(f)$ takes the value $1$ unboundedly often. 
\end{claim*} 
\begin{proof} 
Fix $x\in {}^\kappa\kappa$ and $\xi<\kappa$ and 
let $u:=f(x)\restr\xi$. 
By continuity, pick some $\gamma<\kappa$ such that 
$f(N_{x\restr\gamma})\subseteq N_u$. 
Take $\beta$ with $(x\restr\gamma)\conc\langle\beta\rangle \subseteq x$, 
and choose a hyperedge $\langle x_\alpha:\alpha<\kappa\rangle$ of $\dhH\kappa$
such that 
$x_\beta=x$ and
$(x\restr\gamma)\conc\langle\alpha\rangle\subseteq x_\alpha$ for all $\alpha<\kappa$.
%
%Since $f$ is a homomorphism, 
Since $\langle f(x_\alpha): \alpha<\kappa \rangle \in \HH_\iota$, 
there exists some $v\in{}^{<\kappa}\kappa$ such that 
$v\conc \langle0\rangle^{\alpha}\conc \langle1\rangle 
\subseteq f(x_\alpha)$ 
for all $\alpha<\kappa$. 
To see that $u\subseteq v$, note that $u\subseteq f(x_\alpha)$ for all $\alpha<\kappa$. 
For $\alpha=0,1$, this implies
$u\compat v\conc \langle1\rangle$ and $u\compat v\conc \langle0\rangle\conc \langle1\rangle$.
Since these split at $v$, we have $u\subseteq v$. 
Since $v\conc \langle0\rangle^{\beta}{}\conc \langle1\rangle \subseteq f(x)$,  we have $f(x)(\eta)=1$ for some $\eta>\lh(u)=\xi$. 
\end{proof} 

We claim that there is no continuous homomorphism $f: {}^\kappa\kappa\to {}^\kappa\kappa$ from $\dhH{\kappa}$ to $\HH_\iota$ with closed range. 
Towards a contradiction, suppose that $f$ is such a homomorphism. 
Let $\langle x_\alpha:\alpha<\kappa\rangle\in\dhH\kappa$ be an arbitrary hyperedge of $\dhH\kappa$.
Since $\langle f(x_\alpha): \alpha<\kappa \rangle \in \HH_\iota$, 
there is some $v\in {}^{<\kappa}\kappa$ such that $v\conc \langle0\rangle^{\alpha}{}\conc \langle1\rangle \subseteq f(x_\alpha)$ for all $\alpha<\kappa$. 
If $\ran(f)$ is closed, then $v\conc \langle0\rangle^{\kappa} \in \ran(f)$. 
But this contradicts the previous claim. 
\end{proof} 

\hypertarget{ODD and not ODDH}{}%
\hypertarget{ODD and not ODDI}{}%
We next show that $\ODD\kappa H$ does not in general imply
$\ODDH\kappa H$
and for $\kappa=\omega$, $\ODD\omega H$ may not even imply $\ODDI\omega H$. 
The following hypergraph\footnote{Recall that a $\ddim$-hypergraph is a $\ddim$-dihypergraph that is closed under permutations of hyperedges.} 
%\footnote{Recall that a $\ddim$-hypergraph is a $\ddim$-dihypergraph that is closed under permutations of hyperedges as in Definition~\ref{def: dihypergraph}~\ref{def: dihypergraph hypergraph}.}
will provide a counterexample to both implications.

\begin{definition}
\label{def: dhD}
Let 
$\dhD\kappa$ denote
\index{hypergraph!of sequences!DD@somewhere dense\idf$\dhD\kappa$}
the $\kappa$-hypergraph
on ${}^\kappa\kappa$
consisting of all somewhere dense sequences 
$\langle x_\alpha:\alpha<\kappa\rangle$.\footnote{I.e., 
$\{x_\alpha:\alpha<\kappa\}\cap N_t$ 
is a dense subset of $N_t$ for some $t\in{}^{<\kappa}\kappa$.
}
\end{definition}

Observe that $\dhD\kappa$ is in $\defsets\kappa$: 
It is in fact a $\kappa$-Borel
subset of the space ${}^\kappa({}^\kappa\kappa)$ with the $\lle\kappa$-box topology.
\todog{By the conventions in this paper,  ${}^\kappa({}^\kappa\kappa)$ is equipped with the box-topology unless otherwise stated!!} 
We first show that $\dhD\kappa$ is box-open and $\ODD\kappa{\dhD\kappa}$ holds.

\begin{proposition}\ 
\label{lemma: dhD}
\begin{enumerate-(1)}
\item\label{dhD box-open}
$\dhD\kappa$ is  box-open on ${}^\kappa\kappa$.
\item\label{ODD dhD}
There is a continuous homomorphism from $\dhH\kappa$ to $\dhD\kappa$.
\end{enumerate-(1)}
\end{proposition}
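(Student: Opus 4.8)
The plan is to prove the two parts separately, both by fairly direct constructions.

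For part \ref{dhD box-open}, I would show that $\dhD\kappa$ is box-open by verifying that membership in $\dhD\kappa$ is witnessed by a finite (indeed length-$\lle\kappa$) amount of information about the initial segments of the coordinates. Suppose $\bar x=\langle x_\alpha:\alpha<\kappa\rangle\in\dhD\kappa$, so that $\{x_\alpha:\alpha<\kappa\}\cap N_t$ is dense in $N_t$ for some $t\in{}^{<\kappa}\kappa$. Density in $N_t$ means that for every $s\in{}^{<\kappa}\kappa$ with $t\subseteq s$, there is some $\alpha$ with $s\subseteq x_\alpha$. The key observation is that this property is preserved if we perturb each $x_\alpha$ on a tail, as long as the relevant witnessing initial segments are retained; more precisely, one selects for each relevant $s$ a witness $\alpha_s$ and a length $\beta_s<\kappa$ with $s\subseteq x_{\alpha_s}\restr\beta_s$, and takes the basic box-open neighbourhood $\prod_{\alpha<\kappa} N_{x_\alpha\restr\gamma}$ where $\gamma$ bounds all the $\beta_s$ used. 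Since ${}^{<\kappa}\kappa$ has size $\kappa$ and $\kappa$ is regular, such a $\gamma<\kappa$ exists. Any $\bar y$ in this neighbourhood still has $\{y_\alpha:\alpha<\kappa\}\cap N_t$ dense in $N_t$, so $\bar y\in\dhD\kappa$. This shows $\dhD\kappa$ is box-open. I would also note in passing that $\bar x$ being somewhere dense requires $\{x_\alpha:\alpha<\kappa\}$ to be non-constant, so $\dhD\kappa$ is genuinely a dihypergraph.

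For part \ref{ODD dhD}, the natural approach is to build a continuous homomorphism $f:{}^\kappa\kappa\to{}^\kappa\kappa$ from $\dhH\kappa$ to $\dhD\kappa$ directly, using the order-homomorphism machinery of Subsection~\ref{subsection: order homomorphisms}. By Lemma~\ref{homomorphisms and order preserving maps}~\ref{hop 2}, it suffices to construct a strict order preserving map $\iota:{}^{<\kappa}\kappa\to{}^{<\kappa}\kappa$ with $\ran([\iota])\subseteq{}^\kappa\kappa$ such that for every $t\in{}^{<\kappa}\kappa$, the box $\prod_{\alpha<\kappa}(N_{\iota(t\conc\langle\alpha\rangle)})$ is contained in $\dhD\kappa$. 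The idea is to arrange, at each node $t$, that the nodes $\iota(t\conc\langle\alpha\rangle)$ for $\alpha<\kappa$ densely fill up a cone $N_{\iota(t)}$: that is, I would fix an enumeration $\langle s_\beta:\beta<\kappa\rangle$ of ${}^{<\kappa}\kappa$ extending $\iota(t)$ and choose the $\iota(t\conc\langle\alpha\rangle)$ so that each $s_\beta$ is an initial segment of some $\iota(t\conc\langle\alpha\rangle)$. This is possible using $\kappa^{<\kappa}=\kappa$ and regularity, interleaving the strict-order-preservation requirement. Then any hyperedge $\langle y_\alpha:\alpha<\kappa\rangle$ of $\dhH\kappa$ branches below some $u$ as $u\conc\langle\alpha\rangle\subseteq y_\alpha$, so the images $[\iota](y_\alpha)\supseteq\iota(u\conc\langle\alpha\rangle)$ densely fill $N_{\iota(u)}$, giving $\langle[\iota](y_\alpha):\alpha<\kappa\rangle\in\dhD\kappa$.

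The main obstacle, and the step requiring the most care, is the recursive construction of $\iota$ in part \ref{ODD dhD}: I must simultaneously guarantee strict order preservation at all lengths (including limit stages, where I take unions), keep $\dom([\iota])=[\dom(\iota)]={}^\kappa\kappa$ by ensuring the lengths $\lh(\iota(t\conc\langle\alpha\rangle))$ grow appropriately, and at each node achieve the density condition that the successor values $\iota(t\conc\langle\alpha\rangle)$ hit every extension of $\iota(t)$. Balancing the density requirement against strict order preservation is the delicate point, but it is manageable because at a single node there are only $\kappa$ many extensions to cover and $\kappa$ many successor slots $\alpha<\kappa$ to use, so a straightforward bookkeeping (enumerate the extensions and dovetail) suffices. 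Once $\iota$ is constructed, the verification that $[\iota]$ is the desired continuous homomorphism is immediate from Lemma~\ref{homomorphisms and order preserving maps}~\ref{hop 2} together with the density condition.
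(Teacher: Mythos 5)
Part (1) of your proof has a genuine gap. After fixing $t$ with $\{x_\alpha:\alpha<\kappa\}$ dense in $N_t$, you choose for each of the $\kappa$ many nodes $s\supseteq t$ a witness $\alpha_s$ and a length $\beta_s<\kappa$ with $s\subseteq x_{\alpha_s}\restr\beta_s$, and then claim that a single $\gamma<\kappa$ bounds all the $\beta_s$ because ${}^{<\kappa}\kappa$ has size $\kappa$ and $\kappa$ is regular. Regularity only bounds families of \emph{fewer than} $\kappa$ ordinals; here you have $\kappa$ many, and in fact the $\beta_s$ are provably unbounded in $\kappa$: the requirement $s\subseteq x_{\alpha_s}\restr\beta_s$ forces $\beta_s\geq\lh(s)$, and the lengths of the extensions $s$ of $t$ are cofinal in $\kappa$. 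Moreover, the failure cannot be repaired within your scheme, because no box with a uniform side length is ever contained in $\dhD\kappa$: given any $\gamma<\kappa$, the sequence $\bar y$ defined by $y_\alpha:=(x_\alpha\restr\gamma)\conc\langle 0\rangle^\kappa$ belongs to $\prod_{\alpha<\kappa}N_{x_\alpha\restr\gamma}$, yet $\{y_\alpha:\alpha<\kappa\}$ is nowhere dense (every node extends to a node $w$ with $w(\delta)=1$ for some $\delta\geq\gamma$, and $N_w$ misses all the $y_\alpha$), so $\bar y\notin\dhD\kappa$. The cure is to let the side lengths grow with the coordinate: the paper takes $U:=\prod_{\alpha<\kappa}N_{x_\alpha\restr\alpha}$. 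What makes this work is that density of $\{x_\alpha:\alpha<\kappa\}$ in $N_t$ gives, for each $u\supseteq t$, not just one witness but $\kappa$ many indices $\alpha$ with $x_\alpha\in N_u$ (witnesses attached to pairwise incompatible extensions of $u$ are distinct); hence one may pick a witness with $\alpha>\lh(u)$, and then $u\subseteq x_\alpha\restr\alpha=y_\alpha\restr\alpha$ for every $\bar y\in U$, so every $\bar y\in U$ is dense in $N_t$.

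Part (2) is correct and follows essentially the same route as the paper: you build a strict order preserving $\iota$ whose successor values $\iota(t\conc\langle\alpha\rangle)$ meet every extension of $\iota(t)$, i.e.\ an order homomorphism for $({}^\kappa\kappa,\dhD\kappa)$, and conclude via Lemma~\ref{homomorphisms and order preserving maps}~\ref{hop 2}; the paper does exactly this, letting $\langle\iota(t\conc\langle\alpha\rangle):\alpha<\kappa\rangle$ enumerate all proper extensions of $\iota(t)$. The bookkeeping you flag as delicate is lighter than you suggest: taking $\iota(t\conc\langle\alpha\rangle)$ to be the $\alpha$-th proper extension of $\iota(t)$ is automatically strict order preserving, and strictness already guarantees that $\lh(\iota(x\restr\alpha))$ is strictly increasing in $\alpha$, so $[\iota](x)\in{}^\kappa\kappa$ for every $x\in{}^\kappa\kappa$; no separate length-growth requirement is needed.
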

\begin{proof}
To show \ref{dhD box-open},
let $\bar x=\langle x_\alpha:\alpha<\kappa\rangle\in\dhD\kappa$.
We claim that the box-open neighborhood $U:=\prod_{\alpha<\kappa}N_{x_\alpha\restr\alpha}$
of $\bar x$ is a subset of 
$\dhD\kappa$.
Let $\bar y=\langle y_\alpha:\alpha<\kappa\rangle\in U$.
Take $t\in{}^{<\kappa}\kappa$ such that
$\bar x$ is dense in $N_t$.
Then for any $u\in{}^{<\kappa}\kappa$ with $u\supseteq t$,
there is some $\alpha>\lh(u)$ with $x_\alpha \in N_u$. 
Since $x_\alpha\restr \alpha=y_\alpha\restr \alpha$, we have $y_\alpha\in N_u$. 
Thus, $\bar y$ is also dense in $N_t$.

To show~\ref{ODD dhD}, 
let $\iota:{}^{<\kappa}\kappa\to{}^{<\kappa}\kappa$ 
be any strict order preserving map such that
for all $t\in{}^{<\kappa}\kappa$, 
$\langle \iota(t\conc\langle\alpha\rangle) : \alpha<\kappa\rangle$
enumerates
$\succ{\iota(t)}=\{u\in{}^{<\kappa}\kappa: \iota(t)\subsetneq u\}$.
Then $\iota$ is an order homomorphism for $\dhD\kappa$,
because
any sequence
$\bar y\in
\prod_{\alpha<\kappa}N_{\iota(t\conc\langle\alpha\rangle)}$
is dense in $N_{\iota(t)}$.
By Lemma~\ref{homomorphisms and order preserving maps} \ref{hop 2}, 
$[\iota]:{}^\kappa\kappa\to {}^\kappa\kappa$ is a continuous homomorphism from  $\dhH{\kappa}$ to $\dhD\kappa$.
\end{proof}

%\hypertarget{ODD and not ODDH}%
The next proposition 
shows that $\ODDH{\kappa}{\dhD\kappa}$ fails, 
and thus $\ODDH{\kappa}{\kappa}({}^\kappa\kappa,\defsets\kappa)$ fails.%
\footnote{The definition of $\ODDH\kappa\kappa({}^\kappa\kappa,\defsets\kappa)$ is analogous to Definition~\ref{def: ODD for classes}.}

\begin{proposition}
\label{ODDH fails for D kappa}
There is no homomorphism from $\dhH\kappa$ to $\dhD\kappa$ which is a homeomorphism from ${}^\kappa\kappa$ onto its image.
\end{proposition}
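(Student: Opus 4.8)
The plan is to argue by contradiction. Suppose $f \colon {}^\kappa\kappa \to {}^\kappa\kappa$ is a homomorphism from $\dhH\kappa$ to $\dhD\kappa$ which is a homeomorphism onto its image $Y := \ran(f)$. The strategy is to produce a single hyperedge $\bar x = \langle x_\alpha : \alpha < \kappa\rangle$ of $\dhH\kappa$ whose image $\langle f(x_\alpha) : \alpha < \kappa\rangle$ is somewhere dense in ${}^\kappa\kappa$, and then to exploit the density together with the fact that $f$ is open onto $Y$ to derive that some open subset of $Y$ is too small, contradicting that the preimage under the homeomorphism must be an open subset of ${}^\kappa\kappa$, which has size $2^\kappa > \kappa$.

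First I would fix a hyperedge $\bar x = \langle x_\alpha : \alpha < \kappa\rangle \in \dhH\kappa$, say with witness $t \in {}^{<\kappa}\kappa$ so that $t \conc \langle \alpha\rangle \subseteq x_\alpha$ for all $\alpha < \kappa$. Since $f$ is a homomorphism to $\dhD\kappa$, the sequence $\langle f(x_\alpha) : \alpha < \kappa\rangle$ is dense in some basic open set $N_s$, i.e.\ $D := \{f(x_\alpha) : \alpha < \kappa\} \cap N_s$ is dense in $N_s$. The key point I want to extract is that $D$ is a subset of $Y$ of size at most $\kappa$ which is dense in $N_s$, so its closure $\closure D$ contains $N_s \cap \closure Y$; in particular $N_s \cap \closure Y$ is the closure of a set of size $\le \kappa$. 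The hard part will be converting this ``smallness'' information into a genuine contradiction with $f$ being a homeomorphism onto $Y$. The cleanest route is to use that $f$ is open as a map onto $Y$: the set $V := f(N_{t \conc \langle \beta\rangle})$ for a suitable $\beta$ is relatively open in $Y$ and sits inside $N_s$, yet by the homeomorphism property $f^{-1}(V) = N_{t\conc\langle\beta\rangle}$ has size $2^\kappa$; I would then show that $V$ must meet the dense countable-in-the-generalized-sense set $D$ in a controlled way, forcing $V$ to be contained in the closure of a single $\le\kappa$-sized set while simultaneously being the continuous injective image of a space of size $2^\kappa$.

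Concretely, the plan is: since $f$ is a homeomorphism onto $Y$ and $N_{t \conc \langle \beta\rangle}$ is open, $f(N_{t \conc \langle \beta\rangle})$ is relatively open in $Y$, so there is some $r \supseteq s$ with $N_r \cap Y \subseteq f(N_{t\conc\langle\beta\rangle})$ and $N_r \cap Y \ne \emptyset$. By density of $D$ in $N_s$, pick $\alpha$ with $\alpha > \lh(r)$ and $f(x_\alpha) \in N_r$; but $x_\alpha \supseteq t \conc \langle\alpha\rangle$ forces $x_\alpha \notin N_{t\conc\langle\beta\rangle}$ whenever $\alpha \ne \beta$, so $f(x_\alpha) \notin f(N_{t\conc\langle\beta\rangle})$ by injectivity, contradicting $f(x_\alpha) \in N_r \cap Y \subseteq f(N_{t\conc\langle\beta\rangle})$. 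The main obstacle I anticipate is arranging the witnessing node $r$ and the index $\alpha$ coherently so that density genuinely delivers a point $f(x_\alpha)$ inside the target open set while its preimage avoids $N_{t\conc\langle\beta\rangle}$; this requires being careful that the relatively open neighborhood in $Y$ can be taken basic (of the form $N_r \cap Y$), which is immediate from the definition of the subspace topology, and that the hyperedge $\bar x$ can be chosen so that all but one of its entries avoid a prescribed cylinder, which is exactly guaranteed by the definition of $\dhH\kappa$.

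Once this contradiction is in place the proof is complete, and I would present it directly rather than through the countable-case analogue, since the generalized-Baire-space argument is self-contained given only the definitions of $\dhH\kappa$, $\dhD\kappa$, and the subspace topology.
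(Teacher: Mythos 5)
Your argument is correct in substance, and it takes a genuinely different route from the paper's. The paper first applies Lemma~\ref{lemma: perp-preserving order homomorphisms and homeomorphisms} to convert the hypothetical homeomorphism into a $\perp$-preserving order homomorphism $\iota$ for $\dhD\kappa$, and then derives the contradiction at the level of nodes: if the nodes $\iota(u\conc\langle\alpha\rangle)$ were pairwise incompatible, the image of a hyperedge over the siblings $u\conc\langle\alpha\rangle$ would meet each cylinder $N_{\iota(u\conc\langle\alpha\rangle)}$ in exactly one point, while somewhere-density forces it to be dense in one of them. You bypass the order-homomorphism machinery and work directly with $f$: relative openness of $f(N_{t\conc\langle\beta\rangle})$ in $\ran(f)$ plays the role that $\perp$-preservation plays in the paper, and injectivity delivers the final contradiction. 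The underlying tension is the same in both proofs (a hyperedge spread over sibling cylinders has somewhere dense image, yet images of distinct sibling cylinders must be separated inside $\ran(f)$), but your version needs only the definitions, whereas the paper's factors through a lemma whose proof is itself a nontrivial construction; in exchange, the paper's route isolates a reusable combinatorial fact (no order homomorphism for $\dhD\kappa$ is $\perp$-preserving) that plugs into the framework of its Section~\ref{subsection: ODD ODDI ODDH}.

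One step of your concrete plan needs repair: ``by density of $D$ in $N_s$, pick $\alpha$ with $\alpha>\lh(r)$ and $f(x_\alpha)\in N_r$.'' Density gives some $\alpha$ with $f(x_\alpha)\in N_r$ but no control whatsoever over the index; and even granting $\alpha>\lh(r)$, this yields $\alpha\neq\beta$ only if you have also arranged $\beta\leq\lh(r)$, which your choice of $r$ does not guarantee. The simplest fix: choose $\beta$ with $f(x_\beta)\in N_s$ (this is how ``suitable $\beta$'' must be read, so that $f(N_{t\conc\langle\beta\rangle})$ actually meets $N_s$), choose $r\supseteq s$ with $f(x_\beta)\in N_r$ and $N_r\cap\ran(f)\subseteq f(N_{t\conc\langle\beta\rangle})$, and then shrink once more: pick $y\in N_r$ with $y\neq f(x_\beta)$ and let $r'\supseteq r$ be an initial segment of $y$ long enough that $f(x_\beta)\notin N_{r'}$. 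Density of $D$ in $N_s$ gives $\alpha$ with $f(x_\alpha)\in N_{r'}$; since $f(x_\beta)\notin N_{r'}$ we get $f(x_\alpha)\neq f(x_\beta)$, hence $\alpha\neq\beta$, and your injectivity contradiction goes through verbatim. (Your ``large index'' variant can also be made to work, but it requires both extending $r$ along $f(x_\beta)$ to length above $\beta$ and a separate argument that $N_r$ can be shrunk to avoid the fewer than $\kappa$ many points $f(x_\gamma)$ with $\gamma\leq\lh(r)$.) Finally, the cardinality considerations framing your strategy — that some relatively open piece of $\ran(f)$ is ``too small'' because $N_s\cap\closure{\ran(f)}$ is the closure of a set of size at most $\kappa$ — lead nowhere, since such a closure can have size $2^\kappa$; they are not used by your final argument and should be dropped.
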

\begin{proof}
By Lemma~\ref{lemma: perp-preserving order homomorphisms and homeomorphisms}~\ref{homeo ppohh}, it suffices to show that no order homomorphism $\iota$ for $\dhD\kappa$ preserves $\perp$. 
Suppose that $\iota$ is an order homomorphism for $\dhD\kappa$. 
\begin{claim*}
For any  $u\in{}^{<\kappa}\kappa$, 
there exists $\alpha<\beta<\kappa$ 
such that
$\iota(u\conc\langle\alpha\rangle)\compat
\iota(u\conc\langle\beta\rangle)$.
\end{claim*}
\begin{proof}
Suppose that
$\iota(u\conc\langle\alpha\rangle)\perp
\iota(u\conc\langle\beta\rangle)$
for all $\alpha<\beta<\kappa$.
Take a hyperedge $\langle x_\alpha:\alpha<\kappa\rangle$ of $\dhH\kappa$ with $u\conc\langle\alpha\rangle\subseteq x_\alpha$ for all $\alpha<\kappa$ 
and let
$Y=\{[\iota](x_\alpha):\alpha<\kappa\}$.
Then $Y\cap N_{\iota(u\conc\langle\alpha\rangle)}=\{[\iota](x_\alpha)\}$ for all $\alpha<\kappa$.
On the other hand $Y$ is dense in $N_t$ for some $t\in{}^{<\kappa}\kappa$, since $[\iota]$ is a homomorphism from $\dhH\kappa$ to $\dhD\kappa$ by Lemma~\ref{homomorphisms and order preserving maps}~\ref{hop 2}. 
In particular,
there exist $\alpha<\beta<\kappa$ such that
$[\iota](x_\alpha), [\iota](x_\beta)\in N_t$. 
Hence $t$ is compatible with both
$\iota(u\conc\langle\alpha\rangle)$
and
$\iota(u\conc\langle\beta\rangle)$.
Since these are incompatible,  $t\subsetneq\iota(u\conc\langle\alpha\rangle)$.
Therefore $Y$ is also dense in $N_{\iota(u\conc\langle\alpha\rangle)}$,
contradicting the fact that 
$|Y\cap N_{\iota(u\conc\langle\alpha\rangle)}|=1$.
\end{proof}
This completes the proof of the proposition. 
\end{proof}

%\hypertarget{ODD and not ODDI}%
The next proposition shows that $\ODDI{\omega}{\dhD\omega}$ fails, 
and thus $\ODDI{\omega}{\omega}({}^\omega\omega,\defsets\omega)$ fails. 

\begin{proposition}
\label{ODDI fails for D omega}
There is no injective continuous homomorphism from $\dhHomega\omega$ to $\dhD\omega$.
\end{proposition}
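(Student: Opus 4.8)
The plan is to prove the contrapositive of injectivity combined with the homomorphism property: assuming $f:{}^\omega\omega\to{}^\omega\omega$ is a continuous homomorphism from $\dhHomega\omega$ to $\dhD\omega$, I would derive a failure of injectivity. The key structural fact is that each hyperedge of $\dhHomega\omega$ is, by definition, a sequence $\langle x_\alpha:\alpha<\omega\rangle$ in which all $x_\alpha$ share a common stem $t$ and then split (i.e.\ $t\conc\langle\alpha\rangle\subseteq x_\alpha$), while each hyperedge of $\dhD\omega$ is a \emph{somewhere dense} sequence. Since $f$ is a homomorphism, for every such spreading sequence the image sequence $\langle f(x_\alpha):\alpha<\omega\rangle$ must be dense in some basic open set $N_s$. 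The heart of the argument is that in the countable setting, density of a countable image combined with continuity of $f$ forces image points to coincide.

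First I would set up, as in the proof of Proposition~\ref{ODDH fails for D kappa} but specialized to $\kappa=\omega$, the analysis via an order homomorphism. By Lemma~\ref{homomorphisms and order preserving maps}~\ref{hop 1}, since $\dhD\omega$ is box-open on ${}^\omega\omega$ by Proposition~\ref{lemma: dhD}~\ref{dhD box-open}, a continuous homomorphism $f$ yields a continuous order homomorphism $\iota:{}^{<\omega}\omega\to{}^{<\omega}\omega$ for $\dhD\omega$ together with a strict $\wedge$-homomorphism $e$ with $[\iota]=f\comp[e]$. Since $[e]$ is injective (indeed a homeomorphism onto its image), $f$ is injective if and only if $[\iota]$ is injective. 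Thus it suffices to show that $[\iota]$ cannot be injective for any order homomorphism $\iota$ for $\dhD\omega$.

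The main obstacle — and the crux — is to show that for some node $u$, two of the successor images $\iota(u\conc\langle\alpha\rangle)$ and $\iota(u\conc\langle\beta\rangle)$ with $\alpha\neq\beta$ must be \emph{equal} (not merely compatible), which immediately yields $[\iota](x_\alpha)=[\iota](x_\beta)$ for suitable extensions and hence non-injectivity. Running the argument of the Claim in Proposition~\ref{ODDH fails for D kappa} gives, for each $u$, some $\alpha<\beta$ with $\iota(u\conc\langle\alpha\rangle)\compat\iota(u\conc\langle\beta\rangle)$; in the countable case I would strengthen this. Because $\omega$ is countable and $\langle\iota(u\conc\langle\alpha\rangle):\alpha<\omega\rangle$ witnesses density of the image of a spreading sequence in $N_{\iota(u)}$, there are only countably many extensions available, and a pigeonhole/compatibility analysis on the finitely-branching nodes forces an actual collision: infinitely many of the $\iota(u\conc\langle\alpha\rangle)$ must be pairwise comparable along a single branch, and since density of a countable set in $N_{\iota(u)}$ cannot be achieved by an injective assignment that also respects the strict order-preservation at every finite level without repetition, two successor values must coincide.

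Concretely, I would argue as follows. Fix any $u$ and consider $S:=\{\iota(u\conc\langle\alpha\rangle):\alpha<\omega\}$; the density requirement means $S$ is cofinal (in the tree order, covering all of $N_{\iota(u)}$ densely). Since $\iota$ is strict order preserving, each $\iota(u\conc\langle\alpha\rangle)\supsetneq\iota(u)$. If all these nodes were pairwise incompatible, then by continuity the image of the hyperedge would have singleton intersection with each $N_{\iota(u\conc\langle\alpha\rangle)}$, contradicting density exactly as in the earlier claim. Hence some pair is comparable, say $\iota(u\conc\langle\alpha\rangle)\subsetneq\iota(u\conc\langle\beta\rangle)$. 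Now I would iterate this comparability downward along a branch: the comparable images form a chain, and I would show that injectivity of $[\iota]$ would force this chain to produce, at some successor node, a second spreading sequence whose image again must be dense, ultimately yielding within the countable branching structure two indices mapped to the same finite sequence. This exploits that in dimension $\kappa=\omega$ one cannot have $\omega$-many \emph{incompatible} extensions densely filling $N_{\iota(u)}$ while keeping all images distinct under continuity; a limit branch through the comparable chain gives a point of $\ran([\iota])$ that is a limit of image points, and injectivity together with density forces it to coincide with one of the $[\iota](x_\alpha)$, contradicting that $[\iota]$ separates distinct branches. I expect the delicate bookkeeping of which nodes are comparable versus incompatible, and extracting the exact collision, to be the main technical difficulty; the topological inputs (continuity, box-openness of $\dhD\omega$, the order-homomorphism reduction) are all available from the lemmas already established.
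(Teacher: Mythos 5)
Your reduction to order homomorphisms is sound in the one direction you actually need (if $[\iota]=f\comp[e]$ with $[e]$ injective and $[\iota]$ fails to be injective, then so does $f$; the ``if and only if'' you assert is false, but it is also not needed). The genuine gap is in the core of the argument: you aim to show that for some node $u$ two successor values $\iota(u\conc\langle\alpha\rangle)$ and $\iota(u\conc\langle\beta\rangle)$ must be \emph{equal}, and that such a node-level collision ``immediately yields'' non-injectivity of $[\iota]$. Both halves fail. First, no such collision is forced: the order-homomorphism condition for $\dhD\omega$ only requires the set $\{\iota(u\conc\langle i\rangle):i<\omega\}$ to be dense above some node (Lemma~\ref{lemma: dhD and density}), and a countable dense set of nodes can perfectly well be enumerated injectively --- for instance, let $\iota(u\conc\langle i\rangle)$ be the $i$-th term of an injective enumeration of all proper extensions of $\iota(u)$, exactly as in the proof of Proposition~\ref{lemma: dhD}. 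So there are order homomorphisms for $\dhD\omega$ whose successor values at every node are pairwise distinct; your pigeonhole claim (``density \dots cannot be achieved by an injective assignment'') is false, and ${}^{<\omega}\omega$ is infinitely branching, so no pigeonhole applies. Second, even if $\iota(u\conc\langle\alpha\rangle)=\iota(u\conc\langle\beta\rangle)$ did occur, this would not make $[\iota]$ non-injective: $[\iota](x)$ is the union of the entire chain $\langle\iota(x\restr n):n<\omega\rangle$, and after agreeing at one level the two chains can immediately diverge again (e.g.\ $\iota(u\conc\langle\alpha\rangle\conc\langle 0\rangle)\perp\iota(u\conc\langle\beta\rangle\conc\langle 0\rangle)$). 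So a node-level collision is neither available nor sufficient.

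What makes the proposition true is a limit phenomenon, visible only at the level of branches, and this is how the paper argues. One first proves a density-propagation claim: whenever $f(N_u)\cap N_t\neq\emptyset$, there exist $u'\supsetneq u$ and $t'\supsetneq t$ such that $f(N_{u'})$ is dense in $N_{t'}$ (continuity gives $u'$ with $f(N_{u'})\subseteq N_t$, and the homomorphism property applied to a spreading sequence inside $N_{u'}$ gives the density). One then interleaves this claim recursively to build strictly increasing sequences $\langle u_n\rangle$, $\langle v_n\rangle$, $\langle t_n\rangle$ with $u_0\perp v_0$ and $f(N_{u_n})$, $f(N_{v_n})$ both dense in $N_{t_n}$ for every $n$. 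Setting $x=\bigcup_n u_n$, $y=\bigcup_n v_n$ and $z=\bigcup_n t_n$, continuity forces $f(x)=f(y)=z$ while $x\perp y$. The collision thus occurs only ``at infinity,'' after an $\omega$-length fusion --- which is also exactly why the statement fails at uncountable $\kappa$, where the inductive density hypothesis cannot be carried through limit stages (Remark~\ref{remark: dhD-}). Your closing sentences gesture at ``a limit branch through the comparable chain,'' but without the density-propagation claim and the two-branch fusion there is no mechanism forcing the two image points to coincide, so the argument as proposed does not go through.
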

\begin{proof}
Suppose that $f: {}^\omega\omega\to {}^\omega\omega$ is a continuous homomorphism from $\dhHomega\omega$ to $\dhD\omega$. 
We shall construct $x\neq y$ with $f(x)=f(y)$ using the next claim. 
\begin{claim*} 
Suppose that $u,t\in {}^{<\omega}\omega$ and $f(N_u)\cap N_t \neq\emptyset$. 
Then there exist $u',t'\in {}^{<\omega}\omega$ such that $u'\supsetneq u$, $t'\supsetneq t$ and $f(N_{u'})$ is dense in $N_{t'}$. 
\end{claim*} 
\begin{proof} 
Since $f(N_u)\cap N_t \neq\emptyset$ and $f$ is continuous, there exists some $u'\supsetneq u$ with $f(N_{u'})\subseteq N_t$. 
The set $f(N_{u'})=\bigcup_{ i<\omega}f(N_{u'\conc\langle i\rangle})$ is dense in $N_w$ for some $w\in {}^{<\omega}\omega$ since $f$ is a homomorphism from $\dhHomega\omega$ to $\dhD\omega$. 
Since $N_w\cap N_t\neq\emptyset$,
we have $t\compat w$. 
Let $t'\supsetneq t\cup w$. Then $f(N_{u'})$ is dense in $N_{t'}$. 
\end{proof} 

Using the previous claim, we now construct strictly increasing sequences $\langle u_n : n<\omega\rangle$, $\langle v_n : n<\omega\rangle$ and $\langle t_n : n<\omega\rangle$ 
of elements of ${}^{<\omega}\omega$ such that for all $n<\omega$: 
\begin{enumerate-(i)} 
\item 
$u_n \perp v_n$. 
\item 
\label{ODDI fails for D omega claim ii}
$f(N_{u_n})$ and $f(N_{v_n})$ are both dense in $N_{t_n}$. 
\end{enumerate-(i)} 
We first define $u_0$, $v_0$ and $t_0$. 
Fix a hyperedge $\langle x_i:i<\omega\rangle$ of $\dhHomega\omega$ with $\langle i\rangle\subseteq x_i$ for all $i<\omega$.
Since $f$ is a homomorphism from $\dhHomega\omega$ to $\dhD\omega$,
$\langle f(x_i):i<\omega\rangle$ 
is dense in $N_{t}$ for some $t\in {}^{<\omega}\omega$. 
Hence there is some $i<\omega$ with $f(N_{\langle i\rangle})\cap N_t\neq\emptyset$. 
By the previous claim for $\langle  i\rangle$ and $t$, there is some $u_0\supsetneq \langle  i\rangle$ and some $t'_0\supsetneq t$ such that $f(N_{u_0})$ is dense in $N_{t_0}$. 
Since 
$\langle f(x_j):j<\omega\rangle$ 
is dense in $N_{t}$, there is some $j<\omega$ with $j\neq i$ and $f(N_{\langle j\rangle})\cap N_{t'_0}\neq \emptyset$. 
By the previous claim for $\langle j\rangle$ and $t'_0$, there is some $v_0\supsetneq \langle j\rangle$ and some $t_0\supsetneq t'_0$ such that $f(N_{v_0})$ is dense in $N_{t_0}$. 
We have $u_0\perp v_0$ since $u_0\supseteq \langle i\rangle$ and $v_0\supseteq \langle j\rangle$. 

Suppose that $u_n$, $v_n$ and $t_n$ have been constructed. 
By the previous claim for $u_n$ and $t_n$, there is some $u_{n+1}\supsetneq u_n$ and some $t'_{n+1}\supsetneq t_n$ such that $f(N_{u_{n+1}})$ is dense in $N_{t'_{n+1}}$. 
By the previous claim for $v_n$ and $t'_{n+1}$, there is some $v_{n+1}\supsetneq v_n$ and some $t_{n+1}\supsetneq t'_{n+1}$ such that $f(N_{v_{n+1}})$ is dense in $N_{t_{n+1}}$. 
This completes the construction. 

Finally, let $x:=\bigcup_{n<\omega}u_n$, $y:=\bigcup_{n<\omega}v_n$ and $z:=\bigcup_{n<\omega}t_n$. 
We claim that $f(x)=z$. 
Otherwise $f(x)\notin N_{t_n}$ for some $n<\omega$. Since $f$ is continuous, there is some $k\geq n$ with $f(N_{u_k})\cap N_{t_n}=\emptyset$ and thus $f(N_{u_k})\cap N_{t_k}=\emptyset$. But this contradicts \ref{ODDI fails for D omega claim ii}.
Similarly, $f(y)=z$. 
Thus $x\neq y$ and $f(x)=f(y)=z$. 
\end{proof} 

\begin{remark} 
\label{remark: dhD-}
We will see in the next theorem that the previous proposition
fails for uncountable $\kappa$. 
Note that the construction in its proof %of Proposition \ref{ODDI fails for D omega} 
cannot necessarily be continued at limit stages: the inductive hypothesis need not be preserved, since the intersection of less than $\kappa$ many dense sets may be empty. 
\end{remark} 

\begin{remark}
\label{remark Dminus} 
Let $\dhD\kappa^-$ 
\index{hypergraph!of sequences!DDm@dense inside a basic open set\idf$\dhD\kappa^-$}
denote the hypergraph on ${}^\kappa\kappa$  of all non-constant sequences $\bar{x}$ such that for some $t\in {}^{<\kappa}\kappa$, $\ran(\bar{x})$ is contained in $N_t$ and is dense in $N_t$. 
Observe that $\dhD\kappa^-$ is 
box-open on ${}^\kappa\kappa$
and there is a continuous homomorphism from $\dhH\kappa$ to $\dhD\kappa^-$.
%This can be seen similarly to the proof of 
The latter follows from
Proposition~\ref{lemma: dhD} and
Lemma~\ref{lemma: ODD subsequences},
since 
any hyperedge of $\dhD\kappa$ has a subsequence in $\dhD\kappa^-$ 
and hence
any $\dhD\kappa^-$-independent set is also $\dhD\kappa$-independent,
so $\dhD\kappa\leqf\dhD\kappa^-$.%
\footnote{See Definition~\ref{def: H-full}.}
Thus, the facts about the existence of homomorphisms in
Propositions~\ref{lemma: dhD}--\ref{ODDI fails for D omega}
hold for any 
dihypergraph $H$ on ${}^\kappa\kappa$
with $\dhD\kappa^-\subseteq H \subseteq \dhD\kappa$. 
\end{remark}

\hypertarget{ODD and ODDI Diamondi}{}%
\hypertarget{ODDI and not ODDH Diamondi}{}%
The next theorem shows that $\Diamondi\kappa$ 
implies $\ODD\kappa H\Longleftrightarrow\ODDI\kappa H$
for relatively box-open $\kappa$-dihypergraphs $H$.

\begin{theorem}
\label{theorem: ODD ODDI}
Suppose $\Diamondi\kappa$ holds% 
\footnote{See Definition~\ref{diamondi def}. Recall that $\Diamondi\kappa$ implies $\kappa^{<\kappa}=\kappa>\omega$ by Remark~\ref{diamondi omega}.}
and $H$ is a relatively box-open $\kappa$-dihypergraph on~${}^\kappa\kappa$.
If there exists a continuous homomorphism from $\dhH\kappa$ to $H$, then there exists an injective continuous homomorphism $g$ from $\dhH\kappa$ to $H$
and a continuous strict $\wedge$-homomorphism $e:{}^{<\kappa}\kappa\to{}^{<\kappa}\kappa$ with $g=f\comp [e]$.
\end{theorem}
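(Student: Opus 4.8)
The plan is to reduce, as in the proof of Theorem~\ref{theorem: ODD ODDC when ddim<kappa}, to a purely combinatorial construction and then feed in $\Diamondi\kappa$. First I would apply Lemma~\ref{homomorphisms and order preserving maps}~\ref{hop 1} to the given continuous homomorphism $f$ (a homomorphism to $H\restr X$ for $X=\domh H$) to obtain a continuous order homomorphism $\iota\colon{}^{<\kappa}\kappa\to{}^{<\kappa}\kappa$ for $H$ together with a continuous strict $\wedge$-homomorphism $e_0$ with $[\iota]=f\comp[e_0]$. It then suffices to construct a continuous strict $\wedge$-homomorphism $e_1\colon{}^{<\kappa}\kappa\to{}^{<\kappa}\kappa$ such that $\theta:=\iota\comp e_1$ is an order homomorphism for $H$ and $[\theta]=[\iota]\comp[e_1]$ is injective: indeed, then $g:=[\theta]$ is an injective continuous homomorphism from $\dhH\kappa$ to $H$ by Lemma~\ref{homomorphisms and order preserving maps}~\ref{hop 2}, and $e:=e_0\comp e_1$ is a continuous strict $\wedge$-homomorphism (the composite of continuous strict $\wedge$-homomorphisms is one, since $e_0$ preserves meets) with $g=f\comp[e]$. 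Note also that any strict order preserving $e_1$ whose values extend $e_1(t)\conc\langle c\rangle$ in pairwise distinct directions automatically keeps $\theta$ an order homomorphism for $H$, because $N_{\iota(e_1(t\conc\langle i\rangle))}\subseteq N_{\iota(e_1(t)\conc\langle i'\rangle)}$ and $\iota$ is already an order homomorphism; so the whole construction has a great deal of freedom in how it extends the values of $e_1$.

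The engine of the argument is the following \emph{Separation Lemma}: if $a_0\perp a_1$ in ${}^{<\kappa}\kappa$, then there exist $a_0'\supseteq a_0$ and $a_1'\supseteq a_1$ with $\iota(a_0')\perp\iota(a_1')$. To prove it, suppose not; then $\iota(a_0')\compat\iota(a_1')$ for all extensions, and a short chain argument shows that any two branches $x_0\in N_{a_0}$, $x_1\in N_{a_1}$ satisfy $[\iota](x_0)=[\iota](x_1)$, so $[\iota]$ is constant on $N_{a_0}$. This contradicts $\lvert[\iota](N_{a_0})\rvert>\kappa$, which holds because $[\iota]$ is a homomorphism from $\dhH\kappa$ to $H$ (Lemma~\ref{homomorphisms and order preserving maps}~\ref{hop 2}) and hence $\lvert[\iota](N_t)\rvert>\kappa$ for every $t$ by Lemma~\ref{two options in ODD are mutually exclusive}~\ref{me 2} and the remark following it. Since extensions of incompatible targets remain incompatible, once a pair of nodes is separated under $\iota$ it stays separated, which is what lets the construction handle many requirements in turn.

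Next I would invoke $\Diamondi\kappa$ through its two-dimensional and stationary reformulations (Lemmas~\ref{versions of diamondi are equivalent} and~\ref{diamondi equiv}) to obtain a guessing sequence $\langle B_\alpha:\alpha<\kappa\rangle$ with $B_\alpha\subseteq\dhK 2{{}^\alpha\kappa}$, $\lvert B_\alpha\rvert<\kappa$, that catches every pair $(y,z)\in\dhK 2{{}^\kappa\kappa}$, i.e.\ $(y\restr\alpha,z\restr\alpha)\in B_\alpha$ for the relevant $\alpha$. I would then build $e_1$ by recursion on levels, maintaining that it is strict order preserving with children splitting exactly at their parent (so that it is a $\wedge$-homomorphism) and continuous at limits. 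At each stage I process the $<\kappa$ many guessed pairs relevant to that stage: given a guessed pair whose two nodes are already incompatible in the source, the Separation Lemma lets me extend their (current) values to make the corresponding $\theta$-values incompatible, after which $\theta(y\restr\delta)\perp\theta(z\restr\delta)$ for all larger $\delta$, so $g(y)\neq g(z)$. Because each such extension only lengthens finitely-influential values and persists under later extensions, the $<\kappa$ requirements active at a stage can be met by a sub-fusion of length $<\kappa$, using $\kappa^{<\kappa}=\kappa$ and the regularity of $\kappa$; the conditions $\ran([e_1])\subseteq\domh H$ and the order-homomorphism property for $\theta$ are preserved throughout, and continuity of $[e_1]$ follows from strict order preservation by Lemma~\ref{[e] continuous}.

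The hard part is the coordination of \emph{when} and \emph{where} each pair gets separated while insisting that $e_1$ be genuinely continuous. Continuity leaves no freedom to alter values at limit levels, whereas separating a single pair is only possible at a stage where the relevant values are being freshly chosen; and one cannot separate \emph{every} pair of children at a single node, since that would make $\theta$ a $\perp$-preserving order homomorphism, which is impossible in general for $\ddim=\kappa$ (this is exactly the obstruction behind Proposition~\ref{ODDH fails for D kappa} and the failure of $\ODDH\kappa H$). The crux of the construction is therefore to use the diamond to isolate, pair by pair, the specific separations that must be performed, to arrange these at stages with the requisite freedom, and to verify that every pair of branches—not merely the guessed representatives—is ultimately separated, so that $g$ is injective. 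Getting this bookkeeping to cohere with the $\wedge$-homomorphism and continuity demands on $e_1$, while staying inside $\domh H$, is where the essential work lies and is precisely the point at which the hypothesis $\Diamondi\kappa$ (and hence $\kappa>\omega$, by Remark~\ref{diamondi omega}) is indispensable.
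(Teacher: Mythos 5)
Your proposal has the right skeleton, and it is the paper's: pass to an order homomorphism $\iota$ for $H$ and a strict $\wedge$-homomorphism $e_0$ with $[\iota]=f\comp[e_0]$ via Lemma~\ref{homomorphisms and order preserving maps}~\ref{hop 1}, fix the two-dimensional guessing sequence $\langle B_\alpha:\alpha<\kappa\rangle$ of Lemma~\ref{diamondi equiv}, build $e_1$ by recursion on levels so that guessed pairs receive $\perp$-incompatible $\iota$-values, and read off injectivity of $g=[\iota\comp e_1]$ from the guessing property. Your Separation Lemma is correct and is essentially the paper's mechanism. The genuine gap is what follows: you demand that $e_1$ be \emph{continuous} at limit levels, observe (correctly) that continuity leaves no freedom to separate the pairs guessed at limit levels, and stop there, declaring this coordination to be ``where the essential work lies.'' That coordination problem \emph{is} the theorem, and within your setup it cannot be fixed locally: at a limit level the values are frozen, and pre-separating at earlier successor levels would require separating the values of up to $\kappa$ many pairs of children of a single node, which degenerates toward the wholesale $\perp$-preservation that you yourself note is impossible (Proposition~\ref{ODDH fails for D kappa}). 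The missing idea --- the actual content of the paper's proof --- is to \emph{drop} continuity of $e_1$ and of $\theta:=\iota\comp e_1$ as maps of trees. Strict order preservation suffices for everything downstream: $[\theta]$ is automatically continuous by Lemma~\ref{[e] continuous} and is a homomorphism to $H$ by Lemma~\ref{homomorphisms and order preserving maps}~\ref{hop 2}. Once continuity is abandoned, \emph{every} level $\alpha$, limit or successor, is a fresh level: for all $t\in{}^\alpha\kappa$ simultaneously, let $\epsilon(t)$ be the canonical continuation ($e_1(u)\conc\langle\beta\rangle$ if $t=u\conc\langle\beta\rangle$, and $\bigcup_{u\subsetneq t}e_1(u)$ at limits), choose branches $x_t\supsetneq\epsilon(t)$ with pairwise distinct $[\iota]$-images (possible since $|[\iota](N_{\epsilon(t)})|>\kappa$ by Lemma~\ref{two options in ODD are mutually exclusive}~\ref{me 2}), choose a single $\gamma<\kappa$ with $\iota(x_s\restr\gamma)\perp\iota(x_t\restr\gamma)$ for all $(s,t)\in B_\alpha$ (this is where $|B_\alpha|<\kappa$ enters), and let $e_1(t)$ be an initial segment of $x_t$ extending $\epsilon(t)$ of length at least $\gamma$, with $\theta(t):=\iota(e_1(t))$. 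Injectivity is then immediate, and your closing worry about ``non-guessed'' pairs is vacuous: by Lemma~\ref{diamondi equiv} every pair of distinct branches restricts into some $B_\alpha$. (Accordingly, what the construction delivers and what is used is a strict $\wedge$-homomorphism $e=e_0\comp e_1$ whose induced map $[e]$ on branches is continuous; continuity of $e$ at limit nodes is precisely what must be given up.)

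A second, independent error: your ``freedom'' remark is false. If the $i$-th child of $t$ is sent to an extension of $e_1(t)\conc\langle c_i\rangle$ for an injective but non-identity assignment of directions $\langle c_i : i<\kappa\rangle$, then $\theta$ need not remain an order homomorphism for $H$, because a dihypergraph is not closed under injective relabelings of its coordinates; the inclusion $\prod_{i<\kappa}(N_{\iota(e_1(t)\conc\langle c_i\rangle)}\cap\domh H)\subseteq H$ does not follow from $\iota$ being an order homomorphism unless $c_i=i$. Concretely, for $H=\HH_\pi$ as in Lemma~\ref{lemma:ODDH and not ODDC}, with the canonical $\pi$ of Subsection~\ref{subsection: original KLW} and the shift $c_i=i+1$, the product $\prod_{i<\kappa}N_{\pi(t\conc\langle i+1\rangle)}$ is disjoint from $\HH_\pi$, since a witnessing node $v$ would need $\pi(v)=\pi(t)\conc\langle 0\rangle\notin\ran(\pi)$. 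You must therefore keep the matching-direction condition $e_1(t)\conc\langle i\rangle\subseteq e_1(t\conc\langle i\rangle)$, exactly as in the paper's construction here and in the proof of Theorem~\ref{theorem: ODD ODDC when ddim<kappa}; this is compatible with all the separation steps you describe and with the corrected, non-continuous recursion above.
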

\begin{proof}
We use the following equivalent 
$2$-dimensional version of $\Diamondi\kappa$
obtained in
Lemma~\ref{diamondi equiv}:

\begin{quotation}
There exists a sequence 
$\langle B_\alpha:\alpha<\kappa\rangle$ of sets 
$B_\alpha\subseteq\gK{{}^\alpha\kappa}$
of size
$|B_\alpha|<\kappa$ 
such that
for all $(x,y)\in\gK{{}^\kappa\kappa}$,
there exists $\alpha<\kappa$ 
with $(x\restr\alpha,y\restr\alpha)\in B_\alpha$. 
\end{quotation}

\begin{claim*}
There exists an order homomorphism $\theta$ for $H$ \todog{This $\theta$ may not be continuous}
and a continuous strict $\wedge$-ho\-mo\-morphism $e:{}^{<\kappa}\kappa\to{}^{<\kappa}\kappa$
such that
$[\theta]=f\comp [e]$ and
for all $\alpha<\kappa$, we have
$\theta(s)\perp \theta(t)$
for all $(s,t)\in B_\alpha$. 
\end{claim*}
\begin{proof}
There exists an order homomorphism $\iota$ for $H$
and 
a strict $\wedge$-homomorphism
$e_0:{}^{<\kappa}\kappa\to{}^{<\kappa}\kappa$
with $f=[\iota]\comp [e_0]$
by Lemma~\ref{homomorphisms and order preserving maps}~\ref{hop 1}. 
We construct strict order preserving maps
$\theta:{}^{<\kappa}\kappa\to{}^{<\kappa}\kappa$
and $e_1:{}^{<\kappa}\kappa\to{}^{<\kappa}\kappa$
such that the following hold
for all $s,t\in{}^{<\kappa}\kappa$ and $\alpha<\kappa$:
\begin{enumerate-(i)}
\item\label{hop3 proof 3}
If $(s,t)\in B_\alpha$, then $\theta(s)\perp \theta(t)$. 
\item\label{hop3 proof 2}
$e_1(t)\conc\langle\alpha\rangle\subseteq e_1(t\conc\langle\alpha\rangle)$.
%\item\label{hop3 proof 4} If $\lh(t)\in\Lim$, then $\bigcup_{u\subsetneq t}e_1(u)\subseteq e_1(t)$. 
\item\label{hop3 proof 1}
$\theta(t)= \iota\big(e_1(t)\big)$.
\end{enumerate-(i)}
Construct $\theta(t)$ and $e_1(t)$ by recursion on $\lh(t)$.
Fix $\alpha<\kappa$ and suppose that $\theta(u)$ and $e(u)$ have been constructed for all $u\in{}^{<\alpha}\kappa$. 
For each $t\in{}^\alpha\kappa$, let 
\[
\epsilon(t):=
\begin{cases}
\emptyset &\text{ if $t=\emptyset$,}
\\
e_1(u)\conc\langle\beta\rangle &\text{ if $\alpha\in\Succ$ and $t=u\conc\langle\beta\rangle$,}
\\
\displaystyle\bigcup_{u\subsetneq t}e_1(u) &\text{ if $\alpha\in\Lim$.}
\end{cases}
\]
Let 
$\langle x_t:t\in{}^\alpha\kappa\rangle$ 
be a sequence in ${}^\kappa\kappa$ such that  
$\epsilon(t)\subsetneq x_t$ 
 for all $t\in{}^\alpha\kappa$
and $[\iota](x_s)\neq [\iota](x_t)$
 for all $s,t\in{}^\alpha\kappa$ with $s\neq t$. 
Such a sequence can be defined 
using recursion with respect to some well-order of ${}^\alpha\kappa$
since 
$|[\iota]{(N_{\epsilon(t)})}|>\kappa$ for all $t\in{}^\alpha\kappa$
by Lemma~\ref{two options in ODD are mutually exclusive}~\ref{me 2}. 
Since 
\todog{this is the only place we need to use that $|B_\alpha|<\kappa$}
$|B_\alpha|<\kappa$,
there exists $\gamma<\kappa$ such that 
$\iota(x_s\restr\gamma)\perp\iota(x_t\restr\gamma)$ for all $(s,t)\in B_\alpha$.
For each $t\in{}^\alpha\kappa$, take
$e_1(t)\subsetneq x_t$ 
extending
$\epsilon(t)$
with
$\lh(e_1(t))\geq\gamma$
and let
$\theta(t):=\iota(e_1(t))$.
This choice of
$e_1(t)$
and
$\theta(t)$
clearly ensures~\ref{hop3 proof 2} and~\ref{hop3 proof 1}.
%clearly ensures~\ref{hop3 proof 2}, \ref{hop3 proof 4} and~\ref{hop3 proof 1}.
\ref{hop3 proof 3} 
holds since
$\iota(x_t\restr\gamma)
\subseteq
\iota(e_1(t))=\theta(t)$
for all $t\in{}^\alpha\kappa$.  

Suppose the maps $\theta$ and $e_1$ have been constructed.
Then $e_1$ is a strict $\wedge$-homomorphism by~\ref{hop3 proof 2}, and hence so is $e:=e_0\comp e_1$.
The same argument as in the proof of 
Theorem~\ref{theorem: ODD ODDC when ddim<kappa}
shows that
$\theta$ is an order homomorphism for $H$ 
with $[\theta]=f\comp[e_0\comp e_1]$.
By~\ref{hop3 proof 3}, $\theta$ is as required.
\end{proof}

Let $\theta$ be as in the claim.
$g:=[\theta]$ is a continuous homomorphism from $\dhH\kappa$ to $H$
by Lemma~\ref{homomorphisms and order preserving maps}~\ref{hop 2}.
To see that $g$ is injective, take $x,y\in{}^\kappa \kappa$ with $x\neq y$. 
Choose $\alpha<\kappa$ such that
$(x\restr\alpha,y\restr\alpha)\in B_\alpha$. 
Then $\theta(x\restr\alpha)\perp \theta(y\restr\alpha)$, and therefore $g(x)\neq g(y)$.
\end{proof} 

\hypertarget{ODD and ODDI Diamondi provable}{}%
\hypertarget{ODDI and not ODDH Diamondi provable}{}%
We do not know whether the assumption of $\Diamondi\kappa$ in the previous theorem can be removed. 
The assumption is very weak: recall that $\Diamondi\kappa$ holds if $\kappa$ is inaccessible or $\kappa\geq\omega_2$ is a successor cardinal with $\kappa^{<\kappa}=\kappa$.\footnote{See Lemma~\ref{diamondi claim} and Remark~\ref{diamondi remark}.} 
It is open whether %Theorem \ref{theorem: ODD ODDI} 
the previous theorem
holds for $\omega_1$ and for all weakly inaccessible cardinals.

Note that $\Diamondi\kappa$ holds in all $\Col(\kappa,\lle\lambda)$-generic extensions $V[G]$, where $\lambda>\kappa$ is inaccessible. 
We thus immediately obtain a stronger version of Theorem~\ref{main theorem}.
The definitions of 
the principles in the next corollary
are analogous to Definition~\ref{def: ODD for classes},
and they can be reformulated 
in terms of relatively box-open dihypergraphs as in
\todog{keep this in the text: it is needed to obtain the next corollary from the main theorem, since the results in this subsection talk about relatively box-open dihypergraphs only}
Lemmas~\ref{lemma: two versions of ODD for definable sets} and~\ref{lemma: two versions of definable ODD}.

\begin{corollary}
\label{main theorem strong version}
Suppose $\kappa$ is a regular infinite cardinal and $2\leq\ddim\leq\kappa$. 
If $\lambda>\kappa$ is inaccessible and $G$ is a $\Col(\kappa,\lle\lambda)$-generic filter over $V$, then the following statements hold in $V[G]$:% 
\begin{enumerate-(1)}
\vspace{2 pt}
\item \label{mtvs 2}
$\ODDI\kappa\ddim(\defsets\kappa,\defsets\kappa)$ if $\kappa$ is uncountable. 

\item \label{mtvs 3} 
$\ODDI\kappa\ddim(\defsets\kappa)$ if $\kappa$ is uncountable and $\lambda$ is Mahlo. 
\end{enumerate-(1)}
If $\ddim<\kappa$, then in fact $\ODDC\kappa\ddim(\defsets\kappa)$ holds in $V[G]$. 
\end{corollary}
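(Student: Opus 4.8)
The plan is to combine Theorem~\ref{main theorem} with the two strengthening results of this subsection, Theorem~\ref{theorem: ODD ODDI} and Corollary~\ref{cor: ODD ODDC when ddim<kappa}, together with the fact that $\Diamondi\kappa$ holds in $V[G]$ whenever $\kappa$ is uncountable and $\lambda>\kappa$ is inaccessible. The bridge between the two is the reformulation of the definable dichotomies in terms of relatively box-open dihypergraphs. As indicated before the corollary, exactly the proofs of Lemmas~\ref{lemma: two versions of ODD for definable sets} and~\ref{lemma: two versions of definable ODD} show that $\ODDI\kappa\ddim(\defsetsk,\defsetsk)$ is equivalent to the statement that $\ODDI\kappa H$ holds for every relatively box-open $\ddim$-dihypergraph $H\in\defsetsk$ on ${}^\kappa\kappa$, while $\ODDI\kappa\ddim(\defsetsk)$ and $\ODDC\kappa\ddim(\defsetsk)$ are equivalent to the statements that $\ODDI\kappa H$, respectively $\ODDC\kappa H$, hold for every relatively box-open $H$ with $\domh H\in\defsetsk$. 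I would first record these reformulations; their proofs are verbatim the same as those of the cited lemmas, since the arguments there only use the equivalence between ``$H$ relatively box-open with $\domh H\in\defsetsk$'' and ``$H=H'\restr X$ for a box-open $H'$ and an $X\in\defsetsk$'', and $H$ and $H'\restr X$ are literally the same dihypergraph.

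For parts~\ref{mtvs 2} and~\ref{mtvs 3} I would then fix a relatively box-open $\ddim$-dihypergraph $H$ of the relevant form and derive $\ODDI\kappa H$ in $V[G]$. By Theorem~\ref{main theorem}~\ref{mainthm ddim=kappa} (for~\ref{mtvs 2}) or Theorem~\ref{main theorem}~\ref{mainthm Mahlo} (for~\ref{mtvs 3}), combined with the reformulations above, $\ODD\kappa H$ holds. If $H$ admits a $\kappa$-coloring, then $\ODDI\kappa H$ holds via its first option. Otherwise $\ODD\kappa H$ supplies a continuous homomorphism from $\dhH\ddim$ to $H$, and I split into cases. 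When $\ddim=\kappa$, I use that $\kappa$ is uncountable and $\lambda$ is inaccessible (note a Mahlo cardinal is inaccessible), so $\Diamondi\kappa$ holds in $V[G]$; Theorem~\ref{theorem: ODD ODDI} then upgrades the homomorphism to an injective one, yielding $\ODDI\kappa H$. When $\ddim<\kappa$, I invoke Corollary~\ref{cor: ODD ODDC when ddim<kappa} instead, which gives $\ODDI\kappa H$ (in fact $\ODDC\kappa H$) directly from $\ODD\kappa H$ and needs no instance of $\Diamondi\kappa$. Feeding these back through the reformulations gives $\ODDI\kappa\ddim(\defsetsk,\defsetsk)$ and $\ODDI\kappa\ddim(\defsetsk)$, as desired.

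The final clause is handled by the $\ddim<\kappa$ case of the same scheme. For $\ddim<\kappa$, every box-open $\ddim$-dihypergraph on ${}^\kappa\kappa$ already lies in $\defsetsk$ by Lemma~\ref{<kappa dim hypergraphs are definable}, so $\ODD\kappa\ddim(\defsetsk)$ follows from Theorem~\ref{main theorem}~\ref{mainthm ddim=kappa} with $\lambda$ merely inaccessible; passing to relatively box-open $H$ with $\domh H\in\defsetsk$ via Lemma~\ref{lemma: two versions of ODD for definable sets} and applying Corollary~\ref{cor: ODD ODDC when ddim<kappa} (equivalently Theorem~\ref{theorem: ODD ODDC when ddim<kappa}) to each such $H$ gives $\ODDC\kappa H$, hence $\ODDC\kappa\ddim(\defsetsk)$. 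The one step that genuinely requires care — and which I would write out in full rather than cite — is precisely this matching of formulations: since Theorems~\ref{theorem: ODD ODDC when ddim<kappa} and~\ref{theorem: ODD ODDI} are stated only for relatively box-open dihypergraphs on ${}^\kappa\kappa$, one must verify that the stronger dichotomies $\ODDI\kappa H$ and $\ODDC\kappa H$ transfer along the ``relatively box-open with definable domain'' reformulation in the same way $\ODD\kappa H$ does. Everything else is a direct application of the results already established above.
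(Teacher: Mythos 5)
Your proof is correct and follows essentially the same route as the paper's: Theorem~\ref{main theorem} plus Theorem~\ref{theorem: ODD ODDI} (via $\Diamondi\kappa$ in $V[G]$) for $\ddim=\kappa$, and Lemma~\ref{<kappa dim hypergraphs are definable} with Corollary~\ref{cor: ODD ODDC when ddim<kappa} for $\ddim<\kappa$, all mediated by the relatively box-open reformulations of Lemmas~\ref{lemma: two versions of ODD for definable sets} and~\ref{lemma: two versions of definable ODD}. The only difference is that you spell out the transfer of $\ODDI\kappa{}$ and $\ODDC\kappa{}$ along these reformulations, which the paper treats as immediate in the remark preceding the corollary.
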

\begin{proof}
\ref{mtvs 2} and \ref{mtvs 3} for $\ddim=\kappa$ follow from Theorems~\ref{main theorem} and~\ref{theorem: ODD ODDI}.
The last statement
follows from Lemma~\ref{<kappa dim hypergraphs are definable} and Corollary~\ref{cor: ODD ODDC when ddim<kappa}. 
\ref{mtvs 2} and \ref{mtvs 3} for $\ddim<\kappa$ follow. 
\end{proof}

In the rest of this section, we show that some of the dichotomies in Figure \ref{figure: ODD variants} are equivalent for relatively box-open $\kappa$-dihypergraphs $H$ on ${}^\kappa\kappa$ with additional properties.
These properties are 
defined in terms of the hypergraphs in the next definition.

\begin{definition} 
\label{def: dhI}
\label{def: dhN}
\label{def: dhth}
Suppose that $Y$ is a subset of ${}^\kappa\kappa$.
\begin{enumerate-(a)}
\item
For $2\leq \ddim\leq \kappa$, let
$\dhI\ddim:=\dhII\ddim{{}^\kappa\kappa}$ 
\index{hypergraph!of sequences!IIk@injective in ${}^\kappa\kappa$\idf$\dhI\ddim,\,\dhIkappa$} 
denote the set of
all injective 
$\ddim$-sequences in  ${}^\kappa\kappa$.
Let $\dhIkappa:=\dhI\kappa$. 
\item
Let $\dhN Y:=\dhN Y_{{}^\kappa\kappa}$ 
\index{hypergraph!of sequences!NNY@with no limits in $Y$ and the sequence\idf$\dhN Y$}
denote the set of
all non-constant $\kappa$-sequences $\bar x$ in ${}^\kappa\kappa$
such that $\ran(\bar x)$
has no limit points in $Y\cup\ran(\bar x)$.%
\footnote{It is equivalent that no injective subsequence of $\bar x$ converges to an element of $Y\cup\ran(\bar x)$.
For sequences $\bar x$ in $Y$, it is equivalent to write $Y$ instead of $Y\cup\ran(\bar x)$.}
\todog{Let $H$ denote the original version of $\dhth Y$ (i.e. the one where $Y\cup\ran(\bar x)$ is replaced by $Y$ in the definition). 
Then $H$ is not be box-open, as witnessed by any injective sequence which converges to one of its elements that is not in $Y$.
However, $\dhth Y$ is an $H$-full subdihypergraph of $H$, since any injective convergent sequence contains a subsequence $\bar x$ with $\lim(\bar x)\notin\ran(\bar x)$.
Hence $\ODD\kappa H\Leftrightarrow\ODD\kappa{\dhth Y}$.}
Let $\dhNkk:=\dhN{\,{}^\kappa\kappa}.$
\index{hypergraph!of sequences!NN@with no injective convergent subsequence\idf$\dhNkk$}
\item 
\index{hypergraph!of sequences!HHtopY@injective with no limits in~$Y$~\idf$\dhth Y$}%
\index{hypergraph!of sequences!HHtop@injective with no convergent subsequences\idf$\dhthkk$}%
Let $\dhth Y:=\dhIkappa\cap \dhN Y$ and
$\dhthkk:=\dhth{\,{}^\kappa\kappa}$.%
\footnote{The subscript in $\dhth Y$ indicates that this dihypergraph is important for the topological Hurewicz dichotomy.}
\end{enumerate-(a)}
\end{definition}

Note that $\dhNkk$ consists of all $\kappa$-sequences in ${}^\kappa\kappa$ 
with no injective convergent subsequences,
and
$\dhthkk$ consists of all injective $\kappa$-sequences in ${}^\kappa\kappa$ without convergent subsequences.
$\dhthkk$~was defined in \cite{CarroyMillerSoukup}*{Section 2} for $\kappa=\omega$.   

Note that $\dhI\ddim$ is in $\defsetsk$ for all $\ddim\leq\kappa$, as are $\dhN Y$ and $\dhth Y$ when $Y\in\defsetsk$.
$\dhI\ddim$ is box-open on ${}^\kappa\kappa$ for $\ddim<\kappa$, while $\dhIkappa$ is not box-open.
To see the latter statement, take any injective sequence which converges to one of its elements.
Moreover, 
$\dhN Y$ is not box-open on ${}^\kappa\kappa$,
%\todog{Suppose $\bar x$ is non-constant but eventually constant with value $z$. Then $\bar x\in\dhN Y$ since $\ran(\bar x)$ has size $<\kappa$ and hence it has no limit points. On the other hand, for any open neighborhood $U$ of $\bar x$, and any injective sequence $\bar z\in U$ with $z\in\ran(z)$, $\bar z$ converges to $z$ and hence it is in $\dh N Y$.}
as witnessed by
any non-constant sequence which is eventually constant. 
%with value in $Y$.
However, $\dhth Y=\dhIkappa\cap \dhN Y$ is box-open.
To see this, suppose $\bar{x}=\langle x_\alpha : \alpha<\kappa\rangle\in \dhth Y$.
For each $\alpha<\kappa$,
\todog{We need to use that $\ran(\bar x)$ is a discrete set here! (Equivalently, that it has no limit points in $\ran(\bar x)$)}
pick some $\gamma_\alpha$ with $\alpha\leq \gamma_\alpha<\kappa$ such that $x_\alpha$ is the unique element of $\bar x$ in 
$N_{x_\alpha\restr\gamma_\alpha}$. 
We have $\prod_{\alpha<\kappa} N_{x_\alpha\restr\gamma_\alpha} \subseteq \dhIkappa$ since the sets $N_{x_\alpha\restr\gamma_\alpha}$ 
\todog{\tiny{Suppose $Y:=N_{x_\alpha\restr\gamma_\alpha}\cap N_{x_\beta\restr\gamma_\beta}\neq\emptyset$ 
and $\gamma_\alpha\leq\gamma_\beta$. 
\\
If $y\in Y$, then\\ $x_\alpha\restr\gamma_\alpha=y\restr\gamma_\alpha$ \\$=x_\beta\restr\gamma_\alpha$, 
so\\ $x_\beta\in N_{x_\alpha\restr\gamma_\alpha}$,\\ contradiction.}} 
are pairwise disjoint. 
Moreover, we have $\prod_{\alpha<\kappa} N_{x_\alpha\restr\gamma_\alpha} \subseteq \dhN Y$ 
since $\bar x\in\dhN Y$ and $\gamma_\alpha\geq\alpha$ for all $\alpha<\kappa$. 
\todog{\tiny{Claim:
$\prod_{\alpha<\kappa} N_{x_\alpha\restr\gamma_\alpha}$\\$\subseteq\dhN Y$.
Proof: If the $\gamma_\alpha$'s are unboun-\\ded in $\kappa$, then this follows from\\ $\bar x\in\dhN Y$.
If the $\gamma_\alpha$'s are bounded\\ by $\gamma<\kappa$, then 
this holds 
since the sets $N_{x_\alpha\restr\gamma_\alpha}$ are pairwise disjoint.}}

We shall prove the equivalences displayed in Figure~\ref{figure: ODD variants special cases}
for relatively box-open $\kappa$-di\-hyper\-graphs~$H$ on ${}^\kappa\kappa$.
The meaning of the arrows is as in Figure~\ref{figure: ODD variants}.%
\footnote{Blue arrows are clickable and lead to the corresponding results.} 
\todon{In the next figure, ``abovecaptionskip'' (distance between figure and caption) and ``intextsep'' (distance between text and figure) were set LOCALLY. Remove before submitting to journal?}

\vspace{5 pt}

{
\setlength{\abovecaptionskip}{8pt}
\setlength{\intextsep}{8 pt}
\hypersetup{linkcolor=VeryDarkBlueNode}
\begin{figure}[H]
%\begin{figure}[h]
\begin{tikzpicture}[scale=1]
%node styles
%\tikzstye{vertex}={draw,black,scale=.6,fill=white,line width=.5pt}
\tikzstyle{theory1}=[draw,rounded corners,scale=.90, minimum width=8mm, minimum height=8.0mm,line width=\widthline,VeryDarkBlueNode]
%\tikzstyle{info}=[rounded corners, inner sep= 2pt, align=left,scale=0.82]
%arrow styles
%%%%option 1: the original arrows

\tikzstyle{medarrow}=[>=Stealth, line width=\widtharrow, scale=1.8] 
\tikzstyle{infoarrow}=[>=Stealth, scale=1] 
\tikzstyle{consarrow}=[>=Stealth, line width=\widtharrow, scale=1.8, dash pattern=on 6pt off  2pt]
\tikzstyle{questionarrow}=[>=Stealth, line width=\widtharrow, scale=1.8, dash pattern=on 1pt off  2pt]

% draw the nodes
\node (1) at (0,0) [theory1] {\hyperlink{ODDC def}{\parbox[c][.7cm]{1.78cm}{$\,\ODDC\kappa{H}$}}};
 \node (2) at (4,0) [theory1] {\hyperlink{ODDH def}{\parbox[c][.7cm]{1.5cm}{$\,\ODDH\kappa H$}}};
 \node (3) at (8,0) [theory1] {\hyperlink{ODDI def}{\parbox[c][.7cm]{1.35cm}{$\,\ODDI\kappa H$}}};
 \node (4) at (12,0) [theory1] {\hyperref[ODD def]{\parbox[c][.7cm]{1.25cm}{$\,\ODD\kappa H$}}}; 

%draw the arrows 

%
\draw[medarrow,VeryDarkBlueNode] %%arows with no hyperrefs
%additional arrow from ODDC to ODDH
(1.east) edge [->,VeryDarkBlueNode](2.west);

\draw[medarrow,BlueArrow] %%arrows with hyperrefs
(3.west) edge [<->] 
%[<->,DarkGreenArrow] 
node[midway,above]{{\footnotesize $H\cap\dhIkappa$ is rela-}} 
node {\hyperlink{hyp: ODDI ODDH when H cap dhI is box-open}{\phantom{xxx}}} 
node[midway,below]{{\footnotesize tively box-open}} 
(2.east) 
([xshift=-0.7mm]4.north) edge [<->,bend right=22] 
%[<->,LightGreenArrow,bend right=22] 
node[midway,above]{{\footnotesize $H\subseteq\dhIkappa$}} 
node {\hyperlink{hyp: ODD iff ODDH for dhI}{\phantom{xxx}}} 
(2.north) 
([xshift=0.7mm]4.north) edge [<->,bend right=30] 
%[<->,YellowArrow,bend right=30] 
node[midway,above]{{\footnotesize $H\subseteq\dhthkk$}} 
node {\hyperlink{hyp: ODD iff ODDC for dhth}{\phantom{xxx}}} 
([xshift=-0.7mm]1.north)
([xshift=0mm]3.south) edge [<->,bend left=22] 
%[<->,RedOrangeArrow,bend left=22] 
node[midway,below]{{\footnotesize $H\subseteq\dhNkk$}} 
node {\hyperlink{hyp: ODDI ODDC dhN}{\phantom{xxx}}} 
([xshift=0.7mm]1.south); 

%arrow for diamondi
%%%which color do we want for dashed arrows? 
%\draw[consarrow, RedOrangeArrow] 
%\draw[consarrow, TealArrow]
\draw[consarrow, BlueArrow]
(3.east) edge[<->] 
node[midway,above] {{\footnotesize $\Diamondi\kappa$}} 
node {\hyperlink{ODD and ODDI Diamondi}{\phantom{xxx}}} 
(4.west);
\end{tikzpicture}

%\vspace{-10pt} 
\caption{Equivalences
in some special cases}

\label{figure: ODD variants special cases}
\end{figure}
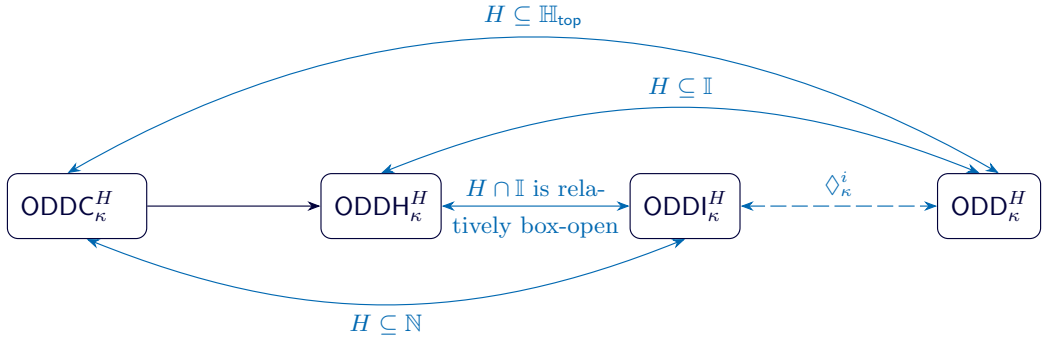}

\begin{lemma}
\label{order homomorphisms and injective H}
Suppose that $2\leq\ddim\leq \kappa$.
\begin{enumerate-(1)}
\item\label{dhI injectivity}
A map $f:{}^\kappa\ddim\to {}^\kappa\kappa$ is injective if and only if it is a homomorphism from $\dhH\ddim$ to~$\dhI\ddim$.
\item\label{dhI perp}
Suppose 
%$X\subseteq{}^\kappa\kappa$ and
$\iota:{}^{<\kappa}\ddim\to{}^{<\kappa}\kappa$ is
strict order preserving with
%a strict order preserving map with
$\ran([\iota])\subseteq X\subseteq{}^\kappa\kappa$.
Then $\iota$
is an order homomorphism for 
$(X,\dhI\ddim)$ 
if and only if
$\iota$ preserves~$\perp$.
\end{enumerate-(1)}
\end{lemma}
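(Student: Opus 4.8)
The plan is to prove the two parts essentially independently, since \ref{dhI injectivity} characterizes injectivity of maps on the branch space while \ref{dhI perp} characterizes $\perp$-preservation of the order map $\iota$ directly in terms of being an order homomorphism for $\dhI\ddim$. Recall that $\dhI\ddim=\dhII\ddim{{}^\kappa\kappa}$ consists of all injective $\ddim$-sequences in ${}^\kappa\kappa$, so a $\ddim$-sequence $\bar y=\langle y_i:i\in\ddim\rangle$ is a hyperedge of $\dhI\ddim$ precisely when the $y_i$ are pairwise distinct.

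For \ref{dhI injectivity}, first I would prove the forward direction: suppose $f$ is injective and let $\bar x=\langle x_i:i\in\ddim\rangle\in\dhH\ddim$. By the definition of $\dhH\ddim$, there is some $t\in{}^{<\kappa}\ddim$ with $t\conc\langle i\rangle\subseteq x_i$ for all $i\in\ddim$; in particular the $x_i$ are pairwise distinct (for $i\neq j$ the nodes $t\conc\langle i\rangle$ and $t\conc\langle j\rangle$ are incompatible). Since $f$ is injective, the values $f(x_i)$ are then pairwise distinct, so $f^\ddim(\bar x)\in\dhI\ddim$, showing $f$ is a homomorphism from $\dhH\ddim$ to $\dhI\ddim$. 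For the converse, suppose $f$ is such a homomorphism but is not injective, so there are $y\neq z$ in ${}^\kappa\ddim$ with $f(y)=f(z)$. The key construction is to exhibit a hyperedge of $\dhH\ddim$ containing both $y$ and $z$ among its entries: let $\beta<\kappa$ be least with $y(\beta)\neq z(\beta)$ (so $y\restr\beta=z\restr\beta=:s$), put $t:=s$, and build a sequence $\bar x=\langle x_i:i\in\ddim\rangle$ with $t\conc\langle i\rangle\subseteq x_i$ for all $i$, arranging that $x_{y(\beta)}=y$ and $x_{z(\beta)}=z$ (possible since $y(\beta)\neq z(\beta)$ are two distinct indices in $\ddim$, and $s\conc\langle y(\beta)\rangle\subseteq y$, $s\conc\langle z(\beta)\rangle\subseteq z$). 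Then $\bar x\in\dhH\ddim$ but $f^\ddim(\bar x)$ has two equal entries, contradicting that its image lies in $\dhI\ddim$.

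For \ref{dhI perp}, I would unwind the definition of an order homomorphism for $(X,\dhI\ddim)$ from Definition~\ref{def: order homomorphism}: since $\ran([\iota])\subseteq X$ is given, condition \ref{oh1} holds automatically, so $\iota$ is an order homomorphism for $(X,\dhI\ddim)$ if and only if $\prod_{i\in\ddim}(N_{\iota(t\conc\langle i\rangle)}\cap X)\subseteq\dhI\ddim$ for all $t\in{}^{<\kappa}\ddim$, i.e. every sequence drawn with $i$-th coordinate in $N_{\iota(t\conc\langle i\rangle)}\cap X$ is injective. For the direction ``$\perp$-preserving $\Rightarrow$ order homomorphism'', if $\iota$ preserves $\perp$ then for $i\neq j$ we have $\iota(t\conc\langle i\rangle)\perp\iota(t\conc\langle j\rangle)$, so $N_{\iota(t\conc\langle i\rangle)}\cap N_{\iota(t\conc\langle j\rangle)}=\emptyset$, forcing distinctness of the $i$-th and $j$-th coordinates of any element of the product; hence the product lies in $\dhI\ddim$. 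For the converse, suppose $\iota$ is an order homomorphism for $(X,\dhI\ddim)$ and take any incompatible $s\perp s'$ in ${}^{<\kappa}\ddim$; I must show $\iota(s)\perp\iota(s')$. Let $t:=s\wedge s'$ be the meet (as in Subsection~\ref{preliminaries: trees}); since $s\perp s'$, they extend distinct immediate successors $t\conc\langle i\rangle\subseteq s$ and $t\conc\langle j\rangle\subseteq s'$ with $i\neq j$. Since $\iota$ is strict order preserving, $\iota(t\conc\langle i\rangle)\subseteq\iota(s)$ and $\iota(t\conc\langle j\rangle)\subseteq\iota(s')$, so it suffices to prove $\iota(t\conc\langle i\rangle)\perp\iota(t\conc\langle j\rangle)$.

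The main obstacle, and the one step requiring genuine care, is exactly this last claim: deriving incompatibility of $\iota(t\conc\langle i\rangle)$ and $\iota(t\conc\langle j\rangle)$ from the product-in-$\dhI\ddim$ condition. Suppose toward a contradiction they are compatible, say $\iota(t\conc\langle i\rangle)\subseteq\iota(t\conc\langle j\rangle)$. The idea is that then one can find a single point $x\in N_{\iota(t\conc\langle j\rangle)}\cap X$ (note $X\supseteq\ran([\iota])\ni$ some extension of $\iota(t\conc\langle j\rangle)$, so this intersection is nonempty) which simultaneously lies in $N_{\iota(t\conc\langle i\rangle)}\cap X$; choosing for every other index $k\in\ddim$ a point $x_k\in N_{\iota(t\conc\langle k\rangle)}\cap X$ with $x_i=x_j=x$ produces a non-injective sequence in $\prod_{k\in\ddim}(N_{\iota(t\conc\langle k\rangle)}\cap X)$, contradicting the order homomorphism hypothesis. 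Here I would use that $\ran([\iota])\subseteq X$ guarantees each factor $N_{\iota(t\conc\langle k\rangle)}\cap X$ is nonempty (via an appropriate branch through $\iota$), which is the one place the hypothesis $\ran([\iota])\subseteq X$ is essential; I expect verifying nonemptiness of these factors and the coincidence $x_i=x_j$ to be the only nontrivial bookkeeping.
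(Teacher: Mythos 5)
Your proof is correct and follows essentially the same route as the paper's: part \ref{dhI injectivity} via the observation that hyperedges of $\dhH\ddim$ are injective (with the converse obtained by placing two points with equal $f$-values into a common hyperedge), and part \ref{dhI perp} via reduction to immediate successors of a splitting node and, for the converse, the same contradiction built from a non-injective sequence $x_i=x_j$ lying in the product $\prod_{k\in\ddim}(N_{\iota(t\conc\langle k\rangle)}\cap X)\subseteq\dhI\ddim$, with nonemptiness of the factors coming from $\ran([\iota])\subseteq X$. The only cosmetic difference is that you argue the converse of \ref{dhI injectivity} contrapositively where the paper argues it directly; the underlying construction is identical.
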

\begin{proof}
For \ref{dhI injectivity}, suppose first that $f$ is injective. 
Since any sequence in $\dhH\ddim$
is injective,
its $f$-image
is also injective, i.e., it is in $\dhI\ddim$.
Conversely, suppose that $f$ is a homomorphism from $\dhH\ddim$ to $\dhI\ddim$.
Take distinct elements $x,y$ of ${}^\kappa d$ and a hyperedge of $\dhH\ddim$ that contains $x$ and $y$. Since the $f$-image of this hyperedge is in $\dhI\ddim$, we have $f(x)\neq f(y)$.

For \ref{dhI perp},
it is clear that if $\iota$ preserves $\perp$, then is an order homomorphism for $(X,\dhI\ddim)$.
Conversely, suppose that $\iota$ is an order homomorphism for $(X,\dhI\ddim)$.
To see that $\iota$ preserves $\perp$,
it suffices to show that
$
\iota(u\conc\langle\alpha\rangle)
\perp
\iota(u\conc\langle\beta\rangle)
$
holds for all $u\in{}^\kappa\ddim$ and all $\alpha,\beta<\kappa$ with $\alpha\neq\beta$.
Seeking a contradiction, suppose that 
$
\iota(u\conc\langle\alpha\rangle)
\subseteq
\iota(u\conc\langle\beta\rangle). 
$
%Consider the sequence 
Since %$\ran([\iota])\subseteq X$,
$\ran(\iota)\subseteq T(X)$,
choose an
%$\bar x=\langle x_\gamma:\gamma<\kappa\rangle$ obtained by choosing 
an arbitrary element
$x_\gamma$ 
of $N_{\iota(u\conc\langle\gamma\rangle)}\cap X$ 
for all $\gamma\in\kappa-\{\alpha\}$ 
%and letting
and let
$x_\alpha:=x_\beta$.
Then $\langle x_\gamma:\gamma<\kappa\rangle$ is not injective.
However, 
since
$x_\alpha\in 
N_{\iota(u\conc\langle\beta\rangle)}\subseteq 
N_{\iota(u\conc\langle\alpha\rangle)}$,
we have
$\langle x_\gamma:\gamma<\kappa\rangle \in\prod_{\gamma<\ddim}(N_{\iota(u\conc\langle\gamma\rangle)}\cap X)\subseteq \dhI\ddim.$
\end{proof}

\begin{lemma} 
\label{lemma: dhN dhth}
Suppose that $Y\subseteq{}^\kappa\kappa$,
and $f:{}^\kappa\kappa\to Y$. 
% and $\iota:{}^{<\kappa}\kappa\to{}^{<\kappa}\kappa$.
\begin{enumerate-(1)}
\item\label{dhN closed maps} 
If $f$ is a closed map,\footnote{$f$ is meant to be closed as a map with codomain $Y$.} 
then $f$ is a homomorphism from $\dhH\kappa$ to $\dhN Y$.%
\footnote{The converse also holds when
when $\kappa=\omega$ or $\kappa$ is weakly compact, but it may fail for non-weakly compact $\kappa$; see Remark~\ref{weakly compact dhN}.}
\item\label{dhth closed injective maps} 
If $f$ is an injective closed map, then $f$ is a homomorphism from $\dhH\kappa$ to $\dhth Y$.
\item\label{dhth order homomorphism}
If 
%$\iota$ 
$X\subseteq Y$ and $\iota:{}^{<\kappa}\kappa\to{}^{<\kappa}\kappa$
is an order homomorphism for $(X,\dhth Y)$,
then $[\iota]$ is a closed map from ${}^\kappa\kappa$ to~$Y$.
\end{enumerate-(1)}
\end{lemma}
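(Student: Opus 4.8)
The plan is to prove the three statements of Lemma~\ref{lemma: dhN dhth} by unwinding the definitions of $\dhN Y$ and $\dhth Y$ and exploiting the characterization of $\dhth Y$-order homomorphisms through closedness of the induced map $[\iota]$. Recall that $\dhN Y$ consists of all non-constant $\kappa$-sequences $\bar x$ whose range has no limit points in $Y\cup\ran(\bar x)$, and $\dhth Y=\dhIkappa\cap\dhN Y$ consists of injective $\kappa$-sequences with no convergent subsequences (relative to $Y$). For part~\ref{dhN closed maps}, I would take an arbitrary hyperedge $\bar x=\langle x_\alpha:\alpha<\kappa\rangle$ of $\dhH\kappa$ and show $f^\kappa(\bar x)=\langle f(x_\alpha):\alpha<\kappa\rangle\in\dhN Y$. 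First note that $\bar x\in\dhH\kappa$ means the $x_\alpha$ split into pairwise incompatible neighborhoods above a common node $t$, so $\bar x$ itself has no convergent subsequence; hence $\{x_\alpha:\alpha<\kappa\}$ is a closed discrete subset of ${}^\kappa\kappa$. Since $f$ is a closed map into $Y$, the image $\{f(x_\alpha):\alpha<\kappa\}$ is a relatively closed subset of $Y$, so it contains all of its limit points that lie in $Y$. The key point is to argue that $f^\kappa(\bar x)$ is non-constant and that $\ran(f^\kappa(\bar x))$ has no limit point in $Y\cup\ran(f^\kappa(\bar x))$; closedness gives that any such limit point would already be an element of the image, and one then rules out accumulation within the image using that the domain sequence is closed discrete.

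For part~\ref{dhth closed injective maps}, I would combine part~\ref{dhN closed maps} with injectivity. If $f$ is injective and closed, then $f$ is a homomorphism from $\dhH\kappa$ to $\dhN Y$ by~\ref{dhN closed maps}, and $f$ is a homomorphism from $\dhH\kappa$ to $\dhIkappa=\dhI\kappa$ by Lemma~\ref{order homomorphisms and injective H}~\ref{dhI injectivity}. Since $\dhth Y=\dhIkappa\cap\dhN Y$, any hyperedge of $\dhH\kappa$ maps into both $\dhIkappa$ and $\dhN Y$ simultaneously, so its image lies in the intersection $\dhth Y$. This gives the claim directly.

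Part~\ref{dhth order homomorphism} is the converse direction and I expect it to be the main obstacle. Here I assume $X\subseteq Y$ and $\iota:{}^{<\kappa}\kappa\to{}^{<\kappa}\kappa$ is an order homomorphism for $(X,\dhth Y)$, and I must show $[\iota]$ is a closed map from ${}^\kappa\kappa$ to $Y$. First, $[\iota]$ is a continuous function with $\ran([\iota])\subseteq X\subseteq Y$ by Definition~\ref{def: order homomorphism}~\ref{oh1} and Lemma~\ref{[e] continuous}. Since $\dhth Y\subseteq\dhIkappa$, Lemma~\ref{order homomorphisms and injective H}~\ref{dhI perp} shows that $\iota$ preserves $\perp$, so $[\iota]$ is injective and a homeomorphism onto its image by Lemma~\ref{[e] continuous}~\ref{[e] homeomorphism}. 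To establish closedness, I would use the description of $\closure{[\iota](Z)}$ from Lemma~\ref{lemma: [e] closed map}~\ref{closure of ran[e] when ddim=kappa}, which reduces closedness to showing $\Lim^\iota_t(Z)\subseteq[\iota](\closure Z)$ for all $t$, or more simply that the relevant limit points already lie in the image. The crux is to exploit condition~\ref{oh2} for $\dhth Y$: if some sequence $\langle x_\alpha\in N_{\iota(t\conc\langle\alpha\rangle)}\cap X\rangle$ had an injective subsequence converging to a point $y\in Y$, this would contradict that $\prod_{\alpha<\kappa}(N_{\iota(t\conc\langle\alpha\rangle)}\cap X)\subseteq\dhth Y\subseteq\dhN Y$, since elements of $\dhN Y$ have no injective subsequence converging to a point of $Y$. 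The careful bookkeeping — extracting the right convergent subsequence, matching it to a hyperedge of $\dhH\kappa$ witnessed by $\iota$, and deducing the limit must already be an image point rather than a genuine new limit point in $Y$ — is where the argument demands the most care, and I would structure it to mirror the limit-point analysis in the proof of Lemma~\ref{lemma: [e] closed map}.
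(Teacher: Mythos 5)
Your proposal is correct and takes essentially the same route as the paper's proof: for \ref{dhN closed maps} the image of the closed discrete set $\{x_\alpha:\alpha<\kappa\}$ under a closed map is a relatively closed discrete subspace of $Y$; \ref{dhth closed injective maps} follows by combining this with Lemma~\ref{order homomorphisms and injective H}~\ref{dhI injectivity}; and \ref{dhth order homomorphism} uses $\perp$-preservation (Lemma~\ref{order homomorphisms and injective H}~\ref{dhI perp}) plus Lemma~\ref{lemma: [e] closed map}~\ref{closure of ran[e] when ddim=kappa} and condition~\ref{oh2}. One remark: in \ref{dhth order homomorphism} the contradiction you describe already yields $\Lim^{\iota}_t\cap Y=\emptyset$ for every $t$, so the extra bookkeeping you anticipate (showing limits are ``image points'') is unnecessary — relative closedness of images in $Y$ follows at once, exactly as in the paper.
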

\begin{proof}
For \ref{dhN closed maps}, 
let $\langle x_\alpha:\alpha<\kappa\rangle\in\dhH\kappa$. 
Then $\{x_\alpha:\alpha<\kappa\}$ is a closed discrete set. 
Hence 
$\{f(x_\alpha):\alpha<\kappa \}$ is a relatively closed discrete subspace of $Y$ and therefore has no limit points in $Y\cup\ran(\bar x)$.
\ref{dhth closed injective maps} follows from~\ref{dhN closed maps} and Lemma~\ref{order homomorphisms and injective H}~\ref{dhI injectivity}.
For~\ref{dhth order homomorphism}, 
note that 
$\ran([\iota])\subseteq X\subseteq Y$
and that 
$\iota$ is $\perp$-preserving by Lemma~\ref{order homomorphisms and injective H}~\ref{dhI perp}.
It thus suffices by  Lemma~\ref{lemma: [e] closed map}
\todog{$\iota$ needs to be $\perp$-preserving in order to apply Lemma~\ref{lemma: [e] closed map}}
to show that $\Lim^\iota_t\cap Y=\emptyset$
for all $t\in{}^{<\kappa}\kappa$, 
i.e., for all $t\in{}^{<\kappa}\kappa$ and all sequences 
$\bar x=\langle x_\alpha:\alpha<\kappa\rangle$ 
with ${t\conc\langle \alpha\rangle}\subseteq x_\alpha$ for all $\alpha<\kappa$,
$\{[\iota](x_\alpha):\alpha<\kappa\}$ has no limit points in $Y$.%
\footnote{See the paragraph before Lemma~\ref{lemma: [e] closed map} for the definition of $\Lim^\iota_t$.}  
This holds since 
$\langle [\iota](x_\alpha):\alpha<\kappa\rangle\in\dhN Y$. 
\todog{$\langle [\iota](x_\alpha)\rangle_\alpha\subseteq \prod(N_{\iota(t\conc\langle \alpha\rangle)}\cap X)\subseteq\dhN Y$.} 
\end{proof}

Suppose $H$ is a relatively box-open $\kappa$-dihypergraph on ${}^\kappa\kappa$.
The next theorem shows that if $H\subseteq\dhIkappa$,  
then 
$\ODD\kappa H\Longleftrightarrow\ODDH\kappa H$  
and if
$H\subseteq\dhthkk$, then
$\ODD\kappa H\Longleftrightarrow\ODDC\kappa{H}$.
Thus if $H\subseteq\dhthkk$, then all of the dichotomies in  Figure~\ref{figure: ODD variants special cases} are equivalent to $\ODD\kappa H$.
In fact, we obtain a similar equivalence for the next variant of $\ODDC\kappa{H}$. 
Here the range of the homomorphism is required to be closed relative to a subset $Y$ of ${}^\kappa\kappa$. 

\begin{definition}
\label{def: ODDC HY}
Suppose that $Y\subseteq{}^\kappa\kappa$ and $H$ is 
a $\kappa$-dihypergraph% 
\footnote{$\ODDC\kappa{H,Y}$ can be defined for $\ddim$-dihypergraphs $H$ on ${}^\kappa\kappa$ for $\ddim<\kappa$ as well. However, this is equivalent to $\ODDC\kappa{H\restr Y}$ for any relatively box-open $\ddim$-dihypergraph $H$ on ${}^\kappa\kappa$ by Theorem~\ref{theorem: ODD ODDC when ddim<kappa}.}
on ${}^\kappa\kappa$.
\todog{For the footnote: if $H$ is relatively box-open, then $H\restr Y$ is also relatively box-open. Apply  Theorem~\ref{theorem: ODD ODDC when ddim<kappa} to $H\restr Y$.}
%\index{open dihypergraph dichotomy!variant where homomorphisms are!homeomorphisms onto relatively closed subsets of $Y$\idf $\ODDC\kappa{H,Y}$}
\index{open dihypergraph dichotomy!variant where homomorphisms are!homeomorphisms onto relatively closed sets\idf $\ODDC\kappa{H,Y}$}
%\index{open dihypergraph dichotomy!variant where homomorphisms are!Z@homeomorphisms onto closed subsets of $Y$\idf $\ODDC\kappa{H,Y}$}
\begin{quotation}
{$\ODDC\kappa{H,Y}$:}
Either $H\restr Y$ admits a $\kappa$-coloring, or 
there exists 
a homomorphism 
from~$\dhH{\kappa}$ to~$H\restr Y$
which is
\emph{a homeomorphism} from ${}^\kappa\kappa$ onto 
a \emph{relatively closed} subset of~$Y$.%
\end{quotation}
\end{definition}

Note that $\ODDC\kappa H$ is the special case of $\ODDC\kappa{H,Y}$ for $Y:={}^\kappa\kappa$, and for all $Y\subseteq Z\subseteq{}^\kappa\kappa$ with $\domh H\subseteq Y$, we have $\ODDC\kappa{H,Z} \Longrightarrow \ODDC\kappa{H,Y}$. 
The next theorem will be used to prove Theorem~\ref{theorem: THD and ODD dhth}.

\begin{theorem}
\label{theorem: strong variants for dhI dhth}
\hypertarget{hyp: ODD iff ODDH for dhI}{}%
\hypertarget{hyp: ODD iff ODDC for dhth}{}%
Suppose that $Y\subseteq{}^\kappa\kappa$ and $H$ is a relatively box-open $\kappa$-dihypergraph on~$Y$.
Suppose there exists  a continuous 
homomorphism from $\dhH\kappa$ to $H$.
\begin{enumerate-(1)}
\item
\label{ODD iff ODDH for dhI}
If $H\subseteq\dhIkappa$, then there exists a 
homomorphism from $\dhH\kappa$ to $H$ which is a homeomorphism from ${}^\kappa\kappa$ onto its image.
\todog{This works for $\ddim<\kappa$ as well. But in that case, we already know that $\ODD\kappa H\Leftrightarrow\ODDC\kappa H$}
\item
\label{ODD iff ODDC for dhth}
If $H\subseteq\dhth Y$, then
there exists a 
homomorphism from $\dhH\kappa$ to $H$ which is a homeomorphism 
from ${}^\kappa\kappa$ onto a relatively closed subset of $Y$.
\end{enumerate-(1)}
\end{theorem}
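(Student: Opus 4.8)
The plan is to extract a single $\perp$-preserving order homomorphism from the given continuous homomorphism and then read off both conclusions directly from the characterizations already in hand. Throughout I would set $X:=\domh H$ and use the identity $H\restr\domh H=H$, which holds because every hyperedge of $H$ lies in ${}^\kappa(\domh H)$. First I would apply Lemma~\ref{homomorphisms and order preserving maps}~\ref{hop 1} to the given continuous homomorphism $f\colon\dhH\kappa\to H$: since $H$ is box-open on $\domh H$, this produces a continuous order homomorphism $\iota\colon{}^{<\kappa}\kappa\to{}^{<\kappa}\kappa$ for $(\domh H,H)$ (together with a map $e$ satisfying $[\iota]=f\comp[e]$, which is not needed here). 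The key observation is that the defining inclusion \ref{oh2} of Definition~\ref{def: order homomorphism}, namely $\prod_{i<\kappa}(N_{\iota(t\conc\langle i\rangle)}\cap\domh H)\subseteq H$, together with the hypothesis $H\subseteq\dhIkappa$ (which holds in both cases, since $\dhth Y\subseteq\dhIkappa$), shows that $\iota$ is also an order homomorphism for $(\domh H,\dhIkappa)$. Hence $\iota$ preserves $\perp$ by Lemma~\ref{order homomorphisms and injective H}~\ref{dhI perp}.

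For part~\ref{ODD iff ODDH for dhI} this already finishes the argument: since $\iota$ is a $\perp$-preserving order homomorphism for $(\domh H,H)$, Lemma~\ref{lemma: perp-preserving order homomorphisms and homeomorphisms}~\ref{ppohh homeo} yields that $[\iota]$ is a homomorphism from $\dhH\kappa$ to $H\restr\domh H=H$ which is a homeomorphism from ${}^\kappa\kappa$ onto its image.

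For part~\ref{ODD iff ODDC for dhth} I would use the same $\iota$ and supply the closedness of the range. Since now $H\subseteq\dhth Y$, the inclusion \ref{oh2} gives that $\iota$ is an order homomorphism for $(\domh H,\dhth Y)$, and $\domh H\subseteq Y$ because $H$ is a dihypergraph on $Y$. Lemma~\ref{lemma: dhN dhth}~\ref{dhth order homomorphism} then shows that $[\iota]$ is a closed map from ${}^\kappa\kappa$ to $Y$; in particular $\ran([\iota])=[\iota]({}^\kappa\kappa)$ is a relatively closed subset of $Y$. Combining this with the homeomorphism-onto-image conclusion of Lemma~\ref{lemma: perp-preserving order homomorphisms and homeomorphisms}~\ref{ppohh homeo} (applicable exactly as before), $[\iota]$ is a homomorphism from $\dhH\kappa$ to $H$ that is a homeomorphism from ${}^\kappa\kappa$ onto a relatively closed subset of $Y$, as required.

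There is essentially no hard analytic step; the argument is a bookkeeping application of the machinery from Subsections~\ref{subsection: order homomorphisms} and~\ref{subsection: closure properties}. The only points demanding care are the consistent use of the parameter $X=\domh H$ (and the identity $H\restr\domh H=H$, together with $\domh H\subseteq Y$), and the conceptual observation that the hypotheses $H\subseteq\dhIkappa$ and $H\subseteq\dhN Y$ are precisely what automatically upgrade an arbitrary order homomorphism into a $\perp$-preserving one and into one inducing a closed map, respectively. This is why, in contrast to Theorem~\ref{theorem: ODD ODDI}, no instance of $\Diamondi\kappa$ is required for the present $\ddim=\kappa$ statement.
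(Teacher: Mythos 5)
Your proposal is correct and follows essentially the same route as the paper's proof: extract an order homomorphism for $(\domh H,H)$ via Lemma~\ref{homomorphisms and order preserving maps}, upgrade it to a $\perp$-preserving one via Lemma~\ref{order homomorphisms and injective H}~\ref{dhI perp} using $H\subseteq\dhIkappa$, conclude part~\ref{ODD iff ODDH for dhI} from Lemma~\ref{lemma: perp-preserving order homomorphisms and homeomorphisms}~\ref{ppohh homeo}, and add Lemma~\ref{lemma: dhN dhth}~\ref{dhth order homomorphism} for the closedness of the range in part~\ref{ODD iff ODDC for dhth}. Your explicit bookkeeping of $H\restr\domh H=H$ and $\domh H\subseteq Y$ matches the points the paper itself flags as the only delicate ones.
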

\begin{proof}
For \ref{ODD iff ODDH for dhI}, note that there exists an order homomorphism  $\iota$ for $H$
by Lemma~\ref{homomorphisms and order preserving maps}.
If $H\subseteq\dhIkappa$, then
$\iota$ also preserves $\perp$ by Lemma~\ref{order homomorphisms and injective H}~\ref{dhI perp}.
Thus 
$[\iota]$ is as required by Lemma~\ref{lemma: perp-preserving order homomorphisms and homeomorphisms}. 
\ref{ODD iff ODDC for dhth} follows from the proof of~\ref{ODD iff ODDH for dhI} and 
Lemma~\ref{lemma: dhN dhth}~\ref{dhth order homomorphism}.
\todog{We need to use that $\domh H\subseteq Y$ either here or in the previous lemma, to have $\ran([\iota])\subseteq Y$.}
\end{proof}

\begin{corollary}
\label{cor: strong variant for dhN} 
\label{cor: strong variant for H cap dhI box-open}
\hypertarget{hyp: ODDI ODDC dhN}{}%
\hypertarget{hyp: ODDI ODDH when H cap dhI is box-open}{}%
Suppose that $Y\subseteq{}^\kappa\kappa$ and $H$ is a $\kappa$-dihypergraph on $Y$.
Suppose there exists an injective continuous homomorphism from $\dhH\kappa$ to~$H$.
\begin{enumerate-(1)}
\item
\label{ODDI iff ODDH, H cap dhI}
If $H\cap\dhIkappa$ is relatively box-open,
then there exists a homomorphism from $\dhH\kappa$ to $H$ which is a homeomorphism from ${}^\kappa\kappa$ onto its image.
\item
\label{ODD iff ODDC for dhN}
If $H\subseteq\dhN Y$ and $H$ is relatively box-open, then
there exists a homomorphism from $\dhH\kappa$ to $H$ which is a homeomorphism 
from ${}^\kappa\kappa$ onto a relatively closed subset of $Y$.
\end{enumerate-(1)}
\end{corollary}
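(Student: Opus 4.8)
The plan is to reduce both parts of Corollary~\ref{cor: strong variant for dhN} to the corresponding parts of Theorem~\ref{theorem: strong variants for dhI dhth} by replacing $H$ with a suitable relatively box-open subdihypergraph on which the injective continuous homomorphism still exists. The key tool is Remark~\ref{remark: ODD injective subsequences}: if $I\leqif H$ and $I$ is box-open on a superset of $\domh H$, then the continuous homomorphism from $\dhH\kappa$ to $I$ can be taken of the form $f\comp[e]$ where $f$ is the given homomorphism into $H$ and $e$ is a continuous strict $\wedge$-homomorphism, so that $[e]$ is a homeomorphism onto its image. Consequently, if $f$ is injective (respectively a homeomorphism onto a relatively closed set), the composite $f\comp[e]$ inherits the same property.

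For part~\ref{ODDI iff ODDH, H cap dhI}, I would set $I:=H\cap\dhIkappa$. Since we assume $I=H\cap\dhIkappa$ is relatively box-open, and since the given injective continuous homomorphism $f$ from $\dhH\kappa$ to $H$ actually maps into $I$ by Lemma~\ref{order homomorphisms and injective H}~\ref{dhI injectivity} (an injective $f$ is exactly a homomorphism into $\dhIkappa$, hence into $H\cap\dhIkappa$), there is a continuous homomorphism from $\dhH\kappa$ to the relatively box-open dihypergraph $I$. Now $I\subseteq\dhIkappa$, so Theorem~\ref{theorem: strong variants for dhI dhth}~\ref{ODD iff ODDH for dhI} applied to $I$ yields a homomorphism from $\dhH\kappa$ to $I\subseteq H$ which is a homeomorphism from ${}^\kappa\kappa$ onto its image, as required. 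The only mild subtlety is checking that $\domh I\subseteq Y$ and that $I$ is a dihypergraph on $Y$, which is immediate since $I\subseteq H$ and $H$ is a dihypergraph on $Y$.

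For part~\ref{ODD iff ODDC for dhN}, I would set $I:=H\cap\dhthkk=H\cap\dhIkappa\cap\dhNkk$. Since $H\subseteq\dhN Y$ by hypothesis and $H$ is relatively box-open, I claim $I$ is relatively box-open with the same domain behaviour: the injective $f$ maps into $H\cap\dhIkappa$ as above, and any injective sequence in $\ran(f)$ has no convergent subsequence because $H\subseteq\dhN Y$ forces $\ran(\bar x)$ to have no limit points in $Y\cup\ran(\bar x)$; thus $f$ maps into $\dhth Y\subseteq\dhthkk$. The dihypergraph $\dhth Y$ is box-open (this is established in the discussion preceding Lemma~\ref{order homomorphisms and injective H}, where $\dhth Y=\dhIkappa\cap\dhN Y$ is shown box-open), and $I=H\cap\dhth Y$ is then relatively box-open as an intersection of $H$ with a box-open set on the relevant domain. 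Since there is a continuous homomorphism from $\dhH\kappa$ to $I\subseteq\dhth Y$, Theorem~\ref{theorem: strong variants for dhI dhth}~\ref{ODD iff ODDC for dhth} applied to $I$ delivers a homomorphism from $\dhH\kappa$ to $I\subseteq H$ that is a homeomorphism from ${}^\kappa\kappa$ onto a relatively closed subset of $Y$.

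The main obstacle I anticipate is verifying that passing from $H$ to the subdihypergraph $I$ genuinely preserves relative box-openness and keeps the domain inside $Y$, so that Theorem~\ref{theorem: strong variants for dhI dhth} applies verbatim; in particular, for part~\ref{ODD iff ODDC for dhN} one must be careful that $H\subseteq\dhN Y$ combined with injectivity really lands $f$ inside $\dhth Y$ rather than merely inside $\dhN Y$, which requires the observation that an injective homomorphic image is injective (Lemma~\ref{order homomorphisms and injective H}~\ref{dhI injectivity}). Once these bookkeeping points are settled, the corollary is a direct specialization of the theorem, and no new construction is needed beyond intersecting $H$ with $\dhIkappa$ or $\dhthkk$.
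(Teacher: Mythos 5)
Your part~(1) is correct and is exactly the paper's argument: injectivity of $f$ makes it a continuous homomorphism into $H\cap\dhIkappa$ (Lemma~\ref{order homomorphisms and injective H}~\ref{dhI injectivity}), and Theorem~\ref{theorem: strong variants for dhI dhth}~\ref{ODD iff ODDH for dhI}, applied to this relatively box-open subdihypergraph of $\dhIkappa$, gives the desired homeomorphism onto its image.

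Part~(2), however, contains a genuine error. You set $I:=H\cap\dhthkk$ and argue that $f$ maps into $I$ because ``$H\subseteq\dhN Y$ forces $\ran(\bar x)$ to have no limit points in $Y\cup\ran(\bar x)$'', hence no convergent subsequences, and because ``$\dhth Y\subseteq\dhthkk$''. Both claims are false. Membership in $\dhN Y$ only forbids limit points \emph{in} $Y\cup\ran(\bar x)$; a hyperedge of $H$ may perfectly well converge to a point of ${}^\kappa\kappa\setminus Y$, in which case it lies in $\dhth Y$ but not in $\dhthkk$. Accordingly the true inclusion is $\dhthkk\subseteq\dhth Y$, the reverse of what you wrote, and it is strict in general: for instance, with $Y=\RR_\kappa$, injective sequences in $Y$ converging to points of $\QQ_\kappa$ belong to $\dhth Y\setminus\dhthkk$. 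So $f$ need not be a homomorphism into $H\cap\dhthkk$ at all (this set can even be empty while $H$ is large), and your application of Theorem~\ref{theorem: strong variants for dhI dhth}~\ref{ODD iff ODDC for dhth} to it is unjustified. The repair is small and is what the paper does (and what your own later sentence ``$I=H\cap\dhth Y$ is then relatively box-open'' suggests you half-intended): take $I:=H\cap\dhIkappa$ and note that $H\subseteq\dhN Y$ gives $I=H\cap\dhIkappa\cap\dhN Y=H\cap\dhth Y\subseteq\dhth Y$, so $f$ does map into $I$; moreover $I$ is relatively box-open because $H$ is relatively box-open and $\dhth Y$ is box-open. Then Theorem~\ref{theorem: strong variants for dhI dhth}~\ref{ODD iff ODDC for dhth}, applied to $I$ as a dihypergraph on $Y$, yields a homomorphism from $\dhH\kappa$ into $I\subseteq H$ that is a homeomorphism onto a relatively closed subset of $Y$. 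Finally, your opening appeal to Remark~\ref{remark: ODD injective subsequences} is never actually used; once the correct $I$ is chosen, the corollary is a direct specialization of Theorem~\ref{theorem: strong variants for dhI dhth} with no composition argument needed.
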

\begin{proof}
If $f$ is an injective continuous homomorphism from $\dhH\kappa$ to~$H$, then
it is a continuous homomorphism from $\dhH\kappa$ to $H\cap\dhIkappa$.
If $H\cap\dhIkappa$ is relatively box-open, then 
by Theorem~\ref{theorem: strong variants for dhI dhth}~\ref{ODD iff ODDH for dhI},
there exists a homeomorphism $h$ from ${}^\kappa\kappa$ onto $\ran(h)$ that is a homomorphism from $\dhH\kappa$ to $H\cap\dhIkappa$ and thus to $H$. 
If $H\subseteq\dhN Y$, 
then $f$ is a continuous homomorphism from $\dhH\kappa$ to
$I:=H\cap\dhIkappa=H\cap\dhth Y$. 
Moreover, if $H$ also is relatively box-open, then 
$I$ is relatively box-open 
since $\dhth Y$ is relatively box-open.
Then there exists a homeomorphism $h$ from ${}^\kappa\kappa$ onto a closed set that is a homomorphism from $\dhH\kappa$ to $I$ and hence to $H$, 
by Theorem~\ref{theorem: strong variants for dhI dhth}~\ref{ODD iff ODDC for dhth}, 
\end{proof}

Suppose $H$ is a $\kappa$-dihypergraph on $Y$. 
If $H\cap\dhIkappa$ is relatively box-open, then
$\ODDI\kappa H\Longleftrightarrow\ODDH\kappa H$,
If $H\subseteq\dhN Y$ and $H$ is relatively box-open, then 
$\ODDI\kappa H\Longleftrightarrow\ODDC\kappa{H,Y}$
by the previous corollary.
By Theorem~\ref{theorem: ODD ODDI},
these are further equivalent to $\ODD\kappa H$ 
if we assume $\Diamondi\kappa$. 
In particular, this holds 
for all inaccessible cardinals and all successor cardinals $\kappa\geq\omega_2$.

\begin{remark}
\label{weakly compact dhN}
If $\kappa=\omega$ or $\kappa$ is weakly compact, then any continuous homomorphism $f$ from $\dhH\kappa$ to $\dhN Y$ is a closed map from ${}^\kappa\kappa$ to $Y$. 
To see this,
it suffices 
 by Lemma~\ref{lemma: f closed map kappa weakly compact}~\ref{closure of ran(f) when kappa weakly compact}
that the sets $\Lim^f_t\cap Y$ are all empty. 
But the latter follows immediately from the statement that $f$ is a homomorphism from $\dhH\kappa$ to $\dhN Y$. 
This argument provides an alternative proof of Corollary~\ref{cor: strong variant for dhN}~\ref{ODD iff ODDC for dhN}
for weakly compact cardinals $\kappa$ and $\kappa=\omega$.
However, the claim fails for $Y:={}^\kappa\kappa$ at non-weakly compact cardinals $\kappa>\omega$, even if we assume that $f$ is injective.%
\footnote{I.e., a continuous homomorphism from $\dhH\kappa$ to $\dhthkk$ need not be a closed map.}
To see this, 
let $f:=[\theta]$ for the map
$\theta$ defined in Remark~\ref{ran[e] not closed}~\ref{ran[e] not closed 2}.
\end{remark}

\newpage 

%%%%%%%%%%%%%%
\section{Applications}
\label{section: applications}

In this section, we prove a number of applications of $\ODD\kappa\kappa(X)$.
\todog{The new def of $\ODD\kappa\kappa(X,\defsetsk)$ is needed for the applications to follow from it.}
We obtain analogues to classical results in descriptive set theory such as the Hurewicz 
\todon{Keep this todo-note: added \$\{\}\$ for right size of footnote number} 
dichotomy \cites{Hurewicz1928,SaintRaymond1975,Kechris1977}, 
its extension by Kechris, Louveau and Woodin \cite{KechrisLouveauWoodin1987}*{Theorem 4} and the Jayne--Rogers theorem \cite{JayneRogers1982}*{Theorem 5}.
These results lift work of Carroy, Miller and Soukup \cite{CarroyMillerSoukup} to the uncountable setting.
We further obtain a strong version of the Kechris--Louveau--Woodin dichotomy relative to box-open $\ddim$-dihypergraphs. 
In fact, this dichotomy implies the $\ddim$-dimensional open dihypergraph dichotomy, and they are equivalent for dimension $\kappa$. 
This dichotomy is then used
to study
the open dihypergraph dichotomy restricted to dihypergraphs on ${}^\kappa\kappa$. 
The latter principle has surprising strength. 
For instance, it implies the open dihypergraph dichotomy for $\kappa$-analytic sets and therefore it has at least the consistency strength of an inaccessible cardinal. 
We further prove that it implies the determinacy of V\"a\"an\"anen's perfect set game \cite{VaananenCantorBendixson}*{Section 2} for all subsets of ${}^\kappa\kappa$ and thereby extend a result of V\"a\"an\"anen. 
We derive the asymmetric $\kappa$-Baire property \cite{SchlichtPSPgames}*{Section 3} from the open dihypergraph dichotomy. 
This implication is new even in the countable setting, 
where 
the asymmetric Baire property and the Baire property for $\defsets\omega$ are equivalent. 
Lastly, we prove implications and separations
between some of these applications. 

The strong variant of the Kechris--Louveau--Woodin dichotomy for definable sets 
and our results on the determinacy of V\"a\"an\"anen's game follow from $\ODD\kappa\kappa(\defsetsk)$. 
Therefore a Mahlo cardinal suffices for these applications. 
The remaining results for definable sets follow from $\ODD\kappa\kappa(\defsetsk,\defsetsk)$ and thus 
an inaccessible cardinal suffices for them. 
We do not know the precise consistency strength of many of the applications.
The only lower bounds come from the fact that the $\kappa$-perfect set property for closed subsets of ${}^\kappa\kappa$ for uncountable $\kappa$ has the same consistency strength as an inaccessible cardinal.

\subsection{Variants of the Hurewicz dichotomy}
\label{subsection: Hurewicz}

We obtain several generalizations of the Hurewicz dichotomy 
for subsets of the space ${}^\kappa\kappa$. 
The first statement in the next definition is the \emph{topological Hurewicz dichotomy}% 
\footnote{This is called the \emph{Hurewicz dichotomy} in \cite{LuckeMottoRosSchlichtHurewicz}.} 
for a subset $X$ of ${}^\kappa\kappa$ \cite{LuckeMottoRosSchlichtHurewicz}. 
The second one describes a generalization to two subsets $X$ and $Y$ of ${}^\kappa\kappa$ with $X\subseteq Y$. 

\begin{definition}
\label{def: top Hwd}
Suppose that $X$ and $Y$ are subsets of ${}^\kappa\kappa$ with $X\subseteq Y$. 
\begin{quotation}
{$\THD_\kappa(X)$:}
\index{Hurewicz dichotomy for a!topological Hurewicz dichotomy @\mygobble|see{topological Hurewicz dichotomy}}%
\index{topological Hurewicz dichotomy!for a set\idf$\THD_\kappa(X)$}%
Either 
$X$ is contained in a $K_\kappa$ subset of ${}^\kappa\kappa$, 
or
$X$ contains a closed subset of ${}^\kappa\kappa$ which is homeomorphic to ${}^\kappa\kappa$.
\end{quotation}
\begin{quotation}
{$\THD_\kappa(X,Y)$:}
\index{topological Hurewicz dichotomy!variant with two parameters for!sets\idf$\THD_\kappa(X,Y)$}
Either 
$X$ is contained in a $K_\kappa$ subset of $Y$, 
or
$X$ contains a relatively closed subset of $Y$ which is homeomorphic to ${}^\kappa\kappa$.
\end{quotation}
For classes $\mathcal C$ and $\mathcal D$, 
let $\THD_\kappa(\mathcal C, \mathcal D)$ 
\index{topological Hurewicz dichotomy!variant with two parameters for!classes\idf$\THD_\kappa(\mathcal C,\mathcal D)$}
state that $\THD_\kappa(X,Y)$ holds for all subsets $X\in\mathcal C$ and $Y\in\mathcal D$ of ${}^\kappa\kappa$ with $X\subseteq Y$. 
Let $\THD_\kappa(\mathcal C)$ 
\index{topological Hurewicz dichotomy!for a class\idf$\THD_\kappa(\mathcal C)$}
state that $\THD_\kappa(X)$ holds for all subsets $X\in \mathcal C$ of ${}^\kappa\kappa$. 
\end{definition}

In each case the alternatives are mutually exclusive,\footnote{This also follows from Theorem \ref{theorem: THD and ODD dhth} below.} since any $\kappa$-compact subset of ${}^\kappa\kappa$ is bounded by an element of ${}^\kappa\kappa$ and thus all $K_\kappa$ subsets are eventually bounded.
Note that $\THD_\kappa(X)$ equals $\THD_\kappa(X,{}^\kappa\kappa)$. 

$\THD_\kappa(X,Y)$ is motivated by the following dichotomies from \cite{zdomskyy}*{pages 6-7}. 
We call the first one the 
\emph{exact topological Hurewicz dichotomy}.\footnote{The exact topological Hurewicz dichotomy for analytic subsets of Polish spaces follows from \cite{Hurewicz1928}*{Section~6} by passing to a compactification. 
Note that Zdomskyy called this statement the \emph{precise Hurewicz dichotomy}. }
\index{topological Hurewicz dichotomy!exact\idf$\THD_\kappa(X,X)$}

\begin{quotation}
$\THD_\kappa(X,X)$: 
Either $X$ is $K_\kappa$ or $X$ has a relatively closed subset homeomorphic to ${}^\kappa\kappa$. 
\end{quotation}
\begin{quotation}
$\THD_\kappa(X,\Gdelta(\kappa))$: 
For every 
$\Gdelta(\kappa)$ superset $G$ of $X$, either $X$ is contained in a $K_\kappa$ subset of $G$, or $X$ contains a relatively closed subset of $G$ which is homeomorphic to ${}^\kappa\kappa$. 
\end{quotation}

Zdomskyy derived variants of Menger's conjecture%
\footnote{A topological space $X$ has the \emph{Menger property} if for any sequence $\langle \mathcal{U}_n : n\in\omega \rangle$ of open covers, there exists a sequence $\langle \mathcal{V}_n : n\in\omega \rangle$ of finite sets $\mathcal{V}_n\subseteq \mathcal{U}_n$ such that $\bigcup_{n\in\omega} \mathcal{V}_n$ is a cover of $X$ \cite{hurewicz1926verallgemeinerung}*{Section 1}. 
\emph{Menger's conjecture} for a subset $X$ of a metric space \cite{hurewicz1926verallgemeinerung}*{Section 1} states that $X$ is $K_\sigma$ if it has the Menger property. 
Menger originally phrased the conjecture as a problem for all separable metric spaces for the following equivalent form of the Menger property \cite{menger1924}*{Section 2}: 
For every cover $\mathcal U$ of $X$ such that each element of $X$ is contained in arbitrarily small elements of $\mathcal U$, there exists a sequence of sets in $\mathcal U$ covering $X$ whose diameters converge to $0$. 
The equivalence was proved by Hurewicz \cite{hurewicz1926verallgemeinerung}*{Section~1, Satz~V \& Satz~VI}. 
Note that the claim is stated for precompact subsets of Polish spaces, called compact at that time, but this assumption can be removed  easily by passing to a compactification. 
Menger's conjecture for analytic sets is immediate from a result of Hurewicz \cite{hurewicz1926verallgemeinerung}*{Section~4, Satz~XX} by passing to a compactification. 
More recently, Tall, Todor\v{c}evi\'c and Tokg\" oz proved that Menger's conjecture for all sets of reals in $L(\RR)$ 
is equiconsistent with the existence of an inaccessible cardinal \cite{tall2021strength}*{Theorem 1.3}.}
in the uncountable setting
from these principles. 
He asked whether they are consistent for $\kappa$-analytic subsets of ${}^\kappa\kappa$ \cite{zdomskyy}*{pages 6-7}. 
We solve this for all subsets in $\defsets\kappa$ in Corollary~\ref{cor: Hurewicz cons} below. 
Note that in the countable setting, 
$\THD_\omega(\analytic,\Gdelta)$ is equivalent to the Hurewicz dichotomy for all analytic subsets 
of $0$-dimensional Polish spaces. 

\begin{remark}
\label{THD strong version non weakly compact}
\todol{New remark. Copy over to JEMS version}
Suppose that $\kappa>\omega$ is not weakly compact and $X\subseteq Y\subseteq{}^\kappa\kappa$. Then $\THD_\kappa(X,Y)$ is equivalent to the following seemingly stronger statement, where the second option is replaced by the second option in $\THD_\kappa(X)$:
\begin{quotation}
Either 
$X$ is contained in a $K_\kappa$ subset of $Y$, 
or
$X$ contains a closed subset of ${}^\kappa\kappa$ which is homeomorphic to ${}^\kappa\kappa$.
\end{quotation}
In particular, $\THD_\kappa(X,Y)$ implies
$\THD_\kappa(X)$ 
and also $\THD_\kappa(X,Z)$ for all $Z\subseteq{}^\kappa\kappa$ with $Y\subseteq Z$.

To prove the equivalence, it suffices to show that if $X$ contains a subset which is homeomorphic to ${}^\kappa\kappa$, then it also contains such a closed subset.
This follows from the observation that any homeomorphism $f$ from ${}^\kappa\kappa$ onto a subset of $X$ is a continuous homomorphism from $\dhH\kappa$ to the hypergraph $\dhIkappa\restr X$ of all injective $\kappa$-sequences in $X$,\footnote{See Definition~\ref{def: dhI}.
Moreover, recall that a $\ddim$-hypergraph is a $\ddim$-dihypergraph that is closed under permutations of hyperedges.} 
and hence its contains a $\kappa$-perfect subset by Lemma~\ref{lemma: kappa-perfect subdh}. 
Thus $X$ also contains a  closed homeomorphic image of ${}^\kappa 2$.
The latter is homeomorphic to ${}^\kappa\kappa$ by \cite{hung1973spaces}*{Theorem 1}. 
\end{remark}

We first obtain $\THD_\kappa(X,Y)$ as a special case of $\ODD\kappa\kappa(X)$.
In fact, $\ODD\kappa\kappa(X,\defsetsk)$ suffices for all $Y\in\defsetsk$. 
We use the hypergraph $\dhth Y=\dhIkappa\cap \dhN Y$ 
from Definition~\ref{def: dhth}. 
Note that $\dhth Y$ is box-open on ${}^\kappa\kappa$ 
and $\dhth Y\in\defsetsk$ for all $Y\in\defsetsk$.
Moreover, 
a subset $A$ of ${}^\kappa\kappa$ is $\dhth Y$-independent if and only if every injective $\kappa$-sequence in $A$ has a subsequence converging to an element of $Y$.%
\footnote{To see the direction from left to right, suppose $\bar a$ is an injective sequence in $A$. 
Since $\bar a\notin\dhth Y$, it has a convergent subsequence $\bar{b}$ with limit $x\in \ran(\bar{b})\cup Y$. We can assume $x\notin \ran(\bar{b})$ by passing to a further subsequence. Since $\bar{b} \notin \dhth Y$, we have $x\in Y$.} 
%Since $\bar a\notin\dhth Y$, it has a convergent subsequence $\bar b$ with $\lim(\bar b)\notin\ran(\bar b)$.
%Since $\bar b\notin\dhth Y$, we must have $\lim(\bar b)\in Y$.}
Recall that $A'$ denotes the set of {limit points} of $A$.
\todoo{We corrected the typo: wrote $A$ instead of $X$}

\begin{lemma}
\label{dhth and compactness} 
Suppose $A$ and $Y$ are subsets of ${}^\kappa\kappa$.
$A$ is $\dhth Y$-independent if and only if 
it is $\kappa$-precompact and $A'\subseteq Y$.
%$A'\subseteq Y$ and $\closure A$ is $\kappa$-compact. 
\end{lemma}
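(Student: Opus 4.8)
The plan is to prove both directions by relating $\dhth Y$-independence to the two stated conditions separately. Recall that by the discussion preceding the lemma, $A$ is $\dhth Y$-independent if and only if every injective $\kappa$-sequence in $A$ has a subsequence converging to an element of $Y$. I will show this is equivalent to the conjunction of $\kappa$-precompactness of $A$ and $A'\subseteq Y$.

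For the direction from right to left, suppose $A$ is $\kappa$-precompact and $A'\subseteq Y$. Let $\bar a=\langle a_\alpha:\alpha<\kappa\rangle$ be an injective $\kappa$-sequence in $A$. Since $\closure A$ is $\kappa$-compact, I would like to extract a convergent subsequence. The key point is that in a $\kappa$-compact space, any $\kappa$-sequence has a convergent subsequence of length $\kappa$: the set $\{a_\alpha:\alpha<\kappa\}$ has a limit point $x$ in $\closure A$ (otherwise it would be a closed discrete subset of the $\kappa$-compact set $\closure A$ of size $\kappa$, contradicting $\kappa$-compactness since a closed discrete subspace of size $\kappa$ has an open cover with no small subcover). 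Then using that $x\in(\ran(\bar a))'$ together with $\kappa^{<\kappa}=\kappa$, I would build an injective subsequence converging to $x$ by recursion, choosing $a_{\alpha_\beta}\in N_{x\restr\gamma_\beta}$ for a strictly increasing sequence $\gamma_\beta\to\kappa$. Since $x$ is a limit point of $A$, we have $x\in A'\subseteq Y$, so the subsequence converges to an element of $Y$, as required.

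For the direction from left to right, suppose $A$ is $\dhth Y$-independent. First I show $A'\subseteq Y$: given $x\in A'$, the point $x$ is a limit of points of $A$, so I can build an injective $\kappa$-sequence $\bar a$ in $A\setminus\{x\}$ converging to $x$ (using $\kappa^{<\kappa}=\kappa$ to pick $a_\alpha\in N_{x\restr\gamma_\alpha}\cap A$ with $\gamma_\alpha$ strictly increasing and $a_\alpha\neq x$, ensuring injectivity). By $\dhth Y$-independence this sequence has a subsequence converging to an element of $Y$; but any subsequence of $\bar a$ also converges to $x$, and limits in ${}^\kappa\kappa$ are unique, so $x\in Y$. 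Next I show $A$ is $\kappa$-precompact. If not, then $\closure A$ is not $\kappa$-compact, and since closed and bounded is necessary for $\kappa$-compactness, $A$ is unbounded; more directly, $\closure A$ contains a closed discrete subset $D$ of size $\kappa$ (a non-$\kappa$-compact closed subset of ${}^\kappa\kappa$ contains such a set). I would then produce from $D$ an injective $\kappa$-sequence $\bar a$ in $A$ that is \emph{closed discrete} in ${}^\kappa\kappa$, hence has no convergent subsequence at all, contradicting $\dhth Y$-independence.

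The main obstacle will be the closed-discrete extraction in the last step: producing an injective sequence \emph{in $A$} (not merely in $\closure A$) with no convergent subsequence when $A$ fails to be $\kappa$-precompact. The care needed is that a closed discrete set of size $\kappa$ in $\closure A$ must be approximated by genuine points of $A$ while preserving the property that no subsequence converges. I expect this can be handled by choosing, for a closed discrete $\{d_\alpha:\alpha<\kappa\}\subseteq\closure A$ with pairwise incompatible witnessing basic neighborhoods $N_{s_\alpha}$, a point $a_\alpha\in N_{s_\alpha}\cap A$ for each $\alpha$; the resulting sequence lands in pairwise disjoint open sets and is therefore closed discrete, so $\bar a\in\dhthkk\subseteq\dhth Y$, contradicting independence. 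Throughout I will use the standing assumption $\kappa^{<\kappa}=\kappa$ and the characterization of $\kappa$-compactness for subsets of ${}^\kappa\kappa$ as closed-and-bounded only in the weakly compact case, so in general I rely directly on the open-cover definition of $\kappa$-compactness rather than boundedness.
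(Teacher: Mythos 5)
Three of your four steps are correct and essentially coincide with the paper's own proof: the right-to-left direction (an injective sequence in a $\kappa$-precompact set must have a convergent subsequence, whose limit lies in $A'\subseteq Y$) and the inclusion $A'\subseteq Y$ in the left-to-right direction are both fine. The genuine gap is in the last step, where you deduce from the failure of $\kappa$-precompactness the existence of a hyperedge of $\dhth{Y}$ inside $A$ --- exactly where you yourself flagged the difficulty.

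The inference ``the resulting sequence lands in pairwise disjoint open sets and is therefore closed discrete'' is false. Pairwise disjoint open sets give injectivity and discreteness, but not closedness: the points $x_\alpha:=\langle 0\rangle^\alpha\conc\langle 1\rangle\conc\langle 0\rangle^\kappa$ lie in the pairwise disjoint sets $N_{\langle 0\rangle^\alpha\conc\langle 1\rangle}$, yet they converge to $\langle 0\rangle^\kappa$, so their range is discrete and not closed. This is not a cosmetic issue, because closed discreteness is precisely what you need: $\bar a\in\dhthkk$ means $\ran(\bar a)$ has \emph{no} limit points whatsoever. If $\ran(\bar a)$ is merely discrete and has a limit point $z$, then $z\in A'\subseteq Y$ by the part you have already proved, so $\bar a$ has a subsequence converging to an element of $Y$ --- which is exactly what $\dhth{Y}$-independence permits, and no contradiction arises. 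Your construction can be repaired, but the repair is the missing content: since compatible basic open sets are nested and the $N_{s_\alpha}$ are pairwise disjoint, any basic neighborhood of a limit point $z$ of $\{a_\alpha:\alpha<\kappa\}$ contains $\kappa$ many $a_\alpha$ (a set of size ${<}\kappa$ has no limit points, as $\kappa$ is regular), hence contains all but at most one of the corresponding $N_{s_\alpha}$ entirely, hence $\kappa$ many $d_\alpha$; thus $z$ would be a limit point of $D$, contradicting that $D$ is closed discrete. In addition, your parenthetical claim that every closed non-$\kappa$-compact subset of ${}^\kappa\kappa$ contains a closed discrete set of size $\kappa$ is true but nowhere proved (and not stated in the paper); it needs an argument, e.g.\ an irredundant refinement of a cover with no small subcover. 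The paper's proof sidesteps both points: it takes a pairwise disjoint family of $\kappa$ many basic open sets $U_\alpha$ \emph{covering} $\closure{A}$, each meeting $A$, picks $a_\alpha\in A\cap U_\alpha$, and notes that the limit $y\in Y$ of any convergent subsequence would lie in $\closure{A}$, hence in some $U_\beta$ --- but $U_\beta$ contains at most one term of the sequence, so no tail of the subsequence enters this neighborhood of $y$. The covering property does the work that closed discreteness does in your approach, and requires no auxiliary lemma.
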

\begin{proof} 
First, suppose that $A$ is $\dhth Y$-independent. 
Then $A'\subseteq Y$,
since otherwise there exists
an injective $\kappa$-sequence $\bar a$ in $A$ converging to an element $x$ of ${}^\kappa\kappa\,\setminus\, Y$ with $x\notin\ran(\bar a)$ and hence 
$A$ is not $\dhth Y$-independent.
We claim that $A$ is $\kappa$-precompact.
%$\closure A$ is $\kappa$-compact.
Otherwise, take a sequence 
$\langle U_\alpha:\alpha<\kappa\rangle$ of 
pairwise disjoint%
\footnote{We may assume that the basic open sets $U_\alpha$'s are disjoint by replacing
each $U_\alpha$ with a family of
${\lleq}\kappa$ many pairwise disjoint basic open sets
whose union is
$U_\alpha\oldsetminus\bigcup_{\beta<\alpha} U_\beta$
and then removing those sets whose intersection with $\bar A$ is empty.}
basic open sets with $\bar A\subseteq \bigcup_{\alpha<\kappa} U_\alpha$
and $\bar A\cap U_\alpha\neq\emptyset$ for all $\alpha<\kappa$.
Let $a_\alpha\in A\cap U_\alpha$ for each $\alpha<\kappa$.
%To show that $A$ is not $\dhth Y$-independent, take an (injective) sequence $\bar a=\langle a_\alpha:\alpha<\kappa\rangle$ with $a_\alpha\in A\cap U_\alpha$ for each $\alpha<\kappa$.
%$\bar a:=\langle a_\alpha:\alpha<\kappa\rangle\in\dhthkk$.%\footnote{Recall that $\dhthkk:=\dhth{{}^\kappa\kappa}\subseteq \dhth Y$.}
Since $\bar a:=\langle a_\alpha:\alpha<\kappa\rangle$ is not in %$\dhthkk$\footnote{Recall that $\dhthkk:=\dhth{{}^\kappa\kappa}\subseteq \dhth Y$.} by assumption, it has a subsequence $\bar{b}$ of length $\kappa$ converging to some $y\in{}^\kappa\kappa$. 
$\dhth Y$, it has a subsequence $\bar{b}$ of length $\kappa$ converging to some $y\in Y$.\todog{$y\in {}^\kappa\kappa$ suffices here instead of $y\in Y$. This part of the argument shows that if $\closure A$ is not $\kappa$-compact, then $A$ is not $\dhthkk$-independent} 
Let $\beta<\kappa$ with $y\in U_\beta$. 
Since at most one element of $\bar{b}$ is in $U_\beta$, $\bar{b}$ cannot converge to $y$. 

Conversely, suppose that 
$A$ is $\kappa$-precompact and $A'\subseteq Y$.
%$A'\subseteq Y$ and $\closure A$ is $\kappa$-compact. 
To show that $A$ is $\dhth Y$-independent,
let $\bar a$ be an injective sequence in $A$.
Then $\bar a$ has a convergent subsequence $\bar b$, since otherwise $\ran(\bar a)$ is a closed discrete set of size $\kappa$, and hence $A$ cannot be $\kappa$-precompact.
Moreover, the limit of $\bar b$ is in $Y$ since $A'\subseteq Y$.
Hence $\bar a\notin\dhth Y$.
\end{proof}

In particular, if $A\subseteq Y\subseteq{}^\kappa\kappa$, then $A$ is $\dhth Y$-independent if and only if $\bar A$ is a $\kappa$-compact subset of $Y$.

\begin{theorem}
\label{theorem: THD and ODD dhth}
Suppose $X$ and $Y$ are subsets of ${}^\kappa\kappa$ with $X\subseteq Y$. 
\begin{enumerate-(1)}
\item\label{THD ODD 1}
$\dhth Y\restr X$ has a $\kappa$-coloring if and only if $X$ is 
contained in 
a $K_\kappa$ subset of $Y$.
\item\label{THD ODD 2} 
There exists a continuous homomorphism from $\dhH\kappa$ to $\dhth Y\restr X$ 
if and only if $X$ contains a relatively closed subset of $Y$ that is homeomorphic to ${}^\kappa\kappa$.
\end{enumerate-(1)}
Thus, $\THD_\kappa(X,Y)$ is equivalent to $\ODD\kappa{\dhth Y\restr X}$
and
in particular, $\THD_\kappa(X)$ is equivalent to $\ODD\kappa{\dhthkk\restr X}$.
\end{theorem}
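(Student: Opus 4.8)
The plan is to prove the two biconditionals \ref{THD ODD 1} and \ref{THD ODD 2} separately and then read off the equivalence of $\THD_\kappa(X,Y)$ with $\ODD\kappa{\dhth Y\restr X}$ by matching the two options of each dichotomy. Throughout, write $H:=\dhth Y\restr X=\dhth Y\cap{}^\kappa X$. Since $\dhth Y$ is box-open on $\kk$, its restriction $H$ is box-open on $X$ and hence relatively box-open; moreover $H\subseteq\dhth Y$, and $H$ is a dihypergraph on $Y$ because its hyperedges lie in ${}^\kappa X\subseteq{}^\kappa Y$ (as $X\subseteq Y$). I would record these observations first, since they are exactly what is needed to invoke Lemma~\ref{dhth and compactness}, Lemma~\ref{lemma: dhN dhth} and Theorem~\ref{theorem: strong variants for dhI dhth}, which carry the real content. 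I would also note that, since $A\subseteq X$, a subset $A$ is $H$-independent if and only if it is $\dhth Y$-independent.

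For \ref{THD ODD 1}, a $\kappa$-coloring of $H$ is exactly a cover of $X$ by $\kappa$ many $\dhth Y$-independent sets $X_\alpha$, and conversely any such cover produces a coloring via $c(x):=\min\{\alpha:x\in X_\alpha\}$ (subsets of independent sets are independent). By Lemma~\ref{dhth and compactness}, each $X_\alpha$ is $\kappa$-precompact with $X_\alpha'\subseteq Y$; since also $X_\alpha\subseteq X\subseteq Y$, its closure $\closure{X_\alpha}=X_\alpha\cup X_\alpha'$ is a $\kappa$-compact subset of $Y$, so $\bigcup_{\alpha<\kappa}\closure{X_\alpha}$ is a $K_\kappa$ subset of $Y$ containing $X$. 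Conversely, if $X\subseteq\bigcup_{\alpha<\kappa}C_\alpha\subseteq Y$ with each $C_\alpha$ $\kappa$-compact, then $X_\alpha:=X\cap C_\alpha$ is $\kappa$-precompact (its closure is a closed subset of the $\kappa$-compact, hence closed, set $C_\alpha$) with $X_\alpha'\subseteq C_\alpha\subseteq Y$, so $X_\alpha$ is $\dhth Y$-independent by Lemma~\ref{dhth and compactness}; this cover yields the coloring. The only delicate point here is the bookkeeping ensuring the closures stay inside $Y$.

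For \ref{THD ODD 2}, the forward direction is where I would apply Theorem~\ref{theorem: strong variants for dhI dhth}~\ref{ODD iff ODDC for dhth} to the relatively box-open dihypergraph $H\subseteq\dhth Y$ on $Y$: from a continuous homomorphism to $H$ it produces a homomorphism $h$ that is a homeomorphism of $\kk$ onto a relatively closed subset of $Y$, and $\ran(h)\subseteq\domh H\subseteq X$ exhibits the required copy of $\kk$ inside $X$. For the converse, given a relatively closed $C\subseteq Y$ with $C\subseteq X$ and a homeomorphism $g:\kk\to C$, I would check that $g$, regarded as a map into $Y$, is an injective closed map: $g$ is a homeomorphism onto $C$, so it sends closed subsets of $\kk$ to closed subsets of $C$, and $C$ is closed in $Y$, whence these images are closed in $Y$. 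Then Lemma~\ref{lemma: dhN dhth}~\ref{dhth closed injective maps} makes $g$ a homomorphism from $\dhH\kappa$ to $\dhth Y$, and since $\ran(g)=C\subseteq X$ the hyperedge-images lie in $\dhth Y\cap{}^\kappa X=H$; as $g$ is also continuous, it is a continuous homomorphism to $H$.

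Finally, $\ODD\kappa{H}$ is by definition the disjunction ``$H$ admits a $\kappa$-coloring, or there is a continuous homomorphism from $\dhH\kappa$ to $H$,'' and by \ref{THD ODD 1} and \ref{THD ODD 2} these two options are equivalent, respectively, to the two options of $\THD_\kappa(X,Y)$; hence $\THD_\kappa(X,Y)\Longleftrightarrow\ODD\kappa{\dhth Y\restr X}$. Taking $Y=\kk$ and using $\dhth{\kk}=\dhthkk$ together with $\THD_\kappa(X)=\THD_\kappa(X,\kk)$ gives the equivalence of $\THD_\kappa(X)$ with $\ODD\kappa{\dhthkk\restr X}$. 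I expect the main obstacle to be purely organizational rather than mathematical: the heavy lifting is already done in Lemma~\ref{dhth and compactness}, Lemma~\ref{lemma: dhN dhth} and Theorem~\ref{theorem: strong variants for dhI dhth}, so the crux is verifying that $H=\dhth Y\restr X$ satisfies the hypotheses ``relatively box-open, contained in $\dhth Y$, a dihypergraph on $Y$'' needed to outsource the construction of the homeomorphism to that theorem.
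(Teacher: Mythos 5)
Your proof is correct and follows essentially the same route as the paper: part \ref{THD ODD 1} via Lemma~\ref{dhth and compactness}, and part \ref{THD ODD 2} by outsourcing both directions to Lemma~\ref{lemma: dhN dhth}~\ref{dhth closed injective maps} and Theorem~\ref{theorem: strong variants for dhI dhth}~\ref{ODD iff ODDC for dhth}, with $H:=\dhth Y\restr X$. The only difference is that you spell out the verification of the hypotheses (relatively box-open, $H\subseteq\dhth Y$, dihypergraph on $Y$) and the final matching of options, which the paper leaves implicit.
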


\begin{proof}
For \ref{THD ODD 1}, 
suppose first that $\dhth Y\restr X $ has a $\kappa$-coloring. 
Take $\dhth Y$-independent sets $X_\alpha$ for $\alpha<\kappa$ with 
$X=\bigcup_{\alpha<\kappa} X_\alpha$.
Since $X_\alpha$ is a $\dhth Y$-independent subset of $Y$, its closure
$\closure X_\alpha$ is a $\kappa$-compact subset of $Y$ by the previous lemma.
Thus, 
$K:=\bigcup_{\alpha<\kappa} \closure X_\alpha $
is a $K_\kappa$ set with $X\subseteq K\subseteq Y$.
Conversely, suppose that $X\subseteq \bigcup_{\alpha<\kappa}X_\alpha\subseteq Y$, where each $X_\alpha$ is $\kappa$-compact. 
Since $X_\alpha$ is $\dhth Y$-independent 
by the previous lemma, we obtain a $\kappa$-coloring of $X$.
\ref{THD ODD 2} follows from
Lemma~\ref{lemma: dhN dhth}~\ref{dhth closed injective maps}
and
Theorem~\ref{theorem: strong variants for dhI dhth}~\ref{ODD iff ODDC for dhth}
for $H:=\dhth Y\restr X$.
\end{proof}

\begin{remark}
\label{remark: THD alternative proof}
We sketch an alternative proof of 
 $\ODD\kappa{\dhthkk\restr X}
\Longrightarrow \THD_\kappa(X)$
based on previous work.
If $\kappa$ is weakly compact, then 
the proof in
\cite{CarroyMillerSoukup}*{Proposition~2.1}  for the countable case can be lifted without major changes. 
\todog{Also, another step of the argument only works when $\kappa$ is inaccessible.}
Weak compactness is used to ensure that all bounded subsets of ${}^\kappa\kappa$ are $\kappa$-precompact. 
If $\kappa$ is not weakly compact, then 
${}^\kappa 2$ is homeomorphic to ${}^\kappa\kappa$ \cite{hung1973spaces}*{Theorem 1}. 
In this case,  
$\THD_\kappa(X)$ already follows from the $\kappa$-perfect set property $\PSP_\kappa(X)$
by \cite{LuckeMottoRosSchlichtHurewicz}*{Proposition~2.7}
or Proposition~\ref{remark: PSP THD non weakly compact} below.
\todol{Remark~\ref{remark: PSP THD non weakly compact}  was moved to Subsection~\ref{subsection: comparing applications} and improved}
\end{remark}

We next consider a different version of the Hurewicz dichotomy for ${}^\kappa\kappa$. 
A node $t$ in a tree $T$ is called \emph{$\lle\kappa$-splitting} (resp.~\emph{$\kappa$-splitting})
%\index{tree!splitting nodeB@$\lle\kappa$-splitting node\idf}%
%\index{tree!splitting nodeA@$\kappa$-splitting node\idf}%
\index{node!splitting B@$\lle\kappa$-splitting\idf}%
\index{node!splitting C@$\kappa$-splitting\idf}%
if it has ${<}\kappa$ many (resp.~precisely $\kappa$ many) direct successors in $T$.

\begin{definition}
\label{def: superperfect}
\label{def: <kappa-splitting tree}
Suppose $T$ is a subtree of ${}^{<\kappa}\kappa$.
\begin{enumerate-(a)}
\item 
$T$ is called \emph{$\lle\kappa$-splitting} 
\index{tree!splitting tree@$\lle\kappa$-splitting tree\idf}%
if every node in $T$ is $\lle\kappa$-splitting. 
\item 
$T$ is \emph{$\kappa$-superperfect} 
\index{tree!superperfect@$\kappa$-superperfect\idf}%
\index{superperfect@$\kappa$-superperfect!tree\idf}%
if it is $\lle\kappa$-closed and 
every $s\in T$ extends to a 
$\kappa$-splitting node $t\in T$. 
\item
A subset $X$ of ${}^\kappa\kappa$ is \emph{$\kappa$-superperfect} 
\index{superperfect@$\kappa$-superperfect!set\idf}
if $X=[T]$ for some $\kappa$-superperfect subtree $T$ of ${}^{<\kappa}\kappa$. 
\end{enumerate-(a)}
\end{definition}

\begin{definition}[\cite{Hurewiczdef}]
\label{def: Hwd}
The \emph{Hurewicz dichotomy}  
\index{Hurewicz dichotomy for a!set\idf$\HD_\kappa(X)$}
for a subset $X$ of ${}^\kappa\kappa$ is the statement:
\begin{quotation}
{$\HD_\kappa(X)$:}
Either
there exists a sequence $\langle T_\alpha:\alpha<\kappa\rangle$ of $\lle\kappa$-splitting subtrees of ${}^{<\kappa}\kappa$ with $X\subseteq\bigcup_{\alpha<\kappa}[T_\alpha]$,
or
$X$ contains a $\kappa$-superperfect subset.
\end{quotation}
For any class $\mathcal C$, let 
\index{Hurewicz dichotomy for a!class\idf$\HD_\kappa(\mathcal C)$}
$\HD_\kappa(\mathcal C)$ state that $\HD_\kappa(X)$ holds for all subsets $X\in \mathcal C$ of ${}^\kappa\kappa$. 
\end{definition}

We next obtain $\HD_\kappa(X)$ as a special case of $\ODD\kappa\kappa(X,\defsetsk)$. 
\label{def: dhhd}
Let $\dhhd$ 
\index{hypergraph!of sequences!HHsuper@$\kappa$-splitting\idf$\dhhd$}
denote the $\kappa$-hypergraph on ${}^\kappa\kappa$ which consists of all sequences $\langle x_\alpha:\alpha<\kappa\rangle$ such that all the $x_\alpha$'s split at the same node.%
\footnote{I.e., the $x_\alpha$'s extend pairwise different direct successors of some $t\in{}^{<\kappa}\kappa$.}
$\dhhd$ was defined in \cite{CarroyMillerSoukup} for $\kappa=\omega$.%
\footnote{In \cite{CarroyMillerSoukup}, $\dhhdom$ is denoted by ${\mathbb{H}}_{{}^\omega\omega}'$.}
Note that $\dhhd$ is definable and box-open on ${}^\kappa\kappa$.
%Note that $\dhhd\in\defsetsk$. 

\begin{lemma}
\label{lemma: dhhd independence}
A subset $Y$ of ${}^\kappa\kappa$ is $\dhhd $-independent if and only if $T(Y)$ is $\lle\kappa$-splitting.
\end{lemma}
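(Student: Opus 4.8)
The plan is to prove the equivalence by directly matching up the combinatorial data on both sides: a hyperedge of $\dhhd\restr Y$ corresponds exactly to a $\kappa$-splitting node of $T(Y)$. Since the immediate successors of any node $t\in{}^{<\kappa}\kappa$ are the nodes $t\conc\langle i\rangle$ for $i<\kappa$, every node of $T(Y)$ has at most $\kappa$ many direct successors. Consequently a node fails to be $\lle\kappa$-splitting precisely when it has exactly $\kappa$ many direct successors, and $T(Y)$ fails to be $\lle\kappa$-splitting precisely when some node is $\kappa$-splitting. I would therefore prove the contrapositive form of both implications, namely that $Y$ is \emph{not} $\dhhd$-independent if and only if $T(Y)$ has a $\kappa$-splitting node.

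For the forward direction, I would assume $Y$ is not $\dhhd$-independent and fix a hyperedge $\bar x=\langle x_\alpha:\alpha<\kappa\rangle\in\dhhd\restr Y$. By the definition of $\dhhd$ there is a node $t\in{}^{<\kappa}\kappa$ and pairwise distinct ordinals $i_\alpha<\kappa$ (for $\alpha<\kappa$) such that $t\conc\langle i_\alpha\rangle\subseteq x_\alpha$ for all $\alpha<\kappa$. Since each $x_\alpha\in Y$, we have $t\conc\langle i_\alpha\rangle\in T(Y)$ for every $\alpha$, so $\{t\conc\langle i_\alpha\rangle:\alpha<\kappa\}$ is a set of $\kappa$ many distinct direct successors of $t$ in $T(Y)$. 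Hence $t$ is a $\kappa$-splitting node, and $T(Y)$ is not $\lle\kappa$-splitting.

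For the converse, I would assume $T(Y)$ is not $\lle\kappa$-splitting and fix a node $t\in T(Y)$ with $\kappa$ many direct successors in $T(Y)$. Let $S:=\{i<\kappa:\,t\conc\langle i\rangle\in T(Y)\}$, so $|S|=\kappa$. For each $i\in S$ the node $t\conc\langle i\rangle$ lies in $T(Y)$, so I may choose $y_i\in Y$ with $t\conc\langle i\rangle\subseteq y_i$. Fixing a bijection $\alpha\mapsto i_\alpha$ from $\kappa$ onto $S$ and setting $x_\alpha:=y_{i_\alpha}$, the resulting sequence $\bar x=\langle x_\alpha:\alpha<\kappa\rangle$ consists of elements of $Y$ that extend the pairwise different immediate successors $t\conc\langle i_\alpha\rangle$ of $t$; in particular $\bar x$ is non-constant, so $\bar x\in\dhhd\restr Y$ and $Y$ is not $\dhhd$-independent.

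I do not anticipate any real obstacle here, since the argument is a direct unwinding of the definitions of $\dhhd$, of $T(Y)$, and of $\lle\kappa$-splitting. The only point requiring a word of care is the observation that a non-$\lle\kappa$-splitting node of $T(Y)$ is automatically $\kappa$-splitting (has exactly $\kappa$ direct successors), which follows from the fact that branching in ${}^{<\kappa}\kappa$ is bounded by $\kappa$; this is what lets me index the hyperedge of $\dhhd$, whose coordinate set is $\kappa$, by a bijection onto the set $S$ of splitting directions.
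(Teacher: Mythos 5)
Your proof is correct and follows essentially the same route as the paper's own (two-line) argument: both directions are proved in contrapositive form by matching a hyperedge of $\dhhd\restr Y$ with a $\kappa$-splitting node of $T(Y)$, using that every node of $T(Y)$ extends to an element of $Y$. Your version simply spells out the details (the bijection onto the splitting directions and the observation that non-$\lle\kappa$-splitting means exactly $\kappa$-splitting) that the paper leaves implicit.
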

\begin{proof} 
If $T(Y)$ has a $\kappa$-splitting node, then $Y$ contains a sequence in $\dhhd$. 
The converse is similar. 
\end{proof}

\begin{theorem}
\label{theorem: dhhd from ODD}
Suppose $X$ is a subset of ${}^\kappa\kappa$.
\begin{enumerate-(1)}
\item\label{dhhd from ODD 1}
$\dhhd\restr X$ admits a $\kappa$-coloring if and only if there exists a sequence $\langle T_\alpha : \alpha<\kappa \rangle$ of $\lle\kappa$-splitting trees with 
$X\subseteq\bigcup_{\alpha<\kappa}[T_\alpha]$. 
\item\label{dhhd from ODD 2}
There exists a continuous homomorphism from $\dhH\kappa$ to $\dhhd\restr X$ 
if and only if $X$ contains a $\kappa$-superperfect subset.
\end{enumerate-(1)}
Thus, $\HD_\kappa(X)$ is equivalent to 
$\ODD\kappa{\dhhd{}\restr X}$.
\end{theorem}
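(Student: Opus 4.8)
The plan is to prove the two equivalences \ref{dhhd from ODD 1} and \ref{dhhd from ODD 2} separately, and then combine them to deduce that $\HD_\kappa(X)$ is equivalent to $\ODD\kappa{\dhhd\restr X}$. The structure mirrors the proof of Theorem~\ref{theorem: THD and ODD dhth}, replacing $\dhth Y$ by $\dhhd$ and using the combinatorial characterization of $\dhhd$-independence provided by Lemma~\ref{lemma: dhhd independence}.

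For part \ref{dhhd from ODD 1}, I would argue as follows. Suppose $\dhhd\restr X$ admits a $\kappa$-coloring, so $X=\bigcup_{\alpha<\kappa}X_\alpha$ with each $X_\alpha$ being $\dhhd$-independent. By Lemma~\ref{lemma: dhhd independence}, each tree $T(X_\alpha)$ is $\lle\kappa$-splitting, and clearly $X_\alpha\subseteq[T(X_\alpha)]$. Setting $T_\alpha:=T(X_\alpha)$ gives the required sequence with $X\subseteq\bigcup_{\alpha<\kappa}[T_\alpha]$. Conversely, if $X\subseteq\bigcup_{\alpha<\kappa}[T_\alpha]$ for $\lle\kappa$-splitting trees $T_\alpha$, then each $[T_\alpha]$ is $\dhhd$-independent: any hyperedge of $\dhhd$ contained in $[T_\alpha]$ would witness a $\kappa$-splitting node in $T_\alpha$, contradicting that $T_\alpha$ is $\lle\kappa$-splitting (this is again immediate from Lemma~\ref{lemma: dhhd independence}, since $T([T_\alpha])\subseteq T_\alpha$ is $\lle\kappa$-splitting). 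Thus the sets $X\cap[T_\alpha]$ are $\dhhd$-independent and cover $X$, yielding a $\kappa$-coloring of $\dhhd\restr X$.

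For part \ref{dhhd from ODD 2}, I would relate continuous homomorphisms to superperfect subsets via order homomorphisms, using the machinery of Subsection~\ref{subsection: order homomorphisms}. Since $\dhhd$ is box-open on ${}^\kappa\kappa$, by Lemma~\ref{homomorphisms and order preserving maps} the existence of a continuous homomorphism from $\dhH\kappa$ to $\dhhd\restr X$ is equivalent to the existence of an order homomorphism $\iota$ for $(X,\dhhd)$. The key observation is that condition \ref{oh2} in Definition~\ref{def: order homomorphism} for the dihypergraph $\dhhd$ forces the nodes $\iota(t\conc\langle\alpha\rangle)$ for $\alpha<\kappa$ to extend $\kappa$ many pairwise distinct immediate successors of a common splitting node; this is precisely the characterization noted in Corollary~\ref{cor: T(e) closed} and the footnote to Definition~\ref{def: Hwd}, namely that $\iota$ is an order homomorphism for $\dhhd$ exactly when the $\iota(t\conc\langle\alpha\rangle)$'s split at a single node. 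Given such an $\iota$, it is $\perp$-preserving, so $[\iota]$ is a homeomorphism onto $\ran([\iota])$ by Lemma~\ref{[e] continuous}~\ref{[e] homeomorphism}, and by Corollary~\ref{cor: T(e) closed} the tree $T(\iota)$ is ${<}\kappa$-closed. Moreover the single-node-splitting condition guarantees that each $\iota(t)$ extends to a $\kappa$-splitting node of $T(\iota)$, so $T(\iota)$ is $\kappa$-superperfect; hence $\ran([\iota])=[T(\iota)]\subseteq X$ is a $\kappa$-superperfect subset of $X$. Conversely, if $X$ contains a $\kappa$-superperfect set $[T]$, one constructs by recursion a $\perp$- and strict order preserving $\iota:{}^{<\kappa}\kappa\to T$ realizing, at each successor step, $\kappa$ many successors of a $\kappa$-splitting node of $T$; then $\iota$ is an order homomorphism for $(X,\dhhd)$, so $[\iota]$ is the desired continuous homomorphism by Lemma~\ref{homomorphisms and order preserving maps}~\ref{hop 2}.

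The final sentence then follows by unwinding the definition of $\ODD\kappa{\dhhd\restr X}$: its first alternative (a $\kappa$-coloring of $\dhhd\restr X$) matches the first alternative of $\HD_\kappa(X)$ by \ref{dhhd from ODD 1}, and its second alternative (a continuous homomorphism from $\dhH\kappa$ to $\dhhd\restr X$) matches the second alternative of $\HD_\kappa(X)$ by \ref{dhhd from ODD 2}. I expect the main obstacle to be the careful verification in \ref{dhhd from ODD 2} that an order homomorphism for $\dhhd$ yields a \emph{$\kappa$-superperfect} tree rather than merely a cofinally splitting one: one must check both that $T(\iota)$ is ${<}\kappa$-closed (which needs Corollary~\ref{cor: T(e) closed}, and here the common-splitting-node condition is exactly what upgrades the closure from $\cof^\kappa_{>\ddim}$-closure to full ${<}\kappa$-closure) and that $\kappa$-splitting nodes are cofinal, both of which hinge on correctly translating the product condition \ref{oh2} for $\dhhd$ into the splitting geometry of the range tree.
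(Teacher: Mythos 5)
Your part \ref{dhhd from ODD 1}, and the converse half of part \ref{dhhd from ODD 2}, are correct and agree with the paper's argument; your forward direction of \ref{dhhd from ODD 2} also follows the paper's line (order homomorphism via Lemma~\ref{homomorphisms and order preserving maps}, $\perp$-preservation, ${<}\kappa$-closure via Corollary~\ref{cor: T(e) closed}, cofinal $\kappa$-splitting nodes). The genuine gap is the word ``hence'' in ``so $T(\iota)$ is $\kappa$-superperfect; hence $\ran([\iota])=[T(\iota)]\subseteq X$''. Since $[T(\iota)]$ is the \emph{closure} of $\ran([\iota])$, the asserted equality says exactly that $\ran([\iota])$ is a closed subset of ${}^\kappa\kappa$, and neither of the two facts you have at that point --- that $[\iota]$ is a homeomorphism onto its image, and that $T(\iota)$ is $\kappa$-superperfect --- implies this. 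Concretely, define $\iota$ recursively (continuously at limit levels) by $\iota(\emptyset)=\emptyset$, $\iota(t\conc\langle 2\alpha\rangle):=\iota(t)\conc\langle 2+\alpha\rangle$ and $\iota(t\conc\langle 2\alpha+1\rangle):=\iota(t)\conc\langle 0\rangle^{1+\alpha}\conc\langle 1\rangle$. This map is $\perp$- and strict order preserving, so $[\iota]$ is a homeomorphism onto its image by Lemma~\ref{[e] continuous}~\ref{[e] homeomorphism}, and $T(\iota)$ is $\kappa$-superperfect: each $\iota(t)$ is a $\kappa$-splitting node of $T(\iota)$, every node of $T(\iota)$ lies below some $\iota(u)$, and ${<}\kappa$-closure is routine from continuity of $\iota$. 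Nevertheless $\iota(t)\conc\langle 0\rangle^{\kappa}\in[T(\iota)]\setminus\ran([\iota])$, so the range is not closed. Of course this $\iota$ is not an order homomorphism for $\dhhd$ --- the odd-indexed successor nodes all extend the \emph{same} immediate successor $\iota(t)\conc\langle 0\rangle$ of $\iota(t)$ --- but that is precisely the point: the single-level splitting property must be used a second time at this step, to rule out such ``spine'' branches in the closure, and your argument never does so. The omission matters for the statement being proved: without closedness you only get $X\supseteq\ran([\iota])$, while the $\kappa$-superperfect \emph{closed} set $[T(\iota)]=\closure{\ran([\iota])}$ could a priori contain points outside $X$, so ``$X$ contains a $\kappa$-superperfect subset'' does not follow.

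The gap is easy to close with tools you already cite elsewhere, and this is exactly where the paper's proof does one more thing than yours. The paper's route: $\dhhd\subseteq\dhthkk$, so the order homomorphism $\iota$ for $(X,\dhhd)$ is also an order homomorphism for $(X,\dhthkk)$, and Lemma~\ref{lemma: dhN dhth}~\ref{dhth order homomorphism} then yields that $[\iota]$ is a closed map; hence $\ran([\iota])$ is closed and equals $[T(\iota)]$. Alternatively, verify directly that $\Lim^{\iota}_t=\emptyset$ for every $t$: for $x_\alpha\in N_{t\conc\langle\alpha\rangle}$, the points $[\iota](x_\alpha)$ lie in the pairwise disjoint cones below the distinct immediate successors of the common splitting node $s_t$, so the set $\{[\iota](x_\alpha):\alpha<\kappa\}$ has no limit points; then Lemma~\ref{lemma: [e] closed map}~\ref{closure of ran[e] when ddim=kappa} gives that $[\iota]$ is a closed map. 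Either one of these sentences repairs the proof; note that some such argument is unavoidable, since by Remark~\ref{weakly compact dhN} (for $\kappa$ not weakly compact) even injective continuous homomorphisms from $\dhH\kappa$ into $\dhthkk$ need not be closed maps, so closedness really depends on the order-homomorphism structure and not just on the properties you listed.
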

\begin{proof}
For \ref{THD ODD 1}, 
we first suppose that $\dhhd\restr X$ has a $\kappa$-coloring. 
Take $\dhhd$-independent sets $X_\alpha$ for $\alpha<\kappa$ with 
$X=\bigcup_{\alpha<\kappa} X_\alpha$.
Then $X\subseteq\bigcup_{\alpha<\kappa} \bar{X}_\alpha=\bigcup_{\alpha<\kappa} [T(X_\alpha)]$.
Since $X_\alpha$ is $\dhhd$-independent, $T(X_\alpha)$ is ${<}\kappa$-splitting by the previous lemma.
Conversely, suppose that $X\subseteq \bigcup_{\alpha<\kappa} [T_\alpha]$, where each $T_\alpha$ is ${<}\kappa$-splitting. 
\todog{We may assume this by replacing $T_\alpha$ with $T([T_\alpha])$ if necessary.
We need to assume that the $T_\alpha$'s to be pruned to apply the previous lemma.}
We may assume that each $T_\alpha$ is pruned. 
Then each $[T_\alpha]$ is $\dhhd$-independent 
by the previous lemma, yielding a $\kappa$-coloring of~$X$. 

For \ref{dhhd from ODD 2}, suppose first
that there exists a continuous homomorphism from $\dhH\kappa$ to $\dhhd\restr X$.
Then there is a $\perp$-preserving continuous
order homomorphism $\iota$ for $(X,\dhhd)$
by Lemmas~\ref{homomorphisms and order preserving maps} and~\ref{order homomorphisms and injective H}~\ref{dhI perp}.
Then 
$T(\iota)$ is a $\kappa$-superperfect tree, 
since it is $\lle\kappa$-closed by Corollary~\ref{cor: T(e) closed}
and each $\iota(t)$ is a $\kappa$-splitting node of $T(\iota)$.
Since $\dhhd \subseteq\dhthkk$,
$[\iota]$ is a closed map from ${}^\kappa\kappa$ to ${}^\kappa\kappa$ by Lemma~\ref{lemma: dhN dhth}~\ref{dhth order homomorphism}.
In particular, $\ran([\iota])$ is closed. 
Thus, $\ran([\iota])=[T(\iota)]$ is a $\kappa$-superperfect subset of $X$.

Conversely, suppose 
that $T$ is a $\kappa$-superperfect tree with $[T]\subseteq X$.
Let $\iota:{}^{<\kappa}\kappa\to T$ 
be any order preserving map
such that for all $u\in{}^{<\kappa}\kappa$,
$\langle \iota(u)\conc \langle \alpha\rangle:\alpha<\kappa\rangle$ is an injective enumeration of the direct successors of some $\kappa$-splitting node $t$ of $T$ with $\iota(u)\subseteq t$. 
Then $\iota$ is an order homomorphism for $\dhhd$ with $\ran([\iota])\subseteq X$,
so $[\iota]$ is a continuous homomorphism from $\dhH\kappa$ to $\dhhd\restr X$
by Lemma~\ref{homomorphisms and order preserving maps}.
\end{proof}

The next result follows from  Theorems~\ref{main theorem}, \ref{theorem: THD and ODD dhth} and~\ref{theorem: dhhd from ODD}.
The claim for $\HD_\kappa(\defsetsk)$ and $\THD_\kappa(\defsetsk)$  
was shown in \cite{Hurewiczdef}. 
%Given a class $\mathcal C$, let $\HD_\kappa(\mathcal C)$ state that $\HD_\kappa(X)$ holds for all subsets $X\in\mathcal C$ of ${}^\kappa\kappa$. $\THD_\kappa(\mathcal C)$ is defined analogously.

\begin{corollary}
\label{cor: Hurewicz cons}
The following hold in all $\Col(\kappa,\lle\lambda)$-generic extensions of $V$: 
\begin{enumerate-(1)}
\item\label{cor: Hurewicz cons 1}
$\HD_\kappa(\defsetsk)$,
$\THD_\kappa(\defsetsk)$ and
$\THD_\kappa(\defsetsk,\defsetsk)$ 
if $\lambda>\kappa$ is inaccessible in $V$.
\item\label{cor: Hurewicz cons 2}
$\THD_\kappa(\defsetsk,\pwrset({}^\kappa\kappa))$ 
if $\lambda>\kappa$ is Mahlo in~$V$.
\end{enumerate-(1)}
\end{corollary}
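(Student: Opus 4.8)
The plan is to combine the general results just established with the main consistency theorem. The statement of Corollary~\ref{cor: Hurewicz cons} asserts that in $\Col(\kappa,\lle\lambda)$-generic extensions, the three combinatorial and topological forms of the Hurewicz dichotomy hold for definable sets. The key observation is that each of these dichotomies has already been identified in Theorems~\ref{theorem: THD and ODD dhth} and~\ref{theorem: dhhd from ODD} as an instance of $\ODD\kappa H$ for a suitable box-open definable $\kappa$-dihypergraph $H$, so the corollary is essentially a translation of Theorem~\ref{main theorem} through these equivalences.

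First I would prove part~\ref{cor: Hurewicz cons 1}. Fix a $\Col(\kappa,\lle\lambda)$-generic extension $V[G]$ with $\lambda$ inaccessible in $V$, so that $\ODD\kappa\kappa(\defsetsk,\defsetsk)$ holds in $V[G]$ by Theorem~\ref{main theorem}~\ref{mainthm ddim=kappa}. For $\HD_\kappa(\defsetsk)$, let $X\in\defsetsk$ be a subset of ${}^\kappa\kappa$. The dihypergraph $\dhhd$ is box-open on ${}^\kappa\kappa$ and definable, hence $\ODD\kappa{\dhhd{\restr}X}$ holds by $\ODD\kappa\kappa(X,\defsetsk)$, and Theorem~\ref{theorem: dhhd from ODD} gives $\HD_\kappa(X)$. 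For $\THD_\kappa(\defsetsk)$ and $\THD_\kappa(\defsetsk,\defsetsk)$, given $X\subseteq Y$ with $X,Y\in\defsetsk$, the dihypergraph $\dhth Y$ is box-open on ${}^\kappa\kappa$ and lies in $\defsetsk$ because $Y\in\defsetsk$; thus $\ODD\kappa{\dhth Y{\restr}X}$ holds, and Theorem~\ref{theorem: THD and ODD dhth} yields $\THD_\kappa(X,Y)$. Taking $Y={}^\kappa\kappa$ recovers $\THD_\kappa(X)$, and the general case gives $\THD_\kappa(\defsetsk,\defsetsk)$.

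For part~\ref{cor: Hurewicz cons 2}, I would have to be more careful, since the superset $Y$ is now allowed to be an arbitrary element of $\pwrset({}^\kappa\kappa)$, not necessarily definable. Here $\dhth Y$ need not be definable, so one cannot directly invoke $\ODD\kappa\kappa(\defsetsk,\defsetsk)$. Instead I would use that $\lambda$ is Mahlo in $V$ to apply Theorem~\ref{main theorem}~\ref{mainthm Mahlo}, giving the stronger principle $\ODD\kappa\kappa(\defsetsk)$ in $V[G]$; this dichotomy holds for \emph{all} box-open $\kappa$-dihypergraphs on ${}^\kappa\kappa$, definable or not. Since $\dhth Y$ is box-open on ${}^\kappa\kappa$ for any $Y$, we obtain $\ODD\kappa{\dhth Y{\restr}X}$ for every $X\in\defsetsk$ and every $Y\in\pwrset({}^\kappa\kappa)$ with $X\subseteq Y$, and Theorem~\ref{theorem: THD and ODD dhth} then delivers $\THD_\kappa(X,Y)$.

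The main subtlety — rather than a genuine obstacle — is exactly this distinction between the two parts: verifying that in part~\ref{cor: Hurewicz cons 1} the relevant dihypergraphs are genuinely definable (so that the inaccessible-cardinal hypothesis suffices), while in part~\ref{cor: Hurewicz cons 2} the non-definability of $\dhth Y$ for arbitrary $Y$ forces the use of the full strength of the Mahlo hypothesis through $\ODD\kappa\kappa(\defsetsk)$. One should also record that $\THD_\kappa(X,Y)$ is applied with $X$ restricted to $\defsetsk$ but $Y$ ranging over all supersets, which matches precisely the quantifier structure of $\THD_\kappa(\defsetsk,\pwrset({}^\kappa\kappa))$ as defined in Definition~\ref{def: top Hwd}. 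Everything else is a direct citation of the equivalences already proved, so no new combinatorial work is needed.
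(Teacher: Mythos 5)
Your proposal is correct and is exactly the argument the paper intends: the corollary is stated there as an immediate consequence of Theorem~\ref{main theorem} combined with the equivalences $\HD_\kappa(X)\Longleftrightarrow\ODD\kappa{\dhhd\restr X}$ and $\THD_\kappa(X,Y)\Longleftrightarrow\ODD\kappa{\dhth Y\restr X}$ from Theorems~\ref{theorem: dhhd from ODD} and~\ref{theorem: THD and ODD dhth}. Your careful bookkeeping — that $\dhhd$ and $\dhth Y$ for definable $Y$ are box-open and in $\defsetsk$ (so the inaccessible case suffices for part~(1)), while arbitrary $Y$ forces the Mahlo hypothesis via $\ODD\kappa\kappa(\defsetsk)$ in part~(2) — is precisely the intended justification, which the paper leaves implicit.
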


If $\kappa$ is not weakly compact, then it suffices in \ref{cor: Hurewicz cons 2} to assume that $\lambda$ is inaccessible, since 
$\THD_\kappa(\defsetsk,\pwrset({}^\kappa\kappa))$ 
follows from $\THD_\kappa(\defsetsk,\defsetsk)$ by Remark~\ref{THD strong version non weakly compact}.
Alternatively, it also follows from
$\PSP_\kappa(\defsetsk)$ by \cite{LuckeMottoRosSchlichtHurewicz}*{Proposition~2.7} or Proposition~\ref{remark: PSP THD non weakly compact} below.

Figure~\ref{figure: Hwd options simple}
summarizes the
 implications between the two options in $\HD_\kappa(X)$ and $\THD_\kappa(X)$
under varying assumptions on $\kappa$. 
A double arrowhead denotes strict implication.
\vspace{10 pt}

{
\setlength{\intextsep}{10 pt}
\setlength{\abovecaptionskip}{7 pt}
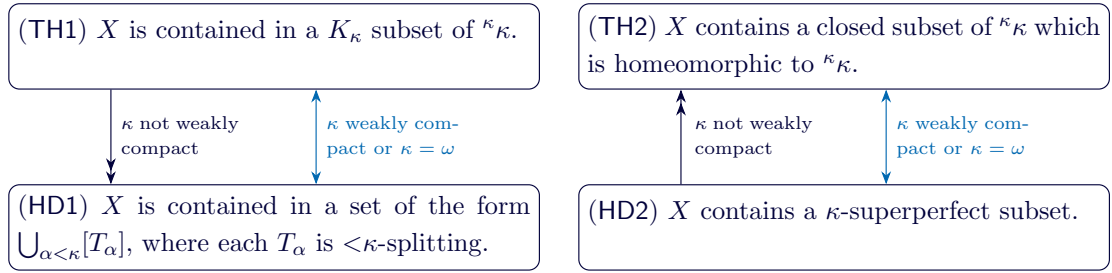
\begin{figure}[H]
%\begin{figure}[h]
\begin{tikzpicture}[scale=.3]
%defining styles for nodes
\tikzstyle{theory1}=[draw,rounded corners,scale=.5, minimum width=10mm, minimum height=6.5mm,scale=1.8,line width=\widthline,VeryDarkBlueNode]
\tikzstyle{info}=[rounded corners, inner sep= 2pt, align=left,scale=0.82]

%styles for arrows
\tikzstyle{medarrow}=[>=Stealth, line width=\widtharrow, scale=1.8] 
%
%
% draw the nodes
%nodes containing the different options
     \node (1) at (25,-3) [theory1] {\parbox[c][1cm]{7.5cm}{\thl 
$X$ contains a closed subset of ${}^\kappa\kappa$ which is homeomorphic to ${}^\kappa\kappa$.
}}; 
     \node (2) at (25,-11) [theory1] {\parbox[c][1cm]{7.5cm}{\hdl 
$X$ contains a $\kappa$-superperfect subset. 
\\ \ %%%%this adds an extra empty line in the box instead of keeping the text vertically centered 
}}; 
     \node (4) at (0,-3) [theory1] {\parbox[c][1cm]{7.5cm}{\ths 
$X$ is contained in a $K_\kappa$ subset of ${}^\kappa\kappa$.
\\ \ %%%%this adds an extra empty line in the box instead of keeping the text vertically centered 
}}; 
     \node (5) at (0,-11) [theory1] {\parbox[c][1cm]{7.5cm}{\hds 
$X$ is contained in a set of the form $\bigcup_{\alpha<\kappa}[T_\alpha]$, where each $T_\alpha$ is $\lle\kappa$-splitting.
}}; 

%\draw[medarrow, TealArrow]
\draw[medarrow, BlueArrow]
([xshift=75mm]1.south west) edge[<->] 
node[midway,right] {\parbox[c]{1.75 cm}{\tiny $\kappa$ weakly compact or $\kappa=\omega$}} 
node {\hyperref[proposition: HD and THD options]{\phantom{xxx}}} 
([xshift=75mm]2.north west) 
([xshift=75mm]4.south west) edge[<->] 
node[midway,right] {\parbox[c]{1.75 cm}{\tiny $\kappa$ weakly compact or $\kappa=\omega$}} 
node {\hyperref[proposition: HD and THD options]{\phantom{xxx}}} 
([xshift=75mm]5.north west); %85mm

%arrows when kappa>omega not weakly compact
\draw[medarrow,VeryDarkBlueNode]
([xshift=25mm]2.north west) edge[->>] 
%node[midway,left, inner sep=5 pt] {\parbox[c]{1.58 cm}{\tiny \raggedleft $\kappa$~not weakly compact}}
%%%``raggedleft'' aligns text to right side
node[midway,right,inner sep=5 pt] {\parbox[c]{1.55 cm}{\tiny $\kappa$~not weakly compact }} 
([xshift=25mm]1.south west) %45mm
([xshift=25mm]4.south west) edge[->>] 
%node[midway,left, inner sep=5 pt] {\parbox[c]{1.58 cm}{\tiny \raggedleft $\kappa$~not weakly compact}}
node[midway,right] {\parbox[c]{1.55 cm}{\tiny $\kappa$~not weakly compact}} 
([xshift=25mm]5.north west); 
\end{tikzpicture}
\caption{The options in $\THD_\kappa(X)$ and $\HD_\kappa(X)$}
\label{figure: Hwd options simple}
\end{figure}
}

It is clear that the implications 
\ths $\Longrightarrow$ \hds 
and 
\hdl $\Longrightarrow$ \thl
hold for all $X\subseteq{}^\kappa\kappa$
at all infinite cardinals $\kappa=\kappa^{<\kappa}$.
The converse implications hold at weakly compact cardinals $\kappa$ and $\kappa=\omega$ by the next proposition.
However, they fail for $X={}^\kappa 2$ at all uncountable non-weakly compact cardinals $\kappa$:
\hds holds since ${}^{<\kappa} 2$ is ${<}\kappa$-splitting
and \thl holds since ${}^\kappa 2$ is homeomorphic to ${}^\kappa\kappa$ 
%at non-weakly compact cardinals 
\cite{hung1973spaces}*{Theorem~1}. 

\begin{proposition}
\label{proposition: HD and THD options}
If $\kappa$ is weakly compact or $\kappa=\omega$, then the corresponding options in Figure~\ref{figure: Hwd options simple} are equivalent for all $X\subseteq{}^\kappa\kappa$.
In particular, $\THD_\kappa(X)\Longleftrightarrow\HD_\kappa(X)$.
\end{proposition}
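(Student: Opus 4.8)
The plan is to reduce the whole proposition to a single statement about independent sets, namely that the dihypergraphs $\dhhd$ and $\dhthkk$ have exactly the same independent subsets of ${}^\kappa\kappa$ when $\kappa$ is weakly compact or $\kappa=\omega$, and then to read off the four desired equivalences from the characterizations already obtained in Theorems~\ref{theorem: THD and ODD dhth} and~\ref{theorem: dhhd from ODD}. Recall that by Lemma~\ref{lemma: dhhd independence} a set $A$ is $\dhhd$-independent if and only if $T(A)$ is $\lle\kappa$-splitting, while by Lemma~\ref{dhth and compactness} (applied with $Y={}^\kappa\kappa$, so that the side condition $A'\subseteq Y$ is automatic) $A$ is $\dhthkk$-independent if and only if $A$ is $\kappa$-precompact. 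Since $\closure A=[T(A)]$, both conditions will coincide once I establish the bridge, for a pruned subtree $T$ of ${}^{<\kappa}\kappa$,
\[ T \text{ is } \lle\kappa\text{-splitting} \iff [T] \text{ is } \kappa\text{-compact},\]
valid exactly when $\kappa$ is weakly compact or $\kappa=\omega$.

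To prove this bridge, first note that $[T]$ is always closed, so by the characterization of weak compactness recorded in Subsection~\ref{preliminaries: notation} (and, for $\kappa=\omega$, by the Heine--Borel property of ${}^\omega\omega$) a closed set is $\kappa$-compact precisely when it is bounded. If $[T]$ is bounded by some $b\in{}^\kappa\kappa$, then every immediate successor $t\conc\langle\gamma\rangle$ of a node $t$ of length $\alpha$ has $\gamma<b(\alpha)$, so each node has fewer than $\kappa$ successors and $T$ is $\lle\kappa$-splitting; this direction needs no assumption on $\kappa$. Conversely, suppose $T$ is $\lle\kappa$-splitting. The main point, and the only genuine obstacle in the proof, is to show by induction on $\alpha<\kappa$ that each level $L_\alpha=\{t\in T:\lh(t)=\alpha\}$ has size ${<}\kappa$: at successor stages this uses the regularity of $\kappa$ together with the fact that each node has ${<}\kappa$ successors, and at limit stages $\alpha$ one bounds $|L_\alpha|\leq\mu^{|\alpha|}$ where $\mu=\sup_{\beta<\alpha}|L_\beta|<\kappa$, which is ${<}\kappa$ because $\kappa$ is a strong limit (for $\kappa=\omega$ this step is König's lemma). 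Once every level is small, regularity yields a bound $b(\alpha)<\kappa$ on the values $x(\alpha)$ for $x\in[T]$, so $[T]$ is bounded. This level-counting argument, which genuinely uses that $\kappa$ is (weakly) inaccessible, is exactly where the hypothesis on $\kappa$ is needed.

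Granting the bridge, $\dhhd$-independence and $\dhthkk$-independence coincide for every $A\subseteq{}^\kappa\kappa$ (both amounting to $\closure A=[T(A)]$ being $\kappa$-compact), so the two box-open dihypergraphs have the same colorable restrictions, i.e. $\dhhd\equivf\dhthkk$, and the same holds after restricting to any fixed $X$. I would then finish by comparing the four options through the two characterization theorems. By Theorem~\ref{theorem: THD and ODD dhth}\,(1) and Theorem~\ref{theorem: dhhd from ODD}\,(1), the options \ths and \hds are equivalent to the colorability of $\dhthkk\restr X$ and of $\dhhd\restr X$ respectively, and these agree by $\dhhd\equivf\dhthkk$; by Theorem~\ref{theorem: THD and ODD dhth}\,(2) and Theorem~\ref{theorem: dhhd from ODD}\,(2), the options \thl and \hdl are equivalent to the existence of a continuous homomorphism from $\dhH\kappa$ into $\dhthkk\restr X$ and into $\dhhd\restr X$ respectively, and these agree by Corollary~\ref{cor: ODD subsequences}, since $\dhhd\restr X$ and $\dhthkk\restr X$ are relatively box-open with $\dhhd\restr X\equivf\dhthkk\restr X$. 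This gives \ths$\,\Leftrightarrow\,$\hds and \thl$\,\Leftrightarrow\,$\hdl, and taking the disjunction of the two options yields $\THD_\kappa(X)\Longleftrightarrow\HD_\kappa(X)$, as claimed.
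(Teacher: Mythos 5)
Your proof is correct, and while it slots into the same outer scaffold as the paper's — establish that $\dhthkk$ and $\dhhd$ are $\equivf$-equivalent, then quote Corollary~\ref{cor: ODD subsequences} together with Theorems~\ref{theorem: THD and ODD dhth} and~\ref{theorem: dhhd from ODD} to transfer both the coloring and the homomorphism options — the key step is proved by a genuinely different argument. The paper gets the nontrivial inequality $\dhthkk\leqf\dhhd$ directly on the level of hyperedges, using the partition property of weakly compact cardinals: color triples of an injective non-converging sequence by their splitting type, extract a homogeneous subsequence, and observe that the homogeneous type must be "all split at the same node" (otherwise the subsequence converges), so every hyperedge of $\dhthkk$ has a subsequence in $\dhhd$. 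You instead work on the level of independent sets and prove the stronger statement that $\dhhd$-independence and $\dhthkk$-independence literally coincide, via Lemmas~\ref{lemma: dhhd independence} and~\ref{dhth and compactness} and your bridge "a pruned tree is $\lle\kappa$-splitting iff its branch space is $\kappa$-compact", whose hard direction uses level-counting (this is where inaccessibility/strong-limitness enters) plus the closed-and-bounded${}={}$$\kappa$-compact characterization of weak compactness. What each buys: the paper's Ramsey argument is shorter and stays entirely combinatorial on sequences; yours isolates exactly which consequence of weak compactness is used, yields the sharper conclusion about independent sets, and is essentially a self-contained version of the alternative route the paper itself gestures at immediately after the proposition (via \cite{LuckeMottoRosSchlichtHurewicz}*{Lemma~2.6 \& Proposition~2.11} and Remark~\ref{remark: HD small option eventually bounded}). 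One small inaccuracy worth fixing: for $\kappa=\omega$ the limit stage of your level-counting induction is vacuous (there are no limit ordinals below $\omega$), so K\"onig's lemma plays no role there; where it is actually needed is in your separate appeal to "closed and bounded implies compact" in ${}^\omega\omega$, which you invoke anyway, so the argument stands.
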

\begin{proof}
We show that $\dhthkk\equivf\dhhd$.%
\footnote{See Definition~\ref{def: H-full}.} 
Then the corresponding options in $\ODD\kappa{\dhthkk\restr X}$ and $\ODD\kappa{\dhhd\restr X}$ 
are equivalent for all $X\subseteq{}^\kappa\kappa$
by Corollary~\ref{cor: ODD subsequences}, and hence 
the claim then follows from 
Theorems~\ref{theorem: THD and ODD dhth} and~\ref{theorem: dhhd from ODD}.
Since $\dhhd\subseteq\dhthkk$,
%\footnote{This fact gives an alternative proof of the implications \ths $\Longrightarrow$ \hds and \hdl $\Longrightarrow$ \thl.}
it suffices to show that any hyperedge $\bar x$ of $\dhthkk$ has a subsequence in $\dhhd$.
It follows from the partition property of weakly compact cardinals that
$\bar x$ has a subsequence $\bar y$ such that all triples in $\bar y$ have the same splitting type. 
Then $\bar y\in\dhhd$ since otherwise it would converge, 
contradicting $\bar x\in\dhthkk$.
\end{proof}

The previous proposition also follows from 
\cite{LuckeMottoRosSchlichtHurewicz}*{Lemma 2.6 \& Proposition~2.11}
by the next observation.

\begin{remark}
\label{remark: HD small option eventually bounded}
It is easy to see that if $X$ is eventually bounded, then the first option in $\HD_\kappa(X)$ holds.
The converse holds for inaccessible cardinals $\kappa$ and $\kappa=\omega$ but fails for uncountable cardinals $\kappa$ that are not inaccessible by the following example from \cite{Hurewiczdef}. 
Suppose that $\mu<\kappa$ is chosen such that $2^\mu=\kappa$. 
Let $\langle s_\alpha : \alpha<\kappa \rangle$ 
enumerate ${}^\mu2$ without repetitions. 
Let $\iota:{}^{<\kappa}\kappa\to{}^{<\kappa}\kappa$ be a continuous strict order preserving map such that
$\iota(\emptyset)=\emptyset$
and 
$\iota(t\conc\langle\alpha\rangle)=\iota(t)\conc s_\alpha\conc\langle\alpha\rangle$
for all $t\in{}^{<\kappa}\kappa$ and $\alpha<\kappa$.
Then $X:=\ran([\iota])$ is not eventually bounded,
since for any $y\in {}^\kappa\kappa$, there exists some $x \in X$ with $y(\mu\cdot (\alpha+1))=x(\alpha)$ for all $\alpha<\kappa$. 
However,
$T(X)=T(\iota)$ 
is $\lleq 2$-splitting.%
\footnote{I.e., every node in $T(X)$ has at most $2$ direct successors.}
Note that $X$ is closed by Lemma~\ref{lemma: [e] closed map}~\ref{closure of ran[e] when ddim=kappa}
and
$T(X)$ is $\lle\kappa$-closed
by Lemma~\ref{lemma: T(e) closure properties}.
\end{remark}

\begin{comment}
\begin{remark}
It is easy to see that if $X$ is eventually bounded, then the first option in $\HD_\kappa(X)$ holds.
The converse holds for inaccessible cardinals $\kappa$ and $\kappa=\omega$ but fails for uncountable cardinals $\kappa$ that are not inaccessible by the following example from \cite{Hurewiczdef}. 
Suppose that $\mu<\kappa$ is chosen such that $2^\mu=\kappa$. 
Let $\langle x_\alpha : \alpha<\kappa \rangle$ enumerate ${}^\mu2$ without repetitions. 
We define a ${<}\kappa$-closed subtree $T$ of ${}^{<\kappa}\kappa$ as follows. 
Let $S$ denote the tree of all $t\in {}^{<\kappa}\kappa$ such that $t(\beta)<2$ for all successors $\beta<\lh(t)$ and all limits $\beta=\mu\cdot \omega\cdot \alpha<\lh(t)$. 
Let $T$ consist of all $t\in S$ such that for all $\alpha<\kappa$ and $\beta<\mu$ with $\mu\cdot \alpha+1+\beta<\lh(t)$ and $\gamma:=t(\mu\cdot (\alpha+1))$, 
we have $t(\mu\cdot \alpha+1+\beta)=x_\gamma(\beta)$. 
Then every node in $T$ has at most $2$ direct successors. 
$[T]$ is not eventually bounded, since for any $x\in {}^\kappa\kappa$, there exists some $y\in [T]$ with $y(\mu\cdot (\alpha+1))=x(\alpha)$ for all $\alpha<\kappa$. 
\end{remark}
\end{comment}

%%%%%%%%%%%%%%%%%%%
\subsection{Extensions of the Kechris--Louveau--Woodin dichotomy}
\label{subsection: KLW}

We show that the open dihypergraph dichotomy implies 
an extension %$\KLW_\kappa(X,Y)$ 
of the Kechris--Louveau--Woodin dichotomy \cite{KechrisLouveauWoodin1987}*{Theorem 4} 
to subsets $X,\,Y$ of  ${}^\kappa\kappa$  which are not necessarily disjoint.
We will denote this extension by $\KLW_\kappa(X,Y)$.
We further derive a more general dichotomy
$\KLW^\ddim_\kappa(X,Y)$
that is defined relative to arbitrary box-open $\ddim$-dihypergraphs. 
$\KLW^\ddim_\kappa(X)$ then states that this dichotomy holds for all sets $Y$. 
We will see that 
$\KLW^\ddim_\kappa(X)$ implies $\ODD\kappa\ddim(X)$ for all dimensions $2\leq\ddim\leq\kappa$.
In fact, the two are equivalent for $\ddim=\kappa$.
%$\KLW^\kappa_\kappa(X)$ is in fact equivalent to $\ODD\kappa\kappa(X)$.
%
We further show that
$\ODD\kappa\kappa({}^\kappa\kappa)$ implies $\ODD\kappa\kappa(\analytic(\kappa))$. 
\todog{This needs: (1) $\KLW^{d}_\kappa(X,Y)$ implies $\ODD\kappa\ddim(X\cap Y)$ for all $X$ and all closed $Y$, (2) $\ODD\kappa\kappa(X)$ implies $\KLW^{d}(X,Y)$ for all $d$ and $Y$.}
Note that the latter statement is clear for %$\analytic(\kappa)$ 
$\kappa$-analytic
sets which are continuous images of ${}^\kappa\kappa$ 
by Lemma~\ref{ODD for continuous images}, but not every $\kappa$-analytic subset of ${}^\kappa\kappa$ is of this form if $\kappa$ is uncountable \cite{MR3430247}*{Theorem~1.5}.

\subsubsection{A version for arbitrary sets}
\label{subsection: original KLW}

\index{embedding!of kj@of ${}^{<\kappa}\kappa$ into ${}^{<\kappa} 2$\idf$\pi$}%
\index{embedding!of kk@of ${}^\kappa\kappa$ into ${}^\kappa 2$\idf$[\pi]$}%
Define $\pi:{}^{<\kappa}\kappa\to{}^{<\kappa} 2$ by letting 
$$\pi(t):=
\bigoplus_{\alpha<\lh(t)} ( \langle0\rangle^{t(\alpha)}\conc\langle 1\rangle ) 
$$
for all $t\in {}^{<\kappa}\kappa$.%
\footnote{A map satisfying the recursive definition of $\pi$ at successor levels already appeared in the proof of Lemma~\ref{lemma:ODDH and not ODDC}.} 
\todoo{We added the footnote (instead of a similar footnote which we deleted from the proof of Lemma~\ref{lemma:ODDH and not ODDC}).}
Then 
\index{sequences!with cofinally many nonzero values in!c@${}^{<\kappa} 2$\idf$\SS_\kappa$} 
$$
\SS_\kappa := \SS^2_\kappa := \ran(\pi) 
$$
consists of all nodes $u$ such that $u(\alpha)=1$ for cofinally many $\alpha<\lh(u)$. In particular, it contains all nodes of successor length ending with $1$. 
Let further 
\index{rationals@$\kappa$-rationals in!c@${}^\kappa 2$\idf$\QQ_\kappa$} 
\index{irrationals@$\kappa$-irrationals in!c@${}^\kappa 2$\idf$\RR_\kappa$} 
\todoo{Typo corrected in the definition of $\QQ_\kappa$ in the equation: we replaced ${}^\kappa\kappa$ by ${}^\kappa 2$}
\begin{equation*}
\begin{gathered}
\RR_\kappa:=\RR^2_\kappa:=\ran([\pi]), 
\\
\QQ_\kappa:=\QQ^2_\kappa:={}^\kappa 2\, \setminus\, \RR_\kappa.
\end{gathered}
\end{equation*}
Then $\QQ_\kappa$ consists of those $x\in{}^\kappa 2$ with 
$x(\alpha)=0$ for a final segment of $\alpha<\kappa$ and $\RR_\kappa$ 
of those with 
$x(\alpha)=1$ for cofinally many $\alpha<\kappa$. 
Since $\pi$ is $\perp$- and strict order preserving, 
$[\pi]$ 
is a homeomorphism from ${}^\kappa\kappa$ onto $\RR_\kappa$.

We call a map 
$f:{}^\kappa \ddim\to{}^{\kappa}\kappa$ 
a \emph{reduction of}
\index{reduction of!subsets of ${}^\kappa\kappa$\idf}
$(A,\,{{}^\kappa\ddim\,\setminus A})$ to $(X,Y)$ 
if $f(A)\subseteq X$ and $f({}^\kappa\ddim\,\setminus A)\subseteq Y$.
Recall that for any subset $X$ of ${}^\kappa\kappa$, 
$X'$ denotes the set of {limit points} of~$X$.

\begin{definition}
\label{def: KLW dichotomy}
\index{Kechris--Louveau--Woodin dichotomy!A@for arbitrary sets!A@standard\idf$\KLW_\kappa(X,Y)$} 
For subsets $X,Y$ of ${}^\kappa\kappa$, let
$\KLW_\kappa(X,Y)$ denote the statement: 
\begin{quotation}
$\KLW_\kappa(X,Y)$: 
Either $X=\bigcup_{\alpha<\kappa} X_\alpha$ for some sequence $\langle X_\alpha : \alpha<\kappa\rangle$ of sets with $X'_\alpha\cap Y=\emptyset$ for all $\alpha<\kappa$,
or there is a homeomorphism $f$
from ${}^\kappa 2$ onto a closed subset of ${}^\kappa\kappa$
that reduces
$(\RR_\kappa,\QQ_\kappa)$ to $(X,Y)$.
\end{quotation}
\end{definition}
If the first option holds, then $|X\cap Y|\leq\kappa$, since $X_\alpha\cap Y$ is a discrete space for each $\alpha<\kappa$ and hence it has size at most $\kappa$.
If the second option holds, then 
$f(\QQ_\kappa)$ and $f(\RR_\kappa)$ are 
disjoint dense subsets of $\ran(f)$ such that $|f(\QQ_\kappa)|=\kappa$ and $f(\RR_\kappa)$ is homeomorphic to~${}^\kappa\kappa$. 

We further say that a set $A$ \emph{separates} 
%\index{separating\idf}
$X$ from $Y$ if $X\subseteq A$ and $A\cap Y=\emptyset$. 
For disjoint subsets $X$ and $Y$ of ${}^\kappa\kappa$, it is easy to see that 
$\KLW_\kappa(X,Y)$ is equivalent to the statement: 
\begin{quotation}
Either 
there is a $\Fsigma(\kappa)$ set $A$ separating $X$ from $Y$, 
or there is a homeomorphism 
from ${}^\kappa 2$ onto a closed subset of ${}^\kappa\kappa$
that reduces
$(\RR_\kappa,\QQ_\kappa)$ to $(X,Y)$.
\end{quotation}
For disjoint subsets $X,\, Y$ of ${}^\omega\omega$ with $X$ analytic, this is precisely the 
Kechris--Louveau--Woodin dichotomy \cite{KechrisLouveauWoodin1987}*{Theorem 4}.\footnote{See also \cite{KechrisBook}*{Theorem 21.22}.}
Thus $\KLW_\omega(X,Y)$ is an extension of their statement to arbitrary subsets $X$ and $Y$.
The case when $X$ and $Y$ are not disjoint will be useful for later applications.

\begin{definition}
\label{def: nice reduction} 
\index{nice reduction\idf}%
\index{reduction of!Z@\mygobble|see {nice reduction}}%
Let $X$ and $Y$ be subsets of ${}^\kappa\kappa$.
A \emph{nice reduction} for $(X,Y)$ is a
$\perp$- and strict order preserving map 
$\Phi:{}^{<\kappa}2\to{}^{<\kappa}\kappa$
such that $[\Phi]$ reduces $(\RR_\kappa,\QQ_\kappa)$ to $(X,Y)$.
\end{definition}

Note that if $\Phi$ is such a map, then
$[\Phi]$ is a homeomorphism from ${}^\kappa 2$ onto a closed 
subset of~${}^\kappa\kappa$ by Lemma~\ref{perfect subsets and sop maps}~\ref{sop maps -> perfect subsets}.

\begin{definition}
\label{def: strong KLW dichotomy}
\index{Kechris--Louveau--Woodin dichotomy!A@for arbitrary sets!A@variant\idf$\sKLW_\kappa(X,Y)$} 
For subsets $X,Y$ of ${}^\kappa\kappa$, let
$\sKLW_\kappa(X,Y)$ denote the statement: 
\begin{quotation}
$\sKLW_\kappa(X,Y)$: 
Either 
$X=\bigcup_{\alpha<\kappa} X_\alpha$ for some sequence $\langle X_\alpha : \alpha<\kappa\rangle$ of sets with $X'_\alpha\cap Y=\emptyset$ for all $\alpha<\kappa$,
or there is a nice reduction 
for $(X,Y)$.
\end{quotation}
\end{definition} 

$\sKLW_\kappa(X,Y)$ clearly implies $\KLW_\kappa(X,Y)$ 
and moreover, they are
equivalent by the results in this subsection. 
In fact, we will show that each one  
is equivalent to 
the same special case of $\ODD\kappa\kappa(X)$ 
in Theorem~\ref{theorem: KLW from ODD}.
Moreover, we will see that 
it suffices that the map $f$ 
in $\KLW_\kappa(X,Y)$
is continuous 
with $f(\RR_\kappa)\cap f(\QQ_\kappa)=\emptyset$
instead of a homeomorphism onto a closed set.%
\footnote{See Theorem~\ref{theorem: KLW from ODD}~\ref{KLW from ODD hom}.}
The proofs of these equivalences are similar to that of \cite{CarroyMiller}*{Theorem~3 \& Proposition~7} but avoid the use of 
compactness
and are perhaps more direct.

We first define the relevant hypergraphs.\footnote{Recall that a $\kappa$-hypergraph is a $\kappa$-dihypergraph that is closed under permutations of hyperedges.} 
\label{def of CC}
Given a subset $Y$ of ${}^\kappa\kappa$, 
\index{hypergraph!of sequences!CCa@convergent with limit in $Y$ but not in the sequence\idf$\CC^Y$} 
we write $\CC^Y$ for the $\kappa$-hypergraph consisting of all 
convergent sequences $\bar x$ in ${}^\kappa\kappa$ 
with $\lim_{\alpha<\kappa}(\bar x)\in Y\setminus \ran(\bar x)$.%
\footnote{I.e., $\lim_{\alpha<\kappa}(\bar x)\neq x_\alpha$ for all $\alpha<\kappa$.}
$\CC^Y$ was defined in the proof of \cite{CarroyMiller}*{Theorem~3} for $\kappa=\omega$.
Note that $\CC^Y$ is box-open on ${}^\kappa\kappa$, 
and $\CC^Y\in\defsetsk$ if $Y\in\defsetsk$. 

The following lemmas characterize when $\CC^Y$ admits a $\kappa$-coloring. 
The next lemma is immediate. 

\begin{lemma} 
\label{lemma: KLW independence}
For subsets $Y$ and $Z$ of ${}^\kappa\kappa$, 
$Z$ is $\CC^Y$-independent if and only if $Y\cap Z'=\emptyset$. 
\end{lemma}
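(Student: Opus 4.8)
Looking at the statement to prove:

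\begin{lemma*}
For subsets $Y$ and $Z$ of ${}^\kappa\kappa$, $Z$ is $\CC^Y$-independent if and only if $Y\cap Z'=\emptyset$.
\end{lemma*}

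The plan is to unfold both definitions and observe that they are literally expressing the same condition, with only a small amount of care needed to handle the ``$\lim \notin \ran(\bar x)$'' clause in the definition of $\CC^Y$. Recall that $\CC^Y$ consists of all convergent sequences $\bar x = \langle x_\alpha : \alpha<\kappa\rangle$ in ${}^\kappa\kappa$ whose limit lies in $Y \setminus \ran(\bar x)$, and that $Z$ is $\CC^Y$-independent means $\CC^Y \restr Z = \CC^Y \cap {}^\kappa Z = \emptyset$. On the other side, $Z' = \{ z : z \in \closure{Z \setminus \{z\}}\}$ is the set of limit points of $Z$ in ${}^\kappa\kappa$, so $Y \cap Z' = \emptyset$ asserts that no element of $Y$ is a limit point of $Z$.

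First I would prove the contrapositive in each direction, as this is cleanest. For the direction ``$Y \cap Z' \neq \emptyset \Rightarrow Z$ not $\CC^Y$-independent'': suppose $y \in Y \cap Z'$. Since $y$ is a limit point of $Z$, every basic open neighborhood $N_{y\restr\alpha}$ meets $Z \setminus \{y\}$, so I can construct an injective sequence $\langle x_\alpha : \alpha<\kappa\rangle$ of elements of $Z \setminus \{y\}$ converging to $y$; concretely, choose $x_\alpha \in (N_{y\restr\beta_\alpha}\cap Z)\setminus\{y\}$ for a suitable strictly increasing sequence $\beta_\alpha$ guaranteeing convergence and injectivity (here $\kappa^{<\kappa}=\kappa$ and the existence of many distinct points near $y$ makes this routine). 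This $\bar x$ is a hyperedge of $\CC^Y$ lying in ${}^\kappa Z$, since its limit $y \in Y$ and $y \notin \ran(\bar x)$ by construction. Hence $\CC^Y\restr Z \neq \emptyset$.

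Conversely, for ``$Z$ not $\CC^Y$-independent $\Rightarrow Y \cap Z' \neq \emptyset$'': take any hyperedge $\bar x = \langle x_\alpha : \alpha<\kappa\rangle \in \CC^Y \restr Z$. Let $y := \lim_{\alpha<\kappa}(\bar x) \in Y \setminus \ran(\bar x)$. Since each $x_\alpha \in Z$ and $y \neq x_\alpha$ for all $\alpha$, the point $y$ is a limit of points of $Z$ distinct from $y$, so $y \in \closure{Z\setminus\{y\}} = Z'$; here the clause $y \notin \ran(\bar x)$ is exactly what ensures $y$ is a genuine limit point rather than merely an element of $Z$. Therefore $y \in Y \cap Z'$.

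I do not expect any serious obstacle: the lemma is essentially a definitional translation, and the only point requiring attention is matching the ``limit point'' definition (which excludes the point itself) against the ``$\lim \notin \ran(\bar x)$'' requirement in $\CC^Y$. The mild technical step is producing, in the first direction, a convergent sequence of length exactly $\kappa$ through $Z$ converging to $y$; this uses that $y$ is a limit point together with $\kappa^{<\kappa}=\kappa$, and is the kind of routine construction that is fully standard in this setting. Because the statement is immediate, I would in practice present this as a one-paragraph argument rather than splitting into formal cases.
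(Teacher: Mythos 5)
Your proof is correct and matches the paper's intent: the paper states this lemma without proof, calling it immediate, and your argument is exactly the definitional unfolding that justifies that remark. (Note only that non-constancy of the constructed sequence — which is automatic since $\bigcap_{\alpha<\kappa} N_{y\restr\alpha}=\{y\}$ — already suffices for it to be a hyperedge, so the injectivity refinement is not even needed.)
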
 

In the special case that $Y$ is disjoint from $Z$, $Z$ is $\CC^Y$-independent if and only if $Y\cap\closure{Z}=\emptyset$, 
since $\closure{Z}=Z\cup Z'$. 
%for all subsets $Z$ of ${}^\kappa\kappa$. 

\begin{comment} 
\begin{lemma*}
%\label{lemma: CC coloring} 
Suppose $X$ and $Y$ are subsets of ${}^\kappa\kappa$.
\begin{enumerate-(1)}
\item\label{CC coloring 1}
If $\CC^Y{\upharpoonright}X$ has a $\kappa$-coloring, then $|X\cap Y|\leq\kappa$.
\todog{We will need to use this later}
\item\label{CC coloring 2}
If $|X\cap \closure Y|\leq\kappa$, then $\CC^Y{\upharpoonright}X$ has a $\kappa$-coloring.
\end{enumerate-(1)}
\end{lemma*}
\begin{proof}
For~\ref{CC coloring 1}, take $\CC^Y$-independent sets $A_\alpha$ for $\alpha<\kappa$ with
$X=\bigcup_{\alpha<\kappa} A_\alpha$.
%
For each $\alpha<\kappa$, let $B_\alpha:=Y\cap A_\alpha$.
Since $B_\alpha$ is a subset of $Y$ with no limit points in $Y$,
it is discrete, so
$|B_\alpha|\leq\kappa$.
Since $X\cap Y = \bigcup_{\alpha<\kappa}B_\alpha$,
we have $|X\cap Y|\leq\kappa$.

For~\ref{CC coloring 2}, it suffices to show that ${}^\kappa\kappa\setminus \closure Y$ is the union of $\kappa$ many $\CC^Y$-independent sets. 
Write ${}^\kappa\kappa\setminus \closure Y$ as the union of basic open sets $N_{s_i}$ for $i<\alpha$, where $\alpha\leq \kappa$.  
Each $N_{s_i}$ is $\CC^Y$-independent, since it is closed and thus sequences in $N_{s_i}$ cannot converge to elements of~$Y$. 
\todog{This shows that $\CC^Y$ has a $\kappa$-coloring if and only if $\CC^Y\restr{\closure Y}$ has a $\kappa$-coloring, for any $Y\subseteq{}^\kappa\kappa$.}
\end{proof}
\end{comment}

\begin{lemma}
\label{lemma: CC coloring} 
The following statements are equivalent for all 
subsets $X$ and $Y$ of ${}^\kappa\kappa$.
\begin{enumerate-(1)}
\item\label{CC coloring 1}
$\CC^Y\restr X$ admits a $\kappa$-coloring.
\item\label{CC coloring 2}
$X$ is covered by sets $X_\alpha$ for $\alpha<\kappa$ with $X_\alpha'\cap Y=\emptyset$.
%$X=\bigcup_{\alpha<\kappa} X_\alpha$ for some sequence $\langle X_\alpha : \alpha<\kappa\rangle$ of sets with $X'_\alpha\cap Y=\emptyset$ for all $\alpha<\kappa$.
\item\label{CC coloring 3}
$|X\cap Y|\leq\kappa$ and $X\setminus Y$ can be separated from $Y$ by a $\Fsigma(\kappa)$ set.
\end{enumerate-(1)}
\end{lemma}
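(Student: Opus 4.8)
The plan is to prove the three-way equivalence by establishing a cycle of implications, using the characterization of $\CC^Y$-independence from Lemma~\ref{lemma: KLW independence} throughout. The statement~\ref{CC coloring 2} is essentially the definition of a $\kappa$-coloring of $\CC^Y\restr X$ translated through Lemma~\ref{lemma: KLW independence}, so the implication \ref{CC coloring 1}~$\Leftrightarrow$~\ref{CC coloring 2} should be the routine part. The main content lies in connecting these to the separation condition~\ref{CC coloring 3}.

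For \ref{CC coloring 1}~$\Leftrightarrow$~\ref{CC coloring 2}: a $\kappa$-coloring of $\CC^Y\restr X$ is precisely a partition of $X$ into at most $\kappa$ many $\CC^Y$-independent sets $X_\alpha$. By Lemma~\ref{lemma: KLW independence}, each $X_\alpha$ is $\CC^Y$-independent if and only if $Y\cap X_\alpha'=\emptyset$, i.e.\ $X_\alpha'\cap Y=\emptyset$. This gives the equivalence immediately (a cover can be refined to a partition without enlarging limit-point sets, since subsets have fewer limit points).

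For \ref{CC coloring 2}~$\Rightarrow$~\ref{CC coloring 3}: first I would show $|X\cap Y|\leq\kappa$. Writing $X=\bigcup_{\alpha<\kappa} X_\alpha$ with $X_\alpha'\cap Y=\emptyset$, set $B_\alpha:=X_\alpha\cap Y$. Each $B_\alpha$ is a subset of $Y$ with no limit points in $Y$, hence $B_\alpha$ is relatively discrete in $Y$, so $|B_\alpha|\leq\kappa$ (a discrete subspace of ${}^\kappa\kappa$, whose topology has a base of size $\kappa$, has size at most $\kappa$). Thus $|X\cap Y|=|\bigcup_{\alpha<\kappa}B_\alpha|\leq\kappa$. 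Next, to separate $X\setminus Y$ from $Y$ by an $\Fsigma(\kappa)$ set: using Corollary~\ref{independence and closure}, the closure $\closure{X_\alpha\setminus Y}$ still avoids $Y$ whenever $X_\alpha\setminus Y$ has no limit points in $Y$, because $\closure{X_\alpha\setminus Y}=(X_\alpha\setminus Y)\cup(X_\alpha\setminus Y)'$ and $(X_\alpha\setminus Y)'\subseteq X_\alpha'$ is disjoint from $Y$. Hence each $\closure{X_\alpha\setminus Y}$ is a closed ($\closedpi(\kappa)$) set disjoint from $Y$, and $A:=\bigcup_{\alpha<\kappa}\closure{X_\alpha\setminus Y}$ is an $\Fsigma(\kappa)$ set that contains $X\setminus Y$ and is disjoint from $Y$, as required.

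For \ref{CC coloring 3}~$\Rightarrow$~\ref{CC coloring 2}: suppose $|X\cap Y|\leq\kappa$ and $A=\bigcup_{\alpha<\kappa}C_\alpha$ is an $\Fsigma(\kappa)$ set with $C_\alpha$ closed, $X\setminus Y\subseteq A$ and $A\cap Y=\emptyset$. I would cover $X$ as follows. For the part inside $Y$, enumerate $X\cap Y=\{y_\beta:\beta<\kappa\}$ and use the singletons $\{y_\beta\}$, which trivially have empty limit-point sets. For $X\setminus Y$, intersect with the closed pieces: the sets $X_\alpha:=C_\alpha\cap(X\setminus Y)$ cover $X\setminus Y$, and since $C_\alpha$ is closed we have $X_\alpha'\subseteq C_\alpha\subseteq A$, which is disjoint from $Y$; thus $X_\alpha'\cap Y=\emptyset$. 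Combining the $\kappa$ many singletons and the $\kappa$ many sets $X_\alpha$ yields a cover of $X$ of size $\kappa$ with all limit-point sets avoiding $Y$, giving~\ref{CC coloring 2}. The main obstacle I anticipate is bookkeeping around whether one works with $X$ or $X\setminus Y$ in each condition, and ensuring the discreteness argument for $|X\cap Y|\leq\kappa$ is stated cleanly; but no deep idea beyond Lemma~\ref{lemma: KLW independence} and Corollary~\ref{independence and closure} is needed.
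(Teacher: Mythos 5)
Your proposal is correct and follows essentially the same route as the paper's proof: both translate everything through Lemma~\ref{lemma: KLW independence}, handle $X\cap Y$ via discreteness of the pieces $X_\alpha\cap Y$, separate $X\setminus Y$ from $Y$ by the closures $\closure{X_\alpha\setminus Y}$, and conversely use singletons for $X\cap Y$ together with the closed pieces of the separating $\Fsigma(\kappa)$ set (the paper closes the cycle via \ref{CC coloring 3}$\Rightarrow$\ref{CC coloring 1} rather than your \ref{CC coloring 3}$\Rightarrow$\ref{CC coloring 2}, which is an immaterial difference given \ref{CC coloring 1}$\Leftrightarrow$\ref{CC coloring 2}). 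The only cosmetic point is that your appeal to Corollary~\ref{independence and closure} is unnecessary: the direct computation you give immediately afterwards, namely $(X_\alpha\setminus Y)'\subseteq X_\alpha'$, is all that is needed and is exactly what the paper does.
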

\begin{proof}
\ref{CC coloring 1} $\Rightarrow$ \ref{CC coloring 2}:
Suppose that $\CC^Y\restr X$ admits a $\kappa$-coloring 
and take $\CC^Y\restr X$-independent sets $X_\alpha$ with
$X=\bigcup_{\alpha<\kappa} X_\alpha$.
Then $X_\alpha' \cap Y=\emptyset$ for all $\alpha<\kappa$ by Lemma \ref{lemma: KLW independence}. 

\ref{CC coloring 2} $\Rightarrow$ \ref{CC coloring 3}:
For each $\alpha<\kappa$, let $Y_\alpha:=Y\cap X_\alpha$
and $Z_\alpha:=X_\alpha\setminus Y$.
Since $Y_\alpha$ is a subset of $Y$ with no limit points in $Y$,
it is discrete, so
$|Y_\alpha|\leq\kappa$.
Since $X\cap Y = \bigcup_{\alpha<\kappa}Y_\alpha$,
we have $|X\cap Y|\leq\kappa$.
Since $Z_\alpha$ is disjoint from $Y$ and has no limit points in $Y$,
its closure is also disjoint from $Y$.
Hence $Z:=\bigcup_{\alpha<\kappa} \closure Z_\alpha$ is a $\Fsigma(\kappa)$ set separating $X\setminus Y$ from $Y$.

\ref{CC coloring 3} $\Rightarrow$ \ref{CC coloring 1}:
Since $|X\cap Y|\leq\kappa$, it suffices to show that $\CC^Y\restr(X\setminus Y)$ admits a $\kappa$-coloring. 
%The former holds since $|X\cap Y|\leq\kappa$.
To see this,
take closed subsets $Z_\alpha$ of ${}^\kappa\kappa$ such that
$Z:=\bigcup_{\alpha<\kappa} Z_\alpha$
separates
$X\setminus Y$ from $Y$ and note that 
each $Z_\alpha$ is $\CC^Y$-independent by Lemma \ref{lemma: KLW independence}. 
%$we obtain a $\kappa$-coloring of $\CC^Y\restr(X\setminus Y)$. 
\end{proof}

In the special case $X:={}^\kappa\kappa$, the equivalence 
\ref{CC coloring 1} $\Leftrightarrow$ \ref{CC coloring 3}
yields the following statement. 

\begin{corollary}
\label{cor: CC coloring}
For any subset $Y$ of ${}^\kappa\kappa$,
$\CC^Y$ admits a $\kappa$-coloring if and only if $Y$ is a $\Gdelta(\kappa)$ set of size~$\kappa$.
\end{corollary}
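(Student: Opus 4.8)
The plan is to derive this corollary directly from the equivalence \ref{CC coloring 1} $\Leftrightarrow$ \ref{CC coloring 3} in Lemma~\ref{lemma: CC coloring}, specializing to the case $X := {}^\kappa\kappa$. First I would substitute $X = {}^\kappa\kappa$ into the three equivalent conditions and observe what each becomes. Condition \ref{CC coloring 1} becomes exactly the statement that $\CC^Y$ admits a $\kappa$-coloring (since $\CC^Y\restr{}^\kappa\kappa = \CC^Y$), which is the left-hand side we want. So the whole task reduces to showing that condition \ref{CC coloring 3}, when $X = {}^\kappa\kappa$, is equivalent to the assertion that $Y$ is a $\Gdelta(\kappa)$ set of size $\kappa$.

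The key computation is to unpack \ref{CC coloring 3} with $X = {}^\kappa\kappa$. Here $X \cap Y = Y$, so the first clause ``$|X \cap Y| \leq \kappa$'' becomes simply $|Y| \leq \kappa$. The second clause ``$X \setminus Y$ can be separated from $Y$ by an $\Fsigma(\kappa)$ set'' becomes ``${}^\kappa\kappa \setminus Y$ can be separated from $Y$ by an $\Fsigma(\kappa)$ set''. I would argue that, since ${}^\kappa\kappa \setminus Y$ and $Y$ partition the whole space, any set $A$ separating them (i.e.\ with ${}^\kappa\kappa\setminus Y \subseteq A$ and $A \cap Y = \emptyset$) must equal ${}^\kappa\kappa \setminus Y$ exactly. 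Thus the separating $\Fsigma(\kappa)$ set is forced to be the complement of $Y$ itself, so the clause says precisely that ${}^\kappa\kappa \setminus Y$ is $\Fsigma(\kappa)$, equivalently that $Y$ is $\Gdelta(\kappa)$. Here I would use that the $\kappa$-Borel classes $\Fsigma(\kappa) = {\bf\Sigma}^0_2(\kappa)$ and $\Gdelta(\kappa) = {\bf\Pi}^0_2(\kappa)$ are dual (a set is $\Gdelta(\kappa)$ iff its complement is $\Fsigma(\kappa)$), which is immediate from the definition of the $\kappa$-Borel hierarchy in Subsection~\ref{preliminaries: notation}.

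Combining the two clauses, \ref{CC coloring 3} for $X = {}^\kappa\kappa$ holds if and only if $|Y| \leq \kappa$ and $Y$ is $\Gdelta(\kappa)$, which is exactly the statement that $Y$ is a $\Gdelta(\kappa)$ set of size $\kappa$ (noting that ``size $\kappa$'' here means $|Y| \leq \kappa$, consistent with the paper's convention in statements like $\PSP_\kappa$). The corollary then follows immediately from Lemma~\ref{lemma: CC coloring}.

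I do not anticipate a serious obstacle, since this is essentially a specialization of an already-proved lemma. The one point requiring a little care is the argument that a separating set for a set and its complement is forced to be the complement exactly; this is where the hypothesis $X = {}^\kappa\kappa$ does real work, collapsing the asymmetric ``separation'' clause of \ref{CC coloring 3} into the symmetric statement that $Y$ is co-$\Fsigma(\kappa)$. A short, clean proof would read as follows.

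\begin{proof}
Apply Lemma~\ref{lemma: CC coloring} with $X:={}^\kappa\kappa$.
Then condition~\ref{CC coloring 1} becomes the statement that $\CC^Y=\CC^Y\restr{}^\kappa\kappa$ admits a $\kappa$-coloring.
We claim that condition~\ref{CC coloring 3} becomes the statement that $Y$ is a $\Gdelta(\kappa)$ set of size $\kappa$.
Since $X\cap Y=Y$, the first part of~\ref{CC coloring 3} states that $|Y|\leq\kappa$.
For the second part, note that $X\setminus Y={}^\kappa\kappa\setminus Y$, and any set $A$ separating ${}^\kappa\kappa\setminus Y$ from $Y$ satisfies ${}^\kappa\kappa\setminus Y\subseteq A$ and $A\cap Y=\emptyset$, which forces $A={}^\kappa\kappa\setminus Y$.
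Thus there is an $\Fsigma(\kappa)$ set separating ${}^\kappa\kappa\setminus Y$ from $Y$ if and only if ${}^\kappa\kappa\setminus Y$ is itself $\Fsigma(\kappa)$, i.e., if and only if $Y$ is $\Gdelta(\kappa)$.
Hence~\ref{CC coloring 3} holds for $X={}^\kappa\kappa$ if and only if $Y$ is a $\Gdelta(\kappa)$ set of size $\kappa$.
The corollary now follows from the equivalence of~\ref{CC coloring 1} and~\ref{CC coloring 3} in Lemma~\ref{lemma: CC coloring}.
\end{proof}
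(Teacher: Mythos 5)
Your proposal is correct and is exactly the paper's own argument: the paper obtains this corollary by specializing the equivalence \ref{CC coloring 1} $\Leftrightarrow$ \ref{CC coloring 3} of Lemma~\ref{lemma: CC coloring} to $X:={}^\kappa\kappa$, just as you do. Your additional observation that a separating set for $Y$ and its complement is forced to equal ${}^\kappa\kappa\setminus Y$ (so the separation clause collapses to ``$Y$ is $\Gdelta(\kappa)$'') is precisely the implicit step the paper leaves to the reader, and you carry it out correctly.
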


We now prove that the existence of a continuous homomorphism from $\dhH\kappa$ to $\CC^Y\restr X$ 
is equivalent to the existence of
a nice reduction for $(X,Y)$.

\begin{lemma} 
\label{lemma: continuous reduction}
Suppose that 
$X$ and $Y$ are subsets of ${}^\kappa\kappa$ and
$f$ is a continuous reduction of $(\RR_\kappa,\QQ_\kappa)$ to $(X,Y)$
with $f(\RR_\kappa) \cap f(\QQ_\kappa)=\emptyset$.
Then $f\comp [\pi]$ is a continuous homomorphism from $\dhH\kappa$ to $\CC^Y\restr X$.
\end{lemma}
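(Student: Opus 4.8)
The plan is to verify the two defining properties of a continuous homomorphism separately. Continuity is immediate: $[\pi]$ is continuous since $\pi$ is strict order preserving (Lemma~\ref{[e] continuous}~\ref{[e] cont}), and $f$ is continuous by hypothesis, so $f\comp[\pi]$ is continuous. The substance lies in checking that $f\comp[\pi]$ sends hyperedges of $\dhH\kappa$ to hyperedges of $\CC^Y\restr X$.

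So fix a hyperedge $\bar x=\langle x_i:i<\kappa\rangle$ of $\dhH\kappa$, and let $t\in{}^{<\kappa}\kappa$ witness this, i.e.\ $t\conc\langle i\rangle\subseteq x_i$ for all $i<\kappa$. Put $u:=\pi(t)$. First I would record the key computation: since $\pi(t\conc\langle i\rangle)=u\conc\langle0\rangle^i\conc\langle1\rangle$ and $\pi$ is order preserving, we have $u\conc\langle0\rangle^i\conc\langle1\rangle\subseteq[\pi](x_i)$, and hence $[\pi](x_i)\restr(\lh(u)+i)=u\conc\langle0\rangle^i$. Setting $w:=u\conc\langle0\rangle^\kappa$, it follows that $[\pi](x_i)\restr\gamma=w\restr\gamma$ for every $\gamma\leq\lh(u)+i$. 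Since $\lh(u)<\kappa$, this shows that for each $\gamma<\kappa$ the tail $\langle[\pi](x_i):\gamma\leq i<\kappa\rangle$ lies in $N_{w\restr\gamma}$; that is, the sequence $\langle[\pi](x_i):i<\kappa\rangle$ converges to $w$. Applying the continuity of $f$, the image sequence $\langle f([\pi](x_i)):i<\kappa\rangle$ converges to $f(w)$.

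It remains to check the three requirements defining $\CC^Y\restr X$. Each $[\pi](x_i)$ lies in $\RR_\kappa=\ran([\pi])$, so $f([\pi](x_i))\in f(\RR_\kappa)\subseteq X$, giving that the image sequence lies in ${}^\kappa X$. The point $w$ takes the value $1$ only below $\lh(u)$, hence is eventually $0$, so $w\in\QQ_\kappa$ and therefore $f(w)\in f(\QQ_\kappa)\subseteq Y$, i.e.\ the limit lies in $Y$. Finally, I would invoke the hypothesis $f(\RR_\kappa)\cap f(\QQ_\kappa)=\emptyset$: since $f(w)\in f(\QQ_\kappa)$ while each $f([\pi](x_i))\in f(\RR_\kappa)$, the limit $f(w)$ differs from every entry of the image sequence, so $f(w)\notin\ran(\langle f([\pi](x_i)):i<\kappa\rangle)$. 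Thus the image sequence is convergent with limit in $Y\setminus\ran$, hence a hyperedge of $\CC^Y$, and its entries lie in $X$, so it is a hyperedge of $\CC^Y\restr X$, as required.

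The only real subtlety, and where I would be most careful, is this last step: membership in $\CC^Y$ demands that the limit avoid the range of the sequence, and it is precisely the disjointness assumption $f(\RR_\kappa)\cap f(\QQ_\kappa)=\emptyset$ that supplies this, separating the limit point $f(w)$ (coming from $\QQ_\kappa$) from the approximating points $f([\pi](x_i))$ (coming from $\RR_\kappa$). The convergence computation itself is routine once one unwinds the definition of $\pi$.
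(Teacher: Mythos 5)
Your proof is correct and is essentially the paper's own argument: identify the limit $\pi(t)\conc\langle 0\rangle^\kappa\in\QQ_\kappa$ of the images of a hyperedge under $[\pi]$, push the convergence through $f$ by continuity, use the reduction property for membership in $X$ and $Y$, and use $f(\RR_\kappa)\cap f(\QQ_\kappa)=\emptyset$ to see the limit avoids the range. The extra detail you supply (the explicit convergence computation and the continuity of the composition) is just an expansion of steps the paper leaves implicit.
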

\begin{proof}
Suppose that $\bar x=\langle x_\alpha:\alpha<\kappa\rangle$ is a hyperedge of $\dhH\kappa$.
Take $t\in{}^{<\kappa}\kappa$ such that $x_\alpha$ extends $t\conc\langle \alpha\rangle$ for each $\alpha<\kappa$.
Since
$$
\pi(t)\conc\langle0\rangle^\alpha \conc\langle1\rangle
=
\pi(t\conc\langle\alpha\rangle)
\subseteq [\pi](x_\alpha)
$$
for all $\alpha<\kappa$,
$\langle [\pi](x_\alpha):\alpha<\kappa\rangle$ converges to 
$y:=\pi(t)\conc \langle0\rangle^\kappa \in \QQ_\kappa$.
Since $f$ is continuous, 
$\big\langle (f\comp[\pi])(x_\alpha):\alpha<\kappa\big\rangle$ 
converges to $f(y)$.
Since $f$ is a reduction of $(\RR_\kappa,\QQ_\kappa)$ to $(X,Y)$,
%and each $[\pi](x_\alpha)$ is in $\RR_\kappa$, 
this sequence is in $X$ and $f(y)\in Y$. 
Using that $f(\RR_\kappa)\cap f(\QQ_\kappa)=\emptyset$, we have 
$(f\comp[\pi])(x_\alpha)\neq f(y)$ for all $\alpha<\kappa$. 
Thus   
$\big\langle (f\comp[\pi])(x_\alpha):\alpha<\kappa\big\rangle\in \CC^Y\restr X$ as required.
\end{proof}

Suppose that $x\in{}^\kappa\kappa$ and 
$\bar{t}=\langle t_\alpha : \alpha<\kappa \rangle$ is a sequence of elements of ${}^{<\kappa}\kappa$.
\label{def: convergence of nodes}
We say that $\bar t$ \emph{converges to} $x$ 
\index{convergent sequence of!nodes of ${}^{<\kappa}\kappa$\idf}
if for all $\gamma<\kappa$, we have 
$x\restr\gamma\subseteq t_\alpha$ for all sufficiently large ordinals $\alpha<\kappa$.
It is equivalent to state that the lengths $\lh(t_\alpha)$ converge to $\kappa$ and 
some (equivalently, all) sequences $\bar{x}\in\prod_{\alpha<\kappa} N_{t_\alpha}$
converges to~$x$.

\begin{lemma}
\label{lemma: CC convergence}
Suppose that $Y$ is a subset of ${}^\kappa\kappa$ and
$\bar{t}=\langle t_\alpha : \alpha<\kappa \rangle$ is a sequence of elements of~${}^{<\kappa}\kappa$. 
The following conditions are equivalent: 
\begin{enumerate-(1)} 
\item 
\label{lemma: CC convergence 1}
$\prod_{\alpha<\kappa}N_{t_\alpha}\subseteq \CC^Y$. 
\item 
\label{lemma: CC convergence 2}
$\bar t$ converges to some $y\in Y$ with $y\perp t_\alpha$ for all $\alpha<\kappa$. 
\end{enumerate-(1)} 
\end{lemma}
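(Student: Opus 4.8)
The plan is to prove the equivalence of the two conditions by unwinding the definitions of $\CC^Y$ and of convergence of nodes, treating the two implications separately. Throughout, I would keep in mind that $\prod_{\alpha<\kappa}N_{t_\alpha}\subseteq\CC^Y$ means that \emph{every} sequence $\bar x$ with $x_\alpha\in N_{t_\alpha}$ is a convergent sequence whose limit lies in $Y\setminus\ran(\bar x)$.

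First I would prove \ref{lemma: CC convergence 2} $\Rightarrow$ \ref{lemma: CC convergence 1}, which is the more routine direction. Assume $\bar t$ converges to some $y\in Y$ with $y\perp t_\alpha$ for all $\alpha<\kappa$, and let $\bar x=\langle x_\alpha:\alpha<\kappa\rangle\in\prod_{\alpha<\kappa}N_{t_\alpha}$. By the stated equivalent characterization of convergence of nodes (the lengths $\lh(t_\alpha)$ tend to $\kappa$ and any such $\bar x$ converges to $y$), the sequence $\bar x$ converges to $y$. Since $y\in Y$, it remains to check $y\neq x_\alpha$ for all $\alpha$; but $x_\alpha\in N_{t_\alpha}$ means $t_\alpha\subseteq x_\alpha$, whereas $y\perp t_\alpha$ forces $y\notin N_{t_\alpha}$, so $y\neq x_\alpha$. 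Hence $\bar x\in\CC^Y$. As $\bar x$ was arbitrary, $\prod_{\alpha<\kappa}N_{t_\alpha}\subseteq\CC^Y$.

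For the converse \ref{lemma: CC convergence 1} $\Rightarrow$ \ref{lemma: CC convergence 2}, I would pick one witnessing sequence $\bar x\in\prod_{\alpha<\kappa}N_{t_\alpha}$ and use $\bar x\in\CC^Y$ to obtain a limit $y:=\lim_{\alpha<\kappa}\bar x\in Y\setminus\ran(\bar x)$. The main point is to upgrade ``$\bar x$ converges to $y$'' to ``$\bar t$ converges to $y$'', i.e.\ to show $\lh(t_\alpha)\to\kappa$ and $y\restr\gamma\subseteq t_\alpha$ eventually. The obstacle is that a single convergent $\bar x$ does not immediately control the lengths $\lh(t_\alpha)$; this is exactly where the quantifier ``every $\bar x$'' in \ref{lemma: CC convergence 1} must be exploited. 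I would argue that if the lengths $\lh(t_\alpha)$ did not converge to $\kappa$, then there would be a fixed $\gamma<\kappa$ and an unbounded set of $\alpha$ with $\lh(t_\alpha)\leq\gamma$; using the freedom to choose $x_\alpha\in N_{t_\alpha}$ arbitrarily above $t_\alpha$, one could then build a sequence $\bar x'\in\prod_{\alpha<\kappa}N_{t_\alpha}$ that fails to converge (spreading the $x_\alpha$ over incompatible extensions of the short $t_\alpha$ along the unbounded set), contradicting $\bar x'\in\CC^Y$. Once $\lh(t_\alpha)\to\kappa$ is established, the limit $y$ of the single witness $\bar x$ satisfies $t_\alpha\subseteq y$ for cofinally many $\alpha$ whenever $t_\alpha\subseteq x_\alpha$ and $x_\alpha\to y$; more carefully, for each $\gamma<\kappa$ and all large $\alpha$ we have $x_\alpha\restr\gamma=y\restr\gamma$ and $\lh(t_\alpha)\geq\gamma$, forcing $y\restr\gamma=t_\alpha\restr\gamma\subseteq t_\alpha$, which is precisely convergence of $\bar t$ to $y$.

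Finally I would verify the incompatibility clause $y\perp t_\alpha$ for all $\alpha<\kappa$. Here I would again invoke that \emph{every} $\bar x\in\prod_{\alpha<\kappa}N_{t_\alpha}$ lies in $\CC^Y$ and hence has limit in $Y\setminus\ran(\bar x)$: the limit is independent of the choice of $\bar x$ (it equals $y$ by the convergence of $\bar t$), so $y\notin\ran(\bar x)$ for every choice of $\bar x$. Fixing an index $\alpha_0$ and taking $\bar x$ with $x_{\alpha_0}$ ranging over all elements of $N_{t_{\alpha_0}}$, the condition $y\neq x_{\alpha_0}$ for all such choices is possible only if $y\notin N_{t_{\alpha_0}}$, i.e.\ $y\perp t_{\alpha_0}$. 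Since $\alpha_0$ was arbitrary, $y\perp t_\alpha$ for all $\alpha<\kappa$, completing \ref{lemma: CC convergence 2}. The crux of the whole argument is the passage from a single convergent witness to uniform control over the node lengths, so the main obstacle is organizing the ``every $\bar x$'' quantifier into these two concrete choices (one to force $\lh(t_\alpha)\to\kappa$, one to force $y\perp t_\alpha$).
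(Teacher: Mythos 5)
Your proposal is correct and follows essentially the same route as the paper's proof: both directions exploit the universal quantifier in (1) by constructing a modified sequence in $\prod_{\alpha<\kappa}N_{t_\alpha}$ that either fails to converge (forcing node-convergence of $\bar t$) or has $y$ in its range (forcing $y\perp t_\alpha$), and the easy direction is handled identically. The only difference is organizational — you case-split on whether $\lh(t_\alpha)\to\kappa$ and build a fresh spread-out counterexample, while the paper negates convergence of $\bar t$ to $y$ directly and mixes the bad choices with the original witness $\bar x$ — but the underlying ideas coincide.
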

\begin{proof}
\ref{lemma: CC convergence 1} $\Rightarrow$ \ref{lemma: CC convergence 2}:
Take any $\bar{x}\in \prod_{\alpha<\kappa}N_{t_\alpha}$. 
By assumption, $\bar{x}$ converges to some $y\in Y$. 
We claim that $\bar{t}$ converges to $y$. 
This is clear if the lengths $\lh(t_\alpha)$ converge to $\kappa$. 
Otherwise, there exists some $\gamma<\kappa$ and an unbounded subset $I$ of $\kappa$ such that $y\restr \gamma \not\subseteq t_\alpha$ for all $\alpha\in I$. 
We may assume that $\kappa\,\setminus\, I$ is also unbounded by shrinking $I$.
For each $\alpha\in I$,
pick some $y_\alpha \in N_{t_\alpha}$ with $y_\alpha\perp y\restr\gamma$.
Define $\bar{z}=\langle z_\alpha : \alpha<\kappa\rangle$ by letting $z_\alpha:= x_\alpha$ for $\alpha\notin I$ and $z_\alpha:= y_\alpha$ for $\alpha \in I$. 
Then $\bar{z}$ does not converge, since 
$y\restr\gamma \perp z_\alpha$ for all $\alpha\in I$ 
and
$y\restr\gamma\subseteq z_\alpha$ for unboundedly many $\alpha\in\kappa\,\setminus\, I$.
As $\bar{z}\in \prod_{\alpha<\kappa}N_{t_\alpha}$, this contradicts the assumption. 
Therefore $\bar t$ converges to $y$.
We now claim that 
$t_\alpha \perp y$ for all $\alpha<\kappa$.
Otherwise, %$t_\alpha\subseteq y$ for some $\alpha<\kappa$.
there exists a hyperedge $\langle z_\beta : \beta<\kappa\rangle$ in $\CC^Y$ with 
$z_\alpha=y$ for some $\alpha<\kappa$ 
and 
$t_\beta\subseteq z_\beta$ for all $\beta<\kappa$,
but no hyperedge of $\CC^Y$ can converge to one of its elements. 

\ref{lemma: CC convergence 2} $\Rightarrow$ \ref{lemma: CC convergence 1}:
If \ref{lemma: CC convergence 2} holds, 
then any $\langle x_\alpha:\alpha<\kappa\rangle\in \prod_{\alpha<\kappa}N_{t_\alpha}$
converges to $y$ and $y\neq x_\alpha$ for all $\alpha<\kappa$.
So $\prod_{\alpha<\kappa}N_{\iota(t\conc\langle\alpha\rangle)}\subseteq \CC^Y$.
\end{proof}

\begin{remark}
\label{remark: CC convergence}
The previous proof also shows the following stronger version of 
\ref{lemma: CC convergence 1} $\Rightarrow$ \ref{lemma: CC convergence 2}:
If $X$ and $Y$ are subsets of ${}^\kappa\kappa$
and $\bar{t}=\langle t_\alpha:\alpha<\kappa\rangle$ is a sequence in $T(X)$ with $\prod_{\alpha<\kappa} (N_{t_\alpha}\cap X)\subseteq \CC^Y$, then 
$\bar{t}$ converges to some $y\in Y$.
Moreover, if $y\in X$ then $y\perp t_\alpha$ for all $\alpha<\kappa$.
Note that the last statement does not necessarily hold if $y\notin X$. 
To see this, let $X:=\RR_\kappa$, $Y:=\QQ_\kappa$
and 
$t_\alpha:=\langle 0\rangle ^\alpha$ for all $\alpha<\kappa$.
\todog{Then $\prod_{\alpha<\kappa} (N_{t_\alpha}\cap X)\subseteq \CC^Y$ and
the $t_\alpha$'s converge to $y:=\langle 0\rangle^\kappa$, but $y\not\perp t_\alpha$ for all $\alpha>0$.} 
\end{remark}

\begin{remark}
\label{remark: nice reduction}
Lemma~\ref{lemma: continuous reduction} has the following version for nice reductions.
If $\Phi$ is a nice reduction for $(X,Y)$, then $\iota:=\Phi\comp\pi$ is an order homomorphism for 
$({}^\kappa\kappa,\CC^Y)$ and $\ran([\iota])\subseteq X$.
To prove this, 
suppose $t\in{}^{<\kappa}\kappa$ and
let
$u_\alpha:=\pi(t\conc\langle\alpha\rangle)=\pi(t)\conc\langle0\rangle^\alpha \conc\langle1\rangle$ for all $\alpha<\kappa$.
Since $[\Phi]$ is continuous and $\langle u_\alpha:\alpha<\kappa\rangle$ converges to 
$\pi(t)\conc \langle0\rangle^\kappa$,
the sequence $\langle \iota(t\conc\langle\alpha\rangle):\alpha<\kappa\rangle=
\langle \Phi(u_\alpha):\alpha<\kappa\rangle$
converges to $y_t:=[\Phi](\pi(t)\conc \langle0\rangle^\kappa)\in Y$. 
As $\Phi$ is $\perp$-preserving, 
$y_t\perp\iota(t\conc\langle\alpha\rangle)$ for all $\alpha<\kappa$.
Therefore $\prod_{\alpha<\kappa}N_{\iota(t\conc\langle\alpha\rangle)}\subseteq \CC^Y$
by  Lemma~\ref{lemma: CC convergence}.
Moreover, $\ran([\iota])=\ran([\Phi]\comp[\pi])=[\Phi](\RR_\kappa)$ is a subset of $X$.
\end{remark}

\begin{remark} 
\label{ran(iota) subset T(X)}
If $\iota$ is an order homomorphism for $({}^\kappa\kappa,\CC^Y)$, then $\ran(\iota)\subseteq T(Y)$ and hence
$\ran([\iota])\subseteq\closure Y$. 
To see this, let $t\in{}^{<\kappa}\kappa$. 
Then $\langle \iota(t\conc\langle\alpha\rangle):\alpha<\kappa\rangle$ converges to some $y_t\in Y$
by Lemma \ref{lemma: CC convergence}.
Since $\iota(t)\subseteq\iota(t\conc\langle\alpha\rangle)$ for all $\alpha<\kappa$,
we have $\iota(t)\subseteq y_t$ and thus $\iota(t)\in T(Y)$.
\end{remark} 

We regard $\SS_\kappa$ and
${}^{<\kappa}\kappa$ 
as $\wedge$-semilattices with respect to inclusion. 
Note that a map $e:\SS_\kappa\to{}^{<\kappa}\kappa$  is a strict $\wedge$-homomorphism% 
\footnote{See Definition~\ref{def: wedge-homomorphism}.} 
if and only if
it is strict order preserving with $e(s\conc\langle 0\rangle^\alpha\conc\langle 1\rangle) \wedge e(s\conc\langle 0\rangle^\beta\conc\langle 1\rangle) =e(s)$
for all $s\in\SS_\kappa$ and $\alpha<\beta<\kappa$.
It is easy to see that such a map is $\perp$-preserving.

\begin{lemma}
\label{lemma: nice reduction}
Suppose that
 $\iota$ is an
order homomorphism 
for $({}^\kappa\kappa,\CC^Y)$ and $\ran([\iota])\subseteq X$,
where $X,Y\subseteq{}^\kappa\kappa$.
Then there exists a nice reduction $\Phi:{}^{<\kappa}2\to{}^{<\kappa}\kappa$ for 
$(X,Y)$
and a continuous 
strict $\wedge$-homomorphism
$e:\SS_\kappa\to{}^{<\kappa}\kappa$
with 
$\iota\comp e=\Phi\restr\SS_\kappa$.
\end{lemma}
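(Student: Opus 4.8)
$\textbf{Plan.}$ The goal is to turn an order homomorphism $\iota$ for $({}^\kappa\kappa,\CC^Y)$ with $\ran([\iota])\subseteq X$ into a genuine \emph{nice reduction} $\Phi$, i.e.\ a $\perp$- and strict order preserving map defined on all of ${}^{<\kappa}2$ whose limit map sends $\RR_\kappa$ into $X$ and $\QQ_\kappa$ into $Y$. The difficulty is that $\iota$ is defined on ${}^{<\kappa}\kappa$ but only "sees" the relevant structure along the images $\pi(t\conc\langle\alpha\rangle)$, whereas $\SS_\kappa=\ran(\pi)$ is a sub-semilattice of ${}^{<\kappa}2$ with a very specific splitting pattern (each node $\pi(t)$ branches into the nodes $\pi(t)\conc\langle0\rangle^\alpha\conc\langle1\rangle$). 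The plan is to build a reparametrizing map $e:\SS_\kappa\to{}^{<\kappa}\kappa$ and set $\Phi$ so that $\iota\comp e=\Phi\restr\SS_\kappa$, then extend $\Phi$ to all of ${}^{<\kappa}2$.

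$\textbf{Constructing $e$ and $\Phi$ on $\SS_\kappa$.}$ First I would recursively define a continuous strict $\wedge$-homomorphism $e:\SS_\kappa\to{}^{<\kappa}\kappa$ together with the values of $\Phi$ on $\SS_\kappa$. The idea is to read off the recursive structure of $\SS_\kappa$: a node of $\SS_\kappa$ is of the form $\pi(t)$, and its immediate "successor nodes in $\SS_\kappa$" are $\pi(t\conc\langle\alpha\rangle)=\pi(t)\conc\langle0\rangle^\alpha\conc\langle1\rangle$ for $\alpha<\kappa$. Set $e(\emptyset):=\emptyset$. Given $e(\pi(t))$ already defined, I would let $e(\pi(t\conc\langle\alpha\rangle))$ extend $e(\pi(t))$ in a way that maps the $\SS_\kappa$-successor $\pi(t\conc\langle\alpha\rangle)$ to a node extending $\pi(t)\conc\langle\alpha\rangle$-worth of information in ${}^{<\kappa}\kappa$, so that the images become pairwise incompatible and split over $e(\pi(t))$; at limit lengths take unions, using that $\iota$ is continuous to keep the construction coherent. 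Simultaneously define $\Phi(\pi(t)):=\iota(e'(t))$ where $e':{}^{<\kappa}\kappa\to{}^{<\kappa}\kappa$ is the order-preserving map underlying $e$ (so $e=e'\comp\pi$ up to the identification). Because $\iota$ is an order homomorphism for $\CC^Y$, Lemma~\ref{lemma: CC convergence} guarantees that for each $t$ the sequence $\langle\iota(e'(t)\conc\langle\alpha\rangle):\alpha<\kappa\rangle$ converges to some $y_t\in Y$ with $y_t\perp$ each node; this is exactly what is needed to make $[\Phi]$ send the $\QQ_\kappa$-point $\pi(t)\conc\langle0\rangle^\kappa$ to $y_t\in Y$. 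Ensuring $e$ is a strict $\wedge$-homomorphism (hence $\perp$-preserving, hence $[e]$ a homeomorphism onto its image by Lemma~\ref{[e] continuous}~\ref{[e] homeomorphism}) is arranged by making the images of distinct $\SS_\kappa$-successors split exactly over the image of their meet.

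$\textbf{Extending $\Phi$ off $\SS_\kappa$ and verifying the reduction.}$ The remaining nodes of ${}^{<\kappa}2$ lie strictly between consecutive $\SS_\kappa$-nodes, i.e.\ are of the form $\pi(t)\conc\langle0\rangle^\beta$ for $\beta\leq\alpha$ along the branch toward $\pi(t\conc\langle\alpha\rangle)$, together with limit-length initial segments. I would extend $\Phi$ to these in the only coherent $\perp$- and strict order preserving way: $\Phi$ on the initial segments of $\pi(t)\conc\langle0\rangle^\kappa$ must approximate $y_t=[\Phi](\pi(t)\conc\langle0\rangle^\kappa)$, so I set $\Phi(\pi(t)\conc\langle0\rangle^\beta):=y_t\restr\gamma_\beta$ for a suitable strictly increasing choice of $\gamma_\beta$ with $\gamma_\beta\geq\lh(\Phi(\pi(t)))$, chosen so that strict order preservation and $\perp$-preservation with respect to the branching nodes $\Phi(\pi(t\conc\langle\alpha\rangle))$ are maintained; this is possible precisely because $y_t\perp\iota(e'(t)\conc\langle\alpha\rangle)$ for all $\alpha$. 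Then $[\Phi]$ is well defined on all of ${}^\kappa2$, it is a homeomorphism onto a closed subset of ${}^\kappa\kappa$ by Lemma~\ref{perfect subsets and sop maps}~\ref{sop maps -> perfect subsets} since $\Phi$ is $\perp$- and strict order preserving, and by construction $\iota\comp e=\Phi\restr\SS_\kappa$. It remains to check $[\Phi](\RR_\kappa)\subseteq X$ and $[\Phi](\QQ_\kappa)\subseteq Y$. For $x\in\RR_\kappa$ we have $x=[\pi](z)$ for some $z\in{}^\kappa\kappa$, and $[\Phi](x)=[\Phi\comp\pi](z)=[\iota\comp e'](z)=[\iota]([e'](z))\in\ran([\iota])\subseteq X$; for $y\in\QQ_\kappa$, $y$ has a final segment of zeros, so $y=\pi(t)\conc\langle0\rangle^\kappa$ for some $t$, whence $[\Phi](y)=y_t\in Y$. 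I expect the main obstacle to be the bookkeeping in the extension step: arranging the $\gamma_\beta$ and the splitting nodes so that $\Phi$ is simultaneously strict order preserving, $\perp$-preserving, and continuous across limit lengths, while keeping the equation $\iota\comp e=\Phi\restr\SS_\kappa$ exact rather than merely up to compatibility.
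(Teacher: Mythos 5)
Your proposal follows the paper's own proof in all essentials: the same key input (Lemma~\ref{lemma: CC convergence}, giving for each $t$ a limit $y_t\in Y$ with $y_t\perp\iota(e'(t)\conc\langle\alpha\rangle)$ for all $\alpha$), the same skeleton ($\Phi\restr\SS_\kappa=\iota\comp e$ for a continuous strict $\wedge$-homomorphism $e$, with the nodes $\pi(t)\conc\langle0\rangle^\beta$ sent to initial segments of $y_t$), and the same final verification that $[\Phi](\RR_\kappa)\subseteq X$ and $[\Phi](\QQ_\kappa)\subseteq Y$.

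However, the two-stage organization you describe --- first fix $e$ (hence $\Phi\restr\SS_\kappa$) on all of $\SS_\kappa$, then choose the ordinals $\gamma_\beta$ with $\Phi(\pi(t)\conc\langle0\rangle^\beta)=y_t\restr\gamma_\beta$ --- does not work as justified. Write $m_\alpha:=\lh\bigl(y_t\wedge\iota(e(\pi(t\conc\langle\alpha\rangle)))\bigr)$. Strict order preservation forces $\gamma_\alpha\leq m_\alpha$ (so that $y_t\restr\gamma_\alpha\subsetneq\Phi(\pi(t\conc\langle\alpha\rangle))$), while $\perp$-preservation forces $\gamma_\beta>m_\alpha$ for all $\beta>\alpha$ (so that $y_t\restr\gamma_\beta\perp\Phi(\pi(t\conc\langle\alpha\rangle))$, as these nodes split at $\pi(t)\conc\langle0\rangle^\alpha$ in ${}^{<\kappa}2$). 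Hence suitable $\gamma_\beta$'s exist only if the $m_\alpha$ are strictly increasing in $\alpha$, and this is \emph{not} guaranteed by $e$ being a $\wedge$-homomorphism together with $y_t\perp\iota(\cdots)$: convergence makes the meets unbounded but not monotone, so a first stage done independently of the $\gamma$'s can produce, say, $m_1<m_0$, leaving no possible value for $\gamma_1$. So your justification ``this is possible precisely because $y_t\perp\iota(e'(t)\conc\langle\alpha\rangle)$'' points to the wrong property. The repair is exactly the paper's interleaved recursion on $\alpha$: having defined $\Phi(\pi(t)\conc\langle0\rangle^\alpha)\subseteq y_t$, use \emph{convergence} to choose the next branching value $u_\alpha=e(\pi(t\conc\langle\alpha\rangle))$ with $\Phi(\pi(t)\conc\langle0\rangle^\alpha)\subsetneq\iota(u_\alpha)$, and only then set $\Phi(\pi(t)\conc\langle0\rangle^{\alpha+1})$ to be the immediate successor of $y_t\wedge\iota(u_\alpha)$ along $y_t$; perpendicularity is what makes this node split off from $\iota(u_\alpha)$. (A small additional slip: the lemma does not assume $\iota$ continuous, and continuity is never needed --- at limit nodes of $\SS_\kappa$ one only uses that $\iota$ is order preserving.)
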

\begin{proof}
For each $t\in{}^{<\kappa}\kappa$,
let $y_t$ denote the unique element of $Y$ such that 
the sequence $\langle \iota(t\conc\langle\alpha\rangle):\alpha<\kappa\rangle$
converges to $y_t$ 
and $y_t\perp\iota(t\conc\langle\alpha\rangle)$ holds for all $\alpha<\kappa$.
$y_t$ exists
by Lemma~\ref{lemma: CC convergence}.
%
%the sequence $\langle \iota(t\conc\langle\alpha\rangle):\alpha<\kappa\rangle$
%converges to some $y_t\in Y$ 
%with $y_t\perp\iota(t\conc\langle\alpha\rangle)$ for all $\alpha<\kappa$ 
%by Lemma~\ref{lemma: CC convergence}.
%
We shall define strict order preserving functions $\Phi: {}^{<\kappa}2\to {}^{<\kappa}\kappa$ and $e: \SS_\kappa\to {}^{<\kappa}\kappa$ such that the following hold: 
\begin{enumerate-(i)} 
\item \label{prop KLW 1} 
$\Phi{\restr}\SS_\kappa = \iota \circ e$. 
\item\label{prop KLW 3}
$e$ is a $\wedge$-homomorphism and $\Phi$ is $\perp$-preserving.
\item \label{prop KLW 2} 
$\Phi(s\conc\langle0\rangle^\alpha)\subseteq y_{e(s)}$ for all $s\in \SS_\kappa$ and $\alpha<\kappa$. 
\end{enumerate-(i)} 

We define $\Phi(t)$ and $e(t)$ by recursion. 
Regarding the order of construction,  
let $e(\emptyset):=\emptyset$. 
If $s\in \SS_\kappa$ and $\lh(s)\in\Lim$, let 
$e(s):=\bigcup_{t\subsetneq s}e(t)$.
If $e(s)$ is defined for some $s\in \SS_\kappa$, 
we define $\Phi(s\conc \langle0\rangle^{\alpha})$, $\Phi(s\conc\langle0\rangle^{\alpha}{}\conc\langle1\rangle)$ 
and
$e(s\conc\langle0\rangle^{\alpha}{}\conc\langle1\rangle)$ 
for all $\alpha<\kappa$
in the next step, by recursion on $\alpha$.
The idea for their construction 
is as follows 
(see Figure~\ref{figure: KLW proof}).
Since $\langle \iota(e(s)\conc\langle\eta\rangle):\eta<\kappa\rangle$
converges to $y_{e(s)}$,
we will be able to take a subsequence
$\langle \iota(u_\alpha):\alpha<\kappa\rangle$
such that
$\iota(u_\alpha)\wedge y_{e(s)}\subsetneq \iota(u_\beta)\wedge y_{e(s)}$ for all $\alpha,\beta<\kappa$ with $\alpha<\beta$.
Then let 
$e(s\conc\langle0\rangle^{\alpha}{}\conc\langle1\rangle):=u_\alpha$ and 
$\Phi(s\conc\langle0\rangle^{\alpha}{}\conc\langle1\rangle):=\iota(u_\alpha)$. 
Note that $u_\alpha\wedge u_\beta=e(s)$ when $\alpha<\beta<\kappa$, so $e$ will be a $\wedge$-homomorphism.
Since $y_{e(s)}\perp \iota(u_\alpha)$, 
we can define
$\Phi(s\conc\langle0\rangle^{\alpha+1})$ as some $v_\alpha\subseteq y_{e(s)}$ that extends $\iota(u_\alpha)\wedge e(s)$. 

\vspace{5pt} 
{
\newcommand{\x}{0.6} 
\newcommand{\y}{1} 
\newcommand{\z}{8} 

%\begin{figure}[h] 
\begin{figure}[H]
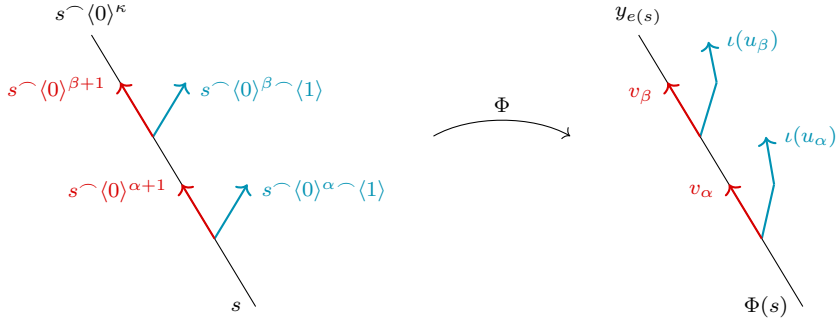
 
\centering
%\begin{tikzpicture} 
\tikz[scale=0.9,font=\scriptsize]{ 
\draw 
(\x-130pt,0); 

\draw 
(\x+2,2.5\y) edge[->,bend left, looseness=0.8] (\x+4,2.5\y); 

\draw 
    (\x+3,2.7\y) node[above] {$\Phi$}; 

\draw 
(0*\x,0) edge[-] (-4*\x,4*\y); 

\draw[thick,color={TealArrow}] 
(-1*\x,1*\y) edge[->] (-0.2*\x,1.8*\y) 
(-2.5*\x,2.5*\y) edge[->] (-1.7*\x,3.3*\y); 

\draw[thick,color={RedArrow}] 
(-1*\x,1*\y) edge[->] (-1.8*\x,1.8*\y) 
(-2.5*\x,2.5*\y) edge[->] (-3.3*\x,3.3*\y)
; 

\draw %empty nodes for correct placement 
    (0*\x,0*\y) node[above] {} 
    (4*\x,4*\y) node[above] {}; 

\draw
    (-4*\x,4*\y) node[above] {$s\conc\langle 0\rangle^\kappa$};

\draw
    (-0.1*\x,0*\y) node[left] {$s$} 
    (-0.1*\x,1.7*\y) node[right] {\color{TealArrow}$s\conc\langle 0\rangle^{\alpha}\conc\langle 1\rangle$} 
    (-1.6*\x,3.2*\y) node[right] {\color{TealArrow}$s\conc\langle 0\rangle^{\beta}\conc\langle 1\rangle$}
    (-1.9*\x,1.7*\y) node[left] {\color{RedArrow}$s\conc\langle0\rangle^{\alpha+1}$} 
    (-3.4*\x,3.2*\y) node[left] {\color{RedArrow}$s\conc\langle0\rangle^{\beta+1}$};

\draw 
(0*\x+\z,0) edge[-] (-4*\x+\z,4*\y); 

\draw[thick,color={TealArrow}] 
(-1*\x+\z,1*\y) edge[-] (-0.7*\x+\z,1.8*\y) 
(-0.7*\x+\z,1.8*\y) edge[->] (-0.9*\x+\z,2.5*\y) 

(-2.5*\x+\z,2.5*\y) edge[-] (-2.1*\x+\z,3.3*\y) 
(-2.1*\x+\z,3.3*\y) edge[->] (-2.3*\x+\z,3.9*\y); 

\draw[thick,color={RedArrow}] 
(-1*\x+\z,1*\y) edge[->] (-1.8*\x+\z,1.8*\y) 
(-2.5*\x+\z,2.5*\y) edge[->] (-3.3*\x+\z,3.3*\y)
;

\draw %empty nodes for correct placement 
    (0*\x+\z,0*\y) node[above] {} 
    (4*\x+\z,4*\y) node[above] {}; 

\draw
    (-4*\x+\z,4*\y) node[above] {$y_{e(s)}$}; 
  
\draw
    (-0.1*\x+\z,0*\y) node[left] {$\Phi(s)$} 
    (-0.7*\x+\z,2.5*\y) node[right] {\color{TealArrow} $\iota(u_\alpha)$} 
    (-2.1*\x+\z,3.9*\y) node[right] {\color{TealArrow} $\iota(u_\beta)$}
    (-1.9*\x+\z,1.7*\y) node[left] {\color{RedArrow} $v_\alpha$}
    (-3.4*\x+\z,3.1*\y) node[left] {\color{RedArrow} $v_\beta$};  
} 
    %\end{tikzpicture} 
\caption{The construction of $\Phi$.
This configuration appears for all $\alpha,\beta<\kappa$ with $\alpha<\beta$. 
} 
\label{figure: KLW proof}
\end{figure} 
}

Here is the detailed construction.
Let $\Phi(s):=\iota(e(s))$.
If $\alpha\in\Lim$, let
$\Phi(s\conc \langle0\rangle^{\alpha}):=
\bigcup_{\beta<\alpha}
\Phi(s\conc \langle0\rangle^{\beta})$. 
Now, assume that $\Phi(s\conc\langle 0\rangle^{\alpha})$ has been defined for some $\alpha<\kappa$.
Since 
$\Phi(s\conc \langle0\rangle^{\alpha})\subseteq y_{e(s)}$ and
$\langle \iota(e(s)\conc\langle\beta\rangle):\beta<\kappa\rangle$
converges to $y_{e(s)}$, there exists a node
$u_\alpha=e(s)\conc\langle\gamma\rangle$ with 
$\Phi(s\conc \langle0\rangle^{\alpha})\subsetneq \iota(u_\alpha)$.
Since $y_{e(s)}\perp \iota(u_\alpha)$, we have
$y_{e(s)}\wedge \iota(u_\alpha)\subsetneq \iota(u_\alpha)$. 
Let
$v_\alpha$ be the unique direct successor of
$y_{e(s)}\wedge \iota(u_\alpha)$ 
with $v_\alpha\subseteq y_{e(s)}$
(see Figure~\ref{figure: KLW proof new}).
Then $\iota(u_\alpha)$ and $v_\alpha$ are incompatible extensions of 
$\Phi(s\conc \langle0\rangle^{\alpha})$.
Let 
$e(s\conc\langle0\rangle^{\alpha}{}\conc\langle1\rangle):=u_\alpha$, 
$\Phi(s\conc\langle0\rangle^{\alpha}{}\conc\langle1\rangle):=\iota(u_\alpha)$
and 
$\Phi(s\conc \langle0\rangle^{\alpha+1}):=v_\alpha$.
This completes the construction of $\Phi$.

\vspace{5pt} 
{
\newcommand{\x}{0.6} 
\newcommand{\y}{1} 
\newcommand{\z}{8} 

\begin{figure}[H]
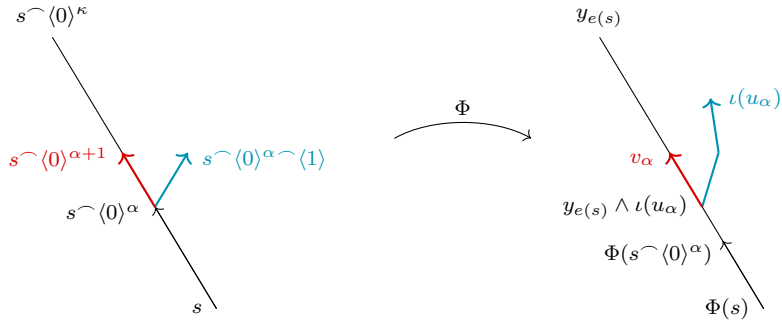
 
%\begin{figure}[h] 
\centering
%\begin{tikzpicture} 
\tikz[scale=0.9,font=\scriptsize]{ 
\draw 
(\x-140pt,0); 

\draw 
(\x+2,2.5\y) edge[->,bend left, looseness=0.8] (\x+4,2.5\y); 

\draw 
    (\x+3,2.7\y) node[above] {$\Phi$}; 

\draw 
(0*\x,0) edge[-] (-4*\x,4*\y) 
(0*\x,0) edge[->] (-1.5*\x,1.5*\y); 

\draw[thick,color={TealArrow}]  
(-1.5*\x,1.5*\y) edge[->] (-0.7*\x,2.3*\y); 
\draw[thick,color={RedArrow}] 
(-1.5*\x,1.5*\y) edge[->] (-2.3*\x,2.3*\y); 

\draw %empty nodes for correct placement 
    (0*\x,0*\y) node[above] {} 
    (4*\x,4*\y) node[above] {}; 
\draw
    (-4*\x,4*\y) node[above] {$s\conc\langle 0\rangle^\kappa$}; 

\draw
    (-0.1*\x,0*\y) node[left] {$s$} 
    (-0.6*\x,2.2*\y) node[right] {\color{TealArrow}$s\conc\langle 0\rangle^{\alpha}\conc\langle 1\rangle$} 
    (-1.6*\x,1.4*\y) node[left] {$s\conc\langle 0\rangle^{\alpha}$} 
    (-2.4*\x,2.2*\y) node[left] {\color{RedArrow}$s\conc\langle 0\rangle^{\alpha+1}$};

    \draw 
(0*\x+\z,0) edge[-] (-4*\x+\z,4*\y) 
(0*\x+\z,0) edge[->] (-1.0*\x+\z,1.0*\y); 

\draw[thick,color={TealArrow}]  
(-1.5*\x+\z,1.5*\y) edge[-] (-1.1*\x+\z,2.3*\y) 
(-1.1*\x+\z,2.3*\y) edge[->] (-1.3*\x+\z,3.1*\y); 

\draw[thick,color={RedArrow}] 
(-1.5*\x+\z,1.5*\y) edge[->] (-2.3*\x+\z,2.3*\y); 

\draw %empty nodes for correct placement 
    (0*\x+\z,0*\y) node[above] {} 
    (4*\x+\z,4*\y) node[above] {}; 
\draw
    (-4*\x+\z,4*\y) node[above] {$y_{e(s)}$}; 
  
\draw
    (-0.1*\x+\z,0*\y) node[left] {$\Phi(s)$} 
    (-1.1*\x+\z,3.1*\y) node[right] {\color{TealArrow}$\iota(u_\alpha)$} 
    (-1.0*\x+\z,0.8*\y) node[left] {$\Phi(s\conc\langle 0\rangle^{\alpha})$} 
    (-1.6*\x+\z,1.5*\y) node[left] { $y_{e(s)} \wedge \iota(u_\alpha)$} 
    (-2.4*\x+\z,2.2*\y) node[left] {\color{RedArrow}$v_\alpha$}; 
    
   } 
    %\end{tikzpicture} 
\caption{Defining $\Phi$ for the successor step} 
\label{figure: KLW proof new}
\end{figure} 
}

We claim that $\Phi$ is a nice reduction for $(X,Y)$. 
%We have $[\Phi](\QQ_\kappa)\subseteq Y$ by \ref{prop KLW 2}, since $y_t\in Y$ for all $t\in {}^{<\kappa}\kappa$. 
%
To see that $[\Phi](\RR_\kappa)\subseteq X$, take any $x\in \RR_\kappa$. 
Then $x=\bigcup_{\alpha<\kappa} s_\alpha$ with $s_\alpha \in \SS_\kappa$ strictly increasing, so $[\Phi](x)=\bigcup_{\alpha<\kappa} \Phi(s_\alpha) = \bigcup_{\alpha<\kappa} (\iota\circ e) (s_\alpha) = [\iota] ( [e](x))\in X$ by~\ref{prop KLW 1}. 
To see that $[\Phi](\QQ^\ddim_\kappa)\subseteq Y$, let $x\in\QQ_\kappa$.
Then $x=s\conc\langle 0\rangle^\kappa$ for some $s\in\SS_\kappa$,
so $[\Phi](x)=y_{e(s)}\in Y$ 
by~\ref{prop KLW 2}.
\end{proof}

\begin{theorem}
\label{theorem: KLW from ODD}
Suppose that $X$ and $Y$ are subsets of ${}^\kappa\kappa$.
\begin{enumerate-(1)}
\item\label{general KLW from ODD coloring} 
The following statements are equivalent: 
\begin{enumerate-(a)}
\item\label{general KLW from ODD coloring 1} 
$\CC^Y\restr X$ admits a $\kappa$-coloring.
\item\label{general KLW from ODD coloring 2} 
$X$ is covered by sets $X_\alpha$ for $\alpha<\kappa$ with $X_\alpha'\cap Y=\emptyset$.
%$X=\bigcup_{\alpha<\kappa} X_\alpha$ for some sequence $\langle X_\alpha : \alpha<\kappa\rangle$ of sets with $X'_\alpha\cap Y=\emptyset$ for all $\alpha<\kappa$.
\item\label{general KLW from ODD coloring 3} 
$|X\cap Y|\leq\kappa$ and $X\setminus Y$ can be separated from $Y$ by a $\Fsigma(\kappa)$ set.
\end{enumerate-(a)}
\item\label{KLW from ODD hom}
The following statements are equivalent:
\begin{enumerate-(a)}
\item\label{KLW hom a} There exists a nice reduction for $(X,Y)$.
\item\label{KLW hom b} There exists a homeomorphism from ${}^\kappa 2$ onto a closed subset of ${}^\kappa\kappa$ which reduces
$(\RR_\kappa,\QQ_\kappa)$ to $(X,Y)$.
\item\label{KLW hom c} There exists a continuous reduction of $(\RR_\kappa,\QQ_\kappa)$ to $(X,Y)$
such that $f(\RR_\kappa) \cap f(\QQ_\kappa)=\emptyset$.
\item\label{KLW hom d} There exists a continuous homomorphism from $\dhH\kappa$ to $\CC^Y\restr X$.
\end{enumerate-(a)}
\end{enumerate-(1)}
Therefore $\KLW_\kappa(X,Y)$, $\sKLW_\kappa(X,Y)$ and $\ODD\kappa{\CC^Y\restr X}$ are all equivalent.
\end{theorem}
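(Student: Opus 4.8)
The plan is to prove Theorem~\ref{theorem: KLW from ODD} in two parts, \ref{general KLW from ODD coloring} and \ref{KLW from ODD hom}, and then assemble the final equivalence from these together with the definitions of $\KLW_\kappa(X,Y)$ and $\sKLW_\kappa(X,Y)$. Part \ref{general KLW from ODD coloring} is essentially already done: the three conditions there are exactly conditions \ref{CC coloring 1}, \ref{CC coloring 2}, \ref{CC coloring 3} of Lemma~\ref{lemma: CC coloring}, so \ref{general KLW from ODD coloring} follows immediately by citing that lemma. This handles the ``first option'' of the dichotomies.

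For part \ref{KLW from ODD hom}, I would prove the chain of equivalences by exhibiting the implications in a cycle. The implication \ref{KLW hom a} $\Rightarrow$ \ref{KLW hom b} is the remark following Definition~\ref{def: nice reduction}: a nice reduction $\Phi$ is $\perp$- and strict order preserving, so $[\Phi]$ is a homeomorphism from ${}^\kappa 2$ onto a closed set by Lemma~\ref{perfect subsets and sop maps}~\ref{sop maps -> perfect subsets} (using $\ddim=2$), and by definition $[\Phi]$ reduces $(\RR_\kappa,\QQ_\kappa)$ to $(X,Y)$. The implication \ref{KLW hom b} $\Rightarrow$ \ref{KLW hom c} is trivial, since a homeomorphism onto a closed set is in particular continuous and injective, and injectivity forces $f(\RR_\kappa)\cap f(\QQ_\kappa)=\emptyset$ because $\RR_\kappa$ and $\QQ_\kappa$ are disjoint. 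The implication \ref{KLW hom c} $\Rightarrow$ \ref{KLW hom d} is exactly Lemma~\ref{lemma: continuous reduction}: from such an $f$ one obtains the continuous homomorphism $f\comp[\pi]$ from $\dhH\kappa$ to $\CC^Y\restr X$. Finally \ref{KLW hom d} $\Rightarrow$ \ref{KLW hom a} combines Lemma~\ref{lemma: homomorphisms and order preserving maps}~\ref{hop 1} (to pass from a continuous homomorphism to an order homomorphism $\iota$ for $({}^\kappa\kappa,\CC^Y)$ with $\ran([\iota])\subseteq X$, using that $\CC^Y\restr X$ is box-open on $X$) and then Lemma~\ref{lemma: nice reduction} (to convert that order homomorphism into a nice reduction $\Phi$ for $(X,Y)$). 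I should double-check at this step that $\ran([\iota])\subseteq X$ is genuinely available: the homomorphism from \ref{KLW hom d} has range in $X$, and Lemma~\ref{homomorphisms and order preserving maps}~\ref{hop 1} produces $\iota$ with $[\iota]=f\comp[e]$ so $\ran([\iota])\subseteq\ran(f)\subseteq X$, which is what Lemma~\ref{lemma: nice reduction} requires.

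It then remains to assemble the last sentence. By the definitions, $\sKLW_\kappa(X,Y)$ is the disjunction of condition \ref{general KLW from ODD coloring 2} (the first option) and the existence of a nice reduction \ref{KLW hom a} (the second option); $\KLW_\kappa(X,Y)$ is the disjunction of \ref{general KLW from ODD coloring 2} and condition \ref{KLW hom b}; and $\ODD\kappa{\CC^Y\restr X}$ is the disjunction of \ref{general KLW from ODD coloring 1} (a $\kappa$-coloring of $\CC^Y\restr X$) and \ref{KLW hom d} (a continuous homomorphism). Since \ref{general KLW from ODD coloring 1}, \ref{general KLW from ODD coloring 2}, \ref{general KLW from ODD coloring 3} are equivalent by part \ref{general KLW from ODD coloring}, and \ref{KLW hom a}, \ref{KLW hom b}, \ref{KLW hom d} are equivalent by part \ref{KLW from ODD hom}, all three statements are disjunctions of pairwise-equivalent disjuncts and are therefore equivalent to one another.

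I do not expect a serious obstacle here, since the substantive work is carried by Lemmas~\ref{lemma: CC coloring}, \ref{lemma: continuous reduction}, and~\ref{lemma: nice reduction}, all proved earlier. The one point needing genuine care is the implication \ref{KLW hom d} $\Rightarrow$ \ref{KLW hom a}: I must confirm that $\CC^Y\restr X$ is box-open \emph{on $X$} (it is, since $\CC^Y$ is box-open on ${}^\kappa\kappa$, so its restriction is box-open on $X$), so that Lemma~\ref{homomorphisms and order preserving maps}~\ref{hop 1} genuinely applies and yields an order homomorphism for $(X,\CC^Y)$ — which, since $\ran([\iota])\subseteq X$, is also an order homomorphism for $({}^\kappa\kappa,\CC^Y)$ in the sense demanded by Lemma~\ref{lemma: nice reduction}. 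The only mild subtlety is keeping straight the distinction between order homomorphisms for $(X,\CC^Y)$ and for $({}^\kappa\kappa,\CC^Y)$, but this is exactly the situation addressed in the discussion after Definition~\ref{def: order homomorphism}.
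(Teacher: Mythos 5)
Your overall architecture is the same as the paper's: part \ref{general KLW from ODD coloring} is Lemma~\ref{lemma: CC coloring}, and part \ref{KLW from ODD hom} is proved as the cycle \ref{KLW hom a}$\Rightarrow$\ref{KLW hom b}$\Rightarrow$\ref{KLW hom c}$\Rightarrow$\ref{KLW hom d}$\Rightarrow$\ref{KLW hom a} via the remark after Definition~\ref{def: nice reduction}, Lemma~\ref{lemma: continuous reduction}, Lemma~\ref{homomorphisms and order preserving maps} and Lemma~\ref{lemma: nice reduction}. However, your last step \ref{KLW hom d}$\Rightarrow$\ref{KLW hom a} has a genuine gap. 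You apply Lemma~\ref{homomorphisms and order preserving maps}~\ref{hop 1} relative to the set $X$, obtaining an order homomorphism $\iota$ for $(X,\CC^Y)$, and then assert that, since $\ran([\iota])\subseteq X$, the map $\iota$ is also an order homomorphism for $({}^\kappa\kappa,\CC^Y)$. That inference is false: being an order homomorphism for $(X,\CC^Y)$ only gives $\prod_{\alpha<\kappa}\big(N_{\iota(t\conc\langle\alpha\rangle)}\cap X\big)\subseteq\CC^Y$, and no containment of ranges upgrades this to the full-box condition $\prod_{\alpha<\kappa}N_{\iota(t\conc\langle\alpha\rangle)}\subseteq\CC^Y$ required in the definition of an order homomorphism for $({}^\kappa\kappa,\CC^Y)$. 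The paper's Remark~\ref{remark: CC convergence} exhibits exactly this failure for $\CC^Y$: with $X=\RR_\kappa$, $Y=\QQ_\kappa$ and $t_\alpha=\langle 0\rangle^\alpha$ one has $\prod_{\alpha<\kappa}(N_{t_\alpha}\cap X)\subseteq\CC^Y$, yet $\prod_{\alpha<\kappa}N_{t_\alpha}\not\subseteq\CC^Y$, because the limit $\langle 0\rangle^\kappa\in Y$ lies outside $X$ and is compatible with every $t_\alpha$, so Lemma~\ref{lemma: CC convergence} fails for the full boxes. This is not a removable technicality: Lemma~\ref{lemma: nice reduction} genuinely needs the $({}^\kappa\kappa,\CC^Y)$ hypothesis, since its proof uses Lemma~\ref{lemma: CC convergence} to produce, for each $t$, a limit $y_t\in Y$ with $y_t\perp\iota(t\conc\langle\alpha\rangle)$ for all $\alpha$, and this incompatibility is what makes the constructed $\Phi$ $\perp$-preserving. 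With only the $(X,\CC^Y)$ hypothesis you get convergence to some $y_t\in Y$ but possibly $y_t\supseteq\iota(t\conc\langle\alpha\rangle)$, and the construction of a nice reduction breaks down.

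The repair is immediate and is what the paper does: since $f$ is a continuous homomorphism from $\dhH\kappa$ to $\CC^Y\restr X$, it is a fortiori a continuous homomorphism to $\CC^Y=\CC^Y\restr{}^\kappa\kappa$, and $\CC^Y$ is box-open on the whole space ${}^\kappa\kappa$. So apply Lemma~\ref{homomorphisms and order preserving maps}~\ref{hop 1} with ambient set ${}^\kappa\kappa$ rather than $X$. This directly yields an order homomorphism $\iota$ for $({}^\kappa\kappa,\CC^Y)$ together with the factorization $[\iota]=f\comp[e]$, and the hypothesis $\ran([\iota])\subseteq X$ of Lemma~\ref{lemma: nice reduction} then comes from $\ran([\iota])\subseteq\ran(f)\subseteq X$ --- not from clause \ref{oh1} of Definition~\ref{def: order homomorphism}. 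With this one change the rest of your argument, including the final assembly of $\KLW_\kappa(X,Y)$, $\sKLW_\kappa(X,Y)$ and $\ODD\kappa{\CC^Y\restr X}$ as disjunctions of pairwise equivalent options, is correct and coincides with the paper's proof.
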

\begin{proof}
\ref{general KLW from ODD coloring} was proved in Lemma \ref{lemma: CC coloring}. 
In~\ref{KLW from ODD hom}, 
it is clear that 
\ref{KLW hom a}
$\Rightarrow$
\ref{KLW hom b}
$\Rightarrow$
\ref{KLW hom c}.
Moreover,
\ref{KLW hom c}
$\Rightarrow$
\ref{KLW hom d}
follows from Lemma~\ref{lemma: continuous reduction}.
To show
\ref{KLW hom d}
$\Rightarrow$
\ref{KLW hom a},
suppose that $f$ is a continuous homomorphism from $\dhH\kappa$ to $\CC^Y\restr X$ and hence to $\CC^Y$ as well. 
Then there exists an order homomorphism $\iota$ for $({}^\kappa\kappa,\CC^Y)$ 
with $\ran([\iota])\subseteq\ran(f)\subseteq X$ by
Lemma~\ref{homomorphisms and order preserving maps}.
Then \ref{KLW hom a} follows from Lemma~\ref{lemma: nice reduction}.
\end{proof}

\begin{remark}
\label{remark: CC and injectivity}
In the previous theorem,
$\CC^Y$ can be equivalently replaced by the $\kappa$-hypergraph
$\CC^Y_i:=\CC^Y\cap \dhIkappa$ consisting of all 
\index{hypergraph!of sequences!CCi@injective convergent with limit in $Y$ but not in the sequence \idf$\CC^Y_i$} 
\emph{injective} elements of $\CC^Y$. 
This follows from 
Corollary~\ref{cor: ODD subsequences} 
using the facts that
$\CC^Y_i$ is box-open on ${}^\kappa\kappa$,
$\CC^Y_i\subseteq \CC^Y$ and 
any hyperedge of $\CC^Y$ has a subsequence in $\CC^Y_i$ and hence 
$\CC^Y_i\equivf\CC^Y$.%
\footnote{See Definition~\ref{def: H-full}} 
\todog{If $X\subseteq Y$, then $\ODD\kappa{\CC_i^Y\restr X}$ is equivalent to $\ODDC\kappa{\CC^Y_i\restr X,Y}$ and hence to $\ODDC\kappa{\CC^Y\restr X,Y}$. This follows from Theorem~\ref{theorem: strong variants for dhI dhth} since $\CC^Y_i\subseteq\dhth Y$. But it also follows from Theorem~\ref{theorem: KLW from ODD} \ref{KLW from ODD hom}~\ref{KLW hom d}$\Rightarrow$\ref{KLW hom b} and Lemma \ref{lemma: continuous reduction}.} 
\end{remark}

The next result follows from Theorem~\ref{main theorem} and the previous theorem.
Given classes $\mathcal C$ and $\mathcal D$, let
$\KLW_\kappa(\mathcal C,\mathcal D)$ denote
\index{Kechris--Louveau--Woodin dichotomy!A@for arbitrary sets!B@for classes\idf$\KLW_\kappa(\mathcal C,\mathcal D)$} 
the statement that $\KLW_\kappa(X,Y)$ holds for all subsets $X\in\mathcal C$ and $Y\in\mathcal D$ of ${}^\kappa\kappa$. 
Moreover, let $\KLW_\kappa(\mathcal C)$ 
\index{Kechris--Louveau--Woodin dichotomy!A@for arbitrary sets!C@for a class\idf$\KLW_\kappa(\mathcal C)$} 
denote $\KLW_\kappa(\mathcal C,\powerset({}^\kappa\kappa))$.
\index{Kechris--Louveau--Woodin dichotomy!A@for arbitrary sets!C@for a set\idf$\KLW_\kappa(X)$} 
\index{Kechris--Louveau--Woodin dichotomy!A@for arbitrary sets!C@for a set and class\idf$\KLW_\kappa(X,\mathcal C)$} 
If $\mathcal C$ has one single element $X$, then 
we write $X$ instead of $\mathcal C$, and similarly for $\mathcal D$.

\begin{corollary}
The following hold in all $\Col(\kappa,\lle\lambda)$-generic extensions of $V$: 
\begin{enumerate-(1)}
\item
$\KLW_\kappa(\defsetsk,\defsetsk)$
if $\lambda>\kappa$ is inaccessible in $V$.
\item
$\KLW_\kappa(\defsetsk)$
if $\lambda>\kappa$ is Mahlo in~$V$.
\end{enumerate-(1)}
\end{corollary}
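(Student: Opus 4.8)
The plan is to deduce the corollary by combining the equivalence $\KLW_\kappa(X,Y)\Longleftrightarrow\ODD\kappa{\CC^Y\restr X}$ established in Theorem~\ref{theorem: KLW from ODD} with the two cases of the main theorem (Theorem~\ref{main theorem}). I would fix a $\Col(\kappa,\lle\lambda)$-generic filter $G$ over $V$ and work in $V[G]$ throughout. For each pair of subsets $X,Y$ of ${}^\kappa\kappa$, the $\kappa$-dihypergraph $\CC^Y$ is box-open on ${}^\kappa\kappa$, and it lies in $\defsetsk$ whenever $Y\in\defsetsk$; both facts are recorded immediately after its definition. Since $\ODD\kappa{\CC^Y\restr X}$ is exactly the instance of the open dihypergraph dichotomy obtained by restricting the box-open $\kappa$-dihypergraph $\CC^Y$ to $X$, both parts of the corollary reduce to verifying the appropriate instances of $\ODD\kappa\kappa$.

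For part (1), I would assume $\lambda$ is inaccessible in $V$, so that $\ODD\kappa\kappa(\defsetsk,\defsetsk)$ holds in $V[G]$ by Theorem~\ref{main theorem}~\ref{mainthm ddim=kappa}. Given $X,Y\in\defsetsk$, the dihypergraph $\CC^Y$ is a box-open $\kappa$-dihypergraph on ${}^\kappa\kappa$ belonging to $\defsetsk$, so by the definition of $\ODD\kappa\kappa(\defsetsk,\defsetsk)$ (Definition~\ref{def: ODD for classes}) we obtain $\ODD\kappa{\CC^Y\restr X}$, and hence $\KLW_\kappa(X,Y)$ by the equivalence in Theorem~\ref{theorem: KLW from ODD}. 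Letting $X$ and $Y$ range over $\defsetsk$ yields $\KLW_\kappa(\defsetsk,\defsetsk)$.

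For part (2), I would assume $\lambda$ is Mahlo in $V$, so that $\ODD\kappa\kappa(\defsetsk)$ holds in $V[G]$ by Theorem~\ref{main theorem}~\ref{mainthm Mahlo}. Here I take $X\in\defsetsk$ and $Y\subseteq{}^\kappa\kappa$ arbitrary. The crucial point is that $Y$ need not be definable, so $\CC^Y$ need not lie in $\defsetsk$; however, $\ODD\kappa\kappa(\defsetsk)$ imposes no definability requirement on the dihypergraph and applies to every box-open $\kappa$-dihypergraph on ${}^\kappa\kappa$. Since $\CC^Y$ is box-open, I again obtain $\ODD\kappa{\CC^Y\restr X}$ and therefore $\KLW_\kappa(X,Y)$ via Theorem~\ref{theorem: KLW from ODD}. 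Letting $Y$ range over all subsets of ${}^\kappa\kappa$ gives $\KLW_\kappa(\defsetsk)=\KLW_\kappa(\defsetsk,\powerset({}^\kappa\kappa))$.

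No substantial obstacle remains once the two input results are in place: the argument is a dictionary translation through the auxiliary dihypergraph $\CC^Y$, and the only subtlety is matching the definability hypotheses. Concretely, the stronger Mahlo assumption is needed in part (2) precisely because $\CC^Y$ for a non-definable $Y$ is itself non-definable, whereas part (1) stays inside the definable class and so requires only an inaccessible cardinal. All the genuine work has already been carried out in Theorem~\ref{main theorem} and in the equivalence of Theorem~\ref{theorem: KLW from ODD}.
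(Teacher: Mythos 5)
Your proposal is correct and is exactly the paper's argument: the corollary is stated there as an immediate consequence of Theorem~\ref{main theorem} and Theorem~\ref{theorem: KLW from ODD}, using the equivalence $\KLW_\kappa(X,Y)\Longleftrightarrow\ODD\kappa{\CC^Y\restr X}$ together with the facts that $\CC^Y$ is box-open and lies in $\defsetsk$ whenever $Y$ does. Your observation about why the Mahlo hypothesis is needed in part (2) — namely that $\CC^Y$ is non-definable for non-definable $Y$, so one needs the version of the dichotomy without definability restrictions on the dihypergraph — is precisely the right accounting of the hypotheses.
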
 

In particular, this answers a question of Bergfalk, Chodounsk\'y, Guzm\'an and Hru\v s\'ak asking whether any proper $\Fsigma(\kappa)$ set is $\Fsigma(\kappa)$-complete. 

\begin{remark} 
\label{remark: Sigma02 complete}
It is consistent that that any proper $\Fsigma(\kappa)$ set is $\Fsigma(\kappa)$-complete.%
\footnote{A subset $X$ of ${}^\kappa\kappa$ is called \emph{$\Fsigma(\kappa)$-complete} if every other $\Fsigma(\kappa)$ subset of ${}^\kappa\kappa$ is the preimage of $X$ under a continuous function on ${}^\kappa\kappa$.} 
In fact, this statement is equivalent to 
\todol{This remark was strengthened slightly.}
the statement that
$\KLW_\kappa({}^\kappa\kappa\setminus X,X)$ holds for all $\Fsigma(\kappa)$ subsets $X$ of ${}^\kappa\kappa$.
To see this, 
note that a subset $X$ of ${}^\kappa\kappa$ is $\Gdelta(\kappa)$ if and only if
the first option in $\KLW_\kappa({}^\kappa\kappa\setminus X,X)$ holds,
by Theorem~\ref{theorem: KLW from ODD}~\ref{general KLW from ODD coloring}.
Hence
it suffices 
by Theorem~\ref{theorem: KLW from ODD}~\ref{KLW from ODD hom}~\ref{KLW hom b}$\Leftrightarrow$\ref{KLW hom c}
to show 
that a subset $X$ of ${}^\kappa\kappa$
is $\Fsigma(\kappa)$-complete if and only if
there exists a continuous function $f:{}^\kappa 2\to{}^\kappa\kappa$ with $f^{-1}(\QQ_\kappa)=X$.
But this follows from the fact that
$\QQ_\kappa$ is $\Fsigma(\kappa)$-complete \cite{beatrice}. 
\end{remark}

The next corollaries show that the $\kappa$-perfect set property is a special case of $\KLW_\kappa(X,Y)$. 

\begin{corollary} 
\label{cor: PSP from ODD for the kappa-Baire space}
Suppose that $X$ and $Y$ are subsets of ${}^\kappa\kappa$. 
\begin{enumerate-(1)} 
\item 
\label{cor: PSP from ODD for the kappa-Baire space a}
$\KLW_\kappa(X,Y)\Longleftrightarrow\PSP_\kappa(X\cap Y)$
if $X\setminus Y$ can be separated from $Y$ by a $\Fsigma(\kappa)$ set.%  
\item 
\label{cor: PSP from ODD for the kappa-Baire space b}
$\KLW_\kappa(X \setminus Y,Y)\land\PSP_\kappa(X\cap Y)\Longrightarrow\KLW_\kappa(X,Y)$. 
\end{enumerate-(1)} 
\end{corollary}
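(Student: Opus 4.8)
The plan is to prove both parts of Corollary~\ref{cor: PSP from ODD for the kappa-Baire space} by reducing everything to the equivalence $\KLW_\kappa(X,Y)\Longleftrightarrow\ODD\kappa{\CC^Y\restr X}$ from Theorem~\ref{theorem: KLW from ODD}, together with the characterization of when $\CC^Y\restr X$ admits a $\kappa$-coloring in part~\ref{general KLW from ODD coloring} of that theorem. The key observation is that when $X\setminus Y$ is $\Fsigma(\kappa)$-separated from $Y$, the dihypergraph $\CC^Y\restr X$ essentially only ``sees'' the intersection $X\cap Y$, so it should behave like the complete graph on $X\cap Y$, whose open dihypergraph dichotomy is exactly $\PSP_\kappa(X\cap Y)$ by Corollary~\ref{cor: PSP from ODD}.

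For part~\ref{cor: PSP from ODD for the kappa-Baire space a}, I would argue as follows. First I would show the forward direction. Assume $\KLW_\kappa(X,Y)$, equivalently $\ODD\kappa{\CC^Y\restr X}$, and suppose $|X\cap Y|>\kappa$ (otherwise $\PSP_\kappa(X\cap Y)$ holds trivially). Under the separation hypothesis, the coloring option of $\KLW_\kappa(X,Y)$ would give, via Theorem~\ref{theorem: KLW from ODD}~\ref{general KLW from ODD coloring}, precisely the statement that $|X\cap Y|\leq\kappa$ together with the $\Fsigma(\kappa)$-separation of $X\setminus Y$ from $Y$; since we already have the separation but $|X\cap Y|>\kappa$, the coloring option must fail, so the homomorphism option holds. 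A nice reduction $\Phi$ for $(X,Y)$ produces a homeomorphism $[\Phi]$ from ${}^\kappa 2$ onto a closed set with $[\Phi](\RR_\kappa)\subseteq X$ and $[\Phi](\QQ_\kappa)\subseteq Y$; I would then extract from this a $\kappa$-perfect subset of $X\cap Y$. The natural candidate is to restrict attention to the closure points, i.e.\ to find a $\kappa$-perfect tree whose branches land in both $X$ and $Y$: since $\RR_\kappa$ is dense in ${}^\kappa 2$ and $\QQ_\kappa$ is its complement, and $[\Phi]$ is a homeomorphism onto a closed set, the set $[\Phi](\RR_\kappa)\cap\closure{[\Phi](\QQ_\kappa)}$ should contain a $\kappa$-perfect set inside $X\cap Y$. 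For the converse, assume $\PSP_\kappa(X\cap Y)$ and the $\Fsigma(\kappa)$-separation. If $|X\cap Y|\leq\kappa$, then Theorem~\ref{theorem: KLW from ODD}~\ref{general KLW from ODD coloring}\ref{general KLW from ODD coloring 3}$\Rightarrow$\ref{general KLW from ODD coloring 1} gives that $\CC^Y\restr X$ has a $\kappa$-coloring, so $\KLW_\kappa(X,Y)$ holds via its first option. Otherwise $X\cap Y$ contains a $\kappa$-perfect subset $P$; I would use $P$ to build a nice reduction for $(X,Y)$ directly, exploiting that every branch of the tree of $P$ lies in $X\cap Y$ and is a limit of other such branches, which is exactly what is needed to define $\Phi$ so that $[\Phi](\RR_\kappa)\subseteq X$ and $[\Phi](\QQ_\kappa)\subseteq Y$.

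For part~\ref{cor: PSP from ODD for the kappa-Baire space b}, I would split $\CC^Y\restr X$ according to the decomposition $X=(X\setminus Y)\cup(X\cap Y)$. The hypothesis $\KLW_\kappa(X\setminus Y,Y)$, i.e.\ $\ODD\kappa{\CC^Y\restr(X\setminus Y)}$, handles the part of $X$ disjoint from $Y$: either $\CC^Y\restr(X\setminus Y)$ has a $\kappa$-coloring (in which case, by Theorem~\ref{theorem: KLW from ODD}~\ref{general KLW from ODD coloring}, $X\setminus Y$ is $\Fsigma(\kappa)$-separable from $Y$ modulo a size-$\kappa$ intersection, but $X\setminus Y$ is disjoint from $Y$ so this is a clean separation), or there is a nice reduction for $(X\setminus Y,Y)$, which is already a nice reduction for $(X,Y)$ and gives the second option of $\KLW_\kappa(X,Y)$. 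In the former case, I would combine the $\Fsigma(\kappa)$-separation with $\PSP_\kappa(X\cap Y)$: if $|X\cap Y|\leq\kappa$ I get the coloring option of $\KLW_\kappa(X,Y)$ through part~\ref{cor: PSP from ODD for the kappa-Baire space a}; if $|X\cap Y|>\kappa$, then $\PSP_\kappa(X\cap Y)$ supplies a $\kappa$-perfect subset of $X\cap Y$, from which part~\ref{cor: PSP from ODD for the kappa-Baire space a} (its converse direction) yields the homomorphism option, and hence $\KLW_\kappa(X,Y)$.

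The main obstacle I anticipate is the passage between a nice reduction for $(X,Y)$ and a $\kappa$-perfect subset of $X\cap Y$ in part~\ref{cor: PSP from ODD for the kappa-Baire space a}. The delicate point is that a nice reduction sends $\RR_\kappa$ into $X$ and $\QQ_\kappa$ into $Y$, but a priori these images need not meet, whereas $\PSP_\kappa(X\cap Y)$ requires a perfect set sitting in the \emph{intersection}. Bridging this requires carefully using the topological interplay between $\RR_\kappa$ and $\QQ_\kappa$ (density of $\QQ_\kappa$ and the fact that $[\Phi]$ is a homeomorphism onto a closed set) to show that the $\QQ_\kappa$-image limit points, which lie in $Y$, can be arranged to also lie in $X$, or alternatively to build the reduction from the $\kappa$-perfect set so that the relevant branches genuinely belong to $X\cap Y$. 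I expect this to hinge on the same branch-versus-limit structure exploited in Lemma~\ref{lemma: nice reduction} and Remark~\ref{remark: nice reduction}, so reusing that machinery rather than reconstructing it will be the efficient route.
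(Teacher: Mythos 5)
Your converse direction of part~(1) and your part~(2) are essentially correct, and (2) reduces to (1) exactly as in the paper (once the first option of $\KLW_\kappa(X\setminus Y,Y)$ is unpacked, via Lemma~\ref{lemma: CC coloring}, into a genuine $\Fsigma(\kappa)$-separation of $X\setminus Y$ from $Y$). The genuine gap is in the forward direction of (1), precisely at the step you flagged as the "main obstacle", and the fix you gesture at does not work. Concretely: since $\QQ_\kappa$ is dense in ${}^\kappa 2$ and $[\Phi]$ is a homeomorphism onto a closed set, $\closure{[\Phi](\QQ_\kappa)}=\ran([\Phi])$, so your candidate set $[\Phi](\RR_\kappa)\cap\closure{[\Phi](\QQ_\kappa)}$ is just $[\Phi](\RR_\kappa)$; this set lies in $X$ and is \emph{disjoint} from $[\Phi](\QQ_\kappa)$, and nothing places a single one of its points in $Y$. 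Worse, your extraction procedure never invokes the $\Fsigma(\kappa)$-separation hypothesis, so it cannot be correct as stated: for $X=\RR_\kappa$ and $Y=\QQ_\kappa$ the identity map on ${}^{<\kappa}2$ is a nice reduction for $(X,Y)$, yet $X\cap Y=\emptyset$, so no procedure that uses only the reduction can output a $\kappa$-perfect subset of $X\cap Y$. Your proposed bridge is also aimed the wrong way: limit points of $[\Phi](\QQ_\kappa)$ lie only in $\closure{Y}$, not in $Y$, and the actual task is to force points of $[\Phi](\RR_\kappa)$ (which are in $X$) into $Y$; Lemma~\ref{lemma: nice reduction} and Remark~\ref{remark: nice reduction} only translate between order homomorphisms for $({}^\kappa\kappa,\CC^Y)$ and nice reductions, and do not address this.

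There are two ways to close the gap. The paper's route bypasses nice reductions entirely: using Lemma~\ref{lemma: CC coloring} and the separation hypothesis, one checks that for every $A\subseteq X$, $\CC^Y\restr A$ admits a $\kappa$-coloring if and only if $|A\cap Y|\leq\kappa$, i.e.\ $\CC^Y\restr X\equivf\gK{X\cap Y}$; since both dihypergraphs are relatively box-open, Corollary~\ref{cor: ODD subsequences} yields $\ODD\kappa{\CC^Y\restr X}\Longleftrightarrow\ODD\kappa{\gK{X\cap Y}}$, and the two sides are $\KLW_\kappa(X,Y)$ and $\PSP_\kappa(X\cap Y)$ by Theorem~\ref{theorem: KLW from ODD} and Corollary~\ref{cor: PSP from ODD}. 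Here the delicate transfer of the homomorphism option is done once and for all by Lemma~\ref{lemma: ODD subsequences}. Alternatively, your direct route can be repaired by a Baire category argument, and this is exactly where the separation hypothesis must enter: fix an $\Fsigma(\kappa)$ set $F=\bigcup_{\alpha<\kappa}F_\alpha$ with $X\setminus Y\subseteq F$ and $F\cap Y=\emptyset$. Each $[\Phi]^{-1}(F_\alpha)$ is a closed subset of ${}^\kappa 2$ disjoint from the dense set $\QQ_\kappa$ (as $[\Phi](\QQ_\kappa)\subseteq Y$), hence nowhere dense, and $\QQ_\kappa$ itself is $\kappa$-meager since it has size $\kappa$. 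Thus $B:=\RR_\kappa\setminus[\Phi]^{-1}(F)$ is $\kappa$-comeager in ${}^\kappa 2$, and $[\Phi](B)\subseteq X\setminus F\subseteq X\cap Y$. A standard fusion argument produces a continuous $\perp$- and strict order preserving map $\theta:{}^{<\kappa}2\to{}^{<\kappa}2$ with $\ran([\theta])\subseteq B$; then $\Phi\comp\theta$ is $\perp$- and strict order preserving with $\ran([\Phi\comp\theta])\subseteq X\cap Y$, so Lemma~\ref{perfect subsets and sop maps} gives the required $\kappa$-perfect subset of $X\cap Y$. Either supplement makes the forward direction correct, but as written your argument for it does not go through.
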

\begin{proof}
For \ref{cor: PSP from ODD for the kappa-Baire space a}, it suffices 
to show that $\ODD\kappa{\CC^Y\restr X}\Longleftrightarrow\ODD\kappa{\gK{X\cap Y}}$ by Theorem~\ref{theorem: KLW from ODD} and Corollary~\ref{cor: PSP from ODD}.
By Corollary~\ref{cor: ODD subsequences}, it further suffices that $\CC^Y\restr X\equivf\gK{X\cap Y}$,
or equivalently, that $\CC^Y\restr A$ admits a $\kappa$-coloring if and only if $|A\cap Y|\leq\kappa$ for all subsets $A$ of $X$. But this follows from Lemma~\ref{lemma: CC coloring}. 
Moreover, \ref{cor: PSP from ODD for the kappa-Baire space b} follows from \ref{cor: PSP from ODD for the kappa-Baire space a}. 
\end{proof}

\begin{corollary}\ 
\label{cor: PSP from ODD for the kappa-Baire space 2}
\begin{enumerate-(1)}
\item\label{cor: PSP from ODD for the kappa-Baire space 2 a}
$\PSP_\kappa(X)\Longleftrightarrow\KLW_\kappa(X,{}^\kappa\kappa)$
for all subsets $X$ of ${}^\kappa\kappa$.
\item\label{cor: PSP from ODD for the kappa-Baire space 2 b}
$\PSP_\kappa(Y)\Longleftrightarrow\KLW_\kappa({}^\kappa\kappa,Y)$
for all $\Gdelta(\kappa)$ subsets $Y$ of ${}^\kappa\kappa$.
\end{enumerate-(1)}
\end{corollary}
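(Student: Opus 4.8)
The plan is to derive both equivalences in Corollary~\ref{cor: PSP from ODD for the kappa-Baire space 2} as special cases of the preceding Corollary~\ref{cor: PSP from ODD for the kappa-Baire space}, which relates $\KLW_\kappa(X,Y)$ to $\PSP_\kappa(X\cap Y)$ under a separation hypothesis on $X\setminus Y$. The entire strategy is to verify that the required $\Fsigma(\kappa)$-separation hypothesis holds trivially in each of the two special cases, so that part~\ref{cor: PSP from ODD for the kappa-Baire space a} applies directly and collapses to the desired statement.

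For \ref{cor: PSP from ODD for the kappa-Baire space 2 a}, I take $Y:={}^\kappa\kappa$. Then $X\setminus Y=\emptyset$, which is certainly separated from $Y$ by the $\Fsigma(\kappa)$ set $\emptyset$ (the empty set is open, hence $\Fsigma(\kappa)$, and vacuously satisfies $\emptyset\subseteq\emptyset$ and $\emptyset\cap Y=\emptyset$). Thus the hypothesis of Corollary~\ref{cor: PSP from ODD for the kappa-Baire space a} is met, and it yields $\KLW_\kappa(X,{}^\kappa\kappa)\Longleftrightarrow\PSP_\kappa(X\cap{}^\kappa\kappa)=\PSP_\kappa(X)$, as required.

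For \ref{cor: PSP from ODD for the kappa-Baire space 2 b}, I take $X:={}^\kappa\kappa$ and $Y$ a $\Gdelta(\kappa)$ set. Here $X\setminus Y={}^\kappa\kappa\setminus Y$ is exactly the complement of a $\Gdelta(\kappa)={\bf\Pi}^0_2(\kappa)$ set, hence is $\Fsigma(\kappa)={\bf\Sigma}^0_2(\kappa)$. Since $({}^\kappa\kappa\setminus Y)\supseteq X\setminus Y$ and $({}^\kappa\kappa\setminus Y)\cap Y=\emptyset$, the set ${}^\kappa\kappa\setminus Y$ itself separates $X\setminus Y$ from $Y$ by an $\Fsigma(\kappa)$ set. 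Again Corollary~\ref{cor: PSP from ODD for the kappa-Baire space a} applies and gives $\KLW_\kappa({}^\kappa\kappa,Y)\Longleftrightarrow\PSP_\kappa({}^\kappa\kappa\cap Y)=\PSP_\kappa(Y)$.

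I do not anticipate any genuine obstacle here: the two statements are pure instantiations of Corollary~\ref{cor: PSP from ODD for the kappa-Baire space a}, and the only content is recognizing that the separation hypothesis is automatic in each case (vacuously for \ref{cor: PSP from ODD for the kappa-Baire space 2 a}, and because the complement of a $\Gdelta(\kappa)$ set is $\Fsigma(\kappa)$ for \ref{cor: PSP from ODD for the kappa-Baire space 2 b}). The mildly delicate point worth stating explicitly is precisely this use of the $\Gdelta(\kappa)$ assumption on $Y$ in part~\ref{cor: PSP from ODD for the kappa-Baire space 2 b}, since it is exactly what guarantees the needed $\Fsigma(\kappa)$-separation; without it the hypothesis of Corollary~\ref{cor: PSP from ODD for the kappa-Baire space a} could fail. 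A short proof simply records these two verifications.
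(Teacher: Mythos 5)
Your proposal is correct and is essentially the paper's own proof: the paper likewise derives \ref{cor: PSP from ODD for the kappa-Baire space 2 a} and \ref{cor: PSP from ODD for the kappa-Baire space 2 b} as the special cases $Y:={}^\kappa\kappa$ and $X:={}^\kappa\kappa$ of Corollary~\ref{cor: PSP from ODD for the kappa-Baire space}~\ref{cor: PSP from ODD for the kappa-Baire space a}. The only difference is that you spell out the verification of the $\Fsigma(\kappa)$-separation hypothesis (vacuous in the first case, and via the complement of the $\Gdelta(\kappa)$ set $Y$ in the second), which the paper leaves implicit.
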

\begin{proof}
\ref{cor: PSP from ODD for the kappa-Baire space 2 a}
and
\ref{cor: PSP from ODD for the kappa-Baire space 2 b}
are the special cases of the Corollary \ref{cor: PSP from ODD for the kappa-Baire space}
 for $Y:={}^\kappa\kappa$
and $X:={}^\kappa\kappa$, respectively.
\end{proof}

Since $\PSP_\kappa(\closedsets(\kappa))$ has the consistency strength of an inaccessible cardinal, 
it follows from 
\ref{cor: PSP from ODD for the kappa-Baire space 2 b}
and Theorem~\ref{theorem: KLW from ODD} 
that
$\ODD\kappa\kappa({}^\kappa\kappa,\defsetsk)$ has the same consistency strength.

%%%%%%%%%%%
\subsubsection{A version relative to dihypergraphs}
\label{subsection: KLW^H}
We assume throughout this subsection that $X,\,Y$ are subsets of ${}^\kappa\kappa$ and that $H$ is a $\ddim$-dihypergraph on ${}^\kappa\kappa$, where $2\leq\ddim\leq\kappa$, unless otherwise stated.
We apply $\ODD\kappa\kappa(X)$ to derive a version $\KLW^H_\kappa(X,Y)$ of the Kechris--Louveau--Woodin dichotomy relative to $H$, assuming $H$ is box-open on ${}^\kappa\kappa$.

\begin{definition}\ 
\label{def: H-limit point}
\begin{enumerate-(a)}
\item Let $H^y$ denote the set 
\index{dihypergraph!slice\idf$H^y$} 
of those sequences $\langle x_i: 1\leq i<\ddim\rangle$ such that 
$\langle y\rangle\conc\langle x_i:1\leq i<\ddim\rangle\in H$.
\item %We say that 
$y\in{}^\kappa\kappa$ is an \emph{$H$-limit point} of $X$ 
\index{limit pointsz@$H$-limit points\idf$X^H$}
if 
$H^y\restr(X\cap N_{y\restr\alpha})\neq\emptyset$
for all $\alpha<\kappa$. 
$X^H$ denotes the set of $H$-limit points of $X$.
\item 
$X$ is called \emph{$H$-closed} if 
\index{closed set@$H$-closed set, $H$-closure\idf}
%it contains all its $H$-limit points. 
$X^H\subseteq X$.
\end{enumerate-(a)}
\end{definition}
In particular, any $H$-limit point of $X$ is also a limit point of $X$. 
The converse holds if $H$ is the complete $\ddim$-hypergraph $\dhK\ddim{{}^\kappa\kappa}$ and, 
more generally, if $\dhI\ddim\subseteq H$. 

\begin{lemma} 
\label{H-closure} 
Suppose that $H$ is box-open on ${}^\kappa\kappa$. 
\begin{enumerate-(1)} 
\item 
\label{H-closure 1} 
Any $H$-limit point of $\closure{X}$ is an $H$-limit point of $X$. 
\item 
\label{H-closure 2} 
$X\cup X^H$ is $H$-closed.
\end{enumerate-(1)} 
\end{lemma}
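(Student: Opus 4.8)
The plan is to prove both statements directly from the definitions of $H$-limit point and $H$-closure, using that $H$ is box-open on ${}^\kappa\kappa$. The key topological input is that a basic open neighborhood of a point $y$ shrinks as we increase $\alpha$, while box-openness lets us transfer a hyperedge with first coordinate $y$ (or near $y$) into an honest hyperedge witnessing membership in $H^{y}$ at deeper levels.

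For part~\ref{H-closure 1}, suppose $y$ is an $H$-limit point of $\closure X$. I would fix an arbitrary $\alpha<\kappa$ and aim to produce a hyperedge in $H^{y}\restr(X\cap N_{y\restr\alpha})$. By definition of $H$-limit point applied to $\closure X$, the set $H^{y}\restr(\closure X\cap N_{y\restr\alpha})$ is nonempty, so there is a sequence $\langle x_i:1\leq i<\ddim\rangle$ of elements of $\closure X\cap N_{y\restr\alpha}$ with $\langle y\rangle\conc\langle x_i:1\leq i<\ddim\rangle\in H$. Since $H$ is box-open on ${}^\kappa\kappa$, there are basic open neighborhoods $N_{s}$ of $y$ and $N_{t_i}$ of each $x_i$ (with $y\restr\alpha\subseteq t_i$, after shrinking) such that $N_s\times\prod_{1\leq i<\ddim} N_{t_i}\subseteq H$. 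Because each $x_i\in\closure X$, the neighborhood $N_{t_i}$ meets $X$, so I can choose $x_i'\in X\cap N_{t_i}\subseteq X\cap N_{y\restr\alpha}$. Then $\langle y\rangle\conc\langle x_i':1\leq i<\ddim\rangle$ lies in $N_s\times\prod_i N_{t_i}\subseteq H$ (noting $y\in N_s$), which witnesses $H^{y}\restr(X\cap N_{y\restr\alpha})\neq\emptyset$. As $\alpha$ was arbitrary, $y$ is an $H$-limit point of $X$.

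For part~\ref{H-closure 2}, I must show $(X\cup X^H)^H\subseteq X\cup X^H$. Let $y\in(X\cup X^H)^H$; it suffices to show $y\in X\cup X^H$, and I will argue that if $y\notin X$ then $y$ is an $H$-limit point of $X$. Note that $\closure{X}=X\cup X'$ and $X^H\subseteq X'$, so $X\cup X^H\subseteq\closure X$. Hence any $H$-limit point of $X\cup X^H$ is an $H$-limit point of $\closure X$, and by part~\ref{H-closure 1} it is an $H$-limit point of $X$, i.e.\ lies in $X^H\subseteq X\cup X^H$. This gives $(X\cup X^H)^H\subseteq X^H\subseteq X\cup X^H$, so $X\cup X^H$ is $H$-closed.

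The main obstacle is purely bookkeeping in part~\ref{H-closure 1}: one must be careful that after using box-openness to obtain the product neighborhood, the neighborhoods $N_{t_i}$ can be taken inside $N_{y\restr\alpha}$ (which is automatic since $x_i\in N_{y\restr\alpha}$ and one intersects with $N_{y\restr\alpha}$ before replacing $x_i$ by $x_i'$), and that replacing each $x_i$ by a genuine point $x_i'\in X$ keeps the whole sequence a hyperedge of $H$. Both follow immediately from the product structure of box-open sets, so no serious difficulty arises; the only subtlety is remembering to keep the first coordinate fixed equal to $y$ throughout, which is exactly what the definition of $H^{y}$ and box-openness accommodate.
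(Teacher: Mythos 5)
Your proof is correct and follows essentially the same route as the paper's: part~(1) by fixing $\alpha$, taking a hyperedge in $H^{y}\restr(\closure{X}\cap N_{y\restr\alpha})$, shrinking via box-openness to product neighborhoods inside $N_{y\restr\alpha}$, and replacing each coordinate by a nearby point of $X$; part~(2) by monotonicity of $(\cdot)^H$ together with $X\cup X^H\subseteq\closure{X}$ and part~(1). The paper's write-up is just a terser version of the same argument (it applies box-openness directly to $H^y$, which is what your $N_s$-bookkeeping accomplishes).
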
 
\begin{proof} 
For \ref{H-closure 1}, suppose that $y$ is an $H$-limit point of $\overline{X}$ and $\alpha<\kappa$. 
Pick a sequence  $\langle x_i: 1\leq i<\ddim\rangle\in H^y$ with $x_i\in \overline{X}\cap N_{y\restr\alpha}$ for all $1\leq i<\ddim$.  
Since $H$ is box-open, there exist $\alpha_i\geq\alpha$ for $1\leq i<\ddim$ such that $\prod_{1\leq i<\ddim} N_{x_i\restr \alpha_i} \subseteq H^y\restr N_{y\restr\alpha}$. 
It follows that $y$ is an $H$-limit point of $X$. 
\ref{H-closure 2} follows from \ref{H-closure 1} since $X\cup X^H\subseteq\closure X$. 
\end{proof} 

$X\cup X^H$ is called the \emph{$H$-closure} of $X$. 
\index{closed set@$H$-closed set, $H$-closure\idf}
This is the smallest $H$-closed set containing $X$ if $H$ is box-open on ${}^\kappa\kappa$.
Note that $\closure X$ is $H$-closed as well.

We now define variants of $\KLW_\kappa(X,Y)$ relative to box-open dihypergraphs. 
Let $\QQ^\ddim_\kappa$ 
\index{rationals@$\kappa$-rationals in!d@${}^\kappa\ddim$\idf$\QQ^\ddim_\kappa$}
denote the set of those $x\in{}^\kappa\ddim$ such that $x=t\conc\langle 0\rangle^\kappa$ for some $t\in{}^{<\kappa}\ddim$.
\index{irrationals@$\kappa$-irrationals in!d@${}^\kappa\ddim$\idf$\RR^\ddim_\kappa$}
Let $\RR^\ddim_\kappa:={}^\kappa\ddim\,\setminus\,\QQ^\ddim_\kappa$.

\begin{definition}
\label{def: KLW for hypergraphs}
Let $\KLW_\kappa^H(X,Y)$ 
and ${\sKLW_\kappa}^H(X,Y)$ 
denote the following statements:
\begin{quotation}
\index{Kechris--Louveau--Woodin dichotomy!C@for $\ddim$-dihypergraphs!A@standard\idf$\KLW_\kappa^H(X,Y)$} 
$\KLW_\kappa^H(X,Y)$: Either 
$X=\bigcup_{\alpha<\kappa} X_\alpha$
for some sequence $\langle X_\alpha:\alpha<\kappa\rangle$ of sets 
%$X_\alpha$ which have no $H$-limit points in $Y$,
$X_\alpha$ with $X_\alpha^H\cap Y=\emptyset$, 
or there exists a continuous
homomorphism $f$ from $\dhH\ddim$ to $H$
which reduces $(\RR^\ddim_\kappa,\QQ^\ddim_\kappa)$ to $(X,Y)$.%
\footnote{I.e., $f(\RR^\ddim_\kappa)\subseteq X$ and $f(\QQ^\ddim_\kappa)\subseteq Y$.}
\end{quotation}
\vspace{0 pt}
\begin{quotation}
\index{Kechris--Louveau--Woodin dichotomy!C@for $\ddim$-dihypergraphs!A@variant\idf${\sKLW_\kappa}^H(X,Y)$} 
${\sKLW_\kappa}^H(X,Y)$: Either 
$X=\bigcup_{\alpha<\kappa} X_\alpha$
for some sequence $\langle X_\alpha:\alpha<\kappa\rangle$ of sets 
%$X_\alpha$ which have no $H$-limit points in $Y$,
$X_\alpha$ with $X_\alpha^H\cap Y=\emptyset$, 
or there exists an order homomorphism $\Phi$ for $({}^\kappa\kappa,H)$
such that $[\Phi]$ reduces $(\RR^\ddim_\kappa,\QQ^\ddim_\kappa)$ to $(X,Y)$.%
\end{quotation}
\end{definition}

We will show that these two dichotomies are in fact equivalent 
if $H$ is box-open on ${}^\kappa\kappa$
in Theorem~\ref{theorem: ODD and KLW^H}.
Note that if $X$ and $Y$ are disjoint, then the first option of each dichotomy
holds if and only if
$X$ can be separated from $Y$ by a $\kappa$-union of $H$-closed sets. 

The above dichotomies generalize the Kechris--Louveau--Woodin dichotomy. 
\todog{It is easy to show that the converse directions of both \ref{remark: KLW^G for graphs G 2} and \ref{remark: KLW^G for graphs G 3} also hold for $G:=\KK$.}%
\todog{It is also easy to see that for $H:=\dhI\ddim$ where $2\leq\ddim\leq\kappa$,
${\sKLW_\kappa}^H(X,Y)$ is equivalent to the following \emph{$\ddim$-dimensional version of $\sKLW_\kappa(X,Y)$}: either $X=\bigcup_{\alpha<\kappa} X_\alpha$ where $X_\alpha'\cap Y=\emptyset$ or there is a $\perp$- and strict order preserving map $\Phi$ such that $[\Phi]$ reduces $(\RR^\ddim_\kappa,\QQ^\ddim_\kappa)$ to $(X,Y)$.
\\[1 pt]
Note that $[\Phi]$ is a homeomorphism from ${}^\ddim\kappa$ onto its image. If $\ddim<\kappa$, then its image is closed. This could be used to define a $\ddim$-dimensional version of $\KLW_\kappa(X,Y)$ as well.}%
\index{graph!complete graph on!${}^{\kappa}\kappa$\idf$\KK$}%
For the complete graph~$\KK:=\gK{{}^\kappa\kappa}=\dhI 2$, 
$\KLW_\kappa^{\KK}(X,Y)$ is equivalent to
$\KLW_\kappa(X,Y)$ 
by the next remark and 
Theorem \ref{theorem: KLW from ODD}~\ref{KLW from ODD hom}~\ref{KLW hom b}$~\Leftrightarrow$~\ref{KLW hom c}.
The next remark also gives a direct proof of the equivalence of
${\sKLW_\kappa}^{\KK}(X,Y)$ and
$\sKLW_\kappa(X,Y)$.
For higher dimensions $2<\ddim\leq\kappa$,
$\KLW_\kappa(X,Y)$ is equivalent to $\KLW^I_\kappa(X,Y)$ for 
%the $\ddim$-dimensional dihypergraph 
$I:=\proj_{\ddim, 2}^{-1}(\KK){\restr}{}^\kappa\kappa$%
\footnote{Recall that $\proj_{\ddim,c}$ denotes projection onto coordinates in $c$. See the paragraph before Lemma~\ref{sublemma: ODD for different dimensions}.}
by Remark~\ref{KLW^H for different dimensions} below.

\begin{remark}\ 
\label{remark: KLW^G for graphs G}
For digraphs $G$, the two options in $\KLW_\kappa^G(X,Y)$ and ${\sKLW_\kappa}^G(X,Y)$
can be characterized as follows. 
\begin{enumerate-(1)}
\item 
\label{remark: KLW^G for graphs G 1}
Regarding the first option, note that
$y$ is a $G$-limit point of a set $A$ if and only if there exists a sequence $\langle x_\alpha : \alpha<\kappa\rangle$ in $A$ converging to $y$ such that $(y,x_\alpha)\in G$ for all $\alpha<\kappa$. 
\item
\label{remark: KLW^G for graphs G 2}
For the second option 
in $\KLW^G_\kappa(X,Y)$,
observe that
any homomorphism $f$ from $\dhH 2$ to $G$ is injective
%by Lemma~\ref{order homomorphisms and injective H}~\ref{dhI injectivity},
%since $G\subseteq\gK{{}^\kappa\kappa}=\dhI 2$.
since $G\subseteq\KK$.
In particular, if $f$ is as 
in $\KLW^G_\kappa(X,Y)$,
%in the second option, 
then
$f(\RR_\kappa)\cap f(\QQ_\kappa)=\emptyset$.
\item
\label{remark: KLW^G for graphs G 3}
The second option 
in ${\sKLW_\kappa}^G(X,Y)$ is equivalent to the existence of an order homomorphism for $({}^\kappa\kappa,G)$ which is a nice reduction for $(X,Y)$. 
To see this, note that any order homomorphism for $({}^\kappa\kappa,G)$ is $\perp$-preserving
by Lemma~\ref{order homomorphisms and injective H}~\ref{dhI perp},
%since $G\subseteq\gK{{}^\kappa\kappa}=\dhI 2$.
since $G\subseteq\KK$.
In particular, if $G$ is a graph, then $C:=\ran([\Phi])$ is
a $\kappa$-perfect $G$-clique%
\footnote{See Definition~\ref{def: clique}.}
%\footnote{An \emph{$H$-clique} for a $\ddim$-dihypergraph $H$ is a set $C$ with $\dhK \ddim C \subseteq H$.}
such that both $X$ and $Y$ are dense in $C$.
\end{enumerate-(1)}
\end{remark}

We will show that 
both $\KLW^H_\kappa(X,Y)$ and ${\sKLW_\kappa}^H(X,Y)$
%of these dichotomies 
are equivalent to the same special case of $\ODD\kappa\kappa(X)$ assuming $H$ is box-open on ${}^\kappa\kappa$ 
in Theorem~\ref{theorem: ODD and KLW^H}.
We first define the relevant dihypergraphs.
Suppose that $\langle X_\alpha : \alpha<\kappa \rangle$ is a sequence of subsets of ${}^\kappa\kappa$. 
We say that $\langle X_\alpha : \alpha<\kappa \rangle$ \emph{converges}
\index{convergent sequence of!subsets of ${}^{\kappa}\kappa$\idf}
to $y\in {}^\kappa\kappa$ if for every $\gamma<\kappa$, there exists some $\alpha<\kappa$ with $\bigcup_{\alpha<\beta<\kappa} X_\beta \subseteq N_{y\restr \gamma}$.% 
\footnote{For example, if ${t_\alpha}\in{}^{<\kappa}\kappa$ for each $\alpha<\kappa$, then 
$\langle N_{t_\alpha}: \alpha<\kappa \rangle$
converges to $y$ if and only if 
$\langle t_\alpha:\alpha<\kappa\rangle$ converges to $y$ 
(in the sense defined on page~\pageref{def: convergence of nodes}).}
Let 
$$D:=\kappa\times(\ddim\,\setminus\{0\}),$$%
\label{def of CC^XH}%
and let $\CC^{Y,H}$ denote the $D$-dihypergraph 
\index{dihypergraph!z@\mygobble|see{hypergraph}}%
\index{dihypergraph!with convergent slices\idf$\CC^{Y,H}$}%
on ${}^\kappa\kappa$ which consists
of all sequences 
$\langle x_{(\alpha,i)} : (\alpha,i)\in D\rangle$ 
with $x_{(\alpha,i)} \in {}^\kappa\kappa$ 
such that the sets 
$X_\alpha:= \{x_{(\alpha,i)} : 1\leq i<{\ddim}\}$ 
converge to an element $y$ of $Y$ with
$\bar{x}_\alpha:=\langle x_{(\alpha,i)} : 1\leq i<{\ddim} \rangle \in H^y$ 
for all $\alpha<\kappa$ (see Figure~\ref{figure: CC^XH}).
Note that $\CC^{Y,H}\restr X$ consists of those $D$-sequences which witness that 
$X^H\cap Y\neq\emptyset$.
%$X$ has an $H$-limit point in~$Y$.
%
%
Note that $\CC^{Y,H}$ can be identified  with a $\kappa$-dihypergraph  $\bar\CC^{Y,H}$ on  ${}^\kappa\kappa$ via a bijection $D\to\kappa$, for instance with $\CC^Y$ if $H=\gK{{}^\kappa\kappa}$. 
If $H$ is box-open on ${}^\kappa\kappa$,
then $\CC^{Y,H}$ and $\bar\CC^{Y,H}$ are 
box-open on ${}^\kappa\kappa$.%
\footnote{In fact, it suffices that $H^y$ is box-open on ${}^\kappa\kappa$ for each $y\in Y$.}
Moreover, both are in $\defsetsk$ whenever $Y,H\in\defsetsk$.

\vspace{10pt} 
{%
\newcommand{\x}{0.6}
\newcommand{\y}{1}
%\begin{figure}[h] 
\begin{figure}[H] 
\centering
%\begin{tikzpicture} 
\tikz[scale=1,font=\scriptsize]{ 
\draw 
(0*\x,0) edge[-] (-4*\x,4*\y); 

\draw[-, thick, dashed, color={DarkGreenArrow}] 
(-2.25*\x,2.25*\y) edge[-] (-2.0*\x,4*\y) 
(-2.25*\x,2.25*\y) edge[-] (0.1*\x,4*\y); 

\draw[thick,color={TealArrow}] 
(-2.25*\x,2.25*\y) edge[-] (-1.7*\x,3.12*\y) 
(-1.7*\x,3.12*\y) edge[-] (-1.3*\x,4*\y); 

\draw
    (-1.8*\x,4.2*\y) node[right] {\tiny \color{DarkGreenArrow}$\bar{x}_\beta \in H^y$} 
    (-1.4*\x,3.8*\y) node[right] {\tiny \color{TealArrow}$x_{(\beta,i)}$};

\draw[-, thick, dashed, color={DarkGreenArrow}] 
(-0.75*\x,0.75*\y) edge[-] (0.5*\x,4*\y) 
(-0.75*\x,0.75*\y) edge[-] (2.8*\x,4*\y); 

\draw[thick,color={TealArrow}] 
(-0.75*\x,0.75*\y) edge[-] 
%(-0.3*\x,1.5*\y) 
%(-0.3*\x,1.5*\y) edge[-] 
(0.3*\x,2.25*\y) 
(0.3*\x,2.25*\y) edge[-] (1.2*\x,4.0*\y); 

\draw
    (0.65*\x,4.2*\y) node[right] {\tiny \color{DarkGreenArrow}$\bar{x}_\alpha \in H^y$} 
    (1.1*\x,3.8*\y) node[right] {\tiny \color{TealArrow}$x_{(\alpha,i)}$};

\draw
    (-4*\x,4*\y) node[above] {$y$}; 

    } 
    %\end{tikzpicture} 
\caption{$\CC^{Y,H}$} 
\label{figure: CC^XH}
\end{figure}
}

The next two lemmas characterize the existence of $\kappa$-colorings of $\CC^{Y,H}\restr X$.

\begin{lemma}
\label{lemma: coloring C^XH}
A subset $A$ of ${}^\kappa\kappa$ is $\CC^{Y,H}$-independent if and only if 
$A^H\cap Y=\emptyset$.
%it has no $H$-limit points in $Y$.
\end{lemma}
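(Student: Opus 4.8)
The plan is to unwind the definitions on both sides and show they are literally negations of the failure mode. Recall that $A$ is $\CC^{Y,H}$-independent means $\CC^{Y,H}\restr A=\emptyset$, i.e.\ there is no hyperedge of $\CC^{Y,H}$ with all entries in $A$. On the other side, $A^H\cap Y=\emptyset$ means no element of $Y$ is an $H$-limit point of $A$. I would prove the contrapositive in both directions, exhibiting a witnessing hyperedge from a witnessing $H$-limit point and vice versa.

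First I would show: if $A$ is not $\CC^{Y,H}$-independent, then $A^H\cap Y\neq\emptyset$. Take a hyperedge $\langle x_{(\alpha,i)}:(\alpha,i)\in D\rangle\in\CC^{Y,H}\restr A$. By the definition of $\CC^{Y,H}$, the sets $X_\alpha=\{x_{(\alpha,i)}:1\leq i<\ddim\}$ converge to some $y\in Y$, and $\bar x_\alpha=\langle x_{(\alpha,i)}:1\leq i<\ddim\rangle\in H^y$ for all $\alpha<\kappa$. Since all entries lie in $A$, each $\bar x_\alpha$ witnesses $H^y\restr A\neq\emptyset$; I then need that these witnesses can be found inside $N_{y\restr\gamma}$ for arbitrarily large $\gamma<\kappa$. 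This is exactly what convergence of $\langle X_\alpha:\alpha<\kappa\rangle$ to $y$ gives: for every $\gamma<\kappa$ there is $\alpha_0$ with $\bigcup_{\alpha_0<\beta<\kappa}X_\beta\subseteq N_{y\restr\gamma}$, so picking any $\beta>\alpha_0$ yields $\bar x_\beta\in H^y\restr(A\cap N_{y\restr\gamma})$. Hence $y\in A^H$ and $y\in Y$, so $A^H\cap Y\neq\emptyset$.

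Conversely, if $A^H\cap Y\neq\emptyset$, pick $y\in A^H\cap Y$. By the definition of $H$-limit point, $H^y\restr(A\cap N_{y\restr\gamma})\neq\emptyset$ for all $\gamma<\kappa$. I would use this to construct, by recursion on $\alpha<\kappa$, sequences $\bar x_\alpha=\langle x_{(\alpha,i)}:1\leq i<\ddim\rangle\in H^y$ with all entries in $A\cap N_{y\restr\gamma_\alpha}$ for a strictly increasing cofinal sequence $\gamma_\alpha<\kappa$ (choosing $\gamma_\alpha\geq\alpha$ suffices). Then $X_\alpha=\{x_{(\alpha,i)}:1\leq i<\ddim\}\subseteq N_{y\restr\gamma_\alpha}$, and since the $\gamma_\alpha$ are cofinal in $\kappa$, the sequence $\langle X_\alpha:\alpha<\kappa\rangle$ converges to $y$. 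Together with $y\in Y$ and $\bar x_\alpha\in H^y$, this shows $\langle x_{(\alpha,i)}:(\alpha,i)\in D\rangle\in\CC^{Y,H}\restr A$, so $A$ is not $\CC^{Y,H}$-independent.

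I do not expect a genuine obstacle here, as the lemma is essentially a bookkeeping exercise matching the two definitions; the only point requiring mild care is the direction from a hyperedge to an $H$-limit point, where one must correctly read off the "for all $\gamma<\kappa$" quantifier in the definition of $H$-limit point from the convergence condition, noting that convergence to $y$ forces the tails of the $X_\alpha$ into every basic neighborhood $N_{y\restr\gamma}$. It is worth remarking that this argument does not require $H$ to be box-open, so the lemma holds for arbitrary $H$; box-openness is only needed elsewhere (e.g.\ in Lemma~\ref{H-closure} and for the box-openness of $\CC^{Y,H}$ itself).
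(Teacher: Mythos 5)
Your proof is correct and takes essentially the same route as the paper's: both directions match a hyperedge of $\CC^{Y,H}\restr A$ with a witnessing $H$-limit point $y\in Y$, using convergence of the blocks $X_\alpha$ to extract, for each $\gamma<\kappa$, a block lying in $H^y\restr(A\cap N_{y\restr\gamma})$, and conversely assembling such blocks into a hyperedge. The paper's version is merely terser — it places the $\alpha$-th block directly inside $N_{y\restr\alpha}$ instead of along an auxiliary cofinal sequence $\gamma_\alpha$ — and your closing remark that box-openness of $H$ is not needed here is consistent with the paper, which states the lemma for arbitrary dihypergraphs.
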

\begin{proof}
%Suppose $A$ has a limit point $y\in Y$.
Suppose $y\in A^H\cap Y$.
For each $\alpha<\kappa$,
choose an arbitrary sequence
$\langle a_{(\alpha,i)}:1\leq i<{\ddim}\rangle$ in $H^y\restr (A\cap N_{y\restr\alpha})$.
Then 
$\langle a_{(\alpha,i)}: (\alpha,i)\in D\rangle$ is a hyperedge of 
$\CC^{Y,H}\restr A$.
Conversely, suppose
$\langle a_{(\alpha,i)}: (\alpha,i)\in D\rangle\in\CC^{Y,H}\restr A$,
and let
$\bar a_\alpha:=\langle a_{(\alpha,i)}: 1\leq i<\ddim\rangle$ for all $\alpha<\kappa$.
Then the sets
$\ran(\bar a_\alpha)$ 
converge to some $y\in Y$.
Since
$\bar a_\alpha\in H^y$ for all $\alpha<\kappa$,
%$y$~is an $H$-limit point of $A$.
we have $y\in A^H$.
\end{proof}

\begin{lemma}
\label{coloring H vs C^XH} 
Suppose that $H$ is box-open on ${}^\kappa\kappa$. 
The following statements are equivalent.
\begin{enumerate-(1)} 
\item
\label{coloring H vs C^XH 1} 
$\CC^{Y,H}\restr X$ admits a $\kappa$-coloring.
\item
$X=\bigcup_{\alpha<\kappa} X_\alpha$
for some sequence $\langle X_\alpha:\alpha<\kappa\rangle$ of sets 
%$X_\alpha$ which have no $H$-limit points in $Y$.
$X_\alpha$ with $X_\alpha^H\cap Y=\emptyset$.
\label{coloring H vs C^XH 2} 
\item
\label{coloring H vs C^XH 3} 
$H\restr (X\cap Y)$ admits a $\kappa$-coloring and
$X\setminus Y$ can be separated from $Y$ by a $\kappa$-union of $H$-closed sets. 
\end{enumerate-(1)} 
\end{lemma}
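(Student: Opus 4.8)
The plan is to prove the three equivalences in Lemma~\ref{coloring H vs C^XH} by establishing the cycle \ref{coloring H vs C^XH 1} $\Rightarrow$ \ref{coloring H vs C^XH 2} $\Rightarrow$ \ref{coloring H vs C^XH 3} $\Rightarrow$ \ref{coloring H vs C^XH 1}, closely paralleling the structure of the proof of Lemma~\ref{lemma: CC coloring} (which is the special case $H=\gK{{}^\kappa\kappa}$). The equivalence \ref{coloring H vs C^XH 1} $\Leftrightarrow$ \ref{coloring H vs C^XH 2} is essentially immediate from Lemma~\ref{lemma: coloring C^XH}: a $\kappa$-coloring of $\CC^{Y,H}\restr X$ is exactly a partition of $X$ into $\kappa$ many $\CC^{Y,H}$-independent sets, and by Lemma~\ref{lemma: coloring C^XH} a subset $A$ of $X$ is $\CC^{Y,H}$-independent precisely when $A^H\cap Y=\emptyset$. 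So this direction requires only unwinding the definition of a $\kappa$-coloring and invoking the independence characterization; notably, this part does not even use box-openness of $H$.

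The substantive work is in the equivalence \ref{coloring H vs C^XH 2} $\Leftrightarrow$ \ref{coloring H vs C^XH 3}, which is where box-openness of $H$ enters. For \ref{coloring H vs C^XH 2} $\Rightarrow$ \ref{coloring H vs C^XH 3}: given $X=\bigcup_{\alpha<\kappa} X_\alpha$ with $X_\alpha^H\cap Y=\emptyset$, I would split each piece as $Y_\alpha:=X_\alpha\cap Y$ and $Z_\alpha:=X_\alpha\setminus Y$, in analogy with the proof of Lemma~\ref{lemma: CC coloring}. The sets $Y_\alpha$ are subsets of $X\cap Y$ that are $H$-independent (since $Y_\alpha^H\cap Y\subseteq X_\alpha^H\cap Y=\emptyset$), so they witness that $H\restr(X\cap Y)$ admits a $\kappa$-coloring. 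For the separation statement, I would pass to the $H$-closure $\overline{Z_\alpha}^{H}=Z_\alpha\cup Z_\alpha^H$; using Lemma~\ref{H-closure} (which requires $H$ box-open on ${}^\kappa\kappa$), this is $H$-closed, and I must check that it remains disjoint from $Y$. The key point is that $Z_\alpha^H\subseteq X_\alpha^H$ so $Z_\alpha^H\cap Y=\emptyset$, and $Z_\alpha\cap Y=\emptyset$ by construction. Then $\bigcup_{\alpha<\kappa}\overline{Z_\alpha}^{H}$ is a $\kappa$-union of $H$-closed sets separating $X\setminus Y$ from $Y$.

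For the reverse direction \ref{coloring H vs C^XH 3} $\Rightarrow$ \ref{coloring H vs C^XH 2}: suppose $H\restr(X\cap Y)$ has a $\kappa$-coloring with independent pieces $W_\beta$, and $X\setminus Y\subseteq\bigcup_{\alpha<\kappa} K_\alpha$ where each $K_\alpha$ is $H$-closed and disjoint from $Y$. Each $W_\beta$ satisfies $W_\beta^H\cap Y=\emptyset$ directly by $H$-independence together with Lemma~\ref{lemma: coloring C^XH}. Each set $K_\alpha\cap X$ satisfies $(K_\alpha\cap X)^H\subseteq K_\alpha^H\subseteq K_\alpha$ by $H$-closedness, hence $(K_\alpha\cap X)^H\cap Y\subseteq K_\alpha\cap Y=\emptyset$. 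Taking the union of the two families $\{W_\beta\}$ and $\{K_\alpha\cap X\}$ covers all of $X$ (the first covers $X\cap Y$, the second covers $X\setminus Y$) with $\kappa$ many sets each having empty $H$-limit intersection with $Y$, giving \ref{coloring H vs C^XH 2}.

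The main obstacle I anticipate is the careful bookkeeping around the $H$-closure in the \ref{coloring H vs C^XH 2} $\Rightarrow$ \ref{coloring H vs C^XH 3} step: I need box-openness precisely to guarantee via Lemma~\ref{H-closure} that $X\cup X^H$ is $H$-closed and that the $H$-closure behaves well, and I must verify the monotonicity facts $Z_\alpha^H\subseteq X_\alpha^H$ and $(K_\alpha\cap X)^H\subseteq K_\alpha^H$ (which follow readily from the definition of $H$-limit point, since enlarging the set can only add $H$-limit points, and the definition is monotone in its set argument). A subtle point worth stating explicitly is that in \ref{coloring H vs C^XH 3} the coloring is required only for $H\restr(X\cap Y)$ rather than full independence as subsets of $X$; I should confirm that $H$-independence of $W_\beta$ as a subset of $X\cap Y$ suffices to conclude $W_\beta^H\cap Y=\emptyset$, which is exactly the content of Lemma~\ref{lemma: coloring C^XH} applied with $A=W_\beta$. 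Everything else is routine analogue of the $H=\gK{{}^\kappa\kappa}$ case already carried out in Lemma~\ref{lemma: CC coloring}.
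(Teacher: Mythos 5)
The decisive step of your argument fails. In (2) $\Rightarrow$ (3) you obtain the $\kappa$-coloring of $H\restr(X\cap Y)$ by asserting that each $Y_\alpha:=X_\alpha\cap Y$ is $H$-independent ``since $Y_\alpha^H\cap Y\subseteq X_\alpha^H\cap Y=\emptyset$''. That implication is false: having no $H$-limit points in $Y$ does not make a set $H$-independent, because being an $H$-limit point requires hyperedges of $H^y$ inside $Y_\alpha\cap N_{y\restr\alpha}$ for \emph{every} $\alpha<\kappa$, whereas failure of $H$-independence only requires a single hyperedge lying somewhere in $Y_\alpha$. For a concrete counterexample take $H=\gK{{}^\kappa\kappa}$ (which is box-open) and $Y_\alpha=\{a,b\}$ with $a\neq b$ in $Y$: this finite set has no limit points at all, hence $Y_\alpha^H\cap Y=\emptyset$, yet $(a,b)\in H\restr Y_\alpha$. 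Indeed, in this special case your claim would say that any subset of $Y$ with no limit points in $Y$ is a singleton, whereas the proof of Lemma~\ref{lemma: CC coloring} --- which you cite as your model --- concludes only that such a set is \emph{discrete}, hence of size at most $\kappa$, i.e.\ a union of $\kappa$ many singletons. The paper performs exactly this finer decomposition in the general case: since no $y\in Y$ is an $H$-limit point of $Y_\alpha$, one may choose $c(y)\subsetneq y$ with $H^y\restr(Y_\alpha\cap N_{c(y)})=\emptyset$, and then each set $B_t:=\{y\in Y_\alpha : c(y)=t\}$ for $t\in{}^{<\kappa}\kappa$ \emph{is} $H$-independent (a hyperedge $\langle y_i:i<\ddim\rangle\in H\restr B_t$ would give $\langle y_i: 1\leq i<\ddim\rangle\in H^{y_0}\restr(Y_\alpha\cap N_t)$ with $c(y_0)=t$, a contradiction); since $\kappa^{<\kappa}=\kappa$, these $\kappa$ many pieces yield the required coloring of $H\restr Y_\alpha$ and hence of $H\restr(X\cap Y)$. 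Without this extra layer, your proof of (2) $\Rightarrow$ (3) does not go through.

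A secondary point: in (3) $\Rightarrow$ (2) you claim $W_\beta^H\cap Y=\emptyset$ for the $H$-independent pieces $W_\beta\subseteq X\cap Y$ ``directly by $H$-independence together with Lemma~\ref{lemma: coloring C^XH}''. The conclusion is correct, but that lemma does not give it; it only translates $\CC^{Y,H}$-independence into the statement about $H$-limit points, and relating $H$-independence to $H$-limit points is precisely where box-openness enters this implication. The argument is: if $y\in Y\setminus W_\beta$ were an $H$-limit point of $W_\beta$, pick $\bar x\in H^y\restr W_\beta$, enclose $\langle y\rangle\conc\bar x\in H$ in a basic box $\prod_{i<\ddim}N_{t_i}\subseteq H$, and replace $y$ by some $w\in W_\beta\cap N_{t_0}$ (which exists since $y$ is then an ordinary limit point of $W_\beta$) to produce a hyperedge inside $W_\beta$; the case $y\in W_\beta$ is immediate. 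The paper relies on the same fact implicitly when its proof of (3) $\Rightarrow$ (1) reduces to coloring $\CC^{Y,H}\restr(X\setminus Y)$, but in your write-up this argument must be supplied rather than attributed to Lemma~\ref{lemma: coloring C^XH}. Your remaining steps --- the equivalence (1) $\Leftrightarrow$ (2) via Lemma~\ref{lemma: coloring C^XH}, the separation via $H$-closures and Lemma~\ref{H-closure}, and the treatment of the sets $K_\alpha\cap X$ --- are correct and agree with the paper.
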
 
\begin{proof}
\ref{coloring H vs C^XH 1} $\Rightarrow$ \ref{coloring H vs C^XH 2}:
Suppose that $\CC^Y\restr X$ admits a $\kappa$-coloring 
and take $\CC^{Y,H}\restr X$-independent sets $X_\alpha$ with
$X=\bigcup_{\alpha<\kappa} X_\alpha$.
%Then $X_\alpha$ does not have any $H$-limit points in $Y$
Then $X_\alpha^H\cap Y=\emptyset$
for all $\alpha<\kappa$ by Lemma~\ref{lemma: coloring C^XH}.

\ref{coloring H vs C^XH 2} $\Rightarrow$ \ref{coloring H vs C^XH 3}:
Let $Y_\alpha:=Y\cap X_\alpha$ for each $\alpha<\kappa$. 
To shows that $H\restr(X\cap Y)$ admits a $\kappa$-coloring, it suffices to show that $H\restr Y_\alpha$ admits a $\kappa$-coloring for each $\alpha<\kappa$. 
%Any $y\in Y$ cannot be a limit point of $X_\alpha$ and thus neither of $Y_\alpha$.
%Hence there exists some $c(y)\subsetneq y$ such that $H^y\restr (Y_\alpha\cap N_{c(y)})= \emptyset$.
Since $Y_\alpha^H\cap Y=\emptyset$, there exists for all $y\in Y$ some $c(y)\subsetneq y$ such that $H^y\restr (Y_\alpha\cap N_{c(y)})= \emptyset$.
We claim that for each $t\in {}^{<\kappa}\kappa$, the set
$B_t:=\{y\in Y_\alpha : c(y)=t\}$
is $H$-independent. 
\todog{We use $Y_\alpha\subseteq Y$ here, since we need $y\in Y$}
Otherwise, pick a sequence $\langle y_i : i<\ddim\rangle \in H\restr  B_t$. 
Then  
$\langle y_i : 1\leq i<\ddim\rangle \in H^{y_0}\restr B_t$.
Since $B_t\subseteq Y_\alpha\cap N_t$ and
$c(y_0)=t$, this contradicts the definition of $c$. 
To show that $X\setminus Y$ can be separated from $Y$ by a $\kappa$-union of $H$-closed sets, 
let $Z_\alpha$ denote the $H$-closure of $X_\alpha\setminus Y$.%
\footnote{I.e., $Z_\alpha:=(X_\alpha\setminus Y)\cup(X_\alpha\setminus Y)^H$.}
%let $Z_\alpha:=(X_\alpha\setminus Y)\cup(X_\alpha\setminus Y)^H$. 
This is $H$-closed by Lemma \ref{H-closure} \ref{H-closure 2}. 
It is disjoint from $Y$ since $(X_\alpha\setminus Y)^H\cap Y=\emptyset$.
%$X_\alpha$ has no $H$-limit points in $Y$. 
\todog{$\closure{X_\alpha\setminus Y}$ is also $H$-closed, but it may not be disjoint from $Y$}

\ref{coloring H vs C^XH 3} $\Rightarrow$ \ref{coloring H vs C^XH 1}:
It suffices to show that $\CC^{Y,H}\restr(X\setminus Y)$ admits a $\kappa$-coloring. 
%The former holds since $|X\cap Y|\leq\kappa$.
To see this, take $H$-closed subsets $Z_\alpha$ of ${}^\kappa\kappa$ for $\alpha<\kappa$ such that
their union separates
$X\setminus Y$ from $Y$. 
Note that each $Z_\alpha$ is $\CC^{Y,H}$-independent by Lemma \ref{lemma: coloring C^XH}. 
%$we obtain a $\kappa$-coloring of $\CC^Y\restr(X\setminus Y)$. 
\end{proof}

The next lemma will be used to characterize the existence of continuous homomorphisms from $\dhH D$ to $\CC^{Y,H}\restr X$.

\begin{lemma}
\label{lemma: oh for C^XH}
Let 
$\iota:{}^{<\kappa}D\to{}^{<\kappa}\kappa$ 
be a strict order preserving map and define
$U^t_\alpha:=\bigcup_{1\leq i<\ddim} N_{\iota(t\conc\langle(\alpha,i)\rangle)}$
for all $t\in{}^{<\kappa}D$ and $\alpha<\kappa$.
The following statements are equivalent: 
\begin{enumerate-(1)} 
\item 
\label{lemma: oh for C^XH direction one}
 $\iota$ is an order homomorphism for $({}^\kappa\kappa,\CC^{Y,H})$. 
 \item 
 \label{lemma: oh for C^XH direction two}
 For each $t\in{}^{<\kappa}D$, the sequence
$\langle U^t_\alpha:\alpha<\kappa\rangle$ converges to some $y_t\in Y$
such that 
$\prod_{1\leq i<\ddim} N_{\iota(t\conc\langle(\alpha,i)\rangle)}
\subseteq H^{y_t}$
for all $\alpha<\kappa$.
\end{enumerate-(1)} 
\end{lemma}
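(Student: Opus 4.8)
The plan is to prove the equivalence by unwinding both definitions in terms of the basic open sets $N_{\iota(t\conc\langle(\alpha,i)\rangle)}$ and relating them to the convergence and membership conditions defining $\CC^{Y,H}$. Recall that $\iota$ is an order homomorphism for $({}^\kappa\kappa,\CC^{Y,H})$ means precisely that $\ran([\iota])\subseteq{}^\kappa\kappa$ (automatic here) and that for all $t\in{}^{<\kappa}D$, we have $\prod_{(\alpha,i)\in D} N_{\iota(t\conc\langle(\alpha,i)\rangle)}\subseteq\CC^{Y,H}$, using Definition~\ref{def: order homomorphism}~\ref{oh2}. So the whole statement reduces to showing, for a fixed $t$, that the inclusion $\prod_{(\alpha,i)\in D} N_{\iota(t\conc\langle(\alpha,i)\rangle)}\subseteq\CC^{Y,H}$ is equivalent to condition~\ref{lemma: oh for C^XH direction two}. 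This is a direct analogue of Lemma~\ref{lemma: CC convergence}, which handles exactly this kind of characterization for the simpler dihypergraph $\CC^Y$, and I would model the argument closely on that proof.

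First I would prove \ref{lemma: oh for C^XH direction two} $\Rightarrow$ \ref{lemma: oh for C^XH direction one}. Assume the convergence and membership conditions hold for each $t$. Take any hyperedge $\bar z=\langle z_{(\alpha,i)}:(\alpha,i)\in D\rangle$ in $\prod_{(\alpha,i)\in D} N_{\iota(t\conc\langle(\alpha,i)\rangle)}$. For each $\alpha$, the set $\{z_{(\alpha,i)}:1\leq i<\ddim\}$ lies in $U^t_\alpha$, and since $\langle U^t_\alpha:\alpha<\kappa\rangle$ converges to $y_t$, the sets $X_\alpha:=\{z_{(\alpha,i)}:1\leq i<\ddim\}$ converge to $y_t\in Y$ as well. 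Moreover $\langle z_{(\alpha,i)}:1\leq i<\ddim\rangle$ lies in $\prod_{1\leq i<\ddim} N_{\iota(t\conc\langle(\alpha,i)\rangle)}\subseteq H^{y_t}$ by hypothesis. Hence $\bar z\in\CC^{Y,H}$, as required. This direction is the routine one.

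The harder direction is \ref{lemma: oh for C^XH direction one} $\Rightarrow$ \ref{lemma: oh for C^XH direction two}, and this is where I expect the main obstacle. Assume $\prod_{(\alpha,i)\in D} N_{\iota(t\conc\langle(\alpha,i)\rangle)}\subseteq\CC^{Y,H}$. Picking any single hyperedge already yields a point $y_t\in Y$ to which the corresponding finite-per-$\alpha$ sets converge and into whose $H$-section each $\alpha$-slice falls; the difficulty, exactly as in Lemma~\ref{lemma: CC convergence}, is to upgrade this from one hyperedge to the uniform statement about the \emph{basic open sets} $U^t_\alpha$ and about all sequences in the product. The key point to establish is that the whole family $\langle U^t_\alpha:\alpha<\kappa\rangle$ converges to $y_t$, i.e.\ that the lengths $\lh\big(\iota(t\conc\langle(\alpha,i)\rangle)\big)$ are suitably controlled. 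I would argue by contradiction following the Lemma~\ref{lemma: CC convergence} template: if $U^t_\alpha$ failed to converge to $y_t$, there would be $\gamma<\kappa$ and an unbounded set of pairs $(\alpha,i)$ with $y_t\restr\gamma\not\subseteq\iota(t\conc\langle(\alpha,i)\rangle)$, allowing me to splice together a hyperedge in the product whose $\alpha$-slices do not converge to a common limit in $Y$, contradicting the inclusion into $\CC^{Y,H}$. The membership claim $\prod_{1\leq i<\ddim} N_{\iota(t\conc\langle(\alpha,i)\rangle)}\subseteq H^{y_t}$ is then handled by fixing the $\alpha$-th slice and varying the remaining slices freely, again producing hyperedges of $\CC^{Y,H}$ and reading off from the definition of $\CC^{Y,H}$ that $\bar x_\alpha\in H^{y_t}$ for the common limit $y_t$; box-openness of $H$ (equivalently of $H^y$) is what makes this a condition on the basic open boxes rather than on individual points. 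The genuinely delicate bookkeeping is ensuring the limit point obtained from different hyperedges is the \emph{same} $y_t$ and that it is independent of the chosen slices, which I would pin down by first fixing one hyperedge to define $y_t$ and then showing any other admissible configuration must converge to this same $y_t$, exactly mirroring the uniqueness-of-limit argument in Remark~\ref{remark: CC convergence}.
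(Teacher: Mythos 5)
Your proposal is correct and takes essentially the same approach as the paper's proof: you reduce both directions to the box condition $\prod_{(\alpha,i)\in D} N_{\iota(t\conc\langle(\alpha,i)\rangle)}\subseteq\CC^{Y,H}$, verify the easy direction slice by slice, and handle the hard direction exactly as the paper does — establishing convergence of $\langle U^t_\alpha:\alpha<\kappa\rangle$ by interleaving a putative bad configuration with a fixed hyperedge so that the resulting slices cannot converge, and then obtaining $\prod_{1\leq i<\ddim} N_{\iota(t\conc\langle(\alpha,i)\rangle)}\subseteq H^{y_t}$ by extending an arbitrary $\alpha$-slice to a full hyperedge of $\CC^{Y,H}$. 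One small correction: box-openness of $H$ is neither assumed in the lemma nor needed — the inclusion into $H^{y_t}$ is a condition on the whole box simply because \emph{every} configuration in the box extends to a hyperedge of $\CC^{Y,H}$ whose slices converge to $y_t$, which is precisely the mechanism you describe, so that aside should be dropped.
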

\begin{proof}
The proof is similar to that of Lemma~\ref{lemma: CC convergence}. 

\ref{lemma: oh for C^XH direction two} $\Rightarrow$ \ref{lemma: oh for C^XH direction one}: Let
$t\in{}^{<\kappa}D$, and
suppose that 
$\langle U^t_\alpha:\alpha<\kappa\rangle$ converges to $y_t\in Y$.
Take an arbitrary sequence
$\bar{a}=\langle a_{(\alpha,i)} : (\alpha,i)\in D\rangle$ 
with
$a_{(\alpha,i)}\in N_{\iota(t\conc\langle (\alpha,i)\rangle)}$ 
for all $(\alpha,i)\in D$.
Then $A_\alpha:=\{a_{(a,i)}:1\leq i<\ddim\}$ is a subset of $U^t_\alpha$ for all $\alpha<\kappa$, 
so the sets $A_\alpha$
converge to $y_t$.
Since $\prod_{i<\kappa} N_{\iota(t\conc\langle(\alpha,i)\rangle)}\subseteq H^{y_t}$
for each $\alpha<\kappa$,
we have $\langle a_{(\alpha,i)} :  1\leq i<\ddim \rangle\in H^{y_t}$ for each $\alpha<\kappa$,
so $\bar a\in \CC^{Y,H}$.
This shows that $\iota$ is an order homomorphism for $({}^\kappa\kappa,\CC^{Y,H})$.

\ref{lemma: oh for C^XH direction one} $\Rightarrow$ \ref{lemma: oh for C^XH direction two}: Fix $t\in{}^{<\kappa}D$ 
and 
$\bar{a}=\langle a_{(\alpha,i)} : (\alpha,i)\in D\rangle$ 
with
$a_{(\alpha,i)}\in N_{\iota(t\conc\langle (\alpha,i)\rangle)}$ 
for all $(\alpha,i)\in D$. 
Since $\bar a\in \CC^{Y,H}$, the sets
$A_\alpha:=\{a_{(a,i)}:1\leq i<\ddim\}$ converge to some $y_t\in Y$.
We first show that $\langle U^t_\alpha:\alpha<\kappa\rangle$ converges to $y_t$. 
Otherwise, there exists some $\gamma<\kappa$
and a sequence $\langle x_\alpha : \alpha<\kappa\rangle$ with
$x_\alpha\in U^t_\alpha$ 
and $x_\alpha\notin N_{y_t\restr\gamma}$ for unboundedly many $\alpha<\kappa$.
Take any sequence
$\bar{b}=\langle b_{(\alpha,i)} : (\alpha,i)\in D\rangle$ 
with $b_{(\alpha,i)}\in N_{\iota(t\conc\langle (\alpha,i)\rangle)}$
and $x_\alpha \in B_\alpha:= \{b_{(\alpha,i)} : 1\leq i<\ddim\}$. 
Since $\bar b\in \CC^{Y,H}$, the sets
%$B_\alpha:=\{b_{(\alpha,i)} : 1\leq i<\ddim\}$ 
$B_\alpha$
converge to some $z_t\in Y$.
Now $y_t\neq z_t$
since $B_\alpha\not\subseteq N_{y_t\restr\gamma}$ for unboundedly many $\alpha<\kappa$.
For all $(\alpha,i)\in D$, let 
\[
c_{(\alpha,i)}:=
\begin{cases}
a_{(\alpha,i)} &\text{ if $\alpha$ is even,}
\\
b_{(\alpha,i)} &\text{ if $\alpha$ is odd.}
\end{cases}
\]
Since $\iota$ is an order homomorphism for $({}^\kappa\kappa,\CC^{Y,H})$, the sets 
$C_\alpha:= \{ c_{(\alpha,i)} : 1\leq i<\ddim\}$ converge. 
But they cannot converge to both $y_t$ and $z_t$ at the same time. 
It now suffices to show for any $\alpha<\kappa$ that any 
$\bar x=\langle x_{(\alpha,i)}: 1\leq i<\ddim\rangle$ in 
$\prod_{1\leq i<\ddim} N_{\iota(t\conc\langle(\alpha,i)\rangle)}$
is also in $H^{y_t}$.
This follows from the fact that we may extend $\bar x$ to a hyperedge of $\CC^{Y,H}$ by choosing
$x_{(\beta, i)}\in N_{\iota(t\conc\langle(\beta,i)\rangle)}$ arbitrarily for all $\beta\neq\alpha$ and $1\leq i<\ddim$. 
This fact holds since $\iota$ is an order homomorphism for $({}^\kappa\kappa,\CC^{Y,H})$. 
\end{proof}

\begin{remark} 
For any order homomorphism 
$\iota:{}^{<\kappa}D\to{}^{<\kappa}\kappa$ for $({}^\kappa\kappa,\CC^{Y,H})$, we have $\ran(\iota)\subseteq T(Y)$ and $\ran([\iota])\subseteq \closure Y$. 
To see this, note that 
the sets $U^t_\alpha$ converge to some $y_t\in Y$ by the previous lemma. 
Since $U^t_\alpha \subseteq N_{\iota(t)}$ for each $t\in{}^{<\kappa}D$,
we have $y_t\in N_{\iota(t)}$, so $\iota(t)\in T(Y)$. 
\end{remark} 

\begin{theorem}
\label{theorem: ODD and KLW^H}
Suppose $H$ is box-open on ${}^\kappa\kappa$. 
\begin{enumerate-(1)}
\item
\label{theorem: ODD and KLW^H 1}
The following statements are equivalent.
\begin{enumerate-(a)} 
\item
$\CC^{Y,H}\restr X$ admits a $\kappa$-coloring.
\item
$X=\bigcup_{\alpha<\kappa} X_\alpha$
for some sequence $\langle X_\alpha:\alpha<\kappa\rangle$ of sets 
%$X_\alpha$ which have no $H$-limit points in $Y$.
$X_\alpha$ with $X_\alpha^H\cap Y=\emptyset$.
\item
$H\restr (X\cap Y)$ admits a $\kappa$-coloring and
$X\setminus Y$ can be separated from $Y$ by a $\kappa$-union of $H$-closed sets. 
\end{enumerate-(a)} 
\item
\label{theorem: ODD and KLW^H 2}
The following statements are equivalent.
\begin{enumerate-(a)}
\item
\label{theorem: ODD and KLW^H: oh H}
There exists an order homomorphism $\Phi$ for $({}^\kappa\kappa,H)$ 
such that $[\Phi]$ reduces $(\RR^\ddim_\kappa,\QQ^\ddim_\kappa)$ to $(X,Y)$.
\item 
\label{theorem: ODD and KLW^H: ch H}
There exists a continuous homomorphism from $\dhH\ddim$ to $H$ which reduces $(\RR^\ddim_\kappa,\QQ^\ddim_\kappa)$ to $(X,Y)$.
\item
\label{theorem: ODD and KLW^H: ch CC}
There exists a continuous homomorphism from $\dhH D$ to $\CC^{Y,H}\restr X$.
%\item
%\label{theorem: ODD and KLW^H: oh CC}
%There exists an order homomorphism $\iota$ for $({}^\kappa\kappa,\CC^{Y,H})$ with $\ran([\iota])\subseteq X$.
\end{enumerate-(a)}
\end{enumerate-(1)}
Therefore $\KLW^H_\kappa(X,Y)$,
${\sKLW_\kappa}^H(X,Y)$
and $\ODD D{\CC^{Y,H}\restr X}$ are all equivalent. 
\end{theorem}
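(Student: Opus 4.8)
The plan is to prove Theorem~\ref{theorem: ODD and KLW^H} by establishing the two internal equivalences~\ref{theorem: ODD and KLW^H 1} and~\ref{theorem: ODD and KLW^H 2}, and then to assemble the final ``Therefore'' statement from them together with the definitions of $\KLW^H_\kappa(X,Y)$ and ${\sKLW_\kappa}^H(X,Y)$. Part~\ref{theorem: ODD and KLW^H 1} is already done: it is literally the statement of Lemma~\ref{coloring H vs C^XH}, so I would simply cite that lemma, noting that its hypothesis (that $H$ is box-open on ${}^\kappa\kappa$) is exactly our standing assumption. Thus the only genuine work is part~\ref{theorem: ODD and KLW^H 2}, the equivalence of the three forms of the ``second option''.

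For part~\ref{theorem: ODD and KLW^H 2} I would prove the cycle \ref{theorem: ODD and KLW^H: oh H} $\Rightarrow$ \ref{theorem: ODD and KLW^H: ch H} $\Rightarrow$ \ref{theorem: ODD and KLW^H: ch CC} $\Rightarrow$ \ref{theorem: ODD and KLW^H: oh H}. The implication \ref{theorem: ODD and KLW^H: oh H} $\Rightarrow$ \ref{theorem: ODD and KLW^H: ch H} is immediate from Lemma~\ref{homomorphisms and order preserving maps}~\ref{hop 2}: if $\Phi$ is an order homomorphism for $({}^\kappa\kappa,H)$ then $[\Phi]$ is a continuous homomorphism from $\dhH\ddim$ to $H$, and the reduction property $[\Phi](\RR^\ddim_\kappa)\subseteq X$, $[\Phi](\QQ^\ddim_\kappa)\subseteq Y$ is inherited directly. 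The implication \ref{theorem: ODD and KLW^H: ch H} $\Rightarrow$ \ref{theorem: ODD and KLW^H: ch CC} is the heart of the matter: given a continuous homomorphism $f$ from $\dhH\ddim$ to $H$ reducing $(\RR^\ddim_\kappa,\QQ^\ddim_\kappa)$ to $(X,Y)$, I would build a continuous homomorphism $g$ from $\dhH D$ to $\CC^{Y,H}\restr X$. The natural construction mirrors Lemma~\ref{lemma: continuous reduction}: one precomposes $f$ with a suitable continuous map ${}^\kappa D\to{}^\kappa\ddim$ (or rather with $[\pi]$-type reindexing along the coordinates $D=\kappa\times(\ddim\setminus\{0\})$) so that for each $t\in{}^{<\kappa}D$ the ``column'' sequences $\langle x_{(\alpha,i)}\rangle_{1\le i<\ddim}$ land inside a single hyperedge of $H$ witnessing an $H$-limit point, while the whole family converges to the image under $f$ of the appropriate $\QQ^\ddim_\kappa$-point, which lies in $Y$. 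The key is to arrange, using the structure of $\dhH\ddim$, that the relevant $\ddim$-tuples are genuine hyperedges of $\dhH\ddim$ (hence mapped into $H$) and that the limit is hit by a $\QQ^\ddim_\kappa$-point (hence mapped into $Y$); this is exactly what makes $g$ land in $\CC^{Y,H}$, as characterized by Lemma~\ref{lemma: oh for C^XH}. Finally \ref{theorem: ODD and KLW^H: ch CC} $\Rightarrow$ \ref{theorem: ODD and KLW^H: oh H}: given a continuous homomorphism from $\dhH D$ to $\CC^{Y,H}\restr X$, Lemma~\ref{homomorphisms and order preserving maps} (applied to the box-open dihypergraph $\CC^{Y,H}$) yields an order homomorphism $\iota:{}^{<\kappa}D\to{}^{<\kappa}\kappa$ for $({}^\kappa\kappa,\CC^{Y,H})$ with $\ran([\iota])\subseteq X$, and then Lemma~\ref{lemma: oh for C^XH} gives, for each $t\in{}^{<\kappa}D$, the limit $y_t\in Y$ together with the inclusion $\prod_{1\le i<\ddim}N_{\iota(t\conc\langle(\alpha,i)\rangle)}\subseteq H^{y_t}$. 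From this data I would extract an order homomorphism $\Phi$ for $({}^\kappa\kappa,H)$ reducing $(\RR^\ddim_\kappa,\QQ^\ddim_\kappa)$ to $(X,Y)$, in close analogy with the construction of the nice reduction in Lemma~\ref{lemma: nice reduction}: the nodes $\iota(t\conc\langle(\alpha,i)\rangle)$ supply the ``$\RR$-branches'' mapping into $X$, while the limits $y_t\in Y$ supply the ``$\QQ$-branches'' mapping into $Y$.

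Having both equivalences, the final assertion follows formally. Comparing Definition~\ref{def: KLW for hypergraphs} with part~\ref{theorem: ODD and KLW^H 1} and the equivalence \ref{theorem: ODD and KLW^H: ch H} in part~\ref{theorem: ODD and KLW^H 2}, the first option of $\KLW^H_\kappa(X,Y)$ is equivalent to ``$\CC^{Y,H}\restr X$ admits a $\kappa$-coloring'' and its second option is equivalent to ``there is a continuous homomorphism from $\dhH D$ to $\CC^{Y,H}\restr X$''; these two are precisely the two options of $\ODD{D}{\CC^{Y,H}\restr X}$ (recalling that $\CC^{Y,H}$ is a $D$-dihypergraph and is box-open on ${}^\kappa\kappa$ under our hypothesis). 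Hence $\KLW^H_\kappa(X,Y)\Longleftrightarrow\ODD{D}{\CC^{Y,H}\restr X}$. Likewise, matching the second option of ${\sKLW_\kappa}^H(X,Y)$ with form~\ref{theorem: ODD and KLW^H: oh H} gives ${\sKLW_\kappa}^H(X,Y)\Longleftrightarrow\ODD{D}{\CC^{Y,H}\restr X}$, so all three are equivalent.

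I expect the main obstacle to be the explicit reindexing in \ref{theorem: ODD and KLW^H: ch H} $\Rightarrow$ \ref{theorem: ODD and KLW^H: ch CC} and the dual extraction in \ref{theorem: ODD and KLW^H: ch CC} $\Rightarrow$ \ref{theorem: ODD and KLW^H: oh H}: one must keep careful track of how the index set $D=\kappa\times(\ddim\setminus\{0\})$ interacts with the two ``layers'' of the construction, namely the outer $\kappa$-indexed convergence to a point of $Y$ and the inner $(\ddim\setminus\{0\})$-indexed hyperedge of $H^{y}$. The bookkeeping is routine given Lemmas~\ref{lemma: oh for C^XH} and~\ref{lemma: nice reduction}, but it is where sign errors and coordinate confusions are most likely; I would model the argument line-by-line on the proofs of those lemmas and on Lemma~\ref{lemma: continuous reduction} to keep it controlled.
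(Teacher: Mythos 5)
Your proposal is correct and follows essentially the same route as the paper's own proof: part (1) is cited from Lemma~\ref{coloring H vs C^XH}, and part (2) is proved via the same cycle \ref{theorem: ODD and KLW^H: oh H} $\Rightarrow$ \ref{theorem: ODD and KLW^H: ch H} $\Rightarrow$ \ref{theorem: ODD and KLW^H: ch CC} $\Rightarrow$ \ref{theorem: ODD and KLW^H: oh H}, with the middle implication given by precomposing with the reindexing map $\pi_\ddim$ (as in Lemma~\ref{lemma: continuous reduction}) and the last by extracting $\Phi$ from Lemmas~\ref{homomorphisms and order preserving maps} and~\ref{lemma: oh for C^XH} via the construction of Lemma~\ref{lemma: nice reduction}. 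The final equivalence of $\KLW^H_\kappa(X,Y)$, ${\sKLW_\kappa}^H(X,Y)$ and $\ODD D{\CC^{Y,H}\restr X}$ is then assembled formally exactly as you describe.
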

\begin{proof}
\ref{theorem: ODD and KLW^H 1} was proved in Lemma~\ref{coloring H vs C^XH}.
For~\ref{theorem: ODD and KLW^H 2},
define the map $\pi_{\ddim}:{}^{<\kappa}D\to{}^{<\kappa}\ddim$ 
\index{embedding!of zkj@of ${}^{<\kappa}(\kappa\times\ddim\setminus\{0\})$ into ${}^{<\kappa}\ddim$\idf$\pi_\ddim$}%
\index{embedding!of zkk@of ${}^{\kappa}(\kappa\times\ddim\setminus\{0\})$ into ${}^{\kappa}\ddim$\idf$[\pi_\ddim]$}%
by letting 
$$\pi_{\ddim}(t)=
\bigoplus_{\alpha<\lh(t)}\big(\langle0\rangle^{{t(\alpha)}_0}
\conc\langle {t(\alpha)}_1\rangle\big),
$$
where 
${t(\alpha)}_0$ and ${t(\alpha)}_1$ denote the unique elements of $\kappa$ and $\ddim\,\setminus\{0\}$, respectively, 
with $t(\alpha)=\big({t(\alpha)}_0,{t(\alpha)}_1\big)$.
Note that $\RR^\ddim_\kappa=\ran([\pi_{\ddim}])$ and 
\index{sequences!with cofinally many nonzero values in!d@${}^{<\kappa}\ddim$\idf$\SS^\ddim_\kappa$}%
$$\SS^\ddim_\kappa := \ran(\pi_{\ddim})$$ 
consists of those nodes $u\in{}^{<\kappa}\ddim$ such that $u(\alpha)\neq 0$ for cofinally many $\alpha<\lh(u)$. 

\ref{theorem: ODD and KLW^H: oh H} $\Rightarrow$ \ref{theorem: ODD and KLW^H: ch H} is clear.
For
\ref{theorem: ODD and KLW^H: ch H} $\Rightarrow$ \ref{theorem: ODD and KLW^H: ch CC},
let $f$ be a continuous homomorphism from $\dhH\ddim$ to $H$ which reduces $(\RR^\ddim_\kappa,\QQ^\ddim_\kappa)$ to $(X,Y)$.
We show that $f\comp[\pi_\ddim]$ is a  continuous homomorphism from $\dhH D$ to $\CC^{Y,H}\restr X$.
To see this, 
suppose that 
$\bar a:=\langle a_{(\alpha,i)}:(\alpha,i)\in D\rangle$
is a hyperedge of $\dhH D$.
Take $t\in{}^{<\kappa}D$ 
with $t\conc\langle(\alpha,i)\rangle\subsetneq a_{(\alpha,i)}$ for each $(\alpha,i)\in D$.
Since
$[\pi_\ddim](a_{(\alpha,i)})$
extends
$\pi_\ddim(t\conc \langle(\alpha,i)\rangle)=
\pi_\ddim(t)\conc\langle 0\rangle^{\alpha}\conc\langle i\rangle$
%extends $\pi_\ddim(t)\conc\langle 0\rangle^{\alpha}$
for each $(\alpha,i)\in D$,
the sets 
$$X_\alpha:= \big\{[\pi_\ddim](a_{(\alpha,i)}): 1\leq i<\ddim\big\}$$
converge to 
$y_t:=\pi_{\ddim}(t)\conc\langle 0\rangle^\kappa$.
Since $f$ is continuous, the sets
$f(X_\alpha)$ converge to $f(y_t)$.
Since $f$ is a reduction of $(\RR^\ddim_\kappa,\QQ^\ddim_\kappa)$ to $(X,Y)$,
$f(X_\alpha)\subseteq X$ for each $\alpha<\kappa$ and $f(y_t)\in Y$.
Since $\prod_{i<\ddim} N_{\pi_\ddim(t)\conc\langle 0\rangle^{\alpha}\conc\langle i\rangle} \subseteq \dhH\ddim$ for each $\alpha<\kappa$ and $f$ is a homomorphism from $\dhH\ddim$ to~$H$, $(f\comp[\pi_\ddim])(\bar a)$ is a hyperedge of $\CC^{Y,H}\restr X$, as required.

\begin{comment} %%%%VERSION FOR ORDER HOMOMORPHISMS
We claim that $\iota:=\Phi\comp\pi_{\kappa\times\kappa}$ is an order homomorphism for $({}^\kappa\kappa,\CC^{X,H})$.
To see this, fix $t\in{}^{<\kappa}(\kappa\times\kappa)$ and let
$x_t:=[\Phi]\big(\pi_{\kappa\times\kappa}(t)\conc\langle 0\rangle^\kappa\big)$.
Then $x_t\in X$ as 
%$\pi_{\kappa\times\kappa}(t)\conc\langle 0\rangle^\kappa\in\QQ^\kappa_\kappa$.  
$[\Phi](\QQ^\kappa_\kappa)\subseteq X$.
Since 
\[
\iota(t\conc\langle(\alpha,i)\rangle =
\Phi\big(\pi_{\kappa\times\kappa}(t)\conc\langle 0\rangle^{\alpha}\conc\langle 1+i\rangle\big)
\]
extends
$\Phi\big(\pi_{\kappa\times\kappa}(t)\conc\langle 0\rangle^\alpha\big)$
for all $(\alpha,i)\in\kappa\times\kappa$,
the sets
$U^t_\alpha:=\bigcup_{i<\kappa} N_{\iota(t\conc\langle(\alpha,i)\rangle)}$
converge to $x_t$.
%
By Lemma~\ref{lemma: oh for C^XH}, it now suffices 
to show that
$\prod_{i<\kappa} N_{\iota(t\conc\langle(\alpha,i)\rangle)}\subseteq H^{x_t}$
for each $\alpha<\kappa$.
Let $u:=\pi_{\kappa\times\kappa}(t)\conc\langle 0\rangle^\alpha$.
Then ${u\conc\langle 0\rangle}\subseteq x_t$ and
$\iota(t\conc\langle(\alpha,i)\rangle)=\Phi(u\conc\langle 1+i\rangle)$,
so we have
$\{x_t\}\times
\prod_{i<\kappa} N_{\iota(t\conc\langle(\alpha,i)\rangle)}
\subseteq 
\prod_{i<\kappa} N_{\Phi(u\conc\langle i\rangle)}\subseteq H$, as required.
\end{comment}

To show~\ref{theorem: ODD and KLW^H: ch CC} $\Rightarrow$ \ref{theorem: ODD and KLW^H: oh H},
suppose $f$ is a continuous homomorphism from $\dhH D$ to $\CC^{Y,H}\restr X$
and therefore to $\CC^{Y,H}$ as well.
Then there exists an
order homomorphism 
$\iota: {}^{<\kappa} D\to{}^{<\kappa}\kappa$ 
for $({}^\kappa\kappa,\CC^{Y,H})$
with $\ran([\iota])\subseteq \ran(f)\subseteq X$
by Lemma~\ref{homomorphisms and order preserving maps}.
For each $t\in{}^{<\kappa} D$,
the sets $U^t_\alpha:=\bigcup_{1\leq i<\ddim} N_{\iota(t\conc\langle(\alpha,i)\rangle)}$
converge to some $y_t\in Y$ 
by Lemma~\ref{lemma: oh for C^XH}.
We use a variant of the construction in the proof of Lemma~\ref{lemma: nice reduction}.
We shall define continuous strict order preserving functions 
$\Phi: {}^{<\kappa}\ddim\to {}^{<\kappa}\kappa$ 
and 
$e: \SS^\ddim_\kappa\to {}^{<\kappa}D$ with the following properties: 
\begin{enumerate-(i)} 
\item\label{prop ODD for C^HX 1}
$\Phi{\restr}\SS^\ddim_\kappa = \iota \circ e$. 
\item\label{prop ODD for C^HX 2}
$\Phi(s\conc\langle0\rangle^\alpha)\subseteq y_{e(s)}$ for all $s\in \SS^\ddim_\kappa$ and $\alpha<\kappa$. 
\item\label{prop ODD for C^HX 3}
$\prod_{\alpha<\kappa}N_{\Phi(t\conc\langle\alpha\rangle)}\subseteq H$ for all $t\in{}^{<\kappa}\kappa$.
\end{enumerate-(i)}

We define $\Phi(t)$ and $e(t)$ by recursion. 
Regarding the order of construction,  
let $e(\emptyset):=\emptyset$. %and $\Phi(\emptyset)=\iota(\emptyset)$.
If $s\in \SS^\ddim_\kappa$ and $\lh(s)\in\Lim$, let 
$e(s):=\bigcup_{t\subsetneq s}e(t)$.
If $e(s)$ is defined for some $s\in \SS^\ddim_\kappa$, 
we define $\Phi(s\conc \langle0\rangle^{\alpha})$, $\Phi(s\conc\langle0\rangle^{\alpha}{}\conc\langle i\rangle)$ 
and
$e(s\conc\langle0\rangle^{\alpha}{}\conc\langle i\rangle)$ 
for all $\alpha<\kappa$ and all 
$i\in\ddim\,\setminus\{0\}$
in the next step by recursion on $\alpha$, as follows
(see Figure~\ref{figure: ODD(kappa^kappa) main theorem}).

\vspace{5pt} 
{%
\newcommand{\x}{0.6}% 
\newcommand{\y}{1}% 
\newcommand{\z}{8}%
\begin{figure}[H]
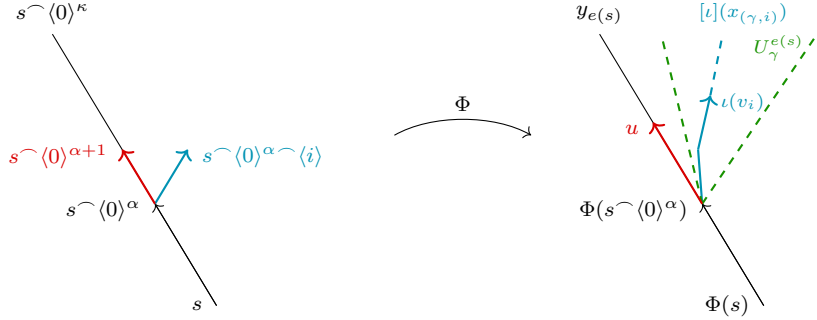
 
%\begin{figure}[h] 
\centering
%\begin{tikzpicture} 
\tikz[scale=0.9,font=\scriptsize]{ 
\draw 
(\x-130pt,0); 

\draw 
(\x+2,2.5\y) edge[->,bend left, looseness=0.8] (\x+4,2.5\y); 

\draw 
    (\x+3,2.7\y) node[above] {$\Phi$}; 

\draw 
(0*\x,0) edge[-] (-4*\x,4*\y) 
(0*\x,0) edge[->] (-1.5*\x,1.5*\y); 

\draw[thick,color={TealArrow}]  
(-1.5*\x,1.5*\y) edge[->] (-0.7*\x,2.3*\y); 

\draw[thick,color={RedArrow}] 
(-1.5*\x,1.5*\y) edge[->] (-2.3*\x,2.3*\y); 

\draw %empty nodes for correct placement 
    (0*\x,0*\y) node[above] {} 
    (4*\x,4*\y) node[above] {}; 

\draw
    (-4*\x,4*\y) node[above] {$s\conc\langle 0\rangle^\kappa$}; 

\draw
    (-0.1*\x,0*\y) node[left] {$s$} 
    (-0.6*\x,2.2*\y) node[right] {\color{TealArrow}$s\conc\langle 0\rangle^{\alpha}\conc\langle i\rangle$} 
    (-1.6*\x,1.4*\y) node[left] {$s\conc\langle 0\rangle^{\alpha}$} 
    (-2.4*\x,2.2*\y) node[left] {\color{RedArrow}$s\conc\langle 0\rangle^{\alpha+1}$};

    \draw 
(0*\x+\z,0) edge[-] (-4*\x+\z,4*\y) 
(0*\x+\z,0) edge[->] (-1.5*\x+\z,1.5*\y); 

\draw[thick,color={TealArrow}]  
(-1.5*\x+\z,1.5*\y) edge[-] (-1.6*\x+\z,2.3*\y) 
(-1.6*\x+\z,2.3*\y) edge[->] (-1.3*\x+\z,3.1*\y); 

\draw[-, thick, dashed, color={TealArrow}]  
(-1.3*\x+\z,3.1*\y) edge[-] (-1*\x+\z,4*\y); 

\draw[-, thick, dashed, color={DarkGreenArrow}]  
(-1.5*\x+\z,1.5*\y) edge[-] (-2.5*\x+\z,4*\y) 
(-1.5*\x+\z,1.5*\y) edge[-] (1.3*\x+\z,4*\y); 

\draw[thick,color={RedArrow}] 
(-1.5*\x+\z,1.5*\y) edge[->] (-2.7*\x+\z,2.7*\y);

\draw %empty nodes for correct placement 
    (0*\x+\z,0*\y) node[above] {} 
    (4*\x+\z,4*\y) node[above] {}; 

\draw
    (-4*\x+\z,4*\y) node[above] {$y_{e(s)}$}
    (-0.5*\x+\z,4*\y) node[above] {\color{TealArrow} \tiny $[\iota](x_{(\gamma,i)})$}
    (-0.5*\x+\z,3.8*\y) node[right] {\color{DarkGreenArrow}  \tiny $U^{e(s)}_\gamma$}; 
     
\draw
    (-0.1*\x+\z,0*\y) node[left] { $\Phi(s)$} 
    (-1.3*\x+\z,3.0*\y) node[right] {\color{TealArrow} \tiny $\iota(v_i)$} 
    (-1.6*\x+\z,1.4*\y) node[left] {$\Phi(s\conc\langle 0\rangle^{\alpha})$} 
    (-2.8*\x+\z,2.6*\y) node[left] {\color{RedArrow}$u$}; 

   } 
    %\end{tikzpicture} 
\caption{Defining $\Phi$ in the successor step} 
\label{figure: ODD(kappa^kappa) main theorem}
\end{figure}% 
}%
For each $(\beta,i)\in D$, fix some 
$x_{(\beta,i)}\in {}^\kappa D$ with
$e(s)\conc\langle(\beta,i)\rangle\subseteq x_{(\beta,i)}$.
Let $\Phi(s):=\iota(e(s))$.
If $\alpha\in\Lim$, let
$\Phi(s\conc \langle0\rangle^{\alpha}):=
\bigcup_{\beta<\alpha}
\Phi(s\conc \langle0\rangle^{\beta})$. 
Now, assume that $\Phi(s\conc\langle 0\rangle^{\alpha})$ has been defined for some $\alpha<\kappa$.
Since $\Phi(s\conc\langle 0\rangle^{\alpha})\subsetneq y_{e(s)}$ and the sets $U^{e(s)}_\beta$ converge to $y_{e(s)}$, 
there exists $\gamma<\kappa$ with 
$U^{e(s)}_\gamma\subseteq N_{\Phi(s\conc\langle 0\rangle^{\alpha})}$.
\todog{We can also get $e$ to be $\wedge$-preserving, if
we make sure that the $\gamma=\gamma_\alpha$ chosen for this $\alpha$ differs from the $\gamma_\beta$'s for all $\beta<\alpha$.
So if $\iota$ is $\perp$-preserving, 
then $\Phi\restr \SS^\ddim_\kappa=\iota\comp e$ is also $\perp$-preserving.}
In particular, we have
$\Phi(s\conc\langle 0\rangle^{\alpha})\subseteq [\iota](x_{(\gamma,i)})$
for every 
$i\in\ddim\,\setminus\{0\}$.
Since 
$\langle [\iota](x_{(\beta,i)}): (\beta,i)\in D\rangle$ 
is a hyperedge of $\CC^{Y,H}$, 
\todog{This is why we needed to fix all the $x_{(\beta,i)}$'s first, instead of choosing them one by one for each $\beta$}
we have 
$$\langle y_{e(s)}\rangle\conc\langle [\iota](x_{(\gamma,i)}): 1\leq i<\ddim\rangle\in H.$$
Since $H$ is box-open on ${}^\kappa\kappa$, 
there exist
$u\subsetneq y_{e(s)}$ and 
$v_i\subsetneq x_{(\gamma,i)}$
with
$\Phi(s\conc\langle 0\rangle^{\alpha})\subsetneq u$ 
and
$e(s)\conc\langle(\gamma,i)\rangle\subseteq v_i$
such that
$N_{u}\times\prod_{1\leq i<\ddim} N_{\iota(v_i)}\subseteq H$.
Now let $\Phi(s\conc \langle0\rangle^{\alpha+1}):=u$, 
$e(s\conc\langle0\rangle^{\alpha}{}\conc\langle {i}\rangle):=v_i$ 
and 
$\Phi(s\conc\langle0\rangle^{\alpha}{}\conc\langle{i}\rangle):=\iota(v_i)$ 
for all $i\in\ddim\,\setminus\{0\}$.
This completes the construction of $\Phi$.

We have $[\Phi](\RR^\ddim_\kappa)\subseteq X$ 
since $[\Phi]\restr\RR^\ddim_\kappa=[\iota]\comp[e]$
by~\ref{prop ODD for C^HX 1} and $\ran([\iota])\subseteq X$.
Moreover, $[\Phi](\QQ^\ddim_\kappa)\subseteq \{y_{e(s)}: s\in\SS^\ddim_\kappa\}\subseteq Y$
by~\ref{prop ODD for C^HX 2}.
Lastly, $\Phi$ is an order homomorphism for $({}^\kappa\kappa,H)$ by~\ref{prop ODD for C^HX 3}.
\end{proof}
%%%%%%%%%%%%

The next result follows from Theorem~\ref{main theorem} and the previous theorem. 
We will use the following %analogous 
notation for classes $\mathcal C$, $\mathcal D$ and $\mathcal H$. 
\index{Kechris--Louveau--Woodin dichotomy!C@for $\ddim$-dihypergraphs!AA@for classes\idf$\KLW_\kappa^{\ddim,\mathcal H}(\mathcal C,\mathcal D)$}%
$\KLW_\kappa^{\ddim,\mathcal H}(\mathcal C,\mathcal D)$ denotes 
the statement that $\KLW^H_\kappa(X,Y)$ holds for all subsets $X\in\mathcal C$ and $Y\in\mathcal D$ of ${}^\kappa\kappa$
and all box-open $\ddim$-dihypergraphs $H\in\mathcal H$ on ${}^\kappa\kappa$.
\index{Kechris--Louveau--Woodin dichotomy!C@for $\ddim$-dihypergraphs!AB@for classes of sets\idf$\KLW_\kappa^{\ddim}(\mathcal C,\mathcal D)$}%
We omit 
$\mathcal D$ from the notation if it is $\powerset({}^\kappa\kappa)$, and we omit
$\mathcal H$ if it is the class of all $\ddim$-dihypergraphs on ${}^\kappa\kappa$. 
\index{Kechris--Louveau--Woodin dichotomy!C@for $\ddim$-dihypergraphs!B@for a class of sets\idf$\KLW^{\ddim}_\kappa(\mathcal C)$}%
For example, $\KLW^{\ddim}_\kappa(\mathcal C)$ states
that 
$\KLW^H_\kappa(X,Y)$ holds for all $X,Y\subseteq{}^\kappa\kappa$ with $X\in\mathcal C$ and all box-open $\ddim$-dihypergraphs $H$ on~${}^\kappa\kappa$.
\index{Kechris--Louveau--Woodin dichotomy!C@for $\ddim$-dihypergraphs!B@for a set\idf$\KLW^{\ddim}_\kappa(X)$}%
\index{Kechris--Louveau--Woodin dichotomy!C@for $\ddim$-dihypergraphs!B@for a set and class\idf$\KLW^{\ddim}_\kappa(X,\mathcal C)$}%
If $\mathcal C$ has one single element $X$, then 
we write $X$ instead of $\mathcal C$, and similarly for $\mathcal D$ and~$\mathcal H$. 

\begin{corollary}
\label{corollary: KLW^H cons}
For any $2\leq\ddim\leq\kappa$,
the following hold in all $\Col(\kappa,\lle\lambda)$-generic extensions of $V$:
\begin{enumerate-(1)}
%\item
%$\KLW^{H}_\kappa(\defsetsk,\defsetsk)$
%for all box-open $\kappa$-dihypergraphs $H\in\defsetsk$ on ${}^\kappa\kappa$
%if $\lambda>\kappa$ is inaccessible in $V$.
\item
$\KLW^{\ddim,\defsetsk}_\kappa(\defsetsk,\defsetsk)$
if $\lambda>\kappa$ is inaccessible in $V$.
\item
$\KLW^{\ddim}_\kappa(\defsetsk)$
if $\lambda>\kappa$ is Mahlo in~$V$.
\end{enumerate-(1)}
\end{corollary}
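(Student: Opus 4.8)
The plan is to deduce Corollary~\ref{corollary: KLW^H cons} directly by combining the main consistency result, Theorem~\ref{main theorem}, with the equivalence established in Theorem~\ref{theorem: ODD and KLW^H}. The point is that Theorem~\ref{theorem: ODD and KLW^H} reduces the dichotomy $\KLW^H_\kappa(X,Y)$ to an instance of the open dihypergraph dichotomy $\ODD\kappa{\CC^{Y,H}\restr X}$ for a suitable $D$-dihypergraph, where $D=\kappa\times(\ddim\setminus\{0\})$. Since $|D|=\kappa$ whenever $2\leq\ddim\leq\kappa$, this is a $\kappa$-dimensional open dihypergraph dichotomy, which falls exactly within the scope of Theorem~\ref{main theorem}.

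For part~(1), I would argue as follows. Work in $V[G]$ where $\lambda>\kappa$ is inaccessible in $V$. Fix $2\leq\ddim\leq\kappa$, a box-open $\ddim$-dihypergraph $H\in\defsetsk$ on ${}^\kappa\kappa$, and subsets $X,Y\in\defsetsk$ of ${}^\kappa\kappa$. By Theorem~\ref{theorem: ODD and KLW^H}, it suffices to prove $\ODD\kappa{\CC^{Y,H}\restr X}$ (after identifying $\CC^{Y,H}$ with a $\kappa$-dihypergraph $\bar\CC^{Y,H}$ via a bijection $D\to\kappa$). As noted in the paragraph defining $\CC^{Y,H}$, since $H$ is box-open on ${}^\kappa\kappa$ the dihypergraph $\bar\CC^{Y,H}$ is box-open on ${}^\kappa\kappa$, and it lies in $\defsetsk$ because $Y,H\in\defsetsk$. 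Thus $X\in\defsetsk$ and $\bar\CC^{Y,H}\in\defsetsk$ is a box-open $\kappa$-dihypergraph, so $\ODD\kappa{\bar\CC^{Y,H}\restr X}$ is precisely an instance of $\ODD\kappa\kappa(\defsetsk,\defsetsk)$, which holds by Theorem~\ref{main theorem}~\ref{mainthm ddim=kappa}. This gives $\KLW^H_\kappa(X,Y)$ for all such $X,Y,H$, i.e. $\KLW^{\ddim,\defsetsk}_\kappa(\defsetsk,\defsetsk)$.

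For part~(2), the argument is the same except that $Y$ and $H$ are now allowed to be arbitrary (no definability assumption), and $\lambda$ is assumed Mahlo in $V$. Here $\bar\CC^{Y,H}$ need not be in $\defsetsk$, but it is still a box-open $\kappa$-dihypergraph on ${}^\kappa\kappa$ and $X\in\defsetsk$. Therefore $\ODD\kappa{\bar\CC^{Y,H}\restr X}$ is an instance of $\ODD\kappa\kappa(\defsetsk)$, which holds by Theorem~\ref{main theorem}~\ref{mainthm Mahlo}. Invoking Theorem~\ref{theorem: ODD and KLW^H} once more yields $\KLW^H_\kappa(X,Y)$ for all subsets $Y$ of ${}^\kappa\kappa$, all box-open $\ddim$-dihypergraphs $H$, and all $X\in\defsetsk$, which is exactly $\KLW^{\ddim}_\kappa(\defsetsk)$.

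The routine but necessary bookkeeping is to confirm the box-openness and definability claims about $\bar\CC^{Y,H}$, and to verify that the identification $D\to\kappa$ preserves box-openness of the dihypergraph and the relevant colorings and homomorphisms; these are already recorded in the discussion preceding Lemma~\ref{lemma: coloring C^XH}. I expect the only genuine subtlety to be confirming that $\ODD\kappa\kappa(\defsetsk)$ really applies to $\bar\CC^{Y,H}\restr X$ in part~(2): Theorem~\ref{main theorem}~\ref{mainthm Mahlo} quantifies over box-open $\ddim$-dihypergraphs $H'$ on ${}^\kappa\kappa$ with no definability restriction and subsets $X'\in\defsetsk$, and then asserts $\ODD\kappa{H'\restr X'}$, which is precisely the form we need. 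Everything else is a direct citation, so the main work is simply to present this reduction cleanly.
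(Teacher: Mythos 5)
Your proposal is correct and follows exactly the paper's own route: the paper derives Corollary~\ref{corollary: KLW^H cons} by combining Theorem~\ref{main theorem} with Theorem~\ref{theorem: ODD and KLW^H}, using the identification of $\CC^{Y,H}$ with the $\kappa$-dihypergraph $\bar\CC^{Y,H}$ (whose box-openness and definability are recorded just before Lemma~\ref{lemma: coloring C^XH}), precisely as you do. Your bookkeeping in both the inaccessible and Mahlo cases matches the intended argument, so there is nothing to add.
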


%%%%%%%%%%%%%%%%%%%%%%%

The next corollaries show that the $\ddim$-dimensional open dihypergraph dichotomy is a special case 
of $\KLW^\ddim_\kappa(X)$ for all dimensions $2\leq\ddim\leq\kappa$.
Moreover, the two dichotomies are equivalent in the $\kappa$-dimensional case.

\begin{corollary} 
\label{theorem: KLW H and ODD H}
Suppose that $H$ is box-open on ${}^\kappa\kappa$.%
\footnote{This corollary extends Corollary~\ref{cor: PSP from ODD for the kappa-Baire space} from the complete graph to arbitrary box-open dihypergraphs.}
\begin{enumerate-(1)} 
\item 
\label{theorem: KLW H and ODD H a}
$\KLW_\kappa^H(X,Y)\Longleftrightarrow\ODD\kappa {H\restr (X\cap Y)}$ 
if $X\setminus Y$  can be separated from $Y$ by a union of $\kappa$ many $H$-closed sets. 
\item 
\label{theorem: KLW H and ODD H b}
$\KLW_\kappa^H(X\setminus Y,Y)\land\ODD\kappa {H\restr (X\cap Y)}\Longrightarrow\KLW^H_\kappa(X,Y)$. 
\end{enumerate-(1)} 
\end{corollary}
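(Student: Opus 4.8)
The plan is to derive both parts of Corollary~\ref{theorem: KLW H and ODD H} from Theorem~\ref{theorem: ODD and KLW^H} together with Lemma~\ref{lemma: coloring C^XH}. The key observation is that, by Theorem~\ref{theorem: ODD and KLW^H}, the statement $\KLW^H_\kappa(X,Y)$ is equivalent to $\ODD D{\CC^{Y,H}\restr X}$, where $D=\kappa\times(\ddim\setminus\{0\})$. Thus it suffices to compare the dichotomy $\ODD D{\CC^{Y,H}\restr X}$ with $\ODD\kappa{H\restr(X\cap Y)}$ under the respective hypotheses on how $X\setminus Y$ sits relative to $Y$.

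For part~\ref{theorem: KLW H and ODD H a}, I would first invoke Theorem~\ref{theorem: ODD and KLW^H}~\ref{theorem: ODD and KLW^H 1}: the coloring option of $\KLW^H_\kappa(X,Y)$ (i.e.\ $\CC^{Y,H}\restr X$ has a $\kappa$-coloring) is equivalent to the conjunction that $H\restr(X\cap Y)$ admits a $\kappa$-coloring and that $X\setminus Y$ can be separated from $Y$ by a $\kappa$-union of $H$-closed sets. By the hypothesis of~\ref{theorem: KLW H and ODD H a}, the second conjunct holds outright, so the coloring option of $\KLW^H_\kappa(X,Y)$ reduces to the statement that $H\restr(X\cap Y)$ admits a $\kappa$-coloring, which is exactly the coloring option of $\ODD\kappa{H\restr(X\cap Y)}$. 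It then remains to match the second (homomorphism) options. Here the cleanest route is to show directly that $\CC^{Y,H}\restr X \equivf H\restr(X\cap Y)$ as relatively box-open dihypergraphs and apply Corollary~\ref{cor: ODD subsequences}; concretely, using Lemma~\ref{lemma: coloring C^XH} a set $A\subseteq X$ is $\CC^{Y,H}$-independent iff $A^H\cap Y=\emptyset$, and under the separation hypothesis one checks that a subset $A$ of $X$ has $\CC^{Y,H}\restr A$ colorable iff $H\restr(A\cap Y)$ is colorable, which is precisely $\equivf$. Since $\CC^{Y,H}$ is box-open on ${}^\kappa\kappa$ when $H$ is (as noted after its definition) and $H\restr(X\cap Y)$ is relatively box-open, Corollary~\ref{cor: ODD subsequences} yields the equivalence of the two instances of the dichotomy. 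This is the step I expect to require the most care: one must verify the fullness comparison uniformly over all subsets $A\subseteq X$, not just for $A=X$, and confirm that both dihypergraphs are genuinely relatively box-open so that the machinery of Subsection~\ref{subsection: full dihypergraphs} applies. The $\equivf$-verification is exactly parallel to the proof of Corollary~\ref{cor: PSP from ODD for the kappa-Baire space}~\ref{cor: PSP from ODD for the kappa-Baire space a}, which handles the special case $H=\gK{{}^\kappa\kappa}$, so the argument can follow that template.

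For part~\ref{theorem: KLW H and ODD H b}, I would argue as follows. Assume $\KLW^H_\kappa(X\setminus Y,Y)$ and $\ODD\kappa{H\restr(X\cap Y)}$; we must establish $\KLW^H_\kappa(X,Y)$. The natural strategy is to split into cases according to which disjunct of the given hypotheses holds. If the homomorphism option of $\KLW^H_\kappa(X\setminus Y,Y)$ holds, then there is a continuous homomorphism from $\dhH\ddim$ to $H$ reducing $(\RR^\ddim_\kappa,\QQ^\ddim_\kappa)$ to $(X\setminus Y,Y)$, and since $X\setminus Y\subseteq X$ this same map witnesses the homomorphism option of $\KLW^H_\kappa(X,Y)$. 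Similarly, if the $\kappa$-perfect/homomorphism option of $\ODD\kappa{H\restr(X\cap Y)}$ holds, the resulting continuous homomorphism into $H\restr(X\cap Y)$ maps $\RR^\ddim_\kappa$ into $X\cap Y\subseteq X$ and $\QQ^\ddim_\kappa$ into $X\cap Y\subseteq Y$, again giving the homomorphism option of $\KLW^H_\kappa(X,Y)$ (noting the relevant reduction structure, which must be checked). In the remaining case both coloring options hold: $X\setminus Y$ is covered by $\kappa$ sets $A_\alpha$ with $A_\alpha^H\cap Y=\emptyset$, and $H\restr(X\cap Y)$ admits a $\kappa$-coloring into sets $B_\beta$ which, being $H$-independent subsets of $X\cap Y\subseteq Y$, also satisfy $B_\beta^H\cap Y=\emptyset$ after passing to closures via Lemma~\ref{lemma: coloring C^XH} (an $H$-independent set has empty $H$-limit set, so $B_\beta^H\cap Y=\emptyset$). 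Combining the two families gives a covering of $X=(X\setminus Y)\cup(X\cap Y)$ by $\kappa$ many sets with empty $H$-limit intersection with $Y$, which is exactly the coloring option of $\KLW^H_\kappa(X,Y)$ via Lemma~\ref{lemma: coloring C^XH}. The main subtlety here is the bookkeeping in the combined-coloring case: one needs that an $H$-clique-free (independent) subset $B$ of $X\cap Y$ really does have $B^H\cap Y=\emptyset$, which follows from the characterization in Lemma~\ref{lemma: coloring C^XH} applied to $H$ itself together with the fact that $H$-independence of $B$ forces no hyperedge $\langle y\rangle\conc\bar x$ with $y\in Y\cap\overline B$ and $\bar x$ in $B$. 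Modulo this verification, part~\ref{theorem: KLW H and ODD H b} follows formally from part~\ref{theorem: KLW H and ODD H a} or directly from the case analysis.
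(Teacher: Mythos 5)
Your proposal is correct and matches the paper's own argument: part~\ref{theorem: KLW H and ODD H a} is proved exactly as in the paper, by combining Theorem~\ref{theorem: ODD and KLW^H} with the equivalence $\CC^{Y,H}\restr X\equivf H\restr(X\cap Y)$ (verified for every $A\subseteq X$ via the coloring characterization, using that the separating family for $X\setminus Y$ also separates $A\setminus Y$ from $Y$) and then Corollary~\ref{cor: ODD subsequences}. Your case analysis for part~\ref{theorem: KLW H and ODD H b} is precisely the content behind the paper's remark that \ref{theorem: KLW H and ODD H b} follows from \ref{theorem: KLW H and ODD H a}; the only citation to adjust is that the fact that an $H$-independent set $B$ satisfies $B^H=\emptyset$ rests on box-openness of $H$ via Corollary~\ref{independence and closure} (so that $\closure{B}$ is $H$-independent and hence no hyperedge $\langle y\rangle\conc\bar x$ with $y\in\closure{B}$, $\bar x$ in $B$ can exist), not on Lemma~\ref{lemma: coloring C^XH} alone.
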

\begin{proof}
For \ref{theorem: KLW H and ODD H a}, it suffices 
to show that $\ODD\kappa{\CC^{Y,H}\restr X}\Longleftrightarrow\ODD\kappa{H\restr{(X\cap Y)}}$ by Theorem \ref{theorem: ODD and KLW^H}.
By Corollary~\ref{cor: ODD subsequences}, it further suffices that $\CC^{Y,H}\restr X\equivf H\restr (X\cap Y)$, 
or equivalently, that $\CC^{Y,H}\restr A$ admits a $\kappa$-coloring if and only if $H\restr(A\cap Y)$ admits a $\kappa$-coloring for all subsets $A$ of $X$. 
But this follows from
Theorem \ref{theorem: ODD and KLW^H}~\ref{general KLW from ODD coloring}. 
Moreover, \ref{theorem: KLW H and ODD H b} follows from \ref{theorem: KLW H and ODD H a}. 
\end{proof}

\begin{corollary}\ 
\label{cor: from KLW to ODD}
Suppose $2\leq\ddim\leq\kappa$.%
\footnote{\ref{cor: from KLW to ODD 2} and \ref{cor: from KLW to ODD 3} are analogues of Corollary~\ref{cor: PSP from ODD for the kappa-Baire space 2}~\ref{cor: PSP from ODD for the kappa-Baire space 2 a} and \ref{cor: PSP from ODD for the kappa-Baire space 2 b}.} 
\begin{enumerate-(1)}
\item\label{cor: from KLW to ODD 1}
If $X\setminus Y$ can be separated from $Y$ by a $\Fsigma(\kappa)$ set, then
\begin{enumerate-(a)}
\item 
$\KLW_\kappa^\ddim(X,Y)\Longleftrightarrow\ODD\kappa\ddim(X\cap Y)$, 
\item 
$\KLW_\kappa^{\ddim,\defsetsk}(X,Y)\Longleftrightarrow\ODD\kappa\ddim(X\cap Y,\defsetsk)$.
\end{enumerate-(a)}
\vspace{2 pt}
\item\label{cor: from KLW to ODD 2}
\begin{enumerate-(a)}
\item\label{cor: from KLW to ODD 2 a}
$
\KLW_\kappa^\ddim(X,{}^\kappa\kappa)
\Longleftrightarrow
\ODD\kappa\ddim(X)
$,
\item\label{cor: from KLW to ODD 2 b}
$
\KLW_\kappa^{\ddim,\defsetsk}(X,{}^\kappa\kappa)
\Longleftrightarrow
\ODD\kappa\ddim(X,\defsetsk)
$.
\end{enumerate-(a)}
\item\label{cor: from KLW to ODD 3}
If $Y$ is a $\Gdelta(\kappa)$ set, then
\begin{enumerate-(a)}
\item\label{cor: from KLW to ODD 3 a}
$
\KLW_\kappa^\ddim({}^\kappa\kappa,Y)
\Longleftrightarrow
\ODD\kappa\ddim(Y)
$,
\item\label{cor: from KLW to ODD 3 b}
$
\KLW_\kappa^{\ddim,\defsetsk}({}^\kappa\kappa,Y)
\Longleftrightarrow
\ODD\kappa\ddim(Y,\defsetsk)
$.
\end{enumerate-(a)}
\end{enumerate-(1)}
\end{corollary}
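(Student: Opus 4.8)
The plan is to derive each equivalence in Corollary~\ref{cor: from KLW to ODD} from the more general Corollary~\ref{theorem: KLW H and ODD H}, combined with the observations connecting $\ddim$-dimensional versions and definability. The key point is that $\KLW_\kappa^\ddim(X,Y)$ quantifies over \emph{all} box-open $\ddim$-dihypergraphs $H$ on ${}^\kappa\kappa$, and we must connect this to the single instance $\ODD\kappa\ddim(\cdot)$, which also quantifies over all such $H$. So the strategy is to show that each \emph{individual} instance matches up under Corollary~\ref{theorem: KLW H and ODD H}, and that the separation hypotheses in the statement translate into the $H$-relative separation hypotheses needed there.

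First I would prove~\ref{cor: from KLW to ODD 1}. For the unparametrized part~(a): observe that if $X\setminus Y$ can be separated from $Y$ by an $\Fsigma(\kappa)$ set, then for \emph{every} box-open $\ddim$-dihypergraph $H$, the set $X\setminus Y$ can be separated from $Y$ by a union of $\kappa$ many $H$-closed sets, because any closed set is $H$-closed (as noted right after Lemma~\ref{H-closure}, $\closure X$ is always $H$-closed). Thus the hypothesis of Corollary~\ref{theorem: KLW H and ODD H}~\ref{theorem: KLW H and ODD H a} holds for this $H$, giving $\KLW_\kappa^H(X,Y)\Longleftrightarrow\ODD\kappa{H\restr(X\cap Y)}$. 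Quantifying over all box-open $\ddim$-dihypergraphs $H$ on ${}^\kappa\kappa$ yields $\KLW_\kappa^\ddim(X,Y)\Longleftrightarrow\ODD\kappa\ddim(X\cap Y)$, where on the right I use that $\ODD\kappa\ddim(Z)$ is by definition the assertion that $\ODD\kappa{H\restr Z}$ holds for all box-open $\ddim$-dihypergraphs $H$. For the definable version~(b), the same argument is run while restricting the quantifier to $H\in\defsetsk$; one uses that $H\restr(X\cap Y)$ is the relevant restriction and that $X\cap Y\in\defsetsk$ is not needed—rather, $\ODD\kappa\ddim(X\cap Y,\defsetsk)$ already means the dichotomy for the restrictions to $X\cap Y$ of definable box-open $H$, matching exactly the definable instances of $\KLW_\kappa^H$.

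Next, parts~\ref{cor: from KLW to ODD 2} and~\ref{cor: from KLW to ODD 3} are the special cases $Y={}^\kappa\kappa$ and $X={}^\kappa\kappa$ of part~\ref{cor: from KLW to ODD 1}, in exact analogy with how Corollary~\ref{cor: PSP from ODD for the kappa-Baire space 2} was deduced from Corollary~\ref{cor: PSP from ODD for the kappa-Baire space}. For~\ref{cor: from KLW to ODD 2}, take $Y={}^\kappa\kappa$: then $X\setminus Y=\emptyset$ is trivially separated from $Y$ by an $\Fsigma(\kappa)$ set (the empty set), so part~\ref{cor: from KLW to ODD 1} applies and gives $\KLW_\kappa^\ddim(X,{}^\kappa\kappa)\Longleftrightarrow\ODD\kappa\ddim(X\cap{}^\kappa\kappa)=\ODD\kappa\ddim(X)$, and similarly for the definable version. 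For~\ref{cor: from KLW to ODD 3}, take $X={}^\kappa\kappa$ and $Y$ a $\Gdelta(\kappa)$ set: here $X\setminus Y={}^\kappa\kappa\setminus Y$ is exactly the complement of a $\Gdelta(\kappa)$ set, hence an $\Fsigma(\kappa)$ set, which trivially separates itself from $Y$; thus part~\ref{cor: from KLW to ODD 1} yields $\KLW_\kappa^\ddim({}^\kappa\kappa,Y)\Longleftrightarrow\ODD\kappa\ddim(Y)$, and likewise in the definable case.

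The main obstacle I anticipate is bookkeeping the quantifier structure correctly: the passage from the single-$H$ equivalence of Corollary~\ref{theorem: KLW H and ODD H} to the class-level statements $\KLW_\kappa^\ddim$ and $\ODD\kappa\ddim$ requires that the two sides range over \emph{the same} family of dihypergraphs $H$, and in the definable case one must be careful that $\ODD\kappa\ddim(Z,\defsetsk)$ is defined via restrictions $H\restr Z$ of definable box-open $H$ (as emphasized in the footnote to Definition~\ref{def: ODD for classes} and in Lemma~\ref{lemma: two versions of definable ODD}), rather than via dihypergraphs on $Z$ that happen to be definable. Verifying that the separation hypothesis "$X\setminus Y$ separated from $Y$ by an $\Fsigma(\kappa)$ set" indeed implies the weaker "$H$-closed sets" separation \emph{uniformly in $H$} is the crux, but it is immediate once one recalls that closed sets are $H$-closed for box-open $H$. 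No new forcing or combinatorial input is needed; the work is purely in assembling the already-established machinery.
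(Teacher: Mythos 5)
Your proof is correct and is essentially the paper's own argument: part (1) is deduced from Corollary~\ref{theorem: KLW H and ODD H} via the observation that closed sets are $H$-closed for every box-open $H$ (so an $\Fsigma(\kappa)$ separation is uniformly a separation by $\kappa$ many $H$-closed sets), and parts (2) and (3) are the specializations $Y:={}^\kappa\kappa$ and $X:={}^\kappa\kappa$ of part (1). Your additional bookkeeping of the quantifier over $H$ and of the definable case is accurate but adds nothing beyond what the paper's terse proof implicitly relies on.
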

\begin{proof}
\ref{cor: from KLW to ODD 1} follows from the previous corollary, since any closed set is also $H$-closed for any box-open dihypergraph $H$ on ${}^\kappa\kappa$.
%\ref{cor: from KLW to ODD 2} follows from \ref{cor: from KLW to ODD 1}.
\ref{cor: from KLW to ODD 2} and \ref{cor: from KLW to ODD 3}
 follow from \ref{cor: from KLW to ODD 1} for $Y:={}^\kappa\kappa$ and $X:={}^\kappa\kappa$, respectively.
\end{proof}

%One can ask whether the equivalences 
%in \ref{cor: from KLW to ODD 2} can be extended to larger classes of sets. 
%in \ref{cor: from KLW to ODD 2} and \ref{cor: from KLW to ODD 3} can be extended to larger classes of sets. 

The next corollary strengthens %\ref{cor: from KLW to ODD 2} 
this significantly in the $\kappa$-dimensional case.

\begin{corollary}\ 
\label{cor: from KLW to ODD kappa}
\begin{enumerate-(1)}
\item\label{cor: from KLW to ODD kappa 1}
$ \ODD\kappa\kappa(X)\Longleftrightarrow
\KLW_\kappa^\kappa(X)
$.
\item\label{cor: from KLW to ODD kappa 2}
$\ODD\kappa\kappa(X,\defsetsk)
\Longleftrightarrow
\KLW_\kappa^{\kappa,\defsetsk}(X)
$.
\end{enumerate-(1)}
\end{corollary}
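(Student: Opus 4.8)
The plan is to read off both equivalences from Theorem~\ref{theorem: ODD and KLW^H} combined with Corollary~\ref{cor: from KLW to ODD}, the only new ingredient being the observation that the dihypergraph $\CC^{Y,H}$ is itself box-open (and definable in the definable case). Recall from the discussion preceding Theorem~\ref{theorem: ODD and KLW^H} that for a box-open $\kappa$-dihypergraph $H$ on ${}^\kappa\kappa$ and an arbitrary $Y\subseteq{}^\kappa\kappa$, the dichotomy $\KLW_\kappa^H(X,Y)$ is equivalent to $\ODD D{\CC^{Y,H}\restr X}$, where $D=\kappa\times(\kappa\setminus\{0\})$. Since $|D|=\kappa$, a fixed bijection $b\colon D\to\kappa$ identifies $\CC^{Y,H}$ with a $\kappa$-dihypergraph $\bar\CC^{Y,H}$ on ${}^\kappa\kappa$; the induced homeomorphism ${}^\kappa D\to{}^\kappa\kappa$ carries $\dhH D$ onto $\dhH\kappa$ and preserves $\kappa$-colorings, so that $\ODD D{\CC^{Y,H}\restr X}$ is literally the statement $\ODD\kappa{\bar\CC^{Y,H}\restr X}$. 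The structural facts I would lean on are that $\bar\CC^{Y,H}$ is box-open on ${}^\kappa\kappa$ whenever $H$ is, and that $\bar\CC^{Y,H}\in\defsetsk$ whenever $H,Y\in\defsetsk$; both are recorded in the paragraph introducing $\CC^{Y,H}$.

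For the implications from right to left I would specialize to $Y={}^\kappa\kappa$. In part~(1), $\KLW_\kappa^\kappa(X)$ in particular yields $\KLW_\kappa^\kappa(X,{}^\kappa\kappa)$, which is equivalent to $\ODD\kappa\kappa(X)$ by Corollary~\ref{cor: from KLW to ODD}~\ref{cor: from KLW to ODD 2}~\ref{cor: from KLW to ODD 2 a}. In part~(2), since ${}^\kappa\kappa\in\defsetsk$, the statement $\KLW_\kappa^{\kappa,\defsetsk}(X)$ yields $\KLW_\kappa^{\kappa,\defsetsk}(X,{}^\kappa\kappa)$, which is equivalent to $\ODD\kappa\kappa(X,\defsetsk)$ by Corollary~\ref{cor: from KLW to ODD}~\ref{cor: from KLW to ODD 2}~\ref{cor: from KLW to ODD 2 b}. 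Both of these are immediate and require no further work.

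For the implications from left to right I would fix a box-open $\kappa$-dihypergraph $H$ and a set $Y$, and establish $\KLW_\kappa^H(X,Y)$ by verifying $\ODD\kappa{\bar\CC^{Y,H}\restr X}$ and then invoking Theorem~\ref{theorem: ODD and KLW^H}. In part~(1) the hypothesis $\ODD\kappa\kappa(X)$ asserts $\ODD\kappa{I\restr X}$ for \emph{every} box-open $\kappa$-dihypergraph $I$ on ${}^\kappa\kappa$; applying it to the box-open $I=\bar\CC^{Y,H}$ gives the conclusion for all $H$ and all $Y$, hence $\KLW_\kappa^\kappa(X)$. In part~(2) the hypothesis $\ODD\kappa\kappa(X,\defsetsk)$ only supplies $\ODD\kappa{I\restr X}$ for definable box-open $I$, so I would apply it to $\bar\CC^{Y,H}$ exactly when that dihypergraph is definable, i.e.\ when $Y\in\defsetsk$ in addition to $H\in\defsetsk$ (the latter being built into $\mathcal H=\defsetsk$). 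This delivers $\KLW_\kappa^H(X,Y)$ for all definable box-open $H$ and all definable $Y$, in accordance with the ``definable subsets $Y$'' clause flagged before the theorem.

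The step demanding the most care, and the main obstacle, is precisely this definability bookkeeping in part~(2): the reduction through $\bar\CC^{Y,H}$ only feeds a \emph{definable} dihypergraph into $\ODD\kappa\kappa(X,\defsetsk)$ when $Y$ is definable, since the ``limit lies in $Y$'' clause defining $\CC^{Y,H}$ cannot be replaced by a definable condition for non-definable $Y$ without changing the $\kappa$-coloring structure. Thus the definable version of the left-to-right implication is genuinely matched to definable $Y$, unlike part~(1), where $\ODD\kappa\kappa(X)$ already covers the arbitrary box-open $\bar\CC^{Y,H}$. The only remaining point to check is the $D\cong\kappa$ identification, which is routine once one verifies that $b$ sends hyperedges of $\dhH D$ to hyperedges of $\dhH\kappa$ and back, so that colorings and continuous homomorphisms transfer verbatim.
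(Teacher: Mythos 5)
Your proof is correct and is essentially the paper's own argument: the left-to-right directions apply Theorem~\ref{theorem: ODD and KLW^H} to the box-open (in part~(2), additionally definable) dihypergraph $\bar\CC^{Y,H}$ via the routine $D\cong\kappa$ identification, and the right-to-left directions specialize to $Y={}^\kappa\kappa$ and invoke Corollary~\ref{cor: from KLW to ODD}~\ref{cor: from KLW to ODD 2}. Your definability bookkeeping in part~(2), pairing definable $H$ with definable $Y$, coincides with what the paper's proof (stated only as ``analogous'' to part~(1)) actually establishes and with the reading of the definable version flagged in the introduction.
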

\begin{proof}
For~\ref{cor: from KLW to ODD kappa 1}, 
$\ODD\kappa\kappa(X)$ implies $\KLW_\kappa^\kappa(X)$ 
by Theorem~\ref{theorem: ODD and KLW^H}.
The converse direction follows from \ref{cor: from KLW to ODD 2}~\ref{cor: from KLW to ODD 2 a}
in the previous corollary since
$\KLW^\kappa_\kappa(X)$ implies 
$\KLW^\kappa_\kappa(X,{}^\kappa\kappa)$.
%$\ODD\kappa\kappa(X)$ 
%by Corollary~\ref{theorem: KLW H and ODD H}.
The proof of~\ref{cor: from KLW to ODD kappa 2} is analogous.
%but we also use that $\CC^{Y,H}$ is definable if $Y$ and $H$ are.
\end{proof}

The next corollary is used to prove Theorem~\ref{thm: ODD for kappa^kappa and analytic sets}.  

\begin{corollary}
\label{cor: ODD(X) implies ODD(closed subsets of X)}
If $X\setminus Y$ can be separated from $Y$ by a $\Fsigma(\kappa)$ set, then 
\begin{enumerate-(1)}
\item\label{cor: ODD(X) implies ODD(closed subsets of X) 1}
$\ODD\kappa\kappa(X)\Longrightarrow\ODD\kappa\kappa(X\cap Y)$,
\item\label{cor: ODD(X) implies ODD(closed subsets of X) 2}
$\ODD\kappa\kappa(X,\defsetsk)\Longrightarrow\ODD\kappa\kappa(X\cap Y,\defsetsk)$ if $Y\in\defsetsk$. 
\end{enumerate-(1)}
\end{corollary}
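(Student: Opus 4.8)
The plan is to run the chain of equivalences between the $\kappa$-dimensional open dihypergraph dichotomy and the relative Kechris--Louveau--Woodin dichotomy, exploiting the fact that $\ODD\kappa\kappa(X)$ (resp.\ its definable version) controls $\KLW_\kappa^H(X,Y)$ \emph{uniformly} over all targets $Y$ and all box-open $\kappa$-dihypergraphs $H$ on ${}^\kappa\kappa$. Both items then become pure bookkeeping with already-proved results, the $\Fsigma(\kappa)$-separation hypothesis being exactly the side condition needed to pass between $\KLW_\kappa^H(X,Y)$ and the open dihypergraph dichotomy over $X\cap Y$.

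For part~\ref{cor: ODD(X) implies ODD(closed subsets of X) 1}, I would first use Corollary~\ref{cor: from KLW to ODD kappa}~\ref{cor: from KLW to ODD kappa 1} to replace the hypothesis $\ODD\kappa\kappa(X)$ by $\KLW_\kappa^\kappa(X)$. By the notational conventions introduced before Corollary~\ref{corollary: KLW^H cons}, $\KLW_\kappa^\kappa(X)$ abbreviates $\KLW_\kappa^\kappa(X,\powerset({}^\kappa\kappa))$, i.e.\ the statement that $\KLW_\kappa^H(X,Y')$ holds for \emph{every} $Y'\subseteq{}^\kappa\kappa$ and every box-open $\kappa$-dihypergraph $H$. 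Specializing the target $Y'$ to the given $Y$ yields $\KLW_\kappa^\kappa(X,Y)$. Since $X\setminus Y$ is separated from $Y$ by an $\Fsigma(\kappa)$ set, the first equivalence of Corollary~\ref{cor: from KLW to ODD}~\ref{cor: from KLW to ODD 1} gives $\KLW_\kappa^\kappa(X,Y)\Longleftrightarrow\ODD\kappa\kappa(X\cap Y)$, whence $\ODD\kappa\kappa(X\cap Y)$.

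Part~\ref{cor: ODD(X) implies ODD(closed subsets of X) 2} follows the identical route through the definable versions: from $\ODD\kappa\kappa(X,\defsetsk)\Longleftrightarrow\KLW_\kappa^{\kappa,\defsetsk}(X)$ (Corollary~\ref{cor: from KLW to ODD kappa}~\ref{cor: from KLW to ODD kappa 2}), unfolding $\KLW_\kappa^{\kappa,\defsetsk}(X)=\KLW_\kappa^{\kappa,\defsetsk}(X,\powerset({}^\kappa\kappa))$ and specializing the target to $Y$, and then applying the second equivalence of Corollary~\ref{cor: from KLW to ODD}~\ref{cor: from KLW to ODD 1}, namely $\KLW_\kappa^{\kappa,\defsetsk}(X,Y)\Longleftrightarrow\ODD\kappa\kappa(X\cap Y,\defsetsk)$, again under the $\Fsigma(\kappa)$-separation hypothesis. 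The assumption $Y\in\defsetsk$ enters precisely here, once it is unwound to the level of dihypergraphs: for a box-open $H\in\defsetsk$ one has $\ODD\kappa{H\restr(X\cap Y)}\Longleftrightarrow\ODD\kappa{\CC^{Y,H}\restr X}$ (Corollary~\ref{theorem: KLW H and ODD H}~\ref{theorem: KLW H and ODD H a} together with Theorem~\ref{theorem: ODD and KLW^H}), and the auxiliary dihypergraph $\CC^{Y,H}$ is box-open and lies in $\defsetsk$ only because both $Y$ and $H$ do, so that the hypothesis $\ODD\kappa\kappa(X,\defsetsk)$ may legitimately be applied to the $\kappa$-dihypergraph identified with $\CC^{Y,H}$.

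Since every link in the chain is an already-established equivalence, I do not anticipate a genuine obstacle. The only points demanding care are, first, checking that the quantifier over targets hidden in the abbreviations $\KLW_\kappa^\kappa(X)$ and $\KLW_\kappa^{\kappa,\defsetsk}(X)$ is broad enough to specialize to the prescribed $Y$; and second, confirming that $\Fsigma(\kappa)$-separation of $X\setminus Y$ from $Y$ is exactly the side hypothesis under which Corollary~\ref{cor: from KLW to ODD}~\ref{cor: from KLW to ODD 1} applies, which it is, since an $\Fsigma(\kappa)$ set is a union of $\kappa$ many closed, hence $H$-closed, sets. For the definable item one additionally tracks that $\CC^{Y,H}$ stays inside $\defsetsk$, and this is precisely what $Y\in\defsetsk$ guarantees.
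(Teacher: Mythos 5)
Your proposal is correct and follows essentially the same route as the paper's proof: the paper derives $\KLW^\kappa_\kappa(X,Y)$ from $\ODD\kappa\kappa(X)$ by applying Theorem~\ref{theorem: ODD and KLW^H} directly at the given $Y$ (rather than quoting the uniform Corollary~\ref{cor: from KLW to ODD kappa} and specializing, as you do), and then concludes via Corollary~\ref{cor: from KLW to ODD}~\ref{cor: from KLW to ODD 1}, exactly as in your argument, with part~(2) handled analogously. Your closing observation is also the right one: the hypothesis $Y\in\defsetsk$ is what makes $\CC^{Y,H}$ definable, so that $\ODD\kappa\kappa(X,\defsetsk)$ can legitimately be applied to it — which is precisely why, in the definable case, one should invoke Theorem~\ref{theorem: ODD and KLW^H} at the specific definable $Y$ rather than treat the uniform corollary (whose literal quantification over arbitrary targets hides this point) as a black box.
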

\begin{proof}
For~\ref{cor: ODD(X) implies ODD(closed subsets of X) 1},
$\ODD\kappa\kappa(X)$ implies $\KLW^\kappa_\kappa(X,Y)$
by Theorem~\ref{theorem: ODD and KLW^H},
and $\KLW^\kappa_\kappa(X,Y)$ implies $\ODD\kappa\kappa(X\cap Y)$ 
by Corollary~\ref{theorem: KLW H and ODD H}~\ref{cor: from KLW to ODD 1}.
The proof of~\ref{cor: ODD(X) implies ODD(closed subsets of X) 2} is analogous.
%but we also use that $\CC^{Y,H}$ is definable if $Y$ and $H$ are.
\end{proof}

In particular, these implications hold if $Y$ is a $\Gdelta(\kappa)$ set. 
Now Theorem~\ref{thm: ODD for kappa^kappa and analytic sets} follows by letting $X:={}^\kappa\kappa$, since 
$\ODD\kappa\kappa(Y)$ for closed sets $Y$ implies the same statement for $\kappa$-analytic sets by Lemma~\ref{ODD for continuous images} and the same holds for $\ODD\kappa\kappa(Y,\defsetsk)$.

\begin{remark}
\label{KLW^H for different dimensions}
Suppose $2\leq c<\ddim$. 
We show that $\KLW^\ddim_\kappa(X,Y)$ implies $\KLW^c_\kappa(X,Y)$
for all subsets $X,\,Y$ of ${}^\kappa\kappa$. 
To see this, let $H$ be a $c$-dihypergraph on ${}^\kappa\kappa$
and let
$I:=\proj_{\ddim, c}^{-1}(H){\restr}{}^\kappa\kappa$.%
\footnote{Recall the notation $\proj_{\ddim,c}$ from the paragraph before Lemma~\ref{sublemma: ODD for different dimensions}.}
We claim that $\KLW^I_\kappa(X,Y)$ is equivalent to $\KLW_\kappa^H(X,Y)$. 
\begin{enumerate-(1)}
\item 
To see that the first options of the dichotomies are equivalent, note that 
$y$ is an $H$-limit point of a set $A$ if and only if 
$y$ is an $I$-limit point of $A$. 
\item 
If $g$ is as in the second option of $\KLW_\kappa^I(X,Y)$, 
then $f:=g\restr{{}^\kappa c}$ is 
%as in the second option of $\KLW_\kappa^H(X,Y)$ since it is 
a continuous homomorphism from $\dhH c$ to $H$ as in the proof of Lemma~\ref{sublemma: ODD for different dimensions}~\ref{dif dims homomorphisms}. 
Moreover, $f$ reduces $(\RR^c_\kappa,\QQ^c_\kappa)$ to $(X,Y)$ since 
$\RR^c_\kappa=\RR^d_\kappa\cap{{}^\kappa c}$
and
$\QQ^c_\kappa=\QQ^d_\kappa\cap{{}^\kappa c}$.
Now suppose that $f$ is as in the second option of $\KLW_\kappa^H(X,Y)$. 
Fix a map $k: d\to c$ with $k\restr c = \id_c$ and $k(i)\neq 0$ for all $i\in d\,\setminus\, c$. 
Define $g:{}^\kappa\ddim\to X$ by letting $g(x):=f(k\comp x)$. 
Then $g$ is a continuous homomorphism from $\dhH d$ to $H$ as in the proof of Lemma~\ref{sublemma: ODD for different dimensions}~\ref{dif dims homomorphisms}. 
\todog{This works for any $c\subsetneq \ddim$ with $0\in c$. But $0\in c$ is needed for both directions in both (1) and (2).}
Since $x\in \RR^d_\kappa$ if and only if $k\circ x\in \RR^c_\kappa$ for all $x\in {}^\kappa\kappa$, $g$ reduces $(\RR^d_\kappa,\QQ^d_\kappa)$ to $(X,Y)$. 
\end{enumerate-(1)}
\end{remark}

%%%%%%%%%%%%%%%%%%%

\subsection{The determinacy of V\"a\"an\"anen's perfect set game}
\label{subsection: Vaananens game}

In this subsection, 
we obtain the determinacy of the games studied in \cite{VaananenCantorBendixson}*{Section 2}
%V\"a\"an\"anen's perfect set game 
for all subsets of ${}^{\kappa}\kappa$ 
as an application of $\ODD\kappa\kappa({}^\kappa\kappa)$.
In fact, the result yields additional properties of the underlying set in the case that player $\pone$ has a winning strategy. 
Similarly, $\ODD\kappa\kappa({}^\kappa\kappa,\defsetsk)$ implies the determinacy of  V\"a\"an\"anen's game for all definable subsets of ${}^\kappa\kappa$.

By Cantor--Bendixson analysis, any topological space can be decomposed into a scattered and a crowded\footnote{Recall that a topological space $X$ is \emph{scattered} if each nonempty subspace contains an isolated point.
$X$ is \emph{crowded} if it has no isolated points.} 
subset by iteratively removing isolated points. If the space has a countable base, then the scattered part is countable.  
V\"a\"an\"anen adapted this to subsets of generalized Baire spaces via 
a game of length $\xi\leq\kappa$.
%the following game. 
The following is a variant of V\"a\"an\"anen's game.
\todog{$\cbii(X)$ implies the determinacy of V\"a\"an\"anen's (original) perfect set game $\GV_\kappa(X,x)$ for all $x\in X$
$\cbi(X)$ implies the determinacy of $\GV_\kappa(X)$}%

\begin{definition}[\cite{VaananenCantorBendixson}*{Section 2}] 
\label{def: Vaananen's game} 
%\index{V\"a\"an\"anen's perfect set game!standard\idf$\GV_\xi(X)$}%
\index{V\"a\"an\"anen's perfect set game!full game\idf$\GV_\xi(X)$}%
%\index{V\"a\"an\"anen's perfect set game!with any first move\idf$\GV_\xi(X)$}%
\emph{V\"a\"an\"anen's perfect set game}%
\footnote{In fact, V\"a\"an\"anen did not consider the full game $\GV_\xi(X)$, but only defined the local versions $\GV_\xi(X,x)$ for all $x\in {}^\kappa\kappa$ with the requirement $x_\alpha\in X$ for all $\alpha\geq 1$ \cite{VaananenCantorBendixson}. These versions are given at the end of this definition. Since we work with the games $\GV_\xi(X,x)$ for $x\in X$ only, we can amalgamate them into a single game.} 
$\GV_\xi(X)$ 
of length $\xi\leq\kappa$ for a subset 
$X$ of ${}^\kappa\kappa$
is played by two players $\pone$ and $\ptwo$ as follows. 
%\todon{In the next figure, ``intextsep'' (space between the table and the text) was set locally. Remove before submitting?}
%
$\pone$ plays a strictly increasing continuous sequence $\langle \gamma_\alpha : \alpha< \xi \rangle$ of ordinals below $\kappa$ with $\gamma_0=0$.
$\ptwo$ plays 
an injective sequence
$\langle x_\alpha : \alpha< \xi \rangle$ of elements of $X$ 
with $x_\alpha \supseteq (x_\beta\restr \gamma_{\beta+1})$
for all $\beta<\alpha$. 
{
\begin{table}[H]
\centering
\begin{tabular}{c cccccccccccccc}
  $\pone$  &  $\gamma_0$    &      & $\gamma_1$ &                & \ldots & 
  &            &$\gamma_{\alpha}$ &   &\ldots \\[5 pt] 

  $\ptwo$ &      & $x_0$ &           & $x_1$ &        &           & \ldots & 
  &  $x_\alpha$&           &  & \ldots 
\end{tabular}
\end{table}
}

\noindent 
Each player loses immediately if they do not follow these requirements. 
If both follow the rules in all rounds, then $\ptwo$ wins. 

\index{V\"a\"an\"anen's perfect set game!with fixed first move\idf$\GV_\xi(X,x)$}%
%\index{V\"a\"an\"anen's perfect set game!local version\idf$\GV_\xi(X,x)$}%
Moreover, the game $\GV_\xi(X,x)$ is defined just like $\GV_\xi(X)$ for any $x$ in $X$, except that $\ptwo$ must play $x_0=x$ in the first round. 
\end{definition}

We say that a player \emph{wins} a game if they have a winning strategy. 

\begin{definition}[\cite{VaananenCantorBendixson}*{Sections 2 \& 3}]  
\label{def: kernel}
\label{def: scattered part}
Suppose that $\xi\leq\kappa$ and that $X$ is a subset of ${}^\kappa\kappa$. 
\begin{enumerate-(1)} 
\item\label{Sc def}
\index{scattered part@$\xi$-scattered part\idf$\Sc_\xi(X)$}
The \emph{$\xi$-scattered part} of $X$ is
$\Sc_\xi(X)= \{ x\in X :  \text{ \pone\ wins  $\GV_\xi(X,x)$} \}$. 
\item\label{Ker def}
\index{kernel@$\xi$-kernel\idf$\Ker_\xi(X)$}
The \emph{$\xi$-kernel} of $X$ is
$\Ker_\xi(X)= \{ x\in X :  \text{ \ptwo\ wins  $\GV_\xi(X,x)$} \}$.%
\footnote{V\"a\"an\"anen originally defined the $\xi$-kernel of $X$ as the set $\Ker^V_\xi(X)$ of all $x\in {}^\kappa\kappa$ such that $\ptwo$ wins $\GV_\xi(X,x)$. 
Note that $\Ker_\xi^V(X)\cap X=\Ker_\xi(X)$ and that
$\bar{\Ker_\xi(X)}=\Ker^V_\xi(X)$ for all $\xi\geq\omega$.}  
\end{enumerate-(1)} 
%We say that 
\index{scattered set@$\xi$-scattered set\idf} 
$X$ is \emph{$\xi$-scattered} if $X=\Sc_\xi(X)$ and 
\index{crowded set@$\xi$-crowded set\idf} 
\emph{$\xi$-crowded} if $X=\Ker_\xi(X)\neq\emptyset$. 
\end{definition} 

These definitions extend well-know notions from topology: 
$\omega$-scattered is equivalent to scattered and $\omega$-crowded is equivalent to crowded. 
Moreover, any subset of ${}^\kappa\kappa$ whose closure is $\kappa$-perfect is a $\kappa$-crowded set.
In fact, a subset $X$ of ${}^\kappa\kappa$ is $\kappa$-crowded if and only if every $x\in X$ is contained in some subset $Y$ of $X$ whose closure is $\kappa$-perfect by Lemma~\ref{theorem: GV II} below. 

Note that $\Sc_\xi(X)$ is a $\xi$-scattered relatively open subset 
and $\Ker_\xi(X)$ is a relatively closed subset of $X$. 
$\Ker_\xi(X)$ is $\xi$-crowded if  
\todog{this may fail when $\xi=\gamma+1$ where $\gamma$ is indecomposable}
$\xi$ is an indecomposable ordinal.\footnote{I.e., $\alpha+\beta<\xi$ for all $\alpha,\beta<\xi$.} 
Moreover: 
\begin{enumerate-(a)} 
\item 
$X$ is $\xi$-scattered $\ \Longleftrightarrow\ $ $\pone$ wins $\GV_\xi(X)$.  
\item 
$\Ker_\xi(X)\neq\emptyset$ $\ \Longleftrightarrow\ $ $\ptwo$ wins $\GV_\xi(X)$. 
\end{enumerate-(a)} 

By the Gale-Stewart theorem, $\GV_\xi(X)$ is determined for all $\xi\leq\omega$. 
This is no longer true for $\xi>\omega$. 
For instance, there exists a closed subset $X$ of ${}^{\omega_1}\omega_1$ such that $\GV_{\omega+1}(X)$ is not determined,
\todog{This does not necessarily imply the non-determinacy of $\GV_\kappa(X)$}
and in fact, it is consistent that there exists such a set $X$ of size $\omega_2$ 
\cite{VaananenCantorBendixson}.% 
\footnote{See \cite{GalgonThesis}*{Section 1.5} for variants of these results.}
The next property can be understood both as the determinacy of V\"a\"an\"anen's game of length $\kappa$ with an extra condition and as a variant of the Cantor--Bendixson analysis.

\begin{definition}
\label{def: cbi}
For any class $\mathcal C$, 
$\cbi(\mathcal C)$ states that the following holds for all subsets $X\in\mathcal C$ of ${}^\kappa\kappa$:
\index{Cantor--Bendixson dichotomy!first variant!for sets\idf$\cbi(X)$}%
\index{Cantor--Bendixson dichotomy!first variant!for classes\idf$\cbi(\mathcal C)$} 
\begin{quotation}
$\cbi(X)$: Either 
$|X|\leq\kappa$ and $\pone$ wins $\GV_\kappa(X)$, 
or 
$\ptwo$ wins $\GV_\kappa(X)$.
\end{quotation}
\end{definition}
Equivalently, $\cbi(X)$ states that either $X$ is $\kappa$-scattered and of size ${\leq}\kappa$, or it contains a $\kappa$-crowded subset.%
\footnote{If $\ptwo$ wins $\GV_\kappa(X)$, then $\Ker_\kappa(X)\neq\emptyset$ is a $\kappa$-crowded subset of $X$.}
This principle may fail even for closed sets. 
In fact, it is consistent that
there exists an $\omega_1$-scattered closed subset of ${}^{\omega_1}\omega_1$ of size $\omega_2$ \cite{VaananenCantorBendixson}*{Theorem~3}.\footnote{See \cite{GalgonThesis}*{Section 1.5} for variants of this result.} 

We first show that
$\ODD\kappa\kappa({}^\kappa\kappa) \Longrightarrow \cbi(\pwrset({}^\kappa\kappa))$
and  
$\ODD\kappa\kappa({}^\kappa\kappa,\defsets\kappa) \Longrightarrow \cbi(\defsetsk)$. 
We
use the hypergraph $\CC^X$
from Subsection~\ref{subsection: original KLW}. 
This consists of all 
convergent 
$\kappa$-sequences $\bar x$ in ${}^\kappa \kappa$ 
with $\lim_{\alpha<\kappa}(\bar x)\in X \setminus \ran(\bar x)$. 
\todoq{What can we say about the complexity of $\CC^X$ if we know the complexity of $X$?}

\begin{lemma}
\label{theorem: GV I}
Let $X\subseteq{}^\kappa\kappa$.
If $\CC^X$ has a $\kappa$-coloring, then $|X|\leq\kappa$ and
$\pone$ wins $\GV_\kappa(X)$.
\end{lemma}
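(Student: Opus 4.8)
The plan is to extract from a $\kappa$-coloring of $\CC^X$ both a bound on the size of $X$ and a winning strategy for player $\pone$ in $\GV_\kappa(X)$. So suppose $c: {}^\kappa\kappa\to\kappa$ is a $\kappa$-coloring of $\CC^X$, and write $X=\bigcup_{\alpha<\kappa} X_\alpha$ where $X_\alpha:=c^{-1}(\{\alpha\})\cap X$ is $\CC^X$-independent for each $\alpha<\kappa$. By Lemma~\ref{lemma: KLW independence}, $X$ being $\CC^X$-independent for a set $Z\subseteq X$ amounts to $X\cap Z'=\emptyset$; applying this with $Z=X_\alpha\subseteq X$, each $X_\alpha$ has no limit points in $X$, hence in particular no limit points in $X_\alpha$ itself. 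Thus each $X_\alpha$ is a discrete subspace of ${}^\kappa\kappa$, and so $|X_\alpha|\leq\kappa$ since a discrete subspace of ${}^\kappa\kappa$ has size at most $\kappa$. Therefore $|X|\leq\kappa$, which gives the first conclusion.

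Next I would describe the winning strategy for $\pone$ in $\GV_\kappa(X)$ built from the coloring. The idea is that $\pone$ uses the partition $\langle X_\alpha:\alpha<\kappa\rangle$ to force $\ptwo$ into a corner: at each round, after $\ptwo$ has committed to a new point $x_\alpha$ lying in some color class $X_{c(x_\alpha)}$, player $\pone$ reacts by playing a sufficiently large ordinal $\gamma_{\alpha+1}$ that isolates $x_\alpha$ within its own color class $X_{c(x_\alpha)}$. Concretely, since $x_\alpha$ is not a limit point of $X_{c(x_\alpha)}$ (as $X_{c(x_\alpha)}'\cap X=\emptyset$ and $x_\alpha\in X$), there is some $\delta<\kappa$ with $N_{x_\alpha\restr\delta}\cap X_{c(x_\alpha)}=\{x_\alpha\}$; player $\pone$ plays $\gamma_{\alpha+1}>\max\{\gamma_\alpha,\delta\}$, also taking care to keep the sequence $\langle\gamma_\alpha\rangle$ strictly increasing and continuous at limits. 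The effect of the rule $x_\beta\supseteq x_\alpha\restr\gamma_{\alpha+1}$ of the game is then that any later point $x_\beta$ ($\beta>\alpha$) extends $x_\alpha\restr\gamma_{\alpha+1}$, hence lies in $N_{x_\alpha\restr\delta}$, and so by the isolation property cannot lie in $X_{c(x_\alpha)}$ unless it equals $x_\alpha$; but $\ptwo$ must play an injective sequence, so $x_\beta\neq x_\alpha$ and thus $x_\beta\notin X_{c(x_\alpha)}$.

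The key combinatorial point is then that along a full play of length $\kappa$ following this strategy, the colors $c(x_\alpha)$ of the points played by $\ptwo$ are all distinct: once color $c(x_\alpha)$ is ``used up'' at stage $\alpha$, no later point can have that color. I would argue that this forces the game to break down before reaching length $\kappa$. The cleanest formulation is to track the ordinal $c(x_\alpha)$ and derive a contradiction with the well-foundedness of $\kappa$ or with continuity at a limit stage: at a limit ordinal $\lambda<\kappa$, the sequence $\langle x_\alpha\restr\gamma_{\alpha+1}:\alpha<\lambda\rangle$ determines a node $x\restr\sup_{\alpha<\lambda}\gamma_\alpha$, and the points $x_\alpha$ converge, giving a candidate limit that would either force $\ptwo$'s continuation into an already-used color or else exhibit a convergent sequence witnessing membership in $\CC^X$, contradicting independence. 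The upshot is that $\ptwo$ cannot survive all $\kappa$ rounds, so $\pone$ wins.

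\textbf{Main obstacle.} The delicate step will be verifying that the strategy described is genuinely winning, i.e.\ that player $\ptwo$ is blocked at some stage $\alpha<\kappa$ rather than merely being restricted. I expect the heart of the matter to be the limit-stage analysis: one must check that the ``isolation'' commitments made at successor stages propagate correctly through limits (using the continuity requirement $\gamma_\lambda=\sup_{\alpha<\lambda}\gamma_\alpha$ that the game imposes on $\pone$'s moves), and that the injective, strictly color-decreasing behavior of $\ptwo$'s plays cannot be sustained cofinally in $\kappa$. This requires a careful bookkeeping argument showing that the map $\alpha\mapsto c(x_\alpha)$ is injective on any legal play, whence a play of length $\kappa$ would inject $\kappa$ into $\kappa$ in a way that conflicts with the isolation constraints forcing premature termination; pinning down exactly where and why $\ptwo$ must fail a rule is the step I would spend the most care on.
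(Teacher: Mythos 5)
Your first claim is correct and is essentially the paper's own argument: each color class meets $X$ in a set with no limit points in $X$ (Lemma~\ref{lemma: KLW independence}), hence in a discrete set, hence in a set of size at most $\kappa$ since the topology has a base of size $\kappa$; the paper gets $|X|\leq\kappa$ by citing Corollary~\ref{cor: CC coloring}, whose proof is exactly this.

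The strategy you propose for $\pone$, however, has a genuine gap, and the endgame contradiction you hope for does not exist. Two decisions cause this. First, you discard most of the coloring: $\CC^X$ is a dihypergraph on all of ${}^\kappa\kappa$, so a $\kappa$-coloring partitions the \emph{whole space} into $\CC^X$-independent pieces, but you only keep the traces $X_\alpha=c^{-1}(\{\alpha\})\cap X$. Second, you isolate $x_\alpha$ inside the class containing it. All this buys is that the colors $c(x_\alpha)$ along a run are pairwise distinct — which is no obstruction at all, since there are $\kappa$ colors and $\kappa$ rounds. If $\ptwo$ survives, the limit $y=\bigcup_{\alpha<\kappa}x_\alpha\restr\gamma_{\alpha+1}$ need not lie in $X$, and even when it does, the hyperedge $\langle x_\alpha:\alpha<\kappa\rangle\in\CC^X$ you produce has pairwise distinct colors, so it is not monochromatic and contradicts the independence of no color class; nor need the color of $y$ ever have been ``used''. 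Concretely: let $z:=\langle 0\rangle^{\kappa}$, let $z_i$ be defined by $z_i(i)=1$ and $z_i(\alpha)=0$ for $\alpha\neq i$, and let $X:=\{z\}\cup\{z_i:i<\kappa\}$. This is a closed (hence $\Gdelta(\kappa)$) set of size $\kappa$, so $\CC^X$ has a $\kappa$-coloring: give each point of $X$ its own color and cover the open complement by $\kappa$ many basic clopen sets, one color each. Every restricted class $X_\alpha$ is then a singleton, so your isolation requirement is vacuous ($\delta=0$ works), and your strategy permits $\pone$ to play $\gamma_{\alpha+1}=\gamma_\alpha+1$; against this, $\ptwo$ wins by playing $x_\alpha:=z_{i_\alpha}$ with $i_\alpha$ large and increasing, producing an injective coherent run converging to $z$. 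All colors played are distinct, exactly as you prove, and yet $\ptwo$ survives.

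The repair is a diagonalization against the coloring of the full space. Fix $\CC^X$-independent sets $A_\alpha$ with ${}^\kappa\kappa=\bigcup_{\alpha<\kappa}A_\alpha$. At round $\alpha+1$, $\pone$ chooses $\gamma_{\alpha+1}$ so that (i) $x_\alpha\restr\gamma_{\alpha+1}\neq x_\beta\restr\gamma_{\alpha+1}$ for all $\beta<\alpha$, and (ii) $A_\alpha\cap N_{x_\alpha\restr\gamma_{\alpha+1}}\subseteq\{x_\alpha\}$, i.e.\ isolation from the \emph{round-indexed} piece $A_\alpha$; this is possible because $A_\alpha'\cap X=\emptyset$ and $x_\alpha\in X$. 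If $\ptwo$ survived a run, the limit $y$ would lie in some $A_\beta$ (here it is essential that the pieces cover ${}^\kappa\kappa$, which handles the case $y\notin X$); since $y\supseteq x_\beta\restr\gamma_{\beta+1}$, condition (ii) forces $y=x_\beta$, and then (i) fails against any $\alpha>\beta$, because $y\supseteq x_\alpha\restr\gamma_{\alpha+1}$ gives $x_\beta\restr\gamma_{\alpha+1}=y\restr\gamma_{\alpha+1}=x_\alpha\restr\gamma_{\alpha+1}$. Both changes relative to your proposal are needed: covering the whole space confronts the limit point with the coloring, and indexing the isolation by the round rather than by the class of the point played guarantees that every piece gets diagonalized against.
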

\begin{proof}
We have already shown $|X|\leq\kappa$ in 
Corollary~\ref{cor: CC coloring}.
For the second claim, recall that a subset $A$ of ${}^\kappa\kappa$ is $\CC^X$-independent if and only if $A'\cap X=\emptyset$,
where $A'$ denotes the set of limit points of $A$.
Take $\CC^X$-independent sets $A_\alpha$ with ${}^\kappa\kappa=\bigcup_{\alpha<\kappa} A_\alpha$.
We define the following strategy for player $\pone$. 
In rounds of the form $\alpha+1$, $\pone$ chooses $\gamma_{\alpha+1}$ so that 
\begin{enumerate-(i)} 
\item 
\label{theorem: GV I a}
$x_{\alpha}\restr\gamma_{\alpha+1}\neq
x_{\beta}\restr\gamma_{\alpha+1}$ for all $\beta<\alpha$, and 
\item 
\label{theorem: GV I b}
$A_\alpha\cap N_{x_\alpha\restr\gamma_{\alpha+1}}\subseteq\{x_\alpha\}$. 
\end{enumerate-(i)} 
$\pone$ can achieve the first property because $\ptwo$ plays an injective sequence and the second property because $x_\alpha\in X$ cannot be a limit point of $A_\alpha$. 
Suppose $\ptwo$ wins a run of the game where $\pone$ has used this strategy. 
Then $y:=\bigcup_{\alpha<\kappa} x_\alpha\restr\gamma_{\alpha+1}$ 
\todog{this is where we use that $x_\alpha\supseteq x_\beta\restr\gamma_{\beta+1}$ for all $\beta<\alpha$}
is an element of ${}^\kappa\kappa$, 
so $y\in A_\beta$ for some $\beta<\kappa$.
Since $y\in N_{x_\beta\restr\gamma_{\beta+1}}$, 
we must have $y=x_\beta$ by \ref{theorem: GV I b}. 
This contradicts \ref{theorem: GV I a} for any $\alpha>\beta$. 
\end{proof}

The equivalence of \ref{II wins GV} and \ref{DIS subset} in the next lemma was shown in \cite{SzThesis}*{Proposition~2.69}. 
Our proof is essentially the same, but we include it due to the different notation.%
\footnote{In \cite{SzThesis}, a subset $Y$ of ${}^\kappa\kappa$ is called $\kappa$-strongly dense in itself
if $\closure Y$ is $\kappa$-perfect and $\Ker_\xi(X)$ denotes V\"a\"an\"anen's definition of the kernel.} 

\begin{lemma}
\label{theorem: GV II}%
The following statements are equivalent for any $X\subseteq {}^{\kappa}\kappa$
and $x\in X$:
\begin{enumerate-(1)}
\item\label{II wins GV} 
%Player $\ptwo$ wins $\GV_\kappa(X,x)$.
$x\in\Ker_\kappa(X)$.
\item\label{DIS subset}
There exists $Y\subseteq X$ such that $x\in Y$ and $\closure Y$ is $\kappa$-perfect.
\item\label{GV rect oh}
There is a nice reduction
$\Phi$ for $({}^\kappa\kappa, X)$ with $x\in[\Phi]\big(\QQ_\kappa)$.%
\todog{The proof shows that $\Phi$ can equivalently be chosen to be continuous.}
\end{enumerate-(1)}
\end{lemma}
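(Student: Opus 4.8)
The plan is to prove the equivalence $\ref{II wins GV} \Leftrightarrow \ref{DIS subset} \Leftrightarrow \ref{GV rect oh}$ by establishing a cycle of implications, exploiting the connection between V\"a\"an\"anen's game, nice reductions, and $\kappa$-perfect closures that has been set up in this subsection via the dihypergraph $\CC^X$. The cleanest route is $\ref{GV rect oh} \Rightarrow \ref{DIS subset} \Rightarrow \ref{II wins GV} \Rightarrow \ref{GV rect oh}$, but I will describe each link in whichever direction is most transparent.

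First I would prove $\ref{GV rect oh} \Rightarrow \ref{DIS subset}$, which should be essentially immediate. Suppose $\Phi$ is a nice reduction for $({}^\kappa\kappa, X)$ with $x\in[\Phi](\QQ_\kappa)$. By Definition~\ref{def: nice reduction}, $[\Phi]$ reduces $(\RR_\kappa,\QQ_\kappa)$ to $({}^\kappa\kappa, X)$ where the roles are: the first option sends $\RR_\kappa$ into ${}^\kappa\kappa$ and $\QQ_\kappa$ into $X$. Since $\Phi$ is $\perp$- and strict order preserving, $[\Phi]$ is a homeomorphism onto its $\kappa$-perfect image $\ran([\Phi])=[\Phi](\RR_\kappa)\cup[\Phi](\QQ_\kappa)$ by Lemma~\ref{perfect subsets and sop maps}~\ref{sop maps -> perfect subsets}. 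Here $x=[\Phi](q)$ for some $q\in\QQ_\kappa$; I would set $Y:=[\Phi](\QQ_\kappa)\subseteq X$, which contains $x$, and note that $\closure Y=\ran([\Phi])$ since $\QQ_\kappa$ is dense in ${}^\kappa 2$ and $[\Phi]$ is a closed map. Thus $\closure Y$ is $\kappa$-perfect, giving $\ref{DIS subset}$. Conversely, for the converse direction $\ref{DIS subset}\Rightarrow\ref{GV rect oh}$ I would start from $Y\subseteq X$ with $x\in Y$ and $\closure Y$ $\kappa$-perfect, and construct a nice reduction $\Phi$ directly by a recursion on ${}^{<\kappa}2$ that builds a $\perp$-preserving strict order preserving map into $T(\closure Y)$, threading $x$ along the all-zero-tail branch so that $x\in[\Phi](\QQ_\kappa)$; this uses that $\closure Y$ is cofinally splitting and ${<}\kappa$-closed, and that $Y$ is dense in $\closure Y$ to place the $\QQ_\kappa$-images inside $Y\subseteq X$.

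The remaining link is $\ref{DIS subset}\Leftrightarrow\ref{II wins GV}$, which connects the combinatorial/topological characterization to the game. For $\ref{DIS subset}\Rightarrow\ref{II wins GV}$ I would have player $\ptwo$ use a winning strategy in $\GV_\kappa(X,x)$ that stays inside such a $Y$: given $\pone$'s moves $\gamma_\alpha$, $\ptwo$ maintains a strictly increasing sequence of nodes in the $\kappa$-perfect tree $T(\closure Y)$, using cofinal splitting to choose at each stage an element $x_\alpha\in Y$ extending $x_\beta\restr\gamma_{\beta+1}$ for the relevant $\beta$ and distinct from all previous moves, and using ${<}\kappa$-closure to continue through limit stages; since every $x_\alpha$ lies in $Y\subseteq X$ and the sequence is injective and respects the length constraints, $\ptwo$ wins. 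For $\ref{II wins GV}\Rightarrow\ref{DIS subset}$ I would run $\ptwo$'s winning strategy against all legal plays of $\pone$ to harvest a $\kappa$-perfect tree of responses: organizing the plays along a tree of ordinal sequences, the strategy produces a $\perp$-preserving family of branches whose downward closure is $\kappa$-perfect and whose branches land in $X$, yielding the desired $Y$ with $x\in Y$.

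The main obstacle I expect is the direction $\ref{II wins GV}\Rightarrow\ref{DIS subset}$, extracting a $\kappa$-perfect set from a winning strategy for $\ptwo$. The difficulty is bookkeeping at limit stages of length up to $\kappa$: one must simulate $\kappa$-many mutually diverging runs of the strategy simultaneously so that the resulting tree is both cofinally splitting and ${<}\kappa$-closed, which requires carefully interleaving $\pone$'s $\gamma$-choices to force splitting while ensuring that limits of partial plays remain legal positions that the strategy can answer. Since the equivalence $\ref{II wins GV}\Leftrightarrow\ref{DIS subset}$ is attributed to \cite{SzThesis}*{Proposition~2.69} in the remark following the statement, I would follow that argument closely and merely translate it into the present notation, treating the reduction-theoretic equivalence with $\ref{GV rect oh}$ (which ties everything back to $\CC^X$ and hence to $\ODD\kappa\kappa$) as the genuinely new contribution of this lemma.
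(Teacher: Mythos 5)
Your proposal is correct, but it factors the equivalence differently from the paper. The paper proves the single cycle \ref{GV rect oh} $\Rightarrow$ \ref{DIS subset} $\Rightarrow$ \ref{II wins GV} $\Rightarrow$ \ref{GV rect oh}: the first two implications are exactly your arguments (take $Y:=[\Phi](\QQ_\kappa)$, whose closure is $\ran([\Phi])$ and hence $\kappa$-perfect by Lemma~\ref{perfect subsets and sop maps}; then let $\ptwo$ play inside the $\kappa$-perfect tree $T(Y)$, using cofinal splitting at successor rounds and ${<}\kappa$-closure at limit rounds), and all the substance is in the third, where the nice reduction $\Phi$ is built directly from a winning strategy $\tau$ by simulating a tree of partial runs indexed by $\SS_\kappa$; the injectivity rule of the game forces $\ptwo$'s answers to distinct continuations to split, which yields $\perp$-preservation, and $\kappa$-perfectness of $\ran([\Phi])=\closure{[\Phi](\QQ_\kappa)}$ then comes for free. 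You instead use \ref{DIS subset} as a hub, proving \ref{II wins GV} $\Leftrightarrow$ \ref{DIS subset} and \ref{DIS subset} $\Leftrightarrow$ \ref{GV rect oh}, which is logically redundant (four implications instead of three) but modular. Your extra direction \ref{DIS subset} $\Rightarrow$ \ref{GV rect oh} does work as sketched: fix $y_s\in Y$ extending $\Phi(s)$ for each $s\in\SS_\kappa$ (every node of $T(Y)$ extends to an element of $Y$), follow $y_s$ in the $0$-direction and split off $y_s$ inside the cofinally splitting tree $T(Y)$ in the $1$-direction, using ${<}\kappa$-closure of $T(Y)$ at limit nodes of $\SS_\kappa$, and start with $y_\emptyset:=x$ to thread $x$ onto the all-zero branch. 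Be aware, though, that this factoring does not avoid the hard construction: your step \ref{II wins GV} $\Rightarrow$ \ref{DIS subset} is the same tree-of-runs simulation as the paper's \ref{II wins GV} $\Rightarrow$ \ref{GV rect oh} (and as \cite{SzThesis}*{Proposition~2.69}), only recording the set $Y$ of $\ptwo$'s moves rather than the map $\Phi$; the bookkeeping you flag (interleaving $\pone$'s ordinals to force splitting, legality of limit positions) is resolved there exactly as in the paper's construction. So your route buys a clean separation of the game-theoretic core, citable from \cite{SzThesis}, from the purely combinatorial passage between sets with $\kappa$-perfect closure and nice reductions; the paper's route buys economy, and its direct construction of $\Phi$ from $\tau$ is what gets reused later (e.g.\ the observation that $\Phi$ can be chosen continuous, and the adaptations in Remark~\ref{remark: GV for graphs with modified winning condition}).
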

\begin{proof}
\ref{GV rect oh} $\Rightarrow$ \ref{DIS subset}: Suppose $\Phi$ is a nice reduction for $({}^\kappa\kappa, X)$. 
Let $Y:=[\Phi]\big(\QQ_\kappa)$.
Then $x\in Y\subseteq X$. 
Moreover,
$\closure{Y}=\ran([\Phi])$ is $\kappa$-perfect 
by Lemma~\ref{perfect subsets and sop maps}.

\ref{DIS subset} $\Rightarrow$ \ref{II wins GV}: Let
$Y$ be a subset of $X$ with $x\in Y$ such that $\bar Y$ is $\kappa$-perfect. 
Then $T(Y)$ is a $\kappa$-perfect tree.
It is straightforward to construct a winning strategy $\tau$ for  $\ptwo$ in $\GV_\kappa(Y,x)$ 
using the fact that $T(Y)$ is cofinally splitting in successor rounds of the game and
the fact that $T(Y)$ is $\lle\kappa$-closed in limit rounds.
Since $Y\subseteq X$, 
$\tau$ is also a winning strategy for  $\ptwo$ in $\GV_\kappa(X,x)$.

\ref{II wins GV} $\Rightarrow$ \ref{GV rect oh}: 
Suppose that $\tau$ is a winning strategy for  $\ptwo$ in $\GV_\kappa(X,x)$.
We construct a nice reduction for $({}^\kappa\kappa,X)$
by having  $\ptwo$ use $\tau$ repeatedly in response to
different partial plays of  $\pone$.
In detail, we construct
a $\perp$- and strict order preserving 
function $\Phi: {}^{<\kappa}2\to T(X)$, a continuous 
strict order preserving function   
$\langle \gamma_s : s\in \SS_\kappa \rangle$%
\footnote{Recall the definition of $\SS_\kappa$ from Subsection~\ref{subsection: original KLW}.} 
of ordinals below $\kappa$ 
and a function $\langle x_s : s\in {}^{<\kappa}\kappa \rangle$ of elements of $X$ such that the following hold for all $s\in{}^{<\kappa}2$: 
\begin{enumerate-(i)}
\item\label{GV two 0}
$\gamma_{\emptyset}=0$ and $x_{\emptyset}=x$.
\item\label{GV two i}
$\Phi(s)=x_s\restr\gamma_{s\conc\langle 1\rangle}$.
\item\label{GV two ii}
$x_s=\tau\big(\langle\gamma_{t}:\, t\subseteq s,\, t\in\SS_\kappa\rangle\big).$%
\end{enumerate-(i)}
For all $s\in{}^{<\kappa}\kappa$, let
$p_s:= \langle \gamma_t,\, x_t:\,t\subseteq s,\, t\in\SS_\kappa\rangle.$
\ref{GV two ii} states that $p_s$ is a partial run of $\GV_\kappa(X)$ where $\ptwo$ uses the strategy $\tau$.
Note that $x_s$ is constant in intervals disjoint from $\SS_\kappa$, but takes a new value at each $s\in \SS_\kappa$: 
If $s\in\SS_\kappa$, then
$\bigcup \{p_u:\, u\subsetneq s,\,u\in\SS_\kappa\}$
is extended to $p_s$
by $\pone$ playing $\gamma_s$ as defined below 
and $\ptwo$ choosing $x_s$ using $\tau$. 
Otherwise,
$s=t\conc \langle 0\rangle^{\alpha}$ for some $\alpha<\kappa$. Then $p_s=p_t$, so $x_s=x_t$.

We shall construct $x_s$, $\gamma_s$ and $\Phi(s)$ 
by recursion.
Regarding the order of the construction, let $\gamma_\emptyset=0$ and $x_\emptyset=x$. 
If $s\in\SS_\kappa$ and $\lh(s)\in\Lim$, let 
$\gamma_s:=\bigcup\{\gamma_{t}:\, t\subsetneq s,\,t\in\SS_\kappa\}$.
If $s\in{}^{<\kappa} 2$
and 
$\gamma_{t}$ has been constructed for all $t\subseteq s$ with $t\in\SS_\kappa$, then
\todog{The recursion is ordered this way because if $s=t\conc\langle 1\rangle$, then $\gamma_s$ also has to be constructed at this point.}
construct $x_s$, $\gamma_{s\conc\langle 1\rangle}$ and $\Phi(s)$ in the next step as follows.
Let $x_s$ be as in~\ref{GV two ii}.
We aim to choose %$\pone$'s moves 
$\gamma_{s\conc\langle 1\rangle}$ so that 
$\Phi$ remains $\perp$- and strict order preserving for 
$\Phi(s):=x_s\restr\gamma_{s\conc\langle 1\rangle}$.

The next claim shows that 
%$\Phi(s):=x_s\restr\gamma_{s\conc\langle 1\rangle}$ 
$\Phi(s)$
extends $\Phi(t)$ for all $t\subsetneq s$ if $\gamma_{s\conc\langle 1\rangle}$ is chosen sufficiently large. 
%Note that this suffices if $\lh(s)\in\Lim$.
This will ensure that $\Phi$ is strict order preserving.

\begin{claim*}
$\Phi(t)\subsetneq x_s$ for all $t\subsetneq s$.
\end{claim*}
\begin{proof}
If $s=t\conc\langle 0\rangle^\alpha$, then
$\Phi(t)\subseteq x_t=x_s$.
Otherwise, take the least $\alpha\geq \lh(t)$ with $s(\alpha)=1$.
Then $s\supseteq t\conc\langle 0\rangle^\alpha\conc\langle 1\rangle$.
By \ref{GV two i}
applied to $t\conc\langle 0\rangle^\alpha$
and \ref{GV two ii}, 
we have 
$
\Phi(t)
\subseteq 
\Phi(t\conc\langle 0\rangle^\alpha\conc\langle 1\rangle)
=
(x_{t\conc\langle 0\rangle^\alpha}\restr
\gamma_{t\conc\langle 0\rangle^\alpha\conc\langle 1\rangle})
\subseteq 
x_s.
$
%The last inclusion holds since $x_s$ is chosen according to the rules of $\GV_\kappa(X)$.
\end{proof}

It remains to show that $\Phi$ is $\bot$-preserving if $\gamma_{u^\smallfrown \langle 1\rangle}$ is chosen sufficiently large for all $u$ of successor length. 
To see this, suppose that $s=t\conc\langle i\rangle$. 
Since $\ptwo$ has to play an injective sequence in $\GV_\kappa(X,x)$,
we have
$x_{t\conc\langle 0\rangle}=x_t
\neq
x_{t\conc\langle 1\rangle}$
by \ref{GV two ii}.
Therefore $\Phi(t\conc\langle 0\rangle):= x_{t\conc\langle 0\rangle}\restr\gamma_{t\conc\langle 0,1\rangle}$
and
$\Phi(t\conc\langle 1\rangle):= x_{t\conc\langle 1\rangle}\restr\gamma_{t\conc\langle 1,1\rangle}$
are incompatible extensions of $\Phi(t)$, if 
$\gamma_{t\conc\langle 0,1\rangle}$ 
and
$\gamma_{t\conc\langle 1,1\rangle}$ 
are sufficiently large. 
This completes the construction. 

Note that we can ensure that $\Phi$ is continuous by letting $\gamma_{s\conc\langle 1\rangle}:=
\bigcup_{t\subsetneq s}\gamma_{t\conc\langle 1\rangle}$ for $\lh(s)\in\Lim$. 
Then $\Phi(s)=\bigcup_{t\subsetneq s}\Phi(t)$ by the previous claim and~\ref{GV two i}.

\begin{claim*} 
$
[\Phi](\pi(u)\conc\langle0\rangle^\kappa)
=
x_{\pi(u)}
$
for all $u\in {}^{<\kappa}\kappa$. 
\end{claim*} 
\begin{proof} 
We have 
$[\Phi](\pi(u)\conc\langle0\rangle^\kappa) = \bigcup_{\alpha<\kappa} \Phi(\pi(u)\conc\langle0\rangle^\alpha)= x_{\pi(u)}$.  
The first equality holds by the definition of $[\Phi]$. 
The last one holds since $\langle \Phi(\pi(u)\conc\langle0\rangle^\alpha) : \alpha<\kappa \rangle$ is a strictly increasing chain of initial segments of $x_{\pi(u)}$ by \ref{GV two i} and since $x_s=x_t$ if $s=t\conc \langle 0\rangle^{\alpha}$. 
\end{proof} 

By the previous claim, $[\Phi](\pi(\emptyset)\conc\langle0\rangle^\kappa)=x_{\emptyset}=x$ and
$[\Phi](\QQ_\kappa)
\subseteq \{x_s:s\in{}^{<\kappa}\kappa\}\subseteq X$. 
%So $\Phi$ is as 
Since $\Phi$ is also $\perp$- and strict order preserving, it is as 
required in \ref{GV rect oh}.
\end{proof}

The next theorem follows from the two previous lemmas and Theorem~\ref{theorem: KLW from ODD}. 
It also holds for $\CC^X_i:=\CC^X\cap\dhIkappa$ instead of $\CC^X$ by Remark~\ref{remark: CC and injectivity}.
\todoq{Question/idea: is it possible to get a set $X$ as in (1) whose closure is as in  \cite{VaananenCantorBendixson}*{Theorem~3}?}

\begin{theorem} \
\label{theorem: ODD implies CB1} 
Suppose $X$ is a subset of ${}^\kappa\kappa$.
\begin{enumerate-(1)} 
\item\label{ODD CB1 coloring}
If $\CC^X$ has a $\kappa$-coloring, then $|X|\leq\kappa$ and
 $\pone$ wins $\GV_\kappa(X)$. 
\item\label{ODD CB1 homomorphism}
There exists a continuous homomorphism from $\dhH\kappa$ to $\CC^X$
if and only if 
 $\ptwo$ wins $\GV_\kappa(X)$. 
\end{enumerate-(1)} 
In particular, 
each of 
$\ODD\kappa{\CC^X}$ and $\KLW_\kappa({}^\kappa\kappa,X)$%
\footnote{Note that $\ODD\kappa{\CC^X}$ and $\KLW_\kappa({}^\kappa\kappa,X)$ are equivalent by Theorem~\ref{theorem: KLW from ODD}.}
implies $\cbi(X)$. 
\end{theorem}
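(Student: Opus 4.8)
The plan is to prove the two itemized statements \ref{ODD CB1 coloring} and \ref{ODD CB1 homomorphism} separately, since the two previous lemmas almost immediately give one direction of each, and then to assemble the ``in particular'' clause from them together with the fact (recorded just before the theorem) that $\ODD\kappa{\CC^X}$ and $\KLW_\kappa({}^\kappa\kappa,X)$ are equivalent by Theorem~\ref{theorem: KLW from ODD}.

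First I would dispatch \ref{ODD CB1 coloring}: this is exactly the content of Lemma~\ref{theorem: GV I}, so the proof is a one-line citation. For \ref{ODD CB1 homomorphism} I would argue both directions. For the direction from left to right, suppose there is a continuous homomorphism $f$ from $\dhH\kappa$ to $\CC^X$. By Lemma~\ref{homomorphisms and order preserving maps}~\ref{hop 1} (applied with $H=\CC^X$, which is box-open on ${}^\kappa\kappa$ and whose domain is all of ${}^\kappa\kappa$) there is an order homomorphism $\iota$ for $({}^\kappa\kappa,\CC^X)$; then Lemma~\ref{lemma: nice reduction} yields a nice reduction $\Phi$ for $({}^\kappa\kappa,X)$. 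Fix any $x\in[\Phi](\QQ_\kappa)$, which is nonempty. By Lemma~\ref{theorem: GV II}, the implication \ref{GV rect oh} $\Rightarrow$ \ref{II wins GV} shows $x\in\Ker_\kappa(X)$, so $\ptwo$ wins $\GV_\kappa(X,x)$ and hence wins $\GV_\kappa(X)$. Conversely, if $\ptwo$ wins $\GV_\kappa(X)$, then $\Ker_\kappa(X)\neq\emptyset$; picking $x\in\Ker_\kappa(X)$ and applying Lemma~\ref{theorem: GV II} (\ref{II wins GV} $\Rightarrow$ \ref{GV rect oh}) gives a nice reduction $\Phi$ for $({}^\kappa\kappa,X)$, which by Theorem~\ref{theorem: KLW from ODD}~\ref{KLW from ODD hom} (the equivalence \ref{KLW hom a} $\Rightarrow$ \ref{KLW hom d}) produces a continuous homomorphism from $\dhH\kappa$ to $\CC^X$.

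For the final ``in particular'' clause I would reason as follows. Assume $\ODD\kappa{\CC^X}$. If $\CC^X$ admits a $\kappa$-coloring, then by \ref{ODD CB1 coloring} we have $|X|\leq\kappa$ and $\pone$ wins $\GV_\kappa(X)$, which is the first alternative of $\cbi(X)$. Otherwise, by $\ODD\kappa{\CC^X}$ there is a continuous homomorphism from $\dhH\kappa$ to $\CC^X$, whence by \ref{ODD CB1 homomorphism} $\ptwo$ wins $\GV_\kappa(X)$, the second alternative. Thus $\cbi(X)$ holds. Finally, $\KLW_\kappa({}^\kappa\kappa,X)$ implies $\cbi(X)$ as well, since $\KLW_\kappa({}^\kappa\kappa,X)$ is equivalent to $\ODD\kappa{\CC^X}$ by Theorem~\ref{theorem: KLW from ODD} (taking the particular instance where the first subset is ${}^\kappa\kappa$).

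I do not expect any serious obstacle here: essentially all of the work has been done in Lemmas~\ref{theorem: GV I} and~\ref{theorem: GV II} and in Theorem~\ref{theorem: KLW from ODD}, and the theorem is a clean packaging of those results. The only point requiring a little care is to make sure that a nice reduction $\Phi$ for $({}^\kappa\kappa,X)$ indeed has $[\Phi](\QQ_\kappa)$ nonempty (it is the continuous injective image of the nonempty set $\QQ_\kappa$) so that we can select a witness $x$ for Lemma~\ref{theorem: GV II}; this is immediate. The mild bookkeeping issue is keeping straight which equivalence of Theorem~\ref{theorem: KLW from ODD}~\ref{KLW from ODD hom} is invoked in each direction, but no new ideas are needed.
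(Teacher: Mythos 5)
Your proof is correct and follows essentially the same route as the paper, whose entire argument is the remark that the theorem ``follows from the two previous lemmas and Theorem~\ref{theorem: KLW from ODD}'': part \ref{ODD CB1 coloring} is Lemma~\ref{theorem: GV I}, part \ref{ODD CB1 homomorphism} combines Lemma~\ref{theorem: GV II} with the equivalence \ref{KLW hom a}$\Leftrightarrow$\ref{KLW hom d} of Theorem~\ref{theorem: KLW from ODD}~\ref{KLW from ODD hom} (applied with ${}^\kappa\kappa$ and $X$ in place of its $X$ and $Y$), and the final clause is assembled exactly as you do. The only cosmetic difference is that in the forward direction of \ref{ODD CB1 homomorphism} you unpack that equivalence through Lemmas~\ref{homomorphisms and order preserving maps} and~\ref{lemma: nice reduction} instead of citing it, which is precisely how the paper proves it anyway.
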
 

\todog{``NOT $\cbi(X)$ and $\Det(\GV_\kappa(X))$ for some closed set $X$'' is consistent for $\kappa=\omega_1$ by \cite{VaananenCantorBendixson}*{Theorem~3} (which says that
it is consistent that 
there exists an $\omega_1$-scattered closed subset $X$ of ${}^{\omega_1}\omega_1$ of size $\omega_2$).}%
%Note that this holds for all $\Gdelta(\kappa)$ subsets $X$ of ${}^\kappa\kappa$
%by Corollary~\ref{cor: CC coloring}.
%

\begin{remark}
\label{PSP iff CB1 for Gdelta sets}
Suppose $X$ is a $\Gdelta(\kappa)$ subset of ${}^\kappa\kappa$. 
Then 
$\cbi(X)\Longleftrightarrow\KLW_\kappa({}^\kappa\kappa,X)$
since
the converse of~\ref{ODD CB1 coloring} holds by Corollary~\ref{cor: CC coloring}.
Hence $\cbi(X)\Longleftrightarrow\PSP_\kappa(X)$
by Corollary~\ref{cor: PSP from ODD for the kappa-Baire space 2}~\ref{cor: PSP from ODD for the kappa-Baire space 2 b}.
\end{remark}

It is open whether 
$\cbi(X)\Longrightarrow\KLW_\kappa({}^\kappa\kappa,X)$
holds 
for all subsets $X$ of ${}^\kappa\kappa$,
or equivalently, whether
the converse of~\ref{ODD CB1 coloring} holds.
It is further equivalent to ask whether all $\kappa$-scattered sets of size $\kappa$ are $\Gdelta(\kappa)$ by Corollary~\ref{cor: CC coloring}.
\todol{I added this sentence in this version.}
\todog{``NOT $\cbi(X)$ and $\Det(\GV_\kappa(X))$ for some closed set $X$'' is consistent for $\kappa=\omega_1$ by \cite{VaananenCantorBendixson}*{Theorem~3} (which says that
it is consistent that 
there exists an $\omega_1$-scattered closed subset $X$ of ${}^{\omega_1}\omega_1$ of size $\omega_2$).}%

\begin{remark}
\label{remark: PSP and CB1 for closed sets}
Lemmas \ref{theorem: GV I} and \ref{theorem: GV II}  
generalize the following observations 
from closed subsets to arbitrary subsets of ${}^\kappa\kappa$:
\begin{enumerate-(1)}
\item 
If $X$ is closed and 
$|X|\leq\kappa$, then $\pone$ wins $\GV_\kappa(X)$ \cite{VaananenCantorBendixson}*{Proposition 3}.%
\footnote{Note that if $X$ is closed, 
or more generally $\Gdelta(\kappa)$, 
then $|X|\leq\kappa$ if and only if $\CC^X$ has a $\kappa$-coloring by Corollary~\ref{cor: CC coloring}.}
\item 
If $X$ is closed, then 
$\Ker_\kappa(X)$ is the union of all $\kappa$-perfect subsets of $X$ \cite{VaananenCantorBendixson}*{Lemma~1 \& Proposition~1}.
\end{enumerate-(1)} 
These observations already show $\cbi(X)\Longleftrightarrow\PSP_\kappa(X)$ for closed sets $X$. 
%One can in fact obtain a slightly stronger result. 
\end{remark}

\begin{comment}
%%%Older version, with a more direct proof of cbi => PSP
\begin{remark}
\label{PSP iff CB1 for Gdelta sets}
$\PSP_\kappa(X)\Longleftrightarrow\cbi(X)$ 
holds for any $\Gdelta(\kappa)$ subset $X$ of ${}^\kappa\kappa$.
To see this, note that the direction from left to right follows immediately from the previous theorem and
Corollary~\ref{cor: PSP from ODD for the kappa-Baire space 2}~\ref{cor: PSP from ODD for the kappa-Baire space 2 b}.
For the converse direction, 
it suffices to show that any $\Gdelta(\kappa)$ subset $X$ of ${}^\kappa\kappa$ such that 
$\ptwo$ wins $\GV_\kappa(X)$ 
has a $\kappa$-perfect subset.
The proof is a slight variant of the construction in Lemma~\ref{theorem: GV II} \ref{II wins GV} $\Rightarrow$ \ref{GV rect oh}. 
If $X=\bigcap_{\alpha<\kappa} U_\alpha$ with each $U_\alpha$ open, 
we construct 
$\Phi: {}^{<\kappa}2\to T(X)$,
$\langle \gamma_s : s\in \SS_\kappa \rangle$
and $\langle x_s : s\in {}^{<\kappa}\kappa \rangle$ 
such that in addition to the properties in that construction, 
$N_{\Phi(s)}\subseteq U_{\lh(s)}$ for all $s\in{}^{<\kappa}2$.
This can be guaranteed since 
$x_s\in X\subseteq U_{\lh(s)}$
and $U_{\lh(s)}$ is open.
Thus, $\ran([\Phi])$ is contained in 
$X=\bigcap_{\alpha<\kappa} U_{\alpha}$. 
Since $\Phi$ is $\perp$-preserving, this is a $\kappa$-perfect subset of $X$ by Lemma~\ref{perfect subsets and sop maps}~\ref{sop maps -> perfect subsets}.
\end{remark}
\end{comment}

The next property is an analogue of Cantor--Bendixson analysis:
it ensures that a subset $X$ of~${}^\kappa\kappa$ can be turned into
a $\kappa$-crowded set
by first removing 
a $\kappa$-scattered subset consisting of 
up to $\kappa$ many points.
\begin{definition}
\label{def: cbii}
For any class $\mathcal C$, 
$\cbii(\mathcal C)$ states that the following holds for all subsets $X\in\mathcal C$ of ${}^\kappa\kappa$: 
\index{Cantor--Bendixson dichotomy!second variant!for sets\idf$\cbii(X)$}%
\index{Cantor--Bendixson dichotomy!second variant!for classes\idf$\cbii(\mathcal C)$} 
\begin{equation*}
\cbii(X):\:
X=\Ker_\kappa(X)\cup\Sc_\kappa(X) 
\text{ and } 
|\Sc_\kappa(X)|\leq\kappa. 
\end{equation*}
\end{definition}
Note that
$\cbii(X)$ implies that $\GV_\kappa(X,x)$ is determined for every $x\in X$,%
\footnote{It is easy to see that this implies the determinacy of V\"a\"an\"anen's original games $\GV_\kappa(X,x)$ for all $x\in {}^\kappa\kappa$.}
and hence that $\GV_\kappa(X)$ is determined. 
The next theorem shows that 
$\ODD\kappa\kappa({}^\kappa\kappa) \Longrightarrow \cbii(\pwrset({}^\kappa\kappa))$ and
$\ODD\kappa\kappa({}^\kappa\kappa,\defsetsk) \Longrightarrow \cbii(\defsetsk)$.

\begin{theorem}
\label{prop: cbi iff cbii}
Suppose that $\mathcal C$ is 
a class.
\begin{enumerate-(1)}
\item \label{cbi cbii 1}
$\cbii({\mathcal C}) \Longrightarrow \cbi({\mathcal C})$.
\item\label{cbi cbii 2}
If $X\cap N_t\in{\mathcal C}$ 
for all subsets $X\in{\mathcal C}$ of ${}^\kappa\kappa$ and all $t\in{}^{<\kappa}\kappa$, 
then 
$$\cbi({\mathcal C}) \Longleftrightarrow \cbii({\mathcal C}).$$
In particular,
%each of $\ODD\kappa{\CC^X}$ and 
$\KLW_\kappa({}^\kappa\kappa,\mathcal C)$%
\footnote{Equivalently, $\ODD\kappa{\CC^X}$ for all $X\in\mathcal C$, by Theorem~\ref{theorem: KLW from ODD}.}
%\footnote{Equivalently, $\ODD\kappa\kappa({}^\kappa\kappa,\mathcal C')$ where $\mathcal C'$ consists of the dihypergraphs $\CC^X$ for all $X\in\mathcal C$.}
implies $\cbii(\mathcal C)$. 
\end{enumerate-(1)}
\end{theorem}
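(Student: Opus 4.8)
The plan is to reduce everything to two \emph{locality} facts about V\"a\"an\"anen's game---that for $x\in X$ and $t\subseteq x$ the winner of $\GV_\kappa(X\cap N_t,x)$ is the same as the winner of $\GV_\kappa(X,x)$---and then run a Cantor--Bendixson-style bookkeeping over the $\kappa$ many basic open sets $N_t$. Part~\ref{cbi cbii 1} is immediate and I would dispatch it first. Assume $\cbii(X)$, so $X=\Ker_\kappa(X)\cup\Sc_\kappa(X)$ with $|\Sc_\kappa(X)|\leq\kappa$. If $\Ker_\kappa(X)=\emptyset$, then $X=\Sc_\kappa(X)$ is $\kappa$-scattered (equivalently $\pone$ wins $\GV_\kappa(X)$) and $|X|\leq\kappa$, which is the first option of $\cbi(X)$; otherwise $\Ker_\kappa(X)\neq\emptyset$, which is exactly the statement that $\ptwo$ wins $\GV_\kappa(X)$, the second option. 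Hence $\cbi(X)$ holds, giving $\cbii(\mathcal C)\Rightarrow\cbi(\mathcal C)$.

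For part~\ref{cbi cbii 2} only the direction $\cbi(\mathcal C)\Rightarrow\cbii(\mathcal C)$ remains, by part~\ref{cbi cbii 1}. Fix $X\in\mathcal C$; by the hypothesis on $\mathcal C$ we have $X\cap N_t\in\mathcal C$ for all $t$, so $\cbi(X\cap N_t)$ is available. First I would record the locality facts. The inclusion $\Ker_\kappa(X\cap N_t)\subseteq\Ker_\kappa(X)$ is immediate from Lemma~\ref{theorem: GV II}: a witnessing $Y\subseteq X\cap N_t\subseteq X$ with $\closure{Y}$ $\kappa$-perfect witnesses membership in $\Ker_\kappa(X)$ as well (and the reverse inclusion, if needed, follows since $\closure{Y\cap N_t}=\closure{Y}\cap N_t$ is a nonempty relatively clopen, hence $\kappa$-perfect, subset). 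The inclusion $\Sc_\kappa(X\cap N_t)\subseteq\Sc_\kappa(X)$ is where $\pone$ must transfer a strategy: in $\GV_\kappa(X,x)$ with $x\in X\cap N_t$, player $\pone$ opens with $\gamma_1\geq\lh(t)$, and since $x_0=x\supseteq t$ and every later $x_\alpha\supseteq x_0\restr\gamma_1\supseteq t$, all of $\ptwo$'s moves remain in $X\cap N_t$; the run then coincides with a run of $\GV_\kappa(X\cap N_t,x)$, in which $\pone$ follows a winning strategy.

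Then comes the bookkeeping. Let $A:=\{t\in{}^{<\kappa}\kappa : X\cap N_t\text{ is }\kappa\text{-scattered of size }\leq\kappa\}$, i.e.\ the set of $t$ realizing the first option of $\cbi(X\cap N_t)$, and set $S:=\bigcup_{t\in A}(X\cap N_t)$. Since $|{}^{<\kappa}\kappa|=\kappa^{<\kappa}=\kappa$ and each $X\cap N_t$ with $t\in A$ has size $\leq\kappa$, we get $|S|\leq\kappa$. I claim $S\subseteq\Sc_\kappa(X)$ and $X\setminus S\subseteq\Ker_\kappa(X)$. For the first, if $x\in X\cap N_t$ with $t\in A$ then $x\in X\cap N_t=\Sc_\kappa(X\cap N_t)\subseteq\Sc_\kappa(X)$ by the $\pone$-locality fact. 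For the second, fix $x\in X\setminus S$; then $t\notin A$ for every $t\subseteq x$, so the second option of $\cbi(X\cap N_t)$ gives some $y_t\in\Ker_\kappa(X\cap N_t)\subseteq N_t\cap\Ker_\kappa(X)$ via the $\ptwo$-locality fact. Thus every neighborhood $N_t$ of $x$ meets $\Ker_\kappa(X)$, so $x\in\closure{\Ker_\kappa(X)}\cap X=\Ker_\kappa(X)$, using that $\Ker_\kappa(X)$ is relatively closed in $X$ (as noted after Definition~\ref{def: kernel}). From $S\subseteq\Sc_\kappa(X)$ and $X\setminus S\subseteq\Ker_\kappa(X)$ together with $\Sc_\kappa(X),\Ker_\kappa(X)\subseteq X$ we obtain $X=\Sc_\kappa(X)\cup\Ker_\kappa(X)$; and since no single game $\GV_\kappa(X,x)$ is won by both players, $\Sc_\kappa(X)\cap\Ker_\kappa(X)=\emptyset$, which forces $\Sc_\kappa(X)\subseteq S$ and hence $|\Sc_\kappa(X)|\leq\kappa$. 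This is $\cbii(X)$. The ``in particular'' then follows at once: under the same closure hypothesis on $\mathcal C$, $\KLW_\kappa({}^\kappa\kappa,\mathcal C)$ yields $\cbi(Y)$ for every $Y\in\mathcal C$ by Theorem~\ref{theorem: ODD implies CB1}, i.e.\ $\cbi(\mathcal C)$, whence $\cbii(\mathcal C)$.

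The main obstacle I anticipate is making the two locality facts precise---in particular the strategy transfer for $\pone$ (verifying that opening with $\gamma_1\geq\lh(t)$ does not disturb an imported winning strategy, which rests on the observation that within $\GV_\kappa(X\cap N_t,x)$ the constraint coming from $\gamma_1\leq\lh(t)$ is vacuous) and the appeal to relative closedness of $\Ker_\kappa(X)$. Once these are secured, the remaining Cantor--Bendixson bookkeeping with $A$ and $S$ is routine.
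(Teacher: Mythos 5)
Your proposal is correct and follows essentially the same route as the paper's proof: part (1) is the same observation, and for part (2) you use the same decomposition — the family of nodes $t$ with $X\cap N_t$ a $\kappa$-scattered set of size $\leq\kappa$, the same two inclusions pinning down $\Sc_\kappa(X)$ and $\Ker_\kappa(X)$, and the same $\pone$-strategy transfer obtained by opening with $\gamma_1\geq\lh(t)$ to confine $\ptwo$ to $N_t$ and then importing a local winning strategy. The only divergence is at the step where the paper asserts outright that $x\notin\Ker_\kappa(X)$ yields some $\gamma$ with $\ptwo$ not winning $\GV_\kappa(X\cap N_{x\restr\gamma})$: you derive its contrapositive from kernel locality (via Lemma~\ref{theorem: GV II}) together with the relative closedness of $\Ker_\kappa(X)$ in $X$ noted after Definition~\ref{def: kernel}, which is a legitimate filling-in of a detail the paper leaves implicit rather than a genuinely different method.
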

\begin{proof}
For \ref{cbi cbii 1}, let $X\subseteq{}^\kappa\kappa$ and assume
$\cbii(X)$.
To see $\cbi(X)$, suppose that
$\ptwo$ does not win $\GV_\kappa(X)$.
Then
$\Ker_\kappa(X)=\emptyset$. 
By $\cbii(X)$,
$X$ is a $\kappa$-scattered set of size~$\lleq\kappa$,
as required.

Now suppose that $\cbi({\mathcal C})$ holds for some class ${\mathcal C}$ as in \ref{cbi cbii 2}, and take $X\in{\mathcal C}$. To show $\cbii(X)$, let
$S$ denote the set of those $s\in{}^{<\kappa}\kappa$ such that 
$X\cap N_s$ is a $\kappa$-scattered set of size $\leq\kappa$.
Since 
$|\bigcup_{s\in S} (X\cap N_s)|\leq\kappa$
and
$\Sc_\kappa(X)\cap\Ker_\kappa(X)=\emptyset$,
it suffices to show that
\[
X\oldsetminus\Ker_\kappa(X)\subseteq
\bigcup_{s\in S} (X\cap N_s)
\subseteq\Sc_\kappa(X).
\]
To see the first inclusion, suppose that $y\in X\oldsetminus\Ker_\kappa(X)$.
\todoo{typo corrected}
Then $\ptwo$ does not win $\GV_\kappa(X,y)$, so 
there is some $\gamma$ such that
$\ptwo$ does not win $\GV_\kappa(X\cap N_{y\restr\gamma})$.
Since $X\cap N_{y\restr\gamma}\in{\mathcal C}$, we have
$\cbi(X\cap N_{y\restr\gamma})$.
Hence $|X\cap N_{y\restr\gamma}|\leq\kappa$ and
$\pone$ wins $X\cap N_{y\restr\gamma}$, so
$y\restr\gamma\in S$. 
For the second inclusion,
let $y\in\bigcup_{s\in S} (X\cap N_s)$.
Then $y\restr\gamma\in S$ for some $\gamma<\kappa$.
$\pone$ has the following winning strategy in $\GV_\kappa(X,y)$:
$\pone$ first plays $\gamma_0=0$ and $\gamma_1=\gamma$. 
In subsequent rounds, $\pone$ uses their
winning strategy in $\GV_\kappa(X\cap N_{y\restr\gamma})$
to determine their moves in $\GV_\kappa(X,y)$.
Thus, $y\in\Sc_\kappa(X)$.
The last assertion in \ref{cbi cbii 2} now follows immediately from Theorem~\ref{theorem: ODD implies CB1}. 
\end{proof}

The next corollary follows from the results in this subsection and Theorem~\ref{main theorem}.

\begin{corollary}
\label{cor: CB consistency}
The following hold in all $\Col(\kappa,\lle\lambda)$-generic extensions of $V$: 
\begin{enumerate-(1)}
\item\label{cor: CB consistency 1}
$\cbii(\defsetsk)$ %and $\cbi(\defsetsk)$ 
%for all subsets $X\in\defsetsk$ of ${}^\kappa\kappa$,
if $\lambda>\kappa$ is inaccessible in $V$.
\item\label{cor: CB consistency 2}
$\cbii(\pwrset({}^\kappa\kappa))$ %and $\cbi(\pwrset({}^\kappa\kappa))$ 
%for all subsets $X$ of ${}^{\kappa}\kappa$, 
if $\lambda>\kappa$ is Mahlo in $V$.
\end{enumerate-(1)}
\end{corollary}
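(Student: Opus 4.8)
The plan is to deduce both parts by feeding Theorem~\ref{main theorem} into the chain of equivalences established earlier in this subsection, applied to the dihypergraphs $\CC^Y$. Recall that for any $Y\subseteq{}^\kappa\kappa$ the dihypergraph $\CC^Y$ is box-open on ${}^\kappa\kappa$, and that $\CC^Y\in\defsetsk$ whenever $Y\in\defsetsk$. The key links are: Theorem~\ref{theorem: KLW from ODD}, which gives $\ODD\kappa{\CC^Y}\Longleftrightarrow\KLW_\kappa({}^\kappa\kappa,Y)$; and Theorem~\ref{prop: cbi iff cbii}~\ref{cbi cbii 2}, whose ``in particular'' clause yields $\cbii(\mathcal C)$ from $\KLW_\kappa({}^\kappa\kappa,\mathcal C)$ for any class $\mathcal C$ closed under intersections with basic open sets $N_t$. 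So it suffices to verify, in each case, that the relevant instances of $\ODD\kappa{\CC^Y}$ hold in $V[G]$ and that the ambient class satisfies the closure hypothesis of Theorem~\ref{prop: cbi iff cbii}~\ref{cbi cbii 2}.

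For part~\ref{cor: CB consistency 1}, assume $\lambda>\kappa$ is inaccessible. By Theorem~\ref{main theorem}~\ref{mainthm ddim=kappa}, $\ODD\kappa\kappa(\defsetsk,\defsetsk)$ holds in $V[G]$; taking $X={}^\kappa\kappa\in\defsetsk$ gives $\ODD\kappa\kappa({}^\kappa\kappa,\defsetsk)$. Now fix any $Y\in\defsetsk$. Since $\CC^Y$ is a box-open $\kappa$-dihypergraph on ${}^\kappa\kappa$ lying in $\defsetsk$, the instance $\ODD\kappa{\CC^Y}$ is exactly a case of $\ODD\kappa\kappa({}^\kappa\kappa,\defsetsk)$ and therefore holds. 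By Theorem~\ref{theorem: KLW from ODD} this is equivalent to $\KLW_\kappa({}^\kappa\kappa,Y)$, and as $Y\in\defsetsk$ was arbitrary we obtain $\KLW_\kappa({}^\kappa\kappa,\defsetsk)$. Finally, $\defsetsk$ is closed under intersections with $N_t$ (if $X=X_{\varphi,a}$ then $X\cap N_t$ is definable from the parameter $\langle a,t\rangle$, again a $\kappa$-sequence of ordinals), so Theorem~\ref{prop: cbi iff cbii}~\ref{cbi cbii 2} yields $\cbii(\defsetsk)$.

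For part~\ref{cor: CB consistency 2}, assume $\lambda>\kappa$ is Mahlo. By Theorem~\ref{main theorem}~\ref{mainthm Mahlo}, $\ODD\kappa\kappa(\defsetsk)$ holds in $V[G]$, i.e.\ $\ODD\kappa{H\restr X}$ for all $X\in\defsetsk$ and all box-open $\kappa$-dihypergraphs $H$ on ${}^\kappa\kappa$; taking $X={}^\kappa\kappa$ gives $\ODD\kappa{H}$ for every such $H$. Now for an \emph{arbitrary} $Y\subseteq{}^\kappa\kappa$, the dihypergraph $\CC^Y$ is box-open on ${}^\kappa\kappa$ (though no longer definable in general), so $\ODD\kappa{\CC^Y}$ holds; by Theorem~\ref{theorem: KLW from ODD} this gives $\KLW_\kappa({}^\kappa\kappa,Y)$ for every $Y$, that is, $\KLW_\kappa({}^\kappa\kappa,\pwrset({}^\kappa\kappa))$. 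Since $\pwrset({}^\kappa\kappa)$ is trivially closed under intersections with $N_t$, Theorem~\ref{prop: cbi iff cbii}~\ref{cbi cbii 2} delivers $\cbii(\pwrset({}^\kappa\kappa))$. I expect no genuine obstacle here, as all the substantive work resides in the cited theorems; the only points requiring care are the bookkeeping of which ambient class of dihypergraphs is needed (definable ones in part~\ref{cor: CB consistency 1}, all box-open ones in part~\ref{cor: CB consistency 2}, matching the inaccessible versus Mahlo hypotheses) and the verification that $\CC^Y$ is box-open and inherits definability from $Y$, so that the ODD instances for the specific dihypergraphs $\CC^Y$ are legitimately covered.
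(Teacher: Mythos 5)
Your proof is correct and follows essentially the same route as the paper: Theorem~\ref{main theorem} yields the relevant instances of $\ODD\kappa\kappa({}^\kappa\kappa,\defsetsk)$ (resp.\ $\ODD\kappa\kappa({}^\kappa\kappa)$), which give $\ODD\kappa{\CC^Y}$, equivalently $\KLW_\kappa({}^\kappa\kappa,Y)$, for all relevant $Y$, and Theorem~\ref{prop: cbi iff cbii}~\ref{cbi cbii 2} then delivers $\cbii$ for each class, both of which are closed under intersections with basic open sets. This is precisely the chain the paper compresses into ``follows from the results in this subsection and Theorem~\ref{main theorem}.''
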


We can equivalently replace $\cbii(\mathcal C)$ by $\cbi(\mathcal C)$, where $\mathcal C$ denotes $\defsetsk$ and $\pwrset({}^\kappa\kappa)$, respectively.
In the model in \ref{cor: CB consistency 2}, V\"a\"an\"anen's game is therefore determined for all subsets of ${}^\kappa\kappa$. 
The previous corollary strengthens two results of V\"a\"an\"anen 
\cite{VaananenCantorBendixson}*{Theorem~1 \& Theorem~4}.  
He showed that both
$\cbiiomegaone(\closedsets(\omega_1))$ and 
the statement $\cboomegaone$ defined right below%
\footnote{$\cboomegaone$ follows from  $\cbiomegaone(\powerset({}^{\omega_1}\omega_1))$.}
hold in $\Col(\omega_1,\lle\lambda)$-generic extensions if $\lambda$ is measurable.%
\footnote{In fact, V\"a\"an\"anen obtained these two statements from a 
principle $I(\omega)$ 
which holds after L\'evy-collapsing a measurable cardinal to $\omega_2$ \cite{GalvinJechMagidor}.
$I(\omega)$ is equiconsistent with a measurable cardinal since it implies the existence of a precipitous ideal on $\omega_2$.} 

\begin{remark}
\label{cbii remark}
Galgon showed that 
$\cbii(\closedsets(\kappa))$
can be obtained by the L\'evy collapse of an inaccessible cardinal to $\kappa^+$  
\cite{GalgonThesis}*{Theorem~1.4.5}. 
$\cbii(\closedsets(\kappa))$
is in fact equivalent to 
$\PSP_\kappa(\closedsets(\kappa))$ 
%by a result of the second-listed author 
\cite{SzThesis}*{Proposition~2.16}. 
Our proof of Theorem~\ref{prop: cbi iff cbii} generalizes the proof of the latter result. 
\end{remark}

Using the previous corollary and \cite{SzThesis}*{Corollary~4.35}, 
we can lower the consistency strength of 
the dichotomy on the size of complete subhypergraphs (cliques) 
of finite dimensional $\Gdelta(\kappa)$ dihypergraphs on $\analytic(\kappa)$ sets studied in \cite{SzVaananen} 
and its generalization to families of dihypergraphs. 
Given a family $\mathcal H$ of dihypergraphs on a set $X$,
we say that $Y\subseteq X$ is an \emph{$\mathcal H$-clique} if  
$\dhK{\ddim_H}Y\subseteq H$ for each $H\in\mathcal H$, where $\ddim_H$ denotes the arity of $H$.
\index{Dolezal Kubis dichotomy@Dole\v{z}al--Kubi\'s dichotomy\idf $\DK_\kappa(X)$}
\begin{quotation}
$\DK_\kappa(X)$:
If $\mathcal H$ is a set of $\kappa$ many finite dimensional $\Gdelta(\kappa)$ %
%\footnote{Equivalently, open.}
dihypergraphs on $X$
and there is an $\mathcal H$-clique of size $\kappa^+$,
then there is a $\kappa$-perfect $\mathcal H$-clique.%
\footnote{%
$\Gdelta(\kappa)$ can equivalently be replaced by open. 
Moreover, $\DK_\kappa(X)$ is a statement about 
%complete subhypergraphs of 
certain product-open $\omega$-dihypergraphs, 
as $Y$ is an $\mathcal H$-clique if and only if 
$\dhK{\omega}Y\subseteq \bigcap_{H\in\mathcal H}\proj_{\omega, \ddim_H}^{-1}(H){\restr}X$. }
\end{quotation}
$\DK_\kappa(X)$ extends a dichotomy of Dole\v{z}al and Kubi\'s \cite{DolezalKubis} to uncountable cardinals.${}$\footnote{They proved $\DK_\omega(\analytic)$, extending the special cases for a single $\Gdelta$ graph \cite{ShelahBorelSq} and a single finite dimensional $\Gdelta$ hypergraph \cite{Kubis}.} 
By \cite{SzThesis}*{Corollary~4.35},${}$%
\footnote{In \cite{SzThesis},
$\DK_\kappa(X)$, $\Diamondi\kappa$ and $\cbo$ are denoted by
$\mathrm{PIF}_\kappa(X)$, $\mathscr{DI}_\kappa$
and
$\mathrm{DISP}_\kappa$,
respectively. 
Moreover, 
$\DK_\kappa(X)$ is formulated as a dichotomy about the size of independent sets with respect to $\kappa$ many finite dimensional  $\Fsigma(\kappa)$ dihypergraphs.}
\todon{Footnote number size problem solved (twice) by adding \$\{\}\$.} 
$\DK_\kappa(\analytic(\kappa))$ follows from
$\Diamondi\kappa$ and 
\index{Cantor--Bendixson dichotomy!weak variant\idf$\cbo$} 
\begin{quotation}
$\cbo$:
$\ptwo$ wins $\GV_\kappa(X)$ for all subsets $X$ of ${}^\kappa\kappa$ of size $\kappa^+$.
\end{quotation}
Since $\cbo$ follows from $\cbi(\powerset({}^\kappa\kappa))$, 
 %Corollary~\ref{cor: CB consistency} yields the following.
the previous corollary yields:
\begin{corollary}
\label{corollary: DK}
$\DK_\kappa(\analytic(\kappa))$
holds in all $\Col(\kappa,\lle\lambda)$-generic extensions of $V$ 
if $\lambda>\kappa$ is a Mahlo cardinal in $V$. 
\end{corollary}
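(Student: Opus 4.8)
The plan is to deduce Corollary~\ref{corollary: DK} directly by chaining two external facts already cited in the excerpt with Corollary~\ref{cor: CB consistency}. The statement asserts that $\DK_\kappa(\analytic(\kappa))$ holds in $\Col(\kappa,\lle\lambda)$-generic extensions whenever $\lambda>\kappa$ is Mahlo. The key input is the cited result \cite[Corollary~4.35]{SzThesis}, which tells us that $\DK_\kappa(\analytic(\kappa))$ follows from the conjunction of $\Diamondi\kappa$ and the principle $\cbo$ (that $\ptwo$ wins $\GV_\kappa(X)$ for all $X\subseteq{}^\kappa\kappa$ of size $\kappa^+$). So it suffices to verify that both $\Diamondi\kappa$ and $\cbo$ hold in the relevant generic extension.

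First I would observe that $\cbo$ follows from $\cbi(\powerset({}^\kappa\kappa))$, as the text notes just before the corollary: if $\cbi(X)$ holds for every $X$ and $X$ has size $\kappa^+>\kappa$, then the first alternative of $\cbi(X)$ (namely $|X|\leq\kappa$ together with $\pone$ winning) is impossible, so the second alternative holds, i.e.\ $\ptwo$ wins $\GV_\kappa(X)$. Now Corollary~\ref{cor: CB consistency}~\ref{cor: CB consistency 2} gives us $\cbii(\powerset({}^\kappa\kappa))$ in any $\Col(\kappa,\lle\lambda)$-generic extension with $\lambda$ Mahlo, and by Theorem~\ref{prop: cbi iff cbii}~\ref{cbi cbii 1} this implies $\cbi(\powerset({}^\kappa\kappa))$. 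Hence $\cbo$ holds in $V[G]$.

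Next I would verify $\Diamondi\kappa$ in $V[G]$. The excerpt states repeatedly (for instance in the paragraph following Theorem~\ref{theorem: ODD ODDI} and before Corollary~\ref{main theorem strong version}) that $\Diamondi\kappa$ holds in all $\Col(\kappa,\lle\lambda)$-generic extensions $V[G]$ where $\lambda>\kappa$ is inaccessible. Since a Mahlo cardinal is in particular inaccessible, this applies directly. One small point to address is the case $\kappa=\omega$: by Remark~\ref{diamondi omega}, $\Diamondi\omega$ is false, so strictly speaking the cited implication from \cite[Corollary~4.35]{SzThesis} and the assumption $\Diamondi\kappa$ only make sense for uncountable $\kappa$; I would note that $\DK_\kappa(\analytic(\kappa))$ as formulated here is the uncountable analogue and that the countable case $\DK_\omega(\analytic)$ is the separately cited theorem of Dole\v{z}al and Kubi\'s, so no genuine obstruction arises.

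Assembling these, in $V[G]$ both $\Diamondi\kappa$ and $\cbo$ hold, so \cite[Corollary~4.35]{SzThesis} yields $\DK_\kappa(\analytic(\kappa))$, completing the proof. The main (and essentially only) obstacle is bookkeeping rather than mathematics: one must make sure the hypotheses of the cited \cite[Corollary~4.35]{SzThesis} are exactly matched, in particular that the notational dictionary in the footnote ($\DK_\kappa$ versus $\mathrm{PIF}_\kappa$, $\cbo$ versus $\mathrm{DISP}_\kappa$, $\Diamondi\kappa$ versus $\mathscr{DI}_\kappa$, and $\Fsigma(\kappa)$ versus $\Gdelta(\kappa)$ dihypergraphs) is applied correctly, and that the Mahlo hypothesis is used only through Corollary~\ref{cor: CB consistency}~\ref{cor: CB consistency 2} while the inaccessibility needed for $\Diamondi\kappa$ is subsumed by it. Since every ingredient is a black-box citation or a one-line deduction already present in the text, the proof is short.
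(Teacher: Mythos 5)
Your proposal is correct and follows exactly the paper's own route: the paper likewise deduces the corollary by combining \cite[Corollary~4.35]{SzThesis} (which gives $\DK_\kappa(\analytic(\kappa))$ from $\Diamondi\kappa$ together with $\cbo$), the observation that $\cbo$ follows from $\cbi(\powerset({}^\kappa\kappa))$, Corollary~\ref{cor: CB consistency}~\ref{cor: CB consistency 2} (with the remark that $\cbii$ may be replaced by $\cbi$), and the fact that $\Diamondi\kappa$ holds after the L\'evy collapse of an inaccessible. Your extra care with the $\kappa=\omega$ case (where $\Diamondi\omega$ fails but $\DK_\omega(\analytic)$ is already a $\ZFC$ theorem of Dole\v{z}al and Kubi\'s) is a harmless refinement of a point the paper leaves implicit.
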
 
This strengthens a joint result of V\"a\"an\"anen and the second-listed author \cite{SzVaananen} showing that $\DK_\kappa(\analytic(\kappa))$ holds in $\Col(\kappa,\lle\lambda)$-generic extensions for any measurable cardinal $\lambda>\kappa$.\footnote{They showed that the combination of $\Diamond_\kappa$ and a principle $I^-(\kappa)$ implies $\DK_\kappa(\analytic(\kappa))$. Note that $I^-(\omega_1)$ is the same as $I(\omega)$. 
The consistency of $I^-(\kappa)$ can be proved similarly to that of $I(\omega)$ in \cite{GalvinJechMagidor}.} 
It is open whether one can obtain $\DK_\kappa(\analytic)$ directly from $\ODD\kappa\kappa({}^\kappa\kappa)$ by constructing a suitable dihypergraph. 
Moreover, we have not considered the case of $\lle\kappa$-dimensional dihypergraphs.

\begin{remark}
\label{remark: GV for graphs with modified winning condition}
Suppose $X$ and $Y$ are subsets of ${}^\kappa\kappa$ and $G$ is a digraph on ${}^\kappa\kappa$. 
We consider  
a more general game $\GV^G_\kappa(X,Y)$ 
\index{V\"a\"an\"anen's perfect set game!Z@variant for a graph and!a payoff set\idf$\GV^G_\xi(X,Y)$}%
that is defined by modifying the winning condition in $\GV_\kappa(X)$. 
$\ptwo$ wins a run if and only if 
$(x_\alpha,x_{\alpha+1})\in G$ for all $\alpha<\kappa$%
\footnote{If $G$ is an open digraph, then it is easy to see that we get an equivalent game if we require instead that $(x_\beta,x_\alpha)\in G$ for all $\beta<\alpha<\kappa$.} 
and
$\bigcup_{\alpha<\kappa} x_\alpha\restr\gamma_{\alpha+1} \in Y$. 
We claim that for any open digraph $G$ on ${}^\kappa\kappa$,
$\KLW^G_\kappa(Y,X)$% 
\footnote{Equivalently, $\ODD\kappa{\CC^{X,G}\restr Y}$, by Theorem~\ref{theorem: ODD and KLW^H}.}
implies the following.%
\index{Cantor--Bendixson dichotomy!Z@relative to a graph\idf$\cbig G(X,Y)$} 
\begin{quotation}
$\cbig G(X,Y)$: 
Either $G\restr (X\cap Y)$ admits a $\kappa$-coloring and $\pone$ wins $\GV^G_\kappa(X,Y)$,
or $\ptwo$ wins~$\GV^G_\kappa(X,Y)$.
\end{quotation}
This follows from~\ref{remark: GV for graphs with modified winning condition 1} and~\ref{remark: GV for graphs with modified winning condition 2} below.
%and Theorem~\ref{theorem: ODD and KLW^H}.
%\footnote{Recall that $\ODD\kappa{\CC^{X,G}\restr Y}$ and $\KLW^G_\kappa(Y,X)$ are equivalent by Theorem~\ref{theorem: ODD and KLW^H}.}
%
For $Y={}^\kappa\kappa$ and $G=\gK {{}^\kappa\kappa}$, 
%this 
$\cbig G(X,Y)$
is equivalent to $\cbi (X)$. 
Moreover, $\cbig G(X,Y)$ relates to $\ODD\kappa{G\restr(X\cap Y)}$ just like $\cbi (X)$ relates to $\PSP_\kappa(X)$. 
In particular, 
they are equivalent 
if $X$ is a $\Gdelta(\kappa)$ subset of ${}^\kappa\kappa$
by~\ref{remark: GV for graphs with modified winning condition 3} below.%
\footnote{The game $\GV^G_\kappa(X):=\GV^G_\kappa(X,{}^\kappa\kappa)$ for open graphs $G$
was studied in \cite{SzThesis}*{Section~3}
with the aim of obtaining games that relate to $\OGD_\kappa(X)$ just like $\GV_\kappa(X)$ relates to $\PSP_\kappa(X)$ for closed sets $X$.
It is denoted by $\GV^1_\kappa(X,R)$ for $R=(X\times X)\oldsetminus G$ there.} 

\begin{enumerate-(1)}
\item\label{remark: GV for graphs with modified winning condition 1}
%Recall that $\ODD\kappa{\CC^{X,G}\restr Y}$ and $\KLW^G_\kappa(Y,X)$ are equivalent by Theorem~\ref{theorem: ODD and KLW^H}.
%
%Suppose %first 
%that $\CC^{X,G}\restr Y$ admits a $\kappa$-coloring. Then $G\restr(X\cap Y)$ admits a $\kappa$-coloring as well by Lemma~\ref{coloring H vs C^XH}.
%We prove that $\pone$ wins $\GV_\kappa^G(X,Y)$
%similarly to Lemma~\ref{theorem: GV I}.
%Write $Y$ as the union of $\CC^{X,G}$-independent sets $A_\alpha$ for $\alpha<\kappa$.
%Consider the following strategy for $\pone$ in $\GV^G_\kappa(X,Y)$.
%In rounds of the form $\alpha+1$, $\pone$ chooses $\gamma_{\alpha+1}$ so that
Suppose the first option in $\KLW^G_\kappa(Y,X)$ holds, i.e., $Y$ is the $\kappa$-union of sets $A_\alpha$ 
%with $A_\alpha^G\cap Y=\emptyset$.\footnote{See Definition~\ref{def: H-limit point}.}
with no $G$-limit points\footnote{See Definition~\ref{def: H-limit point}.} in $Y$.
We claim that 
$G\restr(X\cap Y)$ admits a $\kappa$-coloring and $\pone$ wins $\GV^G_\kappa(X,Y)$.
The former follows from 
%Lemma~\ref{coloring H vs C^XH}.
Lemma~\ref{coloring H vs C^XH}~\ref{coloring H vs C^XH 2}~$\Rightarrow$~\ref{coloring H vs C^XH 3}.
The proof of the latter is similar to Lemma~\ref{theorem: GV I}.
Consider the following strategy for $\pone$ in $\GV^G_\kappa(X,Y)$.
In rounds of the form $\alpha+1$, $\pone$ chooses $\gamma_{\alpha+1}$ so that
\begin{enumerate-(i)}
\item\label{GV G one a}
$(x_\beta,y)\in G$
for all $y\in N_{x_\alpha\restr\gamma_{\alpha+1}}$
if $\alpha=\beta+1$ 
and
\todog{Equivalently:
$N_{x_\alpha\restr\gamma_{\alpha+1}}\subseteq G^{x_{\alpha-1}}$}
\item\label{GV G one b}
$(x_\alpha,y)\notin G$ 
for all $y\in A_\alpha\cap N_{x_\alpha\restr\gamma_{\alpha+1}}$.
\todog{Equivalently: $G^{x_\alpha}\restr N_{x_\alpha\restr\gamma_{\alpha+1}}=\emptyset$}
\end{enumerate-(i)}
$\pone$ can achieve~\ref{GV G one a} since $G$ is open and $(x_\beta,x_\alpha)\in G$ if $\alpha=\beta+1$.
$\pone$ can achieve~\ref{GV G one b} since 
%$x_\alpha\in X$ is not in $A_\alpha^G$.
$x_\alpha\in X$ is not a $G$-limit point of $A_\alpha$.
%$A_\alpha$ is $\CC^{X,G}$-independent and hence $x_\alpha\in X$ cannot be a $G$-limit point of $A_\alpha$ by Lemma~\ref{lemma: coloring C^XH}.
Suppose $\ptwo$ wins a run of the game where $\pone$ has used this strategy and let
$y:=\bigcup_{\alpha<\kappa} x_\alpha\restr\gamma_{\alpha+1}$.
Then $y\in Y$, so
$y\in A_\alpha$ for some $\alpha<\kappa$.
Hence
$(x_\alpha,y)\notin G$ by~\ref{GV G one b} since 
$y\in N_{x_\alpha\restr\gamma_{\alpha+1}}$.
However, 
$(x_\alpha,y)\in G$ by~\ref{GV G one a} 
since $y\in N_{x_{\alpha+1}\restr\gamma_{\alpha+2}}$.

\item\label{remark: GV for graphs with modified winning condition 2}
%Now suppose that there is a continuous homomorphism from $\dhH\kappa$ to $\CC^{X,G}\restr Y$.
%Theorem~\ref{theorem: ODD and KLW^H} \ref{theorem: ODD and KLW^H 2} \ref{theorem: ODD and KLW^H: ch CC} $\Rightarrow$ \ref{theorem: ODD and KLW^H: oh H}
%and Remark~\ref{remark: KLW^G for graphs G}~\ref{remark: KLW^G for graphs G 3}. 
We claim that 
the second option in $\KLW_\kappa^G(Y,X)$ holds if and only if
$\ptwo$ wins $\GV_\kappa^G(X,Y)$.
Recall that the former is equivalent to the existence of
an order homomorphism $\Phi$ for $({}^\kappa\kappa,G)$
such that $[\Phi]$ reduces $(\RR_\kappa,\QQ_\kappa)$ to $(Y,X)$
by Theorem~\ref{theorem: ODD and KLW^H} \ref{theorem: ODD and KLW^H 2}~\ref{theorem: ODD and KLW^H: oh H}$\Leftrightarrow$\ref{theorem: ODD and KLW^H: ch H}.
Suppose first that $\Phi$ is such an order homomorphism.
We define the following strategy for $\ptwo$ in $\GV_\kappa^G(X,Y)$. 
As part of their strategy, $\ptwo$ also constructs a continuous strictly increasing sequence
$\langle s_\alpha : \alpha<\kappa \rangle$ in $\SS_\kappa$ 
with
$\Phi(s_\alpha\conc\langle 0\rangle^\kappa)=x_\alpha$
for all $\alpha<\kappa$,
and a sequence $\langle\delta_{\alpha}:\alpha<\kappa\rangle$  of ordinals below $\kappa$.
Suppose $\pone$ has played $\langle \gamma_\beta : \beta\leq\alpha \rangle$ and $\ptwo$ has played $\langle x_\beta : \beta<\alpha\rangle$ and has constructed
$s_\beta$ for all $\beta<\alpha$ and 
$\delta_{\beta}$ for all $\beta$ with $\beta+1<\alpha$.
%$\langle s_\beta,\delta_\beta: \beta<\alpha\rangle$.
If $\alpha\in\Lim$, 
$\ptwo$ lets $s_\alpha:=\bigcup_{\beta<\alpha}s_\beta$. 
%and plays $x_\alpha:=[\Phi](s_\alpha\conc\langle 0\rangle^\kappa)$.
If $\alpha=\beta+1$,
$\ptwo$ picks $\delta_\beta<\kappa$ so that
$x_\beta\restr\gamma_\alpha\subseteq\Phi(s_\beta\conc\langle 0\rangle^{\delta_\beta})$
and lets $s_\alpha:=s_\beta\conc\langle 0\rangle^{\delta_\beta}\conc\langle 1\rangle$.
%and plays $x_\alpha=[\Phi](s_\alpha\conc\langle 0\rangle^\kappa)$.
In both cases, $\ptwo$ plays $x_\alpha=[\Phi](s_\alpha\conc\langle 0\rangle^\kappa)$.
Note that $x_\alpha\in X$ since $[\Phi](\QQ_\kappa)\subseteq X$.

Suppose $\langle \gamma_\alpha, x_\alpha : \alpha<\kappa \rangle$ is a run according to this strategy and $\ptwo$ has constructed $s_\alpha$ and $\delta_\alpha$ for all $\alpha<\kappa$. 
To see that $\ptwo$ wins,
it suffices to show that
\begin{enumerate-(i)}
\item\label{GV G two i}
$x_\beta\restr\gamma_{\beta+1}\subseteq x_\alpha$ for all $\beta<\alpha<\kappa$,
\item\label{GV G two ii}
$(x_\beta,x_\alpha)\in G$ for all $\beta<\alpha<\kappa$,
\item\label{GV G two iii}
$y:=\bigcup_{\alpha<\kappa} x_\alpha\restr\gamma_{\alpha+1}$ is in $Y$. 
\end{enumerate-(i)}
In particular, note that \ref{GV G two ii} implies that 
$\langle x_\alpha:\alpha<\kappa\rangle$ is an injective sequence and $(x_\alpha,x_{\alpha+1})\in G$ for all $\alpha<\kappa$.
Moreover, \ref{GV G two i} holds since
$x_\beta\restr\gamma_{\beta+1}\subseteq \Phi(s_\alpha)\subseteq 
[\Phi](s_\alpha\conc\langle 0\rangle^\kappa)= x_\alpha$ for all $\beta<\alpha<\kappa$
by the construction.
For \ref{GV G two ii}, suppose that $\beta<\alpha<\kappa$ and let  
$u:=s_\beta\conc\langle 0\rangle^{\delta_\beta}$.
Then 
$\Phi(u\conc\langle 0\rangle)\subseteq x_\beta$ 
%$\Phi(u\conc\langle 0\rangle)\subseteq[\Phi](s_\beta\conc\langle 0\rangle^\kappa)=x_\beta$ 
and
since $u\conc\langle 1\rangle = s_{\beta+1}\subseteq s_\alpha$, we also have
$\Phi(u\conc\langle 1\rangle)\subseteq \Phi(s_\alpha)\subseteq x_\alpha$.
Since $\Phi$ is an order homomorphism for $({}^\kappa\kappa,G)$, $(x_\beta,x_\alpha)\in N_{\Phi(u\conc\langle 0\rangle)}\times N_{\Phi(u\conc\langle 0\rangle)}\subseteq G$. 
For~\ref{GV G two iii},
let $z=\bigcup_{\alpha<\kappa} s_\alpha$. Then $z\in\RR_\kappa$ and
$[\Phi](z)=\bigcup_{\alpha<\kappa} \Phi(s_\alpha)=y$.
Hence $y\in [\Phi](\RR_\kappa)\subseteq Y$.
%This completes the proof of the direction from left to right.
%above implications. 

%The converse of
%the implication holds as well for $\ptwo$:
%of the previous implication holds as well:
%if $\ptwo$ wins $\GV^G_\kappa(X,Y)$, then 
%there exists a nice reduction for $(Y,X)$ which is also an order homomorphism for $({}^\kappa\kappa,G)$,
%and hence there exists a continuous homomorphism from $\dhH{(\kappa\times\{1\})}$ to $\CC^{X,G}\restr Y$
%by Theorem~\ref{theorem: ODD and KLW^H} \ref{theorem: ODD and KLW^H 2} \ref{theorem: ODD and KLW^H: oh H} $\Rightarrow$ \ref{theorem: ODD and KLW^H: ch CC}.
%To see this, suppose that $\tau$ is a winning strategy for $\ptwo$ in $\GV^G_\kappa(X,Y)$.
For the converse direction,
suppose that $\tau$ is a winning strategy for $\ptwo$ in $\GV^G_\kappa(X,Y)$.
Construct functions
$\Phi: {}^{<\kappa}2\to T(X)$,
$\langle \gamma_s : s\in \SS_\kappa \rangle$
and $\langle x_s : s\in {}^{<\kappa}\kappa \rangle$ as in the proof of 
Lemma~\ref{theorem: GV II}~\ref{II wins GV}~$\Rightarrow$~\ref{GV rect oh} 
with the additional property that
$N_{\Phi(s\conc \langle 0\rangle)}\times N_{\Phi(s\conc\langle 1\rangle)}\subseteq G$ 
for all $s\in{}^{<\kappa}\kappa$.
This can be guaranteed 
since $G$ is open and
$(x_{s\conc\langle 0\rangle},x_{s\conc\langle 1\rangle})=
(x_s,x_{s\conc\langle 1\rangle})\in G$ for all $s\in{}^{<\kappa}\kappa$.
Then $\Phi$ is a nice reduction for $({}^\kappa\kappa,X)$ 
and moreover, it is an order homomorphism for $({}^\kappa\kappa,G)$ by the additional requirement. 
It thus suffices to show that $[\Phi](\RR_\kappa)\subseteq Y$.
Let $z\in\RR_\kappa$. 
It suffices to show that $[\Phi](z)$ is produced during a run of $\GV^G_\kappa(X,Y)$ where $\ptwo$ uses $\tau$.
Since 
$s\in \SS_\kappa$ for cofinally many initial segments $s$ of $z$ and
$\Phi(s)=x_s\restr\gamma_{s\conc\langle 1\rangle}$ for all $s\in{}^{<\kappa}\kappa$ by the definition of $\Phi$, we have
$$
[\Phi](z)=\bigcup\{\Phi(s):s\subseteq z, s\in \SS_\kappa\}=
\bigcup\{x_s\restr\gamma_{s\conc\langle 1\rangle}
:s\subseteq z, s\in \SS_\kappa\}.
$$
Thus, $[\Phi](z)$ is produced during the run
$\langle \gamma_s, x_s :s\subseteq z, s\in \SS_\kappa\rangle$
of $\GV^Y_\kappa(X)$ where $\ptwo$ uses $\tau$, as required.

\item\label{remark: GV for graphs with modified winning condition 3}
For any $\Gdelta(\kappa)$ subset $X$ of ${}^\kappa\kappa$,
$\cbig G(X,Y)\Longleftrightarrow\KLW_\kappa^G(Y,X)\Longleftrightarrow\ODD\kappa {G\restr X\cap Y}$.
In fact, it suffices that 
$Y\setminus X$ can be separated from $X$ by a union of $\kappa$ many $G$-closed sets.
The first equivalence follows from 
\ref{remark: GV for graphs with modified winning condition 1},
\ref{remark: GV for graphs with modified winning condition 2}
and
Lemma~\ref{coloring H vs C^XH}~\ref{coloring H vs C^XH 3}~$\Rightarrow$~\ref{coloring H vs C^XH 2}.
The second equivalence holds by 
Corollary~\ref{theorem: KLW H and ODD H}.
%
\begin{comment}
%%%%Older version, with a more direct proof of (\cbig G) => ODD^G
We claim that 
$\ODD\kappa {G\restr X\cap Y}\Longleftrightarrow\cbig G(X,Y)$
for all $\Gdelta(\kappa)$ subsets $X$ of ${}^\kappa\kappa$ and all $Y\subseteq{}^\kappa\kappa$.
The direction from left to right follows from 
Corollary~\ref{theorem: KLW H and ODD H}, since
$\KLW_\kappa^G(Y,X)\Longrightarrow\cbig G(X,Y)$ by 
\ref{remark: GV for graphs with modified winning condition 1} and
\ref{remark: GV for graphs with modified winning condition 2}.
For the converse direction, it suffices to show that 
if $X$ is a $\Gdelta(\kappa)$ set and $\ptwo$ wins $\GV_\kappa^G(X,Y)$,
then
$G\restr (X\cap Y)$ has a $\kappa$-perfect complete subdihypergraph.
The proof of this is a slight variant of
%the second construction in~\ref{remark: GV for graphs with modified winning condition 2}.
the previous construction.
If $X=\bigcap_{\alpha<\kappa} U_\alpha$ with each $U_\alpha$ open, 
we construct 
$\Phi: {}^{<\kappa}2\to T(X)$,
$\langle \gamma_s : s\in \SS_\kappa \rangle$
and $\langle x_s : s\in {}^{<\kappa}\kappa \rangle$ 
with the additional property that
$N_{\Phi(s)}\subseteq U_{\lh(s)}$ for all $s\in{}^{<\kappa}2$.
This can be guaranteed since 
$x_s\in X\subseteq U_{\lh(s)}$
and $U_{\lh(s)}$ is open.
Then $\ran([\Phi])$ is contained in 
$X=\bigcap_{\alpha<\kappa} U_{\alpha}$
and hence
$[\Phi](\RR_\kappa)\subseteq Y$ 
as in the previous construction.
This set is a $G$-clique 
since $\Phi$ is an order homomorphism for $({}^\kappa\kappa,G)$
Since
$[\Phi](\RR_\kappa)\subseteq Y$,
any $\kappa$-perfect subset of it is as required.
\end{comment}
\end{enumerate-(1)}

One can aim to generalize the preceding arguments to higher dimensions to show that $\KLW_\kappa^H(Y,X)$ implies an analogue of $\cbig G(X,Y)$
for any box-open dihypergraph $H$. 
\todoq{The special case for ${}^\kappa\kappa\times G$ is intended to show that  $\KLW_\kappa^{{}^\kappa\kappa\times G}(Y,X)$ implies an analogue of $\cbi$ with respect to a different variant 
%\footnote{In this game, $\ptwo$ plays a sequence of pairs $(x^0_\alpha,x^1_\alpha)\in G$, and $\pone$ plays a sequence $\langle i_\alpha:\alpha<\kappa\rangle$ in addition to a strictly increasing continuous sequence $\langle \gamma_\alpha : \alpha< \kappa \rangle$ of ordinals below $\kappa$. $\ptwo$ wins a run if and only if $x^i_\alpha \supseteq (x_\beta^{i_{\beta+1}}\restr \gamma_{\beta+1})$ for all $\beta<\alpha<\kappa$ and $i<2$. This game is in general harder for $\pone$ and easier for $\ptwo$ to win than $\GV_\kappa^G(X,Y)$. The special case when $X$ is closed and $Y={}^\kappa\kappa$ was studied in \cite{SzThesis}*{Section~3}. It is denoted by $\GV^2_\kappa(X,R)$ for $R=(X\times X)\oldsetminus G$ there.} 
of V\"a\"an\"anen's game for open graphs $G$ that was studied in \cite{SzThesis}*{Section~3} for the special case of closed sets $X$ and $Y={}^\kappa\kappa$. It is denoted by $\GV^2_\kappa(X,R)$ for $R=(X\times X)\oldsetminus G$ there.}
It is further likely that similar ideas as in the proof of Theorem~\ref{prop: cbi iff cbii} show that $\cbig G(X,Y)$ is equivalent to an 
analogue%
\footnote{For any $x\in X$, define the game $\GV^G_\kappa(X,Y,x)$ just like $\GV^G_\kappa(X,Y)$ except that $\ptwo$ must play $x_0=x$ in the first round.
Let $\Sc^G_\kappa(X,Y)$, respectively $\Ker^G_\kappa(X,Y)$, denote the set of those $x\in X$ such that $\pone$, respectively $\ptwo$, wins $\GV^G_\kappa(X,Y,x)$.
The analogue of $\cbii$ would state that
%$X=\Sc^G_\kappa(X,Y)\bigcup\Ker^G_\kappa(X,Y)$ 
$X$ can be decomposed as the union of $\Sc^G_\kappa(X,Y)$ and $\Ker^G_\kappa(X,Y)$ 
and $G\restr\Sc^G_\kappa(X,Y)$ has a $\kappa$-coloring.}
of $\cbii$ for $\GV^G_\kappa(X,Y)$ for any open graph $G$. 
This analogue would then follow from $\OGD_\kappa(X\cap Y)$ if $X$ is a $\Gdelta(\kappa)$ set,
answering \cite{SzThesis}*{Question~3.57}.
\end{remark}

%%%%%%%%%%%%%%%%%%%
\subsection{The asymmetric Baire property}
\label{subsection: almost Baire property}

\hypertarget{def: kappa-meager}{}%
\hypertarget{def: Baire property}{}%
A subset $X$
\todol{This paragraph changed: previously, we'd defined ``$\kappa$-meager in $Y$ (where $Y\subseteq{}^\kappa\kappa$) to mean that $X\cap Y$ is $\kappa$-meager (in ${}^\kappa\kappa$). We've replaced this with the usual definition. The two are only equivalent in the case that $Y$ is open, but not for arbitrary subsets in general.}
of a topological space $Y$ is called \emph{$\kappa$-meager in $Y$} 
\index{meager@$\kappa$-meager\idf}
if it is the union of $\kappa$ many nowhere dense subsets of $Y$ and \emph{$\kappa$-comeager} 
\index{comeager@$\kappa$-comeager\idf}
if its complement is $\kappa$-meager. 
$Y$ usually stands for the space ${}^\kappa\kappa$ and is then omitted from the notation. 
Note that for open subsets $Y$ of ${}^\kappa\kappa$, a subset $X$ of $Y$ is $\kappa$-meager in $Y$ if and only if $X\cap Y$ is $\kappa$-meager, and $Y$ is $\kappa$-comeager in $Y$ if and only if $Y\setminus X$ is $\kappa$-meager. 

A subset $A$ of ${}^\kappa\kappa$ has the \emph{$\kappa$-Baire property} 
\index{Baire property@$\kappa$-Baire property\idf}%
%\index{Baire propertyZ@$\kappa$-Baire property!Z@\mygobble|see {asymmetric $\kappa$-Baire property}}
if there exists an open subset $U$ of ${}^\kappa\kappa$ such that 
$X\triangle U$ is $\kappa$-meager. 
If $\kappa$ is uncountable, then the $\kappa$-Baire property always fails for some $\kappa$-analytic subset of ${}^\kappa\kappa$:
Halko and Shelah \cite{MR1880900} observed that the \emph{club filter}
\index{club filter \idf $\mathrm{Club}_\kappa$}
$$\mathrm{Club}_\kappa = \{x \in {}^\kappa\kappa: \{\alpha<\kappa: 0<x(\alpha)\} \text{ contains a club}\}$$ 
does not have the $\kappa$-Baire property.
To see this,
suppose first that $ \mathrm{Club}_\kappa$ is $\kappa$-comeager in some basic open set $N_t$. Then there is a sequence $\langle U_\alpha : \alpha<\kappa\rangle$ of open dense subsets of $N_t$ 
such that 
$\bigcap_{\alpha<\kappa}U_\alpha\subseteq \mathrm{Club}_\kappa$. 
We construct a strictly increasing sequence $\langle t_\alpha : \alpha<\kappa \rangle$ in ${}^{<\kappa}\kappa$ with $t_0=t$ as follows. 
If $\alpha=\beta+1$, choose $t_\alpha \supsetneq t_\beta$ with $N_{t_{\alpha}} \subseteq U_\beta$.  
If $\alpha$ is a limit, let $t_{\alpha}:=\bigcup_{\beta<\alpha} t_\beta \cup \{ \langle \alpha, 0 \rangle \}$. 
Let $x:=\bigcup_{\alpha<\kappa} t_\alpha$.
Then $x\in \bigcap_{\alpha<\kappa}U_\alpha$ and $x\notin \mathrm{Club}_\kappa$, a contradiction. 
One can similarly see that $ \mathrm{Club}_\kappa $ is not $\kappa$-meager. 

We consider a variant of the $\kappa$-Baire property from \cite{SchlichtPSPgames}.
Its definition uses the following types of strict order preserving maps.

\begin{definition}[\cite{SchlichtPSPgames}*{Definition 3.1}]
\label{def: dense strict order preserving map}
Let $S\subseteq{}^{<\kappa}\kappa$, and  let $t\in{}^{<\kappa}\kappa$.
\begin{enumerate-(a)}
\item $S$ is \emph{dense above $t$} 
\index{dense!subset of ${}^{<\kappa}\kappa$ above a node\idf}
if for any $u\supseteq t$ 
there exists $s\in S$ with $s\supseteq u$. 
$S$ is \emph{nowhere dense} 
\index{nowhere dense subset of ${}^{<\kappa}\kappa$\idf}
if it is not dense above any $t\in{}^{<\kappa}\kappa$.%
\todog{If $S$ is upwards closed, then $S$ is dense above $t$ if and only if $\bigcup_{s\in S} N_s$ is dense in $N_t$. Thus, $S$ is nowhere dense if and only if $\bigcup_{s\in S} N_s$ is nowhere dense.}
\item A strict order preserving map $\iota:{}^{<\kappa}\kappa\to{}^{<\kappa}\kappa$ is
\emph{dense} 
\index{dense!strict order preserving map\idf}
if for all $t\in{}^{<\kappa}\kappa$, the set
$\{\iota(t\conc\langle \alpha\rangle):\alpha<\kappa\}$ 
is dense above $\iota(t)$.
\end{enumerate-(a)}
\end{definition}

The property of being $\kappa$-comeager
can be characterized via 
such maps:
a subset $X$ of ${}^\kappa\kappa$ is $\kappa$-comeager in a basic open set $N_t$
if and only if there exists a continuous dense strict order preserving map 
$\iota:{}^{<\kappa}\kappa\to{}^{<\kappa}\kappa$
with $\iota(\emptyset)=t$ and
$\ran([\iota])\subseteq X$
\cite{SchlichtPSPgames}*{Lemma~3.2}.

\begin{definition}[\cite{SchlichtPSPgames}*{Definition 3.3}]
\label{def: ABP} 
A subset $X$ of  ${}^\kappa\kappa$ has the 
\index{asymmetric $\kappa$-Baire property for a!set\idf$\ABP_\kappa(X)$} 
\emph{asymmetric $\kappa$-Baire property} $\ABP_\kappa(X)$%
\footnote{$\ABP_\kappa(X)$ was called the \emph{almost Baire property} in \cite{SchlichtPSPgames}. Note that $X$ is $\kappa$-meager if and only if there exists a continuous dense strict order preserving map $\iota:{}^{<\kappa}\kappa\to{}^{<\kappa}\kappa$ with $\iota(\emptyset)=\emptyset$ and $\ran([\iota])\subseteq {}^\kappa\kappa\setminus X$, by \cite{SchlichtPSPgames}*{Lemma~3.2}.  Therefore our definition of $\ABP_\kappa(X)$ is equivalent to the one given in \cite{SchlichtPSPgames}*{Definition~3.3}.}
if
either $X$ is $\kappa$-meager
or there exists a dense strict order preserving map 
$\iota:{}^{<\kappa}\kappa\to{}^{<\kappa}\kappa$ 
with $\ran([\iota])\subseteq X$.
For any class $\mathcal C$, 
\index{asymmetric $\kappa$-Baire property for a!class\idf$\ABP_\kappa(\mathcal C)$} 
let $\ABP_\kappa(\mathcal C)$ state that $\ABP_\kappa(X)$ holds for all subsets $X\in\mathcal C$ of ${}^\kappa\kappa$. 
\end{definition} 

%This property was characterised via a variant of the Banach-Mazur game in \cite{SchlichtPSPgames}*{Section~3.1}. 
This 
property is equivalent to the determinacy of the Banach-Mazur game of length~$\kappa$ \cite{SchlichtPSPgames}*{Lemma~3.6}. 
Note that $\iota$ is not required to be continuous in the previous definition, so $\ABP_\kappa(\mathcal C)$
is weaker than the $\kappa$-Baire property
for subsets of ${}^\kappa\kappa$ in $\mathcal C$.
The version of $\ABP_\kappa(\mathcal C)$ with a continuous map $\iota$ is equivalent to the statement that each subset in $\mathcal C$ is either $\kappa$-comeager in some basic open set or $\kappa$-meager. 
If $\mathcal C$
is closed under preimages of shift maps $\sigma_t: {}^\kappa\kappa\to {}^\kappa\kappa$ for each $t\in {}^{<\kappa}\kappa$, where $\sigma_t(x)=t^\smallfrown x$, 
then the latter statement is equivalent to the Baire property for all sets in $\mathcal C$.\footnote{To see this, let $U$ be the union of all basic open sets $N_t$ such that $N_t\setminus X$ is $\kappa$-meager. 
Then $U\triangle X$ is $\kappa$-meager.} 
Since continuity is trivial for $\omega$, 
$\ABP_\omega(\mathcal C)$ is equivalent to the Baire property for subsets of ${}^\omega\omega$ in $\mathcal C$. 

the asymmetric $\kappa$-Baire property $\ABP_\kappa(X)$ 
is a special case of $\ODD\kappa\kappa(X,\defsetsk)$.
To see this, we use the hypergraph $\dhD\kappa$ from Definition \ref{def: dhD}. 
Note that $\dhD\kappa\in\defsetsk$ is a box-open hypergraph on its domain ${}^\kappa\kappa$. 

\begin{lemma}
\label{lemma: dhD independence}
A subset $Y$ of ${}^\kappa\kappa$ is $\dhD\kappa$-independent if and only if 
it is nowhere dense. 
\end{lemma}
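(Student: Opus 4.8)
The plan is to prove both implications by contraposition, working from the reformulation that a subset $Y$ of ${}^\kappa\kappa$ is \emph{nowhere dense} precisely when there is no $t\in{}^{<\kappa}\kappa$ for which $Y\cap N_t$ is dense in $N_t$ (equivalently, when $\closure Y$ has empty interior). With this reformulation the statement becomes a direct unfolding of the definition of $\dhD\kappa$ together with the definition of $H$-independence (Definitions~\ref{def: dhD} and~\ref{def: independent}), since $\dhD\kappa\restr Y=\dhD\kappa\cap{}^\kappa Y$ consists exactly of those $\kappa$-sequences with range a subset of $Y$ that is somewhere dense.

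For the implication from independence to nowhere density, I would argue contrapositively: suppose $Y$ is not nowhere dense, so $Y\cap N_t$ is dense in $N_t$ for some $t\in{}^{<\kappa}\kappa$. The goal is to exhibit a hyperedge of $\dhD\kappa$ with range contained in $Y$, which witnesses $\dhD\kappa\restr Y\neq\emptyset$, i.e.\ that $Y$ is not $\dhD\kappa$-independent. Conversely, if $Y$ is not $\dhD\kappa$-independent, fix a hyperedge $\bar x=\langle x_\alpha:\alpha<\kappa\rangle\in\dhD\kappa\restr Y$; by definition its range $\{x_\alpha:\alpha<\kappa\}\subseteq Y$ is dense in some $N_t$, so $Y\cap N_t$ is dense in $N_t$ and hence $N_t\subseteq\closure Y$, showing $Y$ is not nowhere dense. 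This second direction is immediate and needs no construction.

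The only step requiring care is producing the witnessing hyperedge in the first direction, and this is where I would use the standing assumption $\kappa^{<\kappa}=\kappa$. Since the base $\{N_u:u\in{}^{<\kappa}\kappa\}$ has size $\kappa$, the set $\{u\in{}^{<\kappa}\kappa:u\supseteq t\}$ has cardinality $\kappa$; enumerate it as $\langle u_\alpha:\alpha<\kappa\rangle$ and, using density of $Y\cap N_t$ in $N_t$, choose $x_\alpha\in Y\cap N_{u_\alpha}$ for each $\alpha<\kappa$. Then $\{x_\alpha:\alpha<\kappa\}$ meets every nonempty relatively open subset of $N_t$, so it is dense in $N_t$, giving $\bar x\in\dhD\kappa$. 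The one point to verify is that $\bar x$ is non-constant (hyperedges are non-constant by Definition~\ref{def: dihypergraph}): this holds because $N_{t\conc\langle 0\rangle}$ and $N_{t\conc\langle 1\rangle}$ are disjoint nonempty basic open subsets of $N_t$, each of which meets $Y$, so $Y\cap N_t$ contains at least two distinct points and the enumeration can be taken to include two of them. I do not anticipate a genuine obstacle here; the construction is routine, and the essential content of the lemma is just the translation between somewhere-density of a witnessing $\kappa$-sequence and topological density of $Y$ in a basic open set.
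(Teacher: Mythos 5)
Your proof is correct and follows essentially the same route as the paper's: the forward direction is the immediate observation that a hyperedge of $\dhD\kappa\restr Y$ witnesses density of $Y$ in some $N_t$, and the converse picks, for each node extending $t$, a point of $Y$ in the corresponding basic open set and enumerates these choices as a $\kappa$-sequence (the paper takes an injective enumeration of such witnesses, which makes non-constancy automatic, but this is the same construction). No gaps; the remark about non-constancy is fine, though it is already automatic since the chosen points in the disjoint sets $N_{t\conc\langle 0\rangle}$ and $N_{t\conc\langle 1\rangle}$ must differ.
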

\begin{proof}
Suppose $Y$ is somewhere dense, i.e., $Y$ is dense in $N_t$ for some $t\in{}^{<\kappa}\kappa$.
Fix $y_u\in N_u\cap Y$ for all $u\supseteq t$.
Then any injective enumeration of the values $y_u$ is a 
hyperedge of $\dhD\kappa\restr Y$.
Conversely, if $\dhD\kappa\restr Y$ is nonempty,
then $Y$ is dense is some basic open set $N_t$.
\end{proof}

The next lemma shows that the property of being an order homomorphism for $(X,\dhD\kappa)$ in inherited to supersets of $X$.

\begin{lemma} 
\label{lemma: dhD and density}
Let $X\subseteq{}^\kappa\kappa$.
\begin{enumerate-(1)} 
\item 
\label{lemma: dhD and density 1}
Let $\langle t_\alpha:\alpha<\kappa\rangle$ be a sequence of elements of  $T(X)$. 
Then $\prod_{\alpha<\kappa} N_{t_\alpha}\cap X \subseteq \dhD \kappa$\footnote{Note that $\prod_{\alpha<\kappa} N_{t_\alpha}\cap X \neq\emptyset$, as $t_\alpha\in T(X)$ for all $\alpha<\kappa$.}  
if and only if 
$\{t_\alpha:\alpha<\kappa\}$ is dense above some $s\in{}^{<\kappa}\kappa$.
\todog{Thus, a subtree $T$ of ${}^{<\kappa}\kappa$ is $\dhD\kappa$-independent if and only if it's nowhere dense (i.e. not dense above any $s\in{}^{<\kappa}\kappa$)}
\item 
\label{lemma: dhD and density 2}
Any order homomorphism $\iota$ for $(X,\dhD\kappa)$ is an order homomorphism for $\dhD\kappa$. 
\todog{Note that $\domh{\dhD\kappa}={}^\kappa\kappa$}
\end{enumerate-(1)} 
\end{lemma}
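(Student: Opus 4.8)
The plan is to prove the two parts of Lemma~\ref{lemma: dhD and density} separately, with part~\ref{lemma: dhD and density 1} doing all the real work and part~\ref{lemma: dhD and density 2} following as an easy formal consequence. The key observation driving everything is a reformulation of the condition $\prod_{\alpha<\kappa}(N_{t_\alpha}\cap X)\subseteq\dhD\kappa$: since $\dhD\kappa$ consists exactly of the somewhere dense $\kappa$-sequences, this inclusion says that \emph{every} admissible choice of points $\langle x_\alpha:\alpha<\kappa\rangle$ with $x_\alpha\in N_{t_\alpha}\cap X$ yields a sequence that is dense in some $N_s$. The content of part~\ref{lemma: dhD and density 1} is that this happens precisely when the \emph{nodes} $\{t_\alpha:\alpha<\kappa\}$ are already dense above some $s\in{}^{<\kappa}\kappa$.

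For the direction from right to left in part~\ref{lemma: dhD and density 1}, suppose $\{t_\alpha:\alpha<\kappa\}$ is dense above $s$. First I would take any $\bar x=\langle x_\alpha:\alpha<\kappa\rangle$ with $x_\alpha\in N_{t_\alpha}\cap X$; such a choice exists precisely because each $t_\alpha\in T(X)$, so $N_{t_\alpha}\cap X\neq\emptyset$ (this is what the footnote records). Given any $u\supseteq s$, density above $s$ provides some $\alpha<\kappa$ with $u\subseteq t_\alpha$, and then $x_\alpha\in N_{t_\alpha}\subseteq N_u$, witnessing that $\ran(\bar x)$ is dense in $N_s$; hence $\bar x\in\dhD\kappa$. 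For the direction from left to right, I would argue contrapositively: if $\{t_\alpha:\alpha<\kappa\}$ is nowhere dense, then for each $s\in{}^{<\kappa}\kappa$ there is some $u_s\supseteq s$ with $t_\alpha\not\supseteq u_s$ for all $\alpha$, i.e.\ $N_{u_s}\cap N_{t_\alpha}$ can be made disjoint after extending. The point is to construct a single witness $\bar x$ that is nowhere dense. Picking any $x_\alpha\in N_{t_\alpha}\cap X$ need not suffice, so instead I would exploit the freedom in choosing $x_\alpha$ within $N_{t_\alpha}\cap X$ to steer each $x_\alpha$ away from a fixed nowhere dense target, or more simply observe that if the $t_\alpha$ fail to be dense above any $s$, then $T(\{t_\alpha:\alpha<\kappa\})$ is nowhere dense as a subtree, so $\ran(\bar x)\subseteq[T(\{t_\alpha\})]$ cannot be dense in any $N_s$ for \emph{any} choice of $x_\alpha\in N_{t_\alpha}$. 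This shows $\bar x\notin\dhD\kappa$, giving the failure of the inclusion.

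For part~\ref{lemma: dhD and density 2}, recall that an order homomorphism $\iota$ for $(X,\dhD\kappa)$ satisfies $\ran([\iota])\subseteq X$ and $\prod_{i<\kappa}(N_{\iota(t\conc\langle i\rangle)}\cap X)\subseteq\dhD\kappa$ for all $t\in{}^{<\kappa}\kappa$, by Definition~\ref{def: order homomorphism}. I would apply part~\ref{lemma: dhD and density 1} to the sequence $\langle\iota(t\conc\langle i\rangle):i<\kappa\rangle$, whose terms lie in $\ran(\iota)\subseteq T(\ran([\iota]))\subseteq T(X)$, to conclude that $\{\iota(t\conc\langle i\rangle):i<\kappa\}$ is dense above some $s_t\in{}^{<\kappa}\kappa$; in fact since $\iota$ is strict order preserving these nodes all properly extend $\iota(t)$, so one obtains density above $\iota(t)$ itself. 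This is exactly the condition that $\prod_{i<\kappa}N_{\iota(t\conc\langle i\rangle)}\subseteq\dhD\kappa$ (now over the full space, using the right-to-left direction of part~\ref{lemma: dhD and density 1} with $X={}^\kappa\kappa$), which together with $\ran([\iota])\subseteq X\subseteq{}^\kappa\kappa=\domh{\dhD\kappa}$ says precisely that $\iota$ is an order homomorphism for $\dhD\kappa$.

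The main obstacle I anticipate is the left-to-right direction of part~\ref{lemma: dhD and density 1}: one must produce a single sequence $\bar x$ in $\prod_{\alpha<\kappa}(N_{t_\alpha}\cap X)$ that fails to be somewhere dense, and the subtlety is that density of $\ran(\bar x)$ depends on where the points $x_\alpha$ land inside $N_{t_\alpha}$, not just on the nodes $t_\alpha$. The clean way around this is the tree-level observation that $\ran(\bar x)\subseteq[T(\{t_\alpha:\alpha<\kappa\})]$, so that nowhere density of the node set forces nowhere density of $\ran(\bar x)$ uniformly in the choice of $\bar x$; I should double-check that ``$\{t_\alpha\}$ not dense above any $s$'' is genuinely equivalent to ``$T(\{t_\alpha\})$ is nowhere dense as a subtree,'' which is immediate from the definitions but worth stating explicitly to keep the argument airtight.
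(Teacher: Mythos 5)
Your right-to-left direction of part \ref{lemma: dhD and density 1} is correct and matches the paper, and your derivation of part \ref{lemma: dhD and density 2} is essentially the paper's own argument: the characterization in part \ref{lemma: dhD and density 1} does not mention $X$, so the inclusion transfers from $X$ to ${}^\kappa\kappa$. (Your side remark that the nodes $\iota(t\conc\langle i\rangle)$ must be dense above $\iota(t)$ itself is false --- they could, for instance, all extend $\iota(t)\conc\langle 0\rangle$ and be dense only above that node --- but this is harmless, since density above \emph{some} $s$ is all that the right-to-left direction requires and all that you actually use.) The genuine gap is in the left-to-right direction of part \ref{lemma: dhD and density 1}, exactly the step you identify as the crux. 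The containment $\ran(\bar x)\subseteq[T(\{t_\alpha:\alpha<\kappa\})]$ on which your ``more simply'' argument rests is false: $x_\alpha\in N_{t_\alpha}$ only says $t_\alpha\subseteq x_\alpha$, and beyond $t_\alpha$ the point is arbitrary, so its longer initial segments need not lie below any $t_\gamma$. (If every $t_\alpha$ has length $1$, then $[T(\{t_\alpha:\alpha<\kappa\})]$ is empty.) Worse, the uniform statement you are aiming for --- that nowhere density of the node set forces \emph{every} $\bar x\in\prod_{\alpha<\kappa}(N_{t_\alpha}\cap X)$ to fail to be somewhere dense --- is itself false. Take $X={}^\kappa\kappa$ and $t_\alpha=\emptyset$ for all $\alpha<\kappa$: the set $\{t_\alpha:\alpha<\kappa\}=\{\emptyset\}$ is dense above no $s$, yet one may choose $\langle x_\alpha:\alpha<\kappa\rangle$ to enumerate a dense subset of ${}^\kappa\kappa$, which is a hyperedge of $\dhD\kappa$. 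The lemma is stated for sequences, so repetitions of nodes are allowed, and they can occur in the intended application to order homomorphisms. (If the $t_\alpha$ were pairwise distinct, a uniform argument could in fact be salvaged by counting --- below any fixed $u$ there are fewer than $\kappa$ many $t_\gamma$, while $\kappa$ many pairwise incompatible extensions of $u$ must be met --- but that is not the generality of the lemma, and it is not the argument you gave.)

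So the left-to-right direction really does require constructing one specific sequence in $\prod_{\alpha<\kappa}(N_{t_\alpha}\cap X)$ that is nowhere dense, and the constraint that all coordinates lie in $X$ is what makes this delicate; your fallback suggestion to ``steer each $x_\alpha$ away from a fixed nowhere dense target'' is not a construction and points the wrong way (the points must be steered \emph{into} a nowhere dense configuration). The paper's proof does the following. Let $I$ be the set of $\alpha$ such that $T(X)$ is dense above some $t\supseteq t_\alpha$. For the indices $\alpha\in\kappa\setminus I$, \emph{any} choice of points $x_\alpha\in N_{t_\alpha}\cap X$ has nowhere dense range: if it were dense in some $N_t$, pick $\alpha\in\kappa\setminus I$ with $x_\alpha\in N_t$; then $T(X)$ would be dense above $t\cup t_\alpha$, contradicting $\alpha\notin I$. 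For the indices $\alpha\in I$, one recursively builds nodes $u_\alpha\in T(X)$ with $t_\alpha\subsetneq u_\alpha$, pairwise incomparable or equal, and satisfying $u_\alpha\not\subseteq t_\gamma$ for all $\gamma<\kappa$ --- it is here that nowhere density of $\{t_\gamma:\gamma<\kappa\}$ is used, to find extensions inside $T(X)$ avoiding all the $t_\gamma$ --- and then chooses $y_\alpha\in N_{u_\alpha}\cap X$ with $y_\alpha=y_\beta$ whenever $u_\alpha=u_\beta$; pairwise incomparability of the $u_\alpha$ then forces $\{y_\alpha:\alpha\in I\}$ to be nowhere dense, and the union of the two nowhere dense ranges is nowhere dense. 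This two-case construction is precisely the nontrivial content of the lemma, and it is what is missing from your proposal.
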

\begin{proof}
We first show \ref{lemma: dhD and density 1}. 
If $\{t_\alpha:\alpha<\kappa\}$ is dense above $s$, 
then every element of
$\prod_{\alpha<\kappa} N_{t_\alpha}\cap X$ is dense in~$N_s$. 
Conversely, suppose that $\{t_\alpha:\alpha<\kappa\}$ is nowhere dense. 
It suffices to construct a sequence 
$\bar x\in\prod_{\alpha<\kappa} N_{t_\alpha}\cap X$ which is nowhere dense in ${}^\kappa\kappa$. 
Let $I$ denote the set of $\alpha<\kappa$ such that $T(X)$ is dense above some $t\supseteq t_\alpha$, or equivalently, $\succ{t}\subseteq T(X)$ as $T(X)$ is downward closed.% 
\footnote{Recall that $\succ{t}= \{ u\in {}^{<\kappa}\kappa : t\subsetneq u \}$.} 
Then $\kappa\,\setminus\, I$ is precisely the set of those $\alpha<\kappa$ such that $({}^{<\kappa}\kappa)\setminus T(X)$ is dense above $t_\alpha$. 

We first claim that any $\bar x\in\prod_{\alpha\in \kappa \setminus I} N_{t_\alpha}\cap X$ is nowhere dense in ${}^\kappa\kappa$. 
Otherwise $\bar{x}$ is dense in $N_t$ for some $t\in {}^{<\kappa}\kappa$. 
Pick some $\alpha\in \kappa\,\setminus\, I$ with $x_\alpha \in N_t$. 
Since $\ran(x)\subseteq X$, $T(X)$ is dense above $t\cup t_\alpha$. 
But then $\alpha\in I$. 

It therefore suffices to find a sequence 
$\bar y\in\prod_{\alpha\in I} N_{t_\alpha}\cap X$ that is nowhere dense in ${}^\kappa\kappa$. 
To this end, we construct a sequence
$\langle u_\alpha:\alpha\in I\rangle$
such that the following hold for all $\alpha\in I$:
\begin{enumerate-(i)}
\item\label{dhD density 1} 
$u_\alpha\in T(X)$ and $t_\alpha\subsetneq u_\alpha$.
\item\label{dhD density 2} 
$u_\alpha\not\subseteq t_\gamma$ for all $\gamma<\kappa$.
\item\label{dhD density 3} 
For all $\beta\in I\cap \alpha$, either
$u_\alpha\perp u_\beta$ or $u_\alpha=u_\beta$.
\end{enumerate-(i)} 
Suppose that $\alpha\in I$ and $u_\beta$ has been constructed for all $\beta\in I\cap \alpha$.
If there exists
$\beta\in I\cap \alpha$ with
$u_\beta\supsetneq t_\alpha$, then let $u_\alpha:=u_\beta$.
Otherwise, we have $t_\alpha\perp u_\beta$ for all $\beta<\alpha$ by \ref{dhD density 2}.
As $\alpha\in I$, there exists $s\supseteq t_\alpha$ with $\succ{s}\subseteq T(X) $. 
Since $\{t_\gamma:\gamma<\kappa\}$ is nowhere dense, there exists 
$u_\alpha\supsetneq s$ in $T(X)$ 
such that $u_\alpha\not\subseteq t_\gamma$ for all $\gamma<\kappa$.
Then $u_\alpha\supsetneq t_\alpha$, so
$u_\alpha\perp u_\beta$ for all $\beta\in I\cap \alpha$.

Since $u_\alpha\in T(X)$ for all $\alpha<\kappa$, we may choose a sequence
 $\bar y\in\prod_{\alpha\in I} N_{u_\alpha}\cap X \subseteq \prod_{\alpha\in I} N_{t_\alpha}\cap X$ 
such that $y_\alpha=y_\beta$ whenever $u_\alpha=u_\beta$.
Since the $u_\alpha$'s are pairwise incomparable, 
$\bar y$ is nowhere dense in ${}^\kappa\kappa$. 

\ref{lemma: dhD and density 1} shows that the property  $\prod_{\alpha<\kappa} N_{t_\alpha}\cap X \subseteq \dhD \kappa$ is inherited to supersets of $X$. 
\ref{lemma: dhD and density 2} follows. 
\end{proof}

\begin{lemma}\ 
\label{dense sop and order homomorphisms}
\begin{enumerate-(1)}
\item\label{dense sop -> oh}
Any dense strict order preserving map $\iota:{}^{<\kappa}\kappa\to{}^{<\kappa}\kappa$ 
is an order homomorphism for $\dhD\kappa$. 
\item\label{oh -> dense sop}
If there exists an order homomorphism $\iota$ for $\dhD\kappa$, then there exists a dense strict order preserving map $\theta:{}^{<\kappa}\kappa\to{}^{<\kappa}\kappa$
with $\ran([\theta])\subseteq \ran([\iota])$.
\todog{Note that $\theta$ may not be continuous, even if $\iota$ was.
Uncomment for the example and proof
%Example: Assume $\ODD\kappa\kappa(\analytic(\kappa))$. Let $X= \mathrm{Club}_\kappa$. 
%Since $X$ is $\kappa$-analytic but not $\kappa$-meager, there exists a dense strict order preserving map $\theta$ with $\ran([\theta])\subseteq X$ by Lemma \ref{homomorphisms and order preserving maps} and this lemma.
%(Proof: There exists a continuous homomorphism from 
%$\dhH\kappa$ to $\dhD\kappa\restr X$ by ODD. Hence there exists an order homomorphism $\iota$ for $\dhH\kappa$ with 
%$\ran([\iota])\subseteq X$ by Lemma \ref{homomorphisms and order preserving maps}. The existence of $\theta$ now follows from \ref{oh -> dense sop}.)
%\\
%However, there is NO CONTINUOUS dense strict order preserving map $\theta$ with $\ran([\theta])\subseteq X$, since $X$ is nowhere $\kappa$-comeager.
}
\end{enumerate-(1)}
\end{lemma}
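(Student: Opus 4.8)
The statement to prove is Lemma \ref{dense sop and order homomorphisms}, which has two parts relating dense strict order preserving maps to order homomorphisms for $\dhD\kappa$. Here is my plan.

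\textbf{Part \ref{dense sop -> oh}.} The plan is to show directly that any dense strict order preserving map $\iota:{}^{<\kappa}\kappa\to{}^{<\kappa}\kappa$ satisfies condition \ref{oh2} in Definition~\ref{def: order homomorphism} for the dihypergraph $\dhD\kappa$. Since $\domh{\dhD\kappa}={}^\kappa\kappa$, the only requirement beyond $\ran([\iota])\subseteq{}^\kappa\kappa$ (which is automatic) is that $\prod_{\alpha<\kappa}N_{\iota(t\conc\langle\alpha\rangle)}\subseteq\dhD\kappa$ for all $t\in{}^{<\kappa}\kappa$. Fix $t$. By the density of $\iota$, the set $\{\iota(t\conc\langle\alpha\rangle):\alpha<\kappa\}$ is dense above $\iota(t)$. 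By Lemma~\ref{lemma: dhD and density}~\ref{lemma: dhD and density 1} applied with $X={}^\kappa\kappa$ (so that $T(X)={}^{<\kappa}\kappa$ and every node is in $T(X)$), density above $\iota(t)$ is exactly what is needed to conclude $\prod_{\alpha<\kappa}N_{\iota(t\conc\langle\alpha\rangle)}\subseteq\dhD\kappa$. This should be a short, essentially immediate argument.

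\textbf{Part \ref{oh -> dense sop}.} Here the plan is to start from an order homomorphism $\iota$ for $\dhD\kappa$ and thin it out to a dense strict order preserving map $\theta$ with $\ran([\theta])\subseteq\ran([\iota])$. By Lemma~\ref{lemma: dhD and density}~\ref{lemma: dhD and density 1} (again with $X={}^\kappa\kappa$), for each $t\in{}^{<\kappa}\kappa$ the set $\{\iota(t\conc\langle\alpha\rangle):\alpha<\kappa\}$ is dense above some node; combined with $\iota$ being strict order preserving this says $\ran(\iota)$ is "cofinally dense" in the appropriate sense. I would construct $\theta:{}^{<\kappa}\kappa\to\ran(\iota)$ by recursion on $\lh(t)$, maintaining that $\theta(t)\in\ran(\iota)$, that $\theta$ is strict order preserving, and that at each successor step the immediate successors $\{\theta(t\conc\langle\alpha\rangle):\alpha<\kappa\}$ are chosen to be dense above $\theta(t)$. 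The key point enabling this is that, since $\iota$ is an order homomorphism for $\dhD\kappa$, the nodes $\iota(s\conc\langle\beta\rangle)$ for $s\supseteq$ a given node are dense above $\iota$ of that node, so one can always find elements of $\ran(\iota)$ extending $\theta(t)$ that achieve density; at limit lengths I take unions, using that $\ran(\iota)$ is closed under the relevant unions because $[\iota]$ is defined and $\ran([\iota])$ contains these limits. I must be careful that the union at limit stages lands in $\ran(\iota)$ or at least that $[\theta](x)\in\ran([\iota])$ for every branch $x$; this is where one uses that $[\theta](x)=[\iota](y)$ for a suitable branch $y$ of ${}^{<\kappa}\kappa$ obtained by tracking which successors of $\iota$ were selected.

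\textbf{Main obstacle.} The hard part will be the limit-stage bookkeeping in Part \ref{oh -> dense sop}: ensuring simultaneously that $\theta$ remains strict order preserving, that $\ran([\theta])\subseteq\ran([\iota])$, and that density is preserved. Concretely, one wants $\theta$ to be realized as $\iota\circ e$ for some strict order preserving $e:{}^{<\kappa}\kappa\to{}^{<\kappa}\kappa$ with $e(t)\conc\langle\alpha\rangle\subseteq e(t\conc\langle\alpha\rangle)$ at successors and $e(t)=\bigcup_{u\subsetneq t}e(u)$ at limits, mirroring the constructions in the proofs of Lemma~\ref{homomorphisms and order preserving maps}~\ref{hop 1} and Theorem~\ref{theorem: ODD ODDI}. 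The delicate point is that density of the chosen successors must be arranged relative to $\theta(t)=\iota(e(t))$ rather than relative to an arbitrary node, and the density guaranteed by Lemma~\ref{lemma: dhD and density}~\ref{lemma: dhD and density 1} is only density \emph{above some} node, so I will need to first pass to a cofinal reindexing of the successors of each $e(t)$ to guarantee density precisely above $\iota(e(t))$. Once $e$ is built, setting $\theta:=\iota\circ e$ gives $\ran([\theta])=\ran([\iota]\circ[e])\subseteq\ran([\iota])$ and the density of $\theta$ follows from the construction, completing the proof.
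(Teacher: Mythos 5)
Your part~\ref{dense sop -> oh} is correct and is exactly the paper's argument (the easy direction of Lemma~\ref{lemma: dhD and density}~\ref{lemma: dhD and density 1}, applied with $X={}^\kappa\kappa$). The gap is in part~\ref{oh -> dense sop}. You correctly identify the delicate point, namely that Lemma~\ref{lemma: dhD and density}~\ref{lemma: dhD and density 1} only gives density of $\{\iota(e(t)\conc\langle\alpha\rangle):\alpha<\kappa\}$ above \emph{some} node $s(e(t))\supseteq\iota(e(t))$, but your proposed fix --- a ``cofinal reindexing of the successors of each $e(t)$ to guarantee density precisely above $\iota(e(t))$'' --- cannot work, and consequently no map of the form $\theta=\iota\comp e$ can prove the lemma. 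Reindexing or thinning a family of nodes can never create density above a shorter node: if every available node extends a fixed proper extension of $\iota(e(t))$, then no subfamily is dense above $\iota(e(t))$. This situation genuinely occurs. Define $\iota$ by $\iota(\emptyset):=\emptyset$, $\iota(t\conc\langle\alpha\rangle):=\iota(t)\conc\langle 0\rangle\conc w_\alpha$, where $\langle w_\alpha:\alpha<\kappa\rangle$ enumerates ${}^{<\kappa}\kappa$, taking unions at limits. Then $\{\iota(t\conc\langle\alpha\rangle):\alpha<\kappa\}$ is dense above $\iota(t)\conc\langle 0\rangle$, so $\iota$ is an order homomorphism for $\dhD\kappa$ by the easy direction; but every $v\supsetneq t$ satisfies $\iota(v)\supseteq\iota(t\conc\langle v(\lh(t))\rangle)\supseteq\iota(t)\conc\langle 0\rangle$. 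Hence for any $e$ of the kind you describe (strict order preserving, with $e(t)\conc\langle\alpha\rangle\subseteq e(t\conc\langle\alpha\rangle)$), all the nodes $\theta(t\conc\langle\alpha\rangle)=\iota(e(t\conc\langle\alpha\rangle))$ extend $\theta(t)\conc\langle 0\rangle$, so nothing in their range extends $\theta(t)\conc\langle 1\rangle$ and $\theta$ is never dense, no matter how you reindex.

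The repair --- and this is what the paper does --- is to drop the requirement $\theta(t)\in\ran(\iota)$ altogether: the lemma only demands $\ran([\theta])\subseteq\ran([\iota])$, a condition on branches, not on nodes. Fix for each $v$ a node $s(v)\supseteq\iota(v)$ above which $\{\iota(v\conc\langle\alpha\rangle):\alpha<\kappa\}$ is dense, build $e$ by recursion with $e(\emptyset)=\emptyset$, unions at limits, and with $\langle e(t\conc\langle\beta\rangle):\beta<\kappa\rangle$ an injective enumeration of those $e(t)\conc\langle\alpha\rangle$ for which $\iota(e(t)\conc\langle\alpha\rangle)\supsetneq\theta(t)$, and set $\theta:=s\comp e$ (so $\theta(t)$ is the density witness itself, in general \emph{not} in $\ran(\iota)$). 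Density of $\theta$ survives because each $\theta(t\conc\langle\beta\rangle)=s(e(t\conc\langle\beta\rangle))$ extends the corresponding $\iota(e(t\conc\langle\beta\rangle))$, and extending the members of a family that is dense above $\theta(t)$ keeps it dense above $\theta(t)$. The interleaving $\iota(e(t))\subseteq\theta(t)\subsetneq\iota(e(t\conc\langle\beta\rangle))\subseteq\theta(t\conc\langle\beta\rangle)$ then gives strict order preservation and, on branches, $[\theta]=[\iota]\comp[e]$, whence $\ran([\theta])\subseteq\ran([\iota])$. This also explains why $\theta$ cannot in general be taken continuous, nor with values in $\ran(\iota)$: the jump from $\iota(e(t))$ to the witness $s(e(t))$ is unavoidable.
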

\begin{proof}
\ref{dense sop -> oh} is clear.${}$\footnote{This is the easy direction of Lemma \ref{lemma: dhD and density}.} 
\todon{The symbols \$\{\}\$ were added just before the footnote to make the number appear in the right size.}
For \ref{oh -> dense sop},
suppose there is an order homomorphism $\iota$ for $\dhD \kappa$.
By Lemma~\ref{lemma: dhD and density} \ref{lemma: dhD and density 1} there exists a function $s: {}^{<\kappa}\kappa \to {}^{<\kappa}\kappa$ such that for all $t\in{}^{<\kappa}\kappa$, 
$\langle\iota(t\conc\langle\alpha\rangle):\alpha<\kappa\rangle$
is dense above $s(t)$.
Since $\iota$ is strict order preserving, $s(t)\supseteq\iota(t)$.

We define a 
continuous strict order preserving map $e:{}^{<\kappa}\kappa\to{}^{<\kappa}\kappa$ such that 
$\theta:=s\comp e$ is dense and strict order preserving. 
We define $e(t)$ by recursion on $\lh(t)$.
Let $e(\emptyset):=\emptyset$. If $\lh(t)\in \Lim$, then let $e(t)=\bigcup_{u\subsetneq t} e(u)$.
Suppose $e(t)$ has been defined.
Let $\theta(t):=s(e(t))$.
Let $\langle e(t\conc \langle\beta\rangle):\beta<\kappa\rangle$ be an injective enumeration of 
the set of those $e(t)\conc\langle\alpha\rangle$ such that
$\iota(e(t)\conc\langle\alpha\rangle)\supsetneq \theta(t)$.
It is clear from the construction that $\theta$ is dense and strict order preserving. 

Since
$\iota(e(t))\subseteq s(e(t))=\theta(t)\subseteq \iota\big(e(t\conc\langle\alpha\rangle)\big)$
for all $t\in{}^{<\kappa}\kappa$ and all $\alpha<\kappa$,
we have $[\theta]=[\iota]\comp[e]$. 
Therefore $\ran([\theta])\subseteq\ran([\iota])$.
\end{proof}

\begin{theorem} 
\label{theorem: ABP from ODD}
Suppose $X$ is a subset of ${}^\kappa\kappa$.
\begin{enumerate-(1)}
\item\label{ABP from ODD 1}
$\dhD \kappa{\restr}X$ admits a $\kappa$-coloring if and only if $X$ is $\kappa$-meager.
\todoq{If $\dhD\kappa\restr X$ admits a $\kappa$-coloring, then there is a continuous homomorphism from $\dhH\kappa$ to $\dhD\kappa\restr({}^\kappa\kappa\setminus X)$. Is this true for any other dihypergraphs $H$? For any other ones from our applications?
\\
Proof: Since $X$ is $\kappa$-meager, then there is a (continuous) dense strict order preserving map $\iota$ with $\ran([\iota])\subseteq {}^\kappa\kappa\setminus X$
(and $\iota(\emptyset)=\emptyset$). 
Then $\iota$ is also an order homomorphism for $\dhD\kappa$ by the previous lemma and hence for $({}^\kappa\kappa\setminus X,\dhD\kappa)$.}
\item\label{ABP from ODD 2}
There exists a continuous homomorphism from $\dhH\kappa$ to $\dhD \kappa{\restr}X$
if and only if there exists a dense strict order preserving map 
$\iota:{}^{<\kappa}\kappa\to{}^{<\kappa}\kappa$ with $\ran([\iota])\subseteq X$.
\end{enumerate-(1)}
Thus, $\ABP_\kappa(X)$ is equivalent to $\ODD\kappa{\dhD\kappa{\restr}X}$.
\end{theorem}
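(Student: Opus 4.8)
The plan is to establish the two numbered equivalences \ref{ABP from ODD 1} and \ref{ABP from ODD 2} separately, and then read off the final statement by unwinding the definition of $\ODD\kappa{\dhD\kappa\restr X}$. Throughout I would use that $\dhD\kappa$ is box-open on ${}^\kappa\kappa$ with domain $\domh{\dhD\kappa}={}^\kappa\kappa$ (Proposition~\ref{lemma: dhD}~\ref{dhD box-open}), so that $\dhD\kappa\restr X$ is box-open on $X$ and the order-homomorphism machinery from Subsection~\ref{subsection: order homomorphisms} applies. The substantive work is already isolated in the supporting lemmas; at the level of the theorem itself the task is careful bookkeeping of which notion of independence and of order homomorphism is meant relative to $X$ versus relative to ${}^\kappa\kappa$.

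For part~\ref{ABP from ODD 1}, I would first observe that for any $A\subseteq X$ one has $(\dhD\kappa\restr X)\restr A=\dhD\kappa\restr A$, so that $A$ is $\dhD\kappa\restr X$-independent if and only if it is $\dhD\kappa$-independent; by Lemma~\ref{lemma: dhD independence} this holds exactly when $A$ is nowhere dense. A $\kappa$-coloring of $\dhD\kappa\restr X$ is precisely a decomposition $X=\bigcup_{\alpha<\kappa}X_\alpha$ into at most $\kappa$ many $\dhD\kappa\restr X$-independent, hence nowhere dense, pieces, which is exactly the assertion that $X$ is $\kappa$-meager. This direction is entirely a matter of matching definitions.

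For part~\ref{ABP from ODD 2}, I would argue both implications through order homomorphisms. For the backward direction, given a dense strict order preserving map $\iota:{}^{<\kappa}\kappa\to{}^{<\kappa}\kappa$ with $\ran([\iota])\subseteq X$, Lemma~\ref{dense sop and order homomorphisms}~\ref{dense sop -> oh} makes $\iota$ an order homomorphism for $\dhD\kappa$; since $\ran([\iota])\subseteq X$ and since intersecting the products with $X$ only shrinks them, $\iota$ is then also an order homomorphism for $(X,\dhD\kappa)$, so $[\iota]$ is the desired continuous homomorphism from $\dhH\kappa$ to $\dhD\kappa\restr X$ by Lemma~\ref{homomorphisms and order preserving maps}~\ref{hop 2}. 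For the forward direction, given a continuous homomorphism $f$ from $\dhH\kappa$ to $\dhD\kappa\restr X$, box-openness of $\dhD\kappa\restr X$ on $X$ lets me invoke Lemma~\ref{homomorphisms and order preserving maps}~\ref{hop 1} to extract an order homomorphism $\iota$ for $(X,\dhD\kappa)$; Lemma~\ref{lemma: dhD and density}~\ref{lemma: dhD and density 2} upgrades it to an order homomorphism for $\dhD\kappa$, and Lemma~\ref{dense sop and order homomorphisms}~\ref{oh -> dense sop} then produces a dense strict order preserving map $\theta$ with $\ran([\theta])\subseteq\ran([\iota])\subseteq X$, as required.

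Finally, $\ODD\kappa{\dhD\kappa\restr X}$ asserts that either $\dhD\kappa\restr X$ admits a $\kappa$-coloring, or there is a continuous homomorphism from $\dhH\kappa$ to $\dhD\kappa\restr X$; by parts~\ref{ABP from ODD 1} and~\ref{ABP from ODD 2} these two alternatives are exactly the two disjuncts in the definition of $\ABP_\kappa(X)$, giving the equivalence. The only genuinely delicate points — the two-way passage between order homomorphisms for $\dhD\kappa$ and honestly dense strict order preserving maps, and the inheritance of the relevant product-density condition to supersets — are already handled in Lemmas~\ref{dense sop and order homomorphisms} and~\ref{lemma: dhD and density}, so I expect no real obstacle in assembling the theorem beyond keeping the $X$-relative and ${}^\kappa\kappa$-relative versions straight.
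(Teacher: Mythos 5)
Your proof is correct and takes essentially the same route as the paper's: part~\ref{ABP from ODD 1} is the definition-matching argument via Lemma~\ref{lemma: dhD independence}, and part~\ref{ABP from ODD 2} chains Lemma~\ref{homomorphisms and order preserving maps}, Lemma~\ref{lemma: dhD and density} and Lemma~\ref{dense sop and order homomorphisms} in exactly the same order. The additional details you spell out (the passage from an order homomorphism for $\dhD\kappa$ with $\ran([\iota])\subseteq X$ to one for $(X,\dhD\kappa)$, and the box-openness of $\dhD\kappa{\restr}X$ on $X$) are precisely what the paper leaves implicit.
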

\begin{proof}
\ref{ABP from ODD 1} holds since a subset of ${}^\kappa\kappa$ is  $\dhD \kappa$-independent if and only if it is nowhere dense by Lemma~\ref{lemma: dhD independence}.
 
For \ref{ABP from ODD 2}, suppose there exists a continuous homomorphism from $\dhH\kappa$ to $\dhD \kappa{\restr}X$. 
We obtain an order homomorphism $\iota$ for $(X,\dhD\kappa)$ by Lemma~\ref{homomorphisms and order preserving maps}. 
$\iota$ is also an order homomorphism for $\dhD\kappa$ by Lemma \ref{lemma: dhD and density}. 
We then obtain the required map from Lemma~\ref{dense sop and order homomorphisms} \ref{oh -> dense sop}. 
The converse holds by Lemma~\ref{dense sop and order homomorphisms} \ref{dense sop -> oh}. 
\end{proof}

\begin{remark} 
In Theorem \ref{theorem: ABP from ODD},  $\dhD\kappa$ can be equivalently replaced by the hypergraph
$\dhD\kappa^-$ studied in Remark \ref{remark Dminus}. 
This follows from Corollary~\ref{cor: ODD subsequences}, 
since 
$\dhD \kappa^-\equivf \dhD \kappa$ and both are box-open on ${}^\kappa\kappa$.
\end{remark} 

%Theorem \ref{theorem: ABP from ODD} and Theorem~\ref{main theorem}, 
Theorem~\ref{main theorem} and the previous theorem
yield a new proof of the following result from \cite{SchlichtPSPgames}. 
%Given a class $\mathcal C$, 
%let $\ABP_\kappa(\mathcal C)$ state that $\ABP_\kappa(X)$ holds for all subsets $X\in\mathcal C$ of ${}^\kappa\kappa$. 

\begin{corollary}
$\ABP_\kappa(\defsetsk)$
holds in all $\Col(\kappa,\lle\lambda)$-generic extensions of $V$ if $\lambda>\kappa$ is inaccessible in $V$.
\end{corollary}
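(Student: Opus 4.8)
The final statement to prove is the following corollary:

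\begin{quotation}
$\ABP_\kappa(\defsetsk)$ holds in all $\Col(\kappa,\lle\lambda)$-generic extensions of $V$ if $\lambda>\kappa$ is inaccessible in $V$.
\end{quotation}

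The plan is to combine the two results cited immediately before it, namely Theorem~\ref{main theorem} and Theorem~\ref{theorem: ABP from ODD}. First I would recall that by Theorem~\ref{theorem: ABP from ODD}, for any subset $X$ of ${}^\kappa\kappa$ the principle $\ABP_\kappa(X)$ is equivalent to $\ODD\kappa{\dhD\kappa{\restr}X}$, where $\dhD\kappa$ is the box-open $\kappa$-dihypergraph of somewhere dense sequences from Definition~\ref{def: dhD}. Crucially, $\dhD\kappa$ is in $\defsetsk$ (indeed it is $\kappa$-Borel on the space ${}^\kappa({}^\kappa\kappa)$), as noted just after Definition~\ref{def: dhD} and in the discussion preceding Lemma~\ref{lemma: dhD independence}. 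Thus $\dhD\kappa$ is a box-open $\kappa$-dihypergraph on ${}^\kappa\kappa$ lying in the class $\defsetsk$.

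The main point is then to feed this into the definable open dihypergraph dichotomy. By Theorem~\ref{main theorem}~\ref{mainthm ddim=kappa}, in any $\Col(\kappa,\lle\lambda)$-generic extension $V[G]$ with $\lambda>\kappa$ inaccessible in $V$, the statement $\ODD\kappa\kappa(\defsetsk,\defsetsk)$ holds, taking $\ddim=\kappa$. By definition of $\ODD\kappa\kappa(\defsetsk,\defsetsk)$ (Definitions~\ref{ODD def} and~\ref{def: ODD for classes}), this means that $\ODD\kappa{H{\restr}X}$ holds whenever $X\in\defsetsk$ is a subset of ${}^\kappa\kappa$ and $H\in\defsetsk$ is a box-open $\kappa$-dihypergraph on ${}^\kappa\kappa$. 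Applying this to $H:=\dhD\kappa$, which is a box-open $\kappa$-dihypergraph in $\defsetsk$, I obtain $\ODD\kappa{\dhD\kappa{\restr}X}$ for every $X\in\defsetsk$.

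Finally I would translate back: for each subset $X\in\defsetsk$ of ${}^\kappa\kappa$, the equivalence in Theorem~\ref{theorem: ABP from ODD} gives $\ABP_\kappa(X)$ from $\ODD\kappa{\dhD\kappa{\restr}X}$. Since this holds for all $X\in\defsetsk$, we conclude $\ABP_\kappa(\defsetsk)$ in $V[G]$, as required. I should double-check one subtlety: the equivalence $\ABP_\kappa(X)\Longleftrightarrow\ODD\kappa{\dhD\kappa{\restr}X}$ in Theorem~\ref{theorem: ABP from ODD} is proved in $\ZFC$ and so holds in $V[G]$ for each individual $X$, without any definability hypothesis on $X$; the only place definability is used is to invoke Theorem~\ref{main theorem}, which requires $\dhD\kappa\in\defsetsk$ and $X\in\defsetsk$, both of which hold.

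There is essentially no obstacle beyond this bookkeeping, since the heavy lifting has already been done: the forcing-theoretic content is entirely contained in Theorem~\ref{main theorem}, and the combinatorial translation between the asymmetric $\kappa$-Baire property and the dihypergraph dichotomy is entirely contained in Theorem~\ref{theorem: ABP from ODD}. The only step requiring genuine care is verifying that $\dhD\kappa$ genuinely belongs to $\defsetsk$ so that the \emph{definable} version of the dichotomy applies; this is where one uses that $\dhD\kappa$ is $\kappa$-Borel and hence definable from a $\kappa$-sequence of ordinals coding its $\kappa$-Borel construction. Once this is confirmed, the corollary follows immediately by the composition of the two cited equivalences.
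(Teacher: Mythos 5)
Your proof is correct and is exactly the paper's argument: the paper derives this corollary by combining Theorem~\ref{main theorem}~\ref{mainthm ddim=kappa} (which gives $\ODD\kappa\kappa(\defsetsk,\defsetsk)$ in the extension) with Theorem~\ref{theorem: ABP from ODD}, applied to the box-open dihypergraph $\dhD\kappa\in\defsetsk$. Your added check that $\dhD\kappa$ is definable (being $\kappa$-Borel) is precisely the observation the paper makes after Definition~\ref{def: dhD}, so there is nothing to add.
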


%%%%%%%%%%%
\subsection{An analogue of the Jayne--Rogers theorem}
\label{subsection: Jayne Rogers}

We will derive an analogue of Jayne and Rogers' characterization of 
${\bf \Delta}^0_2$-measurable functions \cite{JayneRogers1982}*{Theorem~5} 
from the open dihypergraph dichotomy. 
They proved the following principle for functions with analytic domain 
in the countable setting. 

Suppose throughout this subsection that $X$ is a subset of ${}^\kappa\kappa$
and $f:X\to {}^\kappa\kappa$ is a function. 
We say that $f$ is
\emph{$\kappa$-continuous with closed pieces} 
\index{function!continuous with closed pieces@$\kappa$-continuous with closed pieces\idf}%
if $X$ is the union of $\kappa$ many 
relatively closed subsets $C$
such that $f\restr C$ is continuous.

\begin{definition}
\label{def: JR} 
\index{Jayne--Rogers dichotomy for a!function\idf$\JR_\kappa^f$}%
\index{Jayne--Rogers dichotomy for a!set\idf$\JR_\kappa(X)$}%
\index{Jayne--Rogers dichotomy for a!class\idf$\JR_\kappa(\mathcal C)$}% 
Let $\JR_\kappa^f$ denote the statement that the following conditions are equivalent: 
\begin{enumerate-(1)}
\item 
$f$ is ${\bf \Delta}^0_2(\kappa)$-measurable.\footnote{See the end of Subsection~\ref{preliminaries: notation} for the definition. Equivalently, $f$ is ${\bf \Pi}^0_2(\kappa)$-measurable.}
\todog{uncomment for the proof of the claim in the footnote 
%(works for any $X\subseteq{}^\kappa\kappa$): Suppose $f$ is $\Gdelta(\kappa)$-measurable.
%Then for any closed subset $C$ of ${}^\kappa\kappa$ $f^{-1}(C)=X\cap Y$ for some $\Fsigma(\kappa)$ subset $Y$ of ${}^\kappa\kappa$. 
%Hence the same holds for $\Fsigma(\kappa)$ subsets $C$.
%Since any open subset of ${}^\kappa\kappa$ is $\Fsigma(\kappa)$ (it is a union of $\kappa$ many basic clopen sets), $f$ is $\Fsigma(\kappa)$-measurable and hence  $\DeltaZeroTwo(\kappa)$-measurable.
}
\item 
$f$ is $\kappa$-continuous with closed pieces. 
\end{enumerate-(1)}
$\JR_\kappa(X)$ states that $\JR_\kappa^g$ holds for all $g:X\to{}^\kappa\kappa$.
$\JR_\kappa(\mathcal C)$ states that $\JR_\kappa(X)$ holds for all subsets $X\in\mathcal C$ of ${}^\kappa\kappa$, where $\mathcal C$ is any class. 
\end{definition}

We will obtain $\JR_\kappa^f$ as a special case of 
$\ODD\kappa\kappa(\graph(f))$, where
$\graph(f)$ denotes the \emph{graph} of $f$.%
\footnote{I.e., $\graph(f)$ consists of those pairs $(x,y)$ in $X\times{}^\kappa\kappa$ such that $f(x)=y$.}
\index{function!graph of\idf$G(f)$}%
\index{graph!of a function\idf$G(f)$}%
If $X=\dom(f)$ is in $\defsetsk$, 
then $\ODD\kappa\kappa(\graph(f),\defsetsk)$ suffices.
Here, we mean the variant of  Definition~\ref{def: ODD for classes} 
for the underlying space ${}^\kappa\kappa\times{}^\kappa\kappa$ instead of ${}^\kappa\kappa$.
Since there is a homeomorphism $k$ from
${}^\kappa\kappa\times{}^\kappa\kappa$
onto
${}^\kappa\kappa$, 
$\ODD\kappa\kappa(\graph(f),\defsetsk)$
is equivalent to $\ODD\kappa\kappa(k(\graph(f)),\defsetsk)$.

Our proof is similar to that of \cite{CarroyMiller}*{Theorem 13 \& Proposition 18}.
Recall from Subsection~\ref{subsection: original KLW} that $\CC^X$ denotes the set of all 
convergent 
$\kappa$-sequences $\bar x$ in ${}^\kappa\kappa$ 
with $\lim_{\alpha<\kappa}(\bar x)\in X\setminus \ran(\bar x)$. 
We will work with the $\kappa$-hypergraph 
$\JJ^f$ on
\index{hypergraph!Jayne--Rogers!standard\idf$\JJ^f$}
${}^\kappa\kappa\times{}^\kappa\kappa$ 
consisting of those sequences
$\langle (x_\alpha,y_\alpha):\alpha<\kappa\rangle$ such that 
$\bar x=\langle x_\alpha:\alpha<\kappa\rangle \in \CC^{X}$ and $f(\lim_{\alpha<\kappa}(\bar x))$ is not in the closure of  ${\{y_\alpha:\alpha<\kappa\}}$. 
We call $\JJ^f$ a \emph{Jayne--Rogers hypergraph} due to its introduction in \cite{CarroyMiller}*{Theorem~13} for $\kappa=\omega$ for a proof of the Jayne--Rogers theorem \cite{JayneRogers1982}*{Theorem~5}. 
Note that $\JJ^f\restr\graph(f)$ 
consists of those sequences
that project to $\CC^X$ and witness discontinuity of $f$.
%
%Let $\JJ_f$ consist of all sequences 
%$\langle (x_\alpha,f(x_\alpha)):\alpha<\kappa\rangle$ such that $\bar x=\langle x_\alpha:\alpha<\kappa\rangle \in \CC^X$ and $f(\lim_{\alpha<\kappa}(\bar x))$ is not in the closure of  ${\{f(x_\alpha):\alpha<\kappa\}}$.
%
Clearly $\JJ^f$ is a box-open dihypergraph on ${}^\kappa\kappa\times{}^\kappa\kappa$,
and $\JJ^f\in\defsetsk$ when $f\in\defsetsk$.

Let $\proj_k$ 
%\index{sequences!projection!onto the $i^{\mathrm{th}}$ coordinate \idf$\proj_i$}%
\index{projection of $\ddim$-sequences onto!the $i^{\mathrm{th}}$ coordinate \idf$\proj_i$}%
denote the projection onto the $k^{\mathrm{th}}$ coordinate for $k\in\{0,1\}$. 

\begin{lemma}
\label{lemma: JR independence}
Suppose $A\subseteq\graph(f)$ and $C$ is the closure of $\proj_0(A)$ in $X$.
Then $A$ is $\JJ^f$-independent if and only if 
$f\restr C$ is continuous.
\end{lemma}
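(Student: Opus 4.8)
The plan is to prove both directions by unpacking the definition of $\JJ^f$-independence and relating it to sequential continuity of $f\restr C$. Since we are working in a $\kappa$-additive, first-countable-at-scale-$\kappa$ setting where $\kappa^{<\kappa}=\kappa$, continuity of a function on a subset of ${}^\kappa\kappa$ is equivalent to sequential continuity: a function $g$ on $C\subseteq{}^\kappa\kappa$ is continuous if and only if for every injective convergent $\kappa$-sequence $\langle x_\alpha:\alpha<\kappa\rangle$ in $C$ with limit $x\in C$, the image sequence $\langle g(x_\alpha):\alpha<\kappa\rangle$ converges to $g(x)$. I would first record this equivalence (or note that the relevant one-sided version suffices), since it is the bridge between the topological statement about $f\restr C$ and the combinatorial statement about hyperedges of $\JJ^f$.

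First I would prove the contrapositive of the ``only if'' direction. Suppose $f\restr C$ is not continuous. Then there is a point $x\in C$ and an injective sequence $\langle x_\alpha:\alpha<\kappa\rangle$ in $C$ converging to $x$ such that $\langle f(x_\alpha):\alpha<\kappa\rangle$ does not converge to $f(x)$; passing to a subsequence, we may assume $f(x)\notin\closure{\{f(x_\alpha):\alpha<\kappa\}}$ and that $x\neq x_\alpha$ for all $\alpha<\kappa$. Here is the first place care is needed: the $x_\alpha$ lie in $C=\closure{\proj_0(A)}$, not necessarily in $\proj_0(A)$ itself. I would use the density of $\proj_0(A)$ in $C$ together with the box-openness of $\CC^X$ and of the discontinuity witnessed at $x$ to perturb each $x_\alpha$ to a genuine point $x_\alpha'\in\proj_0(A)$ close enough that the perturbed sequence still lies in $\CC^X$ (converging to $x\in X$ with $x\notin\ran$) and still has $f(x)=\lim_{\alpha}(\langle x_\alpha':\alpha<\kappa\rangle)$ mapped outside the closure of $\{f(x_\alpha'):\alpha<\kappa\}$. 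Then $\langle (x_\alpha',f(x_\alpha')):\alpha<\kappa\rangle$ is a hyperedge of $\JJ^f$ lying in $A$, so $A$ is not $\JJ^f$-independent.

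For the ``if'' direction I would argue the contrapositive as well: suppose $A$ is not $\JJ^f$-independent, and take a hyperedge $\langle (x_\alpha,y_\alpha):\alpha<\kappa\rangle$ of $\JJ^f$ with all pairs in $A$. Since the pairs are in $A\subseteq\graph(f)$, we have $y_\alpha=f(x_\alpha)$, and by the definition of $\JJ^f$ the sequence $\bar x=\langle x_\alpha:\alpha<\kappa\rangle$ lies in $\CC^X$, so it converges to some $x\in X$ with $x\neq x_\alpha$ for all $\alpha$, while $f(x)\notin\closure{\{f(x_\alpha):\alpha<\kappa\}}$. Each $x_\alpha\in\proj_0(A)\subseteq C$ and the limit $x\in C$ since $C$ is the closure of $\proj_0(A)$ in $X$. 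Thus $\langle f(x_\alpha):\alpha<\kappa\rangle$ cannot converge to $f(x)$ (as $f(x)$ is not even in its closure), witnessing that $f\restr C$ fails to be sequentially continuous, hence not continuous.

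The main obstacle I anticipate is the perturbation argument in the ``only if'' direction: producing the hyperedge of $\JJ^f$ inside $A$ from a mere discontinuity on the closure $C$, since the witnessing sequence need not live in $\proj_0(A)$. I would handle this by exploiting that both the convergence condition (membership in $\CC^X$) and the separation condition ($f(x)$ outside a given closure) are determined by initial segments of bounded length at each stage, so that replacing each $x_\alpha$ by a sufficiently close point of the dense set $\proj_0(A)$ preserves them; the density of $\proj_0(A)$ in $C$ guarantees such points exist. Once this is set up carefully the two directions close cleanly, and the stated equivalence $\ODD\kappa{\JJ^f\restr\graph(f)}\Longleftrightarrow\JR_\kappa^f$ will follow by combining this lemma with Theorem~\ref{theorem: KLW from ODD} and the standard fact that $\kappa$-continuity with closed pieces corresponds to a $\kappa$-coloring of $\JJ^f\restr\graph(f)$, while a continuous homomorphism yields a failure of $\DeltaZeroTwo(\kappa)$-measurability.
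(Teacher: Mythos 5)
Your ``if'' direction (continuity of $f\restr C$ implies $\JJ^f$-independence of $A$, argued contrapositively) is correct and coincides with the paper's argument. The genuine gap is in the ``only if'' direction, at precisely the step you flag as the main obstacle: the perturbation cannot be carried out as you describe. Replacing each $x_\alpha\in C$ by a nearby point $x_\alpha'\in\proj_0(A)$ does preserve the convergence condition on the first coordinates, but it gives no control whatsoever over the images: the separation condition $f(x)\notin\closure{\{f(x_\alpha'):\alpha<\kappa\}}$ is determined by initial segments of the \emph{values} $f(x_\alpha')$, not by initial segments of the points $x_\alpha'$, and there is no continuity of $f$ available to transfer closeness in the domain to closeness in the range --- indeed you are working exactly in the case where $f$ is discontinuous. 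Equivalently, density of $\proj_0(A)$ in $C$ does not make $A$ dense in $\graph(f\restr C)$ in the box (or product) topology, which is what would be needed for the box-openness of $\JJ^f$ to absorb the perturbed points. A witness sequence running through $C\setminus\proj_0(A)$ may have its images held away from $f(x)$, while every nearby point $w\in\proj_0(A)$ has $f(w)$ close to $f(x)$; the perturbed sequence is then simply not a hyperedge, so ``determined by initial segments at each stage'' is false for the second coordinate.

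The correct route (the paper's) is structurally different: one shows that a discontinuity of $f\restr C$ can always be witnessed by a sequence taken from $\proj_0(A)$ itself, and then the subsequence trick (pass to a subsequence whose images avoid a fixed basic neighborhood $N_{f(y)\restr\gamma}$) produces a hyperedge of $\JJ^f$ lying in $A$. The point is that this witnessing fact is not a pointwise perturbation but a global approximation argument: if, for every $z\in C$ and every sequence from $\proj_0(A)$ converging to $z$, the images converge to $f(z)$ (and this is what $\JJ^f$-independence yields via the subsequence trick, applied at each $z\in C$), then $f\restr C$ is continuous --- given $z_\alpha\to y$ in $C$, one uses this property \emph{at each $z_\alpha$} to choose $w_\alpha\in\proj_0(A)$ with both $w_\alpha$ close to $z_\alpha$ and $f(w_\alpha)$ close to $f(z_\alpha)$, whence $f(z_\alpha)\to f(y)$. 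The needed control on $f(w_\alpha)$ comes from the independence hypothesis applied at $z_\alpha$, not from density of $\proj_0(A)$; this is why the argument must be run with independence as the standing assumption (or as the hypothesis of a proof by contradiction) rather than by perturbing a bad sequence as you propose.
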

\begin{proof}
First, suppose that  $A$ is $\JJ^f$-independent. 
Assuming $f\restr C$ is not continuous at $y\in C$, 
take an injective sequence
$\bar x=\langle x_\alpha:\alpha<\kappa\rangle$ in $\proj_0(A)$ converging to $y$
such that $\langle f(x_\alpha):\alpha<\kappa\rangle$ does not converge to $f(y)$.
Then there exists a subsequence
$\langle x_{\alpha_i}:i<\kappa\rangle$ of $\bar x$ and some $\gamma<\kappa$ 
with 
$f(x_{\alpha_i}) \notin N_{f(y)\restr\gamma}$
for all $i<\kappa$.
Since 
$f(y)\notin\closure{\{f(x_{\alpha_i}):i<\kappa\}}$,
the sequence
$\langle (x_{\alpha_i}, f(x_{\alpha_i})):i<\kappa\rangle$ 
contradicts the fact that 
$A$ is $\JJ^f$-independent.

Conversely, suppose that $f\restr C$ is continuous.
It suffices to show for any sequence 
$\bar x=\langle x_\alpha:\alpha<\kappa\rangle$ in $C$ with $\bar x \in \CC^X$ 
that 
$\langle (x_\alpha,f(x_\alpha)):\alpha<\kappa\rangle$ is not in $\JJ^f$.
But this follows from the fact that
$f(\lim_{\alpha<\kappa} \bar x)=\lim_{\alpha<\kappa} (\langle f(x_\alpha):\alpha<\kappa\rangle)$
by the continuity of $f\restr C$.
\end{proof}

The following lemmas 
will be used to characterize the existence of continuous homomorphisms from $\dhH \kappa$ to $\JJ^f\restr\graph(f)$.
Recall from Subsection~\ref{subsection: closure properties} that for a strict order preserving map $\iota: {}^{<\kappa}\kappa\to {}^{<\kappa}\kappa$, $\Lim_t^{\iota}$ denotes the union of all 
limit points of sets of the form 
$\{[\iota](x_\alpha) : \alpha<\kappa\}$ 
where 
$x_\alpha\in N_{t\conc\langle\alpha\rangle}\cap X$
for all $\alpha<\kappa$. 
The next lemma shows that $\iota$ can be ``thinned out''
to a map $\iota \circ e$ such that $\Lim_t^{\iota\circ e}$ contains at most one element for any $t\in {}^{<\kappa}\kappa$. 

\begin{lemma} 
\label{lemma: small Lim} 
If $\iota:{}^{<\kappa}\kappa\to{}^{<\kappa}\kappa$ 
is strict order preserving, then there exists a 
continuous strict $\wedge$-homomorphism%
\footnote{See Definition~\ref{def: wedge-homomorphism}. Note that a map $e: {}^{<\kappa}\kappa\to {}^{<\kappa}\kappa$ is a strict $\wedge$-homomorphism if and only if it is strict order preserving with $e(t\conc\langle\alpha\rangle) \wedge e(t\conc\langle\beta\rangle)= e(t)$ for all $t\in{}^{<\kappa}\kappa$ and all $\alpha<\beta<\kappa$.}
$e:{}^{<\kappa}\kappa\to{}^{<\kappa}\kappa$ 
with 
$|\Lim_t^{\iota\comp e}|\leq 1$ for all $t\in {}^{<\kappa}\kappa$. 
\end{lemma}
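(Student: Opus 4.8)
The plan is to construct $e$ by a recursion of length $\kappa$ on $\lh(t)$, thinning out the successors of each node so that for each $t\in{}^{<\kappa}\kappa$, the images $\iota(e(t\conc\langle\alpha\rangle))$ for $\alpha<\kappa$ converge to a single branch (or spread out with no limit point at all). The key observation is that $\Lim_t^{\iota\comp e}$ measures the set of accumulation points of sequences $\langle[\iota\comp e](x_\alpha):\alpha<\kappa\rangle$ where $x_\alpha\in N_{e(t)\conc\langle\alpha\rangle}$; so to control $|\Lim_t^{\iota\comp e}|$ it suffices to arrange that the nodes $e(t\conc\langle\alpha\rangle)$ are chosen so that $\langle\iota(e(t\conc\langle\alpha\rangle)):\alpha<\kappa\rangle$ either converges to a single element of ${}^\kappa\kappa$ in the sense of Subsection~\ref{subsection: original KLW} (i.e. the splitting nodes $\iota(e(t\conc\langle\alpha\rangle))\wedge\iota(e(t\conc\langle\beta\rangle))$ increase cofinally along one branch), or eventually escapes every basic open set (so that no limit point exists).

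First I would set $e(\emptyset):=\emptyset$ and, at limit stages $\lh(t)\in\Lim$, let $e(t):=\bigcup_{u\subsetneq t}e(u)$; continuity and the $\wedge$-homomorphism property at limits follow automatically from the successor-stage construction as in the proofs of Lemmas~\ref{homomorphisms and order preserving maps} and \ref{lemma: nice reduction}. The work is at successor stages: given $e(t)$, I must define $e(t\conc\langle\alpha\rangle)\supsetneq e(t)\conc\langle\alpha\rangle$ for all $\alpha<\kappa$. I would first choose a strictly increasing continuous sequence $\langle\gamma_\alpha:\alpha<\kappa\rangle$ and a branch $y_t\in{}^\kappa\kappa$ to which a subsequence of $\langle\iota(e(t)\conc\langle\alpha\rangle):\alpha<\kappa\rangle$ converges. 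By passing to a cofinal subset of the successors of $e(t)$ and re-indexing (so that $e$ remains a strict $\wedge$-homomorphism, i.e. $e(t\conc\langle\alpha\rangle)\wedge e(t\conc\langle\beta\rangle)=e(t)$ for $\alpha\neq\beta$), I can arrange that the images $\iota(e(t\conc\langle\alpha\rangle))$ have the property that any convergent sequence $\langle[\iota\comp e](x_\alpha):\alpha<\kappa\rangle$ with $x_\alpha\in N_{e(t)\conc\langle\alpha\rangle}$ must converge to the single point $y_t$. This is exactly the situation analysed in the closure-of-range computations of Lemma~\ref{lemma: [e] closed map}~\ref{closure of ran[e] when ddim=kappa}: since $e$ is $\perp$-preserving, distinct successors have incompatible images, so any limit point at $t$ is forced to be the common limit of the chosen branch.

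The main obstacle will be carrying out the successor step so that \emph{every} potential limit point is collapsed to at most one value, not merely the limit points arising from the original indexing. Because $\Lim_t^{\iota\comp e}$ ranges over \emph{all} sequences $\langle x_\alpha\rangle$ with $x_\alpha\in N_{e(t)\conc\langle\alpha\rangle}\cap X$, I must ensure that the selected nodes $e(t\conc\langle\alpha\rangle)$ are mutually incompatible (guaranteed by the $\wedge$-homomorphism property) and, crucially, that their $\iota$-images leave every fixed basic open set except along the single branch $y_t$. Concretely, if infinitely many $\iota(e(t\conc\langle\alpha\rangle))$ remained inside some $N_s$ for $s\perp y_t$, a second limit point could appear; so at the successor stage I would discard, for each candidate splitting level, all but a suitably sparse set of successors, using $\kappa^{<\kappa}=\kappa$ (available since $\iota$ exists as a strict order preserving map, hence we are in the standard setting) to bookkeep. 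A clean way to organize this is to first apply Lemma~\ref{lemma: [e] closed map}~\ref{closure of ran[e] when ddim=kappa} to identify $\Lim_t^\iota=\bigcup_{s}\Lim^\iota_s$ and then choose the successors $e(t\conc\langle\alpha\rangle)$ whose images accumulate at one designated point of this set, discarding the rest.

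Once $e$ is constructed, $\theta:=\iota\comp e$ is strict order preserving (as a composition of $\iota$ with the strict $\wedge$-homomorphism $e$), $e$ is continuous and a strict $\wedge$-homomorphism by construction, and $|\Lim_t^{\iota\comp e}|\leq 1$ for each $t$ by the successor-stage control. I expect this lemma to feed directly into the subsequent construction of a continuous homomorphism witnessing $\JR_\kappa^f$, where the single possible limit value $y_t$ at each node will be identified with $f(\lim_{\alpha<\kappa}\bar x)$, allowing the two alternatives of $\ODD\kappa{\JJ^f\restr\graph(f)}$ to be matched against continuity with closed pieces versus a witness of non-$\DeltaZeroTwo(\kappa)$-measurability.
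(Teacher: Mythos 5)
Your recursion skeleton (trivial limit stages, all work at successor stages, aiming to trap every branch-sequence through the new successor neighborhoods at a single point) is the same as the paper's, but your successor step does not achieve this, and the gap is at the decisive point. Your mechanism for controlling $\Lim_t^{\iota\comp e}$ is thinning: ``passing to a cofinal subset of the successors of $e(t)$'', later ``discarding all but a suitably sparse set of successors''. Thinning the immediate successors does not shrink any neighborhood, and it cannot in general make $|\Lim_t^{\iota\comp e}|\leq 1$: suppose $\iota$ sends every immediate successor $e(t)\conc\langle\alpha\rangle$ to one fixed node $u\supsetneq\iota(e(t))$ and acts above it by $\iota(e(t)\conc\langle\alpha\rangle\conc s)=u\conc s$, so that $[\iota]$ maps each $N_{e(t)\conc\langle\alpha\rangle}$ onto $N_u$. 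Then for every cofinal choice $\langle\alpha_i:i<\kappa\rangle$ and every $w\in N_u$ one can pick $z_i\in N_{e(t)\conc\langle\alpha_i\rangle}$ with $[\iota](z_i)$ converging to $w$ and distinct from $w$, so after any thinning the limit set is still all of $N_u$. What is needed, and what the paper's proof does, is to properly \emph{extend} the new nodes along witnesses of a convergent sequence of \emph{branches}: in the nontrivial case fix $\langle\alpha_i:i<\kappa\rangle$ and $x_i\in N_{e(t)\conc\langle\alpha_i\rangle}$ such that $\langle[\iota](x_i):i<\kappa\rangle$ converges to some $y$, and let $e(t\conc\langle i\rangle)$ be an initial segment of $x_i$ properly extending $e(t)\conc\langle\alpha_i\rangle$ of length at least $i$. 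Then $\iota(e(t\conc\langle i\rangle))$ is an initial segment of $[\iota](x_i)$ of length at least $i$, so these \emph{nodes} converge to $y$, and hence every sequence through the new neighborhoods is forced to converge to $y$, giving $\Lim_t^{\iota\comp e}\subseteq\{y\}$. In the complementary case, where no sequence through the successors has a convergent image subsequence, one takes the trivial extensions $e(t\conc\langle\alpha\rangle):=e(t)\conc\langle\alpha\rangle$ and gets $\Lim_t^{\iota\comp e}=\emptyset$; your construction has no provision for this case, since your designated $y_t$ need not exist.

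The justifications you offer for the thinning step also fail. You invoke Lemma~\ref{lemma: [e] closed map}~\ref{closure of ran[e] when ddim=kappa} via ``since $e$ is $\perp$-preserving, distinct successors have incompatible images'', but the map whose limit points are at issue is $\iota\comp e$, not $e$: since $\iota$ is only strict order preserving, incompatibility of $e(t\conc\langle\alpha\rangle)$ and $e(t\conc\langle\beta\rangle)$ gives no information about $\iota(e(t\conc\langle\alpha\rangle))$ and $\iota(e(t\conc\langle\beta\rangle))$, which may be compatible or even equal (as in the example above), so that lemma does not apply. This is not a repairable shortcut: the whole point of Lemma~\ref{lemma: small Lim} is to handle maps that are \emph{not} $\perp$-preserving, such as $\theta_1$ in the proof of Theorem~\ref{theorem: JR main theorem} where the lemma is applied; for $\perp$-preserving maps there would be little to prove. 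Similarly, your $y_t$ is chosen as the limit of a convergent subsequence of the node sequence $\langle\iota(e(t)\conc\langle\alpha\rangle):\alpha<\kappa\rangle$; node convergence requires the lengths to tend to $\kappa$, so such a subsequence need not exist even when $\Lim^\iota_{e(t)}\neq\emptyset$ (again the example above, where all these nodes equal $u$). The correct object is a convergent sequence of branches $[\iota](x_i)$, and its witnesses $x_i$ are exactly what the extensions $e(t\conc\langle i\rangle)$ must follow --- the step missing from your proposal.
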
 
\begin{proof}
We construct $e(t)$ by recursion on $t$. 
Let $e(\emptyset)=\emptyset$ and if $t$ is a limit, let $e(t):=\bigcup_{s\subsetneq t} e(s)$. 
Now suppose that $e(t)$ has been constructed. 
If there exists no $\bar{x}=\langle x_\alpha : \alpha<\kappa\rangle$ in $\prod_{\alpha<\kappa} N_{ e(t)\conc\langle \alpha\rangle}$ such that $\langle [\iota](x_\alpha) : \alpha<\kappa\rangle$ has a convergent subsequence,
then let $e(t\conc \langle \alpha \rangle):=e(t)\conc \langle \alpha \rangle$ for all $\alpha<\kappa$. 
Then $\Lim^{\iota\comp e}_t=\Lim^\iota_{e(t)}=\emptyset$.
Otherwise fix a strictly increasing sequence $\langle \alpha_i : i<\kappa\rangle$ of ordinals below $\kappa$ and $\langle x_i : i<\kappa \rangle$ in $\prod_{i<\kappa} N_{e(t)\conc\langle \alpha_i\rangle}$ such that $\langle [\iota](x_i) : i<\kappa\rangle$ converges to some $y$.
For each $i<\kappa$, take $e(t\conc \langle i \rangle)$ with 
$e(t)\conc \langle \alpha_i \rangle \subsetneq e(t\conc \langle i \rangle) \subsetneq x_i$ 
and $\lh\big(e(t\conc \langle i \rangle)\big)\geq i$.
Then $\big\langle \iota\big(e(t\conc \langle i \rangle)\big) : i<\kappa \big\rangle$ converges to~$y$,%
\footnote{Equivalently, all sequences in  
$\prod_{i<\kappa} N_{\iota(e(t)\conc\langle \alpha_i\rangle)}$ converge to $y$; 
see the paragraph before Lemma~\ref{lemma: CC convergence}.}
%Then $\Lim^{\iota\comp e}_t=\{y\}$. 
so $\Lim^{\iota\comp e}_t=\{y\}$.
\end{proof}

We next extend the notion of order homomorphisms 
to dihypergraphs on ${}^\kappa\kappa\times{}^\kappa\kappa$.
Let
${}^{<\kappa}\kappa\otimes{}^{<\kappa}\kappa:=
\bigcup_{\alpha<\kappa}({}^\alpha\kappa\times{}^\alpha\kappa)$.
Given a strict order preserving map
$\theta:{}^{<\kappa}\ddim\to({}^{<\kappa}\kappa\otimes{}^{<\kappa}\kappa)$
where $\ddim\leq\kappa$,
let $\theta_i:=\proj_i\comp\theta$ 
for $i\in\{0,1\}$ and
define $[\theta]: {}^\kappa\ddim\to {}^\kappa\kappa\times{}^\kappa\kappa$ by letting $[\theta](x):=([\theta_0](x),[\theta_1](x))$. 
The definition of \emph{continuity} for $\theta$ is analogous to Definition~\ref{def: order preserving}~\ref{cont def}.%
\footnote{I.e., %we say $\theta$ is \emph{continuous} if
$\theta(t)=(\bigcup_{s\subsetneq t}\theta_0(s),$ $\bigcup_{s\subsetneq t}\theta_1(s))$ for all $t\in{}^\kappa\kappa$ with $\lh(t)\in\Lim$.}

In the following definition and lemma, 
we assume that 
$\ddim\geq 2$ is an ordinal,
%$2\leq\ddim\leq\kappa$,
$Y$ is a subset of 
${}^\kappa\kappa\times{}^\kappa\kappa$
and $H$ is a 
$\ddim$-dihypergraph on ${}^\kappa\kappa\times{}^\kappa\kappa$.

\begin{definition}
\label{def: order hom for kappa^kappa times kappa^kappa}
We say that a strict order preserving map 
$\theta:{}^{<\kappa}\ddim\to({}^{<\kappa}\kappa\otimes{}^{<\kappa}\kappa)$
is an \emph{order homomorphism} for $(Y,H)$
\index{order homomorphism for!dihypergraphs on ${}^\kappa\kappa\times{}^\kappa\kappa$\idf} 
if $\ran([\theta])\subseteq Y$
%\footnote{For $Y=\graph(f)$, this is equivalent to the conjunction of $\ran([\theta_0])\subseteq X$ and $f\comp[\theta_0]=[\theta_1]$.}
and 
for all $t\in {}^{<\kappa}\ddim$, 
\[
\prod_{\alpha<\ddim} 
(N_{\theta_0(t\conc\langle \alpha\rangle)}
\times
N_{\theta_1(t\conc\langle \alpha\rangle)}) \cap Y 
\subseteq H.
\]
\end{definition}

The next lemma is an analogoue of Lemma~\ref{homomorphisms and order preserving maps} for dihypergraphs on ${}^\kappa\kappa\times {}^\kappa\kappa$. 

\begin{lemma} \ 
\label{hop - version 2}
%Suppose $2\leq\ddim\leq\kappa$.
\begin{enumerate-(1)}
\item\label{hop 2 - version 2} 
If $\theta$ is an order homomorphism for $(Y,H)$, then 
$[\theta]$ is a continuous homomorphism from $\dhH\ddim$ to $H\restr Y$. 
\todog{This is a verbatim analogue of  Lemma~\ref{homomorphisms and order preserving maps}. Weaker versions of both (1) and (2) would suffice, but it might be easier to read the verbatim analogue}
\item\label{hop 1 - version 2} 
If $H\restr Y$ is box-open on $Y$ and there exists
a continuous homomorphism $h$
from $\dhH\ddim$ to~$H\restr Y$, then
there exists a continuous
order homomorphism $\theta$ for $(Y,H)$
and a continuous strict $\wedge$-homomorphism
$e:{}^{<\kappa}\ddim\to{}^{<\kappa}\ddim$
with $[\theta]=h\comp [e]$.
\end{enumerate-(1)}
\end{lemma}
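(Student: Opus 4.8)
The plan is to prove Lemma~\ref{hop - version 2} as a direct analogue of Lemma~\ref{homomorphisms and order preserving maps}, transferring its proof verbatim to the setting of the product space ${}^\kappa\kappa\times{}^\kappa\kappa$. The two coordinates $\theta_0,\theta_1$ of the map $\theta$ play together the same role that the single map $\iota$ played in the original lemma, and the basic open sets $N_{\theta_0(t)}\times N_{\theta_1(t)}$ of the product space replace the $N_{\iota(t)}$'s. Since the bounded topology on ${}^\kappa\kappa\times{}^\kappa\kappa$ has a base consisting of such products, and since $[\theta]$ is defined coordinatewise, all the topological observations used in the original argument carry over with no essential change.

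For part~\ref{hop 2 - version 2}, I would argue as in the proof of Lemma~\ref{homomorphisms and order preserving maps}~\ref{hop 2}. Since $\theta$ is strict order preserving, $[\theta_0]$ and $[\theta_1]$ are continuous, and hence so is $[\theta]$. To see that $[\theta]$ is a homomorphism from $\dhH\ddim$ to $H\restr Y$, take any hyperedge $\bar x=\langle x_i:i\in\ddim\rangle\in\dhH\ddim$ and fix $u\in{}^{<\kappa}\ddim$ with $u\conc\langle i\rangle\subseteq x_i$ for all $i\in\ddim$. Then $\theta_0(u\conc\langle i\rangle)\subseteq [\theta_0](x_i)$ and $\theta_1(u\conc\langle i\rangle)\subseteq [\theta_1](x_i)$ for all $i\in\ddim$, so
\[
[\theta]^\ddim(\bar x)\in
\prod_{i\in\ddim}
\big(N_{\theta_0(u\conc\langle i\rangle)}\times N_{\theta_1(u\conc\langle i\rangle)}\big)\cap Y
\subseteq H
\]
by the defining property of an order homomorphism for $(Y,H)$, and $\ran([\theta])\subseteq Y$ ensures the hyperedge lies in $H\restr Y$.

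For part~\ref{hop 1 - version 2}, I would reconstruct the recursion from the proof of Lemma~\ref{homomorphisms and order preserving maps}~\ref{hop 1}, building continuous strict order preserving maps $\theta:{}^{<\kappa}\ddim\to({}^{<\kappa}\kappa\otimes{}^{<\kappa}\kappa)$ and $e:{}^{<\kappa}\ddim\to{}^{<\kappa}\ddim$ satisfying the analogues of conditions \ref{hop proof e}, \ref{hop proof 1} and \ref{hop proof 2}, namely $e(t)\conc\langle i\rangle\subseteq e(t\conc\langle i\rangle)$, $h(N_{e(t)})\subseteq N_{\theta_0(t)}\times N_{\theta_1(t)}$, and the product inclusion into $H$. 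At limit lengths one takes unions coordinatewise; at successor lengths, one picks $x_i\in{}^\kappa\ddim$ extending $e(u)\conc\langle i\rangle$, uses that $\langle h(x_i):i\in\ddim\rangle$ is a hyperedge of $H\restr Y$, and shrinks to basic product neighborhoods $N_{\theta_0(u\conc\langle i\rangle)}\times N_{\theta_1(u\conc\langle i\rangle)}$ using the box-openness of $H\restr Y$ on $Y$, then uses the continuity of $h$ to choose $e(u\conc\langle i\rangle)$. The same computation as before shows $[\theta]=h\comp[e]$ and hence $\ran([\theta])\subseteq\ran(h)\subseteq Y$, so $\theta$ is an order homomorphism for $(Y,H)$, while $e$ is a strict $\wedge$-homomorphism by the successor-step disjointness.

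I do not expect any genuine obstacle here: the whole point is that the product structure is cosmetic, since ${}^\kappa\kappa\times{}^\kappa\kappa$ is itself (homeomorphic to) a generalized Baire space with a base of size $\kappa$, and the coordinatewise definition of $[\theta]$ makes every step of the original argument applicable verbatim. The only mild care needed is bookkeeping for the two coordinates simultaneously and confirming that the base of the box topology on the product is given by the sets $N_{\theta_0(t)}\times N_{\theta_1(t)}$, which is immediate. Thus I would present the proof simply by remarking that it is the verbatim analogue of the proof of Lemma~\ref{homomorphisms and order preserving maps}, indicating the coordinatewise modifications as above.
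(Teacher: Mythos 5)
Your proof is correct, but it is not the route the paper takes. You re-run the recursion of Lemma~\ref{homomorphisms and order preserving maps} coordinatewise inside the product space, whereas the paper reduces the lemma to that one as a black box: it fixes the interleaving coding $k$ of ${}^{\leq\kappa}\kappa\otimes{}^{\leq\kappa}\kappa$ into ${}^{\leq\kappa}\kappa$ (with $\lh(k(u,v))=2\cdot\lh(u)$), transfers $H$ to the dihypergraph $\bar H:=k^\ddim(H)$ on ${}^\kappa\kappa$, observes that $\theta$ is an order homomorphism for $(Y,H)$ if and only if $k\comp\theta$ is one for $(k(Y),\bar H)$, and then applies Lemma~\ref{homomorphisms and order preserving maps} to $k\comp h$, using Remark~\ref{remark: order homomorphisms even length} to arrange that the resulting order homomorphism $\iota$ takes values of even length, so that $\theta:=k^{-1}\comp\iota$ is defined. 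The paper's reduction buys brevity and avoids duplicating the construction, at the cost of needing the even-length trick to invert the coding; your direct construction is self-contained and makes explicit that the product structure changes nothing essential. Note, however, that the issue the paper's even-length trick addresses resurfaces in your argument in disguise: elements of ${}^{<\kappa}\kappa\otimes{}^{<\kappa}\kappa$ are by definition pairs of sequences of \emph{equal} length, and the boxes $N_{u_i}\times N_{v_i}$ produced by box-openness at the successor step need not have $\lh(u_i)=\lh(v_i)$. You should say explicitly that one pads the shorter coordinate along the corresponding coordinate of $h(x_i)$ (shrinking the box, which preserves the inclusion into $H$); with that bookkeeping spelled out, your recursion goes through verbatim.
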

\begin{proof} 
Let $k$ denote the natural coding of elements $(u,v)$ of
${}^{\leq\kappa}\kappa\otimes{}^{\leq\kappa}\kappa$%
\footnote{We write
${}^{\leq\kappa}\kappa\otimes{}^{\leq\kappa}\kappa:=
({}^{<\kappa}\kappa\otimes{}^{<\kappa}\kappa)
\cup
({}^{\kappa}\kappa\times{}^{\kappa}\kappa)$.} 
as elements 
$k(u,v)$ of ${}^{\leq\kappa}\kappa$
such that
$\lh(k(u,v))=2\cdot\lh(u)=2\cdot\lh(v)$.%
\footnote{That is, for all $\alpha<\lh(u)$, let
$k(u,v)(2\cdot\alpha)=u(\alpha)$  
and let
$k(u,v)(2\cdot\alpha+1)=v(\alpha)$.}
$k$ induces the $\ddim$-dihypergraph $\bar H:=k^\ddim(H)$ on 
${}^\kappa\kappa$. 
Moreover, $\theta: {}^{<\kappa}\ddim\to {}^{<\kappa}\kappa\times {}^{<\kappa}\kappa$ is an order homomorphism for $(Y,H)$ 
if and only if $k\comp \theta$ is an 
order homomorphism for $(k(Y),\bar H)$. 
\todog{Use that $(x,y)$ is in
$N_{\theta_0(t\conc\langle \alpha\rangle)}
\times
N_{\theta_1(t\conc\langle \alpha\rangle)}$
if and only if 
$k((x,y))$ is in $N_{(k\comp\theta)(t\conc\langle\alpha\rangle)}$.}

For \ref{hop 2 - version 2}, suppose $\theta$ is an order homomorphism for $(Y,H)$. 
Then $[k\comp\theta]=k\comp[\theta]$ 
\todog{$\big[k\restr{}^{<\kappa}\kappa\big]=[k]\restr{}^\kappa\kappa$}
is a continuous homomorphism from $\dhH\ddim$ to 
$\bar H\restr k(Y)$ 
%$\bar H\restr k(\ran([\theta])$ 
by Lemma~\ref{homomorphisms and order preserving maps}. 
Hence $[\theta]$ is a continuous homomorphism from $\dhH\ddim$ to 
$H\restr Y$.
%$H\restr\ran([\theta])$ and hence to $H\restr Y$.

For \ref{hop 1 - version 2}, suppose $h$ is a continuous homomorphism from $\dhH\ddim$ to $H\restr Y$.
Then $k\comp h$ is a continuous homomorphism from $\dhH\ddim$ to $\bar H\restr k(Y)$.
Since $\bar H\restr k(Y)$ is a box-open dihypergraph on $k(Y)$,
there exists 
a continuous order homomorphism $\iota$ for $(k(Y),\bar H)$ 
and a continuous strict $\wedge$-homomorphism
$e:{}^{<\kappa}\ddim\to{}^{<\kappa}\ddim$
with $[\iota]=k\comp h\comp [e]$
by Lemma~\ref{homomorphisms and order preserving maps}. 
We can assume that $\lh(\iota(t))$ is even for all $t\in{}^\kappa\kappa$ by Remark~\ref{remark: order homomorphisms even length}. 
Then $\theta:=k^{-1}\comp \iota$ is an order homomorphism for $(Y,H)$
with $[\theta]=k^{-1}\comp [\iota]=h\comp [e]$.
$\theta$ is continuous since both $k^{-1}$ and $\iota$ are continuous.
\end{proof}

\begin{theorem}
\label{theorem: JR main theorem} \ 
\begin{enumerate-(1)}
\item\label{JR coloring}
$\JJ^f\restr\graph(f)$  admits a $\kappa$-coloring if and only if $f$ is $\kappa$-continuous with closed pieces. 
\item\label{JR homomorphism} 
There exists a continuous homomorphism from $\dhH\kappa$ to 
$\JJ^f\restr\graph(f)$ 
if and only if 
there exists a continuous map $g:{}^\kappa 2\to X$ with the following properties:
\begin{enumerate-(a)}
\item
\label{JR homomorphism 1}
$f\comp g$ is a reduction of $\RR_\kappa$ to a closed subset of ${}^\kappa\kappa$.\footnote{A reduction $h: {}^\kappa2\to {}^\kappa\kappa$ of a set $A$ to a set $B$ is a function with $A=h^{-1}(B)$. 
$\RR_\kappa,\,\QQ_\kappa$ and $\SS_\kappa$ were defined in Subsection~\ref{subsection: original KLW}.}

\item
\label{JR homomorphism 2}
$(f\circ g){\restr}\RR_\kappa$ is continuous. 
\end{enumerate-(a)}
\end{enumerate-(1)}
In \ref{JR homomorphism}, we can equivalently take $g$ to be injective, or even a homeomorphism onto a closed subset of ${}^\kappa\kappa$. 
\end{theorem}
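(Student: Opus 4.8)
The plan is to prove the two equivalences in Theorem~\ref{theorem: JR main theorem} via the dihypergraph $\JJ^f$ and then observe that $\JR_\kappa^f$ and the addendum about injectivity follow. For part~\ref{JR coloring}, I would unfold the definition of a $\kappa$-coloring of $\JJ^f\restr\graph(f)$ and apply Lemma~\ref{lemma: JR independence}. If $\JJ^f\restr\graph(f)=\bigcup_{\alpha<\kappa}A_\alpha$ with each $A_\alpha$ being $\JJ^f$-independent, then letting $C_\alpha$ be the closure of $\proj_0(A_\alpha)$ in $X$, Lemma~\ref{lemma: JR independence} gives that $f\restr C_\alpha$ is continuous; since $X=\bigcup_{\alpha<\kappa}\proj_0(A_\alpha)\subseteq\bigcup_{\alpha<\kappa}C_\alpha$, this witnesses that $f$ is $\kappa$-continuous with closed pieces. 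Conversely, if $X=\bigcup_{\alpha<\kappa}C_\alpha$ with each $C_\alpha$ relatively closed and $f\restr C_\alpha$ continuous, then the sets $A_\alpha:=\graph(f\restr C_\alpha)$ are $\JJ^f$-independent by Lemma~\ref{lemma: JR independence} (using that the closure of $\proj_0(A_\alpha)$ in $X$ is $C_\alpha$), and they cover $\graph(f)$, yielding the required coloring.

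For part~\ref{JR homomorphism}, I would first pass through the order-homomorphism characterization adapted to ${}^\kappa\kappa\times{}^\kappa\kappa$. Since $\JJ^f\restr\graph(f)$ is box-open on $\graph(f)$, Lemma~\ref{hop - version 2}~\ref{hop 1 - version 2} converts a continuous homomorphism from $\dhH\kappa$ to $\JJ^f\restr\graph(f)$ into a continuous order homomorphism $\theta$ for $(\graph(f),\JJ^f)$, and conversely Lemma~\ref{hop - version 2}~\ref{hop 2 - version 2} goes back. The key point is that $\theta_0:{}^{<\kappa}\kappa\to{}^{<\kappa}\kappa$, together with the defining property of $\JJ^f$, encodes exactly that the sequences $\langle\theta_0(t\conc\langle\alpha\rangle):\alpha<\kappa\rangle$ converge to points of $X$ where $f$ is discontinuous along the $\theta_1$-coordinate. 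I would then compose with the map $\pi$ (or $[\pi]$) from Subsection~\ref{subsection: original KLW} to convert between $\dhH\kappa$ and the $(\RR_\kappa,\QQ_\kappa)$-structure, exactly as in Lemmas~\ref{lemma: continuous reduction} and~\ref{lemma: nice reduction}: setting $g:=[\theta_0]\comp[\pi]$, the convergence of $\langle\theta_0(t\conc\langle\alpha\rangle):\alpha<\kappa\rangle$ to a limit in $X$ gives that $f\comp g$ maps $\QQ_\kappa$ into a closed set and $\RR_\kappa$ off it, which is the reduction property~\ref{JR homomorphism 1}, while the $\CC^X$-structure of the first coordinate forces continuity of $(f\comp g)\restr\RR_\kappa$ as in~\ref{JR homomorphism 2}. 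For the converse, given such a $g$, I would build a continuous order homomorphism for $(\graph(f),\JJ^f)$ using $\pi$ and check the two clauses of Definition~\ref{def: order hom for kappa^kappa times kappa^kappa} against the definition of $\JJ^f$.

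The main obstacle will be the clean extraction of property~\ref{JR homomorphism 2}, the continuity of $(f\comp g)\restr\RR_\kappa$, from the homomorphism. This is where Lemma~\ref{lemma: small Lim} enters: by precomposing $\theta$ with a continuous strict $\wedge$-homomorphism $e$, I can arrange that each $\Lim_t^{\theta_1\comp e}$ contains at most one point, so that along $\RR_\kappa$ the second-coordinate map genuinely converges and $f\comp g$ agrees with $[\theta_1]$ on a dense set, forcing the required continuity. Controlling that the limit points of $\proj_0$ land in $X$ (not just in $\closure X$) and that $f$ is evaluated at genuine elements of $\dom(f)$ is the delicate bookkeeping step, analogous to the role of $\CC^X$ in Remark~\ref{remark: CC convergence}.

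For the final addendum, once a continuous homomorphism exists I would invoke Theorem~\ref{theorem: strong variants for dhI dhth} or Corollary~\ref{cor: strong variant for dhN}: the first coordinate of the homomorphism lands in $\CC^X\subseteq\dhthkk$-type behaviour, so the map $[\theta_0]$ may be taken $\perp$-preserving, whence $g=[\theta_0]\comp[\pi]$ is a homeomorphism onto a closed subset of ${}^\kappa\kappa$ by Lemma~\ref{perfect subsets and sop maps}~\ref{sop maps -> perfect subsets} together with Lemma~\ref{lemma: [e] closed map}. In particular $g$ is injective, giving the strengthened conclusion. Combining parts~\ref{JR coloring} and~\ref{JR homomorphism} with the two-options-exclusive Lemma~\ref{two options in ODD are mutually exclusive} then shows that $\ODD\kappa{\JJ^f\restr\graph(f)}$ is equivalent to $\JR_\kappa^f$, which is what feeds Corollary~\ref{cor: Jayne Rogers}.
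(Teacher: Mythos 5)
Your part~\ref{JR coloring} is correct and coincides with the paper's argument (modulo one slip: a $\kappa$-coloring partitions the vertex set $\graph(f)$ into $\JJ^f$-independent pieces, not the hyperedge set $\JJ^f\restr\graph(f)$ as written; your use of $\proj_0(A_\alpha)$ shows you meant the former). The forward direction of~\ref{JR homomorphism} is also fine, and is in fact easier than your sketch suggests: the paper simply checks by hand that $h=(g\comp[\pi],\,f\comp g\comp[\pi])$ is a continuous homomorphism, with no detour through order homomorphisms. The gap is in the backward direction. First, $g:=[\theta_0]\comp[\pi]$ is not a map ${}^\kappa 2\to X$: however you compose, you only produce values indexed by branches, i.e.\ along $\RR_\kappa$, whereas the theorem needs $g$ defined on all of ${}^\kappa 2$, and its values on $\QQ_\kappa$ are forced to be the limit points $x^{\psi_0}_t\in X$. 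Producing this extension, continuously and coherently with the $\RR_\kappa$-part, is exactly what the nice reduction $\Phi$ of Lemma~\ref{lemma: nice reduction} supplies ($\Phi\restr\SS_\kappa=\psi_0\comp\epsilon$, $g=[\Phi]$); your sketch cites that lemma but the formula for $g$ does not use it. You also state the reduction backwards: $f\comp g$ must send $\RR_\kappa$ \emph{into} the closed set (indeed $(f\comp g)(\RR_\kappa)\subseteq\ran([\psi_1])$) and $\QQ_\kappa$ off it, not the other way around.

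Second, and more seriously, the heart of the proof is missing. Being an order homomorphism for $\JJ^f$ only guarantees, for each node $t$, that $f(x^{\theta_0}_t)$ avoids the closure of the values of $[\theta_1]$ along branches splitting \emph{at $t$}; it says nothing about values of $[\theta_1]$ along branches through nodes incomparable with $t$. The reduction property, however, needs the global statement $f(x^{\psi_0}_t)\notin\closure{\ran([\psi_1])}$ for every $t$. Bridging this local-to-global gap is the paper's central Claim, proved by a three-case analysis: $[\theta_1]$ constant on a cone; fewer than $\kappa$ many values $f(x^{\theta_0}_t)$ above some node; and the generic case, where Lemma~\ref{lemma: small Lim} is used to get $|\Lim^{\theta_1}_t|\leq 1$, followed by a diagonalization arranging $f(x^{\psi_0}_{t_\beta})\notin\Lim^{\psi_1}_{t_\alpha}$ and $f(x^{\psi_0}_{t_\alpha})\notin N_{\psi_1(t_\beta)}$, and finally Lemma~\ref{lemma: [e] closed map} to compute $\closure{\ran([\psi_1])}$. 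Your proposal instead assigns Lemma~\ref{lemma: small Lim} to the continuity clause~\ref{JR homomorphism 2}, which in the paper is essentially free: $(f\comp g)\restr\RR_\kappa=[\psi_1\comp\epsilon]$ is induced by a strict order preserving map, hence continuous. Finally, your route to the addendum rests on a false inclusion: hyperedges of $\CC^X$ converge by definition, whereas hyperedges of $\dhthkk$ have no convergent subsequences, so $\CC^X$ exhibits the opposite of ``$\dhthkk$-type behaviour''; in the paper, injectivity and closedness of $g$ come for free because $g=[\Phi]$ for a $\perp$- and strict order preserving map $\Phi$ (Lemma~\ref{perfect subsets and sop maps}).
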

\begin{proof}
For \ref{JR coloring}, suppose first that $\graph(f)=\bigcup_{\alpha<\kappa} A_\alpha$, where each $A_\alpha$ is a 
$\JJ^f$-independent subset of $\graph(f)$. 
For each $\alpha<\kappa$, let $C_\alpha$ be the closure of $\proj_0(A_\alpha)$ in $X$.
Then $f\restr C_\alpha$ is continuous by Lemma~\ref{lemma: JR independence}, and $X=\bigcup_{\alpha<\kappa} C_\alpha$,
so $f$ is $\kappa$-continuous with closed pieces.
Conversely, suppose that the sequence $\langle C_\alpha:\alpha<\kappa\rangle$ witnesses that $X$ is $\kappa$-continuous with closed pieces.
Since each $f\restr C_\alpha$ is continuous,
each of the sets 
$A_\alpha:=\{(x,f(x)): x\in C_\alpha\}$ is $\JJ^f$-independent by  Lemma~\ref{lemma: JR independence}.
Since 
$X=\bigcup_{\alpha<\kappa} C_\alpha$,
we have $\graph(f)=\bigcup_{\alpha<\kappa} A_\alpha$.

For \ref{JR homomorphism}, suppose that $g:{}^\kappa 2\to X$ is a continuous map with \ref{JR homomorphism 1} and \ref{JR homomorphism 2}. 
Recall the definition of $\pi$ in 
Subsection~\ref{subsection: original KLW}. 
Let $h:=(g\circ [\pi], f\circ g \circ [\pi]): {}^\kappa\kappa\to X\times {}^\kappa\kappa$. 
Then $\ran(h)\subseteq\graph(f)$,
and $h$
is continuous  
since $f\comp g$ is continuous on $\RR_\kappa=\ran([\pi])$.

It remains to show that $h$ is a homomorphism from $\dhH\kappa$ to $\JJ^f$. 
To see this, suppose that $\langle y_\alpha : \alpha<\kappa\rangle$ is a hyperedge in $\dhH\kappa$ with $t^\smallfrown\langle\alpha\rangle\subseteq y_\alpha$ for all $\alpha<\kappa$. 
Then $t^\smallfrown \langle 0\rangle^\alpha {}^\smallfrown\langle1\rangle \subseteq [\pi](y_\alpha)$ for all $\alpha<\kappa$, 
so $\lim_{\alpha<\kappa} [\pi](y_\alpha) = \pi(t)^\smallfrown \langle 0\rangle^\kappa$. 
Since $g$ is continuous, $\lim_{\alpha<\kappa} (g\circ[\pi])(y_\alpha)=x_t:=g(\pi(t)^\smallfrown\langle0\rangle^\kappa)$. 
Since $f\circ g$ is a reduction of $\RR_\kappa$, $g(\RR_\kappa)$ and $g(\QQ_\kappa
)$ are disjoint. 
Hence $(g\circ[\pi])(y_\alpha)\neq x_t$ for all $\alpha<\kappa$ and $\langle (g\circ[\pi])(y_\alpha): \alpha<\kappa\rangle \in \CC^X$. 
Suppose that $f\circ g$ is a reduction of $\RR_\kappa$ to the closed set $C$. 
Then $f(x_t)\notin\closure{\{z_\alpha : \alpha<\kappa \}}\subseteq C$, where $z_\alpha := (f\circ g \circ [\pi])(y_\alpha)$. 
Thus $\langle h(y_\alpha) : \alpha<\kappa \rangle \in \JJ^f$ as required. 

For the converse,
suppose that there is a continuous homomorphism from $\dhH\kappa$ to 
$\JJ^f\restr\graph(f)$.
Since $\JJ^f$ is box-open on ${}^\kappa\kappa\times{}^\kappa\kappa$, 
there is an order homomorphism 
$\theta=(\theta_0,\theta_1)$ for 
$({{}^\kappa\kappa\times{}^\kappa\kappa},\,\JJ^f)$ 
with $\ran([\theta])\subseteq \graph(f)$%
\footnote{Equivalently, 
$\ran([\theta_0])\subseteq X$ and
$f\comp[\theta_0]=[\theta_1]$.}
by Lemma~\ref{hop - version 2}.
Then $\theta_0$ is an 
order homomorphism for $({}^\kappa\kappa,\CC^X)$.
Hence 
$\langle \theta_0(t\conc\langle \alpha\rangle):\alpha<\kappa\rangle$
converges to some
$x^{\theta_0}_t\in X$ 
for each $t\in{}^{<\kappa}\kappa$
by Lemma~\ref{lemma: CC convergence}.

We now show that $\theta$ can be assumed to have some additional properties.

\begin{claim*}  
There exists an 
order homomorphism $\psi$ 
for 
$({{}^\kappa\kappa\times{}^\kappa\kappa},\,\JJ^f)$ 
with $\ran([\psi])\subseteq \graph(f)$ 
such that for all $t\in{}^{<\kappa}\kappa$,
$f(x^{\psi_0}_t)$ is not in the closure of $\ran([\psi_1])$.
\end{claim*} 
\begin{proof}
Let $\theta$ be any order homomorphism 
for 
$({{}^\kappa\kappa\times{}^\kappa\kappa},\,\JJ^f)$ 
with $\ran([\theta])\subseteq \graph(f)$. 
We will construct a 
strict $\wedge$-homomorphism
$e:{}^{<\kappa}\kappa\to{}^{<\kappa}\kappa$ such that for $\psi:=\theta\comp e$ we have 
$$\forall t\in{}^{<\kappa}\kappa\ \ f(x^{\psi_0}_{t})\notin\closure{\ran([\psi_1])}.$$ 
This suffices, since
$\psi$ will also be an order homomorphism for
$({{}^\kappa\kappa\times{}^\kappa\kappa},\,\JJ^f)$ 
because
$e$ is a $\wedge$-homomorphism and 
$\JJ^f$ is closed under subsequences and permutations of hyperedges.

\setcounter{case}{0} 

\begin{case}
\label{case 1 jayne rogers} 
$[\theta_1]{\restr}N_{t}$ is constant for some $t\in {}^{<\kappa}\kappa$. 
\end{case} 
In this case, let $y$ denote the unique element of $[\theta_1](N_{t})$. 
Let $e(u):=t\conc u$ for all $u\in {}^{<\kappa}\kappa$. 
For $\psi:= \theta \circ e$, $\ran([\psi_1])$ has the unique element $y$. 
It thus suffices to show $f(x_{u}^{\psi_0})\neq y$ for all $u\in {}^{<\kappa}\kappa$. 
To see this, pick $y_\alpha$ with $u\conc\langle\alpha \rangle \subseteq y_\alpha$ for $\alpha<\kappa$. 
Then 
%$f(x^{\psi_0}_u)$
$f(x^{\psi_0}_u)=f(x^{\theta_0}_{t\conc u})$
\todog{$f(x^{\psi_0}_u)=f(x^{\theta_0}_{t\conc u})$ and $[\psi_1](y_\alpha)=[\theta_1](t\conc y_\alpha)$ extends 
$\theta_1(t\conc u\conc\langle\alpha\rangle)$ for each $\alpha$}
is not in the closure $C$ of 
$\{ [\psi_1](y_\alpha) : \alpha<\kappa \}=
\{[\theta_1](t\conc y_\alpha)\:\alpha<\kappa\}$, since $\theta$ is an order homomorphism 
for $({{}^\kappa\kappa\times{}^\kappa\kappa},\,\JJ^f)$ 
But $C$ has the unique element $y$. 
 
\begin{case}
\label{case 2 jayne rogers}
$|\{f(x_t^{\theta_0}) : s\subseteq t\in {}^{<\kappa}\kappa \} |<\kappa$ for some $s\in {}^{<\kappa}\kappa$ 
and Case \ref{case 1 jayne rogers} does not occur. 
%$[\theta_1]\restr N_{t}$ is non-constant for all $t\in {}^{<\kappa}\kappa$. 
\end{case} 
We first claim that there exists some $u\in {}^{<\kappa}\kappa$ such that 
$A_u:=\{ t\in{}^{<\kappa}\kappa : f(x_{u}^{\theta_0})=f(x_t^{\theta_0})\}$ 
is dense above some $v\in{}^{<\kappa}\kappa$.%
\footnote{See Definition~\ref{def: dense strict order preserving map}.}
Let $\bar{u}=\langle u_\alpha : \alpha<\gamma \rangle$ be a sequence in ${}^{<\kappa}\kappa$ such that $\gamma<\kappa$ and for any $t\in {}^{<\kappa}\kappa$ extending $s$, there exists some $\alpha<\gamma$ with $f(x_t^{\theta_0})=f(x_{u_\alpha}^{\theta_0})$. 
Supposing each $A_{u_\alpha}$ is nowhere dense, we can construct a strictly increasing sequence $\langle s_\alpha : \alpha<\gamma \rangle$ in ${}^{<\kappa}\kappa$ with $s\subseteq s_0$ such that for each $\alpha<\gamma$, there exists no $t\supseteq s_\alpha$ with $f(x_t^{\theta_0})=f(x_{u_\alpha}^{\theta_0})$. 
Then for $t= \bigcup_{\alpha<\gamma} s_\alpha$, we have $f(x_t^{\theta_0})\neq f(x_{u_\alpha}^{\theta_0})$ for all $\alpha<\gamma$. 
But this contradicts the choice of~$\bar{u}$. 

Since $[\theta_1]\restr N_{v}$ is not constant,
there exists some $y\in N_v$ with $[\theta_1](y)\neq f(x^{\theta_0}_u)$.
Take $w\subsetneq y$ with $v\subseteq w$ and $\theta_1(w)\perp f(x^{\theta_0}_u)$.
Since $A_u$ is dense above $w$, we can
construct a strict $\wedge$-homomorphism 
$e: {}^{<\kappa}\kappa \to {}^{<\kappa}\kappa$
with $\ran(e)\subseteq A_u$ 
and $w\subseteq e(\emptyset)$.
Let $\psi:=\theta\circ e$. 
Then 
$x^{\psi_0}_t=x^{\theta_0}_{e(t_0)}$ 
for any $t\in {}^{<\kappa}\kappa$
since $e$ is a $\wedge$-homomorphism, so
%$f(x_t^{\psi_0})=f(x_{e(t)}^{\theta_0})=f(x^{\theta_0}_u)$ 
$f(x_t^{\psi_0})=f(x^{\theta_0}_u)$ 
since $\ran(e)\subseteq A_u$.
Hence
$f(x^{\psi_0}_t)\notin\closure{\ran([\psi_1])}$
since $\ran([\psi_1])$ is a subset of $N_{\theta_1(w)}$.

\begin{case}
\label{case 3 jayne rogers}
Neither Case \ref{case 1 jayne rogers} nor Case \ref{case 2 jayne rogers} occurs. 
\end{case} 

By Lemma~\ref{lemma: small Lim} for $\theta_1$, 
there exists a strict $\wedge$-homomorphism
$\epsilon:{}^{<\kappa}\kappa\to{}^{<\kappa}\kappa$ 
such that $|\Lim^{\theta_1\comp \epsilon}_t|\leq 1$ for all $t\in{}^{<\kappa}\kappa$.
Since $\JJ^f$ is closed under subsequences and permutations of hyperedges,
$\theta\comp \epsilon$
is an order homomorphism for 
$({{}^\kappa\kappa\times{}^\kappa\kappa},\,\JJ^f)$ 
Thus, 
we may assume that $|\Lim^{\theta_1}_t|\leq 1$ for all $t\in{}^{<\kappa}\kappa$,
by replacing $\theta$ with $\theta\comp \epsilon$ if necessary.
Fix an enumeration $\langle t_\alpha : \alpha<\kappa \rangle$ of ${}^{<\kappa}\kappa$ such that
$t_\alpha\subsetneq t_\beta$ implies $\alpha<\beta$.

\begin{subclaim*} 
\label{claim avoid previous limits} 
There exists a strict $\wedge$-homomorphism
$e: {}^{<\kappa}\kappa \to {}^{<\kappa}\kappa$ such that $\psi:=\theta\comp e$ has the following properties for all $\alpha,\beta<\kappa$: 
\begin{enumerate-(i)} 
\item 
\label{claim avoid previous limits 1} 
$f(x_{t_\beta}^{\psi_0}) \notin \Lim_{t_\alpha}^{\psi_1}$ if $\alpha\leq\beta$. 
\item 
\label{claim avoid previous limits 2} 
$f(x_{t_\alpha}^{\psi_0}) \notin N_{\psi_1(t_\beta)}$ if $\alpha<\beta$. 
\end{enumerate-(i)} 
\end{subclaim*} 
\begin{proof} 
Note that \ref{claim avoid previous limits 1} already holds for $\theta_1$ and $\alpha=\beta$. 
To see this, observe that $f(x^{\theta_0}_{t}) \notin \Lim_{t}^{\theta_1}$ for all $t\in {}^{<\kappa}\kappa$, 
since $\langle\theta_0(t\conc\langle\alpha\rangle):\alpha<\kappa\rangle$ converges to 
$x^{\theta_0}_{t}$
and $\theta$ is an order homomorphism for 
$({{}^\kappa\kappa\times{}^\kappa\kappa},\,\JJ^f)$.
Since this property remains true for $\psi_1=\theta_1\comp e$ for any choice of $e$, 
we only need to ensure \ref{claim avoid previous limits 1} in the case $\alpha<\beta$. 

We first show how to arrange \ref{claim avoid previous limits 1}. 
We define $e(t_\beta)$ by recursion on $\beta$. 
Let $B_\beta:=\bigcup_{\alpha<\beta} \Lim_{e(t_\alpha)}^{\theta_1}$. 
Since $|\Lim_{t_\alpha}^{\theta_1}|\leq 1$ for all $\alpha<\kappa$, we have $|B_\beta|<\kappa$. 
Let 
$u:=e(t_\gamma)\conc\langle\eta\rangle$ if $t_\beta=t_\gamma\conc\langle\eta\rangle$ for some $\gamma<\beta$ and $\eta<\kappa$. 
If $t_\beta$ has limit length, then let 
$u:=\bigcup_{s\subsetneq t_\beta} e(s)$.%
\footnote{This is defined since each $s\subsetneq t_\beta$ is equal to $t_\alpha$ for some $\alpha<\beta$.}
Since $|\{f(x^{\theta_0}_t) : u\subseteq t\in{}^{<\kappa}\kappa \} |\geq \kappa$ by the case assumption, 
we can find some $t$ extending 
%$e(t_\gamma)\conc\langle\eta\rangle$ 
$u$
with $f(x^{\theta_0}_t)\notin B_\beta$. 
Then $e(t_\beta):=t$ satisfies \ref{claim avoid previous limits 1}.
\todog{Proof: $f(x^{\psi_0}_{t_\beta})=f(x^{\theta_0}_t)\notin B_\beta\supseteq\bigcup_{\alpha<\kappa}\Lim_{t_\alpha}^{\psi_1}$}
Note that $e$ is a strict $\wedge$-homomorphism.

Assume $\theta$ satisfies \ref{claim avoid previous limits 1}. 
We next show how to arrange \ref{claim avoid previous limits 2}. 
Again, we define $e(t_\beta)$ by recursion on $\beta$. 
We let $e$ be continuous at nodes $t_\beta$ of limit length.
Suppose that $t_\beta$ is a direct successor of $t_\gamma$ with $t_\beta=t_\gamma\conc\langle \eta \rangle$. 
Since $[\theta_1](N_{e(t_\gamma)\conc\langle \eta \rangle})$ has at least $2$ elements by the case assumption, we can pick some proper extension $u_0$ of $e(t_\gamma)\conc\langle \eta \rangle$ such that $\theta_1(u_0)$ is incompatible with $f(x^{\theta_0}_{e(t_0)})$. 
Continuing in the same fashion, we construct a strictly increasing sequence $\langle u_\alpha : \alpha<\beta\rangle$ such that $\theta_1(u_\alpha)$ is incompatible with $f(x^{\theta_0}_{e(t_\alpha)})$ for all $\alpha<\beta$. 
Let  $e(t_\beta) := \bigcup_{\alpha<\beta}u_\alpha$. 
Then \ref{claim avoid previous limits 2} holds for $t_\beta$. 
\todog{Proof: $\psi_1(t_\beta)=\theta_1(e(t_\beta))$ is incomatible with $f(x^{\psi_0}_{t_\alpha})=f(x^{\theta_0}_{e(t_\alpha)})$ for all $\alpha<\beta$.}
Again, $e$ is a strict $\wedge$-homomorphism. 
\end{proof}

Let $\psi:=\theta\comp e$, where $e$ is as in the previous subclaim. 
We show that 
$f(x_{t}^{\psi_0})\notin\closure{\ran([\psi_1])}$ for all $t\in{}^{<\kappa}\kappa$.
Suppose that this fails for some $t\in{}^{<\kappa}\kappa$.
Suppose that $t=t_\alpha$. 
%Let $\beta<\kappa$ be such that $t=t_\beta$.
We first show that $f(x_{t}^{\psi_0})\notin\ran([\psi_1])$. Otherwise, 
suppose that  $y\in{}^\kappa\kappa$ with 
$f(x_{t}^{\psi_0})=[\psi_1](y)=\bigcup_{u\subsetneq y}\psi_1(u)$.
Take any $\beta$ with $\alpha<\beta<\kappa$ and $t_\beta\subsetneq y$.
Then 
$f(x_{t}^{\psi_0})\in N_{\psi_1(t_\beta)}$, contradicting \ref{claim avoid previous limits 2}. 
We next show that $f(x_{t}^{\psi_0})\notin\closure{\ran([\psi_1])}$. 
Suppose otherwise. 
Since $\closure{\ran([\psi_1])}=
\ran([\psi_1])\cup\bigcup_{\beta<\kappa}\Lim^{\psi_1}_{t_\beta}$ by Lemma \ref{lemma: [e] closed map} \ref{closure of ran[e] when ddim=kappa}, 
we have $f(x_{t}^{\psi_0})\in \Lim^{\psi_1}_{t_\beta}$ 
for some $\beta<\kappa$. 
Then $\alpha<\beta$ by \ref{claim avoid previous limits 1}. 
Since
$\Lim^{\psi_1}_{t_\beta}\subseteq
\closure{\bigcup_{\eta<\kappa} N_{\psi_1(t_\beta\conc\langle\eta\rangle)}}
\subseteq N_{\psi_1(t_\beta)}$,
we have 
$f(x_{t}^{\psi_0})\in N_{\psi_1(t_\beta)}$. 
But this contradicts~\ref{claim avoid previous limits 2}. 
\end{proof} 

Let $\psi$ be an order homomorphism for 
$({{}^\kappa\kappa\times{}^\kappa\kappa},\,\JJ^f)$ 
as in the previous claim.
Since $\psi_0$ is an 
order homomorphism for 
$({}^\kappa\kappa,\CC^X)$,
%with $\ran([\psi_0])\subseteq X$,
there exists 
a nice reduction $\Phi:{}^{<\kappa}2\to{}^{<\kappa}\kappa$ for 
$(\ran([\psi_0]),X)$
and a strict $\wedge$-homomorphism%
\footnote{Note that a strict order preserving map $\epsilon:\SS_\kappa\to{}^{<\kappa}\kappa$  is a strict $\wedge$-homomorphism if and only if $\epsilon(s\conc\langle 0\rangle^\alpha\conc\langle 1\rangle) \wedge \epsilon(s\conc\langle 0\rangle^\beta\conc\langle 1\rangle)=\epsilon(s)$ for all $s\in\SS_\kappa$ and $\alpha<\beta<\kappa$.}
$\epsilon:\SS_\kappa\to{}^{<\kappa}\kappa$
such that 
$\psi_0\comp \epsilon=\Phi\restr\SS_\kappa$
by Lemma~\ref{lemma: nice reduction}.
We will show $g:=[\Phi]:{}^\kappa 2\to {}^\kappa \kappa$ is as required in 
\ref{JR homomorphism}.

Since $\Phi$ is $\perp$- and strict order preserving,
$g$ is a homeomorphism onto a closed subset of ${}^\kappa\kappa$ 
\todog{this is needed because of the last sentence in the theorem}
%by Lemma~\ref{lemma: [e] closed map}~\ref{[e] closed when ddim<kappa}.
by Lemma~\ref{perfect subsets and sop maps}~\ref{sop maps -> perfect subsets}.
We next show that $\ran(g)\subseteq X$ and that $(f\circ g){\restr}\RR_\kappa$ is continuous.
It is clear that $g(\QQ_\kappa)\subseteq X$
and $g(\RR_\kappa)\subseteq \ran([\psi_0])$.
Since $\ran([\psi])\subseteq\graph(f)$,
we have
$\ran([\psi_0])\subseteq X$
and hence $\ran(g)\subseteq X$. 
Moreover, note that
$f\comp[\psi_0]=[\psi_1]$ since 
$\ran([\psi])\subseteq\graph(f)$.
Thus $(f\comp g)\restr\RR_\kappa = [\psi_1 \comp \epsilon]$
and hence it is continuous.

We now show that $f\comp g$ is a reduction of $\RR_\kappa$ to a closed subset of ${}^\kappa\kappa$. 
Since $(f\comp g)(\RR_\kappa)\subseteq\ran([\psi_1])$,
it suffices to show that
$(f\comp g)(\QQ_\kappa)$ and $\closure{\ran([\psi_1])}$ are disjoint.
Suppose that $z\in\QQ_\kappa$. Then $z=s\conc\langle 0\rangle^\kappa$ for some $s\in\SS_\kappa$.
For each $\alpha<\kappa$, let $t_\alpha:=s\conc\langle 0\rangle^\alpha\conc \langle 1\rangle$.
We claim that 
$g(z)=x^{\psi_0}_{\epsilon(s)}$.
This suffices, since
$f(x^{\psi_0}_{\epsilon(s)})\notin\closure{\ran([\psi_1])}$ by the choice of $\psi$.
Since $\epsilon$ is a $\wedge$-homomorphism from $\SS_\kappa$ to ${}^{<\kappa}\kappa$,
%\footnote{In particular, $\epsilon(t_\alpha)\wedge \epsilon(t_\beta)=\epsilon(s)$ for all $\alpha<\beta<\kappa$; see the paragraph before Lemma~\ref{lemma: nice reduction}.} 
there exists an injective sequence 
$\langle \eta_\alpha:\alpha<\kappa\rangle$ of ordinals below $\kappa$ such that
$\epsilon(s)\conc\langle \eta_\alpha\rangle\subseteq \epsilon(t_\alpha)$
and
hence $\psi_0(\epsilon(s)\conc\langle \eta_\alpha\rangle)\subseteq \Phi(t_\alpha)$.
On the one hand,
$\langle\Phi(t_\alpha):\alpha<\kappa\rangle$ converges to 
$g(z)=[\Phi](s\conc\langle 0\rangle^\kappa)$.
On the other hand,
$\psi_0(\epsilon(s)\conc\langle \alpha\rangle)$ 
converges to $x^{\psi_0}_{\epsilon(s)}$
and therefore so does
$\psi_0(\epsilon(s)\conc\langle \eta_\alpha\rangle)$.
Since we have 
$\psi_0(\epsilon(s)\conc\langle \eta_\alpha\rangle)\subseteq \Phi(t_\alpha)$,
$g(z)=x^\psi_{\epsilon(s)}$ must hold. 
\end{proof}

We next obtain an analogue of Jayne and Rogers' characterization of ${\bf \Delta}^0_2$-measurable functions \cite{JayneRogers1982}*{Theorem 5}
from the open dihypergraph dichotomy. 

\begin{corollary}
\label{cor: JR}\ 
$\ODD{\kappa}{\JJ^f\restr\graph(f)}$ 
implies $\JR_\kappa^f$. 
\end{corollary}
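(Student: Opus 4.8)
The plan is to derive $\JR_\kappa^f$ directly from Theorem~\ref{theorem: JR main theorem} together with the assumed instance $\ODD{\kappa}{\JJ^f\restr\graph(f)}$ of the open dihypergraph dichotomy. The equivalence $\JR_\kappa^f$ asserts that $f$ is $\DeltaZeroTwo(\kappa)$-measurable if and only if it is $\kappa$-continuous with closed pieces, so I must establish both implications.

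First I would note that the direction ``$\kappa$-continuous with closed pieces $\Longrightarrow$ $\DeltaZeroTwo(\kappa)$-measurable'' is the routine one: if $X=\bigcup_{\alpha<\kappa} C_\alpha$ with each $C_\alpha$ relatively closed and $f\restr C_\alpha$ continuous, then for any open $U\subseteq{}^\kappa\kappa$ the preimage $f^{-1}(U)=\bigcup_{\alpha<\kappa}(f\restr C_\alpha)^{-1}(U)$ is a $\kappa$-union of relatively open subsets of the relatively closed sets $C_\alpha$, hence a $\Fsigma(\kappa)$ set, so $f$ is $\DeltaZeroTwo(\kappa)$-measurable. This uses only the definitions from Subsection~\ref{preliminaries: notation} and does not require the dichotomy.

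The substantive direction is ``$\DeltaZeroTwo(\kappa)$-measurable $\Longrightarrow$ $\kappa$-continuous with closed pieces.'' Here I would argue by contraposition, applying $\ODD{\kappa}{\JJ^f\restr\graph(f)}$. Suppose $f$ is not $\kappa$-continuous with closed pieces. By Theorem~\ref{theorem: JR main theorem}~\ref{JR coloring}, $\JJ^f\restr\graph(f)$ does not admit a $\kappa$-coloring, so the dichotomy yields a continuous homomorphism from $\dhH\kappa$ to $\JJ^f\restr\graph(f)$. By Theorem~\ref{theorem: JR main theorem}~\ref{JR homomorphism}, and using the strengthening in its final sentence, there is then a homeomorphism $g:{}^\kappa 2\to{}^\kappa\kappa$ onto a closed subset of ${}^\kappa\kappa$ such that $f\comp g$ reduces $\RR_\kappa$ to a closed set $C$ (i.e.\ $(f\comp g)^{-1}(C)=\RR_\kappa$) with $(f\comp g)\restr\RR_\kappa$ continuous. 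The main obstacle is to convert this reduction into a genuine witness that $f$ fails to be $\DeltaZeroTwo(\kappa)$-measurable. The key observation is that $\RR_\kappa$ is not a $\DeltaZeroTwo(\kappa)$ subset of ${}^\kappa 2$: since $\QQ_\kappa$ is $\Fsigma(\kappa)$-complete (as recorded in Subsection~\ref{subsection: original KLW} and the remark on $\Fsigma(\kappa)$-completeness), $\RR_\kappa$ is $\Gdelta(\kappa)$ but not $\Fsigma(\kappa)$, hence not $\DeltaZeroTwo(\kappa)$.

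To finish I would run the standard reduction argument: if $f$ were $\DeltaZeroTwo(\kappa)$-measurable, then $f^{-1}(C)$ would be $\Gdelta(\kappa)$ (a $\DeltaZeroTwo(\kappa)$-measurable preimage of a closed set), and pulling back along the continuous map $g$ would make $\RR_\kappa=g^{-1}(f^{-1}(C))$ a $\Gdelta(\kappa)$ set relative to ${}^\kappa 2$ --- which it is --- but the reduction in fact shows more. The cleanest route is to use that $C$ is closed and $(f\comp g)\restr\RR_\kappa$ is continuous, so $\RR_\kappa$ being the full preimage $(f\comp g)^{-1}(C)$ together with $\DeltaZeroTwo(\kappa)$-measurability of $f$ forces $\RR_\kappa$ into the pointclass $\DeltaZeroTwo(\kappa)$ via the continuous reduction $g$ (continuous preimages preserve $\DeltaZeroTwo(\kappa)$). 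This contradicts the non-$\DeltaZeroTwo(\kappa)$-ness of $\RR_\kappa$. I expect the delicate point to be checking that the reduction genuinely lands $\RR_\kappa$ in $\DeltaZeroTwo(\kappa)$ --- one must verify that $(f\comp g)^{-1}(C)=\RR_\kappa$ together with $C$ closed and $f$ of the assumed measurability pulls the Wadge-style complexity of $\RR_\kappa$ below that of $C$ through $g$, so that the $\Fsigma(\kappa)$-completeness of $\QQ_\kappa$ can be invoked to produce the contradiction. Once this is in place, the contrapositive is complete and $\JR_\kappa^f$ follows.
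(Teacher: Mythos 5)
Your proposal is correct and takes essentially the same route as the paper: you note the easy implication, and reduce the substantive direction to showing that the reduction map produced by Theorem~\ref{theorem: JR main theorem}~\ref{JR homomorphism} is incompatible with $\DeltaZeroTwo(\kappa)$-measurability of $f$, since pulling the closed set back through $f$ and then through the continuous map $g$ would make $\RR_\kappa$ an $\Fsigma(\kappa)$ subset of ${}^\kappa\kappa$, which it is not. The only cosmetic differences are that you organize the hard direction contrapositively and justify $\RR_\kappa\notin\Fsigma(\kappa)$ via the $\Fsigma(\kappa)$-completeness of $\QQ_\kappa$, whereas the paper simply asserts the contradiction, the standard justification being the $\kappa$-Baire category argument on ${}^\kappa 2$.
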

\begin{proof} 
Note that any $\kappa$-continuous function with closed pieces is ${\bf\Delta}^0_2(\kappa)$-measurable. 
Thus, it suffices to show that the condition in Theorem \ref{theorem: JR main theorem} \ref{JR homomorphism} fails for any ${\bf\Delta}^0_2(\kappa)$-measurable function $f: X\to {}^\kappa\kappa$. 
To see this,
suppose that there exists a continuous map $g:{}^\kappa 2\to X$ such that $f\comp g$ is a reduction of $\RR_\kappa$ to a closed subset of ${}^\kappa\kappa$. 
Then $\RR_\kappa$ would be a ${\bf \Sigma}^0_2(\kappa)$ subset of ${}^\kappa\kappa$. 
\end{proof}

\begin{remark}
\label{remark: injective JJ}
Let $\JJ^f_i$ 
\index{hypergraph!Jayne--Rogers!variant\idf$\JJ^f_i$}
denote the variant of $\JJ^f$ where $\CC^X$ is replaced with the $\kappa$-hypergraph $\CC_i^X:=\CC^X\cap \dhIkappa$ of all injective sequences in $\CC^X$. 
%In Theorem~\ref{theorem: JR main theorem}, 
In the previous theorem and corollary,
$\JJ^f$ can be equivalently replaced by $\JJ_i^f$. 
This follows from Corollary~\ref{cor: ODD subsequences}
since $\JJ_i^f$ 
is box-open on ${}^\kappa\kappa$,
$\JJ^f_i\subseteq\JJ^f$
and any hyperedge of $\JJ^f$ has a subsequence in $\JJ^f_i$ and hence $\JJ^f_i\equivf\JJ^f$.
%Then Theorem~\ref{theorem: JR main theorem} holds for $\JJ^f_i$ as well.
%To see this, note that \ref{JR coloring} holds for $\JJ^f_i$ since
%the proof of Lemma \ref{lemma: JR independence} works for $\JJ^f_i$. 
%The implication from left to right in \ref{JR homomorphism} holds for $\JJ_i^f$ 
%since it is a subset of $\JJ^f$. 
%For the converse implication,
%take an \emph{injective} continuous map $g: {}^\kappa 2 \to X$ with \ref{JR homomorphism 1} and \ref{JR homomorphism 2}. 
%In the beginning of the proof, we now use the fact that $g$ is injective to ensure that $\langle (g\circ[\pi])(y_\alpha): \alpha<\kappa\rangle \in \CC^X\cap \dhIkappa $. 
%The additional statement in Theorem~\ref{theorem: JR main theorem}
%for $\JJ_i^f$
%is immediate from the original  Theorem~\ref{theorem: JR main theorem}.
\end{remark}

\begin{corollary} \ 
\label{cor: Jayne Rogers} 
\begin{enumerate-(1)} 
\item 
\label{cor: Jayne Rogers 1} 
$\ODD\kappa\kappa(\graph(f),\defsetsk)\Longrightarrow
\JR_\kappa^f$ 
for any $X\in\defsetsk$ and any $f:X\to{}^\kappa\kappa$. 
\item 
\label{cor: Jayne Rogers 2} 
$\ODD\kappa\kappa(\defsetsk,\defsetsk)\Longrightarrow\JR_\kappa(\defsetsk)$.
\end{enumerate-(1)} 
\end{corollary}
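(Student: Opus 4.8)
The plan is to deduce Corollary~\ref{cor: Jayne Rogers} directly from Corollary~\ref{cor: JR}, which already packages the hard analytic content. The whole task reduces to exhibiting, for a suitable definable box-open $\kappa$-dihypergraph $H'$ on ${}^\kappa\kappa$ and a suitable definable set $Z\subseteq{}^\kappa\kappa$, that the instance $\ODD\kappa{H'\restr Z}$ supplied by the hypothesis is literally the instance $\ODD\kappa{\JJ^f\restr\graph(f)}$ needed to invoke Corollary~\ref{cor: JR}. The only genuine subtlety is that $\JJ^f$ lives on the product space ${}^\kappa\kappa\times{}^\kappa\kappa$ rather than on ${}^\kappa\kappa$, so I first transfer everything along a fixed homeomorphism $k\colon{}^\kappa\kappa\times{}^\kappa\kappa\to{}^\kappa\kappa$, exactly as in the paragraph preceding Theorem~\ref{theorem: JR main theorem}.

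For part~\ref{cor: Jayne Rogers 1}, fix $X\in\defsetsk$ and $f\colon X\to{}^\kappa\kappa$ with $\graph(f)\in\defsetsk$. First I would observe that $\JJ^f$ is a box-open $\kappa$-dihypergraph on ${}^\kappa\kappa\times{}^\kappa\kappa$ and that $\JJ^f\in\defsetsk$, both noted in the text right before Lemma~\ref{lemma: JR independence}; the definability of $\JJ^f$ uses that $X$ (hence $\CC^X$) and $f$ are definable. Transporting along $k$, the set $H':=k^\kappa(\JJ^f)$ is a definable box-open $\kappa$-dihypergraph on ${}^\kappa\kappa$, and $Z:=k(\graph(f))$ is a definable subset of ${}^\kappa\kappa$, so that $H'\restr Z=k^\kappa(\JJ^f\restr\graph(f))$. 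Since a continuous homomorphism and a $\kappa$-coloring each transfer across the homeomorphism $k$, we have $\ODD\kappa{H'\restr Z}\Longleftrightarrow\ODD\kappa{\JJ^f\restr\graph(f)}$. Now $\ODD\kappa\kappa(\graph(f),\defsetsk)$, read on the product space via $k$ exactly as explained before Theorem~\ref{theorem: JR main theorem}, gives $\ODD\kappa{H'\restr Z}$, hence $\ODD\kappa{\JJ^f\restr\graph(f)}$, and Corollary~\ref{cor: JR} yields $\JR^f_\kappa$. (One could equally invoke Remark~\ref{remark: injective JJ} and work with $\JJ^f_i$, but this is not necessary.)

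For part~\ref{cor: Jayne Rogers 2}, I would simply quantify part~\ref{cor: Jayne Rogers 1} over all $X\in\defsetsk$ and all $f$. Suppose $\ODD\kappa\kappa(\defsetsk,\defsetsk)$ holds and let $X\in\defsetsk$. To establish $\JR_\kappa(X)$ I must show $\JR^g_\kappa$ for every $g\colon X\to{}^\kappa\kappa$. Fix such a $g$; its graph $\graph(g)$ is a subset of ${}^\kappa\kappa\times{}^\kappa\kappa$, but it need not be definable for an arbitrary $g$. However, the dihypergraph $H'=k^\kappa(\JJ^g)$ that actually matters need not be definable either — what part~\ref{cor: Jayne Rogers 1} requires is only that the box-open dihypergraph on ${}^\kappa\kappa$ used to form the relevant restriction be in $\defsetsk$. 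The right move is therefore to realize $\JJ^g\restr\graph(g)$ as a restriction $H''\restr Z$ with $H''\in\defsetsk$ box-open on the product and $Z\in\defsetsk$, and here I would exploit that $\ODD\kappa\kappa(\defsetsk,\defsetsk)$ is, by Lemma~\ref{lemma: two versions of definable ODD}, equivalent to $\ODD\kappa H$ for all relatively box-open definable $\kappa$-dihypergraphs $H$; transported along $k$, the relatively box-open definable dihypergraph $k^\kappa(\JJ^g\restr\graph(g))$ (whose domain $k(\graph(g))$ and whose hyperedge set are determined by the definable data $X$ together with $g$, the latter allowed as a parameter since $\defsetsk$ is closed under definability from $\kappa$-sequences of ordinals coding $g$) is exactly of the required form.

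The step I expect to be the main obstacle is precisely this bookkeeping about \emph{which} objects must be definable in part~\ref{cor: Jayne Rogers 2}: for a fixed but arbitrary $g$ one cannot assume $g$, and hence $\JJ^g$ and $\graph(g)$, are definable without parameters, so one has to be careful that the instance of $\ODD\kappa\kappa(\defsetsk,\defsetsk)$ being invoked really covers the dihypergraph at hand. The clean way around this is to note that $\ODD\kappa\kappa(\defsetsk,\defsetsk)$, via Lemma~\ref{lemma: two versions of definable ODD}, delivers $\ODD\kappa H$ for \emph{all} relatively box-open $\ddim$-dihypergraphs $H\in\defsetsk$, and that, once $X$ is definable, the only remaining parameter is $g$ itself; absorbing $g$ into the defining parameter places $k^\kappa(\JJ^g\restr\graph(g))$ in $\defsetsk$, after which Corollary~\ref{cor: JR} applies verbatim to conclude $\JR^g_\kappa$. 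Everything else — box-openness of $\JJ^f$, the equivalence of the two options across the homeomorphism $k$, and the implication to $\JR^f_\kappa$ — is routine once the definability is tracked correctly.
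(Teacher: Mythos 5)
Your reduction to Corollary~\ref{cor: JR} is the right engine, and for \emph{definable} $f$ it matches the paper's argument. But both parts of your proof have a genuine gap in the non-definable case, and your attempted fix for it is wrong. In part~\ref{cor: Jayne Rogers 1} the statement quantifies over \emph{arbitrary} $f:X\to{}^\kappa\kappa$, yet you fix "$f$ with $\graph(f)\in\defsetsk$" and then use definability of $f$ to get $\JJ^f\in\defsetsk$; nothing in your argument covers $f\notin\defsetsk$. In part~\ref{cor: Jayne Rogers 2} you do confront the problem, but the proposed resolution --- "absorbing $g$ into the defining parameter places $k^\kappa(\JJ^g\restr\graph(g))$ in $\defsetsk$" --- is false: membership in $\defsetsk$ requires definability from a $\kappa$-sequence of \emph{ordinals}, and an arbitrary function $g:X\to{}^\kappa\kappa$ is an object of size up to $2^\kappa$ that cannot in general be so coded. (If every such $g$ could be absorbed as a parameter, then every subset of ${}^\kappa\kappa$ would lie in $\defsetsk$, and consequences of your hypothesis such as $\PSP_\kappa(\defsetsk)$ would contradict the axiom of choice.) So for non-definable $f$, the hypotheses $\ODD\kappa\kappa(\graph(f),\defsetsk)$ and $\ODD\kappa\kappa(\defsetsk,\defsetsk)$ simply do not apply to $\JJ^f\restr\graph(f)$, and your argument stalls.

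The missing idea is that the non-definable case needs no dichotomy at all: if $f\notin\defsetsk$, then $\JR^f_\kappa$ holds \emph{vacuously}. Indeed, since $X\in\defsetsk$, both properties appearing in $\JR^f_\kappa$ force $f\in\defsetsk$: a $\DeltaZeroTwo(\kappa)$-measurable $f$ is determined by the $\kappa$ many sets $f^{-1}(N_t)$, each a relatively $\DeltaZeroTwo(\kappa)$ subset of $X$ coded by a $\kappa$-sequence of ordinals, and similarly a function that is a $\kappa$-union of continuous functions on relatively closed pieces is determined by $\kappa$ many trees and monotone node-maps. Hence for $f\notin\defsetsk$ both sides of the equivalence in $\JR^f_\kappa$ are false and the equivalence holds trivially. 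Adding this observation to your argument for definable $f$ completes both parts; this is exactly how the paper proceeds, proving \ref{cor: Jayne Rogers 1} and \ref{cor: Jayne Rogers 2} simultaneously and noting that when $f\in\defsetsk$ one has $\graph(f)\in\defsetsk$, so that $\ODD\kappa\kappa(\defsetsk,\defsetsk)\Longrightarrow\ODD\kappa\kappa(\graph(f),\defsetsk)$.
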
 
\begin{proof} 
We prove~\ref{cor: Jayne Rogers 1} and~\ref{cor: Jayne Rogers 2} simultaneously.
Suppose that $X\in\defsetsk$ and $f:X\to{}^\kappa\kappa$.
It suffices to show that $\JR^f_\kappa$ follows from each of
$\ODD\kappa\kappa(\graph(f),\defsetsk)$ %in the case of~\ref{cor: Jayen Rogers 1}
and $\ODD\kappa\kappa(\defsetsk,\defsetsk)$. %in the case of~\ref{cor: Jayen Rogers 2}.
First suppose that $f\in\defsetsk$.
Then $\JJ^f$ is in $\defsetsk$ 
and we thus have 
$\ODD\kappa\kappa(\graph(f),\defsetsk)\Longrightarrow\ODD\kappa{\JJ^f\restr\graph(f)}\Longrightarrow\JR^f_\kappa$
by the previous corollary.
Since $\graph(f)$ is then in $\defsetsk$, we further have 
$\ODD\kappa\kappa(\defsetsk,\defsetsk)\Longrightarrow\ODD\kappa\kappa(\graph(f),\defsetsk)\Longrightarrow\JR^f_\kappa$.
If on the other hand $f\notin\defsetsk$, then $\JR^f_\kappa$ holds trivially since
all ${\bf\Delta}^0_2(\kappa)$-measurable functions on $X$ and all $\kappa$-continuous functions on $X$ with closed pieces are in $\defsetsk$. 
\end{proof} 

Theorem \ref{main theorem} and the previous corollary yield the following result.

\begin{corollary}
\label{cor: JR 2} 
$\JR_\kappa(\defsetsk)$ holds 
in all $\Col(\kappa,\lle\lambda)$-generic extensions 
if $\lambda>\kappa$ is inaccessible in $V$.
\end{corollary}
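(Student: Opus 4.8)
The plan is to derive this corollary by combining the abstract implication from the open dihypergraph dichotomy to the Jayne--Rogers property, namely Corollary~\ref{cor: Jayne Rogers}~\ref{cor: Jayne Rogers 2}, with the consistency of the open dihypergraph dichotomy for definable sets furnished by the main theorem. Concretely, the statement $\JR_\kappa(\defsetsk)$ is already shown in Corollary~\ref{cor: Jayne Rogers}~\ref{cor: Jayne Rogers 2} to follow from $\ODD\kappa\kappa(\defsetsk,\defsetsk)$, so it suffices to establish that $\ODD\kappa\kappa(\defsetsk,\defsetsk)$ holds in any $\Col(\kappa,\lle\lambda)$-generic extension when $\lambda>\kappa$ is inaccessible in $V$. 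This is exactly the content of Theorem~\ref{main theorem}~\ref{mainthm ddim=kappa} specialized to the dimension $\ddim=\kappa$.

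First I would fix a $\Col(\kappa,\lle\lambda)$-generic filter $G$ over $V$ with $\lambda>\kappa$ inaccessible in $V$, and invoke Theorem~\ref{main theorem}~\ref{mainthm ddim=kappa} with $\ddim=\kappa$ to conclude that $\ODD\kappa\kappa(\defsetsk,\defsetsk)$ holds in $V[G]$. Then I would apply Corollary~\ref{cor: Jayne Rogers}~\ref{cor: Jayne Rogers 2}, which asserts the implication $\ODD\kappa\kappa(\defsetsk,\defsetsk)\Longrightarrow\JR_\kappa(\defsetsk)$, to obtain $\JR_\kappa(\defsetsk)$ in $V[G]$. Since both of these are results already stated and proved earlier in the excerpt, the corollary follows immediately. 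There is essentially no new mathematical content to produce here: the proof is a one-line citation of the two prior results chained together.

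In slightly more detail, the underlying mechanism of Corollary~\ref{cor: Jayne Rogers} is that for each $X\in\defsetsk$ and $f\colon X\to{}^\kappa\kappa$ with $f\in\defsetsk$, the dihypergraph $\JJ^f$ on ${}^\kappa\kappa\times{}^\kappa\kappa$ is box-open and definable, so that $\ODD\kappa\kappa(\defsetsk,\defsetsk)$ yields $\ODD\kappa{\JJ^f\restr\graph(f)}$, which in turn gives $\JR_\kappa^f$ by Corollary~\ref{cor: JR} via Theorem~\ref{theorem: JR main theorem}; the case $f\notin\defsetsk$ is handled trivially because $\Delta^0_2(\kappa)$-measurability and $\kappa$-continuity with closed pieces both keep $f$ inside $\defsetsk$. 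I would not reprove any of this chain, since it is all available.

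The only genuine subtlety worth flagging is that there is no real obstacle here: the entire difficulty has been discharged upstream, in the proof of Theorem~\ref{main theorem} (with the Nice Quotient Lemma machinery of Subsections~\ref{section: proof in the countable case}--\ref{subsection: the forcing Q}) and in the construction of the dihypergraph $\JJ^f$ together with the delicate order-homomorphism analysis of Theorem~\ref{theorem: JR main theorem}. Thus the proof of Corollary~\ref{cor: JR 2} itself is purely a bookkeeping composition of two cited statements, and the main thing to be careful about is simply recording that it is the case $\ddim=\kappa$ of Theorem~\ref{main theorem}~\ref{mainthm ddim=kappa} that is needed, so that the hypothesis ``$\lambda$ inaccessible'' (rather than Mahlo) indeed suffices.
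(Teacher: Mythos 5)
Your proposal is correct and is exactly the paper's own argument: the paper derives Corollary~\ref{cor: JR 2} by combining Theorem~\ref{main theorem}~\ref{mainthm ddim=kappa} (giving $\ODD\kappa\kappa(\defsetsk,\defsetsk)$ in the $\Col(\kappa,\lle\lambda)$-generic extension for inaccessible $\lambda$) with Corollary~\ref{cor: Jayne Rogers}~\ref{cor: Jayne Rogers 2}. Your additional remark that only the inaccessible (not Mahlo) hypothesis is needed, since one invokes the $(\defsetsk,\defsetsk)$ form of the dichotomy, is also accurate.
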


%%%%%%%%%%%%%%%%%%
%\subsection{Comparing the applications}
\subsection{Some implications}
\label{subsection: comparing applications}

In this subsection, 
we collect implications and separations
between some of the applications of the open dihypergraph dichotomy from the previous subsections such as 
the Kechris--Louveau--Woodin dichotomy, its generalization to arbitrary sets, 
the $\kappa$-perfect set property, versions of the Hurewicz dichotomy 
and the analogue to the Jayne--Rogers theorem.
The known information on implications between the above principles is summarized in the following Figures \ref{figure: various implications weakly compact}--\ref{figure: various implications not pspclosed}.%
\footnote{Blue arrows are clickable and lead to the corresponding results. Recall that $\KLW_\kappa(\mathcal C,\mathcal D)$ states that $\KLW_\kappa(X,Y)$ holds for all \emph{arbitrary} subsets $X\in\mathcal C,\,Y\in\mathcal D$ of~${}^\kappa\kappa$ and $\KLW_\kappa(\mathcal C):=\KLW(\mathcal C,\pwrset({}^\kappa\kappa))$. Moreover, recall that $\THD_\kappa(\mathcal C,\mathcal D)$ states that $\THD_\kappa(X,Y)$ holds for all sets $X\subseteq Y$ with $X\in\mathcal C$ and $Y\in\mathcal D$ and $\THD_\kappa(\mathcal C):=\THD_\kappa(\mathcal C,\{{}^\kappa\kappa\})$.}
Throughout this subsection, 
$\mathcal C$ and $\mathcal D$ denote classes of subsets of ${}^\kappa\kappa$. 
\index{Kechris--Louveau--Woodin dichotomy!B@for disjoint sets!A@for classes\idf$\KLWdis_\kappa(\mathcal C,\mathcal D)$} 
Define $\KLWdis_\kappa(\mathcal C,\mathcal D)$ as the statement that $\KLW_\kappa(X,Y)$ holds for all \emph{disjoint} 
subsets $X\in\mathcal C$ and $Y\in\mathcal D$ of ${}^\kappa\kappa$
and let 
\index{Kechris--Louveau--Woodin dichotomy!B@for disjoint sets!B@for a class\idf$\KLWdis_\kappa(\mathcal C)$} 
$\KLWdis_\kappa(\mathcal C):=\KLWdis_\kappa(\mathcal C,\pwrset({}^\kappa\kappa))$.
%\footnote{Recall that $\KLW_\kappa(\mathcal C,\mathcal D)$ states that $\KLW_\kappa(X,Y)$ holds for all \emph{arbitrary} subsets $X\in\mathcal C,\,Y\in\mathcal D$ of~${}^\kappa\kappa$ and $\KLW_\kappa(\mathcal C):=\KLW(\mathcal C,\pwrset({}^\kappa\kappa))$. Moreover, recall that $\THD_\kappa(\mathcal C,\mathcal D)$ states that $\THD_\kappa(X,Y)$ holds for all sets $X\subseteq Y$ with $X\in\mathcal C$ and $Y\in\mathcal D$ and $\THD_\kappa(\mathcal C):=\THD_\kappa(\mathcal C,\{{}^\kappa\kappa\})$.}
$\KLWdis_\kappa(\mathcal C)$
is a direct analogue of the Kechris--Louveau--Woodin dichotomy
for classes $\mathcal C$ of subsets of the $\kappa$-Baire space.

\ \vspace{0 pt}
%%%%Figure: general implications at weakly compact cardinals
{
\setlength{\abovecaptionskip}{5 pt}
\begin{figure}[H] %%%puts figure in this exact place in text 
%\begin{figure}[h] %%% less strict about the place 
%\begin{figure}[t] %%% less strict about the place 
\begin{tikzpicture}[scale=1.0]%was: scale=1
\tikzstyle{theory1}=[draw,rounded corners,scale=.900, minimum width=8.0mm, minimum height=8.0mm,line width=\widthline,VeryDarkBlueNode]
\tikzstyle{theory2}=[draw,rounded corners,scale=.900, minimum width=8.0mm, minimum height=8.0mm,line width=\widthline,GreyBlueNode]
\tikzstyle{info}=[rounded corners, inner sep= 2pt, align=left,scale=0.82]

%arrow styles
\tikzstyle{cons}=[dash pattern=on 6pt off  2pt]
\tikzstyle{dotted}=[dash pattern=on 1pt off  2pt]
\tikzstyle{medarrow}=[>=Stealth, line width=\widtharrow, scale=1.8, VeryDarkBlueNode] 
\tikzstyle{infoarrow}=[>=Stealth, scale=1] 
% draw the nodes

 \node (11) at (0,3.6) [theory1] {{\parbox[c][.6cm]{2.1cm}{$\,\KLW_\kappa(\mathcal{C},\mathcal D)$}}};
 \node (13) at (8,3.6) [theory1] {{\parbox[c][.6cm]{1.5cm}{$\,\PSP_\kappa(\mathcal{C})$}}};
 \node (21) at (0,1.8) [theory1] {{\parbox[c][.6cm]{2.3cm}{$\,\KLWdis_\kappa(\mathcal{C},\mathcal D)$
}}};
 \node (22) at (4,1.8) [theory1] {{\parbox[c][.6cm]{2.15cm}{
$\,\THD_\kappa(\mathcal C,\mathcal D)$
}}};
 \node (23) at (8,1.8) [theory1] {{\parbox[c][.6cm]{1.6cm}{$\,\THD_\kappa(\mathcal C)$}}};
 \node (24) at (11.6,1.8) [theory1] {{\parbox[c][.6cm]{1.38cm}{$\,\HD_\kappa(\mathcal C)$}}};
 \node (33) at (8,0.0) [theory1] {{\parbox[c][.6cm]{1.25cm}{$\,\JR_\kappa(\mathcal C)$}}};

%%nodes containing explanations of arrows
%%%left side
%\node (i1) at (2.25,-1.40) 
\node (i1) at (2.25,7.20) 
[info, text width=10.0 cm] {
%Meaning of the arrows:
Meaning of the arrows in Figures~\ref{figure: various implications weakly compact}--\ref{figure: various implications not pspclosed}:
};
%\node(a1) at (-1.7, -1.95)[info,BlueArrow]{A};
\node(a1) at (-1.7, 6.67)[info,BlueArrow]{A};
%\node(b1) at (2.80, -1.95)[info, text width=9.0 cm]{\textcolor{BlueArrow}{B:}
\node(b1) at (2.80, 6.67)[info, text width=9.0 cm]{\textcolor{BlueArrow}{B:}
the implication holds for all classes $\mathcal C,\mathcal D$ 
};
%\node(b1b) at (2.10,-2.4)
\node(b1b) at (2.10,6.22)
[info, text width=9.6 cm] {
satisfying the assumptions on the arrows
};
\draw[infoarrow,BlueArrow] (a1) edge[->] (b1);
%\node(a2) at (-1.70, -2.85)[info,BlueArrow]{A};
\node(a2) at (-1.70, 5.77)[info,BlueArrow]{A};
%\node(b2a) at (2.80, -2.85)[info, text width=9.0 cm]{\textcolor{BlueArrow}{B:}
\node(b2a) at (2.80, 5.77)[info, text width=9.0 cm]{\textcolor{BlueArrow}{B:}
the implication fails for the class $\mathcal C$ on the
%the implication fails for some classes $\mathcal C,\mathcal D$ 
};
%\node(b2b) at (2.10,-3.30)
\node(b2b) at (2.10,5.32)
[info, text width=9.6 cm] {
arrow and any $\mathcal D\supseteq\Fsigma(\kappa)\cup\Gdelta(\kappa)$
%closed under continuous preimages
};
%%crossed out arrow
\draw[infoarrow,BlueArrow] (a2) edge[->] (b2a);
%\draw[infoarrow,BlueArrow] (-1.40,-2.760) edge[-] (-1.20,-2.940);
\draw[infoarrow,BlueArrow] (-1.40,5.680) edge[-] (-1.20,5.870);
%%
%
%%%right side
%\node (i3) at (10.0,-2.60)
\node (i3) at (10.0,6.00)
[info, text width=9.0 cm] {
{\bf \color{BlueArrow} solid:} 
the (non-)implication is provable from the corresponding assumptions%
\newline
{\bf \color{BlueArrow} dashed:} 
the (non-)implication is consistent with\\ the corresponding assumptions%
}; 

%grey dotted line
\node (pointA1) at (6.15,4.10){};
\node (pointA2) at (6.15,3.00){};
\node (pointAA1) at (6.15,2.95){};
\node (pointAA2) at (6.15,2.78){};
\node (pointBA1) at (6.15,2.76){};
\node (pointBA2) at (6.15,2.57){};
\node (pointB1) at (6.15,2.53){};
\node (pointB2) at (6.15,2.05){};
\node (pointC1) at (6.15,1.97){};
\node (pointC2) at (6.15,1.22){};
\node (pointD1) at (6.15,1.15){};
\node (pointD2) at (6.15,0.75){};
\node (pointE1) at (6.15,0.55){};
\node (pointE2) at (6.15,-0.45){};

 \draw[medarrow,dotted, DottedLine]
([yshift=0mm]pointA1) edge ([yshift=0mm]pointA2)
([yshift=0mm]pointAA1) edge ([yshift=0mm]pointAA2)
([yshift=0mm]pointBA1) edge ([yshift=0mm]pointBA2)
([yshift=0mm]pointB1) edge ([yshift=0mm]pointB2)
([yshift=0mm]pointC1) edge ([yshift=0mm]pointC2) 
([yshift=0mm]pointD1) edge ([yshift=0mm]pointD2) 
([yshift=0mm]pointE1) edge ([yshift=0mm]pointE2);

%%%%%arrows without hyperrefs
\draw[medarrow,  VeryDarkBlueNode!80] 
%
%%%THD(C,D) to THD(C)
([yshift=0mm]22.east) edge [->] 
node[pos=0.57,above,inner sep=2 pt]{\tiny{${}^\kappa\kappa\in \mathcal D$}}
([yshift=0mm]23.west)
%
%%%%KLW to KLWdis
([xshift=0mm]11.south) edge [->] 
node {\hyperref[sep: KLW and KLWdis]{\phantom{xxx}}} 
([xshift=0mm]21.north) %([yshift=0.55mm]
;

%%%%arrows with hyperrefs
%%%KLW to PSP
\draw[medarrow, BlueArrow] 
([yshift=0mm]11.east) edge [->]
node {\hyperref[connection btw KLW, KLWdis and PSP]{\phantom{xxx}}} 
node[midway,above,inner sep=2 pt]{\tiny{${}^\kappa\kappa\in \mathcal D$}}
([yshift=0mm]13.west)
%
%
%%%KLWdis to THD for closed C,D
([yshift=0.4mm]21.east) edge [->] 
node {\hyperref[cor: KLW implies THD weakly compact]{\phantom{xxx}}} 
node[midway, above, inner sep=2 pt]{\tiny{$\mathcal C, \mathcal D$ closed}}
([yshift=0.4mm]22.west)
%
%%%KLWdis <-> THD for eventually bounded/pre-K_kappa sets
([yshift=-0.4mm]21.east) edge [<->]
node {\hyperref[KLW and THD for precompact X]{\phantom{xxx}}} 
node[below, inner sep=4 pt]{\tiny $\mathcal C\, {\subseteq}\,$eventually}
node[below, inner sep=12 pt]{\tiny \phantom{x} bounded}
([yshift=-0.4mm]22.west)
%
%%%THD <-> HD
([yshift=0mm]23.east) edge [<->]
node {\hyperref[proposition: HD and THD options]{\phantom{xxx}}} 
([yshift=0mm]24.west)
%
%%%THD to JR
([yshift=-1.85mm,xshift=-0.165mm]22.east) edge [->]
node {\hyperref[cor: THD implies JR weakly compact]{\phantom{xxx}}} 
%node[midway,above,sloped] {\tiny {$\Gdelta(\kappa)\subseteq\mathcal C\subseteq\mathcal D,$ $\mathcal C$ closed}}
node[midway,below,sloped] {\tiny {$\Gdelta(\kappa){\subseteq}\mathcal C{\subseteq}\mathcal D$}}
node[midway,above,sloped]{\tiny {$\mathcal C$ closed}}
([yshift=1.85mm,xshift=0.145mm]33.west)
; 

%%%%%crossed out arrows (with hyperrefs)

%%%% not (PSP to KLWdis) for kappa=omega
%\draw[medarrow,cons, BlueArrow] 
\draw[medarrow, BlueArrow] 
([yshift=-1.3mm,xshift=0.03mm]13.west) 
edge [->] 
%node[pos=0.5,sloped] {\hyperref[remark: separating KLWdis and PSP]{\phantom{yyyyyyyyy}}} 
%node[pos=0.47,above,sloped,inner sep=3 pt] {\hyperref[remark: separating KLWdis and PSP]{\phantom{x}\tiny $\kappa=\omega,\,\mathcal C=L(\RR)[U]$}} 
node[pos=0.50,above,sloped] {\hyperref[remark: separating KLWdis and PSP]{\tiny $\kappa=\omega$}} 
node[pos=0.50,below,sloped,inner sep=3 pt] {\hyperref[remark: separating KLWdis and PSP]{\tiny $\mathcal C=L(\RR)[U]$}} 
([yshift=0.2mm,xshift=4mm]21.north); 
\draw[medarrow, BlueArrow](1.92,1.560) edge (2.04,1.470); 

%%%% not (PSP to THD) for kappa=omega
%\draw[medarrow,cons, BlueArrow] 
\draw[medarrow, BlueArrow] 
([yshift=-1.85mm,xshift=0.15 mm]13.west) edge [->] 
node {\hyperref[sep: not PSP to THD omega]{\phantom{xxx}}} 
node[pos=0.47,above, sloped]{\tiny{$\kappa=\omega$}}
node[pos=0.37,below, sloped,inner sep=2 pt]{\tiny{$\mathcal C=L(\RR)[U]$}}
([yshift=1.85mm,xshift=-0.15 mm]22.east); 
\draw[medarrow, BlueArrow](3.37,1.545) edge (3.49,1.465); 
%
%
%%%% the next arrow got moved here for technical reasons
%
%%%% KLW to THD(C,D) 
\draw[medarrow, BlueArrow] 
%([yshift=-1.50mm,xshift=1.3 mm]11.east) edge [-,line width=15pt,draw=yellow] ([yshift=3.70mm,xshift=-1.2 mm]22.west) 
([yshift=-3.0mm,xshift=2.0 mm]11.east) edge [-,line width=8pt,draw=white] ([yshift=3.0mm,xshift=-2.0 mm]22.west) 
([yshift=-1.85mm,xshift=-0.15 mm]11.east) edge [->] 
node {\hyperref[cor: KLW implies THD weakly compact]{\phantom{xxx}}} 
node[pos=0.4,above,sloped,inner sep=2 pt]{\tiny 
$\mathcal C, \mathcal D$ closed
}
([yshift=1.85mm,xshift=0.15 mm]22.west) 
;
\end{tikzpicture}
\caption{Implications in the case that $\kappa$ is weakly compact or $\omega$}
\label{figure: various implications weakly compact}
\end{figure}
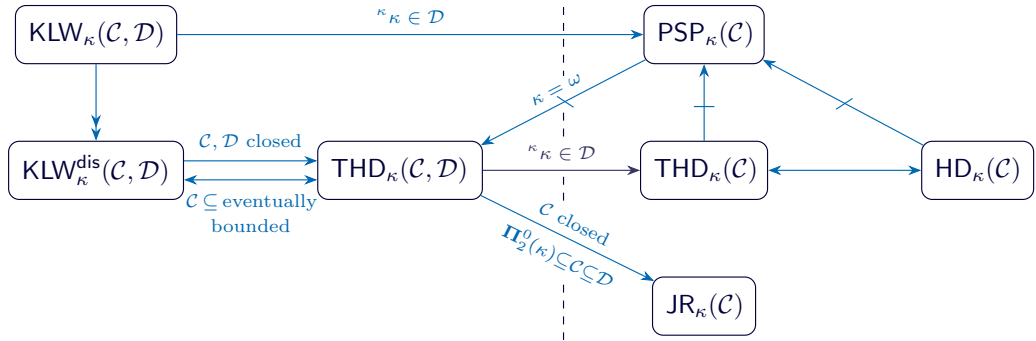
}

%%%%Figure: general implications at NON weakly compact cardinals
{
\setlength{\abovecaptionskip}{5 pt}
%\begin{figure}[t] 
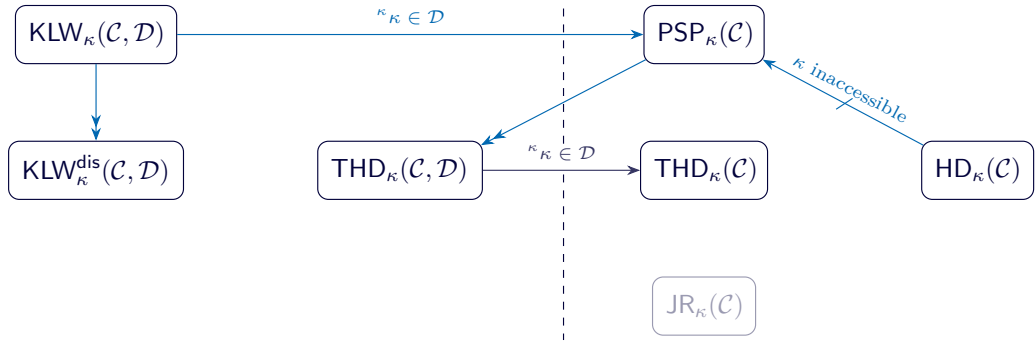
\begin{figure}[H] %%%puts figure in this exact place in text 
%\begin{figure}[h] %%% less strict about the place 
\begin{tikzpicture}[scale=1.0]%was: scale=1
\tikzstyle{theory1}=[draw,rounded corners,scale=.900, minimum width=8.0mm, minimum height=8.0mm,line width=\widthline,VeryDarkBlueNode]
\tikzstyle{theory2}=[draw,rounded corners,scale=.900, minimum width=8.0mm, minimum height=8.0mm,line width=\widthline,GreyBlueNode]
%arrow styles
\tikzstyle{cons}=[dash pattern=on 6pt off  2pt]
\tikzstyle{dotted}=[dash pattern=on 1pt off  2pt]
\tikzstyle{medarrow}=[>=Stealth, line width=\widtharrow, scale=1.8, VeryDarkBlueNode] 

% draw the nodes

 \node (11) at (0,3.6) [theory1] {{\parbox[c][.6cm]{2.1cm}{$\,\KLW_\kappa(\mathcal{C},\mathcal D)$}}};
 \node (13) at (8,3.6) [theory1] {{\parbox[c][.6cm]{1.5cm}{$\,\PSP_\kappa(\mathcal{C})$}}};
 \node (21) at (0,1.8) [theory1] {{\parbox[c][.6cm]{2.3cm}{$\,\KLWdis_\kappa(\mathcal{C},\mathcal D)$
}}};
 \node (22) at (4,1.8) [theory1] {{\parbox[c][.6cm]{2.15cm}{
$\,\THD_\kappa(\mathcal C,\mathcal D)$
}}};
 \node (23) at (8,1.8) [theory1] {{\parbox[c][.6cm]{1.6cm}{$\,\THD_\kappa(\mathcal C)$}}};
 \node (24) at (11.6,1.8) [theory1] {{\parbox[c][.6cm]{1.38cm}{$\,\HD_\kappa(\mathcal C)$}}};
 \node (33) at (8,0.0) [theory1] {{\parbox[c][.6cm]{1.25cm}{$\,\JR_\kappa(\mathcal C)$}}};

%grey dotted line
\node (pointA1) at (6.15,4.08){};
\node (pointA2) at (6.15,-0.45){};
\draw[medarrow,dotted,DottedLine]
([yshift=0mm]pointA1) edge ([yshift=0mm]pointA2);

%\node (pointA1) at (6.15,4.08){};
%\node (pointA2) at (6.15,2.05){};
%\node (pointB1) at (6.15,1.98){};
%\node (pointB2) at (6.15,-0.45){};
%\draw[medarrow,dotted,DottedLine]
%([yshift=0mm]pointA1) edge ([yshift=0mm]pointA2)
%([yshift=0mm]pointB1) edge ([yshift=0mm]pointB2); 

%%%%arrows without hyperlinks
\draw[medarrow, VeryDarkBlueNode!80] 
%
%%%THD(C,D) to THD(C)
%([yshift=0mm]22.east) edge [->]
%node[midway,above,inner sep=2 pt]{\tiny{${}^\kappa\kappa\in \mathcal D$}}
%([yshift=0mm]23.west)
%
%%%%KLW to KLWdis
([xshift=0mm]11.south) edge [->] 
node {\hyperref[sep: KLW and KLWdis]{\phantom{xxx}}} 
([xshift=0mm]21.north) %([yshift=0.55mm]
;

\draw[medarrow, BlueArrow] 
%%%%KLW to PSP 
([yshift=0.4mm]11.east) edge [->]
%node {\hyperref[connection btw KLW, KLWdis and PSP]{\phantom{xxx}}} 
node[midway,above,inner sep=2 pt]{\hyperref[connection btw KLW, KLWdis and PSP]{\phantom{x}{\tiny ${}^\kappa\kappa\in \mathcal D$}\phantom{x}}}
([yshift=0.4mm]13.west)
%
%%%%PSP to KLW
([yshift=-0.4mm]13.west) edge [->]
%node[midway,below]{\hyperref[prop: PSP pre-Kkappa non weakly compact]{\tiny$\mathcal C\downset\mathcal D,\,\mathcal C\, {\subseteq}\,$pre-$K_\kappa$}}
node[midway,below,inner sep=2 pt]{\hyperref[prop: PSP pre-Kkappa non weakly compact]{\phantom{x}{\tiny$\mathcal C\, {\subseteq}\,$pre-$K_\kappa$}\phantom{x}}}
([yshift=-0.4mm]11.east)
%
%%%%PSP to HD
([yshift=-1.30mm,xshift=0.03mm]13.east) edge [->]
node{\hyperref[prop: PSP pre-Kkappa non weakly compact]{\phantom{X}}}
node[midway,above,sloped,inner sep=2 pt]{\tiny$\mathcal C\, {\subseteq}\,$pre-$K_\kappa$}
([yshift=1.85mm,xshift=0.15mm]24.west)
%
%%%%PSP to JR
([yshift=-1.85mm,xshift=-0.15mm]13.east) edge [->,bend left=30]
node{\hyperref[prop: PSP pre-Kkappa non weakly compact]{\phantom{X}}}
node[pos=0.5,right,inner sep=2 pt]{\tiny$\mathcal C\,{\subseteq}$pre-$K_\kappa$}
%node[pos=0.45,right,inner sep=2 pt]{\tiny$\mathcal C\, {\subseteq}$}
%node[pos=0.55,right,inner sep=2 pt]{\tiny pre-$K_\kappa$}
([yshift=1.85mm,xshift=-0.15mm]33.east)
%
%%%(PSP to THD(C,D))
([yshift=-1.85mm,xshift=0.165 mm]13.west) edge [->] 
node {\hyperref[remark: PSP THD non weakly compact]{\phantom{xxx}}} 
%node[midway,below,sloped] {{\tiny $\mathcal C=\kappa$-Borel}}
([yshift=1.85mm,xshift=-0.145 mm]22.east) 
%
%%%KLWdis <-> THD for pre-K_kappa sets
([yshift=0mm]21.east) edge [<->]
node {\hyperref[KLW and THD for precompact X]{\phantom{xxxx}}} 
node[above, inner sep=2 pt]{\tiny $\mathcal C\, {\subseteq}\,$pre-$K_\kappa$}
%node[below]{\tiny $\mathcal C\, {\subseteq}\,$pre-$K_\kappa$}
([yshift=0mm]22.west)
%
%%%% KLW to THD(C,D) 
([yshift=-1.85mm,xshift=-0.15 mm]11.east) edge [->] 
node {\hyperref[cor: KLW to THD new]{\phantom{xxx}}} 
node[pos=0.5,above,sloped,inner sep=2 pt]{\tiny ${}^\kappa\kappa\in\mathcal D$}
([yshift=1.85mm,xshift=0.15 mm]22.west) 
%
%%THD(C,D) to THD(C)
([yshift=0mm]22.east) edge [->]
node {\hyperref[THD strong version non weakly compact]{\phantom{xxx}}} 
([yshift=0mm]23.west)
; 
\end{tikzpicture}
\caption{Implications in the case that $\kappa>\omega$ is not weakly compact
}
\label{figure: various implications not weakly compact}
\end{figure}
}

In Figures~\ref{figure: various implications weakly compact}--\ref{figure: various implications not pspclosed}, we assume that 
$\mathcal C$ and $\mathcal D$ are closed under finite intersections, 
and in addition, $\mathcal D$ is closed under complements.%
\footnote{However, note that these two assumptions together are only relevant 
for the implications $\THD_\kappa(\mathcal C,\mathcal D)\Longrightarrow\JR_\kappa(\mathcal C)$ 
in Figure~\ref{figure: various implications weakly compact}
and $\PSP_\kappa(\mathcal C)\Longrightarrow\KLW_\kappa(\mathcal C,\mathcal D)$ in Figure~\ref{figure: various implications not weakly compact}
and the implications between
$\KLW_\kappa(\mathcal C,\mathcal D)$, $\KLWdis_\kappa(\mathcal C,\mathcal D)$, $\THD_\kappa(\mathcal C, \mathcal D)$ in Figure~\ref{figure: various implications pspclosed}; the latter assumption is also needed for 
the equivalence between $\KLWdis_\kappa(\mathcal C,\mathcal D)$ and  $\THD_\kappa(\mathcal C, \mathcal D)$
in Figures~\ref{figure: various implications weakly compact}--\ref{figure: various implications not weakly compact}.
} 
Some of the implications in the figures use the following additional assumptions that appear on the arrows in abbreviated form. 
Firstly, we call a class $\mathcal C$ of subsets of ${}^\kappa\kappa$ \emph{closed} if it is closed under preimages of continuous functions $f:{}^\kappa\kappa\to{}^\kappa\kappa$ and under images of homeomorphisms from ${}^\kappa\kappa$  onto their image. 
We write \emph{pre-$K_\kappa$} 
\index{preKkappa@pre-$K_\kappa$\idf}
as an abbreviation for the class of subsets of $K_\kappa$ sets; 
recall that for weakly compact $\kappa$ or $\omega$, these are precisely the eventually bounded subsets of ${}^\kappa\kappa$.\footnote{See \cite{LuckeMottoRosSchlichtHurewicz}*{Lemma~2.6}.} 
The notation $\mathcal C \downset \mathcal D$ 
\index{downward closed subset\idf$\downset$} 
will mean that $\mathcal C$ is a downward closed subset of the partial order $(\mathcal D,\subseteq)$.%
\footnote{I.e., for all $X\in\mathcal C$ and $Y\in\mathcal D$ with $Y\subseteq X$, we have $Y\in\mathcal C$.
$\mathcal C \downset \mathcal D$ holds for example if $\mathcal C=\mathcal D\cap\pwrset(X)$ for some $X\subseteq{}^\kappa\kappa$, or more generally, if $\mathcal C=\mathcal D\cap \mathcal I$ for some ideal $\mathcal I\subseteq\pwrset({}^\kappa\kappa)$.}
%Finally, recall that  
Finally, writing $L(\RR)[U]$ implies that $L(\RR)$ is a Solovay model and $U$ is a selective ultrafilter generic over $L(\RR)$ for $\pwrset(\omega)/\mathsf{fin}$ as in \cite{di1998perfect}. 
Thus, the non-implications in the next Figure~\ref{figure: various implications weakly compact} use these additional assumptions.

The next Figure~\ref{figure: various implications pspclosed}
\todol{The next two figures 13 and 14 are new. There are also new arrows in the previous two figures 11 and 12. Some of the non-implications in Fig 11-12 were moved to Fig 14}
summarizes additional implications which hold 
under the assumption that the $\kappa$-perfect set property holds for closed sets.%
%for all infinite cardinals $\kappa$ with ${\kappa}^\kappa=\kappa$.
\footnote{In particular, these implications hold for $\kappa=\omega$.} 
Note that it suffices to assume the $\kappa$-perfect set property for $\kappa$-compact sets%
\footnote{See Propositions~\ref{prop: PSP THD new} and~\ref{remark: PSP THD non weakly compact} below.}
for all implications between $\PSP_\kappa(\mathcal C)$, $\THD_\kappa(\mathcal C)$ and $\THD_\kappa(\mathcal C,\mathcal D)$ 
in Figure~\ref{figure: various implications pspclosed}. 
The idea is that if $\kappa>\omega$ is not weakly compact,  
this assumption implies that 
all $\kappa$-compact sets have size $\kappa$.\footnote{See Proposition~\ref{prop: PSP THD new}.}
Note that the assumption is strictly weaker in the sense that it is consistent with 
the failure of the $\omega_1$-perfect set property for closed sets for $\kappa=\omega_1$ 
%$\neg\PSP_{\kappa}{\closedsets(\kappa)}$
%$\neg\PSP_{\omega_1}(\closedsets(\omega_1))$ 
relative to an inaccessible cardinal by \cite{LambieStejPSP}*{Theorem~A}
and~\cite{LuckeMottoRosSchlichtHurewicz}*{Lemma~2.2}.%
\footnote{See Remark~\ref{sep: KLW, KLWdis and THD}~\ref{sep: KLWdis and THD}.}

%%%%%Figure: additional implications assuming PSP(closed)
\vspace{5 pt}
{
\setlength{\abovecaptionskip}{5 pt}
\begin{figure}[H] %%%puts figure in this exact place in text 
%\begin{figure}[h] %%% less strict about the place 
\begin{tikzpicture}[scale=1.0]%was: scale=1
\tikzstyle{theory1}=[draw,rounded corners,scale=.900, minimum width=8.0mm, minimum height=8.0mm,line width=\widthline,VeryDarkBlueNode]
\tikzstyle{theory2}=[draw,rounded corners,scale=.900, minimum width=8.0mm, minimum height=8.0mm,line width=\widthline,GreyBlueNode]
%arrow styles
\tikzstyle{cons}=[dash pattern=on 6pt off  2pt]
\tikzstyle{dotted}=[dash pattern=on 1pt off  2pt]
\tikzstyle{medarrow}=[>=Stealth, line width=\widtharrow, scale=1.8, VeryDarkBlueNode] 

% draw the nodes

 \node (11) at (0,3.6) [theory1] {{\parbox[c][.6cm]{2.1cm}{$\,\KLW_\kappa(\mathcal{C},\mathcal D)$}}};
 \node (13) at (8,3.6) [theory1] {{\parbox[c][.6cm]{1.5cm}{$\,\PSP_\kappa(\mathcal{C})$}}};
 \node (21) at (0,1.8) [theory1] {{\parbox[c][.6cm]{2.3cm}{$\,\KLWdis_\kappa(\mathcal{C},\mathcal D)$
}}};
 \node (22) at (4,1.8) [theory1] {{\parbox[c][.6cm]{2.15cm}{
$\,\THD_\kappa(\mathcal C,\mathcal D)$
}}};
 \node (23) at (8,1.8) [theory1] {{\parbox[c][.6cm]{1.6cm}{$\,\THD_\kappa(\mathcal C)$}}};
 \node (24) at (11.6,1.8) [theory2] {{\parbox[c][.6cm]{1.38cm}{$\,\HD_\kappa(\mathcal C)$}}};
 \node (33) at (8,0.0) [theory2] {{\parbox[c][.6cm]{1.25cm}{$\,\JR_\kappa(\mathcal C)$}}};

%grey dotted line
\node (pointA1) at (6.15,4.10){};
\node (pointA2) at (6.15,2.28){};
\node (pointB1) at (6.15,2.25){};
\node (pointB2) at (6.15,1.70){};
\node (pointC1) at (6.15,1.68){};
\node (pointC2) at (6.15,1.58){};
\node (pointD1) at (6.15,1.45){};
\node (pointD2) at (6.15,-0.45){};

\draw[medarrow,dotted,DottedLine]
([yshift=0mm]pointA1) edge ([yshift=0mm]pointA2) 
([yshift=0mm]pointB1) edge ([yshift=0mm]pointB2) 
([yshift=0mm]pointC1) edge ([yshift=0mm]pointC2) 
([yshift=0mm]pointD1) edge ([yshift=0mm]pointD2);

%%%%arrows with hyperrefs
\draw[medarrow, BlueArrow] 
%%%%KLW to KLWdis
([yshift=0mm]11.south) edge [<->] 
node[midway,left]{\tiny $\mathcal C\downset\mathcal D$}
node {\hyperref[prop: PSP KLW new 3]{\phantom{xxx}}} 
([yshift=0mm]21.north) 
%
%%%KLWdis to PSP
([yshift=-0.3mm,xshift=6.2mm]21.north) edge [->] 
node[pos=0.5,sloped] {\hyperref[prop: PSP KLW new]{\phantom{yyyyyyyyy}}} 
node[pos=0.5,sloped,above,inner sep=2 pt] {\hyperref[prop: PSP KLW new]{\tiny $\mathcal C\subseteq\mathcal D$}} 
([yshift=0.0mm,xshift=0.0mm]13.west) 
%
%
%%%% THD(C,D) to PSP %%% this got moved below for technical reasons
%%([yshift=1.85mm,xshift=-0.165 mm]22.east) edge [->] 
%%node[pos=0.5,above,sloped,inner sep=2 pt]{\hyperref[prop: PSP THD new]{{\tiny $\mathcal C\subseteq\mathcal D$}}}
%%([yshift=-1.0mm,xshift=0.03 mm]13.west) 
%
%
%
%%%% THD(C,D) iff KLW 
([yshift=-2.35mm,xshift=-0.15 mm]11.east) edge [-,line width=17pt,draw=white] ([yshift=1.35mm,xshift=-0.2 mm]22.west) 
([yshift=-1.85mm,xshift=-0.15 mm]11.east) edge [<->] 
node {\hyperref[prop: PSP THD new 3]{\phantom{xxxx}}} 
node[pos=0.45,above,sloped]{\tiny $\mathcal C\,{\subseteq}\,$pre-$K_\kappa$}
node[pos=0.45,below,sloped]{\tiny $\mathcal C\downset\mathcal D$}
([yshift=1.85mm,xshift=0.15 mm]22.west) 
;
%
%%%%arrows at non-weakly compact cardinals (with hyperrefs)
\draw[medarrow, BlueArrow] 
%%%% THD(C,D) iff PSP 
([yshift=-1.85mm,xshift=0.15 mm]13.west) edge [<->] 
node[pos=0.3,below,sloped,inner sep=0 pt]{\hyperref[remark: PSP THD non weakly compact]{\phantom{xxx}}} 
node[midway,below,sloped,inner sep=1 pt]{\hyperref[remark: PSP THD non weakly compact]{\tiny $\kappa$ not weakly}}
node[midway,below,sloped,inner sep=7 pt]{\tiny compact}
([yshift=1.00mm,xshift=-0.03 mm]22.east) 
%
%%%THD(C,D) iff THD(C)
([yshift=0mm]22.east) edge [<->] 
node{\hyperref[remark: PSP THD non weakly compact]{\phantom{xxx}}} 
node[midway,below,sloped,inner sep=1 pt]{\hyperref[remark: PSP THD non weakly compact]{\tiny $\kappa$ not weakly}}
node[midway,below,sloped,inner sep=7 pt]{\tiny compact}
([yshift=0mm]23.west)
%
%%%% THD(C) iff PSP
([yshift=0mm]23.north) edge [<->] 
node {\hyperref[remark: PSP THD non weakly compact]{\phantom{xxx}}} 
%node[midway,right] {\parbox[c]{1.55 cm}{\tiny $\kappa$~not weakly compact}} 
node[right,pos=0.60]{\tiny $\kappa$ not weakly}
node[right,pos=0.40]{\tiny compact}
([yshift=0mm]13.south) 
%
%
%%%KLWdis to THD(C,D)
([yshift=0mm]21.east) edge [->]
node {\hyperref[cor: THD KLW non-weakly compact new]{\phantom{xxxx}}} 
node[pos=0.45,above, inner sep=2 pt]{\tiny $\mathcal C\subseteq\mathcal D$}
node[pos=0.45,below,sloped,inner sep=1 pt]{{\tiny $\kappa\,$not weakly}}
node[pos=0.45,below,sloped,inner sep=7 pt]{\tiny compact}
([yshift=0mm]22.west)
%
%
%
%%%the next arrow (for all kappa) got moved here for technical reasons
%%%% THD(C,D) to PSP 
([yshift=1.85mm,xshift=-0.165 mm]22.east) edge [->] 
node[pos=0.5,above,sloped,inner sep=2 pt]{\hyperref[prop: PSP THD new]{{\tiny $\mathcal C\subseteq\mathcal D$}}}
%node{\hyperref[prop: PSP THD new]{\phantom{xxx}}} 
%node[pos=0.5,above,sloped,inner sep=2 pt]{\tiny{$\mathcal C\subseteq\mathcal D$}}
([yshift=-1.0mm,xshift=0.03 mm]13.west) 
;
\end{tikzpicture}
\caption{Additional implications assuming $\PSP_\kappa(\closedsets(\kappa))$ 
}
\label{figure: various implications pspclosed}
\end{figure}
}

The next Figure~\ref{figure: various implications not pspclosed} shows that several of the implications from the previous Figure~\ref{figure: various implications pspclosed} may fail if the failure of the $\kappa$-perfect set property for closed sets is assumed.
The meaning of the arrows is as in Figure~\ref{figure: various implications weakly compact}, except that it suffices to assume ${}^\kappa\kappa\in\mathcal D$ 
for all non-implications. 
Note that the non-implications from the two versions of the topological Hurewicz dichotomy to $\PSP_\kappa(\mathcal C)$ are provable for $\mathcal C:=K_\kappa$ from the stronger assumption that the $\kappa$-perfect set property for $\kappa$-compact sets fails.%
\footnote{See Remark~\ref{sep: KLW, KLWdis and THD}~\ref{sep: PSP and THD Kkappa}.}

%%%%%Figure: additional NON implications assuming NOT(PSP(closed))
\vspace{5 pt}
{
\setlength{\abovecaptionskip}{5 pt}
\begin{figure}[H] %%%puts figure in this exact place in text 
%\begin{figure}[h] %%% less strict about the place 
\begin{tikzpicture}[scale=1.0]%was: scale=1
\tikzstyle{theory1}=[draw,rounded corners,scale=.900, minimum width=8.0mm, minimum height=8.0mm,line width=\widthline,VeryDarkBlueNode]
\tikzstyle{theory2}=[draw,rounded corners,scale=.900, minimum width=8.0mm, minimum height=8.0mm,line width=\widthline,GreyBlueNode]
%arrow styles
\tikzstyle{cons}=[dash pattern=on 6pt off  2pt]
\tikzstyle{dotted}=[dash pattern=on 1pt off  2pt]
\tikzstyle{medarrow}=[>=Stealth, line width=\widtharrow, scale=1.8, VeryDarkBlueNode] 

% draw the nodes

 \node (11) at (0,3.6) [theory1] {{\parbox[c][.6cm]{2.1cm}{$\,\KLW_\kappa(\mathcal{C},\mathcal D)$}}};
 \node (13) at (8,3.6) [theory1] {{\parbox[c][.6cm]{1.5cm}{$\,\PSP_\kappa(\mathcal{C})$}}};
 \node (21) at (0,1.8) [theory1] {{\parbox[c][.6cm]{2.3cm}{$\,\KLWdis_\kappa(\mathcal{C},\mathcal D)$
}}};
 \node (22) at (4,1.8) [theory1] {{\parbox[c][.6cm]{2.15cm}{
$\,\THD_\kappa(\mathcal C,\mathcal D)$
}}};
 \node (23) at (8,1.8) [theory1] {{\parbox[c][.6cm]{1.6cm}{$\,\THD_\kappa(\mathcal C)$}}};
 \node (24) at (11.6,1.8) [theory1] {{\parbox[c][.6cm]{1.38cm}{$\,\HD_\kappa(\mathcal C)$}}};
 \node (33) at (8,0.0) [theory2] {{\parbox[c][.6cm]{1.25cm}{$\,\JR_\kappa(\mathcal C)$}}};

%grey dotted line
\node (pointA1) at (6.15,4.10){};
\node (pointA2) at (6.15,2.91){};
\node (pointB1) at (6.15,2.75){};
\node (pointB2) at (6.15,2.28){};
\node (pointC1) at (6.15,2.25){};
\node (pointC2) at (6.15,-0.45){};

\draw[medarrow,dotted,DottedLine]
([yshift=0mm]pointA1) edge ([yshift=0mm]pointA2) 
([yshift=0mm]pointB1) edge ([yshift=0mm]pointB2)
([yshift=0mm]pointC1) edge ([yshift=0mm]pointC2);

%draw the arrows

%%%solid arrows (provable from not(PSP(closed)))

%%%%%not(KLWdis to KLW)
\draw[medarrow, BlueArrow] 
([xshift=0mm]21.north) edge [->] 
node[pos=0.55,right,inner sep=3 pt]{\tiny{$\mathcal C=\Fsigma(\kappa)$}}
%node[pos=0.55,right,inner sep=3 pt]{\tiny{$\mathcal C=$}}
%node[pos=0.30,right,inner sep=3 pt]{\tiny{$\Fsigma(\kappa)$}}
node {\hyperref[sep: KLW and KLWdis]{\phantom{xxx}}} 
([xshift=0mm]11.south);
\draw[medarrow, BlueArrow](-0.060,1.53) edge (0.060,1.46); 
%
%%%%% not (KLWdis to PSP)
\draw[medarrow, BlueArrow] 
([yshift=-0.3mm,xshift=6.2mm]21.north) edge [->] 
node[pos=0.55,sloped,above,inner sep=2 pt]{\tiny{$\mathcal C=\Fsigma(\kappa)$}}
node[pos=0.5,sloped] {\hyperref[sep: KLW and KLWdis]{\phantom{yyyyyyyyy}}} 
%([yshift=0mm,xshift=0mm]13.west) 
([yshift=-0.55mm,xshift=0.0mm]13.west) 
;
\draw[medarrow, BlueArrow](2.02,1.555) edge (2.14,1.475); 
%

%%%dashed arrows (consistent with not(PSP(closed)))

%%%not (KLWdis to THD(C,D) for kappa=omega_1
\draw[medarrow,cons, BlueArrow] 
(21.east) edge [->] 
node[pos=0.45,below]{\tiny{$\kappa=\omega_1$}}
node[pos=0.5,above,inner sep=2 pt]{\tiny{$\mathcal C=\Fsigma(\kappa)$}}
node {\hyperref[sep: KLWdis and THD]{\phantom{xxx}}} 
(22.west); 
\draw[medarrow, BlueArrow](1.04,1.04) edge (1.16,0.96); 

%%%not (KLWdis to THD(C) for kappa=omega_1
\draw[medarrow,cons, BlueArrow] 
([yshift=-1.85mm,xshift=-0.15 mm]21.east) edge [->,bend right=22] 
node[pos=0.40,below,sloped,inner sep=2 pt]{\tiny{$\mathcal C=\Fsigma(\kappa)$,}}
node[pos=0.60,below,sloped,inner sep=5 pt]{\tiny{$\kappa=\omega_1$}}
node {\hyperref[sep: KLWdis and THD]{\phantom{xxx}}} 
([yshift=-1.85mm,xshift=0.15 mm]23.west); 
\draw[medarrow, BlueArrow](2.22,0.485) edge (2.34,0.405); 

%%%not (HD to PSP) for inaccessible
\draw[medarrow,cons, BlueArrow] 
([yshift=1.85mm,xshift=0.145 mm]24.west) edge [->]  
node[midway,below, sloped,inner sep=3 pt]{\tiny{$\kappa$ inaccessible}}
node[midway,above,sloped,inner sep=2 pt]{\tiny{$\mathcal C=\analytic(\kappa)$}}
node {\hyperref[sep: PSP and HD]{\phantom{xxx}}} 
([yshift=-1.85mm,xshift=-0.15 mm]13.east);
\draw[medarrow, BlueArrow](5.41,1.46) edge (5.53,1.52); 

%%%% not (THD(C,D) to PSP) for non weakly compact
\draw[medarrow,cons, BlueArrow] 
([yshift=1.85mm,xshift=-0.15 mm]22.east) edge [->]  
%node[pos=0.40,above,sloped,inner sep=2 pt]{\tiny $\kappa$ not weakly compact}
%node[midway,below,sloped,inner sep=3 pt]{\hyperref[remark: PSP THD non weakly compact]{\tiny compact, $\mathcal C=\analytic(\kappa)$}}
%
node[pos=0.45,above,sloped,inner sep=2 pt]{\tiny{$\mathcal C=\analytic(\kappa)$}}
node[midway,below,sloped,inner sep=3 pt]{\hyperref[remark: PSP THD non weakly compact]{\tiny $\kappa$ not weakly}}
node[midway,below,sloped,inner sep=9 pt]{\tiny compact}
node {\hyperref[impl and sep: PSP and THD non weakly compact]{\phantom{xxx}}} 
([yshift=-1.85mm,xshift=0.15 mm]13.west);
\draw[medarrow, BlueArrow](3.37,1.545) edge (3.49,1.465); 

%%%% not (THD(C) to PSP)
\draw[medarrow,cons, BlueArrow] 
([yshift=0mm]23.north) edge [->] 
node[pos=0.25,right,inner sep=1 pt]{\tiny{$\mathcal C=\analytic(\kappa)$}}
%node[pos=0.55,right,inner sep=3 pt]{\tiny{$\mathcal C=$}}
%node[pos=0.30,right,inner sep=3 pt]{\tiny{$\analytic(\kappa)$}}
node {\hyperref[sep: PSP and THD]{\phantom{xxx}}} 
([yshift=0mm]13.south); 
\draw[medarrow, BlueArrow](4.38,1.54) edge (4.51,1.46); 
%\draw[medarrow, LightBlueArrow](4.4,1.45) edge (4.5,1.55); 
%(5,1) edge (5,1.1);
%
\end{tikzpicture}
\caption{Additional non-implications assuming 
$\neg\PSP_\kappa(\closedsets(\kappa))$} 
\label{figure: various implications not pspclosed}
\end{figure}
}

We now proceed with proving the implications in 
the figures, starting with the those in 
Figure~\ref{figure: various implications weakly compact}
and those in Figures~\ref{figure: various implications not weakly compact}--\ref{figure: various implications pspclosed}
which hold without the assumption that $\kappa$ is uncountable and not weakly compact. 
We then prove the remaining implications using this additional assumption. 
The separations in Figure~\ref{figure: various implications not pspclosed}
will be discussed at the end of the section. 
%Recall that $\mathcal C\land\mathcal D$ denotes the class of all sets of the form $X\cap Y$ where $X\in\mathcal C$ and $Y\in\mathcal D$, and $\check{\mathcal C}$ denotes the class of complements of elements of $\mathcal C$.

%The next two proposition discuss the connections 
We begin with the implications
between versions of 
the Kechris--Louveau--Woodin dichotomy and
the $\kappa$-perfect set property.  
First, note that $\KLW_\kappa(\mathcal C,\mathcal D)\Longrightarrow\KLWdis_\kappa(\mathcal C,\mathcal D)$ holds by definition.
Recall that $\mathcal C\land\mathcal D$ denotes the class of all sets of the form $X\cap Y$ where $X\in\mathcal C$ and $Y\in\mathcal D$, and $\check{\mathcal C}$ denotes the class of complements of elements of $\mathcal C$.
\todon{pagebreak inserted}

\pagebreak

\begin{proposition}
\label{connection btw KLW, KLWdis and PSP}
Suppose that for all $X\in\mathcal C$, there exists $Y\in\mathcal D$ with $X\subseteq Y$.%
\footnote{For example, this holds if ${}^\kappa\kappa\in\mathcal D$.
Moreover, it would suffice to assume in the proposition that
all elements of $\mathcal C$ can be obtained as the intersection  
of sets $X\in\mathcal C$ and $Y\in\mathcal D$ such that $X\setminus Y$ can be separated from $Y$ by a $\Fsigma(\kappa)$ set.}
\begin{enumerate-(1)}
\item \label{connection btw KLW, KLWdis and PSP 1}
$\KLW_\kappa(\mathcal C,\mathcal D)\Longrightarrow \PSP_\kappa(\mathcal C)$.
\item \label{connection btw KLW, KLWdis and PSP 2}
$\KLW_\kappa(\mathcal C,\mathcal D) \Longleftrightarrow\KLWdis_\kappa(\mathcal C,\mathcal D)\land\PSP_\kappa(\mathcal C)$
%if $\mathcal C$ is closed under intersections with elements of $\mathcal D\cup\check{\mathcal D}$,%
if we have $\mathcal C\wedge(\mathcal D\cup\check{\mathcal D})\subseteq\mathcal C$.%
\footnote{This holds for example if $\mathcal C\downset \mathcal D$ and $\mathcal D$ is closed under finite intersections and complements.}
\todog{the assumption in \ref{connection btw KLW, KLWdis and PSP 2}  does not imply the one in the footnote: let $\mathcal D:=\kappa$-Borel and $\mathcal C:=\analytic(\kappa)$} 
\end{enumerate-(1)}
\end{proposition}
\begin{proof}
\ref{connection btw KLW, KLWdis and PSP 1} 
follows immediately from Corollary~\ref{cor: PSP from ODD for the kappa-Baire space}~\ref{cor: PSP from ODD for the kappa-Baire space a}.
For~\ref{connection btw KLW, KLWdis and PSP 2},
the direction from left to right follows from \ref{connection btw KLW, KLWdis and PSP 1}.
The converse follows from Corollary~\ref{cor: PSP from ODD for the kappa-Baire space}~\ref{cor: PSP from ODD for the kappa-Baire space b}.
\end{proof}

In particular, 
 $\KLW_\kappa(\mathcal C,\mathcal C)\Longleftrightarrow\KLWdis_\kappa(\mathcal C,\mathcal C)$ 
if
$\mathcal C$ is the class of definable or $\kappa$-Borel subsets of ${}^\kappa\kappa$
and
$\PSP_\kappa(\mathcal C)$ holds.
The latter can be weakened to  $\PSP_\kappa(\closedsets(\kappa))$ 
by the next proposition.

\begin{proposition}
\label{prop: PSP KLW new}
Assume $\PSP_\kappa(\closedsets(\kappa))$.
\todol{New proposition}
\begin{enumerate-(1)}
\item\label{prop: PSP KLW new 1} 
$\KLW_\kappa(X,{}^\kappa\kappa\setminus X)\Longrightarrow\PSP_\kappa(X)$ for all $X\subseteq{}^\kappa\kappa$.
\item\label{prop: PSP KLW new 2} 
$\KLWdis_\kappa(\mathcal C,\mathcal D)\Longrightarrow\PSP_\kappa(\mathcal C)$ if we have $\check{\mathcal C}\subseteq \mathcal D$. 
%$\mathcal D$ contains the complement of every element of $\mathcal C$.
\item\label{prop: PSP KLW new 3} 
%If $\check{\mathcal C}\subseteq \mathcal D$ and $\mathcal C$ is closed under intersections with elements of $\mathcal D$,%
$\KLWdis_\kappa(\mathcal C,\mathcal D)\Longleftrightarrow\KLW_\kappa(\mathcal C,\mathcal D)$
if we have $\mathcal C\wedge\mathcal D \subseteq \mathcal C\subseteq\check{\mathcal D}$.%
\footnote{If $\mathcal D$ is closed under finite intersections and complements, then it is equivalent to assume $\mathcal C \downset \mathcal D$.} 
\end{enumerate-(1)}
\end{proposition}
\begin{proof}
For~\ref{prop: PSP KLW new 1},
suppose first that $X$ is $\Fsigma(\kappa)$. 
Then $\PSP_\kappa(X)$ holds, 
since
$\PSP_\kappa(\closedsets(\kappa))$ implies $\PSP_\kappa(\Fsigma(\kappa))$.
If $X$ is not $\Fsigma(\kappa)$, then
there exists a continuous homomorphism $f$
from $\dhH\kappa$ to $\CC:=\CC^{{}^\kappa\kappa\setminus X}\restr X$
since $\KLW_\kappa(X,{}^\kappa\kappa\setminus X)$ holds.%
\footnote{Recall that $\KLW_\kappa(X,{}^\kappa\kappa\setminus X)$ is equivalent to $\ODD\kappa \CC$ by Theorem~\ref{theorem: KLW from ODD}.}
Then $\ran(f)\subseteq X$ has a $\kappa$-perfect subset
by Lemma~\ref{lemma: kappa-perfect subdh}~\ref{lemma: kappa-perfect subdh 1}.
\ref{prop: PSP KLW new 2}
follows immediately from~\ref{prop: PSP KLW new 1}.
For~\ref{prop: PSP KLW new 3},
observe that
$\KLW_\kappa(X\setminus Y,Y)\land
\KLW_\kappa(X\cap Y, {}^\kappa\kappa\setminus(X\cap Y))\Longrightarrow \KLW_\kappa(X,Y)$
for all $X,Y\subseteq{}^\kappa\kappa$.
\todog{If $X\in\mathcal C$ and $Y\in\mathcal D$, then $X\cap Y\in\mathcal C$, $Z:={}^\kappa\kappa\setminus(X\cap Y)$ is in $\mathcal D$ and $X\setminus Y=X\cap Z$ is in $\mathcal C$}
To see this, apply~\ref{prop: PSP KLW new 1} for $X\cap Y$
and use
Corollary~\ref{cor: PSP from ODD for the kappa-Baire space}~\ref{cor: PSP from ODD for the kappa-Baire space b}.
\end{proof}

Turning to 
versions of the Hurewicz dichotomy, 
%recall that 
%$\THD_\kappa(\mathcal C,\mathcal D)$ states that $\THD_\kappa(X,Y)$ holds for all sets $X\subseteq Y$ with $X\in\mathcal C$ and $Y\in\mathcal D$. 
$\THD_\kappa(\mathcal C):=\THD_\kappa(\mathcal C,\{{}^\kappa\kappa\})$
is equivalent to $\HD_\kappa(\mathcal C)$ at weakly compact cardinals $\kappa$ and $\kappa=\omega$ 
by Proposition~\ref{proposition: HD and THD options}.
The equivalence between $\THD_\kappa(\mathcal C,\mathcal D)$ and 
$\KLWdis_\kappa(\mathcal C,\mathcal D)$ 
in Figures~\ref{figure: various implications weakly compact} and~\ref{figure: various implications not weakly compact}
will follow from the implications in the next
Figure~\ref{figure: KLW and THD options}.
%Note that these hold at all infinite cardinals $\kappa$ with $\kappa^{<\kappa}=\kappa$.
A double arrowhead denotes strict implication.

\vspace{10 pt}

{
\setlength{\intextsep}{10 pt}
\setlength{\abovecaptionskip}{7 pt}
%\begin{figure}[h]
\begin{figure}[H]
\begin{tikzpicture}[scale=.3]
%defining styles for nodes
\tikzstyle{theory1}=[draw,rounded corners,scale=.5, minimum width=10mm, minimum height=6.5mm,scale=1.8,line width=\widthline,VeryDarkBlueNode]
%styles for arrows
\tikzstyle{medarrow}=[>=Stealth, line width=\widtharrow, scale=1.8] 
%

% draw the nodes
%nodes containing the different options
     \node (1) at (25,-2) [theory1] {\parbox[c][1cm]{7.5cm}{\thl 
$X$ contains a relatively closed subset of $Y$ which is homeomorphic to ${}^\kappa\kappa$.
}}; 
     \node (2) at (25,-11) [theory1] {\parbox[c][1.5cm]{7.5cm}{\klwl 
There is a homeomorphism 
from ${}^\kappa 2$ onto a closed subset of ${}^\kappa\kappa$
that reduces
$(\RR_\kappa,\QQ_\kappa)$ to $(X,{}^\kappa \kappa\setminus Y)$.
}}; 
     \node (4) at (0,-2) [theory1] {\parbox[c][1cm]{7.5cm}{\ths 
$X$ is contained in a $K_\kappa$ subset of $Y$.
\\ \ %%%%this adds an extra empty line at the bottom of the box instead of keeping the text vertically centered 
}}; 
     \node (5) at (0,-11) [theory1] {\parbox[c][1.5cm]{7.5cm}{\klws 
$X$ is contained in a $\Fsigma(\kappa)$ subset $A$ of ${}^\kappa\kappa$ with $A\subseteq Y$.
\\ \ %%%%this adds an extra empty line at the bottom of the box instead of keeping the text vertically centered 
}}; 

%arrows when X is precompact
 \draw[medarrow, BlueArrow]
([xshift=85mm]1.south west) edge[<->] 
node {\hyperref[KLW and THD for precompact X]{\phantom{xxx}}} 
node[midway,right] {\parbox[c]{1.50 cm}{\tiny $X$~contained in a $K_\kappa$ set}} 
([xshift=85mm]2.north west) 
([xshift=85mm]4.south west) edge[<->] 
node {\hyperref[KLW and THD for precompact X]{\phantom{xxx}}} 
node[midway,right] {\parbox[c]{1.50 cm}{\tiny $X$~contained in a $K_\kappa$ set}} 
([xshift=85mm]5.north west);

%arrows for the general case
 \draw[medarrow, VeryDarkBlueNode] %OrangeArrow]
([xshift=45mm]2.north west) edge[->>] ([xshift=45mm]1.south west) 
([xshift=45mm]4.south west) edge[->>] ([xshift=45mm]5.north west); 
\end{tikzpicture}
\caption{The options in $\THD_\kappa(X,Y)$ and $\KLW_\kappa(X,{}^\kappa\kappa\setminus Y)$ for $X\subseteq Y$}
\label{figure: KLW and THD options}
\end{figure}
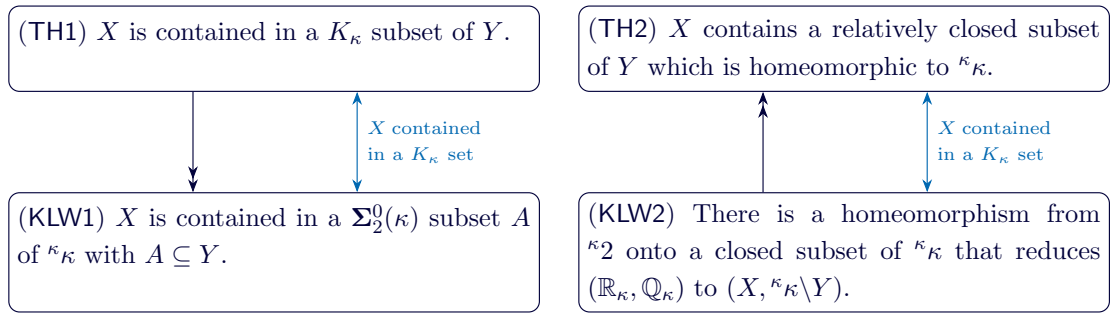
}

It is clear that the implications $\ths$ $\Longrightarrow$ $\klws$
and $\klwl$ $\Longrightarrow$ $\thl$ hold 
for all subsets $X\subseteq Y$ of ${}^\kappa\kappa$.
The converse implications hold  for subsets $X$ of $K_\kappa$ sets by the next proposition. 
However, they clearly fail 
for any closed homeomorphic copy $X$ of~${}^\kappa\kappa$,
and in particular, for $X:={}^\kappa 2$ if $\kappa>\omega$ is not weakly compact by~\cite{hung1973spaces}*{Theorem~1}.

\begin{proposition}
\label{KLW and THD for precompact X}
If $X\subseteq Y\subseteq{}^\kappa\kappa$ and $X$ 
is contained in a $K_\kappa$ subset of ${}^\kappa\kappa$,
%has a $K_\kappa$ superset,
%is $\kappa$-precompact, 
then the corresponding options in Figure~\ref{figure: KLW and THD options} are equivalent.
In particular, $\THD_\kappa(X,Y)\Longleftrightarrow\KLW_\kappa(X,{}^\kappa\kappa\setminus Y)$. 
\end{proposition}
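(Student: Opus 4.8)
The statement to prove is Proposition~\ref{KLW and THD for precompact X}: for $X\subseteq Y\subseteq{}^\kappa\kappa$ with $X$ contained in a $K_\kappa$ subset of ${}^\kappa\kappa$, the corresponding options in Figure~\ref{figure: KLW and THD options} are equivalent, and in particular $\THD_\kappa(X,Y)\Longleftrightarrow\KLW_\kappa(X,{}^\kappa\kappa\setminus Y)$. There are four boxes (\ths,~\thl,~\klws,~\klwl), and we already know for free that \ths$\Rightarrow$\klws and \klwl$\Rightarrow$\thl hold for all $X\subseteq Y$. So the genuine content is the two converse implications \emph{under the hypothesis that $X$ sits inside a $K_\kappa$ set}, namely \klws$\Rightarrow$\ths and \thl$\Rightarrow$\klwl. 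Once all four boxes are shown pairwise equivalent, the displayed equivalence of dichotomies follows by combining the options: $\THD_\kappa(X,Y)$ is the disjunction \ths$\lor$\thl and $\KLW_\kappa(X,{}^\kappa\kappa\setminus Y)$ is the disjunction \klws$\lor$\klwl (recall that for disjoint $X$ and ${}^\kappa\kappa\setminus Y$, the first option of $\KLW_\kappa$ is exactly the $\Fsigma(\kappa)$-separation statement, which under $X\subseteq Y$ becomes \klws).

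First I would prove \klws$\Rightarrow$\ths. Assume $X$ is contained in an $\Fsigma(\kappa)$ set $A$ with $A\subseteq Y$, and fix a $K_\kappa$ superset $K=\bigcup_{\alpha<\kappa}K_\alpha$ of $X$ with each $K_\alpha$ $\kappa$-compact. Write $A=\bigcup_{\beta<\kappa}C_\beta$ with each $C_\beta$ relatively closed in ${}^\kappa\kappa$. The plan is to intersect: the sets $K_\alpha\cap C_\beta$ are $\kappa$-compact (closed subsets of $\kappa$-compact sets are $\kappa$-compact), they are subsets of $A\subseteq Y$, and their union over all $\alpha,\beta<\kappa$ contains $X$ (since $X\subseteq K$ and $X\subseteq A$). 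This is a union of $\kappa\cdot\kappa=\kappa$ many $\kappa$-compact subsets of $Y$, hence a $K_\kappa$ subset of $Y$ containing $X$, which is exactly \ths. This step is short and is the routine use of the $K_\kappa$-hypothesis.

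The main work is \thl$\Rightarrow$\klwl, and this is where I expect the real obstacle. Suppose $X$ contains a relatively closed subset $Z$ of $Y$ with $Z$ homeomorphic to ${}^\kappa\kappa$; I want to produce a homeomorphism from ${}^\kappa2$ onto a closed subset of ${}^\kappa\kappa$ reducing $(\RR_\kappa,\QQ_\kappa)$ to $(X,{}^\kappa\kappa\setminus Y)$, i.e.\ mapping $\RR_\kappa$ into $X$ and $\QQ_\kappa$ into the complement of $Y$. Since $Z\cong{}^\kappa\kappa$, fix a homeomorphism and compose with the map $[\pi]:{}^\kappa\kappa\to\RR_\kappa$ from Subsection~\ref{subsection: original KLW} to realize $\RR_\kappa$ as (the image of) a subset of $Z\subseteq X$; the idea is to build a $\perp$- and strict order preserving map $\Phi:{}^{<\kappa}2\to{}^{<\kappa}\kappa$ whose restriction to $\SS_\kappa$ tracks this copy of ${}^\kappa\kappa$ inside $Z$, so that $[\Phi](\RR_\kappa)\subseteq Z\subseteq X$. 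The delicate point is arranging that the ``limit'' points $[\Phi](s\conc\langle0\rangle^\kappa)$ indexed by $\QQ_\kappa$ land \emph{outside} $Y$: this is where the $K_\kappa$-hypothesis must re-enter, because the key mechanism is that $Z$ is relatively closed in $Y$ and $X\subseteq K_\sigma$, so the closure of the copy of ${}^\kappa\kappa$ can be pushed to pick up limit points in the complement of $Y$ (otherwise such limits would be forced to lie in $Z$, contradicting $Z\cong{}^\kappa\kappa$ being relatively closed and the copy being unbounded). Concretely, I would run a recursive construction in the style of Lemma~\ref{lemma: nice reduction} and Lemma~\ref{lemma: [e] closed map}, using cofinal splitting of the tree of the copy of ${}^\kappa\kappa$ together with $\lle\kappa$-closure at limits; at each stage one chooses the ``$0$-branch'' values $\Phi(s\conc\langle0\rangle^\alpha)$ converging to a point $y_s$ that can be guaranteed to avoid $Y$ because the $K_\kappa$-structure of the ambient set forces diverging sequences in $Z$ to accumulate off $Y$. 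I expect the bookkeeping of ``landing outside $Y$'' to be the crux, and I would handle it exactly as in the \klwl-direction of the absolute case, extracting from the relative-closedness of $Z$ in $Y$ a witnessing family analogous to $\CC^{{}^\kappa\kappa\setminus Y}$, then invoking Lemma~\ref{perfect subsets and sop maps}~\ref{sop maps -> perfect subsets} to conclude that $[\Phi]$ is a homeomorphism onto a closed set. The remaining two arrows are then immediate, completing the cycle and hence the stated equivalence.
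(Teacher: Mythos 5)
Your proposal is sound in substance and follows a genuinely different route from the paper's proof, but its second half is a plan rather than a finished argument. The paper never argues option by option: it sets $\HH:=\dhth Y\restr X$ and $\CC:=\CC_i^{{}^\kappa\kappa\setminus Y}\restr X$, observes $\CC\subseteq\HH$, and shows that on each $\kappa$-precompact piece of $X$ every hyperedge of $\HH$ has a convergent subsequence whose limit lies outside $Y\cup\ran(\bar y)$ and hence belongs to $\CC$; this yields $\HH\equivf\CC$, after which Corollary~\ref{cor: ODD subsequences} together with Theorems~\ref{theorem: THD and ODD dhth}, \ref{theorem: KLW from ODD} and Remark~\ref{remark: CC and injectivity} gives both option-equivalences simultaneously. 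Your argument for \klws$\Rightarrow$\ths is complete, correct, and more elementary than what the paper's route provides for that pair: intersecting the $\kappa$-compact pieces $K_\alpha$ with the closed pieces $C_\beta$ of the separating $\Fsigma(\kappa)$ set produces $\kappa$ many $\kappa$-compact subsets of $Y$ covering $X$, with no dihypergraphs involved.

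The direction \thl$\Rightarrow$\klwl is where your write-up falls short of a proof. The mechanism you identify is the right one: inside the copy $Z$ there are injective sequences with no limit point in $Z$; pigeonholing into a single $\kappa$-compact piece yields convergent subsequences; and relative closedness of $Z$ in $Y$ forces those limits off $Y$. But two things need repair. First, your parenthetical justification (``contradicting \dots the copy being unbounded'') is wrong as stated: the copy $Z$ may well be bounded --- $\RR_\kappa\subseteq{}^\kappa 2$ is a bounded homeomorphic copy of ${}^\kappa\kappa$ --- and what you actually need is that ${}^\kappa\kappa$ contains a closed discrete set of size $\kappa$ (equivalently, is not a $K_\kappa$ space), so that $Z$ carries injective $\kappa$-sequences with no limit point in $Z$. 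Second, the recursion you defer (``in the style of Lemma~\ref{lemma: nice reduction}\dots'') is exactly what the paper's machinery already packages, so there is no need to redo it by hand: by Theorem~\ref{theorem: THD and ODD dhth}, \thl gives a continuous homomorphism from $\dhH\kappa$ to $\dhth Y\restr X$; your key fact says precisely that $\dhth Y\restr X\leqf \CC_i^{{}^\kappa\kappa\setminus Y}\restr X$ (if $A$ is $\CC_i$-independent, then each $A\cap X_\alpha$ is $\dhth Y$-independent, where $\langle X_\alpha:\alpha<\kappa\rangle$ is the $\kappa$-precompact cover, so $\dhth Y\restr A$ is $\kappa$-colorable); Lemma~\ref{lemma: ODD subsequences} then transfers the homomorphism to $\CC_i^{{}^\kappa\kappa\setminus Y}\restr X$, and Theorem~\ref{theorem: KLW from ODD} with Remark~\ref{remark: CC and injectivity} upgrades it to the homeomorphism onto a closed set demanded by \klwl. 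With these two repairs your outline becomes a complete proof, essentially re-deriving the paper's single fullness argument in two separate halves.
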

\begin{proof}
Let $\HH:=\dhth Y\restr X$ and $\CC:=\CC^{{}^\kappa\kappa\setminus Y}_i\restr X=(\CC^{{}^\kappa\kappa\setminus Y}\cap \dhIkappa)\restr X$.%
\footnote{Recall the definitions of $\dhIkappa$, $\dhth Y$ and $\CC^Y$ from
Definition~\ref{def: dhth} and Subsection~\ref{subsection: original KLW}.}
Write $X$ as the $\kappa$-union of $\kappa$-precompact sets $X_\alpha$.
%We show that $\HH\equivf\CC$.%
We show that $\CC\restr X_\alpha\equivf\HH\restr X_\alpha$%
\footnote{See Definition~\ref{def: H-full}.} 
for each $\alpha<\kappa$.
This implies $\HH\equivf\CC$ and hence 
the corresponding options in $\ODD\kappa\HH$ and $\ODD\kappa\CC$ are equivalent by Corollary~\ref{cor: ODD subsequences}. 
The claim then follows from 
Theorems~\ref{theorem: THD and ODD dhth}, \ref{theorem: KLW from ODD} and Remark~\ref{remark: CC and injectivity}. 
Since $\CC\subseteq \HH$,
it suffices to see that
any hyperedge $\bar x$ of $\HH\restr X_\alpha$ contains a subsequence in $\CC$. 
Note that $\bar x$ is injective.
Since $\bar x$ is 
contained in a $\kappa$-precompact set, 
it must contain a convergent subsequence $\bar y$. 
As $\bar x\in \HH$, the limit of $\bar y$ cannot be an element of $Y\cup\ran(\bar y)$ 
and hence $\bar y\in \CC$, as required.
\end{proof}

In particular, 
for weakly compact cardinals $\kappa$ and $\kappa=\omega$,
$\KLWdis_\kappa(\mathcal C,\mathcal D)\Longleftrightarrow
\THD_\kappa(\mathcal C,\mathcal D)$ holds 
if every element of $\mathcal C$ is eventually bounded
and $\mathcal D$ is closed under complements.%
\footnote{Recall that every eventually bounded subset of ${}^\kappa\kappa$ is contained in a $K_\kappa$ set in this case by  \cite{LuckeMottoRosSchlichtHurewicz}*{Lemma~2.6}.}

\begin{corollary}
\label{cor: KLW implies THD weakly compact}
Suppose $\kappa$ is weakly compact or $\kappa=\omega$,
$\mathcal C$ and $\mathcal D$ are closed under homeomorphic images 
and $\mathcal D$ is closed under complements.
Then 
$$\KLWdis_\kappa(\mathcal C,\mathcal D)
\Longrightarrow
\THD_\kappa(\mathcal C,\mathcal D).$$ 
\end{corollary}
\begin{proof}
Let $h$ be a homeomorphism from ${}^\kappa\kappa$ onto a subset of ${}^\kappa 2$.%
\footnote{E.g., let $h:=[\pi]$ where $\pi$ is the map defined in Subsection~\ref{subsection: original KLW}.}
Take subsets $X\in\mathcal C$ and $Y\in\mathcal D$ of ${}^\kappa\kappa$. 
Then $\KLW_\kappa(h(X),{{}^\kappa\kappa\setminus h(Y)})$ is equivalent to $\THD_\kappa(h(X),h(Y))$ by the previous proposition.
The latter is equivalent to  $\THD_\kappa(X,Y)$. 
\end{proof}

%It is open whether the previous corollary holds without the assumption of weak compactness. 

Note that the previous corollary also holds 
for non-weakly compact cardinals $\kappa>\omega$
if 
$\PSP_\kappa(\closedsets(\kappa))$
%the $\kappa$-perfect set property for closed sets 
is assumed and $\check{\mathcal C}\subseteq \mathcal D$ by Corollary~\ref{cor: THD KLW non-weakly compact new} below.
However, its failure is consistent with the failure of 
 $\PSP_\kappa(\closedsets(\kappa))$
%the $\kappa$-perfect set property for closed sets 
by Remark~\ref{sep: KLW, KLWdis and THD} below.
\todol{It is no longer open whether the previous corollary holds without the assumption of weak compactness.}

The following two results are mostly interesting for weakly compact $\kappa$ or $\omega$, since we will mention stronger versions for non-weakly compact uncountable $\kappa$ afterwards. 
The next corollary is an analogoue of Proposition~\ref{connection btw KLW, KLWdis and PSP}~\ref{connection btw KLW, KLWdis and PSP 2} for the topological Hurewicz dichotomy for subsets of $K_\kappa$ sets. 

\begin{corollary}\ 
\label{cor: KLW, KLWdis and THD}
Suppose that every element of $\mathcal C$ is contained in a $K_\kappa$ set, $\mathcal D$ is closed under complements and $\mathcal C\wedge\mathcal D\subseteq\mathcal C$.%
\footnote{This holds for example if $\mathcal C\downset\mathcal D$ and $\mathcal D$ is closed under finite intersections and complements.} 
Then
\todol{This corollary changed: I stated the two directions seperately since \ref{cor: KLW, KLWdis and THD 1} will be used later with the assumption in \ref{cor: KLW, KLWdis and THD 2} omitted.I removed the sentence preceding the corollary.}
$\THD_\kappa(\mathcal C,\mathcal D)\land \PSP_\kappa(\mathcal C) \Longrightarrow \KLW_\kappa(\mathcal C,\mathcal D)$. 
\end{corollary}
\begin{proof}
%For~\ref{cor: KLW, KLWdis and THD 1}, 
It suffices to show that
$\THD_\kappa(X\cap Y,Y)\land \PSP_\kappa(X\setminus Y)\Longrightarrow \KLW_\kappa(X,{}^\kappa\kappa\setminus Y)$ holds for all $X\in\mathcal C$ and $Y\in\mathcal D$.
To see this, apply Proposition~\ref{KLW and THD for precompact X} for $X\cap Y$ and $Y$ 
and Corollary~\ref{cor: PSP from ODD for the kappa-Baire space}~\ref{cor: PSP from ODD for the kappa-Baire space b} 
for $X\cap Y$ and ${}^\kappa\kappa\setminus Y$.
\end{proof}

Note that if $\kappa$ is uncountable and not weakly compact, then a stronger version of 
%\ref{cor: KLW, KLWdis and THD 1}, 
the previous corollary holds, 
since Proposition~\ref{prop: PSP pre-Kkappa non weakly compact} below shows that
$\PSP_\kappa(\mathcal C)$ and $\mathcal C\subseteq\text{pre-}K_\kappa$ imply that every element of $\mathcal C$ has size $\kappa$.

Note that the converse of the previous corollary holds assuming ${}^\kappa\kappa\in\mathcal D$ by 
Propositions~\ref{connection btw KLW, KLWdis and PSP}~\ref{connection btw KLW, KLWdis and PSP 1} and~\ref{KLW and THD for precompact X}.
Moreover, the assumption 
the assumption $\mathcal C\subseteq\text{pre-}K_\kappa$ can be omitted for this direction if 
$\kappa$ is not weakly compact,
since $\PSP_\kappa(X)$ implies $\THD_\kappa(X,Y)$ for any superset $Y$ of $X$ by Proposition~\ref{remark: PSP THD non weakly compact equiv}. 
If $\kappa$ is weakly compact or $\omega$,
$\mathcal C$ and $\mathcal D$ are closed under homeomorphic images and $\mathcal D$ is closed under complements, then this assumption can also be omitted 
by Corollary~\ref{cor: KLW implies THD weakly compact}.

Thus, the previous corollary implies that 
$\KLW_\kappa(\mathcal C,\mathcal C)\Longleftrightarrow\THD_\kappa(\mathcal C,\mathcal C)$ 
if $\kappa$ is weakly compact or $\omega$, $\mathcal C$ is the class of definable or $\kappa$-Borel subsets of ${}^\kappa 2$ and $\PSP_\kappa(\mathcal C)$ holds. 
The latter can be weakened to the $\kappa$-perfect set property for the class of $\kappa$-compact sets by the next proposition. 
It is anologous to Proposition~\ref{prop: PSP KLW new}.

\begin{proposition}
\label{prop: PSP THD new}
Assume that the $\kappa$-perfect set property holds for all $\kappa$-compact sets.
\todol{New proposition}
\begin{enumerate-(1)}
\item\label{prop: PSP THD new 1} 
$\THD_\kappa(X,X)\Longrightarrow\PSP_\kappa(X)$ for all $X\subseteq{}^\kappa\kappa$.
\item\label{prop: PSP THD new 2} 
$\THD_\kappa(\mathcal C,\mathcal C)\Longrightarrow\PSP_\kappa(\mathcal C)$.
\item\label{prop: PSP THD new 3} 
$\THD_\kappa(\mathcal C,\mathcal D)\Longleftrightarrow\KLW_\kappa(\mathcal C,\mathcal D)$
if $\mathcal C\wedge\mathcal D\subseteq\mathcal C\subseteq\mathcal D=\check{\mathcal D}$\footnote{If $\mathcal D$ is closed under finite intersections, then it is equivalent to assume $\mathcal C \downset \mathcal D$.} 
and every element of $\mathcal C$ is contained in a $K_\kappa$ set. 
%%%% old version of assumptions
%if every element of $\mathcal C$ is contained in a $K_\kappa$ set, $\mathcal C \downset \mathcal D$ and $\mathcal D$ is closed under finite intersections and complements.\footnote{It suffices to assume that $\mathcal C\wedge\mathcal D\subseteq\mathcal C\subseteq\mathcal D=\check{\mathcal D}$. The assumptions are equivalent if $\mathcal D$ is closed under finite intersections, since $\mathcal C\land\mathcal D\subseteq\mathcal C\subseteq\mathcal D$ implies $\mathcal C\downset\mathcal D$.}
\end{enumerate-(1)}
\end{proposition}
\begin{proof}
For~\ref{prop: PSP KLW new 1}, suppose first that 
$X$ is a $K_\kappa$ set, and write $X$ as a $\kappa$-union of $\kappa$-compact sets $X_\alpha$.
Then $\PSP_\kappa(X)$ holds since $\PSP_\kappa(X_\alpha)$ holds for each $\alpha<\kappa$ by assumption.
If $X$ is not $K_\kappa$, then
there exists a continuous homomorphism $f$
from $\dhH\kappa$ to $\HH:=\dhth{X}\restr X$
since $\THD_\kappa(X,X)$ holds.%
\footnote{Recall that $\THD_\kappa(X,X)$ is equivalent to $\ODD\kappa\HH$ by Theorem~\ref{theorem: THD and ODD dhth}.}
Then $\ran(f)\subseteq X$ has a $\kappa$-perfect subset
by Lemma~\ref{lemma: kappa-perfect subdh}~\ref{lemma: kappa-perfect subdh 1}.
\ref{prop: PSP KLW new 2}
follows immediately from~\ref{prop: PSP KLW new 1}.
For~\ref{prop: PSP KLW new 3},
The direction from left to right
follows from Proposition~\ref{prop: PSP KLW new}~\ref{prop: PSP KLW new 2} and 
Corollary~\ref{cor: KLW, KLWdis and THD}.
%Corollary~\ref{cor: KLW, KLWdis and THD}~\ref{cor: KLW, KLWdis and THD 1}.
The converse follows from Proposition~\ref{KLW and THD for precompact X}.
\end{proof}

If $\kappa$ is uncountable and not weakly compact, then there is a stronger version of \ref{prop: PSP THD new 1}, since Proposition~\ref{remark: PSP THD non weakly compact equiv} shows that $\THD_\kappa(X,Y)$ implies $\PSP_\kappa(X)$ for any $Y\supseteq X$ if the $\kappa$-perfect set property holds for $\kappa$-compact sets.

We now turn to the implications between the Jayne--Rogers theorem and other applications.
The next proposition shows that
for functions $f$ with $\kappa$-precompact range,
$\JR_\kappa^f$ follows from
the 
Kechris--Louveau--Woodin dichotomy.
Moreover, it follows
from the 
two-parameter topological Hurewicz dichotomy 
if the domain of $f$ is also $\kappa$-precompact. 
Recall that $\JR_\kappa^f$ follows from $\ODD\kappa\JJ$ for the $\kappa$-dihypergraph $\JJ$ in the next proposition
by 
%Theorem~\ref{theorem: JR main theorem}
Corollary~\ref{cor: JR}
and Remark~\ref{remark: injective JJ}.

\begin{proposition}
\label{JR from KLW or THD}
Suppose $X\subseteq{}^\kappa\kappa$ and $f:X\to{}^\kappa\kappa$. 
Let $\JJ:=\JJ^f_i\restr \graph(f)$,%
\footnote{Recall that $\graph(f)$ denotes the graph of $f$. The dihypergraph $\JJ^f$ and its variant $\JJ_i^f$ were defined at the beginning of Subsection~\ref{subsection: Jayne Rogers} and in Remark~\ref{remark: injective JJ}, respectively.}
and let
$A:=k(\graph(f))$ and 
$B:=k\big((X\times {}^\kappa\kappa)\setminus\! \graph(f)\big)=
k\big(X\times{}^\kappa\kappa\big)\setminus A$, 
where
$k$  is a homeomorphism from ${}^\kappa\kappa\times{}^\kappa\kappa$ onto ${}^\kappa\kappa$.
%and let $h=k^{-1}$.
%
\begin{enumerate-(1)}
\item\label{JR from KLW or THD 1}
$\ODD\kappa\JJ\Longleftrightarrow\KLW_\kappa(A,B)$ if $\ran(f)$ 
is contained in a $K_\kappa$ subset of ${}^\kappa\kappa$.
%is $\kappa$-precompact.
\todog{(1) also follows from (2) in the case that $X$ is also contained in a $K_\kappa$ set by Proposition~\ref{KLW and THD for precompact X}}
\item\label{JR from KLW or THD 2}
$\ODD\kappa\JJ\Longleftrightarrow\THD_\kappa(A,{}^\kappa\kappa\setminus  B)$ if both $X$ and $\ran(f)$ are 
contained in a $K_\kappa$ subsets of ${}^\kappa\kappa$.
%$\kappa$-precompact.
\end{enumerate-(1)}
\end{proposition}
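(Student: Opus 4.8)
The plan is to reduce both parts to the equivalence $\KLW_\kappa(A,B)\Longleftrightarrow\ODD\kappa{\CC^B\restr A}$ of Theorem~\ref{theorem: KLW from ODD}, transported to the product space by the homeomorphism $k$. Throughout I read $\ODD\kappa\JJ$ as $\ODD\kappa{\JJ^f_i\restr\graph(f)}$, the restriction to $\graph(f)$ being the object that governs $\JR_\kappa^f$ in Theorem~\ref{theorem: JR main theorem}, and I set $Y:=k^{-1}(B)=(X\times{}^\kappa\kappa)\setminus\graph(f)$, so that $k$ carries $\CC^B\restr A$ to $\CC^Y\restr\graph(f)$, where $\CC^Y$ now denotes the analogous dihypergraph on ${}^\kappa\kappa\times{}^\kappa\kappa$. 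Since $k$ is a homeomorphism it preserves convergence, closures, ranges, box-openness and $\kappa$-colorability, so it suffices to prove the product-space statements. By Theorem~\ref{theorem: KLW from ODD} and Remark~\ref{remark: CC and injectivity} the right-hand side of Part~(1) becomes $\ODD\kappa{\CC^Y\restr\graph(f)}$.

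For Part~(1) the goal is thus $\ODD\kappa{\JJ^f_i\restr\graph(f)}\Longleftrightarrow\ODD\kappa{\CC^Y\restr\graph(f)}$. Both dihypergraphs are box-open on $\graph(f)$, hence relatively box-open, so by Corollary~\ref{cor: ODD subsequences} it is enough to show $\JJ^f_i\restr\graph(f)\equivf\CC^Y\restr\graph(f)$. The easy half $\CC^Y\restr\graph(f)\leqf\JJ^f_i\restr\graph(f)$ needs no hypothesis on $\ran(f)$: given a $\CC^Y$-hyperedge $\langle(x_\alpha,f(x_\alpha)):\alpha<\kappa\rangle$ converging to $(x,w)$ with $x\in X$ and $w\neq f(x)$, I would discard the fewer-than-$\kappa$ indices whose $x$-value equals $x$ or is repeated, together with the boundedly many indices on which $f(x_\alpha)$ lies in a fixed neighbourhood of $f(x)$ disjoint from a neighbourhood of $w$, obtaining an injective subsequence that witnesses membership in $\JJ^f_i\restr\graph(f)$. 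Thus every $\CC^Y$-hyperedge has a $\JJ^f_i$-subsequence, which yields $\leqf$.

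The hard half $\JJ^f_i\restr\graph(f)\leqf\CC^Y\restr\graph(f)$ is where the hypothesis that $\ran(f)$ is contained in a $K_\kappa$ set enters, and I expect this to be the main obstacle. The naive subsequence argument fails here, since a $\JJ^f_i$-hyperedge only guarantees $f(x)\notin\closure{\{f(x_\alpha)\}}$, and $\langle f(x_\alpha)\rangle$ may be closed discrete with no convergent $\kappa$-subsequence. Instead I would argue at the level of colorings. By the $\CC^Y$-independence characterization (the product-space analogue of Lemma~\ref{lemma: KLW independence}), a set $A\subseteq\graph(f)$ on which $\CC^Y\restr\graph(f)$ is $\kappa$-colorable splits into pieces $A_\xi$ with $A_\xi'\cap Y=\emptyset$, i.e. with $\closure{A_\xi}\cap(X\times{}^\kappa\kappa)\subseteq\graph(f)$, so $f$ has closed graph on each $\proj_0(A_\xi)$. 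The key local lemma is then: if $g$ has closed graph and $\ran(g)$ lies in a single $\kappa$-compact set $Q$, then $g$ is continuous. This rests on the fact that a $\kappa$-compact subset of ${}^\kappa\kappa$ is sequentially $\kappa$-compact — every $\kappa$-sequence has a complete accumulation point (cover by neighbourhoods meeting $<\kappa$ terms, take a $<\kappa$-subcover, and use regularity of $\kappa$), from which a convergent $\kappa$-subsequence is built. Writing $\ran(f)\subseteq\bigcup_{\beta<\kappa}Q_\beta$ with each $Q_\beta$ $\kappa$-compact and $D_\beta:=\{x:g(x)\in Q_\beta\}$, the same fact shows each $D_\beta$ is relatively closed and $g\restr D_\beta$ continuous; hence $f\restr\proj_0(A_\xi)$ is $\kappa$-continuous with closed pieces, and $\JJ^f\restr A$ (so $\JJ^f_i\restr A$) is $\kappa$-colorable by Lemma~\ref{lemma: JR independence} and Theorem~\ref{theorem: JR main theorem}. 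This gives $\leqf$, hence $\equivf$, and Part~(1) follows via Corollary~\ref{cor: ODD subsequences} and the reductions above.

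For Part~(2) I would combine Part~(1) with Proposition~\ref{KLW and THD for precompact X}. Since $A\subseteq{}^\kappa\kappa\setminus B$, the dichotomy $\THD_\kappa(A,{}^\kappa\kappa\setminus B)$ is well-posed, and Proposition~\ref{KLW and THD for precompact X} yields $\THD_\kappa(A,{}^\kappa\kappa\setminus B)\Longleftrightarrow\KLW_\kappa(A,B)$ (as ${}^\kappa\kappa\setminus({}^\kappa\kappa\setminus B)=B$) provided $A$ is contained in a $K_\kappa$ set. To verify the latter I would show that the product of two $\kappa$-compact subsets of ${}^\kappa\kappa$ is $\kappa$-compact: a tube-lemma argument, valid because the boxes $N_s\times N_t$ form a base, each $N_s$ is clopen, and $\kappa$ is regular, turns any open cover of $P\times Q$ into a $<\kappa$-subcover. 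Hence if $X\subseteq\bigcup_\alpha P_\alpha$ and $\ran(f)\subseteq\bigcup_\beta Q_\beta$ with $P_\alpha,Q_\beta$ $\kappa$-compact, then $\graph(f)\subseteq\bigcup_{\alpha,\beta}(P_\alpha\times Q_\beta)$ is contained in a $K_\kappa$ set, and so is $A=k(\graph(f))$. Combining $\THD_\kappa(A,{}^\kappa\kappa\setminus B)\Longleftrightarrow\KLW_\kappa(A,B)$ with Part~(1) (applicable since $\ran(f)$ is in particular $K_\kappa$) completes the argument.
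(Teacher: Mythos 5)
Your proposal is partly correct and takes a route that genuinely differs from the paper's. Your Part~(2) is fine: deducing it from Part~(1) via Proposition~\ref{KLW and THD for precompact X}, after the tube-lemma observation that $\graph(f)$, and hence $A$, is contained in a $K_\kappa$ set, is a legitimate shortcut; the paper instead proves (2) directly by comparing $\JJ$ with the pullback of $\dhth{{}^\kappa\kappa\setminus B}$ and quoting Theorem~\ref{theorem: THD and ODD dhth}. Your easy half of (1) coincides with the paper's. For the hard half of (1) the paper does not abandon the subsequence method as you do: it decomposes $\graph(f)=\bigcup_\alpha G_\alpha$ with $G_\alpha:=\graph(f)\cap(X\times C_\alpha)$, $C_\alpha$ $\kappa$-compact, shows that inside each $G_\alpha$ every $\JJ$-hyperedge has a convergent subsequence lying in the pullback of $\CC_i^B$, and then glues the resulting colorings; your observation that the global subsequence argument fails under a mere $K_\kappa$ hypothesis is exactly why that decomposition is there.

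Your own argument for the hard half of (1) has a genuine gap at its final step. You prove that each $D_\beta$ is relatively closed \emph{in} $D_\xi:=\proj_0(A_\xi)$ and that $f\restr D_\beta$ is continuous, conclude that $f\restr D_\xi$ is $\kappa$-continuous with closed pieces, and then infer that $\JJ^f\restr A$ is $\kappa$-colorable ``by Lemma~\ref{lemma: JR independence} and Theorem~\ref{theorem: JR main theorem}''. Neither result supports this inference: Lemma~\ref{lemma: JR independence} characterizes $\JJ^f$-independence of a piece $B\subseteq\graph(f)$ by continuity of $f$ on the closure of $\proj_0(B)$ \emph{in $X$}, not by continuity and closedness relative to $D_\xi$; and Theorem~\ref{theorem: JR main theorem} applied to the restricted function $f\restr D_\xi$ governs the dihypergraph $\JJ^{f\restr D_\xi}$, which omits precisely those hyperedges of $\JJ^f\restr A_\xi$ whose first coordinates converge to a point of $X\setminus D_\xi$. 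The implication you use is in fact false as a general statement: take $\kappa=\omega$, $X={}^\omega 2$, let $E$ be a countable dense subset of ${}^\omega 2$, let $D_\xi:={}^\omega 2\setminus E$, and let $f$ be constant with value $\langle 0\rangle^\omega$ on $D_\xi$ and constant with value $\langle 1\rangle^\omega$ on $E$. Then $f\restr D_\xi$ is continuous (a single piece, trivially closed in $D_\xi$) and $\ran(f)$ is even finite, yet by Lemma~\ref{lemma: JR independence} a set $B\subseteq\graph(f\restr D_\xi)$ is $\JJ^f$-independent only if $\closure{\proj_0(B)}\cap E=\emptyset$, so a countable coloring of $\JJ^f\restr\graph(f\restr D_\xi)$ would write the comeager set $D_\xi$ as a countable union of closed nowhere dense sets, contradicting the Baire category theorem. (This $A_\xi$ is of course not $\CC^Y$-independent; the point is that your intermediate statement has discarded that hypothesis, and the quoted results cannot recover it.)

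The gap is repairable with the tools you already introduced, by applying your closed-graph-plus-compact-range lemma to $E_\beta:=\closure{D_\beta}\cap X$ rather than to $D_\beta$. By $\kappa$-compactness of $Q_\beta$ and $A_\xi'\cap Y=\emptyset$, every point of $\graph(f\restr E_\beta)$ is a limit of points of $\graph(f\restr D_\beta)$, so $\graph(f\restr E_\beta)\subseteq\closure{\graph(f\restr D_\beta)}$; since in a Hausdorff space the limit points of a closure coincide with the limit points of the set, all limit points of $\graph(f\restr E_\beta)$ lie in $A_\xi'$ and hence avoid $Y$, and the same compactness argument yields $f(E_\beta)\subseteq Q_\beta$ and continuity of $f\restr E_\beta$. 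Now Lemma~\ref{lemma: JR independence} applies verbatim to the pieces $\graph(f\restr D_\beta)$, whose projections have closure $E_\beta$ in $X$, and gives their $\JJ^f$-independence; Theorem~\ref{theorem: JR main theorem} is not needed at all. With this correction, your coloring-level argument becomes a sound alternative to the paper's decomposition-and-subsequence proof.
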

\begin{proof}
Let $h:=k^{-1}$, $\CC:=
%{h}^\ddim(\CC_i^{{}^\kappa\kappa\setminus B}\restr{A})=
{h}^\ddim(\CC_i^{B}\restr{A})=
{h}^\ddim(\CC_i^{B})\restr\graph(f)
$
and
$\HH:=(h)^{\ddim}(\dhth{{}^\kappa\kappa\setminus B}\restr A)=(h)^{\ddim}(\dhth{{}^\kappa\kappa\setminus B})\restr \graph(f)$.
The next claim shows that 
$\JJ\equivf\CC$ if $\ran(f)$ is $\kappa$-precompact 
and $\JJ\equivf\HH$ if both $X$ and $\ran(f)$ are $\kappa$-precompact.

\begin{claim*}\ 
\begin{enumerate-(i)}
\item\label{JR from KLW subclaim 1}
$\CC\leqf\JJ$.
\item\label{JR from KLW subclaim 2}
$\JJ\leqf\CC$ if $\ran(f)$ is $\kappa$-precompact.
\item\label{JR from KLW subclaim 3}
$\JJ\subseteq\HH$.
\item\label{JR from KLW subclaim 4}
$\HH\leqf\CC$ if $X$ and $\ran(f)$ are both $\kappa$-precompact.
\end{enumerate-(i)}
\end{claim*}
\begin{proof}
For~\ref{JR from KLW subclaim 1},
note that $\CC$ consists of those injective convergent sequences $\bar a$ in $\graph(f)$ with 
$(x^{\bar a},y^{\bar a}):=\lim_{\alpha<\kappa}(\bar a)$ in $(X\times{}^\kappa\kappa)\setminus\! \graph(f)$.
It suffices to show that any such sequence $\bar a=\langle (x_\alpha,f(x_\alpha)):\alpha<\kappa\rangle$ has a subsequence in $\JJ$.
Note that 
$x^{\bar a}$ is in $X$ and
$f(x^{\bar a})\neq y^{\bar a}=\lim_{\alpha<\kappa}f(x_\alpha)$.
By passing to a subsequence of $\bar a$, we may assume that $x\neq x_\alpha$ and $f(x)\neq f(x_\alpha)$ for all $\alpha<\kappa$. 
Then $\langle x_\alpha:\alpha<\kappa\rangle$ is in $\CC^X_i$ 
\todog{If $\langle x_\alpha:\alpha<\kappa\rangle$ weren't injective, then $\bar a$ would not be either since $f$ is well defined.}
and $f(x)\notin\closure{\{f(x_\alpha):\alpha<\kappa\}}$.
This sequence is in $\JJ$, as required.

We prove~\ref{JR from KLW subclaim 2}
and~\ref{JR from KLW subclaim 3} simultaneously.
Note that $\HH$ consists of those injective $\kappa$-sequences 
in $\graph(f)$ whose limit points are all contained in $(X\times{}^\kappa\kappa)\setminus\!\graph(f)$.
Suppose $\bar a=\langle (x_\alpha,f(x_\alpha):\alpha<\kappa\rangle$ is a hyperedge of $\JJ$.
Since $\bar x:=\langle x_\alpha:\alpha<\kappa\rangle$ is in $\CC_i^X$, it must converge to some $x\in X$.
Hence all limit points of $\bar a$ are of the form $(x,y)$ for some $y\in{}^\kappa\kappa$.
Moreover, $(x,f(x))$ is not a limit point of $\bar a$ 
since $f(x)$ is not in $\closure{\{f(x_\alpha):\alpha<\kappa\}}$. 
Since $\bar a$ is injective, it is a hyperedge of $\HH$. 
Now, assume that $\ran(f)$ is $\kappa$-precompact. Then $\langle f(x_\alpha):\alpha<\kappa\rangle$ and therefore $\bar a$ has a convergent subsequence,
This subsequence is in $\CC$, 
since its limit is in $(X\times{}^\kappa\kappa)\setminus\!\graph(f)$ and it is injective.

For~\ref{JR from KLW subclaim 4}, note that if $X$ and $\ran(f)$ are both $\kappa$-precompact,
then so is $\graph(f)$.
As in the proof of Proposition~\ref{KLW and THD for precompact X}, it suffices to see that
any hyperedge $\bar a$ 
of $\HH$ contains a subsequence in $\CC$. 
Since $\bar a$ is 
contained in the $\kappa$-precompact set $\graph(f)$, 
it must contain a convergent subsequence $\bar b$. 
As $\bar a\in \HH$, the limit of $\bar b$ 
is an element of $(X\times{}^\kappa\kappa)\setminus\!\graph(f)$.
Hence $\bar b\in \CC$, as required.
\end{proof}

For~\ref{JR from KLW or THD 1},
suppose that $\ran(f)$ is contained in a $K_\kappa$ set $\bigcup_{\alpha<\kappa} C_\alpha$, where each $C_\alpha$ is $\kappa$-compact.
%Write $\ran(f)$ as the $\kappa$-union of $\kappa$-precompact sets $Y_\alpha$, and let $G_\alpha:=\graph(f)\cap({}^\kappa\kappa\times Y_\alpha)$ for each $\alpha<\kappa$.
Let $G_\alpha:=\graph(f)\cap(X\times C_\alpha)$ for each $\alpha<\kappa$.
Then
$\graph(f)=\bigcup_{\alpha<\kappa}G_\alpha$, and
$\JJ\restr G_\alpha\equivf\CC\restr G_\alpha$ for each $\alpha<\kappa$ by the previous claim.
Hence $\JJ\equivf\CC$,
and thus 
$\ODD\kappa\JJ$ and $\ODD\kappa\CC$ are equivalent
by Corollary~\ref{cor: ODD subsequences}.
The latter is further equivalent to $\KLW_\kappa(A,B)$ by 
Theorem~\ref{theorem: KLW from ODD} and Remark~\ref{remark: CC and injectivity}. 

For~\ref{JR from KLW or THD 2},
note that if both $X$ and $\ran(f)$ are contained in $K_\kappa$ sets, then so is $\graph(f)$.
Write $\graph(f)$ as the $\kappa$-union of $\kappa$-precompact sets $G_\alpha$.
Then $\JJ\restr G_\alpha\equivf\HH\restr G_\alpha$ for each $\alpha<\kappa$ by the previous claim.
Hence $\JJ\equivf\HH$, and therefore
$\ODD\kappa\JJ$ is equivalent to $\ODD\kappa\HH$
by Corollary~\ref{cor: ODD subsequences},
which is further equivalent to $\THD_\kappa(A,{}^\kappa\kappa\setminus  B)$
by Theorem~\ref{theorem: THD and ODD dhth}.
\end{proof}

\begin{corollary}
\label{cor: THD implies JR weakly compact}\ 
Suppose that $\kappa$ is weakly compact or $\kappa=\omega$. 
\todoq{the assumption cound be shortened, maybe}
\todo{open question: does JR for functions with pre $K_\kappa$ range imply JR? \\ 
Note that the proof of this prop. might show: for any $\kappa$, same conclusion but for the weaker version of JR instead of JR  } 
If the classes $\mathcal C\subseteq \mathcal D$ contain all $\Gdelta(\kappa)$ sets,  
$\mathcal C$ is closed under continuous preimages, 
$\mathcal D$ under complements and 
both are closed under finite intersections, 
 then 
$\THD_\kappa(\mathcal C,\mathcal D)\Longrightarrow\JR_\kappa(\mathcal C)$. 
\end{corollary}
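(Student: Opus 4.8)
The plan is to deduce $\JR_\kappa(\mathcal C)$ from the instance of the open dihypergraph dichotomy supplied by Proposition~\ref{JR from KLW or THD}~\ref{JR from KLW or THD 2}, after reducing to functions whose domain and range both lie in $K_\kappa$ sets. Fix $X\in\mathcal C$ and $f\colon X\to{}^\kappa\kappa$. The implication ``$\kappa$-continuous with closed pieces $\Rightarrow$ $\DeltaZeroTwo(\kappa)$-measurable'' in $\JR^f_\kappa$ is free, so only the converse needs proof. First I would pass from $f$ to the conjugate $g:=h\comp f\comp(h\restr X)^{-1}$, where $h:=[\pi]\colon{}^\kappa\kappa\to\RR_\kappa\subseteq{}^\kappa 2$ is the homeomorphism onto its image from Subsection~\ref{subsection: original KLW}. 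Since $\kappa$ is weakly compact (or $\omega$), the space ${}^\kappa 2$ is $\kappa$-compact, hence $K_\kappa$, so $\dom(g)=h(X)$ and $\ran(g)=h(\ran(f))$ both lie in the $K_\kappa$ set ${}^\kappa 2$. As conjugation by homeomorphisms preserves both $\DeltaZeroTwo(\kappa)$-measurability and $\kappa$-continuity with closed pieces, $\JR^f_\kappa$ is equivalent to $\JR^g_\kappa$, so it suffices to establish $\JR^g_\kappa$.

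Assume now that $g$ is $\DeltaZeroTwo(\kappa)$-measurable and apply Proposition~\ref{JR from KLW or THD}~\ref{JR from KLW or THD 2} with $\JJ:=\JJ^g_i$: as $\dom(g)$ and $\ran(g)$ are contained in $K_\kappa$ sets, $\ODD\kappa{\JJ^g_i\restr\graph(g)}$ is equivalent to $\THD_\kappa(A,{}^\kappa\kappa\setminus B)$, where $A:=k(\graph(g))$ and $B:=k\big((\dom(g)\times{}^\kappa\kappa)\setminus\graph(g)\big)$ for the fixed homeomorphism $k\colon{}^\kappa\kappa\times{}^\kappa\kappa\to{}^\kappa\kappa$. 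The next step is to place $A$ and ${}^\kappa\kappa\setminus B$ in the right classes. Because $g$ is $\DeltaZeroTwo(\kappa)$-measurable, $\graph(g)$ is the intersection of the cylinder $\dom(g)\times{}^\kappa\kappa$ with a $\Gdelta(\kappa)$ set; since $\dom(g)=h(X)\in\mathcal C$, the cylinder lies in $\mathcal C$ (a continuous preimage of $\dom(g)$ under the projection), whence $\graph(g)\in\mathcal C$ and $A=k(\graph(g))\in\mathcal C$. Writing $B=k(\dom(g)\times{}^\kappa\kappa)\cap({}^\kappa\kappa\setminus A)$ with $k(\dom(g)\times{}^\kappa\kappa)\in\mathcal C\subseteq\mathcal D$ and ${}^\kappa\kappa\setminus A\in\mathcal D$, closure of $\mathcal D$ under finite intersections and complements yields ${}^\kappa\kappa\setminus B\in\mathcal D$. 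As $A\subseteq{}^\kappa\kappa\setminus B$, the hypothesis $\THD_\kappa(\mathcal C,\mathcal D)$ gives $\THD_\kappa(A,{}^\kappa\kappa\setminus B)$, hence $\ODD\kappa{\JJ^g_i\restr\graph(g)}$, and then $\JR^g_\kappa$ by Corollary~\ref{cor: JR} together with Remark~\ref{remark: injective JJ}. Transporting back along the conjugation delivers $\JR^f_\kappa$.

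The main obstacle is the class bookkeeping, and specifically the claim $\dom(g)=h(X)\in\mathcal C$. One cannot realize $h(X)$ as the preimage of $X$ under a globally defined continuous self-map of ${}^\kappa\kappa$, since a continuous retraction ${}^\kappa 2\to{}^\kappa\kappa$ would exhibit ${}^\kappa\kappa$ as a $\kappa$-compact space, which it is not. Instead one writes $h(X)=\RR_\kappa\cap(h^{-1})^{-1}(X)$, where $h^{-1}=[\pi]^{-1}$ is continuous on the $\Gdelta(\kappa)$ set $\RR_\kappa$, so that $h(X)\in\mathcal C$ follows from closure of $\mathcal C$ under continuous preimages (on a $\mathcal C$-domain), $\RR_\kappa\in\Gdelta(\kappa)\subseteq\mathcal C$, and closure under finite intersections; for the classes to which the corollary is applied, such as $\defsetsk$ and the $\kappa$-Borel sets, this is immediate because $[\pi]$ is a definable homeomorphism onto a $\Gdelta(\kappa)$ set. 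The remaining points, namely that the graph of a $\DeltaZeroTwo(\kappa)$-measurable function is a $\Gdelta(\kappa)$ set intersected with a cylinder over its domain, and the homeomorphism-invariance of the two properties defining $\JR$, are routine computations with the $\kappa$-Borel hierarchy that I would not spell out in detail.
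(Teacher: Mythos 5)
Your proof is correct and takes essentially the same route as the paper's own: conjugate into ${}^\kappa 2$ via the homeomorphism $[\pi]$ (using weak compactness to get $\kappa$-compactness of ${}^\kappa 2$), apply Proposition~\ref{JR from KLW or THD}~\ref{JR from KLW or THD 2}, carry out the same class bookkeeping to show $A\in\mathcal C$ and ${}^\kappa\kappa\setminus B\in\mathcal D$, and conclude via Corollary~\ref{cor: JR} and Remark~\ref{remark: injective JJ}. Your explicit justification of $h(X)\in\mathcal C$ through preimages under the partial continuous map $[\pi]^{-1}$ on the $\Gdelta(\kappa)$ set $\RR_\kappa$ is a careful patch of a step the paper merely asserts (``Note that $X\in\mathcal C$''), but it refines rather than changes the argument.
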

\begin{proof}
Suppose that $\THD_\kappa(\mathcal C,\mathcal D)$ holds.
It suffices to show $\JR_\kappa^g$ for all
subsets $Y\in\mathcal C$ of ${}^\kappa\kappa$
and all $\DeltaZeroTwo(\kappa)$-measurable functions $g:Y\to{}^\kappa\kappa$.
Let $h$ be a homeomorphism from ${}^\kappa\kappa$ onto a subset of ${}^\kappa 2$ 
%let $X=h(Y)$
and
 $f:=h\comp g\comp h^{-1}$.
Since $X:=\dom(f)=h(Y)$ and $\ran(f)=h(\ran(g))$ are both 
$\kappa$-precompact,
$\THD_\kappa(A,{}^\kappa\kappa\setminus B)$
is equivalent to $\ODD\kappa\JJ$,
where
$\JJ$, $k$, $A:=k(\graph(f))$ and 
$B:=k(X\times{}^\kappa\kappa)\setminus A$ 
are defined as in the previous proposition. 
We will show that $A\in\mathcal C$ and ${}^\kappa\kappa\setminus B\in\mathcal D$.
This suffices, since 
$\JR_\kappa^f$ and thus $\JR_\kappa^g$ follow from
$\THD_\kappa(A,{}^\kappa\kappa\setminus B)$
by Corollary~\ref{cor: JR} and Remark~\ref{remark: injective JJ}

Since $f$ is $\DeltaZeroTwo(\kappa)$-measurable, $\graph(f)$ is a $\Gdelta(\kappa)$ subset of ${}^\kappa\kappa \times{}^\kappa\kappa$. 
To see this, note that 
$$\graph(f)=\bigcap_{t\in{}^{<\kappa}\kappa}\big[
(X \times ({}^\kappa\kappa\setminus N_t))\cup
(f^{-1}(N_t)\times{}^\kappa\kappa)
\big],$$ 
since for all $(x,y)\in X\times{}^\kappa\kappa$ we have $f(x)=y$ $\Longleftrightarrow$ $\forall t\in{}^{<\kappa}\kappa
\big(y\in N_t\Rightarrow f(x)\in N_t\big)$.

\begin{comment} 
\begin{claim*}
$\graph(f)=G\cap(X\times{}^\kappa\kappa)$ 
for some $\Gdelta(\kappa)$ subset $G$ of ${}^\kappa\kappa \times{}^\kappa\kappa$.
\end{claim*}
\begin{proof} 
Since $f$ is also $\DeltaZeroTwo(\kappa)$-measurable,
fix a $\DeltaZeroTwo(\kappa)$ set $U_t$ with
$f^{-1}(N_t)=U_t\cap X$ 
for all $t\in{}^{<\kappa}\kappa$.
Let
$$G:=\bigcap_{t\in{}^{<\kappa}\kappa}\big(
({}^\kappa\kappa \times N^c_t)\cup
(U_t\times{}^\kappa\kappa)
\big),$$
where $N^c_t:={}^\kappa\kappa\setminus N_t$.
Then $G$ is a $\Gdelta(\kappa)$ set since 
${}^\kappa\kappa \times N^c_t$ is clopen and
$U_t\times{}^\kappa\kappa$ is $\DeltaZeroTwo(\kappa)$ for each
$t\in{}^\kappa\kappa$.
We claim that $\graph(f)=G\cap(X\times{}^\kappa\kappa)$.
To see this, let $(x,y)\in X\times{}^\kappa\kappa$.
Then 
\begin{multline*}
(x,y)\in\graph(f)
\Longleftrightarrow
y=f(x)
\Longleftrightarrow
\forall t\in{}^{<\kappa}\kappa
\big(y\in N_t\Rightarrow f(x)\in N_t\big) 
\Longleftrightarrow
\\
\forall t\in{}^{<\kappa}\kappa
\big(y\notin N_t \lor f(x)\in N_t\big)
\Longleftrightarrow
(x,y)\in G.
\end{multline*}
The last equivalence holds since $x\in X$ and hence 
$f(x)\in N_t\Leftrightarrow x\in U_t$.
\end{proof}
\end{comment}

Note that 
$X\in \mathcal C$. 
Since $X\times{}^\kappa\kappa$ is the preimage of $X$ under a projection and $k$ is a homeomorphism, we have $k(X\times{}^\kappa\kappa)\in \mathcal C$. 
Since $A:=k(\graph(f))$ is a $\Gdelta(\kappa)$ subset of $k(X\times {}^\kappa\kappa)$, we have $A\in \mathcal C$. 
Therefore ${}^\kappa\kappa\setminus B= A\cup k(({}^\kappa\kappa\setminus X)\times{}^\kappa\kappa)\in \mathcal D$. 
\end{proof}

We now turn to the implications in Figures~\ref{figure: various implications not weakly compact} and~\ref{figure: various implications pspclosed} which use that $\kappa$ is uncountable and not weakly compact.
The next proposition shows that for subsets $X$ of $K_\kappa$ sets, 
$\PSP_\kappa(X)$ can only hold if $|X|\leq\kappa$. Hence $\PSP_\kappa(X)$ implies all of the other dichotomies in Figure~\ref{figure: various implications not weakly compact} in this case.

\begin{proposition}[See \cite{LuckeMottoRosSchlichtHurewicz}*{Proposition~2.7}] 
\label{prop: PSP pre-Kkappa non weakly compact}
Suppose that $\kappa>\omega$ is not weakly compact and $X$ is a subset of a $K_\kappa$ set.
\begin{enumerate-(1)}
\item\label{prop: PSP pre-Kkappa non weakly compact 1}
$X$ does not contain a $\kappa$-perfect subset.
\item\label{prop: PSP pre-Kkappa non weakly compact 2}
$\PSP_\kappa(X)$ implies that $|X|\leq\kappa$.
\end{enumerate-(1)}
\end{proposition}
\begin{proof}
For \ref{prop: PSP pre-Kkappa non weakly compact 1}, note that otherwise $X$ would contain a closed subset homeomorphic to ${}^\kappa 2$ and hence to ${}^\kappa\kappa$ by \cite{hung1973spaces}*{Theorem 1}, 
contradicting the fact that $X$ is contained in a $K_\kappa$ set. 
\ref{prop: PSP pre-Kkappa non weakly compact 2} follows from~\ref{prop: PSP pre-Kkappa non weakly compact 1}.
\end{proof}

The next proposition shows that if $\kappa>\omega$ is not weakly compact, then 
$\PSP_\kappa(\mathcal C)\Longrightarrow\THD_\kappa(\mathcal C,\pwrset({}^\kappa\kappa))$
and that if in addition $\PSP_\kappa(\closedsets(\kappa))$ is assumed,
then 
$\PSP_\kappa(\mathcal C)$
$\Longleftrightarrow$
$\THD_\kappa(\mathcal C,\mathcal D)$
$\Longleftrightarrow$
$\THD_\kappa(\mathcal C)$
for any class $\mathcal D$. 
\index{topological Hurewicz dichotomy!strong\idf$\mathsf{sTHD}_\kappa(X)$}%
%\index{Hurewicz dichotomy!topological Hurewicz dichotomy Zstrong@\mygobble|see{strong topological Hurewicz dichotomy}}%
The \emph{strong topological Hurewicz di\-cho\-tomy} $\mathsf{sTHD}_\kappa(X)$ 
for a subset $X$ of ${}^\kappa\kappa$ is the statement that $X$ is either itself a $K_\kappa$ set or else $X$ contains a closed subset of ${}^\kappa\kappa$ which is homeomorphic to ${}^\kappa\kappa$.
Note that for $\kappa=\omega$ and weakly compact cardinals $\kappa$, there are $\Gdelta(\kappa)$ counterexamples to the strong topological Hurewicz dichotomy.%
\footnote{The set $\RR_\kappa$ defined at the beginning of Subsection~\ref{subsection: original KLW} is such a counterexample.}

\begin{proposition}[See \cite{LuckeMottoRosSchlichtHurewicz}*{Proposition~2.7}] 
\label{remark: PSP THD non weakly compact}
\label{remark: PSP THD non weakly compact equiv}
Suppose $\kappa$ is not weakly compact and $X\subseteq Y\subseteq{}^\kappa\kappa$. 
\todol{New proposition; part of it used to be a remark in Section 6.1} 
Then each of the following statements implies the ones following it:
\begin{enumerate-(1)}
\item\label{remark: PSP THD non weakly compact a}
$\PSP_\kappa(X)$,
\item\label{remark: PSP THD non weakly compact b}
$\mathsf{sTHD}_\kappa(X)$, 
%the strong topological Hurewicz dichotomy for $X$,
\item\label{remark: PSP THD non weakly compact c}
$\THD_\kappa(X,Y)$.
\end{enumerate-(1)}
If the $\kappa$-perfect set property holds for all $\kappa$-compact sets, 
then all three statements are equivalent.
\end{proposition}
\begin{proof}
\ref{remark: PSP THD non weakly compact a} $\Rightarrow$ \ref{remark: PSP THD non weakly compact b}
is immediate from the fact that ${}^\kappa 2$ is homeomorphic to ${}^\kappa\kappa$ in the non-weakly compact case by \cite{hung1973spaces}*{Theorem 1}. 
\ref{remark: PSP THD non weakly compact b} $\Rightarrow$ \ref{remark: PSP THD non weakly compact c} is clear.
For the last part of the proposition,
suppose that the $\kappa$-perfect set property holds for all $\kappa$-compact sets and 
$\THD_\kappa(X,Y)$ holds.
If $X$ is contained in a $K_\kappa$ set, then $X$ has size at most $\kappa$ by the previous proposition.
Otherwise, 
there exists a continuous homomorphism $f$
from $\dhH\kappa$ to $\HH:=\dhth{X}\restr Y$
since $\THD_\kappa(X,Y)$ is equivalent to $\ODD\kappa\HH$ by Theorem~\ref{theorem: THD and ODD dhth}.
Hence $\ran(f)\subseteq X$ has a $\kappa$-perfect subset
by Lemma~\ref{lemma: kappa-perfect subdh}~\ref{lemma: kappa-perfect subdh 1}.
%%%%% OLD PROOF - delete later
%For the last part of the proposition,
%note that for any $\kappa$-compact subset $C$ of ${}^\kappa\kappa$, 
%$\PSP_\kappa(C)$ implies that
%$C$ has size at most $\kappa$.
%Otherwise,$C$ would contain a closed homeomorphic image of ${}^\kappa 2$ by $\PSP_\kappa(C)$, and hence of ${}^\kappa\kappa$,
%contradicting the fact that ${}^\kappa\kappa$  is not $\kappa$-compact. 
%Thus, if the $\kappa$-perfect set property holds for all $\kappa$-compact sets, then
%$\THD_\kappa(X,Y)$ implies that either $X$ has size at most $\kappa$ or
%there exists a continuous homomorphism $f$
%from $\dhH\kappa$ to $\HH:=\dhth{X}\restr Y$.%
%\footnote{Recall that $\THD_\kappa(X,Y)$ is equivalent to $\ODD\kappa\HH$ by Theorem~\ref{theorem: THD and ODD dhth}.}
%In the second case,  $\ran(f)\subseteq X$ has a $\kappa$-perfect subset
%by Lemma~\ref{lemma: kappa-perfect subdh}~\ref{lemma: kappa-perfect subdh 1}.
\end{proof}

Furthermore, recall that if $\kappa>\omega$ is not weakly compact, then
$\THD_\kappa(X,Y)$ implies $\THD_\kappa(X,Z)$ for all supersets $Z$ of $Y$ 
% $X\subseteq Y\subseteq Z\subseteq{}^\kappa\kappa$ 
by Remark~\ref{THD strong version non weakly compact}.
In particular, $\THD_\kappa(X,Y)$ implies $\THD_\kappa(X)$.

\begin{corollary}
\label{cor: KLW to THD new}
$\KLW_\kappa(\mathcal C,D)\Longrightarrow\THD_\kappa(\mathcal C,\mathcal D)$ 
if ${}^\kappa\kappa\in\mathcal D$, $\mathcal C$ and $\mathcal D$ are closed under homeomorphic images and $\mathcal D$ is closed under complements.%
\footnote{It suffices to assume  ${}^\kappa\kappa\in\mathcal D$ if $\kappa$ is not weakly compact and the above closure properties for $\mathcal C$ and $\mathcal D$ suffice if $\kappa$ is weakly compact or $\kappa=\omega$.} 
\end{corollary}
\begin{proof}
First suppose $\kappa>\omega$ is not weakly compact. Then
$\KLW_\kappa(\mathcal C,\mathcal D)\Longrightarrow \PSP_\kappa(\mathcal C)$ by Proposition~\ref{connection btw KLW, KLWdis and PSP}~\ref{connection btw KLW, KLWdis and PSP 1} and
$\PSP_\kappa(\mathcal C)\Longrightarrow \THD_\kappa(\mathcal C,\mathcal D)$ by the previous proposition.
If $\kappa$ is weakly compact or $\kappa=\omega$, then the claim is immediate by Corollary~\ref{cor: KLW implies THD weakly compact}.
\end{proof}

Recall that
$\KLW_\kappa(\mathcal C,D)$
can be weakened to
$\KLWdis_\kappa(\mathcal C,D)$
in the previous statement
at weakly compact cardinals $\kappa$ and $\kappa=\omega$
if $\mathcal C$ and $\mathcal D$ are sufficiently closed
by Corollary~\ref{cor: KLW implies THD weakly compact}.
The next corollary shows that this is also true at non-weakly compact cardinals $\kappa>\omega$
if the $\kappa$-perfect set property for closed sets is assumed.

\begin{corollary}
\label{cor: THD KLW non-weakly compact new}
Suppose $\kappa>\omega$ is not weakly compact and 
 $\PSP_\kappa(\closedsets(\kappa))$ holds.
%the $\kappa$-perfect set property holds for all $\kappa$-compact sets.
Then $\KLWdis_\kappa(\mathcal C,\mathcal D)\Longrightarrow \THD_\kappa(\mathcal C,\mathcal D)$
if $\check{\mathcal C}\subseteq \mathcal D$. 
\end{corollary}
\begin{proof}
This follows Proposition~\ref{prop: PSP KLW new}~\ref{prop: PSP KLW new 2}
since $\PSP_\kappa(\mathcal C)\Longrightarrow\THD_\kappa(\mathcal C,\mathcal D)$ by the previous proposition.
\end{proof}

%Note that the failure of the previous implication is consistent with the failure of $\PSP_\kappa(\closedsets(\kappa))$ by Remark~\ref{sep: KLW, KLWdis and THD}~\ref{sep: KLWdis and THD} below.

We now turn to the non-implications in Figures~\ref{figure: various implications weakly compact} and~\ref{figure: various implications not pspclosed}, starting with the former.

\begin{remark}
\label{remark: separating KLWdis and PSP}
\label{sep: not PSP to THD omega}{}%
%Regarding the separations in the countable setting in Figure~\ref{figure: various implications weakly compact},
In the countable setting,
neither %the perfect set property 
$\PSP_\omega(\mathcal C)$
nor the open graph dichotomy $\OGD_\omega(\mathcal C)$
imply %the Kechris--Louveau--Woodin dichotomy 
$\KLWdis_\omega(\mathcal C,\Fsigma)$ or $\THD_\omega(\mathcal C,\Gdelta)$
for the class
$\mathcal C:=L(\RR)[U]$, if $L(\RR)$ is a Solovay model and $U$ is a selective ultrafilter on $\omega$ that is
 generic over $L(\RR)$ for $\pwrset(\omega)/\mathsf{fin}$.
This follows from results of Di Prisco and Todor\v{c}evi\'c 
\cite{di1998perfect};%
\footnote{$\OGD_\omega(X)$ is denoted by $\OCA(X)$ and $\KLW_\omega(X,Y)$ is called the \emph{Hurewicz separation property} in \cite{di1998perfect}.}
in particular, 
\cite{di1998perfect}*{Theorem~5.1} shows that $\OGD_\omega(\mathcal{C})$ holds, while
\cite{di1998perfect}*{Lemma~6.1} shows that $\KLW_\omega(X,Y)$ fails for the subset $X$ of the Cantor space induced by $U$ and $Y=\QQ_\omega$. 
\todog{D's note to self (uncomment later)%: Every countable dense subset of the Cantor space is homeomorphic to the rationals and its complement is homeomorphic to ${}^\omega\omega$, by \cite{KechrisBook}*{Exercises 7.12 \& 7.13}. Hence the Hurewicz separation property as defined in \cite{di1998perfect} for disjoint sets $X$ and $Y$ is equivalent to $\KLW_\omega(X,Y)$.
}%
Hence, $\THD_\omega(\mathcal C,\Gdelta)$ also fails
by Proposition~\ref{KLW and THD for precompact X}.
\end{remark}

\begin{remark}\ 
\label{sep: KLW, KLWdis and THD}
In the uncountable setting, the following non-implications in Figure~\ref{figure: various implications not pspclosed} follow easily from previous results in this subsection.
\begin{enumerate-(1)}
\item\label{sep: KLW and KLWdis}
%If $\PSP_\kappa(\closedsets(\kappa))$ fails, then
If the $\kappa$-perfect set property for closed sets fails, then
$\KLWdis_\kappa(\mathcal C,\mathcal D)$ 
implies neither $\PSP_\kappa(\mathcal C)$ nor $\KLW_\kappa(\mathcal C,\mathcal D)$
%can be separated from both $\PSP_\kappa(\mathcal C)$ and $\KLW_\kappa(\mathcal C,\mathcal D)$
%the $\kappa$-perfect set property and the Kechris--Louveau--Woodin dichotomy
for $\mathcal C:=\Fsigma(\kappa)$
and any $\mathcal D$ with ${}^\kappa\kappa\in\mathcal D$, since
$\KLWdis_\kappa(\Fsigma(\kappa))$ is trivially true
while $\PSP_\kappa(X)$ fails for some closed set $X$ and hence $\KLW_\kappa(X,{}^\kappa\kappa)$ fails by Corollary~\ref{cor: PSP from ODD for the kappa-Baire space 2}~\ref{cor: PSP from ODD for the kappa-Baire space 2 a}.

\item\label{sep: PSP and THD Kkappa}
%A similar observation as in \ref{sep: KLW and KLWdis} shows that
Similarly to \ref{sep: KLW and KLWdis}, if 
the $\kappa$-perfect set property for $\kappa$-compact sets fails,
then neither $\THD_\kappa(\mathcal C)$ nor $\THD_\kappa(\mathcal C,\mathcal D)$ imply $\PSP_\kappa(\mathcal C)$
for $\mathcal C:=K_\kappa$ and any nonempty class $\mathcal D\subseteq\pwrset({}^\kappa\kappa)$,
since the former  %versions of the Hurewicz dichotomy 
are trivially true for $K_\kappa$ sets while the latter fails for some $\kappa$-compact set.

\item\label{sep: KLWdis and THD}
Now, assume that $\kappa$ is uncountable and not weakly compact and
the $\kappa$-perfect set property holds for all $\kappa$-compact sets but fails for some closed subset of ${}^\kappa\kappa$.
Note that this is consistent for $\kappa=\omega_1$ 
relative to an inaccessible cardinal by \cite{LambieStejPSP}*{Theorem~A}
and~\cite{LuckeMottoRosSchlichtHurewicz}*{Lemma~2.2}.
In more detail,
the $\omega_1$-perfect set property for $\omega_1$-compact sets
holds if and only if 
\index{tree!Kurepa@$\kappa$-Kurepa\idf}
every $\omega_1$-Kurepa tree%
\footnote{Recall that a subtree $T$ of ${}^{<\kappa}\kappa$ of height $\kappa$ is a \emph{$\kappa$-Kurepa tree} if its levels $T\cap{}^\alpha\kappa$ have size $\lle\kappa$ for all $\alpha<\kappa$ and it has at least $\kappa^+$ many branches.}
contains an $\omega_1$-Aronszajn subtree
by \cite{LuckeMottoRosSchlichtHurewicz}*{Lemma~2.2}.
Moreover, the nonexistence of $\omega_1$-Kurepa trees is consistent with 
%$\neg\PSP_{\omega_1}(\closedsets(\omega_1))$ 
the failure of the $\omega_1$-perfect set property for closed sets
relative to an inaccessible cardinal by \cite{LambieStejPSP}*{Theorem~A}.

%A similar observation as in \ref{sep: KLW and KLWdis} shows that in this case,
In this case, $\KLWdis_\kappa(\mathcal C,\mathcal D)$ can be separated from
$\THD_\kappa(\mathcal C)$ and more generally, from
$\THD_\kappa(\mathcal C,\mathcal E)$ for $\mathcal C:=\Fsigma(\kappa)$ and any nonempty classes $\mathcal{D,\,E}\subseteq\pwrset({}^\kappa\kappa)$,
since
$\THD_\kappa(X)$ and 
$\THD_\kappa(X,Y)$ also fail for some closed subset $X$ of ${}^\kappa\kappa$ and all subsets $Y$ of ${}^\kappa\kappa$, by Proposition~\ref{remark: PSP THD non weakly compact equiv}.
However, $\KLWdis_\kappa(\Fsigma(\kappa))$ is true by definition.
\end{enumerate-(1)}
\end{remark}

\begin{remark}\ 
\label{remark: connections between PSP, THD and HD}
In the uncountable setting,
results in~\cite{LuckeMottoRosSchlichtHurewicz} imply that
several versions of the Hurewicz dichotomy for $\kappa$-analytic sets do not imply the $\kappa$-perfect set property for closed sets.
Hence 
separations similar to  \ref{sep: PSP and THD Kkappa} of the previous remark are consistent for the larger class of $\kappa$-analytic sets  with the failure of the $\kappa$-perfect set property for closed sets.
In particular:
\begin{enumerate-(1)}
\item %%general separation
\label{sep: PSP and THD}{}%
It is consistent that $\THD_\kappa(\analytic(\kappa))$
holds and the $\kappa$-perfect set property for closed sets 
fails simultaneously at all regular uncountable cardinals $\kappa$ \cite{LuckeMottoRosSchlichtHurewicz}*{Corollary~5.4}. 
Note that these statements are also consistent with the existence of weakly compact cardinals, since the iteration $\PP_{<\kappa}$ used in the proof of \cite{LuckeMottoRosSchlichtHurewicz}*{Corollary~5.4} preserves the $(\kappa+2)$-strong unfoldability of any cardinal~$\kappa$ by \cite{LuckeMottoRosSchlichtHurewicz}*{Lemmas~4.3 and 4.7} and thus these cardinals, which may exist in the ground model $L$, remain weakly compact in the extension. 

\item %%non weakly compact case
\label{impl and sep: PSP and THD non weakly compact}{}%
It is consistent that 
$\THD_\kappa(\analytic(\kappa),\pwrset({}^\kappa\kappa))$ holds but
the $\kappa$-perfect set property for closed sets fails 
simultaneously 
at all uncountable non-weakly compact cardinals, 
since the model in \cite{LuckeMottoRosSchlichtHurewicz}*{Corollary~5.4} satisfies the 
strong topological Hurewicz dichotomy for $\kappa$-analytic sets% 
%\footnote{See the paragraph preceding Proposition~\ref{remark: PSP THD non weakly compact} for the definition of the strong topological Hurewicz dichotomy.}
at all such cardinals.
To see this, note that the argument in the proof of \cite{LuckeMottoRosSchlichtHurewicz}*{Theorem~1.7} works for the construction in \cite{LuckeMottoRosSchlichtHurewicz}*{Theorem~1.9} too.  

\item
\label{sep: PSP and HD}{}%
A similar non-implication from $\HD_\kappa(\analytic(\kappa))$ to the $\kappa$-perfect set property for closed sets is possible for all inaccessible cardinals $\kappa$ by arguments in \cite{LuckeMottoRosSchlichtHurewicz}*{Section 3.2}. 
This follows from
\cite{LuckeMottoRosSchlichtHurewicz}*{Proposition~3.19 and Lemma~3.23 (4) $\Rightarrow$ (1)}
and the observation that for inaccessible cardinals $\kappa$,
the first option in $\HD_\kappa(X)$ holds if and only if $X$ is eventually bounded. 
\end{enumerate-(1)}
\end{remark}

\newpage 

%%%%%%%%%%%
\section{Outlook} 
%\section{Future directions} 
%\section{Open problems}
\label{section: open questions}
\numberwithin{theorem}{section} 

As above, $\kappa$ always denotes an infinite cardinal with $\kappa^{<\kappa}=\kappa$.
With regards to the main results, we ask for a {\bf global version} of Theorem~\ref{main theorem} \ref{mainthm Mahlo} for all regular cardinals and arbitrary box-open dihypergraphs. 
The difficulty with proving such a result is that in an iterated forcing extension as in the proof of Theorem \ref{theorem: global version}, new box-open dihypergraphs are added by the tail forcings and the box-open dihypergraph dichotomy does not automatically hold for them. 

\begin{problem} 
\label{q global Mahlo}
Assume there is a proper class of Mahlo cardinals. 
Is there a tame class forcing extension of $V$ where $\ODD\kappa\kappa(\defsetsk)$ holds for all 
%infinite regular 
$\kappa$? 
\end{problem} 

%%%%%%%%%%%
%\subsection{Consistency strength} 

The {\bf consistency strength} of $\ODD\kappa\kappa(\defsets\kappa)$ is open. 
In particular, we do not know if it implies that there is an inner model with a Mahlo cardinal. 
We further ask about the consistency strength of each of its variants and applications. 

\begin{problem}
\label{q cons Mahlo} 
Do the following need a Mahlo cardinal for some uncountable $\kappa$? 
\begin{enumerate-(1)}
\item 
\label{q cons Mahlo A}
\begin{enumerate-(a)}
\item%[$\mathrm{(1a)}$]
\label{q cons Mahlo 1} 
$\ODD\kappa\kappa(\defsets\kappa)$.
\item%[$\mathrm{(1b)}$]
\label{q cons Mahlo 2}
$\ODD\kappa\kappa({}^\kappa\kappa)$. 
\end{enumerate-(a)}
\item 
\label{q cons Mahlo 3} 
\begin{enumerate-(a)}
\item\label{q cons Mahlo 3a} $\KLW^\ddim_\kappa(\defsetsk)$, where $2\leq\ddim<\kappa$.%
\footnote{See Subsection~\ref{subsection: KLW^H}.
%Definition \ref{def: KLW for hypergraphs} and the paragraph before Corollary~\ref{corollary: KLW^H cons}.
%For $\ddim=\kappa$, this 
Note that $\KLW^\kappa_\kappa(\defsetsk)$
is equivalent to $\ODD\kappa\kappa(\defsetsk)$ by Corollary~\ref{cor: from KLW to ODD kappa}.}
\todog{I.e., $\KLW^\ddim_\kappa(X,Y)$ for all $X\in\defsetsk$ and ALL $Y\subseteq{}^\kappa\kappa$. If $\ddim<\kappa$, this is stronger than $\ODD\kappa{\ddim}(\defsetsk)$,
which is equivalent to: $\KLW_\kappa^\ddim(X,Y)$ for all $\Gdelta(\kappa)$ sets $Y$ and all $X\in\defsetsk$. The latter is equiconsistent with an inaccessible.}
\item\label{q cons Mahlo 3b} $\KLW_\kappa(\defsetsk)$. 
\item\label{q cons Mahlo 3c} $\KLW_\kappa({}^\kappa\kappa)$. 
\end{enumerate-(a)}
\item\label{q cons Mahlo 3dis} $\KLWdis_\kappa(\defsetsk)$.%
\footnote{Recall that $\KLWdis_\kappa(\defsetsk)$ denotes the statement that $\KLW_\kappa(X,Y)$ holds for all \emph{disjoint} subsets $X\in\defsetsk$ and $Y$ of ${}^\kappa\kappa$, while $\KLW_\kappa(\defsetsk)$ is the version where $X$ and $Y$ are not necessarily disjoint.}
\item 
\label{q cons Mahlo B} 
\begin{enumerate-(a)}
\item 
\label{q cons Mahlo 4} 
$\cbii(\powerset({}^\kappa\kappa))$. 
\item\label{q cons Mahlo cbo}
$\cbo$.\footnote{$\cbo$ and $\DK_\kappa(X)$ were defined in the paragraphs preceding Corollary~\ref{corollary: DK}.}
\item 
\label{q cons Mahlo 5} 
The determinacy of $\GV_\kappa(X)$ for all subsets $X$ of ${}^\kappa\kappa$.
\end{enumerate-(a)}
\item\label{q cons Mahlo DK}
$\DK_\kappa(\analytic(\kappa))$.
\end{enumerate-(1)}
\end{problem}

In \ref{q cons Mahlo B}\ref{q cons Mahlo 4}, one can equivalently consider 
$\cbi(\powerset({}^\kappa\kappa))$. 
Note that
\ref{q cons Mahlo A}\ref{q cons Mahlo 1}
$\Rightarrow$
\ref{q cons Mahlo 3}\ref{q cons Mahlo 3a}
$\Rightarrow$
\ref{q cons Mahlo 3}\ref{q cons Mahlo 3b}
$\Rightarrow$
\ref{q cons Mahlo 3dis},
while
%\ref{q cons Mahlo 3}\ref{q cons Mahlo 3b}
\ref{q cons Mahlo A}\ref{q cons Mahlo 2}
$\Rightarrow$
\ref{q cons Mahlo 3}\ref{q cons Mahlo 3c}
$\Rightarrow$
\ref{q cons Mahlo B}\ref{q cons Mahlo 4}
$\Rightarrow$ 
\ref{q cons Mahlo B}\ref{q cons Mahlo cbo}$+$\ref{q cons Mahlo 5}, 
and
\ref{q cons Mahlo B}\ref{q cons Mahlo cbo}~$+$~$\Diamondi\kappa$
$\Rightarrow$
\ref{q cons Mahlo DK}.%
\footnote{These implications follow from the results in Subsections~\ref{subsection: KLW} and \ref{subsection: Vaananens game}.}

The statements 
$\ODD\kappa\kappa({}^\kappa\kappa,\defsetsk)$, 
$\KLW_\kappa({}^\kappa\kappa,\defsetsk)$, 
$\cbii(\defsetsk)$
and
$\OGD_\kappa(\defsetsk)$
are each equi\-consistent with the existence of an inaccessible cardinal, since they each imply $\PSP_\kappa(X)$ for all closed sets $X$. 
For non-weakly compact $\kappa$, 
$\THD_{\kappa}(\defsets{\kappa},\powerset({}^{\kappa}\kappa))$ is consistent relative to an inaccessible cardinal, 
since it follows from $\PSP_{\kappa}(\defsets{\kappa})$
\cite{LuckeMottoRosSchlichtHurewicz}*{Proposition~2.7}, 
and hence so is $\THD_{\kappa}(\defsets{\kappa},\defsets{\kappa})$.
Moreover, for weakly compact $\kappa$, $\THD_{\kappa}(\defsets{\kappa},\powerset({}^{\kappa}\kappa))$ follows from $\KLWdis_\kappa(\defsets{\kappa})$ by Corollary \ref{cor: KLW implies THD weakly compact}. 

\begin{problem}
Do the following need an inaccessible cardinal for some uncountable $\kappa$? 
\begin{enumerate-(1)}
\item 
\begin{enumerate-(a)}
\item
$\HD_\kappa(\defsets\kappa)$.
\item 
$\THD_\kappa(\defsets\kappa)$.%
\footnote{Recall that this is the same statement as 
$\THD_\kappa(\defsets\kappa,\{{}^\kappa\kappa\})$.}
\item
$\THD_\kappa(\defsets\kappa,\defsets\kappa)$.
\end{enumerate-(a)}
\item 
$\KLWdis_\kappa(\defsetsk,\defsetsk)$. 
\todoq{can we use the failure in $L$ of ``every proper $\Fsigma(\kappa)$ set is $\Fsigma(\kappa)$-complete to show that $\omega_1$ is inaccessible in $L$?} 
\item 
$\ABP_\kappa(\defsets\kappa)$.
\item 
$\JR_\kappa(\defsetsk)$. \todog{this is equivalent to: ``$\JR_\kappa^f$ for all $f\in\defsetsk$''. Also, this is the version which we've stated in Corollary~\ref{cor: JR 2}}
\item 
$\OGD_\kappa({}^\kappa\kappa)$.
\end{enumerate-(1)}
\end{problem}

Note that it is consistent relative to $\ZFC$ that
$\THD_\kappa(\analytic(\kappa))$ holds
for all regular uncountable cardinals $\kappa$ \cite{LuckeMottoRosSchlichtHurewicz}*{Theorem~1.13},
$\THD_\kappa(\analytic(\kappa),\pwrset({}^\kappa\kappa))$ holds for all uncountable non-weakly compact cardinals $\kappa$,%
\footnote{To see this, note that the model constructed in \cite{LuckeMottoRosSchlichtHurewicz}*{Theorem~1.13} 
satisfies the strong topological Hurewicz dichotomy for $\kappa$-analytic sets at all non-weakly compact cardinals by the proof of  \cite{LuckeMottoRosSchlichtHurewicz}*{Theorem~1.7}.} 
and $\HD_\kappa(\analytic(\kappa))$ holds for all inaccessible cardinals $\kappa$
\cite{LuckeMottoRosSchlichtHurewicz}*{Section~3.2}.%
\footnote{See Remark~\ref{remark: connections between PSP, THD and HD}~\ref{sep: PSP and HD} for a detailed explanation.}
It is open whether the latter holds for uncountable successor cardinals $\kappa$.
In the countable setting, 
$\ABP_\omega(\defsets\omega)$ is equivalent to the Baire property for all sets of reals in $\defsets\omega$.
Hence it is consistent relative to $\ZFC$ by a well-known result of Shelah \cite{MR768264}*{Theorem 7.16}.
Moreover, 
$\OGD_\kappa({}^\kappa\kappa)$ is not provable in $\ZFC$ for $\kappa=\omega_2$. 
To see this, note that in $L$, 
there exists a $\kappa$-Kurepa tree $T$ 
that is a continuous image of ${}^\kappa\kappa$ 
for any successor cardinal $\kappa=\mu^+$, where $\mu$ has uncountable cofinality
\cite{lucke2020descriptive}*{Theorem 1.7}. 
Then $\PSP_\kappa([T])$ fails in $L$, so $\OGD_\kappa({}^\kappa\kappa)$ fails as well.\footnote{Note that $\OGD_\kappa(X)$ is preserved under taking continuous images by Lemma~\ref{ODD for continuous images} and Corollary~\ref{homomorphisms and perfect homogeneous subgraphs}.}
In the countable setting, it is open whether an inaccessible cardinal is required to obtain $\THD_\omega(\defsets\omega)$, 
and while  \cite{stern1985regularity}*{Theorem 2} claimed a solution, J\"org Brendle pointed out to us that this argument does not work.

%%%%%%%%%%%
%\subsection{Applications} 
We expect the open dihypergraph dichotomy to have further {\bf applications}. 
In current work, we have shown that $\ODD\omega\omega(X)$ is equivalent to the \emph{closed-sets covering property} $\mathsf{CCP}(X)$ 
introduced by Louveau \cite{louveau1980}*{Th\'eor\`eme 2.2} and further studied by Petruska \cite{petruska1992borel}*{Theorem 1}, Solecki \cite{solecki1994covering}*{Theorem 1}, 
Di Prisco and Todor\v{c}evi\'c \cite{di1998perfect}*{Section~3}. 
We have also obtained a variant of this result in the uncountable case. 
It is likely that some further applications of $\mathsf{CCP}(X)$
in \cite{di1998perfect}*{Section~3} lift to the uncountable setting. 
Moreover, we expect that
the level-two results of Lecomte-Zeleny \cite{lecomte2014baire},
and more generally,
the higher dimensional version of the Kechris--Louveau--Woodin dichotomy 
in \cite{CarroyMillerSoukup}*{Section 4}, can be extended to the uncountable setting.  
Carroy and Miller showed in unpublished work that their basis results for the classes of non-Baire class $1$ functions and functions that are not $\sigma$-continuous with closed pieces \cite{CarroyMiller2020} follow from the Kechris--Louveau--Woodin dichotomy. 
Does this hold in the uncountable setting? 

Since the Baire property for subsets of ${}^\omega\omega$ follows from instances of $\ODD\omega\omega(X)$, we ask whether a similar result holds for Lebesgue measurability. 
Note that there is no analogous proof, since the ideal of Lebesgue null sets is not generated by closed sets and it thus cannot be generated by the independent sets with respect to a box-open dihypergraph on the Baire space. 

%%%%%%%%%%%
%\subsection{Separations} 

Several 
\todol{Changed due to Section 6.6}
{\bf implications} and {\bf separations} 
between applications of the open dihypergraph dichotomy are open, 
for instance all missing implications in Figures~\ref{figure: various implications weakly compact}--\ref{figure: various implications not pspclosed}. 
In particular, implications from the $\kappa$-perfect set property $\PSP_\kappa(\mathcal C)$ to  
most of the other applications of the open dihypergraph dichotomy discussed in Section~\ref{section: applications} are open. 

\begin{problem}\label{problem app of PSP} 
Suppose that $\kappa$ is uncountable and $\mathcal C$ is closed under continuous preimages. 
\todol{Changed due to Section 6.6: this problem and the next 3 paragraphs. Since we now know that the implications from KLWdis to PSP holds if PSP holds for closed sets (and fails for $\Fsigma$ sets if the latter fails), we thought it might make more sense to ask only about the implications from PSP to other applications}
Does
$\PSP_\kappa(\mathcal{C})$ imply 
$\KLWdis_\kappa(\mathcal{C})$,
$\ABP_\kappa(\mathcal{C})$ or $\JR_\kappa(\mathcal C)$?
\end{problem}

As for implications from
the $\kappa$-perfect set property and versions of the Hurewicz dichotomy,
it is open 
for weakly compact cardinals and $\omega$ 
whether
$\PSP_\kappa(\defsets\kappa)$ implies $\THD_\kappa(\defsets\kappa)$ 
and for weakly compact cardinals whether it implies $\THD_\kappa(\defsets\kappa,\pwrset({}^\kappa\kappa))$.\footnote{For uncountable but not weakly compact $\kappa$, $\PSP_\kappa(X)$ is equivalent to $\THD_\kappa(X,Y)$ for all supersets $Y$ of $X$, and in particular to $\THD_\kappa(X)$ by Proposition~\ref{remark: PSP THD non weakly compact}.} 
Note that $\THD_\kappa(\defsets\kappa)$ and $\HD_\kappa(\defsets\kappa)$ are equivalent for weakly compact cardinals by Proposition~\ref{proposition: HD and THD options}, but we do not know whether $\THD_\kappa(\defsets\kappa,\pwrset({}^\kappa\kappa))$ implies $\HD_\kappa(\defsets\kappa)$ for non-weakly compact cardinals. 

$\PSP_\omega(\defsetsk)$ implies neither $\THD_\omega(\defsets\omega,\Gdelta)$ nor $\KLWdis_\omega(\defsets\omega,\Fsigma)$ by results of Di Prisco and Todorcevic~\cite{di1998perfect} described in
Remark~\ref{remark: separating KLWdis and PSP}.
Their results further show that
neither the perfect set property 
nor the open graph dichotomy
implies the Baire property,
%for the class $L(\RR)[U]$
since the subset of the Cantor space induced by
a nonprincipal ultrafilter on $\omega$ does not have the Baire property.
Moreover, the Baire property does not imply the perfect set property for $\defsets\omega$ in the countable setting \cite{MR768264}*{Theorem~7.16}. 
Is this separation valid in the uncountable setting too? 

Note that
$\KLW_\kappa({}^\kappa\kappa,\defsetsk)$ implies $\cbi(\defsetsk)$.%
\footnote{See Theorem~\ref{theorem: ODD implies CB1}. Note that
$\cbi(\defsetsk)$ is equivalent to $\cbii(\defsetsk)$
by Theorem~\ref{prop: cbi iff cbii}.}
\todon{Footnote size problem solved in the usual way}
One can ask similarly to Problem \ref{problem app of PSP}
whether $\PSP_\kappa(\defsetsk)$ implies $\cbi(\defsetsk)$. 
This implication is equivalent to the statement that
$\GV_\kappa(X)$ is determined for every
$X\subseteq{}^\kappa\kappa$ of size $\kappa$.
We also ask whether the reverse implication holds.
This
is equivalent to the statement that every $\kappa$-crowded set 
in $\mathcal C$
of size at least $\kappa^+$
\todog{I wrote ``at least $\kappa^+$'' insted of just ``$\kappa^+$'' since I'm not sure every $\kappa$-croweded set of size at least $\kappa^+$ has a $\kappa$-crowded set of size exactly $\kappa^+$.}
has a $\kappa$-perfect subset.%
\footnote{Note that $\PSP_\kappa(\Gdelta(\kappa))$, $\cbi(\Gdelta(\kappa))$ and $\KLW_\kappa({}^\kappa\kappa,\Gdelta(\kappa))$ 
are equivalent by Corollary~\ref{cor: PSP from ODD for the kappa-Baire space 2}~\ref{cor: PSP from ODD for the kappa-Baire space 2 b} and 
Remark~\ref{PSP iff CB1 for Gdelta sets}.}

Recall that $\ODD\kappa\kappa(\mathcal C)$ and $\KLW^\kappa_\kappa(\mathcal C)$ are {\bf equivalent}.%
\footnote{See Corollary~\ref{cor: from KLW to ODD kappa}.} 
Are some other applications of $\ODD\kappa\kappa(\defsetsk)$ equivalent to it? 
For lower dimensions, we ask whether 
$\KLW^\ddim_\kappa(\mathcal C)$ implies 
$\ODD\kappa\ddim(\mathcal C)$. 
Note that
$\ODD\kappa\ddim(\mathcal C)$ is equivalent to $\KLW^\ddim_\kappa(\mathcal C,\Gdelta(\kappa))$ 
if $\mathcal C$ is closed under intersections with $\Gdelta(\kappa)$ sets.%
\footnote{See Corollary~\ref{cor: from KLW to ODD}~\ref{cor: from KLW to ODD 1}.}

Note that one can prove the consistency of $\ODD\kappa\ddim(\defsetsk)$ for $\ddim<\kappa$ more easily than in this paper along the lines of the proof of the consistency of $\PSP_\kappa(\defsets\kappa)$ in \cite{SchlichtPSPgames}*{Section~2.2}. 
This is how we proved it originally. 
The fact that the proofs are very similar suggests that 
$\ODD\kappa\ddim(\defsetsk)$ and $\PSP_\kappa(\defsetsk)$ may be equivalent if $\ddim<\kappa$.

\begin{problem}
\label{q: PSP implies ODD2?}
Does $\PSP_\kappa(\mathcal C)$ imply $\OGD_\kappa(\mathcal C)$ if $\mathcal C$ is closed under continuous preimages?%
\footnote{Recall that $\OGD_\kappa(X)$ and $\ODD\kappa 2(X)$ are equivalent for all subsets $X$ of ${}^\kappa\kappa$ by Corollary~\ref{homomorphisms and perfect homogeneous subgraphs}.}
\end{problem}

We further ask which of the open dihypergraph dichotomies for different {\bf dimensions} $2\leq\ddim\leq\kappa$ can be separated, and in particular: 

\begin{problem}
\label{q: PSP implies ODD?}
Do the following implications hold if $3\leq\ddim<\kappa$ and $\mathcal C$ is closed under continuous preimages? 
\begin{enumerate-(1)}
\item 
\label{q: PSP implies ODD? 1}
$\OGD_\kappa(\mathcal C)\Longrightarrow\ODD\kappa\ddim(\mathcal C)$.
\item 
\label{q: PSP implies ODD? 2}
$\ODD\kappa\ddim(\mathcal C)\Longrightarrow\ODD\kappa\kappa(\mathcal C)$.
\end{enumerate-(1)}
\end{problem}

The version of the implication in \ref{q: PSP implies ODD? 2} for definable $\kappa$-dihypergraphs is also open.  
Note that 
$\OGD_\omega(\mathcal{C})$ does not imply $\ODD\omega\omega(\mathcal{C},\defsets\omega)$ by 
the results of Di Prisco and Todorcevic~\cite{di1998perfect} described in
Remark~\ref{remark: separating KLWdis and PSP}. 
We ask if this implication fails for regular uncountable cardinals $\kappa$, too.

The {\bf restriction} $\ODD\kappa\kappa({}^\kappa\kappa)$ to the underlying set ${}^\kappa\kappa$ has interesting consequences such as 
$\ODD\kappa\kappa({\bf \Sigma}^1_1(\kappa))$.% 
\footnote{See Theorem~\ref{thm: ODD for kappa^kappa and analytic sets}.}
This motivates the next problem and its restriction to definable dihypergraphs. 
Similar questions are of interest for  $\KLW^\ddim_\kappa({}^\kappa\kappa)$ and other applications. 

\begin{problem}
\label{q: ODDkk}
Does $\ODD\kappa\kappa({}^\kappa\kappa)$ imply $\ODD\kappa\kappa(\defsetsk)$?
\end{problem}

Summarizing and expanding some of the above questions, 
we ask which of the implications in 
Figure~\ref{figure: ODD OGD PSP} 
can be reversed.%
\footnote{The second and third implication in each row follows from Lemmas~\ref{<kappa dim hypergraphs are definable} and~\ref{comparing ODD for different dimensions}, and Corollary~\ref{cor: PSP from ODD}, respectively.
The equivalences in the figure follow from Lemma~\ref{ODD for continuous images}. 
Blue arrows are clickable and lead to the corresponding results.} 
Note that $\OGD_\kappa(\defsets\kappa)$ is equivalent to $\ODD\kappa 2(\defsets\kappa)$ in the figure\footnote{See Corollary~\ref{homomorphisms and perfect homogeneous subgraphs}.} 
and the same holds for other classes. 
The first two items in the third row are equivalent to $\ODD\kappa\kappa({}^\kappa\kappa)$ and $\ODD\kappa\kappa({}^\kappa\kappa,\defsetsk)$, respectively. 
\todon{``intextsep'' and ``abovecaptionskip'' were set locally in the figure}

{
\setlength{\abovecaptionskip}{5 pt}
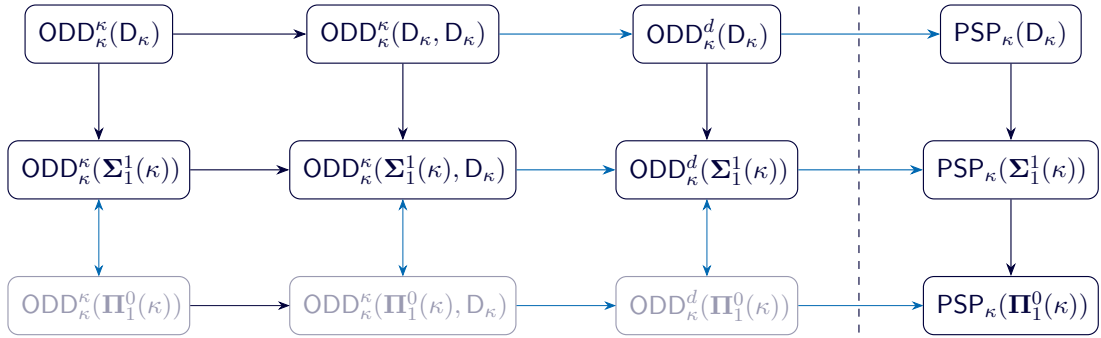
\begin{figure}[H] %%%puts figure in this exact place in text 
%\begin{figure}[h] %%% less strict about the place 
\begin{tikzpicture}[scale=1.0]%was: scale=1
\tikzstyle{theory1}=[draw,rounded corners,scale=.900, minimum width=8.0mm, minimum height=8.0mm,line width=\widthline,VeryDarkBlueNode]
\tikzstyle{theory2}=[draw,rounded corners,scale=.900, minimum width=8.0mm, minimum height=8.0mm,line width=\widthline,GreyBlueNode]

%arrow styles
\tikzstyle{dotted}=[dash pattern=on 1pt off  2pt]
\tikzstyle{medarrow}=[>=Stealth, line width=\widtharrow, scale=1.8, VeryDarkBlueNode] 
\tikzstyle{smallarrow}=[>=Stealth, line width=\widthlinethin, scale=1.8, VeryDarkBlueNode] 

% draw the nodes
 \node (oddD) at (0,3.6) [theory1] {{\parbox[c][.6cm]{1.9cm}{$\,\ODD\kappa\kappa(\defsetsk)$}}};
 \node (oddDD) at (4,3.6) [theory1] {{\parbox[c][.6cm]{2.55cm}{$\,\ODD\kappa\kappa(\defsetsk,\defsetsk)$}}};
 \node (ogdD) at (8,3.6) [theory1] {{\parbox[c][.6cm]{1.9cm}{$\,\ODD\kappa\ddim(\defsetsk)$}}};
 \node (pspD) at (12,3.6) [theory1] {{\parbox[c][.6cm]{1.80cm}{$\,\PSP_\kappa(\defsetsk)$}}}; 

 \node (oddA) at (0,1.8) [theory1] {{\parbox[c][.6cm]{2.4cm}{
$\,\ODD\kappa\kappa(\analytic(\kappa))$
}}};
 \node (oddAD) at (4,1.8) [theory1] {{\parbox[c][.6cm]{3.05cm}{
$\,\ODD\kappa\kappa(\analytic(\kappa),\defsetsk)$
}}};
 \node (ogdA) at (8,1.8) [theory1] {{\parbox[c][.6cm]{2.4cm}{$\,\ODD\kappa\ddim(\analytic(\kappa))$}}};
 \node (pspA) at (12,1.8) [theory1] {{\parbox[c][.6cm]{2.30cm}{$\,\PSP_\kappa(\analytic(\kappa))$}}};
 
 \node (oddC) at (0,0.0) [theory2] {{\parbox[c][.6cm]{2.4cm}{
$\,\ODD\kappa\kappa(\closedsets(\kappa))$
}}};
 \node (oddCD) at (4,0.0) [theory2] {{\parbox[c][.6cm]{3.05cm}{
$\,\ODD\kappa\kappa(\closedsets(\kappa),\defsetsk)$
}}};
 \node (ogdC) at (8,0.0) [theory2] {{\parbox[c][.6cm]{2.4cm}{$\,\ODD\kappa\ddim(\closedsets(\kappa))$}}};
 \node (pspC) at (12,0.0) [theory1] {{\parbox[c][.6cm]{2.30cm}{$\,\PSP_\kappa(\closedsets(\kappa))$}}};

 \node (pointA) at (10,4.10){};
 \node (pointB) at (10,-0.45){};
 \draw[medarrow, DottedLine]
(pointA) edge [dotted](pointB); 

\draw[medarrow, VeryDarkBlueNode] %%Dark arrows without hyperrefs
([yshift=0mm]oddD.east) edge [->]
([yshift=0mm]oddDD.west)
([yshift=0mm]oddA.east) edge [->] 
([yshift=0mm]oddAD.west)
([yshift=0mm]oddD.south) edge [->]
([yshift=0mm]oddA.north)
([yshift=0mm]oddDD.south) edge [->] 
([yshift=0mm]oddAD.north)
([yshift=0mm]ogdD.south) edge [->] 
([yshift=0mm]ogdA.north)
([yshift=0mm]pspA.south) edge [->] 
([yshift=0mm]pspC.north)
([yshift=0mm]pspD.south) edge [->] 
([yshift=0mm]pspA.north)
;

\draw[medarrow, BlueArrow] %%Dark arrows with hyperrefs
([yshift=0mm]oddDD.east) edge [->] 
%node {\hyperref[comparing ODD for different dimensions]{\phantom{xxx}}} 
node {\hyperlink{ODD^kappa implies definable ODD^ddim}{\phantom{xxx}}} 
([yshift=0mm]ogdD.west) 
([yshift=0mm]oddAD.east) edge [->]
%node {\hyperref[comparing ODD for different dimensions]{\phantom{xxx}}} 
node {\hyperlink{ODD^kappa implies definable ODD^ddim}{\phantom{xxx}}} 
([yshift=0mm]ogdA.west)
([yshift=0mm]ogdD.east) edge [->] 
node {\hyperref[cor: PSP from ODD]{\phantom{xxx}}} 
([yshift=0mm]pspD.west)
([yshift=0mm]ogdA.east) edge [->] 
node {\hyperref[cor: PSP from ODD]{\phantom{xxx}}} 
([yshift=0mm]pspA.west)
;

%\draw[medarrow, GreyBlueNode] %%Light arrows without hyperrefs
\draw[smallarrow, VeryDarkBlueNode] %%Light arrows without hyperrefs
([yshift=0mm]oddC.east) edge [->]
([yshift=0mm]oddCD.west)
;

%\draw[medarrow, VeryLightBlueArrow] %%Light arrows with hyperrefs
\draw[smallarrow, BlueArrow] %%Light arrows with hyperrefs
([yshift=0mm]oddA.south) edge [<->] 
node {\hyperref[ODD for continuous images]{\phantom{xxx}}} 
([yshift=0mm]oddC.north)
([yshift=0mm]oddAD.south) edge [<->] 
node {\hyperref[ODD for continuous images]{\phantom{xxx}}} 
([yshift=0mm]oddCD.north) 
([yshift=0mm]ogdA.south) edge [<->] 
node {\hyperref[ODD for continuous images]{\phantom{xxx}}} 
([yshift=0mm]ogdC.north) 
([yshift=0mm]oddCD.east) edge [->] 
%node {\hyperref[comparing ODD for different dimensions]{\phantom{xxx}}} 
node {\hyperlink{ODD^kappa implies definable ODD^ddim}{\phantom{xxx}}} 
([yshift=0mm]ogdC.west) 
([yshift=0mm]ogdC.east) edge [->] 
node {\hyperref[cor: PSP from ODD]{\phantom{xxx}}} 
([yshift=0mm]pspC.west) 
;

\end{tikzpicture}
\caption{Implications between $\mathsf{ODD}$ for dimensions $2\leq\ddim<\kappa$ and $\PSP$}
\label{figure: ODD OGD PSP}
\end{figure}
}

Separating the various dichotomies and applications discussed above is difficult, since the models 
where they are known to hold are in most cases obtained by L\'evy collapses. 
One can try to force over a model of $\ODD\kappa\kappa(\defsets\kappa)$ to preserve some applications but break others as in \cite{di1998perfect} in the countable setting. 
%In the countable setting, Di Prisco and Todor\v{c}evi\'c succeeded in separating the perfect set property from the Kechris--Louveau--Woodin dichotomy and the Baire property by adding a selective ultrafilter over a Solovay model $L(\RR)$ \cite{di1998perfect}. 
%It looks promising to aim for separations in choiceless models by 
%lifting work of Di Prisco and Todor\v{c}evi\'c on forcing over choiceless models \cite{di1998perfect} to the uncountable setting. 

Figure~\ref{figure: ODD variants} in Subsection~\ref{subsection: ODD ODDI ODDH} summarizes 
implications and separations 
between several {\bf variants} of the open dihypergraph dichotomy with additional requirements on the homomorphisms. 
Which of these implications can be reversed? 
A striking difference between the countable and uncountable settings is that one can choose the continuous homomorphisms to be injective in the latter case, assuming $\Diamondi\kappa$ holds.\footnote{See Propositions~\ref{lemma: dhD} and~\ref{ODDI fails for D omega} and Theorem~\ref{theorem: ODD ODDI}.} 
We ask whether this assumption can be eliminated. 

\begin{problem}\label{question ODDI and ODD} 
Does $\ODD{\kappa}H \Longrightarrow \ODDI{\kappa}H$ hold 
for all uncountable $\kappa$ and all relatively box-open $\kappa$-dihypergraphs $H$?% 
$\,$\footnote{It is equivalent to ask whether 
the existence of a continuous homomorphism from $\dhH\kappa$ to a relatively box-open $\kappa$-dihypergraph $H$ implies the existence of an injective map with the same properties.}
\todon{footnote size problem solved in the usual way}
\end{problem} 

This implication holds for $\ddim$-dihypergraphs for $\ddim<\kappa$.%
\footnote{In fact, the homomorphism can be chosen to be a homeomorphism onto a closed set; see Theorem~\ref{theorem: ODD ODDC when ddim<kappa} and Corollary~\ref{cor: ODD ODDC when ddim<kappa}.}
Since $\Diamondi\kappa$ holds if $\kappa$ is inaccessible or a successor cardinal ${\geq}\omega_2$ \cite{ShelahDiamonds},\footnote{See Lemma~\ref{diamondi claim}.}
the remaining cases are $\omega_1$ 
and weakly inaccessible, but not inaccessible, cardinals. 
To separate $\ODD{\omega_1}H$ from $\ODDI{\omega_1}H$ for some relatively box-open $H$, one would need to consider models where $\CH$ holds but $\Diamond_{\omega_1}$ fails. 
To our knowledge, the first such model was constructed by Jensen \cite{devlin1974souslin}. 
Shelah found an alternative proof in \cite{shelah2017proper}*{Theorem~6.1}. 
A simpler construction is possible based on recent work of   and Mota \cite{aspero2017few}. 
Do these models separate the above two principles for some $H$? 
If one can get that $\ODDI\kappa\kappa(\defsets\kappa)$\footnote{The definition of $\ODDI\kappa\kappa(\defsetsk)$ is analogous to Definition~\ref{def: ODD for classes}.}
implies $\Diamondi\kappa$ for all $\kappa$, then any model of $\CH + \neg\Diamond_{\omega_1} + \ODD{\omega_1}{\omega_1}(\defsets\kappa)$ 
is as required. 
A related question is whether it is consistent that
$\ODDI\kappa H\Longrightarrow\ODDH\kappa H$ holds for all relatively box-open $\kappa$-dihypergraphs $H$. 

%%%%%%%%%%%
%\subsection{ODD for higher analogues of Polish spaces}
The questions studied in this paper make sense for 
other spaces 
such as the {\bf strong $\kappa$-Choquet spaces} studied in \cite{coskey2013generalized}
and \cite{agostini2021generalized}. 
Is it consistent that 
the open dihypergraph dichotomy holds for all such spaces and does it have similar applications? 
Are there analogues to the results in this paper for 
{\bf singular cardinals} $\kappa$ with $2^{<\kappa}=\kappa$? 
In setting of $\cof(\kappa)=\omega$, the space ${}^\omega\kappa$ is called the $\kappa$-Baire space \cites{DimonteMottoRosSingular,DimonteMottoRosShi}. 
Variants of the perfect set property and the Baire property for subsets of the $\kappa$-Baire space in $L(V_{\kappa+1})$ have been studied \cites{CramerPSPandI0,DimonteSolovay,DimontePovedaThei}.

%%%%%%%%%%%
%\subsection{The open graph axiom} 

%Omer Ben-Neria, Sakae Fuchino, Istv\'an Juh\'asz, Justin Moore and Teruyuki Yorioka 
Ben-Neria, Fuchino, Juh\'asz, Moore and Yorioka 
asked about the following problem on higher analogues to the {\bf open graph axiom}. 
Let $\OGA_\kappa$ denote the statement that $\OGA_\kappa(X)$%
\footnote{See the paragraph before Definition~\ref{OGD def}.}
holds for all subsets $X$ of ${}^\kappa\kappa$. 

\begin{problem}
\label{open q OGA} 
Is it consistent that $\OGA_\kappa$ holds for some regular uncountable cardinal $\kappa$ with $\kappa^{<\kappa}=\kappa$? 
\end{problem} 

Our results provide a choiceless model of this statement, since 
Theorem \ref{main theorem} shows that the model $L({}^\kappa\Ord)$ in a generic extension obtained by the L\'evy collapse of an inaccessible cardinal to $\kappa^+$ satisfies $\OGA_\kappa$.  
This suggests to approach Problem~\ref{open q OGA} by forcing over a choiceless model of $\OGA_\kappa$. 
\todoq{It may be useful to know whether $\OGA_\kappa$ implies 
$\OGD_\kappa(\analytic(\kappa))$ or the same for $\defsets\kappa$. This would motivate the strategy to force over  a choiceless model of $\OGD$ to obtain $\OGA$. } 

%%%%%END OF TEXT BODY

%%%%%%%%%

\newpage 

\bibliography{references_Philipp,references_Dori}{}
\bibliographystyle{alpha}

%%%%%%%%%

\newpage 
%\clearpage
%\addcontentsline{toc}{chapter}{Index}
\printindex

\todoig{To do notes color codes for the corrections:
\\
{\bf orange}: changes made as a result of the referee report
\\
{\bf blue-green}: changes we made independently of the referee report. (This includes a small corrections in one proof, maybe some typos, and some small improvements to a few results.)
\\
{\bf red, very light yellow}: changes we either still need to make, or to discuss/check/improve/implement in other versions of the file
\\
{\bf green, pink, dark yellow}: these are NOT about corrections (green: useful observations, pink: open questions, yellow: todonotes about layout issues)
}

\end{document}